\documentclass[9pt,english]{article}%before[11pt,...
\usepackage{geometry}\geometry{left=0.7in, right=0.7in}%default is 1.875, before: [1.2in]
\usepackage{setspace}
\setstretch{1.125}
%\singlespacing

\usepackage{musicography}
\usepackage{empheq}
\usepackage[T1]{fontenc}
\usepackage[utf8]{inputenc}
\usepackage{babel}
\usepackage{varioref}
\usepackage{amsmath,nccmath}
\numberwithin{equation}{section}
\usepackage{amssymb}
\usepackage{amsthm}
\usepackage{mathrsfs} 
\usepackage{upgreek}
\usepackage{graphicx}
\usepackage{floatrow}%for captions next to pictures
\usepackage[labelfont=bf]{caption}
\usepackage{subcaption}
\usepackage{appendix}
\usepackage{slashed}
\usepackage{svg}
\usepackage{bm}
\usepackage [autostyle, english = american]{csquotes}
\MakeOuterQuote{"}
\makeatletter
\usepackage{hyperref}
\usepackage[capitalize,nameinlink]{cleveref}
\usepackage{enumitem}

\usepackage[makeroom]{cancel}

\usepackage{authblk}%%for title page
\usepackage{xcolor}
\usepackage{graphicx}

\graphicspath{{figures}}

\hypersetup{
	%linktocpage    ,
	%colorlinks=true, %set true if you want colored links
	%linktoc=all,     %set to all if you want both sections and subsections linked
	%linkcolor=black,  %choose some color if you want links to stand out
	colorlinks,
	linkcolor={blue!60!black},
	citecolor={green!60!black},
	urlcolor={blue!80!black}
}
\usepackage{mathtools}
\usepackage{mathrsfs}
\usepackage{comment}

\usepackage{cases}
\usepackage{xargs}   

\usepackage[colorinlistoftodos,prependcaption,textsize=tiny]{todonotes}
\newcommandx{\typo}[2][1=]{\todo[linecolor=red,backgroundcolor=red!25,bordercolor=red,#1]{#2}}
\newcommandx{\change}[2][1=]{\todo[linecolor=blue,backgroundcolor=blue!25,bordercolor=blue,#1]{#2}}
\newcommandx{\question}[2][1=]{\todo[linecolor=green,backgroundcolor=blue!25,bordercolor=green,#1]{#2}}
\newcommandx{\answer}[1]{\todo[linecolor=pink,backgroundcolor=pink!25,bordercolor=pink]{#1}}
\newcommandx{\unsure}[2][1=]{\todo[linecolor=green,backgroundcolor=green!25,bordercolor=green,#1]{#2}}
\newcommandx{\improve}[2][1=]{\todo[linecolor=violet,backgroundcolor=violet!25,bordercolor=violet,#1]{#2}}
\newcommandx{\thiswillnotshow}[2][1=]{\todo[disable,#1]{#2}}
%\setlength\parindent{0pt}
% Removes all indentation from paragraphs
% Make numbering in the enumerate environment by letter rather than number (e.g.\  section 6)
\allowdisplaybreaks
%%%%%%%%% New Latex environments:

\newtheorem{thm}{Theorem}[section]
\newtheorem{conj}{Conjecture}[section]
\newtheorem{lemma}{Lemma}[section]
\newtheorem{defi}{Definition}[section]

\newtheorem{cor}{Corollary}[section]
\newtheorem{ass}{Assumption}[section]
\theoremstyle{remark}
\newtheorem{rem}{Remark}[section]
\newtheorem{obs}[rem]{Observation}
\theoremstyle{plain}

\newtheorem*{claim*}{Claim}
\newtheorem{prop}{Proposition}[section]

\newenvironment{nalign}{
	\begin{equation}
		\begin{aligned}
		}{
		\end{aligned}
	\end{equation}
	\ignorespacesafterend
}
\theoremstyle{remark}

%%%% General commands
%%%% General commands
\newcommand{\dd}{\mathop{}\!\mathrm{d}}
\renewcommand{\O}{\mathcal{O}}
\newcommand{\e}{\mathrm{e}}
\newcommand{\E}{\mathcal{E}}
\newcommand{\T}{\mathbb{T}}
\newcommand{\tT}{\tilde{\mathbb{T}}}
\renewcommand{\b}{\mathrm{b}}
\newcommand{\scrim}{\mathcal{I}^-}
\newcommand{\scrip}{\mathcal{I}^+}
\newcommand{\deformation}[1]{{}^{#1}\pi}
\newcommand{\tr}{\mathrm{tr}}
\newcommand{\Omegalin}{\stackrel{\scalebox{.6}{\mbox{\tiny (1)}}}{\Omega}}
\newcommand{\blin}{\stackrel{\scalebox{.6}{\mbox{\tiny (1)}}}{b}}
\newcommand{\glin}{\stackrel{\scalebox{.6}{\mbox{\tiny (1)}}}{\slashed{g}}} 
\usepackage{accents}    % needed for better aligned linearised quantities
%%%%%%%%%%%%%%%%%%%%%other linearised gravity symbols

\newcommand{\gs}{\accentset{\scalebox{.6}{\mbox{\tiny (1)}}}{\slashed{g}}}
\newcommand{\gsh}{\accentset{\scalebox{.6}{\mbox{\tiny (1)}}}{\hat{\slashed{g}}}}
\newcommand{\trg}{{\tr{\gs}}}

%Connection coefficients:

\newcommand{\xh}{\accentset{\scalebox{.6}{\mbox{\tiny (1)}}}{{\hat{\chi}}}}
\newcommand{\xhb}{\accentset{\scalebox{.6}{\mbox{\tiny (1)}}}{\underline{\hat{\chi}}}}

%Curvature components:
\newcommand{\al}{\accentset{\scalebox{.6}{\mbox{\tiny (1)}}}{{\alpha}}}

\newcommand{\alb}{\accentset{\scalebox{.6}{\mbox{\tiny (1)}}}{\underline{\alpha}}}

%Commuted fun

\newcommand{\Ps}{\accentset{\scalebox{.6}{\mbox{\tiny (1)}}}{{\Psi}}}
\newcommand{\Psb}{\accentset{\scalebox{.6}{\mbox{\tiny (1)}}}{\underline{\Psi}}}
%%%%%%%%%%%%%%%%%%%%%%%%%%
\newcommand{\pv}{\partial_v}
\newcommand{\pu}{\partial_u}
\renewcommand{\sl}{\mathring{\slashed{\nabla}}}
\newcommand{\Dl}{\mathring{\slashed{\Delta}}}

\newcommand{\red}[1]{{\color{red}{#1}}}

%previous coloring command: \newcommand{\green}[1]{\textcolor{green!60!black}{#1}}

\newcommand{\violet}[1]{{\color{violet}{#1}}}
\newcommand{\jpns}[1]{\langle #1 \rangle}

\title{
{Scattering, Polyhomogeneity and Asymptotics \\for Quasilinear Wave Equations From Past to Future Null Infinity}} % Title

\author[1]{Istvan Kadar\thanks{istvan.kadar@math.ethz.ch}}% violet
\author[2,3]{Lionor Kehrberger\thanks{kehrberger@mis.mpg.de}}% green

\affil[1]{Princeton University, Department of Mathematics, Fine Hall, Washington Road, Princeton, NJ 08544, United States of America}
\affil[2]{Max Planck Institute for Mathematics in the Sciences,  Inselstraße 22, 04103 Leipzig, Germany}
\affil[3]{University of Leipzig, Institute for Theoretical Physics, Br\"uderstraße 16,  04103 Leipzig, Germany}
\setcounter{Maxaffil}{0}

\date{\today} % Date for the report

\makeatother
\hyphenation{Schwarz-schild}
\hyphenation{Minkow-ski}
\hyphenation{Minkow-skian}
\hyphenation{Schwarz-schildean}
\hyphenation{Christo-doulou}
\hyphenation{New-man}
\hyphenation{Pen-rose}

%Package in addition
\usepackage[backend=biber,style=alphabetic,sorting=nty,doi=false,isbn=false,url=false,minalphanames=3]{biblatex}
% make a reasonable size bibliography
\addbibresource{allBib.bib}
\AtEveryBibitem{\clearfield{month}}
\AtEveryBibitem{\clearfield{day}}
\newcommand{\scri}{\mathcal{I}}
\newcommand{\p}[1]{\partial_{#1}}
\newcommand{\Hb}{H_{\mathrm{b}}}
\newcommand{\Hbt}{\tilde{H}_{\mathrm{b}}}
\newcommand{\A}[1]{\mathcal{A}_{\mathrm{#1}}}
\newcommand{\phg}{\mathrm{phg}}
\newcommand{\R}{\mathbb{R}}
\newcommand{\C}{\mathcal{C}}
\newcommand{\Cbar}{\underline{\mathcal{C}}}
\newcommand{\N}{\mathbb{N}}
\newcommand{\D}{\mathcal{D}}
\newcommand{\Deps}{\D^{\delta}}
\newcommand{\Dopen}{\underline{\D}}
\newcommand{\Dbold}{\boldsymbol{\D}}
\newcommand{\Ve}{\mathcal{V}_{\mathrm{e}}}
\newcommand{\Vb}{\mathcal{V}_{\mathrm{b}}}
\newcommand{\Vbt}{\tilde{\mathcal{V}}_{\mathrm{b}}}
\newcommand{\Lbar}{\underline{L}}

\newcommand{\incone}{\underline{\mathcal{C}}_{v_0}}
\newcommand{\outconeFar}{\C_{u_\infty}}
\newcommand{\outcone}[1]{\C_{u_{#1}}}
\newcommand{\norm}[1]{\left\lVert #1\right\rVert}
\newcommand{\abs}[1]{\left\lvert #1\right\rvert}
\newcommand{\mindex}[1]{\overline{(#1,0)}}
\newcommand{\cupdex}{\,\overline{\cup}\,}
\DeclareMathOperator{\Diff}{Diff}
\DeclareMathOperator{\divergence}{div}
\DeclareMathOperator{\supp}{supp}
\crefname{lemma}{Lemma}{Lemmata}
\crefname{thm}{Theorem}{Theorems}
\crefname{prop}{Proposition}{Propositions}
\crefname{conj}{Conjecture}{Conjectures}
\crefname{ass}{Assumption}{Assumptions}
\crefname{cor}{Corollary}{Corollary}
\crefname{rem}{Remark}{Remark}
\crefname{obs}{Observation}{Observations}
\crefname{defi}{Definition}{Definitions}
\crefname{equation}{}{}
\crefname{enumi}{}{}
%strike out package
\usepackage{soul}

\crefname{subfigure}{Figure}{Figure}
\Crefname{subfigure}{Figure}{Figure}

\begin{document}
	\pagenumbering{roman}
	
	\maketitle 
		\begin{abstract}
  We present a general construction of semiglobal scattering solutions to quasilinear wave equations in a neighbourhood of spacelike infinity including past and future null infinity, where the scattering data are posed on an ingoing null cone and along past null infinity.
  More precisely, we prove weighted, optimal-in-decay energy estimates and propagation of polyhomogeneity statements from past to future null infinity for these solutions,  we provide an algorithmic procedure how to compute the precise coefficients in the arising polyhomogeneous expansions, and we apply this procedure to various  examples.  
 As a corollary, our results directly imply the summability in the spherical harmonic number $\ell$ of the estimates proved for fixed spherical harmonic modes in the papers \cite{kehrberger_case_2022-1,kehrberger_case_2024} from the series  "The Case Against Smooth Null Infinity".
  
  Our (physical space) methods are based on weighted energy estimates near spacelike infinity similar to those of~\cite{hintz_microlocal_2023-1}, commutations with (modified) scaling vector fields to remove leading order terms in the relevant expansions, time inversions, as well as the Minkowskian conservation laws:
    \begin{equation*}
        \pu(r^{-2\ell}\pv(r^2\pv)^{\ell}(r\phi_{\ell}))=0,
    \end{equation*}
    which are satisfied if $\Box_\eta\phi=0$.

    Our scattering constructions apply to systems of equations as well and  go beyond the usual class of finite energy solutions.
    We use this to also derive a scattering theory and prove propagation of polyhomogeneity for the Einstein vacuum equations in a harmonic gauge. 
    In the process, we also need to introduce a novel ansatz accounting for the stronger-than-Schwarzschildean divergence of the light cones, which, in particular, extends existing exterior stability of Minkowski statements in harmonic gauge to allow for slowly decaying data as considered in \cite{bieri_extension_2010}.

%Despite being part of a series, the paper is entirely self-contained.

% (\red{outdated})		In the works , estimates for the linear scalar wave equation and linearised gravity on Schwarzschild were derived which translated the decay near past null infinity $\mathcal{I}^-$  into the precise degree of conformal regularity near future null infinity $\mathcal I^+$ for fixed angular mode solutions with angular number $\ell$. 
% 	In the present paper, we prove that these estimates can be summed in $\ell$. 
% 	More generally, we prove that solutions to a large class of nonlinear perturbations of the Minkowskian wave equation arising from polyhomogeneous data towards and along $\scrim$ are polyhomogeneous towards $\scrip$, and we provide an algorithmic procedure how to compute the precise coefficients in the polyhomogeneous expansions.

% 		Despite being part of a series, the paper is entirely self-contained.

	\end{abstract}
	\newpage
	\begingroup
	\hypersetup{linkcolor=black}
	\tableofcontents{}
	\endgroup
	\newpage
	\pagenumbering{arabic}
 
\part{Introduction and setup}
This work is divided into four parts, each having its own short preamble.

In this first part, consisting of \cref{sec:intro,sec:intro:example,sec:notation,sec:ODE_lemmas}, we first give motivation, historical background, and a detailed account of all the main results of this work, along with a rough description of some elements of their proofs, in \cref{sec:intro}. In particular, we hope that the busy reader will already gain a good understanding of our work by just reading \cref{sec:intro}.
We present a basic toy model showcasing some of our ideas in a simple context in \cref{sec:intro:example}.
In \cref{sec:notation}, we then set up our notation. We conclude with some elementary ODE lemmata in \cref{sec:ODE_lemmas}.
\section{Introduction}\label{sec:intro}
	The aim of this work is to  gain a systematic understanding of the asymptotic behaviour of solutions to  linear and nonlinear wave equations in a neighbourhood of the spacelike infinity of asymptotically flat spacetimes.
We immediately give a prosaic version of our main result: Motivated by the constructions in the series of papers "The Case Against Smooth Null Infinity" \cite{kehrberger_case_2022-1,kehrberger_case_2024-1,kehrberger_case_2024}, we pose initial data along an ingoing null cone~$\incone$, emanating from a sphere at past null infinity $\scrim$ and truncated at some $u_0\ll0$, and on the part of $\scrim$ to the future of the latter, denoted $\scrim_{v\geq v_0}$. See  \cref{fig:intro:main} below.
\begin{thm}[Main result, prosaic version]\label{thm:intro:main}
Solutions to a large class of (nonlinear) perturbations of the linear wave equation on Minkowski space arising from polyhomogeneous scattering data along $\incone\cup\scrim_{v\geq v_0}$ admit polyhomogeneous expansions towards $\scrim$, spacelike infinity~$I^0$ and future null infinity~$\scrip$; moreover, the coefficients in these latter expansions can be computed by a simple algorithmic procedure.

If, instead, the data only have a finite expansion plus error term, then the same is the case for the solution.
\end{thm}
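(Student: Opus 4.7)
The plan is to proceed in three phases: first establish existence of the scattering solution, then propagate polyhomogeneity along the expected index sets, and finally extract coefficients algorithmically.

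\textbf{Phase 1 (existence).} I would first solve the linear scattering problem from $\incone \cup \scrim_{v\geq v_0}$ by means of weighted physical-space energy estimates in the region $\{u \leq u_0\}$ near spacelike infinity, in the spirit of \cite{hintz_microlocal_2023-1}. The multiplier vector field and weights would be tuned in $r$ and $|u|$ so that the bulk term is coercive and the boundary terms on the outgoing cones and along $\scrim$ produce control consistent with the expected decay of the scattering state. For the quasilinear/nonlinear problem, I would then run a contraction/Picard iteration in a suitably weighted space; the quasilinear distortion of the characteristics (which, as the abstract emphasises, diverges more strongly than in the Schwarzschildean case) is accounted for by working with an ansatz-corrected background geometry and treating the remainder perturbatively.

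\textbf{Phase 2 (polyhomogeneity).} For the linear flat model the starting point is the Minkowskian conservation law
\begin{equation*}
\pu\bigl(r^{-2\ell}\pv(r^2\pv)^{\ell}(r\phi_{\ell})\bigr)=0,
\end{equation*}
which, after integration from $\scrim$, fixes the leading coefficient of the $\ell$-th spherical mode at $\scrip$ in terms of scattering data. To generate the full expansion I would commute with (modified) scaling vector fields: each commutation annihilates the current leading behaviour and produces an equation whose solution is governed by the next subleading term, letting one read off exponents and logarithmic powers inductively. Regularity/decay transfers between $\scrim$, $I^0$ and $\scrip$ would be bridged by time inversions, which convert prescribed decay at one asymptotic end into an improved source on the other. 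The resulting index sets of the expansions are tracked through the iteration.

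\textbf{Phase 3 (nonlinear case and algorithm).} For quasilinear perturbations I would iterate the linear propagation of polyhomogeneity: at each step the nonlinearity is a polynomial in polyhomogeneous functions (and their derivatives), hence again polyhomogeneous with an index set obtained by the standard sum/union operations on index sets. Feeding this as a forcing term into the linear theory of Phase 2 (which must be extended to polyhomogeneous right-hand sides) yields polyhomogeneity of the next iterate. The explicit integration formulae arising from the conservation law at each order constitute the algorithm for coefficients. For the finite-expansion-plus-error version, the same iteration is performed on a truncated ansatz; the remainder is then handled directly by the weighted energy estimates from Phase 1, with the decay rate of the error controlled by the truncation order.

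\textbf{Main obstacles.} I expect the chief difficulties to be twofold. First, the slow decay regime, combined with the quasilinear character, deforms the null geometry in a way that the usual Schwarzschildean ansatz does not accommodate; hence one must design the correction term carefully so that the energy method still closes and so that the expansion ansatz is compatible with the deformed cones. Second, closure of the iteration at the level of index sets is delicate: nonlinear interactions can generate logarithmic enhancements, and one must verify that these are predictable and absorbable order by order, so that polyhomogeneity is genuinely preserved rather than destroyed by accumulating log powers. Both issues are where I would expect the novel ansatz mentioned in the abstract, together with the explicit commutator structure of the modified scaling fields, to do the essential work.
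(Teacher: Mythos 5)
Your plan hits the same pillars the paper rests on — weighted energy estimates near spatial infinity in the spirit of Hintz, commutations with (modified) scaling vector fields, time inversions to move between $\scrim$ and $I^0$, the Minkowskian conservation laws for coefficient extraction, and a perturbative iteration for the nonlinear terms — so the overall architecture is the right one. Two points are worth sharpening, though. First, the logical ordering matters more than your Phase~2 makes explicit: the conservation law $\pu(r^{-2\ell}\pv(r^2\pv)^{\ell}\psi_{\ell})=0$ requires an $\ell$-dependent number of $r^2\pv$-commutations, and so by itself generates estimates that cannot be summed in $\ell$ (the error grows roughly like $\ell^{\ell}$). The role of the scaling commutations is therefore not merely to ``generate subleading terms'' — it is to prove polyhomogeneity of the \emph{full} solution in a mode-uniform way, entirely before the conservation law is invoked. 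Only once that is established does one project onto fixed $\ell$-modes and apply the conservation law to read off coefficients; summability is then a free consequence. Your phrasing (``the conservation law fixes the leading coefficient, then scaling generates the full expansion'') could be read as interleaving these steps, which would re-introduce exactly the $\ell$-dependence the paper is designed to avoid. Second, ``contraction/Picard iteration'' for the quasilinear scattering problem is too optimistic: recovering transversal derivatives along the initial characteristic cone $\incone$ loses derivatives, and a fixed-point argument in a single Banach space will not close for equations whose principal symbol depends on $\phi$. The paper instead constructs the scattering solution as a limit of solutions with truncated data (with uniform-in-truncation energy bounds), and for genuinely quasilinear perturbations that move the null cones it upgrades to a Nash–Moser iteration at the level of the \emph{linearised} scattering problem — the ``ansatz-corrected background geometry'' you allude to is only used for the Einstein vacuum equations, where the stronger-than-Schwarzschildean bending of the cones forces a separate treatment. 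With those two adjustments your outline matches the paper's proof.
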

 \begin{figure}[htbp]
 \centering
\includegraphics[width=0.2\textwidth]{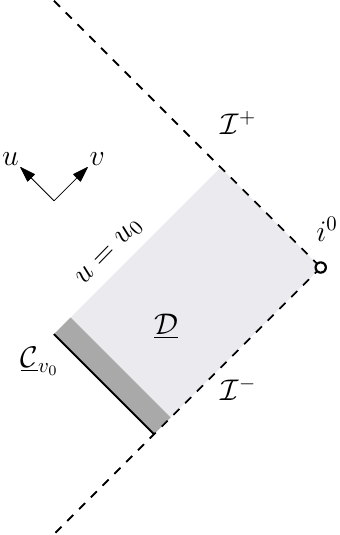}
\caption{Our scattering data are given on a null cone $\incone$ and on $\scrim_{v\geq v_0}$. Depicted is the Penrose diagram of $\Dopen$, the domain of dependence of $\incone\cup\scrim_{v\geq v_0}.$ The darker shaded region corresponds to an initial data slab and is only referred to in \cref{sec:EVE}.}
\label{fig:intro:main}
\end{figure}
There are three distinct aspects to the theorem: Firstly, the \textit{existence} of scattering solutions. 
Secondly, the \textit{propagation of polyhomogeneity} (plus potentially the propagation of error terms), where we recall that polyhomogeneity denotes asymptotic expansions into terms of the form $\rho_{\bullet}^{p}\log ^n \rho_{\bullet}$ with $(p,n)\in\R\times\N$ and with $\rho_{\bullet}$, for $\bullet\in\{-,0,+\}$, characterising the idealised boundaries $\scrim$, $I^0$ or~$\scrip$. 
Thirdly, the \textit{computation} of the actual coefficients in the \textit{asymptotic expansions}. See already \cref{sec:intro:sketch} for a more detailed breakdown.

 The "large class of perturbations" allows for the inclusion of various potential terms, semi-linearities (such as power nonlinearities or null forms), as well as quasilinear terms so long as these can, in a suitable sense, be treated perturbatively, which in turn depends on the initial data decay. 
 See already the discussion around \cref{eq:intro:wavegeneral} for more details. 
 In particular, we can treat the linearised Einstein vacuum equations in double null gauge around Schwarzschild introduced in \cite{dafermos_linear_2019} as well as the nonlinear Einstein vacuum equations in harmonic gauge near Minkowski as studied in \cite{Lindblad2010}. We note that for both of these latter systems, there is a wave part to which \cref{thm:intro:main} applies directly, and a transport part that is dealt with separately.

Furthermore, we also allow for the direct treatment of \textit{linear long-range potentials} such as the first-order terms that appear in the Teukolsky equations \cite{dafermos_linear_2019} or such as $c/r^2$-potentials \textit{for any} $c\in\mathbb R$ (this is in contrast to the restricted range of $c$ in the study of \textit{late-time dynamics}, see \cite{gajic_late-time_2022,hintz_linear_2023}).

 \subsection{Relevance to the series "The Case Against Smooth Null Infinity": The summing of the \texorpdfstring{$\ell$-modes}{spherical modes}}\label{sec:intro:relev}
With more general and self-contained context for this work being given  in \cref{sec:intro:motivation,sec:intro:mathcontext} below, let us first explain the original motivation of this work pertaining to the series of papers "The Case Against Smooth Null Infinity" \cite{kehrberger_case_2022,kehrberger_case_2022-1,kehrberger_case_2024-1,kehrberger_case_2024}, and the relevance of \cref{thm:intro:main} to that series: 
Consider the linear scalar wave equation on a Schwarzschild spacetime, 
\begin{equation}\label{eq:intro:waveSchwarzschild}
    \Box_g\phi=0, \qquad g=-\left(1-\frac{2M}{r}\right)\dd t^2+\left(1-\frac{2M}{r}\right)^{-1}\dd r^2+r^2(\dd \theta^2+\sin^2\theta\dd\varphi^2),\qquad M\in\mathbb{R}.
\end{equation}
Let $u,v$ denote the usual double null coordinates (defined in \cref{sec:Sch:wave}),  let, for some $u_0\ll 0$ and $v_0>0$,  $\incone:=\{u\leq u_0, v=v_0\}$, and let $\Dopen$ denote the domain of dependence of $\incone\cup \scrim_{v\geq v_0}$, where $\scrim_{v\geq v_0}$ denotes the set of limit points $\{u=-\infty,v\geq v_0\}$ and we henceforth suppress the ${}_{v\geq v_0}$ subscript. 
We say that a solution~$\phi$ has no incoming radiation from $\scrim$ for $v\geq v_0$ if the limit of $\norm{\pv\psi(u,\cdot)}_{L^2(\{v\geq v_0\}\times  S^2)}$ vanishes along $\scrim$, where we have introduced the notation
\begin{equation}
\psi=r\phi.
\end{equation}
Finally, we define $r_0(u):=r(u,v_0)$, i.e.~$r_0=r|_{\incone}$. Note that $\tfrac{r_0}{r} \sim 1$  as $\scrim$ and $I^0$ are approached ($u\to-\infty$ at fixed $v$ or fixed $u/v$, respectively), and $\tfrac{r_0}{r}\sim \tfrac{1}{r}$ as $\scrip$ is approached ($v\to\infty$ at fixed $u$).

In \cite{kehrberger_case_2022-1}, the following was shown (see \cref{thm:Schw:wave} for a precise version):
\begin{thm}[from \cite{kehrberger_case_2022-1}]\label{thm:intro:III}
    Let $\phi$ be the solution to \cref{eq:intro:waveSchwarzschild} arising from no incoming radiation and suitably regular initial data $r\phi|_{\incone}=C r^{-p}+\O(r^{-p-\epsilon})$ for some $p\in\mathbb N_{\geq0}$, $0\neq C(\omega)\in C^{\infty}(S^2)$ and $\epsilon>0$.
    Then the fixed angular mode projections  $\phi_\ell$ satisfy the following expansion throughout $\Dopen$:
   \begin{nalign}\label{eq:intro:Schwarzschild:firstexpansion}
			r\phi_{\ell}=\left(\sum_{n=0}^{p+1}f_n^{(\ell,p)}(r_0)\frac{r_0^n}{r^n}\right)+M c_{\log}^{(\ell,p)}\frac{\log (r/r_0)}{r^{p+1}}+{}^{(\ell)}\O\left(\frac{r_0^2}{r^{p+2}}+\frac{1}{r^{p+1+}}\right)
	\end{nalign}
  for some functions $f_n^{(\ell,p)}(r_0)\lesssim r_0^{-p}$ and some generically nonvanishing constants $c_{\log}^{(\ell,p)}$ (both of which are computed explicitly).
\end{thm}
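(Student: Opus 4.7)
The plan is to first project onto the $\ell$-th angular mode, reducing the scalar wave equation to the $1+1$-dimensional PDE
\begin{equation*}
4\,\partial_u\partial_v\psi_\ell \;=\; \left(1-\tfrac{2M}{r}\right)\!\left[\tfrac{\ell(\ell+1)}{r^2}+\tfrac{2M}{r^3}\right]\!\psi_\ell,\qquad \psi_\ell=r\phi_\ell,
\end{equation*}
and then exploit the tower of Minkowskian conservation laws $\partial_u[r^{-2\ell}\partial_v(r^2\partial_v)^\ell\psi_\ell]=0$ mentioned in the abstract as the backbone of the construction. In the Minkowskian limit $M=0$, data on $\incone$ of the form $\psi_\ell|_{\incone}=C_\ell r_0^{-p}(1+O(r_0^{-\epsilon}))$ determine $(r^2\partial_v)^k\psi_\ell|_{\incone}$ inductively through the wave equation; the conservation law, combined with no incoming radiation, then transports this information everywhere in $\Dopen$, and a subsequent $\ell$-fold $v$-integration yields a purely polynomial expansion $r\phi_\ell^{\mathrm{Mink}}=\sum_{n=0}^{p+1}f_n^{(\ell,p),\mathrm{Mink}}(r_0)(r_0/r)^n$ plus a remainder, with coefficients determined explicitly by the recursion and no logarithmic term.

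\textbf{Schwarzschild correction and origin of the $\log$.} I would next treat the $M$-dependent terms $(2M/r^3)\psi_\ell$ and $-(2M\ell(\ell+1)/r^3)\psi_\ell$ perturbatively via Duhamel iteration around the Minkowskian solution. When the Minkowskian term at order $r^{-(p+1)}$ is fed into this source, the integrand behaves like $M r_0^{p+1}/r^{p+3}$; integrating along outgoing null cones, the $v$-integral $\int_{v_0}^v M\, r_0^{p+1}\, r'^{-(p+3)}\,\dd v'$ produces, after subsequent inversion of the Minkowskian transport operators, precisely the term $M c_{\log}^{(\ell,p)}\log(r/r_0)/r^{p+1}$. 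At all lower orders $n\leq p$ the analogous integrals remain purely polynomial in $r_0/r$ and only renormalise the $f_n^{(\ell,p)}$ by $M$-dependent corrections. The combinatorial prefactor $c_{\log}^{(\ell,p)}$ is then read off from the recursion tracking the $\ell$-fold application of the conservation law, and is generically non-zero.

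\textbf{Remainder and the main obstacle.} The remainder is bounded by combining the explicit expansion above with weighted pointwise and energy estimates for the wave equation. It splits naturally into a contribution $\sim r_0^2/r^{p+2}$, arising from the next term in the Minkowskian expansion and controlled by the next conservation-law iterate, and a contribution $\sim r^{-p-1-\epsilon}$, arising from the propagation of the $O(r^{-p-\epsilon})$ subleading part of the data on $\incone$. The principal difficulty is \emph{uniformity across all three asymptotic regions}: near $\scrim$ and near $I^0$ one has $r_0\sim r$, so $r_0^2/r^{p+2}\sim r^{-p}$ and the bound must merge consistently with the leading behaviour of the explicit expansion, whereas near $\scrip$ the ratio $r_0/r\to0$ and one needs the sharper rate $r^{-p-2}$. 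Designing weighted estimates that interpolate optimally between these three regimes, together with the book-keeping of the interaction between the $\ell$-fold $\partial_v$-derivatives in the conservation law and the $M$-perturbative iteration, is the technical core of the argument.
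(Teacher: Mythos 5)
Your overall strategy---Minkowskian baseline via the conservation laws $\pu\bigl(r^{-2\ell}\pv(r^2\pv)^{\ell}\psi_\ell\bigr)=0$, then an $M$-perturbative (Duhamel) iteration---is the right skeleton and matches the spirit of the paper's sketch in Section 5.1. But there is a genuine gap in the way you set up the iteration, and it is precisely the trap the paper warns about in the introductory remark on ``Expanding around Minkowski.''

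You work throughout in the null coordinates $u,v$ and identify the perturbative source purely with the \emph{explicitly} $M$-dependent terms $(2M/r^3)\psi_\ell$, $(2M\ell(\ell+1)/r^3)\psi_\ell$ obtained by expanding the factor $D=1-2M/r$. What this misses is that on Schwarzschild the areal radius $r$ is itself an $M$-dependent function of $(u,v)$: with $r_*=v-u$ one has $r = r_* + \O\!\left(M\log r_*\right)$. If you set $M=0$ to get your Minkowskian $\psi^{(0)}$, you are implicitly replacing $r$ by $r_*$, and then the source for the first iterate contains the difference $\tfrac{\ell(\ell+1)}{r^2}-\tfrac{\ell(\ell+1)}{r_*^2}$, which is of size $\O\!\bigl(M\,r_*^{-3}\log r_*\bigr)$. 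Feeding this back through the Duhamel integral produces logarithmic terms at \emph{every} subsequent order, and nothing in your scheme forces these to resummate into the single $\log(r/r_0)/r^{p+1}$ term the theorem asserts; a priori you get an uncontrolled proliferation of $\log^k$'s. This is exactly why the paper switches to the coordinates $(r_0,r)$ (the areal radii along $\incone$ and in the bulk): in those coordinates the light-cone bending is absorbed into the differential operator, the $M=0$ iterate has no logarithms, and the first correction $\phi_{M^1}$ solves \cref{eq:pm:M} with a source that produces exactly one log at the right order.

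A secondary consequence of the same omission: even once you correct for the coordinates, your source has only two terms, whereas the correct first-order source (cf.\ \cref{eq:pm:M}) has five, including the first-derivative corrections $-\bigl(\tfrac{1}{r_0}+\tfrac{1}{r}\bigr)\partial_{r_0}\partial_{r}-\tfrac{2}{r}\partial_{r}^2+\tfrac{1}{r^{2}}\partial_{r}$ coming from expanding $D_0\partial_{r_0}+D\partial_r$. These contribute at the same order as the potential terms and cannot be dropped; without them $c_{\log}^{(\ell,p)}$ comes out wrong. The closing part of your proposal (``designing weighted estimates that interpolate optimally between the three regimes'') is indeed the technical core of the error estimate, but the structural part of the proof---getting a clean, finitely-logarithmic expansion---hinges on the coordinate choice you are not making.
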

In words, the projections of $\psi$ to fixed angular modes are polyhomogeneous plus error term. Now, the control of this error term achievable by the methods of \cite{kehrberger_case_2022-1} has an $\ell$-dependence that excludes any hope for summing this estimate.
\textit{However}, by \cref{thm:intro:main}, we know that the resummed solution is also polyhomogeneous (plus error term), and so we can, a posteriori, conclude (see \cref{thm:Schw:wave:summed} for the precise version):
\begin{thm}\label{thm:intro:IIIsummed}
    Under the assumptions of \cref{thm:intro:III}, \emph{if additionally it is assumed that $\epsilon>1$}, then \cref{eq:intro:Schwarzschild:firstexpansion} is summable and we have
      \begin{nalign}\label{eq:intro:Schwarzschild:firstexpansionsummed}
			r\phi=\left(\sum_{n=0}^{p+1}f_n^{(p)}(r_0, \theta, \varphi)\frac{r_0^n}{r^n}\right)+M c_{\log}^{(p)}(\theta,\varphi)\frac{\log (r/r_0)}{r^{p+1}}+\O\left(\frac{r_0^{2-}}{r^{p+2-}}+\frac{1}{r^{p+1+}}\right),
	\end{nalign}
 where the projections of $f_n^{(p)}(r_0, \theta, \varphi)$ and $c_{\log}^{(p)}(\theta,\varphi)$ are given by $f_n^{(\ell,p)}(r_0)$ and $c_{\log}^{(\ell,p)}$, respectively. 
\end{thm}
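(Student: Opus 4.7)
The plan is to deduce \cref{thm:intro:IIIsummed} from \cref{thm:intro:main} by applying the latter to the \emph{unprojected} scattering problem and then identifying the coefficients via uniqueness of the asymptotic expansion together with \cref{thm:intro:III}. The key observation is that the wave equation \cref{eq:intro:waveSchwarzschild} on Schwarzschild can be written as $\Box_\eta$ plus a long-range potential/metric perturbation of size $\mathcal O(M/r)$ arising from the Schwarzschildean terms in $g$; this places it inside the class of perturbations of $\Box_\eta$ to which \cref{thm:intro:main} applies. The initial data $r\phi|_{\incone}=Cr^{-p}+\O(r^{-p-\epsilon})$ together with the no-incoming-radiation condition on $\scrim$ is exactly a scattering datum of the "finite expansion plus error term" type covered by the last sentence of \cref{thm:intro:main}.

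First, I would check that the hypotheses of \cref{thm:intro:main} are met: the leading piece $Cr^{-p}$ is a (trivially) polyhomogeneous expansion and the remainder $\O(r^{-p-\epsilon})$ is in the admissible error class precisely when the decay $\epsilon>1$ is assumed — this is the single place where the hypothesis $\epsilon>1$ enters, and it is sharp because it is exactly what is needed to absorb the long-range Schwarzschildean contributions (which produce an extra power of $\log r/r$) into the error term of \cref{thm:intro:main} without degrading the expansion beyond what is claimed in \cref{eq:intro:Schwarzschild:firstexpansionsummed}. Applying \cref{thm:intro:main} then directly yields an expansion of the form
\begin{equation*}
    r\phi=\sum_{n=0}^{p+1}\tilde f_n(r_0,\theta,\varphi)\,\frac{r_0^n}{r^n}+M\tilde c_{\log}(\theta,\varphi)\,\frac{\log(r/r_0)}{r^{p+1}}+\O\!\left(\frac{r_0^{2-}}{r^{p+2-}}+\frac{1}{r^{p+1+}}\right),
\end{equation*}
for some functions $\tilde f_n(r_0,\theta,\varphi)$ and $\tilde c_{\log}(\theta,\varphi)$ produced by the algorithmic procedure of \cref{thm:intro:main}.

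It remains to identify $\tilde f_n=f_n^{(p)}$ and $\tilde c_{\log}=c_{\log}^{(p)}$. Since the wave operator $\Box_g$ and the no-incoming-radiation condition are rotationally invariant, the $\ell$-mode projection commutes with both the equation and the scattering data. Projecting the expansion above onto the $\ell$-th spherical harmonic yields an expansion of $r\phi_{\ell}$ of exactly the same shape as \cref{eq:intro:Schwarzschild:firstexpansion}, but with coefficients $(\tilde f_n)_\ell(r_0)$ and $(\tilde c_{\log})_\ell$ in place of $f_n^{(\ell,p)}$ and $c_{\log}^{(\ell,p)}$. By \cref{thm:intro:III} and the uniqueness of the polyhomogeneous expansion with the given error rate (which is a standard consequence of subtracting the expansion and using the error decay), these coefficients must coincide mode by mode. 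Defining $f_n^{(p)}$ and $c_{\log}^{(p)}$ to be the functions on $S^2$ whose $\ell$-projections are $f_n^{(\ell,p)}(r_0)$ and $c_{\log}^{(\ell,p)}$ respectively then gives $\tilde f_n=f_n^{(p)}$ and $\tilde c_{\log}=c_{\log}^{(p)}$, completing the proof.

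The main obstacle I expect is not the identification step — which is essentially formal given \cref{thm:intro:main} and \cref{thm:intro:III} — but rather verifying cleanly that the Schwarzschild wave equation with these particular data falls inside the hypotheses of \cref{thm:intro:main}, in particular matching the error class $\O(r^{-p-\epsilon})$ for $\epsilon>1$ against the precise error class admitted by that theorem, and checking that the $\log$-coefficient produced by the algorithm of \cref{thm:intro:main} is the same as the one produced by the mode-by-mode computation of \cite{kehrberger_case_2022-1}. Both points are essentially bookkeeping, but they are the steps where care is required.
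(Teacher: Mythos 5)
Your approach is the same as the paper's: apply the general persistence-of-polyhomogeneity result (\cref{thm:intro:main}, whose precise form is \cref{thm:intro:prop_polyhom}/\cref{thm:app:general}) after splitting the data into a polyhomogeneous piece plus a $\Hb^{p+\epsilon}$ error, then project onto fixed $\ell$-modes and identify the coefficients by comparing with \cref{thm:intro:III} and uniqueness of the expansion. That is exactly how \cref{thm:Schw:wave:summed} is proved via \cref{cor:Schw:phg}.

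Two points in your justification are off, though neither breaks the logic. First, the Schwarzschild wave equation written in $(u,v,\omega)$ coordinates on the $r_\ast$-compactification is treated in the paper as a \emph{short-range} perturbation of $\Box_\eta$ (see the second bullet of \cref{rem:en:examples} and the discussion at the start of \cref{sec:Sch:wave}), not a long-range one: the $M/r^3$ potential and the $(1-2M/r)$-modification of the $\Dl/r^2$ term both lose an extra power of $r$ relative to $\Box_\eta$. Second, the constraint $\epsilon>1$ has nothing to do with "absorbing long-range contributions that produce $\log r/r$". As \cref{rem:intro:error} explains, the error estimate in Step 2b translates $r^{-p-\epsilon}$ decay of the data into an error of order $r^{-(p+\epsilon)}$ towards $\scrip$; since you want to resolve the $r^{-p-1}\log r$ coefficient, you need $p+\epsilon>p+1$. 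The $\log$-term is generated by the polyhomogeneous part of the data, not by the error piece, and the short-range Schwarzschild correction merely shifts the location of the first conformal irregularity from order $p$ to $p+1$.
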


\cref{thm:intro:III,thm:intro:IIIsummed} above are just simple example statements. More generally, \textit{all  statements concerning data posed on $\incone\cup\scrim$ that were derived for fixed angular mode solutions in \cite{kehrberger_case_2022-1,kehrberger_case_2024} are, in fact, valid, for the resummed solution}, provided that the initial data assumptions are suitably strengthened. For instance, for \cref{thm:intro:IIIsummed}, it more generally suffices to specify an expansion up to order $\epsilon>1$ in order to deduce the result.
(See already \cref{rem:intro:error} for further commentary on the requirement $\epsilon>1$.)

In particular, the present work combined with \cite{kehrberger_case_2024} gives us a complete understanding of the asymptotics of linearised gravity around Schwarzschild around spacelike infinity, see already~\cref{thm:Schw:lingravity}.

\begin{rem}
 Let us emphasise two particular reasons why the summing of the $\ell$-modes in this context is difficult: Firstly, we are not trying to find some leading-order terms in certain asymptotic expansions, but we are instead trying to detect higher-order terms in such expansions, making the approach of simply subtracting the leading-order behaviour intractable. Secondly, within the physical motivation for the problem we are studying, the most relevant scenario is, just as in \cref{thm:intro:III}, that where all $\ell$-modes feature the same behaviour towards $\scrip$---it is therefore insufficient to simply prove the asymptotics for the first few $\ell$-modes and then prove an error estimate on remaining higher $\ell$-modes.
Both of these points should be compared to the complementary problem of proving asymptotics \textit{at late times}, cf.~\cite{angelopoulos_late-time_2018,gajic_late-time_2022,hintz_sharp_2022,luk_late_2024} and references therein.
\end{rem}

  \paragraph{Structure of the remainder of this introduction:}
The remainder of this section is structured as follows:
In \cref{sec:intro:motivation}, we motivate and put into historical perspective the problems studied in the series "The Case Against Smooth Null Infinity" and, in particular, explain where the difficulty of "summing the $\ell$-modes" comes from. We also give a high-level sketch of our approach towards proving \cref{thm:intro:main}.\footnote{We note that, were it just for the treatment of the linear problem discussed in \cref{sec:intro:relev}, this work would be much shorter; a significant amount of work goes into generalising all our results to nonlinear problems and proving \cref{thm:intro:main}. We therefore included in \cref{sec:intro:example} a small toy model discussion that already features the main ideas in just going from \cref{thm:intro:III} to \cref{thm:intro:IIIsummed}.}
Hoping that the reader will then have a better idea of the elements involved in the proof, we then, in \cref{sec:intro:mathcontext} give a thorough introduction to these elements along with an embedding into the relevant mathematical literature.
We will then state our main results in \cref{sec:intro:mainresults} and showcase a few applications of these results in \cref{sec:intro:applications}. We give a brief overview of further, miscellaneous results of independent interest proved in this work in \cref{sec:intro:misc}.
We conclude with an overview of the structure of the remainder of the work in \cref{sec:intro:structure}.

\subsection{Motivation, historical context and a first overview of the ideas} \label{sec:intro:motivation}
 Although the content of \cref{thm:intro:III,thm:intro:IIIsummed} may seem fairly technical, understanding and obtaining control over the asymptotic expansions of solutions to wave equations on asymptotically flat spacetimes actually has a high degree of physical relevance. 
 Specifically, in the context of gravitational radiation, understanding the first term in the expansion of the gravitational field (itself a solution to some modified wave equations, namely the Einstein equations) towards~$\scrip$ that is not regular in the conformal variable $1/r$  measures the roughness/smoothness of the associated conformal boundary, which then directly leads to a conserved quantity along $\scrip$ that is, in principle, measurable at late times, see \cite{gajic_relation_2022}.
 Moreover, making precise the notion of "asymptotically flat spacetimes" in general relativity \textit{requires} an understanding of the decay behaviour of gravitational radiation in a neighbourhood of $\scrim$, $I^0$ and $\scrip$.

The problems only briefly touched upon above possess, of course, a much deeper history going back in particular to Penrose's proposal of a smooth null infinity \cite{penrose_asymptotic_1963,Penrose65}, and we refer to the introduction of \cite{kehrberger_mathematical_2023} or \cite{kehrberger_case_2024-1} for a detailed general overview. In particular, it is argued in those references that the scattering problem, with data in the infinite past is, from a physical point of view, much more natural than the Cauchy problem, as one can more easily give physical meaning to assumptions on scattering data than on Cauchy data.

In contrast, in the present introduction, we shall shine light on the history of discussions about smooth null infinity from a purely mathematical perspective specifically relevant to the present work:
%%%%%%%%%%%%%%%%%%%%%%%%%%
\subsubsection{The peeling conjecture in even-dimensional Minkowski spacetime}\label{sec:intro:peeling}
Penrose's idea of a smooth null infinity was quite directly motivated by the so-called peeling property~\cite{SeriesVIII}.
Loosely speaking, the content of the peeling property is that solutions to linearised gravity around Minkowski, if they have no incoming radiation from $\scrim$, can be expanded in integer powers of $1/r$ towards $\scrip$, the latter property also being referred to as \textit{conformal smoothness}.
Since linearised gravity around Minkowski is essentially described by wave equations, we restrict our discussion of the peeling property to the wave equation on Minkowski. 

To put into context how surprising the claim of the peeling property is, let us first consider more generally the scale-invariant wave equation on $\mathbb R^{n+1}$
\begin{equation}\label{eq:intro:scalewave}
    \Box_{\mathbb{R}^{n+1}}\phi+\frac{c}{r^2}\phi=0 \iff \pu\pv\psi=\frac{\Dl_{S^{n-1}}+c-\frac{(n-1)(n-3)}{4}}{r^2}\psi\,\,\,\text{for}\,\,\, \psi=r^{\frac{n-1}{2}}\phi,
\end{equation}
where $c\in\mathbb R$, and where $\Dl_{S^{n-1}}$ denotes the unit-sphere Laplacian with eigenvalues $-\ell(\ell+n-2)$, $\ell\in\mathbb N$.
Consider now initial data on $\incone$ satisfying $\psi|_{\incone}\sim r^{-p}$ for some $p\in\mathbb{R}$, and further pose trivial data at $\scrim$, corresponding to no incoming radiation (cf.~\cref{fig:intro:main}). Then the solution $\psi$ will still decay like $r^{-p}$ towards $I^0$, and it will have an expansion into powers of $1/r$ towards $\scrip$ \textit{up to order $r^{-p}$}. Then, at order $p$, it will for generic $c$ either have a nonvanishing $r^{-p}$-term if $p\notin\mathbb{Z}$, or a nonvanishing $r^{-p}\log r$-term if $p\in\mathbb Z$, thus no longer being regular in $1/r$. (See \cref{prop:even} for a precise statement.) 

However, peeling claims that such conformally irregular terms do not appear in odd spatial dimensions with $c=0$. We conjecture the following:
\begin{conj}\label{conj:intro}[The peeling property. See \cref{conj:even:peeling} for a precise version.]
    If $n$ is odd, then solutions to $\Box_{\mathbb{R}^{n+1}}\phi=0$ arising from trivial data on $\scrim$ and conormal\footnote{This refers to smoothness w.r.t.\ $u\pu$ and angular derivatives.} data on $\incone$ admit $1/r$-expansions to all orders towards~$\scrip$.
\end{conj}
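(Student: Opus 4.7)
My plan is to reduce to the case $n=3$ via a spherical-harmonic decomposition, apply the Minkowskian conservation law from the abstract mode by mode to obtain a \emph{finite} $1/r$-polynomial for each $\ell$-mode, and then resum over $\ell$ using conormality. First I would decompose $\phi=\sum_{\ell\geq 0}\phi_\ell Y_\ell$ on $S^{n-1}$. For $n=2m+1$ odd, the radial equation for $\psi_\ell:=r^{(n-1)/2}\phi_\ell$ reads
\begin{equation*}
\pu\pv\psi_\ell=-\tfrac{(\ell+m)(\ell+m-1)}{4\,r^2}\,\psi_\ell,
\end{equation*}
which is precisely the 4-dimensional $\tilde\ell$-mode equation for $\tilde\ell:=\ell+m-1\in\N$. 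Hence the abstract's conservation law applies with $(\ell,r\phi_\ell)\leftrightarrow(\tilde\ell,\psi_\ell)$, giving
\begin{equation*}
\pu\bigl(r^{-2\tilde\ell}\pv(r^2\pv)^{\tilde\ell}\psi_\ell\bigr)=0.
\end{equation*}

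Trivial data on $\scrim$, interpreted in the polyhomogeneous sense (so that the full expansion of $\psi_\ell$ at $\scrim$ vanishes), forces this $u$-independent quantity to vanish identically on $\Dopen$. Consequently $(r^2\pv)^{\tilde\ell}\psi_\ell=C_{\tilde\ell}(u,\omega)$ is $v$-independent, and I would integrate backwards in $v$ from $\incone$ step by step. Using $r=(v-u)/2$ on Minkowski and $\int_{v_0}^v (r')^{-j}\dd v'=-\tfrac{2}{j-1}\bigl(r^{-j+1}-r_0^{-j+1}\bigr)$ for $j\geq 2$, each iteration picks up a constant of integration $D_k(u,\omega)$ which is pinned down algebraically by the data on $\incone$ at $v=v_0$. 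Crucially, the integrands at every step are of the form $r^{-j}$ with $j\geq 2$, so \emph{no logarithms} are produced. After $\tilde\ell$ iterations I arrive at the exact closed-form representation
\begin{equation*}
\psi_\ell(u,v,\omega)=\sum_{k=0}^{\tilde\ell}\frac{C_{\tilde\ell,k}(u,\omega)}{r^k},
\end{equation*}
i.e.\ every fixed $\ell$-mode is already a finite polynomial in $1/r$; in particular it is conformally smooth in the strongest possible sense.

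The central obstacle is rearranging $\psi=\sum_\ell\psi_\ell$ into a genuine $1/r$-asymptotic expansion to all orders. Each mode contributes a polynomial of degree $\tilde\ell=\ell+(n-3)/2$, so infinitely many modes feed into each coefficient, and one must establish both absolute convergence of $\sum_{\ell\geq 0}C_{\tilde\ell,k}(u,\omega)$ at every order $k$ and a uniform remainder bound $\O(r^{-N-1})$ for every $N$. Conormality of the data on $\incone$ -- regularity under $u\pu$ and $\sl$ -- should propagate into rapid decay in $\ell$ of the $\ell$-mode initial data via the ellipticity of $\Dl_{S^{n-1}}$ (with eigenvalues $-\ell(\ell+n-2)$). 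A careful $L^2$-Sobolev induction on $S^{n-1}$, combined with the explicit recursion produced by the $\tilde\ell$ integrations, should then transfer this decay to every $C_{\tilde\ell,k}$. The mode-wise algebra is essentially explicit once the conservation law is in place; it is this $\ell$-bookkeeping that I expect to be the genuine challenge.
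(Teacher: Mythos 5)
Your fixed-$\ell$ computation is the textbook derivation of peeling and coincides with the paper's own discussion in \cref{sec:intro:peeling}: for odd $n=2m+1$, the mode $\psi_\ell=r^{(n-1)/2}\phi_\ell$ satisfies the $3{+}1$-dimensional fixed-mode equation with effective index $\tilde\ell=\ell+m-1\in\N$, no incoming radiation forces $\pv(r^2\pv)^{\tilde\ell}\psi_\ell\equiv 0$, and iterated integration yields a degree-$\tilde\ell$ polynomial in $1/r$. The problem is the step you flag as ``the genuine challenge'': the paper leaves that resummation \emph{open}. \cref{conj:intro} is a conjecture, not a theorem; its precise version is \cref{conj:even:peeling}, and the text explicitly says that for merely conormal data (without a polyhomogeneous expansion) the statement ``is still completely open.'' Your proposal therefore stops exactly where the paper does, and the gap is not closable by routine bookkeeping.

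Concretely, the obstruction is quantified in the paper. The footnote to \cref{conj:even:peeling} records that the coefficient of $r^{-k}$ in the fixed-$\ell$ expansion grows like $\ell^{2k}$ as $\ell\to\infty$; since the $\ell$-mode polynomial has degree $\tilde\ell$, terms with $k\sim\tilde\ell$ carry coefficients of size roughly $\ell^{2\tilde\ell}$ --- the introduction phrases this as ``error terms of order $\ell^\ell$.'' Conormality in the angular variables yields only rapid \emph{polynomial} decay of the data's $\ell$-coefficients, which cannot absorb super-exponential growth, so the uniform remainder bound $\O(r^{-N-1})$ you need does not follow. \cref{lemma:finiteregularitypeeling} exhibits the failure at finite angular regularity (the resummed coefficients $\psi_{\ell,k}$ fall out of $\ell^2$ for $k$ large), and \cref{sec:sharp} even flags as open the ostensibly simpler question of whether the radiation field $\lim_{v\to\infty}\psi$ is finite when $a_-<0$. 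The paper states outright that ``the conservation laws, by themselves, are not suited for proving statements about the conformal smoothness of a solution even for data with infinite regularity, since they always require an $\ell$-dependent number of commutations.'' What the paper does prove (\cref{lemma:app:minkowski_index_improvement}, \cref{cor:app:minkowski_with_error}) is the conjecture for data that are additionally \emph{polyhomogeneous}, and by a genuinely different method: commuting the un-decomposed solution with $\Psi_n=\prod_{m=0}^{n-1}(r\pv+m)$, deriving the commuted wave equation \cref{eq:app:Psin_eq}, and feeding it into the weighted energy and ODE machinery; the number of commutations is $n$, not $\ell$-dependent, but the $r$-decay gain at each step is precisely what polyhomogeneity supplies and conormality does not.
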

Note, in particular, that, according to the conjecture, even if $p$ as above is negative, $\psi$ will not go like $r^{-p}$ towards~$\scrip$ but like $r^0$.
More generally, the conformal regularity towards $\scrip$ is conjecturally independent of the data along $\incone$! 

The condition of no incoming radiation can also be replaced with compactly supported radiation. Indeed, for trivial data along $\incone$ and smooth compactly supported data along $\scrim$ (i.e.~if the limit  $\pv\psi^{\scrim}$ is compactly supported), the following holds: If $n$ is odd, the corresponding solutions to $\Box_{\mathbb{R}^{n+1}}\phi=0$ are conformally smooth towards $\scrip$ (\cref{cor:app:improvedthmgeneral2}), whereas, if $n$ is even, then the rescaled $\psi$ generically blows up logarithmically towards $\scrip$ (\cref{cor:even}). If, instead, the limit of $\psi$ towards $\scrim$ is compactly supported, then a $\log$-term will appear at order $r^{-1}\log r$ towards~$\scrip$.

 Historically, peeling has never been thought of as in the conjectured form above; instead, it has been understood as a simple fact for projections $\phi_\ell$ to fixed spherical harmonics:
Indeed, in that case, if $n$ is odd, then the exact conservation laws (manifesting the strong Huygens' principle)
\begin{equation}\label{eq:intro:conslaw}
\Box_{\mathbb{R}^{n+1}}\phi=0\qquad\implies \qquad\pu(r^{-2\ell-n+1}(r^2\pv)^{\ell+\frac{n-1}{2}}\psi_{\ell})=0
\end{equation}
immediately imply the conjecture, since the vector field $r^2\pv$ precisely measures smoothness in the variable $1/r$, and the no incoming radiation condition combined with \cref{eq:intro:conslaw} ensures that $\pv(r^2\pv)^{\ell+\frac{n-3}{2}}\psi_{\ell}\equiv0$ everywhere, regardless of the data along $\incone$.\footnote{An alternative proof relies on the following representation formula, here stated in $3+1$ dimensions:
\begin{equation}\label{eq:intro:explicit_sol}
\psi_{\ell}=r^{\ell+1}\pu^{\ell}\left(\frac{f_{\mathrm{out}}(u)}{r^{\ell+1}}\right)+r^{\ell+1}\pv^{\ell}\left(\frac{f_{\mathrm{in}}(v)}{r^{\ell+1}}\right).
\end{equation}}

But what happens in the case where the data are supported on infinitely many spherical harmonics?
On the one hand, using the conservation laws \cref{eq:intro:conslaw}, we can easily construct counter-examples where initial data with, say, \textit{finite} angular regularity lead to solutions that are \textit{not} conformally smooth towards $\scrip$, simply because the relevant coefficients in the expansion can be designed to grow in $\ell$.
At the same time, the conservation laws, by themselves, are not suited for proving statements about the conformal smoothness of a solution even for data with infinite regularity, since they always require an $\ell$-dependent number of commutations, loosely speaking generating error terms of order~$\ell^\ell$.
For an approach to proving the conjecture in the class of solutions arising from conformally smooth data (where smoothness along $\incone$ is measured with respect to $r^2\pu$ and angular derivatives), see \cite{taujanskas_controlled_2023}. 
In the present work, we prove that conformal smoothness also holds in class of \textit{conormal} data that also admit a \textit{polyhomogeneous} expansion (\cref{lemma:app:minkowski_index_improvement}). 
Whether the conjecture holds for conormal data without expansions is still completely open,  cf.~\cref{conj:even:peeling} in \cref{sec:sharp} for further discussion.

\begin{rem}[Initial data assumptions: Conormality vs.~Conformal Smoothness]\label{rem:intro:conovsconf}
    In the context of our semi-global scattering problem, the data posed along~$\incone$ are thought to contain knowledge about some primordial history of the spacetime around past timelike infinity.
     The generalisation of results to data that are not conformally regular is necessary, since we expect that initial data along $\incone$ induced by $N$ infalling masses from the infinite past are not conformally smooth, cf.~\cite{WalkerWill79, Damour86,kehrberger_case_2024-1, kehrberger_case_2024}.
     For instance, in the context of $N$ infalling masses from the infinite past following approximately hyperbolic or parabolic orbits, the natural assumption to make is $\psi^{\incone}=C_{\mathrm{hyp}}+\tilde{C}_{\mathrm{hyp}}\frac{\log r_0}{r_0}+\dots$ or {$\psi^{\incone}={C}_{\mathrm{par}}r^{-2/3}+\dots$}, respectively, where $C_{\mathrm{hyp}}$ and $\tilde{C}_{\mathrm{hyp}}$ are supported on all angular modes, and where ${C}_{\mathrm{par}}$ is only supported on the lowest modes (which in the gravitational context correspond to $\ell=2$).\footnote{As an aside, the initial data assumed in the monumental \cite{christodoulou_global_1993} are suitable for describing parabolic, but not hyperbolic orbits. On the other hand, we note that the inclusion of non-integer rates of decay as required for parabolic orbits is not covered by the recent work on late-time asymptotics \cite{luk_late_2024}, though this inclusion would be of merely technical nature.}
     
     Notice that these decay rates are leading-order decay rates. While neither polyhomogeneity (expansions to arbitrarily high orders) nor conormality are explicitly predicted for the data along $\incone$ in the literature, we view it as a natural assumption based on its robust appearance for problems on asymptotically  flat spacetimes, cf.~\cite{hintz_stability_2020,hintz_microlocal_2023-1,hintz_linear_2023}. See also  \cite{kadar_construction_2024} as well as Open Problems~1 and 2 in \cite{kehrberger_mathematical_2023}.

\end{rem}

\subsubsection{The failure of peeling in the Schwarzschild spacetime}
Now, \textit{already at the level of fixed $\ell$-modes}, as proved by \cref{thm:intro:III}, the peeling property fails for slight perturbations of the geometry such as those induced by the Schwarzschild spacetime. 
Notice that the conformal irregularity produced by the Schwarzschild geometry appears one order later compared to the generic behaviour of \cref{eq:intro:scalewave}, namely at order~$r^{-p-1}$.
Indeed, from a suitable point of view, we can understand \cref{thm:intro:III} perturbatively around the Minkowski spacetime; in particular, the general framework of using the conservation laws \cref{eq:intro:conslaw} to compute asymptotic expansions can still be applied in Schwarzschild in some modified form.
In other words, restricting to fixed $\ell$-modes, the method for proving conformal smoothness in Minkowski can also be used to prove and compute the conformal irregularity in Schwarzschild or, in fact, more general perturbations.
At the same time, just as in Minkowski, the method does not allow for summing the fixed angular mode estimates: 
This is precisely where the present work comes in.

 \subsubsection{A high-level sketch of the proof of \texorpdfstring{\cref{thm:intro:main}}{the main theorem}}\label{sec:intro:sketch}
 We now list the main ingredients of the proof of \cref{thm:intro:main} and explain how to resolve the summing of the $\ell$-modes question raised above.
Solely for notational simplicity, let us work in $3+1$ dimensions. We will schematically write 
\begin{equation}\label{eq:intro:wave:P}
    \Box_\eta\phi=\mathcal{P}[\phi],\qquad \psi=r\phi
\end{equation}
where $\Box_\eta$ denotes the Minkowskian wave operator, and $\mathcal{P}[\phi]$ stands for various (possibly quasilinear) short-range perturbations that will be specified in detail later (see \cref{eq:intro:wavegeneral}).

\textbf{\textit{Step 1):} Scattering theory}\hypertarget{step1}{}

First, we require a semiglobal\footnote{We use the word semiglobal to distinguish from the global problem, which also includes the timelike infinities.} scattering theory for \cref{eq:intro:wave:P} with scattering data posed on $\incone$ and $\scrim$ as in \cref{fig:intro:main}, i.e.~well-posedness for solutions arising from such data.
This consists of proving weighted energy estimates (similar to those of~\cite{hintz_microlocal_2023-1}) for the inhomogeneous linear problem, closing nonlinear estimates, and, subsequently, performing standard limiting arguments. 
We note that, in the case of  \cref{eq:intro:waveSchwarzschild},  a standard $T$-energy estimate suffices to derive a (global) scattering theory  (see, for instance, \cite{nicolas_conformal_2016,dafermos_scattering_2018}). See \cref{intro:sec:scattering_theory} for further discussion.

\textbf{\textit{Step 2):} Persistence of polyhomogeneity and estimates on the error term}\hypertarget{step2}{}
 
 Having obtained a solution to the scattering problem, we now pose slightly more precise assumptions on our initial data, namely that the initial data $\psi^{\incone}$ are given by a polyhomogeneous expansion up to some error term lying in a weighted $L^2$ space. Solely to alleviate notation, we shall for now assume that there is no incoming radiation, but the ideas generalise.
 We write
\begin{equation}
\psi^{\incone}=\psi^{\incone}_{\mathrm{phg}}+\psi^{\incone}_{\Delta}, \text{ where } \psi^{\incone}_{\mathrm{phg}}\in \A{phg}^{\mathcal{E}}(\incone) \text{ and } \psi^{\incone}_{\Delta}\in \Hb^{p+\epsilon}({\incone}).
\end{equation} 
 The notation $\psi^{\incone}_{\mathrm{phg}}\in \A{phg}^{\mathcal{E}}(\incone) $ means that $\psi^{\incone}_{\mathrm{phg}}$ admits an expansion in terms of $r^{-q}\log^n r$ along $\incone$, where the set of $(q,n)$ appearing in this expansion is given by $\E\subset \R\times\N$.
On the other hand, $\Hb^{p+\epsilon}(\incone)$ denotes an $L^2$-based Sobolev space (where regularity is measured with respect to $\sl, u\pu$) that captures the decay $\psi^{\incone}_{\Delta}\lesssim r^{-p-\epsilon}$ in an $L^2$-sense.
Note that this corresponds to the assumptions in \cref{thm:intro:III} if $(p,0)=\min(\E)$. We now prove the following two statements:

\textbf{\textit{Step 2a):} Persistence of polyhomogeneity}\hypertarget{step2a}{}
Firstly, we prove that polyhomogeneous initial data lead to polyhomogeneous solutions; i.e.~if the data have an expansion along $\incone$, then the solution has an expansion towards past null {infinity}, spacelike {infinity}, and future null infinity. This step complements the results by \cite{hintz_stability_2020}, where polyhomogeneity was propagated from $I^0$ to $\scrip$.

\textbf{\textit{Step 2b):} Estimates on the error term}\hypertarget{step2b}{}
Secondly, we prove that if the initial data are just given by $\psi^{\incone}_{\Delta}$, then the solution admits an expansion towards either of the three boundaries up until some error term whose order is determined by $p+\epsilon$.

For nonlinear $\mathcal{P}[\phi]$, the polyhomogeneous solution $\psi_{\phg}$ acquired in \hyperlink{step2a}{Step 2a)} will enter as an inhomogeneity in \hyperlink{step2b}{Step~2b)}. See \cref{sec:intro:phg} for further discussion.

\textbf{\textit{Step 3):} Computation of the coefficients in the expansion}\hypertarget{step3}{} 

Knowing that the entire solution admits an expansion up until an error term at some fixed order, where all of this is understood in an $L^2$ sense, we can finally compute all the coefficients in these expansions by projecting onto fixed angular modes.
In the case of \cref{eq:intro:waveSchwarzschild}, we can thus simply cite the results of \cref{thm:intro:III} to obtain \cref{thm:intro:IIIsummed}. In the exact same way, we can also prove \cref{conj:intro} in the case of polyhomogeneous data without error.
Notice that the summing of the $\ell$-modes here comes up as an after-thought! We first construct general expansions in \hyperlink{step2}{Step 2)} without knowing the coefficients, and then compute the coefficients by projecting onto $\ell$-modes.

For more general short-range perturbations $\mathcal{P}[\phi]$, we can instead compute the coefficients in the polyhomogeneous expansions by iteratively using the conservation laws \cref{eq:intro:conslaw}: 
Roughly speaking, we can split up $\phi=\phi^{(0)}+\phi^{(1)}+\phi^{(2)}$, where $\phi^{(0)}$ comes from initial data $r^{-1}\psi^{\incone}$ and solves $\Box_\eta\phi^{(0)}=0$. The asymptotic expansion for $\phi^{(0)}$ is readily computed via \eqref{eq:intro:conslaw}.
Next, we insert the solution $\phi^{(0)}$ back into the equation and let $\phi^{(1)}$ solve
$\Box_\eta \phi=\mathcal{P}[\phi^{(0)}]$ with trivial data, an inhomogeneous Minkowskian wave equation. 
The asymptotic expansion for $\phi^{(1)}$ is then computed via the inhomogeneous version of \eqref{eq:intro:conslaw}, namely:
\begin{equation}\label{eq:intro:conslawf}
\Box_\eta\phi=f\implies \pu(r^{-2\ell}\pv(r^2\pv)^\ell\psi_\ell)=r^{-2\ell-2}(r^2\pv)^{\ell}(r^3f_{\ell}).
\end{equation}
One finally proves an error estimate on the difference $\phi^{(2)}$. Depending on the order of the desired asymptotics, one of course needs to iterate the procedure further. This proves \cref{thm:intro:main}.

\begin{rem}[Expanding around Minkowski]
   In practice, it is important to have a good understanding of the equation in question and to make a good choice for the expansion of $\phi$ "around the Minkowskian solution" in order to make the calculations accessible. See already \cref{sec:app,sec:Sch} for examples. 
For instance, in Schwarzschild, writing the equation \cref{eq:intro:waveSchwarzschild} in $u$, $v$ coordinates as
\begin{equation}\label{eq:intro:Schw_uv_coordinates}
\pu\pv\psi=\frac{(1-2M/r)\Dl\psi}{r^2}-\frac{(1-2M/r)2M\psi}{r^3}
\end{equation}
and then expanding around $M=0$ would result in very messy computations in the iterative procedure above due to the logarithmic terms in $r= v-u+\O(M\log |v-u|)$.
More precisely, letting $\psi^{(0)}$ solve $\pu\pv\psi^{(0)}=\frac{\Dl\psi^{(0)}}{(v-u)^2}$, then the next iterate would need to solve $\pu\pv\psi^{(1)}=O(\frac{\log (v-u)}{(v-u)^3})\Dl\psi^{(0)}+\dots$, with spurious logarithmic terms appearing.

A more appropriate choice of coordinates is given by $r_0=r(u,v_0),\,r=r(u,v)$, which casts the equation into the form
	\begin{nalign}\label{eq:intro:Schw:waveequationinr0r}
			-(D_0 \partial_{r_0}+D \partial_r)(D\partial_r\psi)=\frac{D\Dl\psi}{r^2}-\frac{2MD\psi}{r^3},\qquad D_0:=1-\frac{2M}{r_0},\qquad D=1-\frac{2M}{r}.
		\end{nalign}
        Indeed, if we set $M=0$ and define the first iterate as the solution to $-(\partial_{r_0}+\partial_r)\partial_r(\psi^{(0)})=\frac{\Dl\psi}{r^2}$, then we avoid these logarithmic terms in further expansions of the equations. See already the proof around \cref{eq:Schw:waveequationinr0r} in
\cref{sec:Sch} for further details.
\end{rem}
\begin{rem}[Linear long-range potentials]
We have already mentioned that we can also include linear long-range potentials of, say, the form  $\mathcal{P}_L[\phi]=c\phi/r^2+\mathcal{P}[\phi]$, with $\mathcal{P}$ denoting a short-range perturbation.
Indeed, in this case, \hyperlink{step1}{Step~1} and \hyperlink{step2}{Step~2} work as before. On the other hand, in \hyperlink{step3}{Step~3}, we cannot expand around the solution to $\Box_\eta\phi=0$, but we have to expand around $\Box_\eta\phi=\frac{c\phi}{r^2}$ instead. In particular, we can no longer use the conservation laws \cref{eq:intro:conslaw}. See already \cref{sec:even:ode} how to proceed in that case.
    \end{rem}
 
 \begin{rem}[Estimate on the error-term]\label{rem:intro:error} In \cref{thm:intro:III}, we require $\epsilon>0$, whereas, in \cref{thm:intro:IIIsummed}, we require $\epsilon>1$. 
 The reason for this loss in going from $\ell$-modes to the resummed solution is directly related to our inability to prove \cref{conj:intro}. The best estimate we are able to prove in \hyperlink{step2b}{Step 2b)} translates $p+\epsilon$-decay towards $\scrim$ into an expansion up to order $p+\epsilon$ towards $\scrip$, and since we want to resolve the expansion up to order $p+1$, we must take $\epsilon>1$.
 We emphasise that, in odd spacetime dimensions, where peeling does not hold even for fixed $\ell$-modes, the error estimate from \hyperlink{step2b}{Step 2b)}  is \emph{sharp}.
 See, again, \cref{sec:sharp} for more on this matter.
 \end{rem}

\subsection{Mathematical background and description of the framework}
\label{sec:intro:mathcontext}
We now give an introduction to the mathematical framework employed in this work and embed it into the literature. We first discuss general aspects of the scattering problem and sketch the origin of our main weighted energy estimates (\hyperlink{step1}{Step 1)} and \hyperlink{step3}{Step 2b)}), and then discuss polyhomogeneous expansions (\hyperlink{step2a}{Step 2a)}).

\subsubsection{Scattering constructions, conormality and energy estimates for perturbations of \texorpdfstring{$\Box_\eta \phi=f$}{Box phi =f}}\label{intro:sec:scattering_theory} 

The scattering map is the map from a state of a \emph{free} theory coming in from the infinite past, undergoing various interactions, and converging to another free theory at the infinite future.
We refer to \cite{reed_iii_1979} for further physical motivation and to the introductions of \cite{nicolas_conformal_2016, dafermos_scattering_2018,masaood_scattering_2022} for an overview of the history of the problem in the context of wave equations on black hole backgrounds. 
While the first scattering result in the context of wave equations of black hole spacetimes \cite{dimock_classical_1987} still made use of the Lax--Philips scattering framework \cite{lax_scattering_1964}, the more modern approach is based on Friedlander's framework \cite{friedlander_radiation_1980} of geometric scattering, where the scattering data (or the free theory) live at past or future null infinity, respectively. 

In this work, we are less interested in the properties of the scattering map, but rather of scattering solutions, i.e.~solutions arising from data at $\incone\cup\scrim$. 
There are various approaches to constructing scattering solutions. For instance, one can construct them via a limiting argument as done in \cite{christodoulou_formation_2009,dafermos_scattering_2013,dafermos_scattering_2018,masaood_scattering_2022}, or by taking a conformal perspective, see e.g.~\cite{nicolas_conformal_2016,taujanskas_controlled_2023,minucci_maxwell-scalar_2022}. 
As we mentioned in \cref{rem:intro:conovsconf}, the second approach requires a degree of conformal regularity too high for many physically relevant scenarios, see \cite{WalkerWill79, Damour86, christodoulou_global_2002} as well as \cite{kehrberger_case_2024,kehrberger_case_2024-1}.
We shall therefore follow the former approach, i.e., we shall prove uniform energy estimates for a sequence of solutions and show that this sequence obtains a limit that we may identitfy as the scattering solution. 
 
We emphasise that the present work (just like \cite{kehrberger_case_2024,kehrberger_case_2024-1,taujanskas_controlled_2023})  does not treat the global, but the \textit{semiglobal} scattering problem, where only part of the data are posed at $\scrim$ and the rest of the data are posed along an ingoing cone $\incone$, see \cref{fig:intro:main}.\footnote{Of course, prescribing trivial data along the incoming cone  corresponds to a solution that extends all the way to past timelike infinity as well.}

\paragraph{The behaviour of the free theory $\Box_\eta\phi=f$}
Since we want to construct scattering theories for a class of equations that are perturbations of the Minkowskian equation, we first discuss solutions to $\Box_\eta\phi=f$ along with what we take to be their crucial properties.
In what follows, we will use notation suited for $3+1$ dimensions, but everything applies to $n+1$ dimensions, with $n\geq3$, if $r\phi=\psi$ and $rf$ are replaced by $r^{(n-1)/2}\phi=\psi$ and $r^{(n-1)/2}f$, respectively.
Focusing our attention entirely on the region $\Dopen$ (cf.~\cref{fig:intro:main}), the first observation (going back to \cite{friedlander_radiation_1980}) is that, for $f=0$, finite $T$-energy solutions to the Cauchy problem attain a future radiation field, i.e.~$\pu(r\phi)$ converges to a limit towards $\scrip$; similarly, $\pv(r\phi)$ attains a limit towards $\scrim$.
Commuting with the symmetries of $\Box_\eta$ then shows that a suitable set of operators with respect to which to measure smoothness is
\begin{equation}\label{eq:intro:Vb}
\Vb=\{u\pu,v\pv,\sl,1\},
\end{equation}
where $\sl$ refers to the covariant derivative on the unit sphere. We shall refer to smoothness and boundedness w.r.t.~$\Vb$-commutations as \textit{conormality}.\footnote{We contrast conormality to \textit{conformal} regularity as in \cite{friedrich_existence_1986} or regularity with respect to $\partial_t$ and $\partial_x$ as in \cite{anderson_stability_2023}.}
Indeed, $\Box_\eta \phi=0$ has the property that if the data for $\psi=r\phi$ are conormal with respect to these vector fields, then so is the solution, at least below top order in derivatives.
The perturbations around $\Box_\eta\phi=0$ allowed in the present work are such that they respect this conormal structure.

\paragraph{Notation} For a more concrete discussion, we now introduce the coordinate functions
    \begin{equation}
        \rho_-=\frac{v}{r},\quad \rho_0=\frac{r}{-uv},\quad\rho_+=\frac{-u}{r}.
    \end{equation}
These coordinates characterise the idealised boundaries past null infinity $\scrim$ ($\rho_-\to0$), spacelike infinity $I^0$ ($\rho_0\to0$) and future null infinity $\scrip$ ($\rho_+\to0$), respectively. As we will see below, it is important to distinguish between the behaviours towards all three of these boundaries. It is therefore helpful to introduce $\D$ to be the compactification of~$\Dopen$ with these three boundaries, see~\cref{fig:intro:a} and \cref{def:notation:comp}.\footnote{The compactification and boundaries merely introduce appropriate coordinates for the analysis near infinity, and, while one never needs to make use of the boundaries ($\scrim$, $I^0$, $\scrip$), or equivalently, the compactification, the notation turns out to be very useful in building intuition, facilitating the bookkeeping and for diagrammatic depictions.}
Note that, in the compactified picture, conormality simply corresponds to regularity, all the way up to the boundaries, with respect to those smooth vector fields on $\D$ that have no normal components along the boundaries, namely with respect to angular vector fields and $\rho_{\bullet}\partial_{\rho_{\bullet}}$ for $\bullet\in\{-,0,+\}$. These are precisely the $\Vb$ vector fields from \cref{eq:intro:Vb}.

On $\D$, we work with the $L^2$-based Sobolev spaces (cf.~\cref{def:notation:Hb_spaces})
\begin{equation}
\Hb^{a_-,a_0,a_+; k}(\D)=\rho_-^{a_-}\rho_0^{a_0}\rho_+^{a_+}\Hb^{;k}(\D)=\{f\in L^2_{\mathrm{loc}}(\Dopen):\rho_-^{-a_-+3/2}\rho_0^{-a_++2}\rho_+^{-a_++3/2}\Vb^k f\in L^2(\Dopen) \}.
\end{equation}
Via the Sobolev embedding
\begin{equation}
    \Hb^{a_-,a_0,a_+;\infty}(\D)\subset \rho_-^{a_-}\rho_0^{a_0}\rho_+^{a_+}C^{\infty}(\D)\subset\Hb^{a_--\epsilon,a_0-\epsilon,a_+-\epsilon;\infty}(\D)
\end{equation}
for any $\epsilon>0$, the membership $f\in\Hb^{a_-,a_0,a_+; k}(\D)$ roughly means that $f$ is bounded in an $L^2$ sense by $\rho_-^{a_-}\rho_0^{a_0}\rho_+^{a_+}$ (cf.~\cref{rem:not:Hb_Linfty}), and that the same is true for up to $k$ vector fields from $\Vb$ acting on $f$. In other words, $f$ decays like $r^{-a_-}$ towards $\scrim$, like $r^{-a_0}$ towards $I^0$ and like $r^{-a_+}$ towards $\scrip$.
Often, we will work in the smooth category ($k=\infty$) and drop the $;\infty$ superscript.

\paragraph{Optimal decay rates for $\Box_\eta\phi=f$ and admissible weights} Observe the following mapping properties for the free wave operator (cf.~\cref{eq:intro:box_near_corner}):
\begin{equation}\label{eq:i:box_weights}
    \Box_\eta:\Hb^{a_-,a_0,a_+;k}(\D)\to  \Hb^{a_-+1,a_0+2,a_++1;k-2}(\D).
\end{equation}
The $(1,2,1)$-weight corresponds exactly to the factor $|u|^{-1}v^{-1}=\rho_-\rho_0^2\rho_+$, which in turn comes from the $\pu\pv$-part of the principle symbol of $\Box_\eta$. (Note already that the $\Dl/r^2$-term, on the other hand, comes with a $(2,2,2)$-weight.)
Similarly to \cite{hintz_microlocal_2023-1},\footnote{See already \cref{sec:en:previous} for comparison between the present work and \cite{hintz_microlocal_2023-1}.} the weighted energy estimate we will prove for $\Box_\eta^{-1}$ roughly gain the same decay rates and one order of regularity up to some subtleties. 
More concretely, we will show, using the vector field multipliers listed in \cref{eq:current:past_currents} and \cref{eq:current:future_current}, that for scattering data $\psi^{\incone}\in \Hb^{a_-}(\incone)$, $\pv\psi^{\scrim}=0$, and for inhomogeneity $rf\in\Hb^{a^f_-,a^f_0,a^f_+}(\D)$, solutions to $\Box_\eta\phi=f$ satisfy 
\begin{equation}\label{eq:intro:admissiblebelow}
r\phi=\psi\in\Hb^{a_-,a_0,a_+}(\D),\end{equation} where $\vec{a}$ ($=(a_-,a_0,a_+)$) and $\vec{a}^f$ are related to each other by the \textit{admissibility criteria} defined in~\cref{def:en:admissible:f} as well as \cref{def:scat:extended_short_range}:
Ignoring the gain in regularity, we call $\vec{a}$ admissible if $a_+<\min(0,a_0)\leq a_0<a_-\geq -1/2$, and we call $\vec{a}^f$ admissible w.r.t~$\vec{a}$ if $\vec{a}+(1,2,1)\leq \vec{a}^f$ (in accordance with \cref{eq:i:box_weights}),  and if $a_-^f>1$.
The condition $a_-\geq -1/2$ is simply the condition that the data along $\incone$ have finite $T$-energy; the condition $a_+<a_0<a_-$ captures the fact that we can never prove stronger decay than what we started with, and  the condition  $a_+<0$ comes from the existence of a radiation field towards future null infinity. Finally, the condition  $a_-^f>1$ is  related to the requirement that $\pv\psi$  attain a finite limit towards $\scrim$.
See \cref{sec:en:en} and \cref{prop:en:main} for further discussion and a precise statement in the context of the finite problem, along with the improvement \cref{cor:scat:enlarged_admissible_set}.

We finally note that the assumption of trivial data at $\scrim$ does not lose generality: Given non-trivial data at~$\scrim$, we can always subtract these from the solution to obtain a new solution with different inhomogeneity and trivial data at~$\scrim$.
\begin{figure}[htpb]
\centering
\begin{subfigure}{0.45\textwidth}
\centering
    \includegraphics[width=96pt]{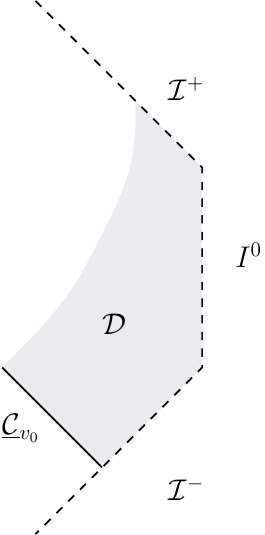}
    \caption{Compactification of $\Dopen$ w.r.t.~$\rho_-$, $\rho_0$, $\rho_+$.}
    \label{fig:intro:a}
\end{subfigure}
\hspace{0.05\textwidth}
\begin{subfigure}{0.45\textwidth}
\centering
 \includegraphics[width=72pt]{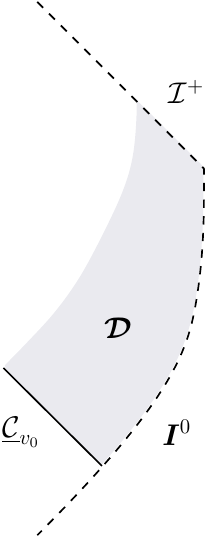}
    \caption{Compactification of $\Dopen$ w.r.t.~$\rho_-\rho_0:=\boldsymbol{\rho}_0$, $\rho_+$.}
    \label{fig:intro:b}
\end{subfigure}
\caption{The two different compactifications $\D$ and $\Dbold$. The latter compactification is specifically tailored to capturing the no incoming radiation condition. The boundary $\boldsymbol{I}^0$ corresponds to limit points $\boldsymbol{\rho}_0\to0$ (which includes both $\scrim$ and $I^0$ in the $\D$-picture). 
Solutions with (without) incoming radiation are regular w.r.t.~smooth vector fields that are tangent to the boundaries of $\D$ ($\Dbold$).}\label{fig:intro:Dbold}
\end{figure}
\paragraph{Geometrically capturing the no incoming radiation condition}
As already mentioned above, solutions to $\Box_\eta\phi=0$ are smooth w.r.t.~commutations with $\Vb$. In particular, a $\pv$-derivative only improves decay towards $I^0$ or $\scrip$, but not towards $\scrim$---it's a "bad" derivative towards $\scrim$.
However, for solutions arising from no incoming radiation, the $\pv$-derivative actually becomes a good derivative in the sense that such solutions remain smooth with respect to not just $v\pv$, but, in fact, with respect to $r\pv$ (although at a loss of regularity at top order).
The easiest way to see this is by simply integrating the equation $\pu\pv\psi=\Dl\psi/r^2$ from $\scrim$, and then inductively commuting with $r\pv$.

This simple equivalent characterisation of no incoming radiation was noticed already in \cite[Cor.~6.3]{baskin_asymptotics_2015}.
In our context of semi-global scattering constructions, we use it to streamline various proofs throughout our work that are more involved in the case of nontrivial incoming radiation,\footnote{Notice that this characterisation also places the same condition on $f$ when considering $\Box_\eta\phi=f$; in particular, the trick of subtracting the incoming radiation mentioned above will produce an inhomogeneity that is not smooth with respect to $r\pv$.} and we also use it in an essential way to prove various generalised scattering results, see already \cref{sec:intro:scat:nic}. 
We also mention that smoothness with respect to commutations with
\begin{equation}
\Vbt=\{u\pu,r\pv,\sl,1\}
\end{equation}
geometrically corresponds to being smooth on the compactification $\Dbold$ defined w.r.t.~the boundary defining functions  $\boldsymbol{\rho}_0=\rho_-\rho_0$ and $\rho_+$, see also \cref{fig:intro:b}.

\paragraph{Short-range perturbations of $\Box_\eta\phi=f$}
Having understood the Minkowskian $\Box_\eta\phi=f$, we next discuss the treatment of \textit{short-range perturbations} of the form
 \begin{equation}\label{eq:intro:wavegeneral}
     \Box_\eta \phi=P_g[\phi]+\mathcal{N}[\phi]+V[\phi]+f=\mathcal{P}[\phi]+f,
 \end{equation}
 where $P_g$, $\mathcal{N}$, $V$ denote quasilinear, semilinear and potential perturbations, respectively, and where $f$ denotes an inhomogeneity. 
 The operators $P_g$, $\mathcal{N}$ and $V$ are all such that for $k$ sufficiently large, their mapping property on $\Hb^{\vec{a};k}(\D)$ spaces gains an $\epsilon>0$ weight compared to the $(1,2,1)$-weight of $\Box_{\eta}$ as in \cref{eq:i:box_weights}.
 For the precise definition of allowed perturbations, see \cref{def:en:short_range} as well as \cref{def:scat:extended_short_range}. 
These definitions, in particular,  depend on the decay of the initial data.

We give an example to illustrate our definition of short-range perturbations: Consider the equation $\Box_\eta\phi=\phi^3$.
Assuming that $\psi=r\phi\in \Hb^{\vec{a}}(\D)$ for $\vec{a}$ admissible, then, treating $\phi^3$ as an inhomogeneity, the associated weight can be read off from  $r\phi^3=r^{-2}\psi^3\in \Hb^{3\vec{a}+\vec{2}}(\D)$.
The requirements that this be admissible (+$\epsilon$) with respect to $\vec{a}$, i.e.~$3\vec{a}+\vec{2}>\vec{a}+(1,2,1)$,  then translates into the requirements $a_->-1/2$, $a_0>0$ and $a_+>-1/2$, respectively.
Since the decay rates must satisfy $a_+<a_0<a_-$, we in fact require $a_->a_0$, so we can only solve this problem if $\psi^{\incone}$ decays like $r^{-a_-}$ for some $a_->0$.\footnote{It turns out that for nontrivial incoming radiation, we necessarily must have $a_-<0$. However, there are cases with special cancellations such that one has $a_0>a_-$ and is still able to treat the $\phi^3$-nonlinearity, see \cref{rem:poly:improved_decay_at_I0}.}

With this definition of short-range perturbations, we can extend our estimates for $\Box_\eta\phi=f$ to apply to \cref{eq:intro:wavegeneral} as well. See already \cref{sec:en:perturbations}.

For quasilinear perturbations, we of course need to be a bit more careful, as we have to put the quasilinear terms in \cref{eq:intro:wavegeneral} on the LHS in order to close energy estimates. This is achieved by essentially using the same multipliers but for the modified equations, see already \cref{sec:current:linearmetric}.

There is another subtlety related to quasilinear perturbations: To construct scattering solutions arising from data $\psi^{\incone}$ along $\incone$ and, say, trivial data along $\scrim$, one typically constructs a sequence of solutions arising from a sequence of compactly supported data $\chi_n \psi^{\incone}$ along $\incone$, where $\chi_n$ denotes a cutoff supported on $u\geq -n$.  
However, for quasilinear problems, the cutoff functions will generally deform the null cones. 
In order to separate this technical issue, we therefore treat general short-range quasilinear perturbations in two steps:
In the main body of this work, we restrict to quasilinear perturbations that leave in tact the Minkowskian null cones. For such perturbations, there is no difficulty with the limiting argument. 
Then, in \cref{sec:current:general_quasilinear_scattering}, we construct scattering solutions for \textit{general} quasilinear perturbations by performing Nash--Moser iteration at the level of scattering solutions for \textit{linearised} quasilinear equations.% at the level of linearised quasilinear problems.

\paragraph{Linear long-range potentials}
In addition to treating short-range nonlinear perturbations, we also treat linear \textit{long-range} potentials in this work: More precisely, we allow for $V$ in \cref{eq:intro:wavegeneral} to contain linear first-order terms that have weight $\rho_0^2$ towards~$I^0$. This includes, in particular, $1/r^2$ potentials or the first order terms appearing in the Teukolsky equations \cite{dafermos_linear_2019}. See already \cref{def:en:long_range}.
In contrast to short-range perturbations, we cannot treat these perturbatively; instead, we directly show in \cref{sec:en:longrange} that our core energy estimates still apply.

\paragraph{Comparison to related recent works and scattering in more general settings}
We conclude by commenting on two recent scattering results and comparing them to our work:

In \cite{yu_nontrivial_2022}, the author constructs scattering solutions to the equation $\Box_{\eta}\phi=(\partial_t\phi)^2$ in a neighbourhood of $\scrip$ including timelike and spacelike infinity. Now, $(\partial_t\phi)^2$ is not a short-range perturbation near $\scrip$ and so the author finds an \textit{alternative free} behaviour, and constructs scattering solutions around this alternative free behaviour. For such solutions, regularity with respect to $\Vb$ vector fields still persists.

In the present work, we can also treat such nonlinearities in the restricted region $\D\cap\{t\leq0\}$ provided that we prescribe trivial data along $\scrim$. The idea here is that for problems with no incoming radiation, $\partial_t$-derivatives become "good derivatives" near $\scrim$. See already \cref{sec:intro:scat:nic} and \cref{rem:en:examples}

We also mention \cite{lindblad_scattering_2023}: While that work is more generally concerned with the global structure of homogeneous scattering solutions, we here single out one aspect of their work: Amongst other things, the authors study scattering solutions for the equation $\Box_\eta\phi=f$ in a neighbourhood of $\scrip$, where $f=\mathcal{O}(r^{-\delta}\jpns{u}^{-q})$ for $\delta\leq2$ and $q\ll1$.
In general, such inhomogeneous terms ruin the possibility of prescribing the radiation field $\psi$ for solutions $\phi$, and thus, of constructing scattering solutions.
The authors in \cite{lindblad_scattering_2023} overcome this by assuming that $f$ is not merely in a Sobolev space, i.e.~error, but by also prescribing a leading order expansion such as $f=r^{-\delta}f_\delta(u)+\O(r^{-3}\jpns{u}^{-q})$, with $f_\delta(u)=\O(\jpns{u}^{-q})$.
This allows them to find scattering solution in a neighbourhood of $\scrip$ that, again, includes both timelike and spacelike infinity.
In this work, we prove a similar result for a large class of perturbations of $\Box_{\eta}$ in the restricted region $\D$, cf.~\cref{sec:intro:scat:phg} and \cref{thm:scat:weak_polyhom}.

\subsubsection{Polyhomogeneous expansions}\label{sec:intro:phg}
The last element required for this introduction is the concept of polyhomogeneity. 
Going back to the discussion in \cref{sec:intro:motivation}, a natural generalisation of having $1/r$ expansions towards $\scrip$ in the spirit of Bondi, Sachs and Penrose, is given by polyhomogeneous expansions, i.e.~expansions into $r^{-p}\log^n r$, $p\in\mathbb R$, $n\in\mathbb N$.
 The first reference in which the formal consistency of such expansions with the Einstein equations was tested is \cite{chrusciel_gravitational_1992}; see, however, references therein for less general earlier descriptions.
 Working with the framework introduced in \cite{melrose_atiyah-patodi-singer_1993}, this consistency was rigorously proved only much later in \cite{hintz_stability_2020}, where it was shown that evolutions of perturbations to the Minkowski initial data under the Einstein vacuum equations remain polyhomogeneous all the way up to future timelike infinity, provided the initial data are polyhomogeneous. 
 
 We quickly sketch the propagation of polyhomogeneity from $I^0$ to $\scrip$, which crucially relies on the following observation: Working in coordinates $\rho_0$, $\rho_+$, the twisted wave operator $r\Box_\eta (r^{-1} \cdot)$ takes the form:
 	\begin{equation}\label{eq:intro:box_near_corner}
			P_{\eta}:=r\Box_{\eta}r^{-1}=\rho_0^2\rho_+\bigg(-\rho_+\partial_{\rho_+}(\rho_+\partial_{\rho_+}-\rho_0\partial_{\rho_0})+\frac{\rho_+}{(1+\rho_+)^2}\Dl\bigg).
		\end{equation}	
The point is that the $\Dl$ term comes adorned with an extra decaying $\rho_+$-weight compared to the $-\rho_+\partial_{\rho_+}(\rho_+\partial_{\rho_+}-\rho_0\partial_{\rho_0})$, and can thus be treated perturbatively. Roughly speaking, one can thus simply integrate along the integral curves of $\rho_+\partial_+$ and of $\rho_+\partial_{\rho_+}-\rho_0\partial_{\rho_0}$ to iteratively construct a polyhomogeneous expansion towards $\rho_+=0$ term by term; see already \cref{lemma:prop:future_corner}.
(Of course, this procedure loses derivatives: The philosophy is always that one first constructs and proves energy estimates for sufficiently regular solutions, and then, in a second step, where regularity is no longer an issue, proves polyhomogeneity of those solutions.)

	In contrast, for the backwards problem (which is not addressed in \cite{hintz_stability_2020}), i.e.~going from $\scrip$ to $I^0$, or, equivalently, going from $\scrim$ to $I^0$, the angular term no longer comes with an extra decaying weight in the direction in which we want to construct an expansion and, thus, can no longer be treated perturbatively. 
		In the current work, we nevertheless prove the propagation of polyhomogeneity from $\scrim$ to $I^0$ via combinations of time integrals and suitable scaling commutations that remove the leading order terms in the expansions. 
  As before, we first treat the Minkowskian problem $\Box_\eta \phi=f$ (in \cref{sec:prop}) and then extend to short-range perturbations by perturbative arguments in \cref{sec:app}.
  See already \cref{sec:intro:example} for a sketch in the context of a simple model problem. 
 
 Before we can move on to stating our main results, we finally introduce the standard notation used to express polyhomogeneity following \cite{grieser_basics_2001}. 
 We use index sets $\mathcal{E}=\{(p,n)\}\subset\mathbb R\times \mathbb N$ to denote the set of terms $r^{-p}\log^{n} r $ appearing in the relevant expansions. By definition, the subsets $\mathcal{E}_{<c}:=\{(p_i,n_i)\in \E\,|\, p_i<c\}$ are required to be finite. 
 With the precise definition given in \cref{def:notation:index_sets,def:notation:polyhom}, the  notation
 $\psi^{\incone}\in \A{phg}^{\E_-}(\incone)$ essentially means that $\psi^{\incone}=\sum_{(p,n)\in\E^-} c^{p,n}(\theta,\varphi)\rho_-^{-p}\log^n \rho_-$ for a sequence of functions $c^{p,n}$ on $S^2$.
The slightly more complicated notation for polyhomogeneity towards multiple boundaries $\scrim$, $I^0$ and $\scrip$, namely $\psi\in\A{phg}^{\E_-,\E_0,\E_+}(\D)$, essentially means that $\psi$ admits an expansion of the form 
 \begin{equation}
 \chi\prod_{\bullet\in\{-,0\}}\left(\sum_{(p_\bullet,n_\bullet)\in\E_{\bullet}} c_\bullet^{p_\bullet,n_\bullet} (\theta,\varphi)\rho_\bullet^{p_\bullet}\log^{n_{\bullet}}\rho_{\bullet}\right)+(1-\chi)\prod_{\bullet\in\{0,+\}}\left(\sum_{(p_\bullet,n_\bullet)\in\E_{\bullet}} c_\bullet^{p_\bullet,n_\bullet} (\theta,\varphi)\rho_\bullet^{p_\bullet}\log^{n_{\bullet}}\rho_{\bullet}\right),
 \end{equation}
 where $\chi$ is a cut-off function localising to the past corner $t\leq 0$ and the $c_{\bullet}$ are some functions on the sphere.
We also use notation for situations where there is an expansion in only one direction and merely some decay in the other two directions: For instance, denoting by $\overline{(0,0)}$ the \textit{conformally smooth index set} given by $\overline{(0,0)}=\{(n,0)|n\in\mathbb N\}$, then the claim of \cref{conj:intro} is that solutions arising from arbitrary data along $\incone$ (so also data that do not admit an expansion) and no incoming radiation enjoy the membership $\psi\in\A{b,b,phg}^{a_-,a_0,\mindex0}(\D)$.
We already include an explanatory diagram in \cref{fig:intro:explain} below:
 \begin{figure}[htbp]
 \centering
\includegraphics[width=0.8\textwidth]{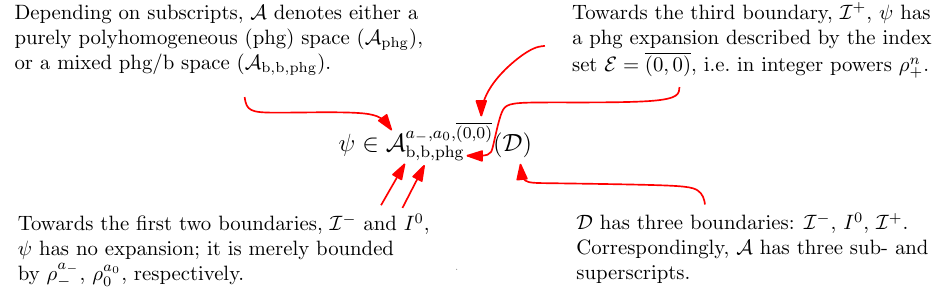}
\caption{A schematic guide to reading the notation $\psi\in\A{b,b,phg}^{a_-,a_0,\mindex0}(\D)$. In case $\psi$ is polyhomogeneous towards all boundaries, we simply write $\A{phg}$ instead of $\A{phg,phg,phg}$; in case $\psi$ is merely bounded towards all boundaries, we write $\Hb$ rather than $\A{b,b,b}$. In contrast, the number of superscripts always matches the number of boundaries. We also notice that, if we work on a space with only two boundaries, e.g.~$\D\cap\{t\leq 0\}$ or $\Dbold$, then we correspondingly only write two sub- and superscripts (e.g.~$\A{phg}^{\E_0,\E_+}(\Dbold)$). }
\label{fig:intro:explain}
\end{figure}

More precisely, restricting to $t\geq0$, the membership $\psi\in\A{b,b,phg}^{a_-,a_0,\mindex0}(\D)$ means that $\psi$ has an expansion $f^n(\omega,\rho_0)\rho_+^{n}$ around $\scrip\cap I^0$ for some $f^n(\omega,\rho_0)\in\Hb^{a_0}([0,\delta)_{\rho_0}\times S^2)$, $\delta<1$ sufficiently small.

	\subsection{Statement of the main results}\label{sec:intro:mainresults}
	Our main results  concern the perturbed wave equation \cref{eq:intro:wavegeneral} in $3\!+\!1$ dimensions (see however \cref{rem:intro:higherd} for the generalisation to other dimensions), for which we recall that the admissible class of nonlinearities, potentials and inhomogeneities is defined in \cref{def:en:admissible:f}, \cref{def:en:short_range} and \cref{def:scat:extended_short_range}.
We also recall anew that these definitions are such that we can essentially treat all terms on the RHS of \cref{eq:intro:wavegeneral} perturbatively (which, in turn, depends on the decay of the posed data), and such that the $\b$-smooth structure (smoothness with respect to the $\Vb$ vector fields) remains intact.

 We first introduce the notion of a scattering solution:
 \begin{defi}
     Given smooth data $\psi^{\incone}$ along $\incone$ and $\pv\psi^{\scrim}$ along $\scrim$, then we say that $\phi$ is the scattering solution arising from these data if $\phi$ solves \cref{eq:intro:wavegeneral}, if $r\phi|_{\incone}=\psi^{\incone}$, and if for all $v_1\geq v_0$: $$\lim_{u\to-\infty}\norm{\pv(r\phi)(u,\cdot)-\pv\psi^{\scrim}(\cdot)}_{L^2(\{v\in [v_0,v_1]\}\times S^2)}=0.$$
 \end{defi}
 We now state an abbreviated version of the first main result; for the precise versions see \cref{thm:scat:scat_general}, \cref{cor:scat:enlarged_admissible_set} (treating the case of no incoming radiation) and \cref{thm:scat:scat_incoming} (treating the case of incoming radiation).
    \begin{thm}[Scattering, cf.~\cref{thm:scat:scat_general,thm:scat:scat_incoming}]\label{thm:intro:scat}
  Let $\vec{a}$ be admissible with $a_-<0$, and let $\vec{a}^f$ be admissible w.r.t.~$\vec{a}$.
  Given conormal scattering data $\psi^{\incone}\in\Hb^{a_-}(\incone)$ and $v\pv\psi^{\scrim}\in \Hb^{a_0}(\incone)$, inhomogeneity $rf\in\Hb^{\vec{a}^f}(\D)$ as well as  short-range perturbations $P_g$, $\mathcal{N}$, $V$, then there exists a unique conormal scattering solution $\psi=r\phi\in\Hb^{\vec{a}}(\D)$ to \cref{eq:intro:wavegeneral} arising from these data, provided we restrict to a region bounded by sufficiently large negative retarded time.
More precisely, this solution satisfies $\psi\in\Hb^{\vec{a}}(\D)$.

If $v\pv\psi^{\scrim}=0$, then we can lift the assumption $a_-<0$ and still prove $ \psi\in\Hb^{\vec{a}}(\D)$.
 Furthermore, if we additionally place modified assumptions on the short range perturbations and the inhomogeneity, then the solution is also smooth with respect to $r\pv$, and we have $(r\pv)^k\psi\in\Hb^{\vec{a}}(\D)$ $\forall k\geq0$.
\end{thm}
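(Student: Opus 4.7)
The plan is to prove the theorem in three logical stages: a core weighted energy estimate for the linear Minkowskian problem $\Box_\eta \phi=f$, an extension to the perturbed equation \cref{eq:intro:wavegeneral} via perturbative arguments, and finally a limiting construction from finite data to genuine scattering data. I would work throughout in the compactified picture $\D$ so that the claimed mapping property $\psi\in\Hb^{\vec{a}}(\D)$ becomes a weighted $L^2$ statement; the smoothness/conormality claim is then folded in by commuting with the $\Vb$ vector fields, which manifestly preserve the admissibility structure.

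For the first stage, I would fix a truncation $\D_{u_\infty,v_\infty}\subset\D$ and consider the forward problem for $\Box_\eta\phi=f$ with data $\psi^{\incone}$ and $\pv\psi^{\scrim}=0$. The multiplier identities would be constructed exactly as hinted at in the text, using the currents of \cref{eq:current:past_currents} near spacelike infinity (to track the $\rho_-^{a_-}\rho_0^{a_0}$ weights on the past side) and the current of \cref{eq:current:future_current} near $\scrip$ (to track the $\rho_+^{a_+}$ weight and the existence of the future radiation field). The admissibility inequalities $a_+<\min(0,a_0)\le a_0<a_-\ge -1/2$ are exactly what is needed to make the bulk terms have the correct sign (after absorption against boundary fluxes) and to make the RHS pairing $\int f\cdot X\phi$ estimable by the weighted norm of $rf$ in $\Hb^{\vec{a}^f}$ with $\vec{a}^f\ge \vec{a}+(1,2,1)$; the extra condition $a_-^f>1$ is precisely what is required for $\pv\psi$ to attain a finite $\scrim$-limit. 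Commuting with $\Vb$ propagates this estimate to all orders of regularity (with a loss of only two orders as in \cref{eq:i:box_weights}). For the long-range potential case, the same currents have to be re-examined directly, as perturbation is no longer available; for short-range $P_g$, $\mathcal{N}$, $V$ one treats $\mathcal P[\phi]$ as part of the inhomogeneity, uses that by the definition in \cref{def:en:short_range} the map $\phi\mapsto\mathcal P[\phi]$ gains $\epsilon$ weight with respect to admissibility, and closes via a contraction in a ball in $\Hb^{\vec{a};k}(\D_{u_\infty,v_\infty})$ for $|u_\infty|$ sufficiently large. This last smallness is where the restriction to "sufficiently large negative retarded time" enters.

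Next I would upgrade these \emph{finite} estimates to a genuine scattering solution. Letting $\chi_n$ be a cutoff on $\incone$ supported in $\{u\ge -n\}$, I would solve the forward problem with data $\chi_n\psi^{\incone}$ and $\chi_n\pv\psi^{\scrim}$ (cut off along $\scrim$ similarly) on the truncated region, obtaining a sequence $\psi_n$. The weighted energy estimates from the previous stage are uniform in $n$, so $\psi_n$ is bounded in $\Hb^{\vec{a};k}(\D_{u_\infty,v_\infty})$; difference estimates on $\psi_n-\psi_m$ in a slightly weaker space give a Cauchy sequence whose limit $\psi$ lives in $\Hb^{\vec{a}}(\D)$ and satisfies the equation as well as the scattering condition $\pv\psi(u,\cdot)\to\pv\psi^{\scrim}$ at $\scrim$. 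For the case with nontrivial $\pv\psi^{\scrim}$, I would first subtract off a fixed extension $\psi_{\mathrm{in}}$ of the incoming radiation (solving $\pv$-transport from $\scrim$), which converts the problem into one with trivial data at $\scrim$ but a modified inhomogeneity $\tilde f$ whose admissibility class is controlled by $a_0$ via $v\pv\psi^{\scrim}\in\Hb^{a_0}$; this is what forces the condition $a_-<0$ in general. Uniqueness at this stage is a direct consequence of the basic energy inequality applied to the difference of two solutions (with zero data).

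For the no-incoming-radiation statement, the key observation (already highlighted in the text around $\Vbt$) is that $r\pv$ becomes a good derivative once $\pv\psi^{\scrim}=0$: one can commute $\Box_\eta$ with $r\pv$, integrate $\pu(r\pv\psi)$ from $\scrim$ where it vanishes, and recover smoothness with respect to the vector fields in $\Vbt$, corresponding geometrically to regularity on the compactification $\Dbold$ of \cref{fig:intro:b}. Inductively commuting with $r\pv$ (at the cost of regularity at each step, as noted) yields the claim $(r\pv)^k\psi\in\Hb^{\vec{a}}(\D)$, and because the good-derivative structure kills the borderline term in $a_-$ that obstructed the general case, the restriction $a_-<0$ can be dropped. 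The main obstacle I anticipate is not any single step of this outline but the compatibility of all three: ensuring that the admissibility inequalities of \cref{def:en:admissible:f}, the $\epsilon$-gain of \cref{def:en:short_range}, and the quasilinear absorption (which requires multipliers adapted to $g$ rather than $\eta$, per \cref{sec:current:linearmetric}) are all simultaneously compatible, and that the cutoff sequence for the limiting procedure does not interact badly with the quasilinear principal part; the latter point is precisely why general quasilinear perturbations need the separate Nash--Moser construction of \cref{sec:current:general_quasilinear_scattering}, but for quasilinearities preserving the Minkowskian null cones the plain limiting argument above suffices.
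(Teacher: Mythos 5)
Your outline is essentially the paper's own route: weighted multiplier estimates for $\Box_\eta\phi=f$ near the past and future corners (Propositions \ref{prop:en:pastestimate}, \ref{prop:en:futureestimate}), absorption of short-range $V$, $\mathcal N$, $P_g$ terms using the gap in admissibility (Propositions \ref{cor:en:perturbed_energy_estimate}, \ref{lemma:en:metric_perturbed_energy_estimate}), a cut-off/limiting argument with uniform-in-$u_\infty$ bounds (\cref{thm:scat:scat_general}), subtraction of the incoming radiation to reduce to trivial $\scrim$-data (\cref{thm:scat:scat_incoming}), and $T$-commutation for the $\Vbt$-improvement (\cref{item:en:pastestimate_Hbt}). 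Two small inaccuracies are worth flagging. First, the smallness needed to close the perturbative contraction is in $|u_0|$ (the value of retarded time to which the region is restricted), not $|u_\infty|$; the estimates must be \emph{uniform} in $u_\infty$ since $u_\infty\to -\infty$ in the limiting step, and it is $u_0$ that plays the role of the smallness parameter via the weight $|u_0|^{-\delta}$ in \cref{eq:en:perturbed_step1}. Second, the lifting of $a_-<0$ when $v\pv\psi^{\scrim}=0$ is not caused by the good-derivative ($r\pv$) structure: it is simply that with nontrivial incoming radiation $\psi$ attains a nonzero limit at $\scrim$ and hence cannot decay, which forces $a_-<0$; when the incoming radiation vanishes this obstruction disappears and the full admissible range $a_-\geq -1/2$ is available \emph{independently} of any $\Vbt$-commutation. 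The $(r\pv)^k$-smoothness is a separate, further improvement requiring the modified (no-incoming-radiation compatible) hypotheses on the perturbations and inhomogeneity, as in \cref{eq:en:perturbed_energy_estimate_no_incoming}; your paragraph conflates these two distinct statements into one mechanism.
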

\begin{rem}[The condition $a_-<0$]
    For problems with nontrivial data at $\scrim$,  $\psi$ will generically not decay towards~$\scrim$, which forces the condition $a_-<0$. This condition disappears for trivial data at $\scrim$.
\end{rem}

\begin{rem}[Uniqueness]
The uniqueness above is understood with respect to the class of solutions for which the $\Hb^{\vec{a};{5}}(\D)$-norm is finite, which is a stronger assumption than finite $T$-energy. We did not investigate whether this can be improved to finite energy solutions for general perturbations, but it can for linear problems.

\end{rem}

\cref{thm:intro:scat} tells us, in particular, that if the initial data are conormal, then so is the solution. The next statement we prove tells us that if the initial data are polyhomogeneous, then so is the solution, and that if the initial data feature certain decay, then the solution is polyhomogeneous up to error term.
	\begin{thm}[Persistence of polyhomogeneity, cf.~\cref{thm:app:general,thm:app:general2} for the precise versions]\label{thm:intro:prop_polyhom}
Under the assumptions of \cref{thm:intro:scat},
$\psi$ has an expansion towards $\scrip$ (cf.~\cref{fig:intro:explain} for a schematic explanation of the notation): 
\begin{equation}\label{eq:intro:thm:error}
\psi\in \A{b,b,phg}^{a_-,a_0,\mindex0}(\D)+\Hb^{a_-,a_0,\min(a_0,a_+^f-1)-\epsilon}(\D)
\end{equation} \emph{for any} $\epsilon>0$. In the case of no incoming radiation, \cref{eq:intro:thm:error} furthermore holds for any $a_-\geq -1/2$. In the case of non-trivial incoming radiation, we can also lift the assumption that $a_-<0$, resulting in  \cref{thm:app:general2}\cref{item:app:error2}.

If we additionally assume that the scattering data, $f$, as well as the coefficients of the nonlinearities and potential $P_g,\,\mathcal{N}, V$ are polyhomogeneous towards the boundaries $\scrim,\,I^0, \,\scrip$, then the solution is polyhomogeneous  as well.
\end{thm}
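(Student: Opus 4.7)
\textbf{Proof plan for \cref{thm:intro:prop_polyhom}.} Starting from the scattering solution $\psi\in\Hb^{\vec{a}}(\D)$ furnished by \cref{thm:intro:scat}, I would separate the two claims and handle the polyhomogeneity statement first, because the error estimate is essentially a quantitative version of it. The overall strategy is to build a formal polyhomogeneous \emph{model} $\psi_{\phg}$ term by term, exploiting the perturbative structure of $\Box_\eta=r^{-1}P_\eta r$ near each boundary, and then to treat the remainder $\psi_\Delta:=\psi-\psi_{\phg}$ by re-running the scattering/energy estimates of \cref{thm:intro:scat} applied to the equation satisfied by $\psi_\Delta$, whose inhomogeneity will have strictly improved weights at each stage.

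For the construction of $\psi_{\phg}$, I would work boundary-by-boundary. Near $\scrip$, I would use the representation \cref{eq:intro:box_near_corner}; since the angular term $\rho_+(1+\rho_+)^{-2}\Dl$ carries an extra $\rho_+$ relative to the leading operator $-\rho_+\partial_{\rho_+}(\rho_+\partial_{\rho_+}-\rho_0\partial_{\rho_0})$, I can integrate along the integral curves of $\rho_+\partial_{\rho_+}$ and $\rho_+\partial_{\rho_+}-\rho_0\partial_{\rho_0}$ to produce a formal expansion whose leading coefficients are determined by boundary data and whose subleading coefficients are forced by the equation, exactly as in the Minkowskian argument leading to \cref{lemma:prop:future_corner}. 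Near $\scrim$ and $I^0$, the angular term no longer carries an improving weight in the expansion direction; here I would instead exploit the observation advertised in \cref{sec:intro:sketch}, namely commuting with (modified) scaling vector fields of the form $r\pv$ or $v\pv-u\pu$ combined with time integrals to successively strip off the leading $\rho_-^{p}\log^n\rho_-$ contributions and push the remaining data into smaller index sets. At each stage, the transport equation for a new coefficient is a first-order ODE in one of $\rho_-,\rho_0,\rho_+$ whose solution is polyhomogeneous with the expected index set, and the standard index-set calculus (unions $\E\cup\E'$, sums $\E+\E'$, extensions $\E\,\overline{\cup}\,(p,0)$) tracks how the initial $\E_-$, the $\E_0,\E_+$ coming from the other boundary data/inhomogeneity, and the index sets of the polyhomogeneous coefficients in $P_g,\mathcal{N},V$ combine. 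For the nonlinear perturbations I would feed the current iterate $\psi_{\phg}^{(k)}$ into $\mathcal{P}[\,\cdot\,]$ and use the short-range assumption to verify that $\mathcal{P}[\psi_{\phg}^{(k)}]$ is polyhomogeneous with strictly better weights than $\psi_{\phg}^{(k)}$, which closes the iteration.

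For the error statement \cref{eq:intro:thm:error}, I would stop the above procedure after finitely many steps so that $\psi_{\phg}^{(N)}$ matches $\psi$ to the desired order at $\scrip$, and then write $\psi=\psi_{\phg}^{(N)}+\psi_\Delta$. The remainder satisfies a linear(ised) equation of the form $\Box_\eta\psi_\Delta=\mathcal{P}'[\psi,\psi_{\phg}^{(N)}]\psi_\Delta+\tilde{f}$, where $\tilde{f}$ contains only "tail" contributions with weights at least $(a_-+1,a_0+2,\min(a_0,a_+^f-1)-\epsilon+1)$ in the relevant $\b$-Sobolev sense (the loss of $\epsilon$ coming from the Sobolev embedding needed to pass between conormality and weighted $L^\infty$ bounds). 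Applying \cref{thm:intro:scat} to this remainder equation, with trivial scattering data at $\scrim$ and $\incone$ and the improved inhomogeneity $\tilde{f}$, yields $\psi_\Delta\in\Hb^{a_-,a_0,\min(a_0,a_+^f-1)-\epsilon}(\D)$, which is the content of \cref{eq:intro:thm:error}. The case of nontrivial incoming radiation and $a_-\geq 0$ is recovered by the enlarged admissible set and the $r\pv$-smoothness improvements already noted in \cref{thm:intro:scat}.

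The main obstacle I expect is the step near $\scrim\cup I^0$, precisely because the angular Laplacian is \emph{not} subleading there: the naive integration that works near $\scrip$ will not terminate and will produce spurious logarithms at each step. The resolution, which the paper signals in \cref{sec:intro:sketch} and will be implemented in \cref{sec:prop}--\cref{sec:app}, is to combine commutations with scaling/modified scaling fields (which kill the pure power terms in the would-be expansion) with time integrals that trade an angular commutation for an extra decay in $\rho_-$; the scheme has to be arranged so that the derivative loss incurred at each step is absorbed by the regularity of the scattering solution already in hand. A secondary technical difficulty is verifying that the quasilinear perturbations $P_g$ with polyhomogeneous coefficients preserve the $\b$-smooth structure and the $r\pv$-smoothness when needed; this I would handle as in the last paragraph of \cref{sec:intro:mathcontext}, keeping the quasilinear correction on the left-hand side, redoing the boundary-corner analysis for the slightly modified operator, and checking that the corresponding index-set bookkeeping is unchanged.
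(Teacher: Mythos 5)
Your proposal captures the paper's strategy faithfully: first establish persistence of polyhomogeneity for the Minkowskian problem $\Box_\eta\phi=f$ (\cref{sec:prop}) by treating the three boundaries separately, then iterate over the short-range perturbation $\mathcal{P}[\phi]$ treating it as an inhomogeneity with improved weights (\cref{sec:app}). The key ingredients you identify — perturbative treatment of the angular Laplacian near $\scrip$ via \cref{eq:intro:box_near_corner}, scaling commutations plus time integrals near $\scrim\cup I^0$ where the Laplacian is no longer subleading, and truncating the iteration plus re-running energy estimates for the $\Hb$-error term — are exactly the ones the paper deploys in \cref{lemma:prop:future_corner}, \cref{lemma:prop:inhomogeneous_no_rad}--\cref{lemma:prop:scri_to_I0}, and \cref{thm:app:general}--\cref{thm:app:general2} respectively.

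One small structural difference: you propose to first construct a formal polyhomogeneous model $\psi_{\phg}$ and then subtract; the paper instead commutes $\psi$ directly with $\mathcal{S}^{\E}_c=\prod(S+z)$ (for $S$ a shifted scaling vector field) and proves improved decay for the commuted quantity, appealing to the characterization \cref{eq:notation:polyhom_def_2} to conclude polyhomogeneity. These are equivalent by \cref{rem:notation:equivalent_polyhoms}, so this is a cosmetic rather than substantive divergence. A minor inaccuracy worth flagging: the vector fields you name near the past corner, "$r\pv$ or $v\pv-u\pu$," are not quite the ones used. The paper commutes with $S_{v_0}=(u-v_0)\pu+(v-v_0)\pv$ (shifted scaling) in $\Dbold^-$ and with $\rho_-\partial_{\rho_-}\big|_{\rho_0}=r\pu$ (not $r\pv$) in coordinates $\rho_-=v/r$, $\rho_0=1/v$ for the propagation along $\scrim$; $r\pv$ is the vector field that characterizes no incoming radiation and plays a different role. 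This would matter if you tried to carry out the ODE step, since the integral curves of the correct vector field terminate at the right boundary component, but it does not change the overall plan.
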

\begin{rem}[Long-range perturbations]
We extend \cref{thm:intro:scat,thm:intro:IIIsummed} to also admit linear long-range perturbations such as $1/r^2$-potentials or first order terms as appearing in the Teukolsky equations \cite{dafermos_linear_2019}, cf.~\cref{thm:scat:long} and \cref{thm:app3}. See also \cref{rem:Sch:Teukolsky_decay_rate} for further comments on the Teukolsky equations.
\end{rem}

\begin{rem}[Other spacetime dimensions]\label{rem:intro:higherd}
    Both \cref{thm:intro:scat} and \cref{thm:intro:prop_polyhom} apply in any spacetime dimension $\geq 4$ after changing the definition of the radiation field to $\psi=r^{(n-1)/2}\phi$ and replacing $rf\mapsto r^{(n-1)/2}f$. In fact, since we can also treat $1/r^2$-potentials with arbitrary sign, we can also extend the results to hold in spacetime dimension $2+1$. Finally, the results trivially apply to $1+1$-dimensions as well.
In general, the only parts of this work that apply exclusively to even spacetime dimensions are those of \cref{sec:app:app1}--\cref{sec:Sch}, as well as those of \cref{section:no incoming radiation Cauchy} and \cref{sec:EVE}, though the latter may be extended with a bit more care.
\end{rem}
 \begin{rem}[Optimality of the estimates and index sets]
Disregarding losses in regularity, the estimate on the error term \eqref{eq:intro:thm:error} is sharp in odd spacetime dimensions (cf.~\cref{prop:even}), but we do not know if it's sharp in even  spacetime dimensions, cf.~the discussion at the end of \cref{sec:intro:motivation} and around \cref{conj:intro}.
On the other hand, in the more precise version of \cref{thm:intro:prop_polyhom}, we actually prove that the solution admits expansions with respect to specific index sets. While the theorem itself makes no statements on the vanishing or nonvanishing of the coefficients in these index sets, we provide several examples where these index sets are, in fact, exhausted with nonvanishing coefficients; in this sense, the expansions are optimal (in any dimension).
 \end{rem}

%%%%%%%%%%%%%%%%
 \subsection{Applications of the main results to some examples}\label{sec:intro:applications}
 \cref{thm:intro:prop_polyhom} tells us that scattering solutions have polyhomogeneous expansions up to some error term, but it does not give us the coefficients in these expansions. 
 The computation of the latter is achieved as explained in \hyperlink{step3}{Step 3)}; we here present the result for a few different applications:

The first application is to $\Box_\eta\phi=0$; this ties in with our discussion of the peeling property in \cref{sec:intro:motivation}:
\begin{cor}[Cf.~\cref{cor:app:minkowski_with_error}]\label{cor:intro:mink}
For any $a_+<a_-\geq -1/2$, let $\phi$ be the scattering solution to $\Box_\eta\phi=0$ arising from no incoming radiation and $\psi^{\incone}=\A{phg}^{\E_-}(\incone)+\Hb^{a_-}(\incone)$ for $\E_-$ an \emph{arbitrary} index set with $\min(\E_-)>-1/2$. Then, along any outgoing null cone $\C_u$, we have $\psi|_{\C_u}\in \A{phg}^{\mindex0}(\C_u)+\Hb^{a_+}(\C_u)$; in particular, the solution has expansions in integer powers of $1/r$ up to order $a_+$.

\end{cor}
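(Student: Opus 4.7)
The plan is to combine \cref{thm:intro:scat} (existence of scattering solutions), \cref{thm:intro:prop_polyhom} (persistence of polyhomogeneity, which produces an abstract polyhomogeneous expansion towards $\scrip$), and a mode-by-mode analysis based on the Minkowskian conservation law \cref{eq:intro:conslaw} (equivalently, the explicit formula \cref{eq:intro:explicit_sol}) to pin down the index set of that expansion.

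First, I decompose $\psi^{\incone}=\psi^{\incone}_{\phg}+\psi^{\incone}_{\Delta}$ with $\psi^{\incone}_{\phg}\in\A{phg}^{\E_-}(\incone)$ and $\psi^{\incone}_{\Delta}\in\Hb^{a_-}(\incone)$. By linearity of $\Box_\eta$ and the uniqueness part of \cref{thm:intro:scat} (applied in the no-incoming-radiation version, valid for any $a_-\geq -1/2$), the scattering solution splits as $\psi=\psi_\phg+\psi_\Delta$. Pick auxiliary weights $a_+<a_+^{\mathrm{aux}}<a_0<a_-$ so that $\vec{a}=(a_-,a_0,a_+^{\mathrm{aux}})$ is admissible. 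With $f=0$ (hence $a_+^f=\infty$), \cref{thm:intro:prop_polyhom} applied to $\psi_\Delta$ gives
\begin{equation*}
\psi_\Delta\in\A{b,b,phg}^{a_-,a_0,\mindex 0}(\D)+\Hb^{a_-,a_0,a_0-\epsilon}(\D)
\end{equation*}
for any $\epsilon>0$, hence $\psi_\Delta|_{\C_u}\in\A{phg}^{\mindex 0}(\C_u)+\Hb^{a_+}(\C_u)$ once $\epsilon$ is small. Applied to $\psi_\phg$, the same theorem produces an a priori polyhomogeneous expansion towards $\scrip$ with some index set $\E_+$ inherited from $\E_-$; I must verify $\E_+\subseteq\mindex 0$.

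For this, I project onto spherical harmonics and invoke the explicit representation for no-incoming-radiation solutions to $\Box_\eta\phi=0$ at mode $\ell$ from \cref{eq:intro:explicit_sol},
\begin{equation*}
\psi_{\phg,\ell}=r^{\ell+1}\pu^{\ell}\!\left(\frac{f_{\mathrm{out}}^{(\ell)}(u)}{r^{\ell+1}}\right),
\end{equation*}
which is a polynomial in $1/r$ of degree at most $\ell$ along each outgoing cone $\C_u$. Equivalently, this follows from \cref{eq:intro:conslaw}: the conserved integrand vanishes at $\scrim$ (by the no-incoming-radiation assumption combined with the smoothness of $\psi_{\phg}$ under commutations with $r\pv$ granted by the second part of \cref{thm:intro:scat}), and hence $\pv(r^2\pv)^\ell\psi_{\phg,\ell}\equiv 0$ throughout $\Dopen$. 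Either way, each per-mode asymptotic expansion towards $\scrip$ lies in $\mindex 0$. Since \cref{thm:intro:prop_polyhom} has already established that the \emph{resummed} $\psi_\phg|_{\C_u}$ admits a polyhomogeneous expansion in the $L^2$ sense, the only possible index set compatible with the mode-by-mode structure is $\mindex 0$, giving $\psi_\phg|_{\C_u}\in\A{phg}^{\mindex 0}(\C_u)$.

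Combining the two pieces yields the claimed decomposition $\psi|_{\C_u}\in\A{phg}^{\mindex 0}(\C_u)+\Hb^{a_+}(\C_u)$. \emph{The main obstacle}, and the reason the corollary is nontrivial, is that the mode-by-mode polynomial-in-$1/r$ structure is not a priori summable in $\ell$, since the explicit formula loses an $\ell$-dependent number of derivatives---this is precisely the "summing of the $\ell$-modes" difficulty flagged in \cref{sec:intro:relev}. Summability is recovered by first using \cref{thm:intro:prop_polyhom} to establish the existence of a resummed polyhomogeneous expansion with \emph{some} index set, and only then using the $\ell$-mode identity to identify its index set as $\mindex 0$.
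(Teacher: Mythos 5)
Your proof is correct and follows essentially the same route as the paper: establish abstract polyhomogeneity with some a priori index set $\E_+$ via \cref{thm:app:general} (= \cref{thm:intro:prop_polyhom}), then kill the coefficients outside $\mindex0$ by projecting onto fixed spherical harmonics and invoking the conservation law/explicit representation. The one place you depart slightly from the paper is in the per-mode improvement: the paper cites \cref{lemma:app:minkowski_index_improvement}, whose proof commutes with $\Psi_n=\prod_{m<n}(r\pv+m)$ and integrates the resulting wave equations to obtain the stronger statement $\psi_\ell\in\A{b,phg}^{a_0,\mindex0}(\Dbold)$ in the full domain (which is what \cref{cor:app:minkowski_with_error} in the body of the paper asserts). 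Your argument only shows that each $\psi_{\phg,\ell}|_{\C_u}$ is a polynomial of degree $\leq\ell$ in $1/r$ on each outgoing cone, which is weaker but entirely adequate for this introduction-level version of the statement, since it is phrased cone by cone. The identification step — that a coefficient $v_{z,k}\in H^{;\infty}(S^2)$ with $(z,k)\notin\mindex0$ whose projection onto every $Y_{\ell m}$ vanishes must itself vanish — is exactly the implicit content of the paper's "improve the index set a posteriori."
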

Again, removing the $\Hb^{a_+}(\C_u)$-error term above would amount to proving \cref{conj:intro}.%($\Hb^{a_-,a_0,a_+}(\D)$) in \cref{eq:intro:cor:mink} would amount to proving \cref{conj:intro}.

	The second application is to the following equation, for some $M\in\mathbb{R}_{\neq 0}$:
	\begin{equation}\label{eq:intro:wave:potential}
	\Box_\eta \phi=-\frac{M\phi}{r^3}.
	\end{equation}
 Recalling the discussion at the beginning of \cref{sec:intro:motivation}, we shall henceforth only focus on the appearance of the first conformally irregular term towards $\scrip$ and restrict to data that are polyhomogeneous (without error term).

    \begin{cor}[Cf.~\cref{lem:app:potential}]\label{cor:intro:polyhom_potential}\label{cor:kotz}
		Let $p>-1/2$ and $0\neq C(\omega)\in C^{\infty}(S^2)$. 
    Given initial data $\psi^{\incone}\in\A{phg}^{\overline{(p,0)}}(\incone)$ (i.e.~with an expansion into powers of $r^{-(p+n)}$, $n\in\mathbb N$) such that $\lim_{u\to-\infty} r^p \psi^{\incone}=C(\omega)$, then there exists an explicitly computable $\tilde{C}(\omega)\neq0$ such that the scattering solution to \cref{eq:intro:wave:potential} with no incoming radiation satisfies along any outgoing null cone $\C_u$:
    \begin{equation}
        \psi|_{\C_u}=\Big(\sum_{i=0}^{\lceil p\rceil} f_i(u,\omega) r^{-i}\Big)+\tilde{C}(\omega) r^{-p-1}\log^{\sigma}r+\O(r^{-\lceil p+1\rceil}).
    \end{equation}
    Here, $\sigma=1$ if $p\in\mathbb{N}_{\geq1}$ and $\sigma=0$ otherwise.
	\end{cor}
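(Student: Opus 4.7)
The plan is to apply \cref{thm:intro:prop_polyhom} to assert that $\psi$ is polyhomogeneous on all of $\D$, and then to identify the coefficient of the first term in the expansion of $\psi|_{\C_u}$ that is \emph{not} an integer power of $1/r$. Following \hyperlink{step3}{Step 3)} of the sketch in \cref{sec:intro:sketch}, this coefficient will be computed by expanding around the Minkowskian solution and iterating the inhomogeneous Minkowskian conservation law \eqref{eq:intro:conslawf} mode by mode.

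First, the potential $V[\phi]=-M\phi/r^3$ is a short-range perturbation in the sense of \cref{def:en:short_range}, since $r\cdot V[\phi]=-M\psi/r^3$ gains weight $(3,3,3)$ on $\Hb^{\vec{a}}(\D)$ spaces relative to the $(1,2,1)$-weight of $\Box_\eta$. Hence \cref{thm:intro:scat,thm:intro:prop_polyhom} apply; since the data are purely polyhomogeneous and the incoming radiation vanishes, the scattering solution $\psi$ is polyhomogeneous on $\D$, and in particular on each outgoing cone $\C_u$.

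Next, decompose $\phi=\phi^{(0)}+\phi^{(1)}$, where $\phi^{(0)}$ solves $\Box_\eta\phi^{(0)}=0$ with the same scattering data, and $\phi^{(1)}$ solves $\Box_\eta\phi^{(1)}=-M\phi/r^3$ with trivial data on $\incone\cup\scrim$. For $\phi^{(0)}$, projecting to each spherical mode and applying the conservation law \eqref{eq:intro:conslaw} together with the polyhomogeneous peeling improvement (\cref{lemma:app:minkowski_index_improvement}) forces the expansion of $\psi^{(0)}|_{\C_u}$ to run over integer powers of $1/r$ only, which can be absorbed into $\sum_{i=0}^{\lceil p\rceil} f_i(u,\omega) r^{-i}$. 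Hence any term of order $r^{-p-1}\log^\sigma r$ must come from $\phi^{(1)}$.

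To extract it, project onto an arbitrary fixed spherical harmonic of parameter $\ell$ and use \eqref{eq:intro:conslawf} with $f=-M\phi/r^3$. The leading source is $r^3 f_\ell=-M\psi_\ell/r \sim -M C_\ell(\omega)r^{-p-1}$, where $C_\ell$ denotes the $\ell$-projection of $C(\omega)$. Applying $(r^2\pv)^\ell$, dividing by $r^{2\ell+2}$, integrating in $u$ (using trivial data on $\incone$), integrating once more in $v$ from $\scrim$, and finally unwinding the $(r^2\pv)^\ell$ operator produces an order-$r^{-p-1}$ contribution to $\psi^{(1)}_\ell$ whose coefficient is an explicit rational function of $p$ and $\ell$ times $C_\ell(\omega)$. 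For $p\notin\N_{\geq 1}$ this function is nonzero and yields a pure power $r^{-p-1}$; for $p\in\N_{\geq 1}$ one of the $v$-integrations becomes resonant at an intermediate stage, and the would-be pole in the non-resonant formula is replaced by a logarithm $\log r$, yielding the $\sigma=1$ case. Summing over $\ell$ gives the explicit, nonzero $\tilde{C}(\omega)$.

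The main obstacle is the combinatorial bookkeeping in the iterated $(r^2\pv)^\ell$ computation, both to identify precisely where the resonance occurs when $p$ is a positive integer, and to verify that the non-resonant rational-function coefficient never vanishes for any $\ell$ in the support of $C(\omega)$. A secondary concern is to confirm that substituting the full polyhomogeneous expansion of $\phi$ (rather than just its leading term) into $-M\phi/r^3$ produces only corrections at orders strictly below $r^{-p-1}$ along $\C_u$; this is handled by the short-range gain of $V$ combined with a finite iteration of the same scheme, so that all additional contributions are absorbed into $\O(r^{-\lceil p+1\rceil})$.
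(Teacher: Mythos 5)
Your overall plan (split $\phi=\phi^{(0)}+\phi^{(1)}$, use \cref{lemma:app:minkowski_index_improvement} for $\phi^{(0)}$, and feed $\phi^{(0)}$ into the inhomogeneous conservation law \eqref{eq:intro:conslawf} mode-by-mode for $\phi^{(1)}$) is the same as the paper's. However, the concluding step contains a genuine error that would derail the argument.

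You propose to ``verify that the non-resonant rational-function coefficient never vanishes for any $\ell$ in the support of $C(\omega)$'' and then ``sum over $\ell$.'' If you carry out this verification you will find the opposite of what you expect: the paper's computation (see \eqref{eq:app:potential_cancellation}) shows that for \emph{every} $\ell>0$ there is a surprising complete cancellation, so that $r^{-2\ell}\pv(r^2\pv)^\ell\psi^{(1)}_\ell$ equals a $u$-total-derivative $\pu\!\left(\tfrac{r_0^{\ell-p}}{2r^{2\ell+2}}\right)$, which decays like $\rho_+^{2\ell+2}$ towards $\scrip$. After $\ell+1$ integrations in $v$ this produces \emph{no} conformally irregular term for any $\ell>0$. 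The sole contribution to $\tilde{C}(\omega)$ comes from the $\ell=0$ mode, where the integral $\int_{-\infty}^u M r_0^{-p}/r^3\,\dd u'$ does produce the $r^{-2-p}$ (or $r^{-2-p}\log r$ for $p\in\mathbb N_{\geq1}$) behaviour. Consequently $\tilde{C}(\omega)$ is spherically symmetric, and it is nonzero only when $C(\omega)$ has nonvanishing $\ell=0$ projection; this is exactly the ``generically nonvanishing'' caveat in \cref{lem:app:potential}. Your proposal both anticipates the wrong conclusion of the mode computation and omits the resulting qualification on $C$, so as written the proof would collapse on data supported on $\ell>0$ (and, conversely, would mislead you into looking for a nonvanishing rational coefficient that simply is not there). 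The correct route is to carry the iterated computation through to the cancellation, as the paper does, and note explicitly that only $\ell=0$ survives.
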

	\begin{rem}
	It is interesting to compare this to the wave equation on Schwarzschild \cref{eq:intro:waveSchwarzschild}. Indeed, one might naively think that \cref{eq:intro:waveSchwarzschild} is morally equivalent to \cref{eq:intro:wave:potential}, but \cref{cor:intro:polyhom_potential} shows that this is not the case. For instance, in Schwarzschild, according to \cref{thm:intro:III}, there is also a nonvanishing logarithmic term at order $r^{-p-1}\log r$ if $p=0$.
	\end{rem}
One can nevertheless treat the Schwarzschild wave equation \eqref{eq:intro:waveSchwarzschild} similarly to the potential case above; this results in the proof of \cref{thm:intro:IIIsummed}. However, here, it is crucial to work in different coordinates than the $u$, $v$ coordinates. See already \cref{sec:Sch}, where this is discussed in detail and also generalised to the case of linearised gravity around Schwarzschild (see \cref{thm:Schw:lingravity}). 
	
	Finally, (and merely to demonstrate the utility of the algorithm), we treat an example semilinearity, namely
	\begin{equation}\label{eq:intro:wavepupv}
	\Box_\eta \phi=\pu\phi\pv\phi.
	\end{equation}

	\begin{cor}[Cf.~\cref{lem:app:semi}]\label{cor:intro:semilinear}
Under the assumptions of \cref{cor:kotz}, but with $\phi$ now solving \cref{eq:intro:wavepupv} instead of \cref{eq:intro:wave:potential}, there exists  $\tilde{C}(\omega)$ such that the solution satisfies along any $\C_u$
 \begin{equation}
        \psi|_{\C_u}=\Big(\sum_{i=0}^{\lceil 2p \rceil} f_i(u,\omega) r^{-i}\Big)+\tilde{C}(\omega) r^{-2p-1}\log^{\sigma}r+\O(r^{-\lceil 2p+1\rceil}).
    \end{equation}
    Here, $\sigma=1$ if $2p+1\in\mathbb{N}_{\geq2}$ and $\sigma=0$ otherwise. Furthermore, $\tilde{C}(\omega)\neq 0$ if $p\neq 0,1$.
	\end{cor}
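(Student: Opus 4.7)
Follow the three-step strategy of \hyperlink{step3}{Step 3} in \cref{sec:intro:sketch}. By \cref{thm:intro:scat} the scattering solution $\phi$ exists, and by \cref{thm:intro:prop_polyhom} its radiation variable $\psi=r\phi$ is polyhomogeneous along $\C_u$ up to an error of order $\O(r^{-\lceil 2p+1\rceil})$. The task therefore reduces to identifying the coefficient of the first non-integer power. Write $\phi=\phi^{(0)}+\phi^{(1)}+\phi^{(2)}$, with $\phi^{(0)}$ solving $\Box_\eta\phi^{(0)}=0$ from the given polyhomogeneous data on $\incone$ and no incoming radiation, $\phi^{(1)}$ solving $\Box_\eta\phi^{(1)}=\pu\phi^{(0)}\pv\phi^{(0)}$ with trivial scattering data, and $\phi^{(2)}$ the higher-order remainder (absorbed by running the procedure one more time, in the spirit of the proof of \cref{cor:intro:polyhom_potential}).

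For $\phi^{(0)}$, \cref{cor:intro:mink} yields an integer-power expansion $\psi^{(0)}=\sum_{k\geq0}g_k(u,\omega)r^{-k}$ along $\C_u$, with each $g_k$ computable via the representation formula \cref{eq:intro:explicit_sol} mode-by-mode. Matching to the datum $\psi^{\incone}=C(\omega)r_0^{-p}+\cdots$ fixes $g_0(u,\omega)=c_p\,C(\omega)(-u)^{-p}+o((-u)^{-p})$ as $u\to-\infty$ for an explicit nonzero constant $c_p$. The inhomogeneity $f:=\pu\phi^{(0)}\pv\phi^{(0)}$ is then computed via $\pu\phi=\pu\psi/r+\psi/(2r^2)$ and $\pv\phi=\pv\psi/r-\psi/(2r^2)$ together with $\pv\psi^{(0)}=\O(r^{-2})$; the leading behaviour at $\scrip$ is
\[
rf=-\tfrac{1}{2}\,\pu g_0\cdot g_0\,r^{-2}+\O(r^{-3}),\qquad \pu g_0\cdot g_0\sim p\,c_p^{\,2}\,(-u)^{-2p-1}C(\omega)^2,
\]
which is the source of the exponent $-2p-1$.

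Next, feed $f$ into the inhomogeneous conservation law \cref{eq:intro:conslawf} for $\psi^{(1)}$ and integrate mode-by-mode. For $\ell=0$ one has $\pu\pv\psi^{(1)}_0=(rf)_0$; integrating in $u$ from $\scrim$, where $\pv\psi^{(1)}_0=0$ by trivial data and no incoming radiation, produces the factor $(-u)^{-2p}/(-2p)$ away from the integer resonances and $\log(-u)$ at them, and a subsequent $v$-integration from $\incone$ translates this into the $r^{-2p-1}\log^\sigma r$ contribution along $\C_u$. The analogous procedure for each $\ell\geq1$ is run using the $\ell$-th conservation law, and the whole collection assembles into an angular function $\tilde C(\omega)$ by virtue of the polyhomogeneity already furnished by \cref{thm:intro:prop_polyhom}, which justifies the assembly of the mode-by-mode coefficients. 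The logarithmic power $\sigma=1$ emerges precisely when the $u$-integration above is resonant, i.e.\ when $2p+1\in\mathbb{N}_{\geq 2}$.

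For the non-vanishing assertion, the prefactor $p$ in the leading coefficient of $\pu g_0\cdot g_0$ explains $\tilde C\equiv 0$ at $p=0$; the second exceptional value $p=1$ corresponds to an additional cancellation between the $\O(r^{-2})$ and $\O(r^{-3})$ pieces of $rf$ after integration against the conservation law. The main obstacle is precisely this $p=1$ cancellation, which has no counterpart in \cref{cor:intro:polyhom_potential} and must be verified by explicit computation, together with the bookkeeping that ensures $\phi^{(2)}$ contributes only to the $\O(r^{-\lceil 2p+1\rceil})$ remainder and not to $\tilde C(\omega)$.
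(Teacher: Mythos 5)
Your decomposition $\phi=\phi^{(0)}+\phi^{(1)}+\phi^{(2)}$ matches the paper's (\cref{lem:app:semi}), and the reduction to computing $\phi^{(1)}$ via the inhomogeneous conservation law is the right framework. But the computation you propose for extracting $\tilde C(\omega)$ does not go through, and the failure is precisely the phenomenon the paper identifies.

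Take $\ell=0$ data $\psi^{\incone}=r_0^{-p}$, so $\phi^{(0)}=|u|^{-p}/r$. A direct computation gives $rf=r\,\pu\phi^{(0)}\pv\phi^{(0)}=-p|u|^{-2p-1}r^{-2}-|u|^{-2p}r^{-3}$, and one checks the identity
\begin{equation*}
p|u|^{-2p-1}r^{-2}+|u|^{-2p}r^{-3}=\pu\!\bigl(\tfrac12 r^{-2}|u|^{-2p}\bigr),
\end{equation*}
i.e.\ $rf$ is an \emph{exact $u$-derivative}. Integrating the $\ell=0$ conservation law from $\scrim$ then yields $\pv\psi^{(1)}_0=\tfrac12 r^{-2}|u|^{-2p}$, which is conformally smooth towards $\scrip$ for every $p$, and a further $v$-integration gives $\psi^{(1)}_0=\tfrac12|u|^{-2p}(r_0^{-1}-r^{-1})$: no logarithm and no $r^{-2p-1}$ term at all. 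So your claim that the $u$-integration ``produces the factor $(-u)^{-2p}/(-2p)$ away from the integer resonances and $\log(-u)$ at them'' is incorrect; the integrand closes up completely, and your asserted mechanism (``the prefactor $p$ \dots explains $\tilde C\equiv 0$ at $p=0$'') attributes to $p=0$ a vanishing that in fact happens identically in $p$ for $\ell=0$ data.

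The underlying reason is structural: on radial data the nonlinearity $\pu\phi\pv\phi$ coincides with the full null form $\partial\phi\cdot\partial\phi$, which is conformally regularising, so $\ell=0$ data can never exhibit the $r^{-2p-1}\log^\sigma r$ term. To produce a nonvanishing $\tilde C(\omega)$ one must use data supported on higher angular modes; the paper takes data on $\ell=1$, observes that $\pu\phi\pv\phi$ and $r^{-2}|\sl\phi|^2$ differ by a null form and hence have the same conformal irregularity, and then projects $\phi^{(1)}$ onto $\ell=0$. In that setting the forcing is $r^{-5}|u|^{-2p}((p-1)r+2|u|)^2$, and the integration by parts that establishes nonvanishing of the coefficient (for $p\neq 0,1$) is genuinely nontrivial---it is there that the $p=1$ exceptional value emerges (the forcing collapses to a pure $1/r^5$, conformally smooth), and $p=0$ is likewise checked explicitly. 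Your proposal does not contain the step of moving to nonradial data, nor the observation about the null-form equivalence, so as written it would conclude $\tilde C=0$ for all $p$ from the radial computation, which contradicts the statement it is supposed to prove.
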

	\begin{rem}
	In this example, the precise form of the index set and the order at which the first conformally irregular term appears depends very intricately on the initial data. In particular, if $p=0,1$, then we find that $\tilde{C}(\omega)=0$, i.e.~no conformally irregular term appears at order $2p+1$ at least for data supported on, say, $\ell\leq 10$.

    We also note that for the classical null form nonlinearity $\Box_\eta\phi=\pu\phi\pv\phi-r^{-2}|\sl\phi|^2$, one of course has conformal smoothness towards $\scrip$ (since the equation can be transformed to a linear equation).
	\end{rem}

The examples above are only treated in order to showcase in detail how to apply the algorithm of \hyperlink{step3}{Step 3)}. For now, the most important application remains that to \emph{linearised} gravity in \emph{double null gauge} around Schwarzschild treated in \cref{sec:Sch}, and to the Einstein vacuum equations in \emph{harmonic gauge}.
Since properly introducing either system would take up too much space for this introduction, we refer the reader to \cref{sec:Sch}, \cref{sec:EVE:equations} and to \cite{dafermos_linear_2019,kehrberger_case_2024}, \cite{lindblad_global_2005} respectively for background.
%%%%%%%%%%%%%%%%%%%%%%%%%%
\subsection{Miscellaneous additional results}\label{sec:intro:misc}
We here list a few further results that are interesting in their own right, but slightly further detached from the discussion so far.

\subsubsection{Scattering results beyond finite energy with no incoming radiation}\label{sec:intro:scat:nic}
In \cref{thm:intro:scat}, we in particular required the data along $\incone$ to lie in $\Hb^{-1/2}(\incone)$, corresponding to finite $T$-energy. (This assumption is hidden in the requirement that $\vec{a}$ be admissible.)
We can, in fact, also consider data along~$\incone$ that decay much worse provided a suitable analogue of the no incoming radiation condition holds (\cref{def:scat:weak_decay}) and we specify a suitable amount of transversal derivatives at a finite sphere along $\incone$. 
The simple observation underlying this generalisation is that, given data with worse decay, we can apply sufficiently many time derivatives (which, by the no incoming radiation condition, improve decay) to the data to get back into the setting of \cref{thm:intro:scat} and construct a scattering solution $T^n\psi$. 
Then, we may integrate in time $n$ times to recover $\psi$ (this is why the extra transversal derivatives need to be specified). The precise statement is given in~\cref{thm:scat:weak_decay}.
This means, in particular, that the restrictions on $a_-$ and on $p$ in \cref{cor:intro:mink,cor:intro:polyhom_potential,cor:intro:semilinear} as well as \cref{thm:intro:III,thm:intro:IIIsummed} are, in fact, unnecessary.

\subsubsection{Scattering results beyond finite energy with polyhomogeneous data}\label{sec:intro:scat:phg}
Even if there is incoming radiation, we can still make sense of scattering solutions that don't have finite energy provided that the initial data and inhomogeneity are polyhomogeneous up to the order corresponding to finite energy.

We already give a brief sketch how this works: Firstly, if we consider the usual wave equation $\pu\pv\psi=r^{-2}\Dl\psi$ near~$\scrim$, then we see that the spherical Laplacian can be treated perturbatively, as integrating in the $\pu$ and $\pv$ direction only gives one extra weight towards $\scrim$, whereas the $\Dl$-term comes with two extra weights, cf.~\cref{eq:intro:box_near_corner}. 
Secondly, we note that the same perturbative treatment continues to be possible for short-range perturbations, as the RHS of $\pu\pv\psi$ will always have $1+\epsilon$ extra powers towards $\scrim$. 
Thus, the construction of scattering solutions is, to leading order, simply an ODE problem.

Consider then the ODE $\pu G= g$: The usual setting of the scattering problem corresponds to specifying the limit $G(u=-\infty)$, which produces a unique solution provided that $g$ is integrable. 
However, if $g$ is not integrable but instead has an explicit expansion, say, $g=1/u+O(|u|^{-1-\epsilon})$ for $\epsilon>0$, then this uniquely fixes the leading-order behaviour of $G$ (namely $\log u$).
To specify $G$ uniquely, we need only fix the order-1 term of $G$, i.e.~the term in the kernel of $\pu$. 

With these ideas, one can then iteratively construct the leading-order expansions to scattering problems for short-range perturbations provided that data, inhomogeneity and coefficients of the nonlinearities all have an expansion in order to finally get a remainder to which \cref{thm:intro:scat} can be applied again. We point out that the iterative construction of the leading-order expansion loses derivatives, so this only works for sufficiently regular problems.
 See~\cref{thm:scat:weak_polyhom} for details. This extension is important for \cref{sec:intro:EVE} below.

\subsubsection{The failure of peeling in odd spacetime dimensions}
That peeling is on less robust footing in odd spacetime dimensions has already been discussed in \cite{hollands_conformal_2004,godazgar_peeling_2012}. It turns out, however, that peeling fails for reasons orthogonal to those of the cited works: Already for compactly supported data along $\scrim$, the radiation field diverges logarithmically towards $\scrip$. Cf.~\cref{cor:even}.

\subsubsection{The antipodal matching condition}
The remaining subsections are concerned with even spacetime dimensions.

In \cite{masaood_scattering_2022}, it was shown that, for linearised gravity around Schwarzschild, Strominger's antipodal matching condition \cite{Strominger14,PI22,CNP22} can be reduced to the following statement for solutions to \cref{eq:intro:waveSchwarzschild}:
\begin{equation}\label{eq:intro:antipodal}
  P^{(\ell)}_{S^2}  \lim_{v\to\infty}\lim_{u\to-\infty}\psi=(-1)^{\ell}  P^{(\ell)}_{S^2}\lim_{u\to-\infty}\lim_{v\to\infty}\psi,
\end{equation}
which, in turn, was also proved in \cite{masaood_scattering_2022} based on the fixed $\ell$-result in \cite{kehrberger_case_2022}.
We show that \cref{eq:intro:antipodal} actually holds for a much larger class of equations in \cref{lemma:app:antipodal} (in even spacetime dimensions).

\subsubsection{The no incoming radiation condition on a Cauchy hypersurface}
The no incoming radiation condition is formulated most naturally as a condition on $\scrim$. If one nevertheless wants to work with Cauchy data, testing for no incoming radiation (without evolving the solution back to $\scrim$) is highly nontrivial. We provide an algorithmic procedure to do this, order by order, in \cref{section:no incoming radiation Cauchy}. 
Already for $\Box_\eta\phi=0$ (we note our algorithm also works on other spacetimes), testing for no incoming radiation at leading order requires testing a complicated Fourier multiplier relation between $\psi_\ell$ and $T\psi_\ell$ on data. See \cref{lemma:noinc:fourier_Minkowksi}.
We note that if an initial data set satisfies the no incoming radiation condition to some order, then one can immediately deduce better conformal regularity towards $\scrip$, which in turn allows to prove better decay towards $i^+$, cf.~\cite{gajic_relation_2022}. See also \cite[Section~1.5.3]{kehrberger_case_2024}.

\subsubsection{Detecting a smooth null infinity from Cauchy data}
%When do logarithm-free Cauchy data give rise to logarithms towards \texorpdfstring{$\scrip$?}{I+?}
As already discussed in \cref{sec:intro:peeling}, the no incoming radiation is historically strongly tied to the idea of peeling (conformal regularity) in \emph{Minkowski} space.
The question of finding conditions on Cauchy data to not give rise to logarithms towards future null infinity for $\Box_\eta$ has been entertained for a long time in the literature, see, for instance, \cite{ValienteKroon2012,Friedrich2013,gasperin_asymptotics_2024-1,fuentealba_logarithmic_2024}.
%In even spacetime dimensions and for $\Box_\eta\phi=0$, this question is closely related to the question of detecting no incoming radiation. 
In \cref{cor:noinc}, we characterise Cauchy data with integer power expansions in $1/r$ that do not give rise to logarithmic terms towards $\scrip$ and thus recover \cite[Proposition 1]{gasperin_asymptotics_2024-1}.
The subset of data that do not give logarithms towards $\scrip$ up to a fixed order strictly contains the subset of data with no incoming radiation to that order in Minkowski.

We note, however, that our algorithms to characterise solution without incoming radiation are also valid beyond Minkowski (say, in Schwarzschild), where no incoming radiation, in fact, implies the failure of peeling \cite{christodoulou_global_2002,kehrberger_case_2022}.

%However, away from exact Minkowski space, smoothness of null infinity/peeling is no longer implied by no incoming radiation \cite{christodoulou_global_2002,kehrberger_case_2022}.
%In \cref{sec:noinc:Schw} we provide an algorithm to compute\footnote{though we do not proceed with the explicit computations} the no incoming radiation on a\emph{Schwarzschild} spacetime to sufficient accuracy on Cauchy data and, thereby, to deduce the implication of no incoming radiation to 
%the failure of smoothness of $\scrip$ at a fixed time slice.

\subsubsection{Scattering for the Einstein vacuum equations in harmonic gauge}\label{sec:intro:EVE}
We stated \cref{thm:intro:scat,thm:intro:prop_polyhom} for scalar equations, but they apply just as well for \textit{systems} of scalar equations, where each term can have a separate decay rate $\vec{a}_i$.
In particular, we  generalise our results to the Einstein vacuum equations for $g=\eta+h$ in harmonic gauge in \cref{sec:EVE} by treating the equations as a system of scalar equations for the components of the metric perturbation $h$.
We encounter the following  extra difficulties:
In the region $\D\cap\{t\leq 0\}$, we address the usual difficulty posed by the "logarithmic divergence of the radiation field" associated to the weak null condition by our scattering results beyond finite energy but with polyhomogeneous data from \cref{sec:intro:scat:phg}.
Another technical difficulty in constructing scattering solutions in $\D\cap\{t\leq0\}$ lies in posing characteristic data on some null cone $\incone$---in this work, we avoid this difficulty by assuming knowledge of the existence of a solution in a slab (see the darker shaded region in \cref{fig:intro:main}), for which we then show that it contains null cones.  Note that even if this existing solution is trivial, we can still generate nontrivial solutions via the scattering data along $\scrim$.

Concerning the future region, we recall from \cite{masaood_scattering_2022-1,kehrberger_case_2024-1} that scattering solutions arising from data at $\scrim$ will in general not be compatible with the decay class imposed in \cite{lindblad_global_2005,hintz_stability_2020}; instead, they will decay towards $I^0$ like $h\sim r^{-1}$(though we also construct examples of scattering solutions that decay faster towards $I^0$).
The existence of solutions  for this decay class is well known since \cite{bieri_extension_2010} and also within harmonic gauge as shown in \cite{ionescu_einstein-klein-gordon_2022}.

Here, we prove existence, conormality and polyhomogeneity of the solution in $\D\cap\{t>0\}$ under the assumption of yet weaker decay, namely $h\sim r^{-1/2}$.
In contrast to $\{t\leq 0\}$, this does not quite follow from \cref{thm:intro:scat} as the system now has to be coupled to a transport equation. 
This, however, is well understood since \cite{lindblad_global_2005}.The main novelty is that we need to find an ansatz that captures the stronger-than-Schwarzschildean divergence of the light cones.
In effect, we show a high regularity result on the exterior stability of Minkowski space with data similar to  those of \cite{bieri_extension_2010} in \emph{harmonic gauge}.
For the main result, see \cref{thm:EVE:main_scattering}.
\subsection{Structure of the remainder of this work}\label{sec:intro:structure}
    The remainder of this work is structured as follows:
    
\textbf{Remainder of Part I:}  

In \cref{sec:intro:example}, we showcase some of the concrete mathematical ideas of this paper using the simple model problem \cref{eq:intro:wave:potential}; this section can also be read as an extended introduction.
   
    The paper proper starts in \cref{sec:notation}, where we introduce the somewhat loaded notation we employ, along with the relevant mathematical concepts.
    
    In \cref{sec:ODE_lemmas}, we then prove a variety of basic ODE lemmata that are applied throughout the paper.
   
\textbf{Part II:}  

In \cref{sec:en}, we first prove robust energy estimates for $\Box_\eta\phi=f$. We then introduce the class of admissible quasilinear perturbations and prove that these estimates can still be proved in this larger class. In this section, we will restrict to quasilinear perturbations that do not affect the light cones.
    
    In \cref{sec:scat:scat}, we finally prove scattering for these equations \cref{eq:intro:wavegeneral}, namely \cref{thm:intro:scat}. We also prove the generalised scattering results mentioned in \cref{sec:intro:scat:nic,sec:intro:scat:phg}.
    
    In \cref{sec:prop}, we use the ODE lemmata of \cref{sec:ODE_lemmas} to prove the propagation of polyhomogeneity from $\scrim$ to $\scrip$ for the scalar Minkowskian wave equation $\Box_\eta\phi=f$.
   
\textbf{Part III:}  

In \cref{sec:app}, we then use this statement to prove the analogous statement for the larger class of wave equations \cref{eq:intro:wavegeneral}, namely \cref{thm:intro:prop_polyhom}. We also apply our results to the examples corresponding to the corollaries of \cref{sec:intro:applications}.
   
    In \cref{sec:Sch}, we discuss in detail the particular application of our result to the setting of \cref{eq:intro:waveSchwarzschild} as well as to that of linearised gravity around Schwarzschild. In particular, we will prove the statements of \cref{sec:intro:relev}.
    
    In \cref{sec:sharp}, we prove that peeling, even for fixed angular frequencies, is extremely specific to even spacetime dimensions, and formulate the precise version of \cref{conj:intro}, namely \cref{conj:even:peeling}, concerning the peeling property in Minkowski.
    This is also the only section of the paper where we compute asymptotics for non-short-range perturbations; in particular, we compute asymptotics for the scale invariant wave equation (with a $1/r^2$-potential).

    In \cref{section:no incoming radiation Cauchy}, we discuss versions of the no incoming radiation condition on a Cauchy hypersurface.

\textbf{Part IV:}   

In \cref{app:sec:current_computations}, we reprove the energy estimates of \cref{sec:en} within a geometric framework, allowing us to remove the previously made restriction to quasilinear perturbations that do not affect the light cones.
    
    Finally, in \cref{sec:EVE}, we then use these results to study scattering for the Einstein vacuum equations in harmonic gauge.

\subsection{Acknowledgements}
We would like to thank Mihalis Dafermos, Dejan Gajic, Gustav Holzegel, Jonathan Luk and Volker Schlue for helpful feedback on earlier versions of this manuscript. We also thank Hans Lindblad for helpful conversations regarding the scattering problem for the Einstein vacuum equations.

The first author was partially funded by the grant EPSRC 2436109.
    
 \section{Discussion of a toy model problem}\label{sec:intro:example}
Given the length of the paper, we here give a  brief discussion of a toy model problem, namely the linear wave equation on Minkowski with a $1/r^3$ potential.
 \begin{equation}\label{eq:intro:wave:example}
     \Box_\eta \phi= -\frac{M\phi}{r^3} \iff \pu\pv\psi=\frac{\Dl\psi}{r^2}+\frac{M\psi}{r^3}.
 \end{equation}
In what follows, we will commute with scaling, which generates an error term, so we will in fact have to consider \cref{eq:intro:wave:example} with inhomogeneity, 
 \begin{equation}\label{eq:intro:wave:examplef}
 \Box_\eta \phi=-\frac{M\phi}{r^3}-f \iff \pu\pv\psi=\frac{\Dl\psi}{r^2}+\frac{M\psi}{r^3}+rf.
 \end{equation}
 The data that we pose for \cref{eq:intro:wave:example} will consist of no incoming radiation, and $\psi^{\incone}= r^{-p}+\Hb^{p+\epsilon}(\incone)$ for some $p\in\mathbb R_{>-1/2}$ and some $\epsilon>0$ arbitrary; for details how to treat the case of incoming radiation, see already \cref{sec:prop:II}. 
Since we consider the case of no incoming radiation, we will, in particular, also demand that the decay of $f$ is preserved by the vector field $r\pv$.

 We shall now give a simple sketch how to prove \hyperlink{step1}{Steps 1)} to \hyperlink{step3}{3)} from \cref{sec:intro:motivation}. 
 Since energy estimates and propagation of polyhomogeneity are already available in going from $I^0$ to $\scrip$ (\cite{hintz_stability_2020}), we shall mostly restrict to the region $\D^-=\D\cap\{-\delta u\geq v\}$ for some $\delta>0$. This ensures that $|u|\sim |t| \sim r$ in $\D^-$.

 \textit{\hyperlink{step1}{Step 1)}} \textbf{Energy estimate and scattering construction.} 
 We begin by proving a basic energy estimate and scattering for \cref{eq:intro:wave:examplef}. In the spirit of a limiting argument, we first truncate $\psi^{\incone}$ and $f$ so that they are supported away from $\scrim$.
 Multiplying the second of \cref{eq:intro:wave:examplef} with $|t|^{2a+1}\partial_t\psi$, we then obtain an estimate of the form (integration over $S^2$ is implied)
 \begin{equation}\label{eq:intro:example:energyexplicit}
 \int_{\D^-} |t|^{2a} \left((\pv\psi)^2+(\pu\psi)^2+r^{-2}(\sl\psi)^2\right) \dd u \dd v\lesssim \int_{\D^-} |t|^{2a+2}(rf)^2 \dd u \dd v+\int_{\incone^-} |t|^{2a+1} \left((\pu\psi)^2+r^{-2}(\sl\psi)^2 \right)\dd u.
 \end{equation}
After also controlling zeroth order terms in standard fashion (e.g.~by multiplying the first of \cref{eq:intro:wave:examplef} with $r^2|t|^{2a+1}\partial_t\phi$), this energy estimate reads
 \begin{equation}\label{eq:intro:example:energyestimates}
 \norm{\psi}_{\rho_-^{a-0.5}\rho_0^{a}\Hb^{;1}(\D^-)}\lesssim \norm{rf}_{\rho_-^{a+1.5}\rho_0^{a+2}\Hb^{;0}(\D^-)}+\norm{\psi}_{\rho_-^{a}\Hb^{;1}(\incone)}.
 \end{equation}
 By suitable commutations with angular derivatives, Lorentz boosts, scaling and $\partial_t$, we obtain higher order estimates with arbitrary commutations from the set $\Vbt:=\{r\pv,r\pu,\sl\}$, where we recall that it is the no incoming radiation condition that allows us to gain $r$-decay from $\pv$ commutations.

We note that this estimate is not sharp. In particular, the lack of control near $\scrim$ can easily be improved (see \cref{prop:en:main}). This is, however, not necessary for this toy problem. 
By passing to the limit and performing standard limit arguments, we can now remove the support assumption on $\psi^{\incone}$ and $f$, and prove the analogue of \cref{thm:intro:scat} for this setting. 
We will from now on stop keeping track of regularity.

\textit{\hyperlink{step2a}{Step 2a)} (i)} \textbf{Propagation of polyhomogeneity up until $I^0$.}
The above estimate, in particular, shows that solutions to \cref{eq:intro:wave:example} which initially decay like $r^{-a}$ will also decay like $r^{-a}$ towards $I^0$ (in an $L^2$-sense). 
We now want to prove that $\psi$ has an expansion towards $I^0$. The idea is to \textit{chop off} the leading order term in the expansion of $\psi$ by commuting with the scaling vector field $S=u\pu+v\pv$. 

Let's first see what this does at the level of $\psi^{\incone}$, ignoring for now the error term (which cannot be chopped off):
Since $v\pv\psi|_{\incone}$ decays one power faster along $\incone$ than $\psi^{\incone}$ (as a consequence of the no incoming radiation condition), we obtain that $S\psi|_{\incone}=u\pu r^{-p}+\O(r^{-p-1})=-p r^{-p}+\O(r^{-p-1})$, and we thus get that $(S+p)\psi|_{\incone}=\O(r^{-p-1})\in \Hb^{p+1-}(\incone)$.
We can then keep peeling off higher order terms by further commutations; in this way, we  readily deduce that $(S+p+n)\cdots(S+p)\psi|_{\incone}\in\Hb^{p+n+1-}(\incone)$.

We next prove estimates for this commuted, better-decaying quantity: Observe that if $\psi$ solves \cref{eq:intro:wave:example}, then 
\begin{equation}\label{eq:intro:wavescalingcommuted}
\pu\pv (S+p)\psi=\frac{\Dl (S+p)\psi}{r^2} +\frac{M(S+p)r^{-1}\psi}{r^2}.
\end{equation} 
Using the already obtained membership $\psi\in\Hb^{p-1/2-,p-}(\D^-)$, we obtain, by definition of the $\Hb$ spaces, that $r^{-2}(S+p)r^{-1}\psi \in \rho^3_-\rho^3_0\Hb^{p-1/2-,p-}(\D^-)$.
We can thus apply the energy estimate  \cref{eq:intro:example:energyestimates} to \cref{eq:intro:wavescalingcommuted}, treating the error term $\frac{M(S+p)r^{-1}\psi}{r^2}$ as inhomogeneity, to find that $(S+p)\psi\in \Hb^{p+1/2-,p+1-}(\D^-)$. 
Proceeding inductively, we find that
\begin{equation}
(S+p+n)\cdots(S+p)\psi\in \Hb^{p+n+1/2-,p+1+n-}(\D^-).
\end{equation}

This statement, disregarding losses of regularity, is in fact already equivalent to polyhomogeneity near $I^0$: To see this we simply integrate along the integral curves of $S$, which, in coordinates $\rho_-,\,\rho_0$, is  given by $\rho_0\partial_0$, $n+1$ times from the interior of the spacetime (the boundary terms can be averaged away with a cut-off function), cf.~\cref{fig:scaling}.
By simple ODE analysis (cf. \cref{prop:ODE:ODE}), each of these integrations contributes one term in the expansion of $\psi$ towards $I^0$. 
This proves polyhomogeneity towards $I^0$.

We note that the same idea of  proving an expansion by proving faster decay for commuted quantities of the type $(S+p)\psi$ has been applied in \cite{hintz_stability_2020} in order to prove polyhomogeneity \textit{along} spacelike infinity. In a different, but somewhat related context, a similar idea has been applied in \cite{kehrberger_case_2022,kehrberger_case_2022-1} in order to prove estimates going from past timelike infinity to $\scrim$ (involving commutations with $u\partial_t+p$ rather than commutations with $S+p$).

\textit{\hyperlink{step2a}{Step 2a)} (ii)} \textbf{Polyhomogeneity towards $\scrip$.}
This is already contained in \cite{hintz_stability_2020}, cf.~the discussion around~\eqref{eq:intro:box_near_corner}.

\textit{\hyperlink{step2b}{Step 2b)}}\textbf{ Treatment of the error term.} 
We have at this point proved the first part of \cref{thm:intro:prop_polyhom}. In order to also treat the second part, we assume that the data are given by some error term $\psi^{\incone}\in \Hb^{p+\epsilon}(\incone)$ for some $\epsilon>0$. 
We opt to give a lazy proof of concept here:
The energy estimate (extended to all of $\D$), when also including the flux on ingoing cones in the LHS of \eqref{eq:intro:example:energyexplicit}, then shows that $\psi\lesssim |u|^{-p-\epsilon+}$ everywhere. 
Now, inserting this latter estimate into \eqref{eq:intro:wave:example} and integrating a suitable number of times from $\scrim$ gives an expansion in powers of $1/r$ towards $\scrip$, up to error term.

\textit{\hyperlink{step3}{Step 3)}}\textbf{ Computations of the coefficients.} 
We have at this point understood that the solution $\psi$ has a polyhomogeneous expansion up to some error term. The final step consists of computing the coefficients of these expansion. 
This proceeds as described in \hyperlink{step3}{Step 3)} in \cref{sec:intro:motivation}; that is, we let $\phi^{(0)}$ solve $\Box_\eta\phi^{(0)}$ with no incoming radiation and polyhomogeneous data, compute $\phi^{(0)}$ $\ell$-mode by $\ell$-mode using \eqref{eq:intro:conslawf} with $f=0$.
Since we have already proved the polyhomogeneity statements, we no longer need to worry about the summability of the arising expressions in $\ell$ at this point. 

We then let $\phi^{(1)}$ be the solution to $\Box_\eta\phi=-\frac{M\phi^{(0)}}{r^3}$ with trivial data, and compute it, $\ell$ by $\ell$, using again \eqref{eq:intro:conslawf}.
This gives rise to the first conformally irregular term predicted in \cref{cor:intro:polyhom_potential}. See also the proof of \cref{lem:app:potential} for more details on this computation.
 
	 \begin{figure}[htbp]
 \centering
\includegraphics[width=0.2\textwidth]{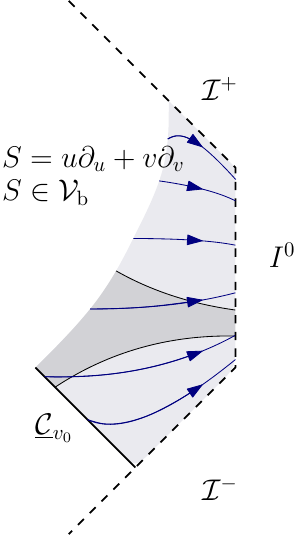}
\caption{Depicted are the integral curves of the scaling vector field. The shaded regions are $\D^+,\D^-$, see~\cref{fig:D-a}.}
\label{fig:scaling}
\end{figure}

  \section{Definitions, Preliminaries and Notation}\label{sec:notation}
	
	In this section, we introduce the analytic (\cref{sec:notation:analytic}) and geometric (\cref{sec:notation:geometric}) framework employed in this paper. 
For the convenience of the reader, we also proved a small glossary of the  symbols used in the paper at the end of the section.
	
	\subsection{Analytic preliminaries: Conormality and Polyhomogeneity}\label{sec:notation:analytic}
	
	In this subsection, we introduce the function spaces and other analytic tools used in the paper. We restrict ourselves to the minimum use of sophisticated spaces that suffice for our problem; for a more general discussion see \cite{grieser_basics_2001, hintz_stability_2020} and references therein.
	
	Let's fix a manifold with corners $X=[0,1)_{x_1}\times...\times[0,1)_{x_n}\times Y$ for some smooth manifold $Y$.
    We call $x_1$, as well as any other function $\bar{x}_1\in\C^{\infty}(X)$ satisfying $\{\bar{x}_1=0\}=\{x_1=0\}$ and $c^{-1}<\partial_{x_1}\bar{x}_1<c$ for $c>0$, a boundary defining function of the $\{x_1=0\}$-boundary.
    Next, we define the vector fields with respect to which we measure smoothness.
	\begin{defi}[$\b$-vector fields]\label{def:notation:b_vector_fields}
		Let 		
		\begin{equation}
			\mathcal{V}=\{x_i \partial_{x_i},Y_i\},
		\end{equation}
		where $Y_i$ are smooth vector fields on $Y$ spanning the tangent space at each point. Furthermore, we define $\Diff_{\b}^1(X)$ to consist of finite sums of vector fields from $\mathcal{V}$ with $\C^\infty(X)$ coefficients and multiplications by $\C^\infty(X)$ functions.
        Finally, let $\Diff^k_{\b}(X)$ denote the space of  finite sums of up to $k$-fold products of elements in $\Diff^1_{\b}(X)$.
	\end{defi}

	In application, we will mostly take $n\leq3$, and $Y=S^2$ (or $Y=S^2\times (0,1)$), with $Y_i$ then denoting the usual spherical derivatives.

    \begin{defi}[Multi-index notation]\label{defi:notation:multiindex}
        Let $\Gamma=\{\Gamma_1,...,\Gamma_m\}$ be a finite set of vector fields and $\norm{\cdot}_{H(X)}$ a norm on $X$.  
        For $\alpha=(\alpha_1,\alpha_2,...,\alpha_m)\in\N^m$ denoting multi-indices, for $|\alpha|=\sum_i\alpha_i$, and for $k\in\mathbb N$, we write
        \begin{equation}
            \norm{\Gamma^kf}_{H(X)}:=\sum_{\abs{\alpha}\leq k}\norm{{\Gamma^{\alpha}}f}_{H(X)}=\sum_{\abs{\alpha}\leq k}\norm{
            \prod_{i}{\Gamma_i^{\alpha_i}}f}_{H(X)}.
        \end{equation}
    \end{defi}

	\begin{defi}[The $\Hb$-norm]\label{def:notation:Hb_spaces}
		Given a measure $\dd y$ on $Y$, we define a naturally weighted $L^2$-norm on $X$, along with higher order variants (Sobolev spaces) and higher order variants with extra weights (weighted Sobolev spaces):
		\begin{nalign}
			\norm{f}_{L^2_{\b}(X)}^2&:=\int f^2 \frac{\dd x_1}{x_1}...\frac{\dd x_n}{x_n}\dd y,\\
			\norm{f}_{\Hb^{;k}(X)}&:=\norm{\mathcal{V}^kf}_{L^2_{\b}(X)},\\	
			\norm{f}_{\Hb^{\vec{a};k}(X)}:&=\norm{f}_{\Hb^{a_1,...,a_n;k}(X)}:=\norm{f{\prod_i x_i^{-a_i}}}_{\Hb^{;k}(X)}.
		\end{nalign}
        We define the corresponding spaces $L^2_{\b}(X)$, $\Hb^{;k}(X)$ and $\Hb^{\vec{a};k}(X)$ to be the completions of smooth, compactly supported functions on $X$ under the respective norms.
        We will occasionally also write $x_1^{a_1}...x_n^{a_n}\Hb^{;k}=\Hb^{a_1,...,a_n;k}$.
		
		Furthermore, we  use the notation $\Hb^{a-;k}([0,1))=\cap_{\epsilon>0}\Hb^{a-\epsilon;k}([0,1))$ as well as 
        $\Hb^{a+;k}([0,1))=\cup_{\epsilon>0}\Hb^{a+\epsilon;k}([0,1))$, and extend this notation appropriately to each component for manifolds with multiple boundaries.
        The spaces $\Hb^{a-;k}$ have no associated norms, but we will write $\norm{f}_{\Hb^{a-;k}}\lesssim A$ when for all $\epsilon>0$ there exists a constant $c(\epsilon)$ (possibly divergent as $\epsilon\to0$) such that $\norm{f}_{\Hb^{a-\epsilon;k}}\leq c(\epsilon) A$; similarly for $A\lesssim\norm{f}_{\Hb^{a+;k}}$.
        
        We  also write $\Hb^{-\infty;k}([0,1))=\cup_{a>0}\Hb^{-a;k}([0,1))$, and $\Hb^{\infty;k}([0,1))=\cap_{a>0}\Hb^{a;k}([0,1))$; and use analogous notation in the case of multiple boundaries.
 
		Finally, we say $f$ is conormal to order $k$ if $f$ is in $\Hb^{;k}$; if $k$ is infinite we will simply say $f$ is conormal.
	\end{defi}
 \begin{rem}
     In the course of the paper, we will encounter cutoff spaces in the context of limiting arguments. We will then, say, consider $X_{\epsilon}=(\epsilon,1)_{x_1}\times Y\subset X$, but still work with the measure inherited from the embedding $X_{\epsilon}\subset X$. For instance, we will write
		$$\norm{f}_{L^2_{\b}(X_{\epsilon})}^2:=\int_{X_{\epsilon}} f^2 \frac{\dd x_1}{x_1}\dd y.
		$$
    Similarly, we extend the weight on $X$ for hypersurfaces.
    For instance, for the hypersurface $\Sigma=\{x_1=x'\}$ for $x'>0$
    \begin{equation}
        \norm{f}_{H^{\vec{a};0}_b(\Sigma)}^2=\int f^2x_1^{-2a_1}...x_n^{-2a_n}\frac{\dd x_2}{x_2}...\frac{\dd x_n}{x_n}.
    \end{equation}
 \end{rem}

 \begin{rem}[Finite boundaries]\label{rem:not:finite_boundary}
     We will encounter manifolds with boundaries, where the boundaries are associated to some hypersurface inside $X$ (e.g. on the initial data slice in $\D$ below).
        For instance, considering $\{x_1\leq1/2\}\subset X$ has an extra boundary at $\{x_1=1/2\}$.
        To differentiate these from $\{x_1=0\}$-boundaries, we call the former a \emph{finite boundary} and the latter a \emph{$\b$-boundary}, respectively.
        Importantly, we will \textit{not} use a weighted norm towards finite boundaries. That is to say, near the finite boundary $x_1=1/2$, we take the norm with respect to $\dd x_1$ rather than $\dd x_1/(x_1-1/2)$. %, for instance $Y=[0,1)_{y_1}\times Y'$ for some smooth manifold $Y'$ we use the measure $\dd y=\dd y_1 \dd y'$ on $Y$ around $y_1=1$.
    This will always be clear from the context.
 \end{rem}

	\begin{rem}\label{rem:not:Hb_Linfty}
		Away from the boundary, the vector fields span the tangent space of $X$.
		Indeed, the $L^2_{\b}$-and the $\Hb$-spaces agree with the usual $L^2$-and $H^{;k}$-spaces on compact subsets of $\mathrm{int}(X)$.
		The normalisation for $L^2_{\b}$ is motivated by the observation that, for $\mathrm{dim}(X)=m$ and $k>\lceil m/2\rceil$,
		\begin{equation}\label{eq:not:Sobolev}
			\Hb^{\vec{a};k}(X)\subset x_{1}^{a_1}...x_n^{a_n}C^{k-\lceil m/2\rceil}(X)\subset \Hb^{\vec{a}-;k-\lceil m/2\rceil}(X).
		\end{equation}
	\end{rem}
	
	\begin{rem}\label{remark:notation:equivalence_of_norms}
		We note that the $\Hb^{;k}$-spaces only depend on the smooth structure of $X$.
		For instance, when $X=[0,1)_x\times Y$, for any other boundary defining function $\bar{x}(x):[0,1)\to[0,1)$, the spaces defined with respect to $\bar{x}$ are the same as $\Hb^{;k}$, and the norms are equivalent.
	\end{rem}

	Having introduced function spaces with respect to which we measure regularity (conormality) of functions, we now move on to introduce \textit{polyhomogeneity}. Loosely speaking, a function is polyhomogeneous if it has an expansion into $x_i^z\log(x_i)^k$.
In order to precisely capture such expansions, we first introduce index sets:
	\begin{defi}[Index sets]\label{def:notation:index_sets}
		A subset of $\mathcal{E}\subset\R\times\N$ is called an index set if it satisfies the following conditions
  \begin{enumerate}[label=\textbf{$\mathcal{E}$.\arabic*}]
			\item $(z,k)\in\mathcal{E}$ and $k\geq1$ implies $(z,k-1)\in\mathcal{E}$\label{item1:notation:index_sets},
			\item $(z,k)\in\mathcal{E}$ and $k\geq0$ implies $(z+1,k)\in\mathcal{E}$\label{item2:notation:index_sets},
			\item $\mathcal{E}_{\leq c}:=\{(z,k)\in\mathcal{E}| z\leq c\}$ is finite for all $c\in\R$\label{item3:notation:index_sets}.
		\end{enumerate}
		We define $\min(\mathcal{E})=(z,k)\in\mathcal{E}$  such that $\forall(z',k')\in\mathcal{E}, z\leq z' \text{ and } z=z'\Rightarrow k\geq k'$.
        For $\bullet\in\{\geq,>\}$, we use the notation $(z,k)\bullet (z',k')$ if $z>z'$ or $z=z'$ and $k'\bullet k$.
        We also write $(z,k)>z'$ whenever $z>z'$. In particular, $\min\E>z'$ means that $z>z'$ for all $(z,k)\in\E$.
	\end{defi}

	\begin{defi}[Polyhomogeneity]\label{def:notation:polyhom}
		Let $X=[0,1)_{x_1}\times Y$ be a manifold with boundary.
		Given $l\in\mathbb N$ and an index set $\mathcal{E}$, we define the corresponding \textbf{polyhomogeneous space} $\A{phg}^{\mathcal{E};l}(X)$ as follows: For $u\in x_1^{-\infty}\Hb^{;l}(X)$, we say $u\in\A{phg}^{\mathcal{E};l}(X)$ if there exist $v_{z,k}\in \Hb^{;l}(Y)$ for $(z,k)\in\mathcal{E}$ such that for all $c\in \R$ 
		\begin{equation}\label{eq:notation:polyhom_def_1}
			\begin{gathered}
				u-\sum_{(z,k)\in\mathcal{E}_{\leq c}} v_{z,k}x_1^{z}\log^kx_1\in x_1^c \Hb^{;l}(X).
			\end{gathered}
		\end{equation}
		
		At a corner, we define \textbf{mixed $\b-$polyhomogeneous spaces} as follows: Let $X=[0,1)_{x_1}\times[0,1)_{x_2}\times Y$ be a manifold with corners, $a_2\in\R$ and $\mathcal{E}_1$ an index set.
		For $u\in x_1^{-\infty}x_2^{-\infty}\Hb^{;l}(X)$, we say $u\in \A{phg,b}^{\mathcal{E}_1,a_2;l}(X)$ if there exists $v_{z,k}\in x_2^{a_2}\Hb^{;l}([0,1)_{x_2}\times Y)$ such that
		\begin{equation}
			u-\sum_{(z,k)\in\mathcal{E}_{\leq c}} v_{z,k}x_1^{z}\log^kx_1\in x_1^c x_2^{a_2}\Hb^{;l}(X).
		\end{equation}
		
		We define polyhomogeneous spaces at the corner for a manifold with corners $X=[0,1)_{x_1}\times[0,1)_{x_2}\times Y$.
		For $u\in x_1^{-\infty}x_2^{-\infty}\Hb^{;l}(X)$ we say $u\in\A{phg}^{\mathcal{E}_1,\mathcal{E}_2;l}(X)$ if there exists $v_{z,k}\in\A{phg}^{\mathcal{E}_2;l}([0,1)_{x_2}\times Y)$ such that
		\begin{equation}
			u-\sum_{(z,k)\in(\mathcal{E}_1)_{\leq c}} v_{z,k}x_1^{z}\log^kx_1\in \A{b,phg}^{c,\mathcal{E}_2;l}(X).
		\end{equation}
		
		We will mainly use the polyhomogeneous spaces with ${l}=\infty$, and we omit the superscript in this case, e.g. $\A{phg}^{\mathcal{E}}([0,1)):=\A{phg}^{\mathcal{E};\infty}([0,1))$.
		For a manifold with multiple boundaries, $B_i$, and corners of codimension 2, we define $\mathcal{A}_{\vec{\beta}}^{\vec{\alpha};l}$ for $\alpha_i\in\{\mathcal{E}_i,b_i\},\beta_i\in\{\mathrm{phg,b}\}$ if around each corner $B_i\cap B_j$  we have $\chi u\in\A{\beta_i,\beta_j}^{\alpha_i,\alpha_j;l}$, where $\chi$ is a smooth cutoff function on $X$ localising around $B_i\cap B_j$.

  A tacit convention used throughout the paper is that whenever we specify a function $f\in\A{phg}^{\E}(X)$, then $\E$ will be assumed to be an index set.
	\end{defi}

	\begin{rem}[Equivalent formulation]\label{rem:notation:equivalent_polyhoms}
     Note that we may give an alternative, more geometric characterisation\footnote{The equivalence follows from multiple applications of \cref{item:ode:1d}, see also (2.33) and (2.34) in \cite{hintz_stability_2020}.} of a polyhomogeneous function $u\in\A{phg}^{\mathcal{E}}([0,1)_{x}\times Y)$ (at infinite regularity) as
		\begin{equation}\label{eq:notation:polyhom_def_2}
			\Big(\prod_{(z,k)\in\mathcal{E}_{\leq c}}(x\partial_x+z) \Big)u\in x^c \Hb^{;\infty}([0,1)_{x}\times Y).
		\end{equation}
  We will make use of this equivalent formulation throughout the paper.
		From \cref{eq:notation:polyhom_def_2}, it is easy to see that the definition of $\mathcal{E}$ only depends on the smooth structure of $[0,1)_{x}\times Y$, that is: Given any other boundary defining function $\bar{x}$, the index sets with respect to $x$ and $\bar{x}$ coincide. 
		We will make use of this freedom and frequently change our coordinates to simplify computations, knowing that this will produce the same index set.
	\end{rem}
	
	\begin{rem}[Finite regularity]
		For $l<\infty$, the natural generalisation of the characterisation of polyhomogeneity \cref{eq:notation:polyhom_def_2} no longer coincides with \cref{def:notation:polyhom}. 
		The reason is that when we integrate \cref{eq:notation:polyhom_def_2} with right hand side in $x^c\Hb^{;l}$, the form we get in \cref{eq:notation:polyhom_def_1} has an error term in $x^c\Hb^{;l}$.
		In contrast,  differentiating \cref{eq:notation:polyhom_def_1} yields error terms $x^c \Hb^{;l-l'}$ in \cref{eq:notation:polyhom_def_2} for some $l'>0$.
Throughout the paper, we almost exclusively work with polyhomogeneous spaces at infinite regularity; we only provided the definitions at finite regularity to make clear that all our proofs still work at finite regularity.
\end{rem}

    To perform computations with polyhomogeneous functions, it is useful to introduce operations on the index sets as well.
	\begin{defi}[Index set operations]\label{def:notation:operations_index_sets}
        For $\mathcal{E}\subset\R\times\N$ satisfying \cref{item3:notation:index_sets}, we introduce
		\begin{equation}\overline{\mathcal{E}}=\bigcap_{\mathcal{E}\subset\mathcal{E}'}\mathcal{E}',\qquad \mathcal{E}' \text{ are index sets}.
		\end{equation}
		For $(z,k)\in\R\times\N$, we write $\overline{(z,k)}:=\overline{\{(z,k)\}}=\{(z',k')|z'-z\in\mathbb N, k'\in\mathbb N_{\leq k}\}$.
 
		For $\E,\mathcal{E}_1,\mathcal{E}_2\subset\R\times\N$, $z\in\R$ and $k\in\N$, we define 
		\begin{nalign}\label{index set operations}
			&\mathcal{E}_1\cupdex\mathcal{E}_2:=\overline{\{(z,k)| \exists(z,k_i)\in\mathcal{E}_i,\, k_1+k_2+1\geq k\}\cup\mathcal{E}_1\cup\mathcal{E}_2},\\
			&\mathcal{E}_1+\mathcal{E}_2:=\{(z,k)|\exists (z_i,k_i)\in\mathcal{E}_i,\, z_1+z_2=z,\,k_1+k_2=k\},\\
			&\mathcal{E}_{\bullet c}:=\{(z,k)\in\mathcal{E}:z\bullet c\},\quad \bullet\in\{\geq,\leq,<,>,=\},\\
            &\E\pm\tilde{z}:=\{(z\pm\tilde{z},k):(z,k)\in\E \},\qquad \E\cupdex z:=\E\cupdex\{(z,0)\}.
		\end{nalign}
	\end{defi}

    \begin{rem}
        Note that, for $\mathcal{E}_1,\E_2$  index sets, the resulting set $\E_1+\E_2$ is an index set.
        Note also that $\cupdex$ is both associative and commutative on index sets, but not in general: for $p,q\in\N_{>0}$
        \begin{nalign}
            0\cupdex\mindex{q}=\mindex{0},\qquad \mindex{0}\cupdex\mindex{q}=\mindex{0}\cup\overline{(q,1)},\\\mindex{p}\cupdex(0\cupdex\mindex{q})=\mindex{0}\cup\overline{(p,1)},\quad
            (\mindex{p}\cupdex 0)\cupdex\mindex{q}=\mindex{0}\cup\overline{(q,1)}.
        \end{nalign}
    \end{rem}
	
    To already highlight the importance of these notations, we provide two examples:
    \begin{itemize}
        \item For $f_i\in\A{phg}^{\E^i}([0,1)_x)$ with $i\in\{1,2\}$, we have $f_1\cdot f_2\in\A{phg}^{\E^1+\E^2}([0,1)_x)$.
        \item For $f=x\partial_x F\in\A{phg}^{\E}([0,1)_x)$ and $F(1/2)=0$, we have $F\in\A{phg}^{\E\cupdex 0}([0,1)_x)$.
    \end{itemize}
 
	\subsection{Geometric preliminaries: The compactification \texorpdfstring{$\D$}{} and the boundaries \texorpdfstring{$\mathcal{I}^\pm$ and $I^0$}{scriminus, i0 and scriplus}}\label{sec:notation:geometric}
	The majority of this paper takes place in Minkowski spacetime $(\mathbb{R}^{3,1},\eta)$ with the usual coordinates $t\in\mathbb R, \, x\in\mathbb R^3$, and metric $\eta=-\dd t^2+\dd x^2$.
	Unless otherwise stated, we use $\omega$ to denote coordinates on $S^2$, and we introduce the standard null coordinates
	\begin{equation}
		u=\frac{t-\abs{x}}{2},\qquad  v=\frac{t+\abs{x}}{2}.	
	\end{equation}

    The entirety of the paper is focused on a region close to spacelike infinity ($I_0$), and we set 
	\begin{nalign}\label{eq:notation:regions}
		&\Dopen_{u_0,v_0}^{u_\infty,v_\infty}=\{u\in(u_\infty,u_0],v\in[v_0,v_\infty)\}\\
		&\C_{u_1}^{v_0,v_\infty}=\{u=u_1,v\in[v_0,v_\infty)\},\quad \Cbar_{v_1}^{u_0,u_\infty}=\{v=v_1,u\in(u_\infty,u_0]\}.
	\end{nalign}
	We will always take $-u_0,v_0>0$.
	In most cases, we will set $u_\infty=-\infty,v_\infty=\infty$ and, in this case, omit these from the notation.
	When clear from context, we will also drop $u_0,v_0$ from the notation; e.g.~we write $\Dopen=\Dopen^{-\infty,\infty}_{u_0,v_0}$. Moreover, we write $\Dopen^{u_1}_{u_0}=\Dopen^{u^1,\infty}_{u_0,v_0}$, and $\Dopen^{v_1}_{v_0}=\Dopen^{-\infty,v_1}_{u_0,v_0}$; it will always be clear from our use of $u$ vs.~$v$-variable which of these we will mean. 
 
	Let us choose $\delta\in(0,1)$.
    We introduce the regions $X^-=X\cap \{-\delta u\geq v\},\, X^+=X\cap \{-\delta u\leq 2v\}$, where $X$ stands for either a hypersurface or a region of spacetime. See \cref{fig:D-a} for a Penrose diagrammatic depiction of the regions $\Dopen^-$ and $\Dopen^+$.
	
		   \begin{figure}[htpb]
\centering
\begin{subfigure}{0.45\textwidth}
\centering
    \includegraphics[width=120pt]{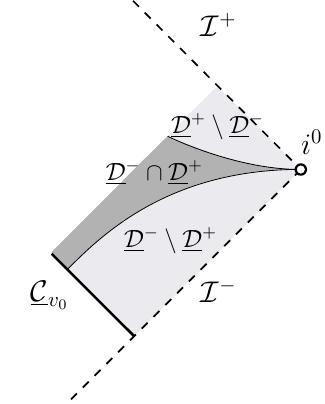}
    \caption{Penrose diagram.}
    \label{fig:D-a}
\end{subfigure}
\hspace{0.05\textwidth}
\begin{subfigure}{0.45\textwidth}
\centering
 \includegraphics[width=95pt]{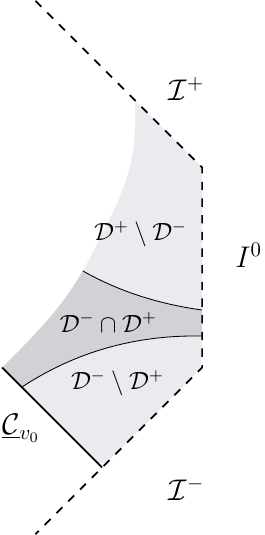}
    \caption{$\b$-compactification.}
    \label{fig:D-b}
\end{subfigure}
\caption{Depiction of the regions $\Dopen^{\pm}$ in a Penrose diagram and with respect to the compactification $\D$.}\label{fig:D}
\end{figure}
	We now introduce the compactifications of Minkowski spacetime on which we will prove regularity results. 
    Let's introduce the boundary defining functions $\rho=\rho_-\rho_0\rho_+=r^{-1}$, $\rho_\scri=\rho_-\rho_+$, where
	\begin{equation}\label{eq:notation:defining_functions}
		\rho_-=\frac{v}{r}, \quad \rho_0=\frac{r}{\abs{u}v}, \quad \rho_+=\frac{\abs{u}}{r}.
	\end{equation}
	Clearly, $\rho_\bullet\in C^\infty(\Dopen\to(0,1))$.
	\begin{defi}[Compactification]\label{def:notation:comp}
	     \begin{enumerate}
	         \item Let $\D$ be the manifold with corners defined by smoothly extending $\rho_\pm,\rho_0$ to 0, resulting in the $\b-$boundaries $\scri^\pm,\,I^0$, as well as the finite boundaries $\incone,\outcone{0}$. See \cref{fig:intro:a}.
          \item Let $\Dbold$ be the manifold with corners defined by smoothly extending $\boldsymbol{\rho_0}=\rho_0\rho_-$ and $\boldsymbol{\rho_+}=\rho_+$ to 0, resulting in the $\b-$boundaries $I_0,\,\scrip$, as well as the finite boundaries $\incone,\outcone{0}$. See \cref{fig:intro:b}.
	     \end{enumerate} 
	\end{defi}
    We give some detail for the construction of $\D$, but refer to Section 2.1 of \cite{hintz_stability_2020} for more:
    In $\Dopen^-$, we use the coordinates $\rho_0,\rho_-$ with ranges $(0,c)$ for some $c>0$ to define a manifold with boundaries by extending $\rho_0,\rho_-$ to 0 smoothly.
    We proceed similarly in $\Dopen^+$ with $\rho_0,\rho_+$ and use \cref{eq:notation:defining_functions} to patch the different coordinate charts together, thus defining the manifold $\D$.
    See \cref{fig:D-b} for a depiction of the regions $\D^-$ and $\D^+$ with respect to the compactification~$\D$.

	We emphasise that the compactifications will only be used for notational convenience, and all our estimates really take place in $\Dopen$. 
	However, referring to the different compactifications  is extremely convenient, as the norms we define are going to be equivalent\footnote{Let us also mention that this geometric viewpoint is the reason  to include \cref{item2:notation:index_sets} in the definition of an index set. For instance, the function $f(x)=x+x^3$ on $[0,1)$ has no $x^2$ part, but this information is disregarded by the inclusion $f\in\A{phg}^{\mindex{1}}([0,1))$, for the benefit of making the space $\A{phg}$ geometric.} for different choices of boundary functions; therefore, we can, depending on the situation, pick these coordinates to best suit our needs.\footnote{See Section 1.2 of \cite{hintz_lectures_2023} for more on this point in different contexts.}

    Instead of $\partial_{\rho_-}, \,\partial_{\rho_0}$ etc., we will simply write $\partial_-, \,\partial_0$, where the coordinate kept fixed will always be clear from context.
	For example, when working near the future corner $\scri^+\cap I^0\subset \D^+$ we use $\rho_+,\,\rho_0$ as coordinates and write $\partial_0=\partial_{\rho_0}|_{\rho_+}$.
    Similarly, a derivative of the form $\partial_u$ or $\partial_v$ is always to be understood in $(u,v)$-coordinates, and a derivative of the form $\partial_t$ or $\partial_r$ is always to be understood in $(t,r)$-coordinates unless specified otherwise.
	
	In the next lemma, we record some expressions for the usual vector fields (including scaling $S=t\partial_t+r\partial_r=u\pu+v\pv$ and Lorentz boosts $B_i=t\partial_{x_i}+x_i\partial_t$), volume forms and wave operators in $\rho_0,\rho_\pm$ coordinates:
    We introduce the notation $\Omega^i=\epsilon^{ijk}x_j\partial_k$ for spherical derivatives, where $\epsilon^{ijk}$ is the totally antisymmetric tensor in 3 dimensions, and we sum over repeated indices.
	
	\begin{lemma}\label{lemma:notation:coordinates}
		In $\D^-$, using coordinates $\rho_-=\frac{v}{-u}$, $\rho_0=\frac1v$ and $\hat{x}=x/r$ we have 
		\begin{nalign} \label{eq:notation:partial_derivatives}
  &\partial_r=\rho_0\big((1-\rho_-)\rho_-\partial_--\rho_0\partial_0\big),\qquad \partial_{x_i}=\hat{x}_i\partial_r+\frac{\rho_0\rho_-}{1+\rho_-}\epsilon^{ijk}\hat{x}_j\Omega^k,\qquad\partial_t=\rho_0\big((1+\rho_-)\rho_-\partial_--\rho_0\partial_0\big),\\
			&\pu=\rho_-^2\rho_0\p-,\qquad \pv=-\rho_0^2\p0+\rho_0\rho_-\p-,\qquad 2\partial_t=\partial_u+\partial_v,\\
			&S=-\rho_0\partial_0,\qquad B_i=\hat{x}_i(4\rho_-\partial_--2\rho_0\partial_0)-\frac{1-\rho_-}{1+\rho_-}\epsilon^{ijk}\hat{x}_j\Omega^k.
		\end{nalign}
		Analogous results hold in $\D^+$. In particular, using $\rho_0=\frac{1}{-u},\rho_+=\frac{-u}{v}$ coordinates, we have
		\begin{equation}
			\pu=-\rho_0\rho_+\p++\rho_0^2\p0,\qquad \pv=-\rho_0\rho_+^2\p+.
		\end{equation}
  
         Furthermore, we have the following identity for the volume form:
		\begin{align}\label{eq:notation:volumeform}
			\dd \mu=\dd u \dd v\dd g_{S^2} =\frac{1}{\rho_0^2\rho_\pm}\frac{\dd \rho_\pm}{\rho_{\pm}}\frac{\dd\rho_0}{\rho_0}\dd g_{S^2},
		\end{align}
        where $g_{S^2}$ is the standard metric on the unit sphere $S^2$ and $\dd g_{S^2}$ the associated volume form.
        
		Finally, we can express the twisted wave operator in $\D^\pm$ as 
		\begin{equation}\label{eq:notation:box_near_corner}
			P_{\eta}:=r\Box_{\eta}r^{-1}=-\partial_t^2+\partial_r^2+\frac{1}{r^2}\Dl=-\partial_v\partial_u+\frac{1}{r^2}\Dl=\rho_0^2\rho_\pm^2\bigg(-\partial_{\rho_\pm}(\rho_\pm\partial_{\rho_\pm}-\rho_0\partial_{\rho_0})+\frac{1}{(1+\rho_\pm)^2}\Dl\bigg).
		\end{equation}	
	\end{lemma}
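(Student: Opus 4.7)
The statement is a straightforward (if tedious) exercise in the chain rule; my plan is to break it into a handful of elementary substeps and verify each in turn.

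\textbf{Step 1 (Inverting the coordinate change).} In $\D^-$, the defining relations $\rho_0=1/v$ and $\rho_-=v/(-u)$ invert to $v=1/\rho_0$ and $u=-1/(\rho_-\rho_0)$, whence $r=v-u=(1+\rho_-)/(\rho_-\rho_0)$ and $t=u+v=-(1-\rho_-)/(\rho_-\rho_0)$. In $\D^+$, one inverts $\rho_0=1/(-u)$ and $\rho_+=(-u)/v$ analogously to get $u=-1/\rho_0$, $v=1/(\rho_0\rho_+)$. These give, once and for all, explicit expressions for any smooth function of $(u,v)$ (e.g.\ $r^{-2}=\rho_-^2\rho_0^2/(1+\rho_-)^2$ in $\D^-$) in terms of the new coordinates.

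\textbf{Step 2 (Computing $\pu,\pv$).} Direct differentiation from Step 1 yields the Jacobian entries $\partial_u\rho_-=\rho_-^2\rho_0$, $\partial_v\rho_-=\rho_-\rho_0$, $\partial_u\rho_0=0$, $\partial_v\rho_0=-\rho_0^2$, and the formulas for $\pu,\pv$ displayed in \cref{eq:notation:partial_derivatives} then follow immediately from the chain rule. The $\D^+$ case is parallel. All other first-order expressions (for $\pt$, $\partial_r$, $S$, $B_i$, $\partial_{x_i}$) are derived from these by the standard identities $2\pt=\pu+\pv$, $2\partial_r=\pv-\pu$, $S=t\pt+r\partial_r=u\pu+v\pv$, $B_i=t\partial_{x_i}+x_i\pt$, and the orthogonal decomposition $\partial_{x_i}=\hat x_i\partial_r+r^{-1}(\delta_{ij}-\hat x_i\hat x_j)\partial_{\hat x_j}$, where the angular piece is re-expressed through the identity $\epsilon^{ijk}\hat x_j\Omega^k=(\delta_{ij}-\hat x_i\hat x_j)\partial_{x_j}\cdot r$ (up to signs) to introduce the $\Omega^k$. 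The $\rho_0\rho_-/(1+\rho_-)$ weight appearing there is exactly $1/r$ written in $\D^-$ coordinates.

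\textbf{Step 3 (Volume form and wave operator).} For the volume form, compute $\det(\partial(u,v)/\partial(\rho_-,\rho_0))=-1/(\rho_-^2\rho_0^3)$ from Step 1, so $\dd u\,\dd v=\dd\rho_-\,\dd\rho_0/(\rho_-^2\rho_0^3)=\rho_-^{-1}\rho_0^{-2}(\dd\rho_-/\rho_-)(\dd\rho_0/\rho_0)$, giving \cref{eq:notation:volumeform}; the $\D^+$ version is identical. For the twisted wave operator, use the classical identity $r\Box_\eta r^{-1}=-\pu\pv+r^{-2}\Dl$ (valid in $3+1$ dimensions since $\Box_\eta r^{-1}=0$ away from the origin); substitute the Step 2 expressions and expand using the product rule, keeping track of $\pu\rho_-=\rho_-^2\rho_0$ and $\pu\rho_0=0$, to obtain $-\pu\pv=\rho_0^2\rho_-^2\bigl(-\partial_-(\rho_-\partial_--\rho_0\partial_0)\bigr)$, while $r^{-2}\Dl$ becomes $\rho_0^2\rho_-^2/(1+\rho_-)^2\cdot\Dl$ by Step 1. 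Combining these yields the displayed formula for $P_\eta$.

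The only place where one has to be careful is the bookkeeping in Step 3's wave-operator calculation, where one must remember that $\pu$ differentiates the $\rho_-$-dependent coefficient inside $\pv$ before acting as $\rho_-^2\rho_0\partial_-$ on the remaining argument; this generates the subleading $-\rho_0^2\rho_-^2\partial_-$ term visible in the expansion. Beyond that, no nontrivial ideas are needed: the lemma is purely a dictionary translating the coordinate conventions introduced in \cref{sec:notation:geometric} into the $\b$-coordinates that are used for the rest of the paper.
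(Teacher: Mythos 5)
Your proposal is correct and takes essentially the same approach as the paper's own proof: both compute the Jacobian of the coordinate change by direct differentiation, express $\pu,\pv$ through the chain rule, derive the remaining vector fields and the wave operator by substitution, and obtain the volume form from the Jacobian determinant. The only difference is that the paper works out the $\D^+$ chart explicitly and leaves $\D^-$ to the reader, while you do the reverse.

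One small caution, which applies equally to the paper's proof since both of you wave at rather than explicitly carry out the last part of the first-order derivative dictionary: if you actually substitute your (correct) formulas $\pu=\rho_-^2\rho_0\partial_-$, $\pv=\rho_-\rho_0\partial_--\rho_0^2\partial_0$ into $2\partial_t=\pu+\pv$ and $2\partial_r=\pv-\pu$, you obtain $\partial_t=\tfrac12\rho_0\big((1+\rho_-)\rho_-\partial_--\rho_0\partial_0\big)$ and $\partial_r=\tfrac12\rho_0\big((1-\rho_-)\rho_-\partial_--\rho_0\partial_0\big)$, which are \emph{half} of the expressions displayed in the lemma, and likewise the $\hat x_i(\cdot)$ part of the stated $B_i$ comes out as twice $v\pv-u\pu=\hat x_i(2\rho_-\partial_--\rho_0\partial_0)+\dots$. (You also flag "up to signs" on the angular identity for $\partial_{x_i}$; working it out gives $\partial_{x_i}=\hat x_i\partial_r-r^{-1}\epsilon^{ijk}\hat x_j\Omega^k$, opposite to the sign printed.) These are typos in the statement rather than errors in your method — the relations $2\partial_t=\pu+\pv$, $S=-\rho_0\partial_0$, the volume form, and the wave-operator formula, which are the ones actually used downstream, all check out — but a proof that genuinely "derives the formulas from the chain rule" would reveal the mismatch, so it is worth saying so explicitly rather than asserting the displayed expressions follow immediately.
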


	\begin{proof}
        This is a straight-forward computation.
        We record the most used computations for the convenience of the reader.
		We'll prove the expression at the future corner (i.e.~in coordinates $\rho_0,\rho_+$), the other case follows similarly. 
        First, we have		
		\begin{equation}
			\begin{gathered}
				\partial_u=-\frac{1}{v}\partial_++\frac{1}{u^2}\partial_0=-\rho_0\rho_+\partial_++\rho_0^2\partial_0,\qquad
				\partial_v=\frac{u}{v^2}\partial_+=-\rho_0\rho_+^2\partial_+.
			\end{gathered}
		\end{equation}
        From these, $2\partial_t=\pu+\pv$ and $2\partial_r=\pv-\pu$ follow.
        For $\partial_{x_i}$ we can verify the formula by showing $\partial_{x_i}x_j=\delta_{ij}$.
		  Using~\cref{eq:notation:partial_derivatives}, we get
		\begin{equation}
			\begin{gathered}
				-\partial_t^2+\partial_r^2=-\partial_u\partial_v=-\partial_u(\frac{u}{v^2}\partial_+)=-\frac{1}{v^2}\partial_+-\frac{u}{v^2}(-\rho_0\rho_+\partial_++\rho_0^2\partial_0)\partial_+\\
				=\rho_0^2\rho_+(-\rho_+\partial_+-\rho_+^2\partial_+^2+\rho_0\partial_0\rho_+\partial_+)=-\rho_0^2\rho_\scri(\rho_+\partial_+-\rho_0\partial_0)(\rho_+\partial_+).
			\end{gathered}
		\end{equation}
		Furthermore, $r=v-u=\rho_0^{-1}(\rho_+^{-1}+1)\implies \frac{1}{r^2}=\rho_0^2\rho_+^2\frac{1}{(1+\rho_+)^2}$. Substituting these into $P_{\eta}$ proves \eqref{eq:notation:box_near_corner}.

        The proof of \cref{eq:notation:volumeform} proceeds similarly straight-forwardly. 
	\end{proof}
	On $\D$, we will use the conormal spaces introduced before and write $\Hb^{a_-,a_0,a_+;k}(\D)$ for the space where the weights have subscripts matching those of the respective boundaries.
    The same applies to the polyhomogeneous spaces $\A{phg}$ and to the compactification $\Dbold$ (which only has two b-boundaries). 
    From \cref{lemma:notation:coordinates}, it follows directly that the vector fields $v\pv,\, u\pu,\,\Omega$ span $\Diff^1_{\b}(\D)$. We include $1$ in this set of vector fields, and write
    \begin{equation}
     \Vb=\{v\partial_v,u\partial_u,\Omega,1\},
    \end{equation}
    In a striking abuse of notation, we also write this as $\Vb=\{v\partial_v,u\partial_u,\sl,1\},$
    where $\sl$ denotes the covariant derivative on the unit sphere.
    We will apply the notation of \cref{defi:notation:multiindex} to $\Vb$ as well.
    From \cref{eq:notation:volumeform}, we see that the measure associated to $\Hb(\D)$, say, near the past corner, is given by \begin{equation}\label{eq:notation:volumeformD}\frac{\dd\rho_-}{\rho_-}\frac{\dd \rho_0}{\rho_0}\dd g_{S^2}\sim \frac{\dd u}{|u|}\frac{\dd v}{v}\dd g_{S^2}.\end{equation} We therefore define

 \begin{defi}[Measures]\label{def:not:measures}
        Let's write  $\dd\mu=r^2\dd t\dd r\dd g_{S^2}$, $\dd\tilde{\mu}=\dd t\dd r\dd g_{S^2}$ and $\dd\mu_{\b}=\frac{\dd u}{|u|}\frac{\dd v}{v}\dd g_{S^2}$ for the measure and weighted measures on Minkowski. 
	With slight abuse of notation, we write $\dd \mu$, $\dd\tilde{\mu}$, $ \dd\mu_{\b}$ for the measures $r^2\dd g_{S^2}\dd u$, $\dd g_{S^2}\dd u$, $\dd g_{S^2}\frac{\dd u}{|u|}$ induced on $\Cbar_v$, similarly for $\outcone{}$.
    \end{defi}

    \begin{rem}[Explicit form of norms]
        We note that \cref{def:notation:comp,def:notation:Hb_spaces} already define the norm $\norm{\cdot}_{\Hb^{\vec{a};k}(\D)}$.
        However, for
    concreteness, we give the following equivalence:
    \begin{multline}\label{not:eq:explicit_norm}
        \norm{f}_{\Hb^{a_-,a_0,a_+;k}(\D)}^2=\int_{\D} \dd g_{S^2}\frac{\dd\rho_0}{\rho_0}\frac{\dd\rho_-}{\rho_-}\chi \big(\rho_0^{-a_0}\rho_-^{-a_-}\Vb^k f\big)^2+\int_{\D} \dd g_{S^2}\frac{\dd\rho_0}{\rho_0}\frac{\dd\rho_+}{\rho_+}(1-\chi) \big(\rho_0^{-a_0}\rho_+^{-a_+}\Vb^k f\big)^2\\
        \sim\int_{\D}\dd g_{S^2}\frac{\dd v}{v}\frac{\dd u}{u}(\rho_-^{-a_-}\rho_0^{-a_0}\rho_+^{-a_+}\Vb^k f)^2.
    \end{multline}
    for a cutoff function $\chi\in\A{phg}^{\mindex{0}}(\D)$ localising to $\D^-$ and equal to 1 in a neighbourhood of $\scrim$.
    \end{rem}

We finally define the energy momentum tensors:
\begin{defi}[Energy momentum tensor]\label{def:notation:T}
We define the energy momentum tensor ($\T$) and its twisted analogue~($\tT$):
	\begin{nalign}\label{eq:notation:energyMomentumTensor}
		&\T[f]=\partial_\mu f\partial_\nu f-\frac{1}{2}\eta_{\mu\nu}\partial f\cdot\partial f,\\
		&\tT[f]=\tilde{\partial}_\nu f\tilde{\partial}_\mu f-\frac{1}{2}\eta_{\mu\nu}\big(\tilde{\partial}f\cdot\tilde{\partial}f+w f^2\big),\\
		&\tilde{\partial}(\cdot)=\beta\partial(\beta^{-1}\cdot ),\quad w=-\frac{\Box_\eta\beta}{\beta}.
	\end{nalign}
 \end{defi}
	A simple computation shows that $\divergence\T[f]=0$ for $f$ solving $\Box_\eta f=0$.
    For the rest of the paper, we make the choice $\beta=r^{-1}$ in $\tT$, so that we have $\divergence (T\cdot\tT[f])=0$ and $w=0$ for $T=\partial_t|_x$.

    \subsection{Spaces capturing the no incoming radiation condition}
    
	As already discussed in \cref{intro:sec:scattering_theory}, we will capture the \emph{no incoming radiation condition} by regularity with respect to $r\pv$ vector fields.
    We introduce the corresponding spaces and norms.
		
	\begin{defi}[$\tilde{H}_{\b}$ vector fields]\label{def:notation:Htilde}\label{def:not:Hbt}
		We write $\Hbt^{a_-,a_0,a_+;k}(\D)$ for the completion of $\C_c^\infty(\Dopen)$ with respect to the norm
		\begin{equation}
			\norm{f}_{\tilde{H}_{\b}^{\vec{a};k}(\D)}:=\norm{\Vbt^kf}_{\Hb^{\vec{a};0}(\D)},\qquad\Vbt=\{r\pv,u\pu,\Omega,1\}.
		\end{equation}
	\end{defi}	

    \begin{rem}
        In an analogous fashion to \cref{eq:notation:volumeformD}, it can be proved that $\Diff^1_{\b}(\Dbold)$ is spanned by the vector fields $\Vbt$, and has volume form $\frac{\dd v}{r}\frac{\dd u}{u}\dd g_{S^2}$.
         As the definition of $\Hbt(\D)$ involves the vector fields of $\Diff^1_\b(\Dbold)$, it might seem more natural to use function spaces on $\Dbold$, however, our estimates are better suited for the volume form of $\Hbt$ spaces.
        Nonetheless, we still have the following $L^\infty$ embeddings from Sobolev inequality, \cref{eq:not:Sobolev},
        \begin{nalign}
        	\Hbt^{a_0,a_0;3+k}(\D^-)\subset x_0^{a_0}x_-^{a_0}C^{k}(\Dbold^-)\subset \Hbt^{a_0-\epsilon,a_0-\epsilon;k}(\D^-),\\
        	\Hb^{a_0;3+k}(\Dbold^-)\subset x_0^{a_0}x_-^{a_0}C^{k}(\Dbold^-)\subset \Hb^{a_0-\epsilon;k}(\Dbold^-).
        \end{nalign}
        This in turn implies
        \begin{equation}\label{eq:not:inclusionDbold}
        	\Hbt^{a_0,a_0;\infty}(\D^-)\subset\Hb^{a_0-\epsilon;\infty}(\Dbold^-)\subset\Hbt^{a_0-2\epsilon,a_0-2\epsilon;\infty}(\D^-).
        \end{equation}
    \end{rem}

	\subsection{A list of symbols}\label{sec:notation:symbols}
	In the following list, we record the main symbols used in the paper:
	\begin{itemize}[itemsep=-1ex]
		\item $\D,\Dopen,\Dbold$ denote spacetime regions, $\C,\Cbar$ outgoing and ingoing null cones, and $\Sigma$ will denote aspacelike hypersurface.
		\item $\T,\tilde{\T}$ denote the (twisted) energy momentum tensors defined in \cref{def:notation:T}.
		
		\item $P_\eta=r\Box_{\eta} r^{-1}$ is the twisted wave operator.
        \item $\Vb=\{u\pu,v\pv,\sl,1\},\Vbt=\{u\pu,r\pv,\sl,1\},\Ve=\{u\pu,v\pv,\rho_\scri^{1/2}\sl,1\}$ denote sets of vector fields.
		\item $\Hb,\mathcal{A}$ are conormal and polyhomogeneous spaces. $\tilde{H}_b$ is the conormal space capturing the no incoming radiation condition. We will occasionally also make use of the usual $L^2$- and $H^{;k}$-spaces.
  \item $g_{S^2}$ (and $g^{-1}_{S^2}$) will denote the (inverse of the) usual metric on $S^2$, and $\Dl$ denotes the Laplacian on $S^2$.
  \item The measures $\dd \mu,\,\dd \tilde{\mu},\,\dd\mu_{\b}$ are as defined in \cref{def:not:measures}.
	
  \item $\nabla$ will always denote the Levi-Civita connection on Minkowski space, and, given a metric $g=\eta+h$, then $\bar{\Box}_h:=(g^{-1})^{\mu\nu}\nabla_\mu\nabla_\nu$.
  	\item We use $\bullet$ as local placeholder for a  elements of a list of symbols specified in the text.
      \item
  We write $A\lesssim B$ if there exists a uniform constant $C>0$ such that $A\leq CB$. Sometimes, we will write $A\lesssim_{a,b,c} B$ to make explicit what parameters $C=C(a,b,c)$ depends on.
  \item Whenever we write $\partial r\phi$ for a derivative operator $\partial$, we will mean $\partial r\phi=\partial(r\phi)=\partial\psi$.
	\end{itemize}

 \newpage
	
	\section{ODE Lemmata}\label{sec:ODE_lemmas}

	In this section, we collect some standard results about ODEs that we will need to eventually prove \cref{thm:intro:prop_polyhom}.
	In particular, \cref{rem:notation:equivalent_polyhoms} about the equivalence of the different formulations of polyhomogeneity is also a consequence of these results.
  Throughout this section $\E$ (with any number of sub- or superscripts) will denote an index set and $c,\tilde{c}\in\R$. Similarly, $a^{\bullet}$ will always denote some real number, and $k\in\mathbb{N}$.
  The first proposition below concerns the following ODE on $X=[0,1)_x$:
  \begin{equation}\label{eq:ode:ode1}
      g=(x\partial_x-c)G,\qquad c,\tilde{c}\in\mathbb R,\,g,G\in\Hb^{-\infty;0}(X).
  \end{equation}

	\begin{prop}\label{prop:ODE:ODE}
		\begin{enumerate}[label=\textbf{ODE.\alph*}]\itemindent=15pt
			\item \label{item:ode:1d}
   Let $g, G$ solve \cref{eq:ode:ode1} and specify $G(x')=G_0\in\R$ for some $x'\in(0,1/2)$. Then the following implications hold, with the memberships on the RHS of \cref{eq:ODE:1d} \emph{not uniform in $x'$} (cf.~\cref{rem:ODE:est}):
			\begin{subequations}\label{eq:ODE:1d}
				\begin{align}
					g\in \Hb^{\tilde{c};k}(X)\text{ with }c\neq\tilde{c}&\implies G\in \A{phg}^{\overline{(c,0)}}(X) +\Hb^{\tilde{c};k}(X),\label{eq:ODE:1d_error}
					\\
					g\in\A{phg}^{\mathcal{E};k}(X)&\implies G\in\A{phg}^{\mathcal{E}\cupdex{\{(c,0)\}};k}(X)\label{eq:ODE:1d_phg}.
				\end{align}
			\end{subequations}
			
            \item\label{item:ode:1d_boundary_cond} Let $g,G$ solve \cref{eq:ode:ode1}, and specify $\lim_{x\to0}x^{-c} G=G_0\in\mathbb R$. Then the following implications hold: 
			\begin{subequations}
				\begin{align}
					\tilde{c}>c \text{ and } g\in \Hb^{\tilde{c};k}(X) & \implies G\in \A{phg}^{\mindex{c};k}(X) +\Hb^{\tilde{c};k}(X),
					\\
				\min \E>c \text{ and }g\in\A{phg}^{\mathcal{E};k}(X)& \implies G\in\A{phg}^{\mathcal{E}\cup{\{(c,0)\}};k}(X).
				\end{align}
			\end{subequations}
    
            \item\label{item:ode:1d_boundary_cond_weak_decay} Let $g,G$ solve \cref{eq:ode:ode1}, and assume that 
            $g\in\A{phg}^{\mathcal{E};k}(X)+\Hb^{\tilde{c};k}(X)$ with $\tilde{c}>c$.
            Then, there exists some
            $\bar{G}\in\A{phg}^{\mathcal{E}\overline{\cup}{\{(c,0)\}};k}(X)$ such that $g-(x\partial_x-c)\bar{G}\in\Hb^{\tilde{c};k}(X)$. 
            Furthermore, for any such $\bar{G}$ and for any $G_0\in\R$, there exists a unique $G$ (solving \cref{eq:ode:ode1}) s.t.~$G-\bar{G}-x^cG_0\in\Hb^{c+;k}(X)$, and it satisfies
            $G\in\A{phg}^{\mathcal{E}\overline{\cup}{\{(c,0)\}};k}(X)+\Hb^{\tilde{c};k}(X)$.
            In fact, given $G_0\in\mathbb{R}$,  then $G$ is the unique solution to \cref{eq:ode:ode1} with
            \begin{equation}\label{eq:ode:poly_data_expansion}
              {G}=  \sum_{(z,k)\in(\E_{\leq c}\cupdex\{(c,0)\})\setminus\{(c,0)\}}\alpha_{z,k}x^z \log^k x +x^cG_0+\Hb^{c+;k}(X),
            \end{equation}
            where $\alpha_{z,k}\in\R$ for $(z,k)\neq(c,0)$ are uniquely determined by $g$.
\end{enumerate}
\end{prop}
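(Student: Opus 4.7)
The whole proposition rests on the explicit variation of constants formula for the ODE $(x\partial_x - c)G = g$. Setting $\tilde{G}(x) := x^{-c}G(x)$ transforms the equation into $x\partial_x \tilde G = x^{-c}g$, and integrating yields
\begin{equation}
G(x) = x^c \Bigl[ (x')^{-c}G_0 + \int_{x'}^x t^{-c-1}g(t)\,dt\Bigr]
\end{equation}
for part (a). For parts (b) and (c), the non-resonance assumptions ($\tilde c > c$ or $\min\E > c$) render the integrand integrable at $0$, so the same formula applies with $x'$ replaced by $0$ and the leading constant replaced by $G_0$ (now interpreted as $\lim_{x\to 0}x^{-c}G$). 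All estimates and expansions reduce to studying the integral operator $g\mapsto x^c\int_\bullet^x t^{-c-1}g(t)\,dt$ on $\Hb$- and $\A{phg}$-spaces.

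For part (a), the error implication \eqref{eq:ODE:1d_error}: a weighted Cauchy--Schwarz against the $b$-measure $dt/t$ yields $\bigl|\int_0^x t^{-c-1}g(t)\,dt\bigr| \lesssim x^{\tilde c - c}\|g\|_{\Hb^{\tilde c;0}}$ when $\tilde c > c$, in which case I would split $\int_{x'}^x = \int_0^x - \int_0^{x'}$ and absorb the constant $\int_0^{x'}$ into the $\A{phg}^{\mindex{c}}$ piece; when $\tilde c < c$, I would integrate directly from $x'$, where $\bigl|\int_{x'}^x t^{-c-1}g(t)\,dt\bigr| \lesssim x^{\tilde c - c}$. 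Higher $k$-regularity propagates cleanly because $x\partial_x$ commutes with the integral via $x\partial_x(x^{-c-1}g) = x^{-c-1}(x\partial_x - c - 1)g$. For the polyhomogeneous implication \eqref{eq:ODE:1d_phg}, the identity
\begin{equation}
\int t^{z-c-1}\log^j t\,dt \in \begin{cases}\mathrm{span}\bigl\{t^{z-c}\log^i t : 0\leq i\leq j\bigr\}, & z \neq c,\\[0.3ex] \mathrm{span}\bigl\{\log^{j+1} t\bigr\}, & z = c,\end{cases}
\end{equation}
shows that non-resonant terms retain their log-degree while the resonant case $z=c$ absorbs one extra $\log$---exactly the action of $\E\cupdex\{(c,0)\}$ on index sets.

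Part (b) is essentially part (a) with the free $x^c$-coefficient pinned down by the boundary condition $\lim_{x\to 0}x^{-c}G = G_0$, combined with the non-resonance hypotheses that eliminate the $\mindex{c}$-ambiguity, yielding the cleaner index set $\E\cup\{(c,0)\}$. For part (c), I would build $\bar G$ iteratively on the expansion of $g_{\mathrm{phg}}\in\A{phg}^{\E;k}(X)$ using
\begin{equation}
(x\partial_x - c)(x^z\log^j x) = (z-c)x^z\log^j x + j x^z\log^{j-1}x,
\end{equation}
inverting $x\partial_x - c$ on each $x^z\log^j x$ by downward induction in $j$ (and compensating at resonance $z=c$ by introducing an extra factor of $\log$). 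This produces $\bar G \in \A{phg}^{\E\cupdex\{(c,0)\};k}(X)$ with residual inhomogeneity $g - (x\partial_x - c)\bar G \in \Hb^{\tilde c;k}(X)$, to which part (b) applies: solving the equation for $H := G - \bar G$ with prescribed leading coefficient $G_0$ produces the unique $G$ with the stated expansion. The uniqueness of the $\alpha_{z,k}$ for $(z,k)\neq(c,0)$ in \eqref{eq:ode:poly_data_expansion} then follows from the observation that the kernel of $x\partial_x - c$ acting on polyhomogeneous functions modulo $\Hb^{\tilde c;k}$ is just $\mathrm{span}(x^c)$, so any two candidate $\bar G$ can only differ by their $x^c$ coefficient below order $\tilde c$.

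The main obstacle is the careful bookkeeping of $\Hb^{;k}$-regularity through the integral operator, in particular the weighted Hardy/Cauchy--Schwarz inequalities needed to realise the correct decay rates with constants depending only on $c$, $\tilde c$, $k$. Notably, the non-uniformity in $x'$ flagged in the statement of \eqref{eq:ODE:1d} originates from the $(x')^{-c}G_0$ term in the variation of constants formula (together with $\int_0^{x'} t^{-c-1}g(t)\,dt$ appearing when $\tilde c > c$), both of which blow up as $x'\to 0$---this is precisely what parts (b) and (c) rectify by moving the boundary condition to $x=0$. Beyond this bookkeeping, the algebraic heart of the argument---the emergence of $\cupdex$ from resonance in a variation of constants---is transparent.
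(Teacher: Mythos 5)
Your proof is correct and takes essentially the same approach as the paper's: reduce to $c=0$ via the $x^c$-twist, integrate explicitly (splitting the cases $\tilde c>c$ and $\tilde c<c$ for the error estimate via Hardy-type inequalities and tracking $\log$-resonances for the polyhomogeneous implication), and for part (c) construct $\bar G$ by term-by-term inversion of $(x\partial_x-c)$ on the expansion of $g$ before applying part (b) to the residual. The only cosmetic difference is that the paper cites Hintz--Vasy (Lemmas 7.5 and 7.7) for the core trivial-data estimates where you spell out the variation-of-constants formula and weighted Hardy/Cauchy--Schwarz directly.
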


\begin{rem}[{Initial data below leading order}]\label{rem:ODE:slow_decay}
     The difference between \cref{item:ode:1d} and \cref{item:ode:1d_boundary_cond} lies in the different choice of boundary conditions.
     In the former, we prescribe $G$ at some finite $x$, whereas in the latter, we dictate the leading order behaviour of $G$ and some subleading decay for $g$ and $G-x^cG_0$. 
     Similarly, in \cref{item:ode:1d_boundary_cond_weak_decay}, we prescribe the free behaviour of $G$ corresponding to the kernel of $x\partial_x-c$, which is hidden behind some leading order terms.
     We note that $\bar{G}$ as in \cref{item:ode:1d_boundary_cond_weak_decay} can be constructed iteratively:
     Let's set $\alpha_{z,k}:=0$ for $(z,k)\notin\E\cupdex\{(c,0)\}$,  $\alpha_{z,k}:=(z-c)^{-1}\beta_{z,k}-(k+1)\alpha_{z,k+1}$ for $z\neq c$ and $\alpha_{c,k+1}=:(k+1)^{-1}\beta_{c,k}$; then
     \begin{equation}\label{eq:ODE:G_prime_explicit}
         g-\sum_{(z,k)\in\E_{\leq c}} \beta_{z,k} x^z\log^k x \in\Hb^{c+;k}(X)\implies \bar{G}=\sum_{(z,k)\in(\E_{\leq c}\cupdex \{(c,0)\})\setminus\{(c,0)\}} \alpha_{z,k}x^z\log^k x .
     \end{equation}

     Finally, note that we could extend \cref{item:ode:1d_boundary_cond_weak_decay} for $g\in\Hb^{\tilde{c}}([0,1)_x)$ (without an expansion) even if $\tilde{c}\leq c$. Of course, in this case, there is no uniqueness statement of the form \cref{eq:ode:poly_data_expansion} and uniqueness would only hold for prescribed $\bar{G}$.\end{rem}

\begin{obs}[Uniqueness]\label{rem:ode:uniqueness}
    Note that while \cref{item:ode:1d,item:ode:1d_boundary_cond} both are geometrically unique, \cref{eq:ode:poly_data_expansion} of \cref{item:ode:1d_boundary_cond_weak_decay} is not:
    Fix a smooth diffeomorphism of $[0,1/2)$ $\Psi:x\mapsto\tilde{x}$ with $\Psi'(0)\neq0$.
    In, say, \cref{item:ode:1d}, the solution $\tilde{G}$ of the ODE $x(\tilde{x})\Psi'\big(x(\tilde{x})\big)\partial_{\tilde{x}} \tilde{G}=g$ and $\tilde{G}(\tilde{x}(x_0))=G_0$ is related to $G$ via the coordinate change $\tilde{x}=\Psi(x)$.
    However, for \cref{item:ode:1d_boundary_cond_weak_decay}, fixing the $x^0$ or $\tilde{x}^0$ part of $G$ or $\tilde{G}$ generally yields a different solution, provided that $\min(\E)<c-1$.\footnote{ Let $c=0$, $g(x)=x^{-a}$ for $a>0$, and specify the $x^0$ part of $G$ to be $C\in \R$. Consider the coordinate change $x=\tilde x+\tilde{x}^2$. For $a\in(0,1)$ specifying the term in $\tilde{x}^0$ or $x^0$ yields the same result at leading order and the rest is unique by \cref{item:ode:1d_boundary_cond}. For $a=1$, however, $-x^{-1}=-\tilde{x}^{-1}+1+o(\tilde{x})$, and so specifying the free term in $\tilde{x}$ is shifted by 1 compared to that in $x$.}
    Similar non-uniqueness will be present in \cref{corr:ODE:du_dv} \cref{item:ode:u-prop_boundary} .
\end{obs}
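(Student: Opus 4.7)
The plan is to verify the two halves of the observation — geometric uniqueness for \cref{item:ode:1d,item:ode:1d_boundary_cond} and its failure for \cref{item:ode:1d_boundary_cond_weak_decay} — by computing the pullback of \cref{eq:ode:ode1} under $\tilde{x} = \Psi(x)$. The first step is the chain rule: writing $\tilde{G}(\tilde{x}) := G(x(\tilde{x}))$, one has $\partial_{\tilde{x}}\tilde{G} = (\partial_x G)/\Psi'(x)$, and therefore
\begin{equation*}
(x\partial_x - c)G \;=\; x(\tilde{x})\Psi'(x(\tilde{x}))\,\partial_{\tilde{x}}\tilde{G} - c\tilde{G},
\end{equation*}
so the pullback solves the transformed ODE stated in the observation, with source $\tilde{g} = g \circ x$. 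Since $\Psi'(0)\neq 0$, the coefficient $x(\tilde{x})\Psi'(x(\tilde{x}))$ is $\tilde{x}$ times a nonvanishing smooth function near the origin, so the transformed ODE has the same qualitative structure as \cref{eq:ode:ode1} and \cref{item:ode:1d,item:ode:1d_boundary_cond} apply to it as well (possibly after a harmless change of boundary-defining function, cf.~\cref{remark:notation:equivalence_of_norms}).

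For \cref{item:ode:1d}, the point-value condition $G(x_0) = G_0$ transforms tautologically to $\tilde{G}(\Psi(x_0)) = G_0$; invoking \cref{item:ode:1d} in the $\tilde{x}$-coordinate yields a unique solution with this datum, and the pullback of $G$ realises it. For \cref{item:ode:1d_boundary_cond}, using $x(\tilde{x})/\tilde{x}\to 1/\Psi'(0)$ near the origin, the condition $\lim_{x\to 0}x^{-c}G = G_0$ transforms to $\lim_{\tilde{x}\to 0}\tilde{x}^{-c}\tilde{G} = (\Psi'(0))^c G_0$, which again uniquely determines a solution in the new coordinate. In both cases the boundary data correspond to intrinsic geometric information (a point, or a leading coefficient with respect to a fixed boundary-defining function) and uniqueness is preserved.

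The main content is the failure of geometric uniqueness in \cref{item:ode:1d_boundary_cond_weak_decay}, which I would establish by working through the footnote example in full. Taking $c = 0$ and $g(x) = x^{-a}$ with $a > 0$, integration of $x\partial_x G = x^{-a}$ gives $G(x) = -a^{-1}x^{-a} + C_x$, where $C_x$ is the prescribed $x^0$-coefficient. Pulling back along $x = \tilde{x} + \tilde{x}^2$ and expanding
\begin{equation*}
x^{-a} \;=\; \tilde{x}^{-a}(1+\tilde{x})^{-a} \;=\; \tilde{x}^{-a} + \sum_{n\geq 1}\binom{-a}{n}\tilde{x}^{n-a},
\end{equation*}
one sees that for $a \in (0,1)$ no exponent $n-a$ with $n \geq 1$ is a non-negative integer, so the $\tilde{x}^0$-coefficient of $\tilde{G}$ equals $C_x$ and the two prescriptions coincide (as then asserted by \cref{item:ode:1d_boundary_cond} applied to the leftover part). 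For $a = 1$, however, the $n = 1$ term of the binomial expansion supplies $-1$ to the $\tilde{x}^0$-coefficient of $\tilde{G}$, which thus equals $C_x + 1$, and fixing it to $C$ forces $C_x = C - 1$ — a genuinely different solution from the one picked out by specifying the $x^0$-coefficient to be $C$.

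The only remaining point is the identification of the threshold $\min(\mathcal{E}) < c - 1$. Under a generic smooth $\Psi$, a term $\alpha\, x^z \log^k x$ in $G$ contributes to the $\tilde{x}^c$-coefficient of $\tilde{G}$ exactly when $c - z \in \mathbb{N}_{\geq 1}$, because the composition with $x(\tilde{x})$ produces (via the same binomial mechanism) only shifts $\tilde{x}^{z+m}$ with $m \in \mathbb{N}_0$. By \cref{eq:ODE:G_prime_explicit} the exponents appearing in the expansion of $G$ are governed by those of $g$, so non-uniqueness of the free-coefficient prescription arises precisely when $g$ has terms below order $c - 1$, i.e.~when $\min(\mathcal{E}) < c - 1$. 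I expect this binomial and index-set bookkeeping to be the only mildly technical step; everything else is an immediate translation of \cref{item:ode:1d,item:ode:1d_boundary_cond} into the new coordinate.
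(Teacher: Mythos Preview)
Your proposal is correct and follows the same approach as the paper: the observation is self-contained in its statement and footnote, and you have simply written out the computations (chain rule for the pullback, the binomial expansion of $x^{-a}$ under $x=\tilde x+\tilde x^2$, and the resulting shift in the constant term for $a=1$) that the paper leaves to the reader. One small slip: in the \cref{item:ode:1d_boundary_cond} transformation you should get $(\Psi'(0))^{-c}G_0$ rather than $(\Psi'(0))^{c}G_0$, but this does not affect the argument since all that matters is that the limit condition transforms to another well-defined limit condition.
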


\begin{prop}\label{prop:ode:ode2}
            \begin{enumerate}[label=\textbf{ODE.d}]\itemindent=15pt
             \item \label{item:ode:2d} Let $X=[0,1)_{x_0}\times[0,1)_{x_+}$, let $g\in \Hb^{-\infty,-\infty;0}(X)$, and let $G$ solve
            \begin{equation}(x_0\partial_0-x_+\partial_+)G=g.\end{equation}
            Let $G|_{x_+=x'}=G_0\in\Hb^{-\infty;0}([0,1)_{x_0})$ for some $x'\in(0,1/2)$.
            Then the following implications hold for $\E_0=\E_0^{G_0}\cup\E_0^g$.
			\begin{subequations}
				\begin{align}
					G_0\in\Hb^{a^G_0;k}([0,1)_{x_0})\text{ and }g\in \Hb^{a_0,a_+;k}(X) \text{ with } a_0\neq a_+ &\implies G\in \Hb^{\min(a^G_0,a_0),\min(a^G_0,a_+,a_0);k}(X)\label{eq:ODE:2d_error},\\
					G_0\in\A{phg}^{\E_0^{G_0};k}([0,1)_{x_0})\text{ and }g\in \A{phg,b}^{\E_0^{g},a_+;k}(X) \text{ with } (a_+,\cdot)\notin\E_0 &\implies G\in\A{phg}^{\E_0,\mathcal{E}_0}(X)+\A{phg,b}^{\mathcal{E}_0,a_+;k}(X)\label{eq:ODE:2d_phg_error},\\
					G_0\in\A{phg}^{\E_0^{G_0};k}([0,1)_{x_0})\text{ and }g\in\A{phg}^{\mathcal{E}^g_0,\mathcal{E}^g_+;k}(X) &\implies G\in \A{phg}^{\mathcal{E}_0,(\mathcal{E}^g_0\overline{\cup}\mathcal{E}^g_+)\cup \E^G_0;k}(X)\label{eq:ODE:2d_phg}\\
                    G_0=0\text{ and }g\in \A{b,phg}^{a,\E;k}(X) \text{ with } (a,\cdot)\notin\E &\implies G\in\A{b,phg}^{a,\mathcal{E};k}(X)+\Hb^{a,a;k}(X)\label{eq:ODE:2d_error_phg}.
				\end{align}
			\end{subequations}
            
			% \violet{\item \label{item:ode:mixed} Combining the previous two yields the following. Let $g,G\in \Hb^{-\infty,-\infty}([0,1)_{x_0}\times[0,1)_{x_+})$ with $\supp g,G\in \{x_+<1/2\}$ and $x_+\partial_+(x_0\partial_0-x_+\partial_+)G=g$. Then
			% \begin{subequations}
			% 	\begin{gather}
			% 		g\in \Hb^{a_0,a_+} \text{ with } \green{\abs{\{a_0,a_+,0\}}=3} \implies G\in \A{b,phg}^{a_0,\{(0,0)\}}+ \Hb^{a_0,\min(a_0,a_+)}\label{eq:ODE:corner_b}\\
			% 		g\in \A{phg}^{\E_0,a_+} \text{ with } (a_+,\cdot)\notin\E_0\implies G\in\A{phg}^{\E_0,\E_0\overline{\cup}\{(0,0)\}}+\A{phg,b}^{\E_0,a_+}\\
			% 		g\in\A{phg}^{\E_0,\E_+}\implies \A{phg}^{\E_0,\E_0\overline{\cup}\E_+\overline{\cup}\{(0,0)\}}
			% 	\end{gather}
			% \end{subequations}
   
   %          \item The above trivially extends to product spaces. Let $g,G\in \Hb^{-\infty,-\infty,-\infty}(\D)$ with $g=(S+c)G$ and $\supp g,\supp G\in r\geq 1$ where $S=t\partial_t+r\partial_r$ is the scaling vector field. Then for $\bar{c}>c$ and $\min(\mathcal{E}_0)>c$
			% \begin{subequations}
			% 	\begin{gather}
			% 		g\in\A{phg,b,phg}^{\mathcal{E}_-,\bar{c},\mathcal{E}_+}\implies G\in \A{phg,phg,phg}^{\mathcal{E}_-,\green{\{(c,0)\}},\mathcal{E}_+}\cup\A{phg,b,phg}^{\mathcal{E}_-,\bar{c},\mathcal{E}_+}, \quad g\in\A{phg,phg,phg}^{\mathcal{E}_-,\mathcal{E}_0,\mathcal{E}_+}\implies G\in \A{phg,phg,phg}^{\mathcal{E}_-,\mathcal{E}_0\cup\{(c,0)\},\mathcal{E}_+}					
			% 	\end{gather}
			% \end{subequations}
			% and similarly if any of the $phg$ subscripts are changed to $b$.}
		\end{enumerate}
	
\end{prop}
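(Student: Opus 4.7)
The plan is to flatten the vector field $L := x_0\partial_0 - x_+\partial_+$ via the change of variables $y := x_0 x_+$, $s := x_+$. A direct computation gives $L = -s\partial_s|_y$, $y\partial_y|_s = x_0\partial_0|_+$, and the $\b$-measure $\dd x_0\dd x_+/(x_0 x_+)$ equals $\dd y\dd s/(ys)$; hence $\Diff_{\b}^1$ is spanned by $\{y\partial_y, s\partial_s\}$ in the new coordinates, and the weight dictionary reads
\begin{equation*}
x_0^{b_0} x_+^{b_+} \;=\; y^{b_0}\, s^{b_+ - b_0},
\end{equation*}
so $\Hb^{b_0,b_+;k}(X)$ corresponds to $\Hb^{b_0, b_+-b_0;k}$ in $(y,s)$. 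Writing $\tilde G(y,s) := G(y/s, s)$ and $\tilde g(y,s) := g(y/s, s)$, the equation becomes the $s$-ODE $-s\partial_s \tilde G = \tilde g$ with boundary condition $\tilde G(y,x') = \tilde G_0(y) := G_0(y/x')$, and admits the explicit solution $\tilde G(y,s) = \tilde G_0(y) + \int_s^{x'} \tilde g(y,s')/s'\, \dd s'$. Since $y = x_0 x_+$, a datum $\tilde G_0(y)$ of weight $a_0^G$ in $y$ corresponds to $x_0^{a_0^G}x_+^{a_0^G}$ in the original coordinates, which explains why $a_0^G$ enters both slots of the claimed weight in \cref{eq:ODE:2d_error}.

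For the conormal case \cref{eq:ODE:2d_error}, I combine the bound $\tilde G_0 \in y^{a_0^G}\Hb^{;k}$ with a weighted Minkowski/Hardy-type estimate on the integral term. Writing $\tilde g \in y^{a_0} s^{a_+-a_0}\Hb^{;k}$, the integral $\int_s^{x'} s'^{a_+-a_0-1}\,\dd s'$ is uniformly bounded when $a_+>a_0$ (producing a contribution of weight $y^{a_0} = x_0^{a_0}x_+^{a_0}$) and grows like $s^{a_+-a_0}$ when $a_+<a_0$ (producing $y^{a_0}s^{a_+-a_0} = x_0^{a_0}x_+^{a_+}$); together with the boundary contribution this yields the claimed weights $(\min(a_0^G,a_0),\min(a_0^G,a_0,a_+))$. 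Higher $k$ comes from commuting the equation with $y\partial_y$ and $s\partial_s$, both of which commute with $L$.

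For the polyhomogeneous cases, I exploit that $y$ is a variable independent of $s$ in the new coordinates, so $y$-expansions are preserved by the $s$-integration. If $g$ is polyhomogeneous in $x_0$ with index $\E_0^g$, the substitution $x_0 = y/s$ and the binomial expansion $\log(y/s)=\log y - \log s$ yield a polyhomogeneous expansion of $\tilde g$ in $y$ with the same index set $\E_0^g$ and $s$-dependent coefficients of the schematic form $s^{-z}\log^{k-j} s\cdot g_{z,k}(s)$; similarly $\tilde G_0(y)$ is polyhomogeneous in $y$ with index $\E_0^{G_0}$. Term-by-term integration then yields a $y$-expansion of $\tilde G$ with index $\E_0 = \E_0^{G_0}\cup\E_0^g$. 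The $s$-behaviour of each coefficient is controlled via \cref{item:ode:1d} applied at fixed $y$: for a coefficient $\sim s^{-z}g_{z,k}(s)$ with $g_{z,k}\in \Hb^{a_+;k}$ and $(z,\cdot)\notin\E_0$, the integral $\int_s^{x'} s'^{a_+-z-1}(\cdot)\,\dd s'$ splits into a bounded part plus a term $\sim s^{a_+-z}$, which in $(x_0,x_+)$-coordinates is precisely the splitting $\A{phg}^{\E_0,\E_0}(X) + \A{phg,b}^{\E_0,a_+;k}(X)$ of \cref{eq:ODE:2d_phg_error}. The implications \cref{eq:ODE:2d_phg,eq:ODE:2d_error_phg} follow identically with $g_{z,k}$ itself polyhomogeneous in $s$; a resonance between indices of $\E_0^g$ and $\E_+^g$ (or between $\E_+^g$ and the pure powers coming from $\tilde G_0$) turns the integration $\int s'^{-1}\log^{k}s'\,\dd s'$ into an extra factor of $\log s$, which is exactly the logarithmic enhancement encoded by the $\cupdex$ operation.

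The main technical obstacle is the systematic bookkeeping of the resulting index sets and of the logarithmic enhancement produced at resonances; once the integral formula is in hand, the remainder is a term-by-term calculation, translated back to $(x_0,x_+)$-coordinates via the identity $y^z s^{z'-z} = x_0^z x_+^{z'}$.
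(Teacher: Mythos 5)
Your coordinate change $y=x_0x_+,\ s=x_+$ is a clean way to derive the explicit integral solution: it straightens $L=x_0\partial_0-x_+\partial_+$ to $-s\partial_s|_y$, and your dictionary $x_0^{b_0}x_+^{b_+}=y^{b_0}s^{b_+-b_0}$, together with the invariance of the $\b$-density and of the $\b$-vector field module, is correct. For \cref{eq:ODE:2d_error}, \cref{eq:ODE:2d_phg_error}, \cref{eq:ODE:2d_phg} this is, in slightly different dress, the same explicit-solution-plus-Schur/Hardy argument that the paper delegates to Lemmata 7.5 and 7.7 of Hintz--Vasy, and your treatment goes through modulo the details of the weighted $L^2_\b$ estimate that you leave implicit. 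One caveat worth flagging is that $(x_0,x_+)\mapsto(y,s)$ is a blow-down, not a diffeomorphism of manifolds with corners: the image is the triangle $\{0\le y<s<1\}$ and the whole face $\{x_+=0,\,x_0>0\}$ collapses to the origin, so ``$\Hb^{b_0,b_+-b_0;k}$ in $(y,s)$'' must be read as shorthand for the corresponding weighted norm on $X$ rather than as a literal $\b$-Sobolev space on a product.

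There is, however, a genuine gap in your treatment of \cref{eq:ODE:2d_error_phg}. You assert that \cref{eq:ODE:2d_phg} and \cref{eq:ODE:2d_error_phg} ``follow identically with $g_{z,k}$ itself polyhomogeneous in $s$,'' but this conflates the two hypotheses. The device you actually use---expand $g$ in $x_0$, substitute $x_0=y/s$, binomially expand $\log(y/s)=\log y-\log s$ to obtain a $y$-expansion of $\tilde g$ with $s$-dependent coefficients, then integrate the $s$-ODE term by term---hinges on $g$ being polyhomogeneous in $x_0$. That is exactly what \cref{eq:ODE:2d_error_phg} does \emph{not} assume: there $g\in\A{b,phg}^{a,\E;k}(X)$ is polyhomogeneous only in $x_+$, with merely conormal $x_0$-coefficients $g_{z,k}(x_0)\in\Hb^{a;k}$. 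After the substitution $x_0=y/s$ the $s$-coefficients $g_{z,k}(y/s)$ carry hidden $s$-dependence through $y/s$ and admit no $y$-expansion of the kind your argument requires. Equivalently: the operator that tests polyhomogeneity in $x_+$, namely $x_+\partial_+-z$, becomes $s\partial_s+y\partial_y-z$ under your change of variables, which your coordinates do \emph{not} straighten; applying it to your integral formula returns $-\tilde g+\int_s^{x'}(y\partial_y-z)\tilde g\,\dd s'/s'$, which is not lower order in $x_+$, so the $x_+$-expansion of $G$ is not produced term by term by your $s$-integration.

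The paper handles \cref{eq:ODE:2d_error_phg} by a different and more direct device: it inductively subtracts an ansatz $G_{n+1}=G_n+x_+^p\log^m x_+\cdot\tilde G_{p,m}(x_0)$, where $\tilde G_{p,m}$ solves the one-variable ODE $(p-x_0\partial_0)\tilde G_{p,m}=g_{p,m}$ in $x_0$ alone (handled by \cref{eq:ODE:1d_error}, using the hypothesis $p\ne a$), thereby raising the $x_+$-order of the residual forcing by one at each step; once $\min\E_n>a$, \cref{eq:ODE:2d_error} dispatches the remainder. To repair your argument for this case you would need to peel in $x_+$ first, in the original coordinates, rather than expand in $y$---which is the wrong expansion variable when $g$ has only conormal $x_0$-regularity.
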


\begin{rem}[Linearity]
    The estimates in \cref{prop:ODE:ODE,prop:ode:ode2} can be summed to also treat cases where $g,G_0$ is in a sum of the above spaces.
\end{rem}

\begin{rem}[Sharpness]\label{rem:ODE:sharp}
    We note that in \cref{eq:ODE:1d_error}, the case $c=\tilde{c}$ is not allowed: For instance, let $c=0$ and $g=\log^{-2}(x)\in\Hb^{0;\infty}([0,1/2))$, then $G=\log^{-1}(x)\notin\Hb^{0;0}([0,1/2))$.
    We also have that the index set $\E\cupdex\mindex{c}$ in \cref{eq:ODE:1d_phg} is sharp as demonstrated by taking $g=x^c\log^k x$.
    One similarly shows sharpness for all other restrictions and index sets.
\end{rem}

\begin{rem}[Product spaces]\label{rem:ODE:product_spaces}
		All results in \cref{prop:ODE:ODE,prop:ode:ode2} extend to product spaces. For instance, given a manifold with boundary, \cref{item:ode:1d} generalises to the setting where $X=[0,1)_x\times Y$, $g,G\in\Hb^{-\infty,a}(X)$, and we have 
		\begin{subequations}
			\begin{gather}
				g\in \Hb^{\tilde{c},a;k}(X)\implies G\in \A{phg,b}^{\overline{(c,0)},a;{k}}(X) +\Hb^{\tilde{c},a;k}(X),
				\\
				g\in\A{phg,b}^{\mathcal{E},a;{k}}(X)\implies G\in\A{phg,b}^{\mathcal{E}\bar{\cup}{\{(c,0)\}},a; {k}}(X).
			\end{gather}
		\end{subequations}
  The applicability of the above ODE results to product spaces is the reason why we refrained from recording improved regularity in, say, \cref{eq:ODE:1d_error}.
	\end{rem}
 \begin{rem}[Finite regularity]
     In the two propositions above, we kept track of the regularity so that it's clear that all results in the paper hold at finite regularity. To alleviate notation, we will write the proof as well as the remaining statements of this section at infinite regularity.
 \end{rem}

	\begin{proof}[Proof of \cref{prop:ODE:ODE,prop:ode:ode2}]
        Note that $(x\partial_x-c)G=x^c(x\partial_x)(x^{-c} G)$, therefore it suffices to treat the case $c=0$ in \cref{prop:ODE:ODE} and the other cases follow by twisting with $x^c$.

		Except for \cref{eq:ODE:2d_error_phg}, both \cref{item:ode:1d,item:ode:2d} are already proved in \cite{hintz_stability_2020} (Lemma 7.5~and 7.7, respectively) in the case of trivial initial data, i.e.~$\supp g,G\in\{x<1/2\}$ and $\supp g,G\in \{ x_+<1/2 \}$; the proof consists of writing down the explicit solutions to the relevant ODEs and then performing basic estimates.
		Adding initial data (i.e.~setting $G_0\neq0$) follows similarly; we write down the explicit representations for sake of completeness.
		For~\cref{eq:ODE:1d_error}, we write 
		\begin{equation}
			G(x)=\begin{cases}
				G_0+\int_{x'}^x\frac{\dd y}{y}g(y),  &\text{if }\tilde{c}< 0,\\
					G_0-\int_0^{x'}\frac{\dd y}{y}g(y) +\int_0^x\frac{\dd y}{y}g(y),  &\text{if }\tilde{c}>0.
					\end{cases}
		\end{equation}
		Swapping the order of integration in $\norm{G}_{\Hb^{0;0}}$ yields \cref{eq:ODE:1d_error}, cf.~\cite{hintz_stability_2020}. The proof of \cref{eq:ODE:1d_phg} follows similarly.
            
        To prove~\cref{item:ode:1d_boundary_cond}, again setting $c=0$, we consider $\tilde{G}=G-G_0$ and write 
        \begin{equation}
            \tilde{G}(x)=\int_0^{x}\frac{\dd y}{y}g(y).
        \end{equation}
        The bounds for $\tilde{G}$  then follow as in \cref{item:ode:1d} with $\tilde{c}>0$.

        To prove \cref{item:ode:1d_boundary_cond_weak_decay}, 
        we first specify $\bar{G}$ as in \cref{eq:ODE:G_prime_explicit}.
        Next, we notice that $(x\partial_x-c)(G-\bar{G})=g-(\partial_x-c)\bar{G}\in\Hb^{c+}(X)$.
        We apply \cref{item:ode:1d_boundary_cond} to get $G-G'\in\A{phg}^{\E\cup\{(0,0)\}}$.
        Adding back $\bar{G}$ yields the result.

        To prove \cref{eq:ODE:2d_error_phg}, we proceed by induction.
        We claim that there exist $G_n\in\A{b,phg}^{a,\E;k}(X)$ such that
        \begin{equation}\label{eq:ode:proof1}
        	(x_+\partial_+-x_0\partial_0)(G-G_n)\in\A{b,phg}^{a,\E_n;k}(X)
        \end{equation}
        where the index sets $\E_n$ satisfy $\E_{n+1}\subset \E_n$ and $\min(\E_{n+1})>\min(\E_n)$.
        The base case holds by assumption with $G_0=G$ and $\E_0=\E$.
        For the inductive step, we assume that \cref{eq:ode:proof1} holds for some $n\geq0$.
       Denoting $(p,m)=\min(\E_n)$, we use the definition of polyhomogeneity to write for some $g_{p,m}\in\Hb^{a;k}([0,1)_{x_0})$
        \begin{equation}
        	(x_+\partial_+-x_0\partial_0)(G-G_n)-x_+^p\log^mx_+g_{p,m}(x_0)\in\A{b,phg}^{a,\E_{n+1};k}(X).
        \end{equation}
        We use \cref{eq:ODE:1d_error} (and $p\neq a$) to conclude the existence of $\tilde{G}_{p,m}\in\Hb^{a;k}([0,1)_{x_0})$ solving $(p-x_0\partial_0)\tilde{G}_{p,m}=g_{p,m}$.
        Setting $G_{n+1}:=G_n+x_+^p\log^m x_+\tilde{G}_{p,m}$ yields the induction step.
   		Taking $n$ sufficiently large, we get $\min(\E_n)>a$.
   		Using \cref{eq:ODE:2d_error} for the remaining term yields \cref{eq:ODE:2d_error_phg}.
	\end{proof}

 We now apply these results to various relevant settings in $\D$:
	\begin{cor}\label{corr:ODE:du_dv}
		Let $g\in \A{phg}^{\vec{\E}^g}(\D)+\Hb^{\vec{a}^g}(\D)$. Then we have the following implications
		\begin{enumerate}[label={\alph*})]
			\item \label{item:ode:u-prop} Let  $g=\partial_u G$. If $\vec{a}^g=\infty$ and $G|_{\outcone{}}\in\A{phg}^{\E^{\outcone{}}}(\outcone{})$ for some $u$, then $ G\in\A{phg}^{\vec{\E}^G}(\D)$, where
			\begin{nalign}
				\E^G_-&=(\E^g_--1)\overline{\cup}\{(0,0)\},&\E_0^G&=\E^G_+\cup\big(\E^g_+\overline{\cup}(\E^g_0-1)\big),&\E_+^G&=\E^{\outcone{}}\cup\E^{g}_+.
			\end{nalign}
			
             If, instead, $\vec{\E}^g=\emptyset$ and $G|_{\outcone{}}\in\Hb^{a^{\outcone{}}}(\outcone{})$, then, for $a^g_0-1\neq a^g_+$ and $a_-^g\neq1$, $ G\in\A{phg,b,b}^{\overline{(0,0)},a_0^G,a_+^G}(\D)+\Hb^{\vec{a}^G}(\D)$, where
			\begin{nalign}
				a_-^G&=a_-^g-1,& a_0^G&=\min(a_0^g-1,a_+^G),& a_+^G&=\min(a^{\outcone{}},a^g_+).
			\end{nalign}
            
            \item \label{item:ode:u-prop_boundary} Let $g=\pu G$ and fix $\rho_-=v/u$. 
            If $\vec{a}^\infty=\infty$, then for all $G^{\scrim}\in\A{phg}^{\E^{\scri}}(\scrim)$ there exists a unique $G\in\A{phg}^{\vec{\E}^G}(\D)$ satisfying
            \begin{subequations}
                \begin{equation}
                    G|_{\D^-}=\sum_{(z,k)\in(\E^G_-\setminus \{(0,0)\})_{\leq0}}\rho_-^z\log^k(\rho_-) G_{z,k}(\rho_0,\omega)+G^{\scrim}(v,\omega)+\A{b,phg}^{0+,\E^G_0}(\D^-), \text{ where}
                \end{equation}
                \begin{nalign}
                    \E^G_-&=(\E^g_--1)\cupdex\{(0,0)\},&\E_0^G&=(\E^g_0-1)\cup\E^{\scri},&\E_+^G&=\E_0^G\cup\big((\E^g_0-1)\overline{\cup}\E^g_+\big);
                \end{nalign}
            \end{subequations}

            If instead, $\vec{\E}^g=\emptyset$, $a^g_->1$, $a_+^g\neq a^g_0$ and $G^{\scrim}\in\Hb^{a^{\scri};k}(\D)$, then$G\in\A{phg,b,b}^{\mindex{0},a^G_0,a^G_+}(\D)+\Hb^{\vec{a}^G;k}(\D)$ (and if $G^{\scrim}=0$, then $G\in \Hb^{\vec{a};k}(\D)$), where
            \begin{nalign}
				a_-^G=a^g_--1,\quad a_0^G=\min(a_0^g-1,a^{\scri}),\quad a_+^G=\min(a_+^g,a_0^G).
			\end{nalign}
        
			\item \label{item:ode:t-prop} Let $g=\partial_t G$. If $\vec{a}^g=\infty$,  and $G|_{\incone}\in\A{phg}^{\E^{\incone}}(\incone)$, then $ G\in\A{phg}^{\vec{\E}^{G}}(\D)$
			\begin{nalign}\label{eq:ODE:T-propagation}
				\E_-^G=\E_-^g\cup\E^{\incone},\quad \E_0^G=\E_-^G\cup\big(\E^g_-\overline{\cup}(\E^g_0-1)\big),\quad\E_+^G=\E_0^G\cup\big((\E^g_0-1)\overline{\cup}\E^g_+\big).
			\end{nalign}
			
			If, instead, $\vec{\E}^g=\emptyset$ and $G|_{\incone}\in\Hb^{a^{\incone}}(\incone)$, with $a_-^g\neq a_0^g-1$ and $a_0^g-1\neq a^g_+$, then $G\in\Hb^{\vec{a}^G;k}(\D)$
			\begin{nalign}\label{eq:ODE:T-propagation_error}
				a_-^G=\min(a^{\incone},a^g_-),\quad a_0^G=\min(a_0^g-1,a_-^G),\quad a_+^G=\min(a_+^g,a_0^G).
			\end{nalign}
		\end{enumerate}
	\end{cor}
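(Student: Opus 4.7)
The strategy is to reduce each ODE to an instance of \cref{prop:ODE:ODE,prop:ode:ode2} (in their product-space form, \cref{rem:ODE:product_spaces}) by rewriting $\partial_u$ and $\partial_t$ in the $\rho_\bullet$-coordinates on the charts $\D^-$ and $\D^+$ given by \cref{lemma:notation:coordinates}, working separately in each chart and then using that the outputs are geometric on the overlap. The various shifts $\E_\bullet^g-1$ and the extra $\rho_\bullet^{-1}$-weights on the right-hand sides appearing in the stated index sets originate from inverting the leading $\rho_\bullet$-prefactors in these coordinate expressions.

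For \cref{item:ode:u-prop}, in $\D^-$ the identity $\partial_u=\rho_-^2\rho_0\partial_-$ turns the equation into the $\rho_-$-fibered 1D ODE $\rho_-\partial_- G = (\rho_-\rho_0)^{-1}g$, whose right-hand side is polyhomogeneous with index sets $(\E_-^g-1,\E_0^g-1)$; the hypersurface $\outcone{}$ corresponds at each fixed $\rho_0$ to the interior $\rho_-$-value $\rho_-=1/(|u_0|\rho_0)$, so \cref{item:ode:1d} applies with $c=0$ and produces the $(\E_-^g-1)\cupdex\mindex 0$ expansion at $\scrim$. In $\D^+$, the identity $\partial_u=\rho_0(\rho_0\partial_0-\rho_+\partial_+)$ puts the equation in the exact form $(\rho_0\partial_0-\rho_+\partial_+)G=\rho_0^{-1}g$ of \cref{item:ode:2d}, with data prescribed on the interior slice $\{\rho_0=1/|u_0|\}$; \cref{eq:ODE:2d_phg} then yields $\E_0^G$ and $\E_+^G$, the $\cupdex$-merger $(\E_0^g-1)\cupdex \E_+^g$ reflecting the possibility of new logarithms at $I^0\cap\scrip$. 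The conormal statements follow identically from \cref{eq:ODE:1d_error,eq:ODE:2d_error}, the gap hypotheses $a_-^g\neq 1$ and $a_0^g-1\neq a_+^g$ matching respectively the non-resonance conditions $c\neq\tilde c$ and $a_0\neq a_+$ required by those lemmata.

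Part \cref{item:ode:u-prop_boundary} differs from \cref{item:ode:u-prop} only in $\D^-$: the data is now prescribed on $\scrim=\{\rho_-=0\}$, so \cref{item:ode:1d} is replaced by \cref{item:ode:1d_boundary_cond_weak_decay}, which first builds an approximate polyhomogeneous solution $\bar G$ absorbing all source terms of order $<0$ and then uses the freedom in the $\rho_-^0$-coefficient (the kernel of $\rho_-\partial_-$) to match $G^{\scrim}$. This is responsible for the explicit $\rho_-=0$-expansion in the statement and for the inclusion of $\E^{\scri}$ in $\E_0^G$ (propagated rigidly in $\rho_0$ by the product structure). Part \cref{item:ode:t-prop} uses the $\D^-$-identity $\partial_t=\rho_0\big((1+\rho_-)\rho_-\partial_--\rho_0\partial_0\big)$; after absorbing the smooth factor $(1+\rho_-)$ into $\rho_-\partial_-$ by the change of boundary defining function $\tilde\rho_-:=\rho_-/(1+\rho_-)$, which preserves the $\Hb$- and $\A{phg}$-spaces by \cref{remark:notation:equivalence_of_norms}, the equation takes the form $(\rho_0\partial_0-\tilde\rho_-\partial_{\tilde\rho_-})G=-\rho_0^{-1}g$, again of the type treated by \cref{item:ode:2d}. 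The slice $\incone=\{v=v_0\}$ corresponds to $\rho_0=1/v_0$, an interior $\rho_0$-value, so \cref{item:ode:2d} applies (with a harmless swap $x_0\leftrightarrow x_+$ absorbing the overall sign) and produces $\E_-^G$ and $\E_0^G$; the analogous computation in $\D^+$ with $\partial_t=\rho_0\big((1+\rho_+)\rho_+\partial_+-\rho_0\partial_0\big)$ yields $\E_+^G$.

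The main obstacle is bookkeeping, specifically: (i) keeping $\cup$ and $\cupdex$ distinct throughout, as the latter is what produces the resonant logarithms in the index sets; (ii) correctly recognizing whether each data hypersurface ($\outcone{}$, $\scrim$, $\incone$) sits at an interior value or on a $\b$-boundary of the particular $\rho_\bullet$-chart being used, since this controls whether \cref{item:ode:1d} or \cref{item:ode:1d_boundary_cond_weak_decay} is the relevant 1D statement; and (iii) assembling the outputs of the $\D^-$- and $\D^+$-analyses at the intermediate boundary $I^0$, which is the source of the "corner unions" in the index sets, e.g.~$\E_0^G=\E_+^G\cup\big(\E_+^g\cupdex(\E_0^g-1)\big)$ in \cref{item:ode:u-prop} or $\E_+^G=\E_0^G\cup\big((\E_0^g-1)\cupdex\E_+^g\big)$ in \cref{item:ode:t-prop}.
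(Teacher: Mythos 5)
Your proposal follows the same strategy as the paper: rewrite $\pu$ and $\partial_t$ in the $\rho_\bullet$-coordinate charts of \cref{lemma:notation:coordinates}, reduce to the 1D and 2D ODE results of \cref{prop:ODE:ODE,prop:ode:ode2} in their product-space form, and assemble across $\D^-$ and $\D^+$. The coordinate identities, the choice of which ODE lemma applies to which boundary-value configuration, the reading-off of the index-set shifts from the $\rho_\bullet^{-1}$-weights, and the change of boundary defining function $\tilde\rho_-=\rho_-/(1+\rho_-)$ in part \cref{item:ode:t-prop} all match what the paper does.

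The one substantive step you gloss over is the transfer from the $\D^+$ analysis into the $\D^-$ analysis in part \cref{item:ode:u-prop}. You propose to apply \cref{item:ode:1d} in $\D^-$ with data on the $\rho_0$-dependent slice $\rho_-=1/(|u_0|\rho_0)$, but \cref{item:ode:1d} requires a fixed interior $x'$ and, more seriously, $\outcone{}\cap\D^-$ does not reach the corner $\scrim\cap I^0$: at fixed $u$ with $v\leq\delta|u|$ one has $\rho_0=1/v$ bounded away from $0$, so there is no data curve there from which to integrate the $\rho_-$-ODE uniformly down to $\rho_0=0$. The paper handles this by doing $\D^+$ first via the 2D lemma, then introducing a cutoff $\chi$ with $\supp\chi'\subset\D^+\cap\D^-$ and setting $G^-=\chi G$: then $\pu G^-=\chi g+\chi' G$ lies in the correct function space because the $\chi' G$ term is supported in the overlap where the $\D^+$ result already controls $G$, and \cref{item:ode:1d} can be applied with data vanishing on a \emph{fixed} interior $\rho_-$-slice, uniformly in $\rho_0$ all the way to $\scrim\cap I^0$. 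You flag the assembly as "bookkeeping" in obstacle (iii), but this cutoff construction is a genuine ingredient rather than bookkeeping; without it the $\D^-$ argument as you state it does not cover the past corner.
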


    \begin{rem}
        The losses of different weights for $\partial_t$ and $\pu$ follow from the fact that $\partial_t\in\rho_0\Diff^1_{\b}(\D)$ and $\pu\in\rho_0\rho_-\Diff^1_{\b}(\D)$. See also \cref{fig:integralcurves}.
    \end{rem}

    \begin{rem}[Time reversed]
        All the statements in \cref{corr:ODE:du_dv} have straightforward time reversed versions with  $t\mapsto -t$, thus interchanging $\pu$ and $\pv$.
    \end{rem}

    \begin{rem}[Estimates vs inclusions]\label{rem:ODE:est}
    All statements above are stated as inclusions. Of course, these inclusions naturally come with estimates. For instance, in the setting of \cref{eq:ODE:T-propagation_error}, we have the estimate
    \begin{equation}
        \norm{G}_{\Hb^{\vec{a}_G;k}(\D)}\lesssim_k \norm{G}_{\Hb^{a^{\incone};k}(\incone)}+\norm{g}_{\Hb^{\vec{a}_g;k}(\D)} \quad\forall k.
    \end{equation}
As another example, the estimates corresponding to \cref{eq:ODE:1d} will depend on $x'$ and potentially degenerate as $x'\to0$.
    \end{rem}

	\begin{proof}
		We start with \cref{item:ode:u-prop}. 
		In a neighbourhood of $\scrip\cap I^0\subset \D^+$, we use coordinates $\rho_+=u/v,\rho_0=-1/u$ and write $\pu=\rho_0(\rho_+\partial_+-\rho_0\partial_0)$.
		\cref{item:ode:2d}, generalised to product spaces, and with the role of $x_0$ and $x_+$ interchanged since our data are at $x_0=\mathrm{const}$, concludes the result in $\D^+$. 

        We use a cutoff $\chi\in\A{phg}^{\mindex0,\mindex0}(\D^-)$ localising in the region $\D^-$ with $\supp\chi'\in \D^+\cap\D^-$, and write $G^-=\chi G$.
        Using the estimate in the region $\D^+$, we get that $\pu G^-$ is in the same function space as $\pu G$.
		Near the past corner, we use $\rho_-=-v/u,\rho_0=1/v$ to get $\pu=\rho_0\rho_-(\rho_-\partial_-)$.
		\cref{item:ode:1d} yields the result (again, generalised to product spaces).

        For \cref{item:ode:u-prop_boundary}, we use the same regions and coordinates as before.
        We start with $\D^-$ and apply \cref{item:ode:1d_boundary_cond,item:ode:1d_boundary_cond_weak_decay} to obtain the result near the past corner.
        In $\D^+$, we use \cref{item:ode:2d} to conclude the result.
  
		For \cref{item:ode:t-prop} we use 
		$\rho_-=v/r,\rho_0=1/v$ in the region $\D^-$ and write $\partial_t=\rho_0(\rho_-\partial_--\rho_0\partial_0)$. In $\D^+$, we use
		$\rho_+=-u/r,\rho_0=-1/u$ and write $\partial_t=\rho_0(\rho_+\partial_+-\rho_0\partial_0)$. 
		The result follows from consecutively applying \cref{item:ode:2d} first in $\D^-$, then in~$\D^+$.
	\end{proof}
 \begin{figure}[htpb]
\centering
\begin{subfigure}{0.45\textwidth}
\centering
    \includegraphics[width=120pt]{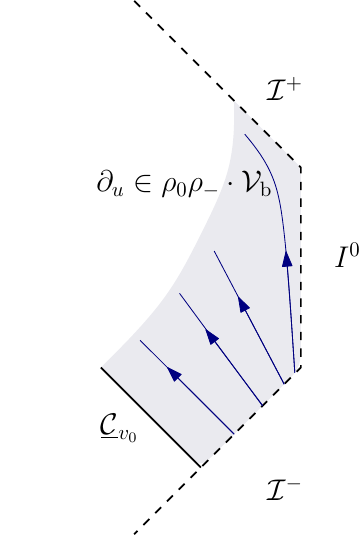}
    \caption{Integral curves of $\pu$.}
    \label{fig:integralcurves:a}
\end{subfigure}
\hspace{0.05\textwidth}
\begin{subfigure}{0.45\textwidth}
\centering
 \includegraphics[width=120pt]{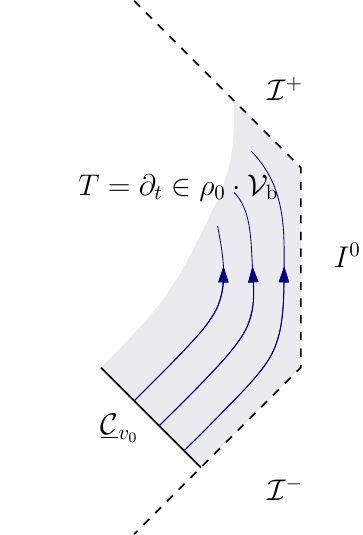}
    \caption{Integral curves of $\partial_t$.}
    \label{fig:integralcurves:b}
\end{subfigure}
\caption{Integral curves of the vector fields $\pu$ and $\partial_t$. Cf.~\cref{lemma:notation:coordinates}}\label{fig:integralcurves}
\end{figure}

	We can trivially extend the previous result for general coefficients
	\begin{cor}
		Let $(1-f),(1-h),g\in\A{phg}^{\vec{\E}^\bullet}(\D)$ for $\bullet\in\{f,h,g\}$ and $G\in H^{-\infty,-\infty,-\infty}(\D)$ with $\min(\vec{\E}^f,\vec{\E}^h)>0$.
        Then for $-u_0,v_0$ sufficient large, depending on $f,h$, we have
		\begin{enumerate}[label={\alph*})]
			\item Let $g=f\partial_u(hG)$ and $G|_{\outcone{}}\in\A{phg}^{\E^{\outcone{}}}(\outcone{})$, then $G\in \A{phg}^{\E^{\vec{G}}}(\D)$ for some $\vec{\E}^G$.
            \item Let $g=f\partial_u(hG)$ with $G|_{\outcone{}^{v_0,v_\infty}} \to G^{\scrim}\in\A{phg}(\scrim)$ as $u\to-\infty$ for all $v_\infty>v_0$.
            Then $G\in\A{phg}^{\vec{\E}^G}(\D)$ for some $\vec{\E}^G$.
		\end{enumerate} 
	\end{cor}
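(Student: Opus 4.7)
The plan is to reduce to the preceding corollary \cref{corr:ODE:du_dv} by inverting $f$ and $h$ and applying the algebra structure on $\A{phg}(\D)$. The main preliminary fact I need is that polyhomogeneous functions form an algebra under pointwise multiplication (with index sets combining via $+$), and that a polyhomogeneous function $F$ bounded uniformly away from $0$ has a polyhomogeneous reciprocal. The latter follows from the Neumann series: if $1-F \in \A{phg}^{\vec{\E}}(\D)$ with $\min \vec{\E} > 0$, the Sobolev embedding following \cref{def:notation:Hb_spaces} gives $1-F \in \vec{\rho}^{\,\min\vec{\E}}C^0(\D)$, so choosing $-u_0, v_0$ large enough forces $|1-F| \leq 1/2$ throughout $\D$; then $F^{-1} = \sum_{n\geq 0}(1-F)^n$ converges in the obvious formal sense to a polyhomogeneous function with index set contained in $\overline{(0,0)} \cupdex \vec{\E}$. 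This is precisely where the hypothesis $\min(\vec{\E}^f,\vec{\E}^h)>0$ and the threshold on $-u_0, v_0$ are used.

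With $1/f, 1/h \in \A{phg}(\D)$ established, set $\tilde{G} := hG$ and rewrite the equation as $\partial_u \tilde{G} = f^{-1} g =: \tilde{g}$, where $\tilde{g} \in \A{phg}^{\vec{\E}^{\tilde{g}}}(\D)$ as a product of two polyhomogeneous functions. For part (a), the trace $\tilde{G}|_{\outcone{}} = h|_{\outcone{}} \cdot G|_{\outcone{}}$ lies in $\A{phg}^{\E^{\outcone{},h}+\E^{\outcone{}}}(\outcone{})$, again as a product of polyhomogeneous functions (restriction of a polyhomogeneous function to a smooth transverse hypersurface stays polyhomogeneous). Applying \cref{corr:ODE:du_dv}~\cref{item:ode:u-prop} yields $\tilde{G}\in \A{phg}^{\vec{\E}^{\tilde{G}}}(\D)$ for a computable index set $\vec{\E}^{\tilde{G}}$. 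Then $G = h^{-1} \tilde{G}$ is again a product of polyhomogeneous functions, so $G \in \A{phg}^{\vec{\E}^G}(\D)$ with $\vec{\E}^G$ obtained from $\vec{\E}^{\tilde{G}}$ and the index set of $h^{-1}$ via the $+$ operation on index sets.

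For part (b), the argument is identical except that the boundary input is given on $\scrim$. Since $(1-h)\to 0$ at $\scrim$ (by $\min \vec{\E}^h > 0$) we have $h|_{\scrim} = 1$, so $\tilde{G}|_{\outcone{}^{v_0,v_\infty}} \to G^{\scrim}$ as $u\to-\infty$ for every $v_\infty > v_0$, and the limit $G^{\scrim} \in \A{phg}(\scrim)$ is unchanged. Now I invoke \cref{corr:ODE:du_dv}~\cref{item:ode:u-prop_boundary} to deduce $\tilde{G} \in \A{phg}^{\vec{\E}^{\tilde{G}}}(\D)$, and dividing by $h$ as before gives $G \in \A{phg}^{\vec{\E}^G}(\D)$. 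No single step is genuinely hard; the only real point of care is the uniform Neumann series bound, which is exactly what the condition on $-u_0, v_0$ is designed to guarantee.
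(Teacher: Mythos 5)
Your proof is correct and takes essentially the same route as the paper's: localise so that $f,h$ stay bounded away from zero (your Neumann-series argument for $f^{-1},h^{-1}\in\A{phg}$ is the content of the paper's one-line localisation step), rewrite $\partial_u(hG)=f^{-1}g$, apply \cref{corr:ODE:du_dv} to conclude polyhomogeneity of $hG$, then divide by $h$. The paper's proof is three lines but substantively identical; your version simply makes explicit the uniform smallness of $1-f,1-h$ on $\D_{u_0,v_0}$ (controlled by the $\rho_0$-weight, which does degenerate as $-u_0,v_0\to\infty$) and the computation $h|_{\scrim}=1$ needed for part (b).
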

 \begin{proof}
     We use the condition on $f,h$ to localise so that $(1-f),(1-h)\in[1/2,3/2]$ in $\D$.
    Then $gf^{-1}$ is polyhomogeneous for some index set.
    We apply \cref{corr:ODE:du_dv} to conclude polyhomogeneity of $hG$, then divide by $h$ to get the same for $G$ with some other index set.
 \end{proof}
	
	%%%%%%%
	\newpage
		\part{Energy estimates, scattering and polyhomogeneity}
    This second part of the paper consists of \cref{sec:en,sec:scat:scat,sec:prop}. These sections form the main building blocks for the rest of the paper.
    
    In \cref{sec:en}, we prove weighted energy estimates for the finite characteristic initial value problem for a large class of perturbations of the Minkowskian wave equation. In particular, we define our class of perturbations in this section. 
    In \cref{sec:scat:scat}, we then use these estimates to construct scattering solutions (with data at infinity) still satisfying the same estimates.
    In \cref{sec:prop}, we then prove various propagation of polyhomogeneity statements for solutions to the inhomogeneous Minkowskian wave equation $\Box_\eta\phi=f$.
		\section{Energy estimates for the finite problem}\label{sec:en}
  %%%%%%%%%%%%%%%%%%%  %%%%%%%%%%%%%%%%%%%  %%%%%%%%%%%%%%%%%%%
	In this section, we prove the main energy estimates of the paper in the context of the finite characteristic initial value problem (IVP).
	More precisely, we consider
	\begin{equation}\label{eq:en:characteristicIVP}
		\begin{gathered}
			\Box_\eta \phi=f\qquad \text{in } \D^{u_{\infty},v_{\infty}}_{u_0,v_0},\\
			\psi|_{\Cbar_{v_0}}=\psi^{\Cbar_{v_0}}\qquad \psi|_{\C_{u_\infty}}=\psi^{\C_{u_\infty}},\qquad \text{where } \psi=r\phi,
		\end{gathered}
	\end{equation}
	for some $u_{\infty}<u_0\ll -1$ and $0<v_0<v_\infty$, and with the inhomogeneity $f$ later on replaced by suitable nonlinearities.
	Here, while $v_0>0$ is arbitrary, some of our estimates will require taking $\abs{u_0}$ sufficiently large, see e.g.~\cref{lemma:en:recovering_initial_data}.
	With the scattering problem in mind, the estimates we prove here are uniform in $u_\infty,v_\infty$. That is to say, for a sequence of functions $f_{u_\infty}$, labelled by $u_\infty$ and with $u_\infty\to-\infty$, we write $f_{u_\infty}\in\Hb^{\vec{a}}(\D^{u_{\infty},v_{\infty}}_{u_0,v_0})$ if there exist a constant $C$ independent $u_\infty$ such that $\norm{f_{u_\infty}}_{\Hb^{\vec{a}}(\D^{u_{\infty},v_{\infty}}_{u_0,v_0})}\leq C$ for all $u_{\infty}$.
   For the rest of this section, we will simply write $\D=\D^{u_{\infty},v_{\infty}}_{u_0,v_0}$, suppressing the dependence on $u_0,v_0,u_\infty,v_\infty$.
    Similarly, we will write $\C_{u}$ rather than $\C_{u_1}^{v_0,v_\infty}$ and $\Cbar_v$ rather than $\Cbar_{v}^{u_0,u_{\infty}}$.
   
    %Moreover, we will employ different estimates in the two regions $\D^-$ and $\D^+$ and glue them together via a cutoff function (cf.~\cref{prop:en:main}).  
 
 	We split up the proof of our energy estimates into the regions $\D^-$ and $\D^+$ (cf.~\cref{fig:D}). Notice that estimates in the region $\D^+$ have, in the context of the Cauchy problem, already been studied in great detail in previous works; we recall some of these in \cref{sec:en:previous}.
 	Indeed, for this region, we can simply use the estimates of \cite{hintz_stability_2020}, see~\cref{sec:en:previous}.
    On the other hand, we provide a self-contained physical space derivation for the required estimates in $\D^-$ in \cref{sec:en:en}.
 	We note already that these estimates overlap to some extent with those proved in the more recent \cite{hintz_microlocal_2023-1}, see already \cref{sec:en:previous} for further comparison.
 	
 	This section is structured as follows:
 	We first recall estimates from the literature in both regions $\D^\pm$ for \cref{eq:en:characteristicIVP} in \cref{sec:en:previous}. %; in particular, we will give a comparison of our work with \cite{hintz_microlocal_2023-1}.
    Then, we study \cref{eq:en:characteristicIVP} with no incoming radiation and prove estimates on exact Minkowski space in \cref{sec:en:en}.
    The proof of these estimates follows integration by parts in $u,v$ coordinates, but we also provide a more geometric proof using currents  in \cref{app:sec:current_computations}. In \cref{sec:en:longrange}, we show that all these estimates can be extended to include what we call linear long-range potentials.
    Next, in \cref{sec:en:perturbations}, we introduce a class of (nonlinear)  short-range perturbations to \cref{eq:en:characteristicIVP}  and show that we can still close our estimates from \cref{sec:en:en}.
    We finally prove energy estimates for solutions with incoming radiation in \cref{sec:en:incoming}.

	\subsection{Previous estimates from the literature}\label{sec:en:previous}
We first recall an estimate from \cite{hintz_stability_2020} concerning the future region $\D^+$:

\begin{prop}\label{prop:en:futureestimate}
		Let $\psi^{\Cbar_{v_0}}=0=\psi^{C_{u_\infty}}$,  let $\supp f\subset\D^{+}\cap\{v>v_0+1\}\cap\{u>u_\infty+1\}$, and let $\phi$ solve \cref{eq:en:characteristicIVP}. Then, for $a_0$ arbitrary and for $a_+< \min(a_0,0)$, we have for any $k\in\mathbb N_{\geq0}$:
		\begin{nalign}\label{eq:en:futureestimate}
			\sup_{u\in (u_\infty,u_0)}\norm{(v\pv,\rho_+^{1/2}\sl)\Vb^kr\phi}_{\rho_0^{a_0}\rho_+^{a_+}\Hb^{;0}(\outcone{}^+)}+\sup_{v\in(v_0,v_\infty)}\norm{(u\pu,\rho_+^{1/2}\sl)\Vb^kr\phi}_{\rho_0^{a_0}\rho_+^{a_+}\Hb^{;0}(\Cbar_v^+)}\\
			+\norm{(1,u\pu,v\pv,\rho_+^{1/2}\sl)r\phi}_{\rho_0^{a_0}\rho_+^{a_+}\Hb^{;k}(\D)}\lesssim \norm{rf}_{\rho_0^{a_0+2}\rho_+^{a_++1}\Hb^{;k}(\D)}.
		\end{nalign}
	\end{prop}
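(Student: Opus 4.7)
The plan is to obtain \cref{eq:en:futureestimate} via a weighted $\b$-vector field multiplier estimate in $\D^+$. This estimate is essentially already contained in the Cauchy-problem analysis of \cite{hintz_stability_2020}; the version above follows by tracking the weights through the standard argument. I will work in coordinates $(\rho_0,\rho_+,\omega)$ near the future corner and use the factorisation from \cref{eq:notation:box_near_corner},
\[
P_\eta = \rho_0^2\rho_+^2\bigl(-\partial_{\rho_+}(\rho_+\partial_{\rho_+}-\rho_0\partial_{\rho_0})+\tfrac{1}{(1+\rho_+)^2}\Dl\bigr).
\]
Noting that $v\partial_v=-\rho_+\partial_+$ and $u\partial_u=\rho_+\partial_+-\rho_0\partial_0$, a natural multiplier is a weighted combination of the two, of the form
\[
X\psi=\rho_+^{-2a_+-1}\rho_0^{-2a_0-1}\bigl(\alpha\,v\partial_v\psi+\beta\,u\partial_u\psi\bigr),
\]
with $\alpha,\beta>0$ chosen so that the diagonal bulk coefficients have the right signs. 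The key algebraic point is that $a_+<0$ (physically: $\psi$ has a finite radiation field at $\scrip$, hence no decay there) produces positivity of the $(v\partial_v\psi)^2$-contribution in the bulk, while $a_+<a_0$ controls the mixed derivative upon integration by parts in $\rho_0$; the angular term is automatically positive owing to the extra $\rho_+$-factor in front of $\Dl$, and $a_0$ itself is left unconstrained because the $\rho_0$-weight is simply carried along.

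Next, I would pair $P_\eta\psi=rf$ with $X\psi$ and integrate against the $\b$-volume form from \cref{eq:notation:volumeform}. The boundary contributions on $\Cbar_{v_0}\cup\C_{u_\infty}$ vanish by the support hypothesis on $f$ (which forces $\psi\equiv 0$ in a neighbourhood of these cones by a short finite-speed-of-propagation argument), while the remaining boundary terms on $\outcone{}$ and $\Cbar_v$ produce precisely the flux norms on the LHS of \cref{eq:en:futureestimate}. The inhomogeneity is absorbed via Cauchy--Schwarz and the bulk positivity, yielding $\int \rho_+^{-2a_+-2}\rho_0^{-2a_0-2}\,|rf|^2\,\dd\mu_\b$ on the RHS, which matches the claimed weight $\rho_0^{a_0+2}\rho_+^{a_++1}$ in the $\Hb$-norm of $rf$. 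A zeroth-order estimate on $\psi$ itself is recovered from a Hardy inequality in $\rho_+$ (equivalently in $v$) using the $v\partial_v$-control and the vanishing of $\psi$ on $\Cbar_{v_0}$.

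Finally, the higher-order bound in $\rho_0^{a_0}\rho_+^{a_+}\Hb^{;k}(\D)$ follows by commuting with the generators of $\Vb=\{u\partial_u,v\partial_v,\Omega,1\}$. Each of $u\partial_u$, $v\partial_v$, $\Omega$ is tangent to the $\b$-boundaries $\scrip$ and $I^0$ and commutes with $\Box_\eta$ up to $\b$-differential terms of the same or lower order with the correct decaying weights, so an induction in $k$ closes, with commutator terms absorbed into the bulk LHS of the base estimate. The main obstacle is really just the sign-bookkeeping in the multiplier identity --- but once one verifies that $a_+<\min(a_0,0)$ is precisely the positivity condition, the estimate reduces to the one already established in \cite{hintz_stability_2020} for the future region, and can be invoked directly.
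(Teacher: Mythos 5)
Your proposal follows the same route as the paper: a weighted multiplier estimate with vector field of the form $v^{2a_+}|u|^{2(a_0-a_+)}(v\partial_v + c|u|\partial_u)$ applied to the (twisted) energy momentum tensor, boundary terms on the past cones killed by the support hypothesis, zeroth-order control recovered by Hardy, and higher $k$ by commuting with $\Vb$; the paper likewise defers to a current computation (\cref{lemma:current:future}) and invokes \cite{hintz_stability_2020}[Prop.~4.8] directly. One small bookkeeping slip: in the paper's divergence identity it is $\epsilon=2(a_0-a_+)>0$ that makes the $(v\partial_v\psi)^2$ coefficient positive, while $a_+<0$ makes the $(u\partial_u\psi)^2$ coefficient positive at $\scrip$ --- you have these assignments swapped --- and the paper also carries an auxiliary weight $w^{\bar c}=(r/v)^{\bar c}$ with $\bar c$ large to absorb the mixed and angular contributions, which your sketch glosses over; but since you correctly state that $a_+<\min(a_0,0)$ is the net positivity condition and defer the bookkeeping to the cited source, the argument is sound.
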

	\begin{proof}
		This is the scalar analogue of \cite[Proposition~4.8]{hintz_stability_2020}.\footnote{Note that the relevant operator in Hintz--Vasy's $L_h u=f$ already contains weights in $L_h$, and so do their $u,f$.}
		It follows from a $v^{2a_++1}\abs{u}^{2(a_0-a_+)}(\pv+\frac{\abs{u}}{v}\pu)$ multiplier estimate for the twisted energy momentum tensor $\tilde{\T}$.
       We provide the computation in \cref{lemma:current:future}.
	\end{proof}

	In order to assist the reader in better parsing this estimate, we observe the following:
	\begin{itemize}
		\item  Firstly, $\b$-regularity is propagated by the estimate.
		This is a consequence of the symmetries, i.e.~good commutators, of $\Box_{\eta}$. 
More precisely, we actually have 1 order of regularity gain, though this is not $\b$-regularity, as $\sl$ derivatives lose half a power of decay ($\rho_+^{1/2}$) towards $\scrip$.
This loss for the angular derivative is related to choice of vector fields with respect to which $\Box$ is to leading order a nondegenerate quadratic form on $\D$. See   \cite{hintz_microlocal_2023-1} for more details.

		\item Ignoring the $\rho_+^{1/2}$ factor, we exactly lose 2 factors of decay at $I^0$ and 1 factor of decay at $\scri^+$, as expected from the fact that $\Box_\eta\in\rho_+^{-1}\rho_0^{-2}\Diff_b^2(\D^+)$, cf.~\cref{eq:notation:box_near_corner}. 
        That is, for $\phi\in\Hb^{a_0,a_+;k}(\D^+)$, we have $\Box_\eta\phi\in\Hb^{a_0-2,a_+-1;k-2}(\D^+)$.
	\end{itemize} 
 The specific gain of regularity with a loss in the angular derivatives will appear many times throughout the paper; we therefore introduce notation for it:
	\begin{defi}
		We use the shorthand $\Ve=\{1,v\pv,u\pu,\rho_\scri^{1/2}\Omega_i\}$. (Recall also $\rho_{\mathcal{I}}:=\rho_-\rho_+$.)
	\end{defi}
	
  For the past region $\D^-$, we quote the following, more recent statement from \cite{hintz_microlocal_2023-1}:%As for the past region $\D^-$, as we already noted, the estimates presented in \cref{sec:en:en} overlap with those in \cite{hintz_microlocal_2023-1}. Their analogue of \cref{prop:en:futureestimate} in the past reads:

  \begin{prop}\label{prop:en:hintz_past}
		Let $\psi^{\Cbar_{v_0}}=0=\psi^{\C_{u_\infty}}$,  let $\supp f\in\D\cap\{u>u_\infty+1,v>v_0+1\}$ and let $\phi$ solve \cref{eq:en:characteristicIVP}. Then, for $a_->1/2$ and $a_0<a_-$, we have for any $k\in\mathbb N_{\geq0}$:
		\begin{nalign}\label{eq:en:hintz_past}
			\norm{\Ve r\phi}_{\rho_-^{a_-}\rho_0^{a_0}\Hb^{;k}(\D^{-})}\lesssim \norm{rf}_{\rho_-^{a_-+1}\rho_0^{a_0+2}\Hb^{;k}(\D^{-})}.
		\end{nalign}
	\end{prop}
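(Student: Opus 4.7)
The plan is to mirror the strategy used in the future region (\cref{prop:en:futureestimate}, i.e.~\cref{lemma:current:future}) but with a multiplier tailored to the geometry of $\D^-$. Concretely, in $\D^-$ one has $|u|\sim r$ and $\rho_-\sim v/|u|$, $\rho_0\sim 1/v$, so the target weight on the solution reads $\rho_-^{a_-}\rho_0^{a_0}\sim v^{a_--a_0}|u|^{-a_-}$. I would work with the twisted operator $P_\eta\psi=rf$ (recall $P_\eta=r\Box_\eta r^{-1}$) from \eqref{eq:notation:box_near_corner} and contract the twisted energy momentum tensor $\tT[\psi]$ from \cref{def:notation:T} (with $\beta=r^{-1}$, hence $w=0$) with a vector field multiplier of the schematic form
\begin{equation*}
   X \;=\; |u|^{2a_--1}v^{2(a_0-a_-)}\bigl(\pu+\lambda\tfrac{v}{|u|}\pv\bigr),
\end{equation*}
for a suitable $\lambda\geq 0$ to be fixed during the computation. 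This is the natural $u\leftrightarrow v$ companion of the multiplier in \cref{prop:en:futureestimate}: the $\pu$ component delivers the dominant good derivative at $\scrim$, while the small admixture of $\pv$ is needed to handle the angular term $r^{-2}\Dl\psi$ coming from $\tT$.

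After multiplying $P_\eta\psi=rf$ by $X\psi$ and integrating over $\D^-$ against $\dd u\,\dd v\,\dd g_{S^2}$, I would expand $\div(\tT\cdot X)$ using the usual deformation tensor computation. The boundary terms on $\Cbar_{v_0}$ and $\C_{u_\infty}$ vanish by the trivial-data assumption, while the bulk terms split into (i) a coercive contribution controlling $|u|^{2a_--1}v^{2(a_0-a_-)}\bigl((\pu\psi)^2+\lambda\tfrac{v}{|u|}(\pv\psi)^2+(\text{angular})\bigr)$, and (ii) cross terms proportional to $\partial X$ and $\partial r$. The key algebraic checks are that the coefficient of $(\pu\psi)^2$ behaves like $a_-|u|^{2a_--2}v^{2(a_0-a_-)}$ (positive iff $a_->0$, with strict coercivity giving the Hardy-type room needed later), while the coefficient of $r^{-2}|\sl\psi|^2$ requires $a_-<a_0$ \emph{or} a sufficiently large $\lambda$; the constraint $a_->1/2$ arises exactly at the point where one converts the $(\pu\psi)^2$ bulk into $L^2$-control of $\psi$ itself via a Hardy inequality in $u$, integrating from $\psi|_{\C_{u_\infty}}=0$. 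Controlling the zeroth order term $\psi^2$ is done by an auxiliary multiplier estimate: multiplying $\pu\pv\psi\sim rf+r^{-2}\Dl\psi$ by $|u|^{2a_--1}v^{2(a_0-a_-)+1}\psi$ and integrating, using the already-obtained $(\pu\psi)^2$, $(\pv\psi)^2$ bounds to close.

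With the zeroth-order-through-first-order estimate in hand, I would upgrade to the $\Ve$-weighted version by using that $\rho_{\mathcal I}^{1/2}\sl$ absorbs precisely the half-power loss visible in the angular term of $\tT$; combined with the $(\pu\psi)^2$ control, this yields the $\Ve$-norm on the LHS of \eqref{eq:en:hintz_past}. Higher $\Hb^{;k}$ regularity is then obtained in a standard manner by commuting $\Box_\eta$ with elements of $\Vb=\{u\pu,v\pv,\sl,1\}$: each such commutator is an admissible element of $\rho_0^2\rho_-\Diff_\b^2$ and produces lower order error terms which are absorbed via induction on $k$ (and, where needed, by taking $|u_0|$ large enough to swallow the constants).

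The main obstacle I expect is tuning $\lambda$ and the precise weights so that \emph{simultaneously} (a) the angular term has the right sign for all $a_0<a_-$ without further restriction, (b) the Hardy step to recover $\psi^2$ works under the single condition $a_->1/2$, and (c) the error terms arising from $\partial r$ (which produce weights $\rho_0$, i.e.~$v^{-1}$) are strictly subleading with respect to the main coercive terms. This balancing is what forces the exact thresholds $a_->1/2$ and $a_0<a_-$, and is the delicate part of the argument; a slightly cleaner, coordinate-free version of the same computation is carried out using the currents formalism in \cref{app:sec:current_computations}, which is how I would organise the final writeup.
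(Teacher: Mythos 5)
The paper's proof of this proposition is a single sentence: it cites \cite[Eq.~(6.6)]{hintz_microlocal_2023-1} with a specific translation of weight parameters. Your proposal is a self-contained vector-field multiplier argument, so it is a genuinely different route; that said, it is essentially the strategy the paper carries out for the closely parallel, self-contained statement in \cref{prop:en:pastestimate} (Case 2 of \cref{item:en:pastestimate_inhom}, via the $|u|^{2a_-+1}v^{-\epsilon}\pu$ multiplier for $\tT$) and again geometrically in \cref{lemma:current:past} ($J_3$). You correctly identify the $\pu$-dominant multiplier, the small $\pv$ admixture to close the angular bulk, the auxiliary $\T$-multiplier for the zeroth order, and commutation with $\Vb$ for higher $k$; structurally the plan is sound and buys you independence from the external microlocal reference.

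There is, however, a misattribution that would mislead you when you actually try to ``tune $\lambda$.'' You claim the threshold $a_->1/2$ enters when a Hardy inequality in $u$ is used to convert $(\pu\psi)^2$ bulk control into $L^2$-control of $\psi$, and that the angular term ``has the right sign for all $a_0<a_-$ without further restriction.'' In the paper's own multiplier proof the situation is the opposite: the constraint $a_->1/2$ is forced by the angular bulk term, while the zeroth-order recovery is comparatively benign. Concretely, in the $J_3$ computation of \cref{lemma:current:past} the coefficient of $r^{-4}|\sl\psi|^2$ in $2\,\divergence(V_3\cdot\tT)$ is
\begin{equation*}
	(2a_-+1)\;-\;c(1-\epsilon)\;+\;\tfrac{2}{r}\bigl(cv-|u|\bigr),
\end{equation*}
and since $|u|/r\to 1$ toward $\scrim$, taking $c$ small leaves $2a_--1>0$ as the nonnegativity requirement. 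So (a) and (b) in your list of balancing constraints should be swapped: the angular term is where $a_->1/2$ lives, not the Hardy step. Separately, the multiplier you write, $|u|^{2a_--1}v^{2(a_0-a_-)}(\pu+\lambda\tfrac{v}{|u|}\pv)$, carries two fewer powers of $|u|$ than the paper's $V_3=w^{\bar c}v^{-\epsilon}|u|^{2a_-}(|u|\pu+cv\pv)$; this is most likely a bookkeeping mismatch between pairing $X\psi$ with $P_\eta\psi$ against $\dd u\,\dd v\,\dd g_{S^2}$ and pairing the current $V\cdot\tT$ against $\dd\mu=r^2\dd u\,\dd v\,\dd g_{S^2}$, but it needs to be reconciled before the coercivity computation is attempted.
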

	\begin{proof}
		This corresponds to \cite[Eq.~(6.6)]{hintz_microlocal_2023-1}, with $\tilde{s}=1$, $\tilde{\alpha}_\scri=a_--1/2$ and $\tilde{\alpha}_0=a_0-1$ (and after carefully deconstructing all the definitions in \cite{hintz_microlocal_2023-1}).
	\end{proof}
	%Unlike in \cite{hintz_microlocal_2023-1}, we do not introduce any additional structure associated to the geometry connected to the non-degeneracy of $g$ as a quadratic bilinear form up to the boundaries of $\D$.\footnote{For more details on why this is important, we refer the interested reader to \cite{hintz_microlocal_2023-1}.}
	%This results in less sharp estimates, but are already sufficient for a large class of perturbations.

	The cited work of course includes a much more general statement than the one above; in particular, it is also valid for a large class of perturbations of the geometry such as Schwarzschild or Kerr. We do not go into details on this. (In particular, we do not introduce any additional structure associated to the geometry connected to the non-degeneracy of $g$ as a quadratic bilinear form up to the boundaries of $\D$. For more details, see~\cite{hintz_microlocal_2023-1}.)
 Let us for now just highlight that, \cref{prop:en:hintz_past} requires that the inhomogeneity decays at least like $rf\sim \rho_-^{3/2}$ towards $\scrim$. This is already improved to $rf\sim \rho_-^{1+}$ in \cite{hintz_microlocal_2023-1}; but stating it would, again, require importing extra notation concerning the regularity setting.
	%We again make a few comments on the estimate:
        	% \begin{itemize}
        	% 	\item The inhomogeneity in \cref{eq:en:hintz_past} needs to decay at least like $rf\sim \rho_-^{3/2}$ towards $\scrim$.
        	% 	A significantly stronger statement is already proved in \cite{hintz_microlocal_2023-1}, where the authors are able to lower the required decay to $rf\sim r^{-1-}$.
        	% 	However,  this requires working at a regularity setting which is difficult to capture with the simple set-up we use in this paper.
        	% 	\item The estimates in \cite{hintz_microlocal_2023-1} are also proved for a large class of perturbation of the geometry including Schwarzschild and Kerr geometries; indeed, the latter geometries exhibit much faster fall-off of the metric than minimally required by the estimates.
        	
        	% \end{itemize}
	
	The authors only became aware of \cref{prop:en:hintz_past} when this section was already mostly written, and so we conclude by listing the main similarities and differences of the estimates of the next few subsections to \cref{prop:en:hintz_past}.
	\begin{itemize}
		\item In \cref{item:en:pastestimate_inhom} of \cref{prop:en:pastestimate}, we recover the estimate from \cref{prop:en:hintz_past} together with an extension (\cref{item:en:pastestimate_inhom_0th}) to the case where $rf\sim r^{-p}$ for $p\in(1,3/2)$.
		In the latter case, we either lose the optimal expected decay towards $I^0$---in particular, we can only show that $r\phi\sim r^{-p+3/2}$---or we lose the gain in $\Ve$ regularity. We further show improved estimates in the case of no incoming radiation in \cref{item:en:pastestimate_Hbt}.
		\item In \cref{lemma:en:energy_no_incoming,thm:scat:weak_polyhom}, we obtain estimates for the case when the inhomogeneity decays weakly ($rf\sim r^{-p}$ for $p<1$) towards $\scrim$ and either has no incoming radiation (which in this context means $\psi^{\C_{u_\infty}}=0$) or an expansion towards $\scrim$.
		\item We also include nontrivial characteristic initial data for \cref{eq:en:characteristicIVP}.
		%This is important, as prescribing initial data along a null hypersurface is a much easier task for the main equation of interest---the Einstein equations---than on a spacelike surface.
		%Importantly, in \cref{sec:en:incoming}, we also prove estimates for nontrivial $\psi^{\outcone{\infty}}$.
		\item Finally, in addition to extending all our results to \textit{short-range} perturbations, we show that the energy estimates also work for some long-range perturbations (e.g.~$r^{-2}$-potentials) of $\Box_{\eta}$ in \cref{sec:en:longrange}.
	\end{itemize}

	\subsection{Energy estimates I: The linear wave equation \texorpdfstring{$\Box_{\eta}\phi=f$}{} with no incoming radiation}\label{sec:en:en}
	
	In this section, we are concerned with deriving estimates for the finite problem \cref{eq:en:characteristicIVP} designed to later be applied to a sequence of solutions approaching a scattering solution with no incoming radiation. Therefore, we make the assumption that\footnote{We note already that, in the limiting problem, the no incoming radiation condition only imposes the vanishing of the limit of $\pv\psi$ towards $\scrim$, which is still consistent with nonzero or also diverging $\psi$.} 
	\begin{align}\label{eq:en:assumption0}
		\psi|_{\incone\cap\{u\in(u_\infty,u_\infty+1)\}}=f|_{\D\cap\{u\in(u_\infty,u_\infty+1)\}}=0,\quad\psi^{\Cbar_{v_0}}(u_{\infty})=0.
	\end{align}
	%Let us already emphasise, however, that the no incoming radiation scattering solutions constructed in \cref{sec:scat:scat} will nevertheless be allowed to have $\lim_{u\to-\infty} r\phi=\mathrm{const}\neq 0$, as this is still consistent with no incoming radiation. In fact, $r\phi|_{\Cbar_v}$ will even be allowed to diverge.
	
	Let us now introduce some notation capturing the range of decay rates that we will consider in our estimates. The reader may wish to refer to the discussion below \cref{eq:intro:admissiblebelow} for basic intuition.
	\begin{defi}\emph{(Admissible weights)}\label{def:en:admissible:f}
		We say that $\vec{a}$ is an \emph{admissible weight triple}, in short \emph{admissible}, if $a_-\geq -1/2,a_+<a_0<a_-$, $a_+<0$ and $a_-\neq0$.\footnote{Note that the assumption $a_0<a_-$ may be relaxed for solutions with incoming radiation as discussed in \cref{rem:poly:improved_decay_at_I0}, but we will pursue no such improvements in the paper.}
  
Given $\vec{a}$ admissible, we say that $\vec{a}^f$ is an \emph{admissible inhomogeneous weight triple relative to $\vec{a}$}, in short \emph{admissible inhomogeneous}, if $a_-^f>1$ and

    \begin{subequations}\label{eq:en:admissible_inhom}
    	\begin{empheq}[left={a_-\leq a^f_--1,\quad a_+\leq a^f_+-1,\quad a_0\leq\empheqlbrace}]{alignat=2}
    		&\min(a^f_--1.5,a_0^f-2),& & \text{ if }a_-^f\leq3/2\label{eq:en:admissible_inhom1}\\
				&a_0^f-2,& & \text{ otherwise.}\label{eq:en:admissible_inhom2}
    	\end{empheq}
    \end{subequations}
		We say $\vec{a}^f$ is a \emph{strongly admissible inhomogeneous weight} if the inequalities  in \cref{eq:en:admissible_inhom} are strict and call $\delta=\sup\{\epsilon>0: \vec{a}^f-(\epsilon,\epsilon,\epsilon) \text{ is strongly admissible}\}$ the corresponding \emph{gap}.
        
        As a shorthand, we will simply say that the pair $\vec{a},\vec{a}^f$ is \textit{admissible} to mean that $\vec{a}$ is admissible and $\vec{a}^f$  is admissible inhomogeneous with respect to $\vec{a}$. Here, in the pair $\vec{a}$, $\vec{a}^{\bullet}$, the triple without the superscript will always be the one that's admissible; the other one will be the one that's admissible inhomogeneous.% that's admissible inhomogeneous with respect to the other one.
	\end{defi}

    \begin{rem}\label{rem:en:strict_inequalities}
		In \cref{def:en:admissible:f}, we allowed for equality in \cref{eq:en:admissible_inhom} and did not allow for equality in $a_+<a_0<a_-$. Alternatively, one could swap these two cases and propagate sharp decay rates below top order regularity, we leave this to the reader.
        %see for instance \cref{lemma:appendix:strict_inequalities}
		However, removing both of them is not possible, cf.~\cref{rem:ODE:sharp}. We don't study such small improvements further.
        Similarly, we made a choice not to allow $a_-=0$ for practical purposes as this case would have to be treated separately in some cases, e.g.~\cref{lemma:en:recovering_initial_data}. The reason is exactly the condition $c\neq \tilde c$ in \cref{eq:ODE:1d_error}.
	\end{rem}
 \begin{rem}
    The condition \cref{eq:en:admissible_inhom1} can always be ignored provided one is willing to work at a loss of derivatives. See already \cref{def:scat:extended_short_range}. 
 \end{rem}
\subsubsection{The main estimates}
 
	\begin{prop}[Past energy estimate]\label{prop:en:pastestimate}
		Let $\phi$ be the solution to \cref{eq:en:characteristicIVP} under the assumptions \cref{eq:en:assumption0}, and let $\vec{a}$ be admissible. Then, for any $k\in\mathbb N_{\geq0}$, we have the following estimates:
		\begin{enumerate}[label=\textbf{E.\arabic*}]
			\item \label{item:en:pastestimate_hom} Homogeneous estimate: For $f=0$, we have:
			\begin{equation}\label{eq:en:pastestimate_hom}
				\norm{\Ve \psi}_{\rho_-^{a_-}\rho_0^{a_0}\Hb^{;k}(\D^-)}\lesssim \norm{\Vb^k \psi}_{\rho_-^{a_-}\Hb^{;1}(\incone)}.
			\end{equation}
			\item \label{item:en:pastestimate_inhom} 
			Inhomogeneous estimate: For $\psi^{\incone}=0$ and admissible $\vec{a}^f$, we have:
			\begin{nalign}\label{eq:en:pastestimate_inhom}
				\sup_{u\in (u_\infty,u_0)}\norm{(v\pv,\rho_-^{1/2}\sl)\Vb^k\psi}_{\rho_-^{a_-}\rho_0^{a_0}\Hb^{;0}(\outcone{}^-)}+\sup_{v\in(v_0,v_\infty)}\norm{(u\pu,\rho_-^{1/2}\sl)\Vb^k\psi}_{\rho_-^{a_-}\rho_0^{a_0}\Hb^{;0}(\Cbar_v^-)}\\
				+\norm{\Ve \psi}_{\rho_-^{a_-}\rho_0^{a_0-}\Hb^{;k}(\D^-)}\lesssim \norm{rf}_{\rho_-^{a^f_-}\rho_0^{a^f_0}\Hb^{;k}(\D^-)}+\norm{\Vb^k \psi}_{\rho_-^{a_-}\Hb^{;1}(\incone)}.
			\end{nalign}
			
			\item \label{item:en:pastestimate_inhom_0th} Improved slow decay with loss of regularity: For $\psi^{\incone}=0$ and $\vec{a}^f$ satisfying \cref{eq:en:admissible_inhom2}, i.e.~dropping the extra restriction of $a_0\leq a^f_--1.5$ in \cref{eq:en:admissible_inhom1} for $a_-^f\in(1,3/2]$, we have:
            \begin{equation}\label{eq:en:pastestimate_inhom_0th}
				\norm{\psi}_{\rho_-^{a_-}\rho_0^{a_0}\Hb^{;k}(\D^-)}\lesssim \norm{rf}_{\rho_-^{a^f_-}\rho_0^{a^f_0}\Hb^{;k}(\D^-)}+\norm{\Vb^k \psi}_{\rho_-^{a_-}\Hb^{;1}(\incone)}.
			\end{equation}			
			\item No incoming radiation: \label{item:en:pastestimate_Hbt} \cref{eq:en:pastestimate_hom,eq:en:pastestimate_inhom,eq:en:pastestimate_inhom_0th} also hold upon changing at the same time spacetime norms from $\Hb(\D^-)$ to $\Hbt(\D^-)$, as well as the $\Vb$ to $\Vbt$ on the hypersurface norms, including those of the initial data. 
		\end{enumerate}
		
	\end{prop}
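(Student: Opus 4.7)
The plan is to derive all four items from weighted vector-field multiplier estimates applied to the twisted equation $P_\eta\psi = rf$, i.e.~$-\pu\pv\psi + \Dl\psi/r^2 = rf$ (cf.~\cref{eq:notation:box_near_corner}), in the region $\D^-$. Since $|u|\sim r$ in $\D^-$, the target weight $\rho_-^{a_-}\rho_0^{a_0}$ is comparable to $v^{a_--a_0}|u|^{-a_-}$. The fundamental multiplier I would use is a weighted $\partial_t$ of schematic form $X = v^{2(a_--a_0)+1}|u|^{-2a_--1}\partial_t$, applied by multiplying the equation by $X\psi$ and integrating by parts on truncated subregions $\D^-\cap\{u\leq u_1,\,v\leq v_1\}$ for arbitrary $u_1\in(u_\infty,u_0)$, $v_1\in(v_0,v_\infty)$; the admissibility condition $a_-\geq -1/2$ ensures that the resulting bulk term is non-negative and controls $v^{2(a_--a_0)}|u|^{-2a_-}\big((\pv\psi)^2+(\pu\psi)^2+r^{-2}|\sl\psi|^2\big)$. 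Taking suprema over $u_1,v_1$ and combining with boundary fluxes on $\incone$ and $\C_{u_\infty}$ produces the left- and right-hand-side structures of the stated estimates. Zeroth-order control that upgrades the bulk norm to the full $\Ve$-norm is added via a secondary multiplier $r^2|t|^{2a+1}\partial_t\phi$ applied to $\Box_\eta\phi=f$, exactly as sketched in \cref{sec:intro:example}.

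For \cref{item:en:pastestimate_hom}, the no-incoming-radiation assumption \cref{eq:en:assumption0} annihilates the $\C_{u_\infty}$-flux and $f\equiv 0$ kills the inhomogeneity, leaving only the $\incone$-flux, which matches the RHS of \cref{eq:en:pastestimate_hom}. Higher-order estimates come from commuting with $\Vb = \{u\pu,v\pv,\sl,1\}$: up to smooth coefficient rearrangement these are linear combinations of the Killing scaling $S = u\pu + v\pv$, of $\partial_t$, and of spherical symmetries, so $[\Box_\eta,V]$ for $V\in\Vb$ produces only admissibly weighted lower-order terms that can be iterated away. The $\Ve$-gain, with the half-weight angular loss $\rho_-^{1/2}\sl$, is precisely what the multiplier's bulk term controls once a full order of $\Vb$-regularity is present on the initial data. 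For \cref{item:en:pastestimate_inhom}, with $\psi^{\incone}=0$, a Cauchy--Schwarz split of the bulk inhomogeneity integral $\int(rf)(X\psi)\,d\mu$ produces a right-hand-side contribution in which the weights on $rf$ are dictated by $X$; the admissibility conditions \cref{eq:en:admissible_inhom} are exactly those under which these weights are dominated by $\rho_-^{a_-^f}\rho_0^{a_0^f}$ while the $\psi$-half is absorbed on the LHS. The $\epsilon$-loss in $\rho_0^{a_0-}$ of \cref{eq:en:pastestimate_inhom} accommodates the borderline logarithmic contribution in the absorption.

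The slow-decay case \cref{item:en:pastestimate_inhom_0th} is treated separately, without the $\Ve$-gain. Under only the weaker condition \cref{eq:en:admissible_inhom2}, the multiplier estimate no longer closes at the level of first derivatives, but one may directly integrate $\pu\pv\psi = \Dl\psi/r^2 - rf$ twice, once in $v$ from $\incone$ (where $\psi^{\incone}=0$) and once in $u$ from $\C_{u_\infty}$ (using \cref{eq:en:assumption0} to kill the boundary term), and then estimate $\psi$ in the weighted $L^2$ norm by a Grönwall-type argument, treating the $\Dl\psi/r^2$ self-interaction as absorbable. The no-incoming-radiation improvement \cref{item:en:pastestimate_Hbt} is obtained by induction on the number of $r\pv$-commutations. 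The essential point is that, for $\psi$ vanishing on $\C_{u_\infty}$, one has $\pv\psi(u,v) = -\int_{u_\infty}^u\pu\pv\psi(u',v)\,du'$, so $r\pv\psi$ already decays one order better in $\rho_-$ than a generic first derivative of $\psi$; equivalently, $r\pv$ extends to a smooth $\b$-vector field on the compactification $\Dbold$ of \cref{fig:intro:b}. Consequently $(r\pv)^j\psi$ satisfies a wave equation of the same schematic form with an inhomogeneity that remains admissibly inhomogeneous relative to $\vec{a}$ after each commutation, and the estimates of \cref{item:en:pastestimate_inhom,item:en:pastestimate_inhom_0th} can be applied iteratively.

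The main obstacle will be the sharp weight-bookkeeping that verifies \cref{eq:en:admissible_inhom}, especially the borderline case $a_-^f\in(1,3/2]$ in which one must accept either a regularity loss (\cref{item:en:pastestimate_inhom_0th}) or an additional $\rho_0$-decay loss (\cref{item:en:pastestimate_inhom}); showing that the multiplier cannot avoid this dichotomy requires testing against explicit extremising inhomogeneities. A secondary obstacle is ensuring that the zeroth-order upgrade via the second multiplier does not overshoot admissibility, and that for \cref{item:en:pastestimate_Hbt} the iterated commutators $[\Box_\eta,(r\pv)^k]$ remain admissibly inhomogeneous under the no-incoming-radiation gain. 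Since the detailed calculation in $(u,v)$-coordinates becomes cumbersome once general quasilinear perturbations are considered, a more geometric current-based derivation of the same estimates is also natural and is carried out in \cref{app:sec:current_computations}.
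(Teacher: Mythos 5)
Your strategy for \cref{item:en:pastestimate_hom} and \cref{item:en:pastestimate_inhom} --- weighted $\partial_t$-, $\partial_u$-, $\partial_v$-multipliers for $\T$ and $\tT$ with the admissibility conditions \cref{eq:en:admissible_inhom} dictating the borderline weights --- matches the paper's (the paper's multiplier for E.1 is $v^{-\epsilon}|t|^{2a_-+1}\partial_t$ with $\epsilon=2(a_--a_0)$; for E.2 it supplements this with $|u|^{2a_-}v^{-\epsilon}\partial_v$- and $|u|^{2a_-+1}v^{-1-\epsilon}\partial_u$-multipliers and the case distinctions in \cref{def:en:admissible:f} correspond to the different extra $v$-powers). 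So I have no objection there beyond bookkeeping. Your treatments of \cref{item:en:pastestimate_inhom_0th} and \cref{item:en:pastestimate_Hbt}, however, take a genuinely different route and in both cases have a gap.

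For \cref{item:en:pastestimate_inhom_0th}, the paper does not double-integrate and apply Gr\"onwall: it takes the \emph{time anti-derivative} $T^{-1}(rf)=\int_{2v_1-r}^{t} rf\,\dd t'$, uses that $T^{-1}$ commutes exactly with $\Box_\eta$, observes via \cref{item:ode:2d} that $T^{-1}(rf)\in\Hb^{a_-^f,\min(a_0^f-1,a_-^f);k}$ now satisfies the stronger admissibility condition \cref{eq:en:admissible_inhom1}, applies the already-established \cref{item:en:pastestimate_inhom} to $T^{-1}(r\phi)$, and differentiates once in $t$ to recover $\psi$ at the cost of one derivative while gaining one $\rho_0$-power. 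Your Gr\"onwall idea does not close as stated: in the twice-integrated identity $\psi=\iint(\Dl\psi/r^2-rf)$, the self-interaction $\Dl\psi/r^2$ carries two more angular derivatives than $\psi$, so a Gr\"onwall bound at the level of $\psi$ cannot absorb it without an a priori bound on $\Dl\psi$ --- which is precisely what the energy estimate is meant to provide. ``Treating $\Dl\psi/r^2$ as absorbable'' is the entire difficulty, and the paper's $T^{-1}$-reduction is what makes it absorbable; it is also what produces the one-derivative loss in the statement.

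For \cref{item:en:pastestimate_Hbt}, the paper commutes with the \emph{exact Killing} vector $T=\partial_t$, not with $r\pv$. This matters: $[\Box_\eta,r\pv]$ contains a genuine $\pv^2$-term, equivalently a $r^{-2}(r\pv)^2$-contribution, which is not a lower-order short-range error --- written as a first-order perturbation acting on $r\pv\psi$ it sits at the borderline of the admissible class, so your claim that the iterated commutators ``remain admissibly inhomogeneous after each commutation'' is not automatic and closing it directly would require the long-range machinery of \cref{sec:en:longrange}. The paper sidesteps this entirely: since $[\Box_\eta,T]=0$, $T^m\psi$ satisfies $P_\eta T^m\psi=T^m(rf)$ with no commutator error; the no-incoming-radiation structure (through \cref{lemma:en:recovering_initial_data}) lets one raise $a_-,a_-^f$ by $m$; and summing the shifted estimates \cref{eq:en:past_proof_T_commuted} over $0\leq j\leq k$, using that in $\D^-$ control of $(rT)^j\psi$ together with $\Vb$-control implies $\Vbt^j$-control, yields the $\Hbt$-estimates.
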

	
	\begin{rem}\label{rem:en:linearity_combine_estimates}
		Using linearity, we can merge the estimates in \cref{prop:en:pastestimate} to apply to  a solution that has both initial data and forcing.
        Furthermore, notice that in, say, \cref{eq:en:pastestimate_hom}, we estimate the LHS against $\Vb^k$ control for $\psi$ along~$\incone$. We can re-express this control directly in terms of $f$ and the data $\psi^{\incone}$ via integration of the equation along $\incone$. By nature of the characteristic IVP, this will lose derivatives. We do this in~\cref{lemma:en:recovering_initial_data}.
	\end{rem}
    
	\begin{rem}\label{rem:en:homogeneous_improvement}
		Let us note that, for the homogeneous case, we actually prove something stronger than \cref{eq:en:pastestimate_hom}; namely:
		\begin{nalign}\label{eq:en:pastestimate_hom:stronger}
			\norm{(1,\rho_-^{-1/2+}v\pv,\rho_-^{-1/2+}vr\pv r^{-1},u\pu,\sl)\psi}_{\rho_-^{a}\rho_0^{a-}\Hb^{;k}(\D^-)}\lesssim \norm{\Vb^k\psi}_{\rho_-^{a}\Hb^{;1}(\incone)}.
		\end{nalign}
		We do not know if this improvement plays any role for perturbations of $\Box_\eta$, and we don't make use of it in this paper.
	\end{rem}

Combining this estimate with \cref{prop:en:futureestimate} gives:
	%%%%%%%%%%%%%%%%%%%%%%%%%%%%%%%%%%%%%%%%%%%%%%%
	\begin{prop}\label{prop:en:main}
        Let $\vec{a},\vec{a}^f$ be admissible.
		Under the assumptions of \cref{prop:en:pastestimate}, we have
		\begin{multline}\label{eq:model:estimate}
		\sup_{u\in (u_\infty,u_0)}\norm{(v\pv,\rho_{\scri}^{1/2}\sl)\Vb^k\psi}_{\rho_0^{a_0}\rho_+^{a_+}\Hb^{;0}(\outcone{}^+)}+\sup_{v\in(v_0,v_\infty)}\norm{(u\pu,\rho_{\scri}^{1/2}\sl)\Vb^k\psi}_{\rho_0^{a_0}\rho_+^{a_+}\Hb^{;0}(\Cbar_v^+)}	+\norm{\Ve  \psi}_{\Hb^{\vec{a};k}(\D)}
			\\
   \lesssim \norm{rf}_{\Hb^{\vec{a}^f;k}(\D)}+\norm{\Vb^k\psi}_{\rho_-^{a_-}\Hb^{;1}(\incone)}			
		\end{multline}
	\end{prop}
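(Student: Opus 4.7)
The plan is to combine the two ingredient estimates via a partition of unity adapted to the decomposition $\D = \D^- \cup \D^+$. Since these two regions overlap in a compact (relative to the $\b$-structure) neighbourhood of $I^0$ where $\rho_-,\rho_+ \sim 1$, the weights $\rho_-^{a_-}\rho_0^{a_0}$ of \cref{prop:en:pastestimate} and $\rho_0^{a_0}\rho_+^{a_+}$ of \cref{prop:en:futureestimate} are both equivalent to the full $\Hb^{\vec{a}}(\D)$-weight on the overlap, so no rescaling is needed.

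The first step is to invoke \cref{prop:en:pastestimate}\cref{item:en:pastestimate_inhom} directly, bounding the restrictions to $\D^-$ of the sup-norms over $\outcone{}^-$, $\Cbar_v^-$ and of the $\Ve$-spacetime norm by the RHS of \eqref{eq:model:estimate}.

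For the second step, introduce a smooth cutoff $\chi=\chi(-\delta u/v)$ with $\chi\equiv 1$ on $\{-\delta u\leq v\}$ and $\chi\equiv 0$ on $\{-\delta u\geq 2v\}$, so that $\nabla\chi$ is supported in the overlap $\D^-\cap\D^+$. The function $\tilde\phi=\chi\phi$ vanishes identically near $\incone$ and $\C_{u_\infty}$ and solves $\Box_\eta\tilde\phi = \chi f + [\Box_\eta,\chi]\phi =: \tilde f$. Applying \cref{prop:en:futureestimate} to $\tilde\phi$ yields control of the LHS contributions from $\D^+$ in terms of $\|r\tilde f\|_{\rho_0^{a_0+2}\rho_+^{a_++1}\Hb^{;k}(\D)}$. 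The piece $\chi\, rf$ is absorbed into $\|rf\|_{\Hb^{\vec{a}^f;k}(\D)}$ by admissibility. The commutator $r[\Box_\eta,\chi]\phi$ is a first-order differential operator in $\psi=r\phi$ with smooth coefficients supported in the overlap $\D^-\cap\D^+$, hence it is pointwise dominated by $\Vb^1\psi$ weighted by an appropriate power of $\rho_0$; this is then absorbed into the $\Ve\psi$-spacetime norm on $\D^-$ provided by Step~1, using that $a_0\leq a_0^f-2$ together with $\rho_-,\rho_+\sim 1$ on the overlap to match the weights.

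Adding the two estimates yields \eqref{eq:model:estimate}. I expect no significant obstacle: the two regions couple only through a compact overlap, and the admissibility of $\vec{a}^f$ relative to $\vec{a}$ is tuned precisely so the commutator error absorbs into the RHS; the mild bookkeeping point is to track that the $\Vb^1\psi$ produced by $[\Box_\eta,\chi]$ costs no derivative beyond what the $\Ve$-norm in Step~1 already delivers, which is fine because both estimates are stated with the same regularity index $k$ on each side.
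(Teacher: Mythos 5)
Your proposal reproduces the paper's own argument: the same cutoff $\chi$ localising to $\D^+$, the same appeal to \cref{prop:en:pastestimate} for control in $\D^-$, and the same commutator-and-absorb step using \cref{prop:en:futureestimate} on $\chi\phi$, with the commutator $[\Box_\eta,\chi]\phi$ supported in the overlap and absorbed into the $\D^-$-bound because $\partial\chi$ and $\Box_\eta\chi$ carry extra $\rho_0$-decay. Two small points you gloss over (as does the paper, so these are not gaps): you need $|u_0|$ large enough (or $\delta$ small enough) so that $\incone$ lies in the region where your $\chi$ vanishes identically, and since \cref{prop:en:pastestimate} allows both nonzero data and nonzero $f$, you implicitly rely on the linearity observation in \cref{rem:en:linearity_combine_estimates} to combine \cref{item:en:pastestimate_hom} and \cref{item:en:pastestimate_inhom}.
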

	%%%%%%%%%%%%%%%%%%%%%%%%%%%%%%%%%%%%%%%%%%%%%%%
	\begin{proof}[Proof of \cref{prop:en:main}]
		We show that \cref{prop:en:futureestimate} and \cref{prop:en:pastestimate} combined imply \cref{eq:model:estimate}: We let $\chi\in\A{\phg}^{\vec{0}}(\D)$ be a cutoff  such that $\chi=1$ on $\D^-\setminus\D^+$ and such that $\chi=0$ on $\D^+\setminus\D^-$ (cf.~\cref{fig:D-b}).
		Taking $|u_0|$ sufficiently large, we  get that $\D^-\supset\incone$ such that for $\phi_{+}:=(1-\chi)\phi=:\phi-\phi_-$, $\phi_+$ vanishes near $\incone$.

We now apply \cref{prop:en:pastestimate} to $\phi_-$. Due to the support properties of $\phi_-$, this implies \cref{prop:en:main} for $\phi_-$. Similarly, we have the desired control on $\phi$ restricted to $\D^-$.
On the other hand, to also control $\phi_+$, we can write
		\begin{equation}
			\Box_\eta\phi_+=(1-\chi)f-\Box_\eta\chi\cdot \phi-2\partial\chi\cdot\partial\phi.
		\end{equation}
		Notice that the RHS is supported entirely in $\D^+$, so we can apply \cref{eq:en:futureestimate} to obtain

		\begin{equation}
			\norm{\Ve \psi_+}_{\rho_0^{a_0}\rho_+^{a_+}\Hb^k(\D^{+})}
			\lesssim \norm{rf}_{\rho_0^{a_0^f}\rho_+^{a_+^f}\Hb^{k}(\D^{+})}+\norm{\Ve \psi}_{\rho_0^{a_0}\Hb^{k}(\D^{-}\cap \D^{+})},
		\end{equation}
		where we have also used that $\partial\chi\in \A{b,\phg,b}^{\infty,\mindex{1},\infty}(\D)$ (and  that $\Box_{\eta}\chi\in\A{b,\phg,b}^{\infty,\mindex{2},\infty}(\D)$) is supported away from $\scrim$ and~$\scrip$). 
		Finally, we control the term $\norm{\Ve\psi}_{\rho_0^{a_0}\Hb^{k}(\D^{-}\cap \D^{+})}$ via the already obtained estimate restricted to $\D^-$.
	\end{proof}
	We next give the proof of \cref{prop:en:pastestimate}. In order to get the reader used to the notation, we give a lot of detail, and work with integration by parts. While this makes it easier to check computations line by line, the reader may prefer to alternatively directly jump to \cref{app:sec:current_computations:mink}, where a proof via currents is provided.
	%%%%%%%%%%%%%%%%%%%%%%%%%%%%%%%%%%%%%%%%%%%%%%%%
	\begin{proof}[Proof of \cref{prop:en:pastestimate}]
		Throughout the proof, we will use the weight functions $r\sim ~\abs{t}\sim ~\abs{u}$ interchangeably (recall that we work in $\D^-$. Furthermore, we recall from \cref{lemma:notation:coordinates} that $\frac{\dd \rho_-}{\rho_-}\frac{\dd \rho_0}{\rho_0}\sim \frac{\dd u}{|u|}\frac{\dd v}{v}$, and that $(\rho_0\partial_0 \psi)^2+(\rho_-\partial_-\psi)^2\sim (u\pu\psi)^2+(v\pv\psi)^2$.
		
		\textbf{Proof of \cref{item:en:pastestimate_hom}:} Let us start with $k=0$. We directly prove the stronger \cref{eq:en:pastestimate_hom:stronger}.
		We perform an energy estimate in~$\D^-$, which is bounded to the past by the two truncated null cones  $\C^-_{u_\infty},\Cbar^-_{v_0}$, and to the future by two (possibly empty) truncated null cones $\C^-_{u_1},\Cbar_{v_1}^-$ and the  spacelike $\{-u=\delta v\}\cap\D^-$. 

		Letting $V=v^{-\epsilon}|t|^{2a+1}\partial_t$, for $\epsilon>0$, we now apply $V$ as a multiplier for the twisted energy momentum tensor~$\tT$ and integrate in $\D^-$.
        Less geometrically, this corresponds to multiplying $P_{\eta}(r\phi)$ by $V(r\phi)$ and integrating by parts. 
		Since $V$ is timelike, the corresponding energy estimate will create positive contributions on the future boundaries, the contributions on the past boundaries being controlled by data. 
		We thus obtain:
		\begin{equation}\label{eq:en:pastlemma:proof1}
			\begin{gathered}
				\sup_{u_\infty\leq u\leq u_0}\int_{\outcone{}^-}\abs{t}^{2a+1}v^{-\epsilon}(\pv\psi)^2 \dd\tilde{\mu} +\sup_{v_\infty\geq v\geq v_0}\int_{\Cbar^-_{v}}\abs{t}^{2a+1}v^{-\epsilon}\Big((\pu\psi)^2+r^{-2}|\sl\psi|^2\Big)\dd\tilde{\mu}\\
                +
				(2a+1)\int_{\D^{-}} \abs{t}^{2a}v^{-\epsilon}\Big((\pv\psi)^2+(\pu\psi)^2+r^{-2}|\sl\psi|^2\Big)\dd\tilde{\mu}+
				\epsilon\int_{\D^{-}} \abs{t}^{2a+1}v^{-1-\epsilon}\Big((\pu\psi)^2+r^{-2}|\sl\psi|^2\Big)\dd\tilde{\mu}\\
				\leq 
				\int_{\Cbar_{v_0}} \abs{t}^{2a+1}\Big((\Lbar \psi)^2+r^{-2}|\sl\psi|^2\Big)\dd\tilde{\mu}
			\end{gathered}
		\end{equation}
		The bulk term on the LHS has a $|t|^{2a}$-weight near $\scrim$, and the estimate \cref{eq:en:pastlemma:proof1} is enough to deduce control over the $u\pu,v\pv,\sl$ terms on the LHS of \cref{eq:en:pastestimate_hom}, for $a=a_->-1/2,\epsilon<a_--a_0$.
        For $a_-=-1/2$ we lose control over $v\pv,\sl$ derivatives (we still control $u\pu$ derivatives).
		In order to also control these terms for $a_-=-1/2$, we integrate the boundary terms to get
		\begin{equation}\label{eq:en:pastlemma:proof2}
			\begin{gathered}
				\int_{\D^{-}}\frac{\abs{t}^{2a+1}}{v^{1+\epsilon}} r^{-2}|\sl\psi|^2\dd\tilde{\mu}\lesssim\sup_{v\in[v_0,v_\infty]}\int_{\Cbar^{-}_{v}}\abs{t}^{2a+1}r^{-2}|\sl\psi|^2\dd\tilde{\mu} \int_{v_0}^\infty \frac{\dd v}{v^{1+\epsilon}}\lesssim_\epsilon\text{RHS \cref{eq:en:pastlemma:proof1}}\\
                \int_{\D^{-}}\frac{\abs{t}^{2a+1}}{u^{1+\epsilon}} |\pv\psi|^2\dd\tilde{\mu}\lesssim\sup_{u\in[u_0,u_\infty]}\int_{\outcone{}^-}\abs{t}^{2a+1}|\pv\psi|^2\dd\tilde{\mu} \int_{u_0}^\infty \frac{\dd u}{u^{1+\epsilon}}\lesssim_\epsilon\text{RHS \cref{eq:en:pastlemma:proof1}}.
			\end{gathered}
		\end{equation}
		On the other hand, we now have, for $\rho_0=1/v$ and $\rho_-=v/|u|$, using that $|t|\sim r \sim |u|$:
		\begin{nalign}
			\text{LHS}\cref{eq:en:pastlemma:proof1}+\text{LHS}\cref{eq:en:pastlemma:proof2}
			& \gtrsim_{\epsilon,q} \int_{\D^{-}}\frac{|t|^{2a+1}}{v^{1+\epsilon}}\Big((\pu \psi)^2+r^{-2}|\sl\psi|^2\Big)+|t|^{2a-\epsilon}(\pv\psi)^2\dd \tilde{\mu}\\
			&\gtrsim \int_{\D^{-}}\frac{\rho_0^{1+\epsilon}}{(\rho_0\rho_-)^{2a+1}} \Big((\pu \psi)^2+(\rho_0\rho_-)^2|\sl\psi|^2\Big)       +\frac{1}{(\rho_0\rho_-)^{2a-\epsilon}}(\pv \psi)^2) \frac{\dd \mu_\b}{\rho_0^2\rho_-}\\
			&\gtrsim \int_{\D^{-}}\frac{1 }{\rho_0^{2a-\epsilon}\rho_-^{2a}}\Big(\frac{(\pu\psi)^2}{\rho_0^2\rho_-^2}+\rho_-^{-1+\epsilon}\frac{(\pv\psi)^2}{\rho_0^2}+|\sl\psi|^2
			\Big)\dd \mu_{\b}
		\end{nalign}
		Setting $a=a_-$ and $a-\epsilon=a_0$, this already concludes \cref{eq:en:pastestimate_hom} for differentiated quantities. 
  To obtain control over the 0th order term as well, we also run an energy estimate for $\T$ with the same multiplier, but with volume form now containing an $r^2$-weight. Again, less geometrically, this corresponds to multiplying $\Box_\eta\phi$ with $r^2V\phi$. The result is
		\begin{nalign}
			\int_{\D^{-}}\frac{r^2 }{\rho_0^{2a-\epsilon}\rho_-^{2a}}\Big(\frac{(\pu\phi)^2}{\rho_0^2\rho_-^2}+\rho_-^{-1}\frac{(\pv\phi)^2}{\rho_0^2}
			\Big)\dd \mu_{\b}\lesssim \norm{r\phi}_{\rho_-^{a}\Hb^{;1}(\incone)}^2.
		\end{nalign}
		
		\textit{Higher order estimates:}
		To obtain the corresponding estimates for commutations with $\sl$, we can simply commute with the angular Killing vector fields.
		To upgrade the result to general $k$, we need to additionally commute with the other Killing vector fields, i.e.~Lorentz boosts and scaling. Notice, however, that e.g.~commuting with $S=u\pu+v\pv$ requires control over transversal derivatives along data, which is why we provide the $\Vb^k$ commuted quantities on the RHS of \cref{eq:en:pastestimate_hom}.

		\textbf{Proof of \cref{item:en:pastestimate_inhom}, Case 1 ($a_0\leq \min(a_-^f-1.5,a_0^f-2)$):} We first prove this statement for $k=0$ and for $a_0\leq \min(a_-^f-1.5,a_0^f-2)$ and consider two different cases:
		
  \textit{Case 1a ($a_-^f$ dominant):}
	Consider the case where  $a_-^f-1.5\leq a_0^f-2$. We prove the borderline estimate where $a_-=a_-^f-1>0$; the full range of admissible $\vec{a}$ and $\vec{a}^f$ then follows a fortiori.
 
		We start with a $\tilde{\T}$ energy estimate with $\abs{u}^{2a_-}v^{-\epsilon}\partial_v$ multiplier\footnote{The $v^{-\epsilon}$ factor is necessary to control the bulk spherical derivatives. Alternatively, one could integrate the $\Cbar$ surface terms in $v$ to obtain the same control.} to get
		\begin{nalign}\label{eq:en:bulka-}
			\sup_u \norm{v\pv \psi}^2_{\Hb^{a_-,a_0;0}(\outcone{}^-)}+\sup_v\norm{\rho_-^{1/2}\sl\psi}^2_{\Hb^{a_-,a_0;0}(\Cbar_v^-)}+\norm{(v\pv,\rho_-^{1/2}\sl)\psi}^2_{\Hb^{a_-,a_0;0}(\D^-)}\lesssim-\int_{\D^-} rf\abs{u}^{2a_-}v^{-\epsilon}\pv \psi \dd\tilde\mu\\
			\lesssim c^{-1}\norm{rf}^2_{\Hb^{a^f_-,a^f_0;0}(\D^-)}+c\norm{v\pv \psi}^2_{\Hb^{a_-,a_0;0}(\D^-)}
		\end{nalign}
		Choosing $c$ sufficiently small concludes the $v\pv,\sl$ part of the estimate for $k=0$.
		Next, we use a $\frac{\abs{u}^{2a_-+1}}{v}v^{-\epsilon}\pu$ multiplier for $\tilde{\T}$ to obtain
		\begin{nalign}
			\sup_v \norm{u\pu \psi}^2_{\Hb^{a_-,a_0;0}(\Cbar_v^-)}+\sup_u\norm{\rho^{1/2}_-\sl \psi}^2_{\Hb^{a_-,a_0;0}(\outcone{}^-)}+\norm{u\pu \psi}^2_{\Hb^{a_-,a_0;0}(\D^-)}\lesssim -\int_{\D^-} rf\frac{\abs{u}^{2a_-+1}}{v}v^{-\epsilon}\pu \psi \dd\tilde{\mu} \\
			+\int_{\D^- } \frac{\abs{u}^{2a_-}}{vr^2}v^{-\epsilon}\abs{\sl \psi}^2 \dd\tilde \mu\lesssim\norm{\rho_-^{1/2}\sl \psi}_{\Hb^{a_-,a_0;0}(\D^-)}^2+ c^{-1}\norm{rf}^2_{\Hb^{a^f_-,a^f_0;0}(\D^-)}+c\norm{u\pu \psi}^2_{\Hb^{a_-,a_0;0}(\D^-)}.
		\end{nalign}
		This completes all but the zeroth order estimate. For the latter, we apply a $\abs{u}^{2a_-}v^{-\epsilon}\partial_t$ multiplier for $\T$ to get
		\begin{nalign}
			\sup_u \norm{rv\pv \phi}^2_{\Hb^{a_-,a_0;0}(\outcone{}^-)}+\norm{rv\pv\phi}^2_{\Hb^{a_-,a_0;0}(\D^-)}\lesssim-\int_{\D^-}f\abs{u}^{2a_-+1}v^{-\epsilon}\partial_t\phi \dd\mu \lesssim \norm{rf}_{\Hb^{a^f_-,a^f_0;0}(\D^-)}^2+\norm{(v\pv ,u\pu)\psi}_{\Hb^{a_-,a_0;0}(\D^-)}^2.
		\end{nalign}
		Finally, we apply a $V=\abs{u}^{2a_-+1}/v^{1+\epsilon}\pu$ multiplier for~$\T$ to get
  
		\begin{nalign}
			\sup_v\norm{ru\pu\phi}^2_{\Hb^{a_-,a_0;0}(\Cbar_v)}+\norm{ru\pu\phi}^2_{\Hb^{a_-,a_0;0}(\D^-)}\lesssim\int_{\D^-}(|f|+\frac{2}{r}|\pv\phi|)|V\phi| \dd\mu \\
			\lesssim c^{-1}\Big(\norm{rf}_{\Hb^{a_-^f,a_0^f;0}}^2+\norm{rv\pv\phi}^2_{\Hb^{a_-,a_0;0}(\D^-)}\Big)+c\norm{ru\pu\phi}^2_{\Hb^{a_-,a_0;0}(\D^-)}.
		\end{nalign}
		Combining with the previous estimate for $\psi=r\phi$, the above yields the zeroth order control.
		
		\textit{Case 1b ($a_0^f$ dominant):} In case $q=1+2(a_-^f-a_0^f)>0$, we can repeat the proof of Case 1a by adding a $v^{-q}$ factor to all the multipliers (i.e.~$v^{-q-\epsilon}\abs{u}^{2a_-}\pu,\,v^{-q-1-\epsilon}\abs{u}^{2a_-+1}\pu$). 
Since $v>0$ in $\D^-$, this only yields additional positive bulk terms for all estimates, while modifying the weight towards $I^0$ in all terms by $q/2$.

        \textit{Higher order estimates:} Commutation works the same way as for \cref{item:en:pastestimate_hom}. Notice that, while $\psi^{\incone}=0$, higher-order commuted quantities such as $\Vb^k\psi$ do not vanish along $\incone$. Thus, for commuted equations, we split up the solution into a part with no inhomogeneity but nontrivial data and apply \cref{item:en:pastestimate_hom}, and into a part with trivial data and nontrivial inhomogeneity, for which we proceed as above.
 Note that this is the reason why we have to include the extra term on the RHS of~\cref{eq:en:pastestimate_inhom}.

		\textbf{Proof of \cref{item:en:pastestimate_inhom}, Case 2 ($a^f_->3/2$):}
	Notice that while the previous proof made no restrictions on the range $a^f_->3/2$, we have claimed something stronger in the range $a_-^f>3/2,$ namely that $a_0$ can be taken to be $a_0^f-2$.

  \textit{Case 2a ($a_-^f+1\leq a_0^f$):}
		Consider first the case $a_-^f+1\leq a_0^f$.
		We use $V=\abs{u}^{2a_-+1}v^{-\epsilon}\pu$ multiplier 
  for $\tilde{\T}$, again set $a_-=a_-^f-1$ (and extend to smaller $a_-$ a fortiori), 
  and then exploit that $2a_-+1>2$ to get
		\begin{nalign}
			\sup_v \norm{u\pu \psi}^2_{\Hb^{a_-,a_0;0}(\Cbar_v^-)}+\sup_u\norm{\rho^{1/2}_-\sl \psi}^2_{\Hb^{a_-,a_0;0}(\outcone{}^-)}+\norm{(u\pu,\rho_-^{1/2}\sl)\psi}^2_{\Hb^{a_-,a_0;0}(\D^-)}\lesssim_{\vec{a}} -\int_{\D^-} rf\abs{u}^{2a_-+1}\pu \psi \dd\tilde{\mu}\\
			\lesssim c^{-1}\norm{rf}^2_{\Hb^{a^f_-,a^f_0;0}(\D^-)}+c\norm{u\pu \psi}^2_{\Hb^{a_-,a_0;0}(\D^-)}.
		\end{nalign}
		We continue with a $\abs{u}^{2a_-}v^{1-\epsilon}\pv$ estimate for $\tilde{\T}$ to get
		\begin{nalign}
			\sup_u \norm{v\pv \psi}^2_{\Hb^{a_-,a_0;0}(\outcone{})}+\sup_v\norm{\rho_-^{1/2}\sl \psi}^2_{\Hb^{a_-,a_0;0}(\Cbar_v)}+\norm{v\pv \psi}^2_{\Hb^{a_-,a_0;0}(\D^-)}\lesssim-\int_{\D^-} rf\abs{u}^{2a_-}v\pv \psi\dd\tilde\mu\\
			+\int_{\D^-}\frac{\abs{u}^{2a_-+2}}{vr^2}\abs{\sl \psi}^2 \dd\tilde\mu \lesssim\norm{\rho_-^{1/2}\sl \psi}_{\Hb^{a_-,a_0;0}}^2+ c^{-1}\norm{rf}^2_{\Hb^{a^f_-,a^f_0;0}(\D^-)}+c\norm{v\pv \psi}^2_{\Hb^{a_-,a_0;0}(\D^-)}
		\end{nalign}
		To obtain control over the 0th order terms, we use  a $V=\abs{u}^{2a_-+1}v^{-\epsilon}\pu$ multiplier for $\T$ to get
		\begin{nalign}
			\sup_v\norm{ru\pu \phi}^2_{\Hb^{a_-,a_0;0}(\Cbar_v^-)}+\norm{ru\pu \phi}^2_{\Hb^{a_-,a_0;0}(\D^-)}\lesssim \int_{\D^-}-fV\phi+\frac{2}{r}\pv\phi V\phi \dd\mu \\
			\leq \int_{\D^-} -fV\phi+\frac{2}{r^2}(\pv (r\phi)-\phi) V\phi \dd\mu\leq\int_{\D^-}  (r^2f+2\pv (r\phi))V\phi\dd\tilde{\mu}-\norm{\phi}^2_{\Hb^{a_0,a_-;0}(\outcone{0})}-\int_{\D^-}\abs{u}^{2a_-}\abs{\phi}^2 \dd\tilde{\mu} .
		\end{nalign}
		Bounding the first term on the right hand side with Cauchy--Schwarz and using the already obtained estimate for $\pv (r\phi)=\pv\psi$ yields the required estimate.
		
		\textit{Case 2b ($a_-^f+1>a_0^f$):} In the case $q=2+2(a_-^f-a_0^f)>0$, we introduce as before a $v^{-q}$-weight for all estimates and correct the norms towards $I^0$ by $q/2$.

  Control over higher order terms is achieved as before.

        \textbf{Proof of \cref{item:en:pastestimate_inhom_0th}:} 
        First, let us note that this estimate only improves over \cref{item:en:pastestimate_inhom} near $I^0$.
        Hence, using a cutoff function, we may assume without loss of generality that $\supp(\phi,f)\subset\{v>v_0+1\}$.
        Furthermore, it suffices to consider the case $a_-^f+0.5<a_0^f$, otherwise \cref{item:en:pastestimate_inhom} already yields the result.
        
		We consider the time integral of the inhomogeneity and of $\phi$
        \begin{equation}\label{eq:en:timeinverse}
            T^{-1}(rf)=\int_{2v_1-r}^t(rf)|_{(t',r,\omega)} \dd t',\qquad T^{-1}(r\phi)=\int_{2v_1-r}^t(r\phi)|_{(t',r,\omega)} \dd t'.
        \end{equation}
        Using \cref{item:ode:2d}, we get that $T^{-1}(rf)\in\Hb^{a_-^f,\min(a_0^f-1,a_-^f);k}$.
        Hence, for $T^{-1}(rf)$, we have that the weight near $I_0$ dominates, i.e.~it holds that $a_-^f+0.5>\min(a_0^f-1,a_-^f)$.
        Using that $\Box$ commutes with $T^{-1}$, we can apply \cref{item:en:pastestimate_inhom} to conclude that $\Ve T^{-1}(r\phi)\in\Hb^{a^f_--1,\min(a_0^f-1,a_-^f)-2-;k}$.
		Taking a time derivative improves the weight towards $I^0$ by 1 and thus implies the result (at a loss of one derivative).
		
		\textbf{Proof of \cref{item:en:pastestimate_Hbt}:} Note that we only need to improve $v\pv\mapsto r\pv$ commutations.
		We do this by commuting with $T$ and increasing $a_-,a_-^f$ by 1 each time and applying the previous estimates.
        For instance in \cref{item:en:pastestimate_hom}, we know that $\vec{a}$ admissible implies that $\vec{a}+1$ is also admissible, so we have for $j\leq k$
        \begin{equation}\label{eq:en:past_proof_T_commuted}
				\norm{\Ve T^jr\phi}_{\rho_-^{a_-+j}\rho_0^{a_0+j}\Hb^{;k-j}(\D^-)}\lesssim \sum_{i\leq k-j} \norm{T^{i+j}(r\phi)}_{\rho_-^{a_-+j}\Hb^{;k-i+1}(\incone)}.
		\end{equation}
		Summing \cref{eq:en:past_proof_T_commuted} with $0\leq j\leq k$ yields the required estimate, namely: 
		\begin{equation}
			\norm{\Ve r\phi}_{\rho_-^{a_-}\rho_0^{a_0}\Hbt^k(\D^-)}\lesssim\norm{\Vbt^k r\phi}_{\Hb^{a_-;1}(\incone)}.
		\end{equation}
	\end{proof}

  We also record the following basic lemma, which includes incoming radiation. It will later be used in the proof of uniqueness of scattering solutions.
    \begin{lemma}\label{lemma:en:past_estimate_with_incoming_data_L2}
        Let $\phi$ be the solution to \cref{eq:en:characteristicIVP}.
        Let $a_-<0$, $a_0<a_--1/2$ and let $\vec{a}^f$ satisfy $a^f_->1$, $a_0^f\geq a_0+2$.
        Then 
        \begin{equation}\label{eq:en:past_estimate_with_incoming_data_L2}
            \norm{\Ve\psi}_{\Hb^{\vec{a};0}(\D^-)}\lesssim \norm{rf}_{\Hb^{\vec{a}^f;0}(\D^-)}+\norm{(u\pu,\rho_-^{1/2}\sl,1)\psi^{\incone}}_{\Hb^{0;0}(\incone)}+\norm{(v\pv,\rho_-^{1/2}\sl,\frac{v}{r})\psi^{\outconeFar}}_{\Hb^{a_0;0}(\outconeFar)}.
        \end{equation}
    \end{lemma}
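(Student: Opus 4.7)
The estimate should follow from a direct adaptation of the multiplier argument behind Proposition~\ref{prop:en:pastestimate}\cref{item:en:pastestimate_inhom} (Case 1a), the only new ingredient being that the past boundary of $\D^-$ now consists of \emph{both} $\incone$ and $\outconeFar$, so the flux on $\outconeFar$ is no longer required to vanish by the analogue of~\eqref{eq:en:assumption0} but is instead kept on the right-hand side. I would therefore apply to $\tT[\psi]$ a null multiplier of the form
\[
V \;=\; c_1\,|u|^{2a_-}\,v^{2(a_0-a_-)}\,\pv \;+\; c_2\,|u|^{2a_--1}\,v^{2(a_0-a_-)+1}\,\pu,
\]
(with small positive $c_1, c_2$ to be fixed in the process) and integrate the divergence identity $\nabla_\mu(V^\nu\,\tT^\mu{}_\nu)=(\divergence V)\cdot\tT + V\psi\cdot rf$ over $\D^-$. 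The weight $\alpha=|u|^{2a_-}v^{2(a_0-a_-)}$ is precisely calibrated so that the resulting deformation-tensor bulk coincides with the target norm: because $a_-<0$ the weight grows like $\rho_-^{2a_-}$ towards $\scrim$ (consistent with incoming radiation), while $a_0-a_-<-1/2$ ensures decay toward $I^0$; the $v$-derivative of $\alpha$ is strictly negative, producing a coercive $(\pv\psi)^2$ contribution, and the symmetric $\pu$ multiplier yields the corresponding $(\pu\psi)^2$ contribution; the angular term is produced by the commutator with $\partial_u(\alpha/r^2)$ exactly as in Case~1a of Proposition~\ref{prop:en:pastestimate}.

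Next, I would collect the boundary fluxes produced by the divergence theorem. The flux on $\incone$ (where $v=v_0$ is bounded) collapses all $v$- and $\rho_0$-weights to constants and reduces to an unweighted $L^2(\dd u/|u|)$ norm of $u\pu\psi^{\incone}$, $\rho_-^{1/2}\sl\psi^{\incone}$ and $\psi^{\incone}$ itself (the zeroth-order term arising from the twisting $\beta=r^{-1}$); this matches the first data term on the right-hand side. The flux on $\outconeFar$ is similarly $v\pv\psi^{\outconeFar}$, $\rho_-^{1/2}\sl\psi^{\outconeFar}$, and, from the twisting, a $(v/r)\psi^{\outconeFar}$ contribution, all in $\rho_0^{a_0}\Hb^{;0}$. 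The remaining flux on the spacelike cap $\{-\delta u=v\}\cap\D^-$ (or on $\outcone{0},\Cbar_{v_\infty}$ at a finite cutoff) has the correct sign since $V$ is causal past-directed, and is absorbed into the LHS.

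Finally, the inhomogeneous term $\int_{\D^-} V\psi\cdot rf \,\dd\tilde\mu$ is estimated by Cauchy--Schwarz: the assumptions $a_-^f>1$ and $a_0^f\geq a_0+2$ are precisely what is needed so that the weight ratio between $\|V\psi\|$ (measured by the coercive bulk) and $\|rf\|_{\Hb^{\vec{a}^f;0}}$ works out, yielding $\int V\psi\cdot rf\lesssim c^{-1}\|rf\|^2_{\Hb^{\vec{a}^f;0}(\D^-)}+c\,(\text{coercive bulk})$, after which a small $c$ permits absorption. Note that in contrast to Proposition~\ref{prop:en:pastestimate}\cref{item:en:pastestimate_inhom}, the gap $a_0<a_--1/2$ (rather than merely $a_0<a_-$) is what buys enough positivity in the $\partial_u\alpha$-weighted bulk to close without having to separately control zeroth-order terms via an auxiliary $\T$-estimate; the weakened assumption $a_0^f\geq a_0+2$ (rather than the full admissibility of \cref{def:en:admissible:f}) is correspondingly sufficient because we only seek $L^2$-level control, with no regularity gain or sharp $v\pv$ improvement. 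I do not foresee any technical obstacle beyond carefully verifying the signs of the four deformation-tensor bulk terms for the concrete weight $\alpha$ above; this is the only real algebra, and mirrors the computation already carried out in the proof of Case~1a.
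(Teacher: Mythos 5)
Your multiplier cannot produce a coercive bulk when $a_-<0$: the $(\pv\psi)^2$ coefficient in the $\tT$-divergence is $-r^{-2}\pu f^v$, and with $f^v=c_1|u|^{2a_-}v^{2(a_0-a_-)}$ one computes (recalling $u<0$) that $\pu f^v=-2a_-c_1|u|^{2a_--1}v^{2(a_0-a_-)}>0$ precisely because $a_-<0$, so the bulk has the \emph{wrong} sign. Your stated reason ``the $v$-derivative of $\alpha$ is strictly negative, producing a coercive $(\pv\psi)^2$ contribution'' misidentifies which deformation-tensor component governs that term: $\pv\alpha$ does not appear in the $(\pv\psi)^2$ coefficient at all; $\pu f^v$ does. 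The gap $a_0<a_--1/2$ fixes only the sign of $\pv f^u$ (governing $(\pu\psi)^2$) and the angular coefficient; it does nothing to cure the $\pu f^v$ sign, so the claim that it lets you ``close without an auxiliary $\T$-estimate'' is also not supported.

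What the paper actually does is different and specifically designed around this obstruction: it inserts $a_-=0$ into the multipliers of the proof of \cref{item:en:pastestimate_inhom} (so $v^{-\epsilon}\pv$, $v^{-\epsilon}\partial_t$, $|u|v^{-1-\epsilon}\pu$), making the problematic $(\pv\psi)^2$ bulk coefficient \emph{vanish} rather than turn negative, and then recovers the missing $(\pv\psi)^2$ bulk control from the $\outcone{}$-flux by the integration trick used in the borderline $a_-=-1/2$ case of \cref{item:en:pastestimate_hom} (cf.~\cref{eq:en:pastlemma:proof2}). This loses an arbitrarily small decay factor and yields $a_-=0-$, which suffices a fortiori because the lemma only asserts the estimate for $a_-<0$. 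You should rework your argument along these lines; the strictly-coercive bulk you are hoping for does not exist for negative $a_-$, and the boundary-integration step is the essential idea you are missing.
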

    \begin{proof}
        We use the multipliers $v^{-\epsilon}\partial_v,v^{-\epsilon}\partial_t,\abs{u}v^{-1-\epsilon}\pu$ as in the proof of \cref{item:en:pastestimate_inhom} (thus setting $a_-=0$).
        This no longer yields bulk control over $(\pv\psi)^2$ in the analogue of \cref{eq:en:bulka-}. We recover this bulk control by integration of the boundary terms, thereby loosing an arbitrary small decay factor and only recovering $a_-=0-$.
    \end{proof}

 \subsubsection{Extended estimates in the case of weakly decaying data}\label{sec:en:weakdecay}
    We next turn our attention to extending \cref{prop:en:pastestimate} to the case when $f,\psi^{\incone}$ decay slower than permitted by \cref{def:en:admissible:f}.
    In this case, we have to assume that $f$ is in a no incoming radiation space, i.e. $f\in\Hbt$.
    In fact, we can also construct scattering solutions in the case where $f$ is weakly decaying and has a polyhomogeneous expansion towards~$\scrim$ without having to assume no incoming radiation compatibility, but this needs no additional energy estimates and we'll prove it directly in~\cref{thm:scat:weak_polyhom}.

	\begin{lemma}[Energy estimate for weak decay and no incoming radiation]\label{lemma:en:energy_no_incoming}
  Let  $\phi$ be the solution to \cref{eq:en:characteristicIVP} under the assumption \cref{eq:en:assumption0}.
		Let $\vec{a}$ be admissible, but with the condition $a_-\geq-1/2$ eased to $a_->-m+1/2$ for some $m\in\N$.
		Let $\vec{a}^f=\vec{a}+(1,2)$, $rf\in\Hbt^{\vec{a}^f;k+m}(\D^-)$ and $\psi^{\incone}\in\Hb^{a_-;\infty}(\incone)$.
		Then we have 
		\begin{nalign}
			\sup_{u\in (u_\infty,u_0)}\norm{(v\pv,\rho_{\scri}^{1/2}\sl) \Vbt^k \psi}_{\Hbt^{\vec{a};0}(\outcone{})}+\sup_{v\in(v_0,v_\infty)}\norm{(u\pu,\rho_{\scri}^{1/2}\sl) \Vbt^k\psi}_{\Hbt^{\vec{a};0}(\Cbar_v)}+\sum_{n\leq m} \norm{(rT)^n\psi}_{\Hbt^{\vec{a};k}(\D^-)}\\
            \lesssim\norm{rf}_{\Hbt^{\vec{a}^f;k+m}(\D^-)}+\norm{\Vbt^{k+m}\psi|_{\incone}}_{\Hb^{a_-;1}(\incone)}
		\end{nalign}
	\end{lemma}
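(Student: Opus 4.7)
The core idea is to use the no incoming radiation condition (encoded by $rf \in \Hbt$) to commute the equation with $T = \partial_t$ a total of $m$ times, shifting weights into the admissible range of \cref{prop:en:pastestimate}\cref{item:en:pastestimate_Hbt}, and then integrate $m$ times in $t$ to recover $\psi$ itself. Concretely, let $\Psi := T^m\psi$ and $F := T^m f$. Since $T$ commutes with $\Box_\eta$, we have $\Box(r^{-1}\Psi) = F$.

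The first step is to check that $rF = rT^m f$ lies in an admissible inhomogeneous $\Hbt$-space relative to the shifted weight $\vec{a}' = (a_-+m,\,a_0+m,\,a_+)$. Using $u\pu,\,r\pv \in \Vbt$ together with $|u| \sim r$ in $\D^-$, one has $T = \frac{1}{2u}(u\pu) + \frac{1}{2r}(r\pv) \in r^{-1}\Vbt$, so each application of $T$ improves the weight in $\D^-$ by $\rho_-\rho_0$, yielding $rT^m f \in \Hbt^{\vec{a}^f+(m,m,0);k}(\D^-)$. The hypothesis $a_- > -m+1/2$ gives $a'_- > 1/2$, and the ordering $a_+ < a_0 < a_-$ is preserved, so $\vec{a}'$ is admissible and $\vec{a}^f+(m,m,0)$ is admissible inhomogeneous w.r.t.\ $\vec{a}'$.

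The second step applies \cref{prop:en:pastestimate}\cref{item:en:pastestimate_Hbt} to $\Psi$, which gives
\begin{equation*}
\sup_u \|(v\pv,\rho_\scri^{1/2}\sl)\Vbt^k\Psi\|_{\Hbt^{\vec{a}';0}(\outcone{}^-)}+\sup_v \|\cdots\|_{\Hbt^{\vec{a}';0}(\Cbar_v^-)}+\|\Ve\Psi\|_{\Hbt^{\vec{a}';k}(\D^-)}\lesssim \|rF\|_{\Hbt^{\vec{a}^f+(m,m,0);k}(\D^-)}+\|\Vbt^k\Psi\|_{\Hb^{a_-+m;1}(\incone)}.
\end{equation*}
The data term is bounded by $\|\Vbt^{k+m}\psi\|_{\Hb^{a_-;1}(\incone)} + \|\Vbt^{k+m-1}(rf)\|_{\Hb^{a_-^f-1;0}(\incone)}$ by using the wave equation on $\incone$ to convert the transversal $\partial_u$-derivatives appearing in $T^m = 2^{-m}(\partial_u+\partial_v)^m$ into tangential $\Vbt$-derivatives plus inhomogeneous contributions; this loses derivatives (hence the $k+m$ regularity threshold in the hypothesis), but it is routine.

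The third step is the back-integration: knowing control on $\Psi = T^m\psi$, we inductively recover $T^{m-1}\psi,\,T^{m-2}\psi,\dots,\psi$ via \cref{corr:ODE:du_dv}\cref{item:ode:t-prop} (the $\partial_t$-propagation result), using $T^j\psi|_{\incone}$ as boundary data at each step, which again reduces to $\Vbt^{j}\psi|_{\incone}$ plus equation-derived contributions from $rf|_{\incone}$. Each integration loses one power of $r$-decay, taking a function at weight $\vec{a}+(j,j,0)$ back to weight $\vec{a}+(j-1,j-1,0)$, so that after $m$ integrations we land exactly at $\vec{a}$. Summing the estimates across $j=0,\ldots,m$, and reading off $(rT)^n\psi$ for $n \leq m$ as functions in $\Hbt^{\vec{a};k}$ (since $r T \in \Vbt$ modulo lower-order terms), yields the desired bound. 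The main technical obstacle is carefully bookkeeping the weights and derivative counts across the $T^m$-commutation, the application of the admissible energy estimate, and the iterative $t$-integration so that all hypersurface boundary terms close against $\|\Vbt^{k+m}\psi\|_{\Hb^{a_-;1}(\incone)}$ and $\|rf\|_{\Hbt^{\vec{a}^f;k+m}(\D^-)}$; the estimates on the ingoing/outgoing null boundary fluxes for $\psi$ (rather than $\Psi$) are obtained along the way as the back-integrations pass through the respective hypersurfaces.
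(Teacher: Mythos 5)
Your proposal follows essentially the same route as the paper's proof: commute with $T$ a total of $m$ times, apply the admissible-weight energy estimate of \cref{prop:en:pastestimate}\cref{item:en:pastestimate_Hbt} to $T^m\psi$ (whose shifted weights are admissible precisely because of the hypothesis $a_->-m+1/2$), and then integrate back in $t$ via \cref{corr:ODE:du_dv}\cref{item:ode:t-prop}. One small inaccuracy: you invoke an extra boundary term of the form $\|\Vbt^{k+m-1}(rf)\|_{\Hb^{a_-^f-1;0}(\incone)}$ to account for converting transversal derivatives of $T^m\psi$ on $\incone$ into tangential ones via the wave equation, but this is unnecessary here — the term $\|\Vbt^{k+m}\psi|_{\incone}\|_{\Hb^{a_-;1}(\incone)}$ on the right-hand side already carries transversal ($r\pv$) derivatives of $\psi$ as part of $\Vbt$, and the weight bookkeeping $T^m\sim r^{-m}\Vbt^m$ in $\D^-$ converts the shifted-weight data norm for $T^m\psi$ directly into this stated form without any appeal to the wave equation on $\incone$.
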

	\begin{proof}
		We note that $T^m\psi$ solves $P_\eta T^m\psi=T^mrf\in\Hbt^{\vec{a}+(1+m,2+m);k}(\D^-)$, so we can apply \cref{prop:en:pastestimate} to obtain control over $T^m\psi$.
		We integrate in time via \cref{corr:ODE:du_dv} to obtain control over $\psi$.
	\end{proof}

\subsection{Energy estimates II: Inclusion of long-range potentials}\label{sec:en:longrange}
In hopes of making the computations easier to follow, we proved \cref{prop:en:main} by  integration by parts    in $u$ and $v$. 
In \cref{app:sec:current_computations:mink}, we present a more geometric approach to proving \cref{prop:en:main} using currents and the divergence theorem. 
Using this more geometric approach, it is straight-forward to extend the energy estimates to hold for a class of modified equations as well (see already \cref{sec:current:non_short_range}), the prototype of which is $\Box_\eta\phi+c\phi/r^2=f$ for any $c$. 
More generally, we will study the problem
\begin{equation}\label{eq:en:longwave}
    \Box_\eta\phi -V_L\phi=f, \qquad \psi|_{\incone}=\psi^{\incone},\qquad \psi|_{\outconeFar}=\psi^{\outconeFar},
\end{equation}
where the class of admissible potentials $V_L$  is defined below:
\begin{defi}\label{def:en:long_range}  
Fix $a_-\geq -1/2$.
Let $\epsilon_{\pm}>0$, and let $f^L_{\pm}(\rho_{\pm},\omega)\in \Hb^{\epsilon_{\pm}-1}([0,1)_{\rho_{\pm}}\times S^2)$ such that $f_{\pm}\equiv 1$ on $\D_{\mp}\setminus \D_{\pm}$.
\begin{enumerate}[label=\textbf{PL}]
		\item \textbf{a)}  Then we call $V_L=\frac{f^L_{-}(\rho_-,\omega)f^L_{+}(\rho_+,\omega)}{r^2}\Ve$ an admissible long-range potential if $a_-+1+\epsilon_->1$.
  \item \textbf{b)} Define $f^L_{-,\b}$ such that $f^L_{-,\b}\Vb=f^L_{-}\Ve$. Then we call $V_L=\frac{f^L_{-}(\rho_-,\omega)f^L_{+}(\rho_+,\omega)}{r^2}{\Ve}$  a long-range potential compatible with no incoming radiation if  $f^L_{-,\b}(\rho_-,\omega)\in \A{phg}^{\mindex0}([0,1)_{\rho_-}\times S^2)$ and $a_-+1+\epsilon_->1$.
	\end{enumerate}
\end{defi}
Notice that long-range potential have weight $(1+\epsilon_-, 2, 1+\epsilon_+)$ (or $(2,2,1+\epsilon_+)$ in the case of no incoming radiation) towards the boundaries; thus, they are perturbative towards $\scrim$ and $\scrip$, but genuine modifications to $\Box_\eta$ towards $I^0$, (though below the principle symbol in differentiability).

We now show that the statements of the previous section carry over to long-range potentials:
\begin{cor}\label{cor:en:longrange1}
    \cref{prop:en:futureestimate}, \cref{prop:en:pastestimate} and, thus, \cref{prop:en:main} as well as \cref{lemma:en:energy_no_incoming} all also hold for \cref{eq:en:longwave} instead of \cref{eq:en:characteristicIVP}.
\end{cor}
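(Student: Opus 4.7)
The plan is to adapt the current-based proof of \cref{prop:en:main} presented in \cref{app:sec:current_computations:mink} to accommodate the additional term $V_L \phi$ in the equation. Multiplying $\Box_\eta \phi - V_L\phi = f$ by the multipliers used in the proofs of \cref{prop:en:futureestimate,prop:en:pastestimate} and applying the divergence theorem yields the standard boundary and bulk identities plus an extra bulk contribution of the form
\begin{equation*}
    \mathcal{E}_L := \int_{\D^\pm} V_L\phi \cdot X\phi \, \dd\mu,
\end{equation*}
where $X$ denotes the relevant multiplier. Everything reduces to showing that $\mathcal{E}_L$ can be absorbed into the LHS of the corresponding estimate, up to terms controlled by the standard RHS.

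I would split the control of $\mathcal{E}_L$ according to a partition subordinate to the three boundaries of $\D$. Near $\scrim$ and $\scrip$, \cref{def:en:long_range} provides extra $\rho_-^{\epsilon_-}$, resp.~$\rho_+^{\epsilon_+}$, decay relative to $\Box_\eta$. Consequently, localising to $\{\rho_-<\delta\}$ (and similarly $\{\rho_+<\delta\}$) and applying Cauchy--Schwarz, one has
\begin{equation*}
    \left|\int_{\{\rho_-<\delta\}} V_L\phi\cdot X\phi\,\dd\mu\right|\lesssim \delta^{\epsilon_-}\norm{\Ve\psi}^2_{\Hb^{\vec{a};0}(\D^-)},
\end{equation*}
which, for $\delta$ small, is absorbable into the bulk controlled by the standard multipliers. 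The analogous argument works near $\scrip$ in the proof of \cref{prop:en:futureestimate}.

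The only genuine modification occurs near $I^0$, where $V_L$ carries the same $\rho_0^2$ weight as $\Box_\eta$ and therefore cannot be treated perturbatively. Here I would exploit that $V_L$ is \emph{first-order} (a combination of vector fields in $\Ve$ with an $r^{-2}$ coefficient), so that $V_L\phi\cdot X\phi$ has the same scale as the bulk quadratic forms $|X\phi|^2+|\Ve\phi|^2$ that appear with positive sign on the LHS of the standard estimates. Using Cauchy--Schwarz with weights $\rho_0^{a_0}$, and the fact that $f^L_\pm$ are bounded by hypothesis, one obtains
\begin{equation*}
    |\mathcal{E}_L|_{\text{near }I^0}\;\lesssim\; C\norm{\Ve\psi}^2_{\Hb^{\vec{a};0}(\D^\pm)}\;+\;(\text{zeroth order terms}),
\end{equation*}
and the zeroth order terms are handled by Hardy inequalities adapted to the admissible weights (as in the proof of \cref{item:en:pastestimate_inhom}). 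For this near-$I^0$ piece, $C$ is not small and cannot be absorbed into the bulk of the standard multiplier. The resolution is to incorporate $V_L$ directly into the current, i.e.~add a multiple of $V_L\phi\cdot X\phi$ to the divergence and track the resulting correction to the bulk quadratic form; the contribution from the first-order part of $V_L$ can be reshuffled, via integration by parts (using that the coefficients are conormal/polyhomogeneous by \cref{def:en:long_range}), into terms of the type already controlled plus boundary terms on $\scrim\cup\scrip$ that are in turn absorbed into the flux estimates.

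Once this estimate is established at zeroth commutation order, the higher-order commuted estimates follow by induction: commuting $\Box_\eta-V_L$ with $\Vb$ (resp.~$\Vbt$ in the no-incoming-radiation case) produces commutator terms $[Y,V_L]\phi$ that are again of the form $V_L'\phi$ for a long-range potential $V_L'$ with the same boundary weights --- here we crucially use \cref{def:en:long_range}\textbf{b)} ($f^L_{-,\b}\in\A{phg}^{\mindex{0}}$) to ensure that $r\pv$-commutations do not destroy the $\Hbt$ structure needed in \cref{item:en:pastestimate_Hbt} and \cref{lemma:en:energy_no_incoming}. The main obstacle, as indicated, is the non-perturbative treatment of $V_L$ near $I^0$; once this is set up correctly at the level of currents, the remainder of the argument is a routine modification of the proofs already given.
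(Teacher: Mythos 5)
Your partition-of-unity outline correctly identifies the geography of the problem — $V_L$ is perturbative near $\mathcal{I}^\pm$ and non-perturbative near $I^0$ — but the "resolution" you propose for the $I^0$ region is not the right mechanism, and this is the crux of the whole proof. Adding a multiple of $V_L\phi\cdot X\phi$ to the current and integrating by parts does not close the estimate: since $V_L=\frac{f^L}{r^2}\Ve$ is genuinely first-order with coefficients of \emph{arbitrary sign}, the bulk quadratic form $V_L\phi\cdot X\phi$ has no definiteness, and no integration by parts converts it into a signed term plus boundary terms. The paper's actual mechanism is entirely different: the multipliers $V_2,V_3,V_+$ from \cref{eq:current:past_currents,eq:current:future_current} come \emph{equipped} with a free weight $w_\pm^{\bar{c}}$ (where $w_-=|u|/r$, $w_+=v/r$), and the resulting divergence estimates \cref{eq:current:past_coercivity,eq:current:future_coercivity} contain a bulk enhancement $\big(1+\bar{c}\tfrac{v}{|u|}\big)$ (resp.~$\big(1+\bar{c}\tfrac{|u|}{v}\big)$). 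The long-range contribution near $I^0$ is then absorbed simply by taking $\bar{c}$ large, possibly after further splitting the integral along $v/|u|\lessgtr\delta$ when $\epsilon_-<1$ (\cref{prop:current:long_range}). This is a purely algebraic absorption dictated by the structure of the multiplier, not a modification of the current or an integration by parts; your proposal has no mechanism that produces a large constant in front of the good bulk term.

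A second, independent gap: you treat all four statements uniformly via "commuting gives $[Y,V_L]\phi=V_L'\phi$," but the paper explicitly flags that \cref{item:en:pastestimate_hom} does \emph{not} generalise directly (the current $J_1$ carries no $w^{\bar{c}}$ factor), and that $T$ no longer commutes with $\Box_\eta-V_L$, which undermines the time-inversion proof of \cref{item:en:pastestimate_inhom_0th} and the $T$-commutation proof of \cref{item:en:pastestimate_Hbt} and \cref{lemma:en:energy_no_incoming}. The paper repairs all of these by \emph{iterated decompositions} of the form $\phi=\phi_1+\phi_{\Delta,1}+\dots+\phi_{\Delta,n}$, where each $\phi_{\Delta,i}$ solves an inhomogeneous Minkowskian (or, for the last, modified) wave equation sourced by $V_L\phi_{\Delta,i-1}$, gaining $\epsilon_-$ decay towards $\scrim$ in each step until the inhomogeneity becomes strongly admissible; only then is the core $J_3$-estimate applicable. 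This iteration is not "routine commutation"; without it, the argument does not close for the homogeneous and weak-decay items.
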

\begin{proof}
For \cref{prop:en:futureestimate}, as well as for the $k=0$-part of \cref{item:en:pastestimate_inhom} of \cref{prop:en:pastestimate} and \cref{lemma:en:energy_no_incoming}, this follows by including a $w_{\pm}^{\bar{c}}$-weight in front of all multipliers of the relevant proofs, where $w_{+}=v/r$ and $w_-=|u|/r$, and where $\bar{c}$ is suitably large. The details are given in \cref{sec:current:non_short_range}; see, in particular, \cref{prop:current:long_range}. 

Since we will in this proof both refer to the exact statements \cref{item:en:pastestimate_hom,item:en:pastestimate_inhom} etc.~as well as their long-range analogues, we will use a subscript $L$ to denote the latter. For instance, we will refer to the long-range version of \cref{item:en:pastestimate_inhom} as $\cref{item:en:pastestimate_inhom}_L$.

    To prove the $k=0$ part of \cref{item:en:pastestimate_hom} for long-range potentials, we simply write $\phi=\phi_1+\phi_{\Delta}$, where only $\phi_1$ has nontrivial data, and where $\Box_\eta\phi_1=0$, $\Box_\eta\phi_{\Delta}-V_L\phi_{\Delta}=V_L\phi_1$. We apply \cref{item:en:pastestimate_hom} to $\phi_1$. We then treat $V_L\phi_1$ as error term and apply $\cref{item:en:pastestimate_inhom}_L$ with $k=0$ to $\phi_{\Delta}$. Now, in the case where $a_-+1+\epsilon_-\leq 3/2$, then, due to the case \cref{eq:en:admissible_inhom1}, $\cref{item:en:pastestimate_inhom}_L$ will only give us that $\Ve \psi_{\Delta}\in \Hb^{a_-,\min(a_-+\epsilon_--1/2-,a_0;0}(\D^-)$, which is not yet what we wish to prove. In order to fix this, we instead write $\phi_\Delta=\phi_{\Delta,1}+\phi_{\Delta,2}$, where $\phi_{\Delta,1}$ and $\phi_{\Delta,2}$ solve (each with trivial data)
    \begin{equation}\label{eq:savedhehe}
        \Box_\eta\phi_{\Delta,1}=V_L\phi_1,\qquad \Box_\eta\phi_{\Delta,2}-V_L\phi_{\Delta,2}=V_L\phi_{\Delta,1}.
    \end{equation}
We then apply \cref{item:en:pastestimate_inhom}, say, $n$ times, to the $\phi_{\Delta,1}$ to inductively get that $\Ve \psi_{\Delta,1}\in \Hb^{a_-,\min(a_-+n\epsilon_--1/2-,a_0;0}(\D^-)$ simply by reinserting this membership into \cref{eq:savedhehe} (note that this does not lose derivatives). Thus, for $n$ large enough, we get that $\Ve\psi_{\Delta,1}\in \Hb^{a_-,a_0;k}(\D^-)$. We then apply our already established $\cref{item:en:pastestimate_inhom}_l$ to  $\psi_{\Delta,2}$, which gives $\Ve\psi_{\Delta,2}\in \Hb^{a_-,a_0;0}(\D^-)$ and proves $\cref{item:en:pastestimate_hom}_L$ for $k=0$.

   We can then iteratively prove $\cref{item:en:pastestimate_inhom}_L$ and $\cref{item:en:pastestimate_hom}_L$ for higher $k$.
    
    In order to prove $\cref{item:en:pastestimate_inhom_0th}_L$, we note that while \cref{eq:en:longwave} no longer commutes with $T$, we can simply write $\phi=\sum_{1\leq i\leq n}\phi_i$, where for $n>i>1$
    \begin{equation}
        \Box_\eta \phi_1=f,\qquad \Box_\eta \phi_i=V_L\phi_{i-1},\qquad \Box_\eta\phi_n-V_L\phi_{n}=V_L\phi_{n-1},
    \end{equation}
    and where only $\phi_1$ has nontrivial data.
Applying \cref{item:en:pastestimate_inhom_0th} to $\phi_1$, we get that $V_L\psi_1\in \rho_-^{1+\epsilon_-}\rho_0^2\Hb^{a^f_{-}-1,\min(a_-^f-1,a_0^f-2)-;k}(\D^-)$.
Since we do not lose regularity at this step, we may iterate the procedure until we have sufficient decay towards $\scrim$ ($V_L\psi_{n-1}\in \rho_-^{2+\epsilon_-}\rho_0^2\Hb^{a^f_{-}-1,\min(a_-^f-1,a_0^f-2)-;k}(\D^-)$) and then apply $\cref{item:en:pastestimate_inhom}_L$ to $\phi_{n}$.

Finally, in order to prove $\cref{item:en:pastestimate_Hbt}_L$, we again note that while $T$ doesn't commute with the wave equation, 
\begin{equation}
    \Box_\eta T^n \phi+r^{-2}T^n(f^L_{-} \Ve\phi)= \Box_\eta T^n \phi+r^{-2}T^n(f^L_{-,\b} \Vb\phi)=T^nf, 
\end{equation}
we can make use of the condition that $f^L_{-,\b}\in \A{phg}^{\mindex0}([0,1)_{\rho_-}\times S^2)$, which implies that $T^m f^L_{-,\b}\in \A{phg}^{\mindex{m}}([0,1)_{\rho_-}\times S^2)$ for any $m\in\mathbb N$. (To see this, take the expressions \cref{eq:notation:partial_derivatives} and notice that $T\rho_-=|u|^{-1}(1+\rho_-)$.)
Thus, the proof of $\cref{item:en:pastestimate_Hbt}_L$ still goes through as before.
\end{proof}
Concerning the extended estimates in the case of weakly decaying data, we have
\begin{cor}\label{cor:en:longrange1.5}
Under the assumptions of \cref{lemma:en:energy_no_incoming}, but with $\phi$ a solution to \cref{eq:en:longwave} and with $rf\in \Hbt^{\vec{a}^f;k+m\cdot N}(\D^-)$ for $N=\lceil m/\epsilon_-\rceil+1$ instead, we have that
\begin{nalign}
			\sum_{n\leq m} \norm{(rT)^n\psi}_{\Hbt^{\vec{a};k}(\D^-)}\lesssim\norm{rf}_{\Hbt^{\vec{a}^f;k+mN}(\D^-)}+\norm{\Vb^{k+mN}\psi}_{\Hb^{a_-;1}(\incone)}.
		\end{nalign}
\end{cor}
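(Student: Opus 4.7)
The plan is to adapt the proof of \cref{lemma:en:energy_no_incoming} to the long-range setting. Recall that in the pure Minkowskian case, one exploits $[\Box_\eta,T]=0$ to obtain $P_\eta T^m\psi=T^m(rf)$, apply \cref{prop:en:main} to $T^m\psi$, and then time-integrate via \cref{corr:ODE:du_dv}. For the long-range equation \cref{eq:en:longwave}, the operator $\Box_\eta-V_L$ no longer commutes with $T$; instead, a direct computation yields
\begin{equation*}
    (\Box_\eta-V_L)(T^n\phi)=T^n f+\sum_{j=1}^{n}\binom{n}{j}(T^j V_L)(T^{n-j}\phi),
\end{equation*}
and the resulting commutator terms must be controlled.

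The crucial structural gain comes from the no incoming radiation compatibility assumption $f^L_{-,\b}\in\A{phg}^{\mindex{0}}([0,1)_{\rho_-}\times S^2)$. Using $T=\rho_0(\rho_-\partial_--\rho_0\partial_0)$ in $\D^-$ from \cref{lemma:notation:coordinates}, and the fact that $f^L_{-,\b}$ is independent of $\rho_0$, each application of $T$ to $f^L_{-,\b}$ yields $\rho_0\rho_-\partial_-f^L_{-,\b}$; since $\rho_-\partial_-$ annihilates the constant mode of the expansion, iterating produces $T^j f^L_{-,\b}\in\A{phg}^{\mindex{j}}$. Thus the coefficient $T^j V_L$ carries $j$ additional powers of $\rho_-$ (and $\rho_0$) compared to $V_L$ itself, so that for $j\geq 1$ the term $(T^j V_L)(T^{n-j}\phi)$ is strictly more perturbative than the original zeroth-order term $V_L\phi$.

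With this gain in hand, I will induct on $n=0,1,\dots,m$. The base case $n=0$ is a direct application of \cref{cor:en:longrange1} to $\phi$. For the inductive step, assume control of $(rT)^j\psi$ in $\Hbt^{\vec{a};k+(m-j)N}(\D^-)$ for all $j<n$. Treating the commutator sum on the RHS of the displayed identity as a forcing term---whose membership in the appropriate admissible $\Hbt$-space is guaranteed by the inductive hypothesis together with the extra $\rho_-^j\rho_0^j$ weight of $T^jV_L$---one applies \cref{cor:en:longrange1} (in particular, $\cref{item:en:pastestimate_Hbt}_L$) to bound $T^n\psi$ in $\Hbt^{\vec{a};k+(m-n)N}(\D^-)$. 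At the end of the induction, the ODE propagation of \cref{corr:ODE:du_dv} collects the estimates on $(rT)^n\psi$ for $0\leq n\leq m$ as in the proof of \cref{lemma:en:energy_no_incoming}.

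The main obstacle is the justification of the derivative count $N=\lceil m/\epsilon_-\rceil+1$, which arises from the inner bootstrap required at each inductive step. Since the commutator $(TV_L)(T^{n-1}\phi)$ only gains $\epsilon_-$ of extra $\rho_-$-decay per commutation, direct application of the admissible-inhomogeneity estimate of \cref{def:en:admissible:f} is not automatic; instead, one must repeat the iterative splitting $T^n\phi=\phi_1+\phi_\Delta$ from the proof of $\cref{item:en:pastestimate_hom}_L$, in which each sub-iteration absorbs $\epsilon_-$ of decay into the leading operator. Roughly $\lceil m/\epsilon_-\rceil$ such sub-iterations suffice to reduce the commutator error into a strictly admissible space, costing $N$ derivatives per value of $n$, and hence $mN$ derivatives in total, matching the hypothesis $rf\in\Hbt^{\vec{a}^f;k+mN}(\D^-)$.
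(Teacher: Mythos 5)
Your approach and the paper's differ structurally, and I think yours has a genuine gap in its foundation.

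The paper's proof does not commute the long-range equation with $T$ at all. Instead it decomposes $\psi = \psi_0 + \sum_{i=1}^{N-1}\psi_i + \psi_\Delta$, where each $\psi_i$ with $i<N$ solves the \emph{unperturbed Minkowskian} equation $P_\eta\psi_i = V_L\psi_{i-1}$ (with trivial data for $i\geq 1$), and only $\psi_\Delta$ solves the long-range equation $P_\eta\psi_\Delta + V_L\psi_\Delta = V_L\psi_{N-1}$. Since each $\psi_i$ solves $\Box_\eta\phi=f$, to which $T$ commutes exactly, \cref{lemma:en:energy_no_incoming} applies directly to each piece, gaining $\epsilon_-$ of decay towards $\scrim$ per iteration (but costing $m$ derivatives each time). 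After $N-1$ iterates, the forcing $V_L\psi_{N-1}$ has decay $a_-^f>3/2$, so the remainder $\psi_\Delta$ has \emph{admissible} decay and \cref{cor:en:longrange1} applies directly to it. This is where the count $k+mN$ comes from: $m$ derivatives per Minkowskian application, done $N$ times.

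The problem with your approach is the base case. You want to establish $\phi=T^0\phi\in\Hbt^{\vec{a};\cdot}$ by "direct application of \cref{cor:en:longrange1}," but $\vec{a}$ here has $a_- > -m+1/2$, which is typically well below the admissibility threshold $a_-\geq -1/2$ when $m>1$. The part of \cref{cor:en:longrange1} that claims to extend \cref{lemma:en:energy_no_incoming} to the long-range setting is exactly the content of \cref{cor:en:longrange1.5}; the underlying energy estimate \cref{prop:current:long_range} that \cref{cor:en:longrange1} invokes is proved only at admissible weights (in fact, its proof even says "we assume that $a_->1/2$"). So citing \cref{cor:en:longrange1} for the base case is circular. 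And this is not a cosmetic issue: commuting the long-range equation with $T^n$ produces forcing $(T^jV_L)(T^{n-j}\phi)$ involving lower-order iterates, so an upward induction from $n=0$ is structurally forced on you, yet it is precisely at $n=0$ that the weight is least admissible. (Reversing the induction to go downward from $n=m$ doesn't work either, since then the commutator forcing at step $n=m$ involves $T^{m-j}\phi$ for $j\geq 1$, which are not yet controlled.) The paper's decomposition of $\psi$ rather than of $T^n\psi$ is designed exactly to avoid this chicken-and-egg structure: each Minkowskian piece can be $T$-commuted freely. Your observation that $T^jf^L_{-,\b}$ gains decay is correct and is indeed used in the paper (it underlies the proof of $\cref{item:en:pastestimate_Hbt}_L$ in \cref{cor:en:longrange1}), but it does not by itself resolve the base-case circularity, and your explanation of the derivative count $N$ via an "inner bootstrap" is not developed enough to substitute for the explicit decomposition.
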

\begin{proof}
    We write $\psi=\psi_0+(\sum_{i=1}^{N-1} \psi_i)+\psi_\Delta$, where we define $\psi_0$ to be the solution to $P_\eta\psi_0=f$ with data $\psi^{\incone}$, and, for $N> i\geq 1$:
   \begin{equation}
       P_\eta\psi_i=V_L\psi_{i-1}, \qquad P_\eta\psi_{\Delta}+V_L\psi_{\Delta}=V_L\psi_{N-1},
   \end{equation}
all with trivial data. 
We now first apply \cref{lemma:en:energy_no_incoming} to $\psi_0$ to obtain that $\Ve\psi_0\in \Hbt^{a_-,a_0;k+mN-m}(\D^-)$, which implies that $V_L\psi_1\in \Hbt^{a_-+1+\epsilon_-,a_0;k+mN-m}(\D^-)$. Inductively applying \cref{lemma:en:energy_no_incoming}, we obtain that for all $i\leq N-1$ %$V_L\psi_i\in \Hbt^{a_-+1+i\epsilon_-;a_0+2;k+m (N-i)}(\D^-)$; in particular, we obtain that
\begin{equation}
   V_L\psi_i\in \Hbt^{a_-+1+i\epsilon_-;a_0+2;k+m (N-i)}(\D^-) \implies P_\eta \psi_{\Delta}-V_{L}\psi_{\Delta}\in \Hbt^{a_-+1+n\epsilon_-;a_0+2;k+m}(\D^-).
\end{equation}
But since $a_-+1+n\epsilon_-\geq a_-+1+m\geq 3/2$, we can now apply \cref{cor:en:longrange1} to deduce the result.

\end{proof}
\begin{rem}
    Note that the regularity here is not optimal, as we don't actually need to commute $m$ times in each step. 
\end{rem}

In summary, all results from \cref{sec:en:en} also hold for \cref{eq:en:longwave}. 
In the remainder of \cref{sec:en}, we will consider and prove estimates non-linear, \textit{short-range} perturbations of $\Box_\eta\phi=0$. Furthermore, we will prove scattering statements for such perturbations in \cref{sec:scat:scat}. Since these statements are all based on the estimates proved in \cref{sec:en:en}, all remaining statements of \cref{sec:en} and \cref{sec:scat:scat} extend to short-range perturbations of \cref{eq:en:longwave} as well. See already \cref{thm:scat:long}.

We conclude with two remarks:
\begin{obs}[Twisted long-range potentials]\label{rem:en:twistedlongrange}
Note that for $\phi$ solving
\begin{equation}\label{eq:long:twisted}
	(\Box_\eta+\frac{c_v}{r}\pv-\frac{c_u}{r}\pu)\phi=f,
\end{equation}
we can write $\bar{\phi}=\rho_+^{-c_u}\rho_-^{-c_v}\phi$, which satisfies
\begin{equation}
	(\Box_\eta+V)\bar{\phi}=\rho_+^{-c_u}\rho_-^{-c_v}f
\end{equation}
for some long-range potential $V$. In particular, we can still apply \cref{cor:en:longrange1,cor:en:longrange1.5} to $\bar{\phi}$ as above.
\end{obs}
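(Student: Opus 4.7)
The plan is to verify the observation by explicit computation. I would write $\bar\phi=B\phi$ with $B=\rho_+^{-c_u}\rho_-^{-c_v}=(-u)^{-c_u}v^{-c_v}r^{c_u+c_v}$, since $\rho_+=-u/r$ and $\rho_-=v/r$. Because $B$ is spherically symmetric and so commutes with $\Dl$, the Leibniz rule for $\Box_\eta=-\pu\pv+\frac{1}{r}(\pv-\pu)+\frac{\Dl}{r^2}$ gives
\begin{equation*}
\Box_\eta(B\phi)=B\Box_\eta\phi+\phi\,\Box_\eta B-(\pu B)(\pv\phi)-(\pv B)(\pu\phi).
\end{equation*}
I would then substitute the equation for $\phi$, i.e. $\Box_\eta\phi=f-\frac{c_v}{r}\pv\phi+\frac{c_u}{r}\pu\phi$, and re-express $\pu\phi, \pv\phi$ in terms of $\pu\bar\phi, \pv\bar\phi, \bar\phi$ via $\pu\phi=B^{-1}\pu\bar\phi-B^{-2}(\pu B)\bar\phi$ and similarly for $\pv$. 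The result is an equation $(\Box_\eta+V)\bar\phi=Bf$ whose first-order part is $(\pu\log B+c_v/r)\pv\bar\phi+(\pv\log B-c_u/r)\pu\bar\phi$, plus an explicit zeroth-order term coming from $\Box_\eta B/B$ and products $\pu\log B\cdot\pv\log B$.

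Second, I would compute the coefficients explicitly using $\pu r=-1$, $\pv r=1$, $\pu\log(-u)=1/u$ and $\pv\log v=1/v$, yielding
\begin{equation*}
\pu\log B=-\frac{c_u}{u}-\frac{c_u+c_v}{r},\qquad \pv\log B=-\frac{c_v}{v}+\frac{c_u+c_v}{r}.
\end{equation*}
Using the algebraic identities $u+r=v$ and $v-r=u$ (since $r=v-u$), these reduce to $\pu\log B+c_v/r=-c_uv/(ur)$ and $\pv\log B-c_u/r=c_vu/(vr)$. Converting to boundary-defining functions via $v/|u|=\rho_-/\rho_+$, $1/r=\rho_-\rho_0\rho_+$, and reorganising against $v\pv$ and $u\pu$, the first-order part of $V$ becomes
\begin{equation*}
V_{\mathrm{1st}}=\frac{c_u}{r^2\rho_+}\,v\pv+\frac{c_v}{r^2\rho_-}\,u\pu,
\end{equation*}
while the zeroth-order contribution is a smooth function of $(\rho_-,\rho_+)$ times $1/r^2$.

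Third, I would verify that this $V$ fits the template of \cref{def:en:long_range}, namely $V=\frac{f^L_-(\rho_-,\omega)f^L_+(\rho_+,\omega)}{r^2}\Ve$ with $f^L_\pm$ in the appropriate $\Hb$ class, and check that the no-incoming-radiation compatibility of \textbf{PL b)} holds since $f^L_-$ is a smooth function of $\rho_-$ alone (hence automatically in $\A{phg}^{\mindex 0}$). The main subtlety here is the sharpness of the $\rho_\pm^{-1}$ factors in the coefficients of $v\pv$ and $u\pu$: the natural splitting gives $f^L_+\sim \rho_+^{-1}$ and $f^L_-\sim \rho_-^{-1}$, which sits exactly at the borderline of the definition (corresponding to $\epsilon_\pm=0$ rather than $\epsilon_\pm>0$). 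This is the main obstacle, and I would address it by inspecting the energy estimate proof of \cref{cor:en:longrange1} (which is based on currents as in \cref{sec:current:non_short_range}): the specific first-order terms of the form $\frac{1}{r^2\rho_\pm}v\pv,u\pu$ produce boundary contributions with fixed signs when paired against the multipliers $|u|^{2a_-+1}v^{-\epsilon}\pu$ and $|u|^{2a_-}v^{-\epsilon}\pv$ used in \cref{prop:en:pastestimate}, hence can be absorbed into the admissible-weight estimates after adjusting $\vec a$ by $(-c_v,0,-c_u)$. Once this is verified, the conclusion follows immediately: applying \cref{cor:en:longrange1,cor:en:longrange1.5} to $\bar\phi$ in the shifted weight class and undoing the conjugation by multiplication by $B^{-1}$ yields the desired estimates for $\phi$.
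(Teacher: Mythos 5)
The algebra is correct: the Leibniz rule, the expressions for $\pu\log B$ and $\pv\log B$, and the simplifications via $r+u=v$, $v-r=u$ all check out, and the first-order part of the conjugated operator acting on $\bar\psi:=r\bar\phi$ is indeed $-\frac{c_uv}{ur}\pv + \frac{c_vu}{vr}\pu$, with $v\pv$-coefficient $-\frac{c_u}{ur}$ (weight $(2,2,1)$) and $u\pu$-coefficient $\frac{c_v}{vr}$ (weight $(1,2,2)$). But the claim that the zeroth-order contribution is "a smooth function of $(\rho_-,\rho_+)$ times $1/r^2$" is not right: collecting it at the level of $\bar\phi$ gives $-\frac{c_uv}{ur^2}-\frac{c_vu}{vr^2}=\big(c_u\tfrac{\rho_-}{\rho_+}+c_v\tfrac{\rho_+}{\rho_-}\big)r^{-2}$, which is singular at both $\rho_{\pm}=0$. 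The cancellation to exploit is at the level of $\bar\psi$: the zeroth-order coefficient of the conjugated operator on $\bar\psi$ is $c_uc_v\big(-\tfrac{1}{uv}+\tfrac{1}{ur}-\tfrac{1}{vr}\big)$, which vanishes identically since $-r+v-u=0$. So the conjugated equation for $\bar\psi=r\bar\phi$ has no zeroth-order potential term at all; the singular term you found for $\bar\phi$ is just $r^{-1}[V_1,r]$. You should make this explicit, since the estimates you want to invoke are all phrased for $\psi=r\phi$.

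You are right to flag the $\rho_{\pm}^{-1}$ factors as borderline for \cref{def:en:long_range} ($\epsilon_{\pm}=0$ rather than $\epsilon_{\pm}>0$). But the proposed resolution is circular: you say the conclusion follows "once this is verified" by applying \cref{cor:en:longrange1,cor:en:longrange1.5}, yet those corollaries are precisely what the missing verification is supposed to justify at this endpoint. Moreover the relevant obstruction in \cref{lemma:current:past,lemma:current:future} and \cref{prop:current:long_range} comes from \emph{bulk} divergence terms, not boundary terms, and "adjusting $\vec a$ by $(-c_v,0,-c_u)$" is a shift of the decay rate of $\bar\phi$, not a statement about admissibility of $V$. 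What makes the observation plausible is structural: after the conjugation, the borderline weight in each coefficient sits in front of the derivative that is \emph{good} for the corresponding boundary. Before conjugation the $v\pv$-coefficient of $\frac{c_v}{r}\pv$ has weight $(1,2,2)$, borderline towards $\scrim$ where $\pv$ is the bad derivative; afterwards the $v\pv$-coefficient has weight $(2,2,1)$, borderline towards $\scrip$ where $\pv$ is good (and analogously for the $\pu$-term). This alignment is what the $w^{\bar{c}}$-weighted currents are designed to absorb. Completing the argument requires carrying out the bulk divergence computation with the explicit first-order operator above rather than citing the corollaries as black boxes.
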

\begin{rem}\label{rem:en:long_range}[Relevance of long-range potentials]
    	Let's consider for $c_u,c_v,c_0\in\R$, $c_{\Dl}\in\R_{\geq0}$ the operator
	\begin{equation}\label{eq:current:non-short-range-Qtilde}
		\tilde{Q}\phi:=\Big(\pu\pv+\frac{c_u}{r}\pu-\frac{c_v}{r}\pv+\frac{c_0}{r^2}-\frac{c_{\Dl}}{r^2}\Dl\Big)\phi=0.
	\end{equation}
	For $c_u=c_v=0$ and $c_0=0$ we recover the twisted wave operator $P_\eta$.
	Terms with $V=1/r^2$ come up naturally when studying the behaviour of charged scalar fields in the presence of nonzero total charge, see \cite{gajic_late-time_2024}.
	An immediate problem for the inclusion of such $V$, is that below a threshold value for $c_0$, the $T$ energy estimate for $\tilde{Q}$ is no longer coercive.
	This is indeed a problem when studying the solution in a neighbourhood of timelike infinity, see \cite{gajic_late-time_2024,hintz_linear_2023} for further discussion.
	In contrast, as shown by \cref{cor:en:longrange1}, there is never an issue with coercivity in the region~$\D$.
	
	The terms $c_u,c_v$ appear when studying the linearised vacuum Einstein equations in a double null gauge or a harmonic gauge, see \cite{dafermos_linear_2019,hintz_exterior_2023}. In fact, in the double null setting, these equations are called the Teukolsky equations; we will discuss these further in \cref{sec:Sch:ling}. 
	We remark, again, that when working in a neighbourhood of the timelike infinities, the specific first order terms in the Teukolsky equation (coming from $c_u$ and $c_v$) are  problematic; indeed, one of the core insights of \cite{dafermos_linear_2019} was that the Teukolsky equations can be transformed to equations without such terms. In contrast, as shown by the above, we can treat such first order terms \textit{directly} in the region 
$\D$.
\end{rem}

	\subsection{Energy estimates III: Perturbations of \texorpdfstring{$\Box_{\eta}\phi=f$}{the linear wave equation} with no incoming radiation}\label{sec:en:perturbations}
    In this section, we upgrade the energy estimates from \cref{sec:en:en} to short-range perturbations of $\Box_\eta$:
    \begin{equation}\label{eq:en:characteristicIVP_perturbed}	\begin{gathered}
			\Box_\eta \phi=\mathcal{P}[\phi]+f,\qquad \mathcal{P}[\phi]=V\phi +P_g[\phi]\phi+\mathcal{N}[\phi]\\
			\psi|_{\Cbar_{v_0}}=\psi^{\Cbar_{v_0}}\qquad \psi|_{\C_{u_\infty}}=\psi^{\C_{u_\infty}},
		\end{gathered}
	\end{equation}
 where $V$, $\mathcal{N}$, $P_g$ are potential,  semilinear and quasilinear perturbations, respectively. 
 We will define these perturbations in \cref{def:en:short_range} in \cref{sec:en:srdef} in such a manner that we can effectively treat them as inhomogeneities and such that they leave in tact the $\b$-smooth structure of the solutions. While this is straight-forward to define for linear perturbations, we note that, for nonlinear perturbations, the definition will depend on the decay of the initial data.
 
After proving some basic properties of these perturbations in \cref{sec:en:sh:properties}, we will prove estimates for the transversal derivatives along the initial cone in \cref{sec:en:sr:data}, and finally close the nonlinear energy estimates for \cref{eq:en:characteristicIVP_perturbed} in \cref{sec:en:sr:energy}.
 \subsubsection{Definition of short-range perturbations}\label{sec:en:srdef}
	 When referring to perturbations in general, or their coefficients, we will use $\mathcal{P}$ to denote  combinations of $V$, $\mathcal{N}$ and $P_g$-perturbations, all of which are defined below. We emphasise that $\mathcal{P}$ is always defined w.r.t.~an admissible $\vec{a}$.

	\begin{defi}\label{def:en:short_range}
		Fix admissible weight $\vec{a}=(a_-,a_0,a_+)$.
		\begin{enumerate}[label=\textbf{P\arabic*}]
			\item \label{item:en:potential_perturbation} \textbf{a)} We say that  $V=f_V\Vb$ for $f_V\in\rho_\scri^{\kappa/2}\Hb^{\vec{a}^V;\infty}(\D)$, where $\kappa=1$ for $\sl$ derivatives and $\kappa=0$ otherwise, is a \textbf{short-range potential}  perturbation if $\vec{a}+\vec{a}^V$ is a strongly admissible inhomogeneous weight relative to $\vec{a}$.\footnote{We could have equivalently written $V=f_V\Ve$ and avoided $\kappa$; the reason why we didn't will become transparent later.}
			
			\textbf{b)} Fix $\vec{a}^V=(a^V_0,a^V_0,a^V_+)$ such that $\vec{a}+\vec{a}^V-(1,0,0)$ is strongly admissible. 
            We say that $V$ is \textbf{compatible with the no incoming radiation condition} if $V=f_V\Vbt$ with $f_V\in\rho_+^{\kappa/2}\Hbt^{a^V_0,a^V_0,a^V_+;\infty}(\D)$, where $\kappa$ is as above.
            
			\item \label{item:en:semilinear_perturbation} \textbf{a)} Let $n^{\mathcal{N}}\in\N_{\geq 1}$. Let $\mathcal{N}[\phi]=\mathcal{N}[\phi,...,\phi]$ for $\mathcal{N}[\phi_1,...,\phi_n]:(\C^\infty(\D))^{n^{\mathcal{N}}}\to \C^\infty(\D)$  a finite sum of multi-linear functions of the form
			\begin{equation}\label{eq:en:short_range_nonlinear_form}
				\frac{1}{r}f_{\mathcal{N}} \prod_i \Vb(r\phi_i),\qquad f_{\mathcal{N}}\in\rho_\scri^{\kappa/2}\Hb^{a^\mathcal{N}_-,a^\mathcal{N}_0,a^\mathcal{N}_+;\infty}(\D),
			\end{equation}
			where $\kappa=0$ if none of the derivative terms contain $\sl$ and $\kappa=1$ otherwise.
			We say that $\mathcal{N}$ is a \textbf{short-range semilinear} perturbation relative to $\vec{a}$ if $n^{\mathcal{N}}\vec{a}+\vec{a}^{\mathcal{N}}$ is a strongly admissible inhomogeneous weight.
            We also allow for multiple $n_i^{\mathcal{N}}$ values, in which case $\mathcal{N}[\phi]=\mathcal{N}_1[\phi]+...+\mathcal{N}_m[\phi]$.
			
			\textbf{b)} Fix $\vec{a}^{\mathcal{N}}=(a^{\mathcal{N}}_0,a^{\mathcal{N}}_0,a^{\mathcal{N}}_+)$ such that $n^{\mathcal{N}}\vec{a}+\vec{a}^{\mathcal{N}}-(1,0,0)$ is strongly admissible. 
            We say $\mathcal{N}$ is \textbf{compatible with the no incoming radiation condition} if it is of the form (with $\kappa$ as above)
			\begin{equation}\label{eq:en:short_range_no_incoming_condition}
				\frac{1}{r}f_{\mathcal{N}}\prod_i (r\pv,u\pu,\sl,1)r\phi_i,\qquad f_{\mathcal{N}}\in\rho_+^{\kappa/2}\Hbt^{a^\mathcal{N}_0,a^\mathcal{N}_0,a^\mathcal{N}_+;\infty}(\D).
			\end{equation}
			\item \label{item:en:quasilinear_perturbation} \textbf{a)} Fix $n^{P_g}\in\mathbb N_{\geq1}$. Let
			\begin{equation}
				P_g[\phi]=\Omegalin[\phi]\partial_u\partial_v+\glin_{AB}[\phi]\sl^A\sl^B
			\end{equation}
			such that $(uv)^{-1}r^{-1}\Omegalin,r^{-1}\abs{\glin}$ are all of the form 
            \cref{eq:en:short_range_nonlinear_form}, where the corresponding $n^{\Omegalin}$ and $n^{\glin}$ are given by $n^{P_g}-1$, where $\kappa_{\glin}=2$ and  where, finally, $\kappa_{\Omegalin}=1$ if $f_{\Omegalin}$ contains $\sl$ term, 0 otherwise.
            If the corresponding prefactors satisfy $f_{P_g}\in\rho^{\kappa/2}\Hb^{\vec{a}^{P_g}}(\D)$ such that $n^{P_g}\vec{a}+\vec{a}^{P_g}$ is  strongly admissible, then we say that $P_g$ is a \textbf{short-range quasilinear} perturbation.
            Here, $\abs{\glin}$ denotes the standard Riemannian norm of $\glin$ on the unit sphere.
            
			\textbf{b)} We say that  $P_g$ is \textbf{compatible with  the no incoming radiation condition} if $(ru)^{-1}r^{-1}\Omegalin$ and $\rho_+^{-1}r^{-1}\abs{\glin_{AB}}$ are, instead, of the form \cref{eq:en:short_range_no_incoming_condition} with $\kappa=0$. 
		\end{enumerate}
    
		Let $n^V=1$. We define the \textbf{gap of $\mathcal{P}$} via:
		\begin{equation}
			\delta(\mathcal{P})=\sup\{ \epsilon>0: n^{\bullet}\vec{a}+\vec{a}^{\bullet}-(\epsilon,\epsilon,\epsilon),\text{ for } \bullet\in\{V,\mathcal{N},P_g\}, \text{ are short-range perturbations}\}.
		\end{equation}
	%	the gap of the short-range perturbations.
	\end{defi}
 All the definitions naturally extend to perturbations with coefficients of finite $\Hb^{;k}$ regularity. The extension of the definition to finite regularity is required e.g.~in \cref{rem:en:regularity_of_coefficients}.

We also immediately note the following two points:
\begin{itemize}
			\item The definition in \cref{item:en:semilinear_perturbation} with $n^{\mathcal{N}}=1$ already includes potential perturbations as well.
			We nevertheless decided to keep $V$ separate because its treatment is strictly simpler.
			\item The commutator of $P_g$ with $r$, $[P_g,r]$, gives first and zeroth order terms that are short-range potential perturbations, c.f.~\cref{lemma:en:short_range_computations}.
            With this in mind, we shall freely commute with $r$ and, in estimates, treat $rP_g[\phi]$ and $P_g[r\phi]$ interchangeably.
		\end{itemize}

We provide some examples so the reader may more easily distill the definition:

\begin{rem}[Examples]\label{rem:en:examples}

	\begin{itemize}
		\item Let us start with the case of a potential perturbation.
		For $a_-\geq 1/2$, $a_0<a_-$ and $a_+<\min(0,a_0)$, we have  $V=\rho^{\epsilon}\rho_\scri\rho_0^2=(r^\epsilon u v)^{-1}$ is an allowed potential perturbation with $\vec{a}^V=(1,2,1)+\epsilon$:
        Indeed, we may compute that $\vec{a}+\vec{a}^V=(a_-+1,a_0+2,a_++1)+\epsilon$ is a strongly admissible weight.
        The example is not compatible with no incoming radiation because acting with $r\pv$ does not preserve the decay towards  $\scrim$.
		If we instead choose $a_-\leq 1/2$ and impose the stronger $a_+<a_0\leq a_--1/2$, then the same $V$ will still be short-range, but note that in this case, we lose an extra half decay order coming from the extra condition \cref{eq:en:admissible_inhom2}, see, however, \cref{rem:en:shortrange_admissible_weights}.
		
		\item For any $\epsilon>0$, the potential $V=r^{-2-\epsilon}(1,r\pv,\sl,r\pu)$ is a short-range potential perturbation compatible with no incoming radiation for any admissible $\vec{a}$. 
        Similarly, $P_g=f(\rho)(\rho^{2}\Dl,\pu\pv)$, with $\rho=1/r$, is a short-range quasilinear perturbation for $f\in\Hb^{\epsilon;\infty}([0,1)_{\rho})$ for any admissible~$\vec{a}$.
        In particular, the wave equation on a Schwarzschild background, \cref{eq:intro:Schw_uv_coordinates}, falls into this category.
		
		\item Consider $\mathcal{N}[\phi]=\pu\phi\pv\phi$.
		We write this as $\mathcal{N}[\phi]=r^{-1}\frac{1}{uvr}(u\pu \psi v\pv \psi+u\phi v\pv \psi-v\phi u\pu \psi-uv\phi^2)$.
		From this expression, we can read off $a_\pm^\mathcal{N}=2,a_0^{\mathcal{N}}=3$.
		A quick check shows that $\mathcal{N}$ is short-range for $\min(a_-,a_0)>-1/2$.
		\item For $(\partial_t\phi)^2=r^{-2}(\partial_t \psi)^2$, we compute that $a_\pm^\mathcal{N}=1,a_0^\mathcal{N}=3$.
		Since for no $a_+<0$ it holds that $na_++a_+^{\mathcal{N}}=2a_++1\leq a_++1$, this is never a short-range perturbation. However, for $(\frac{u}{r})^\epsilon(\partial_t\phi)^2$ and $\epsilon>0$, we have a short-range perturbation for $a_-,a_0>0$ and $a_+>-\epsilon$. (As a foreshadowing, we note already that the requirement $a_->0$ essentially excludes incoming radiation.)
	\end{itemize}
\end{rem}

    \begin{rem}[Compatibility with the no incoming radiation condition]\label{rem:en:no-incoming_vs_short-range}
        Being compatible with no incoming radiation implies short-range only for linear perturbations. Let us expand:
        
    	We restrict attention to the region $\D^-$.    
		A potential perturbation is compatible with no incoming radiation if $a^V_0>2$.
		In turn, we can write $V=\boldsymbol{\rho}_0^{a^V_0}\Vbt=\rho_0^{a^V_0}\rho_-^{a^V_0}\Vbt\in \Hb^{1+,2+}(\D^-)\Vb$.
		Therefore $V$ is also a short-range potential perturbation for any admissible weight $\vec{a}$ whenever $a_-\geq0$.
		
		In the case of nonlinearities, the situation is more intricate. 
		As in the potential case, it still holds that compatibility with no incoming radiation is equivalent to $n^\bullet a_0+a^\bullet_0>a_0+2$; however, it generally does not imply short-range.
        To understand this, take $\mathcal{N}=r^{-1-\alpha}(r\pv \psi)^2$. In the case of \cref{eq:en:short_range_no_incoming_condition}, we then have $a^{\mathcal{N}}_0=\alpha$, but in the case of \cref{eq:en:short_range_nonlinear_form}, we further have $a^{\mathcal{N}}_-=\alpha-2$.
		A short computation then implies that $\mathcal{N}$ is short-range if $a_0+\alpha>3$ and compatible with no incoming radiation if $a_0+\alpha>5/2$. Again, with \cref{rem:en:shortrange_admissible_weights} in mind, this can be improved to $a_0+\alpha>2$.
	\end{rem}
We conclude with the following important remarks: 
 \begin{rem}[Improvement of admissible weights]\label{rem:en:shortrange_admissible_weights}
    To keep the condition of a short-range perturbation accessible as a definition, we did not incorporate the improvement provided by \cref{eq:en:pastestimate_inhom_0th}.
    In particular, our definition of short-range perturbations also requires the bound \cref{eq:en:admissible_inhom1} to be satisfied. 
     Nevertheless, we can still construct scattering solutions if instead of admissibility of $\vec{a}^\bullet$, we merely require that $\vec{a}^\bullet+(n^\bullet-1)\vec{a}>(1,2,1)$ and that ${a}_-^{\bullet}+n^{\bullet}{a}_->1$, albeit with a loss of regularity.
    See already \cref{def:scat:extended_short_range} and \cref{cor:scat:enlarged_admissible_set}.
    In fact, in the case of first order (in derivatives) short-range perturbations, there as no loss of regularity, as we can argue exactly as in \cref{eq:savedhehe} to avoid the condition \cref{eq:en:admissible_inhom1}.
 \end{rem}
 
 \begin{rem}[Nonlinearities depending smoothly on $\phi$]\label{rem:en:smoothperturbations}
     All our results will also apply to nonlinearities that are not of fixed order, but only \textit{vanish} up to some fixed order. For instance, we can treat the nonlinearity $F(\phi)\phi^3$, with $F(\phi)$ depending smoothly on $\phi$, as cubic if $\vec{a}>-\vec{1}$ (this latter condition ensures that higher than cubic terms are subleading in decay). More generally, if $\vec{a}>\vec{-1}$, we can consider, instead of \cref{eq:en:short_range_nonlinear_form}, nonlinearities of the form
     \begin{equation}
         r^{n^{\mathcal{N}-1}}f_{\mathcal{N}}\cdot F(\Vb\phi),\qquad f_{\mathcal{N}}\in\rho_\scri^{\kappa/2}\Hb^{a^\mathcal{N}_-,a^\mathcal{N}_0,a^\mathcal{N}_+;\infty}(\D),
     \end{equation}
     where $F$ depends smoothly on $\Vb\phi$ and vanishes to order $n^{\mathcal{N}}$ at $\Vb\phi=0$. We will still call such nonlinearities "of order $n^{\mathcal{N}}$", and we can analogously extend the definitions in this way for \cref{eq:en:short_range_no_incoming_condition} and for \cref{item:en:quasilinear_perturbation}.
This will only become relevant in the discussion of the Einstein vacuum equations in \cref{sec:EVE}.
\end{rem}
    \begin{rem}[Systems of wave equations]
        The definition of admissible perturbation also extends to system of wave equations $\Box\phi_A=\mathcal{N}_A[\phi_B]$.
        In this case, each component has its own associated decay rate $\vec{a}_A$, and the forcing terms need to be strictly admissible when the respective decay rates are inserted into $\mathcal{N}_A$.
        Such generalisations would only make the notation harder to follow, thus we do not pursue them further. See, however, \cref{sec:EVE}.
    \end{rem}

\subsubsection{Basic properties of short-range perturbations}\label{sec:en:sh:properties}
    \begin{lemma}\label{lemma:en:short_range_computations}
        Fix admissible weight $\vec{a}$ and $\Ve (r\phi)\in\Hb^{\vec{a};k}(\D)$.
        Let $P_g,\mathcal{N}$ be short-range quasilinear and semilinear perturbations. Then the following holds:
       
           a) For $X\in\Diff^1_{\b}(\D)$ there exists short-range $\tilde{V},\tilde{\mathcal{N}}$ with $\Hb^{;k-1}(\D)$ coefficients  such that we have
            \begin{equation}
                [X,P_g[\phi]]=\begin{cases}
                    \tilde{V}\Diff^1_{\b}(\D) & \text{if } n^{P_g}=1,\\
                    (\tilde{V} \Diff^1_{\b}(\D) r\phi)\Diff^2_{\b}(\D) & \text{if } n^{P_g}=2,\\
                    (\Diff^1_{\b}(\D)r\tilde{\mathcal{N}}[\phi])\Diff^2_{\b}(\D)& \text{if } n^{P_g}\geq3.
                \end{cases}
            \end{equation}
          
           b) There exist perturbations $\tilde{V},\tilde{\mathcal{N}},\tilde{P}_g$ (with $k-1$ regular coefficients) that are short-range  with respect to $\vec{a}'$ for any $\vec{a}'\geq \vec{a}$ such that we can write 
            \begin{equation}
                \tilde{V}\tilde{\phi}+\tilde{\mathcal{N}}[\tilde{\phi}]+\tilde{P}_g[\tilde{\phi}]\tilde\phi=\mathcal{N}[\phi+\tilde{\phi}]-\mathcal{N}[\phi]+P_g[\phi+\tilde{\phi}](\phi+\tilde{\phi})-P_g[\phi+\tilde{\phi}](\phi)
            \end{equation}        
    \end{lemma}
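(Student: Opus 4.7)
The idea is a direct application of the Leibniz rule, combined with the observation that the differential operators $\pu\pv$ and $\sl^A\sl^B$ lie in $\Diff^2_{\b}(\D)$ up to explicit weights (this is exactly what is encoded in the factorisation \cref{eq:notation:box_near_corner}). For $X\in\Diff^1_{\b}(\D)$, I would expand
\begin{equation*}
	[X,P_g[\phi]]=X(\Omegalin[\phi])\,\pu\pv+\Omegalin[\phi]\,[X,\pu\pv]+X(\glin_{AB}[\phi])\,\sl^A\sl^B+\glin_{AB}[\phi]\,[X,\sl^A\sl^B]
\end{equation*}
and note that the last two commutators remain in $\Diff^2_{\b}(\D)$ with the same pre-factor weights as the operators themselves (since the weights of $\pu\pv$ and $\sl^A\sl^B$ are smooth functions on $\D$, while $X\in\Vb$ acts tangentially to the boundaries and hence preserves these weights up to a loss of one order of regularity). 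All the weight bookkeeping is organized so as to absorb one $\rho_{\bullet}$ factor into the potential and leave the remaining $\b$-derivatives on the right.

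The three cases are then distinguished by the structure of the coefficients dictated by \cref{item:en:quasilinear_perturbation}: when $n^{P_g}=1$, both $\Omegalin,\glin$ are $\phi$-independent multiples of functions in the appropriate weighted $\Hb$ spaces, so $X$ only acts on these, producing the same kind of coefficient (with $k-1$ regularity), and the remaining second-order operator is factorised as $\Vb\cdot\Vb$, placing one $\Vb$ inside the potential to give $\tilde V\,\Diff^1_{\b}(\D)$. For $n^{P_g}=2$, the coefficients $\Omegalin[\phi],\glin[\phi]$ are linear in one $\Vb(r\phi)$ factor; Leibniz distributes $X$ either onto the scalar prefactor (giving back a linear-in-$\Vb(r\phi)$ term, still short-range) or onto the $\Vb(r\phi)$ factor itself, yielding $X\Vb(r\phi)\in\Diff^1_{\b}(\D)r\phi\cdot\Vb$; in either case we land in the form $(\tilde V\,\Diff^1_{\b}(\D)r\phi)\,\Diff^2_{\b}(\D)$. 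For $n^{P_g}\geq3$, at least two $\Vb(r\phi)$ factors survive after Leibniz, and these factors combine into a well-defined semilinear expression $r\tilde{\mathcal{N}}[\phi]$ of order $n^{P_g}-1\geq 2$ (with coefficient space inherited from $\vec{a}^{P_g}$), leaving one $\Vb$ for the $\Diff^1_{\b}(\D)$ and the remaining $\pu\pv$ or $\sl^2$ for the $\Diff^2_{\b}(\D)$. In all cases, the strong admissibility condition $n^{P_g}\vec{a}+\vec{a}^{P_g}>(1,2,1)$ carries over verbatim to $\tilde V,\tilde{\mathcal{N}}$ since Leibniz does not affect weights, only regularity.

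\textbf{Proof plan for part (b).} The key observation is that since $P_g[\psi](\chi)$ is linear in $\chi$, the last two terms on the right simplify to $P_g[\phi+\tilde\phi](\tilde\phi)$. The plan is then a multinomial expansion of both $\mathcal{N}$ and the coefficients of $P_g$ around $\phi$: writing $\mathcal{N}$ through its polarised symmetric form $\mathcal{N}_{\mathrm{sym}}$, one has
\begin{equation*}
	\mathcal{N}[\phi+\tilde\phi]-\mathcal{N}[\phi]=\sum_{k=1}^{n^{\mathcal{N}}}\binom{n^{\mathcal{N}}}{k}\mathcal{N}_{\mathrm{sym}}[\underbrace{\phi,\dots,\phi}_{n^{\mathcal{N}}-k},\underbrace{\tilde\phi,\dots,\tilde\phi}_{k}],
\end{equation*}
and analogously for $P_g[\phi+\tilde\phi](\tilde\phi)$ using the $(n^{P_g}-1)$-linearity of the coefficient. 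I would then identify the $k=1$ (in $\mathcal{N}$) and $k=0$ (in $P_g$) terms as $\tilde V\tilde\phi$-contributions, and the remaining terms (nonlinear in $\tilde\phi$) as $\tilde{\mathcal{N}}[\tilde\phi]$ and $\tilde P_g[\tilde\phi]\tilde\phi$-contributions, with $n^{\tilde{\mathcal{N}}_k}=k$ and $n^{\tilde P_{g,k}}=k+1$ respectively.

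The admissibility check, which is the main (but routine) point to verify, proceeds as follows: each $\phi$-slot in a multilinear term contributes weight $\vec a$ to the coefficient, so the new coefficient space has weight $\vec a^{\tilde\bullet}=\vec a^{\bullet}+(n^{\bullet}-n^{\tilde\bullet})\vec a$. Checking short-rangeness of $\tilde\bullet$ relative to an arbitrary $\vec a'\geq \vec a$ amounts to verifying that $n^{\tilde\bullet}\vec a'+\vec a^{\tilde\bullet}$ is strongly admissible inhomogeneous; using $\vec a'\geq \vec a$, this quantity is bounded below by $n^{\bullet}\vec a+\vec a^{\bullet}$, which is strongly admissible by hypothesis. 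The main (minor) obstacle is to confirm that the contributions with $\tilde\phi$ inside second derivatives (coming from the $k=0$ expansion of $P_g$) really do fit into the framework of a genuine quasilinear perturbation $\tilde P_g[\tilde\phi]\tilde\phi$ of order $n^{\tilde P_g}=1$, i.e.~simply a $\phi$-dependent linear operator of the same principal form as $P_g$; this follows from the fact that the $\Omegalin,\glin_{AB}$-structure and the associated $\kappa$-weights are preserved under such an expansion.
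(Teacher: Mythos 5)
Your proposal is correct and follows essentially the same approach as the paper, which is deliberately terse here: part (a) is dismissed as "a direct computation", and for part (b) the paper writes $\mathcal{N}$ as a sum of monomials $\frac{1}{r}f^{\mathcal{N}}\prod_i (X_i r\phi)$ and then expands $\mathcal{N}[\phi+\tilde\phi]-\mathcal{N}[\phi]$ by choosing which slots carry $\tilde\phi$ (indexed by nonempty subsets $\sigma$), which is exactly your multinomial/polarisation expansion indexed by $k=|\sigma|$; the weight bookkeeping $a^{\tilde{\mathcal{N}}_\sigma}_\bullet=a^{\mathcal{N}}_\bullet+(n-|\sigma|)a_\bullet$ you write down is verbatim the paper's. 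One small imprecision worth flagging: you initially classify the $k=0$ piece $P_g[\phi](\tilde\phi)$ as a $\tilde V\tilde\phi$-contribution, but as your own final sentence recognises this term is second order in $\tilde\phi$-derivatives and must be placed in $\tilde P_g$ with $n^{\tilde P_g}=1$ (a $\phi$-dependent linear quasilinear perturbation); the potential $\tilde V$ absorbs only the first-order pieces coming from $k=1$ terms of $\mathcal{N}$.
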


    \begin{proof}
        \textit{a)} This is a direct computation. 

        \textit{b)} Let us do the expansion for $\mathcal{N}$, the result follows similarly for $P_g$.
        We write $n=n^{\mathcal{N}}$ and express $\mathcal{N}$ as a linear combination of terms
        \begin{equation}
            \frac{1}{r}f^{\mathcal{N}}(X_1 r\phi)(X_2r\phi)\cdots(X_nr\phi),
        \end{equation}
        for $X_i\in \Diff_\b^1(\D)$
        and analyse each term individually.
        Then, we can compute
        \begin{nalign}\label{eq:en:difference_of_N}
            \mathcal{N}[\phi+\tilde\phi]-\mathcal{N}[\phi]=\frac{1}{r}f^{\mathcal{N}}\sum_{\substack{\sigma\subset\{1,2,..,n\}\\ \sigma\neq\emptyset}}\prod_{i\in\sigma}(X_ir\tilde{\phi})\prod_{j\in\{1,2,\dots,n\}\setminus\sigma}(X_jr\phi).
        \end{nalign}
        Each of the terms in the sum is of the required form.
        Indeed, they take the form $\tilde{\mathcal{N}}_\sigma$ with $a^{\tilde{\mathcal{N}}_\sigma}_\bullet=a^{{\mathcal{N}}}_\bullet+(n-\abs{\sigma})a_\bullet$ (with $\bullet\in\{-,0,+\}$) and $n^{{\tilde{\mathcal{N}}_\sigma}}=\abs{\sigma}$.
        We check that the inequalities in \cref{def:en:short_range} hold.
    \end{proof}

We ultimately want to extend \cref{prop:en:main} to short-range perturbations, i.e.~to \cref{eq:en:characteristicIVP_perturbed}.
For this, we first record a product estimate:
	\begin{lemma}[Sobolev embeddings]\label{lemma:scat:sobolev}
		Let $f,g\in\C^{\infty}(\D)$. Then for $\vec{a}
^f+\vec{a}^g>\vec{a}$ we have 
        \begin{subequations}
            \begin{gather}
            \norm{fg}_{\Hb^{\vec{a};k}(\D)}\lesssim_{k,\vec{a}}\norm{f}_{\Hb^{\vec{a}^f;k}(\D)}\norm{g}_{\Hb^{\vec{a}^g;3}(\D)}+\norm{f}_{\Hb^{\vec{a}^f;3}(\D)}\norm{g}_{\Hb^{\vec{a}^g;k}(\D)},\\
            \norm{fg}_{\Hbt^{\vec{a};k}(\D)}\lesssim_{k,\vec{a}}\norm{f}_{\Hbt^{\vec{a}^f;k}(\D)}\norm{g}_{\Hb^{\vec{a}^g;3}(\D)}+\norm{f}_{\Hb^{\vec{a}^f;3}(\D)}\norm{g}_{\Hbt^{\vec{a}^g;k}(\D)}.
            \end{gather}
        \end{subequations}
        An analogous estimate to the first holds in $\Hb(\incone)$, with the 3 replaced by 2.
	\end{lemma}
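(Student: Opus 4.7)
The estimate is a standard Moser/Sobolev-type product inequality for weighted $\b$-Sobolev spaces, and the plan is to reduce it, via Leibniz rule and a dichotomy on derivative counts, to a weighted $L^2\times L^{\infty}$ estimate for which the $L^{\infty}$ factor is handled by the Sobolev embedding recorded in~\cref{rem:not:Hb_Linfty}.

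\textbf{Step 1 (Leibniz expansion).} Since every vector field in $\Vb=\{u\pu,v\pv,\sl,1\}$ is of order~$1$ and acts as a derivation, for any multi-index $\alpha$ with $|\alpha|\le k$ we may expand
\begin{equation*}
\Vb^{\alpha}(fg)=\sum_{\beta+\gamma=\alpha} c_{\beta,\gamma}\,(\Vb^{\beta}f)\,(\Vb^{\gamma}g),
\end{equation*}
for bounded combinatorial constants $c_{\beta,\gamma}$. Thus $\|fg\|_{\Hb^{\vec a;k}(\D)}^{2}$ is dominated by a sum over $|\beta|+|\gamma|\le k$ of $\|(\Vb^{\beta}f)(\Vb^{\gamma}g)\|_{\Hb^{\vec a;0}(\D)}^{2}$.

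\textbf{Step 2 (weighted H\"older and dichotomy).} Using the hypothesis $\vec a^{f}+\vec a^{g}>\vec a$, the weight $\rho_{-}^{-a_{-}}\rho_{0}^{-a_{0}}\rho_{+}^{-a_{+}}$ appearing in the $\Hb^{\vec a;0}$-norm is bounded by $\rho_{-}^{-a_{-}^{f}}\rho_{0}^{-a_{0}^{f}}\rho_{+}^{-a_{+}^{f}}\cdot\rho_{-}^{-a_{-}^{g}}\rho_{0}^{-a_{0}^{g}}\rho_{+}^{-a_{+}^{g}}$. Splitting the weight between the two factors and applying H\"older's inequality in $L^{2}\cdot L^{\infty}$, we obtain
\begin{equation*}
\|(\Vb^{\beta}f)(\Vb^{\gamma}g)\|_{\Hb^{\vec a;0}(\D)}\lesssim \|\Vb^{\beta}f\|_{\Hb^{\vec a^{f};0}(\D)}\,\|\Vb^{\gamma}g\|_{\rho^{\vec a^{g}}L^{\infty}(\D)},
\end{equation*}
or the same bound with the roles of $f$ and $g$ interchanged. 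For each $(\beta,\gamma)$ with $|\beta|+|\gamma|\le k$ we now choose the factor carrying at most $\lfloor k/2\rfloor$ derivatives (or, if $k<6$, simply the factor with at most $3$ derivatives) to be placed in $L^{\infty}$.

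\textbf{Step 3 (Sobolev embedding).} The compactification $\D$ is $4$-dimensional, so by \cref{rem:not:Hb_Linfty} we have the embedding $\Hb^{\vec a;j+3}(\D)\subset \rho^{\vec a}C^{j}(\D)$ for every $j\ge 0$; in particular, with $j$ equal to the smaller of the two derivative counts, we can bound $\|\Vb^{\gamma}g\|_{\rho^{\vec a^{g}}L^{\infty}(\D)}\lesssim \|g\|_{\Hb^{\vec a^{g};|\gamma|+3}(\D)}$. Combining the two dichotomy cases and summing over $|\alpha|\le k$ yields the claimed estimate, with the bound $|\gamma|+3\le k$ respectively $|\beta|+3\le k$ absorbing the $3$ in the statement.

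\textbf{Step 4 (The $\Hbt$ and $\incone$ variants).} The $\Hbt$-estimate is identical upon replacing $\Vb$ by $\Vbt$ for the factor whose norm is being measured with respect to $\Vbt$: the $L^{\infty}$-factor only uses Sobolev embedding, which holds equally with either vector field set (indeed, $\Vbt$-regularity is stronger than $\Vb$-regularity), so no asymmetry beyond the one written arises. For $\incone$, which is $3$-dimensional, the Sobolev threshold decreases by $1/2$, so only $\lceil 3/2\rceil+1=2$ derivatives are needed in the embedding $\Hb^{\vec a;2}(\incone)\subset \rho^{\vec a}L^{\infty}(\incone)$, and the argument goes through with $3$ replaced by $2$.

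\textbf{Main obstacle.} There is no serious obstacle here; the only point requiring minimal care is to verify that the strict inequality $\vec a^{f}+\vec a^{g}>\vec a$ is compatible with the $L^{2}\cdot L^{\infty}$ split (the strict inequality provides room to absorb any $\epsilon$-losses that might come from passing through the embedding $\Hb^{\vec a;k}\subset \Hb^{\vec a-\epsilon;k-\lceil m/2\rceil}$), and that the combinatorics of distributing derivatives via Leibniz survives the product of weighted norms without loss in powers of $k$.
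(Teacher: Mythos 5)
Your overall plan (Leibniz, weighted Hölder, then Sobolev embedding for the $L^\infty$-factor) is indeed what the paper's terse proof alludes to, but there are two substantive gaps.

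First, the dichotomy argument of Steps 2--3 only gives, for a term with $|\beta|+|\gamma|=k$, a bound of the schematic form $\|f\|_{\Hb^{\vec a^f;|\beta|}}\|g\|_{\Hb^{\vec a^g;|\gamma|+3}}$. For the extreme terms ($|\gamma|=0$ or $|\beta|=0$) this matches the claimed right-hand side, but for the intermediate regime $|\beta|\sim|\gamma|\sim k/2$ it gives something like $\|f\|_{\Hb;k/2}\|g\|_{\Hb;k/2+3}$, which is \emph{not} dominated by $\|f\|_{k}\|g\|_{3}+\|f\|_{3}\|g\|_{k}$ without an additional ingredient. To obtain the stated tame (Moser-type) form you must interpolate the intermediate norms against the endpoints (Gagliardo--Nirenberg on the $\b$-Sobolev scale, or a paraproduct decomposition) and then apply Young's inequality; the phrase ``the bound $|\gamma|+3\le k$ absorbing the $3$'' does not accomplish this.

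Second, the $\Hbt$-variant is \emph{not} ``identical,'' and your own remark that $\Vbt$-regularity is stronger than $\Vb$-regularity is precisely the source of the difficulty rather than the reason it is absent. When you Leibniz-expand $\Vbt^{\alpha}(fg)$, the low-derivative factor inherits $\Vbt$-derivatives; for $|\gamma|\ge1$ this includes $r\partial_v=\rho_-^{-1}\,v\partial_v$, and $\rho_-^{-a^g_-}(r\partial_v g)=\rho_-^{-a^g_--1}(v\partial_v g)$ blows up towards $\scrim$ under only $\Hb^{\vec a^g;3}$-control. Thus $\|\Vbt^{\gamma}g\|_{\rho^{\vec a^g}L^\infty}$ for $|\gamma|\ge1$ is controlled by an $\Hbt$-norm of $g$, not by $\|g\|_{\Hb^{\vec a^g;3}}$, and the claimed right-hand side is not reached by your argument; one must either exploit the additional structure present in the paper's actual applications (where the low-regularity factor is $\Hbt^{;\infty}$) or rework the decomposition so that $\Vbt$-derivatives land entirely on the high-regularity factor. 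A small further issue: for $\incone$, $\lceil 3/2\rceil+1=3$, not $2$; the correct count is that $H^2$ embeds into $L^\infty$ in three dimensions since $2>3/2$, so the conclusion ``$2$ derivatives'' is right but the displayed arithmetic is not.
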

	\begin{proof}
		This follows from product estimates on Sobolev spaces and Sobolev embedding, using also that the boundary defining functions are smooth.
	\end{proof}
    Next, we prove a lemma saying that we can solve a short-range perturbation problem via iteration:
	\begin{lemma}\label{cor:en:VN_estimates} Fix $\vec{a}$ admissible.

		a) Let $V,\mathcal{N}$ be short-range potential and semilinear perturbations, respectively.
		Let $\Ve (r\bar{\phi})\in\Hb^{\vec{a};k}(\D)$ with $k\geq 4$ and $\supp\bar{\phi}\in\{u>u_\infty+1\}$.
		Let $\phi$ be the solution to \cref{eq:en:characteristicIVP} with $f=V\bar{\phi}+\mathcal{N}[\bar{\phi}]$ and \cref{eq:en:assumption0}. 
  Then there exists $\vec{a}^{\mathcal{P}}$ strongly admissible such that 
  \begin{equation}\label{eq:en:cor_VN_estimate0}
  \norm{r\mathcal{N}[\bar{\phi}]+rV\bar{\phi}}_{\Hb^{\vec{a}^{\mathcal{P}};k}(\D)} \lesssim_{\mathcal{N},V}\norm{\Ve (r\bar{\phi})}_{\Hb^{\vec{a};k}(\D)}+\norm{\Ve (r\bar{\phi})}^{n^{\mathcal{N}}}_{\Hb^{\vec{a};k}(\D)}
  .\end{equation}
  In particular, we have for some $\epsilon(\mathcal{N},V)>0$ less than the gap of $\mathcal{N},V$ that 
		\begin{nalign}\label{eq:en:cor_VN_estimate_incoming}
			\norm{\Ve (r\phi)}_{\Hb^{\vec{a}+\epsilon;k}(\D)}\lesssim_{\mathcal{N},V}\norm{\Ve (r\bar{\phi})}_{\Hb^{\vec{a};k}(\D)}+\norm{\Ve (r\bar{\phi})}^{n^{\mathcal{N}}}_{\Hb^{\vec{a};k}(\D)}+\sum_{0\leq i\leq k} \norm{T^{i}(r\phi)}_{\rho_-^{a_-+\epsilon}\Hb^{;k-i+1}(\incone)}.
		\end{nalign}

		b)  Let $V,\mathcal{N}$ be  potential and semilinear perturbations compatible with the no incoming radiation condition. Let $\Ve(r \bar{\phi})\in\Hbt^{\vec{a};k}(\D)$ with $k\geq 4$ and $\supp\bar{\phi}\in\{u>u_\infty+1\}$.
		Let $\phi$ be the solution to \cref{eq:en:characteristicIVP} with $f=V\bar{\phi}+\mathcal{N}[\bar{\phi}]$ and $\psi^{\incone}=0,\psi^{\outconeFar}=0$. Then we have for some $\epsilon(\mathcal{N},V)>0$ less than the gap of $\mathcal N$, $V$:
		\begin{nalign}\label{eq:en:cor_VN_estimate_no_incoming}
			\norm{\Ve (r\phi)}_{\Hbt^{\vec{a}+\epsilon;k}(\D)}\lesssim_{\mathcal{N},V}\norm{\Ve (r\bar{\phi})}_{\Hbt^{\vec{a};k}(\D)}+\norm{\Ve (r\bar{\phi})}^{n^{\mathcal{N}}}_{\Hbt^{\vec{a};k}(\D)}+\sum_{0  \leq i\leq k} \norm{(rT)^{i}(r\phi)}_{\rho_-^{a_-+\epsilon}\Hb^{;k-i+1}(\incone)}.
		\end{nalign}
	\end{lemma}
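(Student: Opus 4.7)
The strategy is to first prove the inhomogeneity estimate \cref{eq:en:cor_VN_estimate0} by a direct application of the multiplicative/Sobolev estimate \cref{lemma:scat:sobolev} using the structural form of the short-range perturbations, and then to feed this into the linear energy estimate \cref{prop:en:main} to conclude \cref{eq:en:cor_VN_estimate_incoming}. The extra gain $\epsilon>0$ will precisely be (strictly less than) the gap $\delta(\mathcal{P})$ afforded by strong admissibility.

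For part a), I will first estimate $rV\bar{\phi}$: since $V=f_V\Vb$ with $f_V\in\rho_\scri^{\kappa/2}\Hb^{\vec{a}^V;\infty}(\D)$, rewriting $\Vb$ in terms of $\Ve$ introduces at most one $\rho_\scri^{-1/2}$ factor, which is absorbed into the $\rho_\scri^{\kappa/2}$ factor in $f_V$. Applying \cref{lemma:scat:sobolev} componentwise then yields $rV\bar{\phi}\in\Hb^{\vec{a}+\vec{a}^V;k}(\D)$, and, by definition of short-range, $\vec{a}+\vec{a}^V$ is strongly admissible inhomogeneous with gap $\geq\delta(V)$. For the semilinear term $\mathcal{N}[\bar{\phi}]$, each factor $\Vb(r\bar{\phi})$ lies in $\rho_\scri^{-\kappa/2}\Hb^{\vec{a};k}(\D)$ (where again $\kappa$ records the presence of angular derivatives), and the prefactor $r^{-1}f_{\mathcal{N}}$ carries the compensating $\rho_\scri^{\kappa/2}$ factor. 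Iterating \cref{lemma:scat:sobolev} over the $n^{\mathcal{N}}$ factors, where at each step all but one copy is estimated in $\Hb^{\vec{a};3}$ and the remaining one in $\Hb^{\vec{a};k}$, and using $k\geq 4>3$ so that Sobolev embedding is available, gives
\[
\norm{r\mathcal{N}[\bar\phi]}_{\Hb^{n^{\mathcal{N}}\vec{a}+\vec{a}^{\mathcal{N}};k}(\D)}\lesssim \norm{\Ve(r\bar\phi)}_{\Hb^{\vec{a};k}(\D)}^{n^{\mathcal{N}}}.
\]
Again by definition of short-range, $n^{\mathcal{N}}\vec{a}+\vec{a}^{\mathcal{N}}$ is strongly admissible inhomogeneous with gap $\geq\delta(\mathcal{N})$. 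Setting $\vec{a}^{\mathcal{P}}$ to be the componentwise minimum of $\vec{a}+\vec{a}^V$ and $n^{\mathcal{N}}\vec{a}+\vec{a}^{\mathcal{N}}$ produces \cref{eq:en:cor_VN_estimate0} and establishes a positive common gap $\delta\leq\min(\delta(V),\delta(\mathcal{N}))$.

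Having established \cref{eq:en:cor_VN_estimate0}, I choose any $\epsilon\in(0,\delta)$ so that $\vec{a}^{\mathcal{P}}-(\epsilon,\epsilon,\epsilon)$ remains admissible inhomogeneous relative to $\vec{a}+(\epsilon,\epsilon,\epsilon)$. Applying the linear energy estimate \cref{prop:en:main} to the solution $\phi$ of \cref{eq:en:characteristicIVP} with inhomogeneity $f=V\bar\phi+\mathcal{N}[\bar\phi]$ then yields
\[
\norm{\Ve(r\phi)}_{\Hb^{\vec{a}+\epsilon;k}(\D)}\lesssim \norm{rf}_{\Hb^{\vec{a}^{\mathcal{P}};k}(\D)}+\norm{\Vb^k(r\phi)}_{\rho_-^{a_-+\epsilon}\Hb^{;1}(\incone)}.
\]
The initial-cone term can be re-expressed in terms of the $T^i(r\phi)$ on $\incone$ using the $(n^{\mathcal{N}})^{\mathrm{th}}$-order ODE obtained by restricting \cref{eq:en:characteristicIVP_perturbed} to $\incone$ and converting $v\pv$ into $T$ and $r\pu$ modulo lower-order data (cf.~the remark following \cref{rem:en:linearity_combine_estimates} and the statement of \cref{lemma:en:recovering_initial_data}). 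Combining with \cref{eq:en:cor_VN_estimate0} gives \cref{eq:en:cor_VN_estimate_incoming}.

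For part b), the proof is parallel, but everywhere $\Hb$ is replaced by $\Hbt$ and $\Vb$ by $\Vbt$. Compatibility with no incoming radiation ensures that the perturbations $V$, $\mathcal{N}$ are actually built from $\Vbt$-vector fields with coefficients in $\Hbt$, so the $\Hbt$-version of \cref{lemma:scat:sobolev} applies without loss and yields $rV\bar\phi+r\mathcal{N}[\bar\phi]\in\Hbt^{\vec{a}^{\mathcal{P}};k}(\D)$ with $\vec{a}^{\mathcal{P}}$ again having a positive gap over admissibility. Feeding this into \cref{lemma:en:energy_no_incoming}, rather than \cref{prop:en:main}, and following the same argument as above with $(rT)^i$ replacing $T^i$ on $\incone$ (consistent with $\Vbt$-regularity), yields \cref{eq:en:cor_VN_estimate_no_incoming}. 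The main technical point worth verifying carefully is the bookkeeping of the $\kappa$-factor and the transition between $\Vb$ and $\Ve$/$\Vbt$ vector fields; since the $\rho_\scri^{\kappa/2}$ weights in \cref{def:en:short_range} were chosen precisely to compensate the $\rho_\scri^{-1/2}$ arising from the $\sl$ to $\rho_\scri^{1/2}\sl$ conversion, there is no actual loss at this step.
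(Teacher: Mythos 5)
Your approach in part a) is essentially the paper's own: estimate the inhomogeneity $rV\bar\phi+r\mathcal{N}[\bar\phi]$ via the Sobolev product estimate (Lemma~\ref{lemma:scat:sobolev}), using the $\rho_\scri^{\kappa/2}$ weights to compensate for the $\sl$-to-$\rho_\scri^{1/2}\sl$ conversion, and then feed the result into the linear estimate Proposition~\ref{prop:en:main}. The bookkeeping of the $\kappa$-factor and the choice $\vec{a}^{\mathcal{P}}=\min(\vec{a}+\vec{a}^V, n^{\mathcal{N}}\vec{a}+\vec{a}^{\mathcal{N}})$ match the paper's computation up to cosmetic differences.

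Your part b), however, contains a genuine gap: you propose to feed the $\Hbt$-estimate on $rf=r(V\bar\phi+\mathcal N[\bar\phi])$ into Lemma~\ref{lemma:en:energy_no_incoming} rather than into the no-incoming-radiation version of Proposition~\ref{prop:en:main} supplied by item~\ref{item:en:pastestimate_Hbt} of Proposition~\ref{prop:en:pastestimate}. Lemma~\ref{lemma:en:energy_no_incoming} is the \emph{weak-decay} estimate: it requires $rf\in\Hbt^{\vec a^f;k+m}$ ($m$ extra orders of regularity) to conclude $\psi\in\Hbt^{\vec a;k}$, and moreover it only covers $a_->-m+1/2$, so the endpoint $a_-=-1/2$ (allowed for admissible $\vec a$) is excluded unless $m\ge 1$. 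Here you only control $rf$ to order $k$ (from $\Ve(r\bar\phi)\in\Hbt^{\vec a;k}$ via the Sobolev estimate), so applying Lemma~\ref{lemma:en:energy_no_incoming} would degrade the conclusion to $\Ve(r\phi)\in\Hbt^{\vec a+\epsilon;k-m}$, which is weaker than the stated \eqref{eq:en:cor_VN_estimate_no_incoming}. The correct tool is the $\Hbt$-version of Proposition~\ref{prop:en:main}, obtained by replacing item~\ref{item:en:pastestimate_inhom} by item~\ref{item:en:pastestimate_Hbt}; this gives the same $k$ on both sides with no loss. One should also note the extra $(1,0,0)$ shift appearing when converting $\Vbt$-vector fields to $\Ve$ (since $r\pv=\rho_-^{-1}v\pv$), which the definition of ``compatible with no incoming radiation'' in Definition~\ref{def:en:short_range}b) anticipates; your remark that ``there is no actual loss'' glosses over this shift, though it does close once the definition is unpacked as in the paper's displayed computation.
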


\begin{obs}\label{rem:en:Pgestimates}
    \cref{cor:en:VN_estimates} also holds for quasilinear perturbations $P_g$ after performing the following changes: We only recover $k-1$ derivatives on the left hand side of \cref{eq:en:cor_VN_estimate0,eq:en:cor_VN_estimate_incoming,eq:en:cor_VN_estimate_no_incoming} and need to change $n^{\mathcal{N}}$ to $n^{P_g}$.
\end{obs}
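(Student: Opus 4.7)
The plan is to mirror the proof of \cref{cor:en:VN_estimates}, replacing the algebraic product estimate for $V$ and $\mathcal{N}$ with one that handles the second-order differential operator $P_g[\bar\phi]\bar\phi$. The single derivative loss relative to \cref{cor:en:VN_estimates} arises because $P_g$ contains $\pu\pv$ and $\sl^A\sl^B$ acting on $\bar\phi$, whereas $V$ and $\mathcal{N}$ contain only first-order $\b$-derivatives. The estimate \cref{eq:en:cor_VN_estimate0} degrades from $\Hb^{\cdot;k}$ to $\Hb^{\cdot;k-1}$ on the left, while the iteration producing \cref{eq:en:cor_VN_estimate_incoming,eq:en:cor_VN_estimate_no_incoming} is unchanged.

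First I would rewrite $rP_g[\bar\phi]\bar\phi$ in terms of $\psi=r\bar\phi$. Commuting the outer factor of $r$ past the second-order operators via the commutator identity of \cref{lemma:en:short_range_computations}(a), one obtains
\[
rP_g[\bar\phi]\bar\phi \;=\; \Omegalin[\bar\phi]\,\pu\pv\psi \;+\; \glin_{AB}[\bar\phi]\,\sl^A\sl^B\psi \;+\; \tilde V[\bar\phi]\,\psi \;+\; \tilde{\mathcal{N}}[\bar\phi],
\]
where $\tilde V$ and $\tilde{\mathcal{N}}$ are first-order short-range perturbations (with gap $\delta(\tilde V),\delta(\tilde{\mathcal{N}})>0$) and hence fall under the scope of \cref{cor:en:VN_estimates}, producing no derivative loss. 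For the first two terms, I substitute the defining forms of \cref{item:en:quasilinear_perturbation}: $\Omegalin[\bar\phi]=uv\,f_{\Omegalin}\prod_{i=1}^{n^{P_g}-1}\Vb\psi$ and $|\glin[\bar\phi]|=r\,f_{\glin}\prod_{i=1}^{n^{P_g}-1}\Vb\psi$. The key algebraic observation, verified from \cref{lemma:notation:coordinates}, is that $uv\,\pu\pv\in\Diff^2_\b(\D)$, while $r\,\sl^A\sl^B$ acting on scalars has magnitude pointwise bounded by $\rho_\scri^{-1}$ times a second-order $\b$-derivative. The latter $\rho_\scri^{-1}$ loss is exactly the reason for the convention $\kappa_{\glin}=2$: the $\rho_\scri^{1}$ factor carried by $f_{\glin}$ compensates it. Similarly the convention $\kappa_{\Omegalin}=1$ in the presence of $\sl$-factors inside the product compensates for the half-power loss of passing from $\Ve$- to $\Vb$-normalised angular derivatives.

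Second, I would apply the product estimate \cref{lemma:scat:sobolev} to the resulting expression. The single second-order factor $\Vb^2\psi$ lies in $\Hb^{\vec{a};k-1}(\D)$ from the hypothesis $\Ve(r\bar\phi)\in\Hb^{\vec{a};k}(\D)$, while each of the remaining $n^{P_g}-1$ first-order factors sits in $\Hb^{\vec{a};k}(\D)$. Coupled with the conormal membership $f_{P_g}\in\rho_\scri^{\kappa/2}\Hb^{\vec{a}^{P_g};\infty}(\D)$, this yields
\[
\|rP_g[\bar\phi]\bar\phi\|_{\Hb^{n^{P_g}\vec{a}+\vec{a}^{P_g};k-1}(\D)} \;\lesssim\; \|\Ve(r\bar\phi)\|_{\Hb^{\vec{a};k}(\D)} \;+\; \|\Ve(r\bar\phi)\|_{\Hb^{\vec{a};k}(\D)}^{n^{P_g}},
\]
which is the $P_g$-analogue of \cref{eq:en:cor_VN_estimate0} at regularity $k-1$. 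Strong admissibility of $n^{P_g}\vec{a}+\vec{a}^{P_g}$, built into the definition of short-range quasilinear perturbations, supplies a positive gap $\epsilon$; applying \cref{prop:en:main} at regularity $k-1$ to $\phi$ solving the truncated characteristic IVP then gives the $P_g$-analogue of \cref{eq:en:cor_VN_estimate_incoming} with $k\mapsto k-1$ and $n^{\mathcal{N}}\mapsto n^{P_g}$. The no-incoming-radiation case proceeds identically in $\Hbt$ spaces: compatibility of $P_g$ in the sense of \cref{item:en:quasilinear_perturbation}(b) guarantees $r\pv$-regularity of all coefficients, so \cref{lemma:scat:sobolev} applies in its $\Hbt$-version and the $\Hbt$ version of \cref{prop:en:main} (valid by \cref{item:en:pastestimate_Hbt}) produces the analogue of \cref{eq:en:cor_VN_estimate_no_incoming}.

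The main technical obstacle is the careful bookkeeping of the $\rho_\scri^{\kappa/2}$ weights when converting angular derivatives between the $\Ve$-normalisation (in which the hypothesis $\Ve(r\bar\phi)\in\Hb^{\vec a;k}$ is stated) and the $\Vb$-normalisation (in which the coefficients of $P_g$ are expressed). Once one verifies that the conventions $\kappa_{\glin}=2$ and $\kappa_{\Omegalin}=1$ (in the presence of angular factors) exactly absorb the $\rho_\scri^{-1/2}$ incurred per angular derivative, the product structure reduces to a bona fide conormal product, the remaining steps are a verbatim repetition of the semilinear proof, and the only substantive change relative to \cref{cor:en:VN_estimates} is the single derivative loss recorded in the statement.
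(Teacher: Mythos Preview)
Your proposal is correct and follows essentially the same approach the paper expects: the observation carries no separate proof in the paper because it is meant to follow verbatim from the proof of \cref{cor:en:VN_estimates}, replacing the first-order factors $\Vb\psi$ appearing in $r\mathcal{N}[\bar\phi]$ by the second-order factor $\Vb^2\psi$ coming from $uv\,\pu\pv\in\Diff^2_\b(\D)$ and $\sl^A\sl^B$, and then invoking \cref{lemma:scat:sobolev} and \cref{prop:en:main} at one order lower. Your identification of the $\kappa$-bookkeeping as the only delicate point is accurate; one minor slip is that from the definition in \cref{item:en:quasilinear_perturbation} one has $|\glin[\bar\phi]|=f_{\glin}\prod_{i=1}^{n^{P_g}-1}\Vb\psi$ (no extra factor of $r$), but this does not affect your argument once the $\rho_\scri$ accounting via $\kappa_{\glin}=2$ is carried through.
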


	\begin{proof}
 We fix throughout the proof $\epsilon$ sufficiently small and $\vec{a}^f$ minimal with the property that it is admissible for $\vec{a}$.
		
 \textit{ a) }This follows from applying \cref{lemma:scat:sobolev,prop:en:main} together with the fine tuned definitions of \cref{def:en:short_range}.
        Let us expand: 
        
        We start with $V\bar{\phi}=f_V \Vb\bar{\phi}$:
        We use \cref{item:en:potential_perturbation,lemma:scat:sobolev} 
        \begin{nalign}
            \norm{rV\bar{\phi}}_{\Hb^{\vec{a}^f+\epsilon;k}(\D)}\lesssim\norm{f_V\Vb (r\bar{\phi})}_{\Hb^{\vec{a}^f+\epsilon;k}(\D)}\lesssim \norm{\Ve (r\bar{\phi})}_{\Hb^{\vec{a}^f+\epsilon-\vec{a}^V;k}(\D)}\lesssim \norm{{\Ve} (r\bar{\phi})}_{\Hb^{\vec{a};k}(\D)}.
        \end{nalign}

        For $\mathcal{N}[\bar{\phi}]$ we proceed similarly. We use \cref{item:en:semilinear_perturbation,lemma:scat:sobolev} to compute
        \begin{nalign}
            \norm{r\mathcal{N}[\bar{\phi}]}_{\Hb^{\vec{a}^f+\epsilon;k}(\D)}
            \lesssim\norm{f_{\mathcal{N}}(\Vb (r\bar{\phi}))^{n_\mathcal{N}}}_{\Hb^{\vec{a}^f+\epsilon;k}(\D)}
            \lesssim\norm{\rho_\scri^{\kappa/2}(\Vb (r\bar{\phi}))^{n_\mathcal{N}}}_{\Hb^{\vec{a}^f-\vec{a}^{\mathcal{N}}+\epsilon;k}(\D)}
            \lesssim\norm{\rho_\scri^{\kappa/2}(\Vb (r\bar{\phi}))^{n^\mathcal{N}}}_{\Hb^{n^{\mathcal{N}}\vec{a};k}(\D)}\\
            \lesssim \norm{\Vb (r\bar{\phi})}_{\Hb^{\vec{a};3}(\D)}^{n_\mathcal{N}-1}\norm{\Ve (r\bar{\phi})}_{\Hb^{\vec{a};k}(\D)}\lesssim\norm{\Ve (r\bar{\phi})}_{\Hb^{\vec{a};k}(\D)}^{n_\mathcal{N}},
        \end{nalign}
        where, in the last step, we have used that $k\geq4$ in order to convert the $\Vb$ into $\Ve$-control. 
        Using now \cref{prop:en:main}, the result \cref{eq:en:cor_VN_estimate_incoming} follows from \cref{eq:en:cor_VN_estimate0}.

		\textit{b)} We upgrade to $\Hbt$ spaces via \cref{item:en:pastestimate_Hbt}.
        Let us expand: 
        
        For $V\bar{\phi}$, we use \cref{item:en:potential_perturbation,lemma:scat:sobolev} and $\vec{a}^f+\epsilon<\vec{a}^V+\vec{a}-(1,0,0)$ to  compute
        \begin{nalign}
            \norm{rV\bar{\phi}}_{\Hbt^{\vec{a}^f+\epsilon;k}(\D)}\lesssim\norm{f_V\Vbt (r\bar{\phi})}_{\Hbt^{\vec{a}^f+\epsilon;k}(\D)}\lesssim \norm{\rho_+^{\kappa/2}\Vbt (r\bar{\phi})}_{\Hbt^{\vec{a}^f+\epsilon-\vec{a}^V;k}(\D)}\lesssim \norm{\Ve (r\bar{\phi})}_{\Hbt^{\vec{a}^f+\epsilon-\vec{a}^V+(1,0,0);k}(\D)}\lesssim \norm{\Ve (r\bar{\phi})}_{\Hbt^{\vec{a};k}(\D)}.
        \end{nalign}
        %The lemma follows from \cref{prop:en:main}.

        For $\mathcal{N}[\bar{\phi}]$, we similarly use $\vec{a}^f+\epsilon-\vec{a}^{\mathcal{N}}+(1,0,0)<n_\mathcal{N}\vec{a}$ and compute
        \begin{nalign}
            \norm{r\mathcal{N}[\bar{\phi}]}_{\Hbt^{\vec{a}^f+\epsilon;k}(\D)}\lesssim\norm{f_{\mathcal{N}}(\Vbt (r\bar{\phi}))^{n_\mathcal{N}}}_{\Hbt^{\vec{a}^f+\epsilon;k}(\D)}\lesssim\norm{\rho_+^{\kappa/2}(\Vbt (r\bar{\phi}))^{n_\mathcal{N}}}_{\Hbt^{\vec{a}^f-\vec{a}^{\mathcal{N}}+\epsilon;k}(\D)}\\
            \lesssim \norm{\Vbt (r\bar{\phi})}_{\Hbt^{\vec{a};3}(\D)}^{n_\mathcal{N}-1}\norm{\rho_+^{\kappa/2}\Vbt (r\bar{\phi})}_{\Hbt^{\vec{a}^f+\epsilon-\vec{a}^{\mathcal{N}}-(n^{\mathcal{N}}-1)\vec{a};k}(\D)}\\
            \lesssim \norm{\Vbt (r\bar{\phi})}_{\Hb^{\vec{a};3}(\D)}^{n_\mathcal{N}-1}\norm{\Ve (r\bar{\phi})}_{\Hbt^{\vec{a}^f+(1,0,0)+\epsilon-\vec{a}^{\mathcal{N}}-(n^{\mathcal{N}}-1)\vec{a};k}(\D)}\lesssim\norm{\Ve (r\bar{\phi})}_{\Hbt^{\vec{a};k}(\D)}^{n^{\mathcal{N}}}.
        \end{nalign}
        Using now  the \cref{item:en:pastestimate_Hbt} version of \cref{prop:en:main}, the result follows.
	\end{proof}

	\subsubsection{Estimating transversal derivatives along the initial cone}\label{sec:en:sr:data}
    	Before we turn to spacetime estimates using short-range perturbations, let us show how to recover transversal derivatives on the light cone for solutions of \cref{eq:en:characteristicIVP_perturbed}. Since we exclusively work on $\incone$ in this section, we of course don't need the full strength of \cref{def:en:short_range}, but we will continue to use its notation.
	\begin{lemma}\label{lemma:en:recovering_initial_data}
    Let $k\geq2$, $a_->0$, and let $\phi$ solve \cref{eq:en:characteristicIVP_perturbed} with $ \psi^{\outconeFar}=0$ and $\psi^{\incone}\in \Hb^{a_-;2k+2}(\incone)$.

		a) Fix $\mathcal{N},V,P_g$ perturbations such that $n^\bullet a_-+a^\bullet_->a_-+1$ for $\bullet\in\{\mathcal{N},V,P_g\}$, where $n^{\bullet},a^{\bullet}_-$ are as in \cref{def:en:short_range}. Also fix inhomogeneity $f$ such that $\mathcal{X}_{\incone}^{2,2}\leq 1$ as defined below. Then, for $\abs{u_0}$ sufficiently large depending only on $\{V,\mathcal{N},P_g\}$ and $j\leq k$
		\begin{equation}\label{eq:scat:data_incoming}
			\norm{T^j\pv \psi|_{\incone}}_{\Hb^{a_-;2(k-j)}(\incone)}\lesssim\norm{\psi^{\incone}}_{\Hb^{a_-;2k+2}(\incone)}+\sum_{j'\leq j}\norm{T^{j'}(rf)}_{\Hb^{a_-+1;2(k-j')}(\incone)}:=\mathcal{X}_{\incone}^{j,k}.
		\end{equation}
		b) Fix $\mathcal{N},V,P_g,f$ compatible with no incoming radiation, i.e. $n^\bullet a_-+a^\bullet_->a_-+2$ and components in $\Hbt^{\vec{a}^\bullet}(\D)$ as in \cref{def:en:short_range}. Assume that $f$ satisfies $\tilde{\mathcal{X}}_{\incone}^{2,2}\leq1$. Then
		\begin{equation}\label{eq:scat:data_no_incoming}
			\norm{(rT)^jr\pv \psi|_{\incone}}_{\Hb^{a_-;2(k-j)}(\incone)}\lesssim\norm{\psi^{\incone}}_{\Hb^{a_-;2k+2}(\incone)}+\sum_{j'\leq j}\norm{(rT)^{j'}(rf)}_{\Hb^{a_-+2;2(k-j')}(\incone)}:=\tilde{\mathcal{X}}_{\incone}^{j,k}.
		\end{equation}
        c) For $a<0$, both \cref{eq:scat:data_incoming,eq:scat:data_no_incoming} still hold if we also we include in the terms $\mathcal{X}^{j,k}_{\incone},\tilde{\mathcal{X}}^{j,k}_{\incone}$ the following norm control on the sphere $S=\incone\cap\{u=u_1\}$ for an arbitrary $-\infty<u_1\leq u_0$:
        \begin{equation}
            \norm{T^j\partial_v\psi|_{S}}_{\rho_-^{a_-}H^{2(k-j)}(S)},\qquad \norm{(rT)^j\partial_v\psi|_{S}}_{\rho_-^{a_-}H^{2(k-j)}(S)}
        \end{equation}
        respectively.\footnote{Note that the estimate, while uniform in $u_\infty$, is \emph{not uniform in $u_1$.} This means that for slow decay, we can not specify transversal data at infinity.} 
	\end{lemma}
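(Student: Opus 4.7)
The plan is to use the equation $P_\eta\psi = -rf - r\mathcal{P}[\phi]$ restricted to $\incone$ (where $v = v_0$ is fixed) as a first-order ODE in $u$ for $\pv\psi|_{\incone}$, integrate from the corner $u = u_\infty$ with the boundary value forced by $\psi^{\outconeFar}=0$, and then inductively commute with $T^j$ (respectively $(rT)^j$ in case b)) to recover higher transversal derivatives. More concretely, on $\incone$ the equation reads
\begin{equation*}
    \pu(\pv\psi)\big|_{\incone} = \left.\frac{\Dl\psi}{r^2} - rf - r\mathcal{P}[\phi]\right|_{\incone}.
\end{equation*}
On the right-hand side $\psi$, $\pu\psi$ and $\sl\psi$ are determined by $\psi^{\incone}$; the perturbation $\mathcal{P}$ contributes terms in $\pv\psi$ and, for quasilinear $P_g$, in $\pu\pv\psi$, but with coefficients carrying an extra factor $|u|^{-\delta}$ for some $\delta>0$ thanks to the hypothesis $n^\bullet a_- + a^\bullet_- > a_- + 1$. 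Taking $|u_0|$ large makes these contributions perturbative, so a standard contraction argument yields a unique $\pv\psi|_{\incone}$ consistent with $\pv\psi(u_\infty,v_0) = 0$ (which holds because $\pv$ is tangential to $\outcone{u_\infty}$ and $\psi\equiv 0$ there). Integrating in $u$ and using the product estimates of \cref{lemma:scat:sobolev} gives \eqref{eq:scat:data_incoming} for $j=0$.

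For $j\geq 1$ I would observe that $[\pu,T]=0$, so $g_j := T^j\pv\psi|_{\incone}$ satisfies an ODE whose RHS is $T^j(\Dl\psi/r^2 - rf - r\mathcal{P}[\phi])|_{\incone}$. Leibniz-expanding, this produces terms in $g_{j'}$ for $j'<j$ (controlled by the induction hypothesis), in tangential derivatives of $\psi$ and $f$, and an implicit dependence on $g_j$ through $\mathcal{P}$ of size $|u|^{-\delta}$. The corner value $g_j(u_\infty,v_0)$ is computed recursively from the equation restricted to the corner together with the vanishing of all tangential derivatives of $\psi$ there. The same contraction and integration scheme then closes the estimate, yielding \eqref{eq:scat:data_incoming} for all $j\leq k$. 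Part b) follows by replacing $T^j$ with $(rT)^j$ and $\Hb$ with $\Hbt$, using the strengthened hypothesis $n^\bullet a_- + a^\bullet_- > a_- + 2$ to absorb the extra factor of $r$ each $rT$ commutation produces. Part c) is handled by integrating instead from the finite sphere $S = \incone \cap \{u = u_1\}$, with the additional prescribed norm of $\pv\psi$ on $S$ serving as initial datum; the estimates are naturally non-uniform in $u_1$, as stated.

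The main technical obstacle is the quasilinear piece $P_g[\phi]\phi$, which modifies the coefficient of $\pu\pv\psi$ on the LHS of the ODE and thereby requires inversion of the principal part. This is precisely where large $|u_0|$ enters as a smallness parameter. Moreover, each commutation with $T^j$ (or $(rT)^j$) generates a proliferating collection of nonlinear lower-order contributions, and to control these one has to apply \cref{lemma:en:short_range_computations} repeatedly to re-express the commutators $[T^j, r\mathcal{P}]$ as further families of short-range perturbations acting on quantities already controlled at lower $j$, all while staying within the $2k+2$ derivatives available on $\psi^{\incone}$. The assumption $\mathcal{X}^{2,2}_{\incone}\leq 1$ (resp.~$\tilde{\mathcal{X}}^{2,2}_{\incone}\leq 1$) furnishes the small-data condition necessary for the fixed-point step when the degree $n^{\mathcal{N}}$ or $n^{P_g}$ is at least two.
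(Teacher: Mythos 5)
Your proof follows the same overall route as the paper's: restrict the equation to $\incone$, view it as a first-order ODE in $u$ for $\pv\psi$, integrate inward from the corner (where $\psi^{\outconeFar}=0$ forces vanishing), absorb the perturbations using $|u_0|^{-\delta}$-smallness, and then commute with $T^j$ (resp.\ $(rT)^j$), treating the commuted equations by induction in $j$. Your handling of b) (strengthened weight $+2$ to absorb the extra $r$) and c) (integrate from the finite sphere $S$, estimates non-uniform in $u_1$) also matches.

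There is, however, one technical point you gloss over, and it matters for the precise statement. You claim your fixed-point/contraction argument (with smallness supplied by $\mathcal{X}^{2,2}_{\incone}\le 1$ and large $|u_0|$) ``yields \eqref{eq:scat:data_incoming} for all $j\leq k$'', but the lemma insists that $|u_0|$ depend \emph{only} on $\{V,\mathcal{N},P_g\}$, not on $k$. The issue is that the implicit constants in the $\Hb^{;2k}$ product and Sobolev estimates grow with $k$, so absorbing $|u_0|^{-\delta}\cdot(\text{constant depending on }k)\cdot x_k$ into the left-hand side by a direct contraction would require $|u_0|$ to grow with $k$. The paper resolves this by a two-step argument: first run the contraction on a region $\{u< u'_0(k)\}$ with $u'_0$ possibly much more negative than $u_0$, then observe that once the low-order ($\Hb^{a_-;5}$-type) bound is in hand, the top-order quantities $\Vb^{2k}\psi$, $T^j\pv\psi$ enter \emph{linearly} in the commuted equations (cf.\ the split $\|\cdot\|_{\Hb^{;4}}^{n-1}\|\cdot\|_{\Hb^{;2k}}$ provided by \cref{lemma:scat:sobolev}), so the extension from $u'_0$ back up to the fixed $u_0$ over the compact $u$-interval $[u'_0,u_0]$ follows by Gr\"onwall without any further smallness. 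This linear-propagation-plus-Gr\"onwall step is what decouples $|u_0|$ from $k$, and it should be made explicit in your argument; without it, your stated conclusion is formally stronger than what the contraction alone delivers.
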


    \begin{rem}\label{rem:en:slab_data_recovery}
        Note that if instead of working with data on a $\incone$, we specified the full solution in a spacetime slab $\D^{u_\infty,v_0+1}_{u_0,v_0}$, we would not face the loss of regularity provided by the lemma.
        We further note that, for trivial data posed on $\incone$ and nontrivial incoming radiation along $\outconeFar$, the analogous loss in derivatives will disappear in the limit $\outconeFar\to\scrim$.
        Finally, note that had we not assumed an explicit bound on the norm of the data, then, due to the nonlinear terms, $u_0$ would also need to depend on the size of the data.
    \end{rem}
 
	\begin{proof}
		\textit{a)} In this proof, we will exclusively work on $\incone$ and all spaces are to be understood on this cone.
		We can write the wave equation along the null cone $\incone$ as
		\begin{equation}
			\pu\pv \psi=\frac{\Dl \psi}{r^2}-rf-rV\phi-r\mathcal{N}[\phi]-rP_g[\phi]\phi.
		\end{equation}
		We first prove the existence of $u_0$ such that \cref{eq:scat:data_incoming} holds for $j=0$, $k=2$ uniformly in $u_\infty$.
        Then, we will prove that \cref{eq:scat:data_incoming} holds with the same $u_0$ and $j\leq k\geq2$. 
        By \cref{item:ode:1d_boundary_cond}, using $\phi|_{\outconeFar}=0$  and the short-range nature of the nonlinearity (see below for details), we can estimate for some $\epsilon(\mathcal{N},V,P_g)>0$

		\begin{nalign}\label{eq:en:dataestimateproof1}
			\norm{\pv \psi}_{\Hb^{a_-;2k}}+\norm{u\pu\pv \psi}_{\Hb^{a_-;2k}}\lesssim \norm{r\pu\pv \psi}_{\Hb^{a_-;2k}}\lesssim\norm{\Dl \psi}_{\Hb^{a_--1;2k}}
			+\norm{rf}_{\Hb^{a_-+1;2k}}\\+(\norm{\psi}_{\Hb^{a_--\epsilon;2k+1}}
			+\norm{\pv \psi}_{\Hb^{a_--\epsilon;2k}})+(\norm{\pv \psi}_{\Hb^{a_--\epsilon;2k}}+\norm{\psi}_{\Hb^{a_--\epsilon;2k+1}})^{n_\mathcal{N}}\\
			+(\norm{\pv \psi}_{\Hb^{a_--\epsilon;2k}}+\norm{\psi}_{\Hb^{a_--\epsilon;2k+1}})^{n^{P_g}-1}{\norm{r(\abs{u}^{-1}\Dl,u\pu\pv)\phi}_{\Hb^{a_--\epsilon;2k}}}
		\end{nalign}
        Here, the nonlinear parts are estimated as 
         \begin{subequations}
        	\begin{gather}
        		\begin{multlined}
        			\norm{r\mathcal{N}[\phi]}_{\Hb^{a_-+1;2k}}=\norm{f_{\mathcal{N}}(\Vb \psi)^{n_\mathcal{N}}}_{\Hb^{a_-+1;2k}}\lesssim \norm{(\Vb \psi)^{n_\mathcal{N}}}_{\Hb^{a_-+1-a^{\mathcal{N}}_-;2k}}\lesssim\Big(\norm{\psi}_{\Hb^{a_--\epsilon;2k+1}}+\norm{\pv \psi}_{\Hb^{a_--\epsilon;2k}}\Big)^{n^{\mathcal{N}}}
        		\end{multlined}\\
        		\begin{gathered}
        			\norm{r(\Omegalin[\phi]\pu\pv,\glin_{AB}[\phi]\sl^A\sl^B)\phi}_{\Hb^{a_-+1;2k}}\lesssim\norm{(u^{-1}\Omegalin[\phi],r\glin[\phi])}_{\Hb^{a_-(n^{P_g}-1)+a_-^{P_g}-\epsilon;2k}}\norm{r(u\pu\pv,r^{-1}\Dl)\phi}_{\Hb^{a_--\epsilon;2k}}\\
        			\lesssim(\norm{\pv \psi}_{\Hb^{a_--\epsilon;2k}}+\norm{\psi}_{\Hb^{a_--\epsilon;2k+1}})^{n^{P_g}-1}\norm{r(u\pu\pv,r^{-1}\Dl)\phi}_{\Hb^{a_--\epsilon;2k}}
        		\end{gathered}
        	\end{gather}
        \end{subequations}
		Estimate \cref{eq:en:dataestimateproof1} takes the form $x\lesssim 1+|u_0|^{-\epsilon}B x^n$; thus, taking $\abs{u_0}$ sufficiently large shows that $\pv \psi\in\Hb^{a_-;2k}$ uniformly in $u_\infty$.

		To get $k\geq3$, we apply the same procedure to get the result for some potentially larger $-u'_0$ depending on $k$.
        To extend to the rest of the region, we use that higher norms are propagated linearly, i.e.~we have
        \begin{multline}
            \norm{\pv \psi}_{\Hb^{a_-;2k}}+\norm{r\pu \pv \psi}_{\Hb^{a_-;2k}}\lesssim \norm{\Dl \psi^{\incone}}_{\Hb^{a_--1;2k}}
			+\norm{rf}_{\Hb^{a_-+1;2k}}\\+(\norm{\psi}_{\Hb^{a_--\epsilon;2k+1}}
			+\norm{\pv \psi}_{\Hb^{a_--\epsilon;2k}})+(\norm{\pv \psi}_{\Hb^{a_--\epsilon;4}}+\norm{\psi}_{\Hb^{a_--\epsilon;5}})^{n_\mathcal{N}-1}(\norm{\pv \psi}_{\Hb^{a_--\epsilon;2k}}+\norm{\psi}_{\Hb^{a_--\epsilon;2k+1}})\\
			+
   (\norm{\pv \psi}_{\Hb^{a_--\epsilon;4}}+\norm{\psi}_{\Hb^{a_--\epsilon;5}})^{n^{P_g}-1}{\norm{r(u\pu\pv,r^{-1}\Dl)\phi}_{\Hb^{a_--\epsilon;2k}}}.
        \end{multline}
		We extend to $[u'_0,u_0]$ by Gr\"onwall inequality.
		
		To obtain higher order commutation with $T$, we inductively prove the estimates
		\begin{nalign}
			\norm{T^jr\mathcal{N}[\phi]}_{\Hb^{a_-+\epsilon;2(k-j)}}\lesssim \norm{T^{j}\pv \psi}_{\Hb^{a_--\epsilon;2(k-j)}}w^{n^{\mathcal{N}}-1}+w^{n^{\mathcal{N}}},\\
			\norm{T^jrP_g\phi}_{\Hb^{a_-;k}}\lesssim\norm{T^j(\pv,u\pu\pv)\psi}_{\Hb^{a_--\epsilon;k}}w^{n^{P_g}-1}+w^{n^{P_g}}, \\
			\text{where    }\quad w=\sum_{j'\leq j}\norm{\pv^{j'} \psi}_{\Hb^{a_--\epsilon;{2(k-j')+2}}}
		\end{nalign}
		Therefore, for higher $T^j$ commutations, we only have linear in $T^j\pv$ terms, with the lower order terms being controlled from previous iterates.
		\cref{eq:scat:data_incoming} follows by induction in $j$ for fixed $k$. 
		
		\textit{b)}
        We again start with the base case $j=0,k=2$, but include an extra weight for $\pv \psi$:
        \begin{nalign}
			\norm{\pv \psi}_{\Hb^{a_-+1;2k+1}}+\norm{r\pu\pv \psi}_{\Hb^{a_-+1;2k}}\lesssim\norm{\Dl \psi^{\incone}}_{\Hb^{a_-;2k}}
			+\norm{rf}_{\Hb^{a_-+2;2k}}\\+(\norm{\psi}_{\Hb^{a_--\epsilon;2k+1}}
			+\norm{r\pv \psi}_{\Hb^{a_--\epsilon;2k}})+(\norm{r\pv \psi}_{\Hb^{a_--\epsilon;2k}}+\norm{\psi}_{\Hb^{a_--\epsilon;2k+1}})^{n_\mathcal{N}}\\+
            (\norm{r\pv \psi}_{\Hb^{a_--\epsilon;2k}}+\norm{\psi}_{\Hb^{a_--\epsilon;2k+1}})^{n^{P_g}-1}\norm{(\Dl,ur\pu\pv)\psi}_{\Hb^{a_--\epsilon;2k}}
		\end{nalign}
        Using that $a^\bullet_-+n^\bullet a_->2+a_-$, the nonlinear terms are bounded as
        	\begin{subequations}
		\begin{gather}
			\begin{multlined}
				\norm{r\mathcal{N}[\phi]}_{\Hb^{a_-+2;2k}}=\norm{f_{\mathcal{N}}(\Vbt \psi)^{n^{\mathcal{N}}}}_{\Hb^{a_-+2;2k}}\lesssim\norm{(\Vbt \psi)^{n^{\mathcal{N}}}}_{\Hb^{a_-+2-a^{\mathcal{N}};2k}}
				\lesssim\Big(\norm{ \psi}_{\Hb^{a_--\epsilon;2k+1}}+\norm{r\pv\psi}_{\Hb^{a_--\epsilon;2k}}\Big)^{n^{\mathcal{N}}}
			\end{multlined}\\
			\begin{multlined}
				\norm{r^2(\Omegalin\pu\pv,\glin_{AB}\sl^A\sl^B)\psi}_{\Hb^{a_-;2k}}\lesssim\norm{r^2(r^{-2}\Omegalin,\glin)}_{\Hb^{a_-(n^{P_g}-1)-\epsilon;2k}}\norm{(ur\pu\pv,\Dl)\psi}_{\Hb^{a_--\epsilon;2k}}\\
				\lesssim(\norm{r\pv \psi}_{\Hb^{a_--\epsilon;2k}}+\norm{\psi}_{\Hb^{a_--\epsilon;2k+1}})^{n^{P_g}-1}\norm{(ur\pu\pv,\Dl)\psi}_{\Hb^{a_--\epsilon;2k}}.
			\end{multlined}
		\end{gather}
	\end{subequations} 
        For $u_0$ sufficiently large, we already conclude \cref{eq:scat:data_no_incoming} with $k=2,j=0$.
        We increase the value of $k$ exactly as before.
        
        To obtain \cref{eq:scat:data_no_incoming} with $j\geq1$, we proceed as before, but include an $r$-weight with each $T$-commutation and use that $f_\bullet\in\Hbt^{a^\bullet}(\D)$, for $\bullet\in\{V,\mathcal{N},P_g\}$, does not lose weights for such commutations.

       \textit{ c)} This follows in the same way as the previous ones, but we need to use \cref{item:ode:1d}, since the integration constant is specified at a finite value.
        This is required to obtain estimates that are uniform in $u_\infty$. 
	\end{proof}

    \subsubsection{The main energy estimates}\label{sec:en:sr:energy}
    We now turn to the main energy estimates of the paper.
    We first close the semilinear estimate:
 
	\begin{prop}\label{cor:en:perturbed_energy_estimate}
		 Fix $\vec{a}$, $\vec{a}^f$ admissible, $k\in\N_{\geq4}$, and let $\phi$ solve the characteristic initial value problem \cref{eq:en:characteristicIVP_perturbed} with~\cref{eq:en:assumption0}.

   a) Let $V,\mathcal{N}$ be short-range perturbations for $\vec{a}$, and assume that $\mathcal{X}^4\leq1$, where $\mathcal{X}^k$ is defined below:
		Then for $|u_0|$ sufficiently large depending only on $V,\mathcal{N}$, we have:
		%Let $P_g=0,V,\mathcal{N}$ be short-range perturbations for admissible weight $\vec{a}$.
       % Let $\bar{\vec{a}}=(a_-,\min(a^f_0-2,a_-)-,\min(a^f_+-1)-)$
		
		\begin{nalign}\label{eq:en:perturbed_energy_estimate}
			\sup_{u\in (u_\infty,u_0)}\norm{(v\pv,\rho_-^{1/2}\sl)\Vb^k\psi}_{\Hb^{\vec{a};0}(\outcone{})}+\sup_{v\in(v_0,v_\infty)}\norm{(u\pu,\rho_-^{1/2}\sl)\Vb^k\psi}_{\Hb^{\vec{a};0}(\Cbar_v)}+\norm{ \Ve \psi}_{\Hb^{\vec{a};k}(\D)}\\
			\lesssim_{\mathcal{N},V,k} \norm{rf}_{\Hb^{\vec{a}^f;k}(\D)}+\sum_{j\leq k}\norm{T^{j}(rf)}_{\rho_-^{a_-^f}\Hb^{;2(k-j)}(\incone)}
			+ \norm{\psi^{\incone}}_{\rho_-^{a_-}\Hb^{;2k+2}(\incone)}=:\mathcal{X}^k.
		\end{nalign}
  b) Let  $V,\mathcal{N}$ instead be compatible with the no incoming radiation condition, and assume  that $\tilde{\mathcal{X}}^4\leq1$ (as defined below). 
  
  Then, for $|u_0|$ sufficiently large depending only on $V,\mathcal{N}$, we have:
		\begin{nalign}\label{eq:en:perturbed_energy_estimate_no_incoming}
			\sup_{u\in (u_\infty,u_0)}\norm{(v\pv,\rho_-^{1/2}\sl)\Vbt^k \psi}_{\Hbt^{\vec{a};0}(\outcone{})}+\sup_{v\in(v_0,v_\infty)}\norm{(u\pu,\rho_-^{1/2}\sl) \Vbt^k \psi}_{\Hbt^{\vec{a};0}(\Cbar_v)}+\norm{ \Ve\psi}_{\Hbt^{\vec{a};k}(\D)}\\
			\lesssim_{\mathcal{N},V,k} \norm{rf}_{\Hbt^{\vec{a}^f;k}(\D)}+\sum_{j'\leq k}\norm{(rT)^{j}(rf)}_{\Hb^{a_-+2;2(k-j)(\incone)}}
			+ \norm{\psi^{\incone}}_{\Hb^{a_-;2k+2}(\incone)}=:\tilde{\mathcal{X}}^k
		\end{nalign}
        
	\end{prop}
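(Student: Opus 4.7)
The plan is to close the estimate by a standard iteration (Picard/contraction) argument based on the linear estimate Proposition~\ref{prop:en:main}, the short-range nonlinearity bound Lemma~\ref{cor:en:VN_estimates}, and the transversal derivative recovery of Lemma~\ref{lemma:en:recovering_initial_data}. We treat part a); part b) is identical, replacing $\Hb$ by $\Hbt$ and $\Vb$ by $\Vbt$ throughout, and invoking the $\Hbt$-version of the linear estimate (\cref{item:en:pastestimate_Hbt}) together with \cref{eq:scat:data_no_incoming}.

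First, I would set up the iteration: let $\phi_0:=0$ and define inductively $\phi_{n+1}$ as the solution to the \emph{linear} characteristic IVP
\begin{equation*}
\Box_\eta \phi_{n+1}=f+V\phi_n+\mathcal{N}[\phi_n],\qquad \psi_{n+1}|_{\incone}=\psi^{\incone},\quad \psi_{n+1}|_{\outconeFar}=0,
\end{equation*}
whose solvability is guaranteed by \cref{prop:en:main} once we know that the right hand side lies in $\Hb^{\vec{a}^f;k}(\D)$. To apply the linear estimate I need control over $\Vb^k \psi_{n+1}|_{\incone}$, which is not data but must be recovered from the equation restricted to $\incone$; this is where \cref{lemma:en:recovering_initial_data}\textit{a)} enters, controlling transversal derivatives $T^j\pv\psi_{n+1}$ on $\incone$ by $\mathcal{X}^k$ (the constant being uniform in $n$ provided $|u_0|$ is chosen large depending on $V,\mathcal{N}$ and $\mathcal{X}^4\leq 1$).

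Next, I would combine \cref{prop:en:main} with the nonlinearity bound \cref{eq:en:cor_VN_estimate0} in the proof of \cref{cor:en:VN_estimates}. The point is that $V\phi_n+\mathcal{N}[\phi_n]$, viewed as an inhomogeneity, satisfies
\begin{equation*}
\norm{rV\phi_n+r\mathcal{N}[\phi_n]}_{\Hb^{\vec{a}^f+\delta;k}(\D)}\lesssim \norm{\Ve \psi_n}_{\Hb^{\vec{a};k}(\D)}+\norm{\Ve \psi_n}_{\Hb^{\vec{a};k}(\D)}^{n^{\mathcal{N}}}
\end{equation*}
for some $\delta>0$ smaller than the gap of $\mathcal{P}$. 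Because the weight is \emph{strictly} better than the admissible weight $\vec{a}^f$, restricting to $\D^-\cup\D^+$ with $|u_0|$, $v_0$ large yields a smallness factor of order $|u_0|^{-\delta}+v_0^{-\delta}$ when we estimate the $\Hb^{\vec{a}^f;k}$-norm by the $\Hb^{\vec{a}^f+\delta;k}$-norm. Combining this with \cref{prop:en:main} and the data estimate, I obtain
\begin{equation*}
\norm{\Ve \psi_{n+1}}_{\Hb^{\vec{a};k}(\D)}\leq C\,\mathcal{X}^k + C(|u_0|)\Big(\norm{\Ve \psi_n}_{\Hb^{\vec{a};k}(\D)}+\norm{\Ve \psi_n}_{\Hb^{\vec{a};k}(\D)}^{n^{\mathcal{N}}}\Big),
\end{equation*}
where $C(|u_0|)\to 0$ as $|u_0|\to\infty$. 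Choosing $|u_0|$ large enough (depending only on $V,\mathcal{N}$ through $\delta$ and the implicit constants in \cref{def:en:short_range}) and using $\mathcal{X}^4\leq1$ to control the $\Hb^{\vec{a};4}$-pieces entering the nonlinear estimate through Sobolev embedding (\cref{lemma:scat:sobolev}), this inequality propagates the bootstrap assumption $\norm{\Ve\psi_n}_{\Hb^{\vec{a};k}(\D)}\leq 2C\,\mathcal{X}^k$ from $n$ to $n+1$.

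Finally, I would show the sequence is Cauchy in $\Hb^{\vec{a};k-1}(\D)$ by applying the same linear estimate to the differences $\phi_{n+1}-\phi_n$, which solve a linear equation with source $(V+\mathcal{N}[\phi_n]-\mathcal{N}[\phi_{n-1}])$. Using \cref{lemma:en:short_range_computations}\textit{b)} to rewrite $\mathcal{N}[\phi_n]-\mathcal{N}[\phi_{n-1}]$ as a short-range linear-in-$(\phi_n-\phi_{n-1})$ operator applied to $\phi_n-\phi_{n-1}$ (with coefficients controlled by the bootstrap bound), the same smallness trick yields contraction. Passing to the limit produces a solution to \cref{eq:en:characteristicIVP_perturbed} satisfying \cref{eq:en:perturbed_energy_estimate}; the boundary flux terms on $\outcone{}$ and $\Cbar_v$ are recovered by applying the boundary part of \cref{prop:en:pastestimate,prop:en:futureestimate} to the limit. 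Uniqueness within this class follows from the same contraction estimate applied to the difference of two solutions.

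The main obstacle is the interplay between the regularity threshold $k\geq 4$ (needed so that Sobolev embedding converts $\Vb^k$-control into $L^\infty$-control of $\Vb^{k-3}$-quantities, to handle the nonlinear product in \cref{lemma:scat:sobolev}) and the loss of derivatives in recovering transversal data through \cref{lemma:en:recovering_initial_data}. One must verify that the number of $T$-commutations on initial data in the definition of $\mathcal{X}^k$ matches what is needed on the RHS of \cref{prop:en:main} after the linear estimate consumes two derivatives for the wave operator; this is precisely what forces the $2k+2$ derivatives on $\psi^{\incone}$ and the $2(k-j)$ derivatives on $T^j(rf)$ in the definition of $\mathcal{X}^k$. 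No essential new idea beyond bookkeeping is needed here.
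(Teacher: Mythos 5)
Your approach is a Picard iteration constructing the solution from scratch, whereas the paper instead applies \cref{prop:en:main} directly to the \emph{given} solution $\phi$, treats $V\phi+\mathcal{N}[\phi]$ as an inhomogeneity, uses \cref{cor:en:VN_estimates} and \cref{lemma:en:recovering_initial_data} to bound that inhomogeneity and the transversal data, and then \emph{absorbs} the resulting $|u_0|^{-\delta}$-weighted terms into the left-hand side (this is the standard a~priori estimate/bootstrap for a solution already supplied by local existence, \cref{thm:scat:local}). Both routes can be made to work, and the derivative-counting you flag at the end is indeed the same. A small advantage of the paper's route is that \cref{lemma:en:recovering_initial_data} applies directly to $\phi$ solving the nonlinear equation; in your iteration, the transversal-derivative recovery for $\phi_{n+1}$ must be rerun with inhomogeneity $f+V\phi_n+\mathcal{N}[\phi_n]$, which means you additionally need to control $T^j$-derivatives of $V\phi_n+\mathcal{N}[\phi_n]$ along $\incone$ uniformly in $n$ --- this works, but you don't spell it out.

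There is, however, one genuine gap. Your displayed iteration inequality has a term $C(|u_0|)\norm{\Ve\psi_n}^{n^{\mathcal{N}}}_{\Hb^{\vec{a};k}(\D)}$ and you claim the bootstrap bound $\norm{\Ve\psi_n}_{\Hb^{\vec{a};k}}\leq 2C\mathcal{X}^k$ propagates. To absorb the nonlinear term you need a factor like $C(|u_0|)(2C\mathcal{X}^k)^{n^{\mathcal{N}}-1}\leq 1/2$; since only $\mathcal{X}^4\leq 1$ is assumed (not $\mathcal{X}^k\leq 1$), you must first replace your estimate by the tame version from \cref{lemma:scat:sobolev} so that the nonlinearity is \emph{linear} in the top-order norm with a coefficient controlled by $\Hb^{\vec{a};4}$. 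You allude to this, but more importantly, even then the implicit constant in the $\lesssim_k$ of the product estimate grows with $k$, so the absorption at level $k$ requires $|u_0|>R'(k)$, which violates the proposition's claim that $u_0$ depends \emph{only} on $V,\mathcal{N}$. The paper fixes this with an extra step you omit: after closing the estimate for $|u|>R'(k)$, it extends to the remaining compact-in-$u$ strip $u\in(-R'(k),u_0)$ by a local energy estimate plus Gr\"onwall in $u$ (the weights toward $\scrim$ are bounded there, so this is harmless). Without this extension your $u_0$ is $k$-dependent, which is strictly weaker than what is claimed and would impair the scattering constructions in \cref{sec:scat:scat} that fix $u_0$ once and then invoke all $k$.
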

 
	\begin{proof}
		\textit{a) Step 1 (base case):}
		Let us first prove the result for $k=4$.
		We apply consecutively  \cref{prop:en:main} to infer that for some admissible inhomogeneous $\vec{a}^{\mathcal{P}}$  and for  
        $\delta=\delta(V,\mathcal{N})$ sufficiently small (in particular smaller than the gap of $V,\mathcal{N}$):
		\begin{nalign}\label{eq:en:perturbed_step1}
      	\norm{\Ve  \psi}_{\Hb^{\vec{a};k}(\D)}\lesssim \sum_{i\leq k} \norm{T^{i}\psi}_{\Hb^{a_-;k-i}(\incone)}+\norm{rf}_{\Hb^{\vec{a}^{f};k}(\D)}+\norm{r\mathcal{N}[\phi]+rV\phi}_{\Hb^{\vec{a}^{\mathcal{P}};k}(\D)}\\
      	\lesssim \sum_{i\leq k} \norm{T^{i}\psi}_{\Hb^{a_-;k-i}(\incone)}+\norm{rf}_{\Hb^{\vec{a}^f;k}(\D)}+\abs{u_0}^{-\delta}\norm{r\mathcal{N}[\phi]+rV\phi}_{\Hb^{\vec{a}^{\mathcal{P}}+\delta;k}(\D)}\\
      	\lesssim \sum_{j\leq k}\norm{T^{j}rf}_{\rho_-^{a_-^f}\Hb^{;2(k-j)}(\incone)}
			+ \norm{\psi^{\incone}}_{\rho_-^{a_-}\Hb^{;2k+2}(\incone)}+\norm{rf}_{\Hb^{\vec{a}^f;k}(\D)}+\abs{u_0}^{-\delta}(\norm{\Ve \psi}^{n^{\mathcal{N}}}_{\Hb^{\vec{a};k}(\D)}+\norm{\Ve \psi}_{\Hb^{\vec{a};k}(\D)}),
      \end{nalign}
      where, in the last line, we have used \cref{cor:en:VN_estimates} to estimate the perturbations, as well as \cref{lemma:en:recovering_initial_data} to estimate the terms along $\incone$.

		Using the boundedness assumption on $\psi^{\incone},f$, we can take $\abs{u_0}$ sufficiently large such that the last two terms on the right hand side can be absorbed into the left hand side.

		\textit{a) Step 2 (commuted estimates):}
		We obtain higher regularity in the same spacetime region in two steps. First, we commute with the symmetries of $\Box_\eta$ and use the previously obtained estimates to obtain \cref{eq:en:perturbed_energy_estimate} in a region $\abs{u_0}>R'(k)$ for $R'(k)$ possibly larger than $-u_0$.
  
		Second, we extend the result to the remaining \textit{compact in $u$} region by a local energy estimate and Gr{\"o}nwall's inequality with weights only towards $\scrip$. More precisely, we use \cref{lemma:scat:sobolev} to write for $k\geq5$
        \begin{nalign}\label{eq:en:perturbed_step10000}
          	\norm{\Ve  \psi}_{\Hb^{\vec{a};k}(\D)}
          	\lesssim \sum_{i\leq k} \norm{T^{i}\psi}_{\Hb^{a_-;k-i+1}(\incone)}+\norm{rf}_{\Hb^{\vec{a}^f;k}(\D)}+\abs{u_0}^{-\delta}(\norm{\Ve \phi}^{n^{\mathcal{N}}-1}_{\Hb^{\vec{a};4}(\D)}+1)\norm{\Ve \phi}_{\Hb^{\vec{a};k}(\D)}.
        \end{nalign}
        and then apply Gr{\"o}nwall.
       % We apply Gronwall to propagate the regularity in the finite in $u$ region.

		\textit{b):} In order to prove \cref{eq:en:perturbed_energy_estimate_no_incoming}, we instead use the \textit{b)} parts of \cref{cor:en:VN_estimates,lemma:en:recovering_initial_data}.
	\end{proof}

	\begin{prop}\label{lemma:en:metric_perturbed_energy_estimate}
		Let $f,\psi^{\incone},\phi^{\outconeFar},\vec{a}^f,\vec{a},V,\mathcal{N},k$ be as in \cref{cor:en:perturbed_energy_estimate}, and let $P_g$ be a short-range quasilinear perturbation. 
  Finally, let $\phi$ be a solution to \cref{eq:en:characteristicIVP_perturbed} with \cref{eq:en:assumption0}. 
		Then there exists $R$ depending on $V,\mathcal{N},P_g$ sufficiently large such that for $-u_0>R$ and $\D=\D^{u_\infty,v_\infty}_{u_0,v_0}$ the estimate \cref{eq:en:perturbed_energy_estimate} holds. 
        Provided  $\mathcal{P}$ is compatible with the no incoming radiation condition, then \cref{eq:en:perturbed_energy_estimate_no_incoming}  holds.
	\end{prop}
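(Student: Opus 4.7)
The strategy is to move the quasilinear term onto the left-hand side, to prove a linear energy estimate for the resulting $\phi$-dependent wave operator
\[
\tilde{\Box}_\phi := \Box_\eta - \Omegalin[\phi]\pu\pv - \glin_{AB}[\phi]\,\sl^A\sl^B,
\]
and then to close a nonlinear Gr\"onwall argument exactly as in the proof of \cref{cor:en:perturbed_energy_estimate}. The key observation that makes this strategy viable is that, by \cref{item:en:quasilinear_perturbation}, the coefficients $(uv)^{-1}r^{-1}\Omegalin[\phi]$ and $r^{-1}\glin_{AB}[\phi]$ carry a short-range weight $(n^{P_g}-1)\vec a+\vec a^{P_g}$ that is strongly admissible inhomogeneous. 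This means that, after factoring out $|u_0|^{-\delta}$ for $\delta<\delta(\mathcal P)$, all new terms produced by the quasilinear perturbation come with an arbitrarily small prefactor once $|u_0|$ is taken sufficiently large, precisely as for the semilinear terms in \cref{cor:en:perturbed_energy_estimate}.

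First, I would establish the \emph{linear} estimates \cref{eq:en:pastestimate_hom,eq:en:pastestimate_inhom,eq:en:pastestimate_inhom_0th,eq:en:pastestimate_Hbt} and the future-region analogue \cref{eq:en:futureestimate} with $\tilde{\Box}_\phi$ in place of $\Box_\eta$, treating $\Ve(r\phi)\in\Hb^{\vec a;k}(\D)$ as fixed background data. Using the multipliers of \cref{sec:en:en} (e.g.\ $|t|^{2a+1}v^{-\epsilon}\pt$ and $|t|^{2a+1}v^{-\epsilon}\pu$), the only new terms beyond those of \cref{prop:en:pastestimate} are schematically
\[
\int_{\D^-}\Omegalin[\phi]\,(\pu\pv\psi)(V\psi)\,\dd\tilde\mu \quad\text{and}\quad \int_{\D^-}\glin_{AB}[\phi]\,(\sl^A\sl^B\psi)(V\psi)\,\dd\tilde\mu.
\]
After integrating by parts in $\pu$ (or $\pv$) in the first and in $\sl$ in the second, Cauchy--Schwarz together with \cref{lemma:scat:sobolev} and the strongly admissible inhomogeneous weight of the coefficients yields a bound by $C\,|u_0|^{-\delta}\,\norm{\Ve\psi}_{\Hb^{\vec a;k}(\D)}^{n^{P_g}+1}$ plus an absorbable fraction of the LHS. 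The structural point is that $P_g$ contains no first-order terms, so the integration by parts does not produce new principal contributions; it only generates commutators with the multiplier which are controlled in the same way.

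Second, I would combine this with the treatment of $V$ and $\mathcal{N}$ from \cref{cor:en:perturbed_energy_estimate}, and use \cref{lemma:en:short_range_computations}b) to absorb the contribution $rP_g[\phi]\phi$ together with $rV\phi+r\mathcal{N}[\phi]$ into a single short-range inhomogeneity. The resulting inequality takes the schematic form
\[
\norm{\Ve\psi}_{\Hb^{\vec a;k}(\D)} \;\lesssim\; \mathcal{X}^k + |u_0|^{-\delta}\!\!\sum_{\bullet\in\{V,\mathcal N,P_g\}}\!\!\Big(\norm{\Ve\psi}_{\Hb^{\vec a;k}(\D)}+\norm{\Ve\psi}_{\Hb^{\vec a;k}(\D)}^{n^{\bullet}}\Big),
\]
which, under the smallness assumption $\mathcal X^4\leq 1$, closes by a standard bootstrap/absorption argument once $|u_0|$ is chosen larger than some threshold $R(V,\mathcal N,P_g)$. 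The no-incoming-radiation version is obtained in exactly the same way using the $\Hbt$-variants from \cref{item:en:pastestimate_Hbt} of \cref{prop:en:pastestimate} and part b) of \cref{cor:en:VN_estimates}.

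The main obstacle is the higher-order ($k\geq 5$) commuted estimate. Since $P_g$ contains genuine second-order operators $\pu\pv$ and $\sl^A\sl^B$, the commutators $[\Vb^j,P_g[\phi]]$ produce terms involving $\Vb^{j+2}\phi$ with coefficients depending on $\phi$ through $\Omegalin,\glin$. Using \cref{lemma:en:short_range_computations}a), these commutators can be reorganised as short-range semilinear and potential perturbations whose coefficients have one fewer degree of conormal regularity; the top-order factor $\Vb^{j+2}\phi$ is then paired with a small prefactor from the short-range weight and absorbed into the LHS after summing over $j\leq k$. As in Step 2 of the proof of \cref{cor:en:perturbed_energy_estimate}, the argument decomposes into a ``large $|u|$'' piece, handled by the $|u_0|^{-\delta}$ factor, and a compact-in-$u$ remainder, handled by a local energy estimate together with Gr\"onwall's inequality with weights only towards $\scrip$. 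No new analytic input is required beyond \cref{cor:en:VN_estimates} and its quasilinear analogue from \cref{rem:en:Pgestimates}; the bookkeeping, however, is the most delicate part of the argument.
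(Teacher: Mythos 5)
Your proposal follows essentially the same route as the paper: keep the multipliers of \cref{sec:en:en}, move the quasilinear terms onto the left-hand side, integrate by parts, exploit the strongly admissible weight of $f_{P_g}$ to extract a factor $R^{-\delta}\norm{w}_{L^\infty}$, absorb into the LHS for $-u_0$ large, and handle higher $k$ via \cref{lemma:en:short_range_computations}a) together with a local energy estimate plus Gr\"onwall. The exponent $n^{P_g}+1$ and the reliance on \cref{cor:en:VN_estimates}/\cref{rem:en:Pgestimates} also match.

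Two things are under-stated. First, the paper's central structural observation is that, unlike for $V,\mathcal{N}$, the quasilinear terms cannot be absorbed using the bulk estimate alone: after integrating by parts, the $\Omegalin\pu\pv$ and $\glin_{AB}\sl^A\sl^B$ pieces produce genuine \emph{flux} terms on $\outcone{}$ and $\Cbar_v$, and the bulk norm in \cref{prop:en:pastestimate} is a power of $uv$ weaker than the fluxes (cf.~\cref{rem:current:weight_change_divergence}), so the $\sup_u$ and $\sup_v$ boundary controls on the LHS of \cref{eq:en:perturbed_energy_estimate} are indispensable. Your closing inequality displays only $\norm{\Ve\psi}_{\Hb^{\vec a;k}(\D)}$, and the phrase ``precisely as for the semilinear terms'' obscures this distinction; you should state explicitly that the absorbable quantity after IBP is the full sum of boundary fluxes and bulk norm, squared, as in the paper's \cref{eq:en:error_metric_pert_bound}. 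Second, you omit the technicality that the divergence theorem in $\D^-$ picks up contributions on the timelike/spacelike boundary $\{-\delta u = v\}$; the paper sidesteps this by performing the estimate in the causal past of constant-$(u,v)$ spheres bounded to the future by null cones, establishing uniform boundedness of the fluxes there, and then integrating to recover the bulk estimate. Neither point changes the overall strategy, but the first one is precisely the content distinguishing this proposition from \cref{cor:en:perturbed_energy_estimate}, so it deserves to be foregrounded rather than buried.
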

	\begin{proof}		
		Whereas in \cref{cor:en:perturbed_energy_estimate} we were able to treat the perturbation as error terms in the bulk estimates, for metric perturbations, we need to use the boundary terms in \cref{prop:en:pastestimate} as well.
		Indeed, the lemma follows from applying the same vectorfield multipliers as in \cref{prop:en:pastestimate} and \cref{prop:en:futureestimate}, while keeping track of the boundary terms as well.
		Therefore, we will only concentrate on the terms coming from $P_g$, since if we can bound these, the $\mathcal{N},V$ terms are treated as before.
        Furthermore, to avoid boundary terms arising on $\partial \D^-$, we only work out energy estimates in the causal past of the constant $u,v$ spheres on $\D^-$ bounded by null cones to the future.
        This implies a uniform boundedness of the boundary terms. 
        Integrating these yields the required bulk estimate as well.
		
		Let us showcase how to achieve this, by proving \cref{item:en:pastestimate_inhom} with $P_g$ error, the rest of the estimates follow similarly.
		More specifically, let us shows that in the case $a_-^f+0.5\leq a_0^f$, when we used the multipliers $W_1=v^{-\epsilon}\abs{u}^{2a_-}\pv$ and $W_2=v^{-\epsilon-1}\abs{u}^{2a_-+1}\pu$, all error terms from $P_g$ are bounded.
		We will also use the observation form the proof of \cref{prop:en:pastestimate}, that instead of using $v^{-\epsilon}$ weight, we can integrate the boundary terms with a weight $v^{1+\epsilon}$ to recover the same bulk estimates.
		Let $2\delta>0$ be the gap associated to $\mathcal{N},V,P_g$.
		
		\textit{Step 1 (energy estimate):}
		We use $W_1=\abs{u}^{2a_-}\pv$ multiplier in the domain $\D^-=\D^{u_0,v_0,-}_{u_\infty,v_\infty}$ for $\tilde\T$ and get error terms from $P_g$ given by the following
		\begin{subequations}\label{eq:en:error_metric_pert}
			\begin{equation}
				2W_1r\phi\Omegalin\pu\pv r\phi=\pu\Big(\Omegalin \abs{u}^{2a_-}(\pv r\phi)^2\Big)-(\pv r\phi)^2\pu(\Omegalin\abs{u}^{2a_-})
			\end{equation}
			\begin{nalign}\label{eq:en:error3_metric_pert}
				W_1r\phi\glin_{AB}\sl^A\sl^Br\phi=\sl^A(W_1r\phi \glin_{AB}\sl^Br\phi)-\frac{1}{2}\glin_{AB}W_1(\sl^Ar\phi\sl^Br\phi)-W_1r\phi\sl^Br\phi\sl^A\glin_{AB}\\
				=\sl^A(W_1r\phi \glin_{AB}\sl^Br\phi)-\pv (\frac{\abs{u}^{2a_-}}{2}\glin_{AB}\sl^Ar\phi\sl^Br\phi)-W_1r\phi\sl^Br\phi\sl^A\glin_{AB}+\sl^Ar\phi\sl^Br\phi \pv(\frac{\abs{u}^{2a_-}}{2}\glin_{AB}).
			\end{nalign}
		\end{subequations}
		We introduce $w=(\rho_-\rho_0)^{-\delta}\Vb\big((\Omegalin)^2+(r^2\glin)^2\big)^{1/2}$ and bound the integral in $\D^-$ for all terms from \cref{eq:en:error_metric_pert} by
		\begin{nalign}\label{eq:en:error_metric_pert_bound}
			R^{-\delta}\norm{w}_{L^{\infty}(\D^-)}\Big(\sup_u\norm{(v\pv,\rho_-^{1/2}\sl)r\phi}_{\Hb^{\vec{a};0}(\outcone{})}+\sup_v\norm{(u\pu,\boxed{\rho_-^{1/2}\sl}) r\phi}_{\Hb^{\vec{a};0}(\Cbar_{v}^-)}+\norm{\Ve r\phi}_{\Hb^{\vec{a};0}(\D^-)}\Big)^2,
		\end{nalign}
        where the boxed term is not necessary and only included for future convenience.
        Importantly, we can use the short-range assumption \cref{item:en:quasilinear_perturbation} to conclude that for $\delta$ sufficiently small we have 
        \begin{multline}
            \norm{r^{\delta}\Omegalin}_{L^\infty(\D^-)}\lesssim\norm{r^{\delta}uvf^{\Omega}(\Vb \psi)^{n^{P_g}-1}}_{L^\infty(\D^-)}\lesssim \norm{\Vb \psi}^{n^{P_g}-1}_{\Hb^{\vec{a};3}(\D^-)}\norm{f^\Omega}_{\Hb^{\delta+(1,2)-(n^{P_g}-1)\vec{a};3}(\D^-)}\\
            \lesssim \norm{\Vb \psi}^{n^{P_g}-1}_{\Hb^{\vec{a};3}(\D^-)}\norm{f^\Omega}_{\Hb^{\vec{a}^{P_g};3}(\D^-)}\lesssim \norm{\Vb \psi}^{n^{P_g}-1}_{\Hb^{\vec{a};3}(\D^-)}.
        \end{multline}
        A similar estimate also holds for $\slashed{g}$, so we have
        \begin{equation}\label{eq:en:w_estimate}
            \norm{w}_{L^\infty(\D^-)}\lesssim\norm{\Vb \psi}^{n^{P_g}-1}_{\Hb^{\vec{a};3}(\D^-)}
        \end{equation}
        
        We complement this with the $W_2=\frac{\abs{u}^{2a_-+1}}{v}\pu$ estimate. The corresponding error terms are
		\begin{subequations}
			\begin{equation}
				2W_2r\phi\Omegalin\pu\pv r\phi=\pv\Big(\Omegalin \abs{u}^{2a_-+1}v^{-1}(\pu r\phi)^2\Big)-(\pu r\phi)^2\pv(\Omegalin\abs{u}^{2a_-+1}v^{-1})
			\end{equation}
			\begin{nalign}
				W_2r\phi\glin_{AB}\sl^A\sl^Br\phi=\sl^A(W_2r\phi \glin_{AB}\sl^Br\phi)-\frac{1}{2}\glin_{AB}W_2(\sl^Ar\phi\sl^Br\phi)-W_2r\phi\sl^Br\phi\sl^A\glin_{AB}\\
				=\sl^A(W_2r\phi \glin_{AB}\sl^Br\phi)-\pu (\frac{\abs{u}^{2a_-+1}}{2v}\glin_{AB}\sl^Ar\phi\sl^Br\phi)-W_2r\phi\sl^Br\phi\sl^A\glin+\sl^Ar\phi\sl^Br\phi \pu(\frac{\abs{u}^{2a_-+1}}{2v}\glin_{AB}).
			\end{nalign}
		\end{subequations}
		As before, all error terms after integration can be bounded by \cref{eq:en:error_metric_pert_bound}. Summing the two estimates yields for any strictly admissible $\vec{a}^{\mathcal{P}}$
		\begin{nalign}\label{eq:en:quasilinear_proof_estimate1}
			\sup_u \norm{(v\pv,\rho_-^{1/2}\sl) r\phi}^2_{\Hb^{\vec{a};0}(\outcone{})}+\sup_v\norm{(u\pu,\rho_-^{1/2}\sl) r\phi}^2_{\Hb^{\vec{a};0}(\Cbar_v)}+\norm{\Ve r\phi}^2_{\Hb^{\vec{a};0}(\D^-)}\\
			\lesssim
			\norm{f}^2_{\Hb^{\vec{a}^f;0}}+\norm{\mathcal{N}[\phi]}^2_{\Hb^{\vec{a}^{\mathcal{P}};0}(\D^-)}
			+\norm{V r\phi}_{\Hb^{\vec{a}^{\mathcal{P}};0}(\D^-)}^2+R^{-\delta}\norm{w}_{L^{\infty}(\D)}\Big(\sup_u\norm{(v\pv,\rho_-^{1/2}\sl)r\phi}_{\Hb^{\vec{a};0}(\outcone{})}\\
			+\sup_v\norm{(u\pu,\rho_-\sl) r\phi}_{\Hb^{\vec{a};0}(\Cbar_{v})}+\norm{\Ve r\phi}_{\Hb^{\vec{a};0}(\D^-)}\Big)^2+(\mathcal{X}^0)^2,
		\end{nalign}
		where we used $\mathcal{X}^0$ from \cref{eq:en:perturbed_energy_estimate}.
  
		\textit{Step 2:}
		Up to this point, we still have error terms involving $\phi$ on the right hand side of \cref{eq:en:quasilinear_proof_estimate1}.
		To close the nonlinear parts of the estimate, i.e.~absorb these nonlinearities from the right hand side of \cref{eq:en:quasilinear_proof_estimate1}, we first work at $k=4$ regularity.
        We recall from \cref{lemma:en:short_range_computations}
		\begin{equation}
			[\Diff^1_{\b}(\D),P_g]=(\Diff^1_{\b}(\D)f_{P_g}[\phi])\Diff_{\b}^2(\D)
		\end{equation}
		and conclude that error terms arising from commuting $P_g$ with the symmetries of $\Box_\eta$ can be included in the potential perturbations.
        Hence, for any $k\geq0$ and stictly admissible $\vec{a}^{\mathcal{P}}$ we get
		\begin{nalign}
			\sup_u \norm{(v\pv,\rho_-^{1/2}\sl)  \Vb^kr\phi}^2_{\Hb^{\vec{a};0}(\outcone{})}+\sup_v\norm{(u\pu,\rho_-^{1/2}\sl)\Vb^k r\phi}^2_{\Hb^{\vec{a};0}(\Cbar_v)}+\norm{(u\pu,v\pv,\rho_-^{1/2}\sl)r\phi}^2_{\Hb^{\vec{a};k}(\D^-)}\\
			\lesssim
			\norm{f}^2_{\Hb^{\vec{a}^f;k}}+\norm{\mathcal{N}[\phi]}^2_{\Hb^{\vec{a}^{\mathcal{P}};k}(\D^-)}
			+\norm{\Ve  r\phi}_{\Hb^{\vec{a}^{\mathcal{P}};k}(\D^-)}^2+R^{-\delta}\norm{w}_{L^{\infty}(\D^-)}\Big(\sup_u\norm{(v\pv,\rho_-^{1/2}\sl)r\phi}_{\Hb^{\vec{a};k}(\outcone{})}\\
			+\sup_v\norm{(u\pu,\rho_-\sl) \Vb^k r\phi}_{\Hb^{\vec{a};0}(\Cbar_{v})}+\norm{\Ve r\phi}_{\Hb^{\vec{a};k}(\D^-)}\Big)^2+(\mathcal{X}^{k})^2.
		\end{nalign}
		Using \cref{cor:en:VN_estimates} for $V,\mathcal{N}$,\cref{eq:en:w_estimate} for $w$ and $R$ sufficiently large we can close the energy estimates to obtain \cref{eq:en:perturbed_energy_estimate}.
		
		\textit{Step 3 (higher regularity):} To obtain further commuted estimates, we proceed as in the last step of \cref{cor:en:perturbed_energy_estimate}.
	\end{proof}

 Similarly to \cref{lemma:en:past_estimate_with_incoming_data_L2}, we also have the following linear statement (with $\mathcal{N}=0$):
    \begin{lemma}\label{lemma:en:past_estimate_L2_perturbed}
        Let $\vec{a},\vec{a}^f$ be as in \cref{lemma:en:past_estimate_with_incoming_data_L2}.
        Let $P^g,V$ be linear short-range perturbations, with $n^{P_g}=1$.
        There exists $-u_0$ sufficiently large such that the solution $\phi$ to \cref{eq:en:characteristicIVP_perturbed} satisfies \cref{eq:en:past_estimate_with_incoming_data_L2}.
    \end{lemma}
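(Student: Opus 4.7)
The plan is to combine the multiplier analysis of \cref{lemma:en:past_estimate_with_incoming_data_L2} with the techniques for handling linear perturbations developed in \cref{cor:en:perturbed_energy_estimate,lemma:en:metric_perturbed_energy_estimate}. Since $n^{P_g}=1$ and we have only linear perturbations, there is no nonlinear bootstrap to close and the coefficients $f_V,\Omegalin,\glin$ are fixed functions of $(u,v,\omega)$ lying in $\Hb^{\vec{a}^{V}}(\D^-)$ and $\Hb^{\vec{a}^{P_g}}(\D^-)$ respectively.

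First, I would apply the three multipliers used in the proof of \cref{lemma:en:past_estimate_with_incoming_data_L2}, namely $v^{-\epsilon}\partial_v$, $v^{-\epsilon}\partial_t$ and $|u|v^{-1-\epsilon}\partial_u$, to the twisted energy momentum tensor $\tilde{\T}[\psi]$ for $\psi=r\phi$ solving $P_\eta\psi=r(V\phi+P_g[\phi]\phi+f)$ in the region $\D^-$. The resulting estimate has, as its main contribution, the same left-hand side as in \cref{eq:en:past_estimate_with_incoming_data_L2}, with additional bulk terms coming from the coupling of the multipliers to $rV\phi$ and $rP_g[\phi]\phi$.

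Next, I would treat the $V\phi$-term perturbatively. By \cref{item:en:potential_perturbation}, $rV\phi=rf_V\Vb\phi$ lies in $\rho_-^{a_-^f+\delta}\rho_0^{a_0^f+\delta}\Hb^{;0}(\D^-)$ for some $\delta>0$ less than the gap, and thus the product estimate of \cref{lemma:scat:sobolev} yields
\begin{equation}
    \norm{rV\phi}_{\Hb^{\vec{a}^f;0}(\D^-)}\lesssim |u_0|^{-\delta}\norm{\Ve\psi}_{\Hb^{\vec{a};0}(\D^-)}.
\end{equation}
Pairing this with $\partial_t\psi$, $\partial_v\psi$ and $\partial_u\psi$ via Cauchy--Schwarz produces terms absorbable into the LHS provided $|u_0|$ is taken large, depending on $V$.

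For the quasilinear piece $P_g[\phi]\phi=\Omegalin\pu\pv\psi+\glin_{AB}\sl^A\sl^B\psi$ (with $\phi$-independent coefficients), I would follow the integration-by-parts scheme of the proof of \cref{lemma:en:metric_perturbed_energy_estimate}: pair each of $\Omegalin\pu\pv\psi$ and $\glin_{AB}\sl^A\sl^B\psi$ with the three multipliers, and move derivatives off to produce either boundary terms (which contribute to the LHS in our analogue of \cref{eq:en:quasilinear_proof_estimate1}) or bulk terms multiplied by derivatives of the weight $\Omegalin\abs{u}^{2a_-}v^{-\epsilon}$ and $\glin \abs{u}^{2a_-}v^{-\epsilon}$. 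As in \cref{eq:en:w_estimate}, the short-range assumption on $P_g$ with $n^{P_g}=1$ gives $\norm{w}_{L^\infty(\D^-)}\lesssim 1$ where now $w=(\rho_-\rho_0)^{-\delta}\Vb((\Omegalin)^2+(r^2\glin)^2)^{1/2}$ is a fixed function of spacetime, and hence each such bulk term is bounded by $|u_0|^{-\delta}$ times the LHS of the estimate. Taking $|u_0|$ sufficiently large (depending on $P_g$) these contributions are absorbed, yielding the desired estimate.

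The only subtlety worth flagging is that, in contrast to \cref{lemma:en:past_estimate_with_incoming_data_L2}, we now have $a_-<0$ and boundary contributions on both $\incone$ (controlled by the $\psi^{\incone}$-norm) and $\outconeFar$ (controlled by the $\psi^{\outconeFar}$-norm, hence the appearance of $v/r$ in the zeroth order term). I would therefore, as in the proof of \cref{lemma:en:past_estimate_with_incoming_data_L2}, combine the $\tilde{\T}$-estimates with an $\T$-multiplier $v^{-\epsilon}|u|\partial_u$ in order to close the zeroth order control, the only minor point being that the corresponding boundary integral along $\outconeFar$ should be absorbed into $\|\frac{v}{r}\psi^{\outconeFar}\|_{\Hb^{a_0;0}(\outconeFar)}$. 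All error terms from $V$ and $P_g$ again gain a factor of $|u_0|^{-\delta}$ and are absorbed. I expect no other obstacle, and no loss of regularity, since everything is done at the $L^2$-level.
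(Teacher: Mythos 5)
Your strategy matches the paper's: the official proof is a one-line pointer stating that the result follows from the proof of \cref{lemma:en:metric_perturbed_energy_estimate} exactly as \cref{lemma:en:past_estimate_with_incoming_data_L2} followed from \cref{prop:en:pastestimate}, and your plan — run the $a_-=0$ multipliers $v^{-\epsilon}\partial_v$, $v^{-\epsilon}\partial_t$, $|u|v^{-1-\epsilon}\partial_u$ from \cref{lemma:en:past_estimate_with_incoming_data_L2}, treat $rV\phi$ perturbatively via the product estimate, absorb the $P_g$ error terms by the integration-by-parts and $R^{-\delta}$-smallness scheme of \cref{lemma:en:metric_perturbed_energy_estimate}, and close zeroth-order control with an additional $\T$-multiplier — is precisely that. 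Two small slips worth correcting: the stated contrast ``in contrast to \cref{lemma:en:past_estimate_with_incoming_data_L2}, we now have $a_-<0$'' is spurious, since \cref{lemma:en:past_estimate_with_incoming_data_L2} already assumes $a_-<0$ and the hypotheses here are identical; and in the final paragraph the third multiplier should carry the weight $|u|v^{-1-\epsilon}\pu$, not $v^{-\epsilon}|u|\pu$. Neither affects the substance of the argument.
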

    \begin{proof}
        The proof follows from the proof of \cref{lemma:en:metric_perturbed_energy_estimate} in the same way in which \cref{lemma:en:past_estimate_with_incoming_data_L2} followed from \cref{prop:en:pastestimate}.
    \end{proof}
    	
	\begin{obs}[Regularity of coefficients]\label{rem:en:regularity_of_coefficients}
		The assumption that $V,\mathcal{N}$ are in $\Hb^{\vec{a};\infty}$ can be weakened to $V,\mathcal{N}$ having the same regularity as $\phi$, as we can then still control the products in the same way as for the nonlinear-in-$\phi$ terms.
		In particular, assuming that $f_{\mathcal{N}},f_V\in\Hb^{\vec{a};k}(\D)$ is sufficient for to proving \cref{cor:en:perturbed_energy_estimate} at regularity $k$.
		
		Let us also note that when $\mathcal{N}=0$ and $P_g$ does not depend on $\phi$, we can take $k\geq0$, as we need not close a nonlinear estimate. However, even in this case, we still do require the coefficients to be at least $k\geq4$ regular.
	\end{obs}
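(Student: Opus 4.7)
The observation consists of two claims: (i) the assumption $V,\mathcal{N}\in\Hb^{\vec{a};\infty}$ in the proofs of \cref{cor:en:perturbed_energy_estimate,lemma:en:metric_perturbed_energy_estimate} can be weakened to matching finite regularity $\Hb^{\vec{a};k}$, and (ii) when $\mathcal{N}=0$ and $P_g$ has no $\phi$-dependence, one may take $k\geq 0$ for $\phi$ while still requiring $k\geq 4$ for the coefficients. The plan is to audit the proofs of \cref{cor:en:perturbed_energy_estimate,lemma:en:metric_perturbed_energy_estimate} and identify the two mechanisms through which the coefficients enter, then check that they survive under the weakened regularity hypothesis.

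The first mechanism is the asymmetric product estimate of \cref{lemma:scat:sobolev}: $\norm{fg}_{\Hb^{\vec{a};k}}\lesssim\norm{f}_{\Hb^{\vec{a}^f;k}}\norm{g}_{\Hb^{\vec{a}^g;3}}+\norm{f}_{\Hb^{\vec{a}^f;3}}\norm{g}_{\Hb^{\vec{a}^g;k}}$. In \cref{cor:en:VN_estimates}, when bounding $Vr\phi=f_V\Vb(r\phi)$ and $r\mathcal{N}[\phi]=f_\mathcal{N}(\Vb(r\phi))^{n^\mathcal{N}}$, this estimate is applied so that at most one factor appears at top regularity $k$ and all remaining factors at $\Hb^{;3}$. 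Consequently, it suffices to have $f_V\in\Hb^{\vec{a}^V;k}$ and $f_\mathcal{N}\in\Hb^{\vec{a}^\mathcal{N};k}$. Re-running the proof of \cref{cor:en:perturbed_energy_estimate} with this observation: in Step~1 (base case $k=4$) the absorption argument after \cref{eq:en:perturbed_step1} uses the smallness factor $\abs{u_0}^{-\delta}$ which is independent of the regularity of coefficients, while Step~2 (the Grönwall extension to $k\geq 5$) only requires coefficients at regularity $k$. Hence the entire estimate closes with finitely regular coefficients.

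The second mechanism arises in the proof of \cref{lemma:en:metric_perturbed_energy_estimate}, where one needs the $L^\infty$-bound \cref{eq:en:w_estimate} for $w=(\rho_-\rho_0)^{-\delta}\Vb\big((\Omegalin)^2+(r^2\glin)^2\big)^{1/2}$. By Sobolev embedding $\Hb^{;3}\subset L^\infty$, controlling $w\in L^\infty$ requires one further $\Vb$-derivative, i.e.\ the coefficients of $P_g$ must lie in $\Hb^{\vec{a}^{P_g};4}$. This is precisely the origin of the $k\geq 4$ threshold in the second part of the observation: even when the equation is linear in $\phi$ and no product estimate on $\phi$ is needed, the quasilinear structure forces $L^\infty$-control on one derivative of the coefficients, hence the requirement of four derivatives of regularity on $\Omegalin,\glin$.

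For claim (ii), when $\mathcal{N}=0$ and $P_g$ is independent of $\phi$, the equation is genuinely linear and no absorption of nonlinear-in-$\phi$ terms is required. Hence the base-case regularity threshold $k=4$, which was forced in \cref{cor:en:perturbed_energy_estimate} by the use of $\Hb^{;3}\subset L^\infty$ on $\phi$ itself, is no longer needed, and one can propagate the estimate down to $k=0$ by applying \cref{prop:en:main,prop:en:hintz_past,prop:en:futureestimate} directly, with the $P_g$-error terms handled as in Step~1 of the proof of \cref{lemma:en:metric_perturbed_energy_estimate} using only the $L^\infty$ bound on $w$. The main (and essentially only) obstacle in the whole argument is the bookkeeping of how many derivatives are lost at each step; nothing conceptually new is required beyond re-reading the existing proofs with the asymmetric form of \cref{lemma:scat:sobolev} in mind.
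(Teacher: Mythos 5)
Your proposal is correct and takes exactly the approach the paper intends: since the Observation is stated without proof, the only honest justification is a regularity audit of the proofs of \cref{cor:en:perturbed_energy_estimate,lemma:en:metric_perturbed_energy_estimate}, and you have identified both of the relevant mechanisms accurately. The asymmetric product estimate of \cref{lemma:scat:sobolev} puts at most one factor at top regularity $k$, so the coefficients $f_V,f_{\mathcal{N}}$ enter only at $\Hb^{;k}$ (or $\Hb^{;3}$ in the other summand), which yields claim (i); and the $k\geq4$ floor on the $P_g$-coefficients in claim (ii) is precisely the $L^\infty$ bound on $w$ in \cref{eq:en:w_estimate}, which costs one $\Vb$-derivative (from the definition of $w$) plus three for Sobolev embedding. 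You also correctly pinpoint that the $k\geq4$ threshold on $\phi$ in \cref{cor:en:perturbed_energy_estimate} comes from closing the nonlinear absorption (the $\Vb\to\Ve$ conversion and the $\Hb^{;3}\subset L^\infty$ embedding on $\phi$), which disappears when $\mathcal{N}=0$ and $P_g$ is $\phi$-independent, allowing $k=0$ for the solution while keeping $k\geq4$ on the coefficients. No gaps.
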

    
    We finally also elevate \cref{lemma:en:energy_no_incoming} to the perturbed setting.
    The purpose of this lemma is to treat data and perturbations~$\mathcal{P}$ that decay weakly or even grow.
    We therefore continue to use the notation of \cref{def:en:short_range}, but weaken the requirement on the weights as specified below:
	\begin{prop}\label{lemma:en:no_incoming_perturbed_en_estimate}
		Let $\vec{a},\vec{a}^f,m,f,\psi^{\incone}$ be as in \cref{lemma:en:energy_no_incoming}.
		Let $k\in\N_{\geq4}$ and $\mathcal{P}$ be perturbations compatible with the no incoming radiation condition with the requirements on the weights weakened to  $n^{\bullet}\vec{a}+\vec{a}^{\bullet}>\vec{a}+(2,2,1)$ for $\bullet\in\{V,\mathcal{N},P_g\}$.
		Then there exists $-u_0(f,\psi^{\incone},V,\mathcal{N},P_g)$ sufficiently large that for $\phi$ the solution to \cref{eq:en:characteristicIVP_perturbed} in $\D=\D^{u_\infty,v_\infty}_{u_0,v_0}$ satisfies
		\begin{nalign}\label{eq:en:no_incoming_estimate}
			\sup_{u\in (u_\infty,u_0)}\norm{(v\pv,\rho_{\scri}^{1/2}\sl) \Vbt^k \psi}_{\Hbt^{\vec{a};0}(\outcone{})}+\sup_{v\in(v_0,v_\infty)}\norm{(u\pu,\rho_{\scri}^{1/2}\sl) \Vbt^k\psi}_{\Hbt^{\vec{a};0}(\Cbar_v)}+\norm{ \Ve \psi}_{\Hbt^{\vec{a};k}(\D)}\\
			\lesssim_{\mathcal{N},V,j,\gamma,k} \norm{rf}_{\Hbt^{\vec{a}^f;k+m}(\D)}
			+ \norm{\Vbt^{\red{2k+2m+2}}\psi|_{\incone}}_{\rho_-^{a_-}\Hb^{0}(\incone)}
		\end{nalign}
	\end{prop}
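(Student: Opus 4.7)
The strategy mirrors the proof of \cref{lemma:en:energy_no_incoming}: commute the equation $m$ times with $T$, reducing to the strongly admissible regime already handled by \cref{lemma:en:metric_perturbed_energy_estimate} (in its no incoming radiation variant), and then integrate back in time using \cref{corr:ODE:du_dv}\cref{item:ode:t-prop} to recover control on $\psi$ itself. The only genuinely new point over the linear case is that the $T$-commutators now also hit the perturbation $\mathcal{P}[\phi]$, and one must verify that the commuted nonlinearities remain NIR-compatible (in the sense of \cref{def:en:short_range}\textit{b)}) at the shifted weights $\vec{a}_m := \vec{a} + (m,m,0)$.

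For the commutator bookkeeping, the key point is that $T$ lies in $\rho_0\rho_-\Vbt$ in $\D^-$ and in $\rho_0\rho_+\Vbt$ in $\D^+$, so each $T$-derivative of an $\Hbt^{\vec{a}^\bullet}$-coefficient gains decay $(1,1,0)$ in $\D^-$ and $(0,1,1)$ in $\D^+$. Applying $T^m$ to \cref{eq:en:characteristicIVP_perturbed} and expanding via Leibniz thus produces a finite sum of multilinear terms in $T^j\phi$, $0\le j\le m$, whose coefficients are still of the form required by \cref{def:en:short_range}\textit{b)}, with weights shifted accordingly. The weakened assumption $n^\bullet\vec{a}+\vec{a}^\bullet>\vec{a}+(2,2,1)$ then propagates to $\vec{a}_m$: since $n^\bullet\ge 1$, we have $n^\bullet\vec{a}_m+\vec{a}^\bullet \ge n^\bullet\vec{a}+\vec{a}^\bullet+(m,m,0) > \vec{a}_m+(2,2,1)$, so the commuted perturbations are strictly short-range NIR-compatible for $\vec{a}_m$.

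We then proceed by induction on $m$, the base case $m=0$ being exactly \cref{lemma:en:metric_perturbed_energy_estimate}. For the inductive step, $T^m\phi$ satisfies a schematic equation $\Box_\eta(T^m\phi) = T^m f + \widetilde{\mathcal{P}}[T^m\phi] + \mathcal{R}$, where $\widetilde{\mathcal{P}}$ is a short-range NIR-compatible perturbation for $\vec{a}_m$ (with lower-order $T^j\phi$ absorbed into its coefficients) and $\mathcal{R}$ collects terms involving only $T^j\phi$ for $j<m$, which by the inductive hypothesis enjoy the required $\Hbt$-control, whence $r\mathcal{R}\in\Hbt^{\vec{a}_m+(1,2,1);k}(\D)$. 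Applying \cref{lemma:en:metric_perturbed_energy_estimate} (NIR variant) to $T^m\phi$ with the characteristic data $T^m\psi|_{\incone}$ extracted via \cref{lemma:en:recovering_initial_data}\textit{b)} produces the spacetime estimate for $T^m\psi$ in $\Hbt^{\vec{a}_m;k}(\D)$; the factor $2(k+m)+2$ of regularity on $\psi^{\incone}$ in \cref{eq:en:no_incoming_estimate} accounts for this data recovery. Iterating \cref{corr:ODE:du_dv}\cref{item:ode:t-prop} $m$ times then integrates back from $T^m\psi$ to $\psi$ and yields the claimed control in $\Hbt^{\vec{a};k}(\D)$.

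The main obstacle is the commutator analysis for the quasilinear piece $P_g$: since these are top-order in derivatives, the terms $[T^m,P_g[\phi]]$ must be absorbed into the left-hand side rather than treated as pure inhomogeneities. This is handled exactly as in the proof of \cref{lemma:en:metric_perturbed_energy_estimate}, namely via a bootstrap in the $\Hbt^{\vec{a};k}(\D)$-norm of $\psi$, using the smallness of $\tilde{\mathcal{X}}_{\incone}^{j,k}$ (which follows from \cref{lemma:en:recovering_initial_data}\textit{b)} upon choosing $|u_0|$ sufficiently large depending on $f,\psi^{\incone},V,\mathcal{N},P_g$) to close the nonlinear estimates uniformly along the induction in $m$.
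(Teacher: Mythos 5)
Your overall strategy --- commute $m$ times with $T$ to push the decay into the admissible range, verify that the NIR-compatibility of the perturbations survives the commutation, apply the perturbed energy estimate to $T^m\phi$, and integrate back via \cref{corr:ODE:du_dv}\cref{item:ode:t-prop} --- matches the paper's, as do the commutator weight bookkeeping and the bootstrap handling of $P_g$. However, the ``induction on $m$'' you use to control $\mathcal{R}$ (and the $\phi$-dependence hidden in the coefficients of $\widetilde{\mathcal{P}}$) has a genuine gap: the inductive hypothesis at level $m-1$ requires $a_->-(m-1)+1/2=-m+3/2$, but the $m$-th step is precisely meant to reach the new range $a_-\in(-m+1/2,\,-m+3/2]$, where that hypothesis simply does not apply. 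Nor does the shifted application of the $m-1$ statement to $T\phi$ with weight $\vec{a}+(1,1,0)$ decouple the problem, since by Leibniz $T\mathcal{P}[\phi]$ depends jointly on $T\phi$ \emph{and} $\phi$, and control on the latter is exactly what you are trying to establish.

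The paper avoids the induction altogether. It expresses $T^j\psi$, $j<m$, directly in terms of $T^m\psi$ plus the initial data $T^j\psi|_{\incone}$ by forward integration (\cref{corr:ODE:du_dv}\cref{item:ode:t-prop}),
\begin{equation*}
\norm{T^j\psi}_{\Hb^{a_-+j,a_0+j;s}(\D^-)}\lesssim\sum_{j\le j'<m}\norm{T^{j'}\psi|_{\incone}}_{\Hb^{a_-+j';s}(\incone)}+\norm{T^m\psi}_{\Hb^{a_-+j,a_0+m;s}(\D^-)},
\end{equation*}
then bounds the commuted perturbations $T^m\mathcal{P}[\phi]$ by $|u_0|^{-\delta}$ times the left-hand side of \cref{eq:en:no_incoming_estimate} plus $\norm{\Ve T^m\psi}$, and closes a \emph{single} bootstrap at the level of $T^m\psi$ (via \cref{lemma:en:energy_no_incoming} when $P_g=0$ and the boundary-term argument of \cref{lemma:en:metric_perturbed_energy_estimate} when $P_g\neq0$). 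Replacing your inductive hypothesis with this forward ODE estimate makes your argument close; the remaining bookkeeping is as you describe.
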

	\begin{proof}
		Let us work in $\D^-$.
		Let $2\delta$ be the gap of the perturbation $\mathcal{P}$.
		We commute the equation $m$ times to get $\Box T^n\phi=T^n f+T^n\mathcal{P}\phi$.
		We estimate for $j<m$ using \cref{corr:ODE:du_dv}
		\begin{nalign}\label{eq:en:Tj_from_Tm}
			\norm{T^j\psi}_{\Hb^{a_-+j,a_0+j;s}(\D^-)}\lesssim\norm{T^j\psi|_{\incone}}_{\Hb^{a_-+j;s}(\incone)}+\norm{T^{j+1}\psi}_{\Hb^{a_-+j,a_0+j+1;s}(\D^-)}\\\lesssim
            \sum_{j\leq j'<m}\norm{T^{j'}\psi|_{\incone}}_{\Hb^{a_-+j';s}(\incone)}+\norm{T^{m}\psi}_{\Hb^{a_-+j,a_0+m;s}(\D^-)}.
		\end{nalign}
		We use \cref{eq:en:Tj_from_Tm} and the no incoming radiation control of $V,\mathcal{N},P_g$, to control the perturbations for $s\geq4$
		\begin{nalign}
			\norm{T^mV\psi}_{\Hb^{\vec{a}^f+\vec{m};s}(\D^-)}&\lesssim\sum_{n\leq m}\norm{\Ve T^n\psi}_{\Hb^{\vec{a}+\vec{n}-\vec{\delta};s}(\D^-)}\lesssim \abs{u_0}^{-\delta}\Big(\norm{\Ve T^m\psi}_{\Hb^{\vec{a}+\vec{m};s}(\D^-)}+\text{LHS}\cref{eq:en:no_incoming_estimate}\Big)\\
			\norm{rT^m\mathcal{N}[\phi]}_{\Hb^{\vec{a}^f+\vec{m};s}(\D^-)}&\lesssim\abs{u_0}^{-\delta}\sum_{n_1+...+n^{n^{\mathcal{N}}}\leq m}\norm{\Ve T^{n_1}\psi}_{\Hb^{\vec{a}+\vec{n}_1;s}(\D^-)}\cdots\norm{\Ve T^{n_{n^\mathcal{N}}}\psi}_{\Hb^{\vec{a}+{\vec{n}}_{n^\mathcal{N}};s}(\D^-)}\\
			&\lesssim\abs{u_0}^{-\delta}\Big(\norm{\Ve T^m\psi}_{\Hb^{\vec{a}+\vec{m};s}(\D^-)}+\text{LHS}\cref{eq:en:no_incoming_estimate}\Big)^{n^{\mathcal{N}}}		\\
			\norm{[rT^m,P_g]\phi}_{\Hb^{\vec{a}^f+\vec{m};s}(\D^-)}&\lesssim\abs{u_0}^{-\delta}\sum_{n_1+...+n_{n^{P_g}}\leq m}\norm{\Ve T^{n_1}\psi}_{\Hb^{\vec{a}+\vec{n}_1;s}(\D^-)}\cdots\norm{\Ve T^{n^{P_g}}\psi}_{\Hb^{\vec{a}+\vec{n}_{n^{P_g}};s}(\D^-)}\\
			&\lesssim\abs{u_0}^{-\delta}\Big(\norm{\Ve T^m\psi}_{\Hb^{\vec{a}+\vec{m};s}(\D^-)}+\text{LHS}\cref{eq:en:no_incoming_estimate}\Big)^{n^{P_g}}	.
		\end{nalign}
		For $P_g=0$, we simply estimate $T^m\psi$ using \cref{lemma:en:energy_no_incoming} and closing a bootstrap as in \cref{cor:en:perturbed_energy_estimate}. For $P_g\neq0$, we only need to worry about the $P_gT^m\psi$ term, and the corresponding boundary terms are estimated as in \cref{lemma:en:metric_perturbed_energy_estimate}. Higher order estimates follow as in \cref{cor:en:perturbed_energy_estimate}.
	\end{proof}
	
    \subsection{Energy estimates IV: The linear wave equation \texorpdfstring{$\Box_{\eta}\phi=0$}{} with nontrivial incoming radiation}\label{sec:en:incoming}
 
	We finish this section with an estimate treating non-trivial incoming radiation $\psi^{\outconeFar
 }\neq 0$.
 In practice, we will not be using this estimate, as we can always absorb the incoming radiation into an inhomogeneity by subtracting it from the solution, cf.~\cref{thm:scat:scat_incoming}.
 This trick, however, loses regularity; the estimate below can be used to remove this loss of regularity, see also \cref{rem:scat:derivative_loss_incoming}.
 Let us also recall that in \cref{lemma:en:energy_no_incoming,lemma:en:no_incoming_perturbed_en_estimate}, we already allowed for nontrivial incoming radiation; indeed, we will use the latter results to argue for uniqueness of scattering solutions in \cref{sec:scat:scat}.

% As we will see later on, even for applications to \cref{eq:en:characteristicIVP_perturbed}, it suffices to understand the linear setting \cref{eq:en:characteristicIVP}, where, by linearity, we can in fact set $\psi^{\incone}=0$.
%Let us also note, that the estimates in this section are not used in the rest of the paper, as they can be avoided by a trick even when constructing scattering solutions with incoming radiation, c.f.~\cref{thm:scat:scat_incoming}.
%See \cref{rem:scat:derivative_loss_incoming} for the potential use of \cref{prop:scat:energy_incoming_rad} in case on wants to upgrade the scattering to sharp (in regularity) statements.
 
Throughout this section, we impose the additional assumption that $v_\infty-v_0<\abs{u_\infty}^{1/2}\gg 1$. Since we would later on take the limit $u_{\infty}\to-\infty$, this does not lose generality.
 
%The main estimate we use is the following:

\begin{prop}\label{prop:scat:energy_incoming_rad}
	Let $\phi$ solve \cref{eq:en:characteristicIVP} with $\psi^{\incone}=0=f$, and with $r\phi^{\outconeFar}=\psi^{\outconeFar}\in\Hb^{a_0;\infty}$ satisfying $\psi^{\C_{u_\infty}}(v,\omega)=0$ for $v\in[v_0,v_0+1)$.\footnote{This assumption's purpose is to avoid having to deal with boundary terms when integrating for transversal derivatives of $\psi$ along $\outconeFar$.}
	Let $a_0\in(-\infty,1/2).$
	
Then we have for any $k\geq0$:
	
	\begin{multline}\label{eq:en:incoming_prop}
		\norm{\psi}_{\Hb^{0-,\min(0,a_0)-;k}(\D^-)}+\norm{(v\pv,\sl) \psi}_{\Hb^{0-,a_0-;k}(\D^-)}+\norm{u\pu \psi}_{\Hb^{1/2;,a_0-;k}(\D^-)}\\
		\lesssim \norm{(v\pv,\sl) \psi^{\outconeFar}}_{\Hb^{a_0;k}(\outconeFar)}+
		\frac{1}{\abs{u_\infty}}\norm{\psi^{\outconeFar}}_{\Hb^{a_0+1;2k}(\outconeFar)}
	\end{multline}
	\end{prop}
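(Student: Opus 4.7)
The structure of this estimate is that of \cref{lemma:en:past_estimate_with_incoming_data_L2}, the only new element being that we must track the boundary contribution on $\outconeFar$ coming from the incoming radiation and reconstruct the transversal data $\pu\psi|_{\outconeFar}$ (which is not prescribed). The condition $v_\infty-v_0<|u_\infty|^{1/2}$ is exactly what makes this reconstruction cheap and lossless in decay, at the cost of angular regularity.

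\textbf{Step 1 (transversal data recovery on $\outconeFar$).} Along $\outconeFar$ the wave equation $\pu\pv\psi=r^{-2}\Dl\psi$ together with the vanishing assumption $\psi^{\C_{u_\infty}}|_{[v_0,v_0+1)}=0$ gives
\[
\pu\psi(u_\infty,v,\omega)=\int_{v_0}^{v}\frac{\Dl\psi^{\outconeFar}(u_\infty,v',\omega)}{r^2(u_\infty,v')}\,dv'.
\]
Since $r(u_\infty,v)\geq|u_\infty|$ and $v_\infty-v_0\leq|u_\infty|^{1/2}$, Cauchy--Schwarz in $v$ yields
\[
\|\pu\psi\|_{\Hb^{a_0;k}(\outconeFar)}\lesssim \tfrac{1}{|u_\infty|}\|\Dl\psi^{\outconeFar}\|_{\Hb^{a_0+1;k}(\outconeFar)},
\]
which accounts for the second summand on the RHS of \cref{eq:en:incoming_prop}. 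Iterating this procedure after commuting with symmetries of $\Box_\eta$ (each $u\pu$- or $T$-commutation in the boundary term forces one additional integration in $v$ of a new $\pu\pv$-equation) produces the loss from $k$ to $2k$ derivatives.

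\textbf{Step 2 (energy estimates with incoming boundary flux).} We now run exactly the same sequence of multiplier estimates as in the proof of \cref{prop:en:pastestimate} and \cref{lemma:en:past_estimate_with_incoming_data_L2}, namely the $\tilde{\T}$ currents generated by $v^{-\epsilon}\pv$, $|u|^{2a}v^{-\epsilon}\pu$ and the $\T$ current for $v^{-\epsilon}\partial_t$ (and the corresponding $|u|^{2\cdot 1/2}v^{-\epsilon}\pu$ current for the top term $\|u\pu\psi\|_{\Hb^{1/2,a_0-}}$). Because $\psi^{\incone}=0=f$ the only nontrivial data contribution appears as a boundary flux on $\outconeFar$, which in $(u,v)$-coordinates has the schematic form
\[
\int_{\outconeFar} |u_\infty|^{2a}v^{-\epsilon}\bigl((\pv\psi)^2+(\pu\psi)^2+r^{-2}|\sl\psi|^2\bigr)\,d\tilde\mu,
\]
with $a\in\{0,1/2\}$ depending on which multiplier we use. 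The $(\pv\psi)^2$ and $|\sl\psi|^2$ pieces are directly controlled by $\|(v\pv,\sl)\psi^{\outconeFar}\|_{\Hb^{a_0;k}(\outconeFar)}$, while the $(\pu\psi)^2$ piece is fed in from Step 1 and produces the $\tfrac{1}{|u_\infty|}\|\psi^{\outconeFar}\|_{\Hb^{a_0+1;2k}}$ contribution. The $|u_\infty|^{-1/2}$-restriction on the length of $\outconeFar$ also ensures that the integration in $v$ of the boundary multiplier weights $v^{-\epsilon}$ converges harmlessly.

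\textbf{Step 3 (commuted estimates and zeroth order term).} Higher $\Vb^k$-control is obtained by commuting the equation with the Killing vector fields $T$, $S$, Lorentz boosts $B_i$ and $\Omega$; on $\outconeFar$ these commutators produce exactly the $\Hb^{a_0;k}$-norms of $(v\pv,\sl)\psi^{\outconeFar}$ appearing on the RHS, and the newly generated transversal terms are handled via Step 1 (this is where the $k\mapsto 2k$ doubling appears). The zeroth order term $\|\psi\|_{\Hb^{0-,\min(0,a_0)-;k}}$ is recovered via a $|u|^{2\cdot 0+1}v^{-1-\epsilon}\pu$ multiplier applied to $\T$ exactly as at the end of Case~1 in the proof of \cref{item:en:pastestimate_inhom}; the restriction $a_0<1/2$ ensures the boundary contribution on $\outconeFar$ can still be absorbed into the data norms.

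\textbf{Main obstacle.} The delicate point is \textbf{Step 1}: the reconstruction of $\pu\psi|_{\outconeFar}$ from $\psi^{\outconeFar}$ must be done with a gain of $|u_\infty|^{-1}$ in order not to spoil the bulk decay rate $a_-=0{-}$, and this gain is only available because of the auxiliary hypothesis $v_\infty-v_0<|u_\infty|^{1/2}$. This is also what forces the derivative loss $k\to 2k$ recorded in the statement, and it explains why, in applications, one always takes this estimate as a tool to be combined with a limiting argument $u_\infty\to-\infty$ rather than used uniformly in $v_\infty$.
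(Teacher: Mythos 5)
Your approach differs from the paper's: the paper proves \cref{eq:en:incoming_prop} at base order $k=0$ with a \emph{single} application of the conformal (Morawetz) multiplier $V=u^2\pu+v^2\pv$ for $\tilde{\T}$, whereas you propose to re-run the $\pu$-, $\pv$-, and $\partial_t$-multiplier estimates of \cref{prop:en:pastestimate} and \cref{lemma:en:past_estimate_with_incoming_data_L2}. The conformal multiplier is the more natural choice here because it simultaneously delivers the anisotropic weight structure of the statement---the $\rho_-^{1/2}$ weight for $u\pu\psi$ (via the ingoing-cone flux) and only $\rho_-^{0-}$ for $(v\pv,\sl)\psi$ (via integrating the outgoing-cone flux)---and because its flux on $\outconeFar$ controls \emph{exactly} the given data $\norm{(v\pv,\sl)\psi^{\outconeFar}}_{\Hb^{a_0}}$. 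Your proposal leaves unexplained how the sharp $a_-=1/2$ (not $1/2-$) for $u\pu\psi$ would be recovered from the $|u|^{2a}v^{-\epsilon}\pu$-type multipliers, which in \cref{lemma:en:past_estimate_with_incoming_data_L2} only yield $a_-=0-$.

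There is also a concrete error in your Step~2. For a multiplier of the form $V=f^u(u,v)\pu+f^v(u,v)\pv$ (no angular components), the current $J=V\cdot\tT$ has flux $J^u=\tT^{uu}V_u+\tT^{uv}V_v$ on $\outconeFar=\{u=u_\infty\}$, and by \cref{eq:current:T_indices} both $\tT^{uu}\sim(\pv\psi)^2$ and $\tT^{uv}\sim r^{-2}|\sl\psi|^2$; no $(\pu\psi)^2$ term appears. Hence your schematic boundary flux is wrong, and your Step~1 transversal-derivative reconstruction is \emph{not} needed at $k=0$. Where it is genuinely needed---and this matches the paper---is in the commutator estimates for $k\geq1$: once one commutes with scaling $S=u\pu+v\pv$ or boosts $B_i$, the commuted initial data $S^j\psi|_{\outconeFar}$ do involve $(u\pu)^{j'}\psi|_{\outconeFar}$, and the iterated relation $\norm{(u\pu)^j\psi}_{\Hb^{a;k}(\outconeFar)}\lesssim|u_\infty|^{-1}\norm{\psi}_{\Hb^{a;2j+k}(\outconeFar)}$ (which is your Step~1, iterated) supplies the missing control, using precisely the short-cone assumption $v_\infty-v_0<|u_\infty|^{1/2}$ and accounting for the $k\mapsto 2k$ derivative loss. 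So your reconstruction mechanism and the derivative-loss accounting are correct and match the paper's Step~3; the error is attaching it to the base-order boundary flux rather than to the commutator structure, and substituting a less convenient multiplier for the conformal one.
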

	\begin{proof}
		\emph{Step 1): Weight towards $I^0$.} We will only show the result for $a_0=1/2-$. 
		For any other $a_0$, one multiplies all vector field multipliers in the proof with $v^{-(1-2a_0)}$, and observes that only already controlled or positive terms appear.
		
		\emph{Step 2): Multiplier.} We first prove the estimate for $k=0$.
		We apply the conformal vector field multiplier $V=u^2\pu+v^2\pv$ for $\tT$ to obtain:
		\begin{equation}
			\sup_u\norm{(v\pv,\sl)\psi}_{\rho_0^{1/2}H^{;0}(\outcone{})}+\sup_v\norm{u\pu\psi}_{\rho_0^{1/2}\rho_-^{1/2}\Hb^{;0}(\Cbar_{v})}\lesssim \norm{(v\pv,\sl) \psi^{\outconeFar}}_{\Hb^{1/2;0}(\outconeFar)}
		\end{equation}
		Integrating the boundary terms already yields \cref{eq:en:incoming_prop} without the 0th order term.
		Notice, that we can use \cref{corr:ODE:du_dv} to recover control over $\psi$ from $\pv\psi$:
		\begin{equation}
			v\pv\psi\in\Hb^{0-,1/2-;0}(\D^-) \text{ and }\psi|_{\incone}=0\implies \psi\in\Hb^{0-,0-;0}(\D^-).
		\end{equation}
		
		\emph{Step 3): Commutators.}
		For higher $k$, we commute with rotations ($\Omega$), boosts (${B}_i$) and scaling $S$.  $\Omega$-commutators introduce no loss in regularity. 
		Commuting the equation with scaling ($S=u\partial_u+v\partial_v$) and boost  vector fields, on the other hand, gives rise to  loss of derivatives.
		Let's focus on scaling, the boosts behave similarly.
		
		First, note that we can estimate the $\Hb$ norm of $\partial_u \psi$ as follows. Using that $(u\pu)^n\psi=0$ at $\incone\cap\outconeFar$, we have for all $a\in\mathbb R$ that
		\begin{nalign}
			\partial_u\psi|_{\C_{u_{\infty}}}=\int_{v_0}^v\frac{1}{r^2}\Dl \psi\implies \norm{u_\infty\partial_u\psi|_{\C_{u_{\infty}}}}_{\Hb^{a;k}(\C_{u_\infty})}\lesssim_a\frac{v_\infty-v_0}{\abs{u_\infty}^{3/2}}\norm{\Dl \psi}_{\Hb^{a;k}(\C_{u_\infty})}\lesssim\abs{u_\infty}^{-1}\norm{\psi}_{\Hb^{a;k+2}(\C_{u_\infty})}\\
			\implies \norm{(u\partial_u)^j\psi|_{\C_{u_{\infty}}}}_{\Hb^{a;k}(\C_{u_\infty})}\lesssim_{a,j}\abs{u_\infty}^{-1}\norm{\psi}_{\Hb^{a;2j+k}(\C_{u_\infty})}.
		\end{nalign}
		
		We may thus estimate the boundary term for $S^k\psi$ as follows:
		\begin{nalign}
			S^kr\phi|_{\C_{u_{\infty}}}=(u\partial_u+v\partial_v)^k\psi|_{\C_{u_{\infty}}}=(v\partial_v)^k\psi|_{\C_{u_{\infty}}}+\sum_{\substack{k_1+k_2=k\\k_2>0}}c_{k_1,k_2}(v\partial_v)^{k_1}(u\partial_u)^{k_2}\psi|_{\outconeFar}\\\implies
			\norm{(\partial_v,r^{-1},r^{-1}\sl)S^k\psi|_{\outconeFar}}_{\Hb^{-1/2;0}(\outconeFar)}\lesssim_{k} \norm{\psi^{\C_{u_\infty}}}_{\Hb^{-1/2;k+1}(\outconeFar)}+\abs{u_\infty}^{-1}\norm{\psi^{\C_{u_\infty}}}_{\Hb^{-1/2;2k+1}(\outconeFar)}.
		\end{nalign}
		On the other hand, since  $\phi$  vanishes identically for $v\in[v_0,v_0+1)$, commutations do not generate any boundary terms along $\incone$.		
		
	\end{proof}

    \begin{rem}[Optimal decay with incoming radiation]\label{rem:en:optimatl decay from ccdata}
		Note that \cref{prop:scat:energy_incoming_rad} only gives weak decay towards $I_0$ even for compactly supported incoming radiation.
        In fact, one cannot prove arbitrary fast decay towards $I_0$ even for compactly supported $\pv\psi^{\outconeFar}$: If $\pv\psi^{\outconeFar}=h(v,\omega)$ along $\scrim$, the solution will generically decay like $\psi\sim 1$ towards $I^0$. (For instance, if $\ell=0$, then a solution is given by $\psi_{\ell=0}=\int h_{\ell=0} \dd v$, and if $\ell=1$, then $\psi_{\ell=1}=\int h_{\ell=1}-\frac2r\int\int h_{\ell=1}$ and so on.) See already \cref{cor:prop:solution_with_radiation_scri}.
        
	\end{rem}

	\section{Scattering theory for perturbations of \texorpdfstring{$\Box_{\eta}\phi=0$}{the linear wave equation}}\label{sec:scat:scat}

	We now elevate the (uniform in $u_\infty$) results for the finite characteristic initial value problem from \cref{sec:en} to the scattering problem. 
    We first prove a scattering statement without incoming radiation for admissible short-range perturbations in \cref{sec:scat:noincoming}, and we explain how to remove the extra condition \cref{eq:en:admissible_inhom1} in the class of short-range perturbations in \cref{cor:scat:enlarged_admissible_set}.
    We treat the case of incoming radiation in \cref{sec:scat:incoming}.
    We then provide a scattering theory in the case where the inhomogeneity decays weakly (or grows) in \cref{sec:scat:weak_decay}.
    We finally treat the case of long-range perturbations in \cref{sec:scat:longrange}. %
  Henceforth, we write $\D$ to denote $\D^{-\infty,\infty}_{u_0,v_0}$.

    Before we begin, we give the following definition and well-posedness result:
 \begin{defi}\label{defi:scat:scat_general}
		Let $\vec{a},\vec{a}^f$ be admissible and $P_g,V,\mathcal{N}$ be short-range perturbations.
		Fix scattering data $\psi^{\incone}\in \Hb^{a_-;k+1}(\incone)$, $v\pv \psi^{\scrim}\in\Hb^{a_0;k}(\scrim)$ and $rf\in\Hb^{\vec{a}^f;k}(\D)$, for $k\in \mathbb{Z}_{\geq {0}}$.
        We will write $\psi^{\scrim}:=\int_{v_0}^v\pv\psi^{\scrim}$ (notice that this will generally not be the limit of $\psi$ towards $\scrim$).
        %For $\pv\psi^{\scrim}\neq0$ we require $a_-<0$.
        We call $\phi$ the scattering solution to
		\begin{equation}\label{eq:scat:scattering_def}
			\Box \phi=f+\mathcal{P}[\phi],\qquad \mathcal{P}[\phi]=(P_g[\phi]+V)\phi+\mathcal{N}[\phi]
		\end{equation}
        \textbf{with incoming radiation} $v\pv\psi^{\scrim}\in\Hb^{a_0;k}(\scrim)$ if \cref{eq:scat:scattering_def} holds and 
        \begin{equation}\label{eq:scat:scat_with_inc_condition}
			\forall v_\infty>v_0\text{ we have }\lim_{u\to-\infty} \norm{\pv(r\phi)-\pv(\psi^{\scrim})}_{L^2(C^{v_{\infty}}_{u})}\to 0,\qquad r\phi|_{\incone}=\psi^{\incone}.
		\end{equation}
		When $\pv\psi^{\scrim}=0$, we say that $\phi$ is the scattering solution to \cref{eq:scat:scattering_def} with \textbf{no incoming radiation}.
	\end{defi}
 
	We also require the following standard result concerning the characteristic IVP for quasilinear waves: 
 
	\begin{thm}[Local characteristic solution (\cite{rendall_reduction_1990})]\label{thm:scat:local}
		Let $\psi^{\incone}\in H^{\infty}(\incone),\,\psi^{\outconeFar}\in H^{\infty}(\incone),\,f\in H^{\infty}(\D)$ such that $\psi^{\incone}$ and $\psi^{\outconeFar}$ satisfy the corner condition.
		Then the problem %there exists $\epsilon(\mathcal{P})$ such that the problem
		\begin{nalign}\label{eq:scat:cutoff_scattering}
			&\Box\phi=f+\mathcal{P}[\phi]\\
			&\psi|_{\incone}=\psi^{\incone},\qquad \psi|_{\outconeFar}=\psi^{\outconeFar},
		\end{nalign}
        for $\mathcal{P}$ as in \cref{eq:scat:scattering_def},
		has a unique solution $\phi$ in $\D^{u_\infty,v_0+\delta v}_{u_{\infty}+\delta u,v_0}$ for $\delta u,\delta v$ sufficiently small depending only on the size of the norms $\norm{(\mathrm{V}^3\phi)|_{\incone}}_{L^2(\incone)},\norm{(\mathrm{V}^3\phi)|_{\outconeFar}}_{L^2(\outconeFar)},\norm{f}_{H^2(\D^-)}$, where $V=\{\partial_{x_i},\partial_t\}$. Moreover, $\phi|_{u=u_1}$ for all $u_1\in(u_\infty,u_\infty+\delta u)$ satisfies the corner condition.
	\end{thm}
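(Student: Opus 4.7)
The plan is to essentially follow Rendall's argument \cite{rendall_reduction_1990}, which reduces the characteristic initial value problem to a standard Cauchy problem via the construction of an approximate solution. The corner condition on $(\psi^{\incone},\psi^{\outconeFar})$ is precisely the compatibility needed to make this reduction possible.

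First I would construct a smooth approximate solution $\phi_0 \in H^{\infty}(\D^{u_\infty,v_0+\delta v}_{u_{\infty}+\delta u,v_0})$ that matches the prescribed data along $\incone$ and $\outconeFar$ and such that $\Box \phi_0 - \mathcal{P}[\phi_0] - f$ vanishes to infinite order along $\incone \cup \outconeFar$. This is done by iteratively computing the transversal derivatives $(\partial_u)^j \partial_v^k \phi$ on each cone from the equation itself: on $\incone$, the equation supplies $\partial_v$ in terms of tangential derivatives of the data, and conversely on $\outconeFar$; the corner condition ensures that the two Taylor expansions obtained in this way agree on $\incone \cap \outconeFar$ to all orders. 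A Borel summation then produces the desired $\phi_0$, whose $H^k$ norm on any slice is controlled by the data norms appearing in the hypothesis.

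Next I would write $\phi = \phi_0 + \tilde\phi$ and derive the equation satisfied by $\tilde\phi$, which takes the quasilinear form $\bar{\Box}_{g[\phi_0+\tilde\phi]} \tilde\phi = F[\tilde\phi, \phi_0]$, where the RHS vanishes to infinite order on $\incone \cup \outconeFar$. Choosing a smooth spacelike hypersurface $\Sigma$ passing through the corner sphere and lying slightly inside $\D^{u_\infty,v_0+\delta v}_{u_{\infty}+\delta u,v_0}$, one may extend $\tilde\phi$ by zero across $\Sigma$ on the characteristic-data side and prescribe vanishing Cauchy data on $\Sigma$. This yields a standard quasilinear Cauchy problem to which the classical local existence theory (energy estimates at a loss of derivatives, Picard iteration, contraction on a suitable Sobolev space) applies; see e.g.\ \cite{rendall_reduction_1990}. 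The size of the existence slab $\delta u, \delta v$ is determined by the coefficients of $P_g$ near zero (short-range) and by the $H^2$ norm of $f$ together with the $L^2$ norms of $V^3 \psi$ on the cones, which control the $H^3$ norm of $\phi_0$ via tame estimates in the Borel construction.

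Uniqueness follows from standard domain-of-dependence arguments for the quasilinear Cauchy problem: any other solution with the same characteristic data would, after subtracting $\phi_0$, give another solution to the same Cauchy problem with trivial data, hence coincide with $\tilde\phi$. Finally, the preservation of the corner condition on $\phi|_{u=u_1}$ is automatic: the corner condition is precisely the statement that all mixed derivatives computed from the equation agree with those computed from the data, and this is preserved by the flow since $\phi$ solves the equation classically in the interior. The main obstacle in this program is bookkeeping in the Borel-type construction of $\phi_0$ -- one has to show that the tower of transversal derivatives obtained from the quasilinear equation depends smoothly on the data and that the compatibility ensured by the corner condition persists to all orders under the nonlinear coupling; this is exactly what Rendall's argument handles, and we invoke it as a black box.
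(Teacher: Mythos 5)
The paper gives no proof of this theorem and simply cites \cite{rendall_reduction_1990}, invoking it as a black box; your sketch is an accurate outline of Rendall's reduction argument (Borel construction of an approximate solution $\phi_0$ matching the data to infinite order on both cones, then a Cauchy problem for $\phi-\phi_0$ with data vanishing to infinite order near the corner, plus finite speed of propagation), and you ultimately defer to Rendall as well. So the approaches coincide, and your account correctly identifies the role of the corner condition, the tame dependence of the existence slab on low Sobolev norms, and the automatic propagation of the corner condition.
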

	
	\subsection{Scattering theory with no incoming radiation}
	\label{sec:scat:noincoming}
	
	We now use the \textit{a priori} uniform bounds proved in the previous section together with \cref{thm:scat:local} result to infer a scattering theory for \cref{eq:scat:scattering_def}.
	\begin{thm}\label{thm:scat:scat_general}
        Fix admissible weight $\vec{a}$ and corresponding inhomogeneous weight $\vec{a}^f$. Let $k\in\N_{\geq 4}$  and 
        \begin{equation}\label{eq:scat:k-data}
            \psi^{\incone}\in\Hb^{a_-;2k+2}(\incone), \,rf\in\Hb^{\vec{a}^f;k}(\D),\,T^j(rf)\in\Hb^{a_{-}^f;2(k-j)}(\incone),\qquad \text{for } j\leq k.
        \end{equation}
		Let $\mathcal{X}^4\leq 1$ as defined in \cref{eq:en:perturbed_energy_estimate}. Let $\mathcal{N},V,P_g$ be short-range perturbations.
        \begin{enumerate}[label=\alph*)]
            \item There exists $-u_0$ sufficiently large, depending only on the perturbations $\mathcal{P}$, such that there exists a unique scattering solution $r\phi\in \Hb^{a_-,a_0,a_+;k}(\D^{-\infty,\infty}_{u_0,v_0})$ to \cref{eq:scat:scattering_def} with no incoming radiation. Here, the uniqueness is understood with respect to the class of solutions satisfying the \emph{a priori} bound $\norm{\Ve r\phi}_{\Hb^{a_--;4}(\D\cap[v_0,v_{\infty}))}<\infty$ for any $v_{\infty}<\infty$. 
         %   In case of linear perturbation, the uniqueness hold under \cref{eq:scat:scat_with_inc_condition}.
            \item For linear perturbations, i.e.~$\mathcal{N}=0,n^{P_g}=1$, 
            $\phi\in H^1_{\mathrm{loc}}(\D^-)$ is sufficient for uniqueness.
            \item\label{item:} Moreover, for any $k\geq4$, we have that \cref{eq:scat:k-data} implies $
			(\Ve  r\phi)\in\Hb^{\vec{a};k}(\D)$.%\violet{
		%\begin{nalign}\label{eq:scat:higher_estimate}
		%	\psi^{\Cbar_{v_0}}\in{\rho_-^{a_-}\Hb^{2k+2}(\Cbar_{v_0})},rf\in\Hb^{\vec{a}^f;k}(\D), (rT)^j\Hb^{a^f_-+1,2(k-j)}(\incone), \implies 
		%	(\Ve  r\phi)\in\Hb^{\vec{a};k}(\D).
		%\end{nalign}}
            \item Provided that $f,\,V,\,\mathcal{N},\,P_g$ also satisfy the no incoming radiation condition, we have the  following improved estimate:
		\begin{nalign}\label{eq:scat:higher_no_incoming_estimate}
			\psi^{\Cbar_{v_0}}\in{\rho_-^{a_-}\Hb^{2k+2}(\Cbar_{v_0})},\, rf\in\Hbt^{\vec{a}^f;k}(\D),\,(rT)^j(rf)\in \Hb^{a_-^f+1;2(k-j)}(\incone)\implies 
			(\Ve  r\phi)\in\Hbt^{\vec{a};k}(\D).
		\end{nalign}
        \end{enumerate}
		
	\end{thm}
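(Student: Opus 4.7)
The plan is to obtain all four parts via a single limiting construction that promotes the uniform-in-$u_\infty$ a priori estimates of \cref{sec:en} to genuine scattering statements, together with a linearised difference argument for uniqueness. The structural ingredients are all already in place: Rendall's local characteristic solvability result (\cref{thm:scat:local}), the commuted energy estimates (\cref{cor:en:perturbed_energy_estimate,lemma:en:metric_perturbed_energy_estimate}) on $\D^{u_\infty,v_\infty}_{u_0,v_0}$ which are \emph{uniform} in $u_\infty,v_\infty$, the boundary-data recovery estimate (\cref{lemma:en:recovering_initial_data}), and the algebraic identity (\cref{lemma:en:short_range_computations}\,b) expressing differences of short-range perturbations of $\phi_1,\phi_2$ as short-range perturbations of $\phi_1-\phi_2$.

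For existence (part a), I would choose smooth cutoffs $\chi_n(u)$, $n\in\N$, with $\chi_n=1$ for $u\geq -n+1$ and $\chi_n=0$ for $u\leq -n$, and solve the finite characteristic IVP on $\D^{-n,v_\infty}_{u_0,v_0}$ with data $\chi_n\psi^{\incone}$ on $\incone$, trivial data on $\C_{-n}$, and inhomogeneity $\chi_n f$. The assumption $\mathcal{X}^4\lesssim 1$ together with a standard continuity argument based on \cref{lemma:en:metric_perturbed_energy_estimate} (and \cref{prop:en:main} for the future region) shows, for $|u_0|$ large enough depending only on the size of the perturbations, that the local solutions supplied by \cref{thm:scat:local} extend to all of $\D^{-n,v_\infty}_{u_0,v_0}$ with a bound $\|\Ve r\phi_n\|_{\Hb^{\vec a;k}} \leq C$ independent of both $n$ and $v_\infty$. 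Passing to a subsequential weak-$*$ limit, and using Rellich--Kondrachov for strong convergence on compact subsets of $\Dopen$ (which suffices to pass to the limit in $P_g[\phi_n]\phi_n$ and $\mathcal{N}[\phi_n]$ by interpolation with the uniform high-regularity bound), yields a solution $\phi\in\Hb^{\vec a;k}(\D)$ of \cref{eq:scat:scattering_def} with $r\phi|_{\incone}=\psi^{\incone}$. The no-incoming-radiation condition \cref{eq:scat:scat_with_inc_condition} follows from the uniform $\sup_u \|v\partial_v \psi_n\|_{\Hb^{\vec a;0}(\C_u^-)}$ bound combined with $a_-\geq -1/2$ and $a_0<a_-$, forcing $\|\partial_v\psi\|_{L^2(\C_u^{v_\infty})}\to 0$ as $u\to -\infty$ by a weighted H\"older estimate in $v$.

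Uniqueness in part a proceeds by letting $\tilde\phi:=\phi_1-\phi_2$ be the difference of two scattering solutions in the stated a priori class. \cref{lemma:en:short_range_computations}\,b recasts $\mathcal{P}[\phi_1]-\mathcal{P}[\phi_2]$ as a short-range perturbation $\tilde{\mathcal{P}}$ of $\tilde\phi$, with coefficients depending on $\phi_1,\phi_2$ and uniformly bounded by the assumed \emph{a priori} control. Since $\tilde\phi$ has trivial data on $\incone$ and satisfies the no-incoming-radiation condition, applying \cref{lemma:en:past_estimate_L2_perturbed} in $\D^-$ and the corresponding future estimate derived from \cref{prop:en:futureestimate} yields $\tilde\phi\equiv 0$; the choice of $|u_0|$ large enough, as dictated by the size of the a priori bound on $\phi_1,\phi_2$, closes the estimate. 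For part b, in the purely linear setting with $\mathcal{N}=0$ and $n^{P_g}=1$, the equation for $\tilde\phi$ is itself linear with short-range coefficients, so a plain $T$-energy estimate at $H^1_{\mathrm{loc}}$ regularity gives uniqueness without any weighted norm hypothesis. Part c is obtained by rerunning the limiting argument at higher regularity, invoking \cref{lemma:en:metric_perturbed_energy_estimate} with arbitrary $k\geq 4$ and the assumed data regularity \cref{eq:scat:k-data}. Part d is proved analogously by substituting the $\Hbt$-variant of the energy estimate (\cref{cor:en:perturbed_energy_estimate}\,b), which applies precisely because all perturbations and the inhomogeneity are assumed compatible with the no-incoming-radiation condition.

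The main obstacle I expect is the quasilinear limit: weak-$*$ convergence of $\phi_n$ is automatic, but one must ensure $P_g[\phi_n]\phi_n \rightharpoonup P_g[\phi]\phi$ in a distributional sense compatible with the weighted spaces, and similarly for the semilinear terms. Here the resolution is to combine the uniform high-regularity bound (which gives, via Sobolev embedding, uniform $L^\infty$ control of low-order derivatives with the claimed decay rates) with strong $L^2_{\mathrm{loc}}$ convergence from Rellich--Kondrachov; the strict admissibility gap $\delta(\mathcal P)>0$ built into \cref{def:en:short_range} is precisely what makes the product estimates of \cref{lemma:scat:sobolev} pass to the limit. A subtler point is checking that the weak limit genuinely satisfies \cref{eq:scat:scat_with_inc_condition} rather than merely a weaker integrated version; this reduces to verifying that the sup-over-$u$ bound on $\|v\partial_v\psi_n\|_{\Hb^{\vec a;0}(\C_u^-)}$ survives the limit, which it does because the bound is uniform and the trace map onto fixed $\C_u$ is continuous.
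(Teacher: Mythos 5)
Your overall architecture is sound and overlaps substantially with the paper's: cutoff the data, apply \cref{thm:scat:local} plus the uniform energy estimates to get uniform bounds, pass to a limit, and deduce uniqueness from \cref{lemma:en:short_range_computations}~b) and \cref{lemma:en:past_estimate_L2_perturbed}. The main methodological departure is in Step~2: the paper shows $\{\psi^{u_\infty}\}$ is \emph{Cauchy} in $\Hb^{\vec a;k}(\D)$ by writing the difference $\psi^{u_1}-\psi^{u_2}$ as a solution of a linearised short-range equation with vanishing data and vanishing inhomogeneity (cf.~\cref{eq:scat:scatproofdifference1,eq:scat:linear_splitting,eq:scat:estimate3}), thereby obtaining strong convergence directly in the weighted global space; you instead extract a weak-$*$ limit and upgrade it by Rellich--Kondrachov on compacta. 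Both are viable, though your route needs slightly more bookkeeping to verify that the limit inherits the correct trace on $\incone$ and the full $\Hb^{\vec a;k}(\D)$ membership (lower semicontinuity only gives the bound; the Cauchy argument gives genuine convergence in the weighted space).

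There is, however, a genuine gap in your verification of the no-incoming-radiation condition \cref{eq:scat:scat_with_inc_condition}. You claim that the uniform bound $\sup_u\|v\partial_v\psi_n\|_{\Hb^{\vec a;0}(\C_u^-)}\lesssim 1$ together with $a_-\geq -1/2$ and a ``weighted H\"older estimate in $v$'' forces $\|\partial_v\psi\|_{L^2(\C_u^{v_\infty})}\to 0$ as $u\to-\infty$. Unwinding the weights: on $\C_u\cap\{v\leq v_\infty\}$ (which lies in $\C_u^-$ for $|u|$ large) one has $\rho_-\sim v/|u|$, so the bound reads $\int_{v_0}^{v_\infty}(|u|/v)^{2a_-}v^{2a_0}\,|v\partial_v\psi|^2\,\tfrac{dv}{v}\lesssim 1$, giving only $\|\partial_v\psi\|_{L^2(\C_u^{v_\infty})}^2\lesssim_{v_\infty,a_0} |u|^{-2a_-}$. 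This decays as $u\to-\infty$ only if $a_->0$; for $a_-\in[-1/2,0)$ the bound \emph{degrades}, and the theorem explicitly permits such $a_-$ in the no-incoming-radiation case. The paper circumvents this via the a posteriori estimate \cref{eq:scat:L2_estimate}, which controls $\|\partial_v\psi^{u_\infty}\|_{L^2(\C_{u_1}^{v_1})}$ by $\|\psi^{\incone}\|_{\Hb^{a_-;1}(\incone^{u_\infty,u_1})}+|u_1|^{-\delta}$; the first term vanishes as $u_1\to-\infty$ because it is the \emph{tail} of a convergent integral (not because of a weight gain), and the second term carries the gap $\delta>0$ of the perturbations. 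You will need to replace the weighted-H\"older heuristic by an estimate of this type, i.e.~one that isolates the tail contribution of the initial data and the explicit $|u|^{-\delta}$ decay furnished by the short-range structure of $\mathcal{P}$ and $f$, in order to cover the full admissible range $a_-\geq-1/2$.
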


	\begin{proof}
		Without loss of generality, we take $\psi^{\incone},f$ smooth, otherwise the result follows using approximations.
        By taking $u_0$ sufficiently large and by using \cref{eq:scat:data_incoming}, we may assume that $\norm{\Vb^k\phi|_{\incone}}_{\Hb^{a_-;1}(\incone)}\leq1$ for $k=4$. 
        We now first construct a sequence of finite solutions satisfying uniform bounds, then show that this sequence attain a limit, and finally show that this limit is the unique solution.
    
        % %$\norm{\psi^{\incone}}_{\Hb^{a_-;2k+2}(\incone)},\norm{rf}_{\Hb^{\vec{a}^f;k}(\D)}\leq1$, with $k= 4$.
       % We stop using the assumptions for $T^jf$ and %$\psi^{\incone}$ and work directly with the assumption that $\norm{\Vb^k\phi|_{\incone}}_{\Hb^{a_-;1}(\incone)}\leq1$, as these statements can be recovered using \cref{lemma:en:recovering_initial_data}.
        We also take $-u_0$ to be large enough so that \cref{lemma:en:metric_perturbed_energy_estimate} applies. (Strictly speaking, we pose the requirements on the sequence $\phi_{u_{\infty}}$ introduced below.)

		\textit{a)} \textit{Step 1: Uniform bounds.} 
        Let $\tilde{\chi}$ be a smooth cutoff satisfying $\tilde{\chi}(x)|_{x<0}=1,\tilde{\chi}(x)|_{x>2}=0$. Then, for any $u_{\infty}<u_0$, $\chi_{u_\infty}:=\tilde\chi(4\abs{u}/\abs{u_\infty}-1)\in\A{\phg}^{\vec{0}}(\D)$ localises to $u>u_\infty/4$, and we let $\phi^{u_\infty}$ be the maximal local characteristic solution to 
		\begin{nalign}\label{eq:scat:finite_scattering}
			&\Box\phi^{u_\infty}=\chi_{u_\infty}f+\mathcal{P}[\phi^{u_\infty}],\\
			&\psi^{u_\infty}|_{\incone}=\psi^{\incone}\chi_{u_\infty},\qquad \psi^{u_\infty}|_{\outconeFar}=0.
		\end{nalign}
        We claim that $\phi^{u_\infty}$ is defined on $\D^{u_\infty,\infty}_{u_0,v_0}$.
        
        We first iterate in the $u$-direction:
        By \cref{thm:scat:local}, we know that a solution exists in $\D^{u_\infty,v_0+\epsilon}_{u_\infty+\epsilon,v_0}$ for $\epsilon$ sufficiently small.
        If $u_\infty+\epsilon<u_0$, we use \cref{lemma:en:metric_perturbed_energy_estimate}  to get 
        \begin{equation}\label{eq:scat:local_bootstrap}
			\sup_{u\in (u_\infty,u_0)}\norm{(v\pv,\rho_{\scri}^{1/2}\sl)\Vb^k \psi^{u_{\infty}}}_{\Hb^{\vec{a};0}(\outcone{}^{v_0,v_0+\epsilon})}+\norm{\Ve \psi^{u_\infty}}_{\Hb^{\vec{a};k}(\mathcal{D}^{u_\infty,v_0+\epsilon}_{u_\infty+\epsilon,v_0})}\lesssim 1.
		\end{equation}
        Eq.~\cref{eq:scat:local_bootstrap} is sufficient to establish existence up to $u_\infty+2\epsilon$ (provided $u_{\infty}+2\epsilon<u_0$).
        Indeed, we can iterate the above to conclude that $\phi^{u_\infty}$ exists in $\D^{u_\infty,v_0+\epsilon}_{u_0,v_0}$.
        We proceed similarly for the induction in $v$.
        Assume we already know that $\phi^{u_\infty}$ exists in $\D_1\cup\D_2$ for $\D_1=\D^{u_\infty,v_1}_{u_0,v_0}$ and $\D_2=\D^{u_1,v_1+\epsilon}_{u_0,v_1}$ for $v_1>v_0$ and $u_1\in(u_\infty,u_0)$.
        Using energy estimate from \cref{lemma:en:metric_perturbed_energy_estimate} in both regions, we get
        \begin{equation}
            \sup_{u\in (u_\infty,u_1)}\norm{(v\pv,\rho_{\scri}^{1/2}\sl)\Vb^k r\phi^{u_{\infty}}}_{\Hb^{\vec{a};0}(\outcone{}^{v_0,v_1+\epsilon})},\sup_{v\in(v_0,v_1)}\norm{(u\pu,\rho_{\scri}^{1/2}\sl) \Vb^k r\phi}_{\Hb^{\vec{a};0}(\Cbar_v)}\lesssim 1.
        \end{equation}
        Using \cref{thm:scat:local}, we can extend $u_1\mapsto u_1+\epsilon$ as long as $u_1+\epsilon\leq u_0$.
        Similarly, we can also extend the solution to $\D^{u_\infty,v_1+\epsilon}_{u_0,v_0}$ and start anew.

        A posteriori, we can apply \cref{prop:en:pastestimate} in the region $\D^{u_\infty,v_1}_{u_1,v_0}$ for $u_1<u_0$ with $k=0$ and \cref{eq:scat:local_bootstrap} to get that for some $\delta>0$ and for any fixed $v_1<\infty$, we have
        \begin{equation}\label{eq:scat:L2_estimate}
            \norm{\pv \psi^{u_\infty}}_{L^2(\outcone{1}^{v_1})}\lesssim_{v_1} \norm{\psi^{\incone}}_{\Hb^{a_-;1}(\incone^{u_\infty,u_1})}+\norm{\mathcal{P}[\phi^{u_\infty}]+\chi_{u_\infty}f}_{\Hb^{a_-+1+2\delta;0}(\D^{u_\infty,v_1}_{u_1,v_0})}\lesssim\norm{\psi^{\incone}}_{\Hb^{a_-;1}(\incone^{u_\infty,u_1})}+ \abs{u_1}^{-\delta}
        \end{equation}
        where we did not indicate the weights toward $I^0$ and $\scrip$, as the estimate may depend on $v_1$ anyway.
        Furthermore, we used $L^2(\C^{v_1}_{u_1})$, without any extra weight (e.g. $L^2_\b$) for the same reason. We will later on use this estimate to check that \cref{eq:scat:scat_with_inc_condition} holds.
  
		\textit{Step 2:  Taking the limit.}
        Since $f$ and $\psi^{\incone}$ smooth, and, by \cref{lemma:en:metric_perturbed_energy_estimate}, regularity is propagated, we know that \cref{eq:scat:local_bootstrap} holds for $k\mapsto k+1$ and the right hand side replaced by $<\infty$.
		We now prove the convergence of $\phi^{u_\infty}$ in $\Hb^{\vec{a};k}(\D^{-\infty,\infty}_{u_0,v_0})$ for any $k\geq 4$.
        Let us set $\psi^{1,2}=\psi^{u_1}-\psi^{u_2}$ for $u_2<u_1<u_0$ arbitrary, where the $\psi^{u_i}$ are defined as in \cref{eq:scat:finite_scattering} for $u\geq u_i$ and where we extend $\phi^{u_i}|_{u<u_i}=0$ for $i\in\{1,2\}$.  Note that $\psi^{1,2}$ is the solution to the characteristic initial value problem
		\begin{nalign}\label{eq:scat:scatproofdifference1}
			&(\Box-V-P_g[\phi_1])\phi^{1,2}=f^{1,2}+\mathcal{N}[\phi^{u_1}]-\mathcal{N}[\phi^{u_2}]+(P_g[\phi_1]-P_g[\phi_2])\phi_2,\qquad f^{1,2}:=\big(\chi_{u_1}-\chi_{u_2}\big)f\\
			&\psi^{1,2}|_{\incone}=\psi^{\incone}\big(\chi_{u_1}-\chi_{u_2}\big),\qquad r\phi|_{\outcone{2}}=0.
		\end{nalign}
		Using multi-linearity of $\mathcal{N},P_g$, there exists first order differential operators $(V_{\mathcal{N}},V_{P_g})$ with coefficients depending on up to two derivatives of $\phi^{u_1},\phi^{u_2}$ such that we can write 
        \begin{nalign}\label{eq:scat:linear_splitting}
            \mathcal{N}[\phi^{u_1}]-\mathcal{N}[\phi^{u_2}]=V_{\mathcal{N}}\phi^{1,2}\\
            (P_g[\phi_1]-P_g[\phi_2])\phi_2)=V_{P_g}\phi^{1,2}.
        \end{nalign}
        Using the uniform estimates \cref{eq:scat:local_bootstrap} with $k+1$ for $\phi^{u_1},\phi^{u_2}$ and \cref{rem:en:regularity_of_coefficients}, we will show that $V_{\mathcal{N}},V_{P_g}$ are short range, i.e. for some strongly admissible $\vec{a}^{\mathcal{P}}$ and $\delta>0$
		\begin{equation}\label{eq:scat:estimate3}
			\norm{r(\mathcal{N}[\phi^{u_1}]-\mathcal{N}[\phi^{u_2}],r(P_g[\phi_1]-P_g[\phi_2])\phi_2)}_{\Hb^{\vec{a}^{\mathcal{P}(\D)}+(\delta,\delta,\delta),k}}\lesssim\norm{\Ve r\phi^{1,2}}_{\Hb^{\vec{a};k}(\D)}.
		\end{equation}
        We prove this for $\mathcal{N}$, for $P_g$ one proceeds similarly.
        Let's write $\mathcal{N}$ as a linear combination of terms 
        \begin{equation}
            \frac{1}{r}f_{\mathcal{N}}(X_1r\phi)(X_2r\phi)\cdots(X_n\phi)
        \end{equation}
        for $X_i\in\Diff^1_{\b}(\D)$ and analyse each individually.
        Then, we can express
        \begin{nalign}
            \mathcal{N}[\phi^{u_1}]-\mathcal{N}[\phi^{u_2}]=\frac{1}{r}f_{\mathcal{N}}\Big((X_1r\phi^{1,2})(X_2r\phi^{u_1})\cdots(X_nr\phi^{u_1})+(X_1r\phi^{u_2})(X_2r\phi^{1,2})(X_3r\phi^{u_1})\cdots(X_nr\phi^{u_1})\\
            +\dots+(X_1r\phi^{u_2})(X_2r\phi^{u_2})\cdots(X_{n-1}r\phi^{u_2})(X_nr\phi^{1,2})\Big)
        \end{nalign}
        Using the apriori control from \cref{eq:scat:local_bootstrap} and the short range nature of $\mathcal{N}$ yields \cref{eq:scat:estimate3}.
  
		We also note that $\norm{rf^{1,2}}_{\Hb^{\vec{a}^f;k}},\norm{\Vb^k r\phi^{1,2}|_{\incone}}_{\Hb^{a_-;1}}\to0$ as $u_1,u_2\to-\infty$.
		Therefore, using \cref{lemma:en:metric_perturbed_energy_estimate} again, we get that $\phi^{1,2}$ is a Cauchy sequence in $\Hb^{a;k}$, thus $\phi^{u_\infty}$ converges to a limit $\phi$. Vanishing of the radiation field for $\phi$ follows from \cref{eq:scat:L2_estimate}.

		\textit{Step 3: Local uniqueness.} 
        We restrict attention to $\D^{-\infty,v_1}_{u_0,v_0}$ for this step, as uniqueness for any $v_1<\infty$ implies uniqueness in the full region.
        We also drop weights towards $I^0,\scrip$ and allow the inequalities to depend on $v_1$.
        Assume that $\phi^1,\phi^2$ both are solutions to the scattering problem. 
        Then $\phi^{1,2}=\phi^1-\phi^2$ solves
		\begin{nalign}
			(\Box-V-P_g[\phi^1])\phi^{1,2}=\mathcal{N}[\phi^1]-\mathcal{N}[\phi^2]+(P_g[\phi^1]-P_g[\phi^2])\phi^2,\quad r\phi|_{\incone}=0, \quad \norm{\partial_vr\phi^{1,2}}_{L^2(\outconeFar)}\to0.
		\end{nalign}
		Using the control over $\phi^1,\phi^2$ provided by $\Ve \psi^i\in\Hb^{a;4}(\D\cap[v_0,v_\infty))$, we get that $\phi^{1,2}$ satisfies a linear equation with some short range $\bar{V},\bar{P}_g$. In particular
		\begin{equation}
			\norm{r\big(\mathcal{N}[\phi^{1}]-\mathcal{N}[\phi^2],(P_g[\phi^1]-P_g[\phi^2])\phi_2\big)}_{\Hb^{a+1+\delta;0}(\D^{u_\infty,v_1}_{u_1,v_0})}\lesssim\norm{\Ve r\phi^{1,2}}_{\Hb^{a;0}(\D^{u_\infty,v_1}_{u_1,v_0})}.
		\end{equation}
         Using \cref{lemma:en:past_estimate_L2_perturbed} and \cref{rem:en:regularity_of_coefficients}, there exists $-u_1$ sufficiently large, that we already have
		\begin{equation}\label{eq:scat:proof_incoming}
			\norm{\Ve r\phi^{1,2}}_{\Hb^{a;0}(\D^{u_\infty,v_1}_{u_1,v_0})}\lesssim\norm{\pv r\phi^{1,2}|_{\outconeFar}}_{L^2(\outconeFar)}+\frac{1}{\abs{u_\infty}^{1/2}}\norm{\sl r\phi^{1,2}|_{\outconeFar}}_{L^2(\outconeFar)}.
		\end{equation}
		We conclude that $r\phi^{1,2}=0$ in $\D^{-\infty,v_1}_{u_1,v_0}$. Extending the uniqueness to $\D^{-\infty,v_1}_{u_0,v_0}$ follows from local well-posedness.
        Note, that for a linear problem, we need not use $\Ve\psi^i\in\Hb^{a;4}(\D\cap[v_0,v_\infty))$ to obtain short range perturbed wav equation for $\phi^{1,2}$, therefore the assumption is not necessary.

		\textit{c)}
		We construct a solution $\phi'$, by including the higher regularity norms in the space in which we take the limit and use that the region in which \cref{eq:en:perturbed_energy_estimate} holds does not depend on $k$.
		As we have uniqueness, we conclude that the $\phi=\phi'$. 

        \textit{d)}
		To get \cref{eq:scat:higher_no_incoming_estimate}, we instead use \cref{eq:en:perturbed_energy_estimate_no_incoming}.
	\end{proof}

    \begin{rem}[Scattering theory for no incoming radiation compatible problems that are not short-range]
        As we already discussed in \cref{rem:en:no-incoming_vs_short-range}, compatibility with no incoming radiation for $\mathcal{N}$ does not imply short-range.
        \cref{thm:scat:scat_general} only yields a scattering theory for short-range pertubations and yields $\Hbt$ regularity when $\mathcal{P}$ is in the intersection with no incoming radiation and short-range.
        It is not difficult to see that the estimates from \cref{lemma:en:metric_perturbed_energy_estimate,lemma:en:recovering_initial_data} allow one to mimic the proof of \cref{thm:scat:scat_general} to obtain scattering result for only no incoming radiation $\mathcal{P}$.
        We pursue no such improvements here.
    \end{rem}

    As already mentioned in \cref{rem:en:shortrange_admissible_weights}, we can enlarge the perturbations for which scattering solutions exist in $\D$:

    \begin{defi}\label{def:scat:extended_short_range}
        Use the notation of \cref{def:en:short_range}.
        Fix admissible weight $\vec{a}$.
        We say that $\vec{a}^f$ is extended admissible if
        $\vec{a}^f\geq\vec{a}+(1,2,1)$ with $a_-^f>1$.
        Similarly, we say the perturbations $\bullet\in\{P,\mathcal{N},V\}$ are extended short range, if they satisfy
        \begin{equation}\label{eq:scat:minimal_scattering_assumption}
            (1,2,1)+\vec{a}<\vec{a}^{\bullet,p},\quad a^{\bullet,p}_->1,\quad\text{where }
            \vec{a}^{\bullet,p}=\vec{a}^\bullet+n^\bullet\vec{a}.
        \end{equation}
        The \emph{gap} for these is $\delta=\min(\vec{a}^{\bullet,p}-(1,2,1)-\vec{a})$.
    \end{defi}

    \begin{cor}\label{cor:scat:enlarged_admissible_set}
        Let $\vec{a}$ be admissible, $\vec{a}^f$ extended admissible and $\mathcal{P}$ be extended short range perturbations with weights~$\vec{a}^{\bullet,p}$ as in \cref{eq:scat:minimal_scattering_assumption}.
        There exists $k$ sufficiently large, depending on $\vec{a},\vec{a}^f,\vec{a}^{\bullet,p},\delta$, such that the following holds:
        Let
        $\psi^{\incone}\in\Hb^{a_-;2k+2}(\incone), rf\in\Hb^{\vec{a}^f;k}(\D),T^j(rf)\in\Hb^{a_{-}^f;2(k-j)}(\incone)$ for $j\leq k$ such that $\mathcal{X}^{k}\leq 1$ as defined in \cref{eq:en:perturbed_energy_estimate}. Then there exists $-u_0$ sufficiently large, depending only on $\mathcal{P}$, such that there exists a unique scattering solution $\Ve r\phi\in\Hb^{\vec{a};4}$ to \cref{eq:scat:scattering_def}, where uniqueness holds in the same class as in \cref{thm:scat:scat_general}.
    \end{cor}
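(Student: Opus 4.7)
The plan is to mimic the argument of \cref{thm:scat:scat_general} essentially verbatim, with the single difference that, whenever one previously invoked the admissibility condition \cref{eq:en:admissible_inhom1} in order to apply \cref{item:en:pastestimate_inhom}, one now instead invokes \cref{item:en:pastestimate_inhom_0th}, at the cost of one derivative each time. Indeed, observe that \cref{def:scat:extended_short_range} differs from the admissibility of \cref{def:en:admissible:f} exactly by the inequality \cref{eq:en:admissible_inhom1}, which is precisely the condition sidestepped by \cref{item:en:pastestimate_inhom_0th}.

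Concretely, I would first construct a sequence $\phi^{u_\infty}$ of finite-region solutions to a cutoff version of \cref{eq:scat:scattering_def} as in Step 1 of the proof of \cref{thm:scat:scat_general}. To obtain uniform-in-$u_\infty$ bounds on this sequence, I would adapt the energy estimate of \cref{lemma:en:metric_perturbed_energy_estimate}: the only place the strong admissibility of $\vec{a}^{\mathcal{P}}$ enters is in the application of \cref{prop:en:main} after invoking \cref{cor:en:VN_estimates}. Under the weakened hypothesis \cref{eq:scat:minimal_scattering_assumption}, the resulting inhomogeneity $r\mathcal{P}[\phi]$ still lies in $\Hb^{\vec{a}+(1,2,1)+\vec{\delta};k-1}(\D)$ (with $\vec{\delta}>0$ the gap), so \cref{item:en:pastestimate_inhom_0th} closes the estimate at a loss of one derivative. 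Iterating this through the commuted versions and closing the nonlinear bootstrap exactly as in the proofs of \cref{cor:en:perturbed_energy_estimate,lemma:en:metric_perturbed_energy_estimate} then yields uniform bounds in a space of regularity $k-C$ for some $C=C(\delta)$.

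To handle the extra loss coming from the short range condition \cref{eq:en:admissible_inhom1} not being satisfied (which is most cleanly avoided by the iterative splitting used in the proof of \cref{cor:en:longrange1}, namely around \cref{eq:savedhehe}), I would write $\phi=\phi_{(0)}+\phi_{(1)}+\dots+\phi_{(N-1)}+\phi_\Delta$, where $\phi_{(0)}$ carries the original data and inhomogeneity and solves the equation with $\mathcal{P}$ dropped, each subsequent $\phi_{(i)}$ has trivial data and picks up $\mathcal{P}$ applied to the already-constructed pieces as inhomogeneity, and $\phi_\Delta$ absorbs the remainder. Each step gains an additional $\vec{\delta}$ of decay, so after $N=\lceil (3/2-1)/\delta\rceil+1$ iterations the inhomogeneity driving $\phi_\Delta$ satisfies $a_-^f>3/2$ and is thus strongly admissible in the sense of \cref{def:en:admissible:f}; at that point \cref{thm:scat:scat_general} applies directly to $\phi_\Delta$. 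The price paid is a finite, $\delta$-dependent number of lost derivatives, which fixes the minimal $k$.

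Taking $u_\infty\to-\infty$ and proving convergence of the sequence in $\Hb^{\vec{a};4}(\D)$ proceeds exactly as in Steps 2--3 of the proof of \cref{thm:scat:scat_general}, since the difference $\phi^{u_1}-\phi^{u_2}$ satisfies a linear short-range equation in the same extended class. Uniqueness in the specified a priori class likewise carries over without modification. The main obstacle is simply bookkeeping the precise number of derivatives lost in the iteration so that the choice of $k$ can be made explicit in terms of $\vec a$, $\vec a^f$, $\vec a^{\bullet,p}$ and $\delta$; no genuinely new analytic difficulty arises beyond what is already present in \cref{sec:en}.
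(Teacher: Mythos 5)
Your proposal lands on the paper's central idea—an iterative decomposition $\phi=\phi^{(0)}+\phi^{(1)}+\dots+\phi_\Delta$ in which each successive term gains the gap $\delta$ in decay towards $\scrim$, so that after finitely many steps the residual is driven by an inhomogeneity with $a_-^f>3/2$ and \cref{thm:scat:scat_general} applies directly. This is exactly the paper's strategy, and your derivative-count bookkeeping (roughly two derivatives per iterate) matches.

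There is, however, one genuinely different choice that matters. You take $\phi_{(0)}$ to solve the full Minkowskian wave equation $\Box_\eta\phi_{(0)}=f$ with the given data, and similarly each $\phi_{(j)}$ solves a wave equation with trivial data. The paper instead defines $\psi^{(0)}$ as the solution of the \emph{ODE} $-\pu\pv\psi^{(0)}=rf$, discarding not only $\mathcal{P}$ but also the angular Laplacian from the principal part, and likewise for the higher iterates. This distinction is not cosmetic: the ODE integration via \cref{corr:ODE:du_dv} carries no admissibility constraint at all and no derivative loss, whereas your wave-equation iterates need a scattering construction—which is precisely the kind of statement this corollary is trying to prove. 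Under merely \emph{extended} admissibility of $\vec a^f$, \cref{thm:scat:scat_general} does not directly produce $\phi_{(0)}$; one would first have to re-prove a variant of that theorem built on \cref{item:en:pastestimate_inhom_0th} rather than \cref{item:en:pastestimate_inhom}, which is more work and is what your paragraphs 1–2 gesture at without carrying through. The paper sidesteps this entirely: the $\psi^{(j)}$ are pure ODE objects, the Laplacian residual $\Dl\psi^{(N+1)}/r^2$ simply enters as an additional (now admissible) inhomogeneity for the remainder $\psi^\Delta$, and \cref{thm:scat:scat_general} is applied exactly once, at the end. Your route can be made to work, but you should commit to one of the two mechanisms you describe (iteration versus one-shot loss via \cref{item:en:pastestimate_inhom_0th}) rather than presenting them in parallel, and you would need to justify the construction of $\phi_{(0)}$ under extended admissibility before the iteration can start.
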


    \begin{rem}[Example]\label{rem:scat:enlarged}
        As a simple example of a perturbation not covered by \cref{thm:scat:scat_incoming} yet covered by \cref{cor:scat:enlarged_admissible_set} (restricting to $\D^-$), consider any semilinear perturbation with $rf=0$ and $a_-=1/4,\,n^{\mathcal{N}}=2,\,(a^{\mathcal{N}}_-,a^{\mathcal{N}}_0)=(3/4+\epsilon,2-\epsilon)$ with $\epsilon=0.01$.
        Clearly, this is extended short-range. However, it is not short-range for any $a_0$, since this would simultaneously require $a_0+2\leq 2a_0+(2-\epsilon)$ and $a_0\leq 1/4+2(3/4+\epsilon)$ (cf.~\cref{eq:en:admissible_inhom1}).
        
    \end{rem}
    \begin{rem}[No loss of derivatives for semi-linear perturbations]
        For short-range perturbations that are semilinear, then $k$ in \cref{cor:scat:enlarged_admissible_set} is exactly as in \cref{thm:scat:scat_general}, since we can remove the requirement \cref{eq:en:admissible_inhom1} as in \cref{eq:savedhehe}, cf.~\cref{rem:en:shortrange_admissible_weights}.
    \end{rem}
    \begin{proof}
        \emph{Existence:}
        We begin with constructing an ansatz such that we can apply \cref{thm:scat:scat_general} to the remainder term.
        Let $1>\delta>0$ be such that $\delta+(1,2,1)+\vec{a}<\vec{a}^{\bullet,p}$.
        Let's define $\psi^{(0)}\in \Hb^{\vec{a};k}(\D)$ via $-\pu\pv\psi^{(0)}=rf$ with data that of $\psi$ using \cref{corr:ODE:du_dv}.
        
        For $j\geq0$, we then inductively define $\psi^{(j+1)}$  with trivial data as solutions to
        \begin{equation}\label{eq:scat:proof1_enlarged}
            -\pu\pv\psi^{(j+1)}=-r^{-2}\Dl\psi^{(j)}+r\mathcal{P}[\phi^{(\leq j)}]-r\mathcal{P}[\phi^{(\leq j-1)}],\quad\text{where }\phi^{(\leq j)}:=\sum_{i=0}^j\phi^{(i)}\quad \text{and }\mathcal{P}[\phi^{(\leq -1)}]:=0.
        \end{equation}
        We inductively claim that $\psi^{(j+1)}\in\Hb^{a_-+\delta j,a_0,a_+;k-2j}(\D)$ and that $r\mathcal{P}[\phi^{(\leq j)}]-r\mathcal{P}[\phi^{(\leq j-1)}]\in\Hb^{a_-+\delta j+1,a_0+2,a_++1;k-2j}(\D)$.
        For $j=0$, this holds by an application of \cref{corr:ODE:du_dv}, since $r\mathcal{P}[\phi^{0}]\in \Hb^{\vec{a}+(1,2,1)+\vec{\delta}}(\D)$. 
        For higher $j$, we proceed inductively, in each step splitting the RHS according to $\phi^{(j)}=\phi^{(\leq j)}-\phi^{(\leq j-1)}$, and then decomposing e.g.~$r\mathcal{N}[\phi^{(\leq j)}]-r\mathcal{N}[\phi^{(\leq j-1)}]$ linearly in $\phi^{(k)}$ as in \cref{eq:scat:linear_splitting}. 
        
        Finally, we fix $N$ such that $a_-+N\delta+1+\delta>3/2$ (thus avoiding the condition \cref{eq:en:admissible_inhom}), write $\psi=\psi^{\Delta}-\psi^{(\leq N+1)}$, and note that $\psi^{\Delta}$ satisfies:
       \begin{nalign}\label{eq:scat:proof2_enlarged}
            P_\eta\psi^{\Delta}&=-\frac{\Dl \psi^{(N+1)}}{r^2}+
           r\mathcal{P}[\phi]-r\mathcal{P}[\phi^{(\leq N)}]\\&=-\frac{\Dl \psi^{(N+1)}}{r^2}+
           (r\mathcal{P}[\phi^{\Delta}+\phi^{(\leq N+1)}]-r\mathcal{P}[\phi^{(\leq N+1)}])+(r\mathcal{P}[\phi^{(\leq N+1)}]-r\mathcal{P}[\phi^{(\leq N)}]).
        \end{nalign}  
       The last term on the RHS is already controlled, and using \cref{lemma:en:short_range_computations} for the middle term, we may apply \cref{thm:scat:scat_general} to conclude that there exists a unique scattering solution $\psi^{\Delta}\in\Hb^{(1/2+,a_0,a_+);k-2N-2}(\D)$. 
        
        \emph{Uniqueness:}  For uniqueness, we proceed exactly as before.
        
    \end{proof}
	
	\subsection{Scattering theory with nontrivial incoming radiation}\label{sec:scat:incoming}
	We now prove the analogue of \cref{thm:scat:scat_incoming} in the case of incoming radiation:
	\begin{thm}\label{thm:scat:scat_incoming} Let $k\geq 4$, fix admissible weights $\vec{a}, \vec{a}^f$ with $a_-<0$ and prescribe scattering data and inhomogeneity $\psi^{\incone},f$ as in \cref{thm:scat:scat_general}.
 Furthermore, specify $\pv\psi^{\scrim}$ such that $\psi^{\scrim}\in \Hb^{a_0;k+2}(\scrim)$ satisfies $\norm{\psi^{\scrim}}_{\Hb^{a_0;4+2}(\scrim)}\leq 1$.
		Let $\mathcal{N},V,P_g$ be short-range perturbations. Then we have:
    \begin{enumerate}[label=\alph*)]
	\item 
		There exists $-u_0$ sufficiently large, depending only on $\mathcal{P}$, such that there exists a unique scattering solution $\Ve \psi\in \Hb^{a_-,a_0,a_+;k}(\D_{u_0,v_0})$ to \cref{eq:scat:scattering_def} with \cref{eq:scat:scat_with_inc_condition}. 
        Here, uniqueness is as in \cref{thm:scat:scat_general}.
        
		\item  Moreover, for any $k\geq 4$, if \cref{eq:scat:k-data} holds and $\psi^{\scrim}\in \rho_0^{a_-}\Hb^{k+2}(\scrim)$, then $\Ve\psi\in \Hb^{\vec{a};k}(\D)$. 
 
        \item As in \cref{cor:scat:enlarged_admissible_set}, we can weaken the requirement on $\vec{a}^f$ to \emph{extended admissibility} and the requirements on $V,\mathcal{N},P_g$ to merely be \emph{extended short-range perturbations}, provided the initial data are sufficiently regular.
  \end{enumerate}
	\end{thm}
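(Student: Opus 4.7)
The plan is to reduce to the no incoming radiation case treated in \cref{thm:scat:scat_general} by subtracting an explicit ansatz that carries the incoming radiation. Concretely, let $\chi\in \A{phg}^{\vec{0}}(\D)$ be a smooth cut-off that equals $1$ in a neighbourhood of $\scrim$ and vanishes in a neighbourhood of $\scrip\cup\outcone{0}$, and define
\begin{equation}
\psi_{\mathrm{ans}}(u,v,\omega):=\chi(u,v,\omega)\,\psi^{\scrim}(v,\omega),
\end{equation}
viewed as a function on $\D$. Since $\psi^{\scrim}\in \Hb^{a_0;k+2}(\scrim)$ with $a_0<a_-<0$, inspection in $\rho_-,\rho_0$ coordinates (using $\psi^\scrim$ depends only on $v$ and $\omega$) gives $\psi_{\mathrm{ans}}\in \Hb^{0,a_0,0;k+2}(\D)$, and $\psi_{\mathrm{ans}}$ is smooth in $r\pv$ near $\scrim$. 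A direct computation using \cref{lemma:notation:coordinates} together with the cut-off properties of $\chi$ shows
\begin{equation}
P_\eta\psi_{\mathrm{ans}}\in \rho_-\rho_0^2\Hb^{a_-^f,a_0^f-\epsilon,\infty;k}(\D)\cup \supp(\nabla \chi),
\end{equation}
where the crucial gain $\rho_-$ arises because $\pu\psi^\scrim\equiv 0$. In particular $P_\eta\psi_{\mathrm{ans}}$ and the contribution from the cut-off (which is supported away from $\scrim$) are admissible inhomogeneities relative to $\vec{a}$, after choosing $a_-<0$.

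Write $\psi = \psi_{\mathrm{ans}} + \tilde\psi$, so that $\tilde\psi$ should satisfy
\begin{equation}
P_\eta\tilde\psi = rf - P_\eta\psi_{\mathrm{ans}} + r\mathcal{P}[\phi_{\mathrm{ans}}+\tilde\phi] =: \tilde f + \tilde{\mathcal{P}}[\tilde\phi]\tilde\phi + \tilde{\mathcal{N}}[\tilde\phi] + \tilde V\tilde\phi,
\end{equation}
where we have used \cref{lemma:en:short_range_computations}(b) to reorganise $\mathcal{P}[\phi_{\mathrm{ans}}+\tilde\phi]-\mathcal{P}[\phi_{\mathrm{ans}}]$ as a sum of short-range perturbations acting on $\tilde\phi$ with modified (but still short-range) coefficients, the ``background'' piece $\mathcal{P}[\phi_{\mathrm{ans}}]$ and $-P_\eta\psi_{\mathrm{ans}}$ being absorbed into the new inhomogeneity $\tilde f$. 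The key verification here is that each of these terms is admissible inhomogeneous relative to $\vec{a}$. The first term $rf$ is so by hypothesis; $P_\eta\psi_{\mathrm{ans}}$ is controlled as above; for $\mathcal{P}[\phi_{\mathrm{ans}}]$, one checks (using $\psi_{\mathrm{ans}}\in\Hb^{0,a_0,0}$ and the short-range conditions of \cref{def:en:short_range}) that the corresponding products inherit the required weights, exactly because the short-range hypothesis was defined relative to $\vec{a}$ with $a_-$ as prescribed. Hence \cref{thm:scat:scat_general} applies to $\tilde\psi$ (with trivial radiation data on $\scrim$ and data $\psi^\incone-\psi_{\mathrm{ans}}|_{\incone}$ on $\incone$), producing a unique scattering solution $\tilde\psi$ with $\Ve\tilde\psi\in\Hb^{\vec a;k}(\D)$ provided $|u_0|$ is taken sufficiently large. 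Setting $\psi:=\psi_{\mathrm{ans}}+\tilde\psi$ yields the desired existence, and the improved regularity statement (b) follows by the same bookkeeping at higher $k$.

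For uniqueness, given two scattering solutions $\phi^1,\phi^2$ in the allowed class, their difference $\phi^{1,2}$ satisfies a linear equation of the type studied in \cref{lemma:en:past_estimate_L2_perturbed}, with vanishing data on $\incone$ and $\|\pv(r\phi^{1,2})\|_{L^2(\outconeFar)}\to 0$ along $\outconeFar$. Applying that lemma in $\D^{-\infty,v_1}_{u_1,v_0}$ for $-u_1$ sufficiently large and arbitrary $v_1<\infty$ forces $\phi^{1,2}\equiv 0$ in a neighbourhood of $\scrim$, and local well-posedness then extends this to all of $\D_{u_0,v_0}$. Finally, to prove part (c), we repeat the iterative construction of \cref{cor:scat:enlarged_admissible_set}: one first builds an approximate expansion $\sum_{j=0}^N \tilde\psi^{(j)}$ that absorbs the potentially insufficient $a_-^f$-weight of the inhomogeneity after the subtraction of $\psi_{\mathrm{ans}}$, until the remainder has decay strong enough that \cref{thm:scat:scat_general}(a) applies directly. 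The main obstacle in the whole argument is the bookkeeping verifying that the subtraction of $\psi_{\mathrm{ans}}$ produces an admissible inhomogeneity and that the modified perturbations $\tilde{\mathcal{P}}$ remain short-range relative to $\vec{a}$—this is precisely where the hypothesis $a_-<0$ enters.
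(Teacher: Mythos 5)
Your argument is correct and follows essentially the same route as the paper's proof: subtract the ansatz $\psi_{\mathrm{ans}}=\chi\psi^{\scrim}$, reorganise $\mathcal{P}[\phi_{\mathrm{ans}}+\tilde\phi]-\mathcal{P}[\phi_{\mathrm{ans}}]$ via \cref{lemma:en:short_range_computations}(b), and apply \cref{thm:scat:scat_general} to the no-incoming-radiation remainder, with uniqueness handled as in that theorem since the difference of two solutions has trivial data on $\scrim$. One small bookkeeping slip: $\psi_{\mathrm{ans}}\notin\Hb^{0,a_0,0;k+2}(\D)$ (a function bounded but not decaying towards $\scrim$ fails the $L^2_{\b}$ integrability against $\rho_-^{0}\,\tfrac{\dd\rho_-}{\rho_-}$); the membership your argument actually needs, and the one the paper records, is $\psi_{\mathrm{ans}}\in\Hb^{\vec{a};k+2}(\D)$, which holds precisely because $a_-<0$.
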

    \begin{rem}\label{rem:scat:derivative_loss_incoming}
       While \cref{thm:scat:scat_incoming} requires $k+1$ extra derivatives on $\incone$ (namely $2k+2$, though recall \cref{rem:en:slab_data_recovery}), it only requires $1$ extra derivative along $\scrim$.
        We further expect that this latter loss of one derivative is merely a consequence of our 2-step proof below, and that it can be removed by using an upgraded version of \cref{prop:scat:energy_incoming_rad}.
    \end{rem}
	\begin{proof}
        \textit{a)} \emph{Existence.}
        The idea is to subtract the incoming radiation from the solution so that we can get a new solution with no incoming radiation, where the actual incoming radiation is prescribed in the inhomogeneity:
        We write $\psi=\psi^{(0)}+\psi^{(1)}$, with $\psi^{(0)}(u,v,\omega)=\psi^{\scrim}(v,\omega)\chi$ for a cutoff function localising to $\D^-$.
        Clearly $\norm{\psi^{(0)}}_{\Hb^{\vec{a};k+2}}\lesssim1$ for $k=4$.
        Then, we use \cref{lemma:en:short_range_computations} to get that $\psi^{(1)}$ satisfies a wave equation with short range perturbations depending on $\psi^{(0)}$.
        We now apply \cref{thm:scat:scat_general} to get a scattering solution for $\psi^{(1)}$.
        
        \emph{Uniqueness:}
        This follows  as in the proof of \cref{thm:scat:scat_general}, since the difference $\phi^{1,2}$ of two scattering solutions $\phi^1,\phi^2$ has trivial data on $\scrim$.

        The statements \textit{b)} and  \textit{c)} then follow exactly as in the case of no incoming radiation.
	\end{proof}

    \subsection{Scattering theory with weakly decaying or growing data}\label{sec:scat:weak_decay}
    We now discuss two cases where scattering solutions still exist even if the initial data decay weakly or grow.
        \subsubsection{Scattering with weak decay and "no incoming radiation"}
		If we  impose trivial data at $\scrim$, then we can still make sense of scattering solutions even if the data along $\incone$ diverge, although at a loss of regularity. Of course, in this setting, the previous version of no incoming radiation is  no longer useful as $\pv\psi$ will generally not even attain a limit at $\scrim$ anymore. We therefore introduce the following 
		\begin{defi}\label{def:scat:weak_decay}
        
            Fix $m\in\N$, $\vec{a}$ with $a_+<a_0<a_-$, $-m+1/2<a_-$ and $\vec{a}^f\geq\vec{a}+(1,2,1)$.
            Let $\{V,\mathcal{N},P_g\}$ be perturbations with weights as in \cref{lemma:en:no_incoming_perturbed_en_estimate}.
            Given smooth data $\psi^{\incone}\in \Hb^{a_-}(\incone)$, inhomogeneity $rf\in\Hbt^{\vec{a}^f}(\D)$, and also prescribing for all $j\leq m-1$ and a fixed $u_1<u_0$ the functions $\psi^{\incone,u_1,j}\in H^{\infty}(S^2)$, we call $\phi$ the scattering solution to \cref{eq:scat:scattering_def} with \textbf{no incoming radiation (at order $m$)} corresponding to these data if $\phi$ solves \cref{eq:scat:scattering_def} and if for all $1\leq j\leq m-1$:
            	\begin{equation}
				T^j\psi|_{\incone\cap\outcone{1}}=\psi^{\incone,u_1,j},\quad \psi|_{\incone}=\psi^{\incone},\quad\text{and if } \forall v_\infty>v_0\text{ we have }\lim_{u\to-\infty} \norm{\pv^m(r\phi)}_{L^2(C^{v_{\infty}}_{u})}\to 0.
			\end{equation}
		\end{defi}

        \begin{rem}\label{rem:scat:slab_scattering_with_weak_decay}
            For \cref{def:scat:weak_decay}, we need to specify transversal derivatives of $\phi$ at some fixed finite sphere to be able to construct unique scattering solutions admitting a given data. Note that this is exactly what happens in the case of the negative spin Teukolsky equations, see \cite{kehrberger_case_2024} and \cref{sec:Sch} for further details.
            Note that if we specified $\phi$ in an initial slab  $v\in(v_0,v_0+\delta)$ instead of along $\incone$, then we wouldn't need to specify these extra derivatives.% would remove this extra data requirement.
        \end{rem}
  
		\begin{thm}\label{thm:scat:weak_decay}
			Fix  $k,m,\vec{a},\vec{a}^f,f,m,\psi^{\incone},\mathcal{P},u_0$ as in \cref{lemma:en:no_incoming_perturbed_en_estimate}. Let $u_1<u_0$ arbitrary. Given $\psi^{\incone}\in\Hb^{a_-;2(k+m)+2}(\incone)$, $rf\in\Hbt^{\vec{a}^f;k+m}(\D)$, $(rT)^j(rf)\in\Hb^{a_-+2;2(k+m-j)}(\incone)$ for $j\leq k+m$ and, finally, $\rho_-^{-2(k+m+1-j)}\psi^{\incone,u_1,j}\in H^{2(k+m-j)}(S^2)$, there exists a unique scattering solution $\psi\in\Hbt^{\vec{a};k}(\D)$ with no  incoming radiation at order $m$ to \cref{eq:scat:scattering_def}.
            
            Here, uniqueness is understood with respect to functions with bounded $\norm{\Ve T^m r\phi}_{\Hb^{a_-;4}(\D\cap[v,v_\infty])}$ for any $v_\infty<\infty$.
		\end{thm}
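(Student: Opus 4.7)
The plan is to reduce to the setting of \cref{thm:scat:scat_general} by $T^m$-commutation: since $\mathcal{P}$ is compatible with the no incoming radiation condition, each commutation with $T$ gains one extra decay power toward $\scrim$, so the equation for $T^m\psi$ falls into the admissible class and is solvable by \cref{thm:scat:scat_general}. The prescribed data $\psi^{\incone}$ together with $\psi^{\incone,u_1,j}$ for $1\leq j\leq m-1$ then supply the correct sphere data to integrate back $m$ times in $T$ and recover $\psi$.

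For existence, I would mirror the argument in the proof of \cref{thm:scat:scat_general}. For each $u_\infty<u_1$, I will truncate $\psi^{\incone}$ and $rf$ by a smooth cutoff $\chi_{u_\infty}$ localising to $\{u>u_\infty/2\}$, arranged so as to preserve the prescribed jet $\{\psi^{\incone,u_1,j}\}_{j=0}^{m-1}$ at the sphere $\incone\cap\outcone{1}$. The latter will be achieved by first solving a slab problem around $u=u_1$ (in the spirit of \cref{rem:scat:slab_scattering_with_weak_decay}) whose data along $\incone$ realise this jet, and then patching with the cutoff of $\psi^{\incone}$ for $u<u_1-\delta$. Local well-posedness (\cref{thm:scat:local}), iterated as in Step 1 of the proof of \cref{thm:scat:scat_general}, then yields a solution $\phi^{u_\infty}$ on $\D^{u_\infty,\infty}_{u_0,v_0}$ with trivial data on $\outcone{\infty}$; the uniform a priori bound $\norm{\Ve\psi^{u_\infty}}_{\Hbt^{\vec{a};k}(\D)}\lesssim 1$ from \cref{lemma:en:no_incoming_perturbed_en_estimate} (for $|u_0|$ sufficiently large depending on $\mathcal{P}$), together with the same estimate applied to differences $\psi^{u_\infty,u_\infty'}$ via the linearisation argument around \cref{eq:scat:linear_splitting,eq:scat:estimate3}, will produce a Cauchy sequence with limit $\phi\in\Hbt^{\vec{a};k}(\D)$. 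The no incoming radiation condition at order $m$ follows from the uniform bound on $(rT)^m\psi^{u_\infty}$ passing to the limit, and the recovery of the jet at $\incone\cap\outcone{1}$ follows from Sobolev embedding at high $k$ applied to the approximants.

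The main obstacle I anticipate is precisely the design of the approximating data so that, in the limit, one simultaneously recovers (a) $\psi|_{\incone}=\psi^{\incone}$, (b) the prescribed jet $\psi^{\incone,u_1,j}$, and (c) the no incoming radiation at order $m$: the first two are characteristic-IVP-type conditions, whereas (c) is a scattering-type condition, and they interact in a nontrivial way since the transversal derivatives of $\phi^{u_\infty}$ at $\incone\cap\outcone{1}$ are determined by the entire cutoff data rather than by prescription at a single sphere, which is precisely why the slab-and-patch construction sketched above is needed. Uniqueness will then follow as in \cref{thm:scat:scat_general}: for two solutions $\phi^{(1)},\phi^{(2)}$ in the claimed class, the difference $\phi^{1,2}$ satisfies a linearised short-range perturbed equation with coefficients built from $\phi^{(1)},\phi^{(2)}$ via \cref{lemma:en:short_range_computations,eq:scat:linear_splitting} (the short-range property being ensured by the a priori hypothesis $\norm{\Ve T^m\phi^{(i)}}_{\Hb^{a_-;4}(\D\cap[v_0,v_\infty))}<\infty$), with trivial $\incone$-data, trivial jet at $\incone\cap\outcone{1}$, and vanishing $\partial_v^m(r\phi^{1,2})$ at $\scrim$; \cref{lemma:en:no_incoming_perturbed_en_estimate} applied in $\D^{-\infty,v_1}_{u_0,v_0}$ for each finite $v_1$, together with local well-posedness, then gives $\phi^{1,2}\equiv 0$.
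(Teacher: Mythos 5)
Your overall strategy---commuting with $T^m$ to land the equation in the admissible class, then using the sphere data $\psi^{\incone,u_1,j}$ as the integration constants to recover $\psi$ from $T^m\psi$---is the paper's. So is the uniqueness argument: the difference satisfies a linearised short-range equation with trivial data, to which the weak-decay energy estimate applies, after deriving bounds on $T^{m-j}\psi^i$ from the a priori bound on $T^m\psi^i$ by time integration (the paper does this explicitly via \cref{corr:ODE:du_dv}\cref{item:ode:t-prop}; you elide it but it is needed).

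Where you diverge is the existence construction, and here you are missing the key tool. The paper's first step is to invoke \cref{lemma:en:recovering_initial_data}, case \emph{(c)}: the prescribed $\psi^{\incone}$, $f$, and the sphere data at $\incone\cap\outcone{1}$ already \emph{uniquely determine} the transversal derivatives $T^j\psi|_{\incone}$ along \emph{all of $\incone$}, with estimates that are uniform in $u_\infty$ because the ODE integration constant is pinned at the fixed sphere $u=u_1$ rather than at $\scrim$. You never cite this lemma; instead you propose a slab-and-patch construction to realise the jet. Your concern is genuinely well-founded---with trivial data on $\outconeFar$ and a naive cutoff $\chi_{u_\infty}\psi^{\incone}$, the sphere values $T^j\psi^{u_\infty}|_{\incone\cap\outcone{1}}$ are given by $\int_{u_\infty}^{u_1}$ of a quantity that for $j<m$ and $a_-<-1$ decays too slowly to be absolutely integrable, so they cannot converge to the prescribed jet as $u_\infty\to-\infty$. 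The paper's proof indeed passes over this in a single sentence, and the mechanism resolving it is precisely \cref{lemma:en:recovering_initial_data}\emph{(c)}: once the transversal derivatives along all of $\incone$ are known, the truncated problems can be posed compatibly (with all data quantities appearing in \cref{lemma:en:no_incoming_perturbed_en_estimate} uniformly controlled), and the limit construction proceeds as in \cref{thm:scat:scat_general}. Your slab-and-patch is a valid alternative but substantially heavier: a compatible slab solution (a problem not of the standard characteristic-IVP form), a patching argument, and a verification that the patched data still satisfy all the hypotheses of \cref{lemma:en:no_incoming_perturbed_en_estimate}---none of which you supply. If you reach for \cref{lemma:en:recovering_initial_data}\emph{(c)} instead, the existence step reduces to the one-line adaptation the paper claims.
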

		\begin{proof}
       \emph{Transversal derivatives:} First, we note that the specification of the transversal derivatives at some fixed sphere allows us to appeal to the estimates of \cref{lemma:en:recovering_initial_data}, case \textit{c)}, and, in particular, uniquely determine the transversal derivatives of $\psi$ along all of $\incone$. 

       \emph{Existence:} We then prove the existence of $\psi$ in the same way as we proved  \cref{thm:scat:scat_general}, but using the estimates of \cref{lemma:en:no_incoming_perturbed_en_estimate} instead of \cref{lemma:en:metric_perturbed_energy_estimate}.
			
      \emph{Uniqueness:}
      Suppose we have two solutions $\phi^1$ and $\phi^2$. 
      Starting with the a priori bounds on $T^m\psi^i$, and the uniquely determined data for $T^{m-j}\psi$ along all of $\incone$, we get corresponding bounds for $T^{m-j}\psi^i$ in all of $\D$ via integration using \cref{corr:ODE:du_dv}\cref{item:ode:t-prop}.
      We can then consider the equation satisfied by $\bar\phi:=T^m(\phi^1-\phi^2)$. 
      Using the just established bounds on  $\phi^{1}$ and $\phi^{2}$ (cf.~the bounds in the proof of \cref{lemma:en:no_incoming_perturbed_en_estimate}), we deduce $\bar{\phi}=0$ just as in \textit{Step 3)} of \cref{thm:scat:scat_general}.
   It follows that $\phi^1=\phi^2$.
           % Since we can write $\phi^1-\phi^2$ as an integral of $\bar{\phi}$ plus initial data that vanishes, we get that $\phi=0$.
		\end{proof}

        \subsubsection{Scattering with weakly decaying polyhomogeneous data}
        We can also extend our scattering theory to setups \textit{with} incoming radiation; however, in this case we require the data, the inhomogeneity and the nonlinearities to all have an expansion. We restrict the discussion to $\D^-$:

        \begin{defi}\label{def:scat:weak_polyhom}
            Fix arbitrary index set $\vec{\E}^f$, fix $k\in\N_{\geq1}$, and fix admissible weights $\vec{a}$, $\vec{a}^f$ with $a_->0$.
            Fix inhomogeneity $rf\in\A{phg,b}^{\E^f_-,a^f_0;k}(\D^-)+\Hb^{\vec{a}^f;k}(\D^-)$, as well as data $\psi^{\incone}\in\A{phg}^{\E^f_--1;k}(\incone)+\Hb^{a_-;k}(\incone)$ and $\psi^{\scrim}\in\Hb^{a_0;k}(\scrim)$.
           Finally, fix coordinates $\rho_-=v/|u|, v, \omega$. Then we say that $\phi$ is the scattering solution to \cref{eq:scat:scattering_def} with weakly decaying polyhomogeneous data if there exists some index set $\E_-$ such that
            \begin{equation}\label{eq:scat:polyhom_cond1}
                \pv \psi\in\A{phg,b}^{\E_-,a_0;k}(\D^-)+\Hb^{a_-,a_0;k}(\D^-)
            \end{equation}
            and such that there exist functions $\psi_{z,k}(v,\omega)$ such that
            \begin{equation}\label{eq:scat:polyhom_cond2}
                \pv \psi=\sum_{(z,k)\in(\E_-\setminus (0,0))_{\leq 0}} \rho_-^{z}\log^k\rho_- \pv\psi_{z,k}(v,\omega)+\pv\psi^{\scrim}(v,\omega)+\Hb^{0+,a_0;k}(\D^-).
            \end{equation}
        \end{defi}

        \begin{rem}[Coordinate dependence]
            Similarly as in \cref{rem:ode:uniqueness}, we note that the above definition, in particular~\cref{eq:scat:polyhom_cond2}, depends on the coordinate $\rho_-$ used.
            Using an alternative coordinate, e.g.~$\rho_-=v/r$ may yield a different solution.
            Just as in \cref{rem:ode:uniqueness}, this coordinate dependence is absent if we require $\min(\E_-)>-1$.
        \end{rem}

\begin{thm}\label{thm:scat:weak_polyhom}
    Let $\E_-^f$ be an arbitrary index set.
    Let $\vec{a}'$, $\vec{a}^f$ be admissible with $a'_->0$. 
    Let $rf\in\A{phg,b}^{\E^f_-,a^f_0;N}(\D^-)+\Hb^{\vec{a}^f;k}(\D^-)$, $\psi^{\incone}\in\A{phg}^{\E^f_--1;N}(\incone)+\Hb^{a_-';k}(\incone)$, and let $\psi^{\scrim}\in \Hb^{a_0';k}(\scrim)$, with $N$ and $k$ specified below.

    Define now $a_-=\min(\min(\E^f_-)-1-,a_-'), a_0=\min(a_0',a_--)$. With notation as in \cref{def:en:short_range} and for some $\delta>0$, let $\mathcal{P}$ be perturbations satisfying 
     \begin{equation}\label{eq:scat:weak_decay_gap_condition}
            n^\bullet \vec{a}+\vec{a}^\bullet>\vec{a}+(1,2)+(\delta,\delta), \quad \bullet\in\{V,\mathcal{N},P_g\}
        \end{equation}
        and with coefficients $f_\bullet$ all polyhomogeneous on $\D^-$.
        If we require that $k\geq 10$, that $\min{\E_-^f}>\min(3/2,a'_-+1)-n\delta$ for some $n\in\mathbb N$, and that $N>k+2n+2$, then there exists $-u_0$ sufficiently large and an index set $\E_-$ such that there exist a unique scattering solution $\phi$ to~\cref{eq:scat:scattering_def} satisfying \cref{eq:scat:polyhom_cond1,eq:scat:polyhom_cond2}.

\end{thm}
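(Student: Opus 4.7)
The plan is to follow the strategy outlined in \cref{sec:intro:scat:phg}: iteratively construct a polyhomogeneous ansatz that absorbs all the non-integrable behaviour near $\scrim$, reducing the problem to one where \cref{thm:scat:scat_incoming} (or \cref{cor:scat:enlarged_admissible_set}) applies to a remainder of decay $\vec{a}$ strictly better than $(1/2,a_0,a_+)$. The main work lies in building the polyhomogeneous ansatz term by term via the ODE lemmata of \cref{sec:ODE_lemmas}, keeping track of the generated index sets, and showing that finitely many iterations suffice.

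\textbf{Step 1: Setting up the iteration.} I would rewrite \cref{eq:scat:scattering_def} as
\begin{equation*}
\pu\pv\psi=\frac{\Dl\psi}{r^2}-r\mathcal{P}[\phi]-rf,
\end{equation*}
noting that, in $\D^-$ with coordinates $\rho_-,v,\omega$, we have $\pu=\rho_-^2\rho_0\partial_-$ and $\pv=\rho_-\rho_0\partial_--\rho_0^2\partial_0$ (cf.~\cref{lemma:notation:coordinates}). The $\Dl/r^2$ term and the short-range perturbation $\mathcal{P}[\phi]$ are both subleading near $\scrim$ (each contributing an extra $\rho_-^{\delta}$-factor relative to $\pu\pv$), so at leading order the equation is an ODE in $\pu$, which I can integrate using \cref{item:ode:u-prop_boundary} with boundary data $\pv\psi^{\scrim}(v,\omega)$ along $\scrim$. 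Integrating then in $v$ using \cref{item:ode:1d_boundary_cond,item:ode:1d_boundary_cond_weak_decay} with the expansion data prescribed by $\psi^{\incone}$ yields the first polyhomogeneous iterate $\psi^{(0)}$.

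\textbf{Step 2: Iterating the ansatz.} Inductively define $\psi^{(j+1)}$ by the same procedure applied to the inhomogeneity
\begin{equation*}
g^{(j)}:=\frac{\Dl\psi^{(\leq j)}}{r^2}-r\bigl(\mathcal{P}[\phi^{(\leq j)}]-\mathcal{P}[\phi^{(\leq j-1)}]\bigr)-rf_{\mathrm{phg}}\,\mathbf{1}_{j=0},
\end{equation*}
where $\phi^{(\leq j)}=r^{-1}\sum_{i\le j}\psi^{(i)}$ and $f_{\mathrm{phg}}$ denotes the polyhomogeneous part of $f$. Because each $\psi^{(i)}$ sits in a polyhomogeneous space, the index set calculus of \cref{def:notation:operations_index_sets}, together with the multi-linearity of $\mathcal{N},P_g$ and the polyhomogeneity of their coefficients, ensures that $g^{(j)}$ is itself polyhomogeneous-plus-error with a new index set $\E^{(j)}_-$, and that the gap condition \cref{eq:scat:weak_decay_gap_condition} gives $\min(\E^{(j+1)}_-)\geq \min(\E^{(j)}_-)+\delta$. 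Thus after $n$ steps the residual part of $g^{(n)}$ lying outside the ansatz decays strictly faster than $\rho_-^{1+\delta}$, which (together with the hypothesis $\min\E^f_->\min(3/2,a'_-+1)-n\delta$) lets the remainder enter the strongly admissible range.

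\textbf{Step 3: Closing the remainder equation.} Writing $\psi=\sum_{i\le n}\psi^{(i)}+\psi^{\Delta}$ and using \cref{lemma:en:short_range_computations} to linearise the differences of nonlinearities about the ansatz, the remainder $\psi^{\Delta}$ solves a perturbed wave equation with trivial scattering data at $\scrim$ (since the ansatz already carries all the singular expansion data via \cref{eq:scat:polyhom_cond2}), trivial data along $\incone$ (up to an error in $\Hb^{a_-;k}(\incone)$), and with inhomogeneity now in $\Hb^{\vec{a}^{\Delta}}(\D^-)$ for some extended-admissible $\vec{a}^{\Delta}$. I then invoke \cref{cor:scat:enlarged_admissible_set} (or \cref{thm:scat:scat_incoming} if the decay is strictly admissible) to obtain a unique such $\psi^{\Delta}\in \Hb^{\vec{a}^{\Delta};k}(\D^-)$, which satisfies $\Ve\psi^{\Delta}\in\Hb^{\vec{a};4}(\D^-)$. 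Adding back the ansatz gives the desired scattering solution, and conditions \cref{eq:scat:polyhom_cond1,eq:scat:polyhom_cond2} are verified by construction, with the final index set $\E_-:=\bigcup_{j\le n}\E^{(j)}_-$.

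\textbf{Step 4: Uniqueness and the main obstacle.} Given two such solutions $\phi^1,\phi^2$ with the same expansion data, their difference has $\pv$-limit zero at $\scrim$ in the sense that its polyhomogeneous parts cancel at every order up to $(0,0)$, and it satisfies a linear short-range perturbed wave equation with trivial data on $\scrim$; uniqueness then follows as in the proofs of \cref{thm:scat:scat_general,thm:scat:scat_incoming}, combined with \cref{lemma:en:past_estimate_L2_perturbed}. The genuinely delicate point---and the main obstacle I expect---is the regularity accounting: each step of the iteration loses derivatives (since the ODE solutions built via \cref{corr:ODE:du_dv} lose derivatives when used in nonlinear products with coefficients estimated via \cref{lemma:scat:sobolev}), and each nonlinear combination of polyhomogeneous factors enlarges the index set through the $\cupdex$ operation. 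The stated thresholds $k\ge 10$ and $N>k+2n+2$ are precisely what is needed to absorb these losses while still producing an ansatz of regularity high enough to run \cref{cor:scat:enlarged_admissible_set} on the remainder.
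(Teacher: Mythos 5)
Your proposal follows essentially the same route as the paper: build a polyhomogeneous ansatz by iterating the ODE lemmata of \cref{sec:ODE_lemmas} to peel off the non-integrable behaviour near $\scrim$, then apply the already-proved scattering theory (\cref{thm:scat:scat_incoming} / \cref{cor:scat:enlarged_admissible_set}) to the remainder, with uniqueness following from the fact that the leading expansion coefficients are forced by $(rf,\psi^{\incone})$ so that two solutions differ by something in $\Hb^{0+,a_0}(\D^-)$.

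One organisational point worth flagging: in Step~1 you integrate \cref{item:ode:u-prop_boundary} with boundary data $\pv\psi^{\scrim}$, which builds the (merely $\Hb^{a_0'}$) incoming radiation into $\psi^{(0)}$, so your ansatz is only polyhomogeneous-plus-error and the $\psi^{\scrim}$ piece then gets dragged through the nonlinear products at each iterate. The paper sidesteps this by fixing the order-one term of $\pv\psi^{(0)}$ at $\scrim$ to vanish (the choice $G^{\scrim}=0$ in \cref{item:ode:u-prop_boundary}), so that each $\psi^{(j)}$ is \emph{purely} polyhomogeneous and the $\psi^{\scrim}$ data is carried entirely by the remainder $\psi^{\Delta}$, which is then handled by \cref{thm:scat:scat_incoming} (not by the no-incoming-radiation \cref{cor:scat:enlarged_admissible_set}, contrary to what your Step~3 suggests when it claims trivial data on $\scrim$). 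Both bookkeepings can be made to work, but the paper's is cleaner.
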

\begin{proof}
    \emph{Existence:} We recycle the iteration used in the proof of \cref{cor:scat:enlarged_admissible_set}. We define $\psi^{\incone}_{\phg}$ to be the polyhomogeneous part of the data such that $\psi^{\incone}-\psi^{\incone}_{\phg}=:\psi^{\incone}_{\Delta}\in \Hb^{a'_-;k}(\incone)$, similarly for $f=f_{\phg}+f_{\Delta}$. We then define $\psi_0$ to solve
     \begin{equation}
            -\pu\pv(\psi^{(0)})=rf_{\phg},\quad \psi^{(0)}|_{\incone}=\psi^{\incone}_{\phg}.
    \end{equation}
    We solve this by first applying \cref{item:ode:u-prop_boundary} and then \cref{item:ode:u-prop} (with $u$ and $v$ interchanged) of \cref{corr:ODE:du_dv}. 
    We specify the extra requirements on the order one part of $\pv\psi^{(0)}$ to vanish, which is $G^{\scrim}=0$ in \cref{item:ode:u-prop_boundary}.\footnote{This choice is arbitrary. Note also that this step is only possible since we specified $f_{\phg}$ to have an expansion.}
    This gives $\psi^{(0)}\in \A{phg,b}^{(\E_-^f-1)\cupdex\mindex0,a_0;N}(\D^-)$. (This requires a very minor adaptation of \cref{corr:ODE:du_dv} to include mixed spaces $\A{phg,b}$.) 

        Inductively, we now define for $j\geq0$ $\psi^{(j+1)}$ as in \cref{eq:scat:proof1_enlarged}, with trivial data corresponding to the vanishing of the order one part of $\psi^{(j+1)}$,  and prove that
        \begin{equation}
            \psi^{(j+1)}\in \A{phg,b}^{\E^{f_k}_--1,a_0;N-2j-2}(\D^-), \quad rf^{(j)}:=-r^{-2}\Dl\psi^{(j)}+r\mathcal{P}[\phi^{(\leq j)}]-r\mathcal{P}[\phi^{(\leq j-1)}]\in \A{phg,b}^{\E^{f_k}, a_0+2; N-2j-2}(\D^-),
        \end{equation}
        for some index sets $\E_-^{f_k}$ satisfying $\min(\E_-^{f_k})>3/2-(n-j)\delta$.
        Indeed, by the same arguments as in the proof of \cref{cor:scat:enlarged_admissible_set}, the claim for $rf^{(j)}$ follows from the inductively assumed claim for $\psi^{(j)}$, the polyhomogeneity of the $f_{\bullet}$, and, importantly, the gap condition \cref{eq:scat:weak_decay_gap_condition}.
        The claim for $\psi^{(j+1)}$, on the other hand, then follows by an application of \cref{corr:ODE:du_dv} as for $\psi^{(0)}$.

        Next, we again consider the difference $\psi=\psi^{\Delta}-\psi^{(\leq n+1)}$, and note that $\psi^{\Delta}$ satisfies 
        \begin{nalign}
            P_\eta\psi^{\Delta}=rf_{\Delta}-\frac{\Dl \psi^{(n+1)}}{r^2}+
           (r\mathcal{P}[\phi^{\Delta}+\phi^{(\leq n+1)}]-r\mathcal{P}[\phi^{(\leq n+1)}])+(r\mathcal{P}[\phi^{(\leq n+1)}]-r\mathcal{P}[\phi^{(\leq n)}])
        \end{nalign}  
with scattering data $\psi_{\Delta}^{\incone}$ and $\psi^{\scrim}$. We apply the same argument as in \cref{eq:scat:proof2_enlarged} obtain that the nonlinearity on the RHS can be treated as a short-range perturbation with respect to $\vec{a}'$.  The existence of a scattering solution $\psi$ then follows from \cref{thm:scat:scat_incoming}.

\emph{Uniqueness:}
Note that $\psi_{z,k}$ in \cref{eq:scat:polyhom_cond2} are uniquely determined by the data $rf,\psi^{\incone}$ and can be constructed via a series expansion as done above.
Therefore, for two solutions $\psi^{1},\psi^{2}$, the difference satisfies $\psi^{1,2}=\psi^1-\psi^2\in\Hb^{0+,a_0;k}(\D^-)$.
The rest of the proof proceeds as in \cref{thm:scat:scat_general}.
\end{proof}

        \subsection{Scattering theory with long-range potentials}\label{sec:scat:longrange}
         As we explained in \cref{sec:en:longrange}, we can extend all previous results to include long-range modifications of $\Box_\eta$:
     \begin{thm}
        [Scattering with long-range potential]\label{thm:scat:long} 
        \cref{thm:scat:scat_general,thm:scat:scat_incoming,thm:scat:weak_decay,thm:scat:weak_polyhom} all also hold if $\mathcal{P}[\phi]$ in \cref{eq:scat:scattering_def} is replaced by $\mathcal{P}_L[\phi]=\mathcal{P}[\phi]+V_L\phi$ for some \emph{long-range} potential $V_L$ that, depending on the context, also is compatible with the no incoming radiation condition or, in the case of \cref{thm:scat:weak_polyhom}, polyhomogeneous.
     \end{thm}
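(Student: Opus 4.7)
The plan is to revisit the proofs of \cref{thm:scat:scat_general,thm:scat:scat_incoming,thm:scat:weak_decay,thm:scat:weak_polyhom} one at a time, and at every step where an energy estimate from \cref{sec:en:en,sec:en:perturbations,sec:en:incoming} was used, to invoke instead its long-range analogue provided by \cref{cor:en:longrange1,cor:en:longrange1.5}. Concretely, each scattering proof relied on exactly four ingredients: (i) the local characteristic well-posedness \cref{thm:scat:local}; (ii) the transversal derivative recovery along $\incone$ in \cref{lemma:en:recovering_initial_data}; (iii) uniform-in-$u_\infty$ weighted energy estimates for the finite problem (\cref{prop:en:main}, \cref{cor:en:perturbed_energy_estimate}, \cref{lemma:en:metric_perturbed_energy_estimate}, \cref{lemma:en:no_incoming_perturbed_en_estimate}); and (iv) a limiting argument plus a uniqueness argument based on the same energy estimates applied to a \emph{linear} equation for a difference of two solutions. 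Ingredient~(i) is insensitive to~$V_L$, since adding a smooth-in-the-interior linear zeroth/first order term is a standard perturbation of the quasilinear characteristic IVP.

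For ingredient~(iii), the key observation is that in \cref{cor:en:longrange1,cor:en:longrange1.5} we already elevated \cref{prop:en:main,lemma:en:energy_no_incoming} to \cref{eq:en:longwave}. The extension to nonlinear short-range perturbations $\mathcal{P}$ is then immediate: the arguments of \cref{cor:en:perturbed_energy_estimate,lemma:en:metric_perturbed_energy_estimate,lemma:en:no_incoming_perturbed_en_estimate} only use the linear estimate as a black box, together with the product/Sobolev estimates \cref{lemma:scat:sobolev} and \cref{cor:en:VN_estimates}, the short-range absorption via a smallness factor $|u_0|^{-\delta}$, and commutation with the Minkowskian symmetries. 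None of this interacts with $V_L$: the $V_L$-terms produced by commutation are themselves long-range potentials (possibly twisted in the sense of \cref{rem:en:twistedlongrange}) with the same $\epsilon_\pm$ gaps, and hence stay on the left-hand side of the long-range energy estimate. The $V_L$-compatibility with the no incoming radiation condition (item \textbf{PL}b in \cref{def:en:long_range}) is exactly what is needed so that the $T$-commutation argument in $\cref{item:en:pastestimate_Hbt}_L$ of the proof of \cref{cor:en:longrange1} continues to work for the nonlinear problem; this is required for \cref{thm:scat:scat_general}\emph{d)}, \cref{thm:scat:scat_incoming} and \cref{thm:scat:weak_decay}.

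For ingredient~(ii), namely \cref{lemma:en:recovering_initial_data}, the only modification is that the equation along $\incone$ now acquires an extra term $rV_L\phi$. Because $V_L\in\rho_-^{1+\epsilon_-}\rho_0^2\Ve+\text{(terms supported away from }\incone)$, this contribution is bounded on $\incone$ by $\abs{u_0}^{-\epsilon_-}$ times the norms appearing on the left-hand side of \cref{eq:scat:data_incoming,eq:scat:data_no_incoming}, and can be absorbed by taking $|u_0|$ large. The higher order $T$- and $rT$-commuted versions go through unchanged thanks to the compatibility assumption $f^L_{-,\b}\in\A{phg}^{\mindex0}$ in the no incoming radiation case.

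For ingredient~(iv), the difference $\phi^{1,2}$ of two scattering solutions, resp.\ of two approximants $\phi^{u_1},\phi^{u_2}$, satisfies a linear equation of the schematic form $(\Box_\eta-V-V_L-P_g[\phi^1])\phi^{1,2}=V_{\mathcal{N}}\phi^{1,2}+V_{P_g}\phi^{1,2}$ as in \cref{eq:scat:linear_splitting}, to which the long-range linear estimate (the analogue of \cref{lemma:en:past_estimate_L2_perturbed} or \cref{prop:en:main}, respectively) applies directly; Cauchy-ness and uniqueness then follow exactly as in \emph{Step~2} and \emph{Step~3} of the proof of \cref{thm:scat:scat_general}. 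For \cref{thm:scat:weak_polyhom}, the iterative construction of $\psi^{(j)}$ proceeds identically after replacing the transport pair in \cref{eq:scat:proof1_enlarged} by the long-range model equation $\Box_\eta\psi^{(0)}-V_L\psi^{(0)}=rf_{\phg}$; since $V_L$ is polyhomogeneous on $\D^-$ by assumption, the analogue of \cref{corr:ODE:du_dv} applied to the decoupled system $(\pu,\pv)$ still produces a polyhomogeneous leading-order ansatz, and the remainder is then handled by \cref{thm:scat:scat_incoming} with long-range potential.

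The only genuinely nontrivial point is the treatment of the borderline condition \cref{eq:en:admissible_inhom1} in the proof of $\cref{item:en:pastestimate_hom}_L$: there, one has to iterate the long-range estimate as in the argument around \cref{eq:savedhehe} to bootstrap from the weaker $L^2$-control one obtains by naively applying $\cref{item:en:pastestimate_inhom}_L$ to $V_L\phi_1$. This iteration is not new to this proof, but it is where the greatest bookkeeping care is needed, especially when combined with the extended short-range class of \cref{cor:scat:enlarged_admissible_set}, as one then needs both the $V_L$-iteration and the $\mathcal{N},P_g$-iteration to terminate for the same choice of~$N$.
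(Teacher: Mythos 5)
Your proof is correct and takes the same approach as the paper, which simply asserts in one line that since all energy estimates of \cref{sec:en:perturbations} continue to hold with a long-range potential added (by \cref{cor:en:longrange1,cor:en:longrange1.5}), the scattering proofs can be replicated verbatim. Your detailed four-ingredient checklist is exactly the fleshing-out that the paper leaves implicit; the only small inefficiency is in your treatment of \cref{thm:scat:weak_polyhom}, where moving $V_L$ into the model equation $\Box_\eta\psi^{(0)}-V_L\psi^{(0)}=rf_{\phg}$ re-couples the transport pair, and the simpler route is to keep $-\pu\pv$ as the model and absorb $V_L\psi^{(j)}$ into the perturbative iteration alongside $r^{-2}\Dl\psi^{(j)}$ (at the cost of shrinking the per-step decay gain to $\min(\delta,\epsilon_-)$).
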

     \begin{proof}
     This is a consequence of the fact that all energy estimates of \cref{sec:en:perturbations} still hold when including a long-range potential by the results of \cref{sec:en:longrange}. Thus, we can replicate all the previous proofs of  \cref{thm:scat:scat_general,thm:scat:scat_incoming,thm:scat:weak_decay,thm:scat:weak_polyhom}.
     \end{proof}
     We conclude this section with an observation regarding the decay of solutions to (twisted) long-range potential problems.
     %that is important, for instance, in the context of twisted long-range potentials.
        \begin{obs}[Mixed slow decaying and radiative solutions]\label{rem:en:good_derivatives_noincoming}
        
        First consider the problem $\Box_\eta\phi=0$ with 
        data $r\phi|_{\incone}\in\Hb^{a_-;\infty}(\incone)$ and $v\pv r\phi_{\scrim}\in\Hb^{a_0;\infty}(\scrim)$ \textit{without the restriction $a_-\geq-1/2$}.
        %$rf\in\Hb^{\vec{a}_{\mathrm{rad}}^f;\infty}(\D^-)+\Hbt^{\vec{a}^f;\infty}(\D^-)$ with $a^f_{\mathrm{rad},-}>1$ and $a^f_-\in\R$.
        %We know from \cref{sec:scat:incoming} that the part of the inhomogeneity lying in $\Hb^{\vec{a}_{\mathrm{rad}}^f;\infty}(\D^-)$ is capable of capturing incoming radiation, so let us first pose trivial data at $\scrim$.
        
        Combining now \cref{item:en:pastestimate_inhom_0th,lemma:en:energy_no_incoming,sec:scat:incoming} (or rather, their scattering versions) gives by linearity that the corresponding solution satisfies $r\phi\in\Hb^{\vec{a}_{\mathrm{rad}};\infty}(\D^-)+\Hbt^{\vec{a};\infty}(\D^-)$ for $a_{\mathrm{rad},-}=0-$.
        In particular, the solution can be split into a \emph{radiative} term that is merely smooth with respect to $v\pv$ and has explicit $\phi\sim r^{-1}$ decay, and a \emph{non-radiative} term that is smooth with respect to $r\pv$.
        %Furthermore, for \textit{nontrivial scattering data} along $\scrim$, the radiative term has an explicit fall-off rate $\phi\sim 1/r$ (i.e.~$a_{\mathrm{rad},-}=0-$).

        Next, consider now a twisted long-range potential perturbation such as \cref{eq:long:twisted}. As explained in \cref{rem:en:twistedlongrange}, in order to treat such problems, say, in $\D^-$, we need to twist with $\rho_-^{-c_v}$. 
        By the above, we can then still solve such problems even for weakly decaying data $\rho^{-c_v}\psi^{\incone}$ by splitting the solution into a radiative and a non-radiative part to get $\rho_-^{-c_v}r\phi\in\Hb^{\vec{a}_{\mathrm{rad}};\infty}(\D^-)+\Hbt^{\vec{a};\infty}(\D^-)$.
        In particular, the radiative part always decays like $\phi\sim r^{-(1+c_v)}$.
       See already 
         \cref{rem:Sch:Teukolsky_decay_rate} for the application of this to observation to the Teukolsky equation.
         See also \cref{lemma:EVE:weak_data_at_I0} for a similar observation in $\D^+$.
    \end{obs}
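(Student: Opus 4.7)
The plan is to prove both parts by linearity, splitting the solution into a \emph{radiative} piece carrying the incoming radiation and a \emph{non-radiative} piece carrying the data on $\incone$ (together with any initial-data induced contribution not captured by the first piece), and then applying the appropriate existing result to each piece. Since the second part follows from the first by the twisting argument of \cref{rem:en:twistedlongrange}, I will focus on the Minkowski case and then describe the untwisting.

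For the Minkowski case, write $\phi=\phi_{\mathrm{rad}}+\phi_{\mathrm{nr}}$, where $\phi_{\mathrm{rad}}$ is defined to be the scattering solution of $\Box_\eta \phi_{\mathrm{rad}}=0$ with trivial data along $\incone$ and incoming radiation $v\pv(r\phi_{\mathrm{rad}})|_{\scrim}=v\pv(r\phi)|_{\scrim}$, and $\phi_{\mathrm{nr}}:=\phi-\phi_{\mathrm{rad}}$ is the scattering solution of $\Box_\eta \phi_{\mathrm{nr}}=0$ with data $r\phi_{\mathrm{nr}}|_{\incone}=r\phi|_{\incone}$ and $v\pv(r\phi_{\mathrm{nr}})|_{\scrim}=0$. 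For $\phi_{\mathrm{rad}}$, applying the scattering version of \cref{lemma:en:past_estimate_with_incoming_data_L2} (combined with the higher-regularity improvement afforded by \cref{item:en:pastestimate_inhom_0th}, which allows trading one derivative for an optimal-in-decay estimate near $I^0$) in the region $\D^-$, followed by commutation with the Killing vector fields as in \cref{sec:scat:incoming}, yields $r\phi_{\mathrm{rad}}\in\Hb^{\vec{a}_{\mathrm{rad}};\infty}(\D^-)$ with $a_{\mathrm{rad},-}=0-$, $a_{\mathrm{rad},0}=a_0$. For $\phi_{\mathrm{nr}}$, the no-incoming-radiation condition is satisfied identically, so we may invoke the scattering version of \cref{lemma:en:energy_no_incoming} (this is valid without restriction on $a_-$, upon commuting with sufficiently many time derivatives and then integrating back in time, using \cref{corr:ODE:du_dv}\cref{item:ode:t-prop}): this gives $r\phi_{\mathrm{nr}}\in\Hbt^{\vec{a};\infty}(\D^-)$.

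The main subtlety is to ensure that the scattering construction for $\phi_{\mathrm{rad}}$ makes sense even when $a_-<-1/2$; this is not a true obstacle because the radiative piece itself has $a_{\mathrm{rad},-}=0-$, which is always admissible in the sense of \cref{def:en:admissible:f}, so the existing machinery of \cref{thm:scat:scat_incoming} applies directly to $\phi_{\mathrm{rad}}$. The weakly decaying contribution from $r\phi|_{\incone}$ is then absorbed entirely by the $\phi_{\mathrm{nr}}$ part, whose scattering construction requires no admissibility lower bound on $a_-$ once the no-incoming-radiation condition is enforced, per \cref{thm:scat:weak_decay}.

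For the twisted long-range problem \cref{eq:long:twisted}, define $\bar\phi=\rho_+^{-c_u}\rho_-^{-c_v}\phi$. By \cref{rem:en:twistedlongrange}, $\bar\phi$ satisfies $(\Box_\eta+V_L)\bar\phi=0$ for a long-range potential $V_L$, with data $\rho_-^{-c_v}\psi^{\incone}$ along $\incone$ and the correspondingly twisted incoming radiation along $\scrim$. Applying the first part (with the long-range extension from \cref{thm:scat:long} replacing the plain-Minkowski scattering theory) yields $r\bar\phi\in\Hb^{\vec{a}_{\mathrm{rad}};\infty}(\D^-)+\Hbt^{\vec{a};\infty}(\D^-)$. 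Since $\rho_+$ is bounded above and below in $\D^-$, multiplying through by $\rho_+^{c_u}\rho_-^{c_v}$ gives $\rho_-^{-c_v}r\phi\in\Hb^{\vec{a}_{\mathrm{rad}};\infty}(\D^-)+\Hbt^{\vec{a};\infty}(\D^-)$, and the radiative part of $\phi$ therefore inherits the decay $\phi\sim \rho_-^{c_v}r^{-1}\sim r^{-(1+c_v)}$ at fixed $v$, as claimed.
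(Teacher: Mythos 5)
Your proposal is correct and follows the same route the observation sketches: you split $\phi=\phi_{\mathrm{rad}}+\phi_{\mathrm{nr}}$ by linearity, placing all the incoming radiation in $\phi_{\mathrm{rad}}$ (with trivial $\incone$-data, to which the incoming-radiation scattering machinery of \cref{sec:scat:incoming} applies with the always-admissible weight $a_{\mathrm{rad},-}=0-$) and all the weakly decaying $\incone$-data in $\phi_{\mathrm{nr}}$ (to which \cref{lemma:en:energy_no_incoming}, resp.~\cref{thm:scat:weak_decay}, applies since there is no incoming radiation), and you correctly handle the twisted long-range case by $\rho_+^{-c_u}\rho_-^{-c_v}$-twisting and using that $\rho_+$ is bounded on $\D^-$. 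One small inaccuracy: your claim $a_{\mathrm{rad},0}=a_0$ is too strong when $a_0\geq 0$ — even compactly supported incoming radiation only gives $r\phi_{\mathrm{rad}}\sim 1$ towards $I^0$ generically (\cref{rem:en:optimatl decay from ccdata}), so the best one can say is $a_{\mathrm{rad},0}=\min(a_0,0)-$; the observation intentionally leaves $a_{\mathrm{rad},0}$ unspecified and only records $a_{\mathrm{rad},-}=0-$, so this does not affect the conclusion.
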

	
	%%%%%%%%%%%%%%%%%%%%
	
	\newpage
	
    	\section{Propagation of polyhomogeneity for \texorpdfstring{$\Box_\eta \phi= f$}{Minkowskian wave equation}}\label{sec:prop}

	We now, temporarily, turn our attention back to the Minkowskian wave equation and prove that for polyhomogeneous inhomogeneity and scattering data (+error), scattering solutions to 
	\begin{equation}\label{eq:prop:wave}
		\Box_\eta\phi=f
	\end{equation}
	are also polyhomogeneous ($+$error). The results proved in this section provide, in particular, a complete and sharp picture of the asymptotics of solutions to \cref{eq:prop:wave} in the case of nontrivial incoming radiation.\footnote{We note that we always assume our incoming radiation $\pv\psi^{\scrim}$ to be supported away from $v_0$.}
    
    In the subsequent \cref{sec:app}, we will then use this result to infer a similar result for short-range perturbations of \cref{eq:prop:wave}.

  This section is structured as follows: 
    We first recall the relevant results (along with a small improvement of the index set) from \cite{hintz_stability_2020} concerning the propagation of polyhomogeneity in the future region $\D^+$ in \cref{sec:prop:recall}.
    We then split the discussion into two parts for the  region $\D^-$.
	We first treat the case with no incoming radiation in \cref{sec:propagation:past_corner}. We then treat the slightly more complicated case with nontrivial incoming radiation  in \cref{sec:prop:II}. 
    Finally, in \cref{sec:prop:longrange}, we explain how to extend the results in the case where a long-range potential $V_L\phi$ is included in \cref{eq:prop:wave}.

    \begin{rem}\label{rem:prop:finite_loss}
	For the rest of the paper, we will work exclusively at infinite regularity $\Hb^{;\infty}$ and suppress the ${}^{;k}$-superscript. This only serves the purpose of simplifying the paper's already heavy notation. Nevertheless, recovering each finite term in the expansions in our results only requires a finite amount of regularity, for instance one can replace \cref{eq:prop:futuresemiindex,eq:prop:future_pol} below with
    \begin{align}\label{eq:prop:finite_reg}
        \chi\psi=0, rf\in\Hb^{a_0+2,a_++1;k}(\D^+)\quad&\implies \psi\in \A{b,phg}^{a_0,\mindex{0};k'}(\D^+)+\Hb^{a_0,\min(a_0,a_+)-;k'}(\D^+)\\
        \chi\psi=0, rf\in\A{phg}^{\E^f_0,\E^f_+;k}(\D^+)&\implies \psi\in\A{phg}^{\E_0,\E_+;k-k'}	(\D^+)+\Hb^{a_0,a_+;k-k'},
    \end{align}
    for $k$ sufficiently large depending on $a_0,a_+,\E_0,\E_+$ and $k-k'\sim_{a_0,a_+,\E_0,\E_+}1$.
    For the first one, in the case $a_0,a_+=1$ we can take $k'=4$, while for the second one with $\E^f_0=\overline{(2,1)},\E^f_0=\overline{(1,0)}$ and $a_0=a_+=1$ we may take $k'=10$.
    \end{rem}

	\subsection{Propagation of polyhomogeneity near the future corner \texorpdfstring{$I^0\cap\scrip$}{}}\label{sec:prop:recall}
	We recall (a slight extension of) the  propagation of polyhomogeneity statements in $\D^+$ from \cite{hintz_stability_2020}, and we provide an improvement over it in \cref{lemma:prop:improvement_at_future}.
	
	\begin{lemma}[\cite{hintz_stability_2020}]\label{lemma:prop:future_corner}
		Let $\phi$ be a solution to \cref{eq:prop:wave} and let $\chi$ be a smooth cutoff supported in $\D^+$ away from $\scrip$ and equal to 1 around a compact subset of $I_0\cap\D^-$.
        Let $a_0,a_+\in\mathbb R$ as well as $\E_0$ and $\E_+$ be index sets. 
		Then we have
		\begin{subequations}\label{eq:prop:future}
			\begin{align}
				f=0,\, \chi\psi\in\A{phg}^{\E^f_0-2}(\D^+\cap\D^-)+\Hb^{a_0}(\D^+\cap\D^-)&\implies\psi\in\A{phg}^{\E_0,\E_+}(\D^+)+ \A{b,phg}^{a_0, \mindex0}(\D^+)+\Hb^{a_0,a_0-}(\D^+)\label{eq:prop:future_data},\\
				\chi\psi=0, rf\in\Hb^{a_0+2,a_++1}(\D^+)&\implies \psi\in \A{b,phg}^{a_0,\mindex{0}}(\D^+)+\Hb^{a_0,\min(a_0,a_+)-}(\D^+)\label{eq:prop:futuresemiindex},\\
				\chi\psi=0, rf\in\A{b,phg}^{a_0+2,\E^f_+}(\D^+)&\implies \psi\in\A{b,phg}^{a_0,\E_+}(\D^+)+\Hb^{a_0,a_0-}(\D^+),\label{eq:prop:futureerrorindex}\\
				\chi\psi=0, rf\in\A{phg,b}^{\E^f_0,a_++1}(\D^+)&\implies \psi\in\A{phg}^{\E_0,\E_+}(\D^+)+\A{phg,b}^{\E_0,a_+-}(\D^+),\label{eq:prop:futureindexerror}\\
				\chi\psi=0, rf\in\A{phg}^{\E^f_0,\E^f_+}(\D^+)&\implies \psi\in\A{phg}^{\E_0,\E_+}	(\D^+),\label{eq:prop:future_pol}
			\end{align}
		\end{subequations}
		where $\E_0:=\E^f_0-2$, and $\E_+$ is minimal with the property that
		\begin{equation}\label{eq:prop:suboptimal_future}
			\E_+\supset \big(\E_0\cupdex(\E^f_+-1)\big)\cupdex0,\big(\E_0\cupdex(\E_++1)\big)\cupdex0.
		\end{equation}
		Here, we take $\E^f_0-2,\E^f_+-1=\emptyset$ whenever they do not appear in the assumptions of the respective version of \cref{eq:prop:future}.
	\end{lemma}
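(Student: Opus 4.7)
The strategy is to exploit the structure of the twisted wave operator near the future corner, recorded in \cref{eq:notation:box_near_corner}:
\begin{equation*}
P_\eta \;=\; \rho_0^2\rho_+^2\Bigl(L + \tfrac{\rho_+}{(1+\rho_+)^2}\Dl\Bigr), \qquad L \;:=\; -\rho_+\partial_+\bigl(\rho_+\partial_+ - \rho_0\partial_0\bigr).
\end{equation*}
The decisive feature is that $\Dl$ carries an extra $\rho_+$-weight relative to the leading part $L$, so it is perturbative with respect to expansions in powers of $\rho_+$ at $\scrip$. Dividing $P_\eta\psi = rf$ through by $\rho_0^2\rho_+^2$, one obtains an equation to which the two-variable ODE results \cref{item:ode:2d} apply order by order in $\rho_+$, with the $\Dl\psi$-contribution fed back into the source at the next higher order.

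For the polyhomogeneous-source implication \cref{eq:prop:future_pol}, I would construct the $\rho_+$-expansion iteratively. Assuming the $\rho_+^{<p}$-part of $\psi$ has already been built, matching the equation at the order $\rho_+^p \log^n\rho_+$ produces a first-order ODE on $[0,1)_{\rho_0}\times S^2$ for the next coefficient $\psi_{p,n}(\rho_0,\omega)$, whose right-hand side is assembled from the corresponding coefficient of $rf$, the already-constructed pieces of $\psi$, and the $\Dl$-feedback from the $\rho_+^{p-1}$-piece. Applying \cref{item:ode:1d} then uniquely determines $\psi_{p,n}$ and yields the index set $\E_0 = \E_0^f - 2$ in the $\rho_0$-direction. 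Summing over $p$, the inclusion $\bigl(\E_0 \cupdex (\E_+ + 1)\bigr) \cupdex 0 \subset \E_+$ in \cref{eq:prop:suboptimal_future} records two effects: the self-reference encodes the inductive generation of logarithms in $\rho_+$ by the $\Dl$-feedback entering the source at each step, while the extra $\cupdex 0$ accounts for the kernel of $\rho_0\partial_0$ at integer orders producing constant-in-$\rho_0$ contributions that re-propagate into higher $\rho_+$-orders.

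For the error-term variants \cref{eq:prop:futuresemiindex,eq:prop:futureerrorindex,eq:prop:futureindexerror}, the same peeling procedure is run only until the residual forcing lies in the target $\Hb$-space; the tail is then controlled by the energy estimate of \cref{prop:en:futureestimate}, whose weights $(a_0+2, a_++1)$ match exactly those appearing on the right-hand side. The data case \cref{eq:prop:future_data}, in which the "forcing" is really a polyhomogeneous extension of $\psi$ from $\D^-$ across the corner, reduces to the inhomogeneous case by cutting off the known expansion, subtracting it from $\psi$, and treating the commutator $[\Box_\eta,\chi]$ applied to that expansion as a new forcing supported away from $\scrip$. The main bookkeeping obstacle is the self-referential definition of $\E_+$ and the precise tracking of how $\Dl$-feedback generates logarithms at each $\rho_+$-order; since this argument is essentially that of \cite{hintz_stability_2020}, I would follow their inductive procedure and record only the minor adaptation needed to thread the mixed $\A{b,phg}+\Hb$ and $\A{phg,b}+\A{b,phg}$ decompositions on the right-hand side through the iteration.
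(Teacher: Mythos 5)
Your approach matches the paper's: exploit the extra $\rho_+$-weight on $\Dl$ in \cref{eq:notation:box_near_corner}, peel off the $\rho_+$-expansion order by order via \cref{prop:ode:ode2} and \cref{prop:ODE:ODE}, control the residual with \cref{prop:en:futureestimate}, and defer the pure-polyhomogeneous cases \cref{eq:prop:future_data} (without error) and \cref{eq:prop:future_pol} to \cite{hintz_stability_2020}. One small attribution slip: the trailing $\cupdex 0$ in \cref{eq:prop:suboptimal_future} arises from the zero indicial root of the first factor $\rho_+\partial_{\rho_+}$ when applying \cref{item:ode:1d}, not from the kernel of $\rho_0\partial_0$, which instead governs the mixed $\b$/$\phg$ bookkeeping towards $I^0$ via \cref{item:ode:2d}.
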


 \begin{lemma}\label{lemma:prop:improvement_at_future}
		For all cases \cref{eq:prop:future} in \cref{lemma:prop:future_corner}, the future index set $\E_+$ given in \cref{eq:prop:suboptimal_future} can be improved to the smaller index set
        \begin{equation}\label{eq:prop:optimal_future}
            \E^\phi_+=\big(0\overline{\cup}(\E^f_+-1)\big)\overline{\cup}(\E_0^f-2).
        \end{equation}
	\end{lemma}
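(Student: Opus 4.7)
The suboptimal $\E_+$ in \cref{eq:prop:suboptimal_future} is larger than $\E_+^\phi$ precisely because of the recursive condition $\E_+ \supset (\E_0 \overline\cup (\E_+ + 1)) \overline\cup 0$, which inflates $\log\rho_+$-powers at each virtual ``iteration'' against the $\Dl\psi$-term. The plan is to show this recursion is unnecessary by directly constructing the formal $\rho_+$-expansion of $\psi$ at $\scrip$ and verifying its $\rho_+$-index set is exactly $\E_+^\phi$. Throughout, one already knows from \cref{lemma:prop:future_corner} that $\psi \in \A{phg}^{\E_0, \E_+}$ for some $\E_+$ satisfying \cref{eq:prop:suboptimal_future}, so the task is to upgrade this to the sharper containment $\psi \in \A{phg}^{\E_0, \E_+^\phi}$.

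Working in coordinates $(\rho_0, \rho_+)$ near the future corner, one uses \cref{eq:notation:box_near_corner} to rewrite the equation as
\begin{equation*}
L\psi + \frac{\rho_+}{(1+\rho_+)^2}\Dl\psi = g, \qquad L := \rho_+\partial_+(\rho_0\partial_0 - \rho_+\partial_+), \qquad g := rf/(\rho_0^2\rho_+) \in \A{phg}^{\E_0, \E_+^f - 1}.
\end{equation*}
Formally expand $\psi \sim \sum c_{p, k}(\rho_0, \omega)\rho_+^p \log^k \rho_+$ and substitute. For each $p > 0$ and each log-level $k$, the coefficient equation reads
\begin{equation*}
p(\rho_0\partial_0 - p)c_{p, k} = g_{p, k} - \Dl c_{p-1, k} + (\text{corrections from } c_{p, k'} \text{ with } k' > k),
\end{equation*}
a Fuchsian ODE in $\rho_0$ solvable via \cref{item:ode:1d} with $c_{p, k}$ polyhomogeneous on $[0, 1)_{\rho_0}\times S^2$. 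The crucial point is that this mode-by-mode $\rho_0$-inversion involves no integration in $\rho_+$ and therefore introduces \emph{no} additional $\log \rho_+$-power; the $\log \rho_0$-enrichment at resonances $p \in \E_0$ is absorbed into the $\rho_0$-expansion of $c_{p, k}$ and does not feed back into $\E_+^\phi$. Inductively on $p$, the only sources of $\log \rho_+$ are: (i) direct contributions from the $\rho_+$-expansion of $g$ (the $\E_+^f - 1$-factor), (ii) the kernel of $\rho_+\partial_+$ at $p = 0$ (the $0 \overline\cup \cdot$-factor), and (iii) log-power-preserving inheritance from $\Dl c_{p-1, k}$. Taken together these give exactly $\E_+^\phi = (0 \overline\cup (\E_+^f - 1)) \overline\cup \E_0$.

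The formal expansion is then realised as a genuine polyhomogeneous function $\tilde\psi \in \A{phg}^{\E_0, \E_+^\phi}$ via a standard Borel-type summation at $\scrip$, as already used in the proof of \cref{lemma:prop:future_corner}. The difference $\psi - \tilde\psi$ solves an equation whose forcing and ``data'' from the past side are Schwartz-in-$\rho_+$ at $\scrip$ (i.e., in the $\E_+^f = \emptyset$ regime), so applying \cref{lemma:prop:future_corner} to this difference places it in $\A{phg}^{\E_0, \E_+^\phi}$ as well, completing the refinement. The main obstacle will be the careful bookkeeping at resonances $p \in \E_0$: the Fuchsian $\rho_0$-inversion there does produce extra $\log \rho_0$-factors, which must be tracked as enriching the $\rho_0$-expansion of the coefficient $c_{p, k}$ rather than as new entries in $\E_+^\phi$. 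Without this care, the recursive log-inflation seen in the naive two-step factorisation of $L$ reappears and the improvement collapses back to \cref{eq:prop:suboptimal_future}.
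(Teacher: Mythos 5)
Your strategy — extract the formal $\rho_+$-expansion at $\scrip$ mode by mode and argue that the Fuchsian $\rho_0$-inversions at each $\rho_+$-order introduce no new $\log\rho_+$-powers — is quite far from the paper's proof, which instead commutes the equation with shifted scaling vector fields $S_{u_1}=(u-u_1)\pu+(v-u_1)\pv$, uses $[S_{u_1},r^2P_\eta]=0$ to reduce the $\rho_0$-order of the source below any threshold $c$, applies the basic \cref{lemma:prop:future_corner} to the commuted quantity (so the recursive log-inflation only kicks in above order $c$ and is absorbed into the error), and then integrates along $S_{u_1}$ and a second $S_{u_2}$ to recover the improvement for $\psi$ itself. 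No formal series or summation is needed there.

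As written, your argument has a genuine gap in the bookkeeping of $\rho_+$-indices. You enumerate three sources of $\log\rho_+$: (i) the $\rho_+$-expansion of $g$, (ii) the kernel of $\rho_+\partial_+$ at $p=0$, and (iii) the propagation via $\Dl c_{p-1,k}$. Sources (i) and (ii) together produce $0\,\overline{\cup}\,(\E^f_+-1)$, and (iii) only ever shifts an existing $\rho_+$-index by $+1$ without changing the $\log$-degree — so these three alone generate $\overline{0\,\overline{\cup}\,(\E^f_+-1)}$ and nothing more. The target $\E_+^\phi$, however, contains the extra factor $\overline{\cup}\,\E_0$; this is not optional (see \cref{eq:app:bad_index_on_minkowski}, where for $\E_0=\overline{(p,0)}$, $p\notin\N$, the terms $\rho_+^{p+n}$ do appear). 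The missing source is the family of \emph{homogeneous} solutions of your Fuchsian ODEs: at each $p>0$ the kernel of $p(\rho_0\partial_0-p)$ is $\rho_0^p$, giving a free term proportional to $\rho_0^p\rho_+^p\log^k\rho_+$. Matching these free parameters against the data at $I^0$ (whose $\rho_0$-index set is $\E_0$) forces them to be generically nonzero precisely for $(p,k)\in\E_0$, which is where the $\overline{\cup}\,\E_0$ comes from. Your invocation of \cref{item:ode:1d} handles the particular solution but glosses over this kernel-matching step entirely, and without it your derivation does not yield the stated formula. Separately, the ``Borel-type summation at $\scrip$, as already used in the proof of \cref{lemma:prop:future_corner}'' is not something that proof actually uses: it works by direct ODE integration via \cref{prop:ODE:ODE} and \cref{prop:ode:ode2}, so the realisation of your formal expansion as a genuine polyhomogeneous function and the Schwartz control of the remainder would both need independent justification.
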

	\begin{proof}[Proof of \cref{lemma:prop:future_corner}]
 \cref{eq:prop:future_data} without error term and \cref{eq:prop:future_pol} are proved in a more general setting in section 7 of \cite{hintz_stability_2020}. 
We first prove \cref{eq:prop:futuresemiindex}: From \cref{prop:en:futureestimate}, we already know that $\psi\in\Hb^{a_0,\min(a_0,a_+,0)-}(\D^+)$. 
As already explained in the introduction, we now write $P_\eta=\rho_0^2\rho_+^2\bigg(-\partial_{\rho_+}(\rho_+\partial_{\rho_+}-\rho_0\partial_{\rho_0})+\frac{1}{(1+\rho_+)^2}\Dl\bigg)$ and treat the Laplacian as error term; that is, we write for $n=1$:
\begin{equation}
    \rho_+\partial_+(\rho_+\partial_{\rho_+}-\rho_0\partial_{\rho_0})\psi=-\rho_0^{-2}\rho_+^{-1}(rf)+\frac{\rho_+\Dl\psi}{(1+\rho_+)^2}\in \Hb^{a_0,a_+}(\D^+)+\Hb^{a_0,\min(a_0,a_+,0)+n-}(\D^+).
\end{equation}
Applying first \cref{eq:ODE:2d_error} from \cref{prop:ode:ode2} and then \cref{eq:ODE:1d_error} from \cref{prop:ODE:ODE} then gives $\psi\in \A{b,phg}^{a_0,\mindex0}(\D^+)+\Hb^{a_0,\min(a_0,a_+)-}(\D^+)+\Hb^{a_0,\min(a_0,a_+,0)+n-}(\D^-)$. The result now follows by induction in $n$.

\cref{eq:prop:futureerrorindex}, as well as the error part of \cref{eq:prop:future_data}, follow analogously. Notice that this produces exactly the index set \cref{eq:prop:suboptimal_future}.

Finally, in order to prove \cref{eq:prop:futureindexerror}, we note that in \cite{hintz_stability_2020}, it is already proved that $\psi\in \A{phg,b}^{\E_0,a'}(\D^+)$ for some $a'$. The improvement near the future corner then follows as before.
\end{proof}
 
	\begin{proof}[Proof of \cref{lemma:prop:improvement_at_future}]
		We only consider the case \cref{eq:prop:future_pol}, the other cases following analogously.
		We will make crucial use of a $T$-shifted version of the scaling vector field (see \cref{fig:scaling}):  
		 Define, for any $u_1< u_0$, for any index set $\E$ and for $c\geq \min\E$:
		\begin{equation}\label{eq:prop:Su1}
			S_{u_1}=(u-u_1)\pu+(v-u_1)\pv,\qquad \mathcal{S}^{\mathcal{E}}_{u_1,c}:=\prod_{(z,k)\in\mathcal{E}_{\leq c}}(S_{u_1}+z),%,\qquad\dot{\mathcal{S}}^{\mathcal{E}}_c:=\prod_{(z,k)\in\mathcal{E}_{\leq c}\setminus\mathcal{E}_{\leq c-1}}(S_{u_1}+z)
		\end{equation}
		and write $\mathcal{S}_{u_1,c}=\mathcal{S}_{u_1,c}^{\mathcal{E}^{\phi}_0}$ for $\E^\phi_0=\E^f_0-2$.
	We note the following commutation property of $S_{u_1}=S-u_1T$ with the twisted wave operator $P_\eta=r\Box r^{-1}$:
		\begin{equation}\label{eq:prop:wave:commute}
			[S_{u_1},r^2P_\eta]=0\implies [\mathcal{S}_c,r^2P_\eta]=0
			\implies P_\eta\mathcal{S}_{u_1,c}\psi=r^{-2}\mathcal{S}_c r^2rf.
		\end{equation}
  We furthermore have, writing $\mathcal{S}_c\psi=\psi_{c,u_1}$ and using that $(S+z)r \circ =r(S+z+1)\circ$:
  \begin{equation}\label{eq:prop:waveimproved}
   P_\eta\psi_{c,u_1}=%r^{-2}  \mathcal{S}_c r^2(rf)=r^{-2}\left(\prod_{(z,k)\in\mathcal{E^\phi}_{0,\leq c}}(S_{u_1}+z)\right)(r^2 rf)=
   \Big(\prod_{(z,k)\in(\E^\phi_{0})_{\leq c}+2}(S_{u_1}+z)\Big)(rf)=\mathcal{S}_{c+2}^{\E^f_0}(rf):=rf_c\in \A{phg}^{\E^{f_c}_0,\E^{f_c}_+}(\D^+),
  \end{equation}
  where we used that $\E^\phi_0+2=\E^f_0$, and 
  where $\E^{f_c}_0=(\E^f_{0})_{>c+2}$ and $\E^{f_c}_+=\E^f_+$. 
Indeed, working with boundary defining functions $\rho_{0,u_1}=-(u-u_1)^{-1}$, $\rho_{+,u_1}=-(u-u_1)v^{-1}$, then (suppressing the $u_1$ dependence) $S_{u_1}=-\rho_{0,u_1}\partial_{\rho_{0,u_1}}|_{\rho_{+,u_1}}$, and thus $\mathcal{S}_{u_1,c+2}^{\E^f_0}(rf)$ removes all terms up to order $c+2$ in $rf$ towards $I^0$, while keeping the index set towards $\scrip$ unchanged. Here, we are using the equivalent definition of polyhomogeneity from \cref{rem:notation:equivalent_polyhoms}.

	%	Then $r\phi_{c,u_1}:=\mathcal{S}_cr\phi$ satisfies $\Box\phi_c=r^{-2}\mathcal{S}_c r^2 r f=:f_c\in\A{phg}^{\vec{\E}^{f_c}}(\D^+)$ with $\E^{f_c}=\E^f_{>c+2}$, $\E_+^{f_c}=\E^f_+$. Notice that we are using notation that shows the explicit dependence on $u_1$ for $r\phi_{c,u_1}$, while at the same time keeping this dependence implicit for the other quantities.
		
		We now apply \cref{lemma:prop:future_corner} to \cref{eq:prop:waveimproved} to obtain that $\psi_{c,u_1}\in\A{b,phg}^{c+,(\E^f_+-1)\cupdex0}(\D^+)+\Hb^{c+,c+}(\D^+)$, where we treated all terms beyond order $c$ (coming from $\E_0^{f_c}-2$) as $\Hb$ errors.\footnote{\label{footnote:prop:improvedfuture}More specifically, we used that, for $\E_+$ minimal such that $	\E_+\supset \big((\E^{f_c}_0-2)\overline{\cup}(\E^f_+-1)\big)\overline{\cup}0,\big((\E^{f_c}_0-2)\overline{\cup}(\E_++1)\big)\overline{\cup}0$, if $(z,k)\in\E^+$ and $z\leq c$, then $(z,k)\in (\E^f_+-1)\cupdex0 $.}
		On the cone $\outcone{1}$, since $S_{u_1}|_{\outcone{1}}=(v-u_1)\pv$,  we can now apply \cref{item:ode:1d} (as in \cref{rem:notation:equivalent_polyhoms}) to get that $\psi|_{\outcone{1}}\in \A{\phg}^{\big(0\overline{\cup}(\E^f_+-1)\big)\overline{\cup}(\E^f_0-2)}(\outcone{1})+\Hb^{c+}(\outcone{1})$, where we used that~$\E^\phi_0=\E^f_0-2$.

We would like to now extend this improved expansion into all of $\D^+$ by integrating from $u_1$ along the integral curves of $S_{u_1}$. Since $S_{u_1}$ degenerates at $u=u_1$, we therefore consider a new $u_2\in(u_1,u_0]$ and, by the same considerations as above, obtain that $\psi_{c,u_2}\in\A{phg}^{c+,(\E^f_+-1)\cupdex{0}}(\D^+)+\Hb^{c+,c+}(\D^+)$.
		As before, working in coordinates $\rho_{0,u_2}, \rho_{+,u_2}$ we then have that $S_{u_2}=-\rho_{0,u_2}\partial_{\rho_{0,u_2}}|_{\rho_{+,u_2}}$, so the integral curves of $S_{u_2}$ intersect
	 $\outcone{1}$ and $I^0$ once.
  %; indeed, we can write $S_{u_2}=-\rho_0\partial_0|_{\rho_+}$ and $\outcone{1}=\{\rho_0=(u_2-u_1)^{-1}\}$ for $\rho_0=-(u-u_2)^{-1}$ and $\rho_+=-(u-u_2)v^{-1}$.
		Therefore, we can integrate from $\outcone{1}$ towards $I^0$ using \cref{item:ode:1d} (using that $\psi|_{\outcone{1}}\in \A{phg}^{\E^{\phi}_+}(\D^+)+\Hb^{c+}(\D^+)$) to obtain that we can restrict  $(\E_+^\phi)_{\leq c}\subset\big(0\overline{\cup}(\E^f_+-1)\big)\overline{\cup}(\E^f_0-2)$.
		Taking $c\to\infty$ yields the result.
	\end{proof}
 \begin{rem}[Explicitly computing the index sets for examples and commenting on the improvement.]
      Let us work out  \cref{eq:prop:suboptimal_future}  and \cref{eq:prop:optimal_future} in a straightforward case.
		Consider \cref{eq:prop:future_data} with $a_0=\infty$ and $\E^{f}_0-2=\overline{(p,0)}=\E_0$. 
		Then the future index sets read, respectively:
		
			\begin{equation}\label{eq:app:bad_index_on_minkowski}
				\text{for }p\notin\mathbb N:\qquad \E_+=\mindex{0}\cup\{(n+p,m):n\in\N, m\in\N_{\leq n}  \}, \quad \text{vs.} \quad \E^+_{\phi}=\mindex0\cup \mindex{p}.
			\end{equation}
			
			\begin{equation}
				\text{for }p\in\mathbb N:\qquad\E_+=\big\{(n,k):n=m, k\leq\max(0,m-p+1) \text{ where } m\in\N_{\geq0} \big\}, \quad \text{vs.} \quad \E^+_{\phi}=\mindex0\cupdex\mindex{p}.
			\end{equation}

Thus, while for, say, $p\notin\mathbb{N}$ the index set $\E_+$ from \cref{eq:prop:suboptimal_future} contains conformally irregular terms $(n+p,m)$ for all $m\leq n$, $\E^\phi_+$ from \cref{eq:prop:optimal_future} only has conformally irregular terms $(n+p,0)$.

 \end{rem}

\begin{rem}[First conformal irregularity]\label{rem:prop:suboptimal}
		While \cref{lemma:prop:improvement_at_future} gives a significantly smaller index set near $\scrip$ than \cref{lemma:prop:future_corner}, the first conformally irregular part (i.e.~an element not of the form $(n,0)$ for $n\in\mathbb N$) appears at the same order for both. 
  In fact, this first conformally irregular part of the index set is sharp for all the above statements.
		For instance, for $p\in\N$ there exists $rf\in\A{phg}^{\mindex{p+2}}(\D^+\cap\D^-)$ with $\supp f\in\D^+\cap\D^-$ such that solutions to $\Box\phi=f$ satisfy $r\phi\in\A{phg}^{\mindex{p},\E^+}(\D^+)$ with the coefficient of $(p,1)\in\E^+$ nonvanishing, cf.~\cref{rem:ODE:sharp}.
  For  $f=0$, we may use \cref{eq:intro:explicit_sol} to exhibit sharp examples. 
		
Similarly, the first conformally irregular term of the index sets in $\D^-$ that we will construct below (\cref{lemma:prop:inhomogeneous_no_rad,prop:prop:inhomogeneous_rad,cor:prop:solution_with_radiation_scri}) is also sharp. 	
However, an \textit{extraordinary} cancellation happens in the global $\D$ in the case of no incoming radiation in \textit{even spacetime dimensions}, which makes the index sets in \cref{prop:prop:prop_no_incoming} suboptimal, see already \cref{lemma:app:minkowski_index_improvement}.

We emphasise, however, that in applications, in order to find the exact structure of the index sets, we will always resort to explicit computations, see \cref{sec:app}. In particular, the explicit forms of the index sets given in \cref{lemma:prop:future_corner} or \cref{lemma:prop:improvement_at_future} are never relevant for actual computations, we just need to know that the solution is polyhomogeneous with respect so \textit{some} index set.
\end{rem}

	\subsection{Propagation of polyhomogeneity near the past corner \texorpdfstring{$\scrim\cap I^0$}{}, I: No incoming radiation}\label{sec:propagation:past_corner}
	We will now prove the analogue of \cref{lemma:prop:future_corner} in the past region $\D^-$, that is, we will propagate polyhomogeneity from initial data along $\incone$ and a polyhomogeneous inhomogeneity. 
We will first work with the no incoming radiation condition and, in particular, also assume that $f\in\Hb(\Dbold)+\A{phg}(\Dbold)$, i.e.~compatible with the no incoming radiation condition. 
More precisely, we assume:
\begin{ass}\label{ass1}
   Let for some real numbers $a^{\incone}, \vec{a}^f$ and index sets $\E^{\incone}, \vec{\E}^f$:
   \begin{align}
       \psi^{\incone}\in\A{phg}^{\E^{\incone}}(\incone)+\Hb^{a^{\incone}}(\incone) \text{ with } \min(\E^{\incone})>-1/2\leq a^{\incone},\\   rf\in\A{phg}^{\vec{\E}^f}(\Dbold)+ \Hb^{a^f_0,a^f_+}(\Dbold) \text{ with } \min(\E_0^f),a_0^f>1,
   \end{align}
   and let, for some $\vec{a}'$, $\psi\in \Hb^{\vec{a}'}(\Dbold)$ be the scattering solution to \cref{eq:prop:wave} with no incoming radiation arising from \cref{thm:scat:scat_general}.
\end{ass}
Notice that have now switched from the $\Hbt(\D)$-spaces to the $\Hb(\Dbold)$ spaces for more convenient notation. We remind the reader of the inclusion \cref{eq:not:inclusionDbold}.
\begin{rem}\label{rem:prop:ass1}
    Since we work in no incoming radiation spaces, we can in fact drop the restrictions on $\min(\E^{\incone})$, $a^{\incone}$, $a_0^f$ and $\min(\E_0^f)$, provided that we specify additional transversal derivatives along $\incone$; cf.~\cref{thm:scat:weak_decay}.
\end{rem}
	
	\begin{lemma}\label{lemma:prop:inhomogeneous_no_rad}
		%Let $rf\in\A{phg}^{\E^f_0}(\Dbold^-)+ \Hb^{a^f_0}(\Dbold^-),\psi^{\incone}\in\A{phg}^{\E^{\incone}}(\incone)+\Hb^{a^{\incone}}(\incone)$ with $\min(\E^f_0),a^f_0>1$ and $\min(\E^{\incone})>-1/2$, $a^{\incone}\geq-1/2$. 
		%Let, for some $a'$, $\psi\in\Hb^{a'}(\Dbold^-)$ be the corresponding scattering solution to \cref{eq:prop:wave} with no incoming radiation arising from \cref{thm:scat:scat_general}.
		Under \cref{ass1}, we have ${\psi\in\A{phg}^{\mathcal{E}_0^{\phi}}}(\Dbold ^-)+\Hb^{a_0}(\Dbold^-)$ for $\mathcal{E}_0^{\phi}=\mathcal{E}^{\incone}\cup(\mathcal{E}^{f}_0-2)$ and any $a_0<\min(a_0^f-2,a^{\incone})$.
	\end{lemma}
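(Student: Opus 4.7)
The plan is to carry out the scaling-vector-field iteration sketched in the toy model of \cref{sec:intro:example}, extended to allow for polyhomogeneous (rather than merely decaying) data and inhomogeneity. In $\Dbold^-$, the unique b-boundary $\boldsymbol{I}^0$ has defining function $\boldsymbol{\rho}_0=1/|u|$, and $S:=u\pu+v\pv$ takes the form $S=-\boldsymbol{\rho}_0\partial_{\boldsymbol{\rho}_0}|_v$ near $\boldsymbol{I}^0$, so that $(S+z)$ annihilates $\boldsymbol{\rho}_0^z$. The starting point is the scattering existence result \cref{thm:scat:scat_general} combined with its no-incoming-radiation improvement \cref{eq:scat:higher_no_incoming_estimate} and the embedding $\Hbt(\D^-)\subset\Hb(\Dbold^-)$ from \cref{eq:not:inclusionDbold}, which already provides the error bound $\psi\in\Hb^{a_0}(\Dbold^-)$ for any $a_0<\min(a^{\incone},a_0^f-2)$. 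The remaining task is to extract the polyhomogeneous expansion at $\boldsymbol{I}^0$.

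The key identity is $[S,P_\eta]=-2P_\eta$, equivalently $P_\eta\circ(S+z)=(S+z+2)\circ P_\eta$, which upon iteration gives, for each $c\ge\min(\mathcal{E}_0^{\phi})$,
\begin{equation*}
P_\eta(\mathcal{S}_c\psi)=\mathcal{S}_c^{+2}(rf),\quad
\mathcal{S}_c:=\prod_{(z,k)\in(\mathcal{E}_0^{\phi})_{\le c}}(S+z),\quad
\mathcal{S}_c^{+2}:=\prod_{(z,k)\in(\mathcal{E}_0^{\phi})_{\le c}}(S+z+2),
\end{equation*}
where repeated factors are included in order to handle logarithmic terms in accordance with \ref{item1:notation:index_sets} of \cref{def:notation:index_sets}. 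The precise choice $\mathcal{E}_0^{\phi}=\mathcal{E}^{\incone}\cup(\mathcal{E}_0^f-2)$ is designed so that $\mathcal{S}_c^{+2}$ annihilates the polyhomogeneous part of $rf$ coming from $\mathcal{E}_0^f$ up to order $c+2$, leaving $\mathcal{S}_c^{+2}(rf)\in\Hbt^{c+2-\epsilon,a_+^f}(\D^-)$. On the initial cone, $S|_{\incone}=u\pu+v_0\pv$, and the term $v_0\pv$ acts as a ``good'' derivative thanks to no incoming radiation: $v_0\pv\psi|_{\incone}=(v_0/r)\cdot(r\pv\psi)|_{\incone}$ gains one extra power of $\boldsymbol{\rho}_0$. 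Hence $\mathcal{S}_c$ acts on $\psi^{\incone}$ to leading order as $\prod(u\pu+z)$ and annihilates the $\mathcal{E}^{\incone}$-part of $\psi^{\incone}$ up to order $c$, so that $\mathcal{S}_c\psi|_{\incone}\in\Hb^{c-\epsilon}(\incone)$.

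With these two inputs in place, I apply the no-incoming-radiation energy estimate \cref{item:en:pastestimate_Hbt} of \cref{prop:en:pastestimate} to $\mathcal{S}_c\psi$ to conclude $\mathcal{S}_c\psi\in\Hbt^{c-\epsilon,c-\epsilon}(\D^-)\subset\Hb^{c-\epsilon}(\Dbold^-)$. Since $S=-\boldsymbol{\rho}_0\partial_{\boldsymbol{\rho}_0}$ near $\boldsymbol{I}^0$, this is precisely the equivalent characterization of polyhomogeneity recorded in \cref{rem:notation:equivalent_polyhoms}: inverting the factors $(S+z)$ one at a time via \cref{item:ode:1d} of \cref{prop:ODE:ODE} -- each step contributing a $\boldsymbol{\rho}_0^z\log^k\boldsymbol{\rho}_0$-term whose log powers are bookkept by the $\cupdex$-operation -- gives
\begin{equation*}
\psi\in\A{phg}^{(\mathcal{E}_0^{\phi})_{\le c}}(\Dbold^-)+\Hb^{c-\epsilon}(\Dbold^-),
\end{equation*}
and taking $c\ge a_0$ concludes the lemma. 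The main obstacle is the index-set bookkeeping: at each step one must verify that the commutator-shifted operator $\mathcal{S}_c^{+2}$ exactly matches $\mathcal{E}_0^f$ on the inhomogeneity side, and that $\mathcal{S}_c$ together with the no-incoming-radiation structure exactly matches $\mathcal{E}^{\incone}$ on $\incone$. Both considerations are encoded in the definition $\mathcal{E}_0^{\phi}=\mathcal{E}^{\incone}\cup(\mathcal{E}_0^f-2)$, which is exactly what makes the iteration close.
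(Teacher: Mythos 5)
Your overall strategy — commute with a scaling operator $\mathcal{S}_c$, show the commuted inhomogeneity and initial data gain decay, apply the no-incoming-radiation energy estimate, and conclude polyhomogeneity via the equivalent characterisation of \cref{rem:notation:equivalent_polyhoms} — is the same as the paper's. However, you use the \emph{unshifted} scaling $S=u\pu+v\pv$, while the paper uses the $v_0$-shifted vector field $S_{v_0}=(u-v_0)\pu+(v-v_0)\pv$, and this difference is not cosmetic: it is precisely what makes the restriction to $\incone$ tractable. With $S_{v_0}$, the coefficient $(v-v_0)$ of $\pv$ vanishes \emph{identically} on $\incone$, so $\mathcal{S}_c\psi|_{\incone}$ involves only the tangential derivative $(u-v_0)\pu=\tilde u\partial_{\tilde u}$ acting on $\psi^{\incone}$, and the index set of $\psi^{\incone}$ is annihilated exactly. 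Equivalently, $\incone=\{\tau'=-1\}$ is a coordinate level set of $\tau'=(t-2v_0)/r$, and $S_{v_0}=-\rho\partial_\rho|_{\tau'}$ flows along the correct direction.

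With your unshifted $S$, you have $S|_{\incone}=u\pu+v_0\pv$, and the constant-coefficient $v_0\pv$ piece does \emph{not} vanish. You argue it is harmless because $v_0\pv=(v_0/r)(r\pv)$ gains one power of $\boldsymbol{\rho}_0$. That is correct for a \emph{single} factor, but it does not yield $\mathcal{S}_c\psi|_{\incone}\in\Hb^{c-\epsilon}(\incone)$: when you expand $\prod_{(z,k)\in(\mathcal{E}_0^\phi)_{\le c}}(u\pu+z+v_0\pv)$, a cross term which replaces the factor $(u\pu+z_0)$ by $v_0\pv$ no longer annihilates the $\boldsymbol{\rho}_0^{z_0}$-term of $\psi^{\incone}$, and the single power gained by $v_0\pv$ leaves a contribution of order $\boldsymbol{\rho}_0^{z_0+1}$ — for $z_0<c-1$ this is much worse than $\boldsymbol{\rho}_0^c$. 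Moreover, these cross terms involve transversal derivatives $\pv^m\psi|_{\incone}$, which are \emph{not} determined by the polyhomogeneity assumption on $\psi^{\incone}$ (they are determined by the solution via the wave equation), so asserting they inherit the same index set is circular. A related inaccuracy: $S$ is not $-\boldsymbol{\rho}_0\partial_{\boldsymbol{\rho}_0}|_v$; the correct identity is $u\pu=-\boldsymbol{\rho}_0\partial_{\boldsymbol{\rho}_0}|_{v}$, and $S$ carries the extra $v\pv|_{\boldsymbol{\rho}_0}$-term. (It \emph{is} true that $S=-\rho\partial_\rho|_{t/r}$, so the annihilation of the spacetime expansion of $rf$ is fine — the problem is confined to the initial-cone restriction.) To repair the argument, replace $S$ by $S_{v_0}=S-2v_0T$ throughout (recall $T$ commutes with $P_\eta$, so the commutation identity $[S_{v_0},r^2P_\eta]=0$ still holds), after which the $\pv$-contributions on $\incone$ vanish exactly and the estimate on $\mathcal{S}_c\psi|_{\incone}$ follows purely from \cref{ass1}.
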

	
	\begin{proof}
Similarly to \cref{eq:prop:Su1}, we define\footnote{The $v_0$-shift from $S$ to $S_{v_0}$ uses a time translation symmetry of $\Box_\eta$ and is equivalent to assuming $v=0$ on the incoming cone.} $S_{v_0}:=(u-v_0)\pu+(v-v_0)\pv$ and
		\begin{equation}\label{eq:prop:S_c_definition}
			\mathcal{S}^{\mathcal{E}}_c:=\prod_{(z,k)\in\mathcal{E}_{\leq c}}(S_{v_0}+z),%\qquad\dot{\mathcal{S}}^{\mathcal{E}}_c:=\prod_{(z,k)\in\mathcal{E}_{\leq c}\setminus\mathcal{E}_{\leq c-1}}(S_{v_0}+z)
		\end{equation}
		and write $\mathcal{S}_c=\mathcal{S}_c^{\mathcal{E}_0^{\phi}}$.
		In coordinates $\rho=r^{-1}$, $\tau=(t-v_0)/r$ we get that $S_{v_0}=-\rho\partial_{\rho}|_{\tau}$; thus, using as before the equivalent definition of polyhomogeneity from \cref{rem:notation:equivalent_polyhoms}, it suffices to show that $\mathcal{S}_c \psi\in\Hb^{c}(\Dbold^-)+\Hb^{a_0}(\Dbold^-)$ for arbitrary $c$ to prove polyhomogeneity.

		We show this by first computing the initial data for $\mathcal{S}_c\psi$.
		Decomposing the product into its individual terms, we note that all terms with $\pv$-derivative vanish on $\incone$, as they are multiplied by $(v-v_0)$.
		On $\incone$, we may use the coordinate $\tilde{u}=u-v_0$ so that  we get $\mathcal{S}_c\psi|_{\Cbar}\in \A{phg}^{(\mathcal{E}_0^{\phi})_{>c}}(\Cbar)+\Hb^{a^{\incone}}(\incone)$ (using that $\E^\phi_0\supset \E^{\incone}$).
		
		Furthermore, since $\E^\phi_0\supset \E^f_0-2$, we also have that $r^{-2}\mathcal{S}_cr^2rf\in \Hb^{\min(c+2,a_0^f)}(\Dbold )$, cf.~\cref{eq:prop:wave:commute}.
		Using the improved initial data $\mathcal{S}_c\psi|_{\incone}$, the improvement on the inhomogeneity and that $\mathcal{S}_c\psi$ solves the wave equation \cref{eq:prop:wave:commute}, we can apply \cref{thm:scat:scat_general} to conclude that $\mathcal{S}_c\psi\in\Hb^{\min(a_0,c)}(\Dbold^-)$ for any $a_0<\min(a^{\incone},a_0^f-2)$.
		As this holds for arbitrary $c$, the result follows.
	\end{proof}
	
	We can combine this with \cref{lemma:prop:improvement_at_future} to obtain
	\begin{prop}\label{prop:prop:prop_no_incoming}
		%Let $rf\in\A{phg}^{\vec{\E}^f}(\Dbold)+ \Hb^{a^f_0,a^f_+}(\Dbold),\psi^{\incone}\in\A{phg}^{\E^{\incone}}(\incone)+\Hb^{a^{\incone}}(\incone)$ with $\min(\E^f_0),a^f_0>1$ and $\min(\E^{\incone})>-1/2$, $a^{\incone}\geq-1/2$. 
		%Let, for some $\vec{a}'$, $\psi\in\Hb^{\vec{a}'}(\Dbold)$ be the scattering solution to \cref{eq:prop:wave} with no incoming radiation arising from \cref{thm:scat:scat_general}.
  Under \cref{ass1}, we have $\psi\in\A{phg}^{\vec{\E}^\phi}(\Dbold )+\A{b,phg}^{a_0,\mindex{0}}(\Dbold)+\Hb^{a_0,\min(a_+^f-1,a_0)-}(\Dbold )$ for any $a_0<\min(a^{\incone},a^f_0-2)$, $\E^\phi_0=(\E^f_0-2)\cup\E^{\incone}$ and $\E^\phi_+=\Big(\big(0\overline{\cup}(\E^f_+-1)\big)\cupdex(\E^f_0-2)\Big)\cup\big(\mindex{0}\cupdex\E^{\incone}\big)$.%\cup\E^\phi_0$.
	\end{prop}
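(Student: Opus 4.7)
The plan is to combine Lemma~5.2 (prop:prop:inhomogeneous\_no\_rad), which handles the past region $\Dbold^-$, with the future-region propagation of polyhomogeneity from Lemma~5.1 (prop:future\_corner), together with the sharpening of the scri-index set provided by Lemma~5.1.1 (prop:improvement\_at\_future). I would proceed in three steps.

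First, Lemma~5.2 applied under \cref{ass1} directly yields $\psi\in\A{phg}^{\E_0^\phi}(\Dbold^-)+\Hb^{a_0}(\Dbold^-)$ with $\E_0^\phi=\E^{\incone}\cup(\E_0^f-2)$ for any $a_0<\min(a^{\incone},a_0^f-2)$. This already establishes the claim in $\Dbold^-$ and, by restriction to a cutoff localising near the corner $I^0\cap\scrip$, it provides information of precisely the form required by the "data" hypothesis of \cref{eq:prop:future_data}, namely $\chi\psi\in\A{phg}^{\E_0^\phi}(\Dbold^+\cap\Dbold^-)+\Hb^{a_0}(\Dbold^+\cap\Dbold^-)$.

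Second, in $\Dbold^+$ I would exploit linearity of the Minkowskian wave equation to decompose $\psi=\psi_{\phg}+\psi_\Delta$, in parallel with splitting $f=f_{\phg}+f_\Delta$ according to \cref{ass1} and splitting the data at $I^0$ obtained in Step~1 into its polyhomogeneous part and its $\Hb^{a_0}$-remainder. For the error piece $\psi_\Delta$, I would use \cref{eq:prop:futuresemiindex} to treat the $\Hb^{a_0^f,a_+^f}$-error in $f$, combined with the error portion of \cref{eq:prop:future_data} to treat the $\Hb^{a_0}$-error in the data at $I^0$; this yields
\[
\psi_\Delta\in\A{b,phg}^{a_0,\mindex{0}}(\Dbold^+)+\Hb^{a_0,\min(a_0,a_+^f-1)-}(\Dbold^+),
\]
matching the error term in the proposition.

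Third, for the polyhomogeneous piece $\psi_{\phg}$, I would apply \cref{eq:prop:future_pol} to the inhomogeneity $f_{\phg}$ and the polyhomogeneous part of \cref{eq:prop:future_data} to the data at $I^0$, in both cases invoking the improvement of Lemma~5.1.1 to replace the coarse index set \cref{eq:prop:suboptimal_future} by the sharp one \cref{eq:prop:optimal_future}. The inhomogeneous contribution produces $\big(0\cupdex(\E^f_+-1)\big)\cupdex(\E^f_0-2)$ at $\scrip$, while the $I^0$-data contribution, whose index set at $I^0$ is $\E_0^\phi=\E^{\incone}\cup(\E_0^f-2)$, produces $\mindex{0}\cupdex\E^{\incone}$ modulo terms already contained in the previous piece. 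Taking the union gives exactly
\[
\E_+^\phi=\Big(\big(0\cupdex(\E^f_+-1)\big)\cupdex(\E^f_0-2)\Big)\cup\big(\mindex{0}\cupdex\E^{\incone}\big),
\]
while the index set at $I^0$ remains $\E_0^\phi$.

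The main obstacle, and the place where the most care is required, is the careful bookkeeping in Step~3: we need to invoke the \emph{improved} scri-index set from Lemma~5.1.1 rather than the coarser one from Lemma~5.1, and we need to verify that the two contributions—one coming from the polyhomogeneous inhomogeneity $f_{\phg}$ and one coming from the polyhomogeneous profile inherited from $\Dbold^-$ at $I^0$—combine to exactly the $\E_+^\phi$ written in the statement, without spurious logarithmic generations. A secondary (but routine) point is compatibility of the decompositions across $\Dbold^+\cap\Dbold^-$, which is handled by the cutoff $\chi$ appearing in Lemma~5.1 exactly as in the proof of Lemma~5.2.
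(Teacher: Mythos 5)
Your proposal is correct and matches the paper's proof in substance: the paper simply states that Proposition~\ref{prop:prop:prop_no_incoming} "follows from patching together Lemma~\ref{lemma:prop:inhomogeneous_no_rad}, Lemma~\ref{lemma:prop:future_corner}, Lemma~\ref{lemma:prop:improvement_at_future}, via a cutoff $\chi$ supported in $\D^+\cap\D^-$" together with a split in $\D^+$ into "a part with data and no inhomogeneity, and into a part with inhomogeneity and no data." You organize the same ingredients in the opposite order (first polyhomogeneous-vs-error, then data-vs-inhomogeneity), but both decompositions are needed in either arrangement, and you invoke exactly the same lemmas, including the key sharpening from Lemma~\ref{lemma:prop:improvement_at_future}. (Minor note: your lemma numbers are off by one from the paper's --- the past-region lemma is Lemma~5.3, and the improvement is Lemma~5.2 --- but the labels are unambiguous from context.)
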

	\begin{proof}
		This follows from patching together \cref{lemma:prop:inhomogeneous_no_rad,lemma:prop:future_corner,lemma:prop:improvement_at_future}, via a cutoff $\chi$ supported in $\D^+\cap\D^-$.
        In the corner $\D^+$, we furthermore split up the solution into a part with data and no inhomogeneity, and into a part with inhomogeneity and no data.
	\end{proof}

	\subsection{Propagation of polyhomogeneity near the past corner \texorpdfstring{$\scrim\cap I^0$}{}, II: Incoming radiation}\label{sec:prop:II}
	The goal of this section is to drop the assumption of no incoming radiation. 
We do this by first weakening the assumption  $f\in\A{phg}(\Dbold)$ in favour of the weaker $f\in\A{phg}(\D)$, while still assuming no incoming radiation.
	We can then no longer peel off the leading order terms in $f$ at $\scrim$ and $I^0$ at the same time using scaling. Instead, we first propagate polyhomogeneity along $\scrim$ in \cref{lemma:prop:data}, and then, in \cref{lemma:prop:scri_to_I0}, propagate polyhomogeneity towards $I^0$ using time integrals as in \cite{angelopoulos_asymptotics_2018}.\footnote{Similarly to \cite{hintz_linear_2023}, one could alternatively use the Mellin transform and contour shifting to obtain a similar result.} %  no longer can peel off leading order terms at $\scrim$ and $I^0$ at the same time.
	%We thus proceed differently, splitting the proof into two parts:
	%First, in \cref{sec:prop:data}, we prove polyhomogeneity near $\scrim$ up to the corner $\scrim\cap I^0$ (with no polyhomogeneity towards $I^0$.)
	%Secondly, in \cref{sec:prop:incoming_corner}, we use a cutoff function to study a future supported smooth problem and use the time integrals of \cite{angelopoulos_asymptotics_2018} to obtain an expansion near $I^0$ as well.\footnote{As discussed in \cite{hintz_linear_2023}, one could alternatively use the Mellin transform and contour shifting to obtain a similar result.} 
 We can then finally extend our results to solutions with nontrivial incoming radiation by treating the incoming radiation as an inhomogeneity, see already \cref{cor:prop:solution_with_radiation_scri}.
 We again state our assumptions in one place:
	\begin{ass}\label{ass2}
   Let $\psi^{\incone}$ as in \cref{ass1},  let $\vec{\E}^f$ be index sets, and let $\vec{a}^f$ be real numbers. Let 
   \begin{align}\label{eq:prop:ass2}
  rf\in\A{phg}^{\vec{\E}^f}(\D)+ \Hb^{a_-^f,a^f_0,a^f_+}(\D) \text{ with } \min(\E_-^f),a_-^f>1,
   \end{align}
   and let, for some $\vec{a}'$, $\psi\in \Hb^{\vec{a}'}(\D)$ be the scattering solution to \cref{eq:prop:wave} with no incoming radiation arising from \cref{cor:scat:enlarged_admissible_set}.
  
   Furthermore, define $a_-=\min(a^{\incone}, a^f-1)$, $a_0=\min(a_--,a^f_0-2)$ and $a_+=\min(a_0-,a^f_+-1)$ as well as the index sets
   \begin{equation}\label{eq:prop:assumption:indexsets}
       \E^\phi_-=\E^{\incone}\cup(\E^f_--1),\quad \E^\phi_0=\big((\E^f_--1)\overline{\cup}(\E^f_0-2)\big)\cup\E^{\incone}, \quad \E^\phi_+=(0\cupdex(\E^f_+-1))\cupdex\E^\phi_0. 
   \end{equation}
\end{ass}
\begin{rem}\label{rem:prop:restrictiondropping}
  As in \cref{rem:prop:ass1}, we can actually drop the assumptions on $\min(\E^{f}_-), \min(\E^{\incone})$, cf.~\cref{thm:scat:weak_polyhom}, but we can no longer drop the assumptions on $a_-^f$ and $a^{\incone}$.
\end{rem}

	\begin{lemma}[Propagation along $\scrim$]\label{lemma:prop:data}
		Under \cref{ass2}, we have $\psi\in \A{phg,b}^{\mathcal{E}_-^\phi,a_0'}(\D^{-})+\Hb^{a_-,a_0'}(\D^{-})$.%, where $\mathcal{E}_-^\phi=\mathcal{E}^{\incone}\cup(\mathcal{E}_-^f-1)$  and $a_0=a_0'$.
  
  If we replace \eqref{eq:prop:ass2} by $rf\in \A{phg,b}^{\E_-^f,a_0^f}(\D^-)$, then $\psi\in \A{phg,b}^{\mathcal{E}_-^\phi,a_0'}(\D^{-})$.
	\end{lemma}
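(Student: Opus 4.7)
The strategy is to extract the polyhomogeneous expansion of $\psi$ at $\scrim$ iteratively, one order at a time, working from the a priori conormal bound $\psi\in\Hb^{a_-,a_0'}(\D^-)$ provided by \cref{cor:scat:enlarged_admissible_set} applied to the scattering data of \cref{ass2} (here $a_0' < \min(a_-, a_0^f-2, a_-^f-1)$ is chosen admissibly). The equation
\[
\pu\pv\psi = r^{-2}\Dl\psi - rf
\]
will be integrated first in $u$ from $\scrim$, using the no-incoming-radiation condition $\pv\psi|_{\scrim}=0$ as boundary value, and then in $v$ from $\incone$, where the polyhomogeneous data $\psi^{\incone}$ are prescribed. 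The ODE results of \cref{corr:ODE:du_dv} (in the product-space form of \cref{rem:ODE:product_spaces}, with $v$ and $\omega$ treated as smooth parameters) convert polyhomogeneity/conormality of the integrands at $\scrim$ into the same structure for the integrals, with the expected one-order improvement in decay.

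The inductive hypothesis is that for some $c\in\R$ and some $\epsilon>0$,
\[
\psi \in \A{phg,b}^{(\E^\phi_-)_{\leq c},\,a_0'}(\D^-) + \Hb^{c+\epsilon,\,a_0'}(\D^-),
\]
the base case $c<\min\E^\phi_-$ being the scattering bound. Writing $\psi = \psi_{\leq c} + R$ and inserting into the wave equation, the remainder satisfies
\[
\pu\pv R = r^{-2}\Dl R + \bigl[r^{-2}\Dl\psi_{\leq c} - \pu\pv\psi_{\leq c} - rf\bigr],
\]
whose bracketed RHS is polyhomogeneous in $\rho_-$ modulo an $\Hb$-error that decays one order better at $\scrim$ than at the previous stage. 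Applying \cref{item:ode:u-prop_boundary} to $\pu(\pv R)=\mathrm{RHS}$ with vanishing boundary data at $\scrim$ gives a polyhomogeneous expansion of $\pv R$ at $\scrim$ with index set shifted by $-1$ from that of the RHS; a subsequent integration in $v$ from $\incone$ (the time-reversed version of \cref{item:ode:u-prop}, incorporating the polyhomogeneous data $R|_{\incone}=\psi^{\incone}-\psi_{\leq c}|_{\incone}$) then produces the corresponding expansion of $R$ itself, thereby extending the expansion of $\psi$ by the next layer of terms in $\E^\phi_-$.

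The principal obstacle is book-keeping: one must check that the index set generated at each step is exactly $\E^\phi_- = \E^{\incone}\cup(\E^f_--1)$ as in \eqref{eq:prop:assumption:indexsets}. The $\E^{\incone}$ part arises from the $v$-integration of data on $\incone$, while the $\E^f_--1$ part arises from the $u$-integration of $rf$; the ordinary $\cup$ (rather than $\cupdex$) appears because the two integrations act in orthogonal directions, so that no extra logarithmic terms are produced from their combination (in contrast to the $\Dbold$-corner treated in \cref{lemma:prop:inhomogeneous_no_rad}). The preservation of the conormal $I^0$-weight $a_0'$ across the induction is automatic from the product-space version of \cref{corr:ODE:du_dv}, since $v$ and $\omega$ play no role in the ODE transport step. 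For the sharpened second statement, when $rf \in \A{phg,b}^{\E^f_-,a_0^f}(\D^-)$ carries no $\Hb$-error at $\scrim$, the bracketed RHS at each iteration is purely polyhomogeneous in $\rho_-$, so no conormal residue is ever generated; letting $c\to\infty$ delivers the full expansion $\psi\in\A{phg,b}^{\E^\phi_-,a_0'}(\D^-)$.
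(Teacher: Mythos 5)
Your proposal takes the ``decomposition'' route---splitting $\psi$ into the partial polyhomogeneous sum $\psi_{\leq c}$ plus remainder $R$ and extracting the next layer by ODE integration---which is dual to the paper's ``commutation'' route. The paper's proof instead commutes with $\mathcal{S}_c=\prod_{(z,k)\in(\E^\phi_-)_{\leq c}}(\rho_-\partial_{\rho_-}-z)$, exploits the commutation identity $rv\pu\pv\mathcal{S}_c\psi=\rho_-\Dl\mathcal{S}_{c-1}\psi-\mathcal{S}_c(r^2vf)$ (noting $\mathcal{S}_c$ commutes with the principal part $(\rho_-\partial_-)(\rho_-\partial_--\rho_0\partial_0)$ of $rvP_\eta$), proves improved decay for the commuted quantity via \cref{corr:ODE:du_dv}, and closes by the equivalent characterisation of polyhomogeneity in \cref{rem:notation:equivalent_polyhoms}. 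The two routes are equivalent in content; the paper's is shorter precisely because it never has to track $\psi_{\leq c}$ explicitly---the constraint $\E^\phi_-\supset\E^{\incone}$ immediately annihilates the data, and $\E^\phi_-\supset\E^f_--1$ immediately annihilates the inhomogeneity, with no cancellation to verify.

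The main gap in your argument is the unjustified assertion that the bracket $\bigl[r^{-2}\Dl\psi_{\leq c}-\pu\pv\psi_{\leq c}-rf\bigr]$ ``decays one order better at $\scrim$ than at the previous stage''. Estimating the three terms via their polyhomogeneous structure gives index sets $(\E^\phi_-)_{\leq c}+2$, $(\E^\phi_-)_{\leq c}+1$, and $\E^f_-$ respectively, hence decay only $\gtrsim\rho_-^{\min\E^\phi_-+1}$, which is \emph{independent of $c$} and does not improve as the induction proceeds. The missing step is to observe that, by the wave equation itself, the bracket is identically $\pu\pv R-r^{-2}\Dl R$, and since $\pu\pv\in\rho_-\rho_0^2\Diff^2_\b(\D^-)$ and $r^{-2}\Dl\in\rho_-^2\rho_0^2\Diff^2_\b(\D^-)$, the inductive hypothesis $R\in\Hb^{c+\epsilon,a_0'}$ forces the bracket into $\Hb^{c+1+\epsilon,a_0'+2}$; this is what makes the would-be obstructing polyhomogeneous coefficients at orders $\leq c+1$ vanish. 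Without this cancellation argument the inductive step cannot close. A secondary point: the claimed ``vanishing boundary data at $\scrim$'' for $\pv R$ is not automatic, since $\pv\psi_{\leq c}|_{\scrim}$ need not vanish when $(0,0)\in(\E^\phi_-)_{\leq c}$; one must use \cref{item:ode:u-prop_boundary} in its full generality with nontrivial $G^{\scrim}$.
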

	\begin{rem} 
		The index set $\E^f_-$ is shifted by 1 in this lemma, whereas it was shifted by 2 in \cref{lemma:prop:inhomogeneous_no_rad}.
		Thus, the index set near $\scrim$ is better than in  \cref{lemma:prop:inhomogeneous_no_rad}.
		The difference follows from the different scaling behaviour of $\Box_\eta$ towards $\scrim,I^0$.
	\end{rem}
 \begin{proof}
     We want to mimic the proof of \cref{lemma:prop:inhomogeneous_no_rad} in order to propagate polyhomogeneity along $\scrim$: 
     Working with coordinates $\rho_-=v/r,\,\rho_0=v^{-1}$, we get that $rvP_\eta=-(\rho_-\partial_-)(\rho_-\partial_--\rho_0\partial_0)+\rho_-\Dl$.
    We define
     \begin{equation}\label{eq:prop:S:with:upu}
			\mathcal{S}^\mathcal{E}_{ c}=\prod_{(z,k)\in\mathcal{E}_{\leq c}}(\rho_-\partial_{\rho_-}-z),\qquad\dot{\mathcal{S}}^\mathcal{E}_{c}=\prod_{(z,k)\in\mathcal{E}_{\leq c}\setminus\mathcal{E}_{\leq c-1}}(\rho_-\partial_{\rho_-}-z),
		\end{equation}
  use the short-hand $\mathcal{S}_c:=\mathcal{S}^{\mathcal{E}^{\phi}_-}$ and the convention that for $c<\min\E$, $\mathcal{S}_{\leq c}^{\E}=1$,
        and observe the following commutation
        \begin{equation}\label{eq:prop:commute}
            rv\pu\pv \mathcal{S}_c\psi=\rho_-\Dl \mathcal{S}_{c-1}\psi-\mathcal{S}_c (r^2vf).
        \end{equation}

          Having established the wave equation satisfied by $\mathcal{S}_c \psi$, we now improve initial data and inhomogeneity:
For the initial data, we note that $\mathcal{S}_c$ only contains tangential derivatives on $\incone$ and, since $\E^\phi_-\supset \E^{\incone}$, we immediately get that $\mathcal{S}_c\psi|_{\incone}\in \Hb^{\min(c,a^{\incone})}(\incone)$. 
Similarly, we have the improvement $\mathcal{S}_{c}^{\E^{\phi}_-}(vr^2f)=rv\mathcal{S}_{c+1}^{\E^{\phi}_-+1}(rf)\in rv\Hb^{\min(c+1+,a_-^f),a_0'+2}(\D^-)$, using that $\E^{\phi}_-\supset \E^f_--1$. 

We now claim that, for arbitrarily large $c$,
		\begin{equation}\label{eq:prop:data_claim}
			\mathcal{S}_{c}\psi\in\Hb^{\min(c,a^{\incone}), a_0'}(\D^-).
		\end{equation} 
		For $c<a'_-$, the above holds by assumption of the lemma.
		 Inductively, we assume that for some fixed $c$, $\mathcal{S}_{c-1}\psi\in \Hb^{\min(c-1,a^{\incone}),a_0'}(\D^-)$. 
  By \cref{eq:prop:commute}, we get $\pu\pv\mathcal{S}_{c}\psi\in \Hb^{\min(c+1,a^{\incone}),a_0'+2}(\D^-)$. 
		Applying now \cref{corr:ODE:du_dv} to \cref{eq:prop:commute}, we get that $\mathcal{S}_c\psi\in\Hb^{\min(c,a^{\incone}),a_0'}(\D^-)$, which completes the proof.

  The case where $f$ is in a mixed $\A{phg,b}$ space follows analogously.
     
 \end{proof}

    \begin{lemma}[Propagation towards $I^0$]\label{lemma:prop:scri_to_I0}
        Under \cref{ass2}, we have ${\psi\in\A{phg}^{\vec{\E}^{\phi}}}(\D^-)+\Hb^{\vec{a}}(\D^-)$.

        If we replace \cref{eq:prop:ass2} with $rf\in\A{phg,b}^{\E^f_-,a_0^f}(\D^-)$, then we have $\psi\in\A{phg}^{\vec{\E}^\phi}(\D^-)+\A{phg,b}^{\E^\phi_-,a_0}(\D^-)$, where $\vec{\E}^\phi$ is as in \cref{eq:prop:assumption:indexsets} but with  $\E^f_0-2$ replaced by $\emptyset$.
    \end{lemma}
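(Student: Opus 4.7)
The plan is to combine \cref{lemma:prop:data} (which gives polyhomogeneity at $\scrim$) with the scaling-commutator technique of \cref{lemma:prop:inhomogeneous_no_rad} (which peels off terms in a $\rho$-expansion) to propagate polyhomogeneity towards $I^0$. The crucial geometric observation is that, in the coordinates $(\rho_-,\rho_0)$ on $\D^-$, the shifted scaling vector field $S_{v_0}:=(u-v_0)\partial_u+(v-v_0)\partial_v$ equals $-\rho_0\partial_{\rho_0}|_{\rho_-}$ to leading order (the lower-order corrections come with an extra $\rho_0$- or $\rho_-\rho_0$-weight), so commuting with $(S_{v_0}+z)$ chops off $\rho_0^z\log^k\rho_0$-terms while acting trivially on the $\rho_-$-structure. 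Moreover, $(v-v_0)\partial_v$ vanishes on $\incone$, so $S_{v_0}$ restricts to a tangential operator on the initial cone.

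First I would invoke \cref{lemma:prop:data} to get $\psi\in\A{phg,b}^{\E^\phi_-,a_0'}(\D^-)+\Hb^{a_-,a_0'}(\D^-)$ for any $a_0'<\min(a_--,a_0^f-2)$. Next, setting $\mathcal{S}_c:=\mathcal{S}_c^{\E^\phi_0}=\prod_{(z,k)\in(\E^\phi_0)_{\leq c}}(S_{v_0}+z)$ and using $[S_{v_0},P_\eta]=-2P_\eta$ yields
\[P_\eta\,\mathcal{S}_c\psi=\mathcal{S}_{c+2}^{\E^\phi_0+2}(rf),\qquad \E^\phi_0+2=\big((\E^f_-+1)\,\overline{\cup}\,\E^f_0\big)\cup(\E^{\incone}+2).\]
Two computations then feed the commuted equation into \cref{lemma:prop:data}: on $\incone$, the tangentiality of $S_{v_0}$ together with $\E^\phi_0\supset\E^{\incone}$ gives $\mathcal{S}_c\psi|_{\incone}\in\A{phg}^{(\E^{\incone})_{>c}}(\incone)+\Hb^{a^{\incone}}(\incone)$; while since $S_{v_0}$ acts only in $\rho_0$ and $\E^\phi_0+2\supset\E^f_0$, the inhomogeneity sits in $\A{phg,b}^{\E^f_-,c+}(\D^-)+\Hb^{a^f_-,a^f_0}(\D^-)$ (the $\Hb$ part of $rf$ at $I^0$ is \emph{not} improved by $\mathcal{S}$).

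Applying \cref{lemma:prop:data} to $\mathcal{S}_c\psi$ with this refined data and inhomogeneity then yields $\mathcal{S}_c\psi\in\A{phg,b}^{\cdots,\min(c,a_0)}(\D^-)+\Hb^{a_-,\min(c,a_0)}(\D^-)$. Invoking the equivalent characterization of polyhomogeneity (\cref{rem:notation:equivalent_polyhoms}) and integrating the $\mathcal{S}_c$-operators back via \cref{item:ode:1d}---which transforms $\rho_0^{c+}$ decay of $\mathcal{S}_c\psi$ into a polyhomogeneous expansion of $\psi$ in $\rho_0$ with index set $(\E^\phi_0)_{\leq c}$ plus $\Hb^{a_-,\min(c,a_0)}$-error---then gives $\psi\in\A{phg}^{\vec\E^\phi}(\D^-)+\Hb^{\vec a}(\D^-)$ by taking $c$ arbitrarily large (up to the cap imposed by $a_0$). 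The second statement, where $rf\in\A{phg,b}^{\E^f_-,a_0^f}(\D^-)$ has no $I^0$-expansion, proceeds identically but with the second application of \cref{lemma:prop:data} producing a polyhomogeneous (not merely conormal) error at $\scrim$, and with the $\E^f_0-2$ contribution correspondingly absent from $\E^\phi_0$.

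The main obstacle is bookkeeping: the operator $\mathcal{S}_c^{\E^\phi_0+2}$ improves only the polyhomogeneous part of $rf$ at $I^0$, not its $\Hb^{a_0^f}$ conormal remainder, and this is precisely what sets the sharp order $a_0=\min(a_--,a_0^f-2)$ of the $\Hb$-remainder in the conclusion. A second technical subtlety is verifying that the full index set $\E^\phi_0=\big((\E^f_--1)\,\overline{\cup}\,(\E^f_0-2)\big)\cup\E^{\incone}$ really appears after integrating back: the $\E^{\incone}$ part enters via the integration "constants" (i.e. functions of $\rho_-$) that appear when inverting $\mathcal{S}_c$, while the $(\E^f_--1)\,\overline{\cup}\,(\E^f_0-2)$ part encodes the interplay between the polyhomogeneity at $\scrim$ already established and the $\rho_0$-expansion driven by the inhomogeneity.
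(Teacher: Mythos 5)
Your proposal correctly identifies the broad strategy (commute with a scaling-type operator to peel off the expansion towards $I^0$, feed the commuted quantity back through \cref{lemma:prop:data}, then integrate back), and your commutator computation $[S_{v_0},P_\eta]=-2P_\eta$ and the coordinate analysis of $S_{v_0}$ are both correct. However, there is a genuine gap in the step "Applying \cref{lemma:prop:data} to $\mathcal{S}_c\psi$ \ldots\ yields $\mathcal{S}_c\psi\in\A{phg,b}^{\cdots,\min(c,a_0)}(\D^-)+\Hb^{a_-,\min(c,a_0)}(\D^-)$."

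To feed $\mathcal{S}_c\psi$ into \cref{lemma:prop:data}, you must first establish the \emph{a priori} bound required by \cref{ass2}, i.e.\ a scattering estimate $\Ve\mathcal{S}_c\psi\in\Hb^{\vec a'}(\D^-)$ with the claimed $\rho_0$-decay $\min(c,a_0)$. But the commuted inhomogeneity $\mathcal{S}_{c+2}^{\E^\phi_0+2}(rf)$ still decays only like $\min(\E^f_-)$ towards $\scrim$: as you note, $S_{v_0}$ is tangent to $\scrim$, so it improves $\rho_0$-decay but does nothing for $\rho_-$-decay. The admissibility constraints of \cref{def:en:admissible:f} (and hence \cref{prop:en:pastestimate}, \cref{thm:scat:scat_general}) force $a_0<a_-\leq a^f_--1$. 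Consequently, the scattering estimate can give at most $\mathcal{S}_c\psi\in\Hb^{\cdot,\min(\E^f_-)-1-}(\D^-)$ \emph{regardless of how large $c$ is}: no amount of further $\mathcal{S}_c$-commutation improves the cap. Since \cref{ass2} only requires $\min(\E^f_-)>1$, this cap can be as close to $0$ as one pleases, so your integration-back argument recovers polyhomogeneity only up to order $\min(\E^f_-)-1$ in $\rho_0$, far short of the full $\vec\E^\phi$-expansion claimed. This is exactly why the direct scaling argument works in \cref{lemma:prop:inhomogeneous_no_rad} (there the forcing lives on $\Dbold^-$ with a single merged boundary, so there is no separate $\rho_-$-constraint to hit) but fails here on $\D^-$.

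The paper's proof introduces the missing ingredient: first reduce by linearity to the case $\psi^{\incone}=0$, $\supp f\subset\{v>v_0+1\}$, and then take $k$-fold \emph{time integrals} $T^{-k}$. Since $[T,P_\eta]=0$ and $T^{-1}$ gains one $\rho_0$-weight (cf.\ \cref{corr:ODE:du_dv}\cref{item:ode:t-prop}) while leaving the $\rho_-$-weight untouched, one can commute $\mathcal{S}_c$ with $T^{-k}\psi$ (getting the limited decay $a=\min(\min(\E^f_-)-1-,c-2)$ from the scattering estimate, and then the $\scrim$-improvement from \cref{lemma:prop:data}) and finally undo the time integrals using the key algebraic identity $T^k\mathcal{S}^{\E}_c=\mathcal{S}^{\E+k}_{c+k}T^k$, which raises the $\rho_0$-decay of the chopped quantity by an extra $k$. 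Since $k$ is arbitrary, this gives polyhomogeneity to all orders. Without the time-integral step your argument cannot break the $\min(\E^f_-)-1$ barrier.

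A minor secondary point: the phrase "$S_{v_0}$ acts only in $\rho_0$" is imprecise; as your own computation shows, $S_{v_0}=-\rho_0\partial_0+O(\rho_0)\Vb$ with both normal and $\rho_-$-tangential corrections. The corrections are indeed harmless (they come with an extra $\rho_0$-weight), but the main issue is the decay cap above.
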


    \begin{proof}
        By linearity, we can set $\psi^{\incone}=0$ and assume that $\supp f\subset\{v>v_0+1\}$. More precisely, we can split $\phi=\phi_1+\phi_2+\phi_3$ such that $\Box_\eta\phi_1=0$ with data $\psi^{\incone}$ (and apply \cref{lemma:prop:inhomogeneous_no_rad}), such that $\Box_\eta\phi_2=\chi f$, $\Box_\eta\phi_3=(1-\chi)f$, both with trivial data, where $\chi$ is a cutoff supported in $v>v_0+1$ (and identically 1 for $v\geq v_0+2$).
        For $\phi_3$, we can then apply \cref{lemma:prop:data} to get back into the setting of \cref{lemma:prop:inhomogeneous_no_rad}.

		Furthermore, we can use \cref{thm:scat:scat_general}, \cref{cor:scat:enlarged_admissible_set} and \cref{lemma:prop:future_corner} to deal with the $\Hb$ error part of $f$ and $\psi^{\incone}$, so we here assume that $\psi^{\incone}$ and $rf$ are polyhomogeneous (we treat the mixed case in the end).
            
		%Using \cref{lemma:prop:data}, without loss of generality, we may assume that $\supp f\in\{v>v_0+1\}$ and $\psi^{\incone}=0$.
		Fix $k\geq 1$: Using that $\psi$ and $f$ are supported away from $v\leq v_0+1$,  and the fact that $[T,P_\eta]=0$, we get that $P_\eta T^{-k}\psi=T^{-k}rf$, where
		\begin{equation}\label{eq:prop:T-k_integral}
			T^{-k}g:=\underbrace{T^{-1}T^{-1}...T^{-1}}_{k\text{ times}}g=\int_{2v_0-r}^t(g)|_{(t',r,\omega)} \frac{(t-t')^{k-1}}{(k-1)!}\dd t',\qquad g\in\{rf,\psi\}
		\end{equation}
        where the formula above follows by simple integration by parts.
       Iteratively using \cref{corr:ODE:du_dv}\cref{item:ode:t-prop} (with trivial data), we get $T^{-k}rf\in \A{phg,b}^{\E^f_-,\E'_k}(\D^-)$ for some index set $\E'_k$. Note that the index set towards $\scrim$ remains the same. On the other hand, in order to get better control over the resulting index set $\E'_k$, we instead expand the bracket in \cref{eq:prop:T-k_integral}, set $k_1+k_2=k-1$ and apply \cref{corr:ODE:du_dv}\cref{item:ode:t-prop} \textit{once} to each term:
		\begin{nalign}\label{eq:prop:tintegral:termbyterm}
			t^{k_1}\int_{2v_0-r}^t rf(t',r,\omega)(t')^{k_2}\dd t'&\in (\rho_0\rho_-)^{-k_1}\mathcal{A}_{\phg}^{\E^f_--k_2,(\E^f_--k_2)\cupdex(\E^f_0-k_2-1)}(\D^-)=\mathcal{A}_{\phg}^{\E^f_--k+1,\Big(\E^f_-\cupdex(\E^f_0-1)\Big)-k+1}(\D^-).
		\end{nalign}
		Taking intersections of the index sets, we can thus take $T^{-k}rf\in \A{phg,b}^{\E^f_-,\E'_k}(\D^-)$ for $\E'_k=\Big(\E^f_-\overline{\cup}(\E^f_0-1)\Big)-k+1$. (Note that the index set towards $\scrim$ resulting from \cref{eq:prop:tintegral:termbyterm} is bad, but we already know that we can take it to be $\E^f_-$ by iterative application of \cref{corr:ODE:du_dv}\cref{item:ode:t-prop}.)
		Notice already that $\E^\phi_0=\E'_k+k-2$, with $\E^\phi_0$ as in the statement of the proposition.
		
		Commuting the wave equation with $\mathcal{S}^{\E'_k-2}_c$ from \cref{eq:prop:S_c_definition}, we get the following equation with trivial data:\footnote{Note that since the index set $\E'_k$ comes with a $-k$, commuting with $\mathcal{S}_c$ for, say,  $c=0$ already implies many commutations.}
		\begin{equation}
			P_\eta \mathcal{S}^{\E'_k-2}_c T^{-k}\psi =r^{-2}\mathcal{S}^{\E'_k-2}_c r^2 T^{-k}rf\in\A{phg}^{\E^f_-,(\E'_k)_{>c}}(\D^-).
		\end{equation}
		We use \cref{thm:scat:scat_general} to conclude that $\mathcal{S}_c^{\E'_k-2} T^{-k}\psi\in \Hb^{a,a}(\D^-)$, for $a=\min(\min(\E^f_--1)-,c-2)$.
		Using \cref{lemma:prop:data} we may improve this to $\mathcal{S}_c^{\E'_k-2} T^{-k}\psi\in \A{phg,b}^{\E^\phi_-,a}(\D^-)$, with $\E^{\phi}_-$ as defined in the proposition.
		
 Finally, we commute with time derivatives, using that $[T,S]=T$ and thus $T^k\mathcal{S}^{\E}_c=\mathcal{S}^{\E+k}_{c+k}T^k$:% commuting the equation we find
		\begin{equation}\label{eq:prop:incoming_improved_rphi}
			T^k\mathcal{S}_c^{\E'_k-2}T^{-k}\psi=\mathcal{S}_{c+k}^{\E'_k-2+k}\psi=\mathcal{S}_{c+k}^{\E^\phi_0}\psi\in \A{phg,b}^{\E^\phi_-,a+k}(\D^-),
		\end{equation}
        Since we can take $k\in\N_{\geq0}$ and $c$ arbitrary, \cref{eq:prop:incoming_improved_rphi} implies $\mathcal{S}^{\E^\phi_0}_{k}\psi\in\A{phg,b}^{\E^\phi_-,k-2}(\D^-)$, which, in turn, implies $\mathcal{S}^{\E^\phi_0}_{k-2}\psi\in\A{phg,b}^{\E^\phi_-,k-2}(\D^-)$; thus, the result follows.

        For $rf\in\A{phg,b}^{\E^f_-,a_0^f}(\D^-)$, then instead of \cref{eq:prop:incoming_improved_rphi} we get by the same chain of arguments:
        \begin{equation}
            \mathcal{S}^{\E^{\phi}_0}_{c+k}\psi\in\A{phg,b}^{\E^\phi_-,a+k}(\D^-)+\A{phg,b}^{\E^\phi_-,a_0}(\D^-),
        \end{equation}
        so the second claim also follows.
    \end{proof}

We thus deduce	\begin{prop}\label{prop:prop:inhomogeneous_rad}
% Let $\psi^{\incone}\in\mathcal{A}^{\mathcal{E}^{\incone}}_{\phg}(\incone)+\Hb^{\red{a^{\incone}}}(\incone)$ and $rf\in \mathcal{A}^{\vec{\E}^f}_{\phg}(\D)+\Hb^{\vec{a}^f}(\D)$, \red{with $\min(\mathcal{E}^{f}_-),a_-^f>1$, and with $\min(\mathcal{E}^{\incone})>-1/2\leq a^{\incone}$.}
		%Let, for some $\vec{a}'$, $\psi\in\Hb^{\vec{a}'}(\D)$ be the corresponding scattering solution with no incoming radiation from \cref{thm:scat:scat_general}..
% Make the same assumptions as in \cref{lemma:prop:data}.
		%Let $\psi^{\incone},f,\phi,\vec{\E}^f,\vec{a}^f,a^{\incone},\E^{\incone},\vec{a}',\E^{\phi}_-,a_-$ be as in \cref{lemma:prop:data}.
	Under \cref{ass2}, we have ${\psi\in\A{phg}^{\vec{\E}^{\phi}}}(\D)+\A{b,b,phg}^{a_-,a_0,\mindex0}(\D)+\Hb^{\vec{a}}(\D)$. % for $\E^{\phi}_-=\E^{\incone}\cup(\E^f_--1)$, $\E^\phi_0=\big((\E^f_--1)\overline{\cup}(\E^f_0-2)\big)\cup\E^{\incone}$, $\E^\phi_+=(0\cupdex(\E^f_+-1))\cupdex\E^\phi_0$, for $a_-=a^{\incone}$, and for any $a_0,a_+$ such that $a_0< \min(a_0^f-2,a_-)$, $a_+<\min(a_0,a^f_+-1)$.
	\end{prop}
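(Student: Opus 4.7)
The strategy is to patch together the propagation statements already established in $\D^-$ (Lemma~\ref{lemma:prop:scri_to_I0}) and in $\D^+$ (Lemma~\ref{lemma:prop:future_corner} together with the improvement Lemma~\ref{lemma:prop:improvement_at_future}), exactly as was done in Proposition~\ref{prop:prop:prop_no_incoming} in the no-incoming-radiation setting, but now keeping careful track of the additional contribution coming from the $\scrim$-index set $\E^{\incone}\cup(\E^f_--1)$.

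First, I would apply Lemma~\ref{lemma:prop:scri_to_I0} directly to the solution $\psi$ in $\D^-$. This yields
\begin{equation*}
\psi\in \A{phg}^{\E^\phi_-,\E^\phi_0}(\D^-)+\Hb^{a_-,a_0}(\D^-),
\end{equation*}
with $\E^\phi_-$ and $\E^\phi_0$ as in \eqref{eq:prop:assumption:indexsets}. This already takes care of the neighbourhood of $\scrim\cap I^0$, and gives in particular the membership in the $\A{b,b,phg}^{a_-,a_0,\mindex0}(\D)$ piece near the past portion of the corner.

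Next, I would introduce a smooth cutoff $\chi\in\A{phg}^{\vec 0}(\D)$ with $\chi=1$ on $\D^-\setminus\D^+$ and $\supp\chi\subset \D^-$, and write $\psi^+:=(1-\chi)\psi$. Then $\psi^+$ vanishes on $\incone$ and solves
\begin{equation*}
\Box_\eta\psi^+=(1-\chi)f-(\Box_\eta\chi)\psi-2\partial\chi\cdot\partial\psi,
\end{equation*}
whose right-hand side is supported in $\D^+$, with the commutator terms supported in the overlap region $\D^-\cap\D^+$ where the expansion obtained in the first step applies. Splitting $\psi^+=\psi^+_{\mathrm{data}}+\psi^+_{\mathrm{inh}}$ into the contributions with characteristic data on $\partial\D^+\cap\D^-$ (and trivial inhomogeneity on $\D^+$) and with inhomogeneity $(1-\chi)f$ (and trivial data on $\partial\D^+\cap\D^-$), respectively, I would then invoke Lemma~\ref{lemma:prop:future_corner}\eqref{eq:prop:future_data} for the former and Lemma~\ref{lemma:prop:improvement_at_future} applied to \eqref{eq:prop:futureindexerror}--\eqref{eq:prop:future_pol} for the latter. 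The data contribution gives membership in $\A{phg}^{\E^\phi_0, \mindex0\cupdex\E^\phi_0}(\D^+)+\A{b,phg}^{a_0,\mindex0}(\D^+)+\Hb^{a_0,a_0-}(\D^+)$, while the inhomogeneity contribution yields membership in $\A{phg}^{\E^f_0-2,(0\cupdex(\E^f_+-1))\cupdex(\E^f_0-2)}(\D^+)$ plus the corresponding error terms controlled by $a_0,a^f_+$.

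Finally, I would combine these memberships. Since $\E^\phi_0\supset \E^f_0-2$ and $\E^\phi_0\supset \E^{\incone}$, the two future index sets assemble into
\begin{equation*}
\E^\phi_+=\big(0\cupdex(\E^f_+-1)\big)\cupdex \E^\phi_0,
\end{equation*}
matching the statement of the proposition. The main bookkeeping obstacle is verifying that the $\scrim$-contribution $\E^{\incone}\cup(\E^f_--1)$ is correctly transported by the $\D^+$-propagation into the future index set: this is exactly where the improvement Lemma~\ref{lemma:prop:improvement_at_future} is essential, as the naive index set from Lemma~\ref{lemma:prop:future_corner} would produce spurious logarithmic terms. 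The error terms combine trivially via the inclusions $\Hb^{\vec a}(\D^-)+\Hb^{a_0,a_0-}(\D^+)\subset \A{b,b,phg}^{a_-,a_0,\mindex0}(\D)+\Hb^{\vec a}(\D)$ after possibly shrinking $a_+$ (which is allowed by our definition $a_+<a_0$).
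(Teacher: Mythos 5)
Your proof is correct and takes essentially the same route as the paper, which simply says the result "follows from Lemma~\ref{lemma:prop:scri_to_I0} and Lemma~\ref{lemma:prop:improvement_at_future} via a cutoff function," mirroring the cutoff/splitting strategy used in the proof of Proposition~\ref{prop:prop:prop_no_incoming}. Your elaboration (splitting $\psi^+$ into a data piece and an inhomogeneity piece in $\D^+$, then invoking Lemma~\ref{lemma:prop:future_corner} with the improvement of Lemma~\ref{lemma:prop:improvement_at_future}) is exactly the intended unpacking of the paper's one-line argument; the one small inaccuracy is the remark about "possibly shrinking $a_+$" — no shrinking is needed, since $a_+=\min(a_0-,a^f_+-1)\le a_0-$ already makes $\Hb^{a_0,a_0-}(\D^+)\subset\Hb^{a_0,a_+}(\D^+)$.
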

	
	\begin{proof}
		%		Because the result concerns a linear equation, we write $\phi=\phi_1+\phi_2$ where $\phi_1$ has trivial data and satisfies $\Box\phi_1=f$ and $\phi_2$ has nontrivial initial data and satisfies the homogeneous equation $\Box\phi_2=0$.
		%		The result holds for $\phi_2$ following \cref{lemma:prop:inhomogeneous_no_rad}, therefore it suffices to prove the lemma for $\psi^{\incone}=0$. 
		%		Henceforth we drop the subscript.
        This follows from \cref{lemma:prop:scri_to_I0,lemma:prop:improvement_at_future} via a cutoff function.
	\end{proof}
	
	We finally treat incoming radiation. By linearity, we can set $\psi^{\incone}$ and $f$ to vanish:
	
	\begin{cor}\label{cor:prop:solution_with_radiation_scri}
		Let $\pv\psi^{\scrim}(v,\omega)$  be such that $\psi^{\scrim}:=\int_{v_0}^v \pv\psi^{\scrim} \dd v'$ satisfies $\psi^{\scrim}\in\A{phg}^{\E^{\scrim}}(\scrim)+\Hb^{a_0}(\scrim)$ for $a_0\in\mathbb{R}$. Let further $\psi^{\incone}=rf=0$, and let $\psi$ be the corresponding scattering solution from \cref{thm:scat:scat_incoming}. Then $\psi\in\A{phg}^{\mindex{0},\E^{\scrim}\overline{\cup}\mindex{1},\mindex0\cupdex (\E^{\scrim}\overline{\cup}\mindex{1}) }(\D)+\A{phg,b,phg}^{\mindex0,a_0-,\mindex0}(\D)+\A{phg,b,b}^{\mindex0,a_0-,a_0-}(\D)$. 
	\end{cor}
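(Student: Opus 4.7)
\textbf{Proof plan for \cref{cor:prop:solution_with_radiation_scri}.} The strategy is to reduce to the no-incoming-radiation setting already treated in \cref{prop:prop:prop_no_incoming} by subtracting a suitable ansatz capturing the radiation field at leading order. Specifically, write $\psi=\chi\psi^{\scrim}+\tilde\psi$, where $\chi=\chi(\rho_-)$ is a smooth cutoff equal to $1$ in a neighbourhood of $\scrim$ and supported in $\D^-$ (away from $\scrip$). Because $\pv\psi^{\scrim}$ is supported away from $v_0$, we have $\psi^{\scrim}(v_0,\omega)=0$, which ensures $(\chi\psi^{\scrim})|_{\incone}=0$ and hence $\tilde\psi|_{\incone}=0$; moreover, $\pv(\chi\psi^{\scrim})|_{\scrim}=\pv\psi^{\scrim}$, so $\tilde\psi$ has no incoming radiation.

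The next step is to compute and understand the index set of the inhomogeneity $r\tilde f:=-P_\eta(\chi\psi^{\scrim})$. Since $\psi^{\scrim}$ depends only on $(v,\omega)$, so $\pu\psi^{\scrim}=0=\pu\pv\psi^{\scrim}$, one has
\begin{equation*}
-P_\eta(\chi\psi^{\scrim})=\pu\chi\,\pv\psi^{\scrim}+\pu\pv\chi\cdot\psi^{\scrim}-\frac{\chi}{r^2}\Dl\psi^{\scrim}.
\end{equation*}
After splitting $\psi^{\scrim}=\psi^{\scrim}_{\phg}+\psi^{\scrim}_{\Delta}$, the dominant contribution near $\scrim\cap I^0$ is $-\chi r^{-2}\Dl\psi^{\scrim}_{\phg}$, which (using $r^{-2}=\rho_-^2\rho_0^2$ near $\scrim$, and smoothness of $\chi\psi^{\scrim}$ with respect to $r\pv$) lies in $\A{phg}^{\overline{(2,0)},\,\E^{\scrim}+2}(\Dbold^-)$; the cutoff-supported pieces $\pu\chi\,\pv\psi^{\scrim}$ and $\pu\pv\chi\cdot\psi^{\scrim}$, a short coordinate calculation (using $\pu\chi\in\rho_0\Hb^{\infty,\infty}$ and $\pu\pv\chi\in\rho_0^2\Hb^{\infty,\infty}$) shows, only contribute terms in $\A{phg,b}^{\overline{(\infty,0)},\E^{\scrim}+2}(\Dbold^-)$. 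Combined with the $\psi^{\scrim}_{\Delta}$ contribution in $\Hb^{2,a_0+2}(\Dbold^-)$, this places $r\tilde f$ within the scope of \cref{ass1}.

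Applying \cref{prop:prop:prop_no_incoming} to $\tilde\psi$ with trivial data on $\incone$ (so $\E^{\incone}=\emptyset$) then yields $\tilde\psi\in\A{phg}^{\vec\E^{\tilde\phi}}(\Dbold)+\A{b,phg}^{a_0-,\,\mindex{0}}(\Dbold)+\Hb^{a_0-,a_0-}(\Dbold)$, where $\E^{\tilde\phi}_0=\E^{\scrim}\cup\mindex{0}$ (coming from $(\E^f_0-2)\cup\E^{\incone}$, together with the integer contributions from the cutoff-supported terms) and $\E^{\tilde\phi}_+$ is the corresponding set prescribed by \cref{prop:prop:prop_no_incoming}. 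One then passes from the $\Dbold$ to the $\D$ compactification: the absence of incoming radiation for $\tilde\psi$ translates into smoothness in $\rho_-$, so $\tilde\psi$ lives in a $\mindex{0}$-type space at $\scrim$ and picks up an extra $\rho_-$ factor (relative to $\chi\psi^{\scrim}$) at $I^0$, yielding the $\mindex{1}$ component of the $I^0$-index set.

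Finally, one adds the two pieces back together. The term $\chi\psi^{\scrim}$ contributes the leading $\rho_-^0$ behaviour $\psi^{\scrim}(v,\omega)$ at $\scrim$ (giving index set $\mindex{0}$ there and $\E^{\scrim}$ at $I^0$ from the expansion of $\psi^{\scrim}$ in $1/v$), vanishes near $\scrip$, and contributes the $\Hb^{a_0-}$-error at $I^0$ via $\chi\psi^{\scrim}_{\Delta}$. Combining with the just-established expansion of $\tilde\psi$, taking $\overline{\cup}$ (which accounts for the logarithmic doubling when indices from $\E^{\scrim}$ coincide with the integer powers from $\mindex{1}$), and using \cref{lemma:prop:improvement_at_future} to propagate to $\scrip$, produces exactly the announced $\A{phg}^{\mindex{0},\E^{\scrim}\overline{\cup}\mindex{1},\mindex{0}\cupdex(\E^{\scrim}\overline{\cup}\mindex{1})}(\D)$ membership, together with the two $\A{phg,b,\bullet}$-error pieces from $\psi^{\scrim}_{\Delta}$. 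The main technical obstacle is the careful bookkeeping of index sets when converting between the $\Dbold$ and $\D$ compactifications, and identifying where the $\mindex{1}$ summand at $I^0$ originates, which is precisely a consequence of $\tilde\psi$ gaining one order of decay at $\scrim$ relative to $\chi\psi^{\scrim}$ through the no-incoming-radiation condition.
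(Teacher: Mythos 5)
Your decomposition $\psi=\chi\psi^{\scrim}+\tilde\psi$ is essentially the same as the paper's (which takes $\chi\equiv1$ and handles $\scrip$ later by a cutoff), and you correctly identify that $\tilde\psi$ has no incoming radiation and trivial data on $\incone$. The gap is in the classification of the inhomogeneity and, consequently, in the choice of propagation lemma.

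You claim that $-\chi r^{-2}\Dl\psi^{\scrim}_{\phg}$ lies in $\A{phg}^{\bullet}(\Dbold^-)$, invoking ``smoothness of $\chi\psi^{\scrim}$ with respect to $r\pv$.'' This is false unless $\psi^{\scrim}$ is constant. Membership in $\A{phg}(\Dbold)$ (equivalently the $\Hbt$-framework, Assumption~\ref{ass1}) requires that commuting with $r\pv$ preserve decay towards $\scrim$; but for a function $\psi^{\scrim}(v,\omega)$ one has
\begin{equation*}
r\pv\big(r^{-2}\psi^{\scrim}\big)=-2\,r^{-2}\psi^{\scrim}+r^{-1}\pv\psi^{\scrim},
\end{equation*}
and the second term is worse by a factor $r/v=\rho_-^{-1}$ near $\scrim$. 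In other words, $r^{-2}\Dl\psi^{\scrim}$ is conormal with respect to $v\pv$ (so polyhomogeneous on $\D^-$) but \emph{not} with respect to $r\pv$, i.e.\ it genuinely violates the no-incoming-radiation compatibility that Assumption~\ref{ass1} and hence \cref{prop:prop:prop_no_incoming} demand.

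The correct tool is therefore \cref{prop:prop:inhomogeneous_rad} (under Assumption~\ref{ass2}), which is designed exactly for forcing terms lying in $\A{phg}^{\vec{\E}^f}(\D)$ with $\min\E^f_->1$ but with no $\Dbold$-regularity, and this is what the paper uses: with $P_\eta\psi'=r^{-2}\Dl\psi^{\scrim}\in\A{phg}^{\mindex2,\E^{\scrim}+2}(\D^-)$, \cref{prop:prop:inhomogeneous_rad} (in the polyhomogeneous case) or \cref{lemma:prop:scri_to_I0} (for the $\Hb^{a_0}$ error piece) give the index sets $\mindex1,\,\E^{\scrim}\cupdex\mindex1$ for $\psi'$ in $\D^-$; one then adds back $\psi^{\scrim}$ and finishes near $\scrip$ with a cutoff and \cref{eq:prop:future_data}, \cref{eq:prop:optimal_future}. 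Once you replace \cref{prop:prop:prop_no_incoming} by \cref{prop:prop:inhomogeneous_rad}/\cref{lemma:prop:scri_to_I0}, your argument goes through; the rest of your index-set bookkeeping and the treatment of the cutoff-supported pieces is fine.
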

 \begin{rem}[Optimality of the index set for error term]
  In the case where $\psi^{\scrim}\in\Hb^{a_0}(\scrim)$, we will later show the stronger statement that \textit{in even spacetime dimensions}
    $\psi\in \A{phg}^{\mindex 0,\mindex1,\mindex 0}(\D)+\A{phg,b,phg}^{\mindex0,a_0-,\mindex0}(\D)+\A{phg,b,b}^{\mindex0,a_0-,a_0-}(\D) $ (without the index set $\mindex0\cupdex\mindex1$ near $\scrip$). In words, there are no logarithms towards $\scrip$ before order $a_0$. See already \cref{cor:app:improvedthmgeneral2}.
On the other hand, we will also show that in general dimensions, the index set given by \cref{cor:prop:solution_with_radiation_scri} is sharp. See already \cref{prop:even}.
 \end{rem}

 \begin{obs}[Improved decay towards $I^0$]\label{rem:poly:improved_decay_at_I0}
    Notice that, in \cref{cor:prop:solution_with_radiation_scri}, for compactly supported $\pv \psi^{\scrim}$, we get that the solution $\psi$ towards $I^0$ has an expansion with coefficients in $\mindex{0}$.
    Each of these terms can be computed explicitly from $\psi^{\scrim}$ as a weighted integral, for instance the first element with decay $r^0$ vanishes if $\int \pv\psi^{\scrim}=0$.
    This means that for specific choices of incoming radiation, we may relax the assumption $a_0<a_-$, provided that certain integrals over $\psi^{\scrim}$ vanish.
    We do not incorporate such improvements in our general scattering result.
 \end{obs}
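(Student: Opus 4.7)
The plan is to combine the a priori polyhomogeneity at $I^0$ delivered by \cref{cor:prop:solution_with_radiation_scri} with an explicit $\ell$-mode-by-$\ell$-mode computation based on the Minkowskian conservation laws \cref{eq:intro:conslaw}, using the compact support hypothesis to exclude logarithmic contributions.

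First, I would fix $v_*>v_0$ with $\supp\pv\psi^{\scrim}\subset[v_0,v_*]\times S^2$, so that $\psi^{\scrim}(v,\omega)=C_\infty(\omega):=\int_{v_0}^{\infty}\pv\psi^{\scrim}(v',\omega)\,\dd v'$ for all $v\geq v_*$. In particular $\psi^{\scrim}\in\A{phg}^{\mindex{0}}(\scrim)$, so \cref{cor:prop:solution_with_radiation_scri} applied with $\E^{\scrim}=\mindex{0}$ provides an a priori polyhomogeneous expansion of $\psi$ at $I^0$ whose index set is $\mindex{0}\cupdex\mindex{1}=\mindex{0}\cup\overline{(1,1)}$. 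The task is then to eliminate the $\overline{(1,1)}$ part.

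By linearity, I would decompose $\pv\psi^{\scrim}$ into spherical harmonics and work mode by mode. For each $\ell$, the conservation law says that $F_\ell(v,\omega):=r^{-2\ell}\pv(r^2\pv)^\ell\psi_\ell$ is independent of $u$. Expanding $(r^2\pv)^\ell$ as a polynomial in $r$ and taking the limit $u\to-\infty$ at fixed $v$, the leading-order coefficient gives $F_\ell(v,\omega)=\pv^{\ell+1}\psi^{\scrim}_\ell(v,\omega)$. Since $\psi^{\scrim}_\ell(v,\omega)=C_\infty^{(\ell)}(\omega)$ is constant in $v$ for $v>v_*$, this forces $F_\ell\equiv 0$ there, so that $(r^2\pv)^\ell\psi_\ell$ is independent of $v$ in $\{v>v_*\}$. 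Iteratively integrating in $v$ from $v=v_*$ via \cref{item:ode:1d} and using $\pv r=1$ produces the explicit Laurent-polynomial representation
\begin{equation*}
\psi_\ell(u,v,\omega)=\sum_{n=0}^{\ell}\frac{\alpha_n^{(\ell)}(u,\omega)}{r^n}\qquad(v>v_*),
\end{equation*}
where each $\alpha_n^{(\ell)}$ is uniquely determined by the values of $(r^2\pv)^j\psi_\ell$ at $v=v_*$, which in turn are explicit weighted $v$-integrals of $\pv\psi^{\scrim}_\ell$ coming from the solution on $v\in[v_0,v_*]$. Matching $\psi_\ell\to C_\infty^{(\ell)}(\omega)$ as $u\to-\infty$ at fixed $v>v_*$ pins down $\alpha_0^{(\ell)}(-\infty,\omega)=C_\infty^{(\ell)}(\omega)$.

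Finally, I would sum over $\ell$: each mode-wise expansion consists only of integer powers of $1/r$ with no logarithmic factors, so by uniqueness of polyhomogeneous expansions, the coefficients of all $r^{-n}\log^k r$ with $k\geq 1$ in the a priori $I^0$-expansion must vanish, yielding the refined index set $\mindex{0}$. The leading $r^0$-coefficient reassembles into $C_\infty(\omega)=\int\pv\psi^{\scrim}$, which vanishes precisely when this integral does; relaxing $a_0<a_-$ to $a_0<a_-+1$ (or further) then corresponds to imposing the vanishing of the first (or first few) such moment integrals. The main obstacle is the polynomial identification $F_\ell=\pv^{\ell+1}\psi^{\scrim}_\ell$ together with the bookkeeping in the iterative $v$-integration to produce the coefficients $\alpha_n^{(\ell)}$ as explicit functionals of $\pv\psi^{\scrim}_\ell$; both are tedious but routine. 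Summability in $\ell$ is not a separate difficulty, being supplied directly by \cref{cor:prop:solution_with_radiation_scri}.
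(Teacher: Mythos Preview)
The observation carries no separate proof in the paper; it is stated as a remark and is justified only implicitly by the surrounding material---most directly by the explicit $\ell$-mode formulae recorded in \cref{rem:en:optimatl decay from ccdata} and later in the proof of \cref{lemma:app:antipodal}, where one finds
\[
\psi_\ell=\sum_{n=0}^{\ell}\frac{(-1)^n}{n!\,r^n}\frac{(\ell+n)!}{(\ell-n)!}\,I_n(v,\omega),\qquad I_n(v,\omega)=\underbrace{\int_{v_0}^{v}\!\cdots\!\int_{v_0}^{v^{(n)}}\pv\psi^{\scrim}}_{n+1\text{ integrals}}.
\]
Your conservation-law route is correct and is effectively the same computation: the identity $F_\ell=\pv^{\ell+1}\psi^{\scrim}_\ell$ together with compact support gives a finite Laurent polynomial, and the a~priori polyhomogeneity from \cref{cor:prop:solution_with_radiation_scri} handles summability. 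An alternative route the paper takes (in \cref{cor:prop:expansiontowardsI0} and \cref{obsidobsi}) uses iterated time integrals rather than the conservation laws; that approach also identifies the higher moment integrals $\int (v'-v_0)^k\psi^{\scrim}\dd v'$ governing the next-to-leading coefficients.

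One point deserves tightening. Your representation has coefficients $\alpha_n^{(\ell)}(u,\omega)$ depending on $u$, and you then invoke ``uniqueness of polyhomogeneous expansions'' to exclude $\log$-terms towards $I^0$. But the $I^0$-limit sends $u\to-\infty$ and $v\to\infty$ simultaneously, and a Laurent polynomial in $1/r$ with \emph{arbitrary} $u$-dependent coefficients could still generate $\log\rho_0$ contributions (for instance if $\alpha_n\sim |u|^n\log|u|$). What actually closes the argument is that each $\alpha_n$ is a polynomial in $u$ of degree at most $n$; this is immediate from the $v$-parametrised formula above (each $I_n(v)$ is a polynomial of degree $n$ in $v$ for $v>v_*$, and $v=r+u$), but is not visible from your integration-from-$v_*$ bookkeeping alone. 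With this adjustment your argument is complete.
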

	
	\begin{proof}
		Let's write $\psi(u,v,\omega)=\psi^{\scrim}(v,\omega)+\psi'(u,v,\omega)$.
    We first consider the case where $\psi^{\scrim}\in\A{phg}^{\E^{\scrim}}(\scrim)$.		
  Then $P_{\eta}\psi'(u,v,\omega)=r^{-2}\Dl \psi^{\scrim}(v,\omega)\in\A{phg}^{\mindex{2},\E^{\scrim}+2}(\D^-)$, and $\psi'$ has no incoming radiation. 
  We thus get $\psi'\in \A{phg}^{\mindex1,\E^{\scrim}\cupdex\mindex{1}}(\D^-)$ from \cref{prop:prop:inhomogeneous_rad}. The result follows by applying a cutoff localising to $\D^+$ and using \cref{eq:prop:future_data} along with \cref{eq:prop:optimal_future} for $P_\eta\psi=0$.

  On the other hand, if $\psi^{\scrim}\in \Hb^{a_0}(\scrim)$, then we get that $P_\eta\psi'\in \A{phg,b}^{\mindex2,a_0+2}(\D^-)$ with $\psi'$ having trivial data and no incoming radiation.
  \cref{lemma:prop:scri_to_I0} then implies $\psi'\in \A{phg}^{\mindex1,\mindex 1}(\D^-)+\Hb^{\mindex1, a_0}(\D^-)$, which, in turn, gives $\psi\in \A{phg}^{\mindex0,\mindex 1}(\D^-)+\Hb^{\mindex0, a_0}(\D^-)$. 
  The result in $\D^+$ follows using a cut-off function and \cref{eq:prop:future_data}.
  \end{proof}

For later reference, we also give a more precise characterisation of where the terms in the expansion towards $I^0$ come from in the case where the data along $\scrim$ do not have an expansion:
\begin{cor}\label{cor:prop:expansiontowardsI0}
    Under the assumptions of \cref{cor:prop:solution_with_radiation_scri} with $\E^{\scrim}=\emptyset$, we can write $\psi=\tilde{\psi}+\psi_{\Delta}$, where 
    i) both $\tilde\psi,\psi_\Delta$ solve $P_\eta\tilde\psi=P_\eta\psi_\Delta=0$; ii)
    $\psi_{\Delta}\in \A{phg,b}^{\mindex{0},a_0-}(\D^-)$; 
    iii) $\tilde\psi$ has no incoming radiation;
    iv) $\tilde{\psi}|_{\incone}=-\psi_\Delta|_{\incone}\in \A{phg}^{\mindex{0}}(\incone)$. 
\end{cor}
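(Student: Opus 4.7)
The plan is to construct $\tilde{\psi}$ as a no-incoming-radiation scattering solution (via \cref{thm:scat:scat_general}) with polyhomogeneous $\incone$-data chosen so as to cancel the polyhomogeneous terms of $\psi$ obstructing membership in $\A{phg,b}^{\mindex 0, a_0-}(\D^-)$. Once such a $\tilde{\psi}$ is in hand, items (i), (iii), (iv) follow automatically: $\psi_\Delta := \psi - \tilde{\psi}$ solves $P_\eta=0$ and inherits the full incoming radiation $\pv\psi^{\scrim}$ of $\psi$ (since $\tilde{\psi}$ carries none), and $\tilde{\psi}|_{\incone} = -\psi_\Delta|_{\incone}$ because $\psi|_{\incone} = 0$ in the setup of \cref{cor:prop:solution_with_radiation_scri}.

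To isolate the obstruction, I would refine the proof of \cref{cor:prop:solution_with_radiation_scri} using the mixed $\A{phg,b}$-assertion of \cref{lemma:prop:scri_to_I0}. Writing $\psi = \psi^{\scrim}(v,\omega) + \psi'$, the correction $\psi'$ satisfies $P_\eta \psi' = r^{-2}\Dl\psi^{\scrim}$ with right-hand side in $\A{phg,b}^{\mindex 2, a_0+2}(\D^-)$, no incoming radiation, and trivial $\incone$-data. The mixed assertion then yields $\psi' \in \A{phg}^{\mindex 1, \mindex 1}(\D^-) + \A{phg,b}^{\mindex 1, a_0}(\D^-)$, and combining with $\psi^{\scrim}(v,\omega) \in \A{phg,b}^{\mindex 0, a_0}(\D^-)$ gives
\[
\psi \in \A{phg}^{\mindex 0, \mindex 1}(\D^-) + \A{phg,b}^{\mindex 0, a_0-}(\D^-).
\]
The error summand already lies in the target space, so the obstruction consists of the polyhomogeneous terms $c_{nm}(\omega)\rho_-^n\rho_0^m$ with $n\geq 0$ and $1 \leq m < a_0$ (finitely many values of $m$ for each $n$).

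For the construction of $\tilde{\psi}$, I would pose polyhomogeneous data $\tilde{\psi}|_{\incone} = \sum_{p\geq 1} a_p(\omega)r^{-p} \in \A{phg}^{\mindex 0}(\incone)$ (the vanishing of the $r^0$-coefficient is forced by matching at $\scrim$, using that $\psi^{\scrim} \to 0$ along $\scrim$ as $v \to \infty$), and invoke \cref{prop:prop:prop_no_incoming} to obtain a polyhomogeneous $\tilde{\psi}$ on $\D^-$ with index set $\mindex 0$ at both $\scrim$ and $I^0$. The polyhomogeneous coefficients of $\tilde{\psi}$ at the corner depend linearly on $\{a_p(\omega)\}$, and the scattering map on polyhomogeneous spaces admits order-by-order inversion---at each $\ell$-mode this amounts to an ODE problem solvable via the explicit representation $\psi_\ell = r^{\ell+1}\partial_u^\ell(f_{\mathrm{out},\ell}(u)/r^{\ell+1})$ of no-incoming-radiation $\ell$-mode solutions (cf.\ the conservation laws \cref{eq:intro:conslaw})---which allows one to determine $a_p(\omega)$ so that the obstructing coefficients $c_{nm}(\omega)$ cancel in $\psi - \tilde{\psi}$. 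The main technical obstacle is verifying the non-degeneracy of this iterative matching, which is essentially the algorithmic procedure explicitly carried out for various examples in later sections of the paper (cf.\ Step~3 of \cref{sec:intro:sketch}).
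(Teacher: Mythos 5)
You take a genuinely different route from the paper. The paper's proof proceeds by iterated time integration: it defines $T^{-1}\psi$ via \cref{eq:en:timeinverse}, whose scattering datum $\int_{v_0}^v\psi^{\scrim}$ splits into a constant (whose $T$-derivative yields a manifestly no-incoming-radiation polyhomogeneous piece $\psi^{(1)}$) plus a remainder lying in $\Hb^{a_0-1-}(\scrim)$, and then descends by induction in $a_0$ until $a_0\leq 1$. Your plan instead reads off the obstructing polyhomogeneous coefficients of $\psi$ at $I^0$ (via the mixed assertion of \cref{lemma:prop:scri_to_I0}, which you invoke correctly) and then posits $\incone$-data $\sum_{p\geq 1}a_p(\omega)r^{-p}$ whose no-incoming-radiation evolution $\tilde\psi$ is supposed to cancel those coefficients.

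The gap is exactly where you flag it as "the main technical obstacle," and it is the substantive content of the corollary, not something that can be deferred. To make your matching well-posed you would need to establish two non-trivial facts. First, a no-incoming-radiation polyhomogeneous solution lies in $\A{phg}^{\mindex1}(\Dbold^-)$, i.e.\ it expands in powers of $1/r = \rho_-\rho_0$ with coefficients smooth in $t/r$; translated into a double expansion in $\rho_-,\rho_0$ this forces the coefficient of $\rho_-^n\rho_0^m$ to vanish whenever $n<m$. One must therefore check that the polyhomogeneous part of $\psi$ actually satisfies this triangular constraint — this does hold (it drops out of the recurrence $n(n-\rho_0\partial_0)u_n=\Dl u_{n-1}$ for the $\rho_-$-Taylor coefficients $u_n$ of $\psi$ near $\scrim$, together with the resonance structure of $(k-\rho_0\partial_0)^{-1}$), but you neither state nor prove it, and if it failed your matching would be impossible. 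Second, even granting the structural compatibility, the coefficients of $\tilde\psi$ are not free: they satisfy the same wave-equation recurrence, so the map $\{a_p\}\mapsto\{\text{corner coefficients}\}$ must be shown surjective onto the relevant set, and the resulting $\{a_p(\omega)\}$ must be shown to assemble into an actual $\Hb^{a_-}$-datum on $\incone$ after summing over $\ell$. Your appeal to the $\ell$-mode representation $\psi_\ell = r^{\ell+1}\partial_u^\ell(f_{\mathrm{out},\ell}/r^{\ell+1})$ solves the fixed-$\ell$ ODE but says nothing about $\ell$-summability, and "the algorithmic procedure carried out for various examples" only computes coefficients of a \emph{given} polyhomogeneous solution — it does not establish invertibility. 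The paper's time-inversion argument avoids the inversion entirely by producing $\tilde\psi$ directly as a finite sum of manifestly polyhomogeneous no-incoming-radiation solutions, which is why it closes cleanly; your approach would need a separate lemma carrying essentially the same content.
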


  \begin{proof}
The proof is essentially the same as in \cref{lemma:prop:scri_to_I0}:
Let $n=\lfloor a_0\rfloor$. We construct $\tilde\psi=\sum_{1\leq i \leq n}\psi^{(i)}$ by iteration. 
Assuming that $a_0>1$ (otherwise there is nothing to show), we define $T^{-1}\psi$ to be the scattering solution to $P_\eta(T^{-1}\psi)=0$ with trivial data on $\incone$ and with data 
\begin{equation}
    T^{-1}\psi^{\scrim}:=\int_{v_0}^v \psi^{\scrim} \dd v=\underbrace{\int_{v_0}^{\infty} \psi^{\scrim} \dd v}_{:=T^{-1}\psi^{\scrim}_1}-\underbrace{\int_{v}^{\infty} \psi^{\scrim} \dd v}_{:=T^{-1}\psi^{\scrim}_2} \in \A{phg}^{\mindex0}(\scrim)+\Hb^{a_0-1-}(\scrim),
\end{equation}
where we used \cref{eq:ODE:1d_error}.
 Observe that $T(T^{-1}\psi)=\psi$ by uniqueness.
 We similarly define $T^{-1}\psi_i$ to be the solution to $P_\eta(T^{-1}\psi_i)=0$ with scattering data $T^{-1}\psi^{\scrim}_i $ and $T^{-1}\psi^{\incone}_i\equiv T^{-1}\psi^{\scrim}_i(v=v_0)$, and we then define $\psi^{(1)}:=TT^{-1}\psi_1\in  \A{phg}^{\mindex0,\mindex1}(\D^-)$; this clearly has no incoming radiation.

 In turn, for $T^{-1}\psi_2$, we may apply\footnote{Note that $T^{-1}\psi_2$ also has constant data on $\incone$, but this poses no problem.} \cref{cor:prop:solution_with_radiation_scri} to obtain 
 $
     T^{-1}\psi_2\in\A{phg}^{\mindex0,\mindex1}(\D^-)+ \A{phg,b}^{\mindex0,a_0-1}(\D^-).
$

Assuming that $a_0>2$, we next consider the time-inverse of $T^{-1}\psi_2$, again split up $T^{-2}\psi_2$ into a part with constant data at $\scrim$ and into a part sitting in $\Hb^{a_0-2}(\scrim)$ and repeat the argument. The result follows inductively.
	\end{proof}
 \begin{obs}\label{obsidobsi}
    A closer inspection of the proof reveals that the first coefficients in the expansion of $\psi$ towards $I^0$ vanish (so then $\psi\in\A{phg,b}^{\mindex0,a_0}(\D^-))$ if the integrals $\int_{v_0}^\infty(v'-v_0)^{k}\psi^{\scrim}\dd v'$ vanish for all $0\leq k< n-1$. 

    In other words, for $k'<n-1$, if it holds that for all $k\leq k'$ we have $\int_{v_0}^\infty(v'-v_0)^{k}\psi^{\scrim}\dd v'=0$, then we have $\psi\in\A{phg,b}^{\mindex{0},a_0-}(\D^-)+\A{phg}^{\mindex{0},\mindex{k'+2}}(\D^-)$. 
\end{obs}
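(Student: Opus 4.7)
The plan is to revisit the iterative construction of $\tilde\psi=\sum_{1\leq j\leq n}\psi^{(j)}$ from the proof of Corollary~\ref{cor:prop:expansiontowardsI0} and track precisely how each iterate $\psi^{(j)}$ depends linearly on the moments $M_k:=\int_{v_0}^{\infty}(v'-v_0)^k\psi^{\scrim}(v')\,\dd v'$. The essential observation is that at step~$j$ of the iteration, the ``constant at infinity'' extracted from the $\scrim$ data of the $j$-fold time integral is a nonzero multiple of $M_{j-1}$, so cumulative vanishing of the first $k'+1$ moments causes the first $k'+1$ iterates to be identically zero, shifting the leading order of $\tilde\psi$ at $I^0$ from $\rho_0$ to $\rho_0^{k'+2}$.

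The first step is to establish, by induction on $j$ and repeated use of Fubini, the identity
\[
\lim_{v\to\infty} T^{-j}\psi^{\scrim}(v) \;=\; \frac{(-1)^{j-1}}{(j-1)!}\,M_{j-1},
\]
valid provided $M_0=\cdots=M_{j-2}=0$. The base case $j=1$ is just the definition of $M_0$. For the inductive step, writing $T^{-j}\psi^{\scrim}(v)=\int_{v_0}^{v}T^{-(j-1)}\psi^{\scrim}(v')\,\dd v'$ and swapping the order of integration expresses the limit at $v\to\infty$ as $\int_{v_0}^{\infty}(v'-v_0)^{j-1}\psi^{\scrim}(v')/(j-1)!\,\dd v'$ modulo contributions from earlier moments, which vanish by the induction hypothesis.

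Given this identity, I would then argue inductively that if $M_0=\cdots=M_{k'}=0$ then $\psi^{(j)}=0$ for every $j\leq k'+1$. At step~$j$ of the construction, after the previous iterates have vanished, the constant data fed into the analogue of $T^{-1}\psi_1$ is proportional to $M_{j-1}$, which is zero by assumption. Hence the corresponding scattering solution, whose $\scrim$ and $\incone$ data are both the zero constant, is itself zero by the uniqueness statement in Theorem~\ref{thm:scat:weak_polyhom}, and applying $T$ gives $\psi^{(j)}=0$. Since each surviving piece $\psi^{(j)}$ for $j\geq k'+2$ lies in $\A{phg}^{\mindex{0},\mindex{j}}(\D^-)$ by the same mechanism that placed $\psi^{(1)}$ in $\A{phg}^{\mindex{0},\mindex{1}}(\D^-)$ in Corollary~\ref{cor:prop:expansiontowardsI0}, summation gives $\tilde\psi\in\A{phg}^{\mindex{0},\mindex{k'+2}}(\D^-)$; combined with the unchanged $\psi_\Delta$-part this yields the claimed decomposition.

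The main obstacle I anticipate is the combinatorial identification in the first step: verifying that the iteratively extracted ``constant'' at step~$j$ is exactly $(-1)^{j-1}M_{j-1}/(j-1)!$ when previous moments vanish, which requires care in handling the Fubini swaps and the dependence of the remainder $r_{j-1}$ on lower moments. Once this identification is in hand, the remaining steps reduce to uniqueness arguments already available from the scattering theory developed earlier.
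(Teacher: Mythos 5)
Your approach is essentially the paper's intended one: the observation explicitly says ``a closer inspection of the proof [of \cref{cor:prop:expansiontowardsI0}] reveals'', and your plan is exactly that inspection—track the constant extracted at each step $j$ of the iteration, identify it (via iterated Fubini) as $(-1)^{j-1}M_{j-1}/(j-1)!$, observe that vanishing of $M_0,\dots,M_{k'}$ kills the iterates $\psi^{(1)},\dots,\psi^{(k'+1)}$ by scattering uniqueness, and then note that each surviving $\psi^{(j)}$, $j\ge k'+2$, lies in $\A{phg}^{\mindex{0},\mindex{j}}(\D^-)$ because it is $T^j$ applied to a solution in $\A{phg}^{\mindex{0},\mindex{0}}(\D^-)$ and $T\in\rho_0\Diff_\b^1(\D^-)$. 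Two small remarks: (i) your displayed inductive formula correctly carries the sign $(-1)^{j-1}$ but the prose after it drops it; and (ii) the uniqueness you invoke is really just the basic scattering uniqueness for the zero solution (e.g.\ \cref{thm:scat:scat_general} or \cref{thm:scat:scat_incoming}), not specifically \cref{thm:scat:weak_polyhom}. Neither affects the validity of the argument.
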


%%%%%%%%%%%%%%%%%%%%%%%%%%%%%%%%%%%%%%%%%%%%%%%%%%%%
 \subsection{Propagation of polyhomogeneity for linear long-range potentials}\label{sec:prop:longrange}
 We recall from \cref{thm:scat:long} that all scattering statements and energy estimates remain valid if one includes linear long-range potentials as in \cref{def:en:long_range}. 
 In this subsection, we prove that the propagation of polyhomogeneity statements also remain valid, with some minor modifications. Since the $\Hb$-estimates can be dealt with exactly as before, we will restrict to the case of pure polyhomogeneity, that is, we always assume polyhomogeneous data $\psi^{\incone}\in\A{phg}^{\E^{\incone}}(\incone)$ and polyhomogeneous inhomogeneity $rf\in \A{phg}^{\vec{\E}^f}(\D)$ with the usual restrictions (though recall \cref{rem:prop:restrictiondropping}) that $\min(\E^{\incone})>-1/2$, $\min(\E_-^f)>1$.
 
 We will, for $V_L$ an admissible long-range potential as in \cref{def:en:long_range}, consider the equation
 \begin{equation}\label{eq:prop:longrangewave}
    P_{\eta,L}\psi:=P_{\eta}\psi-V_L\psi=P_{\eta}\psi-\frac{f^L_{-}(\rho_-,\omega)f^L_{+}(\rho_+,\omega)}{r^2}\Ve\psi=rf,\qquad f^L_{\pm,\b}\in \A{phg}^{\E^{L}_{\pm}-1}([0,1)\times S^2),
\end{equation}
where $f^L_{\pm,\b}$ are defined via $f^L_{\pm,\b}\Vb=f^L_{\pm}\Ve$.
Recall that $V_L$ being a long-range potential implies that $\min(\E^L_{\pm})>0$ (and also $\min\E^{L}_{\pm}>1/2$ in case $V_L$ contains angular derivatives), that $f_{\mp}\equiv 1$ on $\D_{\pm}\setminus \D_{\mp}$.
Recall also that $V_L$ is compatible with the no incoming radiation condition if $\E^L_-=\mindex1$.

 We now first list the corresponding changes to the statements of the previous subsections, and then sketch the relevant modifications of the proofs.
In all the statements below, we assume $\psi$ to be a solution to \cref{eq:prop:longrangewave} rather than to $P_{\eta}\psi=rf$, with $f^L_{\pm}$ as above, and with additionally $\E^L_-=\mindex1$ in the case of no incoming radiation.
\begin{lemma}\label{lemma:prop:futurecorner:long}
    \cref{lemma:prop:future_corner} still holds but with $\E_+$ now being minimal with the property that
    \begin{equation}
        \E_+\supset (\E_0\cupdex(\E_+^f-1))\cupdex 0, (\E_0\cupdex(\E_++1))\cupdex 0, (\E_0\cupdex(\E_++\E_+^L))\cupdex 0
    \end{equation}
\end{lemma}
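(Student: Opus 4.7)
I follow the original proof of \cref{lemma:prop:future_corner} (sketched after \cref{eq:prop:future}) and treat the long-range potential $V_L$ as a second perturbation alongside the angular Laplacian. In $(\rho_0,\rho_+)$-coordinates on $\D^+$, using that $f^L_-\equiv 1$ there, the long-range twisted wave operator reads
\begin{equation*}
P_{\eta,L} = \rho_0^2\rho_+^2\Bigl(-\partial_{\rho_+}(\rho_+\partial_{\rho_+}-\rho_0\partial_{\rho_0}) + \tfrac{1}{(1+\rho_+)^2}\Dl - \tfrac{1}{(1+\rho_+)^2}f^L_+(\rho_+,\omega)\Ve\Bigr).
\end{equation*}
After dividing the equation $P_{\eta,L}\psi = rf$ by $\rho_0^2\rho_+$, both the Laplacian and the potential contribution are perturbative relative to the leading b-operator $\rho_+\partial_+(\rho_+\partial_+-\rho_0\partial_0)$: the first carries an extra $\rho_+$, while the second carries an extra $\rho_+ f^L_+ \in \rho_+^{\min\E_+^L}\A{phg}$, with $\min\E_+^L>0$ by admissibility of $V_L$.

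The expansion towards $I^0$ is identical to the original proof: commuting with $S_{u_1}$ peels off leading $\rho_0$-terms, and $V_L\psi$ carries the full $\rho_0^2$-gain, the same as the Laplacian term; hence the index set at $I^0$ remains $\E_0=\E_0^f-2$. For the expansion towards $\scrip$, I run an iterative bootstrap. Suppose $\psi\in\A{phg}^{\E_0,\E_+^{(n)}}(\D^+)$. The three source terms on the right hand side of the rewritten equation have index sets towards $\scrip$ given by $\E_+^f-1$, $\E_+^{(n)}+1$, and $\E_+^{(n)}+\E_+^L$, respectively. Applying \cref{item:ode:2d} to integrate in $\rho_+\partial_+-\rho_0\partial_0$, and then \cref{item:ode:1d} to integrate in $\rho_+\partial_+$, each produces a $\cupdex 0$ from the corresponding kernel, yielding
\begin{equation*}
\E_+^{(n+1)}\subset \bigl(\E_0\cupdex(\E_+^f-1)\bigr)\cupdex 0 \,\cup\, \bigl(\E_0\cupdex(\E_+^{(n)}+1)\bigr)\cupdex 0 \,\cup\, \bigl(\E_0\cupdex(\E_+^{(n)}+\E_+^L)\bigr)\cupdex 0.
\end{equation*}
Since both $1$ and $\min\E_+^L$ are strictly positive, for any truncation height $c$ the bootstrap saturates $(\E_+^{(n)})_{\leq c}$ after finitely many steps, and converges to the minimal index set $\E_+$ with the claimed inclusion.

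The $\Hb$-error variants \cref{eq:prop:future_data}, \cref{eq:prop:futuresemiindex}, \cref{eq:prop:futureerrorindex}, \cref{eq:prop:futureindexerror} are propagated in parallel by tracking the corresponding weighted $\Hb$-norm at each iteration, exactly as in the original argument. The main technical nuisance is the $\rho_\scri^{1/2}\sl$-component of $\Ve$, which can contribute an extra half-integer shift in $\rho_+$ when acting on $\A{phg}^{\E_+^{(n)}}$; since $\min\E_+^L>0$ already suffices to close the bootstrap, these half-integer contributions are dominated by the $\E_++\E_+^L$ inclusion after taking index-set closures, and do not modify the stated minimal $\E_+$.
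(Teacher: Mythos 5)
Your proof is correct and follows essentially the same route as the paper: you treat the long-range term perturbatively alongside the $\Dl$-term in $(\rho_0,\rho_+)$-coordinates, observe that scaling commutations leave the $I^0$-expansion unchanged, and run the iterative ODE bootstrap towards $\scrip$, with the new inclusion $(\E_0\cupdex(\E_++\E^L_+))\cupdex 0$ arising from $f^L_+$ in exact analogy to how $\Dl$ gives $(\E_0\cupdex(\E_++1))\cupdex 0$. The one small wrinkle — the $\rho_\scri^{1/2}\sl$-factor in $\Ve$ — is something the paper sidesteps by working directly with $f^L_{+,\b}\Vb$ (the index set $\E^L_+$ is defined for the $\b$-version), but your observation that it is absorbed since $\min\E^L_+>0$ (indeed $>1/2$ when angular derivatives are present) is consistent with that.
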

\begin{lemma}\label{lemma:prop:inhomogeneous_no_rad:long}
    \cref{lemma:prop:inhomogeneous_no_rad} still holds without changes.
\end{lemma}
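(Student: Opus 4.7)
The plan is to imitate the proof of \cref{lemma:prop:inhomogeneous_no_rad} essentially verbatim, using the same shifted scaling $S_{v_0}=(u-v_0)\pu+(v-v_0)\pv$ and its iterates $\mathcal{S}_c:=\prod_{(z,k)\in(\E_0^\phi)_{\leq c}}(S_{v_0}+z)$, while treating the commutator of $\mathcal{S}_c$ with $V_L$ as a controllable lower-order error. Since the identity $[S_{v_0},r^2P_\eta]=0$ remains valid, a direct computation starting from $P_\eta\psi=rf+V_L\psi$ yields
\begin{equation*}
P_{\eta,L}\mathcal{S}_c\psi \;=\; r^{-2}\mathcal{S}_c(r^2\cdot rf)\;+\;r^{-2}[\mathcal{S}_c,r^2V_L]\psi,
\end{equation*}
in which the first term on the right is handled exactly as in the original proof via the inclusion $\E_0^\phi\supset\E_0^f-2$, so it only remains to absorb the commutator term.

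The key observation is that each occurrence of $S_{v_0}$ landing on the coefficients of $r^2V_L$ produces an extra factor of $1/r\sim\boldsymbol{\rho_0}$. Writing $r^2V_L=f^L_{-,\b}(\rho_-,\omega)\,f^L_{+,\b}(\rho_+,\omega)\,X$ with $X\in\Vb\cup\{1\}$, and using the no-incoming-radiation compatibility $f^L_{-,\b}\in\A{phg}^{\mindex 0}([0,1)_{\rho_-}\times S^2)$, one computes $S_{v_0}\rho_-=-v_0/r$ (and similarly for $\rho_+$), from which $S_{v_0}f^L_{\pm,\b}\in\boldsymbol{\rho_0}\A{phg}^{\mindex 0}(\Dbold^-)$; likewise $[S_{v_0},u\pu]=-v_0\pu$ and $[S_{v_0},v\pv]=-v_0\pv$ both lie in $\boldsymbol{\rho_0}\Diff^1_\b(\Dbold)$. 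Expanding the iterated commutator $[\mathcal{S}_c,r^2V_L]$ in the standard way then produces a sum of terms each containing strictly fewer factors of $(S_{v_0}+z)$ acting on $\psi$ and at least one commutator factor that has absorbed a $\boldsymbol{\rho_0}$-weight; schematically
\begin{equation*}
r^{-2}[\mathcal{S}_c,r^2V_L]\psi \;=\; \sum_{c'<c} W_{c'}\,\mathcal{S}_{c'}\psi,
\end{equation*}
where each $W_{c'}$ is a first-order $\b$-differential operator with polyhomogeneous coefficients whose weight towards $\boldsymbol{I}^0$ strictly improves on that of $V_L$.

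The result now follows by induction on $c$, exactly as in \cref{lemma:prop:inhomogeneous_no_rad}. For $c<\min\E_0^\phi$, the conclusion is a direct application of \cref{thm:scat:long} to $\psi$. For the inductive step, the bound $\mathcal{S}_{c'}\psi\in\Hb^{\min(c',a_0)}(\Dbold^-)$ for all $c'<c$ combined with the improved weight of the $W_{c'}$ places the commutator contribution in a class admissible (as an inhomogeneity) for \cref{thm:scat:long} with target $\Hb^{\min(c,a_0)}(\Dbold^-)$; the initial datum $\mathcal{S}_c\psi|_{\incone}$ is handled as in the original proof, since $S_{v_0}$ is tangent to $\incone$. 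Applying \cref{thm:scat:long} to the equation for $\mathcal{S}_c\psi$ closes the induction, and then \cref{rem:notation:equivalent_polyhoms} converts the family of bounds into the claimed membership $\psi\in\A{phg}^{\E_0^\phi}(\Dbold^-)+\Hb^{a_0}(\Dbold^-)$. The main obstacle is the structural claim in the second paragraph: verifying that every occurrence of $S_{v_0}$ landing on $V_L$ genuinely gains the full factor of $1/r$. This is precisely why the hypothesis $\E^L_-=\mindex 1$ is essential---for a generic long-range $V_L$ with merely $\Hb^{\epsilon_--1}$ regularity at $\rho_-=0$, the expression $\partial_{\rho_-}f^L_-$ would fail to be bounded, no gain in $\boldsymbol{\rho_0}$ would occur, and the induction would break down.
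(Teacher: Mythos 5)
The paper's proof takes a genuinely different — and more economical — route from yours. Where you keep the shifted scaling $S_{v_0}=(u-v_0)\pu+(v-v_0)\pv$ and attempt to control the resulting commutator $[\mathcal{S}_c,r^2V_L]$, the paper switches to the \emph{unshifted} $S=u\pu+v\pv$, for which $[S,r^2V_L]=0$ \emph{exactly} (since $S\rho_\pm=0$ and $S$ commutes with $\Ve$). The entire commutator issue then disappears; the price is that $S$ is no longer tangent to $\incone$, so the data $\mathcal{S}_c\psi|_{\incone}$ are not directly controlled by the assumptions on $\psi^{\incone}$. The paper handles this by a one-sided induction in $c$ on the initial data alone, using the no-incoming-radiation estimate \cref{eq:scat:data_no_incoming} to show $\pv\mathcal{S}_c\psi|_{\incone}\in\Hb^{\min(a^{\incone},c)+1}(\incone)$, from which the $\mathcal{S}_{c+1/2}\psi|_{\incone}$ membership follows.

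Your argument, as written, has a genuine gap in the commutator estimate, and you have also mis-identified where the obstacle lies. You claim that \emph{each} occurrence of $S_{v_0}$ landing on $f^L_{\pm,\b}$ gains a factor of $\boldsymbol{\rho_0}\sim 1/r$. This is true only for the first application: $S_{v_0}f^L_{-,\b}=-\tfrac{v_0}{r}\,\partial_{\rho_-}f^L_{-,\b}$ is indeed $O(1/r)$, but since $S_{v_0}(1/r)=-1/r$, a second application gives $S_{v_0}^2 f^L_{-,\b}=\tfrac{v_0}{r}\,\partial_{\rho_-}f^L_{-,\b}+O(1/r^2)$, which is \emph{still only} $O(1/r)$. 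So the iterated coefficient $S_{v_0}^m f^L_{\pm,\b}$ gains a single $\boldsymbol{\rho_0}$, not $\boldsymbol{\rho_0}^m$. More fundamentally, even with this gain, the schematic decomposition "$\sum_{c'<c}W_{c'}\mathcal{S}_{c'}\psi$" is incorrect: the Leibniz-type expansion of $[\mathcal{S}_c,r^2V_L]$ produces subproducts $\prod_{i\in J}(S_{v_0}+z_i)\psi$ over \emph{arbitrary} proper subsets $J$, not prefixes $\mathcal{S}_{c'}\psi$. Even if one re-organizes the expansion so that the innermost product acting on $\psi$ is a prefix $\mathcal{S}_{z_{j-1}}\psi$, the term where the commutator peels off the \emph{lowest} factor $(S_{v_0}+z_1)$ contributes, after multiplying by $r^{-2}$, only $O(\boldsymbol{\rho_0}^{z_1+3})$ (using $\mathcal{S}_{z_0}\psi=\psi\in\Hb^{z_1-}$). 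This is far from the required $O(\boldsymbol{\rho_0}^{c+2})$ once $c>z_1+1$, which happens as soon as $(\E_0^\phi)_{\le c}$ has three or more elements. The total commutator is in fact $O(\boldsymbol{\rho_0}^{c+2})$, but only because of a nontrivial cancellation across the terms of the expansion — visible a posteriori from the identity $[\mathcal{S}_c,f^L]\psi=\mathcal{S}_c(f^L\psi)-f^L\mathcal{S}_c\psi$ together with polyhomogeneity of $f^L\psi$, which is circular here since polyhomogeneity of $\psi$ is precisely what one is trying to prove. Your term-by-term estimate cannot see this cancellation, so the induction does not close. The final sentence of your proposal focuses on whether $S_{v_0}f^L_-$ gains a full $1/r$ — it does; the actual breaking point is that one such gain is insufficient, and the paper's switch to the unshifted scaling is not a stylistic preference but the mechanism that removes the commutator entirely.
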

\begin{lemma}\label{lemma:prop:data:long}
    \cref{lemma:prop:data} still holds with $\E_-^\phi$ now given by the minimal index set satisfying $\E_-^\phi\supset\E^{\incone}\cup(\E_-^f-1), \E^\phi_-+\E^L_-$.
\end{lemma}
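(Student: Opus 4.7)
The plan is to mimic the proof of \cref{lemma:prop:data}, with the sole new ingredient being the treatment of the potential term $V_L\psi$. The key observation is that, in $\D^-$ near $\scrim$ (where $f^L_+\equiv 1$), the potential contribution to $rv P_{\eta,L}\psi$ takes the form
\begin{equation}
rv V_L\psi = \rho_-\, f^L_{-,\b}(\rho_-,\omega)\,\Vb\psi,
\end{equation}
where $\rho_-f^L_{-,\b}\in\A{phg}^{\E^L_-}([0,1)_{\rho_-}\times S^2)$ by the definition of an admissible long-range potential. In other words, $V_L$ acts at $\scrim$ as a polyhomogeneous, strictly subleading perturbation of $P_\eta$, with index set shift $\E^L_-$ satisfying $\min\E^L_->0$. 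This is precisely the structural property that allows the scaling iteration of \cref{lemma:prop:data} to still close.

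Concretely, using the identity $rvP_\eta=-(\rho_-\partial_-)(\rho_-\partial_--\rho_0\partial_0)+\rho_-\Dl$ and commuting with $\mathcal{S}_c=\mathcal{S}_c^{\E_-^\phi}$ from \eqref{eq:prop:S:with:upu} (now built from the enlarged index set in the statement), one obtains the modified analogue of \eqref{eq:prop:commute}:
\begin{equation}
rv\pu\pv\mathcal{S}_c\psi=\rho_-\Dl\,\mathcal{S}_{c-1}\psi+\mathcal{S}_c\bigl(\rho_-f^L_{-,\b}\Vb\psi\bigr)-\mathcal{S}_c(r^2vf).
\end{equation}
Expanding the middle term by Leibniz yields $\rho_-f^L_{-,\b}\Vb\mathcal{S}_c\psi$ plus commutator terms of the schematic form $\bigl((\rho_-\partial_-)^{j}\rho_-f^L_{-,\b}\bigr)\Vb\,\mathcal{S}_{c-j}\psi$, each of which inherits polyhomogeneity in $\rho_-$ with index set $\E^L_-$ (derivatives and products preserve the polyhomogeneous class).

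The inductive claim is identical to \cref{eq:prop:data_claim}, namely $\mathcal{S}_c\psi\in\Hb^{\min(c,a^{\incone}),a_0'}(\D^-)$ for all $c$, the base case now being supplied by \cref{thm:scat:long} instead of \cref{thm:scat:scat_general}. For the inductive step, the first and third terms on the right-hand side are treated exactly as before. The new contribution $\rho_-f^L_{-,\b}\Vb\mathcal{S}_c\psi$ is handled by treating it as a genuine long-range first-order perturbation of $\pu\pv$: applying \cref{thm:scat:long} (rather than \cref{corr:ODE:du_dv}) to the equation for $\mathcal{S}_c\psi$ gives the required $\Hb^{\min(c,a^{\incone}),a_0'}$ control, while the Leibniz commutator terms gain the extra weight $\min\E^L_->0$ thanks to the factor $\rho_-f^L_{-,\b}\in\A{phg}^{\E^L_-}$, and can therefore be absorbed perturbatively. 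Crucially, the closure condition $\E^\phi_-\supset\E^\phi_-+\E^L_-$ ensures that every polyhomogeneous coefficient produced by iterating the potential lives in $\E^\phi_-$, so that the full argument of the original proof goes through with the enlarged index set.

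The main technical nuisance, and the only step requiring care, is the bookkeeping of the Leibniz commutators $[\mathcal{S}_c,\rho_-f^L_{-,\b}]$ and the verification that the closure $\E^\phi_-\supset\E^\phi_-+\E^L_-$ together with $\min\E^L_->0$ makes $\E^\phi_-$ a legitimate index set (in particular, that $(\E^\phi_-)_{\leq c}$ is finite for each $c$): this is immediate because $\E^L_-$ has positive minimum, so only finitely many shifts $+\E^L_-$ can land below any fixed level. The case of mixed data $rf\in\A{phg,b}^{\E^f_-,a^f_0}(\D^-)$ is handled identically using the second statement of \cref{lemma:prop:data}.
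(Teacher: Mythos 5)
You correctly identify the three essential structural ingredients: the commutation of $\mathcal{S}_c$ with the modified wave operator, the index set closure $\E^\phi_-\supset\E^\phi_-+\E^L_-$ (and its legitimacy since $\min\E^L_->0$), and the extra $\min\E^L_-$ decay carried by $\rho_-f^L_{-,\b}$. However, your inductive step takes a genuinely different—and heavier—route than the paper's. You Leibniz-expand $\mathcal{S}_c(\rho_-f^L_{-,\b}\Vb\psi)$, move the "diagonal" term $\rho_-f^L_{-,\b}\Vb\mathcal{S}_c\psi$ to the left as a long-range perturbation of $\pu\pv$, and invoke \cref{thm:scat:long} to close the estimate. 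This is awkward for two reasons: \cref{thm:scat:long} is an existence theorem, so estimating the \emph{already constructed} function $\mathcal{S}_c\psi$ requires running the uniqueness argument, which in turn requires an a priori membership in the right class together with knowledge of the incoming radiation of $\mathcal{S}_c\psi$ at $\scrim$ (the lemma permits nontrivial incoming radiation); and each such invocation loses derivatives, which is unpleasant when one iterates over arbitrarily many levels $c$. The paper avoids both issues by a different choice of inductive hypothesis: it inducts on $\psi$ itself, assuming $\psi\in\A{phg,b}^{\E^\phi_-,a_0'}(\D^-)+\Hb^{c_n-,a_0'}(\D^-)$ with increments $c_{n+1}-c_n=\min(1,\min\E^L_-)$, rather than on $\mathcal{S}_c\psi\in\Hb^{\min(c,a^{\incone}),a_0'}$. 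From that hypothesis the entire term $\mathcal{S}_{c_{n+1}}(\rho_-f^L_{-,\b}\Vb\psi)$ is bounded at one stroke (the phg part of $\rho_-f^L_{-,\b}\Vb\psi$ lies in $\E^\phi_-+\E^L_-\subset\E^\phi_-$, so is killed up to order $c_{n+1}$, and the $\Hb$ part is lifted by $\min\E^L_-$); there is nothing to move to the left, the potential lives entirely in the inhomogeneity, and the elementary \cref{corr:ODE:du_dv} closes the step with no regularity loss. With that reformulation the Leibniz bookkeeping you flag as "the main technical nuisance" disappears entirely, as does any need to keep track of arbitrary sub-products of the factors of $\mathcal{S}_c$—which your $\mathcal{S}_c$-indexed hypothesis would not control on its own during the induction.
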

\begin{lemma}\label{lemma:prop:timeintegral:long}
    Restricting to $\D^-$, \cref{lemma:prop:scri_to_I0} and \cref{cor:prop:solution_with_radiation_scri} still hold with $\E_-^{\phi}$ modified as in \cref{lemma:prop:data:long}.
\end{lemma}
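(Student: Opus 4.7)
I will focus on the extension of \cref{lemma:prop:scri_to_I0}, since the case of \cref{cor:prop:solution_with_radiation_scri} can be handled by exactly the same reduction as in its original proof: writing $\psi=\psi^{\scrim}+\psi'$ gives $P_{\eta,L}\psi'=r^{-2}\Dl\psi^{\scrim}-V_L\psi^{\scrim}$ with $\psi'$ having no incoming radiation and with the right-hand side polyhomogeneous in $\Dbold$, so the generalised version of \cref{lemma:prop:scri_to_I0} applies. By the usual linearity argument one also reduces to the case $\psi^{\incone}=0$ and $f$ supported in $\{v\geq v_0+1\}$. The a priori bound $\psi\in\Hb^{\vec{a}'}(\D)$ is provided by the long-range scattering result \cref{thm:scat:long}, and propagation of polyhomogeneity along $\scrim$ is already supplied by \cref{lemma:prop:data:long}, which yields $\psi\in\A{phg,b}^{\E_-^{\phi},a_0'}(\D^-)$ with $\E_-^{\phi}$ the index set closed under addition of $\E_-^L$.

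The key new ingredient is an iterative scheme absorbing the long-range potential into a telescoping sum of Minkowskian wave equations. Define $\psi^{(0)}$ to be the scattering solution to $P_\eta\psi^{(0)}=rf$ with the original data (for which \cref{lemma:prop:scri_to_I0} applies directly), and inductively $\psi^{(i+1)}$ to be the solution to $P_\eta\psi^{(i+1)}=V_L\psi^{(i)}$ with trivial data and no incoming radiation. Using $V_L\in r^{-2}f_-^L f_+^L\Ve$ together with $\min\E_-^L>0$, one checks that $rV_L\psi^{(i)}\in\A{phg}^{\vec{\E}^{(i)}+(\E_-^L+1,2,1)}(\D^-)$, so that $\psi^{(i+1)}\in\A{phg}^{\vec{\E}^{(i+1)}}(\D^-)+\Hb$-error, with $\E_-^{(i+1)}=\E_-^{(i)}+\E_-^L$ and hence $\min\E_-^{(i)}\to\infty$ as $i\to\infty$. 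Setting $\Psi_N:=\sum_{i=0}^N\psi^{(i)}$, one computes
\begin{equation}
P_{\eta,L}\bigl(\psi-\Psi_N\bigr)=-V_L\psi^{(N)},
\end{equation}
and $\psi-\Psi_N$ has trivial data and no incoming radiation. Since $rV_L\psi^{(N)}$ lies in a space with weight towards $\scrim$ improved by $(N+1)\min\E_-^L$, the long-range energy/scattering estimate \cref{cor:en:longrange1} (combined with \cref{lemma:prop:data:long} applied to the remainder) yields $\psi-\Psi_N\in\Hb^{a_-+(N+1)\epsilon_-,a_0,a_+}(\D^-)$.

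To conclude, one observes that the index sets $\vec{\E}^{(i)}$ satisfy $(\vec{\E}^{(i)})_{\leq c}=(\vec{\E}^{(N(c))})_{\leq c}$ for $N(c)$ large enough depending only on $c$ and $\min\E_-^L$, by property \cref{item3:notation:index_sets} of index sets. Consequently the partial-sum expansion of $\Psi_N$ stabilises to any fixed order as $N\to\infty$, and the limit is polyhomogeneous with index set precisely the minimal one closed under the shifts dictated by $V_L$, i.e.~the modified $\vec{\E}^{\phi}$ in the statement. Combined with the decaying remainder estimate, this gives the claimed membership $\psi\in\A{phg}^{\vec{\E}^{\phi}}(\D^-)+\Hb^{\vec{a}}(\D^-)$, and the mixed $\A{phg,b}$ variant follows by the same iteration, using the corresponding case of \cref{lemma:prop:scri_to_I0}. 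The main technical obstacle, and the only place real care is required, is the bookkeeping in the third step: one must verify both that the iterated index sets saturate at each finite order and that the sums assembling the coefficients (which combine contributions from arbitrarily many iterates below any fixed order of decay) are finite. Both facts rest on the strict positivity of $\min\E_-^L$, which is precisely the long-range assumption \cref{def:en:long_range}.
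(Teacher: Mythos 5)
Your iterative scheme, defining $\psi^{(0)}$ from $P_\eta\psi^{(0)}=rf$ and $\psi^{(i+1)}$ from $P_\eta\psi^{(i+1)}=V_L\psi^{(i)}$, and then peeling off $\Psi_N=\sum_{i\le N}\psi^{(i)}$ so that the remainder $\psi^\Delta_N:=\psi-\Psi_N$ solves $P_{\eta,L}\psi^\Delta_N=V_L\psi^{(N)}$, is exactly the decomposition the paper uses (there is a minor, inconsequential sign slip in your display). The reductions and the treatment of \cref{cor:prop:solution_with_radiation_scri} are also fine, and your observation that the index sets $\vec{\E}^{(i)}$ stabilise at any finite order towards $\scrim$ because $\min\E_-^L>0$ is correct and essential.

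However, there is a genuine gap in the final step: your argument only improves the decay of $\psi^\Delta_N$ \emph{towards $\scrim$}. The estimate you extract from \cref{cor:en:longrange1} together with \cref{lemma:prop:data:long} gives $\psi^\Delta_N\in\Hb^{a_-+(N+1)\epsilon_-,a_0}(\D^-)$ — the weight towards $I^0$ stays at $a_0$ for every $N$, since the iteration only gains powers of $\rho_-$, not of $\rho_0$. Consequently, you can conclude $\psi\in\A{phg}^{\vec{\E}^\phi}(\D^-)+\Hb^{\infty-,a_0}(\D^-)$, but \emph{not} $\psi\in\A{phg}^{\vec{\E}^\phi}(\D^-)+\Hb^{\vec a}(\D^-)$ in the sense required: recall that \cref{sec:prop:longrange} explicitly works in the purely polyhomogeneous setting, where the intended conclusion is that the solution is polyhomogeneous towards \emph{both} boundaries with no $\Hb$-error, and your iteration gives no way to produce the $\rho_0$-expansion of $\psi^\Delta_N$ beyond order $a_0$. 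Note also that the time-integral device used to get $I^0$-polyhomogeneity in the proof of \cref{lemma:prop:scri_to_I0} is unavailable here, because $T$ does not commute with $V_L$ — this is precisely what forces a different argument.

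What is missing is the scaling commutation applied to the remainder: one commutes $P_{\eta,L}$ with $\mathcal{S}^{\E_0^\phi}_c$ (defined with $S=u\pu+v\pv$, which does commute with $V_L$ up to weights since $V_L$ is a function of $\rho_\pm$ times $r^{-2}\Ve$), and observes that $P_{\eta,L}\mathcal{S}^{\E_0^\phi}_c\psi^\Delta_N\in\Hb^{\min(\E_-^\phi+N\E_-^L),\,c+2}(\D^-)$, because $\mathcal{S}_c$ strips off the $\E_0^\phi$-leading terms of $V_L\psi^{(N)}$ towards $I^0$ while the $N$-iteration has already improved decay towards $\scrim$. Applying \cref{thm:scat:long} then yields $\mathcal{S}^{\E_0^\phi}_c\psi^\Delta_N\in\Hb^{\bullet,c-}(\D^-)$, i.e.~$\psi^\Delta_N$ admits a $\rho_0$-expansion with index set $\E_0^\phi$ to order $c$. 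Letting both $c$ and $N$ run to infinity gives full polyhomogeneity towards both boundaries. Your proof needs this additional step to close.
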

Notice that the proof of \cref{lemma:prop:improvement_at_future} in general does not apply anymore\footnote{The property used in \cref{footnote:prop:improvedfuture} no longer holds if $\E_+$ is taken to be as in \cref{lemma:prop:futurecorner:long}.
This may be remedied using the extra assumption $\E^L_+\subset\N$, see \cref{lemma:app:linear_improvement} for similar improvement}.
Thus, in summary, we can still deduce propagation of polyhomogeneity statements in all of $\D$, but the future index sets (towards $\scrip$) appearing in \cref{prop:prop:prop_no_incoming,prop:prop:inhomogeneous_rad} in general needs to be replaced with the worse one given by from \cref{lemma:prop:futurecorner:long}.
\begin{obs}[Applicability to scale-invariant wave equation]
   In the particular case where $\E^L_{\pm}=\mindex1$, then the proof of \cref{lemma:prop:improvement_at_future} still works, and both \cref{prop:prop:prop_no_incoming,prop:prop:inhomogeneous_rad} hold without modifications. 
In particular, the propositions apply without changes to the scale invariant wave equation $P_{\eta}\psi+\frac{c\psi}{r^2}=rf$ or to the suitably twisted Teukolsky equations.
\end{obs}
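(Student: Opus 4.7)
The observation asserts that \cref{lemma:prop:improvement_at_future}, and hence Propositions \cref{prop:prop:prop_no_incoming} and \cref{prop:prop:inhomogeneous_rad}, go through unchanged for the long-range equation \cref{eq:prop:longrangewave} when $\E^L_\pm = \mindex{1}$. My plan is to verify the two key ingredients of the proof of \cref{lemma:prop:improvement_at_future} in the long-range setting: (i) the commutation identity $P_{\eta,L}(\mathcal{S}^{\E^\phi_0}_{u_1,c}\psi) = \mathcal{S}^{\E^f_0}_{u_1, c+2}(rf)$, and (ii) the integer-shift property of the resulting index set $\E_+$ near $\scrip$ recorded in the footnote of that proof.

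For the commutation (i), I first treat the two named examples. In the scale-invariant case $V_L = c/r^2$, we have $r^2 V_L = c$, so $[S_{u_1}, r^2 V_L] = 0$ for every $u_1$, and the relation $[S_{u_1}, r^2 P_{\eta,L}] = 0$ persists, giving the commutation identity verbatim. For the twisted Teukolsky equation \cref{eq:current:non-short-range-Qtilde}, I apply the twist of \cref{rem:en:twistedlongrange} to replace $\phi$ with $\bar\phi = \rho_+^{-c_u}\rho_-^{-c_v}\phi$, which reduces the equation to a pure $c/r^2$-potential problem, after which the scale-invariant argument applies. For a general long-range potential with $\E^L_\pm = \mindex{1}$ (so that $f^L_{\pm,\b} \in \A{phg}^{\mindex{0}}$), the commutator $[S_{u_1}, r^2 V_L]$ produces polyhomogeneous coefficients with expansion starting at order $\mindex{1}$ in both $\rho_-$ and $\rho_+$, acting through $\Ve$; these have strictly better decay than $r^2 V_L$ itself and can be absorbed iteratively at subsequent orders via \cref{lemma:prop:futurecorner:long} and \cref{thm:scat:long}.

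For the index-set property (ii), I observe that in \cref{lemma:prop:futurecorner:long} the set $\E_+$ is enlarged by the extra contribution $(\E_0 \overline{\cup}(\E_+ + \E^L_+)) \overline{\cup} 0$. Since $\E^L_+ = \mindex{1} \subset \N_{\geq 1}$, every element of $\E_+ + \E^L_+$ is an integer shift of an element already in $\E_+$. By induction on the cutoff order $c$: assuming $(z_0, k) \in (\E^f_+ - 1) \cupdex 0$ for all $(z_0, k) \in \E_+$ with $z_0 \leq c - 1$, then a new element $(z_0 + n, k) \in \E_+ + \E^L_+$ with $z_0 + n \leq c$ and $n \geq 1$ satisfies $(z_0, k) \in (\E^f_+-1) \cupdex 0$ by hypothesis, and the closure of this index set under positive-integer shifts (axiom of an index set) yields $(z_0 + n, k) \in (\E^f_+ - 1) \cupdex 0$. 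Log collisions with $\E_0 = \E^f_0 - 2$ are absorbed by the factor $\overline{\cup}(\E^f_0 - 2)$ in $\E^\phi_+ = (0 \overline{\cup}(\E^f_+ - 1)) \overline{\cup}(\E^f_0 - 2)$.

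With both ingredients verified, the integration argument of \cref{lemma:prop:improvement_at_future} transfers verbatim: restricting $\mathcal{S}^{\E^\phi_0}_{u_1, c}\psi$ to $\outcone{1}$ and applying \cref{item:ode:1d} yields the desired expansion along $\outcone{1}$, and integrating from $\outcone{1}$ towards $I^0$ along the integral curves of $S_{u_2}$ for $u_2 \in (u_1, u_0]$ produces the same $\E^\phi_+$ as in the Minkowski case. Propositions \cref{prop:prop:prop_no_incoming} and \cref{prop:prop:inhomogeneous_rad} then follow by combining with \cref{lemma:prop:inhomogeneous_no_rad:long} exactly as in the original proofs. The main potential obstacle is controlling $[S_{u_1}, V_L]$ for a general $V_L$ with $\E^L_\pm = \mindex{1}$; for the two named examples this commutator vanishes identically, and for the general case the strictly better decay of the commutator together with the integer-shift structure of $\E^L_\pm$ ensures that the iterative construction of the expansion terminates at the correct order.
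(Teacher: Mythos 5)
Your proposal is correct and fills in, with appropriate detail, what the paper only hints at in a footnote (namely, that the property used in footnote~\ref{footnote:prop:improvedfuture} ``may be remedied using the extra assumption $\E^L_+\subset\N$''). Both the commutation check and the index-set argument are sound: for $V_L=c/r^2$ one has $r^2V_L=c$ so $[S_{u_1},r^2V_L]=0$ exactly, while for a general potential with $f^L_{\pm,\b}\in\A{phg}^{\mindex{0}}$ the commutator with $T$ raises the index set by one order and can be absorbed iteratively.

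One simplification worth noting: since $\mindex{1}=\{(n,0):n\in\N_{\geq1}\}$ and every index set $\E_+$ is closed under the shift $(z,k)\mapsto(z+1,k)$ by axiom~$\E.2$, one has the identity $\E_++\mindex{1}=\E_++1$ directly, without induction. Plugging this into the constraint of \cref{lemma:prop:futurecorner:long}, the long-range contribution $(\E_0\cupdex(\E_++\E^L_+))\cupdex 0$ collapses onto the already-present angular term $(\E_0\cupdex(\E_++1))\cupdex 0$, so the system of inclusions defining $\E_+$ reduces \emph{verbatim} to that of \cref{lemma:prop:future_corner}. This makes it immediate that the footnote property (and hence the whole integration argument of \cref{lemma:prop:improvement_at_future}) transfers without change, rather than via the element-by-element induction you carry out. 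Both routes prove the same thing; the direct one is slightly shorter and explains \emph{why} the integer-shift hypothesis is the natural one.
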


\begin{proof}[Proof of \cref{lemma:prop:futurecorner:long,lemma:prop:inhomogeneous_no_rad:long,lemma:prop:data:long,lemma:prop:timeintegral:long}]
   \cref{lemma:prop:futurecorner:long}: We first note that the scaling vector field $S=u\pu+v\pv$ still commutes with the equation since $S\rho_{\pm}=0$.
Thus, we can still propagate polyhomogeneity along spacelike infinity as before. 
In order to propagate from $I^0$ towards $\scrip$, we note that the long-rang term can again be treated perturbatively, i.e.:
\begin{equation}
    \rho_+\partial_+(\rho_+\partial_{\rho_+}-\rho_0\partial_{\rho_0})\psi=-\rho_0^{-2}\rho_+^{-1}(rf)+\frac{\rho_+\Dl\psi}{(1+\rho_+)^2}-\frac{f^L_{+,\b}(\rho_+,\omega)\rho_+\Vb\psi}{(1+\rho_+)^2}.
\end{equation}
In the same way in which the Laplacian is responsible for the inclusion $\E_+\supset (\E_0\cupdex(\E_++1))\cupdex 0$, the $f^L_{+}$-term gives rise to the inclusion $\E_+\supset (\E_0\cupdex(\E^++\E^L_+))\cupdex 0$.
This concludes the proof of \cref{lemma:prop:futurecorner:long}.
For the rest of the proof, we work in $\D^-$ and drop $f^L_{+}$.

The proof of \cref{lemma:prop:inhomogeneous_no_rad} no longer directly applies, as the perturbation does not have time translation symmetry, so we avoid shifting $S$ by $v_0 T$ as done in \cref{eq:prop:S_c_definition} and instead define:
\begin{equation}\label{eq:prop:pooprp}
			\mathcal{S}^{\mathcal{E}}_c:=\prod_{(z,k)\in\mathcal{E}_{\leq c}}(u\pu+v\pv+z), \mathcal{S}_c=\mathcal{S}_c^{\E^{\phi}_0}.
\end{equation}
We now need to show that  $\mathcal{S}_c\psi|_{\incone}\in\A{phg}^{(\E^\phi_0)_{>c}}(\incone)+\Hb^{a^{\incone}}(\incone)$.
We prove this by induction: The statement clearly holds for $c=-1/2$ by assumption.
Using the induction hypothesis and the no incoming radiation estimate \cref{eq:scat:data_no_incoming}, we also have that $\pv \mathcal{S}_c\psi|_{\incone}\in\Hb^{\min(a^{\incone},c)+1}(\incone)$.
Therefore, we have
\begin{equation}
	\mathcal{S}^{\E}_{c+1/2} \psi|_{\incone}=\prod_{(z,k)\in(\mathcal{E}^{\phi}_0)_{\leq c+1/2}\setminus(\mathcal{E}^{\phi}_0)_{\leq c} }(u\pu+z) \mathcal{S}^{\E}_{c} \psi|_{\incone}+\Hb^{\min(a^{\incone},c)+1}(\incone).
\end{equation}
With the initial data improved, the rest of the proof follows as in \cref{lemma:prop:inhomogeneous_no_rad}.

In order to prove \cref{lemma:prop:data:long}, we note the commutation property
  \begin{equation}\label{eq:prop:commute:long}
      rv\pu\pv\mathcal{S}_{c}\psi=\rho_- \Diff^N_{\b}\Dl\mathcal{S}_{c-1}\psi-\mathcal{S}_c(\rho_-f^L_{-,\b}\Vb\psi)-\mathcal{S}_c (r^2vf),
  \end{equation}
  where $\mathcal{S}_c=\mathcal{S}_c^{\E^{\phi}_-}$ is as defined in \cref{eq:prop:S:with:upu}. 
  Notice that $\rho_-f^L_{-,\b}\in\A{phg}^{\E^{L}_{-},\mindex1}(\D^-)$. 
  We then inductively assume that $\psi\in \A{phg,b}^{\E^\phi_-,a_0'}(\D^-)+\Hb^{c_n-, a_0'}(\D^-)$, where $c_n:=n\min(1,\min(\E^L_-))$ for $n\in\mathbb N$. 
The assumption implies that
\begin{equation}
   \mathcal{S}_{c_{n+1}-1}\psi\in \Hb^{c_{n+1}-1, a_0'}(\D^-), \qquad \mathcal{S}_{c_{n+1}}\underbrace{(\rho_-f^L_{-,\b}\Vb\psi)}_{\mathrlap{\in \A{phg,b}^{\E^\phi_-,a_0'}(\D^-)+\Hb^{\min(\E^{L}_-)+c_n-, a_0'}(\D^-)}}\in\Hb^{\min (\E^{L}_-)+c_{n}-, a_0'}(\D^-),
\end{equation}
and thus, inserting these memberships into \cref{eq:prop:commute:long}, we get
\begin{equation}
    rvP_{\eta}\mathcal{S}_{c_{n+1}}\psi\in \Hb^{c_{n+1},a_0'}(\D^-)+S_{c_{n+1}}(r^2vf).
\end{equation}
Dividing by $rv$ and applying \cref{corr:ODE:du_dv} gives the result. 

Finally, we note that the time integral proof of \cref{lemma:prop:scri_to_I0} is also affected by the inclusion of the $f^L_{-}r^{-2}\Ve\psi$-term. We thus avoid directly taking the time integral. 
Instead, we proceed as in the proof of \cref{cor:en:longrange1.5}  and write $\psi=\psi^{(0)}+...+\psi^{(N)}+\psi^{\Delta}$ where, for $k\in\{0,...,N-1\}$, we define
\begin{equation}
    P_\eta\psi^{(0)}=rf,\quad P_\eta\psi^{(k+1)}=f^L_{-}r^{-2} \Ve\psi^{(k)},\quad P_\eta\psi^{\Delta}_N=f^L_{-}r^{-2} \Ve\psi^{(N)}+f^L_{-}r^{-2}\Ve\psi^{\Delta}_N,
\end{equation}
where only $\psi^{(0)}$ has nontrivial data.
Using \cref{lemma:prop:scri_to_I0} iteratively, we get that $\psi^{(N)}\in\A{phg}^{{\E}_-^\phi+N\E^L_-,\E_0^\phi}(\D^-)$ for ${\E}_-^{\phi}$ and~$\E_0^\phi$ as in \cref{lemma:prop:scri_to_I0}.
Commuting with $\mathcal{S}_c^{\E_0^\phi}$ (as defined in \cref{eq:prop:pooprp}) , we then obtain $P_{\eta,L} \mathcal{S}^{\E_0^\phi}_c\psi^{\Delta}_{N}\in \Hb^{\min({\E}^\phi_-+N\E^{L}_-),c+2}(\D^-)$.
Since this holds for arbitrary $c$ and $N$, the result follows after an application of \cref{thm:scat:long}.
\end{proof}

	%%%%%%%%%%%%%%%%%%%%%
	\newpage
	\part{Applications and computation of explicit asymptotics}
In this part, consisting of \cref{sec:app,sec:Sch,sec:sharp,section:no incoming radiation Cauchy}, we apply our building blocks from the previous part to prove various statements.
In \cref{sec:app}, we first prove that propagation of polyhomogeneity in fact holds for general perturbations of $\Box_\eta\phi=0$, i.e.~we prove \cref{thm:intro:prop_polyhom}. We apply this to various example equations, for which we moreover explicitly compute the asymptotics, i.e.~the coefficients in the polyhomogeneous expansions.
In \cref{sec:Sch}, we deduce results on the explicit asymptotics for linearised gravity around Schwarzschild.
In \cref{sec:sharp}, we compute explicit asymptotics in the case of a long-range $1/r^2$-potential, which is, in particular relevant for wave equations in other dimensions.
Finally, in \cref{section:no incoming radiation Cauchy}, we investigate formulations of no incoming radiation on Cauchy hypersurfaces.

\section{Propagation of polyhomogeneity for perturbations of \texorpdfstring{$\Box_\eta\phi=0$}{the linear wave equation} and applications}\label{sec:app}

 In \cref{sec:en,sec:scat:scat}, we proved energy estimates and scattering results for perturbations of the form $\Box_{\eta}\phi=\mathcal{P}[\phi]+f$.
Then, in \cref{sec:prop}, we proved that solutions to $\Box_\eta \phi=f$ satisfy propagation of polyhomogeneity results (cf.~\cref{prop:prop:prop_no_incoming,prop:prop:inhomogeneous_rad}).
In the present section, by treating the $\mathcal{P}[\phi]$ term perturbatively, we extend \cref{prop:prop:prop_no_incoming,prop:prop:inhomogeneous_rad} to $\Box_{\eta}\phi=\mathcal{P}[\phi]+f$, thus proving one of the main results of the paper,  \cref{thm:intro:prop_polyhom}, namely that solutions to \cref{eq:scat:scattering_def} arising from polyhomogeneous data (+error) remain polyhomogeneous (+error).
We first prove this in the case of no incoming radiation (\cref{thm:app:general}), and then in the case of nontrivial incoming radiation (\cref{thm:app:general2}).
A similar result, namely \cref{thm:app3}, holds when long-range potentials are included.
We will continue to work in infinite regularity spaces.

Using the algorithm already explained in the introduction, we then provide three examples where we showcase how to explicitly compute the coefficients in the polyhomogeneous expansions of the relevant solutions in \cref{sec:app:app1}--\cref{sec:app:nonlinearity}.
We conclude with a proof of the so-called antipodal matching condition in \cref{sec:app:antipodal}.

\begin{rem}
    We note that, while the results of \cref{sec:app:main} all hold in any dimension (with modifications as in~\cref{rem:intro:higherd}), the results from \cref{sec:app:app1} onwards are specific to \textit{even} spacetime dimensions.
\end{rem}

	\subsection{Propagation of polyhomogeneity for solutions to general perturbations of \texorpdfstring{$\Box_{\eta}\phi=0$}{the linear wave equation}}\label{sec:app:main}
 
Let us already mention that the following theorem is further improved in \cref{cor:app:sharp_index_sets} (in even spacetime dimensions):
	\begin{thm}\label{thm:app:general}
		Let $\vec{a}$, $\vec{a}^f=(a^f_0,a^f_0,a^f_+)$ be admissible, let $\mathcal{N},V,P_g$ be short-range perturbations compatible with no incoming radiation, and let $rf\in \Hb^{a_0^f,a_+^f}(\Dbold)$.  
        Finally, let  $\psi\in\Hb^{a_0,a_+}(\Dbold)$ be the corresponding scattering solution to~\cref{eq:scat:scattering_def} with no incoming radiation and $\psi^{\incone}\in\Hb^{a_-}(\incone)$.
		
		\begin{enumerate}[label={\alph*})] 
         \item\label{item:app:error} We have $\psi\in\A{b,phg}^{\bar{a}_0,\mindex{0}}(\Dbold)+\Hb^{\bar{a}_0,\bar{a}_+}(\Dbold)$ for any $\bar{a}_0< \min(a^f_0-2,a_-)$ and any $\bar{a}_+<\min(a_+^f-1,\bar{a}_0)$. 
			\item \label{item:app:poly} Make the additional assumptions that
			\begin{equation}
				rf\in\A{phg}^{\vec{\E}^f}(\Dbold ),\quad \psi^{\incone}\in\A{phg}^{\vec{\E}^{\incone}}(\incone),\quad f_{\bullet}\in\A{phg}^{\vec{\E}^{\bullet}}(\Dbold ),\qquad\bullet\in\{V,P_g,\mathcal{N}\}.
			\end{equation}
			Furthermore, set $\vec{\E}^{\mathfrak{F},\bullet}:=n^\bullet\vec{\E}^{\phi}+\vec{\E}^\bullet$ for $\bullet\in\{V,\mathcal{N},P_g\}$, and use the notation $\vec{\E}^{\mathfrak{F},f}=\vec{\E}^f$.\footnote{Note that due to our notation, the $f_{\bullet}$ for $\bullet\in\{V,\mathcal{N},P_g\}$ stand at the same footing as $rf$.}
			Then, $\psi\in\A{phg}^{\vec{\E}^\phi}(\Dbold )$ where $\E_0^\phi,\E_+^\phi$ are minimal with the property
			\begin{equation}\label{eq:app:general_index_set}
				\begin{aligned}
					&\E^\phi_0\supset \E^{\incone},\E^{\mathfrak{F},\bullet}_0-2\\
					&
     { \E^\phi_+\supset\mindex0\cupdex\E^\phi_0, \big(0\overline{\cup}\big(\E^{\mathfrak{F},\bullet}_+-1)		)\overline{\cup}	(\E^{\mathfrak{F},\bullet}_0-2)			}\end{aligned}\qquad\bullet\in\{f,V,\mathcal{N},P_g\}.
			\end{equation}
			
		\end{enumerate}
	\end{thm}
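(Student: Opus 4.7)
\textbf{Part (a) — bootstrapping the error estimate.} The strategy is to treat $\mathcal{P}[\phi]$ perturbatively, iteratively feeding it into \cref{prop:prop:prop_no_incoming}. Set $F:=f+\mathcal{P}[\phi]$; since $\psi$ lies in some $\Hb^{a_0,a_+}(\Dbold)$ by hypothesis, the multiplicative product estimates of \cref{lemma:scat:sobolev} combined with the strong admissibility embedded in \cref{def:en:short_range} give
\begin{equation*}
r\mathcal{P}[\phi]\in\Hb^{\min_\bullet(n^\bullet a_0+a_0^\bullet),\,\min_\bullet(n^\bullet a_++a_+^\bullet)}(\Dbold),
\end{equation*}
with both weights strictly exceeding $(a_0+2,a_++1)$ by a common gap $\delta_0>0$. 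Writing $\Box_\eta\phi=F$ and invoking the error version of \cref{prop:prop:prop_no_incoming} (using \cref{cor:en:longrange1} only if long-range potentials appear), the conclusion is
$\psi\in\A{b,phg}^{a_0^{(1)},\mindex0}(\Dbold)+\Hb^{a_0^{(1)},a_+^{(1)}}(\Dbold)$ with $a_0^{(1)}=\min(a_-,a_0^f-2,a_0+\delta_0)-$ and $a_+^{(1)}=\min(a_0^{(1)},a_+^f-1,a_++\delta_0)-$. For the quasilinear part one uses \cref{lemma:en:short_range_computations} to absorb commutators into new short-range pieces. Iterating this step finitely many times saturates the $f$-driven thresholds, giving $\bar a_0<\min(a_0^f-2,a_-)$ and $\bar a_+<\min(a_+^f-1,\bar a_0)$ as claimed.

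\textbf{Part (b) — propagating polyhomogeneity.} The plan is to build the minimal index sets $\vec{\E}^\phi$ satisfying \cref{eq:app:general_index_set} by induction on the real part. Fix any threshold $c\in\R$. The hypothesis that $\vec{\E}^\bullet,\vec{\E}^f,\vec{\E}^{\incone}$ are index sets together with the strict inequalities in \cref{def:en:short_range} ensures the inclusions \cref{eq:app:general_index_set} only ever reference $(\E^\phi_\bullet)_{\leq c-\delta}$ when populating $(\E^\phi_\bullet)_{\leq c}$, where $\delta>0$ is the common gap. Consequently, the truncations $(\E^\phi_\bullet)_{\leq c}$ exist and are finite; letting $c\to\infty$ yields the desired minimal index sets.

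With $\vec{\E}^\phi$ in hand, we run a second bootstrap, this time upgrading polyhomogeneous content rather than decay. Starting from the output of part (a), we know $\psi\in\A{b,phg}^{\bar a_0,\mindex0}(\Dbold)+\Hb^{\bar a_0,\bar a_+}(\Dbold)$. Using \cref{cor:en:VN_estimates,lemma:scat:sobolev} (and the polyhomogeneity of the coefficients $f_\bullet$), every multilinear product appearing in $\mathcal{P}[\phi]$ is polyhomogeneous with respect to $\vec{\E}^{\mathfrak{F},\bullet}=n^\bullet\vec{\E}^\phi+\vec{\E}^\bullet$ up to a strictly better error. Inserting this into $\Box_\eta\phi=f+\mathcal{P}[\phi]$ and applying the polyhomogeneous version of \cref{prop:prop:prop_no_incoming} produces a membership $\psi\in\A{phg}^{\vec{\E}^\phi}(\Dbold)+\Hb^{a_0^{(k)},a_+^{(k)}}(\Dbold)$ for thresholds $a_\bullet^{(k)}$ that increase by at least $\delta$ at each iteration. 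Since for any fixed $c$ the set $(\E^\phi_\bullet)_{\leq c}$ is finite and stabilises after finitely many steps, iterating drives the error arbitrarily far, establishing $\psi\in\A{phg}^{\vec{\E}^\phi}(\Dbold)$.

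\textbf{Main obstacle.} The principal technical hurdle is the self-referential nature of \cref{eq:app:general_index_set}, where $\vec{\E}^\phi$ appears on both sides through $\vec{\E}^{\mathfrak{F},\bullet}=n^\bullet\vec{\E}^\phi+\vec{\E}^\bullet$. This is resolved by the well-foundedness condition \cref{item3:notation:index_sets} together with the strict admissibility gap: the inclusion always shifts real parts upward by at least $\delta>0$, so a fixed point exists and can be constructed by transfinite (but in fact countable, order by order) iteration. A secondary subtlety is handling the quasilinear term $P_g[\phi]\phi$, for which one applies \cref{lemma:en:short_range_computations} at each step to rewrite commutators with the scaling vector field as additional short-range pieces, so that the inductive scheme never degrades the structure of the perturbation.
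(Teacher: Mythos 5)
Your proposal is correct and follows essentially the same route as the paper: bootstrap via \cref{cor:en:VN_estimates} (with \cref{rem:en:Pgestimates} for the quasilinear part) to obtain an improved inhomogeneity, then feed it into \cref{prop:prop:prop_no_incoming}, gaining a fixed increment $\delta$ of decay (respectively, of resolved polyhomogeneous content) per iteration until the $f$-driven thresholds are reached. The only organizational difference is that the paper runs the two iterations sequentially — first improving decay towards $I^0$ at fixed $a_+$, then improving decay towards $\scrip$ at fixed $\bar a_0$, and similarly for part (b) — whereas you update both weights simultaneously; since each improvement still strictly raises the relevant weight by $\delta$ and the admissibility ordering $a_+ < a_0$ is preserved, both bookkeeping schemes terminate at the same place. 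Your explicit remark about the well-foundedness of the self-referential inclusion \cref{eq:app:general_index_set} (resolved through \cref{item3:notation:index_sets} and the strict gap) makes precise something the paper leaves implicit, and is a welcome clarification rather than a deviation. One small superfluity: invoking \cref{cor:en:longrange1} is unnecessary here, since long-range potentials are handled separately in \cref{thm:app3} and are not part of the hypotheses of this theorem.
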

	\begin{rem}
		We can of course also include multiple nonlinear orders in \cref{thm:app:general} (or \cref{thm:app:general2}), i.e.~we can use multiple values for $n^{P_g},n^{\mathcal{N}}$.
	\end{rem}
	\begin{proof}
		\textit{Proof of \cref{item:app:error}, Step 1) $I^0$:}
  Starting with $\psi\in\Hb^{a_0,a_+}(\Dbold)$, we apply \cref{cor:en:VN_estimates} (together with \cref{rem:en:Pgestimates}) to deduce that $r\mathcal{P}[\phi]\in\Hb^{a_0+2+\delta+,a_++1+\delta+}(\Dbold)$ for some sufficiently small $\delta$ less than the gap of $\{V,\mathcal{N},P_g\}$.
  We can thus apply \cref{prop:prop:prop_no_incoming} to \cref{eq:scat:scattering_def}, ignoring the improvement near $\scrip$, to infer that $\psi \in \Hb^{\min(a_0+\delta,\bar{a}_0),a_+}(\Dbold)$.
  We iterate the procedure of applying \cref{cor:en:VN_estimates} and \cref{prop:prop:prop_no_incoming} inductively to obtain that $\psi \in \Hb^{\min(a_0+n\delta,\bar{a}_0),a_+}(\Dbold)$ for any $n\in\mathbb N$.

  \textit{Proof of \cref{item:app:error}, Step 2) $\scrip$:}
We perform an induction over 
\begin{equation}
			r\phi\in\A{b,pgh}^{\bar{a}_0,\mindex{0}}(\Dbold)+\Hb^{\bar{a}_0,\min(a_+ +n\delta,\bar{a}_+)}(\Dbold).
		\end{equation}
		We already proved the $n=0$ base case. 
		As before, the induction step then follows from applying \cref{cor:en:VN_estimates} (with \cref{rem:en:Pgestimates}) and \cref{prop:prop:prop_no_incoming}. This proves the result.

  \textit{Proof of \cref{item:app:poly}, Step 1) $I^0$:}
  We again use \cref{cor:en:VN_estimates} and \cref{prop:prop:prop_no_incoming} and start with
  \begin{equation}
  \psi\in \Hb^{a_0,a_+}(\Dbold)\implies    r\Box\phi=r\mathcal{P}[\phi]+rf\in \Hb^{a_0+2+\delta+,a_++1+\delta+}(\Dbold)+\A{phg}^{\E^f}(\Dbold)\implies \psi \in \A{phg,b}^{\E^{\phi}_0,a_+}(\Dbold)+\Hb^{a_0+\delta,a_+}(\Dbold),
  \end{equation}
  where we used that $\E^{\phi_0}\supset \E^{\incone}\cup (\E^{f}-2)$ and again ignored the improvement towards $\scrip$ in the last step.
  We now prove inductively that $\psi \in \A{phg,b}^{\E^{\phi}_0,a_+}(\Dbold)+\Hb^{a_0+n\delta,a_+}(\Dbold)$ for any $n\in\mathbb N$:
  Having already established this for $n=0$, we prove the inductive step as follows:
\begin{nalign}
			\psi\in\A{phg,b}^{\E^\phi_0,a_+}(\Dbold)+\Hb^{a_0+n\delta,a_+}(\Dbold)
            &\implies rP_g[\phi]\phi,r\mathcal{N}[\phi],rV\phi\in\A{phg,b}^{\E_0^\phi+2,a_++1+\delta+}(\Dbold)+\Hb^{a_0+n\delta+2+\delta+,a_++1+\delta+}(\Dbold)\\
			&\implies \psi\in\A{phg,b}^{\E^\phi_0,a_+}(\Dbold)+\Hb^{a_0+(n+1)\delta,a_+}(\Dbold),
		\end{nalign}
		where we used \cref{cor:en:VN_estimates} in the first implication and \cref{prop:prop:prop_no_incoming} in the second, and we moreover used in the first step that the inclusions \cref{eq:app:general_index_set} (e.g.~$\E^{\mathcal{N}}_0+n^{\mathcal{N}}\E^{\phi}_0\subset\E^{\phi}_0+2$) hold:
		For instance, we have
		\begin{nalign}
			r\mathcal{N}[\phi]=f_{\mathcal{N}}\prod_i \Diff^1_b(\D)\psi\in \A{\phg,\b}^{\E_0^{\mathcal{N}}+n^{\mathcal{N}}\E^\phi_0,a_++1+\delta+}(\Dbold)+\Hb^{a_0+n\delta+2+\delta+,a_++1+\delta+}(\Dbold)\\
			\subset \A{\phg,\b}^{\E^\phi_0+2,a_++1+\delta+}(\Dbold)+\Hb^{a_0+n\delta+2+\delta+,a_++1+\delta+}(\Dbold).
		\end{nalign}

		\textit{Proof of \cref{item:app:poly}, Step 2) $\scrip$:} 
        We proceed similarly towards $\scrip$:
		By induction, we claim that
		\begin{equation}
			\psi\in\A{phg}^{\mathcal{E}^\phi_0,\mathcal{E}^\phi_+}(\Dbold)+\A{phg,b}^{\mathcal{E}^\phi_0,a_+ +n\delta}(\Dbold).
		\end{equation}
		We already proved the $n=0$ base case.
		For the induction step, we again use \cref{cor:en:VN_estimates}, \cref{prop:prop:prop_no_incoming} and the inclusions of the index sets given in \cref{eq:app:general_index_set}.
		Indeed, we treat each term in the perturbation $\mathcal{P}$ as an additional inhomogeneity, and include an extra term of the form $\big((\E^{\mathfrak{F},\bullet}_+-1)\cupdex0\big)\cupdex (\E^{\mathfrak{F},\bullet}_0-2)$ for each of them.		This proves the result.
	\end{proof}
	We now prove the analogous result in the case of nontrivial incoming radiation. We therefore switch back to the compactification $\D$ and remove the assumption that $\mathcal{P}$ be compatible with the no incoming radiation condition. We again note that we will improve the theorem in \cref{cor:app:improvedthmgeneral2} in the case of even spacetime dimensions.
  %We also remind the reader that, while we originally defined short-range perturbations with respect to admissible weights (\cref{def:en:short_range}), we can actually extend this definition to perturbations satisfying \cref{eq:scat:minimal_scattering_assumption} from \cref{cor:scat:enlarged_admissible_set} (which, in particular, avoids the case distinction \cref{eq:en:admissible_inhom1}).
	\begin{thm}\label{thm:app:general2}
		Let $\vec{a}$ be admissible and let $\vec{a}^f$ be extended admissible and let
     $\mathcal{P}=\{V,\mathcal{N},P_g\}$ be extended short-range perturbations as in \cref{def:scat:extended_short_range}.
        Let $\psi\in \Hb^{\vec{a}'}(\D)$ denote the scattering solution to \cref{eq:scat:scattering_def}, \cref{eq:scat:scat_with_inc_condition} with data $\psi^{\incone}\in\Hb^{a_-}(\incone)$ and $\psi^{\scrim}\in\Hb^{a_0}(\scrim)$.
  Then
		\begin{enumerate}[label={\alph*})] 
  % \item Assume that 
		% 	\begin{equation}
		% 		rf\in\Hb^{\vec{a}^f}(\D),\quad \psi^{\incone}\in\Hb^{a_-^f-1}(\incone),\quad \psi^{\scrim}\in\Hb^{a_0^f-2}(\scrim).
		% 	\end{equation}
		% 	Then $r\phi\in\A{\b,\b,\phg}^{\bar{a}_-,\bar{a}_0,\overline{0}}+\Hb^{\bar{a}_-,\bar{a}_0,\min(a^f_+-1,\bar{a}_0)-}$ for any admissible $\vec{\bar{a}}$ such that $\vec{a}^f$ is inhomogeneous admissible weight.
  \item\label{item:app:error2}  $\psi\in \A{phg}^{\mindex0, \mindex1, \mindex0\cupdex\mindex1}(\D)+\A{phg,b,phg}^{\mindex0,a_0-,\mindex0}(\D)+\A{phg,b,b}^{\mindex0,a_0-,a_0-}(\D)+   \A{b,b,phg}^{a_-,a_0,\mindex{0}}(\D)+\Hb^{a_-,a_0,\min(a^f_+-1,a_0)-}(\D)$.
  \item\label{item:app:poly2} If we make the additional, stronger assumptions that $f$,  the coefficients of $\mathcal{P}$ and the initial data $\psi^{\incone},\psi^{\scri}$ are polyhomogeneous, then so is $\psi$:
			Assume that
			\begin{equation}
				rf\in\A{phg}^{\vec{\E}^f}(\D),\quad \psi^{\incone}\in\A{phg}^{\vec{\E}^{\incone}}(\incone),\quad\psi^{\scrim}\in\A{phg}^{\E^{\scrim}}(\scrim),\quad f_{\bullet}\in\A{phg}^{\vec{\E}^{\bullet}}(\Dbold ),\qquad\bullet\in\{V,P_g,\mathcal{N}\}.
			\end{equation}
			Then, using the same notation as in \cref{thm:app:general} for $\vec{\E}^{\mathfrak{F},\bullet}$, we have $\psi\in\A{phg}^{\vec{\E}^{\phi}}$, where

			\begin{equation}
				\begin{aligned}
					&\E^\phi_-\supset\mindex{0},\E^{\incone},(\E_-^{\mathfrak{F},\bullet}-1),\\
					&\E^\phi_0\supset\E^\phi_-, \mindex{1}\cupdex\E^{\scrim},(\E^{\mathfrak{F},\bullet}_--1)\cupdex(\E^{\mathfrak{F},\bullet}_0-2)\\
					&{\E^\phi_+\supset {\mindex0}\cupdex\E^\phi_0,\, \big(0\cupdex(\E^{\mathfrak{F},\bullet}_+-1)\big)\cupdex(\E^{\mathfrak{F},\bullet}_0-2),}
				\end{aligned}
			\end{equation}
			where we take $\bullet\in\{f,V,\mathcal{N},P_g\}$ and with the inclusion $\E^{\phi}_-\supset\mindex{0}$ only being necessary if $\psi^{\scrim}\neq0$.
			\end{enumerate}
		
	\end{thm}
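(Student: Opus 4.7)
The proof will closely mirror that of \cref{thm:app:general}, replacing the no-incoming-radiation Minkowskian propagation result \cref{prop:prop:prop_no_incoming} with its incoming-radiation counterparts \cref{prop:prop:inhomogeneous_rad} and \cref{cor:prop:solution_with_radiation_scri}, and using the scattering theory of \cref{thm:scat:scat_incoming} (or \cref{cor:scat:enlarged_admissible_set} in the extended-admissible case) as the starting point. By linearity of the Minkowskian wave operator, I first decompose $\psi=\psi_{\mathrm{rad}}+\tilde\psi$, where $\psi_{\mathrm{rad}}$ is the Minkowskian scattering solution to $\Box_\eta \psi_{\mathrm{rad}}=0$ with incoming radiation $\psi^{\scrim}$ and trivial data on $\incone$ (to which \cref{cor:prop:solution_with_radiation_scri} applies directly, producing exactly the $\A{phg}^{\mindex0,\mindex1,\mindex0\cupdex\mindex1}(\D)+\A{phg,b,phg}^{\mindex0,a_0-,\mindex0}(\D)+\A{phg,b,b}^{\mindex0,a_0-,a_0-}(\D)$ part of the target space, plus in part b) the $\mindex0\cupdex\E^{\scrim}$ and $\mindex{1}\cupdex\E^{\scrim}$ contributions). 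The remainder $\tilde\psi$ then solves the perturbed equation with trivial scattering data at $\scrim$ and data $\psi^{\incone}-\psi_{\mathrm{rad}}|_{\incone}$ on $\incone$.

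For part \cref{item:app:error2}, I would run the same bootstrap as in the proof of \cref{thm:app:general}\cref{item:app:error}, but with $\Dbold$ replaced by $\D$. Starting from $\tilde\psi\in \Hb^{\vec{a}'}(\D)$ provided by \cref{thm:scat:scat_incoming} (resp.\ \cref{cor:scat:enlarged_admissible_set}), I apply \cref{cor:en:VN_estimates} together with \cref{rem:en:Pgestimates} to deduce that $r\mathcal{P}[\tilde\psi]\in\Hb^{\vec{a}'+(1,2,1)+\vec{\delta}}(\D)$ for some $\delta>0$ less than the gap of $\mathcal{P}$. Feeding this into \cref{prop:prop:inhomogeneous_rad} (applied to $\Box_\eta\tilde\psi=f+\mathcal{P}[\tilde\psi]-V_{\mathrm{rad}}$-type corrections), I gain a $\delta$-weight in each iteration; iterating finitely many times brings $\tilde\psi$ into the claimed space, where the $\A{b,b,phg}^{a_-,a_0,\mindex{0}}(\D)$ piece is provided by the $\mindex{0}$-expansion toward $\scrip$ in \cref{prop:prop:inhomogeneous_rad}, and the error term $\Hb^{a_-,a_0,\min(a_+^f-1,a_0)-}(\D)$ is the asymptotic remainder once the bootstrap saturates. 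Patching with the contribution from $\psi_{\mathrm{rad}}$ gives \cref{item:app:error2}.

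For part \cref{item:app:poly2}, once the coefficients of $\mathcal{P}$, the inhomogeneity and the initial data are all polyhomogeneous, I upgrade the previous bootstrap to a polyhomogeneous bootstrap: inductively assume $\tilde\psi\in\A{phg}^{\vec{\E}^\phi}(\D)+\Hb^{\vec{a}'+n\vec{\delta}}(\D)$ for a \emph{fixed} proposed index set $\vec{\E}^\phi$; inserting this into $\mathcal{P}[\phi]$ and using the short-range product identity $\vec{\E}^{\mathfrak{F},\bullet}=n^\bullet\vec{\E}^\phi+\vec{\E}^\bullet$ shows that $r(f+\mathcal{P}[\phi])\in\A{phg}^{\vec{\E}^{\mathfrak{F}}}(\D)+\Hb^{\vec{a}'+(n+1)\vec{\delta}+(1,2,1)}(\D)$. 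An application of \cref{prop:prop:inhomogeneous_rad} with index sets $\vec{\E}^{\mathfrak{F},\bullet}$ then reproduces exactly the inclusions \cref{eq:app:general_index_set}' that define $\vec{\E}^\phi$; i.e.\ the proposed $\vec{\E}^\phi$ is closed under one bootstrap step. The $\Hb$-remainder shrinks by a factor $\rho^{\delta}$ at each step, so after finitely many iterations the remainder lies inside $\A{phg}^{\vec{\E}^\phi}(\D)$ up to an arbitrarily small order, which yields polyhomogeneity. Adding back the contribution from $\psi_{\mathrm{rad}}$, whose expansion sits inside $\mindex{0}\cup\mindex{1}\cupdex\E^{\scrim}$ across the three boundaries, gives the claimed $\vec{\E}^\phi$.

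The main obstacle is bookkeeping: one must verify that the proposed $(\E^\phi_-,\E^\phi_0,\E^\phi_+)$ is genuinely closed under a single Minkowskian iteration, i.e.\ that the index sets produced by \cref{prop:prop:inhomogeneous_rad} when fed $n^\bullet\vec{\E}^\phi+\vec{\E}^\bullet$ are again contained in $\vec{\E}^\phi$. The closure follows from the definitions of $\cup$ and $\overline{\cup}$ and the associativity recorded in \cref{def:notation:operations_index_sets}, but one must carefully handle the interaction between the incoming-radiation-generated index set $\mindex{0}\cup(\mindex{1}\cupdex\E^{\scrim})$ and the radially-generated index sets near $\scrim$ (now shifted by only $-1$ rather than $-2$ because of \cref{lemma:prop:data}), as well as the improvement at the future corner provided by \cref{lemma:prop:improvement_at_future}. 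The regularity bookkeeping (\cref{rem:prop:finite_loss}) is routine given that we work in the smooth category.
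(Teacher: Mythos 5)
Your proposal is correct and follows the same path as the paper's own (very terse) proof, which simply says "analogous to Theorem~\ref{thm:app:general}, using \cref{prop:prop:inhomogeneous_rad} and \cref{cor:prop:solution_with_radiation_scri} instead of \cref{prop:prop:prop_no_incoming}"; the decomposition $\psi=\psi_{\mathrm{rad}}+\tilde\psi$ you propose is exactly the implicit step needed to feed the two separate Minkowskian black boxes their respective inputs. One point worth tightening: the remainder solves $\Box_\eta\tilde\psi=f+\mathcal{P}[\psi_{\mathrm{rad}}+\tilde\psi]$, so the inhomogeneity is not ``$\mathcal{P}[\tilde\psi]$ minus $V_{\mathrm{rad}}$-type corrections'' but rather the full multilinear expansion $\mathcal{P}[\tilde\psi]+\mathcal{P}[\psi_{\mathrm{rad}}]+\sum_i \mathcal{N}_i[\tilde\psi]$ of \cref{eq:app:diffofnonls} (as in the proof of \cref{cor:app:sharp_index_sets}), where the cross terms $\mathcal{N}_i$ are lower-order short-range perturbations with coefficients polyhomogeneous in $\psi_{\mathrm{rad}}$ and the term $\mathcal{P}[\psi_{\mathrm{rad}}]$ is treated as an additional forcing; once you write this out, your bootstrap goes through exactly as for \cref{thm:app:general} with the index-set bookkeeping you describe.
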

	\begin{proof}
		The proof is analogous to that of \cref{thm:app:general}, using \cref{prop:prop:inhomogeneous_rad} and \cref{cor:prop:solution_with_radiation_scri} instead of \cref{prop:prop:prop_no_incoming}.
	\end{proof}

	Consider the index set \cref{eq:app:general_index_set}: Just as for the suboptimal index set for $\Box_{\eta}\phi=f$ in \cref{eq:app:bad_index_on_minkowski}, provided a single logarithmic term appears for the future index set, this produces $\log^k$ terms for arbitrary $k$ at higher orders.
	Just as for the improvement in \cref{lemma:prop:improvement_at_future} for the wave equation on Minkowski space, we can remove some of these logarithmic terms in the case of \textit{linear} equations with a short-range potential perturbation $\Box_\eta\phi=f+V\phi$.
	
	\begin{lemma}[Improvement for linear problems]\label{lemma:app:linear_improvement}
		Under the assumptions of \cref{thm:app:general}\cref{item:app:poly}, assuming also that $\mathcal{N}=0=P_g$ and that $V$ is of the form\footnote{This requirement can also be phrased as polyhomogeneity on yet another compactification, or that both $f^V(x,-t),f^V(x,t)\in\A{phg}(\Dbold)$.}
		\begin{equation}
			V=f^V(1/r,t/r,x/r)\in\A{phg}^{\E^V}([0,1)\times(-3/2,3/2)\times S^2),
		\end{equation} 
		we can improve the future index set in \cref{thm:app:general} to $\E^\phi_+=\E^\phi_0\overline{\cup}\E^{\phi_\infty}$, where $\E^{\phi_\infty}$ is the minimal index set with the property
		\begin{equation}\label{eq:app:linear_improvement}\E^{\phi_\infty}\supset0\cupdex\big(\E^{\phi_\infty}+\E^V_+-1\big),0\cupdex(\E_+^f-1).
		\end{equation}
	\end{lemma}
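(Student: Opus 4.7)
The proof adapts the strategy of \cref{lemma:prop:improvement_at_future}, accommodating the potential $V$. The key additional observation is that $V$ is compatible with the shifted scaling $S_{u_1} = (u - u_1)\pu + (v - u_1)\pv = S - u_1 T$ in the following sense: since $S(1/r) = -(1/r)$, $S(t/r) = 0 = S(x/r)$, $T(1/r) = 0 = T(x/r)$, and $T(t/r) = 1/r$, a direct computation shows that $S_{u_1}^n V$ remains polyhomogeneous in $(1/r, t/r, x/r)$ with the same index set $\E^V$ in the first slot for every $n \in \mathbb{N}$. This is the one extra ingredient beyond the proof of \cref{lemma:prop:improvement_at_future}.

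Following that proof, one defines $\mathcal{S}_{u_1, c} := \prod_{(z,k) \in (\E^\phi_0)_{\leq c}}(S_{u_1} + z)$ and commutes through the equation $P_\eta \psi = rf + rV\phi$. Using $[S_{u_1}, r^2 P_\eta] = 0$ and expanding the commutator $[\mathcal{S}_{u_1, c}, rV\phi]$ via Leibniz, one obtains schematically
\begin{equation*}
    P_\eta \mathcal{S}_{u_1,c}\psi \;=\; r^{-2}\mathcal{S}^{\E^f_0}_{u_1, c+2}(r^3 f) \;+\; r^{-2}\!\!\sum_{j \leq \lvert(\E^\phi_0)_{\leq c}\rvert}\!\!(\mathcal{S}_{u_1, c-j}(r^2 V))(\mathcal{S}_{u_1, j}\psi),
\end{equation*}
where each coefficient $\mathcal{S}_{u_1, c-j}V$ retains the index set $\E^V$ by the above observation, while each $\mathcal{S}_{u_1, j}\psi$ enjoys the improved $I^0$-decay $\rho_0^{j+}$ by an inductive hypothesis on $j \leq c$.

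Applying \cref{lemma:prop:future_corner} together with \cref{lemma:prop:improvement_at_future} to the commuted equation yields $\mathcal{S}_{u_1, c}\psi \in \A{b,phg}^{c+,\,\tilde{\E}_+^{(c)}}(\D^+) + \Hb^{c+, c+}(\D^+)$, where $\tilde{\E}_+^{(c)}$ satisfies, truncated at order $c$, the inclusions $\tilde{\E}_+^{(c)} \supset 0 \cupdex (\E^f_+ - 1)$ and $\tilde{\E}_+^{(c)} \supset 0 \cupdex (\E^V_+ + \tilde{\E}_+^{(c)} - 1)$. Restricting to $\outcone{1}$, where $S_{u_1}\vert_{\outcone{1}} = (v - u_1)\pv$ is tangential, \cref{item:ode:1d} then promotes this to $\psi\vert_{\outcone{1}} \in \A{phg}^{\E^\phi_0 \overline{\cup} \E^{\phi_\infty}}(\outcone{1}) + \Hb^{c+}(\outcone{1})$. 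Extending this improvement from $\outcone{1}$ into all of $\D^+$ by introducing a second shifted scaling $S_{u_2}$ with $u_2 \in (u_1, u_0]$, whose integral curves cross both $\outcone{1}$ and $I^0$, and then letting $c \to \infty$ completes the proof exactly as in the final paragraph of \cref{lemma:prop:improvement_at_future}.

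The main obstacle is closing the recursion defining $\tilde{\E}_+^{(c)}$ at each finite order. The strong admissibility of $V$ as a short-range potential forces $\min \E^V_+ > 1$, so that the recursive inclusion $\tilde{\E}_+^{(c+1)} \supset 0 \cupdex (\E^V_+ + \tilde{\E}_+^{(c)} - 1)$ strictly advances the minimum of each new iterated contribution, guaranteeing that the minimal $\E^{\phi_\infty}$ satisfying \cref{eq:app:linear_improvement} is well-defined and attained after boundedly many iterations at each finite order in $c$.
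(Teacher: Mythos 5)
Your proposal follows essentially the same strategy as the paper's proof: commute with the shifted scaling operator $\mathcal{S}^{\E_0^\phi}_{u_1,c}$, exploit that $V=f^V(1/r,t/r,x/r)$ is preserved (up to index set) under $S_{u_1}=S-u_1T$ because $S(t/r)=S(x/r)=0$, $T(t/r)=1/r$, apply the Minkowskian propagation-of-polyhomogeneity results to the commuted equation, recover the $\scrip$-index set on a single null cone, and extend backward into $\D^+$ via the integral curves of a second shifted scaling.

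One presentational difference worth flagging: your schematic Leibniz expansion $\sum_j (\mathcal{S}_{u_1,c-j}(r^2V))(\mathcal{S}_{u_1,j}\psi)$ is imprecise, since a true Leibniz expansion of $\mathcal{S}^{\E_0^\phi}_{u_1,c}(r^2V\,\psi)$ distributes $(S+z_i)$ factors over subsets of $(\E_0^\phi)_{\leq c}$, and the factors landing on $\psi$ need not form an initial segment; non-initial segments do not enjoy the improved $I^0$-decay, so the step "each $\mathcal{S}_{u_1,j}\psi$ has decay $\rho_0^{j+}$" as you phrase it is not directly available. The paper avoids this by not Leibniz-expanding at all: it applies $\mathcal{S}^{\E_0^\phi}_{c_{n+1}}$ to the product $r^2f^V\psi$ as a whole, using the index-set inclusion $\E_0^\phi\supset\E_0^\phi+\E^V-2$ (built into the definition of $\E_0^\phi$ in Theorem 7.3) to conclude that the product has $I^0$-index set contained in $\E_0^\phi$ so that $\mathcal{S}^{\E_0^\phi}_{c_{n+1}}$ kills its expansion up to order $c_{n+1}$ directly; the paper also restricts for simplicity to a single term $f^V=\rho^{2+\delta}\log^k\rho\,f^V_\delta(t/r,\omega)$ and runs a $\delta$-incremented induction $c_n=\bar a_++n\delta$. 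Your observation that $\min\E^V_+>1$ (rather than the sharper $\min\E^V>2$ coming from the $I^0$-constraint) is the precise condition needed to ensure the recursion \cref{eq:app:linear_improvement} terminates at each finite order is correct and is a nice clarification. Overall the proposal is correct in substance; the Leibniz bookkeeping should be replaced by the index-set-inclusion argument to be rigorous.
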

	\begin{rem}
 	One application of \cref{lemma:app:linear_improvement} is to show that, for conformally smooth $f$ ,$V$, $\psi^{\incone}$, $\log^k$ terms towards $\scrip$ only appear with $k\leq 2$.
		Indeed, in this case, \cref{eq:app:linear_improvement} simplifies to\begin{equation}\label{eq:app:linear_improvement_conformal_V}
			\E^{\phi_\infty}=0\cupdex (\E^f_+-1)\big.
		\end{equation}
		A similar improvement also holds for linear $P_g$ perturbations. % and these may also be used to improve the linear part of \cref{eq:app:general_index_set}.
		Since, in order to understand the exact index sets, we need to resort to explicit computations anyways, we see no point in pursuing such improvements further.
\end{rem}
	\begin{proof}
    We set $\rho=r^{-1}$.
		We only consider potential perturbations with $f^V=\rho^{2+\delta}\log^k\rho f^V_\delta(t/r,\omega)$ such that $\E^V=\overline{(2+\delta,k)}$, $\delta>0$. 
  The inclusion of multiple such terms follows straightforwardly. We will simply repeat the ideas of the proof of \cref{lemma:prop:improvement_at_future}; that is, we will first commute with scaling to peel of sufficiently many orders towards $I^0$ to then infer a better index set for the scaling (minus $T$) commuted quantity along a single null cone towards $\scrip$, from which the result then follows via backwards integration.
		
\textit{Step 1): Commutations with scaling and improvement of the scaling commuted $\psi$:}
		Let's define $\mathcal{S}_c^{\E},\dot{\mathcal{S}}^{\E}_c$, for $S=u\pu+v\pv$ (as in \cref{lemma:prop:improvement_at_future}, we will later  actually want to consider $S-u_1 T$ for some $u_1$)
		\begin{equation}
\mathcal{S}^{\mathcal{E}}_c=\prod_{(z,k)\in\mathcal{E}_{\leq c}}(S+z),\qquad\dot{\mathcal{S}}^{\mathcal{E}}_c=\prod_{(z,k)\in\mathcal{E}_{\leq c}\setminus\mathcal{E}_{\leq c-\delta}}(S+z),
		\end{equation}
		where $\mathcal{S}^\emptyset=1$. 
We inductively claim that, for $c_n=\bar{a}_++n\delta$, with $\vec{\bar{a}}$ as in \cref{thm:app:general}:
\begin{equation}
\label{eq:app:future_improvement_induction:}
			\mathcal{S}^{\E_0^\phi}_{c_n}\psi\in\A{phg}^{(\E_0^{\phi})_{>{c_n}},\E^{\phi_\infty}}(\Dbold)+(\Hb^{c_n-,c_n-}(\Dbold)\cap\Hb^{\bar{a}_0,\bar{a}_+}(\Dbold)).
		\end{equation}
		This holds for $n=0$ by \cref{thm:app:general}.
		For the inductive step, we write
		\begin{nalign}
        P_\eta\mathcal{S}^{\E_0^\phi}_{c_{n+1}}\psi&=r^{-2}\mathcal{S}^{\E_0^\phi}_{c_{n+1}} r^2f^V\psi+r^{-2}\mathcal{S}^{\E_0^\phi}_{c_{n+1}}r^2 rf
       \\
       &\in \A{phg}^{2+(\E_0^{\phi})_{>c_{n+1}}, \E^{\phi_{\infty}}+\E^V}(\Dbold)+(\Hb^{c_n+2+\delta-,c_n+2+\delta-}(\Dbold)\cap\Hb^{\bar{a}_0,\bar{a}_+}(\Dbold))+\A{phg}^{2+(\E_0^{\phi})_{>c_{n+1}}, \E_+^f}(\Dbold).
		\end{nalign}
  Here, we used that $\E_0^{\phi}\supset \E_0^{\phi}+\E^V-2$, $\E^f_0-2$ as well as the induction assumption \cref{eq:app:future_improvement_induction:}.
  We may now apply \cref{prop:prop:prop_no_incoming} along with the inclusion \cref{eq:app:linear_improvement} (cf.~the proof of \cref{lemma:prop:improvement_at_future}) to deduce that \cref{eq:app:future_improvement_induction:} holds with $n$ replaced by $n+1$.

  \textit{Step 2): Improvement for $\psi$:} 
  As in \cref{lemma:prop:improvement_at_future}, we observe that \cref{eq:app:future_improvement_induction:} also holds if $S$ is replaced by $S-u_1T$. We thus deduce that $\psi|_{\outcone{1}}\in\A{phg}^{\E^\phi_0\cupdex\E^{\phi_\infty}}(\outcone{1})+\Hb^{c_n-}(\outcone{1})$. 
  Then, we integrate along the integral curves of $S-u_2T$ backward in time (for $u_2$ as in the proof of \cref{lemma:prop:improvement_at_future}), in turn implying that we can in fact restrict the future index set to satisfy $(\E^\phi_+)_{<c_n}\subset\E^\phi_0\overline{\cup}\E^{\phi_\infty}$.
		Taking the limit $n\to\infty$ proves the lemma.
  \end{proof}
  
  We conclude this subsection with the following generalisation of the previous theorems to include long-range potentials:
  \begin{thm}\label{thm:app3}
      Let $\vec{a}$ be admissible, and let $V_L$ be a long-range potential (compatible with no incoming radiation). 
      \begin{enumerate}[label={\alph*})] 
          \item  Then \cref{thm:app:general2}\cref{item:app:error2} (\cref{thm:app:general}\cref{item:app:error}) continues to hold with $\mathcal{P}[\phi]$ in \cref{eq:scat:scattering_def} replaced by $\mathcal{P}_L[\phi]=\mathcal{P}[\phi]+V_L\phi$.
\item Furthermore, if $V_L$ is assumed to be polyhomogeneous (and compatible with no incoming radiation), then \cref{thm:app:general2}\cref{item:app:poly2} (\cref{thm:app:general}\cref{item:app:poly}) continue to hold with modified index sets $\vec{\E}^{\phi}$.
        \end{enumerate}
\end{thm}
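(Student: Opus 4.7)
The strategy is to mirror the bootstrap arguments used to deduce \cref{thm:app:general,thm:app:general2} from \cref{prop:prop:prop_no_incoming,prop:prop:inhomogeneous_rad,cor:prop:solution_with_radiation_scri}, replacing at every step the Minkowskian propagation results by their long-range counterparts assembled in \cref{sec:prop:longrange} (i.e.~\cref{lemma:prop:futurecorner:long,lemma:prop:inhomogeneous_no_rad:long,lemma:prop:data:long,lemma:prop:timeintegral:long}), and replacing the existence and energy estimates for $\mathcal{P}$-perturbations of $\Box_\eta$ by their long-range versions (\cref{thm:scat:long} and \cref{cor:en:longrange1,cor:en:longrange1.5}). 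The scattering solution $\psi\in \Hb^{\vec{a}'}(\D)$ referred to in the statement is supplied by \cref{thm:scat:long}; the point is then to upgrade its membership order by order.

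For part~a), we run exactly the two-step induction of the proof of \cref{thm:app:general} (no incoming radiation) or \cref{thm:app:general2} (incoming radiation): given a current membership $\psi\in\Hb^{\bar{a}_-,\bar{a}_0+n\delta,\bar{a}_+}(\D)$ (respectively the analogous $\Hb$-plus-polyhomogeneous membership already improved near $\scrim$), apply \cref{cor:en:VN_estimates} and \cref{rem:en:Pgestimates} to deduce that the composite source term $r(\mathcal{P}[\phi]+V_L\phi-V_L\phi)+rf=r\mathcal{P}[\phi]+rf$, treated as inhomogeneity for $P_{\eta,L}\phi=r\mathcal{P}[\phi]+rf$, lies in a space with weights improved by a fixed gap $\delta>0$ (where $\delta$ is smaller than the gap of $\mathcal P$). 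Then the long-range propagation statements (\cref{lemma:prop:inhomogeneous_no_rad:long,lemma:prop:data:long,lemma:prop:timeintegral:long}, together with \cref{lemma:prop:futurecorner:long} at the future corner) convert this into an improved membership for $\psi$. Iterating $n$ times yields the advertised conormal-plus-polyhomogeneous error estimate. Crucially, the long-range potential $V_L$ is always kept on the left-hand side as part of $P_{\eta,L}$, exactly as in \cref{sec:en:longrange,sec:prop:longrange}, so no coercivity is ever lost.

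For part~b), assume in addition polyhomogeneity of $f$, of $\psi^{\incone}$, of $\psi^{\scrim}$, and of the coefficients of $V,\mathcal{N},P_g,V_L$. The inductive hypothesis is now $\psi\in\A{phg}^{\vec{\E}^\phi}(\D)+\Hb^{\vec{\bar a}+n\vec\delta}(\D)$ (with the obvious adjustments for the no incoming radiation case on $\Dbold$). Again \cref{cor:en:VN_estimates} and product estimates from \cref{lemma:scat:sobolev} give that $r\mathcal{P}[\phi]\in\A{phg}^{n^\bullet\vec{\E}^\phi+\vec{\E}^\bullet}(\D)+\Hb^{\vec{\bar a}+(n+1)\vec\delta+(1,2,1)}(\D)$. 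One then applies \cref{lemma:prop:inhomogeneous_no_rad:long,lemma:prop:data:long,lemma:prop:timeintegral:long,lemma:prop:futurecorner:long} to the equation $P_{\eta,L}\psi=rf+r\mathcal{P}[\phi]$ to refine $\psi$. The index sets $\vec{\E}^\phi$ are defined as the minimal ones satisfying the corresponding closure relations: for instance, at $\scrim$,
\begin{equation}
\E^\phi_-\supset \E^{\incone},\;(\E^{\mathfrak F,\bullet}_--1),\;\E^\phi_-+\E^L_-,
\end{equation}
with analogous relations for $\E^\phi_0,\E^\phi_+$ dictated by \cref{lemma:prop:data:long,lemma:prop:futurecorner:long}. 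Because $\min(\E^L_\pm)>0$, each self-referential contribution $\E^\phi_\bullet+\E^L_\bullet$ strictly increases the exponent, so only finitely many iterates contribute at any fixed level $c$; consequently the minimal closed $\E^\phi_\bullet$ is an index set in the sense of \cref{def:notation:index_sets}.

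The main obstacle will be tracking the new self-referential inclusions $\E^\phi_+\supset \E^\phi_++\E^L_+$ and $\E^\phi_-\supset \E^\phi_-+\E^L_-$, which are absent from \cref{thm:app:general,thm:app:general2}. I expect no genuinely new analytic difficulty beyond what is already encoded in \cref{sec:prop:longrange}, since the potential $V_L$ has been absorbed into $P_{\eta,L}$ once and for all, and all estimates of \cref{sec:en:perturbations,sec:scat:scat} are explicitly shown to survive the inclusion of such potentials in \cref{cor:en:longrange1,cor:en:longrange1.5,thm:scat:long}. The only bookkeeping subtlety is that \cref{lemma:prop:improvement_at_future} no longer applies in general (cf.~the remark following \cref{lemma:prop:timeintegral:long}), so the future index set we obtain is the one dictated by \cref{lemma:prop:futurecorner:long}, which is strictly larger than the one in \cref{thm:app:general,thm:app:general2} unless $\E^L_+\subset\mathbb N$. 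In that special case (which covers $1/r^2$-potentials and the Teukolsky first-order terms, cf.~\cref{rem:en:long_range}), the argument of \cref{lemma:prop:improvement_at_future} can be rerun verbatim to recover the sharper index set of \cref{thm:app:general,thm:app:general2}.
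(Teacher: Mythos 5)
Your proposal is correct and takes essentially the same approach as the paper's proof, which simply states that the argument is "analogous to the previous proofs, perturbing now around the results of \cref{sec:prop:longrange}." You have in effect unpacked what that one-line proof means: rerun the two-step bootstrap of \cref{thm:app:general,thm:app:general2}, substituting the long-range propagation lemmata of \cref{sec:prop:longrange} and the long-range scattering/energy results (\cref{thm:scat:long}, \cref{cor:en:longrange1,cor:en:longrange1.5}), keeping $V_L$ absorbed on the left as part of $P_{\eta,L}$, and accounting for the new self-referential index-set inclusions $\E^\phi_\bullet\supset\E^\phi_\bullet+\E^L_\bullet$, with the observation (matching the paper's remark after \cref{lemma:prop:timeintegral:long} and the footnote in \cref{lemma:prop:futurecorner:long}) that the sharper future index set of \cref{lemma:prop:improvement_at_future} is only recoverable when $\E^L_+\subset\mathbb N$.
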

\begin{proof}
    The proof is analogous to the previous proofs, perturbing now around the results of \cref{sec:prop:longrange}.
\end{proof}		

 %%%%%%%%%%%%%%%%%%%%%%%%
	\subsection{Application 1: The linear wave equation on Minkowski \texorpdfstring{$\Box_\eta\phi=0$}{}}\label{sec:app:app1}

	As we have already mentioned in \cref{rem:prop:suboptimal}, the index sets provided by \cref{thm:app:general} are in general much larger than the actual index set exhausted by solutions to specific equations and arising from specific data.
	In order to understand the structure of the optimal index set in specific examples, we descend to fixed angular frequencies.
  In the remainder of this section, everything is specific to even spacetime dimensions (for explicit computations in more general settings, see \cref{sec:sharp}).
  
 As the simplest example, we first study $\Box_\eta\phi=0$ with no incoming radiation:
	\begin{lemma}[Minkowskian improvement]\label{lemma:app:minkowski_index_improvement}
		Let $\vec a$ be admissible, and let $\psi^{\incone}\in\Hb^{a_-}(\incone)$ be supported on a fixed spherical harmonic $\ell\in\mathbb N$, and let $\phi\in\Hb^{a_0,a_+}(\Dbold)$ be the scattering solution to $\Box_\eta\phi=0$ with no incoming radiation. Then, in fact,
		\begin{equation}\label{eq:app:minkowskilemma1}
			\psi\in\A{b,phg}^{a_0,\overline{(0,0)}}(\Dbold ).
		\end{equation}
	\end{lemma}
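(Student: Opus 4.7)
The plan is to reduce fully to the $\ell$-th harmonic and exploit the Minkowskian strong Huygens conservation law
\begin{equation*}
\pu\bigl(r^{-2\ell}\pv(r^2\pv)^\ell \psi_\ell\bigr)=0,
\end{equation*}
valid for any solution to $\Box_\eta\phi=0$ after projection to the $\ell$-th spherical harmonic (one can verify this by induction on $\ell$ using $\pu\pv\psi_\ell=-\ell(\ell+1)r^{-2}\psi_\ell$). Since the data lies on the $\ell$-mode and $\Box_\eta$ respects the spherical decomposition, uniqueness in \cref{thm:scat:scat_general} shows the whole solution is supported on the $\ell$-mode. Moreover, $\Box_\eta\phi=0$ is trivially compatible with the no incoming radiation condition, so part d) of that theorem upgrades $\psi\in\Hb^{a_0,a_+}(\Dbold)$ to $\psi\in\Hbt^{\vec a}(\D)$, i.e., full smoothness with respect to $\Vbt=\{u\pu,r\pv,\sl,1\}$.

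Set $Q:=r^{-2\ell}\pv(r^2\pv)^\ell\psi_\ell$. By the conservation law, $Q=Q(v,\omega)$. The main step will be to show $Q\equiv 0$. A short induction using $[r\pv,r]=r$ shows $(r^2\pv)^\ell = r^\ell \sum_{j=0}^\ell c_{j,\ell}(r\pv)^j$ for constants $c_{j,\ell}$, and hence $Q = r^{-\ell-1}\sum_{j=0}^{\ell+1} d_{j,\ell}(r\pv)^j \psi_\ell$ for some constants $d_{j,\ell}$. Since $\psi_\ell\in\Hbt^{\vec a}(\D)$, each $(r\pv)^j\psi_\ell\in \Hb^{\vec a}(\D)$; multiplying by $r^{-\ell-1}=(\rho_-\rho_0\rho_+)^{\ell+1}$ shifts all three weights by $+(\ell+1)$, so $Q\in\Hb^{a_-+\ell+1,a_0+\ell+1,a_++\ell+1}(\D)$. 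On the other hand, $Q$ is independent of $u$, and a direct computation (integrating the corresponding weighted $L^2$-norm in $\rho_-$ with $v,\omega$ fixed) shows that the only $u$-independent element of the relevant $\Hb$-space is $0$.

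Once $Q\equiv 0$, we have $\pv[(r^2\pv)^\ell\psi_\ell]\equiv 0$, so $(r^2\pv)^\ell\psi_\ell=F(u,\omega)$ for some function $F$. Using $\pv r=1$, one computes that the kernel of $(r^2\pv)^\ell$ (as an ODE in $v$) is spanned by $r^{-j}$ for $j=0,\dots,\ell-1$, and that $\tfrac{(-1)^\ell}{\ell!}F(u,\omega)r^{-\ell}$ is a particular solution. Thus
\begin{equation*}
\psi_\ell(u,v,\omega) = \sum_{n=0}^\ell f_n(u,\omega)\,r^{-n},
\end{equation*}
which is exactly the structure of an $\overline{(0,0)}$-polyhomogeneous expansion at $\scrip$. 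The bounds on the coefficients $f_n$ near $\boldsymbol{I}^0$ in $\Dbold$ (ensuring membership in $\A{b,phg}^{a_0,\overline{(0,0)}}(\Dbold)$) follow from the already-established $\psi\in\Hb^{a_0,a_+}(\Dbold)$ by reading off each $\rho_+^n$-coefficient in a neighbourhood of the future corner.

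The main obstacle will be the vanishing of $Q$: although each individual ingredient is elementary (the conservation law, the expansion of $(r^2\pv)^\ell$, and the weighted estimate), the punchline combines them to pin down precisely that every term in $Q$ carries a strictly positive $\rho_-$-weight (the explicit $r^{-\ell-1}=(\rho_-\rho_0\rho_+)^{\ell+1}$ factor), so that the $\Hbt$-regularity of $\psi$—as opposed to merely its $\Hb$-regularity on $\Dbold$—forces genuine decay of $Q$ at $\scrim$, and hence its vanishing.
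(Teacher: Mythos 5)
Your proof is correct, but it takes a genuinely different route from the paper. Both arguments open the same way, by upgrading to $\Hbt$-regularity via \cref{thm:scat:scat_general}\ d) and invoking the conservation law $\pu\bigl(r^{-2\ell}\pv(r^2\pv)^\ell\psi_\ell\bigr)=0$. After that the strategies diverge. You exploit the algebraic identity $(r^2\pv)^\ell = r^\ell P_\ell(r\pv)$ to rewrite the conserved quantity as $Q = r^{-\ell-1}P_{\ell+1}(r\pv)\psi_\ell$, whence $Q\in\Hb^{a_-+\ell+1,a_0+\ell+1,a_++\ell+1}(\D)$; since $a_-\geq-1/2$ forces $a_-+\ell+1>0$, the $u$-independence of $Q$ together with the weighted $L^2$ norm forces $Q\equiv0$. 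Solving the resulting $\ell$-th order ODE then gives the \emph{exact} finite representation $\psi_\ell=\sum_{n=0}^{\ell}f_n(u,\omega)r^{-n}$, and the $\A{b,phg}^{a_0,\mindex0}(\Dbold)$ membership falls out by reading off the (finitely many) $\rho_+$-coefficients from $\psi\in\Hb^{a_0,a_+}(\Dbold)$, which one can make precise with a Vandermonde argument applied to $(r\pv)^j\psi|_{\rho_+=\mathrm{const}}$ for $j=0,\dots,\ell$. The paper, by contrast, does not fully exploit the finite-degree structure: it takes the conclusion $\psi|_{\outcone{}}\in\A{phg}^{\mindex0}(\outcone{})$ for granted (which hides your explicit $Q\equiv0$ step), then introduces the peeled quantities $\Psi_n=\prod_{m=0}^{n-1}(r\pv+m)\psi$, derives a modified wave equation $Q_{2n+1}\Psi_n=0$, and inductively integrates to obtain $\Psi_n\in\Hb^{a_0,a_++n}(\Dbold)$, which implies polyhomogeneity via the scaling-commuted characterisation of \cref{rem:notation:equivalent_polyhoms}. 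Your route is shorter and more elementary. The paper's route buys robustness: it does not rely on $\psi_\ell$ being an exact degree-$\ell$ polynomial, and so it fits the perturbative framework of the rest of the paper (the $\Psi_n$ commutator is the same peeling-off-by-scaling device used everywhere else). In particular, the paper's footnote remark about the failure in odd spatial dimensions highlights precisely the step your argument depends on most crucially --- the finite, log-free representation --- which the inductive approach isolates more cleanly. One small presentation point: you should make explicit in the $Q\equiv0$ step that it is the admissibility bound $a_-\geq-1/2$ that guarantees the shifted weight $a_-+\ell+1\geq 1/2>0$, so the blow-up of the $\rho_-$-integral is unconditional.
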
	
	\begin{proof}
		Using the conservation laws $\pu(r^{-2\ell}\pv(r^2\pv)^{\ell}\psi)=0$ already mentioned in \cref{eq:intro:conslawf} for fixed $\ell$-mode solutions, we immediately have that $\psi|_{\outcone{}}\in\A{phg}^{\overline{(0,0)}}(\outcone{})$ for any $u\leq u_0$.
		We now want to use this, together with $\psi\in\Hb^{a_0,a_+}(\Dbold)$, to prove $\psi\in\A{b,phg}^{a_0,\overline{(0,0)}}(\Dbold )$. 
		
		Let us write $\Psi_0=r\phi$ and define, for $n\geq1$,
		\begin{equation}
			\Psi_n=\prod_{m=0}^{n-1} (r\pv+m)\psi.
		\end{equation}
  As before, this quantity will peel off the leading order terms in the expansion of $\psi$; in particular, we have $\Psi_n|_{\outcone{}}\in\A{phg}^{\mindex{n}}(\outcone{})$ for any $u$.
		We claim that the quantities $\Psi_n$ satisfy:
		\begin{equation}\label{eq:app:Psin_eq}
			\underbrace{\Big(\pu\pv+\frac{2n}{r}\partial_t-\frac{\Dl}{r^2}\Big)}_{:= Q_{2n+1}}\Psi_n=\Big(\pu\pv-\frac{2n}{r}\partial_r-\frac{\Dl}{r^2}+2nr^{-1}\pv\Big)\Psi_n=0
		\end{equation}
		 The case $n=0$ holds trivially. We compute the induction step as
		\begin{nalign}
			Q_{2n+1}r\pv=r\pv Q_{2n+1}-\Big(\frac{2\Dl}{r^2}-\frac{2n}{r}\partial_t-\pu\pv+\pv\pv\Big)
			=r\pv Q_{2n+1}+\Big(2\pu\pv+\frac{2n}{r}\partial_t-\frac{2\Dl}{r^2}\Big)-\frac{2}{r}\partial_tr\pv\\
			\implies  Q_{2n+3}(r\pv+n)=r\pv Q_{2n+1}+n Q_{2n+1}+\Big(2\pu\pv+\frac{2n}{r}\partial_t-\frac{2\Dl}{r^2}\Big)
			=(r\pv+n+2) Q_{2n+1}.
		\end{nalign}
  This proves \cref{eq:app:Psin_eq}. In particular, we thus have
  \begin{equation}
      \pu\pv(r^n\Psi_n)=r^n\Big(\frac{-n(n-1)+\ell(\ell+1)}{r^2}-\frac{2n}{r}\pv\Big)\Psi_n.
  \end{equation}
  We will now treat the RHS as an error term and inductively integrate the ODE twice using \cref{corr:ODE:du_dv}\cref{item:ode:u-prop}: We claim that 
  \begin{equation}\label{eq:app:inductionPsin}
      \Psi_n\in\Hb^{a_0,a_++n}(\Dbold).
      \end{equation}
      This trivially holds for $n=0$. Assuming it to hold for $n-1$, we then get
      \begin{equation}
          \pu\pv(r^n\Psi_n)=r^n\Big(\frac{-n(n-1)+\ell(\ell+1)}{r^2}-\frac{2n}{r}\pv\Big)(r\pv+n-1)\Psi_{n-1}\in\Hb^{a_0+2-n,a_++1}(\Dbold).
      \end{equation}
		Integrating this from $\outcone{}$ backwards in $u$, with initial data $\pv(r^n\Psi_n)|_{\outcone{}}\in\A{phg}^{\mindex{1}}(\outcone{})\in\Hb^{1-}(\outcone{})$,\footnote{This is the part of the proof that fails in even spatial dimensions: We need $\psi$ to be conformally regular on \textit{some} outgoing cone.} using \cref{corr:ODE:du_dv}\cref{item:ode:u-prop}\footnote{Note that \cref{corr:ODE:du_dv}\cref{item:ode:u-prop} applies to $\Hb(\D)$ spaces, but since we only want to improve the behaviour in $\D^+$, it still applies.}, we get that $\pv(r^n\Psi_n)\in \Hb^{a_0+1-n,a_++1}(\Dbold)$. 
Another integration, this time in $v$ from $\incone$, then gives $r^n\Psi_n\in \Hb^{a_0-n,a_+}(\Dbold)$, which implies \cref{eq:app:inductionPsin}. This, in turn, directly implies \cref{eq:app:minkowskilemma1}. 

	\end{proof}
	
	\begin{cor}\label{cor:app:minkowski_with_error}
		Let $a_-\geq -1/2$, $\min(\E^{\incone})>-1/2$ and let $\psi^{\incone}\in\A{phg}^{\E^{\incone}}(\incone)+\Hb^{a_-}(\incone)$. Let $\phi$ be the corresponding scattering solution to $\Box_\eta\phi=0$ with no incoming radiation. Then for any $a_+<a_0<a_-$, we have 
		\begin{equation}\label{eq:app:bad_minkowski}
			\psi\in\A{phg}^{\E^{\incone},\overline{(0,0)}}(\Dbold)+\A{b,phg}^{{a_0},\overline{(0,0)}}(\Dbold)+\Hb^{{a_0,a_+}}(\Dbold ).
		\end{equation}
	\end{cor}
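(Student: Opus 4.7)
The plan is to decompose the data by linearity and treat the polyhomogeneous piece and the error piece separately, and then refine the future index set of the polyhomogeneous piece via a mode-by-mode use of the conservation law \cref{eq:intro:conslaw}.

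Write $\psi^{\incone}=\psi^{\incone}_\phg+\psi^{\incone}_\Delta$ with $\psi^{\incone}_\phg\in\A{phg}^{\E^{\incone}}(\incone)$ and $\psi^{\incone}_\Delta\in\Hb^{a_-}(\incone)$, and let $\psi_\phg,\psi_\Delta$ denote the corresponding no incoming radiation scattering solutions guaranteed by \cref{thm:scat:scat_general}; by uniqueness one has $\psi=\psi_\phg+\psi_\Delta$. For the error part, we apply \cref{thm:app:general}\cref{item:app:error} with $f=0$ and $\mathcal{P}=0$: since $a_0^f$ may be taken arbitrarily large, the admissible range $\bar{a}_0<\min(a_0^f-2,a_-)$ covers every $a_0<a_-$ in the statement, so this directly yields $\psi_\Delta\in\A{b,phg}^{a_0,\mindex{0}}(\Dbold)+\Hb^{a_0,a_+}(\Dbold)$, and recalling $\mindex{0}=\overline{(0,0)}$ this is precisely the last two summands in \cref{eq:app:bad_minkowski}.

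For the polyhomogeneous part, \cref{thm:app:general}\cref{item:app:poly} produces a first estimate $\psi_\phg\in\A{phg}^{\E^{\incone},\E_+^\phi}(\Dbold)$ for \emph{some} index set $\E_+^\phi$ satisfying $\E_+^\phi\supset\mindex{0}\cupdex\E^{\incone}$, which generally carries spurious logarithms towards $\scrip$ (already $\mindex{0}\cupdex\mindex{0}$ contains $(n,1)$-type entries). To sharpen $\E_+^\phi$ to $\overline{(0,0)}$, project onto a fixed spherical harmonic $\ell$: the projection $\psi_{\phg,\ell}$ solves $\Box_\eta\phi=0$ with no incoming radiation, and the exact Minkowskian conservation law \cref{eq:intro:conslaw} combined with the vanishing of $\pv(r^2\pv)^\ell\psi_\ell$ at $\scrim$ forces $\pv(r^2\pv)^\ell\psi_{\phg,\ell}\equiv0$ throughout $\Dopen$, whence the explicit representation from \cref{eq:intro:explicit_sol} with $f_{\mathrm{in}}=0$ gives
\begin{equation*}
\psi_{\phg,\ell}=r^{\ell+1}\pu^{\ell}\!\left(\frac{f_{\mathrm{out}}(u,\omega)}{r^{\ell+1}}\right),
\end{equation*}
which is a \emph{finite} polynomial in $1/r$ with coefficients depending only on $u$ and $\omega$. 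In particular $\psi_{\phg,\ell}$ is conformally smooth towards $\scrip$, i.e.\ its expansion at $\scrip$ is supported on $\overline{(0,0)}$.

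To assemble these per-mode statements into a statement about $\psi_\phg$, the idea is that the polyhomogeneity already established by \cref{thm:app:general}\cref{item:app:poly} guarantees that for each $(p,n)\in\E_+^\phi$ the coefficient $c_{p,n}\in C^\infty(S^2)$ of $\rho_+^{p}\log^{n}\rho_+$ in the expansion at $\scrip$ is an honest function on the sphere, hence is determined by its spherical harmonic projections; but the previous paragraph shows that these projections vanish for every $(p,n)\notin\overline{(0,0)}$, so $c_{p,n}\equiv0$ there. Consequently $\psi_\phg\in\A{phg}^{\E^{\incone},\overline{(0,0)}}(\Dbold)$, and adding the bound on $\psi_\Delta$ yields \cref{eq:app:bad_minkowski}. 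The only genuine obstacle in the argument is precisely the step where the fixed-$\ell$ conservation-law identity is leveraged: the $\ell$-fold commutation in \cref{eq:intro:conslaw} does not produce $\ell$-uniform estimates and so cannot be summed directly, but once polyhomogeneity of the summed solution has been obtained by the energy-based machinery of \cref{sec:en,sec:scat:scat,sec:prop,sec:app:main}, the fixed-$\ell$ identity can be used a posteriori to eliminate the spurious logarithmic coefficients — which is exactly the philosophy advertised in \hyperlink{step3}{Step 3)} of \cref{sec:intro:sketch}.
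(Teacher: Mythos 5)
Your proposal is correct and follows the paper's own route: split $\psi=\psi_\phg+\psi_\Delta$ by linearity, apply \cref{thm:app:general}\cref{item:app:error} to the error piece and \cref{thm:app:general}\cref{item:app:poly} to the polyhomogeneous piece, and then kill the spurious logarithms in $\E_+^\phi$ by projecting onto fixed spherical harmonics and invoking the Minkowskian conservation law. The one cosmetic difference is that the paper simply cites \cref{lemma:app:minkowski_index_improvement} for the last step, whereas you inline its key ingredient (the conservation law plus the representation formula \cref{eq:intro:explicit_sol}) and combine it directly with the already-established polyhomogeneity of the summed solution to conclude the log-coefficients vanish — which is precisely how the lemma is meant to be used here.
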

	\begin{proof}
		The existence of an expansion like \cref{eq:app:bad_minkowski} with some index set $\E^\phi_+$ in place of the first $\overline{(0,0)}$ follows from \cref{thm:app:general} (by linearity).
		We improve the index set a posteriori by \cref{lemma:app:minkowski_index_improvement}.
	\end{proof}
	
	\begin{rem}\label{rem:app:Minkowski_improvement}
		It is easy to see that \cref{eq:app:bad_minkowski} cannot hold without the error term for data of \textit{finite regularity}, see already \cref{lemma:finiteregularitypeeling}.
		On the other hand, even at infinite regularity, it is unclear to the authors whether \cref{eq:app:bad_minkowski} holds without $\Hb^{a_0,a_+}(\Dbold)$ error term.
		For further details, see \cref{sec:sharp} and \cref{conj:even:peeling}.
	\end{rem}
	
	Using this Minkowskian improvement, we can actually improve both \cref{thm:app:general,thm:app:general2} near $\scrip$. (See already the proof of \cref{lem:app:potential} below to see how \cref{cor:app:sharp_index_sets} improves over \cref{thm:app:general}.)
	\begin{cor}\label{cor:app:sharp_index_sets}
		Under the assumptions of \cref{thm:app:general}\cref{item:app:poly}, we can improve the index set $\E^\phi_+$ from \cref{eq:app:general_index_set}: More precisely, we can write $\E^\phi_+=\mindex{0}\cup \E^{\phi^{(1)}}_+$ and correspondingly $\phi=\phi^{(0)}+\phi^{(1)}$ such that $r\phi^{(0)}\in\A{phg}^{\E^{\incone},\mindex{0}}(\Dbold)$, $r\phi^{(1)}\in\A{phg}^{\E_0^{\phi^{(1)}},\E_+^{\phi^{(1)}}}(\Dbold)$ and such that $\E^{\phi^{(1)}}_0$, $\E^{\phi^{(1)}}_+$ are minimal with the property 
		\begin{equation}\label{eq:app:index_set_of_iterate}
            \E^{\phi^{(1)}}_0\supset \E_0^{\mathfrak{F},\bullet}-2,\qquad
			\E^{\phi^{(1)}}_+\supset \big(0\overline{\cup}(\E^{\mathfrak{F},\bullet}_+-1)\big)\overline{\cup}(\E^{\mathfrak{F},\bullet}_0-2),\qquad \bullet\in\{f,V,P_g,\mathcal{N}\},
		\end{equation}
        where the $\E_{0,+}^{\mathfrak{F},\bullet}$ are defined as in \cref{thm:app:general}.
	\end{cor}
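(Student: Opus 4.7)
The plan is to split $\phi=\phi^{(0)}+\phi^{(1)}$, where $\phi^{(0)}$ absorbs the \emph{direct} propagation of the initial data $\psi^{\incone}$ (for which the Minkowskian peeling of \cref{lemma:app:minkowski_index_improvement} gives the optimal conformally smooth index set $\mindex{0}$ at $\scrip$), while $\phi^{(1)}$ captures the feedback of $f$ and of the perturbation $\mathcal{P}$, to which the generic propagation result \cref{thm:app:general} can be applied with \emph{trivial} data on $\incone$. Concretely, define $\phi^{(0)}$ as the unique scattering solution to $\Box_\eta\phi^{(0)}=0$ with no incoming radiation and $r\phi^{(0)}|_{\incone}=\psi^{\incone}\in\A{phg}^{\E^{\incone}}(\incone)$, which exists by \cref{thm:scat:scat_general} (the relevant admissibility follows from $\min(\E^{\incone})\geq a_->-1/2$). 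Then \cref{cor:app:minkowski_with_error}, applied without $\Hb$-error, directly gives
\begin{equation*}
r\phi^{(0)}\in\A{phg}^{\E^{\incone},\mindex{0}}(\Dbold).
\end{equation*}

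Setting $\phi^{(1)}:=\phi-\phi^{(0)}$, the function $\phi^{(1)}$ has vanishing data on $\incone$ and no incoming radiation, and solves
\begin{equation*}
\Box_\eta\phi^{(1)}= f+\mathcal{P}[\phi]=f+V\phi+\mathcal{N}[\phi]+P_g[\phi]\phi.
\end{equation*}
From \cref{thm:app:general}\cref{item:app:poly}, we already know $\psi=r\phi\in\A{phg}^{\vec{\E}^{\phi}}(\Dbold)$ for \emph{some} index set $\vec{\E}^{\phi}$; hence, the RHS above is polyhomogeneous on $\Dbold$, with index sets (near $I^0$ and $\scrip$ respectively) bounded by $\vec{\E}^f\cup\bigcup_{\bullet}\vec{\E}^{\mathfrak{F},\bullet}$, where $\vec{\E}^{\mathfrak{F},\bullet}=n^{\bullet}\vec{\E}^{\phi}+\vec{\E}^{\bullet}$ as in \cref{thm:app:general}. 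We now apply \cref{prop:prop:prop_no_incoming} (or, equivalently, \cref{thm:app:general} with $\E^{\incone}=\emptyset$) to $\phi^{(1)}$: inspecting the explicit form of the future index set in \cref{prop:prop:prop_no_incoming}, namely $\E^\phi_+=\big((0\overline{\cup}(\E^f_+-1))\cupdex(\E^f_0-2)\big)\cup(\mindex{0}\cupdex\E^{\incone})$, the second summand, which is the one responsible for the extra term $\mindex{0}\cupdex\E^\phi_0$ in \eqref{eq:app:general_index_set}, vanishes identically when $\E^{\incone}=\emptyset$. The remaining inclusions give precisely \eqref{eq:app:index_set_of_iterate}, and the analogous reading of \cref{lemma:prop:inhomogeneous_no_rad} yields $\E^{\phi^{(1)}}_0\supset\E^{\mathfrak{F},\bullet}_0-2$.

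The main subtlety is that $\vec{\E}^{\mathfrak{F},\bullet}$ is defined through $\vec{\E}^{\phi}=\vec{\E}^{\phi^{(0)}}\cup\vec{\E}^{\phi^{(1)}}$, so the corollary's statement is implicitly a fixed-point characterisation: $\E^{\phi^{(1)}}_+$ is the minimal index set satisfying \eqref{eq:app:index_set_of_iterate} \emph{after} plugging in $\E^{\phi}_+=\mindex{0}\cup\E^{\phi^{(1)}}_+$. This fixed point exists by the usual iterative construction (one builds an increasing sequence of index sets by repeatedly applying the inclusion, and uses finiteness of $\mathcal{E}_{\leq c}$ to conclude convergence below every order $c$). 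Crucially, no new elements are introduced by the $\mindex{0}$-part of $\E^{\phi}_+$ coming from $\phi^{(0)}$: since index sets are closed under integer shifts in the first coordinate (\cref{item2:notation:index_sets}), one has $\mindex{0}+\E^{\bullet}_+=\E^{\bullet}_+$ as index sets, so the contribution of $\phi^{(0)}$ to $\vec{\E}^{\mathfrak{F},\bullet}_+$ is entirely absorbed, and the splitting $\E^{\phi}_+=\mindex{0}\cup\E^{\phi^{(1)}}_+$ is indeed strictly finer than the one given by \cref{thm:app:general}. The hard part is really just the bookkeeping; the analytic content is already carried by \cref{lemma:app:minkowski_index_improvement}, \cref{cor:app:minkowski_with_error} and \cref{prop:prop:prop_no_incoming}.
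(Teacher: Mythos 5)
Your proof takes essentially the same route as the paper: the decomposition $\phi=\phi^{(0)}+\phi^{(1)}$, using \cref{cor:app:minkowski_with_error} for the free Minkowskian part $\phi^{(0)}$, and a propagation argument for the remainder $\phi^{(1)}$ with trivial data on $\incone$. The paper handles the circularity---that $\mathcal{P}[\phi^{(0)}+\phi^{(1)}]$ depends on $\phi^{(1)}$ itself---more explicitly by expanding $\mathcal{N}[\phi^{(0)}+\phi^{(1)}]=\mathcal{N}[\phi^{(1)}]+\mathcal{N}[\phi^{(0)}]+\sum_i\mathcal{N}_i[\phi^{(1)}]$ and checking $\vec{\E}^{\mathfrak{F},\mathcal{N}_i}\subset\vec{\E}^{\mathfrak{F},\mathcal{N}}$ for the intermediate nonlinearities $\mathcal{N}_i$ of order $n^{\mathcal{N}}-i$ with $\phi^{(0)}$-dependent coefficients; you instead leave the fixed-point structure implicit and argue via monotone iteration, which is valid since the analytic content is already in \cref{thm:app:general}. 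One small slip worth correcting: $\mindex{0}\cupdex\E^{\incone}$ does not ``vanish identically'' when $\E^{\incone}=\emptyset$---by the definition of $\cupdex$ one has $\mindex{0}\cupdex\emptyset=\overline{\mindex{0}}=\mindex{0}$. The correct point is that this $\mindex{0}$ is then \emph{absorbed} by the first summand $\big(0\overline{\cup}(\E^f_+-1)\big)\cupdex(\E^f_0-2)$ (which contains $\mindex{0}$ since $0\overline{\cup}(\cdot)\supset\mindex{0}$ and $\cupdex$ is monotone), so the overall conclusion stands. Similarly, the statement ``$\mindex{0}+\E^{\bullet}_+=\E^{\bullet}_+$ as index sets'' is correct, but for the multilinear contribution $n^{\bullet}\E^\phi_+$ with $\E^\phi_+=\mindex{0}\cup\E^{\phi^{(1)}}_+$ one should also note the pure-$\phi^{(0)}$ term (all factors from $\mindex{0}$), which yields exactly the term $\mathcal{N}[\phi^{(0)}]$ treated as an inhomogeneity in the paper's decomposition; this contributes $\mindex{0}+\E^{\bullet}_+=\E^{\bullet}_+$ and is therefore also absorbed. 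Your argument is sound; the paper's explicit expansion just makes this bookkeeping more transparent.
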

	\begin{proof}
		This follows from writing $\phi=\phi^{(0)}+\phi^{(1)}$, where $\phi^{(0)}$ solves $\Box_{\eta}\phi^{(0)}=0$ with data $\psi^{\incone}$. Indeed, \cref{cor:app:minkowski_with_error} applies to $\phi^{(0)}$, giving $r\phi^{(0)}\in\A{phg}^{\E^{\incone},\mindex{0}}(\Dbold)$. 
		The difference $\phi^{(1)}$ then has trivial initial data 
        and satisfies $\Box_{\eta}\phi^{(1)}=f+\mathcal{P}[\phi^{(0)}+\phi^{(1)}]$. 
        Following the same proof as for \cref{thm:app:general}, we can inductively prove $\mathcal{P}[\phi^{(0)}+\phi^{(1)}]\in\sum_{\bullet\in\{f,V,P_g,\mathcal{N}\}}\A{phg}^{\vec{\E}^{\mathfrak{F},\bullet}}(\Dbold)$ and the claimed \cref{eq:app:index_set_of_iterate}.
  We give the details for $\mathcal{N}$ and leave the rest to the reader: 
  
  We have $\Box_{\eta}\phi^{(1)}=f+\mathcal{N}[\phi^{(0)}+\phi^{(1)}]+\dots$. Doing computations analogous to \cref{eq:en:difference_of_N}, we find, for instance, that 
  \begin{equation}\label{eq:app:diffofnonls}
      \mathcal{N}[\phi^{0}+\phi^{(1)}]=\mathcal{N}[\phi^{1}]+\mathcal{N}[\phi^{(0)}]+\sum_{i=1}^{n^{\mathcal{N}}-1} \mathcal{N}_i[\phi^{(1)}],
  \end{equation}
where the $\mathcal{N}_i$ are semi-linear perturbations of order $n^{\mathcal{N}}-i$ with the corresponding $f_{\mathcal{N}_i}\in \A{phg}^{\E_0^{\mathcal{N}}+i\E^{\incone},\E_+^{\mathcal{N}}+i\mindex0}$. 
In particular, we thus have $\vec{\E}^{\mathfrak{F},\mathcal{N}_i}\subset \vec{\E}^{\mathfrak{F},\mathcal{N}}$ for all $i=1,\dots n-1$. 

The inhomogeneous term $\mathcal{N}[\phi^{(0)}]$ as well as the quasilinear and potential perturbations are dealt with similarly. 
Thus, applying \cref{thm:app:general} to $\phi^{(1)}$ (with multiple nonlinear orders) gives the result.
	\end{proof}

The improvement of \cref{thm:app:general2} is a bit more subtle:
\begin{cor}\label{cor:app:improvedthmgeneral2}
    Under the assumptions of \cref{thm:app:general2}\cref{item:app:error2}, we have the stronger inclusion:
    \begin{equation}\label{eq:app:minkowski_improvement_incoming}
        \psi\in \A{phg}^{\mindex0, \mindex1, \mindex0}(\D)+\A{phg,b,phg}^{\mindex0,a_0-,\mindex0}(\D)+\A{phg,b,b}^{\mindex0,a_0-,a_0-}(\D)+   \A{b,b,phg}^{a_-,a_0,\mindex{0}}(\D)+\Hb^{a_-,a_0,\min(a^f_+-1,a_0)-}(\D).
    \end{equation}
    In particular, there are no logarithms towards $\scrip$ below order $\min(a_0,a^f_+-1)$.
\end{cor}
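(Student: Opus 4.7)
The inclusion to prove differs from \cref{thm:app:general2}\cref{item:app:error2} only by the replacement of the future index set $\mindex{0}\cupdex\mindex{1}$ in the leading polyhomogeneous summand by $\mindex{0}$, i.e.~removal of all logarithmic enhancements below order $\min(a_0,a^f_+-1)$ at $\scrip$. Since this spurious logarithmic piece originates via \cref{cor:prop:solution_with_radiation_scri} entirely from the \emph{Minkowskian} propagation of the incoming radiation $\psi^{\scrim}\in\Hb^{a_0}(\scrim)$, the strategy is to isolate this Minkowski contribution, upgrade its asymptotics using the peeling lemma \cref{lemma:app:minkowski_index_improvement}, and then transport the improvement through the perturbative iteration of \cref{cor:app:sharp_index_sets}.

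\emph{Step 1 (Decomposition).} Write $\phi=\phi^{(M)}+\phi^{(1)}$, where $\phi^{(M)}$ is the Minkowskian scattering solution to $\Box_\eta\phi^{(M)}=0$ with data $\psi^{\incone,(M)}=\psi^{\incone}$ and $v\pv\psi^{\scrim,(M)}=v\pv\psi^{\scrim}$ supplied by \cref{thm:scat:scat_incoming} (with $\mathcal{P}=0=f$), and $\phi^{(1)}$ solves $\Box_\eta\phi^{(1)}=f+\mathcal{P}[\phi^{(M)}+\phi^{(1)}]$ with vanishing data on $\incone\cup\scrim$. By linearity, decompose further $\phi^{(M)}=\phi^{(M,\incone)}+\phi^{(M,\scrim)}$ according to where the datum sits.

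\emph{Step 2 (Upgrade of $\phi^{(M)}$).} For $\phi^{(M,\incone)}$, \cref{cor:app:minkowski_with_error} with $\E^{\incone}=\emptyset$ already yields membership in $\A{b,phg}^{a_0,\mindex{0}}(\Dbold)+\Hb^{a_0,a_0-}(\Dbold)$, contributing only to the $\A{b,b,phg}^{a_-,a_0,\mindex{0}}(\D)+\Hb^{a_-,a_0,a_0-}(\D)$ parts of the target. For $\phi^{(M,\scrim)}$, we invoke \cref{cor:prop:expansiontowardsI0} in $\D^-$ to split $\phi^{(M,\scrim)}|_{\D^-}=\tilde{\phi}+\phi_\Delta$, where $\tilde{\phi}$ solves $\Box_\eta\tilde{\phi}=0$ with polyhomogeneous data $\tilde{\phi}|_{\incone}\in\A{phg}^{\mindex{0}}(\incone)$ and \emph{no} incoming radiation, and where $\phi_\Delta\in\A{phg,b}^{\mindex{0},a_0-}(\D^-)$. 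We extend $\tilde{\phi}$ as a global Minkowskian solution: by linearity of \cref{cor:app:minkowski_with_error} applied with error-free data $\A{phg}^{\mindex{0}}(\incone)$ (so that the $\A{b,phg}^{a_0,\mindex{0}}+\Hb^{a_0,a_+}$ error terms vanish), we obtain $\tilde{\phi}\in\A{phg}^{\mindex{0},\mindex{0}}(\Dbold)$, i.e.~\emph{no logarithms} toward $\scrip$. The globally defined remainder $\phi_\Delta':=\phi^{(M,\scrim)}-\tilde{\phi}$ agrees with $\phi_\Delta$ on $\D^-$; restricting to any intermediate cone $\outcone{u_1}$ with $u_1$ in the overlap $\D^-\cap\D^+$ gives $\phi_\Delta'|_{\outcone{u_1}}\in\A{phg}^{\mindex{0}}(\outcone{u_1})+\Hb^{a_0-}(\outcone{u_1})$, from which \cref{lemma:prop:future_corner} (equations \cref{eq:prop:future_data,eq:prop:futuresemiindex}) yields $\phi_\Delta'\in\A{b,phg}^{a_0-,\mindex{0}}(\D^+)+\Hb^{a_0-,a_0-}(\D^+)$ --- again with no logarithms at $\scrip$ below order $a_0$. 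Recombining, $\phi^{(M)}$ sits inside the target space \cref{eq:app:minkowski_improvement_incoming} (with $a^f_+$ not yet playing any role).

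\emph{Step 3 ($\phi^{(1)}$).} The remainder has vanishing scattering data and solves a short-range perturbed equation whose right-hand side can, as in the proof of \cref{cor:app:sharp_index_sets} (cf.~\cref{eq:app:diffofnonls} and \cref{lemma:en:short_range_computations}), be reorganised into contributions multilinear in $\phi^{(1)}$ with coefficients built from $\phi^{(M)}$. By Step 2 these coefficients inherit a polyhomogeneous-plus-error structure with no logarithms at $\scrip$ below order $\min(a_0,a^f_+-1)$, so the iterative bootstrap of \cref{thm:app:general}\cref{item:app:error} applies to $\phi^{(1)}$: each application of \cref{prop:prop:prop_no_incoming} produces an index set inclusion of the form $\big(0\cupdex(\E^f_+-1)\big)\cupdex(\cdots)$ as in \cref{eq:app:index_set_of_iterate}, which introduces no $\mindex{1}$ factor near $\scrip$, while the short-range gap $\delta>0$ forces finitely many iterations to exhaust the target order. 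Adding the results of Steps 2 and 3 yields \cref{eq:app:minkowski_improvement_incoming}.

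\emph{Main obstacle.} The delicate point is the forward propagation of $\phi_\Delta'$ from $\outcone{u_1}$ into $\D^+$ in Step 2: we need the $\A{phg}^{\mindex{0}}$ part of its restriction to $\outcone{u_1}$ (which is only \emph{a priori} available on the truncated piece $\outcone{u_1}\cap\D^-$) to persist as genuine polyhomogeneous data on the full cone $\outcone{u_1}$ extending to $\scrip$. This is ensured by using that $\phi_\Delta'$ is globally a Minkowskian solution, so that its restriction to $\outcone{u_1}$ is automatically conormal, and by combining the information from \cref{cor:prop:expansiontowardsI0} with the Minkowskian conservation law $\pu(r^{-2\ell}\pv(r^2\pv)^{\ell}\psi_\ell)=0$ mode by mode --- precisely as in \cref{lemma:app:minkowski_index_improvement} --- to promote the expansion to all of $\outcone{u_1}$ before invoking \cref{lemma:prop:future_corner}.
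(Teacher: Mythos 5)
Your proposal follows the paper's own (first) proof quite closely: the crux in both cases is the decomposition of the Minkowskian incoming-radiation piece via \cref{cor:prop:expansiontowardsI0} into a no-incoming-radiation solution $\tilde\psi$ (with polyhomogeneous data on $\incone$) and a remainder $\psi_\Delta\in\A{phg,b}^{\mindex0,a_0-}(\D^-)$, followed by the peeling statement \cref{cor:app:minkowski_with_error} applied to $\tilde\psi$. The paper compresses this by observing that "it suffices to show the improvement for \cref{cor:prop:solution_with_radiation_scri}", letting the rest of the machinery of \cref{thm:app:general2} absorb the perturbative iteration you spell out in your Step~3; your version is more explicit but conceptually the same.

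Two points are off in your write-up, though neither is fatal. First, the ``Main obstacle'' you identify is not a real obstacle, and the resolution you offer is misplaced: the membership $\phi_\Delta'\in\A{phg,b}^{\mindex0,a_0-}(\D^-)$ encodes an expansion towards $\scrim$ (i.e.~$u\to-\infty$) and merely $\Hb^{a_0-}$ conormal control towards $I^0$. Restricting to an intermediate outgoing cone $\outcone{u_1}$, the only a~priori information is the $\Hb^{a_0-}$ control near $I^0$; there is no $\A{phg}^{\mindex0}$ part ``on the truncated cone'' awaiting promotion to $\scrip$. The correct (and simpler) argument is to feed $\phi_\Delta'|_{\D^+\cap\D^-}\in\Hb^{a_0-}$ directly into \cref{lemma:prop:future_corner}\cref{eq:prop:future_data} (with $f=0$ and $\E_0^f-2=\emptyset$), which immediately yields $\phi_\Delta'\in\A{b,phg}^{a_0-,\mindex0}(\D^+)+\Hb^{a_0-,a_0-}(\D^+)$ --- no logarithms, no conservation laws needed. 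The conservation-law route you gesture at is actually the paper's \emph{alternative} proof, but there it is run for the inhomogeneous problem obtained after the cutoff trick $\psi=\chi\psi^{\scrim}+\bar\psi$, not for $\phi_\Delta'$ with nontrivial incoming radiation. Second, in your Step~3 the citation should be \cref{prop:prop:inhomogeneous_rad}, not \cref{prop:prop:prop_no_incoming}: the forcing $\mathcal{P}[\phi^{(M)}+\cdot]$ involves $\phi^{(M)}$, which carries incoming radiation and is therefore not polyhomogeneous on the compactification $\Dbold$ required by \cref{ass1} and \cref{prop:prop:prop_no_incoming}; it is only polyhomogeneous on $\D$, which is the setting of \cref{prop:prop:inhomogeneous_rad}.
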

\begin{proof}
 It suffices to show this improvement for \cref{cor:prop:solution_with_radiation_scri}. 
    For this, we can simply make use of \cref{cor:prop:expansiontowardsI0} and write $\psi=\psi_\Delta+\tilde\psi$, where $\psi_{\Delta}\in \A{phg,b}^{\mindex0,a_0}(\D^-)$ and $\tilde\psi$ has no incoming radiation. The result then follows by applying \cref{cor:app:minkowski_with_error} to $\tilde\psi$.

  An alternative proof uses the conservation laws directly:  We prove the stronger statement:
    Let $rh\in\Hb^{a^h_-,a_0+2}(\D^-)$ with $\supp h\subset\D^-,\, a^h_->1$ and let $\phi$ be the scattering solution to $\Box\phi=h$ with trivial data at $\scrim,\incone$.
    Then \cref{eq:app:minkowski_improvement_incoming} holds with $a^f_+=\infty$.
    (The corollary then follows after writing $\psi(u,v,\omega)=\chi\psi^{\scrim}(v,\omega)+\bar{\psi}(u,v,\omega)$, for some cutoff $\chi$ localising to $\D^-$).

    We now prove this stronger statement. Since we know that polyhomogeneity already holds, it suffices to show \cref{eq:app:minkowski_improvement_incoming} on fixed $\ell$ modes.
    Using \cref{eq:intro:conslawf} and \cref{corr:ODE:du_dv}, we already have
    \begin{equation}
        r^{-2\ell}\pv (r^2\pv)^\ell \psi_\ell\in\Hb^{a^h_--1,a_0+1+\ell,a_0+1+\ell}(\D)+\A{phg,b,b}^{\mindex0, a_0+1+\ell,a_0+1+\ell}(\D).
    \end{equation}
    In particular, on any outgoing cone, we get
    \begin{equation}
        \pv(r^2\pv)^\ell\psi_\ell|_{\outcone{}}\in\Hb^{a_0+1-\ell}(\outcone{}).
    \end{equation}
    Integrating via \cref{item:ode:1d} $\ell+1$ times, we get $\psi_{\ell}|_{\outcone{}}\in\A{phg}^{\mindex{0}}(\outcone{})+\Hb^{a_0-}(\D)$.
    Extending to all of $\D^+$ follows as in \cref{lemma:app:minkowski_index_improvement}.

\end{proof}

	\subsection{Application 2: The linear wave equation on Minkowski with a potential \texorpdfstring{$\Box_\eta\phi=V\phi$}{}}\label{sec:app:potential}
	
	The remaining examples will all have conformally smooth initial data in order to make the statements easier to read, and to better understand the creation of the "first conformally irregular" term towards $\scrip$. 
	
	\begin{lemma}\label{lem:app:potential}
		Let $V=-M/r^3$, $\mathcal{N}=P_g=f=0$ and $\E^{\incone}=\overline{(p,0)}$ for $p> -1/2$ in \cref{thm:app:general}, i.e.~let $\phi$ be the scattering solution to $\Box_\eta \phi=-M\phi/r^3$ with no incoming radiation and data $\psi^{\incone}\in\A{phg}^{\E^{\incone}}(\incone)$, where $\E^{\incone}=\overline{(p,0)}$, with $M\neq0$ and $p>-1/2$. Then we have
		\begin{equation}
			r\phi\in\A{phg}^{\E_0,\E_+}(\Dbold),
		\end{equation}
		where $\E_0=\overline{(p,0)}$, and where $\E_+=\mindex{p+1}\cupdex \mindex 0$.
  
		Furthermore, the coefficient of the first conformally irregular term in the expansion of $\E_+$, $(p+1,0)$ or $(p+1,1)$, respectively, is generically nonvanishing: In this sense, the index set is sharp. For initial data supported on $\ell>0$, however, this coefficient always vanishes.
	\end{lemma}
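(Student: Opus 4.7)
The plan is to combine our general propagation of polyhomogeneity with an explicit mode-by-mode computation via the Minkowskian conservation laws. First I will establish the polyhomogeneity of $r\phi$ with the claimed index sets. Observing that $V=-M/r^3$ lies in $\A{phg}^{\mindex{3},\mindex{3},\mindex{3}}(\Dbold)$ and is short-range for any admissible $\vec{a}$, \cref{thm:app:general} immediately yields $r\phi\in\A{phg}^{\E_0,\E_+}(\Dbold)$, and the inclusions in \cref{eq:app:general_index_set} (together with $\E^{\incone}=\overline{(p,0)}$ and no inhomogeneity or nonlinearity) give $\E_0=\overline{(p,0)}$. For the future index set I will invoke the sharper \cref{cor:app:sharp_index_sets}: splitting $\phi=\phi^{(0)}+\phi^{(1)}$ with $\phi^{(0)}$ the Minkowskian scattering solution, the improvement \cref{cor:app:minkowski_with_error} ensures $r\phi^{(0)}\in\A{phg}^{\overline{(p,0)},\mindex{0}}(\Dbold)$; the recursion \cref{eq:app:index_set_of_iterate} for $\phi^{(1)}$, with $\E_0^{\mathfrak{F},V}-2=\mindex{p+1}$, then closes at $\E_+=\mindex{p+1}\cupdex\mindex{0}$.

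For the coefficient of the first conformally irregular term, I will descend to fixed spherical modes $\ell$ and use the explicit representation $\psi^{(0)}_\ell=r^{\ell+1}\partial_u^\ell\bigl(f_{\mathrm{out}}(u)/r^{\ell+1}\bigr)$ for the no-incoming-radiation solution of $\Box_\eta\phi^{(0)}=0$. Matching with the data on $\incone$ at leading order yields an elementary linear ODE whose solution is $f_{\mathrm{out}}(u)=C_\ell\,\Gamma(p+1)/\Gamma(p+\ell+1)\cdot(v_0-u)^{\ell-p}$, where $C_\ell$ is the $\ell$-mode coefficient of the data, and hence the polynomial-in-$1/r$ expansion $\psi^{(0)}_\ell=\sum_{j=0}^\ell a^\ell_j f_{\mathrm{out}}^{(\ell-j)}(u)\,r^{-j}$ with $a^\ell_j=(\ell+j)!/[j!(\ell-j)!]$. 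Applying the inhomogeneous Minkowski conservation law \cref{eq:intro:conslawf} to $\phi^{(1)}$ (which, to leading order in the iteration suggested by \cref{cor:app:sharp_index_sets}, solves $\Box_\eta\phi^{(1)}=-M\phi^{(0)}/r^3$ with trivial data) yields
\begin{equation*}
    \partial_u\bigl(r^{-2\ell}\partial_v(r^2\partial_v)^\ell\psi^{(1)}_\ell\bigr)\;\propto\;M\,r^{-2\ell-2}(r^2\partial_v)^\ell\!\bigl(\psi^{(0)}_\ell/r\bigr),
\end{equation*}
and since $(r^2\partial_v)^\ell$ annihilates $r^{-k}$ for $1\leq k\leq\ell-1$, only the two terms with $j\in\{\ell-1,\ell\}$ survive on the right-hand side.

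Next I will extract the $r^{-p-1}$ coefficient by integrating the above identity in $u$ from $u'=-\infty$ at fixed $v$ (using no incoming radiation as a vanishing boundary contribution), substituting $s=v-u'$ and then $s=at$ with $a=v-v_0$, and taking $v\to\infty$ (so $r/a\to 1^+$). The resulting limit integrals are Beta functions $B(\ell-p,\ell+p+2)$ and $B(\ell-p+1,\ell+p+2)$, weighted by the coefficients read off in the previous step; one final integration in $v$ converts the $r^{-p-2}$ asymptotics of $\partial_v(r^2\partial_v)^\ell\psi^{(1)}_\ell$ into the sought $r^{-p-1}$ term of $\psi^{(1)}_\ell$. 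For $\ell=0$ this produces $\tilde C_0=-MC_0\,p\pi/[2\sin(\pi p)]$, which is nonzero for non-integer $p$. For $\ell\geq 1$, using $\Gamma(\ell-p+1)=(\ell-p)\Gamma(\ell-p)$ to factor out common Gamma factors, the remaining numerical bracket collapses to $\ell(2\ell-1)!\cdot\bigl[-(2\ell+2)+2(\ell+1)\bigr]=0$, so $\tilde C_\ell\equiv 0$ identically.

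For integer $p\in\N_{\geq 1}$, $\sin(\pi p)$ vanishes and the preceding non-integer-power analysis is singular; the Beta integrals develop a logarithmic divergence near $t=1$, producing a factor $\log a\sim\log r$ and hence the $r^{-p-1}\log r$ term with a computable nonzero coefficient (obtained either by a regularized residue analysis or via a limit $p'\to p$). The main obstacle I foresee is not the algebraic Beta function cancellation, but rather ensuring that the higher iterates $\phi^{(k)}$, $k\geq 2$, arising from the residual $V\phi^{(1)}$-type back-coupling in the full equation, do not contribute at order $r^{-p-1}$. I plan to handle this by induction on $k$, exploiting that each additional iteration multiplies the source by $-M/r^3$ and, after one application of the conservation law, shifts the relevant exponent down by at least one unit; consequently the first iterate $\phi^{(1)}$ alone controls $\tilde C$, giving both the generic nonvanishing for $\ell=0$ data and the identical cancellation for $\ell\geq 1$ data.
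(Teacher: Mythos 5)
Your proposal contains a genuine gap in the index-set derivation. You claim that the recursion from \cref{cor:app:sharp_index_sets} alone ``closes at $\E_+=\mindex{p+1}\cupdex\mindex{0}$,'' but it does not: the minimal index set satisfying the recursion in \cref{eq:app:index_set_of_iterate} accumulates logarithmic powers. Concretely, writing $\E:=\E_+^{\phi^{(1)}}$, the constraint $\E\supset\big(0\cupdex(\E_+^{\mathfrak{F},V}-1)\big)\cupdex(\E_0^{\mathfrak{F},V}-2)$ with $\E_+^{\mathfrak{F},V}-1=\mindex{2}\cup(\E+2)$ and $\E_0^{\mathfrak{F},V}-2=\mindex{p+1}$ becomes $\E\supset\big(\mindex{0}\cup(\E+2)\big)\cupdex\mindex{p+1}$. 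Since $\E\supset\mindex{p+1}$, we have $(\E+2)\supset\mindex{p+3}\subset\mindex{p+1}$, and the overlapping decay rates force $\overline{(p+3,1)}\subset\E$ by the definition of $\cupdex$; iterating yields $\overline{(p+1+2k,k)}\subset\E$ for all $k$. So the minimal fixed point of your recursion is strictly larger than $\mindex{p+1}\cupdex\mindex{0}$. The paper's proof avoids this by \emph{also} invoking \cref{lemma:app:linear_improvement} (with $\E^V=\mindex{3}$, $\E^f_+=\emptyset$, hence $\E^{\phi_\infty}=\mindex{0}$) to obtain the cap $\E_+\subset\E_0\cupdex\E^{\phi_\infty}=\mindex{p}\cupdex\mindex{0}$, which bounds the log power by $0$ (resp.\ $1$ if $p\in\mathbb{N}$). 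Only then does \cref{cor:app:sharp_index_sets} push the first conformally irregular term from $p$ to $p+1$, giving $\E_+=\mindex{0}\cupdex\mindex{p+1}$. You should add the application of \cref{lemma:app:linear_improvement} before using \cref{cor:app:sharp_index_sets}.

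On the sharpness computation, your route differs from the paper's but is in the same family: both descend to fixed $\ell$-modes, split $\phi=\phi^{(0)}+\phi^{(1)}+\dots$, and apply the conservation law \cref{eq:intro:conslawf} to $\phi^{(1)}$ sourced by $\phi^{(0)}$. The paper establishes the vanishing for $\ell\geq1$ by an algebraic identity showing that the right-hand side of the $\ell$-mode conservation law is an exact $\pu$-derivative of $\tfrac12 r_0^{\ell-p}r^{-2\ell-2}$ (\cref{eq:app:potential_cancellation}), hence produces $\rho_+^{2\ell+2}$-decay and no irregular term; your route works through Beta-function integrals and Gamma-function cancellations. The Beta-function computation is plausible and would, if carried out correctly, yield the same result, but it is more fragile: in particular your treatment of the $p\in\mathbb{N}$ case as a regularized limit $p'\to p$ and your claim that the higher iterates $\phi^{(k)}$, $k\geq2$, contribute only beyond order $p+2$ are sketched rather than proved. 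The paper's total-derivative identity is both cleaner and gives the $\ell\geq1$ cancellation for arbitrary $p$ without case distinctions; since you need the polyhomogeneity statement anyway, you may as well use it (as the paper does) to justify commuting the $\ell$-sum with the expansion, and then reproduce the algebraic cancellation directly.
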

	\begin{rem}
		One might think that the linear wave equation on Schwarzschild as studied in \cite{kehrberger_case_2022} can be completely understood by the model problem above. However, \cref{lem:app:potential} shows that there are qualitative differences between the two. In Schwarzschild, initial data supported on angular modes $\psi_{\ell}$ always produce a conformally irregular term at order $p+1$ if $\ell\neq p$.
		In particular, the fact  that the $(1,1)$ coefficient in the potential case above always vanish if $\ell>0$ (for $p=0$)  is in stark contrast to the behaviour on Schwarzschild. 
	\end{rem}
	\begin{proof}
		Let us note that $\E_0=\mindex{p}$ follows from \cref{eq:app:general_index_set}.
  We next apply \cref{lemma:app:linear_improvement} with $\E^V=\mindex3$ to obtain that $\E^{\phi_{\infty}}$ as given in \cref{eq:app:linear_improvement_conformal_V} equals $\mindex0$. Thus, a direct application of \cref{lemma:app:linear_improvement} produces the future index set $\E_+=\mindex{p}\cupdex\mindex0$.
  At the same time, applying instead \cref{cor:app:sharp_index_sets}, we find that $\E_+$ can be further restricted to $\E_+\supset (0\cupdex(\E_++2))\cupdex\mindex{p+1}$; thus, there is no conformally irregular term at order $p$ and $\E_+=\mindex0\cupdex\mindex{p+1}$.
  We will now show that the actual first conformally irregular term appearing at order $p+1$ is generically nonvanishing.
  
	For this, we use slight modifications of methods introduced in \cite{kehrberger_case_2022}. First, we recall from the introduction, see~\cref{eq:intro:conslawf}, the following identities for fixed $\ell$-modes:
		\begin{equation}\label{eq:app:conslaw}
			\pu\pv\psi-\frac{\Dl\psi }{r^2}=\frac{M\phi}{r^2}\implies \pu(r^{-2\ell}\pv(r^2\pv)^{\ell}\psi_{\ell})=\frac{M}{r^{2+{2\ell}}}(r^2\pv)^{\ell}(\psi_{\ell}/r)=\frac{M}{r^{3+2\ell}}(r^2\pv)^{\ell}\psi_{\ell}-\frac{\ell \cdot M}{r^{2\ell+2}}(r^2\pv)^{\ell-1}\psi_\ell.
		\end{equation}
		More generally, we have
		\begin{equation}\label{eq:app:conslawN}
			\pu(r^{-2N}\pv(r^2\pv)^N\psi_{\ell})=r^{-2N-2}(N(N+1)-\ell(\ell+1))(r^2\pv)^{N}\psi_{\ell}+\frac{M}{r^{3+2N}}(r^2\pv)^{N}\psi_{\ell}-\frac{N \cdot M}{r^{2N+2}}(r^2\pv)^{N-1}\psi_\ell.
		\end{equation}
		As in \cite{kehrberger_case_2022}, \cref{eq:app:conslaw} can be used to show that, to leading order, $(r^2\pv)^{\ell}\psi_\ell\sim(r^2\pv)^{\ell}\psi_\ell|_{\incone}$.
		To make the "$\sim$" more precise, we again define $\phi^{(i)}$ for $i=0,1,2$ to be the solution to 
  \begin{equation}
      \Box_\eta\phi^{(0)}=0,\quad \Box_\eta\phi^{(1)}=V\phi^{(0)}, \quad \phi^{(2)}=\phi-\phi^{(0)}-\phi^{(1)},
  \end{equation}
  where only $\phi^{(0)}$ has nontrivial data.
		We then integrate \cref{eq:app:conslaw} in the $u$ and $v$ direction to get $(r^2\pv)^{\ell}\psi_{\ell}^{(0)}=(r^2\pv)^{\ell}\psi_{\ell}^{(0)}|_{\incone}$.
	By \cref{lemma:app:minkowski_index_improvement}, $\phi^{(0)}$ produces no conformally irregular terms. 
 We now analyse $\phi^{(1)}$, and leave it to the reader to check that $\phi^{(2)}$ produces no conformally irregular terms below order $p+2$. 
 % As above, one easily shows that $\phi^{(2)}+\phi^{(0)}$ exhibit no conformally irregular terms below order $p+2$, so it suffices to analyse $\phi^{(1)}$.
		
		For simplicity, let us assume that $\psi^{\incone}=r_0^{-p}$, where $r_0=r|_{\incone}= |u|$. In the case $\ell=0$, we can insert $\psi_{\ell=0}^{(0)}=\psi_{\ell=0}^{(0)}|_{\incone}$ back into \cref{eq:app:conslaw} to obtain that
		\begin{equation}
			\pv\psi_{\ell=0}^{(1)}= \int_{-\infty}^u \frac{Mr_0^{-p}}{r^3}\dd u';
		\end{equation}
		this integral produces a conformal irregularity towards $\scrip$ at order $r^{-2-p}$ if $p\notin\mathbb N$, and at order $r^{-2-p}\log r$ if $p\in\mathbb N_{>0}$ (see Appendix B of \cite{kehrberger_case_2024}). Thus, for $\ell=0$, the index set $\E^+$ is saturated, and it remains to prove the last sentence of \cref{lem:app:potential}:
		
		For general $\ell>0$, we do a similar computation. Integrating \cref{eq:app:conslawN} (for $\psi^{(0)}$) $N$ times from $\scrim$ along $\incone$, we find that $(r^2\pv)^N\psi^{(0)}_\ell|_{\incone}=\frac{(-1)^N p!}{(N+p)!}\frac{(\ell+N)!}{(\ell-N)!}r_0^{N-p}$.
		Thus, writing 
		\begin{equation}
			(r^2\pv)^{\ell}\psi^{(0)}_{\ell}= (r^2\pv)^{\ell}\psi^{(0)}_{\ell}|_{\incone}=C_{\ell}r_0^{\ell-p},
		\end{equation}
		for some $C_{\ell}\neq0$, we find that, after integrating once from $\incone$:
		\begin{equation}
			(r^2\pv)^{\ell-1}\psi^{(0)}_{\ell}=\left(\frac{1}{r_0}-\frac{1}{r}\right)C_{\ell} r_0^{\ell-p}-\frac{\ell+p}{2\ell}C_{\ell}r_0^{\ell-p-1}.
		\end{equation}
		Now, we insert this back into the RHS of \cref{eq:app:conslaw}:
		\begin{nalign}\label{eq:app:potential_cancellation}
			\frac{1}{MC_{\ell}}\pu(r^{-2\ell}\pv(r^2\pv)^{\ell}\psi^{(1)}_{\ell})&= \frac{r_0^{\ell-p}}{r^{2\ell+3}}-\frac{\ell }{r^{2\ell+2}}\left(\frac{1}{r_0}-\frac1r\right)r_0^{\ell-p}+\frac{(\ell+p)}{2}\frac{r_0^{\ell-p-1}}{r^{2\ell+2}}\\
			&=\frac{r_0^{\ell-p}}{r^{2\ell+3}}(1+\ell)-\frac{r_0^{\ell-p-1}}{r^{2\ell+2}}\frac{\ell-p}{2}=\pu\left(\frac{r_0^{\ell-p}}{2 r^{2\ell+2}}\right).
		\end{nalign}
        Integrating along the $u$ direction, we see that the RHS of \cref{eq:app:potential_cancellation} has surprisingly fast ($\rho_+^{2l+2}$) decay towards $\scrip$.
		Thus, after $l+1$ integration in the $v$ direction, we see that for $\ell>0$ we never have a conformally irregular term at leading order, so $\psi^{(1)}$ is also conformally regular.
	\end{proof}

	\subsection{Application 3: The semilinear wave equation on Minkowski \texorpdfstring{$\Box_\eta\phi=\pu\phi\pv\phi$}{}}\label{sec:app:nonlinearity}
	
	Let us now turn our attention to a specific choice of nonlinearity, purely for showcasing the method.
	
	\begin{lemma}\label{lem:app:semi}
		a) Let $V,f=0$, $\mathcal{N}[\phi]=\pu \phi\pv\phi$ and $\psi^{\incone}\in\A{phg}^{\mindex{p}}(\incone)$ for $p> -1/2$. Let $r\phi$ be the corresponding scattering solution with no incoming radiation. Then $r\phi\in\A{phg}^{\E_0,\E_+}(\Dbold)$ for
		\begin{equation}\label{eq:app:semilinear_index_set}
			(\E_+)_{\leq 2p+1}=\mindex{0}_{\leq2p+1}\cup \begin{cases}
				\{(2p+1,0)\},\qquad 2p+1\notin \N\\
				\{(2p+1,1)\}, \qquad 2p+1\in\N.
			\end{cases}
		\end{equation}
		
		b) Moreover, the index set \cref{eq:app:semilinear_index_set} is saturated for $p\notin\{0,1\}$. 
	\end{lemma}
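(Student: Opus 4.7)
\textbf{Proof plan for \cref{lem:app:semi}.} We follow the three-step algorithm from \cref{sec:intro:sketch}. The nonlinearity $\mathcal{N}[\phi]=\pu\phi\pv\phi$ is short-range for any admissible $\vec{a}$ with $\min(a_-,a_0)>-1/2$ (cf.~\cref{rem:en:examples}), so the scattering data $\psi^{\incone}\in\A{phg}^{\mindex{p}}(\incone)\subset \Hb^{p-}(\incone)$ with no incoming radiation fall within the scope of \cref{thm:scat:scat_general} (or \cref{cor:scat:enlarged_admissible_set} for $p$ close to $-1/2$). Hence there exists a unique scattering solution $r\phi \in \Hb^{a_-,a_0}(\Dbold)$ for every $a_-<p,\,a_0<p$. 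Applying \cref{thm:app:general}\cref{item:app:poly} and the Minkowskian improvement of \cref{cor:app:sharp_index_sets}, we conclude $r\phi\in\A{phg}^{\E_0,\E_+}(\Dbold)$ for some index sets with $\E_0\supset\mindex{p}$ and $\E_+\supset\mindex{0}$. It remains to identify $(\E_+)_{\leq 2p+1}$.

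To do this we iterate. Set $\phi=\phi^{(0)}+\phi^{(1)}+\phi^{(2)}$, where $\phi^{(0)}$ is the scattering solution to $\Box_\eta\phi^{(0)}=0$ with data $\psi^{\incone}$ and no incoming radiation; $\phi^{(1)}$ is the scattering solution to $\Box_\eta\phi^{(1)}=\pu\phi^{(0)}\pv\phi^{(0)}$ with trivial data and no incoming radiation; and $\phi^{(2)}$ is the remainder. By \cref{lemma:app:minkowski_index_improvement}, $r\phi^{(0)}\in\A{b,phg}^{p-,\mindex{0}}(\Dbold)$, so $\phi^{(0)}$ contributes only terms $(n,0)\in\mindex{0}$ to $\E_+$. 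For $\phi^{(2)}$, the equation reads schematically
\begin{equation}
\Box_\eta\phi^{(2)} = 2\pu\phi^{(0)}\pv\phi^{(1)}+2\pu\phi^{(1)}\pv\phi^{(0)}+\big(\pu\phi^{(1)}+\pu\phi^{(2)}\big)\big(\pv\phi^{(1)}+\pv\phi^{(2)}\big).
\end{equation}
Since $\phi^{(1)}$ will be shown below to decay strictly faster than $\phi^{(0)}$ near $\scrim$, each term on the right-hand side carries an extra factor of $r_0^{-p-\epsilon}$ compared with $\pu\phi^{(0)}\pv\phi^{(0)}$. Reapplying \cref{thm:app:general}, $r\phi^{(2)}$ is polyhomogeneous with index set towards $\scrip$ restricted to orders strictly greater than $2p+1$; hence $\phi^{(2)}$ cannot affect $(\E_+)_{\leq 2p+1}$.

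The computation of the coefficients of $\phi^{(1)}$ at order $2p+1$ proceeds mode by mode via the conservation law \cref{eq:intro:conslawf}: for each $\ell$,
\begin{equation}
\pu\!\big(r^{-2\ell}\pv(r^2\pv)^{\ell}\psi^{(1)}_{\ell}\big)=r^{-2\ell-2}(r^2\pv)^{\ell}\big(r^3(\pu\phi^{(0)}\pv\phi^{(0)})_{\ell}\big).
\end{equation}
First, $\phi^{(0)}$ is computed in closed form along the integral curves of $r^2\pv$: following the strategy of the proof of \cref{lem:app:potential}, the conservation law with $M=0$ together with the initial data $\psi^{\incone}=r_0^{-p}$ yields $(r^2\pv)^{\ell}\psi^{(0)}_{\ell}=C_{\ell}r_0^{\ell-p}$, and iterated integration gives $\psi^{(0)}_{\ell}$ as a polynomial of degree $\ell$ in $1/r$ with $r_0$-dependent coefficients. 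Substituting into $\pu\phi^{(0)}\pv\phi^{(0)}$ and expanding in spherical harmonics, the resulting $\ell$-mode inhomogeneity behaves like $r_0^{-2p-1}/r^3$ at leading order near $\scrim$. Integrating $\ell+1$ times in $v$ from $\incone$ (where the integrand has explicit powers of $(v-u)$) using \cref{corr:ODE:du_dv}, then once in $u$ from $\scrim$, produces via \cref{item:ode:1d} a term of the form $c_{\ell}(\omega)\,r^{-2p-1}\log^{\sigma}r$ with $\sigma=1$ exactly when $2p+1\in\mathbb N$ (cf.~Appendix~B of \cite{kehrberger_case_2024}). Summing in $\ell$---whose legitimacy is guaranteed a posteriori by the polyhomogeneity established in Step~1---yields the inclusion \cref{eq:app:semilinear_index_set}.

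For part b), we must show that the total coefficient at order $2p+1$ is generically nonzero and vanishes exactly for $p\in\{0,1\}$. The vanishing at $p=0$ is immediate: then $\psi^{(0)}\equiv 1$, so $\phi^{(0)}=r^{-1}$, $\pu\phi^{(0)}\pv\phi^{(0)}=-r^{-4}$ is supported purely on $\ell=0$, and direct integration gives $\psi^{(1)}_0=\tfrac{1}{2|u|}-\tfrac{1}{2r}$, which is conformally smooth. The case $p=1$ similarly reduces, after using the explicit representation $\psi^{(0)}_\ell=r^{\ell+1}\pu^\ell(f_{\mathrm{out}}(u)/r^{\ell+1})$ from \cref{eq:intro:explicit_sol}, to an algebraic cancellation analogous to \cref{eq:app:potential_cancellation}. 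For generic $p\notin\{0,1\}$ one must show that the coefficient, which depends rationally on $p$ and on the angular data, is nonzero; this is verified by inspection of the leading $\ell=0$ or lowest-$\ell$ contribution. The main obstacle in executing the above plan is precisely the bookkeeping for this last verification: the nonlinearity couples different angular modes via products on $S^2$, and identifying which mode contributes the leading coefficient requires tracking the angular-momentum selection rules through the iterated conservation laws.
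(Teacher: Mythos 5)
Your part~a) follows the paper's route exactly: scattering via \cref{thm:scat:scat_general}, polyhomogeneity via \cref{thm:app:general} plus the Minkowskian improvement of \cref{cor:app:sharp_index_sets}, and the index-set arithmetic (using $\E^{\mathcal N}_0=\mindex{3}$, $\E^{\mathcal N}_+=\mindex{2}$) to obtain \cref{eq:app:semilinear_index_set}.

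The gap is in part~b), precisely at the step you flag as "the main obstacle" and defer as bookkeeping. Your strategy is to establish nonvanishing of the coefficient for generic $p\notin\{0,1\}$ by "inspection of the leading $\ell=0$ or lowest-$\ell$ contribution", and you carry out the $\ell=0$ computation only for $p=0$. But the $\ell=0$ contribution to $\phi^{(1)}$ vanishes identically for \emph{every} $p$, not only at $p=0$: with $\psi^{\incone}=|u|^{-p}$ on $\ell=0$ one finds $r\mathcal{N}[\phi^{(0)}]=-\tfrac12\pu\big(r^{-2}|u|^{-2p}\big)$ is an exact $\pu$-derivative, so $\psi^{(1)}=\tfrac12|u|^{-2p}(r_0^{-1}-r^{-1})$ is conformally smooth for all $p$. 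Equivalently, restricted to $\ell=0$ the nonlinearity $\pu\phi\pv\phi$ coincides with $-\partial\phi\cdot\partial\phi$, whose associated equation is conjugate to the linear one and therefore conformally regular. Thus the $\ell=0$ computation can never produce the asserted irregular coefficient, and your plan of checking the lowest mode simply yields $0=0$ for every~$p$. The paper resolves this by first recording this cancellation, then replacing $\pu\phi\pv\phi$ with $r^{-2}|\sl\phi|^2$ (the two differ by the null form $-\partial\phi\cdot\partial\phi$, which contributes only conformally smooth terms to $\phi^{(1)}$), taking data supported on $\ell=1$, and computing the $\ell=0$ \emph{projection} of $\phi^{(1)}$ explicitly, which does give a nonzero coefficient precisely when $p\notin\{0,1\}$. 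Without the switch of nonlinearity and the specific choice of $\ell=1$ data, your decomposition does not yield a proof of saturation; the failure is structural (a null-form cancellation that kills all $\ell=0$ contributions), not a matter of tracking angular selection rules.
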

	\begin{rem}
		Note that we could also consider the case where $\Box_{\eta}\phi=-M\phi/r^3+\pu\phi\pv\phi$. Redoing the proofs of \cref{lem:app:potential} and \cref{lem:app:semi} then shows that if $p<0$, then the nonlinearity is responsible for the first conformally irregular term, whereas if $p>0$, then the potential dominates and leads for the first conformally irregular term towards $\scrip$.
	\end{rem}
 \begin{rem}
     The cases $p=0,1$ are exceptional in that we show that for data supported on low $\ell$-modes, the corresponding conformally irregular terms vanish. The proof can likely be extended to show that this holds for data supported on all $\ell$-modes, but we did not further pursue this. In any case, this shows that the precise analysis of the asymptotic expansions towards $\scrip$ is, in general, a quite subtle issue.
 \end{rem}
	\begin{proof}
		\textit{a)} Let us first note that $\E^{\mathcal{N}}_0=\mindex{3},\E^{\mathcal{N}}_+=\mindex{2}$.
		From \cref{eq:app:general_index_set}, it follows that $\E_0=\mindex{p}$.
		For $\E_+$, we use \cref{cor:app:sharp_index_sets} to get that $\E_+$ is minimal with
		\begin{equation}
			\E_+\supset \big(0\cupdex(2\E_++2-1)\big)\cupdex(2\E_0+3-2)=\big(0\cupdex(2\E_++1)\big)\cupdex\mindex{2p+1}.
		\end{equation}
		The first conformally irregular  thus lives at order $2p+1$.
		
\textit{b)} We now prove that the index set is saturated for $p\notin\{0,1\}$: We write $\phi=\phi^{(0)}+\phi^{(1)}+\phi^{(2)}$, where $\phi^{(i)}$ all have no incoming radiation and only  $\phi^{(0)}$ has nontrivial data $r\phi^{(0)}|_{\incone}=\psi^{\incone}$:
		\begin{equation}
			\Box\phi^{(0)}=0,\quad \Box\phi^{(1)}=\mathcal{N}[\phi^{(0)}],\quad\Box\phi^{(2)}=\mathcal{N}[\phi]-\mathcal{N}[\phi^{(0)}].
		\end{equation}
		A similar analysis as in part \textit{a)} shows that the index set towards $\scrip$ for $\phi^{(0)},\phi^{(2)}$ does not contain conformally irregular terms below order $2p+2$, i.e.~no terms $(2p+1,1)$ for $2p+1\in\N$ or $(2p+1,0)$ when $2p+1\notin\N$.
		Therefore, it suffices to compute the first conformally irregular term in the expansion of $\phi^{(1)}$.
		For $p\notin\{0,1\}$, we take $\psi^{\incone}=r^{-p}$ supported  on $\ell=0$. We then have $r\phi^{(0)}=\abs{u}^{-p}$, and $r\mathcal{N}[\phi^{(0)}]=r^{-3}\abs{u}^{-2p-1}(pr-\abs{u})$.
		Integrating now $\pu\pv\psi^{(1)}=r\mathcal{N}[\phi^{(0)}]$, we see that both the $pr$ and $\abs{u}$ term gives leading order contribution leading to $2p+1$ behaviour towards $\scrip$, however their leading terms exactly cancel: Indeed, we have
		\begin{equation}\label{eq:app:simple}
			pr^{-2}|u|^{-2p-1}=\pu(\tfrac12 r^{-2}|u|^{-2p})+r^{-3}|u|^{-2p} \implies r\mathcal{N}[\phi^{(0)}]=\frac12\pu(r^{-2}|u|^{-2p}).
		\end{equation}
		Thus, initial data supported on $\ell=0$ do not lead to conformally irregular terms for $\phi^{(1)}$. Alternatively, this also follows from the observation that when restricted to $l=0$ modes, the nonlinearity is the same as for $\Box\phi=-\partial\phi\cdot\partial\phi$, which gives conformal smoothness (since it can be transformed to the standard linear wave equation).
		
		By the same reasoning,  proving sharpness of the index set for $\pu\phi\pv\phi$ nonlinearity is equivalent to treating the nonlinearity $r^{-2}|\sl\phi|^2$.
		Let's therefore work with this nonlinearity instead, and assume that the initial data are supported on $\ell=1$. We will show that the $\ell=0$ projection (denoted by  $P_{S^2}^{(0)}$) of $\phi^{(1)}$ then saturates the index set:
		If the initial data are supported on $\ell=1$, then $\psi^{(0)}=Y_1 r^2\pu(\abs{u}^{-p+1}r^{-2})=r^{-1}\abs{u}^{-p}((p-1)r+2\abs{u})$, so that 
		\begin{equation}\label{eq:app:leading_forcing}
			-\pu\pv P_{S^2}^{(0)}\psi^{(1)}=	P_{S^2}^{(0)}rr^{-2}\abs{\sl\phi^{(0)}}^2=P_{S^2}^{(0)}r^{-3}\abs{\sl\psi^{(0)}}^2=r^{-5}\abs{u}^{-2p}((p-1)r+2\abs{u})^2.
		\end{equation}
		Simple integration by parts as in \cref{eq:app:simple} then give that, assuming that $p\notin \{1/2,1\}$:
		\begin{multline}\label{eq:app:proof_nonlin_int}
			\int_{-\infty}^{u}r^{-5}\abs{u}^{-2p}((p-1)r+2\abs{u})^2 \dd u'=\int_{-\infty}^{u}4\cdot\frac{\abs{u}^{2-2p}}{r^5}+4(p-1)\frac{\abs{u}^{1-2p}}{r^4}+(p-1)^2\frac{\abs{u}^{-2p}}{r^3} \dd u'\\
			=|u|^{-2p-2}f(|u|/r)+\int_{-\infty}^{u}\frac{\abs{u}^{-2p+2}}{r^5}\underbrace{\Big(4-4(p-1)\frac{4}{2p-2}+(p-1)^2\frac{12}{(2p-2)(2p-1)}\Big)}_{\neq 0 \text{ if } p\neq -1}\dd u',
		\end{multline}
		where $f(|u|/r)=\O(|u|^{3}/r^3)$ is conformally smooth near $\scrip$ (smooth in $|u|/r$), and the integral gives a conformally irregular term at order $(2p+2,0)$ if $p\notin\mathbb N$, and a conformally irregular term at order $(2p+2,1)$ if $p\notin\mathbb{N}_{\leq 1}$. (See Appendix B of \cite{kehrberger_case_2024} for the computation of these integrals.)
		
		In the case where $p=1/2$, one similarly checks that the first two terms on the RHS of the first line of \cref{eq:app:proof_nonlin_int} give conformally smooth contributions, whereas the third term gives a $\log (r/|u|)/r^2$-contribution.
		Integrating these results for $\pv\psi^{(1)}$ in $v$ gives the result.
		
		Notice that for $p=1$, then \cref{eq:app:leading_forcing} is simply an integral over $1/r^5$, i.e.~conformally smooth. If $p=0$, then the integral also remains conformally smooth. 
		Thus, our ansatz of initial data supported on $\ell=1$ does not produce conformal irregularities at order $p+1$ for $p=0,1$. We conjecture that the same is true for data supported on any $\ell$, but we do not show this here.

	\end{proof}
 \subsection{Application 4: The antipodal matching condition for perturbations of \texorpdfstring{$\Box_\eta\phi=0$}{the Minkowskian wave equation}}\label{sec:app:antipodal}
Given a function $\psi$ on $\D$, it is said to satisfy the antipodal matching condition if the limits below exist and
\begin{equation}\label{eq:app:antipodal}
  P^{(\ell)}_{S^2}  \lim_{v\to\infty}\lim_{u\to-\infty}\psi=(-1)^{\ell}  P^{(\ell)}_{S^2}\lim_{u\to-\infty}\lim_{v\to\infty}\psi.
\end{equation}
We will first show that this condition is satisfied by solutions to $P_\eta\psi=0$, and then deduce that it is also satisfied by solutions to \cref{eq:scat:scattering_def}.
\begin{lemma}\label{lemma:app:antipodal}
   Let $\phi$ be the scattering solution to $\Box_\eta\phi=0$ arising from data $\psi^{\incone}\in\A{phg}^{\mindex0}(\incone)+ \Hb^{0+}(\incone)$ and $v\pv\psi^{\scrim}\in \Hb^{0+}(\scrim)$. 
   Then $\psi$ satisfies \cref{eq:app:antipodal}.
\end{lemma}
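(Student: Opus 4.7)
The plan is to reduce the statement, via linearity and summability, to the corresponding identity on each fixed spherical harmonic, where the matching was already established in \cite{kehrberger_case_2022,masaood_scattering_2022}.

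First, by linearity, split $\phi=\phi_1+\phi_2$, where $\phi_1$ is the scattering solution with data $\psi^{\incone}$ and no incoming radiation, and $\phi_2$ is the scattering solution with trivial data on $\incone$ and incoming radiation $\pv\psi^{\scrim}$. Applying \cref{cor:app:minkowski_with_error} to $\phi_1$ and \cref{cor:prop:solution_with_radiation_scri} to $\phi_2$, and combining via a cutoff with the future-region result \cref{eq:prop:future_data}, we conclude that both $\psi_1=r\phi_1$ and $\psi_2=r\phi_2$ lie in $\A{phg}^{\mindex{0},\mindex{0},\mindex{0}}(\D)$ plus error terms that decay strictly faster than $\rho_\bullet^0$ towards each of the three boundaries $\scrim$, $I^0$, $\scrip$. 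In particular, both iterated limits in \cref{eq:app:antipodal} exist as elements of $H^\infty(S^2)$ and are given by the leading coefficients of the respective polyhomogeneous expansions towards $\scrim\cap I^0$ and $\scrip\cap I^0$; the error terms vanish in both iterated limits.

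Next, we reduce to a single angular mode. Expanding $\psi^{\incone}=\sum_\ell P^{(\ell)}_{S^2}\psi^{\incone}$ and $\psi^{\scrim}=\sum_\ell P^{(\ell)}_{S^2}\psi^{\scrim}$, the $\ell$-mode scattering solutions $\psi_{(\ell)}$ are again polyhomogeneous with the leading coefficients at $\scrim\cap I^0$ and $\scrip\cap I^0$ lying in $H^\infty(S^2)$. The angular regularity propagated by the estimates of \cref{sec:en:perturbations} (uniformly via commutations with angular momentum operators) together with the polyhomogeneity assertion yields the summability of these coefficients in $\ell$ in $H^\infty(S^2)$. Hence the projection $P^{(\ell)}_{S^2}$ commutes with both iterated limits, and it suffices to verify \cref{eq:app:antipodal} for data supported on a single spherical harmonic.

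For fixed $\ell$, we use the explicit representation \cref{eq:intro:explicit_sol} together with the conservation law \cref{eq:intro:conslaw}. Writing
\begin{equation*}
\psi_{\ell}=r^{\ell+1}\pu^{\ell}\!\left(\frac{f_{\mathrm{out}}(u)}{r^{\ell+1}}\right)+r^{\ell+1}\pv^{\ell}\!\left(\frac{f_{\mathrm{in}}(v)}{r^{\ell+1}}\right),
\end{equation*}
the functions $f_{\mathrm{out}}$ and $f_{\mathrm{in}}$ are determined respectively by $\psi^{\incone}$ (via repeated integration of the conservation law from $\incone$) and by $\psi^{\scrim}$. Computing $\lim_{v\to\infty}\psi_\ell$ at fixed $u$ and then $\lim_{u\to-\infty}$, and comparing with the reverse order of limits, one finds via the computation in \cite[Section~6]{kehrberger_case_2022} (reproduced in our setting in \cite{masaood_scattering_2022}) that the two differ exactly by the factor $(-1)^\ell$, which geometrically arises from the antipodal identification of the spheres at $\scrim\cap I^0$ and $\scrip\cap I^0$ implicit in labelling both by the same angular coordinate $\omega$.

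The main obstacle is not the fixed-$\ell$ computation itself, which is short and already present in the literature, but rather the summing of the resulting identities in $\ell$. This summing step is precisely what is unlocked by our propagation of polyhomogeneity results (\cref{cor:app:minkowski_with_error,cor:prop:solution_with_radiation_scri}): these guarantee, uniformly in $\ell$, both the existence of the limits and the summability of their angular coefficients, allowing the interchange of limit and $\ell$-sum.
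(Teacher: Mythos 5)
Your proposal follows the same broad strategy as the paper's proof — decompose by linearity, invoke the polyhomogeneity results so that the two iterated limits are read off from leading expansion coefficients, reduce to fixed $\ell$ via summability, and verify the $(-1)^\ell$ factor by an explicit fixed-mode computation — but the decomposition and the fixed-$\ell$ step differ in their execution. The paper splits $\psi$ into \emph{three} pieces: a "constant-data, no incoming radiation" piece $\psi^{(0)}$, a decaying remainder $\psi^{(1)}\in\Hb^{0+}$, and the incoming-radiation piece $\psi^{(2)}$. It then shows that both limits for $\psi^{(1)}$ vanish outright, cites \cref{rem:even:antipodal} for $\psi^{(0)}$, and does a self-contained explicit iterated-integral computation for $\psi^{(2)}_\ell$ culminating in the identity $\sum_{n=0}^\ell \tfrac{(-1)^n}{(n!)^2}\tfrac{(\ell+n)!}{(\ell-n)!}=(-1)^\ell$. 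Your merging of $\psi^{(0)}$ and $\psi^{(1)}$ into a single $\phi_1$ and appeal to the representation formula \cref{eq:intro:explicit_sol} plus the literature is in spirit correct and slightly more compact, at the cost of leaning on an external computation rather than reproducing it.

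One imprecision to flag: the claim that "$f_{\mathrm{out}}$ and $f_{\mathrm{in}}$ are determined respectively by $\psi^{\incone}$ and by $\psi^{\scrim}$" is not quite accurate. The incoming radiation determines $f_{\mathrm{in}}$ only modulo a polynomial of degree $\leq\ell$ in $v$ (precisely the ambiguity in \cref{eq:intro:explicit_sol}), and the data on $\incone$ in general fixes that polynomial part of $f_{\mathrm{in}}$ as well as $f_{\mathrm{out}}$. So the split is not a clean bijection between $\{f_{\mathrm{out}},f_{\mathrm{in}}\}$ and $\{\psi^{\incone},\psi^{\scrim}\}$. This does not break the argument — the leading corner coefficients are still well-defined via polyhomogeneity and the fixed-$\ell$ identity holds regardless of how one organizes the representation — but the statement as written should be rephrased, or one should do what the paper does and compute $\psi^{(2)}_\ell$ directly from the conservation law \cref{eq:intro:conslaw}, which sidesteps the ambiguity.
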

\begin{proof}
    We write $\psi=\psi^{(0)}+\psi^{(1)}+\psi^{(2)}$, where each $\psi^{(i)}$ solves $P_\eta\psi^{(i)}=0$ such that $\psi^{(0)}$ has data $\psi^{(0),\incone}=\lim_{u\to-\infty}\psi^{\incone}$, $\pv\psi^{(0),\scrim}=0$, $\psi^{(1)}$ has data $\psi^{(1),\incone}=\psi^{\incone}-\lim_{u\to-\infty}\psi^{\incone}\in\Hb^{0+}(\incone)$ and no incoming radiation, and $\psi^{(2)}$ has trivial data on $\incone$ and incoming radiation $\pv\psi^{(2),\scrim}=\pv\psi^{\scrim}$.

    For $\psi^{(1)}$, we may directly apply \cref{prop:prop:prop_no_incoming} to infer that both limits in \cref{eq:app:antipodal} vanish.

    For $\psi^{(2)}$, we are in the setting of \cref{cor:prop:solution_with_radiation_scri} with $\E^{\scrim}=\mindex0$ and $a_0>0$.
    (Note that error term has no contribution to either of the limits in \cref{eq:app:antipodal}.)
    In particular, the limits in \eqref{eq:app:antipodal} are well-defined and in fact commute with the projection operator; we can thus explicitly compute them by decomposing into $\ell$-modes and integrating \cref{eq:intro:conslaw}. 
    This gives the explicit formulae\footnote{It is easiest to check this by directly confirming that $\pu\pv\psi=-\ell(\ell+1)\psi/r^2$ holds.}
    \begin{equation}
        \psi^{(2)}_{\ell}=\sum_{n=0}^{\ell}\frac{(-1)^n}{n!r^n}\frac{(\ell+n)!}{(\ell-n)!}\underbrace{\int_{v_0}^v\cdots\int_{v_0}^{v^{(n)}} \pv\psi^{\scrim}(v^{(n)},\omega) \dd v^{(n)}\cdots \dd v^{(0)}}_{n+1 \text{ integrals}}.
    \end{equation}
Now, we note that
\begin{equation}
    \int_{v_0}^v\cdots\int_{v_0}^{v^{(n)}} \pv\psi^{\scrim}(v^{(n)},\omega) \dd v^{(n)}\cdots \dd v^{(0)}=\int_{v_0}^{\infty}\pv\psi^{\scrim}(v,\omega)\dd v\cdot\frac{v^n}{n!}+\Hb^{-n+}(\scrim).
\end{equation}
Using that $\sum_{n=0}^{\ell}\frac{(-1)^n}{(n!)^2}\frac{(\ell+n)!}{(\ell-n)!}$ is the zeroth coefficient of $(1-x)^{\ell}/x^{\ell}$, it follows that
\begin{align}
\lim_{v\to\infty}\psi^{(2)}_{\ell}=\sum_{n=0}^{\ell}\frac{(-1)^n}{(n!)^2}\frac{(\ell+n)!}{(\ell-n)!}\int_{v_0}^{\infty}\pv\psi^{\scrim}\dd v=(-1)^{\ell}\int_{v_0}^{\infty}\pv\psi^{\scrim}\dd v;
\end{align}
in particular, $\psi^{(2)}$ satisfies \cref{eq:app:antipodal}.

Finally, for $\psi^{(0)}$, we are in the setting of \cref{prop:prop:prop_no_incoming}, and it again suffices to prove the result for fixed $\ell$-modes. This has been done already in Equation (12.4) of \cite{kehrberger_case_2024}; see also \cref{rem:even:antipodal} in the present paper for a proof.
\end{proof}
\begin{cor}
  For $\delta>0$, let $\vec{a}=(-\delta/8,-\delta/4,-\delta/2)$ admissible, and let $\vec{a}^f=\vec{a}+(1,2,1)$ be a corresponding inhomogeneous weight. Let $rf\in \Hb^{\vec{a}^f+\vec{\delta}}(\D)$, and let $\mathcal{P}=\{V,\mathcal{N},P_g\}$ be extended short-range perturbations with gap $\geq\delta$.
  Finally, let $\psi\in \Hb^{\vec{a}}(\D)$  denote the scattering solution to \cref{eq:scat:scattering_def}, \cref{eq:scat:scat_with_inc_condition} with data $\psi^{\incone}\in\A{phg}^{\mindex0}(\incone)+\Hb^{0+}(\incone)$ and $v\pv\psi^{\scrim}\in\Hb^{0+}(\scrim)$.
  Then $\psi$ satisfies \cref{eq:app:antipodal}.
\end{cor}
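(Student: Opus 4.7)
The approach is to split off the Minkowskian part to which \cref{lemma:app:antipodal} applies directly, and to show that the remaining perturbative correction has \emph{vanishing} iterated limits at the corner $\scrim\cap\scrip$, so that the matching identity \cref{eq:app:antipodal} for the full $\psi$ reduces to that for the linear piece. Concretely, let $\psi_{\mathrm{lin}}$ be the scattering solution to $\Box_\eta\psi_{\mathrm{lin}}=0$ with the same data $\psi^{\incone}$ and $\pv\psi^{\scrim}$ (whose existence is provided by \cref{thm:scat:scat_incoming} applied with $\mathcal P=0$) and set $\psi_\Delta:=\psi-\psi_{\mathrm{lin}}$. By linearity, $\psi_\Delta$ satisfies
\begin{equation*}
\Box_\eta\psi_\Delta=f+\mathcal P[\psi],\qquad \psi^{\incone}_\Delta=0,\qquad\pv\psi^{\scrim}_\Delta=0,
\end{equation*}
i.e.\ it is a scattering solution of the \emph{linear inhomogeneous} Minkowskian wave equation with trivial characteristic data, no incoming radiation, and inhomogeneity $F:=f+\mathcal P[\psi]$. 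By \cref{lemma:app:antipodal}, $\psi_{\mathrm{lin}}$ already satisfies \cref{eq:app:antipodal}, so it suffices to prove that both iterated limits of $\psi_\Delta$ vanish identically.

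Next I would estimate $F$ using the gap assumption: $r\mathcal P[\psi]\in\Hb^{\vec a+(1,2,1)+\vec\delta}(\D)$ by the gap of $\mathcal P$ together with $\psi\in\Hb^{\vec a}(\D)$, while $rf\in\Hb^{\vec a^f+\vec\delta}(\D)=\Hb^{\vec a+(1,2,1)+\vec\delta}(\D)$ by hypothesis. Hence $rF\in\Hb^{\vec a^F}(\D)$ with $\vec a^F=\bigl(1+\tfrac{7\delta}{8},\,2+\tfrac{3\delta}{4},\,1+\tfrac{\delta}{2}\bigr)$, so in particular $a^F_->1$ and $\vec a^F-(1,2,1)$ has all components strictly positive. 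Since $\psi_\Delta$ has trivial incoming data from both $\incone$ and $\scrim$, I may apply the propagation-of-polyhomogeneity result \cref{prop:prop:inhomogeneous_rad} to $\psi_\Delta$ with \emph{empty} polyhomogeneous index sets and with $a^{\incone}=+\infty$, yielding
\begin{equation*}
\psi_\Delta\in\A{b,b,phg}^{a^\Delta_-,a^\Delta_0,\mindex{0}}(\D)+\Hb^{a^\Delta_-,a^\Delta_0,a^\Delta_+}(\D),
\end{equation*}
with all three weights $a^\Delta_-,a^\Delta_0,a^\Delta_+>0$ (one may take them close to $7\delta/8$, $3\delta/4$, and $\delta/2$ respectively).

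From this structure both iterated limits of $\psi_\Delta$ are immediate: Sobolev embedding on $\D$ yields a pointwise bound $|\psi_\Delta(u,v,\omega)|\lesssim\rho_-^{a^\Delta_--\epsilon}$ near $\scrim$, so $\lim_{u\to-\infty}\psi_\Delta=0$ at every finite $v$, and hence $\lim_{v\to\infty}\lim_{u\to-\infty}\psi_\Delta=0$; conversely, the polyhomogeneous $\mindex{0}$-expansion at $\scrip$ gives $\lim_{v\to\infty}\psi_\Delta(u,v,\omega)=f_0(u,\omega)$, where the leading coefficient $f_0$ inherits weight $a^\Delta_-$ at the corner $\scrim\cap\scrip$, forcing $\lim_{u\to-\infty}f_0=0$. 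Combining these two facts with the matching property of $\psi_{\mathrm{lin}}$ via \cref{lemma:app:antipodal} then proves the corollary.

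\textbf{Main obstacle.} The delicate step is producing strictly positive pointwise decay for $\psi_\Delta$ towards $\scrip$: the default admissibility constraint $a_+<0$ in the linear scattering estimate would a priori allow $\psi_\Delta$ to grow at $\scrip$. This is bypassed by invoking the propagation of polyhomogeneity, which upgrades the $\Hb$-bound into a polyhomogeneous $\mindex{0}$-expansion in $\rho_+$ whose leading coefficient depends only on $u$ and $\omega$; the positive weight $a^\Delta_->0$ at the $\scrim\cap\scrip$-corner then forces this leading coefficient to vanish as $u\to-\infty$. Everything else is straightforward weight bookkeeping, which becomes routine once the splitting $\psi=\psi_{\mathrm{lin}}+\psi_\Delta$ has been set up.
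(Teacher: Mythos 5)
Your proof is correct, and it takes a genuinely cleaner route than the paper's own argument. Both begin with the same decomposition $\psi=\psi_{\mathrm{lin}}+\psi_{\Delta}$ (the paper writes $\phi=\phi^{(0)}+\phi^{(1)}$), and both invoke \cref{lemma:app:antipodal} for the Minkowskian piece. The difference is in how $\psi_{\Delta}$ is handled: the paper expands $\mathcal{P}[\phi^{(0)}+\phi^{(1)}]$ via \cref{eq:app:diffofnonls} into a known inhomogeneity sourced by $\phi^{(0)}$ plus reduced-order nonlinearities in $\phi^{(1)}$, and then applies the \emph{nonlinear} propagation-of-polyhomogeneity result \cref{thm:app:general2} (with its larger regularity budget and index-set bookkeeping) to $\phi^{(1)}$. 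You instead observe that since $\psi$ is already known to exist and to satisfy $\psi\in\Hb^{\vec a}(\D)$, the full nonlinear term $\mathcal{P}[\psi]$ is a \emph{fixed} spacetime function, so $\psi_\Delta$ satisfies the \emph{linear} inhomogeneous Minkowskian equation with trivial data, no incoming radiation, and inhomogeneity $F=f+\mathcal{P}[\psi]$. The gap hypothesis then places $rF$ in $\Hb^{\vec a+(1,2,1)+\vec\delta}$, and the linear \cref{prop:prop:inhomogeneous_rad} with empty index sets gives precisely the positive-weight structure $\A{b,b,phg}^{a^\Delta_-,a^\Delta_0,\mindex{0}}+\Hb^{\vec a^\Delta}$ you need. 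This avoids the detour through the nonlinear theorem entirely. Two minor imprecisions that do not affect the argument: the relevant corner for the second iterated limit is $I^0\cap\scrip$ (not $\scrim\cap\scrip$, which is empty), and the leading coefficient $f_0(u,\omega)$ of the $\rho_+$-expansion inherits the weight $a^\Delta_0$ at $I^0$, not $a^\Delta_-$; since $a^\Delta_0>0$ too, the conclusion $\lim_{u\to-\infty}f_0=0$ is unaffected.
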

\begin{proof}
    As in the proof of \cref{cor:app:sharp_index_sets}, we write $\phi=\phi^{(0)}+\phi^{(1)}$, where $\phi^{(0)}$ solves $\Box_\eta\phi^{(0)}=0$ with data $\psi^{(0),\incone}=\psi^{\incone}$ and $v\pv\psi^{(0),\scrim}=v\pv\psi^{\scrim}\in\Hb^{0+}(\scrim)$. By \cref{lemma:app:antipodal}, $\psi^{(0)}$ satisfies \cref{eq:app:antipodal}.
    We then note that the difference satisfies $\Box_\eta\phi^{(1)}=f+\mathcal{P}[\phi^{(0)}+\phi^{(1)}]$. 
    Decomposing the nonlinearity as in \cref{eq:app:diffofnonls}, we may now apply \cref{thm:app:general2} to deduce that both limits in \cref{eq:app:antipodal} vanish for $\psi^{(1)}$. This completes the proof.

\end{proof}
	
	%%%%%%%%%%%%%%%%%%%%
	\newpage

    %%%%%%%%%%%%%%%%%%%%%
	\section{Wave equations on Schwarzschild and the summing of the \texorpdfstring{$\ell$}{l}-modes}\label{sec:Sch}

In this section, we discuss another application of \cref{thm:app:general}, namely to linearised gravity on Schwarzschild. 
More specifically, we will now use \cref{thm:app:general}, in conjunction with the ODE lemmata of \cref{sec:ODE_lemmas} to  upgrade the theorems obtained in the previous papers of the series \cite{kehrberger_case_2022-1,kehrberger_case_2024}, which were obtained for \textit{fixed} angular mode solutions, to solutions supported on \textit{all} angular frequencies (cf.~\cref{thm:intro:IIIsummed} from the introduction).
	 We will first do this in the case of the linear wave equation on Schwarzschild (\cref{sec:Sch:wave}), and then in the case of linearised gravity around Schwarzschild (\cref{sec:Sch:ling}).
Furthermore, we will briefly discuss a more suitable choice of coordinates in which we can construct optimal index sets in \cref{sec:Schw:bettercoords}.

	\subsection{The linear wave equation on Schwarzschild}\label{sec:Sch:wave}

    We begin by introducing the Schwarzschild manifold:
    Recall first the definition of the set $\Dopen=\{u\in(u_\infty,u_0],v\in[v_0,v_\infty)\}$ for some $u_0<0<v_0$ from \cref{eq:notation:regions}.
    We can equip $\Dopen$ with a Lorentzian metric $g$---called the Schwarzschild metric---as follows:
    \begin{equation}
        g=-4D(r)\dd u\dd v+r_M(r_\star)^2g_{S^2},\qquad D(r)=1-\frac{2M}{r_M},
    \end{equation}
    where $r_\star:=u+v$, and where $r_M(r_{\star})$ is implicitly defined via $r_\star=r_M+2M\log\Big(\frac{r_M-2M}{M}\Big)$---notice that $r_{\ast}$ is precisely the Minkowskian $r$, and $r_M$ is only equal to the Minkowskian $r$ up logarithmic corrections.
    We will from now on suppress the ${}_M$ subscript in $r_M$.
  For later reference, let us already define the functions $r_0(u):=r(u,1)$, and $D_0(u):=D(r_0)$.
	% We consider the linear wave equation on a Schwarzschild background $\Box_g\phi=0$.
 %    
	% It will be useful to introduce the coordinates $u=(t-r_\star)/2, v=(t+r_\star)/2$, where $r_\star=r+2M\log\Big(\frac{r-2M}{M}\Big)$ and $r=r(u,v)$, $r_0=r(u,1)$.
	% We also let $D:=D(r)=(1-\frac{2M}{r})$ and $D_0:=D(r_0)$.
 %    In these coordinate, we can express the metric on (the noncompactified) $\tilde{\D}$ as
	% \begin{equation}\label{eq:Schw:linear_on_Schw}
	% 	\Box_g \phi=0, \qquad g=-D(r)\dd t^2+D(r)^{-1} \dd r^2+r^2\dd \omega=-4D(r)\dd u\dd v+r^2\dd \omega.
	% \end{equation}
Next, we define the compactification $\D$ with respect to $u,v,r_\star$ coordinates (as opposed to $u,v,r$) as in \cref{sec:notation:geometric} (cf.~\cref{eq:notation:defining_functions}); that is, we define it w.r.t~$\rho_-=v/r_{\ast}$ etc., similarly for $\Dbold$.

We now regard $g$ as a perturbation around $\eta$ in coordinates $(u,v,\omega)$ by writing $g=\eta+8M/r \dd u \dd v+ (r_M^2-r_\ast^2) \dd \omega$.
Then, if we we consider the Schwarzschild wave equation 
\begin{equation}\label{eq:Schw:linear_on_Schw}
\Box_g\phi=0 \iff \pu\pv\psi=\frac{D(r) \Dl\psi}{r^2}-\frac{2MD(r)\psi}{r^3}
\end{equation} and write it as $\Box_\eta\phi=(\Box_g-\Box_\eta)\phi$, we see that $g$ in the coordinates $(u,v,\omega)$ is a short range perturbation of $\Box_{\eta}$ on $\D$ by \cref{rem:en:examples}.

	We now recall from  \cite{kehrberger_case_2022-1,kehrberger_case_2024} the following fixed $\ell$-mode results  (cf.~Theorem 10.1 of \cite{kehrberger_case_2024}, with $s=0$):
	\begin{thm}[\cite{kehrberger_case_2022-1,kehrberger_case_2024}]\label{thm:Schw:wave}
		Let $p>-1/2$, let $C(\omega)\in C^{\infty}(S^2)$, and let $\phi$ be the scattering solution to \cref{eq:Schw:linear_on_Schw} with no incoming radiation from $\scrim$ (cf.~\cref{eq:scat:data_no_incoming}) with $\psi^{\incone}-\frac{C(\omega)}{r_0^p}\in \Hb^{p+\epsilon}(\incone)$ for some $\epsilon>0$.% For convenience, we shall always assume that $p+\epsilon\notin\mathbb Z$.\footnote{We could easily drop this assumption but would then have to include various terms of the form $(\delta_{p+\epsilon\in\mathbb Z}\log r+1)$ in our error terms. Indeed, writing the error in \cref{eq:Schw:exp:pinN} in $\Hb$-spaces, we need not include this assumption, as the $\Hb$ spaces already contain a small error.}\todo{spell out}
		
		If $p\in\mathbb N$, then projections to fixed angular modes $\phi_\ell\cdot Y_{\ell}$ (we suppress the azimuthal number $m$ in the following) have the following expansion in $\D$ for any $N>p$:
		\begin{nalign}\label{eq:Schw:exp:pinN}
			\psi_{\ell}=\left(\sum_{n=0}^{N}\big(f_n^{(\ell,p)}(r_0)+g_n^{(\ell,p,\epsilon)}(r_0)\big)\frac{r_0^n}{r^n}\right)+M c_{\log}^{(\ell,p)}\frac{\log (r/r_0)}{r^{p+1}}+\underbrace{\O\left(\frac{r_0^{N+1-p}}{r^{N+1}}+\frac{Mr_0\log (r/r_0)}{r^{p+2}}+\frac{1}{r^{p+1+\epsilon-}}\right)}_{\in \A{b,phg}^{p-,\overline{(N+1,0)}}(\Dbold)+\A{b,phg}^{p+1-,\overline{(p+2,1)}}(\Dbold)+\Hb^{p+1+\epsilon-,p+1+\epsilon-}(\Dbold)}
		\end{nalign}
		for some constants $c_{\log}^{(\ell,p)}\in\mathbb R.$\footnote{Here, and below, the notation $\mathcal{O}$ simply means that the remainder is bounded in an $L^\infty$ sense by the function in the brackets.
        We included these for the convenience of the reader familiar with previous works on the subject.}
		If $\ell\geq p-1$, then the coefficients $c_{\log}^{(\ell,p)}$ are given explicitly by $c_{\log}^{(\ell,p)}= (-1)^{\ell+p} C_{\ell}\frac{(\ell+p+1)(\ell-p)}{p+1}$.
		
		Similarly, if $p\notin\mathbb N$, then we have for any $N>\lceil p\rceil$:
		\begin{nalign}\label{eq:Schw:exp:pnotinN}
			\psi_{\ell}=\left(\sum_{n=0}^{N}\big(f_n^{(\ell,p)}(r_0)+g_n^{(\ell,p,\epsilon)}(r_0)\big)\frac{r_0^{n}}{r^n}\right)+
			c^{(\ell,p)}\frac{M} {r^{p+1}}
			+\underbrace{\O\left(\frac{r_0^{N+1 -p}}{r^{N+1}}+\frac{Mr_0}{r^{p+2}}+ \frac{1}{r^{p+1+\epsilon-}}\right)}_{ \in \A{b,phg}^{p-,\overline{(N+1,0)}}(\Dbold)+\A{b,phg}^{p+1-,\overline{(p+2,0)}}(\Dbold)+\Hb^{p+1+\epsilon-,p+1+\epsilon-}(\Dbold)}
		\end{nalign}
		for some constants $c^{(\ell,p)}\in\mathbb R$. If $\ell\geq\lceil p-1\rceil$,  then $c^{(\ell,p)}=(-1)^{\ell+1}C_\ell\left((\ell+p+1)(\ell-p)p!(-p-2)!\right)$.
		In both cases, the functions $f_n^{(\ell,p,\epsilon)}$, $g_n^{(\ell,p,\epsilon)}$ are given by
		\begin{equation}
			f_n^{(\ell,p,\epsilon)}(r_0)=C_\ell S_{\ell,p,\ell,n,0}r_0^{-p}+M\cdot C_\ell S'_{\ell,p,n}r_0^{-p-1}+\O(M^2\cdot r_0^{-p-2})\in\A{phg}^{\mindex{p}}(\incone),\qquad g_n^{(\ell,p,\epsilon)}\in\Hb^{p+\epsilon}(\incone)
		\end{equation}
		where the $S_{\ell,p,\ell,n,0}$, given by \cite[(10.64)]{kehrberger_case_2024} and vanish for $n\geq \ell$, and the $S'_{\ell,p,n}$ are coefficients that we do not explicitly provide here (see  \cite[\S14]{kehrberger_case_2024}). 

  Similar expressions hold for initial data of the type $\psi^{\incone}=C(\omega) \frac{\log^q r}{r^p}$.
	\end{thm}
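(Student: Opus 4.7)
The proof, carried out in detail in \cite{kehrberger_case_2022-1,kehrberger_case_2024}, follows the iterative scheme outlined in Step 3 of \cref{sec:intro:sketch}. The crucial preparatory step is to work in the coordinates $(r_0, r)$ rather than $(u,v)$, casting \cref{eq:Schw:linear_on_Schw} into the form of \cref{eq:intro:Schw:waveequationinr0r}. As already emphasised in the introduction, this avoids the spurious logarithms in $r$ that would otherwise pollute every iterate when expanding around $M=0$, since $r_\ast = r - 2M\log((r-2M)/M)$. In these coordinates, I would additively decompose $\psi_\ell = \psi_\ell^{(0)} + M\psi_\ell^{(1)} + \psi_\ell^{(2)}$, where $\psi_\ell^{(0)}$ is the Minkowskian scattering solution with the same scattering data and no incoming radiation, $\psi_\ell^{(1)}$ solves an inhomogeneous Minkowskian wave equation absorbing the linear-in-$M$ corrections with trivial data, and $\psi_\ell^{(2)}$ absorbs all remaining terms.

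For the leading part $\psi_\ell^{(0)}$, I would exploit the Minkowskian conservation law \cref{eq:intro:conslaw}: combined with the no-incoming-radiation condition, this forces $(r^2\pv)^\ell \psi_\ell^{(0)}$ to be a function of $u$ alone, entirely determined by its restriction to $\incone$. Inductive integration in $v$ from $\incone$ then produces the conformally smooth sum $\sum_{n=0}^N f_n^{(\ell,p)}(r_0) r_0^n / r^n$, with the explicit coefficients $S_{\ell,p,\ell,n,0}$ obtained by repeated application of the binomial theorem on $r_0^{-p}$; in particular, these coefficients vanish for $n \geq \ell$ because $(r^2\pv)^\ell (r_0^{\ell-p} r^{-\ell})$ is a polynomial of degree $\ell$ in $1/r$.

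For the $M$-correction $\psi_\ell^{(1)}$, the inhomogeneity is a linear combination of Schwarzschild correction terms acting on $\psi_\ell^{(0)}$, the leading one being proportional to $r^{-3}\psi_\ell^{(0)}$. Applying the inhomogeneous version \cref{eq:intro:conslawf} of the conservation law and integrating $\ell+1$ times in $v$ from $\incone$, the relevant integral in $u$ schematically takes the form $\int_{-\infty}^u r_0^{\ell-p} r^{-2\ell-3} \, \mathrm{d}u'$. A careful computation, closely parallel to that of \cref{lem:app:potential} and executed via the explicit change of variables in Appendix B of \cite{kehrberger_case_2024}, shows that this integral produces exactly a $\log(r/r_0)/r^{p+1}$ contribution when $p \in \mathbb{N}$ (through the resonance between the decay rate of the source and the available power of $1/r$) and a $1/r^{p+1}$ contribution when $p \notin \mathbb{N}$. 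Tracking constants through these integrations yields the explicit formulae for $c_{\log}^{(\ell,p)}$ and $c^{(\ell,p)}$ stated in the theorem, and the restriction $\ell \geq p-1$ ensures that the combinatorial factors do not vanish.

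The main obstacle, and the part that absorbs the bulk of the work in \cite{kehrberger_case_2022-1,kehrberger_case_2024}, is the precise bookkeeping of the error terms $g_n^{(\ell,p,\epsilon)}$ arising from the $\Hb^{p+\epsilon}(\incone)$-part of the data as well as the control of $\psi_\ell^{(2)}$. These are closed via weighted energy estimates in the $(r_0, r)$ coordinates, treating the Schwarzschild-corrected operator \cref{eq:intro:Schw:waveequationinr0r} perturbatively around its Minkowskian principal part. We emphasise that these estimates degenerate as $\ell\to\infty$ (requiring an $\ell$-dependent number of commutations with $r^2\pv$), which is precisely the obstruction to summation over $\ell$ that the present paper resolves \emph{a posteriori} via \cref{thm:app:general}, as will be carried out in \cref{thm:Schw:wave:summed} below.
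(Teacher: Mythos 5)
Your proposal matches the paper's own proof sketch essentially verbatim: the change to $(r_0,r)$ coordinates to absorb the logarithmic bending of the lightcones, the three-term decomposition (the paper's $\phi_{M^0}$, $\phi_{M^1}$, $\phi_{\Delta}$ corresponding exactly to your $\psi_\ell^{(0)}$, $M\psi_\ell^{(1)}$, $\psi_\ell^{(2)}$), the use of the homogeneous and inhomogeneous conservation laws to compute the first two iterates, and the closing error estimate for the remainder. One small correction: the hypothesis $\ell\geq p-1$ is not there to prevent a combinatorial factor from vanishing (indeed $c_{\log}^{(\ell,p)}=0$ at $\ell=p$); it marks the range in which the cited works derived the closed-form expressions, with the general case handled via time inversion as noted in the remark following the theorem.
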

 \begin{rem}[Comparison to \cite{kehrberger_case_2024}]
     \cref{thm:Schw:wave} includes three points that are not stated explicitly in \cite[Theorem~10.1]{kehrberger_case_2024}: 
     Firstly, that the sum in \cref{eq:Schw:exp:pinN} can be taken until $N$ arbitrarily large. Secondly, that regardless of how large $\epsilon$ is taken, there will only be $\log^k$-terms with $k\leq 1$ in \cref{eq:Schw:exp:pinN}. While the first point is trivial, the second was not explicitly stated or proved in \cite{kehrberger_case_2024} and requires a more detailed understanding of the proof itself: It relies on the fact that when plugging in the leading order asymptotics in order to obtain refined asymptotics as described in \cite[ \S10.3]{kehrberger_case_2024}, one will at worst encounter integrals of the form $\int_{-\infty}^u\frac{\log r/r_0}{r^N}$, which, for $N\geq 1$, do not produce further log-terms.
We don't give further details here because we will give an independent and self-contained proof \cref{sec:Schw:bettercoords}.
Finally, \cref{thm:Schw:wave} states that the expressions for $c^{(\ell,p)}$ are valid for $\ell\geq p-1$ rather than for $\ell-p\geq 0$ as in \cite{kehrberger_case_2024}. This extension is contained in  \cite[\S9]{kehrberger_case_2022-1} (the extension to the general case could be dealt with via time inversion as in \cite{kehrberger_case_2022}).
 \end{rem}
	\begin{rem}[Lossy error term towards $\scrim$]\label{rem:Schw:lossy}
It may at first sight be confusing that the error \cref{eq:Schw:exp:pinN}, \cref{eq:Schw:exp:pnotinN} above decays like $r^{-p+}$ towards $\scrim$ despite the data along $\incone$ having $r^{-p-\epsilon}$-error.
This apparent loss can, in fact, only be remedied by letting the sum in e.g.~\cref{eq:Schw:exp:pinN} go at least until $N$ for $N\geq\ell$ (It doesn't come from the error term in the data, but rather from the $r_0^{-p}$-term.)
\end{rem}
\begin{rem}[Higher-order asymptotics]\label{rem:Schw:higher}
		In the same context, we note that, in order to compute higher order asymptotic expansions, it is necessary to fully resolve the semi-global asymptotics of fixed $\ell$-modes (i.e.~to always consider $N\geq\ell$) and to then plug those into the higher-order iteration scheme. In particular, \cref{eq:Schw:exp:pinN} with $N<\ell$ \textit{cannot} be used to compute the precise asymptotics of the next iterate; in fact, already computing the $\log$-term in \cref{eq:Schw:exp:pinN} essentially requires resolving the sum completely ($N\geq\ell$).
  This may be compared to \cref{section:no incoming radiation Cauchy}, where we consider a different perspective and understand the creation of these terms via certain Fourier multipliers.
	\end{rem}	
	\begin{proof}[Sketch of the proof]
		In order to make this section more self-contained, we provide a short sketch of how to prove \cref{thm:Schw:wave} that aligns with the general spirit of this paper. The existence of the scattering solution of course follows from \cref{thm:scat:scat_general}, so we only focus on the derivation of the asymptotic expansions \cref{eq:Schw:exp:pinN,eq:Schw:exp:pnotinN}:
        
		The idea is to expand the Schwarzschild wave equation around its Minkowskian counter-part, in a similar fashion to how we treated \cref{lem:app:potential}.
  A good choice of coordinates to do this is $(r_0,r)$ as defined above, since they capture the logarithmic divergence of the light cones in Schwarzschild compared to Minkowski. 
		In these coordinates, the wave equation reads
		\begin{nalign}\label{eq:Schw:waveequationinr0r}
			-(D_0 \partial_{r_0}+D \partial_r)(D\partial_r(r\phi))=\frac{D\Dl(r\phi)}{r^2}-\frac{2MD(r\phi)}{r^3}
		\end{nalign}
The iteration takes place in powers of the decay of the iterates towards $I^0$. Since $M$ is the only length-scale in this problem, we can write this iteration in terms of powers of $M$ (though this has nothing to do with $M$ being a small parameter, it is simply convenient notation).

		In a first step, we thus set $M=0$ in this equation and compute the Minkowskian solution $\phi_{M^0}$ with data $r\phi_{M^0}|_{\incone}=C(\omega)r_0^{-p}$; we interpret this solution to be of order $\O(M^0)$. This gives us precisely the $S_{\ell,p,\ell,n,0}$ terms.
		
		In a second step, we insert $\phi_{M^0}+ \phi_{M^1}$ into \cref{eq:Schw:waveequationinr0r}, regard $\phi_{M^1}$ to be of order $M^1$ and only keep terms of order $M^1$. The result is a Minkowskian wave equation for $\phi_{M^1}$ with inhomogeneity sourced by $\phi_{M^0}$:
		\begin{equation}\label{eq:pm:M}
			-(\partial_{r_0}+\partial_{r})(\partial_{r}(r\phi_{M^1}))-\frac{1}{r^2}\Dl (r\phi_{M^1})=2M
			\left(-\left(\frac{1}{r_0}+\frac{1}{r}\right)\partial_{r_0}\partial_{r}
			-\frac{2}{r}\partial_{r}^2+\frac{1}{r^{2}}\partial_{r}
			-\frac{1}{r^{3}}\Dl-\frac{1}{r^{3}}
			\right)(r\phi_{M^0})
		\end{equation}
		We pose trivial data for this equation. Then, $\phi_{M^1}$ gives us precisely those terms in \cref{eq:Schw:exp:pinN}, \cref{eq:Schw:exp:pnotinN} that are multiplied by $M$; in particular, it contains the conformal irregularity. 
		We conclude by proving an error term estimate for the equation satisfied by $\phi_{\Delta}=\phi-\phi_{M^0}-\phi_{M^1}$, the Schwarzschildean wave equation with inhomogeneity sourced by $\phi_{M^0}$ and $\phi_{M^1}$, with data $r\phi_{\Delta}|_{\incone}=\O(r^{-p-\epsilon})$.
		
		Note that we could carry on with the procedure of expanding into powers of $M$ to higher orders to obtain more precise asymptotics, however, this will produce no higher order $\log^k$-terms with $k>1$!
	\end{proof}
    
Now, the goal of this subsection is to upgrade this fixed $\ell$-mode result to a statement on the full solution. To this end, we note that our \cref{thm:app:general} immediately implies the following corollary:
	\begin{cor}\label{cor:Schw:phg}
		1) Let $\phi_{\mathrm{phg}}$ be the scattering solution to \cref{eq:Schw:linear_on_Schw} with no incoming radiation and with $\psi_{\phg}|_{\incone}=\psi_{\mathrm{phg}}^{\incone}\in \A{phg}^{\mathcal{E}^{\incone}}(\incone)$, with  $\min(\E^{\incone})>-1/2$.
		Then $\psi_{\mathrm{phg}} \in \A{phg,phg}^{\mathcal{E}_0^{\phi},\mathcal{E}^{\phi}_+}(\Dbold )$
   for some index sets $\mathcal{E}^{\phi}_0,\mathcal{E}^{\phi}_+$.
		
		2) Let $\phi_{\Delta}$ be the scattering solution to the linear wave equation on Schwarzschild with no incoming radiation and with $\psi_{\Delta}^{\incone}\in \Hb^{p+\epsilon}(\incone)$, with $p+\epsilon\geq-1/2$.
		Then $\psi_{\Delta}\in \A{b,phg}^{p+\epsilon-, \overline{(0,0)}}(\Dbold )+ \Hb^{p+\epsilon-,p+\epsilon-}(\Dbold)$.
	\end{cor}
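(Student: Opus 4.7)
The plan is to realise the Schwarzschild wave equation \cref{eq:Schw:linear_on_Schw} as a perturbation of the Minkowskian wave equation on the compactification $\Dbold$ defined with respect to the Minkowskian coordinates $u,v,r_\ast$, so that \cref{thm:app:general} may be applied off the shelf. To this end, I would first rewrite \cref{eq:Schw:linear_on_Schw} as $\Box_\eta\phi=(\Box_\eta-\Box_g)\phi=:\mathcal{P}[\phi]$ and decompose $\mathcal{P}$ into a quasilinear part arising from $g-\eta=\tfrac{8M}{r_M}\dd u\dd v+(r_M^2-r_\ast^2)\dd\omega^2$ and a potential part $-2MD/r^3$.

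The main verification is that $\mathcal{P}$ is a short-range perturbation compatible with the no incoming radiation condition in the sense of \cref{def:en:short_range}, and, moreover, has polyhomogeneous coefficients. For this I would use the implicit expansion $r_M=r_\ast-2M\log(r_\ast/M)+\dots$, which gives $r_M-r_\ast,\,D-1\in\A{phg}^{\E}(\Dbold)$ for some index set $\E$ with $\min\E>0$, and hence $(g^{-1}-\eta^{-1})^{uv},\,(g^{-1}-\eta^{-1})^{AB}$ and $-2MD/r^3$ all decay at least like $r_\ast^{-1+}$ (in fact faster in the angular part) in directions tangent to $I_0$ and towards $\scrip$, without weights towards $\scrim$---precisely the no incoming radiation compatibility. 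This has essentially already been recorded in \cref{rem:en:examples}, and I would add the short computation needed to check all the admissibility inequalities of \cref{def:en:short_range} against any admissible $\vec{a}$ with $a_->-1/2$.

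With $\mathcal{P}$ identified as a short-range perturbation of the required form, both parts of the corollary become direct applications of \cref{thm:app:general}. For Part 1, the data $\psi_\phg^{\incone}\in\A{phg}^{\mathcal{E}^{\incone}}(\incone)$ together with the polyhomogeneity of the coefficients of $\mathcal{P}$ and $f=0$ puts us in the setting of \cref{thm:app:general}\cref{item:app:poly}, yielding $\psi_\phg\in\A{phg}^{\E_0^\phi,\E_+^\phi}(\Dbold)$ with $\E_0^\phi,\E_+^\phi$ built from the index set rules \cref{eq:app:general_index_set}. For Part 2, the data $\psi_\Delta^{\incone}\in\Hb^{p+\epsilon}(\incone)$ with $f=0$ puts us in the setting of \cref{thm:app:general}\cref{item:app:error}, which gives $\psi_\Delta\in\A{b,phg}^{\bar{a}_0,\mindex{0}}(\Dbold)+\Hb^{\bar{a}_0,\bar{a}_+}(\Dbold)$ for any $\bar{a}_0<a_-=p+\epsilon$ and any $\bar{a}_+<\bar{a}_0$; taking $\bar{a}_0,\bar{a}_+$ arbitrarily close to $p+\epsilon$ recovers the claimed $\A{b,phg}^{p+\epsilon-,\overline{(0,0)}}(\Dbold)+\Hb^{p+\epsilon-,p+\epsilon-}(\Dbold)$ membership.

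The only genuine obstacle is the first step: carefully checking that the logarithmic divergence between $r_M$ and $r_\ast$ is harmless on $\Dbold$ and that the resulting $\mathcal{P}$ indeed satisfies all the weight inequalities of \cref{item:en:potential_perturbation}b, \cref{item:en:quasilinear_perturbation}b. Once this bookkeeping is done, the rest is a black-box application of \cref{thm:app:general} and requires no further analysis.
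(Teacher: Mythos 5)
Your proposal is correct and follows essentially the same route as the paper: the paper's proof is precisely "apply \cref{thm:app:general}\cref{item:app:poly} for part 1) and \cref{thm:app:general}\cref{item:app:error} for part 2), using that $\Box_g$ is a short-range perturbation of $\Box_\eta$," with the short-range verification you spell out deferred to \cref{rem:en:examples} and the preamble of \cref{sec:Sch:wave} (which already records that $g=\eta+\tfrac{8M}{r}\dd u\dd v+(r_M^2-r_\ast^2)\dd\omega^2$ in $(u,v,\omega)$ coordinates is short-range on $\D$, hence on $\Dbold$). The extra bookkeeping you propose to do explicitly is exactly the content of those earlier remarks, so your write-up is a fleshed-out version of the same one-line proof.
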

 \begin{rem}
     As before, the assumptions on $p+\epsilon$, $\min(\E^{\incone})$ can be removed since we are working in no incoming radiation spaces.
 \end{rem}
	\begin{proof}
		1) This follows from \cref{thm:app:general} \cref{item:app:poly}, since $\Box_g$ is a short range perturbation of $\Box_\eta$.
	
		2) We use \cref{thm:app:general} \cref{item:app:error} instead.
	\end{proof}
	To conclude: Theorem~\ref{thm:Schw:wave} tells us the coefficients of the asymptotic expansions of fixed angular mode solutions, whereas Corollary~\ref{cor:Schw:phg} tells us that the full solution has a polyhomogeneous expansion towards $\scrip$ up to error term. Together, the two results give the following:
	\begin{thm}\label{thm:Schw:wave:summed}
		%Let $p>-1/2$, and let $\phi$ be the scattering solution the linear wave equation on Schwarzschild with no incoming radiation and with $(r\phi)|_{\incone}=\psi_{\phg}^{\incone} +\psi_{\Delta}^{\incone}$, where $\psi_{\phg}^{\incone}=C(\omega)r_0^{-p}$ for some $C(\omega)\in C^{\infty}(\mathbb S^2)$, and where $\psi_{\Delta}^{\incone}\in \Hb^{p+\epsilon}(\incone)$. We assume for simpler notation that $p+\epsilon\notin\mathbb Z.$
  Under the assumptions of \cref{thm:Schw:wave}, if $p\in\mathbb N$, then $\phi$ satisfies throughout $\tilde{\D}$ for any $N>p$:
		\begin{equation}\label{eq:Schw:expansionfull}
			\psi=\sum_{n=0}^{N}f_n^{(p,\epsilon)}(r_0,\omega)\frac{r_0^n}{r^n}+c_{\log}^{(p)}(\omega)\frac{M \log r/r_0}{r^{p+1}}+\underbrace{{\O}\left(\frac{r_0^{N+1-p}}{r^{N+1}}+\frac{Mr_0\log (r/r_0)}{r^{p+2}}+\frac{1}{r^{p+\epsilon-}}\right)}_{\in \A{b,phg}^{p-,\overline{(N+1,0)}}(\Dbold)+\A{b,phg}^{p+1-,\overline{(p+2,1)}}(\Dbold)+\Hb^{(p+\epsilon-,p+\epsilon-)}(\Dbold) }.
		\end{equation}
		Here, the functions $f_n^{(p,\epsilon)}(r_0,\omega)$ and $c_{\log}^{(p)}(\omega)$ satisfy
		\begin{equation}\label{eq:Schw:thm:wave:projections}
			\langle f_n^{(p,\epsilon)},Y_{\ell}\rangle=f_n^{(\ell,p,\epsilon)}+\O(r_0^{-p-\epsilon+}) \in \A{phg}^{\overline{(p,0)}}(\incone)+\Hb^{p+\epsilon-}(\incone), \qquad \langle c_{\log}^{(p)}(\omega),Y_{\ell}\rangle =c_{\log}^{(\ell,p)},
		\end{equation}
		where $f^{(l,p,\epsilon)}_n,c_{\log}^{(\ell,p)}$ are defined in \cref{thm:Schw:wave}. An analogous statement holds for $p\notin\mathbb N$.
	\end{thm}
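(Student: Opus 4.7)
By linearity of the scattering map, I would decompose $\phi = \phi_{\mathrm{phg}} + \phi_{\Delta}$ corresponding to the splitting of the data $\psi^{\incone} = C(\omega)r_0^{-p} + \bigl(\psi^{\incone} - C(\omega)r_0^{-p}\bigr)$ along $\incone$, with both pieces having no incoming radiation from $\scrim$. The plan is to apply \cref{cor:Schw:phg} 1) to $\phi_{\mathrm{phg}}$ to obtain $\psi_{\mathrm{phg}} \in \A{phg,phg}^{\mathcal{E}_0^{\phi},\mathcal{E}_+^{\phi}}(\Dbold)$ for some index sets $\mathcal{E}_0^{\phi},\mathcal{E}_+^{\phi}$ (whose precise structure we do not need to determine at this point), and \cref{cor:Schw:phg} 2) to $\phi_{\Delta}$ to obtain $\psi_{\Delta} \in \A{b,phg}^{p+\epsilon-,\overline{(0,0)}}(\Dbold) + \Hb^{p+\epsilon-,p+\epsilon-}(\Dbold)$. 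This already establishes the existence of a polyhomogeneous expansion towards $\scrip$ (plus error term) for the full summed solution---crucially, without any need to sum angular-mode estimates.

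The second step is to identify the coefficients of this expansion by projection onto fixed spherical harmonics. The projection $\phi_{\ell}$ solves the same scattering problem with data $C_\ell\,r_0^{-p} + \langle\psi_{\Delta}^{\incone},Y_{\ell}\rangle$, and so \cref{thm:Schw:wave} applies mode by mode, providing the explicit fixed-$\ell$ coefficients $f_n^{(\ell,p)}$, $g_n^{(\ell,p,\epsilon)}$ and $c_{\log}^{(\ell,p)}$. Since coefficients in a polyhomogeneous expansion on $\Dbold$ are uniquely determined (as distributions on the relevant boundary, which includes $S^2$) by their pairing with $(\rho\partial_\rho)^k$ and limits, the coefficients of the summed expansion \emph{must} agree in each spherical harmonic slot with the fixed-$\ell$ ones. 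I would therefore \emph{define}
\begin{equation*}
    f_n^{(p,\epsilon)}(r_0,\omega) := \sum_\ell \bigl(f_n^{(\ell,p)}(r_0) + g_n^{(\ell,p,\epsilon)}(r_0)\bigr)\,Y_\ell(\omega), \qquad c_{\log}^{(p)}(\omega) := \sum_\ell c_{\log}^{(\ell,p)}\,Y_\ell(\omega),
\end{equation*}
with convergence of the series in the relevant function spaces guaranteed a posteriori by the known polyhomogeneity. By construction, the projection identities \cref{eq:Schw:thm:wave:projections} hold.

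The third step is to reassemble the error term. The tail of the expansion of $\psi_{\mathrm{phg}}$ beyond order $N$ lives in $\A{b,phg}^{p-,\overline{(N+1,0)}}(\Dbold)$ (since leading $r_0$-decay of each coefficient is $r_0^{-p}$), the subleading structure containing the $\log$-term and higher corrections lies in $\A{b,phg}^{p+1-,\overline{(p+2,1)}}(\Dbold)$, and the contribution from $\psi_{\Delta}$ is absorbed: its polyhomogeneous piece $\A{b,phg}^{p+\epsilon-,\overline{(0,0)}}(\Dbold)$ contributes the $g_n^{(p,\epsilon)}$ terms to $f_n^{(p,\epsilon)}$ at each order $n\leq N$, and the tail (beyond $N$) is again absorbed into $\A{b,phg}^{p-,\overline{(N+1,0)}}(\Dbold)$ since $p+\epsilon>p$; the residual $\Hb^{p+\epsilon-,p+\epsilon-}(\Dbold)$ matches directly. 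The case $p\notin\mathbb{N}$ follows identically using \cref{eq:Schw:exp:pnotinN} in place of \cref{eq:Schw:exp:pinN}.

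The only delicate point---and the reason the summing of $\ell$-modes looked hard from the perspective of \cite{kehrberger_case_2022-1,kehrberger_case_2024}---is that the fixed-$\ell$ error estimates have constants growing in $\ell$ and are not directly summable. What bypasses this obstacle entirely is that \cref{cor:Schw:phg} (via the general propagation-of-polyhomogeneity theorems from \cref{sec:app}) gives the \emph{existence} of the summed expansion independently of the fixed-$\ell$ analysis; the fixed-$\ell$ theorem is then used only to \emph{identify} coefficients mode by mode, an operation that requires no summability at all.
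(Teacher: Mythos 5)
Your argument is correct and follows the same route as the paper: decompose the data as a polyhomogeneous piece plus an $\Hb^{p+\epsilon}$ remainder, invoke \cref{cor:Schw:phg} to get polyhomogeneity-plus-error of the summed solution (with a priori unspecified index sets), and then identify (and rule out extraneous) coefficients by projecting onto fixed $\ell$-modes and applying \cref{thm:Schw:wave}. Your write-up is somewhat more explicit than the paper's proof about the coefficient-uniqueness step and the reassembly of the error term, but the decomposition, the key lemma, and the logic are identical.
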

	\begin{proof}
We write the initial data as $\psi^{\incone}=\psi_{\phg}^{\incone}+\psi_{\Delta}^{\incone}\in \A{phg}^{\mindex{p}}(\incone)+\Hb^{p+\epsilon}(\incone)$ and apply \cref{cor:Schw:phg}, thus obtaining an expansion of the form \cref{eq:Schw:expansionfull} (with potentially larger index sets, i.e.~more $\log$-terms~etc.)
We then compute the coefficients in these expansions (which hold in an $L^2$ sense) by projecting onto fixed angular modes, for which we can apply \cref{eq:Schw:exp:pinN}. In particular, this then proves \cref{eq:Schw:thm:wave:projections}.
		%The identities \cref{eq:Schw:thm:wave:projections} then follow from \cref{eq:Schw:expansionfull} (which holds in an $L^2$ sense) and the fixed angular mode results \cref{eq:Schw:exp:pinN}.
	\end{proof}
	\begin{rem}
		Notice that in \cref{eq:Schw:expansionfull}, the error term is only of order $r^{-p-\epsilon}$ towards $\scrip$, which is 1 power worse compared to the fixed $\ell$-mode result \cref{eq:Schw:exp:pinN}. In this sense, there seems to be a certain loss going from fixed $\ell$-modes to the resummed solution, see \cref{conj:even:peeling} for further commentary.
  On the other hand, notice that if we specified an expansion for $\psi^{\incone}\in\A{phg}^{\E}(\incone)$ for some index set with $\min\E=(p,0)$, then this will only introduce error terms beyond order $p+1$ towards $\scrip$.
	\end{rem}

    \subsection{Aside: Construction of the sharp index set}\label{sec:Schw:bettercoords}
    \newcommand{\pvbar}{\partial_{\bar{v}}}
    \newcommand{\pubar}{\partial_{\bar{u}}}

    When working with coordinates $u,v$ and corresponding smooth structure on $\D$, we observe that the wave equation~\cref{eq:Schw:linear_on_Schw} has a perturbation $(r^2-r_\star^2)\Dl$, whose coefficient contains a logarithmic term.
    This introduces many extra logarithmic terms in the non specified index sets in \cref{cor:Schw:phg}.
    Of course, as we have repeatedly stated before, this is not directly relevant since we have an algorithmic procedure to compute the actual coefficients in the index sets. Nevertheless, we now showcase that with a better choice of coordinates, one can immediately arrive at the sharp index set in \cref{cor:Schw:phg}.
    Furthermore, we can also show that there's only a single logarithmic term appearing, that is the expansion does not contain terms of the form $r^{-p}\log^k r$ with $k\geq2$.
   
We introduce coordinates $\bar{u}=-r_0, \bar{v}=r+\bar{u}$ and write the Schwarzschild metric as
		\begin{nalign}
			&g&&=-\underbrace{\Big(D(2D_0^{-1}-D^{-1})^2-D^{-1}\Big)}_{=\frac{-8MD_0^{-2}\bar{v}}{rr_0}\sim\rho_-^2\rho_0}\dd \bar{u}^2-4D_0^{-1}\dd\bar{u}\dd\bar{v}+r^2\dd g_{S^2},&&
            &g^{-1}&&=-\frac{2M\bar{v}}{rr_0}\partial_{\bar{v}}^2-D_0\partial_{\bar{v}}\partial_{\bar{u}}+r^{-2}g_{S^2}^{-1}.
		\end{nalign}
    For this metric, $\{\bar{v}=v_0\}=\incone$ as well as the $\bar{u}=$~constant hypersurfaces are null.
    We modify \cref{def:notation:comp} (with $u,v$ in \cref{eq:notation:defining_functions} replaced by $\bar{u},\bar{v}$), to define the compactifications $\D^{\mathrm{S}},\Dbold^{\mathrm{S}}$ of $\Dopen$.
    This corresponds to modifying the smooth structure of $\D, \Dbold$ at subleading level.\footnote{See \cite{hintz_stability_2020} for a similar change in a neighbourhood of timelike infinity.}
    We have that the wave operator takes the form
    \newcommand{\DiffS}{\Diff^2_{\b}(\Dbold^{\mathrm{S}})}
      \begin{nalign}\label{eq:Schw:box_in_uvbar}
       \bar{ \Box}_g&=-\frac{2M\bar{v}}{rr_0}\partial_{\bar{v}}^2-D_0\partial_{\bar{v}}\partial_{\bar{u}}+r^{-2}\Dl+\frac{D_0}{r}(\partial_{\bar{v}}-\partial_{\bar{u}})-\frac{2M(r+\bar{v})}{r_0r^2}\pvbar=D_0\bar{\Box}_\eta-D_0(\bar{\Box}_{\eta}-D_0^{-1}\bar{\Box}_g),\\
       \bar{\Box}_{\eta}&=-\pubar\pvbar+\frac1r(\pvbar-\pubar)+r^{-2}\Dl,\\
       \bar{\Box}_{\eta}-D_0^{-1}\bar\Box_{g}&=(1-D_0^{-1})r^{-2}\Dl+\frac{2M}{\bar{v}rr_0D_0}\bar{v}^2\pvbar^2+\frac{2M(r+\bar{v})}{r_0r^2}\pvbar\in \frac{1}{r^2r_0}\DiffS,
     %  \\
      %  (\bar{\Box}_g-\bar{\Box}_\eta)=-\frac{2M}{rr_0}\bar{v}\partial_{\bar{v}}^2+\frac{8M}{r_0}\partial_{\bar{v}}\partial_{\bar{u}}-\frac{2M}{r_0r}(\partial_{\bar{v}}-\partial_{\bar{u}})+\frac{M(2r+2\bar{v})}{r_0r^2}\partial_{\bar{v}}
    \end{nalign}
    where $\DiffS$ denotes linear combinations of $1,\Dl,\bar{v}\pvbar, \bar{u}\pubar$ with coefficients in $\A{phg}^{\mindex0,\mindex0}(\Dbold^{\mathrm{S}})$.
    
    Now, the appearance of $\pvbar^2$ term makes the operator unfit for the application of \cref{thm:scat:scat_general} or \cref{thm:app:general}.
    Furthermore, the appearance of $\bar{v}$ factors make it non compatible with the no incoming radiation condition.
    Instead of including these specific terms in the notationally already heavy theorems, we prove a sharp result on the index set by hand following \cref{lemma:app:linear_improvement}, treating $\bar{\Box}_g$ as a perturbation around Minkowski.
    
    \begin{lemma}\label{lemma:Schw:no_logk_terms}
        Let $\phi_{\mathrm{phg}}$ be as in \cref{cor:Schw:phg} 1).
        Then we can take the index sets to be $\E^\phi_0=\E^{\incone}$ and $\E^\phi_+=(\E^{\incone}+1)\cupdex\mindex0$. 
    \end{lemma}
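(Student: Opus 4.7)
The plan is to mimic the strategy of \cref{lemma:app:linear_improvement}, but adapted to the modified smooth structure on $\Dbold^{\mathrm{S}}$ coming from the coordinates $(\bar{u},\bar{v})$. The key point is that in these coordinates the wave operator splits as $\bar{\Box}_g = D_0 \bar{\Box}_\eta - D_0 E$ where $E \in \tfrac{1}{r^2 r_0}\DiffS$, and $r_0^{-1}$ decays towards $\scrim$ in $\Dbold^{\mathrm{S}}$ (i.e.\ as $\bar{u}\to-\infty$). Thus $E$ is genuinely short-range towards the two $\b$-boundaries of $\Dbold^{\mathrm{S}}$, even though it contains $\bar{v}\pvbar^2$ and angular terms with non-trivial coefficients. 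Accepting that a polyhomogeneous expansion of $\psi_{\mathrm{phg}}$ on $\Dbold^{\mathrm{S}}$ exists for \emph{some} index sets (obtained by running the proof of \cref{prop:prop:prop_no_incoming} in the new coordinates, with the second-order $\pvbar$-term absorbed via a straightforward modification of the energy estimates from \cref{sec:en} -- the extra $\pvbar^2$ comes with a weight $\rho_-^2\rho_0^2 D_0^{-1}$ and thus acts as a short-range potential when paired against a $\pvbar$-multiplier), it remains to identify the sharp $\E^\phi_0$ and $\E^\phi_+$.

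First I would prove $\E^\phi_0 = \E^{\incone}$ by commuting with $\bar{S} = \bar{u}\pubar + \bar{v}\pvbar$, which, in coordinates $\rho=1/r$, $\tau = (r_0+\bar v)/r$ adapted to $\Dbold^{\mathrm{S}}$, is $-\rho\partial_\rho|_\tau$. Exactly as in \cref{lemma:prop:inhomogeneous_no_rad}, one forms $\mathcal{S}_c = \prod_{(z,k)\in(\E^{\incone})_{\leq c}} (\bar S+z)$ and computes $\mathcal{S}_c \psi_{\mathrm{phg}}|_{\incone}$: since $\incone = \{\bar{v}=v_0\}$ is an integral hypersurface of $\pvbar$, the only contributing terms are the $\bar u \pubar$-derivatives along $\incone$, and these peel off the expansion. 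One then uses $[\bar S, r^2\bar{\Box}_g] = 0$ (which follows from the Schwarzschild metric being scale-invariant in the $(\bar u,\bar v)$-rescaling -- this is the whole reason for using these coordinates) to deduce that $\mathcal{S}_c \psi_{\mathrm{phg}}$ solves the same wave equation with better initial data, and the scattering result on $\Dbold^{\mathrm{S}}$ gives $\mathcal{S}_c \psi_{\mathrm{phg}} \in \Hb^{\min(c,a^{\incone})}(\Dbold^{\mathrm{S}}^-)$. Taking $c \to \infty$ yields $\E^\phi_0 = \E^{\incone}$ with no spurious logarithmic terms.

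Next, for the $\scrip$-improvement, I would follow the two-step integration-along-$\bar S_{\bar u_1}$ argument from \cref{lemma:prop:improvement_at_future} and \cref{lemma:app:linear_improvement}: commute with $\bar S_{\bar u_1} = (\bar u - \bar u_1)\pubar + (\bar v - \bar u_1)\pvbar$ enough times to drive the corresponding $\mathcal{S}_{\bar u_1,c}\psi$ into $\Hb^{c,c}(\Dbold^{\mathrm{S}}^+)$, which forces $\psi|_{\outcone{1}}$ into $\A{phg}^{\E^{\phi_\infty}}(\outcone{1})$ for $\E^{\phi_\infty}$ minimal with $\E^{\phi_\infty} \supset 0 \cupdex (\E^\phi_0 + 1)$ (the $+1$ shift coming from the single factor of $\rho_+$ gained after one $\pubar$-integration of the short-range source, since $E$ and $r^{-2}\Dl$ both lie in $\rho_+\Diff^2_{\b}(\Dbold^{\mathrm{S}})$ modulo good error). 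Integrating back along $\bar S_{\bar u_2}$-curves from $\outcone{1}$ then promotes this expansion to all of $\Dbold^{\mathrm{S}}^+$, producing the claimed $\E^\phi_+ = (\E^{\incone}+1)\cupdex\mindex{0}$.

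The main obstacle will be the $\bar{v}\pvbar^2$-term in $E$: it is not a $\b$-differential operator on $\Dbold^{\mathrm{S}}$ (it contains the principal part in the "good" direction), which complicates both the commutator $[\bar S, E]$ and the energy estimates underlying the scattering construction. However, the explicit $\rho_-^2\rho_0\cdot (\rho_-^{-2}\bar v^2\pvbar^2)$ structure shows that after pairing with the multiplier used in \cref{prop:en:pastestimate}, it produces only short-range corrections to the bulk and boundary terms; the delicate point is checking that commutations with $\bar S$ (and later with $\bar S_{\bar u_1}$) preserve this structure modulo lower-order short-range corrections, which reduces to a careful but bookkeeping-only commutator computation. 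Once this is verified, the remainder of the proof is algebraic, inheriting the Minkowskian conformal smoothness statement from \cref{lemma:app:minkowski_index_improvement} applied on fixed outgoing cones.
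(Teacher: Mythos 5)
The key step in your argument — the identity $[\bar S, r^2\bar{\Box}_g] = 0$ — is false, and the explanation you attach to it ("the Schwarzschild metric is scale-invariant in the $(\bar u,\bar v)$-rescaling — this is the whole reason for using these coordinates") reflects a misunderstanding of why these coordinates are introduced. The Schwarzschild metric has a built-in length scale $M$; under $(\bar u,\bar v)\mapsto(\lambda\bar u,\lambda\bar v)$ the terms $-\tfrac{2M\bar v}{rr_0}\partial_{\bar v}^2$ and $-\tfrac{2M(r+\bar v)}{r_0 r^2}\partial_{\bar v}$ in $\bar{\Box}_g$ do not rescale homogeneously, so $[\bar S, r^2\bar{\Box}_g]\neq 0$ whenever $M\neq 0$. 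This error undermines both parts of your argument: your derivation of $\E_0^\phi=\E^{\incone}$ rests on $\mathcal{S}_c\psi$ "solving the same wave equation with better data," and the subsequent $\scrip$-step inherits the same commutation fact. What is actually scale-invariant (in the sense $[\bar S, r^2\,\cdot\,]=0$) is the $M=0$ reference operator $\bar{\Box}_\eta = -\partial_{\bar u}\partial_{\bar v}+\tfrac1r(\partial_{\bar v}-\partial_{\bar u})+r^{-2}\Dl$; the genuine virtue of the $(\bar u,\bar v)$-coordinates is not scale-invariance of $\bar{\Box}_g$ but rather that the difference $\bar{\Box}_\eta-D_0^{-1}\bar{\Box}_g$ has \emph{polyhomogeneous} coefficients on $\Dbold^{\mathrm{S}}$ (index sets $\mindex 3,\mindex 2$), whereas comparing with $\Box_\eta$ in $(u,v)$-coordinates would generate spurious logarithmic terms from $r=v-u+\mathcal{O}(M\log|v-u|)$.

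The fix, and what the paper actually does, is to commute $\bar S$ only through $\bar{\Box}_\eta$ and to treat the $M$-dependent remainder as a decaying inhomogeneity to be absorbed iteratively. Concretely: write $\bar{\Box}_\eta\phi=(\bar{\Box}_\eta-D_0^{-1}\bar{\Box}_g)\phi$, note that $\psi\in\Hb^{a_0}(\Dbold^{\mathrm{S};-})$ already gives $\bar{\Box}_\eta\phi\in\Hb^{a_0+3}$, feed this back through \cref{prop:prop:prop_no_incoming} to improve the error term to $\Hb^{a_0+1}$, and iterate; this yields $\E_0^\phi=\E^{\incone}$ with no spurious logs. Near the future corner one commutes $\mathcal{S}_c^{\E^\phi_0}$ through $\bar{\Box}_\eta$ (where the commutation is legitimate), obtaining a forced equation whose source involves $\mathcal{S}_{c-1}^{\E^\phi_0-1}$ applied to the decaying coefficient $f^{\mathrm{Sch}}$, and closes an induction on the $\rho_+$-weight using \cref{eq:prop:futuresemiindex}; the shifted-scaling integration (your Step 2) then produces $\E_+^\phi=(\E^{\incone}+1)\overline{\cup}\mindex 0$. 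Your final index-set arithmetic is right, but it needs to rest on this perturbative structure, not on a nonexistent symmetry of $\bar{\Box}_g$.
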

    \begin{rem}
        This is essentially the same result as \cref{lem:app:potential} with a specific linear choice of $P_g\neq0$.
    \end{rem}
    \begin{proof}
        We prove that $\E^\phi_+=\E^{\incone}\cupdex \mindex{0}$.
        The improvement to $(\E^{\incone}+1)\cupdex\mindex{0}$ comes from splitting the solution to first and second iterate (i.e.~$\phi=\phi^{(0)}+\phi^{(1)}$ with $\phi^{(0)}$ solving $\bar\Box_{\eta}\phi^{(0)}=0$) and using \cref{lemma:app:minkowski_index_improvement}.
        
       Using $u,v$ coordinates, we already have that $\psi\in\Hb^{\vec{a}}(\Dbold)$ and therefore $\psi\in\Hb^{\vec{a}}(\Dbold^{\mathrm{S}})$.

        \textit{Step 1: Improvement in past region.}
        We write $\bar\Box_g\phi=0$ as $\bar{\Box}_\eta\phi=(\bar{\Box}_\eta-\bar{\Box}_g)\phi$ with 
        \begin{equation}
            \bar{\Box}_\eta-D_0^{-1}\bar{\Box}_g=f^{\mathrm{Sch}}\DiffS,\qquad f^{\mathrm{Sch}}\in\A{phg}^{{\mindex{3},\mindex{2}}}(\Dbold^{\mathrm{S}})
        \end{equation}
        We can iteratively peel the leading order terms off of $\phi$ in $\Dbold^{\mathrm{S};-}$:
            Indeed, using $\psi\in\Hb^{a_0}(\Dbold^{\mathrm{S};-})$ we get $\bar\Box_\eta\phi\in\Hb^{a_0+3}(\Dbold^{\mathrm{S};-})$ and then \cref{prop:prop:prop_no_incoming} implies $\psi\in\A{phg}^{\E^\phi_0}(\Dbold^{\mathrm{S};-})+\Hb^{a_0+1}(\Dbold^{\mathrm{S};-})$.
        Using this, we immediately get that $r(D_0^{-1}\bar{\Box}_g-\bar{\Box}_\eta)\phi\in\A{phg}^{\E^\phi_0+3}(\Dbold^{\mathrm{S};-})+\Hb^{a_0+4}(\Dbold^{\mathrm{S};-})$.
        By induction, we get that, for all $n\in\mathbb N$,
        \begin{equation}
            \psi\in\A{phg}^{\E^\phi_0}(\Dbold^{\mathrm{S};-})+\Hb^{a_0+n}(\Dbold^{\mathrm{S};-}).
        \end{equation}
        This can be propagated all the way to the future corner to get $\phi\in\A{phg,b}^{\E^\phi_0,a_+}(\Dbold^\mathrm{S})$ for some $a_+$.
        
        \textit{Step 2: Improvement around the future corner for scaling commuted quantity.} 
       Around the future corner $\Dbold^{\mathrm{S};+}$, we again write $\Box_g\phi=0$ as
        \begin{equation}
            \bar{\Box}_\eta\phi=(\bar{\Box}_\eta-D_0^{-1}{\Box}_g)\phi=
            \frac{f^{\mathrm{Sch}}(r,\bar{u}/\bar{v})}{r^2(r_0-2M)}(r\pvbar,\sl,1)^2\phi
        \end{equation}
         with coefficients  $f^{\mathrm{Sch}}(r,\bar{u}/\bar{v})\in \A{phg}^{\mindex{0}}(\R\times(-1,1))$ smooth in the second paramater and polyhomogeneous in the first.\footnote{Here, we use the $1/r$ compactification of $\R$.}

        Let's define $\mathcal{S}^{\mathcal{E}}_c$ as in \cref{eq:prop:S_c_definition}, with $S=\bar{u}\pubar+\bar{v}\pvbar$.
        We commute the wave operator to get that
        \begin{equation}\label{eq:Schw:S_commuted_wave}
            \bar{\Box}_\eta \mathcal{S}^{\E^{\phi}_0}_c\phi=\frac{1}{r^2(r_0-2M)}\mathcal{S}^{\E^{\phi}_0-1}_{c-1} f^{\mathrm{Sch}} (r\pvbar,\sl,1)^2\phi.
        \end{equation}
        
       Let us observe that $r\phi\in\A{phg,b}^{\E^\phi_0,a_+}(\Dbold^{\mathrm{S};+})$ already implies $r(\bar{\Box}_\eta-D_0^{-1}\bar{\Box}_g)\phi\in \A{phg,b}^{\E^\phi_0+3,a_++2}(\Dbold^{\mathrm{S};+})$.
        Therefore, we get that for $p\leq a_+$, $r\bar{\Box}_\eta \mathcal{S}^{\E^{\phi}_0}_{p}\phi\in\A{phg,b}^{\E^{\phi}_0+3,a_++2}(\Dbold^{\mathrm{S};+})$.
        Using \cref{eq:prop:futuresemiindex} of \cref{lemma:prop:future_corner} we get 
        \begin{equation}
         r   \mathcal{S}^{\E^{\phi}_0}_{p}\phi\in\A{phg}^{(\E_{0}^\phi)_{>p},\mindex{0}}(\Dbold^{\mathrm{S};+})+\A{phg,b}^{\E_{>p}^\phi,\min(p,a_+)+1}(\Dbold^{\mathrm{S};+}).
        \end{equation}
        By induction, we claim
        \begin{equation}\label{eq:Sch:improved_induction_future_corner}
        r    \mathcal{S}^{\E^{\phi}_0}_{p+n}\phi\in\A{phg}^{(\E_{0}^\phi)_{>p+n},\mindex{0}}(\Dbold^{\mathrm{S};+})+\A{phg,b}^{\E_{{>p+n}}^\phi,\min(p,a_+)+n}(\Dbold^{\mathrm{S};+}).
        \end{equation}
        Indeed, assuming \cref{eq:Sch:improved_induction_future_corner} holds for $n\leq n_0$, we get
        \begin{nalign}
           r \bar{\Box}\mathcal{S}^{\E^{\phi}_0}_{p+n_0+1}\phi=\frac{1}{rr_0}\mathcal{S}^{\E^\phi_0-1}_{p+n_0}\DiffS\phi\in\A{phg}^{\E_{>p+n_0}^\phi+3,\mindex{2}}(\Dbold^{\mathrm{S};+})+\A{phg,b}^{\E_{{>p+n_0}}^\phi+3,\min(p,a_+)+n_0+2}(\Dbold^{\mathrm{S};+})\\
            \implies 
            r\mathcal{S}^{\E^{\phi}_0}_{p+n_0+1}\phi\in\A{phg}^{\E_{>p+n_0}^\phi+1,\mindex{0}}(\Dbold^{\mathrm{S};+})+\A{phg}^{\E_{{>p+n_0}}^\phi+1,\min(p,a_+)+n_0+1}(\Dbold^{\mathrm{S};+}).
        \end{nalign}

        \textit{Step 3:} We may integrate along the vectorfields $S$ as in \cref{lemma:prop:improvement_at_future} to obtain the result.
        That is, we use shifted commutator $(\bar{u}-u_\bullet)\partial_{\bar{u}}+(\bar{v}-u_\bullet)\partial_{\bar{v}}$ for $\bullet\in\{1,2\}$.
        First, we get that $\phi|_{\outcone{1}}\in\A{phg}^{\E^\phi_0\cupdex\mindex0}(\outcone{1})$ along a single outgoing cone, then we integrate back from a later cone $\C_{u_2}$ towards $I_0$ to obtain the result.
    \end{proof}

	\subsection{Linearised gravity on Schwarzschild in a double null gauge}\label{sec:Sch:ling}
	We now present how to apply our propagation of polyhomogeneity results in the context of linearised gravity around Schwarzschild in a double null gauge \cite{dafermos_linear_2019}.
    From the perspective of the present paper, linearised gravity essentially consists of wave equations (to which we directly apply our results as in \cref{sec:Sch:wave}), and of transport equations which we treat as ODEs, using the results of \cref{sec:ODE_lemmas}.
With that said, we now import the minimal set of prerequisites to be able to reasonably discuss linearised gravity.

	Recall first the extremal components of the linearised Weyl curvature tensor $\al$ and $\alb$. In order to not have to introduce the notation of projected covariant derivatives etc., we will always talk about the components of a 2-tensor evaluated in an orthonormal basis; with the only exception to this rule being angular derivatives. For instance, when writing $\al$, we will always mean the component $\al(e_A,e_B)$, where $e_A, e_B\in\{\frac{1}{r}\partial_{\theta},\frac{1}{r\sin\theta}\partial_{\varphi}\}$.
	Similarly, when we write $\pu\al$, we mean the partial derivative of the component of $\al$. On the other hand, when we write $\Dl \al$, we mean the components of the Laplacian acting on the 2-tensor $\al$.
The purpose of this is that we can treat all equations as scalar equations.
For instance, $\al$ and $\alb$ satisfy the Teukolsky equations: 
\begin{nalign}\label{eq:Schw:teuk}
    \pu\left(\frac{D^2}{r^4}\pv(D^{-1}r^5\al)\right)=\frac{D^2(\Dl+2)\al}{r}-\frac{30MD^2\al }{r^2},&& \pu\left(\frac{r^4}{D^2}\pv(rD\alb)\right)=r^3(\Dl-2)\alb-6MD^2\alb.
\end{nalign}

	We also recall the definition of the Regge--Wheeler quantities:
	\begin{align}\label{eq:Schw:psal}
		\Ps:=(D^{-1}r^2\pu)^2(r\Omega^2\al),&& \Psb:=(D^{-1}r^2\pv)^2(r\Omega^2\alb); 
	\end{align}
	these both satisfy the Regge-Wheeler equation
	\begin{equation}
		\pu\pv \Psi=D\frac{(\Dl-4) \Psi}{r^2}+\frac{6MD\Psi}{r^3},
	\end{equation}
	to which all our results (in particular, \cref{cor:Schw:phg}) apply. (Note that $\Dl-4$ has eigenvalues $-\ell(\ell+1)$ when acting on 2-tensors.)

 \begin{rem}[Treating the Teukolsky equations as twisted long-range potentials]\label{rem:Sch:Teukolsky_decay_rate}
      The way the Teukolsky equations are treated in \cite{dafermos_linear_2019} is by transforming them into the Regge--Wheeler equations as above.
We can, however, also treat them directly. For, the Teukolsky equations \cref{eq:Schw:teuk} are  (short-range potential perturbations of)  twisted long-range potential modifications of the usual wave operator $\Box_\eta$ as discussed in \cref{rem:en:twistedlongrange}.

    More precisely, in $\D^-$, it is the quantities $rD\al$ and $r^5D^{-1}\alb$ that satisfy (non-twisted) long-range potential  equations.
        By \cref{rem:en:good_derivatives_noincoming}, then solutions to \cref{eq:Schw:teuk} with nontrivial incoming radiation and with $\al^{\incone},\,\alb^{\incone}\in \Hb^{a_-}(\incone)$ (for $a_-$ arbitrary) will satisfy $\al\in\Hb^{1-,a_0;\infty}(\D^-)+\Hbt^{a_-,a_0;\infty}(\D^-)$, $\alb\in\Hb^{5-,a_0;\infty}(\D^-)+\Hbt^{a_-,a_0;\infty}(\D^-)$.
Note that, unless the data for $\alb^{\incone}$ decay fast, then, in order to construct the $\Hbt^{a_-,a_0;\infty}(\D^-)$ of the solution, we need to commute with time derivatives (albeit not using as much structure) as the commutations in \cref{eq:Schw:psal}.

        On the other hand, in $\D^+$, the equations \cref{eq:Schw:teuk} are already of the correct form: By \cref{lemma:EVE:weak_data_at_I0} near the future corner, we then obtain  $\al\in\Hb^{a_0,5-;\infty}(\D^+)+\{1,r\pu\}^{-\infty}\Hb^{a_0,a_+;\infty}(\D^+)$, $\alb\in\Hb^{a_0,1-;\infty}(\D^+)+\{1,r\pu\}^{-\infty}\Hb^{a_0,a_+;\infty}(\D^+)$, where $\{1,r\pu\}^{-\infty}\Hb^{a_0,a_+;\infty}(\D^+)=\{f\in \Hb^{a_0,a_+;\infty}(\D^+):(r\pu)^k f\in\Hb^{a_0,a_+;\infty}(\D^+)\; \forall k\}$ and $a_0\in\R$ corresponds to the initial decay towards $I^0$.
   
    \end{rem}

	The next definition we import is that of the in- and outgoing shear. We have
	\begin{align}\label{eq:Schw:chidefinitions}
		\pu\left(\frac{r^2\xhb}{\sqrt{D}}\right)= - r^2\alb,&& {\pv}\left(\frac{r^2\xh}{\sqrt{D}}\right):=- r^2\al.
	\end{align}
 Of course, the full specification of $\xhb$ and $\xh$ also requires specified initial values somewhere. 

 Now, the full system of linearised gravity consists of many other quantities and equations and, subsequently, the formulation of the scattering problem for them is a bit more involved. It consists of the specification of seed scattering data along $\incone\cup\scrim$, from which one can then derive all other quantities along $\incone\cup\scrim$. Cf.~\cite{kehrberger_case_2024} for details. In the context of this section, we will occasionally formulate statements with as many assumptions on data as we need, knowing that they can ultimately recovered from seed data in the context of the full system. We will also formulate all statements assuming already that the solution exists (which follows from \cite{kehrberger_case_2024}). While we thus do not define the full system; when we talk about solutions to linearised gravity, we will, in particular, always mean \cref{eq:Schw:teuk}--\cref{eq:Schw:chidefinitions} to be satisfied.

 In this spirit, we first define what it means for a solution to have no incoming radiation from $\scrim$ (cf.~\cite[\S8]{kehrberger_case_2024}):
 \begin{defi}[No incoming radiation for linearised gravity]\label{def:schw:nir}
		Solutions to the system of linearised gravity with no incoming radiation from $\scrim$ satisfy  $\lim_{u\to-\infty}\norm{\Big((\Omega^{-2}r^2\pu)^i(r\Omega^2\al),\pv(r^2\xhb)\Big)(u,v)}_{L^2(\mathbb S^2)}=0$ for all $v<\infty$ and for $i=0,1$. They also satisfy $\lim_{u\to-\infty}\norm{\pv\Ps,\pv\Psb,r\xh}_{L^2(\outcone{}^{v_0,v_\infty})}=0$ for any $v_\infty>v_0$.
  For smooth solutions, the above equalities remain valid for commutations with $\pv,\sl$.
	\end{defi}
We finally note that, while we avoid working with the entire system, the quantities $\Ps, \Psb, \al, \alb, \xh, \xhb$ already capture the essential physical degrees of freedom of the problem. In particular, the essential part of the seed data is given by $\xhb|_{\incone}$, $\xh|_{\scrim}$ and by the specification of $\pv\alb$ along any sphere along $\incone$. The specification of $\pv\alb$ should be compared to the scattering problem with weak decay, cf.~\cref{thm:scat:weak_decay}.

 Now, analogously to \cref{cor:Schw:phg}, the main ingredient for upgrading the fixed $\ell$-mode results of \cite{kehrberger_case_2024} will be the following statement, which is a corollary of \cref{thm:app:general} and our ODE lemmata:
    	\begin{lemma}
    	    \label{cor:Schw:lingravphg}
Consider solutions to the system of linearised gravity with no incoming radiation. 

	1) Assume $\Ps|_{\incone},\Psb|_{\incone},r^2\xh|_{\incone}, r^2\xhb|_{\incone},r^3\pv(rD\alb)|_{\incone},r^3\alb|_{\incone}\in \A{phg}^{\mathcal{E}^{\bullet}_-}(\incone)$, with $\min \mathcal{E}_-^{\bullet}>-1/2$.
		
        Then  $\Ps,\Psb,\al,\alb,\xh,\xhb\in\A{phg}^{\vec{\E}^\bullet}(\D)$ for some index sets.

2) Assume $\Ps|_{\incone},\Psb|_{\incone},r^2\xh|_{\incone}, r^2\xhb|_{\incone},r^3\pv(rD\alb)|_{\incone},r^3\alb|_{\incone}\in \Hb^{p}(\incone)$, with $p\geq-1/2$.
		Then  $\Ps,\Psb,r^2\xh\in\A{b,b,phg}^{p-,p-,\mindex{0}}(\D)+\Hb^{p-,p-,p-}(\D)$, $r\alb\in\A{b,b,phg}^{p+2-,p+2-,\mindex{0}}(\D)+\Hb^{p+2-,p+2-,p+2-}(\D)$,
		${r^3\al\in\A{b,b,phg}^{p-,p-,\mindex{2}}(\D)+\Hb^{p-,p-,p-}(\D)}$
        and  $r\xhb\in\A{b,b,phg}^{p+1-,p+1-,\mindex{0}}(\D)+\Hb^{p+1-,p+1-,p+1-}(\D)$.
	\end{lemma}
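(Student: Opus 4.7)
The proof reduces to three successive applications of material already developed in the paper: (i) the scattering/propagation-of-polyhomogeneity result for short-range perturbations of $\Box_\eta$, applied to the Regge--Wheeler quantities $\Ps, \Psb$; (ii) inversion of the transformations \eqref{eq:Schw:psal} to recover $\al, \alb$ by integrating two first-order transport equations in $\pu$ (resp.\ $\pv$); and (iii) inversion of the shear definitions \eqref{eq:Schw:chidefinitions} to recover $\xh, \xhb$ by one more integration each. I sketch each step in turn.

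\textbf{Step 1 (Regge--Wheeler).} Since $\Ps, \Psb$ satisfy the Regge--Wheeler equation, which is a short-range perturbation of $\Box_\eta$ in exactly the same sense as the scalar wave equation on Schwarzschild treated in \cref{sec:Sch:wave}, we can directly apply \cref{cor:Schw:phg}. Under the assumptions of \cref{def:schw:nir}, no incoming radiation for $\Ps, \Psb$ is ensured, so that in Part 1 the polyhomogeneity of $\Ps|_{\incone},\Psb|_{\incone}$ yields $\Ps, \Psb \in \A{phg}^{\vec{\mathcal{E}}^\Ps}(\Dbold) + \A{phg}^{\vec{\mathcal{E}}^{\Psb}}(\Dbold)$, while in Part 2 the conormal bound $\Hb^{p}(\incone)$ gives $\Ps, \Psb \in \A{b,b,phg}^{p-,p-,\mindex{0}}(\D) + \Hb^{p-,p-,p-}(\D)$.

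\textbf{Step 2 (Recovery of $\al$ and $\alb$).} Writing $\Phi_1 := D^{-1}r^2 \pu(r\Omega^2\al)$, the identity $\Ps = D^{-1}r^2\pu \Phi_1$ becomes the transport equation $\pu \Phi_1 = D\Ps/r^2$, and similarly for the second integration producing $r\Omega^2\al$. In the no-incoming-radiation framework of \cref{def:schw:nir}, both $\Phi_1$ and $r\Omega^2\al$ have vanishing limits along $\scrim$, so each integration is carried out by \cref{corr:ODE:du_dv}\cref{item:ode:u-prop_boundary} with trivial data on $\scrim$. For $\alb$, the same scheme applies with $\pu$ and $\pv$ swapped and with nontrivial data posed on $\incone$ (namely $r^3\alb|_{\incone}$ and $r^3\pv(rD\alb)|_{\incone}$); here I use \cref{corr:ODE:du_dv}\cref{item:ode:u-prop} instead. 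Tracking the $r$-weights produced by the two factors of $D^{-1}r^2$ in \eqref{eq:Schw:psal} and combining them with the weights gained from the ODE lemmata yields the claimed memberships, in particular the asymmetric growth $r^3\al$ vs.~$r\alb$ and the enlarged polyhomogeneous index $\mindex{2}$ near $\scrip$ for $r^3\al$.

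\textbf{Step 3 (Recovery of $\xh$ and $\xhb$).} With $\al, \alb$ controlled, equations \eqref{eq:Schw:chidefinitions} are transport equations for $r^2\xh/\sqrt D$ in the $v$-direction (with initial data $r^2\xh|_{\incone}$) and for $r^2\xhb/\sqrt D$ in the $u$-direction (with vanishing limit at $\scrim$ by \cref{def:schw:nir}). Applying, respectively, \cref{item:ode:u-prop} and \cref{item:ode:u-prop_boundary} of \cref{corr:ODE:du_dv} gives polyhomogeneity in Part 1 and the stated conormal-plus-polyhomogeneous decompositions in Part 2 for $r^2\xh$ and $r\xhb$.

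\textbf{Main obstacle.} The conceptual structure is straightforward, but the real work lies in the bookkeeping of index sets and decay rates through Steps 2 and 3: the $r^k$-weights in \eqref{eq:Schw:psal} and \eqref{eq:Schw:chidefinitions} interact with the decay/gain in the ODE lemmata in opposite ways depending on whether one integrates from $\scrim$ (with trivial data, thanks to no incoming radiation) or from $\incone$ (with nontrivial prescribed data). Keeping straight which of these is used for each quantity, and verifying that the resulting weights match the statements in Part 2 (e.g.\ $r^3\al \in \A{b,b,phg}^{p-,p-,\mindex{2}}(\D)$ rather than merely $\A{b,b,phg}^{p-,p-,\mindex{0}}(\D)$), is the technical heart of the argument.
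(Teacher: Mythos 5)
Your overall three-step decomposition (Regge--Wheeler $\to$ Teukolsky quantities $\to$ shears) and the intended use of \cref{cor:Schw:phg} in Step 1 and of \cref{corr:ODE:du_dv} in Steps 2--3 match the paper's structure. Steps 1 and 2 are essentially correct; in particular, your observation that $r\Omega^2\al$ and $\Phi_1 = D^{-1}r^2\pu(r\Omega^2\al)$ both have vanishing limits along $\scrim$ (i.e.\ the $i=0,1$ cases of \cref{def:schw:nir}) is exactly what lets \cref{item:ode:u-prop_boundary} be invoked with trivial boundary data for $\al$.

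The gap is in Step 3 for $\xhb$. You state that $r^2\xhb/\sqrt D$ has a vanishing limit at $\scrim$ and therefore \cref{item:ode:u-prop_boundary} applies directly. But \cref{def:schw:nir} only gives that $\pv(r^2\xhb)$ vanishes along $\scrim$ --- not $r^2\xhb$ itself. Indeed, since $r^2\alb$ may decay as slowly as $r^{-(p+1)}$ toward $\scrim$ with $p$ close to $-1/2$, integrating $\pu(r^2\xhb/\sqrt D)=-r^2\alb$ from $u=-\infty$ produces a quantity that can \emph{grow} toward $\scrim$ like $r^{-p}$, so there is no finite boundary value to prescribe there. The paper handles precisely this: in Part~1 it replaces \cref{item:ode:u-prop_boundary} with the weaker-boundary-condition statement \cref{item:ode:1d_boundary_cond_weak_decay} (which fixes only the constant term hidden behind the growing leading terms), and in Part~2 for $p<0$ it avoids integrating \eqref{eq:Schw:chidefinitions} from $\scrim$ altogether, instead working with the $\pv$-commuted second-order equation $\pu\pv(D^{-1/2}r^2\xhb)=\pv(r^2\alb)$, integrating first in $u$ (where $\pv\xhb$ does have a vanishing limit, making the integral converge since $p+1>0$) and then in $v$ from the data on $\incone$. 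Your step as written would require a boundary value that does not exist.

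A smaller omission is that for $\alb$ in Part~2 the paper first propagates the intermediate quantity $r^2\pv(rD\alb)$ (using \cref{eq:ODE:2d_error_phg} near the future corner to get the index set $\mindex{0}$ there) and only then integrates once more to $r\alb$; your proposal collapses this into a single assertion that "the same scheme applies." That is fixable by routine bookkeeping, but the $\xhb$ issue above is a genuine obstruction that requires the different argument used in the paper.
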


 \begin{rem}[Peeling of $\al$ and $\alb$]
    For $p$ sufficiently large, then \cref{cor:Schw:lingravphg} gives the leading order decay behaviour for $\al\sim r^{-5},\alb\sim r^{-1}$ towards $\scrip$ as predicted by the peeling property for gravitational radiation. (The reader should of course compare this with \cref{rem:Sch:Teukolsky_decay_rate})
    For the Einstein vacuum equations, these are the rates required by a smooth null infinity.
 \end{rem}

	\begin{proof}
		\textit{1)} For $\Ps,\Psb$ we apply \cref{cor:Schw:phg} to get the result.
        More precisely, \cref{cor:Schw:phg} gives polyhomogeneity on~$\Dbold$, and, a fortiori, we get it on $\D$.
		
		For $\al$, we apply \cref{corr:ODE:du_dv}\cref{item:ode:u-prop_boundary} to the first of \cref{eq:Schw:psal} (with boundary terms vanishing by  \cref{def:schw:nir}) to get the result.
		
	     For $\xhb$, we cannot directly apply \cref{corr:ODE:du_dv}\cref{item:ode:u-prop_boundary}, as the decay may be too slow. Instead, we apply \cref{item:ode:1d_boundary_cond_weak_decay} to the first of \cref{eq:Schw:chidefinitions} to get that $\xhb\in\A{phg}^{\E^{\xhb}_-, \E^{\xhb}_+}(\D^-)$ for some index sets.
        The future corner is dealt with as in \cref{corr:ODE:du_dv}\cref{item:ode:u-prop_boundary}.
        An alternative approach is to commute \cref{eq:Schw:chidefinitions} with $\pv$; we shall follow this approach for the proof of 2).

        For $\alb,\xh$ the result follows by \cref{corr:ODE:du_dv}\cref{item:ode:u-prop}, with $u$ and $v$ interchanged, applied to the second of \cref{eq:Schw:psal} and of \cref{eq:Schw:chidefinitions}, respectively.

    \textit{2)} 
        For $\Ps,\Psb$, we use \cref{cor:Schw:phg} to get the result.
        
        For $\al$, first start with the weaker $\Ps\in\Hb^{p-,p-,\min(0,p)-}(\D)$. 
        We apply \cref{corr:ODE:du_dv}\cref{item:ode:u-prop_boundary} twice to obtain that $r\al\in \Hb^{2+p-,2+p-,\min(4,2+p)-}(\D)$.
        Similarly, we can use the mixed estimate \cref{eq:ODE:2d_error_phg} near the future corner to get $r^3\al\in\A{b,b,phg}^{p-,p-,{\mindex{2}}}(\D)+\Hb^{p-,p-,p-}(\D)$.
        From this, the required estimate for $\xh$ follows directly as in 1), i.e.~by applying \cref{corr:ODE:du_dv}\cref{item:ode:u-prop} in $\D^-$ and \cref{eq:ODE:1d_phg} in $\D^+$.

        For $\alb$, we integrate the second of \cref{eq:Schw:psal} from $\incone$, using \cref{corr:ODE:du_dv}\cref{item:ode:u-prop} in $\D^-$ and \cref{eq:ODE:2d_error_phg} in $\D^+$ to first get that
\begin{equation}
    r^2\pv(rD\alb)\in \A{b,b,phg}^{p+1-,p+1-,\mindex0}(\D)+\Hb^{p+1-,p+1-,p+1-}(\D).
\end{equation}
The same ODE integrations then give
\begin{equation}
    r\alb\in \A{b,b,phg}^{p+2-,p+2-,\mindex0}(\D)+\Hb^{p+2-,p+2-,p+2-}(\D).
\end{equation}

Finally, we need to establish the result for $\xhb$:
For $p<0$, we cannot integrate \cref{eq:Schw:chidefinitions} from $\scrim$ directly, so we consider instead
\begin{equation}
    \pu\pv(D^{-\frac12}r^2\xhb)=\pv(r^2\alb)=\frac{r}{D}\pv(rD\alb)+\pv(\frac{r}{D})rD\alb\in \A{b,b,phg}^{p+2-,p+2-,\mindex0}(\D)+\Hb^{p+2-,p+2-,p+2-}(\D).
\end{equation}
Now, we first establish the result in $\D^-$ by integration: first in $u$ via \cref{corr:ODE:du_dv}\cref{item:ode:u-prop_boundary}  using that $p+1>0$ and the vanishing of $\pv\xhb$ at $\scrim$; then in $v$ via \cref{corr:ODE:du_dv}\cref{item:ode:u-prop}  using that we also have data for $\xhb$ on $\incone$.
We then prove the result in $\D^+$ by using a cutoff function, and applying \cref{eq:ODE:2d_error_phg} to the first of \cref{eq:Schw:chidefinitions}.
    \end{proof}
    \begin{obs}\label{rem:Sch:DtoDbold}
        In \cref{cor:Schw:lingravphg}, we proved all results in $\Hb(\D)$ and $\A{phg}(\D)$ spaces. 
We can upgrade them to $\Hb(\Dbold)$ and $\A{phg}(\Dbold)$ spaces as follows: 
First, note that it suffices to show this improvement in $\D^-$. Then, note that we already proved the improved statements for $\Ps, \Psb$. 
Furthermore, we can directly apply the results of \cref{sec:prop:longrange} to the (suitably twisted, cf.~\cref{rem:Sch:Teukolsky_decay_rate}) Teukolsky equations for $\al$ and $\alb$ to improve the statements for those as well. Note that in $\D^-$, the quantity $r^5 D\alb$ features slow initial data decay, and we therefore need to appeal to our slow initial data decay results, cf.~\cref{rem:prop:restrictiondropping}.
Finally, we get the improvements for $\xh$ and $\xhb$ by applying \cref{lemma:prop:inhomogeneous_no_rad} to $\pv\pu(D^{-1/2}r^2\xhb)=-\pv (r^2\alb)$ and $\pu\pv(D^{-1/2}r^2\xh)=-\pu(r^2\alb)$.
    \end{obs}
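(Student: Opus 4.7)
Since $\D$ and $\Dbold$ coincide away from $\scrim$, and all statements in \cref{cor:Schw:lingravphg} are already automatic in $\D^+$ (where $\rho_-\sim1$), the plan is to work exclusively in $\D^-$ and upgrade from $v\pv$-regularity/polyhomogeneity to $r\pv$-regularity/polyhomogeneity for each of $\Ps,\Psb,\al,\alb,\xh,\xhb$ in turn, exploiting that everything is set up in a manner compatible with the no incoming radiation condition.

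First, for $\Ps$ and $\Psb$, the Regge--Wheeler equation is a \emph{short-range} perturbation of $\Box_\eta$ whose potential term $6MD/r^3$ and coefficient $D$ are both polyhomogeneous in $1/r$ and trivially compatible with the no incoming radiation condition in the sense of \cref{def:en:short_range}\cref{item:en:potential_perturbation}\textbf{b)}. Therefore I would directly invoke \cref{thm:app:general}\cref{item:app:error}--\cref{item:app:poly} in its $\Hb(\Dbold)/\A{phg}(\Dbold)$ version (this is the version of the theorem actually stated), which already gives the desired memberships on $\Dbold$. This step is essentially bookkeeping.

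Second, for $\al$ and $\alb$, I would follow \cref{rem:Sch:Teukolsky_decay_rate} and rewrite the Teukolsky equations \cref{eq:Schw:teuk} for the twisted quantities $rD\al$ and $r^5D^{-1}\alb$ as (twisted) long-range potential equations in the sense of \cref{def:en:long_range}\textbf{b)}, which are compatible with no incoming radiation. Once this is done, the propagation of polyhomogeneity with long-range potentials from \cref{sec:prop:longrange} (in particular the no-incoming-radiation analogue of \cref{lemma:prop:timeintegral:long}) supplies the improved $\Dbold$-statement. The main obstruction here is that $r^5D^{-1}\alb$ has only slow initial data decay along $\incone$---indeed, $\alb|_{\incone}$ is only assumed in $\Hb^{p+2}(\incone)$, hence $r^5 D^{-1}\alb|_{\incone}$ decays like $r^{-(p-2)}$, which may be negative. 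Thus one must appeal to the weak-decay extension noted in \cref{rem:prop:restrictiondropping}, specifying a sufficient number of transversal derivatives of $\alb$ on a finite sphere of $\incone$ (cf.~\cref{rem:scat:slab_scattering_with_weak_decay}) and then iterating the scaling-commutation argument of \cref{lemma:prop:data:long}. This is the genuinely delicate step, because the long-range part of the Teukolsky operator forces us to unfold the iterative construction of the leading-order polyhomogeneous expansion by hand, just as in the proof of \cref{cor:en:longrange1.5}.

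Third, for $\xh$ and $\xhb$, the transport relations \cref{eq:Schw:chidefinitions} are first-order ODEs and \cref{item:ode:1d_boundary_cond_weak_decay} only gives $\D$-polyhomogeneity directly. To obtain $\Dbold$-polyhomogeneity, I would instead differentiate once more and use the second-order wave-type equations
\begin{equation*}
\pv\pu\bigl(D^{-1/2}r^2\xhb\bigr)=-\pv(r^2\alb),\qquad \pu\pv\bigl(D^{-1/2}r^2\xh\bigr)=-\pu(r^2\alb),
\end{equation*}
whose right-hand sides are now known to be polyhomogeneous (respectively conormal-plus-polyhomogeneous) on $\Dbold$ by the previous step. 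Applying \cref{lemma:prop:inhomogeneous_no_rad} (the no-incoming-radiation version of propagation of polyhomogeneity in the past corner) with the initial data on $\incone$ provided by \cref{cor:Schw:lingravphg} then delivers the required $\Dbold$-memberships. A small point to watch is that, for the $\xhb$ equation, one needs $\pv\xhb\to 0$ at $\scrim$ (which is built into \cref{def:schw:nir}) in order to integrate in $u$ starting from $\scrim$; for $\xh$ one simply uses the data on $\incone$ and integrates in $v$. In summary, the two genuine difficulties are (i) handling the slow-decaying Teukolsky quantity $r^5D^{-1}\alb$ via the weak-decay long-range scattering machinery, and (ii) choosing the right second-order formulation of the transport equations so that the no-incoming-radiation propagation lemmata can be applied directly.
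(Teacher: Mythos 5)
Your proposal matches the paper's own sketch step for step: work in $\D^-$, note $\Ps,\Psb$ are already handled on $\Dbold$, treat $\al,\alb$ via the twisted Teukolsky equations and the long-range polyhomogeneity results of \cref{sec:prop:longrange} with the weak-decay extension for $r^5D^{-1}\alb$, and finally pass $\xh,\xhb$ through the second-order equations $\pv\pu(D^{-1/2}r^2\xhb)=-\pv(r^2\alb)$ and $\pu\pv(D^{-1/2}r^2\xh)=-\pu(r^2\alb)$ via \cref{lemma:prop:inhomogeneous_no_rad}. (One minor slip: from $r^3\alb|_{\incone}\in\Hb^p(\incone)$ you get $\alb|_{\incone}\in\Hb^{p+3}(\incone)$, not $\Hb^{p+2}$, though your final decay rate $r^{-(p-2)}$ for $r^5D^{-1}\alb$ is correct.)
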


	We now use this result to infer that the results of \cite{kehrberger_case_2024} are, in fact, valid for solutions supported on all angular frequencies. 
	For this, it will be convenient to recall (and slightly modify) the following definition:
\newcommand{\Bb}{\underline{\mathscr{B}}}
\newcommand{\Aa}{\mathscr{A}}
\newcommand{\xhbs}{\accentset{\scalebox{.6}{\mbox{\tiny (1)}}}{{\hat{\underline{\upchi}}}}_{\scrim}}
\begin{defi}[$N$-body data. Cf.~Def.~1.4 of \cite{kehrberger_case_2024}]\label{def:Schw:data}
		A Bondi normalised seed scattering data set (for the system of linearised gravity around Schwarzschild) is said to describe the exterior of $N$ infalling masses coming in from the infinite past and following approximately hyperbolic orbits if it has no incoming radiation and the following three conditions are satisfied
		\begin{enumerate}[label=(\Roman*)]
			\item\label{item:nir:I}  $\alb|_{\incone}=\alb_{\incone}\in \A{phg}^{\mathcal{E}}(\incone)+\Hb^{3+\epsilon}(\incone) $, where $\min\mathcal{E}=(4,0)$ and $\epsilon>1$. We write $\alb_{\incone}=-6\Bb(\omega)r_0^{-4}+\dots$ for $\Bb\neq0$.
			\item\label{item:nir:II} $\lim_{u\to-\infty}r^2\xhb|_{\incone} = \xhbs(\omega)\neq 0.$
			\item\label{item:nir:III}  $\lim_{u\to-\infty}r^3\al|_{\incone}=\Aa(\omega)\neq0.$
		\end{enumerate}
        If moreover $\Bb$ vanishes, then we say that the seed scattering data set describes the exterior of a spacetime that is isometric to Minkowski near $i^-$ and has compactly supported incoming gravitational radiation.
	\end{defi}
 
 \begin{rem}\label{rem:Schw:Nbodydataexplained}
     Comparing to \cref{cor:Schw:lingravphg}, one might again think that the definition above should also specify, say, data for $\xh$ along $\incone$; however, such information can be recovered from the points above in the context of the full system.
     In particular, within the full system, one can deduce from \cref{def:Schw:data} that the induced full data along $\incone$ can be written as $\mathrm{data}=\mathrm{data}_{\phg}+\mathrm{data}_{\Delta}$, with $\mathrm{data}_{\phg}$ satisfying the assumptions of \cref{cor:Schw:lingravphg} \textit{1)}, and with $\mathrm{data}_{\Delta}$ satisfying the assumptions of \cref{cor:Schw:lingravphg} \textit{2)} with $p=\epsilon$. Here, $\mathrm{data}$ is short-hand for the tuple $\{\Ps|_{\incone},\Psb|_{\incone},\xh|_{\incone},\xhb|_{\incone},\pv(rD\alb)|_{\incone},\alb|_{\incone}\}$. We emphasise that deducing this truly requires the full system of linearised gravity and is not possible from just the information given in the present section. Cf.~\S5 of \cite{kehrberger_case_2024}. 
 \end{rem}
	We then infer from  \cite[Theorem~1.2]{kehrberger_case_2024}  and \cref{cor:Schw:lingravphg} together with \cref{rem:Sch:DtoDbold} the following:
\begin{thm}\label{thm:Schw:lingravity}
		Consider solutions to the system of linearised gravity arising from scattering data as in Definition~\ref{def:Schw:data}. Then the following expansions are valid throughout $\Dopen$ for any $N\geq 3$: \begin{align}\label{eq:Schw:thmlingrav:al}
			r^5D^{-1}\al&=\sum_{n=0}^{N-4} {A}_{n}^{(\mathcal{E},\epsilon)}(r_0,\omega)\frac{r_0^n}{r^n}+M{a}(\omega)\cdot r+ \underbrace{\O\left(\frac{r_0^{N-1}}{r^{N-3}}+M({r_0\log r_0/r+\log^2 r_0/r})+r^{2-\epsilon-}\right)}_{\in\Hb^{-2-,0-}(\Dbold)+\Hb^{-2+\epsilon-,-2+\epsilon-}(\Dbold)},\\
   rD\alb&=\sum_{n=0}^N \underline{A}_{n}^{(\mathcal{E},\epsilon)}(r_0,\omega)\frac{r_0^n}{r^n}+M\underline{a}_{\log}(\omega)\frac{\log(r/r_0)}{r^3}
   +\underbrace{\O\left(\frac{r_0^{N-1}}{r^{N+1}}+M\frac{r_0 \log r_0/r +\log^2 r_0/r}{r^4}+\frac{1}{r^{2+\epsilon-}}\right)}_{\in\Hb^{2-,4-}(\Dbold)+\Hb^{2+\epsilon-,\violet{2}+\epsilon-}(\Dbold)}
   ,\\
    D^{-1/2}r^2\xh&=\lim_{v\to\infty}r^2\xh+M\cdot \frac{x_{\log}(\omega)}{r} +\underbrace{\O\left(\frac{r_0^2}{r^2}+M\frac{r_0\log r_0/r+\log^2 r_0/r}{r^2}+r^{-\epsilon+}\right)}_{\in\Hb^{0-,2-}(\Dbold)+\Hb^{\epsilon-,\epsilon-}(\Dbold)},\\
   	rD^{1/2}\xhb&=\sum_{n=0}^{N-1} \underline{X}_n^{(\mathcal{E},\epsilon)}(r_0,\omega)\frac{r_0^n}{r^n}+M\cdot \underline{x}_{\log}(\omega) \frac{\log r/r_0}{r^2}+\underbrace{\O\left(r_0^{N-1}/r^N+M\frac{r_0\log r/r_0+\log^2 r/r_0}{r^3}+r^{-1-\epsilon+}\right)}_{\in \Hb^{2-,3-}(\Dbold)+\Hb^{1+\epsilon-,1+\epsilon}(\Dbold)},
		\end{align} 
  where
\begin{equation}
    r_0^{-2}{A}_{n}^{(\mathcal{E},\epsilon)},\, r_0^{2} \underline{A}_{n}^{(\mathcal{E},\epsilon)},\, r_0\underline{X}_n^{(\mathcal{E},\epsilon)},\, \lim_{v\to\infty}r^2\xh \in \A{phg}^{\E'}(\incone)+\Hb^{\epsilon}(\incone),\qquad \text{with  }\min(\E')=(0,0),
\end{equation}
  and where the leading order behaviour of the projections onto fixed angular frequencies of ${A}_{n}^{(\mathcal{E},\epsilon)}$ and $ a $, of $\underline{A}_{n}^{(\mathcal{E},\epsilon)}$ and $\underline{a}_{\log}$, of $\underline{X}_n^{(\mathcal{E},\epsilon)}$ and $\underline{x}_{\log}$, and of $\lim r^2\xh$ and $x_{\log}$ are given by the results of Theorem~11.1, Theorem~13.1, Proposition~16.1 and Proposition~16.2 of \cite{kehrberger_case_2024}, respectively.%\footnote{In particular, there are cancellations such that, for instance, $\underline{A}_{n}^{(\mathcal{E},\epsilon)}\sim r_0^{-3}$ for $n=0$. }
	\end{thm}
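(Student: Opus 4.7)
The strategy mirrors exactly the passage from \cref{thm:Schw:wave} to \cref{thm:Schw:wave:summed} in the scalar case, but now applied component-by-component to the linearised system. The three ingredients are: (i) \cref{cor:Schw:lingravphg}, which produces a polyhomogeneous-plus-error description of each component; (ii) the identification, via \cref{rem:Schw:Nbodydataexplained}, of the data induced by an $N$-body seed as a sum of a polyhomogeneous piece (with index set $\mathcal{E}$ of leading order $(4,0)$ for $\alb$, etc.) and an $\Hb^{p}$-error piece with $p=3+\epsilon$; and (iii) the fixed angular mode asymptotics of \cite{kehrberger_case_2024} to identify the coefficients.

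First, I would split each of the seed quantities $\alb|_{\incone},\xhb|_{\incone},\al|_{\incone},\dots$ into a polyhomogeneous part $\mathrm{data}_{\phg}$ and a conormal error $\mathrm{data}_{\Delta}\in\Hb^{p}(\incone)$ with $p=\epsilon$, as in \cref{rem:Schw:Nbodydataexplained}, and consider the two resulting solutions $(\Ps_{\phg},\Psb_{\phg},\al_{\phg},\alb_{\phg},\xh_{\phg},\xhb_{\phg})$ and $(\Ps_{\Delta},\dots,\xhb_{\Delta})$ separately by linearity. Applying \cref{cor:Schw:lingravphg}\,1) to the first and \cref{cor:Schw:lingravphg}\,2) (with the $\Dbold$-improvement of \cref{rem:Sch:DtoDbold}) to the second, I would obtain that each quantity $\bullet\in\{\Ps,\Psb,\al,\alb,\xh,\xhb\}$ lies in $\A{phg}^{\vec{\E}^{\bullet}}(\Dbold)+\A{b,phg}^{p^{\bullet}-,\mindex{k^{\bullet}}}(\Dbold)+\Hb^{p^{\bullet}-,p^{\bullet}-}(\Dbold)$ for the exponents $p^{\bullet}$ and integer shifts $k^{\bullet}$ dictated by \cref{cor:Schw:lingravphg}\,2). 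A quick check shows these $\Hb$-memberships are exactly those written out in the four displays of \cref{thm:Schw:lingravity} after multiplying by the appropriate powers of $r$ and $D^{1/2}$.

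Next, to identify the explicit coefficient functions $A_n^{(\mathcal{E},\epsilon)}(r_0,\omega),\underline{A}_n^{(\mathcal{E},\epsilon)},\underline{X}_n^{(\mathcal{E},\epsilon)},\lim r^2\xh,a,\underline{a}_{\log},x_{\log},\underline{x}_{\log}$, I would project the polyhomogeneous/bounded expansions just established onto spherical harmonics $Y_\ell$. Since the membership in the $\A{phg}$- and $\Hb$-spaces on $\Dbold$ holds in an $L^2$-sense, this projection commutes with the expansion, and the coefficients of the resulting $r_0^n/r^n$-, $M\log(r/r_0)/r^k$-, and $Mr$-terms at each fixed $\ell$ are uniquely determined by the scattering solution $(\Ps_\ell,\Psb_\ell,\al_\ell,\alb_\ell,\xh_\ell,\xhb_\ell)$. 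These fixed-$\ell$ coefficients are precisely what Theorem~11.1, Theorem~13.1, Propositions~16.1 and~16.2 of \cite{kehrberger_case_2024} compute, so matching projections yields the leading-order identification claimed in the theorem. Finally, the $M(r_0\log(r_0/r)+\log^2(r_0/r))$-type terms represented in the $\A{phg}$ part with index sets containing entries of the form $(k,2)$---these arise naturally because the Schwarzschild perturbation is a short-range quasilinear modification whose iterated commutators produce $\log^2$-contributions---are exactly the second iterate of the algorithm sketched in the proof of \cref{thm:Schw:wave}.

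The main obstacle, as in \cref{thm:Schw:wave:summed}, is the loss in the error term compared to the fixed-$\ell$ statements of \cite{kehrberger_case_2024}: our $\Hb^{p-,p-}$-estimates from \cref{cor:Schw:lingravphg}\,2) translate the $r^{-3-\epsilon}$-decay of $\alb^{\Delta}|_{\incone}$ into only $r^{-2-\epsilon+}$ decay towards $\scrip$ for $rD\alb$ (cf.~\cref{rem:Schw:lossy}), which is precisely the condition $\epsilon>1$ demanded in \cref{def:Schw:data}, ensuring that the error remains subleading with respect to the logarithmic term $M\underline{a}_{\log}(\omega)\log(r/r_0)/r^3$ of interest. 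A secondary technical point is that the $\xhb$-equation \cref{eq:Schw:chidefinitions} involves integration from $\scrim$ with possibly slowly decaying data, which I would handle by working instead with $\pu\pv(D^{-1/2}r^2\xhb)=\pv(r^2\alb)$ as in the proof of \cref{cor:Schw:lingravphg}\,2), thereby circumventing the use of \cref{item:ode:1d_boundary_cond_weak_decay} and its non-uniqueness issues.
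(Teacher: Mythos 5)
Your proposal is correct and follows essentially the same route as the paper's proof: split the seed data via \cref{rem:Schw:Nbodydataexplained} into a polyhomogeneous piece (handled by \cref{cor:Schw:lingravphg}\,1) plus \cref{rem:Sch:DtoDbold}) and an $\Hb^{\epsilon}$-error piece (handled by \cref{cor:Schw:lingravphg}\,2)), then identify the coefficients by projecting onto angular modes and quoting the fixed-$\ell$ results of \cite{kehrberger_case_2024}. The additional remarks you make---on the origin of the $\epsilon>1$ threshold in \cref{def:Schw:data}, and on treating $\xhb$ via the $\pu\pv$-commuted equation---are accurate elaborations of points the paper leaves implicit.
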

 \begin{rem}
     Notice that we only specified the leading-order decay towards $\scrim$ and $I^0$ in order to simplify the notation. Cf.~\cref{rem:Schw:lossy,rem:Schw:higher}.
 \end{rem}%\end{comment}
% \begin{comment}
 \begin{proof}
     The idea is identical to that of \cref{thm:Schw:wave:summed}. First, we split up the seed data from \cref{def:schw:nir} into $r\alb_{\phg}^{\incone} \in \A{phg}^{\E}(\incone)$ in \cref{item:nir:I}, and with the limits \cref{item:nir:II}, \cref{item:nir:III}.
     For the solutions arising from these data, we can apply \cref{cor:Schw:lingravphg} 1), cf.~\cref{rem:Schw:Nbodydataexplained}.%\footnote{\label{gootnote}This in fact requires going from the seed data set to the full data set along $\incone\cup \scrim$, but this is straight-forward (cf.~\S5 in \cite{kehrberger_case_2024}.}

     The other part of the seed data is taken as $\alb_{\Delta}^{\incone}\in H^{3+\epsilon}(\incone)$, with the limits in \cref{item:nir:II}, \cref{item:nir:III} vanishing.
     As shown in \S5 of \cite{kehrberger_case_2024} (cf.~\cref{rem:Schw:Nbodydataexplained}), one can deduce using the full system of linearised gravity that
     \begin{equation}
         r^3\al|_{\incone}, r^2\xhb|_{\incone}r^3\alb_{\Delta}^{\incone}\in H^{\epsilon}(\incone)\quad\implies\quad\Ps|_{\incone},\Psb|_{\incone},r^2\xh|_{\incone}, r^2\xhb|_{\incone},r^3\pv(rD\alb),r^3\alb|_{\incone}\in \Hb^{\epsilon}(\incone)
     \end{equation}
     We can then apply \cref{cor:Schw:lingravphg} 2) with $p=\epsilon$. 

The result now follows from angular mode projections, using the results of \cite{kehrberger_case_2024}.
 \end{proof}

	The statements above are enough to conclude that almost all other statements  from \cite[Thm~1.2]{kehrberger_case_2024} hold for the resummed solution, with the only exceptions being the statements about the last three limits ($r\gsh$, $r\trg$ and~$r\blin$) in \cite[Thm.~1.2, Item \textbf{(VI)}]{kehrberger_case_2024}.\footnote{Recall from \cite{kehrberger_case_2024} that, in order to also treat these last three limits, one needs to better resolve the structure of $\xhb$ near $\scrim$, cf.~(16.16) and Proposition 17.3 of \cite{kehrberger_case_2024}. }
 We can easily include this in our results by extending \cref{cor:Schw:lingravphg} to also include a polyhomogeneity statement for $\gsh$ (which satisfies $\pu\gsh=2\Omega\xhb$); we leave this to the reader.
 Finally, \cref{thm:Schw:lingravity} is stated for initial data with infinite regularity for the sake of easier notation. Since our proofs of \cref{sec:app} and \cref{cor:Schw:lingravphg} only lose finite regularity, it clearly also holds for initial data with finite regularity.

 %%%%%%%%%%%%%%%%
	\newpage
	\section{The specificity of peeling to even spacetime dimensions and asymptotics for the scale-invariant wave equation}\label{sec:sharp}
In the present section, we prove that peeling is a property specific to even spacetime dimensions by computing the coefficients in the expansions for the linear wave equation in general dimensions.
\textit{This section is thus the only part in the paper where we compute precise asymptotics for \emph{long-range} potential perturbations, where we no longer have access to the conservation laws.}
More precisely, we show in \cref{prop:even} that \cref{lemma:app:minkowski_index_improvement} fails entirely in odd spacetime dimensions, cf.~the discussion in \cref{sec:intro:motivation}.
In particular, we can infer that scattering solutions to the linearised Einstein vacuum equations around Minkowski with no incoming radiation are not conformally smooth towards $\scrip$, see \cref{rem:even:linearised_grav}.

    To avoid confusion with the results of e.g.~\cite{hollands_conformal_2004,godazgar_peeling_2012}, let us first discuss the notion of conformal smoothness in this context, as the reason why we don't have conformal smoothness is orthogonal do that in the cited works. For simplicity, we first restrict our attention to the behaviour of solutions, arising from compactly supported Cauchy data, in neighbourhood of a compact part of $\scrip$, $\D^{+,\mathrm{comp}}=\{u\in[u_1,u_2]\}$.
    Firstly, nonzero solutions to $\Box_{\mathbb{R}^{n+1}}\phi=0$ in Minkowski space with $n+1$ spacetime dimensions have nonzero radiation fields, with $\phi\sim r^{-(n-1)/2}$ in $\D^{+,\mathrm{comp}}$.
    Therefore, if $n\in2\mathbb N$, generic solutions trivially fail to be smooth with respect to $r^2\pv$ vector fields (as would be required by conformal smoothness). 

One could still hope for conformal smoothness in the sense that $\psi=r^{(n-1)/2}\phi$ is smooth with respect to $r^2\pv$. Indeed, for compactly supported data for the linear wave equation, this can easily be proved. 
Including nonlinearities, however, one immediately sees that this breaks down, see \cite{godazgar_peeling_2012}.
One could then try to instead measure smoothness with respect to $\sqrt{r^{-1}}$, i.e.~regularity with respect to the vector field $r^{3/2}\pv$. 
Indeed, using a change of variables $U=u,\, V=v^{-1/2}$, and then performing a simple local existence argument, it is easy to see that this smoothness holds for generic compactly supported data in $\D^{+,\mathrm{comp}}$ in a more robust way:
For instance, solutions to $\Box_{\mathbb{R}^{n+1}}\phi=V(r)\phi$, with $V(r)$ smooth with respect to $r^{3/2}\pv$ and $V(r)=\O(r^{-2})$, as well as solutions to $\Box_{\mathbb{R}^{n+1}}\phi=F[\phi]$, for $F\in\{\phi^2,\phi^3,(\partial_t\phi)^2\}$ for $n\geq4$, can be shown to respect this smoothness with respect to $r^{3/2}\pv$ (in $\D^{+,\mathrm{comp}}$).

The entire discussion above concerned solutions arising from compactly supported Cauchy data. The content of the present section is that for solutions with no incoming radiation and nontrivial behaviour towards $\scrim$, none of the above notions of smoothness apply. 
The failure of peeling proved in this section is therefore completely orthogonal to that discussed in \cite{hollands_conformal_2004, godazgar_peeling_2012}, see already \cref{prop:even}.
Since the proof extends to general $c/r^2$-potentials, we more generally prove this failure of peeling for the scale invariant linear wave equation in any dimension.

In \cref{sec:even:ode}, we prove the statement above by writing down certain ODE's satisfied by fixed angular mode solutions with specificied leading-order decay behaviour.
In \cref{sec:even:conj}, we make further comments on the difference between even and odd spacetime dimensions and  formulate \cref{conj:even:peeling} on the peeling property in even spacetime dimensions.

 \subsection{ODE analysis for fixed angular frequency solutions}\label{sec:even:ode}
In odd spacetime dimensions, we no longer have access to the conservation laws \cref{eq:intro:conslaw}, which so far constituted our main tool for computing solutions. We therefore resort to ODE analysis similar to that performed e.g.~in \cite{taujanskas_controlled_2023}.
We reiterate that generally, in the understanding of precise behaviour of $\phi$ in $\D$, such ODE analysis is merely the secondary step after performing the necessary propagation of polyhomogeneity statements as done in \cref{sec:prop} and \cref{thm:app:general}. (Of course, since the main point of this section is to show that peeling does not hold in any form in even space dimensions, this secondary step is actually enough.)
We also recall that the energy estimates and, therefore, the propagation of polyhomogeneity statements hold for arbitrary $1/r^2$ potentials as shown in \cref{thm:app3}.

 Let us first recall the following property of $\Box_{\mathbb R^{n+1}}$:
\begin{equation}
    r^2 r^{\frac{n-1}{2}}\Box_{\R^{n+1}}r^{-\frac{n-1}{2}}=-r^2\pu\pv+\Dl_{S^{n-1}}-\frac{(n-1)(n-3)}{4}.
\end{equation}
Thus, defining $\psi:= r^{\frac{n-1}{2}}\phi$, and recalling that the eigenvalues of $\Dl_{S^{n-1}}$ are given by $-\ell(\ell+n-2)$, $\ell\in\mathbb N$, we get
\begin{equation}\label{eq:even:wave}
    \Box_{\R^{n+1}}\phi+\frac{c\phi}{r^2}=0\implies (-r^2\pu\pv +\tilde{c})\psi_\ell=0, \text{     where       }    \tilde{c}=c-\ell(\ell+n-2)-\frac{(n-1)(n-3)}{4}.
\end{equation}
For later convenience and for comparison with the 3+1-dimensional case, we will often write $\tilde c=-l(l+1)$.
We note already that in even spacetime dimensions, where $n=2N+1$ for some $N\in\mathbb N$, we have
\begin{equation}
    \tilde{c}=c-(\ell(\ell+2N-1)-N(N-1)=c-(\ell+N)(\ell+N-1),
\end{equation}
from which it follows that all even spacetime dimensions are equivalent in the sense that an $\ell$-mode in $n=2N+1$ spatial dimensions behaves like an $(\ell+N-1)$-mode in $3$ spatial dimensions.

The main result of this section is the observation that  unless $n$ is odd and $c=0$, fixed angular frequency solutions to \cref{eq:even:wave} with no incoming radiation from $\scrim$ are in general not conformally smooth towards $\scrip$. More precisely, we prove
\begin{prop}\label{prop:even}
    Let $n\geq 1$. Unless $\tilde{c}$ in \cref{eq:even:wave} is of the form $\tilde{c}=-l(l+1)$ for $l\in\mathbb N$, solutions $\psi_{\ell}$ to \cref{eq:even:wave} arising from no incoming radiation and initial data $\psi_{\ell}|_{\incone}=r^{-p}$ for $p>-1$ are not conformally smooth towards $\scrip$, with the first nonzero term that is conformally irregular towards $\scrip$ appearing at order $r^{-p}$ if $p\notin\mathbb N$, and at order $r^{-p}\log r$ if $p\in\mathbb N$.
\end{prop}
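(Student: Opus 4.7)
Writing $\tilde{c}=-l(l+1)$ (so the hypothesis is $l\notin\N$), the plan is to exploit the scale invariance of $r^{2}\pu\pv\psi+\tilde{c}\psi=0$ and the homogeneity of the data to reduce the problem to an explicitly solvable hypergeometric ODE. Introducing the scale-invariant variable $\xi=r_{0}/r$ with $r_{0}=v_{0}-u$, I would try the self-similar ansatz $\psi(u,v)=r^{-p}F(\xi)$. The boundary condition $\psi|_{\incone}=r^{-p}$ translates to $F(1)=1$, and a short computation gives $\pv\psi=-r^{-p-1}[pF+\xi F']$, so for $p>-1$ the no-incoming-radiation condition is automatic as $u\to-\infty$ at fixed $v$ (where $\xi\to 1$ but $r\to\infty$). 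Substituting into the wave equation reduces it to Gauss's hypergeometric equation with parameters $(a,b,c)=(p+l+1,\,p-l,\,p+1)$, so the unique solution analytic at $\xi=1$ normalised by $F(1)=1$ is $F(\xi)={}_{2}F_{1}(p+l+1,p-l;p+1;\,1-\xi)$. By the uniqueness statement in \cref{thm:scat:long} (or \cref{thm:scat:weak_decay} for $p\in(-1,-1/2]$, after specifying transversal derivatives at a fixed sphere on $\incone$), this ansatz is precisely the scattering solution in question.

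\textbf{Asymptotics at $\scrip$ via Kummer.} Towards $\scrip$ one fixes $u$ (hence $r_{0}$) and sends $r\to\infty$, so $\xi\to 0$. Kummer's connection formula gives, in the generic case $p\notin\N$,
\begin{equation*}
F(\xi)=A\,{}_{2}F_{1}(p+l+1,p-l;p+1;\xi)+B\,\xi^{-p}\,{}_{2}F_{1}(-l,l+1;1-p;\xi),
\end{equation*}
with
\begin{equation*}
A=\frac{\Gamma(p+1)\Gamma(-p)}{\Gamma(-l)\Gamma(l+1)},\qquad B=\frac{\Gamma(p+1)\Gamma(p)}{\Gamma(p+l+1)\Gamma(p-l)}.
\end{equation*}
Using $r^{-p}\xi^{-p}=r_{0}^{-p}$, the $B$-summand contributes only terms $r_{0}^{-p+k}r^{-k}$ with $k\in\N$ to $\psi$, which are conformally smooth in $1/r$ at fixed $u$, while the $A$-summand contributes $A\,r^{-p}$ plus subleading conformally irregular corrections $\propto r_{0}^{k}r^{-p-k}$. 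The key observation is that $A$ vanishes iff $1/\Gamma(-l)=0$, that is iff $l\in\N$; hence for $l\notin\N$ the coefficient $A$ is generically nonzero, yielding the claimed conformally irregular term at order $r^{-p}$.

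\textbf{The case $p\in\N$ and main obstacle.} When $p\in\N$, the parameter $c-a-b=-p$ is a non-positive integer and the above Kummer formula degenerates; its logarithmic version produces, in addition to conformally smooth terms, a summand proportional to $\log\xi$ times a convergent power series at $\xi=0$, with leading coefficient a nonzero multiple of $1/[\Gamma(l+1)\Gamma(-l)]$. Using $\log\xi=\log r_{0}-\log r$, this converts into an $r^{-p}\log r$ contribution to $\psi$ whose coefficient is nonzero precisely when $l\notin\N$. The main technical obstacle is a careful case analysis of Kummer's formula in its degenerate forms: besides the logarithmic version for $p\in\N$, one must verify that possible secondary resonances of the $B$-coefficient (e.g.\ $p+l\in\mathbb{Z}_{\leq-1}$ or $p-l\in\mathbb{Z}_{\leq 0}$) do not accidentally zero out or modify the $A$-coefficient of the $r^{-p}$ (or $r^{-p}\log r$) term. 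The unifying insight is that the entire obstruction to peeling is encoded in the single factor $1/[\Gamma(-l)\Gamma(l+1)]$, which vanishes iff $l\in\mathbb{Z}$, equivalently iff $\tilde{c}=-l'(l'+1)$ for some $l'\in\N$, which is precisely the case where the conservation laws \cref{eq:intro:conslaw} apply and conformal smoothness is preserved.
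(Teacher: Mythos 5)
Your proposal is essentially correct and rests on the same pillar as the paper's argument: reduce to a hypergeometric ODE via a self-similar ansatz and read off the connection coefficient $\propto 1/[\Gamma(-l)\Gamma(l+1)]$ (DLMF 15.8.4), which vanishes precisely when $l\in\mathbb{Z}$. Your variable $\xi=r_0/r$ and normalisation $\psi=r^{-p}F(\xi)$ is related to the paper's $\psi=|u|^{-p}\tilde\psi(u/r)$ by a twist $F(\xi)=\xi^{-p}\tilde\psi(-\xi)$ (taking $v_0=0$), so the two ODEs are frame-equivalent and produce identical $A$- and $B$-coefficients. For $p\notin\mathbb{N}$ the arguments are thus the same up to cosmetics. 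The genuine difference is the treatment of $p\in\mathbb{N}$: you press on with the degenerate (logarithmic) Kummer formula DLMF 15.8.8 and identify the $\log\xi$-coefficient as again a multiple of $1/[\Gamma(l+1)\Gamma(-l)]$; the paper instead switches to a different ansatz at spacelike infinity ($\psi=r^{-p}\bar\psi(t/r)$), giving the Legendre-type ODE \cref{eq:sharp:ode_at_i0}, and proves the no-incoming-radiation solution blows up at $\scrip$ by an elementary ratio analysis of the recurrence \cref{eq:even:recursion}. Your route is more uniform across $p$; the paper's is arguably more elementary and avoids the bookkeeping of the degenerate formula. Both are valid.

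Two small corrections to your write-up. First, the claim that no incoming radiation is ``automatic'' is not quite right for $p\in(-1,0)$: there the secondary branch $(1-\xi)^{-p}$ \emph{also} vanishes at $\xi=1$, so $F(1)=1$ does not by itself exclude it; but since $r^{-p}(1-\xi)^{-p}=(v-v_0)^{-p}$ contributes a nonvanishing $\pv\psi$ on $\scrim$, the no-incoming-radiation condition genuinely kills it. Second, your worry about ``secondary resonances'' of the $B$-coefficient is unfounded: with $p\in\mathbb{N}$ and $l\notin\mathbb{Z}$ (which, via $l\mapsto -l-1$, is the generic case for $\tilde{c}\neq -l'(l'+1)$, $l'\in\mathbb{N}$), neither $a=p+l+1$ nor $b=p-l$ is an integer, so no degeneracy beyond $c-a-b=-p$ can occur.
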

\begin{cor}\label{cor:even}
    Unless $\tilde{c}$ in \cref{eq:even:wave} is of the form $\tilde{c}=-l(l+1)$ for $l\in\mathbb N$, then solutions to \cref{eq:even:wave} arising from compactly supported scattering data along $\scrim$ generically diverge logarithmically towards $\scrip$, $\lim_{v\to\infty}\psi/(\log r)\neq 0$.
\end{cor}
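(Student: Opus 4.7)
The plan is to reduce \cref{cor:even} to a direct application of \cref{prop:even}. First, by linearity and density, it suffices to consider incoming data of the form $\pv\psi^{\scrim}(v,\omega) = Y_{\ell,m}(\omega) f(v)$, where $Y_{\ell,m}$ is a spherical harmonic with $\ell$ chosen so that $\tilde{c}$ from \cref{eq:even:wave} is not of the form $-l(l+1)$ for any $l\in\N$, and $f\in C_c^{\infty}([v_0,\infty))$ satisfies $C:=\int_{v_0}^{\infty} f\dd v\neq 0$. Rotational symmetry together with the uniqueness statement of \cref{thm:scat:long} then forces $\psi(u,v,\omega) = Y_{\ell,m}(\omega)\psi_\ell(u,v)$, where $\psi_\ell$ solves the $\ell$-mode ODE $(-r^2\pu\pv + \tilde{c})\psi_\ell = 0$.

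I would next decompose $\psi_\ell = \tilde\psi_\ell + \psi_{\Delta,\ell}$ via a long-range adaptation of \cref{cor:prop:expansiontowardsI0}: both summands solve the same ODE, $\tilde\psi_\ell$ has no incoming radiation, $\tilde\psi_\ell|_{\incone}$ is polyhomogeneous with leading-order value $C$, and $\psi_{\Delta,\ell}\in\A{phg,b}^{\mindex{0},a_0-}(\D^-)$ for arbitrarily large $a_0$. The construction mirrors the proof of \cref{cor:prop:expansiontowardsI0}: one extracts a constant-in-$u$ part of the incoming radiation at each order via iterated time integration, and invokes \cref{lemma:prop:timeintegral:long} and \cref{thm:app3} in place of the Minkowskian propagation-of-polyhomogeneity results.

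Finally, I would apply \cref{prop:even} to $\tilde\psi_\ell$ with $p=0$. Since $\tilde{c}\neq -l(l+1)$, the constant leading term $C$ on $\incone$ produces a nonzero $\log r$-contribution at leading order in the expansion of $\tilde\psi_\ell$ towards $\scrip$, with coefficient proportional to $C$. The subleading polyhomogeneous corrections of $\tilde\psi_\ell|_{\incone}$ (of order $r^{-k}$, $k\geq 1$) give only strictly subleading $\log r$-contributions by the same proposition, and the rapidly decaying remainder $\psi_{\Delta,\ell}$ contributes nothing at order $\log r$; thus no cancellation occurs and $\lim_{v\to\infty}\psi/\log r\neq 0$ generically in $\omega$, as claimed.

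The main obstacle will be carefully establishing the long-range analog of \cref{cor:prop:expansiontowardsI0}: the time-integration procedure must be iterated in the presence of the $c/r^2$-potential while keeping track of the index sets appearing at each step and verifying the required fast decay of the error term. This is a minor but somewhat technical adaptation of the Minkowskian argument.
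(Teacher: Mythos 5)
Your route differs from the paper's and is considerably more involved than necessary; the key simplification available here is that the incoming data is \emph{compactly supported} in $v$. The paper's proof simply observes that once one passes an ingoing null cone $\Cbar_{v_2}$ to the future of $\supp\pv\psi^{\scrim}_\ell$, the solution is a genuine no-incoming-radiation solution with initial data $\psi_\ell|_{\Cbar_{v_2}}=\int_{v_0}^{\infty}\pv\psi^{\scrim}_\ell\dd v+O(r^{-1})$, and \cref{prop:even} with $p=0$ applies directly. No decomposition, no time integration, no long-range adaptation of any auxiliary lemma is needed -- one just changes the initial cone.

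Beyond being more complicated, there is a gap in the way you invoke \cref{cor:prop:expansiontowardsI0}. That corollary is stated under the hypothesis $\E^{\scrim}=\emptyset$, i.e.\ $\psi^{\scrim}\in\Hb^{a_0}(\scrim)$, which requires $\psi^{\scrim}\to 0$ towards $I^0\cap\scrim$. In your situation $\psi^{\scrim}=\int_{v_0}^v\pv\psi^{\scrim}\dd v'$ tends to the nonzero constant $C$, so $\psi^{\scrim}\notin\Hb^{a_0}(\scrim)$ for any $a_0>0$, and the time-inverse $\int_{v_0}^v\psi^{\scrim}\dd v'$ used in the proof of \cref{cor:prop:expansiontowardsI0} grows linearly in $v$ rather than converging. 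You would have to first strip off the constant mode $C$ of $\psi^{\scrim}$ (which corresponds precisely to a no-incoming-radiation solution with constant Cauchy value on an incoming cone), and only then apply the decomposition to the remainder $\psi^{\scrim}-C$, which is compactly supported and hence in $\Hb^{a_0}$ for any $a_0$. Your phrase ``one extracts a constant-in-$u$ part at each order'' hints at this, but as written the application of the corollary is not justified. Once that preprocessing is done your outline would work, but at that point you have essentially rediscovered the paper's shortcut: the no-incoming-radiation piece with leading constant $C$ is exactly the restriction of $\psi_\ell$ to $\{v>v_1\}$, so the decomposition and its long-range adaptation are superfluous.
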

\begin{proof}[Proof of \cref{cor:even}]
It is an easy computation that for $\pv\psi^{\scrim}_{\ell}$ compactly supported along $\scrim$, there exists a null cone $\Cbar$ to the future of the support where the assumptions of \cref{prop:even} are satisfied with $p=0$. More precisely, we have $\psi_{\ell}|_{\Cbar}=\int_{v_0}^{\infty}\pv\psi^{\scrim}_\ell\dd v +O(r^{-1})$.
\end{proof}
\begin{rem}
    Notice that the first conformal irregularity appearing for $\tilde{c}\neq -l(l+1)$ for any $l\in\mathbb N$ is precisely the first one not excluded by \cref{thm:app:general} (though \cref{thm:app:general} of course didn't tell us that the coefficient of that irregularity is nonvanishing). See also \cref{rem:prop:suboptimal}.
\end{rem}
\begin{rem}\label{rem:even:linearised_grav}
    For linearised gravity around Minkowski in higher dimensions, certain components of the Weyl curvature still satisfy a Regge--Wheeler equation of the type \eqref{eq:even:wave}. For instance, using the notation of \cite{collingbourne_2022}, the Weyl tensor quantity~$r^{\frac{n-1}{2}}\accentset{\scalebox{.6}{\mbox{\tiny (1)}}}{{\varsigma}}$ satisfies \eqref{eq:even:wave} with $c=0$ when replacing $\phi$, as one swiftly derives from the system of equations presented in Section~2.10 therein.
 Thus, the failure of peeling exposed by \cref{prop:even} immediately extends to linearised gravity around Minkowski.
\end{rem}
\cref{prop:even} already shows the property of fixed angular frequency solutions to $\Box_{\mathbb{R}^{2N+1,1}}\phi=0$ to be conformally smooth towards $\scrip$ is truly special. We comment further on this in \cref{sec:even:conj}.
\begin{proof}
    We split the proof into two parts: First, we show the statement for $p\notin\mathbb N$, for which we will make use of an ODE motivated specifically by our understanding of the structure of solutions with no incoming radiation in $3+1$ dimensions. This ODE is derived by making the ansatz $\psi=|u|^{-p} \tilde\psi(u/r)$.
    While we could also use the same ODE to show the statement for $p\in\mathbb N$, it will be more convenient to use a more symmetric ODE for this case, derived by making the ansatz $\psi=r^{-p}\bar\psi(t/r)$.

\paragraph{The ODE geared towards conservation laws and the case $p\notin\mathbb N$}
\newcommand{\tpsi}{\tilde{\psi}}
Motivated by our understanding of solutions in $3+1$-dimensions, we make the following ansatz for solutions to \cref{eq:even:wave}: Let $\psi=|u|^{-p}\tilde\psi(x)$ for $x=u/r$.
It is then straight-forward to derive that \cref{eq:even:wave} implies the following ODE for $\tilde\psi(x)$:
\begin{equation}\label{eq:even:ODE1}
    -(x+x^2)\tpsi''+(p-1-2x)\tpsi'-\tilde{c}\tpsi=0.
\end{equation}
This ODE has regular singular points at $x=-1$ (corresponding to $\scrim$) and at $x=0$ (corresponding to $\scrip$). 

\textbf{Near $x=-1$,} making the ansatz $\tpsi=\sum_{k=0}^{\infty}a_k (1+x)^{R+k}$, we obtain the recurrence relations, writing $\tilde{c}=-l(l+1)$:
\begin{equation}\label{eq:even:rec1}
    a_{k+1}((R+k+1)(R+k+p+1))=a_k((R+k)(R+k+1)-l(l+1)) \implies \text{  indicial roots are  }R\in\{0,-p\}.
\end{equation}
\newcommand{\nir}{\tpsi^{-}_{\slashed{\nwarrow}}}
\newcommand{\ir}{\tpsi^{-}_{\nwarrow}}
\newcommand{\nor}{\tpsi^{+}_{\slashed{\nearrow}}}
\newcommand{\orr}{\tpsi^{+}_{\nearrow}}
We now assume that $-1<p\notin\mathbb N$. We note that the solution corresponding to $R=0$ has no incoming radiation; we denote it by $\nir$. Similarly, the solution corresponding to $R=-p$ has nontrivial incoming radiation, we denote it by $\ir$. Indeed: 

\begin{nalign}\label{eq:even:defofnirandir}
		R=-p\implies \tpsi\sim(1+x)^{-p}=(v/r)^{-p}\implies&\psi\sim |u|^{-p}(v/r)^{-p} &\implies  \pv \psi|_{\scrim}\neq0, &\quad \text{ incoming radiation, denote by }\ir .\\
		R=0\implies \tpsi\sim(1+x)^{0}\implies&\psi\sim |u|^{-p} &\implies \pv\psi|_{\scrim}=0,&\quad \text{ no incoming radiation, }\nir.
	\end{nalign}
Notice that $\psi=|u|^{-p}\ir$ is conformally regular towards $\scrim$, whereas $\psi=|u|^{-p}\nir$ is not.

Similarly, \textbf{near $x=0$,} making the ansatz\footnote{Note that the solution with $R=-p$ will then be complex as $x<0$, the bothered reader may feel free to take the real part. Also note that for $p\in(-1,-1/2)$, this is not a finite energy solution, but since we do explicit computations, we can still deal with this.} $\tpsi=\sum_{k=0}^{\infty}b_k x^{r+k}$, we get
\begin{equation}\label{eq:even:rec2}
     - b_{k+1}((R+k+1)(R+k-p+1))=b_k((R+k)(R+k+1)-l(l+1)) \implies \text{  indicial roots are  }R\in\{0,p\}.
\end{equation}
By similar considerations to the above, the case $R=p$ now corresponds to a solution with no outgoing radiation ($\pu\psi|_{\scrip}=0)$, and we denote in this case $\tpsi$ by $\nor$. Notice that $\psi=|u|^{-p}\nor$ is conformally irregular towards $\scrip$.
The case $R=p$, for which we will write $\orr$, corresponds to nontrivial outgoing radiation and is conformally regular towards~$\scrip$.

We now want to understand if solutions with no incoming radiation are conformally irregular towards $\scrip$: Since~$\nor$ is conformally irregular towards $\scrip$, this means that we need to understand the coefficient $c_2(p,l)$ in  $\nir=c_1(p,l) \orr+c_2(p,l)\nor$. 
Notice that if $l\in\mathbb N$, then the recurrence relation \cref{eq:even:rec1} for $R=0$ terminates at $k=\ell$, and so the solution is a polynomial, and it remains conformally regular towards $\scrip$, i.e.~$c_2(p,l)=0$ for any $l\in\mathbb N$. We claim that these are the only cases where this coefficient vanishes. 

In order to show this latter claim, we explicitly solve the recurrence relations \cref{eq:even:rec1} and \cref{eq:even:rec2}. We compute that for any $l\in\R$ (after suitable normalisation):
\begin{nalign}\label{eq:even:hypGeom_representation}
    x=-1,R=0\implies \tpsi=\nir&={}_2F_1(-l,1+l;1+p;1+x),\\
    x=-1,R=-p\implies \tpsi=\ir&=(1+x)^{-p}\cdot {}_2F_1(-p-l,1+l-p;1-p;1+x),\\
      x=0,R=0\implies \tpsi=\orr&={}_2F_1(-l,1+l;1-p;-x),\\
      x=0,R=p\implies \tpsi=\nor&=x^{p}\cdot {}_2F_1(p-l,1+l+p;1+p;-x).
\end{nalign}
One now easily shows (or looks up Eq.~07.23.17.0058.01  in \cite{Mathematica} or Eq.~15.8.4 in \cite{DLMF}) that 
\begin{nalign}
    \nir&={}_2F_1(-l,1+l;1+p;1+x)\\
    &=\frac{\Gamma(1+p)\Gamma(p)}{\Gamma(1+p+l)\Gamma(p-l)}\,{}_2F_1(-l,1+l;1-p;-x)
    + \frac{\Gamma(1+p)\Gamma(-p)}{\Gamma(-l)\Gamma(1+l)}x^{p}\cdot {}_2F_1(p-l,1+l+p;1+p;-x)\\
    &=c_1(p,l) \orr+c_2(p,l)\nor.
\end{nalign}
Clearly, $c_2(p,l)=\frac{\Gamma(1+p)\Gamma(-p)}{\Gamma(-l)\Gamma(1+l)}=0$ if and only if $l\in\mathbb N$. This proves the result for $p\notin\mathbb N$.\footnote{For the reader who'd rather not do computations as explicit as the above, we note that it is straight-forward to derive from the recurrence relations \cref{eq:even:rec1} in the case $r=0$ that $\lim_{x\to1}\nir=\infty$ if $p<0$ and $l\notin\mathbb N$, implying $c_2(p,l)\neq 0$. 
For $p>0$ and $p\notin\mathbb{N}$, it is then straight-forward to extend this result by using time integrals.}

\begin{obs}[Integer value of $p$]\label{rem:even:hypGeom_rep_integer}
    Note that the formulae for $\nir,\nor$ in  \cref{eq:even:hypGeom_representation} are also valid for $p\in\N$.
    We will make use of this fact in \cref{section:no incoming radiation Cauchy}.
    
    For $\ir,\orr$, the formulae do not extend to $p\in\N$ in general, as the meromorphic in $p$ function has poles there.
    See \S 15.2 \cite{DLMF} for more details on how to renormalise the function in this case and \S 14.3, \S 15.9 \cite{DLMF} for connection to Legendre polynomials.
   
\end{obs}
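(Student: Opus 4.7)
\textbf{Proof proposal for \cref{rem:even:hypGeom_rep_integer}.} The plan is to read off the permissibility of the four hypergeometric formulae \cref{eq:even:hypGeom_representation} at $p\in\mathbb{N}$ directly from the recurrence relations \cref{eq:even:rec1} and \cref{eq:even:rec2} that were used to derive them. Recall that a hypergeometric series ${}_2F_1(a,b;c;z)$ is ill-defined precisely when $c\in\{0,-1,-2,\dots\}$, because the recursion for its coefficients requires division by $c+k$ at some $k\in\mathbb{N}$.

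First, for $\nir$ and $\nor$: inspecting \cref{eq:even:rec1} with $R=0$, the denominator on the left-hand side is $(k+1)(k+p+1)$, and for \cref{eq:even:rec2} with $R=p$ it is $-(k+p+1)(k+1)$. Both are strictly positive for every $k\in\mathbb{N}_{\geq 0}$ and every $p\in\mathbb{N}_{\geq 0}$, so the series defining $\nir$ and $\nor$ converge and solve the ODE \cref{eq:even:ODE1} in the sense of Frobenius. This gives the first claim of the observation. In the case $p=0$ one also has to verify that $\nor$, which in \cref{eq:even:hypGeom_representation} carries the prefactor $x^p$, still captures the correct Frobenius solution at $x=0$; this is immediate because $x^0=1$ and the hypergeometric factor reduces to the standard solution.

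Next, for $\ir$ and $\orr$: the corresponding denominators are $(R+k+1)(R+k+p+1)$ evaluated at $R=-p$, giving $(k-p+1)(k+1)$, and at $R=0$ with the $x=0$ indicial equation giving $(k+1)(k-p+1)$. For any $p\in\mathbb{N}_{\geq 1}$ the factor $k-p+1$ vanishes at $k=p-1$, so the recurrence cannot be propagated past this index and the formal series breaks down. Equivalently, the hypergeometric parameter $c=1-p$ appearing in $\ir$ and $\orr$ in \cref{eq:even:hypGeom_representation} is a nonpositive integer, which is the standard pole locus of ${}_2F_1(a,b;c;z)$ in $c$. For $p=0$ all four formulae coincide pairwise (since the two Frobenius exponents collide at each singular point), so the formulae for $\ir,\orr$ do in fact reduce to those for $\nir,\nor$ modulo a logarithmic partner that is hidden in the degeneracy; this boundary case should be stated as part of the observation and handled by pointing to the confluent Frobenius theory.

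The only nontrivial point of the proposal is what to say about $\ir$ and $\orr$ at $p\in\mathbb{N}_{\geq 1}$: the solutions themselves of course exist (the ODE has two independent Frobenius solutions at each regular singular point for every $p$), but one of them now carries a logarithmic term. I would cite \S 15.2 of \cite{DLMF} for the meromorphic renormalisation of ${}_2F_1$ in $c$ that replaces the divergent formulae with the correct logarithmic solution, and \S 14.3, \S 15.9 of \cite{DLMF} for the identification of the resulting objects with Legendre functions when $l\in\mathbb{N}$. The main obstacle is purely expository: making clear that the failure is only a failure of the formula, not of the existence of two linearly independent solutions, and flagging that the logarithmic term which appears on renormalisation of $\ir,\orr$ is exactly the object that will matter for the Fourier-multiplier characterisation of no incoming radiation in \cref{section:no incoming radiation Cauchy}.
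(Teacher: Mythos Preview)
The paper states this as an observation without supplying a proof; it simply records the fact and points to the DLMF for the renormalisation of ${}_2F_1(a,b;c;z)$ at its poles in $c$. Your justification via the recurrence relations \cref{eq:even:rec1}, \cref{eq:even:rec2} is correct and is precisely the elementary reason behind the observation: the third hypergeometric parameter is $1+p$ for $\nir,\nor$ (never a nonpositive integer for $p\in\N$) and $1-p$ for $\ir,\orr$ (a nonpositive integer for $p\in\N_{\geq 1}$, whence the division by $k-p+1$ at $k=p-1$ obstructs the recursion). Your discussion of the $p=0$ boundary case and of the logarithmic Frobenius partner goes somewhat beyond what the observation itself asserts, but it is correct and anticipates exactly how the paper later uses these formulae in \cref{section:no incoming radiation Cauchy}.
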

\begin{obs}[Antipodal matching]\label{rem:even:antipodal}
One may check from \cref{eq:even:hypGeom_representation} that if $p=0$, then $\lim_{x\to-1}\nir =1=(-1)^{\ell}\lim_{x\to 0} \nir$. 
\end{obs}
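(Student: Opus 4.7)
The plan is to evaluate the hypergeometric representation \eqref{eq:even:hypGeom_representation} of $\nir$ at the two endpoints $x=-1$ and $x=0$ after specialising to $p=0$, in the setting of the $3+1$-dimensional scalar wave equation (so that $N=1$ and the parameter $l$ in \eqref{eq:even:hypGeom_representation} coincides with the spherical harmonic number $\ell \in \mathbb{N}$), in which case
\begin{equation*}
\nir\big|_{p=0} = {}_2F_1(-\ell,\, 1+\ell;\, 1;\, 1+x).
\end{equation*}

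The first limit is immediate from the power series definition of ${}_2F_1$: evaluating at $1+x=0$ leaves only the constant term, so $\lim_{x\to-1}\nir = 1$. For the second limit, the key point is that since $-\ell$ is a non-positive integer, the series terminates into a polynomial of degree $\ell$, so ${}_2F_1(-\ell,1+\ell;1;1)$ is unambiguously defined as the value of this polynomial at argument $1$ (in line with \cref{rem:even:hypGeom_rep_integer}: Gauss's summation theorem does not apply directly on the boundary $c-a-b=0$, but the termination of the series bypasses this entirely). Next I would invoke the classical identity $P_\ell(1-2z) = {}_2F_1(-\ell,1+\ell;1;z)$ relating the Legendre polynomial to the hypergeometric function; at $z=1$ this yields
\begin{equation*}
\lim_{x\to 0}\nir = {}_2F_1(-\ell,1+\ell;1;1) = P_\ell(-1) = (-1)^{\ell}.
\end{equation*}

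Combining the two evaluations, $(-1)^{\ell}\lim_{x\to 0}\nir = (-1)^{\ell}\cdot(-1)^{\ell} = 1 = \lim_{x\to-1}\nir$, which is the claimed matching relation. There is no genuine obstacle: once the terminating hypergeometric is recognised as (proportional to) the Legendre polynomial, the antipodal identity reduces to the elementary fact $P_\ell(-1) = (-1)^\ell$. Alternatively, one can avoid invoking Legendre polynomials altogether and verify the identity $P_\ell(-1)=(-1)^\ell$ by induction on $\ell$ using the three-term recurrence \eqref{eq:even:rec1} for the coefficients $a_k$ with $R=0$, which expresses the polynomial directly and evaluates at $1+x=1$ to an alternating telescoping sum.
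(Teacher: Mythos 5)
Your proof is correct and is essentially the computation the paper has in mind: the Observation is stated with ``One may check from \cref{eq:even:hypGeom_representation}'' and no further details, so your job was simply to supply them. Setting $p=0$ and recognising ${}_2F_1(-\ell,1+\ell;1;1+x)=P_\ell(-1-2x)$, so that the value at $x=0$ is $P_\ell(-1)=(-1)^\ell$, is exactly the natural way to do this, and it also matches how the paper handles the same identity in the proof of \cref{lemma:app:antipodal}, where the combinatorial sum $\sum_{n=0}^{\ell}\frac{(-1)^n}{(n!)^2}\frac{(\ell+n)!}{(\ell-n)!}=(-1)^\ell$ (the explicit evaluation of your terminating ${}_2F_1$ at argument $1$) is established by reading off a coefficient of $(1-x)^\ell/x^\ell$. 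One small quibble: the ``alternative route'' you sketch at the end is loosely worded — the recurrence \eqref{eq:even:rec1} is in $k$ at fixed $\ell$, not a recurrence in $\ell$, and summing the resulting $a_k$ at $1+x=1$ does not visibly telescope; this part would need more care if one actually wanted to avoid Legendre polynomials. It does not affect the main argument, which is sound.
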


\paragraph{The ODE at spacelike infinity and the case $p\in\mathbb N$}
We now treat the case where $p\in\mathbb N$. We use the opportunity to do this using a slightly modified ODE that treats $\scrim$ and $\scrip$ symmetrically: 
We first introduce coordinates $\rho=1/r$ and $\tau=t/r$. In these coordinates, we have $\pu=\rho^2\partial_\rho+\rho(1+\tau)\partial_\tau$, $\pv=-\rho^2\partial_\rho+\rho(1-\tau)\partial_\tau$, and \cref{eq:even:wave} reads 
\begin{equation}
  \left(  -(1-\tau)^2\partial_\tau^2+2\partial_\tau+2\tau\partial_\tau\rho\partial_\rho+(\rho\partial_\rho)^2+\rho\partial_\rho+\tilde{c}\right) \psi_{\ell}=0.
\end{equation}
Since all $\partial_\rho$ derivatives come with exactly one power of $\rho$, and $\rho\partial_\rho r^{-p}=p r^{-p}$, we can make the ansatz $\psi=r^{-p}\bar\psi(\tau,\omega)$ to reduce the above to the following ODE (we drop the $\ell$-subscript):
\begin{equation}\label{eq:sharp:ode_at_i0}
 (-((1-\tau^2)\partial_\tau-2\tau-2\tau p)\partial_\tau+p(p+1)+\tilde{c})\bar\psi=0.
\end{equation}
\renewcommand{\nir}{\bar{\psi}^{-}_{\slashed{\nwarrow}}}
\renewcommand{\ir}{\bar{\psi}^{-}_{\nwarrow}}
\renewcommand{\nor}{\bar{\psi}^{+}_{\slashed{\nearrow}}}
\renewcommand{\orr}{\bar{\psi}^{+}_{\nearrow}}
The indicial roots of \cref{eq:sharp:ode_at_i0} at the regular singular points $\tau=\pm1$ are $R\in\{0,-p\}$.
\textbf{We assume from now that $p\in\mathbb N_{\geq0}$ and that $l\notin\mathbb N$:}
As in the previous section, the solution with no incoming radiation at $\tau=-1$ corresponds to $R=0$; the solution then being given by $\bar\psi=\sum_{k\geq0} c_k(1+\tau)^k=\nir$. 
The other solution is given by
\begin{equation}\label{eq:even:dummy}
    \bar\psi=c_{\log} \log(1+\tau)\nir+ (1+\tau)^{-p}\sum_{k\geq0} d_k(1+\tau)^k=\ir.
\end{equation}
By computing the recurrence relations coming from the ansatz $\bar\psi=\sum c_k (1+\tau)^k$,
\begin{equation}\label{eq:even:recursion}
		c_{k+1}2(k+1)(k+1+p)=c_k\Big(p(p+1)-l(l+1)+k(k+1+2p)\Big)=c_k\Big((p-l)(p+l+1)+k(k+1+2p)\Big),
	\end{equation}
 we directly see that the constant $c_{\log}$ in \cref{eq:even:dummy} is nonzero (again, for $l\notin\mathbb N$) since the RHS in \cref{eq:even:recursion} has roots $k=l-p,-(1+l+p)$ and thus never vanishes.
 Therefore, unlike in the $p\notin\N$ case, $\ir$ is the conformally irregular solution.

 Similarly, near $\scrip$, we have a solution of the form $\bar\psi=\sum_{k\geq0}  e_k (1-\tau)^k=\nor$,
 and another solution of the form $\bar\psi=c'_{\log}\log(1-\tau) \nor+(1-\tau)^{-p} \sum_{k\geq0}  f_k (1-\tau)^k=\orr$, where $c'_{\log}\neq 0$ by the same argument as above.

 Thus, as the limit $\lim_{\tau\to1}\nor$ is finite, in order to show that $c_2(p,l)\neq 0$ in $\nir=c_1(p,l)\nor +c_2(p,l)\orr$, it suffices to show that 
 \begin{equation}\label{eq:even:divergence}
     \lim_{\tau\to 1} \nir=\infty.
 \end{equation}
 For this, we simply take the recurrence relation \cref{eq:even:recursion} and evaluate at $(1+\tau)=2$:
 \begin{equation}\label{eq:even:recursion relation}
    \left. \frac{(1+\tau)^{k+1}c_{k+1}}{ (1+\tau)^kc_k}\right|_{\tau=1}=\frac{p(p+1)-l(l+1)+k(k+1+2p)}{(k+1)(k+1+p)}=1+\frac{-1+p}{k}+\O(1/k^2).
 \end{equation}
It follows that
\begin{equation}\label{eq:even:computing_coefficients}
    2^kc_k\sim \prod^k(1+\frac{p-1}{m})\sim e^{\sum^k\log(1+\frac{p-1}{m})}\sim e^{\sum^k\frac{p-1}{m}}\sim e^{(p-1)\log k}\sim k^{p-1}.
\end{equation}
Thus, the series $\sum_{k=0}^\infty 2^kc_k$ diverges if $p\geq0$, proving \cref{eq:even:divergence} and thus that $c_2(p,l)\neq 0$.
\end{proof}

\begin{rem}\label{rem:even:regular}

\newcommand{\tpsi}{\tilde{\psi}}
\newcommand{\nir}{\tpsi^{-}_{\slashed{\nwarrow}}}
\newcommand{\ir}{\tpsi^{-}_{\nwarrow}}
\newcommand{\nor}{\tpsi^{+}_{\slashed{\nearrow}}}
\newcommand{\orr}{\tpsi^{+}_{\nearrow}}
    We note that, for $l\in\mathbb{N}$, $p\in\mathbb{N}$, \eqref{eq:even:dummy} is still a solution, but the constant $c_{\log}$ will now be vanishing if and only if $\abs{p}>\ell$ since the RHS of \cref{eq:even:recursion} has a zero at $k=l-p,-(1+p+l)$.
Thus, if $p>l$, $p,l\in\mathbb{N}$, then both solutions will be conformally regular towards $\scrip$!
To give examples, let $p=1$.
For $l=0<p$, we have $\nir=1/u,\nor=1/v$ (both conformally regular).
In contrast,  for $l=1$, we have $\nor=\nir=1/r$ and $\ir=\orr=t/(uv)-2\log(u/v)/r$.
For a discussion concerning the same ODEs, see also \cite{gasperin_asymptotics_2024-1}.
\end{rem}

\subsection{A peeling conjecture for \texorpdfstring{$\Box_{\R^{2N+1,1}}\phi=0$}{even spacetime dimensions} and the contrast to odd spacetime dimensions}\label{sec:even:conj}
\cref{prop:even} shows that if $n$ is odd, then fixed angular frequency solutions $\psi_{\ell}$ to $\Box_{\R^{n+1}}\phi=0$ arising from data of the form $\psi_{\ell}^{\incone}=r^{-p}$ and no incoming radiation are conformally smooth towards $\scrip$, i.e.~$\psi_{\ell}|_{\C_u}\in \A{phg}^{\overline{(0,0)}}(\C_u)$ for all $u\leq u_0$. 
Using the exact conservation laws, or the explicit representation formulae for fixed angular mode solutions, the same is true for data $\psi_{\ell}^{\incone}\in \Hb^{a;\infty}(\incone)$ for any $a\geq-1/2$; indeed, this is what we showed in \cref{lemma:app:minkowski_index_improvement}.

We have seen in \cref{prop:even} that the natural attempt to generalise this to odd spacetime dimensions or to the inclusions of $1/r^2$-potentials fails. 
A related question is whether, even in 3+1 dimensions, the property of fixed angular modes to always be conformally smooth towards $\scrip$ if there is no incoming radiation carries over to the resummed solution. 
A first observation with regards to this question is that this is certainly not the case if the initial data are of finite angular regularity.
Indeed, in that case, it is easy to construct explicit solutions where the higher-order terms in the expansions of fixed angular mode solutions are no longer summable in $\ell$ (using for instance the expression (12.3) combined with (10.64) of \cite{kehrberger_case_2024}):
\begin{lemma}\label{lemma:finiteregularitypeeling}
 Let $N\in\mathbb N$ and let $\vec{a}$ be admissible.   Let $\psi^{\incone}\in\Hb^{a_-;1}(\incone)$, and let $\psi\in\Hb^{\vec{a}}(\Dbold)$ be the corresponding scattering solution to $\Box_{\mathbb{R}^{2N+1,1}}\phi=0$ with no incoming radiation.
    Then, the spherical projections $\psi_{\ell}$ satisfy for any $K\in\mathbb N$
    \begin{equation}
        \psi_{\ell}|_{\outcone{}}\in
        \A{phg}^{\mindex{0}}(\outcone{})\implies\psi_{\ell}=\sum_{k\leq K}\psi_{\ell,k}r^{-k}+\Hb^{K;\infty}(\outcone{}).
    \end{equation}
    However, in general, $\psi_{\ell,k}\notin l_{\ell}^2(S^2)$ for $k$ suitably large (depending on $a_0$).
\end{lemma}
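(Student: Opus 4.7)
The implication in the statement is essentially a truncation: polyhomogeneity at infinite order implies a finite-order expansion with remainder in the correspondingly weighted space, so the nontrivial content is (i) that the premise $\psi_\ell|_{\C_u}\in\A{phg}^{\mindex{0}}(\C_u)$ actually holds, and (ii) the failure of $\ell^2_\ell(S^2)$-summability asserted in the second sentence. For (i), I would apply \cref{lemma:app:minkowski_index_improvement} to each fixed angular mode, with its proof carrying over verbatim to $(2N+2)$-dimensional Minkowski via the conservation law $\pu(r^{-2L}\pv(r^2\pv)^L\psi_\ell)=0$ with $L=\ell+N-1$ (cf.\ \cref{eq:intro:conslaw}). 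The only subtlety is that the conclusion is stated with infinite $\Vb$-regularity: restriction to a fixed angular mode automatically provides infinite angular regularity, while $u\pu$-regularity can be bootstrapped to any finite order via the ODE $\pu\pv\psi_\ell=-L(L+1)\psi_\ell/r^2$ integrated from $\scrim$, starting from the $\Hb^{a_-;1}(\incone)$-boundedness of the data.

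The heart of the lemma is point (ii), the counter-example. For this I would use the explicit fixed-$\ell$ representation formula for no-incoming-radiation scattering solutions, which generalizes \cref{eq:intro:explicit_sol} to $(2N+2)$ spacetime dimensions as
\begin{equation*}
    \psi_\ell(u,v)=r^{L+1}\,\pu^{L}\!\left(\frac{f_\ell(u)}{r^{L+1}}\right),\qquad L=\ell+N-1,
\end{equation*}
where $f_\ell(u)$ is determined by inverting the analogous identity at $v=v_0$ against the initial profile $\psi_\ell^{\incone}(u)$. Expanding the right-hand side on a fixed outgoing cone $\C_u$ via the Leibniz rule in $r=v-u$, one sees that the coefficient $\psi_{\ell,k}$ of $r^{-k}$ is, up to an $\ell$-independent factor depending on the data profile, a polynomial in $L$ of degree exactly $2k$ with generically non-vanishing leading coefficient. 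This is precisely the structure encoded in the closed-form expressions \cite[(10.64)]{kehrberger_case_2024} and \cite[(12.3)]{kehrberger_case_2024} after specialization to $M=0$.

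Given this polynomial-in-$\ell$ growth of the coefficients, I would then construct the counter-example from initial data of the form $\psi^{\incone}(u,\omega)=\sum_\ell c_\ell(-u)^{a_-}Y_{\ell,0}(\omega)$ with $c_\ell\sim\ell^{-\gamma}$, where $\gamma$ is chosen just above $3/2$ so that $\psi^{\incone}\in\Hb^{a_-;1}(\incone)$ but not $\Hb^{a_-;1+\epsilon}(\incone)$ for any $\epsilon>0$ (the weight $1$ in the norm accounting for the single admissible angular derivative). The representation formula then gives $\|\psi_{\ell,k}\|_{L^2(S^2)}\sim c_\ell\ell^{2k}\sim\ell^{2k-\gamma}$, so $\sum_\ell\|\psi_{\ell,k}\|^2_{L^2(S^2)}$ diverges as soon as $2k-\gamma\geq -1/2$, yielding non-summability for $k\geq(\gamma-1/2)/2$; the precise threshold tightens as the available angular regularity of the data (and hence the weight $a_0$ controlling the solution in $\Dbold$) decreases. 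The main obstacle is verifying that $\psi_{\ell,k}$ really grows like $\ell^{2k}$ rather than being suppressed by an accidental cancellation in the Leibniz expansion; the closed-form coefficients of \cite{kehrberger_case_2024} provide the leading term explicitly and confirm its non-vanishing for generic $a_-$.
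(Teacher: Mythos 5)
Your overall approach matches the paper's: the displayed implication is mere truncation of the fixed-$\ell$ expansion (which is finite for each $\ell$ by the conservation laws), the substance is the failure of $\ell^2_\ell$-summability, and both you and the paper resolve this by appealing to the explicit coefficients of \cite[(10.64) and (12.3)]{kehrberger_case_2024} at $M=0$ --- the paper in fact states the lemma as an observation with exactly that citation in the immediately preceding sentence.

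However, your Leibniz derivation of the growth rate has a gap. Writing out
\begin{equation*}
\psi_\ell = r^{L+1}\pu^L\!\left(\frac{f_\ell(u)}{r^{L+1}}\right)=\sum_{j=0}^{L}\binom{L}{j}\frac{(L+j)!}{L!}\,f_\ell^{(L-j)}(u)\,r^{-j},
\end{equation*}
the prefactor of $r^{-k}$ is indeed a degree-$2k$ polynomial in $L=\ell+N-1$, but the factor $f_\ell^{(L-k)}(u)$ is \emph{not} $\ell$-independent: the order of differentiation itself grows with $\ell$, and $f_\ell$ must moreover be obtained from $\psi_\ell^{\incone}$ by inverting the very same formula along $\incone$, which is another $\ell$-dependent step. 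Your claim that $\psi_{\ell,k}$ factors as an $\ell$-independent data-dependent part times a polynomial in $L$ of degree exactly $2k$ is therefore not justified by the computation you sketch, and is in fact slightly off: the footnote attached to \cref{conj:even:peeling} records the growth as $\ell^{2(k-a_0)}$, not $\ell^{2k}$. Correcting this replaces your threshold $k\geq(\gamma-1/2)/2$ with $k\geq a_0+(\gamma-1/2)/2$, which is exactly the ``$k$ suitably large, depending on $a_0$'' in the lemma's statement. The qualitative conclusion survives because the degree still grows without bound in $k$; but to make the step rigorous you should either compute the $\ell$-dependence of $f_\ell^{(L-k)}$ explicitly, or else lean entirely on the cited closed-form coefficients rather than asserting the $\ell^{2k}$ rate from the Leibniz prefactor alone, as the paper does.
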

At the same time, we know from \cref{thm:app:general} combined with \cref{lemma:app:minkowski_index_improvement} that if the initial data for $\psi$ are polyhomogeneous and of infinite regularity, then the solution is conformally smooth towards $\scrip$.
The case still shrouded in mystery is when the initial data do \textit{not} have an expansion but are of \textit{infinite} regularity; this is the content of the conjecture below. 
(We note that a task slightly simpler than (but related to) proving the conjecture below would be to show that solutions arising from data $\psi^{\incone}\in \Hb^{a_-;\infty}(\incone)$ with $a_-<0$ have a finite radiation field towards $\scrip$, i.e. a finite limit $\lim_{v\to\infty}\psi$ towards~$\scrip$.)
\begin{conj}\label{conj:even:peeling}
    a) Let $N\in\mathbb N_{\geq1}$, and let $\phi$ be the solution to $\Box_{\R^{2N+1,1}}\phi=0$ with no incoming radiation and data $\psi^{\incone}\in \Hb^{a_0;\infty}(\incone)$ for $a_0\geq -1/2$. Then, along any outgoing null cone $\C_u$ with $u\leq u_0$, we have
    \begin{equation}
        \psi|_{\C_u}\in \A{phg}^{\overline{(0,0)}}(\C_u).
    \end{equation}
    b) Now assume that $\psi^{\incone}\in\Hb^{a_0;k}(\incone)$ for $a_0\in[-1/2,0)$ and some $k\in\mathbb N_{\geq1}$, then, for any $u\leq u_0$:
  \begin{equation}\label{eq:peeling:conj_no_inclusion}
        \psi|_{\outcone{}}\in \A{phg}^{\mindex{0};0}(\C_u)+\Hb^{\lfloor\frac{k-1}{2}\rfloor;0}(\C_u),
    \end{equation}
    but, in general:
    \begin{equation}\label{eq:peeling:conj_inclusion}
        \psi|_{\outcone{}}\notin \A{phg}^{\mindex{0};0}(\C_u)+\Hb^{\lceil \frac{k}{2}\rceil;0}(\C_u),
    \end{equation}
    where we used the limited regularity notation for the polyhomogeneous space from \cref{def:notation:polyhom}.\footnote{Note that the coefficients $S_{\ell,a_0,\ell,n,0}$ defined in (10.64) of \cite{kehrberger_case_2024} (and appearing in the expansion $\psi_{\ell}=r_0^{-a_0}\sum_{n\geq0} S_{\ell,a_0,\ell,n,0}\frac{r_0^n}{r^n}\dots$) asymptotically grow like $\ell^{2(n-a_0)}$ as $\ell\to\infty$, which is consistent with \cref{eq:peeling:conj_no_inclusion,eq:peeling:conj_inclusion} .}
\end{conj}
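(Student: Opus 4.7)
The strategy for part (a) is to combine the fixed-$\ell$-mode analysis, which already yields conformal smoothness of each $\psi_\ell$ on outgoing cones via \cref{lemma:app:minkowski_index_improvement}, with an $\ell$-uniform quantitative bound that exploits the infinite angular regularity of the data. Concretely, for each $\ell$, \cref{lemma:app:minkowski_index_improvement} provides an expansion $\psi_\ell|_{\C_u} = \sum_{k=0}^{K-1} c_{\ell,k}(u) r^{-k} + R_{\ell,K}(u,r)$, where each $c_{\ell,k}$ is obtained from iterated integration of the conservation law $\pu(r^{-2\ell}\pv(r^2\pv)^{\ell}\psi_\ell)=0$ applied to the initial data, and $R_{\ell,K}\in\Hb^{K;\infty}(\C_u)$ is controlled via an energy estimate applied to the commuted quantity $\Psi_K=\prod_{m=0}^{K-1}(r\pv+m)\psi_\ell$. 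I would then bound both quantities in terms of some $\Hb^{a_0;M(\ell,K)}(\incone)$-norm of $\psi_\ell^{\incone}$. The structure of \cref{eq:app:Psin_eq} suggests that the $\ell$-dependence of $c_{\ell,k}$ and $R_{\ell,K}$ is at most polynomial of degree $O(K)$; combined with the Schwartz-class decay of $\|\psi_\ell^{\incone}\|_{\Hb^{a_0;M}(\incone)}$ in $\ell$ (from infinite angular regularity), this would yield absolutely convergent sums $c_k(u,\omega)=\sum_\ell c_{\ell,k}(u) Y_\ell(\omega)\in C^\infty(S^2)$ and $R_K(u,r,\omega)\in\Hb^{K;\infty}(\C_u)$, from which $\psi|_{\C_u}\in\A{phg}^{\overline{(0,0)}}(\C_u)$ follows after letting $K\to\infty$.

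For part (b), the same argument applies, but with the finite regularity of the data entering as a counting parameter: to make the $k$-th coefficient summable in $\ell$, one expects to consume roughly $2k$ angular derivatives of $\psi^{\incone}$, reflecting the $\ell^{O(K)}$-growth of the constants produced by the commuted equation \cref{eq:app:Psin_eq}. This gives precisely the threshold $\lfloor (k-1)/2\rfloor$ in \cref{eq:peeling:conj_no_inclusion}. For the non-inclusion \cref{eq:peeling:conj_inclusion}, I would construct an explicit counterexample by choosing data $\psi^{\incone}_\ell = \alpha_\ell\cdot r_0^{-a_0}$ with $\alpha_\ell$ on the boundary of $\Hb^{a_0;k}(\incone)$ (i.e.~decaying in $\ell$ exactly as fast as the norm permits), and then using the explicit formulae of~\cite{kehrberger_case_2024} to compute the putative coefficients at order $\lceil k/2\rceil$, exhibiting divergence of their sum over $\ell$.

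The main obstacle is the $\ell$-uniform quantitative control of the remainder $R_{\ell,K}$ in part (a). The induction in the proof of \cref{lemma:app:minkowski_index_improvement} treats the $\ell$-dependent zeroth- and first-order terms on the right-hand side of \cref{eq:app:Psin_eq} as error terms, and at each step one integrates in $u$ and $v$ against initial data for $\pv(r^n\Psi_n)|_{\outcone{}}$ supplied by the conservation law. Ensuring that the resulting estimate is polynomial in $\ell$---rather than, say, growing like $\ell!$ due to an uncontrolled telescoping product in the constants of the inductive step---appears to require a cancellation that is not manifest at the level of the energy method, and which likely reflects the finer algebraic structure of the conservation laws themselves. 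A useful intermediate problem, already nontrivial, is to show that for $\psi^{\incone}\in\Hb^{a_0;\infty}(\incone)$ with $a_0<0$ the radiation field $\lim_{v\to\infty}\psi$ on $\scrip$ exists as a finite, conormal function on $S^2$; this is the $k=0$ term of the conjectured expansion and would already confirm that peeling holds at leading order.
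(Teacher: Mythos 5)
This statement is \Cref{conj:even:peeling}, which the paper explicitly presents as an \emph{open conjecture}: there is no proof of it anywhere in the paper. The text around the conjecture (see \Cref{rem:app:Minkowski_improvement} and the discussion in \Cref{sec:even:conj}) states plainly that the case of conormal but non-polyhomogeneous data of infinite regularity is "still shrouded in mystery," and the introduction emphasises that "the conservation laws, by themselves, are not suited for proving statements about the conformal smoothness \dots since they always require an $\ell$-dependent number of commutations, loosely speaking generating error terms of order $\ell^\ell$." So you should not expect your sketch to match a proof in the paper; the task here is to decide whether your sketch would close the gap, and it does not.

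Your strategy---decompose into $\ell$-modes, apply \Cref{lemma:app:minkowski_index_improvement} mode by mode, and try to resum using Schwartz-class decay in $\ell$ coming from infinite angular regularity---is precisely the natural route, and it is the same route the authors considered and found to be blocked. The crux is your claim that "the $\ell$-dependence of $c_{\ell,k}$ and $R_{\ell,K}$ is at most polynomial of degree $O(K)$." For the \emph{coefficients} this is plausible and is consistent with the footnote to the conjecture (the $S_{\ell,a_0,\ell,n,0}$ grow like $\ell^{2(n-a_0)}$). But for the \emph{remainder} $R_{\ell,K}$ this is exactly what is not known: the proof of \Cref{lemma:app:minkowski_index_improvement} at fixed $\ell$ bootstraps from the fact that $\psi_\ell|_{\C_u}\in\A{phg}^{\overline{(0,0)}}(\C_u)$, and \emph{that} fact is established via the conservation law $\pu(r^{-2\ell}\pv(r^2\pv)^\ell\psi_\ell)=0$, which requires $\ell+1$ applications of $r^2\pv$. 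The constants generated in tracing those $\ell$-many commutations back to the data give no visible control that is polynomial (or even subfactorial) in $\ell$. Nothing in the algebraic structure of \Cref{eq:app:Psin_eq} that you invoke is applied $\ell$ times; $\Psi_n$ is commuted only $n$ times, so it cannot substitute for the $\ell$-dependent base case. You acknowledge this gap yourself ("appears to require a cancellation that is not manifest at the level of the energy method"), but that acknowledgement is the whole of the matter: the cancellation is the content of the conjecture, and asserting that it "likely reflects the finer algebraic structure of the conservation laws" is a restatement of the problem, not a step toward solving it. The same caveat applies to your proposal for part (b): without a proof of the polynomial upper bound in part (a) there is no threshold to match, and the non-inclusion \eqref{eq:peeling:conj_inclusion} would need an actual divergent example, not merely the fact that the candidate coefficients sit on the boundary of summability. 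Your suggested intermediate problem (finiteness and conormality of the radiation field for $a_0<0$ and infinite angular regularity) does coincide with the paper's own suggested easier open problem, and establishing even that would be genuine progress.
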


\paragraph{A compactified depiction of the conjecture}
It is instructive to consider the content of the conjecture in a compactified setting. 
The following picture is valid for many different choices of compactifications (of the null coordinates); we here choose the simplest possible one:
    We perform a a coordinate change $\mu=-u^{-1},\nu=-v^{-1}$:
    \begin{equation}
        (\pu\pv-r^{-2}\Dl)r\phi=\mu^2\nu^2(\partial_\nu\partial_\mu-(\mu-\nu)^{-2}\Dl)r\phi=0.
    \end{equation}
    We denote $\nu(v_0)=\nu_0$. 
    Thus, the solution from the conjecture above is defined for $\nu\geq \nu_0$, $\mu\geq 0$. We may extend it to the past of $\mu=0$ by zero; this is consistent with no incoming radiation but introduces a singularity as we cross $\mu=0$.
    We extend the solution to the past of the lightcone $\mu=0$ by $r\phi=0$.
    We may think of this extended solution as arising from initial data along $\nu=\nu_0$ (terminating at $\mu=\mu_0$). 
    Then, these data are smooth away from $\mu=0$, and conormal as $\mu=0$ is approached from $\mu>0$, and smooth as $\mu=0$ is approached from $\mu<0$.  
By standard propagation of singularities arguments, it follows that for any $\mu'>0$, $r\phi|_{\mu=\mu'}$ is smooth away from $\nu=0$ and is conormal at $\nu=0$ from \textit{both} $\nu>0$ and $\nu<0$.
Now, in this picture, \cref{lemma:app:minkowski_index_improvement} combined with \cref{thm:app:general}  shows that if the data are polyhomogeneous for $\mu\geq 0$, then for any $\mu'>0$, $r\phi|_{\mu=\mu'}$ is not only conormal for $\nu\leq 0$, but, indeed, smooth. 
\cref{prop:even}, on the one hand, shows that this property only holds in even spacetime dimensions (and fails in odd spacetime dimensions). 
On the other hand, \cref{conj:even:peeling} posits that, in even spacetime dimensions, this property generalises to weakening the assumption of polyhomogeneity to conormality. See also \cref{fig:conj}.
 \begin{figure}[htbp]
 \centering
\includegraphics[width=0.54\textwidth]{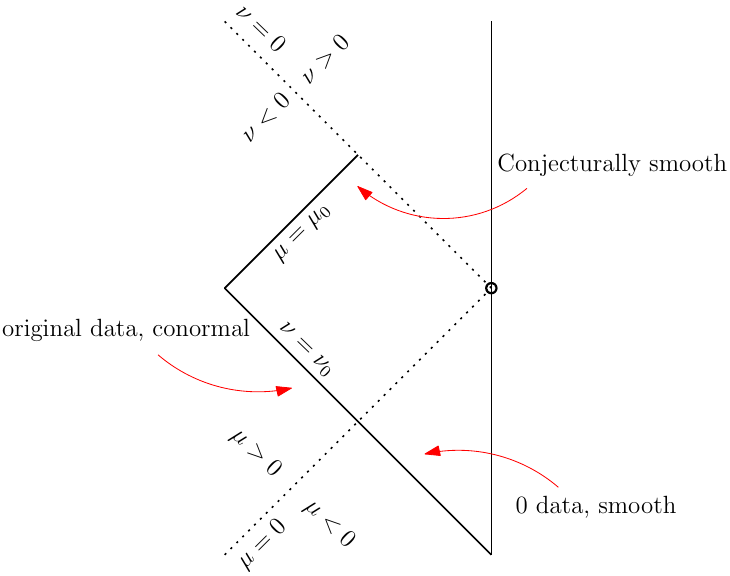}
\caption{Depiction of the compactification w.r.t.~$\mu,\,\nu$. In even spacetime dimensions, if the data at $\nu=\nu_0$ are polyhomogeneous for $\mu\geq0$, then the solution will be smooth towards $\nu=0.$ We conjecture that the same holds for data that are only conormal. In odd spacetime dimensions, this fails completely. }
\label{fig:conj}
\end{figure}

    %%%%%%%%%%%%%%
 
	\newpage
	%%%%%%%%%%%%%%%
	\section{The no incoming radiation condition on Cauchy data}\label{section:no incoming radiation Cauchy}
    Let us recall that the entire discussion of the peeling property started exactly with the aim of excluding incoming radiation from $\scrim$, cf.~\cref{sec:intro:peeling}. (In particular, there was never a reason to expect the peeling property to hold in the presence of nontrivial incoming radiation!)
    In the context of the wave equation, the no incoming radiation condition is given exactly by $\pv\psi^{\scrim}=0$ as a condition \textit{on past null infinity}.
    In the present section, we want to discuss equivalent and approximate formulations of the no incoming radiation as a condition on \textit{Cauchy data} $(\psi_0,\psi_1)$ on a Cauchy hypersurface $\Sigma$ (such that the arising solution $\phi$ to the relevant equation $\Box_\eta\phi=\mathcal{P}[\phi]$ satisfies $r\phi|_{\Sigma}=\psi_0$, $T(r\phi)|_{\Sigma}=\psi_1$). 
    
    Let us begin by formulating the two main requirements we ask such of alternative formulations of the no incoming radiation condition.

        \begin{enumerate}[label=({\Alph*})]
        \item\label{item:noinc:A} {\textbf{Simplicity and practicability:}} We want to be able to determine if $\phi$ satisfies (some version of) the no incoming radiation condition by only checking computationally simple properties of the Cauchy data $\psi_0,\psi_1$, as opposed to constructing the solution $\phi$ in $\D^-$.
        \item\label{item:noinc:B} {\textbf{Nontrivial implications:}} We require that if $\psi_0,\psi_1$ satisfy the condition, one can prove improved regularity of $\phi$ towards $\scrip$.\footnote{This regularity in turn can have consequences for the decay towards $i^+$ as discussed in \cite{gajic_relation_2022}.}
    \end{enumerate}
    Concerning \cref{item:noinc:A}, we, in particular, want to avoid making the obvious choice of \cref{defi:noinc:basic}\cref{item:noinc:defa} below.
    \begin{defi}[No incoming radiation]\label{defi:noinc:basic}
        Smooth initial data $(\psi_0,\psi_1)$ to $\Box_\eta\phi=\mathcal{P}[\phi]$ satisfy 
        \begin{enumerate}[label={\alph*})]
            \item \label{item:noinc:defa} the \emph{no incoming radiation condition} if the corresponding solution $\phi$ satisfies $v\pv\psi|_{\scrim}=0$.
            \item \label{item:noinc:defb} the \emph{no incoming radiation condition to order $q$} if the corresponding solution $\phi$ satisfies $v\pv\psi|_{\scrim}\in\Hb^{q}(\scrim)$.
        \end{enumerate}
    \end{defi}
    On the other hand, for general $\mathcal{P}[\phi]$ (and already for the linear wave equation on Schwarzschild), we cannot hope to practicably distinguish initial data resulting in no incoming radiation from very fast decaying incoming radiation. 
Thus, in general, we can, at, best hope to establish an equivalent formulation of \cref{defi:noinc:basic}\cref{item:noinc:defb}.
    
    Concerning \cref{item:noinc:B}, we recall from \cref{cor:app:sharp_index_sets} that polyhomogeneous solutions satisfying the no incoming radiation condition have improved index sets, i.e.~improved conformal regularity, towards $\scrip$.

    In \cref{sec:noinc:mink} below, we will first focus on an alternative formulation of \cref{defi:noinc:basic} for $\Box_\eta\phi=0$ in Minkowski. In the process, we will also show that solutions arising from data on $t=0$ that are smooth in $1/r$ generically contain nonvanishing log-terms towards $\scrip$. 
    In \cref{sec:noinc:Schw}, we will then generalise this to Schwarzschild. 
These two cases already containing the main ideas, we leave it to the interested reader to attempt further generalisations.

    \subsection{The no incoming radiation condition on Minkowski}\label{sec:noinc:mink}
    For concreteness, we study the Cauchy problem (with smooth data)
    \begin{equation}\label{eq:noinc:Cauchymink}
        \Box_\eta\phi=0,\qquad \Sigma=\{t=0\}\cap\D,\qquad (\psi|_{\Sigma},T\psi|_{\Sigma})=(\psi_0,\psi_1)\in \Hb^{-1/2}(\Sigma)\times \Hb^{1/2}(\Sigma).
    \end{equation}
In Minkowski space, the no incoming radiation condition \cref{defi:noinc:basic}\cref{item:noinc:defa} for \textit{finite-energy} solutions to $\Box_\eta\phi=0$ on fixed $\ell$-modes is equivalent to $\pv(r^2\pv)^\ell(r\phi_\ell)\equiv 0$, as follows from the conservation laws \cref{eq:intro:conslaw}.\footnote{The finite-energy requirement is so that we exclude solutions with $(\psi_{\ell})|_{\scrim}=\sum_{i=1}^{\ell}c_iv^i$ along $\scrim$. Of course, we can remove the assumption if we work instead with our generalised condition of no incoming radiation from \cref{def:scat:weak_decay}.}

    Similarly, \cref{defi:noinc:basic}\cref{item:noinc:defb} is equivalent $\pv(r^2\pv)^{\ell}(\psi_{\ell})|_{\Sigma}\in\Hb^{q+1-\ell}(\Sigma)$:

    \begin{lemma}\label{lemma:noinc:mink1}
        Initial data $(\psi_0,\psi_1)$ to \cref{eq:noinc:Cauchymink} satisfy the no incoming radiation condition to order $q$ if, for all $\ell\in\mathbb N$, the arising solution satisfies $(\pv(r^2\pv)^{\ell}P_{\ell}^{S^2}\psi)|_{\Sigma}\in\Hb^{q+1-\ell}(\Sigma)$.
    \end{lemma}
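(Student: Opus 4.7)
The plan is to exploit the Minkowskian conservation law for fixed spherical-harmonic projections of solutions to $\Box_\eta \phi = 0$, namely $\pu(r^{-2\ell}\pv(r^2\pv)^\ell\psi_\ell) = 0$ (cf.~\cref{eq:intro:conslaw}, with $\psi_\ell:=P_\ell^{S^2}\psi$ and $n=3$). This identity implies that the quantity $G_\ell := r^{-2\ell}\pv(r^2\pv)^\ell\psi_\ell$ depends only on $v$; write $G_\ell=F_\ell(v)$. The proof then consists of evaluating $F_\ell$ on the Cauchy surface, identifying $F_\ell$ with a derivative of the radiation field at $\scrim$, and integrating.

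First, I would restrict $F_\ell$ to $\Sigma=\{t=0\}$, where (in the paper's conventions $u=(t-|x|)/2$, $v=(t+|x|)/2$) we have $v=r/2$. The identity $\pv(r^2\pv)^\ell\psi_\ell = r^{2\ell}F_\ell(v)$, combined with the hypothesis $(\pv(r^2\pv)^\ell\psi_\ell)|_\Sigma \in \Hb^{q+1-\ell}(\Sigma)$, translates directly into the decay $F_\ell(v)\in v^{-\ell-q-1}\Hb$ as $v\to\infty$. Next, I would take the limit $u\to-\infty$ at fixed $v$ to identify $F_\ell$ with a derivative of $\psi_\ell|_{\scrim}$. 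Using the formal asymptotic expansion $\psi_\ell = \psi_\ell^{\scrim}(v) + \sum_{k\geq 1}g_k(v)/r^k$ near $\scrim$ (consistent to all orders with the reduced wave equation $\pu\pv\psi_\ell = -\ell(\ell+1)\psi_\ell/r^2$), the leading (i.e.~$r$-independent) term of $r^{-2\ell}\pv(r^2\pv)^\ell\psi_\ell$ as $r\to\infty$ is easily seen to be $\pv^{\ell+1}\psi_\ell^{\scrim}(v)$, since each application of $r^2\pv$ contributes one factor of $r^2$ at leading order together with one extra $\pv$. Since $F_\ell$ is by construction independent of $u$, this forces $F_\ell(v)=\pv^{\ell+1}\psi_\ell^{\scrim}(v)$.

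Combining the two steps, I obtain $\pv^{\ell+1}\psi_\ell^{\scrim}\in v^{-\ell-q-1}\Hb$ on $\scrim$. Iteratively applying the ODE result \cref{item:ode:1d_boundary_cond} of \cref{prop:ODE:ODE}, with vanishing boundary condition at $v=\infty$ at each step, yields $\pv\psi_\ell^{\scrim}\in v^{-q-1}\Hb$, equivalently $v\pv\psi_\ell^{\scrim}\in\Hb^q(\scrim)$. The polynomial kernels of $\pv^{\ell+1}$, which would correspond to $\psi_\ell^{\scrim}$ being a polynomial of degree $\leq\ell$ in $v$ (and hence to Cauchy data growing at spatial infinity), are excluded by the finite-energy assumption $(\psi_0,\psi_1)\in\Hb^{-1/2}(\Sigma)\times\Hb^{1/2}(\Sigma)$. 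Finally, summing over $\ell$ via Plancherel on $S^2$ yields $v\pv\psi^{\scrim}\in\Hb^q(\scrim)$, the claimed no incoming radiation condition to order $q$.

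The main technical obstacle is the identification $F_\ell=\pv^{\ell+1}\psi_\ell^{\scrim}$, which requires careful bookkeeping for the iterated operator $r^2\pv$ applied to the asymptotic expansion near $\scrim$; the cases $\ell=0,1$ are immediate and the pattern extends by induction using the constraints that the wave equation places on subleading coefficients $g_k$ in terms of $\psi_\ell^{\scrim}$. A secondary subtlety is uniformly controlling estimates across all $\ell$ when summing, but this is handled by the orthogonality of the spherical-harmonic decomposition, noting that the hypothesis weight $q+1-\ell$ decreases exactly enough with $\ell$ to compensate the $r^{2\ell}$ growth of the operator.
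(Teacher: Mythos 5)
Your argument is exactly the one the paper sketches (but does not spell out): the lemma is asserted to "follow from the conservation laws," and you fill in the chain \emph{conservation law $\Rightarrow$ $u$-independence of $r^{-2\ell}\pv(r^2\pv)^\ell\psi_\ell$ $\Rightarrow$ evaluation on $\Sigma$ $\Rightarrow$ identification with $\pv^{\ell+1}\psi_\ell^{\scrim}$ $\Rightarrow$ integration on $\scrim$}. All the individual steps are correct: on $\Sigma$ we have $v=r/2$, so the hypothesis does give $F_\ell\in v^{-\ell-q-1}\Hb$; and the $r^{-j}$ corrections in $r^{-2\ell}\pv(r^2\pv)^\ell$ vanish as $u\to-\infty$, giving $F_\ell=\pv^{\ell+1}\psi_\ell^{\scrim}$ (your bookkeeping for this is right).

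Two points deserve slightly more care, but both align with how the paper itself treats the matter. First, integrating $\pv^{\ell+1}\psi_\ell^{\scrim}$ back down to $\pv\psi_\ell^{\scrim}$ via \cref{item:ode:1d_boundary_cond} (applied with $x=1/v$) requires the constants of integration $\lim_{v\to\infty}\pv^j\psi_\ell^{\scrim}$, $1\le j\le\ell$, to vanish; a nonzero constant at level $j$ forces $\psi_\ell^{\scrim}\sim v^j$, which indeed is incompatible with the finite-energy $\Hb^{-1/2}$-class via the energy estimates near $I^0$ — this is precisely the caveat the paper flags in footnote~36. Second, the Plancherel resummation over $\ell$ tacitly needs the fixed-$\ell$ bounds to be $\ell$-summable; the hypothesis as stated in the lemma is only fixed-$\ell$ membership, so strictly speaking one should read it as the full-solution membership in the corresponding $L^2$-based space (which the paper implicitly does). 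Neither of these is a gap in your reasoning relative to the paper's own level of detail.
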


    Concerning \cref{item:noinc:B}, we note that in view of the exact conservation law \cref{eq:intro:conslaw}, \cref{lemma:noinc:mink1} also allows to conclude better regularity towards $\scrip$, namely $\psi_{\ell}|_{\outcone{1}}\in\A{phg}^{\mindex{0}}(\outcone{1})+\Hb^{q}(\outcone{1})$ for any $u\leq u_0$.
   If the initial data are polyhomogeneous, then one can conclude the same for the summed solution $\psi$, cf.~\cref{cor:app:minkowski_with_error}.

However, with regards to \cref{item:noinc:A}, \cref{lemma:noinc:mink1} still leaves us with having to do a fair amount of computations in order to express $(\pv(r^2\pv)^{\ell}\psi_{\ell})|_{\Sigma}$ in terms of $(\psi_0,\psi_1)$. 
In the \emph{very special} case where the initial data have an expansion with no logarithms, we can use our results from \cref{sec:sharp} to circumvent these computations.\footnote{We emphasise once again that as discussed in \cref{rem:intro:conovsconf}, such an expansion is \emph{not} expected to hold for physical scenario, such as the hyperbolic orbit of $N$ infalling masses. 
We leave the generalisation of \cref{lemma:noinc:fourier_Minkowksi} for more general polyhomogeneous data to the reader.}
Note already that we comment on a more geometric version of \cref{eq:noinc:Fourier_multiplier_on_data} below in \cref{rem:noinc:1}.
 \begin{lemma}\label{lemma:noinc:fourier_Minkowksi}
    Assume that the initial data to \cref{eq:noinc:Cauchymink} satisfy $\psi_0,r\cdot \psi_1\in \A{phg}^{\E}(\Sigma)$ for some index set $\E$ containing no $(p,k)$ with $k>0$ and $\min(\E)>-1/2$.
    We write $(\psi_0,\psi_1)=\sum (r^{-p}\psi_0^{(p)}(\omega),r^{-p-1}\psi_1^{(p)}(\omega))$ for the expansion.
    Then there exist $k\in\mathbb{N}$ and Fourier multiplier operators of order $k$, $Q^p_0,Q^p_1:H^{s}(S^2)\to H^{s-k}(S^2)$ for any $s\in\mathbb{N}$, such that $(\psi_0,\psi_1)$ satisfies the no
    incoming radiation condition to order $q$ if and only if $Q^p_0\psi^{(p)}_0+Q^p_1\psi^{(p)}_1=0$ for all $p\leq q$.
    We have an explicit form for $Q^p_0\psi^{(p)}_0+Q^p_1\psi^{(p)}_1=0$ as
    \begin{equation}\label{eq:noinc:Fourier_multiplier_on_data}
        P^{S^2}_\ell\psi_0^{(p)}\underbrace{\Big(p\cdot {}_2F_1[-\ell,1+\ell,1+p,1/2]-\frac{\ell(1+\ell)}{4(p+1)}{}_2F_1[1-\ell,2+\ell,2+p,1/2]
        \Big)}_{:= \mathrm{LHS}(p,\ell)}=P^{S^2}_\ell\psi_1^{(p)}\underbrace{{}_2F_1[-\ell,1+\ell,1+p,1/2]}_{:=\mathrm{RHS}(p,\ell)}.
        \end{equation}
    \end{lemma}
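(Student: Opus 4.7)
The strategy is to reduce to fixed spherical harmonic modes and then invoke the explicit ODE analysis of Section~\ref{sec:even:ode}. Since $\Box_\eta$, $T$ and $r$ all commute with $P^{S^2}_\ell$, and since by assumption the data are a locally finite sum of pure-power terms $(r^{-p}\psi_0^{(p)}, r^{-p-1}\psi_1^{(p)})$ with $\psi_0^{(p)},\psi_1^{(p)}\in C^\infty(S^2)$, it suffices to determine, for each fixed $\ell$ and each $p$, the component of the solution arising from a single pair $(r^{-p}Y_\ell, r^{-p-1}Y_\ell')$ of data, and to read off when this component has vanishing incoming radiation.

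First I would make the ansatz $\psi_\ell = |u|^{-p}\tilde\psi(x,\omega)$ with $x = u/r$ used in Section~\ref{sec:even:ode}: by exactness of the factor $|u|^{-p}$, the function $\tilde\psi$ must solve the ODE~\eqref{eq:even:ODE1}, and the two-dimensional solution space is spanned by $\nir$ and $\ir$ from~\eqref{eq:even:hypGeom_representation}, where only $\nir$ has no incoming radiation (cf.~\eqref{eq:even:defofnirandir}). Importantly, by Observation~\ref{rem:even:hypGeom_rep_integer}, the explicit formula $\nir(x) = {}_2F_1(-\ell,1+\ell;1+p;1+x)$ continues to hold for $p \in \mathbb N$ (which is the only case needed here, since the index set $\E$ contains no logs and satisfies $\min(\E)>-1/2$; non-integer $p$ follows a fortiori from the same formula). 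Restricting the ansatz to $\Sigma=\{t=0\}$, on which $u=-r/2$ and hence $x=-1/2$, and using $Tx|_{t=0} = 1/(2r)$, one obtains the linear relations
\begin{align*}
  \psi_0^{(p)} &= 2^p\,\tilde\psi(-1/2,\omega),\\
  \psi_1^{(p)} &= 2^p\Bigl(p\,\tilde\psi(-1/2,\omega) + \tfrac{1}{2}\tilde\psi'(-1/2,\omega)\Bigr).
\end{align*}
The no incoming radiation condition for this single term is therefore that $(\tilde\psi(-1/2,\omega),\tilde\psi'(-1/2,\omega))$ be proportional to $(\nir(-1/2),\nir'(-1/2))$ after projection onto each $P^{S^2}_\ell$-eigenspace.

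The derivative $\nir'$ is evaluated using the standard identity $\tfrac{d}{dz}{}_2F_1(a,b;c;z)=\tfrac{ab}{c}{}_2F_1(a+1,b+1;c+1;z)$, giving
\[
\nir'(-1/2) = \frac{-\ell(1+\ell)}{1+p}\,{}_2F_1(1-\ell,2+\ell;2+p;1/2).
\]
Eliminating the common constant of proportionality between $\tilde\psi(-1/2)$ and the prescribed values of $(\psi_0^{(p)},\psi_1^{(p)})$ on each fixed $\ell$-mode produces precisely the linear relation~\eqref{eq:noinc:Fourier_multiplier_on_data}, up to the overall $2^p$ factors which cancel between the two sides (modulo a possible typographical factor $2$ vs.~$4$ which should be checked against the convention used for $\psi_1$). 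Define the Fourier multipliers $Q^p_0,Q^p_1$ on $S^2$ by declaring their eigenvalues on $P^{S^2}_\ell$ to be respectively $\mathrm{LHS}(p,\ell)$ and $\mathrm{RHS}(p,\ell)$; these are polynomials in $\ell$ of bounded degree (in fact degree $\leq \ell$ with values comparable to those of Jacobi polynomials at $1/2$), so $Q^p_0,Q^p_1$ are genuine Fourier multipliers of some finite order $k=k(p)$.

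Finally, I would translate the condition on $\scrim$ into the claimed order-$q$ statement by peeling off the expansion term by term. For each $p$, the $r^{-p}$-coefficient of the data contributes to $v\partial_v \psi|_{\scrim}$ a term of size $v^{-p}$ coming from the $\ir$-part of $\tilde\psi$; these terms are linearly independent in $v$ (for distinct $p$) and decouple across $\ell$, so vanishing of the incoming radiation field to order $\Hb^q$ is equivalent to the vanishing of $Q^p_0\psi_0^{(p)}+Q^p_1\psi_1^{(p)}$ for all $p\leq q$. The principal technical nuisance, which I expect to be the main obstacle, is the degenerate case in Remark~\ref{rem:even:regular}: for integer $p$ with $p>\ell$, both $\nir$ and $\ir$ become conformally regular, so the "no incoming radiation" condition on that $\ell$-mode becomes vacuous and the formal ratio $\mathrm{LHS}/\mathrm{RHS}$ needs to be renormalised using the limiting formulae for hypergeometric functions with coalescing parameters (cf.~\S15.2 of \cite{DLMF}); one must verify that in these exceptional cases $Q^p_0\psi^{(p)}_0+Q^p_1\psi^{(p)}_1$ identically vanishes after $P^{S^2}_\ell$ projection, which is consistent with the fact that the leading-order incoming radiation then already vanishes for free.
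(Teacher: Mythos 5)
Your core reduction is exactly the paper's route: project onto fixed spherical modes, use the ansatz $\psi_\ell=|u|^{-p}\tilde\psi(u/r)$ so that $\tilde\psi$ solves the ODE~\eqref{eq:even:ODE1}, identify the no-incoming-radiation solution with $\nir={}_2F_1(-\ell,1+\ell;1+p;1+x)$ via \cref{rem:even:hypGeom_rep_integer}, evaluate at $u/r=-1/2$, and use the derivative identity for ${}_2F_1$. You also correctly flag the normalisation discrepancy: carrying out the algebra exactly as you describe produces $\tfrac{\ell(1+\ell)}{2(p+1)}$ rather than the $\tfrac{\ell(1+\ell)}{4(p+1)}$ written in \eqref{eq:noinc:Fourier_multiplier_on_data}, which appears to be a typo in the displayed formula (the paper's own intermediate equation \eqref{eq:noinc:proof} gives $2$, not $4$, after multiplying by $|u|^{p+1}$ and using $|u|/r=1/2$).

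There is, however, a genuine gap in your argument that the $Q^p_i$ are Fourier multipliers of \emph{finite} order $k$. You assert that $\mathrm{LHS}(p,\ell)$ and $\mathrm{RHS}(p,\ell)$ are ``polynomials in $\ell$ of bounded degree''; they are not. Each is a terminating hypergeometric sum with $O(\ell)$ terms, and the table in \cref{rem:noinc:1} shows that $\mathrm{RHS}(p,\ell)$ vanishes for infinitely many $\ell$ (e.g.\ for $p=0$ and $\ell$ odd) — behaviour incompatible with a fixed-degree polynomial. You would need an actual asymptotic bound on these quantities as $\ell\to\infty$ to conclude finite order directly. The paper avoids this entirely: it deduces the existence of a finite $k$ from the quantitative conclusions of \cref{thm:scat:scat_general} and \cref{thm:app:general}, which give $\psi^{(p)}_-\in H^0(S^2)$ with norm controlled by $\Hb^k(S^2)$-norms of $(\psi_0^{(p)},\psi_1^{(p)})$; linearity, commutation of $\Box_\eta$ with $P^{S^2}_\ell$, and scale-homogeneity near $I^0$ then force the multiplier structure. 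A secondary issue: your ``degenerate case'' worry is misdirected. Conformal regularity of $\ir$ towards $\scrip$ for $p>\ell$ is independent of whether $\ir$ carries incoming radiation at $\scrim$ (it does), so the condition never becomes vacuous. The only degeneracy to rule out in establishing the ``if and only if'' is the simultaneous vanishing $\nir(-1/2)=\nir'(-1/2)=0$, which ODE uniqueness excludes since $\nir\not\equiv0$; the paper spells this out in the converse step, which your write-up should make explicit.
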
\label{lemma:Cauchy:fourier_Minkowksi}
    Before the proof, let us quickly discuss the difference between \cref{lemma:noinc:mink1,lemma:noinc:fourier_Minkowksi}.
  While the first applies for a larger class of data, only restricted by regularity and decay assumptions, it is unclear weather \cref{item:noinc:A} holds in this case. (Of course, one can rewrite $\pv(r^2\pv)\psi$ in terms of combinations of $\partial_r^n$ derivatives of $\psi_0$ and $\psi_1$ along $\Sigma$ for $n\leq \ell+1$.)
    In contrast, the second lemma gives fixed $k\in\N$ (independent of $l,p$) order Fourier multipliers such that we can immediately test the condition in the case of \textit{data with $r^{-p}$ expansions}. %these for conformally smooth data. \red{could we upgrade Lemma 11.1? that would be actually super cool. but probably not.}
    
    \begin{proof}
        Let us first prove the existence of the Fourier multiplier operators.
        For $\psi_0^{(p)},\psi_1^{(p)}\in\Hb^{k}(S^2)$ and $k$ sufficiently large, we can use \cref{thm:scat:scat_general,thm:app:general} to conclude that
        \begin{equation}
            v\pv\psi|_{\scrim}=\sum_{p}v^{-p}\psi_-^{(p)}+\Hb^{q_b;0}(\scrim)
        \end{equation}
        for $\psi_-^{(p)}\in H^{0}(S^2)$.
        Using linearity and that $\Box_\eta$ commutes with spherical projections, it follows that $\psi_-^{(p)}=Q^p_0\psi^{(p)}_0+Q^p_1\psi^{(p)}_1$ for some multipliers.
        Indeed, note that $\psi_-^{(p)}$ cannot depend on $\psi_i^{(q)}$ for $q\neq p$ since $\Box_\eta$ preserves scaling towards~$I^0$.
    
        \newcommand{\tpsi}{\tilde{\psi}}
         \newcommand{\nir}{\tpsi^{-}_{\slashed{\nwarrow}}}
        \newcommand{\ir}{\tpsi^{-}_{\nwarrow}}
        Next, in order to prove the more specific \cref{eq:noinc:Fourier_multiplier_on_data}, it suffices to consider a single term at fixed order $p\leq q$ in the expansion and project to fixed $\ell$-mode.  We therefore consider $\psi$ to have initial data $r^{-p}P^{S^2}_\ell(\psi_0^{(p)}(\omega),r^{-1}\psi_1^{(p)}(\omega))$, and we will show that \cref{eq:noinc:Fourier_multiplier_on_data} is equivalent to $\psi$ not having incoming radiation in the sense of \cref{defi:noinc:basic}\cref{item:noinc:defa}.
        By \cref{sec:even:ode}, we may write the general form of the solution as $\psi=\abs{u}^{-p}(a \nir(u/r)+b \ir(u/r))$, where $\nir$ and $\ir$ are as defined in \eqref{eq:even:defofnirandir}.
        
        Let us first prove that no incoming radiation implies \cref{eq:noinc:Fourier_multiplier_on_data}:
         Enforcing the no incoming radiation condition implies that $b=0$, and without loss of generality we set $a=1$.
        Now, using \cref{rem:even:hypGeom_rep_integer} and $b=0$, we know that the exact representation from (the first of) \cref{eq:even:hypGeom_representation} is valid for $\nir$.
        We next differentiate $\psi=\abs{u}^{-p}\nir(u/r)$ in $t$ (recall $2T=\pu+\pv$) to get $2T\psi=p|u|^{-p-1}\nir+r^{-1}|u|^{-p}{\nir}'$.
        Using now (15.5.1) of \cite{DLMF}:
        \begin{equation}
            \partial_d \big({}_2F_1(a,b,c,d)\big)=\frac{ab}{c}{}_2F_1(a+1,b+1,c+1,d),
        \end{equation}
        then \cref{eq:noinc:Fourier_multiplier_on_data} follows from multiplying (note that $u/r=-1/2$ at $t=0$)
        \begin{equation}\label{eq:noinc:proof}
       \frac12     \psi_0^{(p)}\cdot \Big(p|u|^{-p-1}\nir+r^{-1}|u|^{-p}{\nir}'\left.\Big)\right|_{\frac{u}{r}=-\frac{1}{2}}
       =r^{-1}\psi_1^{(p)}\cdot \abs{u}^{-p}\left.\nir\right|_{\frac{u}{r}=-\frac{1}{2}},
        \end{equation}
        and then multiplying \cref{eq:noinc:proof} by $|u|^{p+1}$, using again that $|u|/r=1/2$.

        Next, we show that \cref{eq:noinc:Fourier_multiplier_on_data} implies that $b=0$
        in the representation $\psi=\abs{u}^{-p}(a \nir+b \ir)$.
        Under these assumptions, then \cref{eq:noinc:proof} holds with $a$ replaced by $b$ and with $\nir$ replaced by $\ir$.
        Unless $\partial_t\nir|_{t=0}=\nir|_{t=0}=0$, which would force $\nir=0$ and thus $b=0$ by uniqueness, this then produces another linear relation between $\psi_0^{(p)}$ and $\psi_1^{(p)}$. 
        If this linear relation differs from \cref{eq:noinc:Fourier_multiplier_on_data}, then $b=0$. But if not, then by linearity, the arising solution is a constant multiple of $\nir$ and so $b=0$. This completes the proof.
    \end{proof}

    \begin{cor}\label{cor:noinc}
        Consider initial data to \cref{eq:noinc:Cauchymink} satisfying $\psi_0,r\psi_1\in \A{phg}^{\mindex0}(\Sigma)$, then the solution will generically be conformally irregular towards $\scrip$. 
More precisely, let $(\psi_0,\psi_1)=(r^{-p}\psi_0^{(p)}(\omega),r^{-p-1} \psi_1^{(p)}(\omega))$
for $p\in\mathbb{N}$.
Then the solution $\psi$ has no logarithmic terms towards $\scrip$ if and only if  \cref{eq:noinc:Fourier_multiplier_on_data} holds for all $\ell\geq p$ (otherwise, the first logarithmic term towards $\scrip$ will appear at order $r^{-p}\log r$).
    \end{cor}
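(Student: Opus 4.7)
The approach is a mode-by-mode analysis that reduces the corollary to the explicit ODE calculations already carried out in \cref{sec:even:ode}, glued together with \cref{lemma:noinc:fourier_Minkowksi} and the propagation of polyhomogeneity result \cref{thm:app:general}. First, by linearity of $\Box_\eta$ and its commutation with spherical harmonic projections, I would decompose $\psi=\sum_\ell \psi_\ell$, where each $\psi_\ell$ solves \cref{eq:noinc:Cauchymink} with data $(r^{-p}P^{S^2}_\ell\psi_0^{(p)},\,r^{-p-1}P^{S^2}_\ell\psi_1^{(p)})$, and then exploit the scale-invariance of the operator together with the homogeneity of the data to make the self-similar ansatz $\psi_\ell = r^{-p}\bar\psi_\ell(t/r)Y_{\ell,m}(\omega)$. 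Substitution reduces the problem to the ODE \cref{eq:sharp:ode_at_i0} already studied in the proof of \cref{prop:even}. Writing the general solution as $\bar\psi_\ell = a_\ell\bar\psi^-_{\slashed{\nwarrow}} + b_\ell\bar\psi^-_{\nwarrow}$ in the notation introduced there, the condition $b_\ell=0$ is precisely the no-incoming-radiation condition at the fixed mode; by the proof of \cref{lemma:noinc:fourier_Minkowksi} (evaluating the two basis solutions and their $\partial_\tau$-derivatives at $\tau=0$ and solving the resulting $2\times 2$ system) this is in turn equivalent to \cref{eq:noinc:Fourier_multiplier_on_data} holding at mode $\ell$.

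Second, I would locate the logs at $\scrip$. The behaviour of $\bar\psi_\ell$ near $\tau=1$ is again governed by the recurrence \cref{eq:even:recursion}: the no-incoming-radiation branch $\bar\psi^-_{\slashed{\nwarrow}}$ is, by the conservation-law-based \cref{lemma:app:minkowski_index_improvement}, always conformally regular at $\scrip$, while the incoming-radiation branch $\bar\psi^-_{\nwarrow}$ admits near $\tau=1$ an expansion of the form $c'_{\log}\log(1-\tau)\bar\psi^+_{\slashed{\nearrow}}+\ldots$. As recorded in \cref{rem:even:regular}, $c'_{\log}=0$ iff $p>\ell$, while $c'_{\log}\neq 0$ for all $\ell\geq p$. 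Using $\log(1-\tau)=-\log r + \log(2|u|)+\mathcal{O}(|u|/r)$ at $\scrip$, the contribution of $\bar\psi^-_{\nwarrow}$ produces an $r^{-p}\log r$-term with coefficient a nonzero multiple of $b_\ell$ when $\ell\geq p$, and no log at all when $\ell<p$.

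Third, I would assemble the global statement. The full solution $\psi=\sum_\ell\psi_\ell$ is polyhomogeneous in a neighbourhood of $\scrip$ by \cref{thm:app:general} applied to the (polyhomogeneous) Cauchy data, with index set contained in $\mindex{p}\cupdex\mindex{0}$, so that the only possible new log-terms appear at order $r^{-p}\log r$ (or higher). The coefficient of this log, seen as a function on $S^2$, has $\ell$-th spherical-harmonic component equal to a nonzero multiple of $b_\ell$ for $\ell\geq p$ and vanishes identically for $\ell<p$. Hence vanishing of the full log-coefficient is equivalent to $b_\ell=0$ for all $\ell\geq p$, i.e.~to \cref{eq:noinc:Fourier_multiplier_on_data} holding for every $\ell\geq p$; otherwise the first log appears exactly at order $r^{-p}\log r$.

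The main obstacle is the coordinated bookkeeping between the two singular points $\tau=\pm1$ of \cref{eq:sharp:ode_at_i0}, so that the condition determining $b_\ell$ (from the Cauchy data at $\tau=0$) can be directly compared to the condition determining $c'_{\log}$ (from the Frobenius expansion at $\tau=1$), with matching thresholds in $\ell$. Both ingredients are already contained in the hypergeometric/recurrence analysis of \cref{sec:even:ode} and in the proof of \cref{lemma:noinc:fourier_Minkowksi}; the remaining work is essentially to verify that $\ell\geq p$ is the correct threshold on both sides and that the mode-sum converges in the appropriate polyhomogeneous sense, the latter being guaranteed by \cref{thm:app:general}.
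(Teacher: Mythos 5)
Your argument is correct and follows the same route as the paper's proof of \cref{cor:noinc}: decompose into fixed-$\ell$ modes with a self-similar ansatz leading to the ODE \cref{eq:sharp:ode_at_i0}, identify $b_\ell\neq 0$ with failure of \cref{eq:noinc:Fourier_multiplier_on_data} via \cref{lemma:noinc:fourier_Minkowksi}, and read off the $\log$ at $\scrip$ from \cref{rem:even:regular} (noting that, by the $\tau\mapsto-\tau$ symmetry of the ODE, the $c'_{\log}$ threshold you cite coincides with the $c_{\log}$ threshold recorded there). Your extra step of invoking \cref{thm:app:general} to justify the $\ell$-resummation in a polyhomogeneous sense is a useful elaboration that the paper leaves implicit.
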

    \begin{proof}
    \newcommand{\nir}{\bar{\psi}^{-}_{\slashed{\nwarrow}}}
\newcommand{\ir}{\bar{\psi}^{-}_{\nwarrow}}
Let $\psi$ be the solution arising from $(\psi_0,\psi_1)=(r^{-p}\psi_0^{(p)}(\omega),r^{-p-1} \psi_1^{(p)}(\omega))$, $p\in\mathbb{N}$.
        We know from \cref{lemma:noinc:fourier_Minkowksi} that if \cref{eq:noinc:Fourier_multiplier_on_data} does not hold, then the solution has nontrivial incoming radiation. In turn, this means that when writing $\psi=a\nir+b\ir$ for $a,b\in\mathbb{R}$ and $\nir,\ir$ as in \cref{eq:even:dummy}, then $b\neq0$.
On the other hand, we know from \cref{rem:even:regular} that $\ir$ is conformally irregular towards $\scrip$ if and only if $\ell\geq p$, with the conformally irregular term appearing at order $r^{-p}\log r$.
\end{proof}

\begin{rem}[Better understanding the condition \cref{eq:noinc:Fourier_multiplier_on_data}]\label{rem:noinc:1}
    The conditions \cref{eq:noinc:Fourier_multiplier_on_data} are slightly larger than one might expect, since either of  $\mathrm{RHS}(p,\ell)$ and $\mathrm{LHS}(p,\ell)$ can vanish for certain $p,\ell$. 
    Understanding this better allows us to give a more geometric characterisation of \cref{eq:noinc:Fourier_multiplier_on_data}, see already \cref{rem:noinc:2} below.
    First, it is instructive to compute the first few values of the ratios $\mathrm{RHS}(p,\ell)/\mathrm{LHS}(p,\ell)$:
    \begin{table}[htpb]
	\centering
$\begin{array}{c|ccccccccccc}
	p\text{  vs.  } \ell&\ell=0	&\ell=1 	&\ell=2	&\ell=3	&\ell=4	&\ell=5	&\ell=6	&\ell=7	&\ell=8	&\ell=9&\ell=10\\
	\hline
	p=0&\infty & 0 & \infty & 0 & \infty & 0 & \infty & 0 & \infty & 0 & \infty \\
	p=1&	1 & 2 & 0 & 2 & 0 & 2 & 0 & 2 & 0 & 2 & 0 \\
	p=2&	\frac{1}{2} & \frac{4}{7} & 1 & 0 & 1 & 0 & 1 & 0 & 1 & 0 & 1 \\
	p=3&	\frac{1}{3} & \frac{6}{17} & \frac{16}{39} & \frac{2}{3} & 0 & \frac{2}{3} & 0 & \frac{2}{3} & 0 & \frac{2}{3} & 0 \\
	p=4&	\frac{1}{4} & \frac{8}{31} & \frac{5}{18} & \frac{32}{99} & \frac{1}{2} & 0 & \frac{1}{2} & 0 & \frac{1}{2} & 0 & \frac{1}{2} \\
	p=5&	\frac{1}{5} & \frac{10}{49} & \frac{16}{75} & \frac{70}{303} & \frac{256}{955} & \frac{2}{5} & 0 & \frac{2}{5} & 0 & \frac{2}{5} & 0 \\
\end{array}
$
\caption{A table giving the first few values of the ratio $\mathrm{RHS}(p,\ell)/\mathrm{LHS}(p,\ell)$.}\label{table}
	\end{table}

\textbf{For $p=0$}, we can show that $\mathrm{RHS}(0,\ell)=0$ for $\ell$ odd, and that $\mathrm{LHS}(0,\ell)=0$ for $\ell$ even; in other words, the condition \cref{eq:noinc:Fourier_multiplier_on_data} is satisfied so long as $\psi^{(0)}_0$ is an even function and $\psi^{(0)}_1$ is an odd function on the sphere, see already \cref{rem:noinc:2} below.

\textbf{More generally, for $p>0$,} we expect the following picture, which would be a consequence of\footnote{The statement \cref{eq:noinc:guess} is merely motivated by numerical computations. In fact, the base cases $n=0$ can be proved directly, and we expect that case of general $n$ should then follow from standard recurrence properties of the hypergeometric functions, but we did not attempt a proof.} 
\begin{align}\label{eq:noinc:guess}
    \mathrm{RHS}(p,p+2n+1)=0 \,\,\forall n\in\mathbb{N}_{\geq0},&& \frac{\mathrm{RHS}(p,p+2n)}{\mathrm{LHS}(p,p+2n)}=\frac{2}{p}\,\,\forall n\in\mathbb{N}_{\geq0}:
\end{align}

1) For $\ell<p$, there are \textit{nontrivial constraints} between $P_\ell^{S^2}\psi_0^{(p)}$ and $P_\ell^{S^2}\psi_1^{(p)}$. (Cf.~the $\ell<p$ triangle in \cref{table}.)

2a) \textbf{If $p\leq \ell$ is even:} Then, by the first of \cref{eq:noinc:guess}, $\psi_0^{(p)}$ must be an even function on $S^2$, the odd part of $\psi_1^{(p)}$ is arbitrary, and we require that
\begin{equation}
    \text{the even part of  } (\psi_0^{(p)}-\frac2p \psi_1^{(p)}) \text{ vanishes.}
\end{equation}

2b) \textbf{If $p\leq \ell$ is odd:} Then, by the first of \cref{eq:noinc:guess}, $\psi_0^{(p)}$ must be an odd function on $S^2$, the even part of $\psi_1^{(p)}$ is arbitrary, and we require that
\begin{equation}
    \text{the odd part of  } (\psi_0^{(p)}-\frac2p \psi_1^{(p)}) \text{ vanishes.}
\end{equation}
We note that the combination $(\psi_0^{(p)}-\frac2p \psi_1^{(p)})$ corresponds exactly to the restriction of $2(\partial_r+2T)\psi= (3\pv+\pu)\psi$ to $\{t=0\}$, but we have, at the moment, no interpretation for this. See, however, also \cite[Proposition 1]{gasperin_asymptotics_2024-1}.
\end{rem}

\begin{rem}[Comparison to the results of \cite{lindblad_scattering_2023}]\label{rem:noinc:2}
    In the special case $p=0$, the work \cite{lindblad_scattering_2023} already gave a nice geometric characterisation of initial data satisfying the no incoming radiation condition to leading order. Indeed, this question is equivalent to demanding that the initial data (given by $M$ and $N$ in their notation) lie in the kernel of the maps $\mathcal{F}[N],\, \mathcal{G}[M]$ in their (1.29). 
    Now, the kernel of $\mathcal{G}$ is given by even functions, and the kernel of $\mathcal{G}$ is given by odd functions on $S^2$.
In other words, for $p=0$, initial data satisfy the no incoming radiation condition if $\psi_0^{(p)}$ is even and $\psi_1^{(p)}$ is odd.
Similarly, the condition for $p=1$ from \cref{rem:noinc:1} may also be extracted from \cite{lindblad_scattering_2023}.
\end{rem}
\begin{rem}[Better understanding \cref{cor:noinc}]
As explained in \cref{rem:noinc:1}, we expect  \cref{eq:noinc:Fourier_multiplier_on_data} to have a nice geometric interpretation concerning the even and odd parts of $\psi_0^{(p)}$ and $\psi^{(p)}_1$ in addition to some nontrivial constraints for $\ell<p$. However, since the case $\ell<p$ is excluded from the condition in \cref{cor:noinc}, we expect that a more geometric characterisation of initial data not leaving to $\log$-terms towards $\scrip$ is given simply by the conditions 2a), 2b) above.
We also note that the result of \cref{cor:noinc} is essentially identical to  \cite[Proposition 1]{gasperin_asymptotics_2024-1}, where a coordinate system is used under which the $\ell\geq p$ conditions take a simpler form.
\end{rem}
\subsection{The no incoming radiation condition on Schwarzschild}\label{sec:noinc:Schw}
For perturbations of \cref{eq:noinc:Cauchymink}, understanding no incoming radiation on a Cauchy level is harder as one no longer has access to the exact conservation law $\pv(r^2\pv)^{\ell}\psi_\ell=0$, for instance. 
Nevertheless, we can still define alternative formulations of the no incoming radiation condition by perturbing around \cref{eq:noinc:Cauchymink}. We illustrate the ideas for the wave equation in Schwarzschild. 
As we have already motivated in \cref{sec:Schw:bettercoords}, this is significantly easier when working in $\bar{u}$, $\bar{v}$ coordinates, as the perturbation around Minkowski in those coordinates does not exhibit any spurious logarithms (in other words, the coordinates capture the logarithmic divergence of the light cones).
The Cauchy problem we study then is
 \begin{equation}\label{eq:noinc:CauchySchw}
        \bar{\Box}_\eta\phi=(\bar{\Box}_{\eta}-D_0^{-1}\bar\Box_g)\phi,\qquad \Sigma=\{\bar{u}=-\bar{v}\}\cap\D,\qquad (\psi|_{\Sigma},\tfrac12(\pubar+\pvbar)\psi|_{\Sigma})=(\psi_0,\psi_1)\in \Hb^{1/2}(\Sigma)\times \Hb^{-1/2}(\Sigma),
    \end{equation}
    where $\bar{\Box}_\eta,\,\bar{\Box}_g$, $D_0$, $\bar{u}$ and $\bar{v}$ are defined in \cref{sec:Schw:bettercoords}.
    We emphasise that $\Sigma$ is logarithmically shifted compared to the hypersurface $t=0$!\footnote{We may also deduce from \cref{lemma:noinc:fourier_Sch,rem:noince:Sch_l=0}, that solutions with no incoming radiation are \emph{not} conformally smooth on $\{t=0\}$ hypersurface, and that a \emph{no-incoming} radiation data ansatz on $\{t=0\}$, similar to \cref{eq:noinc:data_p+1_noinc}, \emph{must} contain logarithmic terms.}

For simpler notation, we assume that $(\psi_0,\psi_1)=\sum_{n=0}(r^{-p-n}\psi_0^{(p+n)},r^{-p-n-1}\psi_1^{(p+n)})$ for $p>-1/2$ and $n\in\mathbb{N}$.
In order to test \cref{defi:noinc:basic}\cref{item:noinc:defb}, we can then proceed as follows:
\begin{enumerate}
    \item In order to check that there is no incoming radiation to order $q$ for $q<p+1$, it suffices to simply treat the Minkowskian problem $\bar\Box_\eta\phi=0$ and apply \cref{lemma:noinc:mink1} (or \cref{lemma:noinc:fourier_Minkowksi}, i.e.~test \cref{eq:noinc:Fourier_multiplier_on_data}) to $(\psi_0^{(p)},\psi_1^{(p)})$. Let us denote the solution to $\bar{\Box}_\eta\phi=0$ arising from the data at order $p$ as $\phi^{(p)}$.
    \item In order to check that there is no incoming radiation to order $q$ for $p+1\leq q<p+2$, we consider a corrective term $\phi^{(p+1)}_{\mathrm{cor}}$ defined as the solution to
\begin{equation}\label{eq:noinc:stepb-1}
            \bar{\Box}_\eta\phi^{(p+1)}_{\mathrm{cor}}=(\bar{\Box}_\eta-D_0^{-1}\bar{\Box}_g)\phi^{(p)}
        \end{equation}
with trivial data on $\scrim\cup\incone$.
We may then check that there is no incoming radiation to order $q$ by applying \cref{lemma:noinc:mink1} to $\phi-\phi^{(p+1)}_{\mathrm{cor}}$; that is, we can check whether
\begin{equation}\label{eq:noinc:stepb}
     \pvbar(r^2\pvbar)^{\ell}\big(P^{S^2}_\ell(\psi-\psi^{(p+1)}_{\mathrm{cor}})\big)|_{\Sigma}\in\Hb^{q+1-\ell}(\Sigma).
\end{equation}
     
\item For $p+2\leq q <p+3$, we can construct a corrective term $\phi^{(p+2)}_{\mathrm{cor}}$ solving $\bar{\Box}_{\eta}\phi_{\mathrm{cor}}^{(p+2)}=(\bar{\Box}_\eta-D_0^{-1}\bar{\Box}_g)(\phi_{\mathrm{cor}}^{(p+1)})$, and then test for no incoming radiation to order $q$ by checking that $\pvbar(r^2\pvbar)^{\ell}(P^{S^2}_\ell(\psi-\psi_{\mathrm{cor}}^{(p+1)}-\psi_{\mathrm{cor}}^{(p+2)})))\in \Hb^{q+1-\ell}(\Sigma)$.
\item We can iteratively treat higher orders.
\end{enumerate}
We give an example below (\cref{rem:noince:Sch_l=0}) showcasing this algorithm in action.

Similarly as in \cref{lemma:noinc:fourier_Minkowksi}, we can prove the following more abstract result (without explicit expressions):
\begin{lemma}\label{lemma:noinc:fourier_Sch}
    Let $p\in\mathbb{R}$ and $Q^p_0, Q^p_1$ be as in \cref{lemma:noinc:fourier_Minkowksi}.
    Assume that the initial data to \cref{eq:noinc:CauchySchw} are
    \begin{equation}\label{eq:noinc:data_p+1_noinc}
        (\psi_0,\psi_1)=(r^{-p}\psi_0^{(p)}+r^{-p-1}\psi_0^{(p+1,0)},r^{-p-1}\psi_1^{(p)}+r^{-p-2}\psi_1^{(p+1,0)}).
    \end{equation}
    There exists $k\in\mathbb{N}$ and a Fourier multiplier operators of order $k$, $Q^{p,(1)}_0, Q^{p,(1)}_1:H^{s}(S^2)\to H^{s-k}(S^2)$ such that the data $(\psi_0,\psi_1)$ satisfy the no
    incoming radiation condition to order $p+2-$ if and only if $Q^{p}_0\psi^{(p)}_0+Q^{p}_1\psi^{(p)}_1=0$ and
    \begin{equation}\label{eq:noinc:saved}
        Q^{p+1}_0\psi^{(p+1)}_0+Q^{p+1}_1\psi^{(p+1)}_1+Q^{p,(1)}_0\psi^{(p)}_0+Q^{p,(1)}_1\psi^{(p)}_1=0
    \end{equation}
  
\end{lemma}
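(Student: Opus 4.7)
The plan is to execute the iterative algorithm sketched in the paragraph preceding the lemma. By linearity, decompose the Schwarzschild solution as $\phi = \phi^{(p)}_M + \phi^{(p+1)}_M + \phi_{\mathrm{cor}} + \phi_{\Delta}$, where $\phi^{(p)}_M, \phi^{(p+1)}_M$ solve $\bar{\Box}_\eta\phi = 0$ with Cauchy data on $\Sigma$ given by $(r^{-p}\psi^{(p)}_0, r^{-p-1}\psi^{(p)}_1)$ and $(r^{-p-1}\psi^{(p+1)}_0, r^{-p-2}\psi^{(p+1)}_1)$, respectively, where $\phi_{\mathrm{cor}}$ solves $\bar{\Box}_\eta\phi_{\mathrm{cor}} = (\bar{\Box}_\eta - D_0^{-1}\bar{\Box}_g)\phi^{(p)}_M$ with trivial Cauchy data on $\Sigma$, and $\phi_{\Delta} := \phi - \phi^{(p)}_M - \phi^{(p+1)}_M - \phi_{\mathrm{cor}}$ is the remainder. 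Since $\bar{\Box}_\eta - D_0^{-1}\bar{\Box}_g \in \frac{1}{r^2 r_0}\Diff^2_{\b}(\Dbold^{\mathrm{S}})$ by \cref{eq:Schw:box_in_uvbar}, it acts as a short-range perturbation of $\bar{\Box}_\eta$, so by \cref{thm:app:general} together with the ODE propagation lemma \cref{corr:ODE:du_dv}, the correction $\phi_{\mathrm{cor}}$ is polyhomogeneous with leading behaviour $r^{-p-1}$ on $\Sigma$ and radiation field starting at order $v^{-p-2}$, while $\phi_{\Delta}$ (a Schwarzschild solution with trivial Cauchy data and source two orders better than the source for $\phi_{\mathrm{cor}}$) has radiation field in $\Hb^{p+2-}(\scrim)$.

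Next, apply \cref{lemma:noinc:fourier_Minkowksi} to the two Minkowskian pieces. Because their Cauchy data consist of a single homogeneous term, the scaling structure of $\bar{\Box}_\eta$ forces the full Minkowskian radiation fields to be exactly $v^{-p-1}(Q^p_0\psi^{(p)}_0 + Q^p_1\psi^{(p)}_1)$ and $v^{-p-2}(Q^{p+1}_0\psi^{(p+1)}_0 + Q^{p+1}_1\psi^{(p+1)}_1)$, respectively, with no contributions at other powers of $v$. Combining with the $\Hb^{p+2-}(\scrim)$ error from $\phi_{\Delta}$, the full radiation field of $\phi$ takes the form
\begin{equation*}
\pv\psi|_{\scrim} = v^{-p-1}\mathcal{R}_1 + v^{-p-2}\mathcal{R}_2 + \Hb^{p+2-}(\scrim),
\end{equation*}
where $\mathcal{R}_1 = Q^p_0\psi^{(p)}_0 + Q^p_1\psi^{(p)}_1$ and $\mathcal{R}_2 = Q^{p+1}_0\psi^{(p+1)}_0 + Q^{p+1}_1\psi^{(p+1)}_1 + \tilde{Q}^{p,(1)}_0\psi^{(p)}_0 + \tilde{Q}^{p,(1)}_1\psi^{(p)}_1$, and $\tilde{Q}^{p,(1)}_{0,1}$ are the (a priori abstract) linear maps sending $(\psi^{(p)}_0, \psi^{(p)}_1)$ to the leading radiation-field coefficient of $\phi_{\mathrm{cor}}$. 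The condition of no incoming radiation to order $p+2-$ is then equivalent to $\mathcal{R}_1 = \mathcal{R}_2 = 0$, which are precisely the two stated conditions, with $Q^{p,(1)}_{0,1} := \tilde{Q}^{p,(1)}_{0,1}$.

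The hard part is to verify that $\tilde{Q}^{p,(1)}_{0,1}$ are Fourier multipliers of finite order on the sphere. The spherical symmetry of both $\bar{\Box}_\eta$ and $\bar{\Box}_\eta - D_0^{-1}\bar{\Box}_g$ ensures the entire construction respects projection to a fixed spherical harmonic $Y_\ell$, so it suffices to track the $\ell$-dependence of the $\ell$-mode symbol of $\tilde{Q}^{p,(1)}_{0,1}$. Each step in the construction — recovering $\phi^{(p)}_{M,\ell}$ from its Cauchy data via the hypergeometric formulae \cref{eq:even:hypGeom_representation} used in the proof of \cref{lemma:noinc:fourier_Minkowksi}, applying the differential operator $\bar{\Box}_\eta - D_0^{-1}\bar{\Box}_g$ whose spherical-Laplacian part contributes factors of $\ell(\ell+1)$, and integrating via \cref{prop:ODE:ODE,prop:ode:ode2} to extract the $v^{-p-2}$ coefficient of $\pv\psi_{\mathrm{cor},\ell}|_{\scrim}$ — contributes a rational-in-$\ell$ expression, built from polynomial-in-$\ell$ pieces divided by the normalising hypergeometric factors $\mathrm{LHS}(p,\ell), \mathrm{RHS}(p,\ell)$ that identify Cauchy data with $\phi^{(p)}_{M,\ell}$. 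Showing that this symbol grows at most polynomially in $\ell$ thus reduces to the polynomial-in-$\ell$ growth of $\mathrm{LHS}(p,\ell)$ and $\mathrm{RHS}(p,\ell)$, established via the recurrence-coefficient estimates around \cref{eq:even:computing_coefficients} in the proof of \cref{prop:even}.
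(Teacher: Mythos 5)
Your decomposition $\phi=\phi^{(p)}_M+\phi^{(p+1)}_M+\phi_{\mathrm{cor}}+\phi_{\Delta}$ correctly implements the iterative algorithm sketched in the text preceding the lemma, and the reduction of the no-incoming-radiation condition at orders $p+1$ and $p+2$ to $\mathcal{R}_1=\mathcal{R}_2=0$ is sound. This is, however, not the route the paper takes for the lemma itself: the paper simply observes, via \cref{thm:scat:scat_general,thm:app:general} and the coordinate improvement of \cref{lemma:Schw:no_logk_terms}, that $v\pv\psi|_{\scrim}=v^{-p}\psi^{(p)}_-+v^{-p-1}\psi^{(p+1)}_-+\Hb^{p+1+;0}(\scrim)$, and then lets linearity, spherical symmetry (commutation with $P^{S^2}_\ell$), and the finite loss of derivatives built into those theorems do all the work. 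The explicit peeling-off you carry out is instead what the paper does in the worked $\ell=0$ example (\cref{rem:noince:Sch_l=0}).

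Where your argument has a real gap is precisely the step you flag as ``the hard part.'' You claim that the finite order of $\tilde{Q}^{p,(1)}_{0,1}$ ``reduces to the polynomial-in-$\ell$ growth of $\mathrm{LHS}(p,\ell)$ and $\mathrm{RHS}(p,\ell)$, established via the recurrence-coefficient estimates around \cref{eq:even:computing_coefficients}.'' That equation establishes $2^kc_k\sim k^{p-1}$ as $k\to\infty$ at \emph{fixed} $\ell$ (or $l$); it governs the blow-up of $\nir$ as $\tau\to1$, not the $\ell$-dependence of $\nir$ or of $\mathrm{LHS},\mathrm{RHS}$ evaluated at $\tau=\pm1/2$. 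It says nothing about uniformity in $\ell$. Moreover, a polynomial bound on $\mathrm{LHS},\mathrm{RHS}$ would in any case not suffice: your symbol also contains the Wronskian inverse (identifying the pair $(a_\ell,b_\ell)$ from Cauchy data), the coefficients produced by $\bar{\Box}_\eta-D_0^{-1}\bar{\Box}_g$ (which scale like $\ell(\ell+1)$), and the hypergeometric values appearing in the extraction of the $v^{-p-2}$ radiation coefficient of $\phi_{\mathrm{cor},\ell}$, and you have not bounded the $\ell$-growth of this full composite.

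The fix is to drop the explicit symbol-tracking entirely and argue as in the proof of \cref{lemma:noinc:fourier_Minkowksi}: the map $(\psi^{(p)}_0,\psi^{(p)}_1)\mapsto\psi^{(p+1)}_-$ (with the order-$(p+1)$ data set to zero) is linear, commutes with $P^{S^2}_\ell$ by the spherical symmetry of $\bar{\Box}_g$, and by the energy and polyhomogeneity estimates of \cref{thm:scat:scat_general,thm:app:general} is continuous $H^s(S^2)\times H^s(S^2)\to H^{s-k}(S^2)$ for some \emph{fixed} $k$ independent of $s$. These three facts immediately give that it is a Fourier multiplier of order $k$, with no need to exhibit or bound the symbol.
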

\begin{proof}
    The proof is essentially the same as in \cref{lemma:noinc:fourier_Minkowksi}:
    From \cref{thm:scat:scat_general,thm:app:general}, together with an argument as in \cref{lemma:Schw:no_logk_terms}, we already have that $v\pv \psi|_{\scrim}=v^{-p}\psi^{(p)}_-+v^{-p-1}\psi^{(p+1)}_-+\Hb^{p+1+;0}(\scrim)$ for some $\psi^{(p)}_-,\psi^{(p+1)}_-\in H^{0}(S^2)$.
    As before, requiring $\psi^{(p)}_-=0$ yields $Q^{p}_0\psi^{(p)}_0+Q^{p}_1\psi^{(p)}_1=0$ by \cref{lemma:noinc:fourier_Minkowksi}.
    Similarly, we can compute the contribution of faster decaying part of the data to get that $\psi^{(p+1)}_-=0$ implies \cref{eq:noinc:saved}.%\cref{eq:no_inc:Fourier1} and \cref{eq:no_inc:Fourier2} respectively.
\end{proof}

In order for the reader to get an idea how the algorithm above works in action, we provide a simple lemma for the case $\ell=0$:
\begin{lemma}\label{rem:noince:Sch_l=0}
   Fix $p\in\R_{\geq -1/2}$ and data as in \cref{lemma:noinc:fourier_Sch} supported on $\ell=0$.
   Then the explicit version of \cref{eq:noinc:saved} is given by 
   \begin{equation}\label{eq:noinc:l=0_multiplier}
       (\psi_1^{(p+1)}-(p+1)\psi_0^{(p+1)})-\psi_0^{(p)}M\Big(1-2p+2p(1+p)\big(\mathbf{H}(\frac{1+p}{2})-\mathbf{H}(\frac{p}{2})\big)\Big)=0
   \end{equation}
   where $\mathbf{H}(p)=\int_0^1\frac{1-x^p}{1-x}\dd x$ is the harmonic number function.
\end{lemma}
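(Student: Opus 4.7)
The plan is to follow the three-step algorithm preceding \cref{lemma:noinc:fourier_Sch}, carrying it out explicitly on the $\ell=0$ sector to leading order in $M$. First, I would specialise all operators to $\ell=0$: the Laplacian term $(1-D_0^{-1})r^{-2}\Dl$ in the perturbation \eqref{eq:Schw:box_in_uvbar} drops out, so only the first-order and $\pvbar^2$ contributions survive. The Minkowskian $\ell=0$ equation $\bar\Box_\eta\phi=0$ reduces to $\pubar\pvbar\psi=0$, whose no-incoming-radiation solution with leading $r^{-p}$ decay is simply $\psi^{(p)}=2^{-p}(-\bar u)^{-p}\psi_0^{(p)}$ (using $r=-2\bar u$ on $\Sigma$). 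Comparing with the data $\psi_1=T\psi|_\Sigma$ reproduces the order-$p$ Minkowskian condition $\psi_1^{(p)}=p\,\psi_0^{(p)}$, consistent with \cref{eq:noinc:Fourier_multiplier_on_data} at $\ell=0$.

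Next, I would compute the source $\bar\Box_\eta\phi^{(p+1)}_{\mathrm{cor}}=(\bar\Box_\eta-D_0^{-1}\bar\Box_g)\phi^{(p)}$. Using $\pvbar\phi^{(p)}=-r^{-2}\psi^{(p)}$, $\pvbar^2\phi^{(p)}=2r^{-3}\psi^{(p)}$, and $D_0^{-1}=1+2M/r_0+O(M^2)$, a direct calculation of the two surviving terms in \eqref{eq:Schw:box_in_uvbar} yields, to leading order in $M$,
\[
(\bar\Box_\eta-D_0^{-1}\bar\Box_g)\phi^{(p)} \;=\; -\frac{2M\cdot 2^{-p}\psi_0^{(p)}}{r^4}(-\bar u)^{-p},
\]
where the cancellation $\bar v-r=\bar u=-r_0$ is the key input. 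Passing to $\psi^{(p+1)}_{\mathrm{cor}}=r\phi^{(p+1)}_{\mathrm{cor}}$ gives $\pubar\pvbar\psi^{(p+1)}_{\mathrm{cor}}=2M\cdot 2^{-p}\psi_0^{(p)}(-\bar u)^{-p}/r^3$, to be solved with trivial data on $\scrim\cup\incone$. Integrating in $\bar u$ from $-\infty$ (enforcing $\pvbar\psi^{(p+1)}_{\mathrm{cor}}|_{\scrim}=0$) and restricting to $\Sigma$ (where $\bar v=-\bar u$, $r=2\bar v$), the substitution $s=(-\bar u)t$ gives
\[
\pvbar\psi^{(p+1)}_{\mathrm{cor}}\big|_\Sigma \;=\; 8M\,\psi_0^{(p)}\,r^{-p-2}\int_1^\infty\frac{t^{-p}}{(1+t)^3}\,dt.
\]

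For the third step, I would exploit the purely geometric identity $\pvbar\psi|_\Sigma=\psi_1+\tfrac12\psi_0'$ (valid for any $\psi$, since $\Sigma=\{\bar u=-\bar v\}$ and $T=\tfrac12(\pubar+\pvbar)$). Inserting the data ansatz yields
\[
\pvbar\psi|_\Sigma \;=\; r^{-p-1}\bigl[\psi_1^{(p)}-p\psi_0^{(p)}\bigr] + r^{-p-2}\bigl[\psi_1^{(p+1)}-(p+1)\psi_0^{(p+1)}\bigr],
\]
so, by the algorithm before \cref{eq:noinc:stepb}, the $r^{-p-2}$ coefficient of $\pvbar(\psi-\psi^{(p+1)}_{\mathrm{cor}})|_\Sigma$ must vanish (the $r^{-p-1}$ one already vanishes by the $\ell=0$ Minkowski condition). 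Equating the two expressions just computed then gives the relation
\[
\psi_1^{(p+1)}-(p+1)\psi_0^{(p+1)} \;=\; 8M\,\psi_0^{(p)}\int_1^\infty\frac{t^{-p}}{(1+t)^3}\,dt.
\]

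The remaining task, and the only real obstacle, is to identify
\[
\int_1^\infty\frac{t^{-p}}{(1+t)^3}\,dt \;=\; \tfrac{1}{8}\Big(1-2p + 2p(1+p)\bigl[\mathbf{H}(\tfrac{1+p}{2})-\mathbf{H}(\tfrac{p}{2})\bigr]\Big).
\]
This is a pure special-function identity. The substitution $x=1/(1+t)$ converts it to the incomplete Beta integral $\int_0^{1/2}x^{p+1}(1-x)^{-p}\,dx$; two integrations by parts together with a further substitution $x\mapsto(1-y)/(1+y)$ reduce it to $\int_0^1 y^{p+1}/(1+y)\,dy$, whose evaluation $\tfrac12[\psi(\tfrac{p+3}{2})-\psi(\tfrac{p+2}{2})]=\tfrac12[\mathbf H(\tfrac{p+1}{2})-\mathbf H(\tfrac{p}{2})]$ is a classical digamma identity. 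Sanity checks at $p=0$ (both sides equal $\tfrac{1}{8}$) and $p=1$ (both sides equal $\ln 2-\tfrac{5}{8}$, using $\mathbf H(\tfrac12)=2-2\ln 2$) confirm the formula and pin down the overall numerical constants. Aside from this bookkeeping of factors of $2$ (from $r=2\bar v$ on $\Sigma$ and $T=\tfrac12(\pubar+\pvbar)$), no further conceptual input is required beyond \cref{lemma:noinc:mink1} and the $\ell=0$ reduction of \eqref{eq:Schw:box_in_uvbar}.
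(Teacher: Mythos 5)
Your proposal traces the same three-step algorithm as the paper's proof and reproduces \cref{eq:noinc:l=0_multiplier}. Two comments are worth making.

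First, there is a sign discrepancy in your Step 2 that deserves attention. You compute the source from \cref{eq:Schw:box_in_uvbar} (the $\phi$-level operator $\bar\Box_\eta-D_0^{-1}\bar\Box_g$), obtain $(\bar\Box_\eta-D_0^{-1}\bar\Box_g)\phi^{(p)}=-2M\psi^{(p)}/r^4$, and then pass to $\pubar\pvbar\psi^{(p+1)}_{\mathrm{cor}}=+2M\psi^{(p)}/r^3$. The paper's proof instead works directly from the $\psi$-equation of \cref{eq:Schw:waveequationinr0r} and displays $\pubar\pvbar\psi^{(p+1)}_{\mathrm{cor}}=-\frac{2M\psi^{(p)}}{D_0r^3}$, the opposite sign. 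Tracing this back: since $\bar\Box_\eta\phi^{(p)}=0$, one should have $(\bar\Box_\eta-D_0^{-1}\bar\Box_g)\phi^{(p)}=-D_0^{-1}\bar\Box_g\phi^{(p)}$; evaluating $\bar\Box_g\phi^{(p)}$ by plugging $\psi^{(p)}$ (with $\pvbar\psi^{(p)}=0$) into the $\psi$-equation and using $r\bar\Box_\eta\phi=-\pubar\pvbar\psi+r^{-2}\Dl\psi$ yields the paper's negative sign, not the positive one you obtained. The culprit is that the displayed formula \cref{eq:Schw:box_in_uvbar} for $\bar\Box_\eta-D_0^{-1}\bar\Box_g$ appears to carry a spurious overall sign on its $M$-terms, so anyone applying it verbatim (as you did) lands on the wrong sign for the source. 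The paper itself is not entirely clean here either: its subsequent integral display silently drops the minus sign from its own ODE, so the two errors compensate and both routes converge on the stated formula. But as written, your Step 2 source does not agree with the paper's ODE, and a careful reader should re-derive the source directly from \cref{eq:Schw:waveequationinr0r} as the paper does rather than from \cref{eq:Schw:box_in_uvbar}.

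Second, your hand reduction of $\int_1^\infty t^{-p}(1+t)^{-3}\,dt$ to the digamma/harmonic-number expression (via the incomplete Beta integral and the substitution $x\mapsto(1-y)/(1+y)$) is a genuine improvement in self-containedness over the paper, which evaluates the integral with Mathematica; the checks at $p=0$ and $p=1$ pin down the constants correctly.
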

\begin{proof}
First, recall the form of the Schwarzschildean wave equation in $\bar{u}, \bar{v}$ coordinates:
\begin{equation}
    (D_0\pubar+(D_0-D)\pvbar)\pvbar \psi-\frac{2M}{r^2}\pvbar\psi=\frac{\Dl\psi}{r^2}-\frac{2M\psi}{r^3}.
\end{equation}

 We follow the procedure outlined above:
\begin{enumerate}
    \item At order $q<p+1$, we simply need to test that the $r^{-p}$-part of $\pvbar\psi|_{\Sigma}$ vanishes; this is precisely the condition that $\psi_1^{(p)}-p\psi_0^{(p)}=0$.
   By integrating $\pvbar\psi^{(p)}$ from $\Sigma$ (where $r=2|\bar{u}|$), we then compute the corresponding Minkowskian solution at order $r^{-p}$ to be $\psi^{(p)}=2^{-p}|\bar{u}|^{-p}\psi_0^{(p)}$.
   \item The correction term $\psi_{\mathrm{cor}}^{(p+1)}$ is defined via the equation 
   \begin{equation}
       \pubar\pvbar \psi_{\mathrm{cor}}^{(p+1)}=-\frac{2M\psi^{(p)}}{D_0r^3}=-\frac{2M|2\bar{u}|^{-p}\psi^{(p)}_0}{D_0(\bar{v}-\bar{u})^3},
   \end{equation}
   where we used that $\pvbar$ and angular derivatives annihilate $\psi^{(p)}$.
Integrating with \cite{Mathematica} in $\bar{u}$ from past null infinity gives, modulo $\mathcal{O}(r^{-(p+3)})$
\begin{equation}
    \pvbar\psi_{\mathrm{cor}}^{(p+1)}|_{\Sigma}=\int_{-\infty}^{\bar{u}} \frac{2M|2\bar{u}'|^{-p}\psi^{(p)}_0}{(\bar{v}-\bar{u})^3}\dd \bar{u}'= \psi^{(p)}_0\frac{2M2^{-p}}{8|\bar{u}|^{p+2}}\Big(1-2p+2p(1+p)\big(\mathbf{H}(\frac{1+p}{2})-\mathbf{H}(\frac{p}{2})\big)\Big).
\end{equation}
The condition \cref{eq:noinc:stepb} now requires that $(\pv\psi-\pv\psi_{\mathrm{cor}}^{(p+1)})|_{\Sigma}$ vanishes modulo $\mathcal{O}(r^{-(p+3)})$; this is exactly\cref{eq:noinc:l=0_multiplier}.
\end{enumerate}

\end{proof}
We emphasise that, in contrast to the Minkowskian computation, in Schwarzschild, testing for no incoming radiation \textit{does not} exclude logs towards $\scrip$---the opposite is the case: For $p\in\mathbb{N}$, nontrivial initial data as in \cref{rem:noince:Sch_l=0} satisfying \cref{eq:noinc:l=0_multiplier} \textit{will} generate log-terms towards $\scrip$ at order $r^{-p-1}\log r$.
\begin{rem}[Practicability of the algorithm]
    Let us comment on the computations required in order to follow the steps above, i.e.~let us comment on \cref{item:noinc:A}.
    The term $\phi^{(p)}$ can be computed directly using \cref{eq:intro:conslaw}, as an alternative to the ODE method used in \cref{lemma:noinc:fourier_Minkowksi}.
    The computation of $\phi_{\mathrm{cor}}^{(p+1)}$ via \cref{eq:intro:conslawf} or the ODE method, however, already gets significantly more involved, cf.~\cite[Proposition~14.4]{kehrberger_case_2024}.\footnote{In \cite{kehrberger_case_2024}, the analogue of $\phi_{\mathrm{cor}}^{(p+1)}$ for the Teukolsky equations is computed up to an undetermined constant restricted to $p\leq \ell$. There, this was important for leading order asymptotic statements about the full system of linearised gravity in the presence of cancellations, cf.~\cite[Eq.~(1.19)]{kehrberger_case_2024}.}
    (Note however that,  in order to test \cref{eq:noinc:stepb}, one does not need to compute $\phi^{(p)}$ itself, but it suffices to compute $(r^2\pv)^{\ell-i}(\psi^{(p)}_{\ell})$ for $i=0,1$ as follows from \cref{eq:Schw:box_in_uvbar}, cf.~\cref{eq:app:conslaw}.)

    To avoid confusion, we stress that even though using the conservation laws \cref{eq:intro:conslaw,eq:intro:conslawf}  requires computing unbounded amounts of derivatives in $v$, all our conditions apply to initial data of finite regularity; indeed, this is exactly the statement that the unspecified higher order Fourier multipliers $Q_i^{p,(1)}$ are of finite order.

\end{rem}

\begin{rem}[Usability of the algorithm.]
  With the remark above already having addressed \cref{item:noinc:A}, let us now explain how the algorithm above addresses \cref{item:noinc:B}:
    Recall again that, for initial data to \cref{eq:noinc:CauchySchw} of the form $(\psi_0,\psi_1)=(r^{-p}\psi_0^{(p)},r^{-p-1}\psi_1^{(p)})$ with $p\in\mathbb{N}$ satisfying the no incoming radiation condition to order $q>p+1$, an exceptional cancellation occurs and $\psi$ exhibits its first logarithm towards $\scrip$ at order $r^{-p-1}\log r$ instead of $r^{-p}\log r$, similarly for $p\notin\mathbb{N}$.
Thus, in order to see this exceptional improvement by $1$, it suffices to test \cref{eq:noinc:stepb} or, equivalently, \cref{eq:noinc:saved}. 
\end{rem}

	%%%%%%%%%%%%%%%

	%%%%%%%%%%%%%%%

 	\appendix	

    %\section{Extra computations}
	
	%Let us now prove the claimed alteration of \cref{rem:en:strict_inequalities}
	%\begin{lemma}\label{lemma:appendix:strict_inequalities}
	%Let $\phi$ be the solution to \cref{eq:en:characteristicIVP} under the assumptions \cref{eq:en:assumption0} and write $\D^-=\D^{u_{\infty},-}_{u_0,v_0}$ as well as $\incone=\Cbar^{u_\infty}_{u_0,v_0}$. 
	%Let $a_0=a_-$ and $a^f_->3/2$, $a^f_->a_-+1,a^f_0>a_0+2$. 
	%Then for $\psi^{\incone}=0$, we have
	%\begin{equation}
	%\norm{r\phi}_{\Hb^{a_-,a_-;0}(\D^-)}\lesssim\norm{f}_{\Hb^{a^f_-,a^f_1}}
	%\end{equation}
	%\end{lemma}
	%\begin{proof}
	%We use the multiplier $V=\abs{u}^{2a_-+1}\pu$ for $\tT$ to already control
	%\begin{equation}
	%\norm{\rho_-^{1/2}\sl r\phi}_{\Hb^{a_-,a_-;0}}
    %\lesssim\norm{rf}_{\Hb^{a^f_-,a^f_0;0}(\D^-)}^2.
	%\end{equation}
	%Commuting the equation with angular derivatives, we obtain
	%\begin{equation}
	%\pu\pv r\phi\in \Hb^{\min(a^f_-,a_-+3/2),a_0+2;0}(\D^-).
	%\end{equation}
	%Using \cref{prop:ode:ode2} yields the result.
	%\end{proof}
    
    \part{Appendix: General quasilinearities and the Einstein vacuum equations}
In this last part of our work, we extend our results from \cref{sec:en,sec:scat:scat} to general quasilinear perturbations (in \cref{app:sec:current_computations}), and we use these extensions to develop a scattering theory for the Einstein vacuum equations in harmonic gauge (in \cref{sec:EVE}).
We follow a more geometric approach to improve the estimates of \cref{sec:en} to general metric and quasilinear perturbations, but the ideas and multiplier are essentially the same as there.

\section{Scattering theory for general quasilinear perturbations}\label{app:sec:current_computations}
\newcommand{\Tmod}{\accentset{\scalebox{.6}{\mbox{\tiny (1)}}}{{\T}}}
The purpose of this section is to put the estimates of \cref{sec:en} on a more geometric footing, using compatible currents and the divergence theorem, and to then finally deduce scattering also for general quasilinear short-range perturbations that \textit{do} affect the light cones.

This section is structured as follows: In \cref{app:sec:current_computations:mink}, we reprove the energy estimates of \cref{sec:en:en} for $\Box_\eta\phi=f$ in a geometric way using currents defined out of the energy momentum tensor $\T$. 
In \cref{sec:current:non_short_range}, we then deduce that the core energy estimates also still apply to long-range potentials as defined in \cref{def:en:long_range}.

In \cref{sec:current:linearmetric}, we then prove estimates for \textit{linear short-range metric perturbations}, i.e.~for metric perturbations that do not depend on $\phi$. The crucial ingredient here is that, given such a metric perturbation $g$, we define a pseudo-geometric energy momentum tensor $\Tmod$ for which we can essentially still prove the same divergence estimates.

In \cref{sec:current:linearscattering}, we elevate the energy estimates to scattering results. Since the metric perturbation does not depend on $\phi$, there is no difficulty in fixing the light-cone yet.

Finally, in \cref{sec:current:general_quasilinear_scattering}, we allow for quasilinear metric perturbations (i.e.~depending on $\phi$) by a Nash--Moser iteration, where, at each step, we solve a \textit{linear} semi-global scattering problem.
\subsection{Geometric energy estimates for \texorpdfstring{$\Box_\eta\phi=0$}{the linear wave equation} using compatible currents}\label{app:sec:current_computations:mink}
In this subsection, we give a more geometric proof of \cref{prop:en:pastestimate} using the energy momentum tensors defined in \cref{def:notation:T}. 
Let us first compute $\T$ and $\tT$ in $u,v,\omega_a$ coordinates, where $\omega_a$ are some coordinates on the unit sphere.
We will reserve the Latin indices $a,b,c\dots$ appearing as components of tensors in spherical directions. 
For convenience, we recall the Minkowski metric as $\eta=-2\dd u \otimes\dd v-2\dd v \otimes\dd u+r^{2}g_{S^2}$, and we use  $g_{S^2}$, $g_{S^2}^{-1}$ to denote the standard metric and  inverse metric on the unit sphere.
We begin by computing the components of the energy momentum tensors of \cref{def:notation:T}:
\begin{lemma}
	For $\tT,\T$ as in \cref{eq:notation:energyMomentumTensor}, we have
	\begin{equation}\label{eq:current:T_indices}
		\begin{aligned}
			4\T^{vv}&=\T_{uu}=(\pu\phi)^2,\\
			4\T^{uu}&=\T_{vv}=(\pv\phi)^2,\\
			4\T^{uv}&=\T_{uv}=r^{-2}\abs{\sl\phi}^2,\\
			r^4\T^{\mu\nu}(g_{S^2})_{\mu\nu}&=\T_{\mu\nu}(g_{S^2}^{-1})^{\mu\nu}=r^2\pu\phi\pv\phi,
		\end{aligned}\qquad
		\begin{aligned}
			4\tT^{vv}&=\tT_{uu}=r^{-2}(\pu r\phi)^2,\\
			4\tT^{uu}&=\tT_{vv}=r^{-2}(\pv r\phi)^2,\\
			4\tT^{uv}&=\tT_{uv}=r^{-2}\abs{\sl\phi}^2,\\
			r^4\tT^{\mu\nu}(g_{S^2})_{\mu\nu}&=\tT_{\mu\nu}(g_{S^2}^{-1})^{\mu\nu}=\pu (r\phi)\pv (r\phi).
		\end{aligned}
	\end{equation}
\end{lemma}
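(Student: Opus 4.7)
The plan is to perform the computation directly from the definitions, exploiting the simple form of the Minkowski metric in $(u,v,\omega)$ coordinates. First, I would record that in these coordinates the metric and its inverse have components $\eta_{uv}=\eta_{vu}=-2$, $\eta_{uu}=\eta_{vv}=0$, $\eta_{ab}=r^2(g_{S^2})_{ab}$ and $\eta^{uv}=\eta^{vu}=-1/2$, $\eta^{uu}=\eta^{vv}=0$, $\eta^{ab}=r^{-2}(g_{S^2}^{-1})^{ab}$, with no mixed spherical--null components. This immediately yields the scalar $\partial\phi\cdot\partial\phi=\eta^{\mu\nu}\partial_\mu\phi\,\partial_\nu\phi=-\pu\phi\,\pv\phi+r^{-2}\abs{\sl\phi}^2$.

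Second, I would substitute into $\T_{\mu\nu}=\partial_\mu\phi\,\partial_\nu\phi-\tfrac12\eta_{\mu\nu}\partial\phi\cdot\partial\phi$. Because $\eta_{uu}=\eta_{vv}=0$, the second term drops out of the $uu$ and $vv$ components, giving $\T_{uu}=(\pu\phi)^2$ and $\T_{vv}=(\pv\phi)^2$ at once. For $\T_{uv}$, the contribution $\eta_{uv}=-2$ combined with the scalar $\partial\phi\cdot\partial\phi$ produces a cancellation of the $\pu\phi\,\pv\phi$ term and leaves $\T_{uv}=r^{-2}\abs{\sl\phi}^2$. For the trace, the identity $(g_{S^2})^{ab}(g_{S^2})_{ab}=2$ on $S^2$ yields, after a short computation, $(g_{S^2}^{-1})^{ab}\T_{ab}=r^2\pu\phi\,\pv\phi$. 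The contravariant identities $4\T^{uu}=\T_{vv}$, $4\T^{vv}=\T_{uu}$, $4\T^{uv}=\T_{uv}$ and $r^4(g_{S^2})_{ab}\T^{ab}=(g_{S^2}^{-1})^{ab}\T_{ab}$ follow by raising indices with $\eta^{uv}=-1/2$ and $\eta^{ab}=r^{-2}(g_{S^2}^{-1})^{ab}$, using the block structure of $\eta$ to see which contractions are nonzero.

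Third, for the twisted tensor $\tT$, I would first note that $\beta=r^{-1}$ gives $\tilde\partial_\mu\phi=r^{-1}\partial_\mu(r\phi)$ and that $w=-\Box_\eta\beta/\beta=0$ away from the origin, since $\Box_\eta r^{-1}=0$ on Minkowski. In particular the spherical derivatives satisfy $\tilde\partial_a\phi=\partial_a\phi$ because $r$ is independent of $\omega$, while the null components become $\tilde\partial_u\phi=r^{-1}\pu(r\phi)$ and analogously for $v$. Repeating the computation with $\partial_\mu\phi$ replaced by $\tilde\partial_\mu\phi$ and the scalar $\partial\phi\cdot\partial\phi$ replaced by $\tilde\partial\phi\cdot\tilde\partial\phi=-r^{-2}\pu(r\phi)\pv(r\phi)+r^{-2}\abs{\sl\phi}^2$, the same cancellations produce the claimed expressions for $\tT_{uu}$, $\tT_{vv}$, $\tT_{uv}$ and the trace; raising indices as above yields the contravariant identities.

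Since the lemma is purely computational, there is no real obstacle; the only minor points requiring care are keeping track of the factors of $-1/2$ from $\eta^{uv}$ and the sphere trace $(g_{S^2})^{ab}(g_{S^2})_{ab}=2$, and verifying that $w$ indeed vanishes for the twisting $\beta=r^{-1}$ so that no zeroth-order term in $\phi$ appears in $\tT$.
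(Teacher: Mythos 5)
Your proposal is correct and coincides with the paper's own proof, which simply states that the identities are the result of direct computations in $(u,v,\omega)$ coordinates; you have spelled out precisely that computation, including the key facts $\eta_{uv}=-2$, $\eta^{uv}=-1/2$, $(g_{S^2}^{-1})^{ab}(g_{S^2})_{ab}=2$, and that the twist $\beta=r^{-1}$ gives $\tilde\partial_a\phi=\partial_a\phi$ and $w=0$.
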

\begin{proof}
	These relations are the consequences of direct computations.
\end{proof}

\begin{defi}
	For a vector field $V$ and a metric $g$, we define its \emph{deformation tensor} $\deformation{V,g}=\tfrac12\mathcal{L}_V g$ for $\mathcal{L}_V$ the Lie derivative along $V$. In particular, we have $\deformation{V,\eta}=\frac12(\nabla_\mu V_\nu+\nabla_\nu V_\mu)$.
\end{defi}

In what is to follow, we will consider vector fields of the form 
$V=f^u(u,v)\pu+f^v(u,v)\pv$. For such $V$, and for metric $\eta$, we swiftly compute:
\begin{equation}\label{eq:current:divergence_computation}
	\deformation{V,\eta}_{uu}=-2\pu f^v,\quad \deformation{V,\eta}_{vv}-2\pv f^u,\quad \deformation{V,\eta}_{uv}=-(\pv f^v+\pu f^u),\quad\deformation{V,\eta}_{ab}=(g_{S^2})_{ab}V(r^{2}).
\end{equation}

Hence, we have the following computations for the currents:
\begin{lemma}
	For $\Box_\eta\phi=f$ and $J=V\cdot \T[\phi]$ and $\tilde{J}=V\cdot\tT[\phi]$, we have
	\begin{subequations}\label{eq:current:divergence_general}
		\begin{align}
			2\divergence(J)&=-\pv f^u(\pu\phi)^2-\pu f^v(\pv\phi)^2+r^{-2}V(r^{2})\pu\phi\pv\phi-r^{-2}(\pu f^u+\pv f^v)\abs{\sl\phi}^2+2V(\phi)f\\
			2\divergence(\tilde{J})&=-r^{-2}\pv f^u(\pu r\phi)^2-r^{-2}\pu f^v(\pv r\phi)^2+\Big(r^{-4}V(r^{2})-r^{-2}(\pu f^u+\pv f^v)\Big)\abs{\sl r\phi}^2+2r^{-1}V(r\phi)f
		\end{align}
	\end{subequations}
\end{lemma}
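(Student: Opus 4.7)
The plan is to prove both identities in \eqref{eq:current:divergence_general} by the standard divergence identity for a symmetric $2$-tensor contracted against a vector field. For the untwisted current, one writes
\[
\nabla_\mu (V_\nu T^{\mu\nu}[\phi]) = (\mathcal{L}_V\eta)_{\mu\nu} T^{\mu\nu}[\phi] + V_\nu \nabla_\mu T^{\mu\nu}[\phi],
\]
which exploits the symmetry of $T$ to replace $\nabla_\mu V_\nu$ by the symmetrized $(\mathcal{L}_V \eta)_{\mu\nu}=2 \deformation{V,\eta}_{\mu\nu}$, and thus produces the overall factor of $2$ on the LHS of \eqref{eq:current:divergence_general}. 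The second term is handled by the standard identity $\nabla_\mu T^{\mu\nu}[\phi]=\partial^\nu\phi\,\Box_\eta\phi$, which, together with $\Box_\eta\phi=f$, immediately yields the contribution $2V(\phi)f$.

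For the first term, the plan is a purely algebraic term-by-term contraction using the components collected in \eqref{eq:current:T_indices} and \eqref{eq:current:divergence_computation}. Concretely, $\deformation{V,\eta}_{vv}T^{vv}$ produces $-\tfrac12\pv f^u(\pu\phi)^2$, $\deformation{V,\eta}_{uu}T^{uu}$ produces $-\tfrac12\pu f^v(\pv\phi)^2$, the two off-diagonal $uv$ contributions combine to give the $-r^{-2}(\pu f^u+\pv f^v)\abs{\sl\phi}^2$ term, and the angular trace $g_{S^2}^{ab}\deformation{V,\eta}_{ab}\cdot T^{\mu\nu}(g_{S^2})_{\mu\nu}/r^4$ yields the $r^{-2}V(r^2)\pu\phi\pv\phi$ piece. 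Doubling assembles the first identity.

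For $\tilde J$, the plan is to observe that the components in \eqref{eq:current:T_indices} imply $\tT_{\mu\nu}[\phi]=r^{-2}T_{\mu\nu}[\psi]$ with $\psi=r\phi$, so that $\tilde J=r^{-2}V\cdot T[\psi]$. Since $\beta=r^{-1}$ gives $w=-\Box_\eta\beta/\beta=0$ and $P_\eta\psi=r\Box_\eta\phi=rf$, one can rerun the previous divergence identity with $\psi$ in place of $\phi$. The inhomogeneous contribution then becomes $2r^{-2}V(\psi)\,r^{-1}P_\eta\psi=2r^{-1}V(r\phi)f$ after noting $\tilde\partial\phi=r^{-1}\partial\psi$; the homogeneous contraction goes through verbatim and produces the stated expressions with $(\pu r\phi)^2,(\pv r\phi)^2,\abs{\sl r\phi}^2$ in place of their untwisted counterparts, carrying the extra $r^{-2}$ weight in each term.

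The only subtlety — and the only thing worth being careful about — is keeping track of the derivatives of $r^{-2}$ that appear when one writes $\nabla_\mu(r^{-2}V_\nu T^{\mu\nu}[\psi])$: these produce additional terms proportional to $V(r^{-2})T^{\mu\nu}[\psi]$, which must cancel against the analogous contributions in the $\psi$-version of $\deformation{V,\eta}_{ab}T^{ab}$ to yield precisely the coefficient $r^{-4}V(r^2)-r^{-2}(\pu f^u+\pv f^v)$ multiplying $\abs{\sl r\phi}^2$ in the final expression. No new ideas beyond bookkeeping are needed.
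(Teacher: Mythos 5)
Your treatment of the first identity is essentially the paper's argument: the paper also uses $\nabla^\mu\T_{\mu\nu}[\phi]=\nabla_\nu\phi\,\Box_\eta\phi$ together with the component tables, so no issues there.

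For the twisted identity your strategy is genuinely different from the paper's. The paper invokes the Holzegel--Warnick divergence formula $\nabla^\mu\tT_{\mu\nu}[\phi]=S_\nu[\phi]+f\tilde\partial_\nu\phi$ and computes the extra $S_\nu$ term; you instead use the clean observation $\tT_{\mu\nu}[\phi]=r^{-2}\T_{\mu\nu}[\psi]$ and try to reduce everything to the untwisted identity applied to $\psi=r\phi$. That observation is correct and the route \emph{can} be made to work, but as written the argument has a real gap.

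The gap is your claim that one can ``rerun the previous divergence identity with $\psi$ in place of $\phi$'' and that the inhomogeneity collapses to $2r^{-1}V(r\phi)f$. Rerunning the untwisted identity with $\psi$ produces the term $2r^{-2}V(\psi)\,\Box_\eta\psi$, and crucially $\Box_\eta\psi\neq rf$; rather $\Box_\eta\psi = rf + \tfrac{2}{r}\partial_r\psi$ (the identity $P_\eta\psi=r\Box_\eta\phi=rf$ involves the \emph{twisted} operator $P_\eta=r\Box_\eta r^{-1}$, not $\Box_\eta$ acting on $\psi$). The resulting extra piece $\tfrac{4}{r^3}V(\psi)\partial_r\psi$ does not vanish: it contributes to $(\pu\psi)^2$, $(\pv\psi)^2$ \emph{and} to the cross term $\pu\psi\pv\psi$, and it is precisely this contribution, together with the $\nabla(r^{-2})$ terms from differentiating the prefactor and the $r^{-2}$-rescaled $\deformation{V,\eta}_{\mu\nu}\T^{\mu\nu}[\psi]$ term, that makes the $\pu\psi\pv\psi$ cross term cancel to zero and restores the correct coefficients on $(\pu\psi)^2$, $(\pv\psi)^2$ (a naive rescaling of the $J$-identity would otherwise leave a spurious $r^{-4}V(r^2)\pu\psi\pv\psi$ term which is absent from the target). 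Your ``only subtlety'' paragraph tracks just one of these three ingredients --- the $V(r^{-2})T^{\mu\nu}[\psi]$ against $\deformation{V,\eta}_{ab}T^{ab}[\psi]$ --- so the bookkeeping would not close. (There is also an arithmetic slip: $2r^{-2}V(\psi)\cdot r^{-1}P_\eta\psi = 2r^{-2}V(\psi)f$, not $2r^{-1}V(\psi)f$.) If you keep your $\tT=r^{-2}\T[\psi]$ route you must carry $\Box_\eta\psi-rf=\tfrac{2}{r}\partial_r\psi$ explicitly; alternatively, following the paper and citing the twisted divergence identity for $\tT$ sidesteps the issue entirely.
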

\begin{proof}
	Let us start with the $J$ divergence.
	It is a standard computation that $\nabla^\mu\T_{\mu\nu}[\phi]=\nabla_\nu\phi \Box \phi$.
	Then $\divergence(J)$ follows from combining \cref{eq:current:T_indices,eq:current:divergence_computation}.
	
	For $\tT[\phi]$ as defined in \cref{eq:notation:energyMomentumTensor}, we use the divergence computation from \cite[Proposition~3]{holzegel_boundedness_2014}: 
	\begin{equation}\label{not:eq:twisted_conservation}
		\begin{gathered}
			\nabla^\mu\tilde{\T}_{\mu\nu}[\phi]= S_\nu[\phi]+f\tilde{\partial}_\nu\phi,\\
			S_\nu[\phi]=\frac{\beta^{-1}\partial_\nu \Box_\eta\beta}{2\beta}\phi^2-\frac{\beta^{-1}\partial_\nu\beta^2}{2\beta}\tilde{\partial}_\sigma\phi\cdot\tilde{\partial}^\sigma\phi,
		\end{gathered}
	\end{equation}
	where $\beta=r^{-1}$, and thus $\Box_\eta\beta=0$.
	We compute the extra term from $S_\mu[\phi]$ to give
	\begin{nalign}
		V^\mu\nabla^\nu\tT_{\mu\nu}[\phi]=-\frac12V(r^{-2})\partial r\phi\cdot\partial r\phi=\frac12r^{-4}V(r^2)\big(\abs{\sl\phi}^2-\pu r\phi\pv r\phi\big).
	\end{nalign}
	Combining this with \cref{eq:current:T_indices,eq:current:divergence_computation} yields the result.
\end{proof}

Therefore, the more geometric proof of \cref{prop:en:pastestimate} follows from the following explicit computations:
\begin{lemma}[Past currents]\label{lemma:current:past}
	Let $\vec{a},\vec{a}^f$ be admissible weights.
	Fix $\epsilon=2(a_--a_0)>0$, $w=\abs{u}/r$ and $\bar{c}>0$. There exist sufficiently small constants $\delta$, $c$ and $C$ (independent of $\bar{c}$) such that the currents 
	\begin{nalign}\label{eq:current:past_currents}
		J_1&=V_1\cdot(\tT[\phi]+\T[\phi]),\qquad V_1=v^{-\epsilon}\abs{u}^{2a_-+1}(\pv+\pu),\\
		J_2&=V_2\cdot(\tT[\phi]+\T[\phi]),\qquad V_2=w^{\bar{c}}v^{-\epsilon-1}\abs{u}^{2a_-}\big(v\pv+c\abs{u}\pu\big),\\
		J_3&=V_3\cdot(\tT[\phi]+\delta\T[\phi]),\qquad V_3=w^{\bar{c}}v^{-\epsilon}\abs{u}^{2a_-}\big(\abs{u}\pu+cv\pv\big),
	\end{nalign} 
	satisfy the following divergence estimates  in $\D^-$ provided that $\Box_\eta\phi=f$:
	\begin{nalign}\label{eq:current:past_coercivity}
		\divergence(J_1)&\geq C \frac{\abs{u}^{2a_--1}}{r^2v^{\epsilon+1}}\abs{\Ve r\phi}^2-C^{-1}\frac{|u|^{2a_--1 }}{r^2v^{1+\epsilon}}|rf|^2|u|^3v,&a_->-1/2,\\
		\divergence(J_2)&\geq Cw^{\bar{c}}\frac{\abs{u}^{2a_--1}}{r^2v^{\epsilon+2}}\big(1+\frac{\bar{c}v}{\abs{u}}\big)\abs{\Ve r\phi}^2-C^{-1}\frac{\abs{u}^{2a_--1}}{r^2v^{\epsilon+2}}\abs{rf}^2u^2v^2w^{\bar{c}},&a_->0,\\
		\divergence(J_3)&\geq Cw^{\bar{c}}\frac{\abs{u}^{2a_--1}}{r^2v^{\epsilon+1}}\big(1+\frac{\bar{c}v}{\abs{u}}\big)\abs{\Ve r\phi}^2+C\frac{1}{r^2}\pu\Big(w^{\bar{c}}|u|^{2a_-}v^{\epsilon}\phi^2\Big)-C^{-1}\frac{\abs{u}^{2a_--1}}{r^2v^{\epsilon+1}}\abs{rf}^2u^2v^2w^{\bar{c}},&a_->1/2.
	\end{nalign}
	In particular, for $a_->0$, $f\in\C^\infty_c(\Dopen^-)$ and for solutions  to $\Box_\eta\phi=f$ with trivial data, we have the uniform estimate $\norm{\Ve r\phi}_{\Hb^{\vec{a};0}(\D^-)}\lesssim \norm{rf}_{\Hb^{\vec{a}^f;0}(\D^-)}$.
\end{lemma}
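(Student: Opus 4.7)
The plan is to apply the general divergence identities \eqref{eq:current:divergence_general} to each of the three vector fields $V_1, V_2, V_3$ and to verify, term by term, that the resulting quadratic form in $(\pu\phi,\pv\phi,r^{-1}\sl\phi)$ (resp.\ in the corresponding derivatives of $r\phi$) is positive definite with the claimed weights, while the inhomogeneous contribution $2V(\phi)f$ (resp.\ $2r^{-1}V(r\phi)f$) is absorbed via Cauchy--Schwarz against the coercive part. Throughout, I would exploit that $\pv r = -\pu r = 1$, so $V(r^2) = 2r(f^v-f^u)$.

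For $J_1$ (Step 1), the choice $V_1 = v^{-\epsilon}|u|^{2a_-+1}(\pv+\pu)$ has $f^u=f^v$, hence $V_1(r^2)=0$ and the cross-term in \eqref{eq:current:divergence_general} vanishes. A direct computation gives $-\pv f^u = \epsilon v^{-\epsilon-1}|u|^{2a_-+1}$ and $-\pu f^v = (2a_-{+}1)v^{-\epsilon}|u|^{2a_-}$, both non-negative when $a_->-\frac12$. Summing with the twisted version $\divergence(V_1\cdot\tT)$ (which replaces $\pu\phi,\pv\phi$ by $r^{-1}\pu(r\phi),r^{-1}\pv(r\phi)$ and contributes an additional definite $|\sl\phi|^2$ term through $V_1(r^{-2})$) yields the desired bulk coercivity after choosing $\epsilon<2a_-+1$ and using $v\leq|u|$ in $\D^-$.

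For $J_2$ and $J_3$ (Steps 2--3), the additional weight $w^{\bar c} = (|u|/r)^{\bar c}$ produces, through $\pu w = r^{-1}+|u|r^{-2}$ and $\pv w = -|u|r^{-2}$, an extra positive contribution proportional to $\bar c v/|u|$, which accounts for the $(1+\bar c v/|u|)$ factor in \eqref{eq:current:past_coercivity}. The mixing of $v\pv$ and $c|u|\pu$ (with $c$ small) is essential: the $v\pv$-piece dominates all diagonal terms, while the $c|u|\pu$-piece generates the non-vanishing cross-term $r^{-2}V(r^2)\pu\phi\pv\phi$ from \eqref{eq:current:divergence_general} with coefficient $\sim c w^{\bar c}v^{-\epsilon-1}|u|^{2a_-}(v-c|u|)/r$, which is absorbed by Cauchy--Schwarz into the diagonal terms provided $c\ll 1$. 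The thresholds $a_->0$ for $J_2$ and $a_->1/2$ for $J_3$ enter precisely at the step where one demands that the coefficient of $(\pu\phi)^2$ (from $-\pv f^u$) and the zeroth-order Hardy term remain positive after the $\epsilon$-corrections. For $J_3$, the Hardy term $r^{-2}\pu(w^{\bar c}|u|^{2a_-}v^{\epsilon}\phi^2)$ arises from the $\delta\T$-contribution: expanding $2V_3(\phi)\phi = V_3(\phi^2)$ and using $2\phi\cdot(r^{-2}\Dl\phi) = r^{-2}\Dl(\phi^2)-2r^{-2}|\sl\phi|^2$ together with \eqref{not:eq:twisted_conservation} reorganises the $\T$-part of the divergence into a manifest boundary term $\pu(\ldots\phi^2)$ plus bulk contributions of the correct sign, as in a standard Hardy--Morawetz computation.

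The main obstacle I expect is the bookkeeping in Step 3: balancing the interplay between $\tT$ (which handles $\Ve\psi = \Ve(r\phi)$ without zeroth-order bulk coercivity but with a favourable $r$-scaling in the forcing term $r^{-1}V(r\phi)f$) and $\delta\T$ (which produces the Hardy zeroth-order term but at the cost of the worse forcing $V(\phi)f$, necessitating strict decay $a_->1/2$). The weight $\delta$ must be chosen small enough for the cross-contributions not to spoil the $\tT$-coercivity, yet nonzero to produce the Hardy term; this is a standard but delicate computation, parallel to the passage from \cref{eq:en:pastlemma:proof1} to \cref{eq:en:pastlemma:proof2} in the integration-by-parts proof of \cref{prop:en:pastestimate}. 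Once the divergence inequalities \eqref{eq:current:past_coercivity} are established, the final uniform estimate follows by integrating over $\D^-$ with the divergence theorem: the boundary terms on $\outcone{}$ and $\Cbar_v$ are manifestly positive because each $V_i$ is causal and future-directed (up to the small $c$-perturbation), and the contribution on $\incone$ vanishes for the trivial-data solutions considered here.
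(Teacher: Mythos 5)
Your overall strategy matches the paper's: apply the general divergence identities \eqref{eq:current:divergence_general}, verify positivity of the resulting quadratic form, and absorb $V(\phi)f$ by Cauchy--Schwarz. However, there are two substantive issues.

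First, the sign of $\pu w$ is wrong. With $w = |u|/r = -u/(v-u)$, a direct computation gives
\[
\pu w = \frac{-(v-u)+(-u)}{(v-u)^2} = -\frac{v}{r^2} \;<\;0,
\]
not $r^{-1}+|u|r^{-2}>0$ as you wrote (you dropped the $-$ in $\pu|u|=-1$). Both $\pu w$ and $\pv w = -|u|/r^2$ are negative, and this is precisely what makes the $w^{\bar c}$-weight produce \emph{positive} extra bulk terms: the bulk contributions $-\pv f^u$ and $-\pu f^v$ both see $\partial(w^{\bar c})<0$. With the sign you wrote, the contribution from $-\pu f^v$ would be negative and spoil coercivity, so the claimed ``extra positive contribution proportional to $\bar c v/|u|$'' would not follow from your computation as stated.

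Second, your mechanism for the Hardy term in the $J_3$ estimate does not fit the compatible-current framework you are working in. The divergence formula \eqref{eq:current:divergence_general} contains no $\Dl\phi$ term, so the identity $2\phi\cdot r^{-2}\Dl\phi = r^{-2}\Dl(\phi^2)-2r^{-2}|\sl\phi|^2$ has nowhere to be inserted, and $2V_3(\phi)\phi=V_3(\phi^2)$ does not occur since the forcing term is $V_3(\phi)\cdot f$, not $V_3(\phi)\cdot\phi$. The Hardy term in fact comes from the cross term $r^{-2}V_3(r^2)\pu\phi\pv\phi$ generated by the $\delta\T$-piece: one substitutes $\pv\phi = r^{-1}(\pv\psi-\phi)$ (using $\psi=r\phi$, $\pv r=1$) into this cross term, and the $-\phi$ piece produces $\pu\phi\cdot\phi = \tfrac12\pu(\phi^2)$, which after an integration by parts of the $|u|$-weight yields the total-derivative Hardy term $r^{-2}\pu(\ldots\phi^2)$ together with a positive bulk $\phi^2$-term. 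Without this key substitution the Hardy term does not appear, and the remaining $\pu\phi\,\pv\psi$ cross contribution must still be absorbed by choosing $\delta$ small relative to $\epsilon$ and $c a_-$, which is the delicate balance you correctly anticipate but do not carry out.

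A minor further confusion in Step~1: you attribute the definite $|\sl\phi|^2$-term in $\divergence(V_1\cdot\tT)$ to $V_1(r^{-2})$. Since $f^u=f^v$ for $V_1$, we have $V_1(r)=0$ and hence $V_1(r^{\pm 2})=0$; the positive $|\sl\phi|^2$-contribution in both the $\T$- and $\tT$-pieces actually comes from $-r^{-2}(\pu f^u+\pv f^v)>0$. This does not affect the conclusion (the term you mis-attributed is zero anyway), but the bookkeeping should be kept straight before attempting $J_2$ and $J_3$, where $V(r^2)\neq 0$.
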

\begin{obs}\label{rem:current:weight_change_divergence}
	Notice that for both $J_2$ and $J_3$, the weight controlling $\Ve\phi$ (say, $w^{\bar{c}}\frac{\abs{u}^{2a_--1}}{v^{\epsilon+2}}$, for $J_2$) loses a factor of $u\cdot v$ compared to the weight in front of the   bracket in the definition of $V_2,V_3$ (say, $w^{\bar{c}}v^{-\epsilon-1}\abs{u}^{2a_-}$).
	As already seen in \cref{prop:en:main}, the weights are much less clear for $J_1$.
\end{obs}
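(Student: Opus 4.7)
The plan is to verify the observation by direct arithmetic comparison between the scalar prefactors standing in front of the vector-field brackets in \eqref{eq:current:past_currents} and the bulk coercivity weights appearing in \eqref{eq:current:past_coercivity}; no analytic work is needed beyond reading off exponents. For $J_2$, I would write $V_2 = W_2\cdot(v\pv + c|u|\pu)$ with scalar prefactor $W_2 := w^{\bar c}v^{-\epsilon-1}|u|^{2a_-}$ and observe that the coercivity weight standing in front of $|\Ve r\phi|^2$ equals $w^{\bar c}|u|^{2a_--1}v^{-\epsilon-2} = W_2/(|u|v)$; analogously, writing $V_3 = W_3\cdot(|u|\pu + cv\pv)$ with $W_3 := w^{\bar c}v^{-\epsilon}|u|^{2a_-}$, the coercivity weight is $w^{\bar c}|u|^{2a_--1}v^{-\epsilon-1} = W_3/(|u|v)$. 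In both cases the loss is exactly $(|u|v)^{-1}$.

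To explain where this uniform loss comes from mechanistically, I would point to \eqref{eq:current:divergence_general}: the coercive contributions to $\divergence(V\cdot \T)$ arise from $\pv f^u\cdot(\pu\phi)^2$, $\pu f^v\cdot(\pv\phi)^2$ and $(\pu f^u+\pv f^v)\cdot r^{-2}|\sl\phi|^2$, where $f^u, f^v$ denote the coefficients of $\pu,\pv$ in $V$. For $V_2$ and $V_3$ these coefficients factor as $f^u \sim |u| W$ and $f^v \sim v W$, so that $\pv f^u = |u|\,\pv W$ and $\pu f^v = v\,\pu W$ each pick up exactly one additional factor of $v^{-1}$ or $|u|^{-1}$ from differentiating the explicit monomial weight $W$. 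The net effect is a coercivity bulk weight of order $W/(|u|v)$, confirming the observation.

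The contrast with $J_1$, alluded to in the last sentence of the observation, is a useful sanity check and I would include it: since $V_1 = v^{-\epsilon}|u|^{2a_-+1}(\pu+\pv)$ has identical $\pu$- and $\pv$-components, both $\pu f^v$ and $\pv f^u$ act on the same scalar prefactor without any balancing $|u|$ or $v$ factor, and the resulting coercivity weights on $(\pu\phi)^2$ and $(\pv\phi)^2$ are asymmetric and not simultaneously improvable by a single division by $|u|v$. This mismatch is precisely why one must resort to the boundary-integration manoeuvre of \cref{eq:en:pastlemma:proof2} in the proof of \cref{prop:en:pastestimate}, and is the content of the remark that ``the weights are much less clear for $J_1$''. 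There is no genuine obstacle in the proof of the observation itself: it is a dimensional accounting exercise on the explicit weights in \eqref{eq:current:past_currents} and \eqref{eq:current:past_coercivity}.
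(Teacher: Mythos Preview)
Your verification is correct. The paper states this as an observation without proof, and your direct arithmetic comparison of the prefactors in \eqref{eq:current:past_currents} against the coercivity weights in \eqref{eq:current:past_coercivity}, together with the mechanistic explanation via \eqref{eq:current:divergence_general}, is exactly what is needed to substantiate it.
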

\begin{obs}\label{rem:current:weights_towards_incone}
	Introducing a weight $q_k:=1+(v-v_0)^{-k}$ (with $k>0$) in the multiplier in \cref{eq:current:past_currents} yields extra bulk terms with good sign (since $\pu q_k=0$ and $\pv q_k<0$), we can therefore always introduce such weights for free. This will be relevant for estimates that capture the vanishing of quantities near $v_0$ to order $k$.
\end{obs}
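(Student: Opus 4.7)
The plan is to verify the observation by computing the divergence of the modified currents $q_k J_i$ directly and isolating the effect of the new factor. Writing the general current as $J^\mu = V^\nu T_\nu{}^\mu$ for $T \in \{\T, \tT\}$, the product rule immediately yields
\[
\divergence(q_k V \cdot T) = q_k \divergence(V \cdot T) + T(V, \nabla q_k).
\]
The first term is $q_k > 0$ times the coercive divergence already established in \cref{lemma:current:past}, so it inherits the good sign from \cref{eq:current:past_coercivity}. Hence the entire question reduces to showing that the correction $T(V, \nabla q_k)$ is non-negative pointwise in $\D^-$.

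First I would compute $\nabla q_k$ in the null frame. Since $q_k$ depends only on $v$, and the inverse Minkowski metric satisfies $\eta^{uv} = -1/2$ with $\eta^{uu} = \eta^{vv} = 0$, one has $\nabla q_k = -\tfrac{1}{2}(\pv q_k)\,\pu$. With $\pv q_k = -k(v-v_0)^{-k-1} < 0$ in $\D^-$, the coefficient $-\tfrac{1}{2}\pv q_k$ is strictly positive. Next, for $V = f^u\pu + f^v\pv$ I would expand
\[
T(V,\nabla q_k) = -\tfrac{1}{2}(\pv q_k)\bigl(f^u T_{uu} + f^v T_{uv}\bigr).
\]
For all three currents in \cref{eq:current:past_currents} the coefficients $f^u, f^v$ are manifestly non-negative in $\D^-$ (using $|u|, v, w > 0$ and $c > 0$), and from \cref{eq:current:T_indices} both $T_{uu}$ and $T_{uv}$ (for $T \in \{\T,\tT\}$) are squares, hence non-negative. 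The inequalities $(\pu\phi)^2, r^{-2}|\sl\phi|^2 \geq 0$ and the twisted analogues $r^{-2}(\pu r\phi)^2, r^{-2}|\sl\phi|^2 \geq 0$ give pointwise non-negativity of the correction term.

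For the twisted current $\tT$ I would additionally check that the extra divergence contribution $S_\nu[\phi]$ from \cref{not:eq:twisted_conservation}, which already appeared in computing $\divergence(V \cdot \tT)$, is simply rescaled by $q_k > 0$ and so creates no new sign issue; the whole $S$-contribution sits inside the $q_k \divergence(V\cdot \tT)$ piece whose sign is controlled by \cref{lemma:current:past}. Finally, I would note that if one prefers to keep track of the explicit magnitude of the correction, one can quantify $-\pv q_k/q_k \sim k(v-v_0)^{-1}$ where $q_k$ is large, which is precisely what allows one to absorb flux degenerations at $\Cbar_{v_0}$ for solutions vanishing to order $k$ at $v = v_0$.

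The main obstacle is essentially bookkeeping rather than substance: since $q_k$ is a scalar depending only on $v$, $\nabla q_k$ has only a $\pu$-component, and the positivity structure of the energy-momentum tensors together with positivity of $f^u, f^v$ forces the sign of the correction. Thus the observation follows with no new coercivity work beyond \cref{lemma:current:past}.
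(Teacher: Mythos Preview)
Your proposal is correct and expands exactly the reasoning the paper compresses into its parenthetical remark: the paper just records $\pu q_k=0$ and $\pv q_k<0$, which, via the product rule $\divergence(q_k J)=q_k\divergence(J)+J\cdot\nabla q_k$ and the non-negativity of $T_{uu},T_{uv}$ (and their twisted analogues) together with $f^u,f^v\geq 0$, forces the extra bulk term to be non-negative. Your computation is the detailed unpacking of precisely this observation, so nothing differs in substance.
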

\begin{proof}
	Recall that, in $\D^-$, $|u|\sim r$.
	Let us start with the computation for $J_1$ (for which we require $a_-\geq -1/2$):
	Using \cref{eq:current:divergence_general}, we compute
	\begin{equation}
		\begin{multlined}
			2\divergence(J_1)=\frac{\abs{u}^{2a_--1}}{r^2v^{\epsilon+1}}\Big(\epsilon(u\pu r\phi)^2+\frac{\abs{u}}{v}(2a_-+1)(v\pv r\phi)^2\Big)+2\frac{\abs{u}^{2a_-+1}}{r^4v^{\epsilon+1}}\abs{\sl r\phi}^2\Big(\epsilon+(2a_-+1)\frac{v}{\abs{u}}\Big)\\
			+\frac{\abs{u}^{2a_--1}}{v^{\epsilon+1}}\Big(\epsilon(u\pu \phi)^2+(2a_-+1)\frac{\abs{u}}{v}(v\pv \phi)^2\Big)+2V\phi\cdot f+\frac{2}{r}V\psi \cdot f.
		\end{multlined}
	\end{equation}
	The result follows by controlling the $f$-term with Young's inequality. In fact, we actually have the stronger control of \cref{rem:en:homogeneous_improvement}.
	
	We next show the result for $J_2$, for which we require $a_->0$:
	First, for $f=0$ and $\bar{c}=0$, we use \cref{eq:current:divergence_general} to compute
	\begin{nalign}
		\begin{multlined}
			2	\divergence(J_2)=\frac{\abs{u}^{2a_--1}}{r^2v^{\epsilon+2}}\Big((1+\epsilon)c(u\pu r\phi)^2+2a_-(v\pv r\phi)^2\Big)+\frac{\abs{u}^{2a_-}}{r^2v^{\epsilon+1}}\abs{\sl \phi}^2\Big(\frac{2}{r}(v-c|u|)+2(c(2a_-+1)+\epsilon)\Big)\\
			+\frac{\abs{u}^{2a_--1}}{v^{\epsilon+2}}\Big((1+\epsilon)c(u\pu \phi)^2+2a_-(v\pv \phi)^2\Big) %+\frac{v^{-\epsilon}\abs{u}^{2a_-}}{r^2}\abs{\sl \phi}^2\Big(\frac{c+\epsilon}{v}\Big)\\
			+\frac{2|u|^{2a_-}}{rv^{\epsilon}}\Big(1-\frac{\abs{u}c}{v}\Big)\pu\phi\pv\phi
		\end{multlined}\\
		\begin{multlined}
			=\frac{\abs{u}^{2a_--1}}{r^2v^{\epsilon+2}}\Big(\big((1+\epsilon)c(u\pu r\phi)^2+2a_-(v\pv r\phi)^2\big)+r^2\big((1+\epsilon)c(u\pu \phi)^2+2a_-(v\pv \phi)^2\big)\Big)\\
			+\frac{\abs{u}^{2a_-}}{v^{\epsilon+2} r^3}\frac{v}{r}\abs{\sl r \phi}^2\Big(\frac{2v}{r}-\frac{2\abs{u}c}{r}+2(c(2a_-+1)+\epsilon)\Big)
			+\frac{\abs{u}^{2a_--1}}{v^{\epsilon+2}}\Big(\frac{2v}{r}-\frac{2\abs{u}c}{r}\Big)|u|\pu\phi v\pv\phi
		\end{multlined}
	\end{nalign}
	By taking $c<2a_->0$, we already have the desired control. If we now consider the case where $f\neq0$, then we  get the additional terms:
	\begin{equation}
		V(\phi) f+r^{-1}V(r\phi)f\lesssim\abs{rf}\frac{\abs{u}^{2a_-}}{r^2v^{\epsilon+1}}\abs{(v\pv,u\pu,1)r\phi}\lesssim \frac{1}{\epsilon'}\frac{\abs{u}^{2a_--1}}{r^2v^{\epsilon+2}}\abs{rf}^2u^2v^2+\epsilon'\frac{\abs{u}^{2a_--1}}{r^2v^{\epsilon+2}}\abs{(v\pv,u\pu,1)r\phi}^2.
	\end{equation}
	
	Lastly, for $\bar{c}\neq 0$, we first compute $\pu w=-\frac{v}{r^2}<0$ and $\pv w=-\frac{\abs{u}}{r^2}<0$.
	Therefore, in the computations above, we simply need to perform the following replacements to conclude the claimed result:
	\begin{nalign}
		(1+\epsilon)c\mapsto (1+\epsilon)c+\bar{c}\frac{v}{r},\quad 2a_-
		\mapsto 2a_-+\bar{c}\frac{v}{r},\quad c(2a_-+1)+\epsilon\mapsto c(2a_-+1)+\epsilon+2\bar{c}\frac{v}{r}.
	\end{nalign}
	%Therefore, we get extra terms replacing $\pu f^u $ as
	%	\begin{nalign}
		%		\pu \Big(\frac{\abs{u}^{2a_-}}{v^{1+\epsilon}}\Big)\mapsto\pu \Big(\frac{\abs{u}^{2a_-}}{v^{1+\epsilon}}w^{\bar{c}}\Big)=-\frac{\abs{u}^{2a_--1}}{v^{1+\epsilon}}w^{\bar{c}}\Big(2a_-+\bar{c}\frac{v}{r}\Big).
		%	\end{nalign}
	
	Finally, we provide the proof for $J_3$, for which we require $a_->1/2$:
	We again set $f=0=\bar{c}$ and first compute
	\begin{multline}\label{eq:currents:proofJtilde3}
		2\divergence(V_3\cdot\tT[\phi])=\frac{|u|^{2a_--1}}{r^2v^{\epsilon+1}}\Big( \epsilon(u\pu\psi)^2 +2ca_- (v\pv\psi)^2  \Big)+\frac{|\sl\psi|^2}{r^4}\frac{|u|^{2a_-}}{v^{\epsilon}}\Big((2a_-+1)-c(1-\epsilon)+\frac{2}{r}(cv-|u|) \Big).
	\end{multline}
	For $c\ll 2a_--1>0$, this already gives the desired control of all first order terms. 
	We next compute 
	\begin{multline}\label{eq:currents:proofJ3}
		2\delta\divergence (V_3\cdot\T[\phi])=\delta\frac{|u|^{2a_--1}}{v^{\epsilon+1}}\Big( \epsilon(u\pu\phi)^2 +2ca_- (v\pv\phi)^2  \Big)+\delta\frac{|\sl\psi|^2}{r^4}\frac{|u|^{2a_-}}{v^{\epsilon}}\Big((2a_-+1)-(1-\epsilon) \Big)\\
		+\delta\frac{2}{r}\pu\phi\pv\phi \Big(cv^{1-\epsilon}|u|^{2a_-}-v^{-\epsilon}|u|^{2a_-+1}\Big).
	\end{multline}
	For $c$ sufficiently small, it suffices to control
	\begin{equation}
		-\frac{2\delta |u|^{2a_-+1} }{rv^{\epsilon}}\pu\phi\pv\phi=-\frac{2\delta |u|^{2a_-+1} }{r^2v^{\epsilon}}\pu\phi(\pv\psi-\phi)
		=r^{-2}\pu\Big(\delta |u|^{2a_-+1}v^{-\epsilon} \phi^2\Big)+\delta(2a_-+1)\frac{|u|^{2a_-}}{r^2v^{\epsilon}}\phi^2
		-2\delta \frac{|u|^{2a_-+1}}{r^2v^{\epsilon}}\pu\phi\pv\psi
	\end{equation}
	The first two terms have a good sign. For the third, we estimate
	\begin{equation}
		-2\delta \frac{|u|^{2a_-+1}}{r^2v^{\epsilon}}\pu\phi\pv\psi\leq   \delta^{3/2}\frac{|u|^{2a_--1}}{v^{\epsilon+1}}(u\pu\phi)^2+\delta^{1/2}\frac{|u|^{2a_--1}}{r^2v^{\epsilon+1}}(v\pv\psi)^2.
	\end{equation}
	For $\delta\ll \min(\epsilon, 2ca_-)$, we can absorb the two terms into the LHS of \cref{eq:currents:proofJtilde3}, \cref{eq:currents:proofJ3}, respectively.
	
	Including $\bar{c}$ and $f$ works exactly as for $J_2$.
\end{proof}

We similarly provide a current for the future region:
\begin{lemma}[Future current]\label{lemma:current:future}
	Let $\vec{a},\vec{a}^f$ be admissible weights.
	Fix $\epsilon=2(a_0-a_+)>0$ and $w=r/v$. Then, there exist sufficiently large constants $c$, $\bar{c}$ such that the current
	\begin{equation}\label{eq:current:future_current}
		J_+=V_+\cdot(\T[\phi]+\tT[\phi]),\qquad V_+=w^{\bar{c}}v^{2a_+}\abs{u}^{\epsilon}(v\pv+c\abs{u}\pu)
	\end{equation}
	has the following coercitivity properties for some $C(\vec{a})$ not depending on $\bar{c}$, provided that $\Box_\eta \phi=f$:
	\begin{equation}\label{eq:current:future_coercivity}
		\divergence(J_+)\geq C \abs{u}^{\epsilon-1}v^{2a_+-1}w^{\bar{c}}\big(1+\bar{c}\frac{\abs{u}}{v}\big)\abs{\Ve\phi}^2-C^{-1}\abs{u}^{\epsilon-1}v^{2a_+-1}w^{\bar{c}}\abs{f}^2u^2v^2.
	\end{equation}
	In particular, we have $\norm{r\phi}_{\Hb^{\vec{a}}(\D^+)}\lesssim \norm{rf}_{\Hb^{\vec{a}^f}(\D^+)}$ for solutions with trivial data at $t=0$.
\end{lemma}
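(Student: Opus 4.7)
The proof runs in direct analogy with Lemma~\ref{lemma:current:past}, now in the future region $\D^+$ (where $r = v + |u|$ and $|u|\lesssim v$), by applying the divergence identities \cref{eq:current:divergence_general} to $V_+ \cdot (\T[\phi] + \tT[\phi])$ with $f^u = c w^{\bar c}v^{2a_+}|u|^{\epsilon+1}$ and $f^v = w^{\bar c}v^{2a_+ +1}|u|^{\epsilon}$. Using $\pu r = -1$, $\pv r = 1$, $\pu|u|=-1$, $\pv v = 1$, and therefore $\pu w = -1/v$ and $\pv w = -|u|/v^2$, the derivatives of $f^u, f^v$ all carry definite signs:
\begin{align*}
-\pu f^v &= w^{\bar c}v^{2a_+}|u|^{\epsilon}\bigl(\bar c\,\tfrac{v}{r} + \epsilon\,\tfrac{v}{|u|}\bigr),\\
-\pv f^u &= c\, w^{\bar c}v^{2a_+}|u|^{\epsilon+1}\bigl(\bar c\,\tfrac{1}{r} + 2|a_+|\,\tfrac{1}{v}\bigr),
\end{align*}
where we used $a_+<0$ for the second line. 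Both are strictly positive and give the coercive coefficients of $(\pv\phi)^2$ and $(\pu\phi)^2$ (and the $r\phi$-analogues) in \cref{eq:current:divergence_general}. Using the comparability $|u|/r \sim |u|/v$ in $\D^+$, these coefficients control $(v\pv\phi)^2$ and $(u\pu\phi)^2$ with the bound $C\,|u|^{\epsilon-1}v^{2a_+ -1}w^{\bar c}(1+\bar c|u|/v)$ claimed in \cref{eq:current:future_coercivity}, with $C$ independent of $\bar c$.

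For the angular term, a direct computation gives
\[
\pu f^u + \pv f^v = w^{\bar c}v^{2a_+}|u|^{\epsilon}\!\left(-(c+1)\bar c\,\tfrac{|u|}{r} - c(\epsilon+1) + (2a_+ + 1)\right),\qquad V_+(r^2) = 2r\, w^{\bar c}v^{2a_+}|u|^{\epsilon}(v - c|u|).
\]
In $\divergence(V_+\cdot\tT)$, the coefficient of $|\sl r\phi|^2$ is $r^{-4}V_+(r^2) - r^{-2}(\pu f^u + \pv f^v) = r^{-2}w^{\bar c}v^{2a_+}|u|^{\epsilon}\bigl((c+1)\bar c\tfrac{|u|}{r} + \mathcal{O}_{c,\epsilon,a_+}(1)\bigr)$, which is positive once $\bar c$ is chosen large relative to $c,\epsilon,a_+$, and provides the coercive angular contribution (after multiplying by $\rho_\scri\sim v|u|/r^2$ to convert $|\sl r\phi|^2$ into $\rho_\scri^{1/2}|\sl\phi|^2$ control). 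In $\divergence(V_+\cdot\T)$, the same positive $(c+1)\bar c|u|/r$ contribution dominates both the term $-r^{-2}(\pu f^u + \pv f^v)|\sl\phi|^2$ and the cross term $r^{-2}V_+(r^2)\pu\phi\pv\phi$, the latter being absorbed via Cauchy--Schwarz into the already-coercive $(\pu\phi)^2, (\pv\phi)^2$ terms at the cost of a multiple independent of $\bar c$. Finally, the inhomogeneous contributions $V_+(\phi)f + r^{-1}V_+(r\phi)f$ are split off by Young's inequality, producing the forcing term on the right of \cref{eq:current:future_coercivity}; the zeroth-order control of $\phi^2$ is recovered by adding a small multiple $\delta\T$, mirroring the $J_3$-analysis in Lemma~\ref{lemma:current:past}.

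For the $L^2$ estimate, a standard cutoff/limiting argument allows us to assume $f$ has compact support in $\D^+$. Integrating $\divergence(J_+) \geq \cdots$ over $\D^+$ via the divergence theorem, the boundary contribution on $\{t=0\}\cap\D^+$ vanishes by the triviality of the data, the flux through $\outconeFar$ is nonnegative since $V_+$ is future-causal (indeed $c$ large makes $c|u|\pu + v\pv$ a multiple of a future-timelike vector in $\D^+$), and the flux through $\scri^+$ vanishes in the limit since the chosen weights $w^{\bar c}v^{2a_+}|u|^{\epsilon}$ decay appropriately. Identifying $|u|^{\epsilon-1}v^{2a_+-1}$ with $\rho_0^{2a_0}\rho_+^{2a_+}$ (modulo measure factors) and specialising to $\bar c = 0$ yields the stated $\Hb^{\vec a}$--$\Hb^{\vec a^f}$ bound; the free parameter $\bar c$ is retained for later use in \cref{sec:current:non_short_range,sec:current:linearmetric}, where it absorbs additional structure.

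\textbf{Main obstacle.} The delicate point is the angular coefficient in the $\T$-part of the divergence, where $V_+(r^2) = 2r(v - c|u|)$ changes sign across $c|u| = v$; thus the "naive" contribution $r^{-4}V_+(r^2)$ can work against us precisely in the subregion of $\D^+$ where $|u|/v$ is of order one. The resolution is a two-parameter choice: first fix $c$ large in terms of $\vec a, \epsilon$ so that the timelike character of $V_+$ dominates the cross terms, then take $\bar c$ much larger still so that the positive $(c+1)\bar c|u|/r$ arising from $-r^{-2}(\pu f^u + \pv f^v)$ absorbs all residual negative terms (including $-2c|u|/r$ from $V_+(r^2)/r^2$). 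The quantitative bookkeeping that $C$ can be chosen independent of $\bar c$ follows because the dominant positive term and the bad term both scale like $\bar c|u|/r$ or better, so once the former exceeds the latter by a fixed factor, further increases of $\bar c$ only improve the estimate.
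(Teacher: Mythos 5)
Your proof follows the same route as the paper's: compute $\divergence(V_+\cdot\T)$ and $\divergence(V_+\cdot\tT)$ from \cref{eq:current:divergence_general}, verify the signs of $-\pu f^v$, $-\pv f^u$ and the angular coefficient, absorb the cross term $r^{-2}V_+(r^2)\pu\phi\pv\phi$ by Young's inequality at a cost independent of $\bar c$, and then integrate. Two small points are worth correcting.

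First, your formula for $-\pv f^u$ has a slip in the $\bar c$-term. Since $\pv(w^{\bar c})=-\bar c\,w^{\bar c}\,|u|/(vr)$, one gets
\[
-\pv f^u = c\,w^{\bar c}v^{2a_+}|u|^{\epsilon+1}\Bigl(\bar c\,\tfrac{|u|}{vr}+2|a_+|\tfrac1v\Bigr),
\]
i.e.\ the $\bar c$-contribution carries the extra factor $|u|/v$ (it is $\bar c\,|u|/(vr)$, not $\bar c/r$). This is precisely why the coercivity gain is $1+\bar c|u|/v$ and not $1+\bar c$: near $\scrip$ the $\bar c$-weight gives nothing, and one relies on $\epsilon>0$ and $-2a_+>0$ for the leading positivity there; $\bar c$ is only responsible for absorbing the sign-indefinite contributions where $|u|/v$ is of order one. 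Your stated bound is nevertheless the correct one — but your displayed intermediate expression would overstate the gain near $\scrip$, so the claim ``positive once $\bar c$ is chosen large'' needs the caveat that near $\scrip$ the $\mathcal O_{c,\epsilon,a_+}(1)$ constant part (which is $1+c(1+\epsilon)-2a_+>0$) does the work, not $\bar c$.

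Second, ``specialising to $\bar c=0$'' in the last paragraph is not quite right: $\bar c$ is \emph{needed} large for the coercivity inequality, and cannot be set to zero after the fact. What one actually uses is that $w=r/v\in[1,1+2/\delta]$ on $\D^+$, so $w^{\bar c}$ is bounded above and below and hence transparent to the $\Hb$-norm; the $\Hb^{\vec a}$--$\Hb^{\vec a^f}$ estimate then follows with whatever fixed $\bar c$ was chosen, no specialisation required. You also replace the paper's current $V_+\cdot(\T+\tT)$ by $V_+\cdot(\tT+\delta\T)$ when invoking zeroth-order control; the paper keeps the un-weighted sum and the coercivity for the $\phi^2$ term is read off from the same cross-term manipulation, but this is a cosmetic deviation. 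With these repairs the argument is the one given in the paper.
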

\begin{proof}
	We again set $f=0$ (but keep $\bar{c}>0$) and first compute, using that $\pv w^{\bar{c}}=-\frac{|u|}{vr}\bar{c}w^{\bar{c}}$ and $\pu w^{\bar{c}}=-\frac{1}{r}\bar{c}w^{\bar{c}}$:
	\begin{multline}
		2\divergence(V_+\tT[\phi])=\frac{v^{2a_+-1}|u|^{\epsilon-1}w^{\bar{c}}}{r^2}\Big((v\pv\psi)^2\big(\epsilon+\bar{c}\tfrac{|u|}{r}\big)+(u\pu\psi)^2\cdot c\big(-2a_++\bar{c}\tfrac{|u|}{r}\big)\Big)\\
		+\frac{v^{2a_+}|u|^{\epsilon}w^{\bar{c}}}{r^4}|\sl\psi|^2\Big(\big(2\tfrac{v}{r}-2c\tfrac{|u|}{r}\big)+\big(c(1+\epsilon)+c\bar{c}\tfrac{|u|}{r}+\bar{c}\tfrac{|u|}{r}-2a_+-1\big)\Big).
	\end{multline}
	Now, since $a_+<0$, this yields the  claimed control over all first order terms provided that, say, $\bar{c}>2$ and $c\geq1$.
	We move on to the non-twisted current: We have
	\begin{multline}\label{eq:current:futureproof1} 2\divergence(V_+\T[\phi])=v^{2a_+-1}|u|^{\epsilon-1}w^{\bar{c}}\Big((v\pv\phi)^2\big(\epsilon+\bar{c}\tfrac{|u|}{r}\big)+ c(u\pu\phi)^2\big(-2a_++\bar{c}\tfrac{|u|}{r}\big)\Big)
		+\frac{2v^{2a_+}|u|^{\epsilon-1}w^{\bar{c}}}{r}\Big(1-c\frac{|u|}{v}\Big)|u|\pu\phi v\pv\phi.
	\end{multline}
	We now apply Young's inequality for some $\delta>0$ to deal with the last term:
	\begin{multline}
		\frac{2v^{2a_+}|u|^{\epsilon-1}w^{\bar{c}}}{r}\Big(1-c\frac{|u|}{v}\Big)|u|\pu\phi v\pv\phi\\
		\leq w^{\bar{c}}v^{2a_+-1}|u|^{\epsilon-1}\Big( \delta (v\pv\phi)^2+\delta^{-1}(u\pu\phi)^2     \Big)+w^{\bar{c}}\frac{|u|}{v} v^{2a_+-1}|u|^{\epsilon-1}\Big( c (v\pv\phi)^2+c(u\pu\phi)^2     \Big).
	\end{multline}
	We can absorb the first term on the RHS into the RHS of \cref{eq:current:futureproof1} provided that $\delta<\epsilon$, and provided that $(-2a_+\delta)^{-1}<c$. We can similarly absorb the second term as long as $\bar{c}\gg c$.
	
	Including $f$ is as before.
\end{proof}

We quickly showcase how to use the above current computations to prove the energy estimates of \cref{sec:en:en}:
\begin{proof}[Proof of \cref{prop:en:pastestimate}, \cref{item:en:pastestimate_hom} and \cref{item:en:pastestimate_inhom}]
	In order to prove \cref{item:en:pastestimate_hom}, we apply the divergence theorem to $\divergence(J_1)$ from \cref{lemma:current:past}. Noting that we have 
	\begin{equation}
		\int_{\D^-} \divergence(J_1)\dd \mu \gtrsim  \int_{\D^-} \frac{|u|^{2a_-}}{v^{\epsilon}}|\Ve\psi|^2 \frac{\dd \tilde{\mu}}{|u| v}= \norm{\Ve\psi}_{\Hb^{a_-,a_0;0}(\D^-)},
	\end{equation}
	as well as 
	\begin{equation}
		\int_{\incone} J_1\cdot(\pv) \dd \mu=\int_{\incone} v^{-\epsilon}|u|^{2a_-+1}\big((r^{-1}u\pu\psi)^2+(u\pu\phi)^2+r^{-2}|\sl\phi|^2\big)\dd \mu \lesssim \norm{\psi}_{\Hb^{a_-;1}(\incone)},
	\end{equation}
	the result follows for $k=0$. Higher $k$ follow from commuting.
	
	Similarly, in order to prove \cref{item:en:pastestimate_inhom}, we either apply the divergence theorem to $\divergence(J_2)$ in the case $a_->0$, yielding
	\begin{equation}
		\norm{\Ve\psi}_{\Hb^{a_-,a_--\frac{\epsilon}{2}-\frac12;0}(\D^-)}\lesssim \norm{rf}_{\Hb^{a_-+1,a_-+\frac32-\frac{\epsilon}{2};0}(\D^-)},
	\end{equation}
	or, in the case $a_->1/2$, to $\divergence(J_3)$, yielding
	\begin{equation}
		\norm{\Ve\psi}_{\Hb^{a_-,a_--\frac{\epsilon}{2};0}(\D^-)}\lesssim \norm{rf}_{\Hb^{a_-+1,a_-+2-\frac{\epsilon}{2};0}(\D^-)}.
	\end{equation}
	We may, of course, also include boundary terms on the LHS. For higher $k$, we commute and split the problem into a homogeneous problem with nontrivial data, and an inhomogeneous problem with trivial data.
\end{proof}
\begin{proof}[Proof of \cref{prop:en:futureestimate}]
	Similarly to the above, we apply the divergence theorem to the current $J$ from \cref{lemma:current:future}.
	
\end{proof}
As another quick application, we show that \cref{lemma:current:past} together with \cref{lemma:current:future} already yields a coercive energy estimate for any short-range linear perturbation:
\begin{proof}[Proof of \cref{cor:en:perturbed_energy_estimate} for $k=0$ and $\mathcal{N}=0$]
	We restrict attention to the case when $a_->1/2$, so that $a^f_-\geq3/2$ and admissibility simplifies to $\vec{a}+(1,2,1)<\vec{a}^f$.
	(Otherwise, we would simply also have to use the current $J_2$).
	
	Let us use the notation $\Cbar[J]$ for the flux of the current $J$ over a hypersurface $\Cbar$. Notice that the incoming flux of $J_3$ is controlled by initial data:
	\begin{equation}\label{eq:current:proof_flux_equiv}
		\incone[J_3]\sim \norm{\psi}_{\Hb^{a_0;1}(\incone)}.
	\end{equation}
	
	Next, we apply the divergence theorem for $J_3$ in $\D^-$ to obtain
	\begin{multline}
		\norm{\Ve\psi}_{\Hb^{\vec{a};1}(\D^-)}\lesssim \norm{\psi}_{\Hb^{a_0;1}(\incone)}+\norm{\abs{rf}+\abs{rV\phi}}_{\Hb^{\vec{a};1}(\D^-)}\lesssim \norm{\psi}_{\Hb^{a_0;1}(\incone)}+\norm{rf}_{\Hb^{\vec{a};1}(\D^-)}+\abs{u_0}^{-\delta}\norm{\Ve\psi}_{\Hb^{\vec{a};1}(\D^-)}.
	\end{multline}
	Taking $\abs{u_0}$ suitably large yields the required bulk control.
	The boundary terms follow in much the same way, also using comparability statements such as \cref{eq:current:proof_flux_equiv}.
	
	In the future region $\D^+$, we use a cut-off function to remove the data and apply the current $J_+$ from \cref{lemma:current:future} to obtain the result.
\end{proof}

\subsection{Proof of energy estimates for linear long-range potentials}\label{sec:current:non_short_range}
In this subsection, we showcase how to use the extra constant $\bar{c}$ in \cref{lemma:current:past,lemma:current:future} to extend the core energy estimates to the case where long-range potentials are included, cf.~\cref{def:en:long_range} and \cref{eq:en:longwave}.
\begin{prop}\label{prop:current:long_range}
	Let $\phi$ solve \cref{eq:en:longwave} instead of $\Box_\eta\phi=f$. Then \cref{prop:en:futureestimate}, the $k=0$-part of \cref{item:en:pastestimate_inhom} of \cref{prop:en:pastestimate}, as well as  \cref{lemma:en:energy_no_incoming} all still hold for \cref{eq:en:longwave}.
\end{prop}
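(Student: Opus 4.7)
The plan is to revisit each of the current computations from \cref{lemma:current:past,lemma:current:future} and redo them with $\Box_{\eta}\phi$ replaced by $V_L\phi+f$, using the freedom in the parameter $\bar{c}$ to absorb the additional bulk contributions produced by $V_L\phi$. First, writing $\Box_{\eta}\phi=V_L\phi+f$, each divergence formula in \cref{eq:current:divergence_general} acquires an extra summand of the form $2V(\phi)\cdot V_L\phi=2V(\phi)\cdot \frac{f^L_-(\rho_-,\omega)f^L_+(\rho_+,\omega)}{r^2}\Ve\phi$, which I would split by Cauchy--Schwarz with a small parameter $\delta>0$ into two pieces: one proportional to $\delta|V(\phi)|^2/r^2$ (together with the multiplier weight) and one proportional to $\delta^{-1}\rho_0^{2}(f^L)^2|\Ve\phi|^2$ times the multiplier weight.

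For the past estimate (i.e.~the $k=0$ part of \cref{item:en:pastestimate_inhom}), I would apply this analysis to the multiplier $V_3$ (and analogously to $V_2$). Because $V_3$ carries a $w^{\bar{c}}$ factor, the resulting error term comes multiplied by $w^{\bar{c}}$ as well, and the first piece of the Cauchy--Schwarz split is absorbed into the positive bulk on the RHS of \cref{eq:current:past_coercivity} once $\delta$ is chosen small relative to the constants there. For the second piece, I would observe that in $\D^-$ we have $f^L_+\equiv1$ and $|f^L_-|\lesssim \rho_-^{\epsilon_- - 1}$, so that after combining with the explicit weights of the multiplier the error integrand takes the form $\delta^{-1} w^{\bar c}\rho_-^{\epsilon_-} \rho_0^{2}\cdot(\text{admissible bulk weight})\cdot|\Ve\phi|^2$. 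The crucial point is then that the coercivity in \cref{eq:current:past_coercivity} contains the factor $(1+\bar{c}v/|u|)$, i.e.~an extra $\bar{c}$ arising from differentiation of $w^{\bar{c}}$; choosing $\bar{c}$ sufficiently large relative to $\delta^{-1}$ and the size of $\|f^L\|_{L^\infty}$ in a suitable neighbourhood of $I^0$ (localised via a cutoff) produces enough positivity to absorb the leftover error. Near $\scrim$ the $\rho_-^{\epsilon_-}$ decay of the coefficient of $\Ve\phi$ gives the missing smallness directly. The future estimate is handled completely analogously using the multiplier $V_+$ of \cref{lemma:current:future}, whose coercivity \cref{eq:current:future_coercivity} again contains a free $\bar{c}$-enhancement that absorbs the $V_L\phi$ cross-term.

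For \cref{lemma:en:energy_no_incoming}, which requires $\Hbt$ control, I would use the assumption that $f^L_{-,\b}\in\A{phg}^{\mindex0}([0,1)_{\rho_-}\times S^2)$ so that $T$-commutations with $V_L$ behave well. More precisely, I would commute the equation $m$ times with $T$, obtaining $\Box_\eta T^m\phi=V_L T^m\phi+T^m f+[T^m,V_L]\phi$, and note that, by the same computation as in the proof of \cref{cor:en:longrange1}, the commutator $[T^m,V_L]$ gains one extra power of $\rho_-$ per $T$-derivative (since $T\rho_-=|u|^{-1}(1+\rho_-)$ and $f^L_{-,\b}$ is polyhomogeneous at $\rho_-=0$). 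Hence $[T^m,V_L]\phi$ is effectively a short-range perturbation and can be treated via the iterative scheme in \cref{cor:en:longrange1.5}, while the main term $V_L T^m\phi$ is handled by the long-range past estimate just established; then integrating $m$ times in $t$ via \cref{corr:ODE:du_dv} recovers the required $\Hbt$ bounds on $\phi$ itself.

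The main obstacle I anticipate is the case where the long-range potential saturates admissibility at $I^0$ (weight exactly $\rho_0^2$), which makes it impossible to treat $V_L\phi$ perturbatively by the iteration used for short-range terms; the only workable alternative is the direct current-based argument sketched above, where the delicate step is verifying that a \emph{single} choice of $\bar{c}$, depending only on $V_L$ (and not on the initial data or $f$), simultaneously closes the estimate throughout $\D^\pm$ uniformly in $u_\infty,v_\infty$. The other required checks are routine: the boundary flux terms along $\incone,\outcone{}$ and $\Cbar_v$ are unaffected by the $V_L\phi$ modification at the level of the current $V\cdot(\T+\tT)$, and the zeroth-order control in \cref{item:en:pastestimate_inhom} follows as in the proof of \cref{prop:en:pastestimate} by also running the non-twisted current with the same multiplier and absorbing a $\phi^2$ error using the Hardy-type boundary terms produced by the divergence.
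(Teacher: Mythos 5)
Your proof takes essentially the same route as the paper: redo the current computations of \cref{lemma:current:past,lemma:current:future} with the extra bulk cross-term $2V(\phi)\cdot V_L\phi$, split by Cauchy--Schwarz, absorb one piece into the coercive bulk for $\delta$ small, and absorb the other using the $\bar{c}$-enhancement $(1+\bar{c}\rho_\mp)$ in \cref{eq:current:past_coercivity,eq:current:future_coercivity} together with a region-split (near $\scrim$ the $\rho_-^{\epsilon_-}$ decay of $f^L_-$ gives smallness outright, near $I^0$ the $\bar{c}$-factor dominates). This is exactly the paper's argument.

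One small remark regarding your treatment of \cref{lemma:en:energy_no_incoming}: you invoke the iterative scheme from \cref{cor:en:longrange1.5}, but note that \cref{cor:en:longrange1.5} is itself a downstream consequence of the long-range versions of the core energy estimates, so citing it literally is circular. What you really need (and what the paper does, in the proof of $\cref{item:en:pastestimate_Hbt}_L$ inside \cref{cor:en:longrange1}) is only the observation you already make: since $T\rho_- = |u|^{-1}(1+\rho_-)$ and $f^L_{-,\b}\in\A{phg}^{\mindex0}$, each $T$-commutator $[T^m,V_L]$ gains $m$ powers of $\rho_-$, so the commuted term is harmless for the core current estimate. If you phrase that observation as a self-contained commutation lemma (rather than a citation of \cref{cor:en:longrange1.5}), the argument is complete.
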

\begin{rem}
	Importantly, \cref{item:en:pastestimate_hom} does not generalise directly!
\end{rem}
%In this section, we extend \cref{def:en:short_range} to a range of perturbations, that cover physically important models, such as: Teukolsky equation and Maxwell-Scalar field system.
%\begin{defi}\label{def:current:long_range} Fix admissible $\vec{a}$.
%	\begin{enumerate}[label=\textbf{PL}]
	%		\item \textbf{a)} For any $f\in C^0(\D)$ with $\Vb^kf\in C^0(\D), \,\forall k$ and $\epsilon_-, \epsilon_+>0$, we call $\frac{f}{uv}\rho_-^{\epsilon_-}\rho_+^{\epsilon_+}\Ve$ an admissible long range perturbation.
	% \item \textbf{b)} For any $f\in C^0(\Dbold)$ with $\Vbt^k f\in \C^0(\D)$ for all $k$, and for any $\epsilon_+>0$, we call $\frac{f}{uv}\rho_-^{1}\rho_+^{\epsilon_+}\Ve$ a long range perturbation compatible with no incoming radiation.
	%	\end{enumerate}
%\end{defi}
%Note, that \cref{lemma:current:past,lemma:current:future} also yields coercive energy estimate for these long range perturbations, via the free parameter $c$.
%Therefore, we conclude
%\begin{prop}\label{prop:current:long_range}
%	\cref{cor:en:perturbed_energy_estimate} holds for $\mathcal{N}=0$ and $V$ containing admissible long range perturbations.
%\end{prop}
\begin{proof}
	We prove the result in $\D^-$, as the proof in $\D^+$ proceeds analogously.
	Fix $f_L,\epsilon$ as in \cref{def:en:long_range}.
	For simpler notation, we assume that $a_->1/2$. (Otherwise we may simply use $J_2$ instead.) Then, applying the divergence theorem to $J_3$ from \cref{lemma:current:past}, we obtain that there exists a constant $C$ sufficiently small and independent of $\bar{c}$ such that 
	\begin{equation}
		\norm{\Ve\psi\sqrt{(1+\bar{c}\tfrac{v   }{|u|})}}_{\Hb^{a_-,a_0;0}(\D^-)}\leq C^{-2}\norm{rf}_{\Hb^{a_-+1,a_0+2;0}(\D^-)}+C^{-2}\norm{f_L\Ve\psi\tfrac{v^{\epsilon/2}}{|u|^{\epsilon/2}}}_{\Hb^{a_-,a_0;0}(\D^-)}.
	\end{equation}
	If $\epsilon\geq 1$, we can absorb the second term on the RHS into the LHS by picking $\bar{c}$ sufficiently large. 
	
	On the other hand, if $\epsilon\in(0,1)$, we proceed as follows:
	In the region where $0\leq v/|u|\leq \delta$, then $v^{\epsilon}/|u|^{\epsilon}\leq \delta^{\epsilon}$ can be absorbed into the LHS for sufficiently small $\delta$. 
	On the other hand, in the region where $\delta\leq v/|u|\leq 1$, then we can control $\bar{c}v/|u|\geq \bar{c} \delta^{1-\epsilon}\cdot v^{\epsilon}/|u|^{\epsilon}$ by picking $\bar{c}$ sufficiently large. 
\end{proof}

\subsection{Energy estimates for linear metric perturbations}\label{sec:current:linearmetric}
In the previous subsection, we constructed currents for the Minkowski metric. 
In the present subsection, we show how to construct the currents for linear metric perturbations (with quasilinear problems in mind).
We will work geometrically on \emph{exact Minkowski} space, and treat all objects as tensors on Minkowski space with corresponding metric $\eta$.
Raising and lowering of indices is therefore done with $\eta$.
This Minkowskianisation of the geometry manifests our ability to treat short-range quasilinear problems in $\D$ perturbatively around $\eta$.% We choose this approach as opposed to using the metric $g$ as the background to emphasise the Minkowskian nature of the estimates and also to have a shorter presentation.

{Let $H^{\mu\nu}$} be a smooth symmetric $(2,0)$-tensor field on $\Dopen$, corresponding to the perturbed (inverse) metric $\mathbf{g}^{\mu\nu}:=\eta^{\mu\nu}+H^{\mu\nu}$ and consider
\begin{equation}\label{eq:current:quasilinear_eq}
	\tilde{\Box}_H\phi:=\nabla_\mu (\mathbf{g}^{\mu\nu}\nabla_\nu\phi)=f,\qquad r\phi|_{\incone}=\psi^{\incone},\qquad r\phi|_{u\leq u_\infty}=0
\end{equation}
where $\nabla$ is the covariant derivative with respect to $\eta$.
Central to our computations is the definition of the following \textbf{modified energy momentum tensor} adapted to $H$: 
\begin{equation}\label{eq:current:tmod}
	\Tmod[\phi]^{\mu\nu}:=\mathbf{g}^{\mu'\mu}\mathbf{g}^{\nu\nu'}\nabla_{\mu'}\phi\nabla_{\nu'}\phi-\frac{\mathbf{g}^{\mu\nu}}{2}(\mathbf{g}^{\alpha\beta}\nabla_\alpha\phi\nabla_\beta\phi)
\end{equation}
We compute that, if \cref{eq:current:quasilinear_eq} holds, then it satisfies
\begin{equation}\label{eq:current:Tmod_divergence}\nabla_\mu\Tmod[\phi]^{\mu\nu}=f\mathbf{g}^{\nu\nu'}\nabla_{\nu'}\phi+\nabla_\mu(\mathbf{g}^{\nu\nu'})\mathbf{g}^{\mu\mu'}\nabla_{\mu'}\phi\nabla_{\nu'}\phi-\frac{1}{2}\nabla_{\mu}(\mathbf{g}^{\nu\mu}\mathbf{g}^{\alpha\beta})\nabla_\alpha\phi\nabla_\beta\phi.
\end{equation}

Similarly as for the wave equation on Minkowski, we can use an $r$ twist to derive the equation
\begin{equation}
	r\nabla_\nu \big(\mathbf{g}^{\nu\mu}\frac{1}{r^2}\nabla_\mu \psi\big)+\psi r^{-1} W=f,
\end{equation}
where $W=r\tilde{\Box}r^{-1}$ and $\psi=r\phi$. 
From this, it follows that the \textbf{modified twisted energy momentum tensor}, defined via
\newcommand{\tTmod}{\accentset{\scalebox{.6}{\mbox{\tiny (1)}}}{{\tT}}}
\begin{equation}\tTmod[\phi]^{\mu\nu}:=r^{-2}\mathbf{g}^{\mu'\mu}\mathbf{g}^{\nu'\nu}\nabla_{\mu'}\psi\nabla_{\nu'}\psi-\frac{\mathbf{g}^{\mu\nu}}{2r^2}(\mathbf{g}^{\alpha\beta}\nabla_\alpha \psi\nabla_\beta \psi-W\psi^2),
\end{equation}
satisfies
\begin{equation}\label{eq:app:tTmod_divergence}
	\nabla_\mu\tTmod[\phi]^{\mu\nu}=\frac{f}{r}\mathbf{g}^{\nu\nu'}\nabla_{\nu'} \psi+\frac{1}{r^2}\nabla_\mu(\mathbf{g}^{\nu\nu'})\mathbf{g}^{\mu\mu'}\nabla_{\mu'} \psi\nabla_{\nu'} \psi-\frac{1}{2}\nabla_\mu\frac{\mathbf{g}^{\mu\nu}\mathbf{g}^{\alpha\beta}}{r^2}\nabla_\alpha \psi\nabla_\beta \psi
	-\frac{\nabla_\mu (r^{-2}W\mathbf{g}^{\mu\nu}) }{2}\psi^2.
\end{equation}

Now, using \cref{eq:current:Tmod_divergence,eq:app:tTmod_divergence}, under the assumption that $H^{\mu\nu}$ is conormal, we can derive the required decay of each component of $H^{\mu\nu}$, so as to keep the divergence of the currents from \cref{lemma:current:past,lemma:current:future} coercive. This is done below:
\begin{defi}
	We define $\mathrm{V}$ to be a vector space of tensor fields on $\D$ spanned by $uv$ times
	\begin{equation}\label{eq:current:admissible_h}
		\frac{v}{u}\partial_v\otimes\partial_v,\quad\frac{u}{v}\partial_u\otimes\pu,\quad\pu\otimes\pv,\quad \frac{\sqrt{v}}{r\sqrt{u}}b^a\pv\otimes \partial_a,\quad \frac{\sqrt{u}}{r\sqrt{v}}b^a\pu\otimes \partial_a,\quad r^{-2}\slashed{H}^{ab}\partial_a\otimes \partial_b
	\end{equation}
	where $b,\slashed{H}$ are tensors on Minkowski space that are tangent to the spheres of constant $u,v$ and have $\norm{b,\slashed{H}}_{H^k(S^2_{u,v})}\lesssim_k1$  uniformly in $u,v$.\footnote{Note, that
		$uv\mathrm{V}\subset S^2T_{\mathrm{e}}(\D)$ where the latter space is the symmetric two tensors over the edge tangent bundle as discussed in \cite{hintz_microlocal_2023-1}, and $uv\mathrm{V}$ spans $S^2T_{\mathrm{e}}(\D)$ over $C^\infty(\D)$.} (Equivalently, the elements of $\mathrm{V}$ are spanned by $v\pv\otimes v\pv$, $u\pu\otimes v\pv$ etc.)
\end{defi}

\begin{lemma}\label{lemma:current:perturbative_h}
	Let $\vec{a}^H-(1,2,1)>\delta>0$ componentwise, $f_H\in\Hb^{\vec{a}^H;\infty}(\D)$, and let $H$ be a $f_H$-linear combination of  elements from $\mathrm{V}$. Assume that $\tilde{\Box}_H\phi:=\nabla_\mu (\mathbf{g}^{\mu\nu}\nabla_\nu\phi)=f$. 
	Provided that $\norm{f_H}_{\Hb^{\vec{a}^H;4}(\D)}\leq\epsilon$ is sufficiently small {(alternatively, let $-u_0$ be sufficiently large)}, the following holds:
	For $J_2,J_3$ ($J_+$) as defined in \cref{eq:current:past_currents} (\cref{eq:current:future_current}) with $\T,\tT$ replaced by $\Tmod,\tTmod$, respectively, then \cref{eq:current:past_coercivity} (\cref{eq:current:future_coercivity}) still holds.
\end{lemma}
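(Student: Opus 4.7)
The strategy is to write $\Tmod=\T+\Delta \T$, $\tTmod=\tT+\Delta \tT$, where the increments $\Delta \T,\Delta \tT$ collect all the $H$-dependent corrections coming from $\mathbf{g}^{\mu\nu}=\eta^{\mu\nu}+H^{\mu\nu}$. For any vector field $V$, the divergence of $V\cdot\Tmod$ (or $V\cdot\tTmod$) splits as
\begin{equation*}
\nabla_\mu(V^\nu\Tmod^\mu{}_\nu)=V^\nu\nabla_\mu\Tmod^\mu{}_\nu+\Tmod^{\mu\nu}\deformation{V,\eta}_{\mu\nu},
\end{equation*}
so that, using \eqref{eq:current:Tmod_divergence} and \eqref{eq:app:tTmod_divergence}, the full divergence decomposes into the Minkowskian divergence (which is coercive by \cref{lemma:current:past,lemma:current:future}) plus error terms of three types: (i) $\Delta\T^{\mu\nu}\deformation{V,\eta}_{\mu\nu}$, quadratic in $\partial\phi$ with coefficients from $H$; (ii) terms $\nabla H\cdot (\partial\phi)^2\cdot V$ coming from the last two terms of \eqref{eq:current:Tmod_divergence} (and analogously for $\tTmod$); and (iii) the $f$-term $f\mathbf{g}^{\nu\nu'}\nabla_{\nu'}\phi\cdot V_\nu$, which differs from the Minkowskian $fV(\phi)$ by an $H$-contribution. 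For the twisted tensor there is additionally a zeroth order term $r^{-2}W\psi^2$ with $W=r\tilde\Box r^{-1}$; this is annihilated by $\eta$ and is of size $H\cdot r^{-2}\psi^2$ when $\mathbf{g}$ replaces $\eta$, decaying fast enough to be absorbed.

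The plan is then, first, to replay the proof of \cref{lemma:current:past,lemma:current:future} verbatim using $V_2,V_3,V_+$, obtaining unchanged Minkowskian coercive bulks and the standard Young bound for the $f$-term. Next, I will estimate each error term above pointwise in terms of the good bulk. The key input is that by \cref{def:en:long_range}-type considerations and Sobolev embedding \eqref{eq:not:Sobolev}, the hypothesis $\norm{f_H}_{\Hb^{\vec{a}^H;4}}\leq \epsilon$ with $\vec{a}^H>(1,2,1)+\vec\delta$ gives the pointwise bound $|f_H|\lesssim \epsilon\,\rho_-^{a_-^H}\rho_0^{a_0^H}\rho_+^{a_+^H}$ and an analogous bound for $\Vb f_H$. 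Combined with the explicit frame weights prescribed in \eqref{eq:current:admissible_h}, this translates into componentwise bounds for $H^{\mu\nu}$ in the null frame $\{\pu,\pv,\partial_a\}$ which are exactly $(uv)^{-1}$ times the corresponding $\mathrm{V}$-weight, multiplied by $\epsilon\,\rho_-^{a_-^H-1}\rho_0^{a_0^H-2}\rho_+^{a_+^H-1}$. These weights are precisely the ones needed so that, when the components of $H$ (or $\nabla H$) are paired with the Minkowskian coercive null/spherical bulks in \eqref{eq:current:past_coercivity}, \eqref{eq:current:future_coercivity}, the result is bounded by $\epsilon$ (or, equivalently by taking $-u_0$ large, by $|u_0|^{-\delta}$) times the coercive bulk.

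The bookkeeping proceeds direction by direction through \eqref{eq:current:admissible_h}. For the block $\pv\otimes\pv$, the relevant scalar coefficient of $H$ carries the weight $v/u$ relative to $(uv)^{-1}$, and this weight is exactly what converts the $(v\pv\psi)^2$-bulk in \eqref{eq:current:past_coercivity} to the $(u\pu\psi)^2$-bulk (which is weaker near $\scrim$); the analogous statement holds for $\pu\otimes\pu$ in the direction of $\scrip$. The block $\pu\otimes\pv$ is handled by Young's inequality against both diagonal bulks, with gain $\epsilon$. The mixed ``sphere-null'' blocks $\frac{\sqrt v}{r\sqrt u}b^a\pv\otimes\partial_a$ (and its $\pu$-partner) produce errors of the form $H\cdot\pv\phi\cdot\frac1r\sl\phi$, where the $\sqrt{v/u}$ weight is the geometric mean of the weights converting $|\sl\psi/r|^2$ to $(v\pv\psi)^2$ and $(u\pu\psi)^2$; Cauchy--Schwarz closes it. Finally, the purely spherical block $r^{-2}\slashed H^{ab}\partial_a\otimes\partial_b$ is subordinate to the $|\sl\psi/r|^2$-bulk by its own weight. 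Derivatives $\nabla H$ falling on any of these components cost at most one power of $\rho_\bullet$ on the scalar $f_H$-prefactor, which is absorbed in the $\delta$-margin of the assumption $\vec a^H>(1,2,1)+\vec\delta$. The extra zeroth order term for $\tTmod$ involving $W$ is controlled by noting that $W\in\rho^2\Hb^{\vec a^H-(0,0,0)}$ after cancellations with the Minkowskian $\Box_\eta r^{-1}=0$, and then using Hardy's inequality exactly as in the passage from \eqref{eq:currents:proofJtilde3} to \eqref{eq:currents:proofJ3} in the proof of \cref{lemma:current:past}.

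The main obstacle is the careful verification that each of the six directions in \eqref{eq:current:admissible_h} yields errors that are genuinely dominated by the coercive bulk, rather than by a weaker bulk that would require re-closing the estimate. This is purely a matter of matching scaling weights between $\mathrm{V}$ and the Minkowskian deformation tensor, but it must be done in all cases and for both the $\T$- and $\tT$-contributions; the choice of $\mathrm{V}$ is precisely what makes this possible. Once the smallness parameter $\epsilon$ (equivalently, a lower bound on $-u_0$ combined with the decay $\vec a^H>(1,2,1)+\vec\delta$) is chosen so that all collected error constants are less than a half of the Minkowskian coercivity constant $C$ of \cref{lemma:current:past,lemma:current:future}, absorbing everything into the LHS recovers \eqref{eq:current:past_coercivity} and \eqref{eq:current:future_coercivity} for $\Tmod,\tTmod$.
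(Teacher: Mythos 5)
Your proposal is correct and follows essentially the same route as the paper's proof: decompose the divergence of $V\cdot\Tmod$ (resp.\ $V\cdot\tTmod$) into the Minkowskian coercive bulk already established in Lemmata~\ref{lemma:current:past} and~\ref{lemma:current:future}, plus error terms proportional to $H$ and $\nabla H$, and then absorb those errors by matching the $\Ve$-weights carried by the components of $H$ against the weights in the coercive bulk, using the smallness $\norm{f_H}_{\Hb^{\vec{a}^H;4}}\leq\epsilon$ (or $|u_0|^{-\delta}$). The only real presentational difference is that the paper abstracts the bookkeeping through the scalar weight functions $w_{\mathrm{e}}(\mu)$, $w_{\mathrm{b}}(\mu)$ and treats one schematic "worst" term (the $\nabla H\cdot\nabla\phi\cdot\nabla\phi$ contributions in \eqref{eq:current:Tmod_divergence}), observing that the remaining terms are strictly easier, whereas you propose going block by block through all six entries of $\mathrm{V}$; the weight-function abstraction is cleaner and would save you from repeating the same arithmetic six times, but the underlying weight-cancellation ($|H^{\mu\nu}|\lesssim\epsilon\,w_{\mathrm{e}}(\mu)w_{\mathrm{e}}(\nu)/(uv)$ against $|\nabla_\mu\phi|\lesssim|\Ve\phi|/w_{\mathrm{e}}(\mu)$ and $|V^\mu|\lesssim w_{\mathrm{b}}(\mu)$) is exactly the one you identify.
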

\begin{proof}
	In this proof, we exclusively use $u,v,\omega_a$ coordinates, with $\omega_a$ denoting coordinates on the unit sphere.
	Correspondingly, we write $X_\mu$ to be a number corresponding to the value of the tensor evaluated in this coordinate system.
	
	It is important to note that we can already use the Minkowskian control provided by $\eta^{\mu\nu}$ part of \cref{eq:app:tTmod_divergence,eq:current:Tmod_divergence}, so all we need to do is bound the contribution from $H$ in these latter two equations against the Minkowskian contribution.
	To this end, let us introduce the weight functions $w_\e,w_\b$ (defined over indices), measuring $\Ve,\Vb$ decay respectively:
	\begin{equation}
		w_{\e}(\mu)=\begin{cases}
			u\quad\text{if }\mu=u,\\
			v\quad\text{if }\mu=v,\\
			\rho_\scri^{1/2}\quad\text{if }\mu=a,
		\end{cases}\qquad
		w_{\b}(\mu)=\begin{cases}
			u\quad\text{if }\mu=u,\\
			v\quad\text{if }\mu=v,\\
			1\quad\text{if }\mu=a.
		\end{cases}
	\end{equation}
	Let us also introduce the vector field multiplier $V=\abs{u}\pu+v\pv$.
	Then we have the following inequalities
	\begin{nalign}\label{eq:app:proof_weights}
		\abs{\nabla_\mu\phi}\lesssim \frac{\abs{\Ve\phi}}{w_e(\mu)},\quad \abs{H^{\mu\nu}}\lesssim \norm{f_H}_{\Hb^{\vec{a}^H;4}(\D)}\frac{w_\e(\mu)w_\e(\nu)}{uv},\quad \abs{V_\mu}\lesssim \frac{uv}{w_\b(\mu)},\quad \abs{V^\mu}\lesssim w_\b(\mu).
	\end{nalign}
	We also note that since $V$ has no angular components, so $V^\mu\nabla_\mu\in C^\infty(\D)\Ve$ as an operator acting on tensors of arbitrary rank.\footnote{Alternatively, we can write $V^\mu\nabla_\mu\in \Diff^1_{\mathrm{e}}({}^{\mathrm{sc}}T^{(n,m)}\R^{3+1},{}^{\mathrm{sc}}T^{(n,m)}\R^{3+1})$, where the scattering tangent bundle (${}^{\mathrm{sc}}T\R^{3+1}$) containing powers of $\dd x,\dd t$ up to the boundary and edge differential operators ($\Diff_{\mathrm{e}}$) spanned over $C^{\infty}(\D)$ by $\Ve$ is introduced in \cite[Section~2]{hintz_stability_2020}.}
	
	We leave the analogous treatment of $J_3$ and $J_+$ to the reader, and focus solely on the treatment of $J_2$, that is, we will treat the error term coming from $H$ in $\nabla_\nu \big((J_2)_\mu\Tmod[\phi]^{\mu\nu}\big)=\Tmod[\phi]^{\mu\nu}\nabla_\nu(J_2)_\mu+(J_2)_\mu\nabla_\nu\Tmod[\phi]^{\mu\nu}$.
	Yet more specifically, we only treat the terms       $\nabla_\mu(H^{\mu\mu'})\nabla_{\mu'}\phi\nabla_\nu\phi$ and $\nabla_\nu (H^{\alpha\beta})\nabla_\alpha\phi\nabla_\beta\phi$ from \cref{eq:current:Tmod_divergence}, as the others are strictly easier to control:
	Compared to the weight in front of the vectorfield $V=(\abs{u}\pu+v\pv)$, as already observed in \cref{rem:current:weight_change_divergence}, we loose a $uv$ weight for the control of $\Ve\phi$.
	Hence, we compute, using \cref{eq:app:proof_weights}, that 
	\begin{nalign}
		uv\abs{V^\mu \nabla_\nu(H^{\nu\sigma})\nabla_\sigma\phi\nabla_\mu\phi}\leq \abs{\Ve\phi}^2\abs{\Vb H^{\nu\sigma}}\frac{uv\cancel{w_\b(\mu)}}{w_\e(\sigma)w_\e(\nu)\cancel{w_\b(\mu)}}&\lesssim \norm{f_H}_{\Hb^{\vec{a}^H;4}(\D)}\abs{\Ve\phi}^2,\\
		uv\abs{V^\nu\nabla_\nu(H^{\alpha\beta})\nabla_\alpha\phi\nabla_\beta\phi}\leq\abs{\Ve\phi}^2\abs{\Vb H^{\alpha\beta}} \frac{uv}{w_\e(\alpha)w_\e(\beta)}&\lesssim \norm{f_H}_{\Hb^{\vec{a}^H;4}(\D)}\abs{\Ve\phi}^2.
	\end{nalign}
	Therefore, all error terms can be bounded by the bulk terms for $-u_0$ sufficiently small.
\end{proof}

Having now obtained the necessary bulk control of the divergences, we next to establish control of the fluxes. To this end, we construct safely spacelike hypersurfaces in order to appeal to the following%currents $J_{\bullet}$ yield coercive control.

\begin{lemma}[Positivity]\label{lemma:current:positivity}
	Let $X,Y$ be covectors that are null- or timelike for $\mathbf{g}$, i.e.~$\mathbf{g}(X,X)\leq 0\geq \mathbf{g}(Y,Y)$. 
	Then $X_\mu Y_\nu\Tmod[\phi]^{\mu\nu}\geq0$.
\end{lemma}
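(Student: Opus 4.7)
The statement is the pointwise dominant energy condition (DEC) for the Klein--Gordon stress--energy tensor associated with the Lorentzian metric $\mathbf{g}=\eta+H$. Since \cref{lemma:current:perturbative_h} forces $H$ to be a small perturbation of $\eta$, $\mathbf{g}$ is indeed Lorentzian on $\D$ and the notions of causal covector and time-orientation make sense. The plan is to work purely pointwise at a fixed $p\in\D$ and to reduce everything to standard linear algebra in a single $\mathbf{g}$-orthonormal frame; no analytic input is required.

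Introducing the $\mathbf{g}$-dual vector $\xi^\mu:=\mathbf{g}^{\mu\nu}\nabla_\nu\phi$ of $d\phi$, the definition \cref{eq:current:tmod} rewrites as $\Tmod^{\mu\nu}=\xi^\mu\xi^\nu-\tfrac12\mathbf{g}^{\mu\nu}\mathbf{g}(\xi,\xi)$, and correspondingly
\[
X_\mu Y_\nu\Tmod^{\mu\nu}\;=\;(X_\mu\xi^\mu)(Y_\nu\xi^\nu)\;-\;\tfrac12\,\mathbf{g}^{-1}(X,Y)\,\mathbf{g}(\xi,\xi),
\]
where $\mathbf{g}^{-1}(X,Y):=\mathbf{g}^{\mu\nu}X_\mu Y_\nu$ is the pairing appearing in the hypothesis $\mathbf{g}(X,X),\mathbf{g}(Y,Y)\le 0$. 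The sharps $X^{\sharp,\mu}:=\mathbf{g}^{\mu\nu}X_\nu$ and $Y^\sharp$ are thus causal vectors for $\mathbf{g}$; after possibly replacing $X$ by $-X$ and $Y$ by $-Y$ (which leaves $X_\mu Y_\nu\Tmod^{\mu\nu}$ unchanged) I may assume both are future-directed.

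I now choose a $\mathbf{g}$-orthonormal frame $\{e_{\hat 0},e_{\hat i}\}$ at $p$ with $e_{\hat 0}$ future-directed, and expand $X^\sharp,Y^\sharp,\xi$ in this frame; the causality conditions become $X^{\hat 0}\ge|\vec X|$, $Y^{\hat 0}\ge|\vec Y|$, with $X^{\hat 0},Y^{\hat 0}\ge 0$. In this frame a direct computation gives $\Tmod^{\hat 0\hat 0}=\tfrac12\bigl((\xi^{\hat 0})^2+|\vec\xi|^2\bigr)\ge 0$ and $\Tmod^{\hat 0\hat i}=\xi^{\hat 0}\xi^{\hat i}$, so that $|\Tmod^{\hat 0\hat i}|\le\Tmod^{\hat 0\hat 0}$ by AM--GM. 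This exactly states that the vector $Z^\mu:=\Tmod^{\mu\nu}X_\nu$ is future-causal whenever $X^\sharp$ is; the hypothesis on $Y$ then yields $X_\mu Y_\nu\Tmod^{\mu\nu}=Y_\mu Z^\mu\ge 0$, since the pairing of a future-causal covector with a future-causal vector is non-negative in Lorentzian signature.

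The main (and only) obstacle is of a bookkeeping nature: keeping the sign and time-orientation conventions straight when the hypothesis is stated as $\mathbf{g}(X,X)\le 0$ for covectors (i.e.~contraction with the inverse metric), while the inequality in the Klein--Gordon expression simultaneously involves the metric contraction $\mathbf{g}(\xi,\xi)$ on vectors. Beyond this, the argument is the standard algebraic DEC and carries no analytic content.
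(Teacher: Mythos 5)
Your proof is correct and is essentially the paper's own argument, just unfolded: the paper's proof simply observes that $\Tmod^{\mu\nu}$, written in terms of $\xi^\mu = \mathbf{g}^{\mu\nu}\nabla_\nu\phi$, is literally the $(2,0)$-form of the Klein--Gordon stress--energy tensor for the metric $\mathbf{g}$ (since $\nabla_\mu\phi = (\dd\phi)_\mu$ is connection-independent), and then cites the standard dominant energy condition for a scalar field on a general Lorentzian metric. You make exactly this identification and then carry out the DEC in a $\mathbf{g}$-orthonormal frame rather than invoking it as a black box, so the two proofs differ only in the amount of linear-algebra detail displayed. One small remark: your step of "replacing $X$ by $-X$ and $Y$ by $-Y$" only preserves the quantity if both are flipped simultaneously, so, as written, the reduction to future-directed covectors presupposes that $X^\sharp$ and $Y^\sharp$ lie on the same half of the light cone; but this implicit consistent-time-orientation assumption is already present in the lemma statement (and in all the paper's applications, where $X$ and $Y$ are both outward/future normals), so it is a feature of the statement, not a gap specific to your argument.
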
	
\begin{proof}
	Since, for a function $\phi$, we have that $\nabla_\mu\phi=(\dd\phi)_{\mu}$ irrespectively of what connection is used, we have that $\Tmod^{\mu\nu}[\phi]$, as a $(2,0)$ tensor, is the same as the usual energy momentum tensor corresponding to $g$.
	Hence, the result follows from the analogous result for the energy momentum tensor for a general metric.
\end{proof}

\begin{lemma}[Spacelike hypersurfaces]\label{lemma:current:hypersurfaces}
	Let $H,\,\delta,\,\epsilon,\,\vec{a}^H$ be as in \red{\cref{lemma:current:perturbative_h}}.
	
	\emph{a)} Then the following is a  family of spacelike hypersurfaces w.r.t~$\mathbf{g}$: 
	\begin{itemize}
		\item 
		for $\epsilon$ sufficiently small (or $-u_1$ sufficiently large) $\Sigma_{u_1}:=\{(v-v_0+1)^{\delta/10}(u-2u_1)=-u_1\}\cap\D$;
		\item for  $\epsilon$ sufficiently small (or $v_2$ sufficiently large) $\bar{\Sigma}_{v_2}:=\{(v-2v_2)u^{\delta/10}=-v_2u_0^{\delta/10}\}\cap\D$
	\end{itemize}
	
	\emph{b)} For  $\epsilon$ sufficiently small (or $-u_0$ sufficiently large), $V^{\musFlat{}}_2,V^{\musFlat{}}_3$ from \cref{eq:current:past_currents} and $V^{\musFlat{}}$ from \cref{eq:current:future_current} are timelike w.r.t~$\mathbf{g}$.
	Here, the musical ismorphism $V^{\musFlat{}}:=\eta(V,\cdot)$ is defined with respect to $\eta$.
\end{lemma}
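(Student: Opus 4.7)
The plan is to verify both statements by perturbing around $\eta$: in each case I would first compute the relevant quadratic form with respect to $\eta$, extract a quantitative negative sign with an explicit lower bound, and then absorb the $H$-contribution into this Minkowskian term using the weighted pointwise bound $|H^{\mu\nu}|\lesssim \|f_H\|_{L^\infty}\,w_\e(\mu)w_\e(\nu)$ that follows from $H\in\mathrm{V}$, from Sobolev embedding, and from the strict excess $\vec{a}^H-(1,2,1)>\vec{\delta}$ assumed in \cref{lemma:current:perturbative_h}. This is exactly the same absorption mechanism employed in that lemma, now applied to a different quadratic form.

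For part~(a), a hypersurface $\{F=0\}$ is $\mathbf{g}$-spacelike iff $\mathbf{g}^{\mu\nu}\partial_\mu F\partial_\nu F<0$. For $\Sigma_{u_1}$ with $F=(v-v_0+1)^{\delta/10}(u-2u_1)+u_1$, I would compute $\partial_u F=(v-v_0+1)^{\delta/10}$ and $\partial_v F=(\delta/10)(v-v_0+1)^{\delta/10-1}(u-2u_1)$, and use the defining constraint $u-2u_1=-u_1/(v-v_0+1)^{\delta/10}>0$ together with $\eta^{uv}=-1/2$ (Lemma~\ref{lemma:notation:coordinates}) to obtain
\begin{equation*}
    \eta^{\mu\nu}\partial_\mu F\,\partial_\nu F \;=\; -\partial_u F\,\partial_v F \;=\; \tfrac{\delta}{10}\,u_1\,(v-v_0+1)^{\delta/10-1} \;<\;0.
\end{equation*}
The analogous computation for $\bar\Sigma_{v_2}$ is symmetric under the interchange of $u$ and $v$.

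For part~(b), using $\eta_{uv}=-2$, a vector $V=V^u\partial_u+V^v\partial_v+V^a\partial_a$ yields $V^\flat=-2V^v\,\dd u-2V^u\,\dd v+r^2(g_{S^2})_{ab}V^b\,\dd\omega^a$, so
\begin{equation*}
    \eta(V^\flat,V^\flat) \;=\; -4V^uV^v+r^{-2}|V^a|_{g_{S^2}}^2.
\end{equation*}
For $V_2,V_3$ from \cref{eq:current:past_currents} and $V_+$ from \cref{eq:current:future_current} the angular components vanish and the product $V^uV^v$ is strictly positive (as $c,\bar c>0$), so $\eta(V^\flat_\bullet,V^\flat_\bullet)$ is strictly negative and comparable to the square of the scalar prefactor times $|u|v$. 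In both (a) and (b) I would then substitute the bound $|H^{\mu\nu}|\lesssim\|f_H\|_{L^\infty}w_\e(\mu)w_\e(\nu)$ into $H^{\mu\nu}\partial_\mu F\partial_\nu F$ (respectively $H^{\mu\nu}V^\flat_\mu V^\flat_\nu$) and verify that each of the six resulting terms, when evaluated on $\Sigma_{u_1}$ or $\bar\Sigma_{v_2}$ (respectively on $\D^\pm$), is strictly dominated by the Minkowskian lower bound once $a_-^H>1+\delta$, $a_0^H>2+\delta$, $a_+^H>1+\delta$ are used. Choosing $\epsilon=\|f_H\|_{\Hb^{\vec{a}^H;4}(\D)}$ small (or equivalently $-u_0$, $-u_1$, $v_2$ large) then completes the absorption.

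The main obstacle will be book-keeping: the Minkowskian quadratic form for $\Sigma_{u_1}$ is only marginally negative (of size $|u_1|(v-v_0+1)^{\delta/10-1}$), whereas a naive estimate of $H^{uu}(\partial_u F)^2\lesssim \|f_H\|_{L^\infty}u^2(v-v_0+1)^{\delta/5}$ grows in $v$. The hypersurfaces have been deliberately designed with the tiny exponent $\delta/10$ (much smaller than the gap $\delta$) precisely so that, when $\|f_H\|_{L^\infty}$ is weighted by $\rho_-^{a_-^H}\rho_0^{a_0^H}\rho_+^{a_+^H}$ and one uses $\rho_0\sim 1/|u_1|$ and $\rho_+\sim|u_1|/v$ along the surface, the growth in $v$ of the $H$-term is beaten by the decay coming from $a_+^H>1+\delta>1+\delta/10$. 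The same scale balancing is what keeps the multipliers $V_2,V_3,V_+$ uniformly $\mathbf{g}$-timelike and is the only delicate point in the argument; once it is verified, the rest is a direct computation.
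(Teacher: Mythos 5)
Your argument is correct and is essentially the paper's own proof: compute the Minkowskian quadratic form (strictly negative via the defining relation for $\Sigma_{u_1}$, resp.\ via $V^uV^v>0$ for the multipliers), then absorb the $H$-contribution using the weighted pointwise bound $|H^{\mu\nu}|\lesssim\epsilon\,r^{-\delta}\,w_{\mathrm{e}}(\mu)w_{\mathrm{e}}(\nu)/(|u|v)$ together with the scale balance $\delta/10<\delta$ built into the hypersurfaces. The only slip is cosmetic: $\eta^{-1}(V^{\musFlat{}},V^{\musFlat{}})=\eta(V,V)=-4V^uV^v+r^{2}\lvert V\rvert_{g_{S^2}}^2$ carries an $r^{2}$ rather than an $r^{-2}$ in front of the angular part, which is immaterial since $V_2,V_3,V_+$ have no angular components.
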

\begin{proof}
	a) 
	Without loss of generality, we set $v_0=1$ and use $\alpha=\delta/10$.
	We simply compute that there exists some $C>0$ depending on $H$ such that for a defining function of $\Sigma_{u_1}$, $s=v^{d}(u-2u_1)$, we have
	\begin{nalign}
		\mathbf{g}(\dd s,\dd s)=\mathbf{g}(v^{\alpha}\dd u+\alpha v^{\alpha-1}(u-2u_1)\dd v,v^{\alpha}\dd u+\alpha v^{\alpha-1}(u-2u_1)\dd v)\\
		=-4\alpha v^{2\alpha-1}(u-2u_1)+H(v^{\alpha}\dd u+\alpha v^{\alpha-1}(u-2u_1)\dd v,v^{\alpha}\dd u+\alpha v^{\alpha-1}(u-2u_1)\dd v)\\
		\geq 4\alpha u_1v^{\alpha-1}+\epsilon Cr^{-\delta}\alpha^2\frac{\abs{u_1}^2}{\abs{u}v}+\epsilon Cr^{-\delta}v^{2\alpha-1}\abs{u}\leq\abs{u_1}\big(-\alpha v^{\alpha-1}+\epsilon Cr^{-\delta}v^{-1}+\epsilon Cr^{-\delta}v^{\alpha-1}\big).
	\end{nalign}
	Taking $\epsilon$ sufficiently small (or $-u_1$, and correspondingly $r$ large enough), the last term is negative, which yields the result.
	The same computation with $s=(v-2v_2)u^{\alpha}$ and $u,v$ interchanged shows the second result.
	
	b) For this part, it suffices to check that $W=cv\pv+\abs{u}\pu$ for $c\in\R_{>0}$ is timelike:
	\begin{equation}
		\mathbf{g}(cv\dd u+\abs{u}\dd v,cv\dd u+\abs{u}\dd v)\geq-2cv\abs{u}+\epsilon Cr^{-\delta}\abs{u}v.
	\end{equation}
	The result follows by taking $\epsilon$ sufficiently small.
\end{proof}

Therefore, we have the following energy estimate for $\tilde{\Box}_H$:

\begin{prop}\label{prop:current:metric_perturbed_estimate}
	Let $\vec{a},\vec{a}^f$ be admissible with $a_->0$.
	Let $H$ be as in \cref{lemma:current:perturbative_h} with the additional assumption that $\vec{a}^H+\vec{a}$ is strongly admissible\footnote{Note that this assumption is required for a similar reason as in \cref{rem:en:shortrange_admissible_weights}.
		In particular, it can be lifted to $\vec{a}^H>(1,2,1)$, though with a loss of derivatives.} with gap $\delta$. Furthermore, \emph{assume that $\incone$ is null w.r.t~$\mathbf{g}=\eta^{-1}+H$.}
	Let $\psi^{\incone}\in\Hb^{a_-;2k+2}(\incone), rf\in\Hb^{\vec{a}^f;k}(\Deps),T^jf\in\Hb^{a_{-}^f+1;2(k-j)}(\incone)$ for $j\leq k$ satisfy \cref{eq:en:assumption0}.
	Then, for $\norm{f_H}_{\Hb^{\vec{a}^H;4}(\D)}$ sufficiently small, the solution  $\phi$ to \cref{eq:current:quasilinear_eq} satisfies
	\begin{equation}\label{eq:current:linear_quasilinear_estimate}
		\norm{ \Ve \psi}_{\Hb^{\vec{a};k}(\Deps)}
		\lesssim_{H,k} \norm{rf}_{\Hb^{\vec{a}^f;k}(\Deps)}+\sum_{j\leq k}\norm{T^{j}rf}_{\rho_-^{a_-^f}\Hb^{;2(k-j)}(\incone)}
		+ \norm{\psi^{\incone}}_{\rho_-^{a_-}\Hb^{;2k+2}(\incone)},
	\end{equation}
	where $\Deps=\D\cap\{(v-v_0+1)^{\delta/10}(u-2u_0)\leq-u_0\}$.
	
	If in addition we assume that $rf\in\Hbt^{\vec{a}^f;k}(\Deps)$ and $f_H\in\Hbt^{\vec{a}^H;\infty}(\D)$, we get
	\begin{equation}\label{eq:current:linear_quasilinear_estimate_no_incoming}
		\norm{ \Ve \psi}_{\Hbt^{\vec{a};k}(\Deps)}
		\lesssim_{H,k} \norm{rf}_{\Hbt^{\vec{a}^f;k}(\Deps)}+\sum_{j\leq k}\norm{T^{j}rf}_{\rho_-^{a_-^f+1}\Hb^{;2(k-j)}(\incone)}
		+ \norm{\psi^{\incone}}_{\rho_-^{a_-}\Hb^{;2k+2}(\incone)},
	\end{equation}
\end{prop}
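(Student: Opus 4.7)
The strategy is to adapt the proof of the Minkowskian estimates in \cref{item:en:pastestimate_inhom} of \cref{prop:en:pastestimate} by replacing the energy momentum tensors $\T, \tT$ with their modified counterparts $\Tmod, \tTmod$ from~\eqref{eq:current:tmod}, and then running the standard divergence theorem argument with the currents $J_2, J_3$ from \cref{eq:current:past_currents} and $J_+$ from \cref{eq:current:future_current}. For the base case $k=0$ (assuming first $a_->1/2$ so that the simpler $J_3$ suffices), I would integrate $\divergence(J_3)$ over the region $\Dopen^{\delta,-}:=\Deps\cap\D^-$ bounded to the future by a spacelike hypersurface $\Sigma_{u_1}$ and to the past by the null cone $\incone$. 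The bulk term is coercive by \cref{lemma:current:perturbative_h}, producing a lower bound equivalent to $\norm{\Ve\psi}^2_{\Hb^{\vec{a};0}(\Dopen^{\delta,-})}$ up to an $\Hb$-norm of $rf$. The future flux on $\Sigma_{u_1}$ has the correct sign by \cref{lemma:current:positivity}, combined with the fact that $V_3^\musFlat$ is $\mathbf{g}$-timelike (\cref{lemma:current:hypersurfaces}\emph{b)}) and the conormal to $\Sigma_{u_1}$ is $\mathbf{g}$-timelike (\cref{lemma:current:hypersurfaces}\emph{a)}). The past flux along $\incone$ is bounded by $\Hb^{a_-;1}(\incone)$-norms of $\psi^{\incone}$; here one crucially uses the hypothesis that $\incone$ is null with respect to $\mathbf{g}$, which ensures that the flux integrand depends only on tangential derivatives plus the transversal $\pv\psi|_{\incone}$. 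The latter is controlled via an analogue of \cref{lemma:en:recovering_initial_data}, where one integrates the equation $\mathbf{g}^{\mu\nu}\nabla_\mu\nabla_\nu\phi = f$ along $\incone$ (treating the $H$-terms perturbatively for $-u_0$ sufficiently large using smallness of $f_H$), which accounts for the loss of $2k+2$ derivatives on the RHS of \eqref{eq:current:linear_quasilinear_estimate}. The future region $\Dopen^{\delta,+}:=\Deps\cap\D^+$ is handled analogously, now using $J_+$ integrated against the spacelike hypersurface $\bar\Sigma_{v_2}$ and a cutoff localising away from $\incone$ to convert the resulting bulk commutator into an already-controlled error.

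For higher regularity $k\ge 1$, the plan is to commute the equation $\tilde\Box_H\phi = f$ with the Minkowskian symmetries $\Vb = \{u\pu, v\pv, \Omega_i, 1\}$ (or, near the future corner, their boost/scaling analogues as in \cref{prop:en:pastestimate}). Each commutation produces an inhomogeneity of the form $[\Vb, H^{\mu\nu}\nabla_\mu\nabla_\nu]\phi$; because $uv\,\mathrm{V}$ is spanned by Minkowskian $\b$-vector fields, the commutator preserves the structural form \cref{eq:current:admissible_h} of $H$, only acting on the coefficient $f_H$ via $\Vb$-derivatives. Since $f_H\in\Hb^{\vec{a}^H;\infty}(\D)$, these commutators produce error terms that are bounded by $\norm{f_H}_{\Hb^{\vec{a}^H;k+4}(\D)}\cdot \norm{\Ve\psi}_{\Hb^{\vec{a};k}(\Deps)}$ via \cref{lemma:scat:sobolev}, and hence by choosing $\norm{f_H}_{\Hb^{\vec{a}^H;4}(\D)}$ small (equivalently, $-u_0$ large using strong admissibility with gap $\delta$), they can be absorbed into the LHS via a bootstrap, exactly as in the linear part of the proof of \cref{cor:en:perturbed_energy_estimate}. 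The boundary terms on $\incone$ for commuted quantities are handled by \cref{lemma:en:recovering_initial_data}-type estimates that convert $T^j\psi|_{\incone}$ into $\Vb^{2(k-j)+2}$ control on $\psi^{\incone}$ and corresponding control on $T^j(rf)|_{\incone}$.

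For the $\Hbt$ upgrade giving \eqref{eq:current:linear_quasilinear_estimate_no_incoming}, the idea is to repeat the argument, but additionally commute with the time translation $T$: since $f_H\in\Hbt^{\vec{a}^H;\infty}(\D)$, commutators $[T, H^{\mu\nu}\nabla_\mu\nabla_\nu]$ preserve the no-incoming-radiation structure (they gain a full power of $r$ towards $\scrim$ rather than merely $v$), so applying $T^j$ first and then the previous estimate with weights $\vec{a}+(j,j,j)$ followed by time integration via \cref{corr:ODE:du_dv}\cref{item:ode:t-prop} yields \eqref{eq:current:linear_quasilinear_estimate_no_incoming}. This mirrors exactly the structure of the proof of \cref{item:en:pastestimate_Hbt}.

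The main obstacle I expect is controlling the interaction between the deformed light cones and the current multipliers $V_2, V_3, V_+$: although these vector fields remain $\mathbf{g}$-timelike by \cref{lemma:current:hypersurfaces}\emph{b)}, the fluxes on the perturbed spacelike hypersurfaces $\Sigma_{u_1}, \bar\Sigma_{v_2}$ must be shown to bound the corresponding Minkowskian fluxes \emph{uniformly in $u_1$ and $v_2$} in order to take the limit and recover the full region $\Deps$. This comparability requires carefully tracking the $H$-dependent cross-terms in $\Tmod^{\mu\nu}(N^{\Sigma})_\mu (V_3)_\nu$ for $N^\Sigma$ the conormal to $\Sigma_{u_1}$, which become degenerate precisely when $\Sigma_{u_1}$ approaches the asymptotic light cone. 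This degeneration can be controlled using the weights $(v-v_0+1)^{\delta/10}$ built into the definition of $\Sigma_{u_1}$, but matching them against the weights in $V_2, V_3$ with the correct powers is where the cleanest accounting will be required.
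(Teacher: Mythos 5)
Your proposal is essentially the paper's own proof: modify the currents with $\Tmod,\tTmod$, run the divergence theorem over the region bounded by $\incone$, $\Sigma_{u_0}$, $\Sigma_{u_\infty}$, $\bar{\Sigma}_{v_2}$, use \cref{lemma:current:perturbative_h} for coercive bulk, \cref{lemma:current:positivity} with \cref{lemma:current:hypersurfaces} for the future fluxes, recover transversal derivatives on $\incone$ (which is where the nullity hypothesis enters), commute with Minkowskian symmetries for higher $k$, and commute with $T$ for the $\Hbt$ version. One small correction to your final paragraph: there is no need to show that the fluxes on $\Sigma_{u_1},\bar\Sigma_{v_2}$ bound the corresponding Minkowskian fluxes uniformly; by \cref{lemma:current:positivity} these future fluxes have a sign and are simply dropped, so the degenerate cross-terms you worry about never need to be estimated from below.
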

\begin{proof}
	Let us only consider the case $k=0$, as we can then treat higher $k$ in the usual way by commuting with the symmetries of $\eta$ and recovering transversal derivatives on $\incone$ as in \cref{lemma:en:recovering_initial_data}, using that $\incone$ is null for $g$.

	We use the divergence theorem with the currents from \cref{lemma:current:past,lemma:current:future} in the finite regions with borders $\incone,\Sigma_{u_0},\Sigma_{u_\infty},\bar{\Sigma}_{v_2}$ for $v_2\to\infty$.
	Using \cref{lemma:current:positivity,lemma:current:hypersurfaces}
	we can neglect the boundary terms on $\Sigma_{u_0}$ and $\bar{\Sigma}_{v_2}$.         
	Using \cref{lemma:current:perturbative_h}, we get bulk control of $\phi$ in terms of $f$ and boundary terms from $\incone$, yielding the result.
\end{proof}
\subsection{Scattering for linear metric perturbations}\label{sec:current:linearscattering}
We can use \cref{prop:current:metric_perturbed_estimate} to construct scattering solutions on more general backgrounds for linear equations, following the steps in \cref{sec:scat:scat}:
\begin{thm}\label{thm:current:metric_scattering}
	Let $\vec{a},\vec{a}^f$ be admissible with $a_->0$.
	Let $\vec{a}^H+\vec{a}$ be strongly admissible.
	Let $\psi^{\incone},rf$ and $H$ be as in \cref{prop:current:metric_perturbed_estimate}.
	Then, for $\norm{f_H}_{\Hb^{\vec{a}^H;4}(\D)}$ sufficiently small, there exists a scattering solution to
	
	\begin{equation}\label{eq:current:quasilinear_no_incoming}
		\tilde{\Box}_H\phi=f,\quad \psi|_{\incone}=\psi^{\incone},\quad \pv\psi\in\Hb^{0+;0}((\D^{v_\infty}_{v_0})^-)\quad \forall v_\infty>v_0,
	\end{equation}
	in $\Deps$ satisfying \cref{eq:current:linear_quasilinear_estimate}.
	The solution is unique in the class of functions satisfying $\Ve\psi\in\Hb^{(0+,a_0+1);0}(\D^-)$.
\end{thm}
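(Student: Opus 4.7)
The plan is to follow the strategy of \cref{thm:scat:scat_general}, but significantly simplified by linearity: the metric perturbation $H$ does not depend on $\phi$, so the Cauchy-sequence step becomes straightforward instead of requiring a bootstrap on the nonlinear terms. The main new ingredient over the Minkowskian case is the weighted energy estimate \cref{prop:current:metric_perturbed_estimate}, which we have already established using the modified currents of \cref{sec:current:linearmetric}, together with the fact (\cref{lemma:current:hypersurfaces}) that the finite region $\Deps$ is bounded to the future by $\mathbf{g}$-spacelike hypersurfaces on which the flux of $J_3$ has the correct sign.

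Concretely, for each $u_\infty < u_0$, I would introduce a smooth cutoff $\chi_{u_\infty}$ localising to $u \geq 3u_\infty/4$ and converging to $1$ as $u_\infty \to -\infty$, and construct $\phi^{u_\infty}$ as the solution to
\begin{equation*}
\tilde{\Box}_H\phi^{u_\infty} = \chi_{u_\infty} f,\qquad r\phi^{u_\infty}|_{\incone} = \chi_{u_\infty}\psi^{\incone},\qquad r\phi^{u_\infty}|_{\outcone{\infty}} = 0.
\end{equation*}
Local characteristic existence for linear waves (a direct analogue of \cref{thm:scat:local}) gives $\phi^{u_\infty}$ in $\D^{u_\infty,\infty}_{u_0,v_0}$, and the assumption that $\incone$ is null for $\mathbf{g}$ allows \cref{lemma:en:recovering_initial_data} to recover the transversal derivatives along $\incone$ uniformly in $u_\infty$. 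Applying \cref{prop:current:metric_perturbed_estimate} yields the uniform bound $\norm{\Ve(r\phi^{u_\infty})}_{\Hb^{\vec{a};k}(\Deps)} \lesssim \mathcal{X}^k$ with $\mathcal{X}^k$ as in \cref{eq:current:linear_quasilinear_estimate}. For $u_2 < u_1 \leq u_0$, the difference $\phi^{u_1} - \phi^{u_2}$ solves the \emph{same} linear equation $\tilde{\Box}_H(\phi^{u_1}-\phi^{u_2}) = (\chi_{u_1}-\chi_{u_2})f$ with data $(\chi_{u_1}-\chi_{u_2})\psi^{\incone}$; since the data differences and the inhomogeneity tend to zero as $u_1, u_2 \to -\infty$, a direct reapplication of \cref{prop:current:metric_perturbed_estimate} to $\phi^{u_1}-\phi^{u_2}$ shows that $\{\phi^{u_\infty}\}$ is Cauchy in $\Hb^{\vec{a};k}(\Deps)$. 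The limit $\phi$ is the desired scattering solution and inherits \cref{eq:current:linear_quasilinear_estimate}.

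For the no-incoming-radiation condition $\pv\psi \in \Hb^{0+;0}((\D^{v_\infty}_{v_0})^-)$, one observes that $v\pv(r\phi) \in \Hb^{\vec{a};0}(\Deps)$ with $a_->0$, and since $v$ is bounded on $\D^{v_\infty}_{v_0}$, this immediately yields the claimed $\rho_-^{0+}$-decay in $u$. For uniqueness, given two solutions $\phi^1,\phi^2$ in the uniqueness class, the difference $\phi^{1,2}=\phi^1-\phi^2$ solves $\tilde{\Box}_H\phi^{1,2}=0$ with trivial data on $\incone$ and the same decay condition at $\scrim$. A metric-perturbed analogue of \cref{lemma:en:past_estimate_with_incoming_data_L2} --- using the current $J_3$ from \cref{lemma:current:past} combined with the absorption of the $H$-error terms afforded by \cref{lemma:current:perturbative_h} (which costs only that $|u_0|$ be large enough) --- then forces $\phi^{1,2} \equiv 0$ in $\Deps$. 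The main obstacle is organisational rather than analytic: one must track that the transversal-derivative recovery on $\incone$ through \cref{lemma:en:recovering_initial_data} proceeds uniformly in $u_\infty$ (needing $|u_0|$ depending only on $H$ and the decay parameters), and that the slight shrinkage from $\D$ to $\Deps$ induced by the $\mathbf{g}$-spacelike hypersurfaces of \cref{lemma:current:hypersurfaces} is compatible with the data cutoffs. Since no nonlinear bootstrap is required, there is no genuine analytic difficulty beyond what has been handled in \cref{sec:en,sec:scat:scat}.
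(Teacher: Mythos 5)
Your existence argument reproduces what the paper does: the proof of \cref{thm:scat:scat_general} carried through verbatim with \cref{prop:current:metric_perturbed_estimate} in place of the Minkowskian estimate, with the cutoff, Cauchy-sequence and no-incoming-radiation steps all organised as you describe. The uniqueness step is where you deviate from the paper, and it is also where your proposal is a bit shaky. The paper simply re-applies \cref{prop:current:metric_perturbed_estimate} to $\chi_{u_\infty}\phi^{1,2}$ with a \emph{smaller positive} decay weight $a_-'\in(0,a_-)$ (consistent with the fact that the uniqueness class $\Hb^{(0+,a_0+1);0}(\D^-)$ already carries positive $\rho_-$-decay), and then sends $u_\infty\to-\infty$, using the a priori bound to kill the commutator term from the cutoff. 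You instead invoke a metric-perturbed analogue of \cref{lemma:en:past_estimate_with_incoming_data_L2} built from the current $J_3$. That is a mismatch on two counts: \cref{lemma:en:past_estimate_with_incoming_data_L2} is stated with $a_-<0$, which is designed for differences that do \emph{not} decay towards $\scrim$, whereas here the difference lies in $\Hb^{0+}$; and $J_3$ from \cref{lemma:current:past} requires $a_->1/2$ so it is not the right current for a small positive $a_-'$ (you would want $J_2$). You also leave the limiting argument implicit --- with $\incone\cup\outconeFar$ replaced by $\incone$ alone in the scattering problem, one must still control the flux through $\C_{u_\infty}$ as $u_\infty\to-\infty$, which is exactly what the cutoff and a priori bound accomplish in the paper. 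None of this is fatal: your route could be made to work after re-deriving a suitable metric-perturbed $L^2$ estimate in the right weight range and spelling out the truncation argument, but the paper's version is cleaner because the needed estimate (\cref{prop:current:metric_perturbed_estimate}, built on $J_2,J_3,J_+$ with the absorption from \cref{lemma:current:perturbative_h}) is already in hand for the relevant positive weights.
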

\begin{rem}
	Notice that, in \cref{eq:current:quasilinear_no_incoming}, we used a spacetime estimate to capture no incoming radiation condition.
	Indeed, for the case when the null cones are those of Minkowski, it follows easily that \cref{eq:current:quasilinear_no_incoming} with one extra derivative implies the weaker inclusion in \cref{eq:scat:data_incoming}.
\end{rem}

\begin{proof}
	The proof of existence is the same as in \cref{thm:scat:scat_general}, just we need to use \cref{prop:current:metric_perturbed_estimate}.
	
	For uniqueness, we use the
	energy estimate from \cref{prop:current:metric_perturbed_estimate} with a decay $a_-'\in(0,a_-)$, and a cutoff supported in $\D^{u_\infty,v_0}_{2u_\infty,v_\infty}$.
	We take $u_\infty\to-\infty$.
	Using the a priori bound on $\psi$ the result follows.
\end{proof}

In \cref{thm:current:metric_scattering}, we can also allow $\vec{a}^f\geq(1,2,1)+\vec{a}$ instead of admissibility at extra losses of regularity, and treat the case $a_-\in[-1/2,0)$ with extra iterations as in \cref{cor:scat:enlarged_admissible_set,thm:scat:weak_polyhom}. Similarly, we can also include semilinear and potential perturbations.
We leave the details to the reader.

We finish this subsection by providing a few examples of metrics that yield a wave equation with principle symbol that is, or is not, of the form \cref{eq:current:admissible_h}.
\begin{enumerate}[label=\textnormal{Ex.~\arabic*)}]
	\item Any metric that is of the form $\eta+h$, where $h$ is a $r^{-\epsilon}\Hb^{0,0,0;k}(D)$ linear combination of
	\begin{equation}\label{eq:current:admissible_metric}
		\frac{u}{v}\dd v^2,\quad \frac{v}{u}\dd u^2,\quad \dd u\dd v,\quad r\frac{\sqrt{u}}{\sqrt{v}}b^a\dd v\rho_a,\quad r\frac{\sqrt{v}}{\sqrt{u}}b^a\dd u \rho_a,\quad r^2\slashed{h}^{ab} \rho_a\rho_b
	\end{equation}
	where $b,\slashed{h}$ are as in \cref{lemma:current:perturbative_h} and $\rho_a$ is any uniformly bounded one form on $S^2$ ($\norm{\rho_a}_{H^k(S_{u,v}^2)}\lesssim_k1$) gives an example of an $H$ as in \cref{lemma:current:perturbative_h}.
	We note that this class is the same class as considered in  (3.13) of \cite{hintz_microlocal_2023-1}.
	In particular, note that the Kerr metric in suitable coordinates falls into this category.
	\item\label{item:current:metric_coordiante_change}
	\newcommand{\pvb}{\partial_{\bar{v}}}
	Let's restrict attention to $\D^-$.
	Fix $f\in\Hb^{a_-;\infty}(\Dbold^-)$ for some $a_-\in(0,1)$, and consider 
	\begin{equation}\label{eq:current:admissible_metric_bad_coordinates}
		g=\eta+f\dd u^2.
	\end{equation}
	Clearly, $g$ is not of the form \cref{eq:current:admissible_metric}, but we can remedy this. This is important for applications to the Einstein vacuum equations, so let's explain how:\footnote{Alternatively, we could find an eikonal function for $g$ as in \cref{lemma:EVE:eikonal}, but we find the prescription below to be more streamlined though less geometric.}
	First, we find $F\in\Hb^{a_--1;\infty}(\Dbold^-)$ satisfying $\pu F=f$.\footnote{We provide some details: Notice that $\pu\pv F=\pv f$ and $\pu F|_{\incone}=f|_{\incone}$ imply $\pu F=f$. Therefore, solving $\pu\pv F=\pv f$ with no incoming radiation for $F$ as in \cref{thm:scat:weak_decay} yields a possible $F$.}
	Therefore, in $\bar{v}=v+F,u$ coordinates, we can write $\pv|_u=(1+\pv F)\pvb|_u,\,\pu|_v=\pu|_{\bar{v}}+f\pvb|_u$ and get that $ F\in\Hb^{a_--1;\infty}(\Dbold_{u,\bar{v}}^-)$, where we introduced a new compactification with respect to $u,\bar{v}$ coordinates.
	Writing the metric in these coordinates, we get $\dd v=(1-\partial_{\bar{v}} F)\dd \bar{v}-\pu|_{\bar{v}} F \dd u=(1-\pv F)\dd \bar{v}-(f-f\pvb F)\dd u$, and thus:
	\begin{multline}
		g=\dd u\big((1-\partial_{\bar{v}} F)\dd \bar{v}-(f-f\pvb F)\dd u\big)+f\dd u^2+(\bar{v}-u-F)^2\slashed{g}_{S^2}\\=\eta_{u\bar{v}}+f\pvb F\dd u^2
		-\partial_{\bar{v}} F\dd u\dd\bar{v}+F\big(F+2(\bar{v}-u)\big)\slashed{g}_{S^2}.
	\end{multline}
	Clearly, the last two terms in the metric are of the form \cref{eq:current:admissible_metric}, while the $\dd u^2$ term is improved to $\Hb^{2a_-;\infty}(\Dbold^-)$.
	Iterating the procedure a finite number of times yields a metric of the form \cref{eq:current:admissible_metric}, hence we get that \cref{eq:current:admissible_metric_bad_coordinates} is, after a change of coordinates, a short range perturbation of $\eta$ in the sense of falling into the category \cref{eq:current:admissible_metric}. 
	Indeed, the need to modify the coordinates is due to the bending of the lightcones, as constant $v$ hypersurfaces are no longer a sufficiently good approximations to incoming lightcones and do not terminate in the correct~$\scrim$.
\end{enumerate}

\subsection{Scattering for general quasilinear perturbations}\label{sec:current:general_quasilinear_scattering}
We finally elevate the results to allow for metric perturbations that depend on the solution. For this, it is necessary to first study the precise dependence on $\norm{f_H}_{\Hb^{\vec{\epsilon};k}}(\D)$ in the estimate  of \cref{prop:current:metric_perturbed_estimate}.
\begin{prop}\label{prop:current:tame_estimate}
	a) Fix $k\in\N_{\geq10}$ and $\vec{a},\vec{a}^f,\vec{a}^H,f_H,\delta$ as in \cref{thm:current:metric_scattering}.
	Let $\phi$ be a solution to \cref{eq:current:quasilinear_eq} under the assumption \cref{eq:en:assumption0} with $\norm{f_H}_{\Hb^{\vec{a}^H;4}(\Deps)}$  sufficiently small and $\norm{f_H}_{\Hb^{\vec{a}^H;k}(\Deps)}\leq 1$, it holds that
	\begin{equation}\label{eq:current:tame}
		\norm{\Ve \psi}_{\Hb^{\vec{a},k}(\Deps)}\lesssim_k \norm{rf}_{\Hb^{\vec{a}^f;k}(\Deps)}+\norm{rf}_{\Hb^5(\Deps)}\norm{f_H}_{\Hb^{\vec{\epsilon};k}(\Deps)}.
	\end{equation}
	
	b) \cref{thm:current:metric_scattering} holds with explicit dependence on the norm of $f_H$ as in \cref{eq:current:tame}.
\end{prop}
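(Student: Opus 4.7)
The plan is to revisit the proof of \cref{prop:current:metric_perturbed_estimate} and track at each step how the constants in the estimate depend on the high-regularity norm of $f_H$ in a \emph{linear} manner. The guiding principle is the tame product estimate of \cref{lemma:scat:sobolev}: for any $\vec{a}^H+\vec{a}^\phi>\vec{a}$,
\begin{equation}\label{eq:current:tame:proof:product}
\norm{H\cdot \Ve^2\phi}_{\Hb^{\vec{a};k}(\Deps)}\lesssim_k \norm{H}_{\Hb^{\vec{a}^H;k}(\Deps)}\norm{\Ve^2\phi}_{\Hb^{\vec{a}^\phi;3}(\Deps)}+\norm{H}_{\Hb^{\vec{a}^H;3}(\Deps)}\norm{\Ve^2\phi}_{\Hb^{\vec{a}^\phi;k}(\Deps)},
\end{equation}
which isolates a top-derivative factor on at most one of $H$, $\phi$.

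For the base case, which here we take as $k=5$, the estimate~\cref{eq:current:tame} follows directly from \cref{prop:current:metric_perturbed_estimate}, together with \cref{lemma:en:recovering_initial_data} for the control of transversal derivatives along $\incone$; the smallness of $\norm{f_H}_{\Hb^{\vec{a}^H;4}(\Deps)}$ is used to absorb perturbative error terms into the LHS, as in \cref{lemma:current:perturbative_h}. For the inductive step, I would commute the equation $\tilde{\Box}_H\phi=f$ with $\Vb$ vector fields. Schematically,
\[
\tilde{\Box}_H\,\Vb^k \phi=\Vb^k f+[\tilde{\Box}_H,\Vb^k]\phi,\qquad [\tilde{\Box}_H,\Vb^k]\phi\in \sum_{j\leq k} \bigl(\Vb^{k-j}H\bigr)\Ve^{j+2}\phi+\text{lower order},
\]
and an application of~\cref{eq:current:tame:proof:product} distributes the $k$ derivatives so that either $H$ carries $k$ derivatives (producing the tame product on the RHS of~\cref{eq:current:tame}) or $\phi$ does, in which case the factor comes with $\norm{f_H}_{\Hb^{\vec{a}^H;3}(\Deps)}$ which is small. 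The latter terms can therefore be absorbed into the LHS exactly as in \cref{lemma:current:perturbative_h}. Running the energy estimate from \cref{prop:current:metric_perturbed_estimate} on the commuted equation, together with the tame version of \cref{lemma:en:recovering_initial_data} for initial data (recovered along $\incone$ using that $\incone$ is null for $g$), inductively closes~\cref{eq:current:tame}.

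For part (b), the scattering solution itself has already been constructed in \cref{thm:current:metric_scattering} at a fixed regularity level. It therefore suffices to propagate the tame estimate~\cref{eq:current:tame} through the limiting argument used there. Concretely, I would apply the sequence of finite characteristic solutions $\phi_{u_\infty}$ to the cut-off data and verify that their high-regularity norms obey~\cref{eq:current:tame} uniformly in $u_\infty$; lower semicontinuity of the norms under the established weak limit (and uniqueness of the limit in the class of \cref{thm:current:metric_scattering}) then transfers~\cref{eq:current:tame} to the scattering solution.

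The main technical obstacle is bookkeeping in the commutator step: one must ensure that at each level of the induction, no term is produced in which both $H$ and $\phi$ carry the maximum number of derivatives simultaneously. This is exactly the role of the low-regularity threshold ($k=3$ in~\cref{eq:current:tame:proof:product}), and requires checking the worst case of~\cref{def:en:short_range}-type commutators carefully against the admissible structure~\cref{eq:current:admissible_h}. Once this accounting is in place, the argument is a straightforward tame-energy-estimate adaptation of the earlier proofs and loses no further smallness.
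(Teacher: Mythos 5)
Your proof takes essentially the same approach as the paper's: both establish a base case at $k=5$ via \cref{prop:current:metric_perturbed_estimate}, then induct on $k$ using the tame product estimate of \cref{lemma:scat:sobolev} to distribute derivatives in the commutator $[\tilde{\Box}_H,\Vb^{k}]\phi$, absorbing the terms where $\phi$ carries the top derivatives via the smallness of $\norm{f_H}_{\Hb^{\vec{a}^H;4}(\Deps)}$, and reading off the tame product from the terms where $f_H$ carries the top derivatives. The one place where your sketch is lighter than the paper's argument is the handling of \emph{intermediate}-regularity commutator terms. The paper replaces the two-term tame estimate by an explicit three-term variant (their \cref{app:en:proof1}), whose third term has $f_H$ at an intermediate level $4$ and $\psi$ at level $k-2$; this term is neither small nor controllable by the base case, and is closed by invoking the inductive hypothesis at level $k-1$ together with the bound $\norm{f_H}_{\Hb^{\vec{a}^H;5}(\Deps)}\leq 1$. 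Your dichotomy (``either $H$ or $\phi$ carries $k$ derivatives'') implicitly leans on the two-term Moser estimate to take care of the middle, which is in principle fine, but it glosses over exactly the step where the induction hypothesis enters in the paper. You would want to make this induction step explicit rather than folding it into ``distributes the $k$ derivatives''. Part (b) of your argument matches the paper's (which simply says the statement is immediate once (a) is in hand): you run the tame estimate uniformly along the finite approximating sequence $\phi_{u_\infty}$ and pass to the limit.
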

\begin{proof}
	A similar statement is proved in Proposition 3.32 \cite{hintz_exterior_2023}, so we will be brief.
	First, one gets from \cref{prop:current:metric_perturbed_estimate} that, for $k=5$,
	\begin{equation}\label{eq:current:proof_tame}
		\norm{\Ve \psi}_{\Hb^{\vec{a},k}(\Deps)}\lesssim_{\norm{f_H}_{\Hb^{\vec{\epsilon};8}(\Deps)}} \norm{rf}_{\Hb^{\vec{a}^f;k}(\Deps)}.
	\end{equation}

	Next, one simply keeps track of the derivatives acting on $f_H$ after commuting $\tilde{\Box}_H$ in \cref{prop:current:metric_perturbed_estimate} using \cref{lemma:scat:sobolev}. We have, for any $k\geq5$,
	\begin{multline}\label{app:en:proof1}
		\norm{(\Ve^2\psi)(\Vb f_H)}_{\Hb^{\vec{a}^{\tilde{f}};k-1}(\Deps)}\leq \norm{\Vb f_H}_{\Hb^{\vec{a}^H-\vec{\epsilon};3}(\Deps)}\norm{\Ve^2\psi}_{\Hb^{\vec{a};k-1}(\Deps)}\\+
		C_k\norm{ f_H}_{\Hb^{\vec{a}^{H};k}(\Deps)}\norm{\psi}_{\Hb^{\vec{a};5}(\Deps)}+C_k\norm{\Vb f_H}_{\Hb^{\vec{a}^H-\vec{\epsilon};4}(\Deps)}\norm{\Ve^2\psi}_{\Hb^{\vec{a};k-2}(\Deps)}
	\end{multline}
	for some strongly admissible $\vec{a}^{\tilde{f}}$ and $\epsilon>0$.
	Indeed, this follows by expanding the norm definition from \cref{not:eq:explicit_norm} and using Sobolev embedding.
	
	Finally, we use induction on $k$ to prove \cref{eq:current:tame}.
	For $k\leq5$, \cref{eq:current:proof_tame} already suffices.
	Next we treat the induction step and assume that \cref{eq:current:tame} already holds for $k'\leq k-1$.
	In \cref{app:en:proof1}, the first term on the right hand side can be absorbed to the left of \cref{eq:current:tame} by assumption on $f_H$;
	the second term can be estimated via \cref{eq:current:proof_tame}; the third is already controlled by induction on $k'$. This proves $a)$. 
	
	The proof of $b)$ is then immediate.
\end{proof}

We have finally gathered all the necessary ingredients to prove scattering for general quasilinear perturbations of the form $\tilde{\Box}_H$. Let us first provide definitions of admissible metric perturbations and scattering solutions, extending \cref{def:en:short_range}:
\begin{defi}\label{def:current:quasilinear}  
	Fix $\vec{a}$ admissible.
	\begin{enumerate}[label=\textbf{P4}]
		\item \textbf{}  	Let $H[\phi]$ be a linear combination of tensors of $\mathrm{V}$  (see \cref{eq:current:admissible_h}) with coefficients of the form
		\begin{equation}\label{eq:current:admissible_h_quasi}
			f_H\prod_{i=1}^{n^H-1}\Diff_{\b}(r\phi),\qquad f_H\in\Hb^{\vec{a}^H;\infty}(\D),
		\end{equation}
		with \emph{weight} $\vec{a}^H$ and \emph{degree} $n^H$ satisfying that $\vec{a}^H+n^H\vec{a}$ is strongly admissible.
		We call $H^{\mu\nu}\nabla_\mu\nabla_\nu$ an \emph{admissible quasilinear perturbation of general type} and the corresponding $H^{\mu\nu}$ an \emph{admissible metric perturbation}.
		We call $\delta=\min(\vec{a}^H+(n^H-1)\vec{a})$ \emph{the gap} of $H$.
		
		We say that $H[\phi]$ is compatible with the no incoming radiation condition if $f_H\in\Hbt^{\vec{a}^H;\infty}(\D)$ and if the only differentiated terms in \cref{eq:current:admissible_h_quasi} are of the form $\Vbt(r\phi)$.
	\end{enumerate}
\end{defi}
We also  generalise our notion of a scattering solution:
\begin{defi}
	Fix an admissible quasilinear perturbation of general type $H$, short-range semilinear perturbations $\mathcal{N}$, and linear perturbations $A$ (which may include both long-range and  short-range potentials, cf.~\cref{def:en:long_range} and \cref{def:en:short_range}).
	For $\psi^{\incone}\in\Hb^{a_-;4}(\incone)$, we call $r\phi\in\Hb^{\vec{a};k}(\D)$ for $k\geq 1$ a scattering solution with no incoming radiation to
	\begin{equation}\label{eq:current:quasi_box}
		\tilde\Box_H\phi+\mathcal{N}[\phi]+A[\phi]=f
	\end{equation}
	if $\phi$ solves \cref{eq:current:quasi_box}, and if, for any $v_{\infty}<\infty$, 
	\begin{equation}
		r\phi|_{\incone}=\psi^{\incone},\qquad \pv\psi\in\Hb^{0+;0}(\D^{v_{\infty}}_{v_0}).
	\end{equation}
\end{defi}

\begin{thm}\label{thm:current:quasilinear_scattering}
	Let's fix $\vec{a},\vec{a}^f$ admissible.
	Let $k\in\N_{\geq 298}$, and assume $\psi^{\incone}\in\Hb^{a_-;2k+2}(\incone)$, $ rf\in\Hb^{\vec{a}^f;k}(\D)$, $T^jf\in\Hb^{a_{-}^f+1;2(k-j)}(\incone)$ for all $j\leq k$.
	Let $H$ be an admissible quasilinear perturbation of general type such that $H^{vv}[\phi]|_{\incone}=0$,\footnote{This condition only depends on $\phi|_{\incone}$ and not $T\phi|_{\incone}$. Indeed, we can recover $T\phi$ by \cref{lemma:en:recovering_initial_data} under the  \emph{assumption} that $H^{vv}|_{\incone}=0$. Then, we use this to evaluate whether $H^{vv}|_{\incone}=0$.} let $\mathcal{N}$ be semilinear short-range perturbations and $A$ long-range and short-range potentials.
	
	Then there exists $-u_0$ sufficiently large such that a scattering solution to \cref{eq:current:quasi_box} with no incoming radiation exists and satisfies $\Ve r\phi\in\Hb^{\vec{a};k}(\Deps)$.
	Finally, the solution is unique in the space $\Ve r\phi\in\Hb^{(a_-,-\infty,-\infty);5}(\D)$.
	
	If in addition $\mathcal{N},A,H$ are compatible with no incoming radiation then $rf\in\Hbt^{\vec{a}^f;k}(\D)\implies r\phi\in\Hbt^{\vec{a};k}(\D^\delta)$.
	
	If $\mathcal{N},A,H,f$ as well as the data are polyhomogeneous (or have polyhomogeneous coefficients), then the solution $\phi$ is also polyhomogeneous on $\D$.
\end{thm}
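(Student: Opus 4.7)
The approach will be Nash--Moser iteration, with each step reducing to a linear semi-global scattering problem solvable by \cref{thm:current:metric_scattering}, and the tame estimate from \cref{prop:current:tame_estimate} controlling derivative losses along the iteration. The condition $H^{vv}[\phi]|_{\incone} = 0$ is essential: it ensures that $\incone$ is null for the perturbed metric $\mathbf{g}^{\mu\nu} = \eta^{\mu\nu} + H^{\mu\nu}[\phi]$, which is the basic compatibility required for posing characteristic data on $\incone$ at each step.

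I would begin by constructing a base approximation $\phi^{(0)}$ as the scattering solution to the Minkowskian problem $\Box_\eta\phi^{(0)} + \mathcal{N}[\phi^{(0)}] + A[\phi^{(0)}] = f$ via \cref{thm:scat:scat_general} (or \cref{thm:scat:long} in presence of a long-range potential). Given $\phi^{(n)}$, I would define $\phi^{(n+1)} = \phi^{(n)} + \delta\phi^{(n)}$, where $\delta\phi^{(n)}$ is the scattering solution of the linearisation of \cref{eq:current:quasi_box} about $\phi^{(n)}$, posed with trivial data on $\incone$ and source equal to the residual $-\mathcal{R}[\phi^{(n)}] := -\bigl(\tilde{\Box}_{H[\phi^{(n)}]}\phi^{(n)} + \mathcal{N}[\phi^{(n)}] + A[\phi^{(n)}] - f\bigr)$. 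By \cref{lemma:en:short_range_computations} combined with the definitions of short-range $\mathcal{N}$, $A$, the linearised operator is an admissible linear short-range perturbation of $\tilde{\Box}_{H[\phi^{(n)}]}$; after a finite sequence of coordinate changes as in Ex.~2) of \cref{sec:current:linearscattering} adapted to the current iterate, $H[\phi^{(n)}]$ falls into the class \cref{eq:current:admissible_h} addressed by \cref{lemma:current:perturbative_h}, so that \cref{thm:current:metric_scattering} delivers $\delta\phi^{(n)}$ with the appropriate decay. Crucially, the constraint $H^{vv}[\phi^{(n+1)}]|_{\incone} = 0$ is preserved at each step because $\delta\phi^{(n)}$ has trivial data on $\incone$ and therefore $\phi^{(n)}|_{\incone}$ (and, by \cref{lemma:en:recovering_initial_data}, all its derivatives along $\incone$) remains constant throughout the iteration.

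The hard part will be closing the Nash--Moser scheme against the loss of derivatives: recovering transversal data on $\incone$ costs roughly $k+2$ derivatives per solve (\cref{lemma:en:recovering_initial_data}), the tame estimate introduces further losses through \cref{lemma:scat:sobolev}-type product inequalities, and the quasilinear structure produces commutators with the multipliers of \cref{lemma:current:past,lemma:current:future} that must be absorbed. Following the standard Nash--Moser template (cf.~the scheme in \cite{hintz_stability_2020}), I would introduce smoothing operators $S_{N_n}$ with $N_n = N_0 e^{\alpha^n}$, and interpolate, at each step, the tame bound of \cref{prop:current:tame_estimate}(b) between the high regularity $k$ and a low regularity where the residual is quadratically small in $\delta\phi^{(n-1)}$. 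The threshold $k \geq 298$ is precisely the total derivative budget required so that a positive margin remains after accounting for all these losses plus the geometric contraction of the Nash--Moser scheme. Uniqueness in $\Hb^{;5}$ follows by differencing two solutions and applying the linear uniqueness statement contained in \cref{thm:current:metric_scattering} (i.e.\ the analogue of \cref{lemma:en:past_estimate_L2_perturbed} in the geometric framework of \cref{sec:current:linearmetric}), together with local well-posedness \cref{thm:scat:local} to extend vanishing near $\scrim$ to the whole of $\Deps$.

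For the two refinements, I would not change the scheme but only the function space in which the iteration is run. For the no-incoming-radiation statement, I would replace $\Hb$ by $\Hbt$ throughout and invoke the no-incoming-radiation variant \cref{eq:current:linear_quasilinear_estimate_no_incoming} of the linear estimate at each step; this closes because $H$, $\mathcal{N}$, $A$, $f$ are all compatible with no incoming radiation, so the coefficients generated by the linearisation remain in the relevant $\Hbt$ class. For propagation of polyhomogeneity, I would apply \cref{thm:app:general} (or \cref{thm:app:general2}/\cref{thm:app3} in the presence of incoming radiation or long-range potentials) to each linearised scattering problem: since the coefficients of $H[\phi^{(n)}]$, $\mathcal{N}[\phi^{(n)}]$, $A[\phi^{(n)}]$ remain polyhomogeneous whenever $\phi^{(n)}$ is, the index set stabilises after finitely many steps at the minimal $\vec{\E}^\phi$ dictated by the compatibility conditions \cref{eq:app:general_index_set}, and Nash--Moser convergence transfers this polyhomogeneous structure to the limit.
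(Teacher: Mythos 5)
Your proposal follows the same basic architecture as the paper's proof: Nash--Moser iteration in which each step is a linear semi-global scattering solve (via \cref{thm:current:metric_scattering}) with trivial characteristic data, closed with the tame estimate of \cref{prop:current:tame_estimate}, and uniqueness by differencing against the linear coercivity estimate. That much is correct, and your observation that $H^{vv}[\phi]|_{\incone}=0$ is preserved along the iteration because the increments vanish on $\incone$ is also in line with the paper.

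There is, however, a genuine gap in the polyhomogeneity argument. You propose to run the iteration in the polyhomogeneous class, observe that the index sets ``stabilise after finitely many steps,'' and then claim ``Nash--Moser convergence transfers this polyhomogeneous structure to the limit.'' This does not work. Nash--Moser schemes apply smoothing operators $S_{N_n}$ to the iterates before each linear solve, and smoothing destroys polyhomogeneity; even without smoothing, the index sets in a Picard iteration typically grow with each step and stabilise only in the limit, and polyhomogeneity is not a closed condition in the (finite-regularity) topology in which the iterates converge, so it does not automatically pass to the limit. The paper instead proves polyhomogeneity \emph{a posteriori}: after the solution $\phi$ is constructed in a weighted conormal space, one inserts $\phi$ into the nonlinearity, treats $\mathcal{P}[\phi]$ as an inhomogeneity with improved decay/expansion (owing to its short-range structure), and iterates the propagation-of-polyhomogeneity argument of \cref{thm:app:general} on the fixed solution $\phi$, gaining one further order in the expansion at each pass. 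This is the argument you should use, not convergence of the iterates.

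Two smaller points. First, your invocation of Ex.~2) coordinate changes to bring $H[\phi^{(n)}]$ into the admissible class is unnecessary: the hypothesis already requires $H$ to be an admissible quasilinear perturbation of general type in the sense of \cref{def:current:quasilinear}, i.e.\ already a linear combination of elements of $\mathrm{V}$ from \cref{eq:current:admissible_h}, so \cref{lemma:current:perturbative_h} applies directly to $H[\phi^{(n)}]$ without any coordinate gymnastics. Second, you omit the post--Nash--Moser regularity recovery: the Nash--Moser scheme (following \cite{raymond_simple_1989} with derivative loss $d=3$) delivers $\Ve r\phi\in\Hb^{\vec{a};d}(\D^\delta)$ only; one then has to commute the equation and use propagation of regularity to upgrade this to the claimed $\Hb^{\vec{a};k}$ membership, which is the paper's Step 3.
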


\begin{proof}
	We only prove scattering, as the no incoming radiation estimate follows by replacing \cref{eq:current:linear_quasilinear_estimate} by \cref{eq:current:linear_quasilinear_estimate_no_incoming} and polyhomogeneity follows as in \cref{thm:app:general}.
	
	\emph{Step 1: Removing the data.}
	Note that all $T^j\psi$ derivatives of $\phi$ can be recovered on $\incone$ as in \cref{lemma:en:recovering_initial_data} from $\psi^{\incone}$ using that $H^{vv}=0$ along $\incone$.
	These satisfy $T^j\psi|_{\incone}\in\Hb^{a_-;2(k+1-j)}(\incone)$ for $j\leq k+1$.
	
	Let $\psi^{(0)}\in\Hb^{\vec{a};k_0}(\D)$ be any function satisfying $T^j\psi^{(0)}|_{\incone}=T^j\psi|_{\incone}$ for $j\leq k+1$ and $\supp\psi^{(0)}\subset\{v<v_0+1\}$.
	Writing $\psi^{(1)}=\psi-\psi^{(0)}$, at a loss of regularity, it suffices to prove \cref{thm:current:quasilinear_scattering} with $T^j\phi|_{\incone}=0$ $\forall j\leq k_0$.
	Equivalently, we are looking for $r\phi\in  q_{k_0}\Hb^{\vec{a}}(\D)$ where $ q_{k_0}=1+(v-v_0)^{-k_0}$.

	\emph{Step 2: Iteration.}
	Fix $k_0\geq k$.
	Let $L_{\Phi_1}=(\tilde\Box_{H[\Phi_1]}+V[\Phi_1])$ be the linearisation of \cref{eq:current:quasi_box} around $\Phi_1$.
	Similarly as in \cref{lemma:en:short_range_computations}, we get that for $\Phi_1\in  q_{k_0}\Hb^{\vec{a};k}(\Deps)$, $P_g[\Phi_1]:=\tilde{\Box}_{H[\Phi_1]},V[\Phi_1]$ are short range metric and long range potential perturbations for $\vec{a}$ with coefficients of $k-1$ regularity.
	Therefore, applying \cref{prop:current:tame_estimate} together with the extra weights from \cref{rem:current:weights_towards_incone}, we get for $10\leq j\leq k-1$, and any admissible inhomogeneous weight $\vec{a}^\mathfrak{f}$, the scattering solution with no data for $L_{\Phi_1}\Phi=\mathfrak{f}\in q_{k_0}\Hb^{\vec{a}^{\mathfrak{f}};j}(\Deps)$ satisfies 
	\begin{equation}\label{eq:app:nash_step1}
		\norm{ q_{k_0}\Ve r\Phi}_{\Hb^{\vec{a};j}(\Deps)}\lesssim \norm{ q_{k_0}r\mathfrak{f}}_{\Hb^{\vec{a}^f;j}(\Deps)}+\norm{ q_{k_0}r\Phi_1}_{\Hb^{\vec{a};j+1}(\Deps)} \norm{ q_{k_0}r\mathfrak{f}}_{\Hb^{\vec{a}^\mathfrak{f};5}(\Deps)}.
	\end{equation}
	Now, we can use a Nash-Moser iteration scheme of \cite{raymond_simple_1989} with a loss of derivative $d=3$ as in \cite[Corollary~3.36]{hintz_exterior_2023}  to get that there exists a scattering solution $\Ve r\phi\in\Hb^{\vec{a};d}(\D^\delta)$ provided that $rf\in\Hb^{\vec{a}^f;k}(\D^\delta)$
	whenever $\norm{r\Phi_0}_{\Hb^{\vec{a};2d}(\Deps)}$ is sufficiently small depending on $\norm{r\Phi_0}_{\Hb^{\vec{a};k}(\Deps)}$ with $k\geq 297 = 16d^2 + 43d + 24$ (and on the implicit constant in \cref{eq:app:nash_step1}, which in turn depends on the size of the coefficients of the perturbations and can be made small by choosing $u_0$ sufficiently large), where the first iterate $\Phi_0$ is defined via
	\begin{equation}
		\Box_\eta\Phi_0=f,\quad r\phi|_{\incone}=0,\quad \pv r\Phi_0\in\Hb^{0+,a_0;k}(\D^-).
	\end{equation}
	
	{\emph{Step 3: Regularity.}}
	Let us now improve the regularity of $r\phi\in\Hb^{\vec{a};d}(\D^\delta)$ to $r\phi\in\Hb^{\vec{a};k}(\D^\delta)$.
	This follows the same strategy as the propagation of regularity for quasilinear wave equations, where we use \cref{thm:current:metric_scattering} to propagate \emph{linearly} higher order norms on an already sufficiently regular background.
	
	Commuting \cref{eq:current:quasi_box} with the symmetries of $\tilde{\Box}_0$ up to three orders and schematically writing $\Vb^3\phi$ for this, we obtain an equation of the form $L_{\phi}\Vb^3\phi+\mathrm{V}\Ve\Diff_{\b}^3\phi+f_{\phi}=\Vb^3f$,
	where $\mathrm{V}$ is a short-range admissible linear perturbation depending on $\Diff_{\b}^2\phi$ and $\norm{f_\phi}_{\Hb^{\vec{a}+(1,2,1)+\delta;k}}\lesssim \norm{\Vb\Diff^2_{\b}\phi}_{\Hb^{\vec{a};k}}^2$ for all $k\geq2$.
	We apply the linear result of \cref{prop:current:tame_estimate} to $\Vb^3\phi$ with $k=d-2$ to obtain for $j=1$
	\begin{equation}\label{eq:currernt:improved_reg}
		\norm{\Ve r\phi}_{\Hb^{\vec{a};d+j}(\D^\delta)}\lesssim \norm{rf}_{\Hb^{\vec{a}^f;d+j}(\D^\delta)}+\norm{\Ve r\phi}_{\Hb^{\vec{a};d+j-1}}^2+\norm{\Ve r\phi}_{\Hb^{\vec{a};d}(\D^\delta)}\norm{rf}_{\Hb^{\vec{a}^f;5}(\D^\delta)},
	\end{equation}
	on a region $\D^\delta\cap\{u<u_0\}$ for some possibly large $-u_0$.
	Commuting with $\Vb^{j+2}$ and inducting on the regularity index, we obtain \cref{eq:currernt:improved_reg} for all $j\geq 1$ in $\D^\delta\cap\{u<u_j\}$ with $-u_j\gg_j 1$.
	Notice that each time, we only apply \cref{prop:current:tame_estimate} with $k=d-2$, so we only need $d$ regularity on the coefficients of $L_\phi$.
	For $j=k-d$, we combine this with the a priori control on $\norm{\Ve r\phi}_{\Hb^{\vec{a};d}}$ in terms of $\norm{rf}_{\Hb^{\vec{a}^f;k}(\D^\delta)}$ from Step 2 to close the estimate at the same order.
	
	Regularity for $u>u_j$ follows from standard propagation of regularity for quasilinear wave equations.
	
	\emph{Step 4: Uniqueness.}
	Consider two scattering solutions with no incoming radiation $\phi_1,\phi_2$.
	By assumption $\Ve \phi_i \in \Hb^{\delta;5}(\D^{v_1}_{v_0})$ for some $\delta>0$.
	Fix a smooth cutoff $\chi(x)$ with $\chi|_{x>1}=1$ and $\chi|_{x<1/2}=0$.
	Then $\bar\phi_{u_\infty}=\chi(u/u_\infty)(\phi_1-\phi_2)$ satisfies a linear equation with short range linear and quasilinear perturbation of general type and inhomogeneity uniformly bounded in $\Hb^{\delta}(\D^{v_1}_{v_0})$.
	Using \cref{prop:current:metric_perturbed_estimate} with $a_-=\delta/2, k=0$ and sending $u_\infty\to-\infty$ yields uniqueness in $\D^{v_1}_{v_0}$ for all $v_1<\infty$.
\end{proof}

\begin{rem}[Proof by bootstrap]
	\cref{thm:current:quasilinear_scattering} can be proved with fewer derivatives by using a bootstrap  as for \cref{thm:scat:scat_general}. 
	However, for quasilinear problems, the characteristic IVP is no longer suitable (as it would not be robust enough for a limiting argument), so one instead has to use the hypersurfaces from \cref{lemma:current:hypersurfaces} to set up a sequence of finite characteristic-spacelike IVPs and then take the limit. 
	We leave the details to the interested reader.
\end{rem}
    \newpage
\section{Analysis of the Einstein vacuum equations in harmonic gauge}\label{sec:EVE}
The aim of this section is to construct scattering solutions to the Einstein vacuum equations (EVE) in a neighbourhood of spacelike infinity.
Harmonic gauge is the most natural choice for the framework of our paper, as the EVE then  immediately fall into our general setting in the region $\D^-$, and need only a slight modification in $\D^+$.
We briefly recall the usual harmonic gauge fixing procedure:
The Einstein vacuum equations with zero cosmological constant read
\begin{equation}\label{eq:EVE:EVE}\tag{EVE}
    \mathrm{Ric}(g)=0
\end{equation}
for a Lorentzian metric $g$.
To obtain a hyperbolic system, we introduce a gauge term, following \cite{lindblad_global_2005,hintz_stability_2020}:
Introduce the gauge 1-form
\begin{equation}\label{eq:EVE:upsilon}
    \Upsilon_\nu:=(g\eta^{-1}\delta_gG_g\eta)_\nu=g_{\mu\nu}g^{\kappa\lambda}(\Gamma(g)^\nu_{\kappa\lambda}-\Gamma(\eta)^\nu_{\kappa\lambda}),
\end{equation}
where $\eta$ is the Minkowski metric, and where $\delta_g$, $G_g=1-\frac12 g\tr_g$ are the divergence and the trace reversal  operators with respect to $g$.
Fix a (1,2) tensor field $\mathrm{X}$ depending only on $\Upsilon,g,\nabla_\eta g$ ($\nabla_\eta$ denoting the Minkowski covariant derivative).
Write $\Upsilon\lrcorner \mathrm{X}$ for the contraction in its first index and $\delta_g^\star$ for the symmetric gradient with respect to $g$.
Then, we obtain the reduced Einstein vacuum equations
\begin{equation}\label{eq:EVE:rEVE}\tag{rEVE}
    \mathrm{Ric}(g)-\delta_g^\star \Upsilon-\Upsilon\lrcorner \mathrm{X}=0.
\end{equation}
An important fact about \cref{eq:EVE:rEVE} is, that using the Bianchi identity $\delta_gG_g\mathrm{Ric}(g)=0$ along with $\mathrm{Ric}(g)=0$, we obtain that $\Upsilon$ satisfies $\Box_g\Upsilon=-\delta_g G_g(\Upsilon\lrcorner \mathrm{X})$, where $\Box_g$ is the covariant wave operator associated to $g$.
Therefore, a solution to \cref{eq:EVE:rEVE} that has $\Upsilon=0$ also yields a solution to \cref{eq:EVE:EVE}, furthermore $\Upsilon=0$ is propagated by \cref{eq:EVE:rEVE}.
Solutions with $\Upsilon=0$ are said to be in harmonic gauge.
The naiv $\mathrm{X}=0$ choice for metric perturbations $g=\eta+h$ in rectangular coordinates $h_{\mu\nu}=h(\partial_\mu,\partial_\nu)$ yields
\begin{equation}
    \Box_{\eta+h}h_\mu\nu=Q_{\mu\nu}[\partial h],
\end{equation}
where the right hand side contains quadratic in $\partial h$ terms with smooth in $h$ coefficients, see \cite{lindblad_global_2005}.
An important, yet somewhat subtle, achievement of \cite{lindblad_global_2005} is to choose an appropriate $\mathrm{X}$ such that the resulting wave equation for $g$ has semilinear terms of a nice form (in terms of null conditions), see already \cref{eq:EVE:g_original}.

It is well known that one cannot straight-forwardly associate much physical significance to results in harmonic gauge (though one can, for instance, prove the Bondi mass loss formula), so the point of this section is merely to conclude that the solution exists and has a polyhomogeneous expansion towards all regions, provided the data also have one. In particular, we do not concern ourselves with determining the coefficients in the asymptotic expansions (see, however, \cref{sec:EVE:fast_decaying_scattering}).

Let us give a rough version of the main result:
\begin{thm}[See \cref{cor:EVE:final_form} for precise decay rates]\label{thm:EVE:main_scattering}
    Let $k\geq300,a_0\in(-1/2,0]$  and let $\mathfrak{h}_{\mathcal{UU}}\in\Hb^{a_0;k}(\scrim_{v>v_0+1})$ be scattering data for \cref{eq:EVE:EVE} in harmonic gauge satisfying \cref{eq:EVE:restriction_on_scattering_data}, and assume that a suitably decaying solution---at least $rh_{\mathcal{UU}}\in \Hb^{a_-}(\mathcal{D}^{v_0+1}_{v_0})$ with $a_->-1/2$---in harmonic gauge already exists in the slab $\mathcal{D}^{v_0+1}_{v_0}$ (see \cref{ass:EVE:data}).
    Then, there exists $-u_0$ sufficiently large such that \cref{eq:EVE:EVE} admits a solution $g=\eta+h$ in the domain of dependence of $\incone,\scrim_{v>v_0}$, with $rh\in \Hb^{\left(a_-,a_0,a_0\right)-;k}(\D)$.
    Moreover, provided that $\mathfrak{h}$ and the solution in the initial slab $\D^{v_0+1}_{v_0}$ is polyhomogeneous, the solution is polyhomogeneous on a compactification associated to the blow-up of $\scrim,\scrip$ (associated to $g$).
\end{thm}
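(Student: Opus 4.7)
The plan is to treat the two regions $\D^-$ and $\D^+$ separately, since they require different techniques due to the different behaviour of the solution there: In $\D^-$, the harmonic gauge already produces a system falling within the scope of our short-range scattering framework (once one accounts for the weak null condition), whereas in $\D^+$ the slow $h \sim r^{-1/2}$ decay implies that the light cones diverge faster than in Schwarzschild, and the Minkowskian $\eta$ is not a good enough background to perturb around. We first decompose $h$ into components adapted to the null frame $\{\pu,\pv,e_A\}$ and write \cref{eq:EVE:rEVE} as a coupled system of scalar wave equations $\Box_\eta h_{\bullet\bullet} = \mathcal{N}_{\bullet\bullet}[h,\partial h] + Q_{\bullet\bullet}[\partial h,\partial h]$, carefully identifying which components satisfy transport rather than wave equations, following the decomposition of \cite{lindblad_global_2005}. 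Each component will carry its own admissible weight vector $\vec{a}_{\bullet\bullet}$.

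In $\D^-$, I would start from the solution in the initial slab $\D^{v_0+1}_{v_0}$ to extract characteristic data along some incoming cone $\incone$ located in the interior of the slab. The key subtlety is the weak null condition: the $(\partial_u h_{\mathcal{UU}})^2$-type semilinearities are not short-range for a component like $h_{\mathcal{LL}}$ that is expected to decay like $r^{-1}$ with a logarithmic correction. This is handled precisely by the scattering theory with weakly decaying polyhomogeneous data of \cref{thm:scat:weak_polyhom}: one first identifies the leading-order polyhomogeneous expansion (the logarithmic term) by solving the corresponding ODEs order by order, and then the remainder, for which the nonlinearities become short-range, is handled by the standard \cref{thm:scat:scat_incoming,thm:scat:scat_general}. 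Polyhomogeneity in $\D^-$ follows from \cref{thm:app:general2}.

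In $\D^+$, the principal obstacle is precisely the one advertised in the introduction: since $h \sim r^{-1/2}$ is merely the stronger-than-Schwarzschildean rate of \cite{bieri_extension_2010}, the quasilinear terms $h^{\mathcal{LL}} \pu^2 h_{\bullet\bullet}$ shift the outgoing null cones by more than logarithmically, so the $\rho_+ = |u|/r$ boundary defining function built from $\eta$ no longer tracks the genuine $\scrip$ of $g$. My plan is to construct an explicit modified ansatz $\tilde{g} = \eta + h_{\mathrm{back}}$, where $h_{\mathrm{back}}$ is chosen to absorb the leading-order behaviour of $h^{\mathcal{LL}}$ extracted from the $\D^-$ analysis together with the transport equation for $\tr h$, so that $\tilde{g}$ captures the correct asymptotic light cone structure. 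Concretely, one constructs a new optical function $\tilde{u}$ solving an eikonal equation for $\tilde{g}$ (in the spirit of \cref{item:current:metric_coordiante_change} in \cref{app:sec:current_computations}) and works on the corresponding compactification $\tilde{\D}^+$. The perturbation $h - h_{\mathrm{back}}$ is then expected to decay strictly faster, and the nonlinearities, when expanded around $\tilde g$, become admissible quasilinear perturbations of general type in the sense of \cref{def:current:quasilinear}. Existence then follows from the Nash--Moser scattering result \cref{thm:current:quasilinear_scattering}, and propagation of polyhomogeneity on $\tilde{\D}^+$ from the relevant extension of \cref{thm:app:general2} to systems with components of varying decay.

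The hardest step will be the construction of the modified ansatz $h_{\mathrm{back}}$ and the corresponding compactification $\tilde{\D}^+$: one must make sure that the ansatz is consistent across the transition from $\D^-$ to $\D^+$ (so that matching of the two scattering constructions on a transition cone can be carried out without loss of regularity), and one must verify that, after the change of optical function, all components of $h - h_{\mathrm{back}}$, together with the transport quantities such as $\tr h$, have their respective inhomogeneities and perturbations satisfying the strong admissibility condition required by \cref{thm:current:quasilinear_scattering} (including compatibility with the decay rates $\vec{a}_{\bullet\bullet}$, which differ component by component). Once this geometric setup is in place, the actual estimates and the iteration are essentially an application of the machinery developed in \cref{app:sec:current_computations}, and the polyhomogeneity statement follows from the same arguments as in \cref{sec:app:main}, adapted to the new compactification.
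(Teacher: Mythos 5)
Your overall architecture matches the paper's: split $\D^-$ from $\D^+$, handle the weak null condition in $\D^-$ by peeling off the logarithmic behaviour of $h_{LL}$ before applying short-range scattering, and in $\D^+$ correct for the stronger-than-Schwarzschildean light cone bending by a coordinate change and a modified ansatz before doing the energy estimates. Your $\D^-$ route (via \cref{thm:scat:weak_polyhom}) is morally equivalent to the paper's, which peels off the $\log(-u/v)\,\mathfrak{h}^{(0,1)}_{LL}$ term by hand into an explicit $h^{-,2}$ and then applies \cref{thm:current:quasilinear_scattering} to the remainder; either presentation works.

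There is, however, a genuine gap in the $\D^+$ analysis. You assert that once the ansatz $h_{\mathrm{back}}$ and the new optical function have been constructed, the remainder $h^2 = h - h_{\mathrm{back}}$ satisfies a system of admissible quasilinear wave equations to which \cref{thm:current:quasilinear_scattering} applies directly. This is not the case, and the paper is explicit about why (see the remark immediately preceding \cref{lemma:EVE:future_nonlinear}): even after subtracting the ansatz, the component $h^2_{LL}$ viewed as a solution of its \emph{wave} equation would only come with radiative $r^{-1}$ decay towards $\scrip$ (i.e.\ $rh^2_{LL} \in \Hb^{a_1,0-}(\D^+)$), and with only that estimate the quasilinear term $H^{LL}\,\pu\otimes\pu$ in \cref{eq:EVE:g_original} fails to be a short-range metric perturbation in the sense of \cref{def:current:quasilinear}. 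The extra order of decay $h^2_{LL} \sim r^{-1-a_1}$ that makes the perturbation admissible comes precisely from the \emph{transport} equation $\pu h^2_{LL} = A^2_{\mathrm{t}}[h^2] + \mathfrak{G}_{\mathrm{t}}[h^2] + \text{(error)}$ in \cref{eq:EVE:h2}. Since \cref{thm:current:quasilinear_scattering} is formulated for wave operators $\tilde{\Box}_H$ and has no mechanism for this transport gain, you cannot cite it as a black box here. What is actually needed is a hands-on bootstrap (as in \cref{lemma:EVE:future_nonlinear}) in which the top-order wave estimate from \cref{prop:current:tame_estimate} and the lower-order linear transport estimate for $h^2_{LL}$ (via \cref{corr:ODE:du_dv}) are iterated in tandem, so that the improved decay of $h^2_{LL}$ obtained from the transport equation feeds back into the short-range admissibility of the wave system, closing both bootstrap assumptions simultaneously. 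You do mention "the transport quantities such as $\tr h$" but treat them as though they can be folded into the Nash--Moser scattering framework, which understates both the role of the transport estimate and the fact that the relevant transport quantity is $h_{LL}$ itself, not the trace.
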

Let us already remark that the regularity $\Hb^{\left(a_-,a_0,a_0\right)-;k}(\D)$ and polyhomogeneity statement only holds if one is working in coordinates that correctly capture the deviation from the null geometry.
The necessary requirement is that the metric deviation is found to be strongly admissible as in \cref{def:current:quasilinear}, for which we need to find coordinates $u$ and $v$ that are approximately eikonal for $\eta+h$.
This requirement is verified in \cref{eq:EVE:solution_past} for~$\D^-$, similarly in \cref{eq:EVE:future_h1_size,eq:EVE:proof_bootstrap} for $\D^+$.
\begin{proof}
   We first construct an eikonal function in \cref{lemma:EVE:eikonal} within $\D^{v_0+1}_{v_0}$ using \cref{ass:EVE:data}, and then prove the result restricted to $\D^-$ in \cref{thm:EVE:scatteringpast} as an application of \cref{thm:current:quasilinear_scattering}. 
   We extend the result to the future in \cref{prop:EVE:ansatz} and \cref{lemma:EVE:future_nonlinear}.
\end{proof}

\paragraph{Structure of this section:} 
We first recall the form of EVE in harmonic gauge in \cref{sec:EVE:equations} from \cite{lindblad_global_2005} which, in particular, captures the well known weak null structure of the equations first noticed in \cite{lindblad_weak_2003}.
For well chosen decay rates, the system in $\D^-$ is a short-range quasilinear perturbation of $\Box_\eta$ with a long-range potential (the long-range potential only arising from frame projections).
The corresponding scattering theory in $\D^-$ is worked out in \cref{sec:EVE:ass,sec:EVE:eikonal,sec:EVE:proof}. 
We also give an example of a scattering solution with fast decay towards Schwarzschild in \cref{sec:EVE:fast_decaying_scattering}.

In $\D^+$, studied in \cref{sec:EVE:future}, the equations are no longer short-range perturbations of $\Box_\eta$, and we have to use the transport character of the gauge condition for one of the metric components, an element of the analysis well known since \cite{Lindblad2010}. 
The only subtlety in this region is that the harmonic gauge set up with respect to coordinates in a neighbourhood of $I^0$ suffers from \emph{light bending} stronger than that caused by the Schwarzschildean term.\footnote{For the same issue, see also \cite{leflochNonlinearStabilitySelfgravitating2024}.}
We resolve this issue by first constructing an ansatz correcting this bending; this is similar to \cref{item:current:metric_coordiante_change}.

\textit{We stress that, throughout the entire section,  just as in \cref{app:sec:current_computations}, all geometric operators such as covariant derivatives and musical isomorphism are performed with respect to the Minkowski metric $\eta$ unless otherwise noted.
This is to emphasise the fundamentally perturbative nature of the problem.}

\subsection{The form of the equations in \texorpdfstring{$\D^+$ and in $\D^-$}{D+ and D-}}\label{sec:EVE:equations}

Throughout this section, we will assume that $h$ (always to be thought of as in $g=\eta+h$) is a symmetric 2-tensor field on Minkowski space, and that in rectangular frame, the components $h_{\mu\nu}$ corresponding to  satisfy
\begin{equation}\label{eq:EVE:assumption_h}
    \norm{rh_{\mu\nu}}_{\Hb^{a',a',a';4}(\D)}\leq\epsilon
\end{equation}
for some $a'>-1/2$ and $\epsilon$ small  enough.\footnote{This requirement is to ensure that the only failure of $\eta+h$ to be a short range metric perturbation comes from $h_{LL}$ near $\scrim$ and from $h_{\Lbar\Lbar}$ near $\scrim$,respectively.
Indeed, the aim of this section is to show how to find $rh$ in the given class with extra decay for $h_{LL}$ or $h_{\Lbar\Lbar}$.}
Of course, this assumption will later be recovered via a bootstrap in our dynamical construction of solutions.

Let us first consider the form of the equations in $\D^+$. 
We introduce the labels $\Lbar,L,1,2$ corresponding to contraction with incoming null ($\pu$), outgoing null ($\pv$) and spherical vector fields ($S_1,S_2$)  which are defined  on the sphere $\{u=u_0,v=v_0\}$ and then extended to all of $\D$ via  parallel transport.\footnote{Of course, to locally trivialise the tensor bundle (and corresponding sections) over the sphere, we  need to use two coordinate charts: 
For instance, we can use the spherical coordinates around the $z$ axis $\theta,\phi,r$ and take $rS_1=\partial_\phi,\,rS_2=\partial_\theta$ in $\abs{\cos\theta}<9/10$ and, we can repeat the analogous procedure to construct vector fields via spherical coordinates around the  $y$ axis  in $\abs{y/r}<0.9$. From here on, we suppress this issue of multiple charts.}
%We emphasise that all these geometric objects are defined with respect to the Minkowski spacetime. Similarly, whenever we refer to covariant derivatives, it will be that of the Minkowski spacetime etc.
%Whenever an object is defined not with respect to $\eta$, we will make this clear in the notation.

As customary, we group the above labels into $\mathcal{T}=\{L,1,2\}$, $\mathcal{U}=\{\Lbar,L,1,2\}$, corresponding to \emph{good} (tangential to outgoing null hypersurfaces) and \emph{bad} (arbitrary) directions. 
For a tensor field $Y$ of rank $(0,2)$, we will then use $Y_{\mathcal{UU}}$ to refer to the \textit{components} of $Y$ in the above basis, and we will consider equations for these components as \emph{scalar} equations. 
Now, \cref{eq:EVE:rEVE} being a geometric equation for a symmetric $(0,2)$ tensor field $g=\eta+h$, in harmonic gauge, it can be written as a system of  equations for 10 \emph{scalar} functions, $h_{\mathcal{UU}}$.
Henceforth, we rid ourselves of most geometry and simply study the corresponding scalar system and write
\begin{itemize}
    \item $h$ for the collection of scalars  $h_{\mathcal{UU}}$ unless otherwise specified, such as in the proof of \cref{prop:EVE:ansatz};
    \item $F[h_{\mathcal{UT}}]$ for functions that depend only on the scalar quantities $h_{\mathcal{UT}}$;
    \item $\Box_\eta h_{\mathcal{UT}}=F_{\mathcal{UT}}$ when the scalar equation $\Box_\eta h_{\mu\nu}=F_{\mu\nu}$ holds for any labels $\mu\nu\in\mathcal{UT}$.
\end{itemize}
Bringing some of the results of \cite{lindblad_global_2005,Lindblad2010} into a form suitable for our paper's general framework, we then have:

\begin{prop}[Form of \cref{eq:EVE:rEVE} in $\D^+$]\label{thm:EVE:form_of_EVE}
	Assume \cref{eq:EVE:assumption_h}, and let $g=\eta+h$. There exists a choice of $\mathrm{X}$, vanishing when $h=0$, such that the Einstein vacuum equations in harmonic gauge \cref{eq:EVE:rEVE} in $\D^+$ can be written in the form 
	\begin{subequations}\label{eq:EVE:EVE_schematic}
		\begin{align}
			\bar{\Box}_{h}h_{\mathcal{T}\mathcal{U}}:=g^{\mu\nu}[h](\nabla_{\mu}\nabla_\nu h_{\mathcal{T}\mathcal{U}})&=\mathfrak{F}_{\mathcal{TU}}[h]=A_{\mathcal{T}\mathcal{U}}[h]+Q_{\mathcal{T}\mathcal{U}}[h]+G_{\mathcal{T}\mathcal{U}}[h]\label{eq:EVE:eve_L},\\
			\bar{\Box}_hh_{\Lbar\Lbar}&=\mathfrak{F}_{\Lbar\Lbar}[h]=A_{\Lbar\Lbar}[h]+Q_{\Lbar\Lbar}[h]+G_{\Lbar\Lbar}[h]+F_{\Lbar\Lbar}[h_{\mathcal{U}\mathcal{T}}]\label{eq:EVE:eve_Lbar},\\
			\partial_u h_{LL}&=A_{\mathrm{t}} [h]+G_{\mathrm{t}}[h],\label{eq:EVE:eve_Ltransport}
		\end{align}
	\end{subequations}
	
	where\footnote{We will from now on use the notation from \cref{rem:en:smoothperturbations}!} 
	\begin{enumerate}[itemsep=-1ex,label={\alph*)}]
		\item all the terms are formed of $\partial_t,\partial_x$ derivatives and coefficients that are smooth in coordinates $\{r^{-1},x/r,t/r\}$;
		\item $A_{\mathcal{UU}}$ are long-range potentials with weight $(2,2)$ arising from projecting onto the frame $\mathcal{U}$;
		\item the nonlinearities $Q_{\mathcal{UU}},G_{\mathcal{UU}}$  (using the notation of \cref{def:en:short_range}) have orders $n^Q=2$, $n^G=3$ and weights $(a^{Q}_0,a^{Q}_+)=(3,3/2)$, $(a^{G}_0,a^{G}_+)=(4,2)$ respectively;
		\item $Q$ is a quadratic function of $(\partial,r^{-1}) h$ with bounded coefficients;
		\item $G$ is a smooth function of $h$ vanishing at $h=0$ times $((\partial,r^{-1}) h)^2$ with bounded coefficients;
		\item the nonlinearity $G_{\mathrm{t}}$ is a smooth product of $h$ and $\{\pu,\pv,r^{-1}\sl\}h$;  $A_t$ is a linear term of the form $A_{\mathrm{t}}[h]=r^{-1}\Diff^1_{\b}[h]$;
		\item \cref{eq:EVE:eve_Ltransport} is the contraction of the gauge 1-form $\Upsilon_\mu$ with $\partial_t-\partial_r$ where $\Upsilon_\mu$ (defined in \cref{eq:EVE:upsilon}) solves the covariant wave equation on 1-forms: $\Box_g\Upsilon=0$; 
		\item $F$ depends on $\pu h_{\mathcal{U}\mathcal{T}}$ quadratically with bounded coefficients, that is $n^F=2$ and $(a^F_0,a^F_+)=(3,1)$;
		\item the metric is given by $g=\eta+h$ and the inverse is given by
		\begin{equation}\label{eq:EVE:g_original}
			g^{-1}=\eta^{-1}+H[h]=\eta-\frac14h_{LL}\pu\otimes\pu+\bar{H}_1[h]+\bar{H}_2[h],
		\end{equation}
		where $\bar{H}_1,\bar{H}_2$ are function of $h_{\mathcal{UU}}$ vanishing to order one and two respectively with bounded coefficients; moreover, they are metric perturbations (see \cref{def:current:quasilinear}) with orders {$n^{\bar{H}_1}=2$, $n^{\bar{H}_2}=3$} and weights $(3,3/2)$, $(4,2)$.
	\end{enumerate}	
\end{prop}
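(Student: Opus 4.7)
The plan is to transcribe the standard Lindblad--Rodnianski form of \cref{eq:EVE:rEVE} from \cite{lindblad_global_2005,Lindblad2010} into the null frame $\{L,\Lbar,S_1,S_2\}$, and to isolate the gauge content separately for the $h_{LL}$ component. I would begin by recalling that, for a suitable choice of $\mathrm{X}$ vanishing at $h=0$ (the choice being dictated exactly by the desired split below), \cref{eq:EVE:rEVE} in rectangular coordinates reads schematically $\tilde{\Box}_g h_{\alpha\beta} = P_{\alpha\beta}(\partial h,\partial h) + Q_{\alpha\beta}(\partial h,\partial h) + G_{\alpha\beta}[h](\partial h,\partial h)$, where $Q$ and $G$ already have the structural properties (c)--(e) in rectangular form, and $P$ is the semilinear term carrying the failure of the classical null condition. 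The smoothness claim (a) is immediate, since only Minkowski Christoffel symbols and the $\eta$-inverse of $g$ (bounded by \cref{eq:EVE:assumption_h}) enter.

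The second step is the frame conversion. Since the chosen frame vectors are Minkowski-parallel only along $\partial_t$ on a single sphere, commuting $\bar{\Box}_g$ past the null-frame projection produces correction terms of schematic form $r^{-1}(\nabla h)_{\mathcal{UU}}$ and $r^{-2} h_{\mathcal{UU}}$, which are exactly the long-range potentials $A_{\mathcal{UU}}$ of (b) with weight $(2,2)$ towards $I^0$ and which meet the requirements of \cref{def:en:long_range}. The weak null structure of $P$ proved in \cite{lindblad_global_2005} then ensures that its projection onto any $\mathcal{TU}$-slot vanishes modulo terms of the same form as $Q$ and $G$, producing \cref{eq:EVE:eve_L}; for the $\Lbar\Lbar$ projection, $P$ survives but only as a quadratic form in $\pv h_{\mathcal{UT}}$ alone, yielding precisely the $F_{\Lbar\Lbar}[h_{\mathcal{UT}}]$ of (h) with weights $(a^F_0,a^F_+)=(3,1)$. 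The order and weight checks (b)--(h) then reduce to inspecting how the nonlinearities group, using $\partial_\alpha \in \rho_0 \Diff^1_{\b}$ in $\D^+$.

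For \cref{eq:EVE:eve_Ltransport} I would bypass the wave equation entirely and exploit the harmonic gauge condition $\Upsilon=0$: a direct computation from \cref{eq:EVE:upsilon} shows that $\Upsilon_L$ is linear in $\nabla h$ with principal part proportional to $\pu h_{LL}$ (the candidate $\pv h_{\mathcal{UU}}$ contributions cancel algebraically using $\eta^{LL}=0$); the remainder splits into a linear piece $A_{\mathrm{t}} = r^{-1}\Diff^1_{\b} h$ coming from the Minkowski Christoffel symbols in the frame projection, and a nonlinear piece $G_{\mathrm{t}} = h\cdot\Diff^1_{\b} h$ coming from the difference $\Gamma(g)-\Gamma(\eta)$ in \cref{eq:EVE:upsilon}. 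Imposing $\Upsilon_L=0$ then gives \cref{eq:EVE:eve_Ltransport}. Claim (i) is then a purely algebraic expansion: from $g^{-1}=\eta^{-1}-\eta^{-1}h\eta^{-1}+O(h^2)$, together with $\eta^{LL}=\eta^{\Lbar\Lbar}=0$ and $\eta^{L\Lbar}=-\tfrac12$, the only linear correction with a $\pu\otimes\pu$ component is $(\eta^{L\Lbar})^2 h_{LL} = \tfrac14 h_{LL}$; the remaining linear pieces collect into $\bar{H}_1$ and the quadratic-and-higher pieces (which are genuinely smooth in $h$ under \cref{eq:EVE:assumption_h}) into $\bar{H}_2$, with the orders and weights required by \cref{def:current:quasilinear} following directly from the corresponding weights for $h$.

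The main obstacle is not conceptual but bookkeeping: one must verify carefully that the commutator of $\bar\Box_g$ with the null-frame projection introduces no worse than the claimed $(2,2)$-weight long-range piece $A_{\mathcal{UU}}$, and that $P$ really projects as described onto $\mathcal{TU}$ and $\Lbar\Lbar$ indices without leaking any $\partial\Lbar h_{\Lbar\Lbar}$ factor into the right-hand side. Both reductions are essentially already carried out in Section~2 of \cite{lindblad_global_2005}, and translating their computation into the $\b$-weighted language of \cref{def:en:short_range,def:current:quasilinear,def:en:long_range} is mechanical.
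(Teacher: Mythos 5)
Your overall strategy is the same as the paper's: transcribe the Lindblad--Rodnianski structure of \eqref{eq:EVE:rEVE} into the null frame, attribute the long-range potentials $A_{\mathcal{UU}}$ to commutators of $\bar\Box_g$ with the frame projection, and extract \eqref{eq:EVE:eve_Ltransport} from the gauge condition $\Upsilon=0$ rather than from a wave equation. Two of your details, however, are wrong in a way that matters for the weight bookkeeping that the statement is asserting.

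First, you write that $P$ survives in the $\Lbar\Lbar$ equation ``as a quadratic form in $\pv h_{\mathcal{UT}}$ alone.'' This should be $\pu h_{\mathcal{UT}}$, as item~(h) of the statement records. The distinction is not cosmetic: $\pv$ is the good derivative near $\scrip$ (it sits in $\rho_0\rho_+\Diff^1_{\b}$), so a quadratic form in $\pv h_{\mathcal{UT}}$ would carry weight $(a^F_0,a^F_+)=(3,3)$ and be absorbable into $Q$, destroying the whole point of singling out \eqref{eq:EVE:eve_Lbar}. It is precisely because $F_{\Lbar\Lbar}$ is quadratic in the \emph{bad} derivative $\pu$ that the weak null hierarchy (good components $h_{\mathcal{TU}}$ feeding the bad component $h_{\Lbar\Lbar}$) is nontrivial, and that the weight is the borderline $(3,1)$.

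Second, your parenthetical claim that in $\Upsilon_L$ ``the candidate $\pv h_{\mathcal{UU}}$ contributions cancel algebraically using $\eta^{LL}=0$'' does not hold. Writing $\Upsilon_L\sim g^{\kappa\lambda}\partial_\kappa h_{\lambda L}-\tfrac12\partial_L(g^{\kappa\lambda}h_{\kappa\lambda})$, the cross terms $\eta^{L\Lbar}\partial_L h_{\Lbar L}$ and $-\tfrac12\partial_L\eta^{L\Lbar}h_{L\Lbar}$ do partially combine, but the angular trace piece $-\tfrac12\pv(r^{-2}g_{S^2}^{AB}h_{AB})$ survives and indeed appears explicitly as the $-r^{-1}g_{S^2}^{AB}\pv h_{AB}$ term in \eqref{eq:EVE:constraints:puhLL}. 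These terms do not need to cancel: as $\pv$ already lives in $r^{-1}\Diff^1_{\b}$ near $\scrip$, they fall within the claimed $A_{\mathrm t}[h]=r^{-1}\Diff^1_{\b}[h]$. Finally, you state the choice of $\mathrm X$ only implicitly; the paper pins it down by listing the precise $\Upsilon$-terms dropped in passing from \cref{eq:EVE:EVE} to the reduced system in \cite{lindblad_global_2005}, which is what guarantees that $\mathrm X$ vanishes at $h=0$ and is algebraic in $\Upsilon$, $g$, $\nabla_\eta g$. You should supply this identification, since the proposition asserts its existence.
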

\begin{rem}
    Notice that the splitting into \cref{eq:EVE:eve_L} and \cref{eq:EVE:eve_Lbar} corresponds exactly to the usual presentation of the weak null condition, with the "bad" term in \cref{eq:EVE:eve_Lbar}, namely $F_{\bar{L}\bar{L}}$, depending only on the components $h_{\mathcal{TU}}$, which themselves satisfy a "good" equation, namely \cref{eq:EVE:eve_L}.
\end{rem}
\begin{rem}
    The definition of $\bar{\Box}_h$ differs from that of $\tilde{\Box}_H$ in \cref{eq:current:quasilinear_eq}. This difference only generates semilinear perturbations, as $g^{\mu\nu}$ does not depend on derivatives of $h$.
\end{rem}

\begin{rem}\label{rem:EVE:projectionlongrange}
    The appearance of the long-range potential terms in \cref{eq:EVE:EVE_schematic} is due to the fact that we work with the equations satisfied by the \textit{projections} of $h$ onto the frame $\mathcal{U}$, thus reducing tensorial equations to scalar equations. 
    
    This allows us to sidestep various issues present in \cite{Lindblad2010,keir_weak_2018}.
    We can make this simplification because unlike \cite{Lindblad2010,keir_weak_2018}, we only work in a neighbourhood of spacelike infinity, where long-range potentials are easy to deal with, see \cref{rem:en:long_range}.% that we can only trivially deal with long-range perturbations in the region $\D$, and not all the way to future timelike infinity)
\end{rem}
\begin{proof}
    Although \cite{lindblad_global_2005} do not use the wording as in~\cref{eq:EVE:rEVE}, it is straight-forward to find the choice of $\mathrm{X}$ they implicitly use:
    They assume that $\Upsilon=0$, and then manipulate \cref{eq:EVE:EVE} to arrive at \cref{eq:EVE:rEVE} whilst dropping all terms related to~$\Upsilon$.
    In particular, they drop $\partial_\mu\Upsilon_\nu$ from their (3.11) as well as the $\mu\nu$ symmetrization of the following terms below:
    \begin{nalign}
        \Upsilon_{\beta'}g^{\beta\beta'}\partial_{(\mu} g_{|\beta|\nu)}+\frac{1}{2}g^{\alpha\alpha'}\Upsilon_{(\nu}\partial_{\mu)}g_{\alpha\alpha'},\quad \text{dropped in \cite[Eq.~(3.14)]{lindblad_global_2005}};\\ 
        -\frac{1}{2}g^{\beta\beta'}\partial_{\beta'}g_{\beta(\mu}\Upsilon_{\nu)}-\frac{1}{4}g^{\alpha\alpha'}\Upsilon_{(\mu}\partial_{\nu)}g_{\alpha\alpha'},\quad \text{dropped in \cite[Eq.~(3.14)]{lindblad_global_2005}};\\ 
        \partial_{(\mu}\Upsilon_{\nu)}+\Upsilon_{\beta'}g^{\beta\beta'}\partial_{(\mu} g_{|\beta|\nu)}-\Upsilon_\mu\Upsilon_\nu, \quad \text{dropped in \cite[Eq.~(3.17)]{lindblad_global_2005}}.\\ 
    \end{nalign}
    In particular, the term in the last line (dropped in (3.17)) term is exactly equal to $\delta_g^{\star}\Upsilon+\Upsilon\lrcorner \mathrm{X}$ and thus defines $\mathrm{X}$.

	Moving on to \textit{a)}, we note that, while the regularity of the coefficients is not explicitly classified as we write it here in \cite{lindblad_global_2005}, it can easily be inferred from \cite[Lemma~3.1]{lindblad_global_2005}.
	In particular, in the rectangular frame $\{\dd t, \dd x\}$, the coefficients are constant and the projections onto the $\mathcal{U}$ basis satisfy the required smoothness. 
	
	The nonlinearities in \cref{eq:EVE:eve_Lbar} follow from \cite[Proposition~3.1]{Lindblad2010}---again, the weights are not explicitly written there, but can easily be deduced from the explicit form of the nonlinearity in \cite[Eq.~(3.9)]{Lindblad2010} and the computation in \cite[Section~4]{Lindblad2010}. 
 See also \cite[Lemmata~4.2 and 9.6]{Lindblad2010}, where the improved weights of the $Q$-terms in \cref{eq:EVE:eve_L} are phrased as inequalities. %,\cref{eq:EVE:eve_L} follows from the explicit form of the nonlinearity in \cite[Eq. 3.9]{Lindblad2010} and the computation in \cite[Sec. 4]{Lindblad2010}.
	
	The origin of the long-range terms $A_{\mathcal{UU}}$  (cf.~\cref{rem:EVE:projectionlongrange}) lies in the fact that projecting onto the basis used in \cite[Eq.~(3.7)]{Lindblad2010} does not commute with $\bar\Box_h$: Indeed, fix $X,Y$ to be vector fields from the set $\{L,\Lbar\}$ or unit size spherical derivatives. 
	Then, we have for some $A_{\mu\nu}\in\rho_+^2\rho_0^2\Diff^1_{\b}(\D^+)$, possibly changing from line to line, (here, we treat $h_{\mu\nu}$ as a tensor!)
	\begin{nalign}
		X^\mu Y^\nu(\bar{\Box}_{0}h)_{\mu\nu}&=\bar{\Box}_{0}(X^\mu Y^\nu h_{\mu\nu})-2\eta^{\rho\sigma}\nabla_\rho (Y^\nu X^\mu)\nabla_\sigma h_{\mu\nu}-h_{\mu\nu}\bar{\Box}_0 (X^\mu Y^\nu)\\&=
		\bar{\Box}_{0}(X^\mu Y^\nu h_{\mu\nu})-2\nabla_{L}(X^\mu Y^\nu)\pu h_{\mu\nu}+A_{\mu\nu}[h]=\bar{\Box}_{0}(X^\mu Y^\nu h_{\mu\nu})+A_{\mu\nu}[h],
	\end{nalign}
	where, in the second equality, we used (in $u,v,\omega_a$ basis) that only a $\pu$ derivative acting on $h$ can give a term that decays too slowly towards $\scrip$ to be a long-range potential and, in the third, we used that the vector fields $X,Y$ are paralell propagated along $\Lbar$.
	The terms from $\bar{\Box}_{h}-\bar{\Box}_0$ are all short-range following from the assumption that $h$ decays and using rectangular coordinates.
	
	The transport equation \cref{eq:EVE:eve_Ltransport} follows from contracting $\Upsilon$ as in \cref{eq:EVE:upsilon} with $L$, see \cite[Lemma~8.1]{Lindblad2010}.
	Note, as for the nonlinearities, this statement in \cite{Lindblad2010} appears as an inequality, but the equality follows from the equations \cite[Eq.~(8.2),(8.3)]{Lindblad2010}, see also \cite[Section~2.6]{keir_weak_2018}.
	For later reference, we provide explicit expressions for the linear parts of the constraint equations from \cite[Eq.~(2.80)]{keir_weak_2018}:\footnote{Let's recall Keir's notation: In Keir's (2.80), the expression $(\Lbar h)_{LL}$ means $(\pu (h_{\mu\nu}))L^\mu L^\nu$ evaluated in rectangular coordinates. Since $\pu L^\mu=0$ in rectangular coordinates, we then have $(\Lbar h)_{LL}=\pu h_{LL}$. On the other hand, Keir's term $\slashed{g}^{\mu\nu}(r^{-1}\sl_\mu h)_{L\nu}$ only equals $r^{-1}g_{S^2}^{AB}\sl_A h_{LB}$ to leading order, where, by $\sl_A h_{LB}$, we mean the $A$-component of the covariant derivative acting on the scalar $h_{LB}$.} 
	\begin{subequations}\label{eq:eve:constraint_linear}
		\begin{align}\label{eq:EVE:constraints:puhLL}
			\pu h_{LL}&=-2r^{-1}g_{S^2}^{AB}\sl_A h_{LB}-r^{-1}g_{S^2}^{AB}\pv h_{AB}+A_{\mathrm{t},0}[h]+G_{\mathrm{t}}[h],\\\label{eq:EVE:constraints:pvhLbarLbvar}
			\pu g_{S^2}^{AB}h_{AB}&=-\pv h_{\Lbar\Lbar}-2r^{-1}g_{S^2}^{AB}\sl_A h_{\Lbar B}+A_{\mathrm{t},0}[h]+G_{\mathrm{t}}[h],\\
			\pu h_{L B}&=-\pv h_{\Lbar A}-2r^{-1}g_{S^2}^{BC}\sl_C h_{AB}-r^{-1}g_{S^2}^{BC}\sl_A h_{BC}-r^{-1}\sl_A h_{L\Lbar}+A_{\mathrm{t},0}[h]+G_{\mathrm{t}}[h],\label{eq:EVE:constraints:puhLA}
		\end{align} 
	\end{subequations}
	where, just as before, $A_{\mathrm{t},0}[h]$ come from commuting the derivatives with the projections onto $\mathcal{U}$ and  depends linearly on $h$ with coefficients that are smooth in $\{r^{-1},x/r,t/r\}$ and weight $(1,1,1)$.
    Importantly, we see that $\partial_u$-derivatives of certain components of the metric perturbation are related to $\partial_v$-derivatives of other components.

	Finally, \cref{eq:EVE:g_original} follows from inverting the metric $g=\eta+h$, see \cite[Section~3]{Lindblad2010}:
    \begin{equation}
        (g^{-1})^{\mu\nu}=\eta^{\mu\nu}-h^{\mu\nu}+h^{\mu\sigma}h_{\sigma}{}^{\nu}+\mathcal{O}(h^3)=\eta^{\mu\nu}-\eta^{\mu\mu'}\eta^{\nu\nu'}\big(h_{\mu'\nu'}-\eta^{\sigma\sigma'}h_{\mu'\sigma}h_{\nu'\sigma}\big)+\mathcal{O}(h^3).
    \end{equation}
The reason why we record the $-\frac12 h_{LL}$ term in \cref{eq:EVE:g_original} separately is that it is not a short-range metric perturbation under just the assumption \cref{eq:EVE:assumption_h}, cf.~\cref{def:current:quasilinear}.
\end{proof}

Next, we analogously introduce the form of the equations useful in $\D^-$.
Here, the labels differentiating \emph{good} and \emph{bad} terms are $\mathcal{T}=\{\Lbar,1,2\}$, $\mathcal{U}=\{\Lbar,L,1,2\}$, i.e.~$L$ and $\Lbar$ are interchanged.

\begin{prop}\label{thm:EVE:form_of_EVE_past}
        Assume \cref{eq:EVE:assumption_h}.
	   Then \cref{eq:EVE:rEVE} in harmonic gauge, with $g=\eta+h$, can be written in the form

\begin{subequations}\label{eq:EVE:EVE_schematic_past}
		\begin{align}
			\bar{\Box}_{h}h_{\mathcal{T}\mathcal{U}}:=g^{\mu\nu}[h]\nabla_{\mu}\nabla_\nu h_{\mathcal{T}\mathcal{U}}&=\mathfrak{F}_{\mathcal{TU}}[h]=A_{\mathcal{T}\mathcal{U}}[h]+Q_{\mathcal{T}\mathcal{U}}[h]+G_{\mathcal{T}\mathcal{U}}[h]\label{eq:EVE:past_TU}\\
			\bar{\Box}_{h}h_{LL}&=\mathfrak{F}_{LL}[h]=A_{LL}[h]+Q_{LL}[h]+G_{LL}[h]+F_{LL}[h_{\mathcal{T}\mathcal{U}}]\label{eq:EVE:past_LL}
		\end{align}
	\end{subequations}
	where the nonlinearities $\mathfrak{F}_{UU}$ satisfy the time inversed statements of \cref{thm:EVE:form_of_EVE}, namely:
	\begin{enumerate}[itemsep=-1ex,label={\alph*)}]
		\item all the terms are formed of $\partial_t,\partial_x$ derivatives and coefficients that are smooth in coordinates $\{r^{-1},x/r,t/r\}$;
		\item $A_{\mathcal{UU}}$ are long-range potentials with weight $(2,2)$ arising from the frame projections;
		\item the nonlinearities $Q_{\mathcal{UU}},G_{\mathcal{UU}}$  have orders $n^Q=2$, $n^G=3$ and weights $(a^{Q}_-,a^{Q}_0)=(3/2,3)$, $(a^{G}_-,a^{G}_0)=(2,4)$ respectively;
		\item $Q$ is a quadratic function of $(\partial,r^{-1}) h$ with bounded coefficients;\label{item:EVE:Q_past}
		\item $G$ is a smooth function of $h$ vanishing at $h=0$ times $((\partial,r^{-1}) h)^2$ with bounded coefficients;\label{item:EVE:G_past}
		\item $F$ depends on $\pv h_{\mathcal{U}\mathcal{T}}$ quadratically with bounded coefficients, that is $n^F=2$ and $(a^F_-,a^F_0)=(1,3)$;\label{item:EVE:F_past}
		\item the metric takes the form $g=\eta+h$ and the inverse is given by 
		\begin{equation}\label{eq:EVE:g_past}
			g^{-1}=\eta^{-1}+H=\eta^{-1}-\frac14 h_{\Lbar\Lbar} \pv\otimes\pv +\bar{H}_1[h]+\bar{H}_2[h],
		\end{equation}
        where $\bar{H}_1,\bar{H}_2$ are function of $h_{\mathcal{UU}}$ vanishing to order one and two respectively with bounded coefficients, moreover they are metric perturbations (see \cref{def:current:quasilinear}) with order {$n^{\bar{H}_1}=2$, $n^{\bar{H}_2}=3$} and weights $(3/2,3)$, $(2,4)$.\label{item:EVE:H_past}
    \end{enumerate}
\end{prop}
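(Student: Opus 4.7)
The plan is to obtain \cref{thm:EVE:form_of_EVE_past} as the time-reversal counterpart of \cref{thm:EVE:form_of_EVE}. The key observation is that the reduced Einstein system \cref{eq:EVE:rEVE}, together with the choice of $\mathrm{X}$ fixed in the proof of \cref{thm:EVE:form_of_EVE}, is geometrically invariant under the Minkowskian isometry $\Phi : (t,x) \mapsto (-t,x)$, which exchanges $u \leftrightarrow -v$, $L = \partial_v \leftrightarrow -\bar L = -\partial_u$, $\D^+ \leftrightarrow \D^-$, and $\scrip \leftrightarrow \scrim$, while preserving $\eta$, $I^0$, and the tangential sphere frame $S_1, S_2$. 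Indeed, since $\mathrm{X}$ is built from $\Upsilon$, $g$, and $\nabla_\eta g$ (all natural under $\Phi$), the same tensor $\mathrm{X}$ used in $\D^+$ furnishes a valid gauge choice in $\D^-$; therefore \cref{eq:EVE:rEVE} pulls back tensorially and the decomposition \cref{eq:EVE:EVE_schematic} produces the decomposition \cref{eq:EVE:EVE_schematic_past} in $\D^-$.

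Under this pull-back, the future label set $\mathcal{T}^+ = \{L, 1, 2\}$ becomes (up to inconsequential signs on the symmetric $(0,2)$-tensor $h$) the past label set $\mathcal{T}^- = \{\bar L, 1, 2\}$, the ``bad'' component $h_{\bar L\bar L}$ becomes $h_{LL}$, and the two future wave equations \cref{eq:EVE:eve_L,eq:EVE:eve_Lbar} yield \cref{eq:EVE:past_TU,eq:EVE:past_LL}, respectively. The structural items \cref{item:EVE:Q_past}--\cref{item:EVE:H_past} then follow directly from the corresponding items of \cref{thm:EVE:form_of_EVE} under the weight relabelling $a_+ \leftrightarrow a_-$ (with $a_0$ unchanged since $I^0$ is $\Phi$-fixed): for instance the null-form weights $(a^Q_0, a^Q_+) = (3, 3/2)$ in $\D^+$ become $(a^Q_-, a^Q_0) = (3/2, 3)$ in $\D^-$, the weights $(a^G_0,a^G_+)=(4,2)$ become $(a^G_-,a^G_0)=(2,4)$, and the semilinear weights $(a^F_0,a^F_+)=(3,1)$ become $(a^F_-,a^F_0)=(1,3)$. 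Similarly, the form \cref{eq:EVE:g_past} arises from \cref{eq:EVE:g_original} by isolating $h_{\bar L\bar L}\partial_v \otimes \partial_v$ (the $\Phi^*$-image of $h_{LL}\partial_u\otimes\partial_u$) as the unique term failing to be a short-range metric perturbation towards $\scrim$ under \cref{eq:EVE:assumption_h}. The transport equation \cref{eq:EVE:eve_Ltransport} also pulls back, becoming a transport equation for $h_{\bar L\bar L}$ in $\D^-$; but since $\bar L \in \mathcal{T}^-$, this component is already captured by the wave equation \cref{eq:EVE:past_TU}, and we therefore simply omit this transport identity from the past formulation since the wave-equation framework of \cref{sec:en} will provide enough control.

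The only step requiring care, rather than pure bookkeeping, is the classification of the long-range projection terms $A_{\mathcal{UU}}$, which arise from the commutator of $\bar{\Box}_0$ with projections onto the frame. As in the proof of \cref{thm:EVE:form_of_EVE}, for $X, Y \in \{L, \bar L, S_1, S_2\}$ one expands
\[X^\mu Y^\nu (\bar{\Box}_0 h)_{\mu\nu} = \bar{\Box}_0(X^\mu Y^\nu h_{\mu\nu}) - 2\eta^{\rho\sigma}\nabla_\rho(X^\mu Y^\nu)\nabla_\sigma h_{\mu\nu} - h_{\mu\nu}\bar{\Box}_0(X^\mu Y^\nu),\]
and the potentially dangerous commutator pieces (containing $\partial_v$-derivatives of $h$, which are the bad direction towards $\scrim$) are eliminated because the frame $S_1, S_2$ is parallel-transported along the good direction in $\D^-$, i.e.\ along the $\Phi^*$-image of the direction used in the future set-up. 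The remaining contributions to the commutator are short-range linear terms plus a long-range potential $A_{\mathcal{UU}}$ with weight $(2,2)$ towards $\scrim$ and $I^0$, matching the claim in item b). This completes the proof plan; the hardest piece is the parallel-transport bookkeeping just described, though it is no harder than in the future case.
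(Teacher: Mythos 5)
Your proposal is correct and takes exactly the paper's approach: the paper's own proof of this proposition is the single sentence ``This is simply a time reversed version of \cref{thm:EVE:form_of_EVE}'', and you are supplying precisely the bookkeeping (the isometry $\Phi:(t,x)\mapsto(-t,x)$, the label exchange $\bar{L}\leftrightarrow -L$ with sign cancellation on the symmetric tensor $h$, the weight swap $a_+\leftrightarrow a_-$, and the frame-projection commutator analysis) that the paper leaves implicit. Nothing is missing or incorrect.
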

\begin{proof}
	This is simply a time reversed version of \cref{thm:EVE:form_of_EVE}.
\end{proof}

\subsection{Construction of scattering solutions in \texorpdfstring{$\D^-$}{D-}}\label{sec:EVE:past}
\subsubsection{Assumptions on the scattering data}\label{sec:EVE:ass}
In this subsection, we consider the scattering problem for the Einstein vacuum equations in the region $\D^-$, provided that a solution is already given in some bounded advanced time region $\D^{v_0+1}_{v_0}$.
We will use the following sets of labels $\mathcal{T}=\{\Lbar,1,2\}$, $\mathcal{U}=\{\Lbar,L,1,2\}$.
As is clear from \cref{eq:EVE:past_LL}, the component $h_{LL}$ will have a forcing ($F_{LL}[h_{\mathcal{TU}}]$) that is not admissible whenever there is incoming radiation for the components $h_{\mathcal{TU}}$.
To remedy this, we need to define scattering as done for weakly decaying polyhomogeneous forcing in \cref{def:scat:weak_polyhom}:
\begin{defi}\label{def:EVE:data}
    Fix $\mathfrak{h}_{\mathcal{UU}}\in\Hb^{a_0;1}(\scrim_{v>v_0+1})$.
    Let's define $\mathfrak{h}^{(0,1)}_{LL}=\int\dd v (rF_{LL}[r^{-1}\mathfrak{h}_{\mathcal{UT}}])|_{\scrim}$ for $F_{LL}$ as in \cref{eq:EVE:EVE_schematic_past}.
    We call $h$ a scattering solution to \cref{eq:EVE:EVE_schematic_past} with incoming radiation $\mathfrak{h}_{\mathcal{UU}}$ if for all $v_\infty>v_0$ and $k=0$
    \begin{equation}\label{eq:EVE:scattering_def}
		(\pv rh_{\mathcal{T}\mathcal{U}}-\pv\mathfrak{h}_{\mathcal{T}\mathcal{U}})|_{\D^{v_\infty}_{v_0+1}}\in\Hb^{0+;k}(\D^{v_\infty}_{v_0+1}),\qquad \pv rh_{LL}|_{\D^{v_\infty}_{v_0+1}}=\log(-u/v) \pv \mathfrak{h}^{(0,1)}_{LL}+\pv\mathfrak{h}_{LL}(v,\omega)+\Hb^{0+;k}(\D^{v_\infty}_{v_0+1}).
	\end{equation}

    We say that the scattering data $\mathfrak{h}_{\mathcal{UU}}$ is compatible with the harmonic gauge condition if
	\begin{equation}\label{eq:EVE:restriction_on_scattering_data}
		(\pv \mathfrak{h}_{\Lbar\Lbar})|_{\scrim}=(\pv \mathfrak{h}_{\Lbar A})|_{\scrim}=(\pv \eta^{\mu\nu}\mathfrak{h}_{\mu\nu})|_{\scrim}=0.
	\end{equation}
\end{defi}

\begin{rem}[Compactly supported incoming radiation]\label{rem:appB:compactsupport}
    For $\supp\mathfrak{h}_{\mathcal{UU}}\subset \scrim_{v_0,v_1}$ for $v_1<\infty$ we say that the solution has compactly supported incoming radiation.
    Note that this does not imply $\supp \mathfrak{h}_{LL}^{0,1}$ is compact.
\end{rem}

The extra condition \cref{eq:EVE:restriction_on_scattering_data} is because we need to impose vanishing scattering data for the harmonic gauge potential~$\Upsilon$, see \cref{eq:eve:constraint_linear}:
Indeed, provided that there exists a solution $rh\in \Hb^{-1+}(\D^{v_\infty}_{v_0})$ satisfying that $\Upsilon[h]=0$, we must have \cref{eq:EVE:restriction_on_scattering_data}.

The physically relevant assumption in $\D^{v_0+1}_{v_0}$ comes from Post-Newtonian theory, see Section 2 of~\cite{kehrberger_case_2024} and \cref{rem:EVE:PN} below for more details:
\begin{ass}\label{ass:EVE:data}
	We say that $h$ is \emph{compatible} with the far field region of $N$-body scattering if for some collection $\{a^{\mathcal{UU}}_-\}$ with $\min\{a^{\mathcal{UU}}_-\}>-1/4$ and with 
	$a^{\Lbar\Lbar}_->1/2$, it holds that ${rh_{\mathcal{UU}}|_{\D^{v_0+1}_{v_0}}\in\Hbt^{a^{\mathcal{UU}}_-;k}(\D^{v_0+1}_{v_0})}$.
\end{ass}

\begin{rem}[Construction of harmonic coordinates in $\D^-$ such that \cref{ass:EVE:data} holds]\label{rem:EVE:PN}
   At the \emph{linear level}, the outgoing radiation (= no incoming radiation) solutions of systems with approximately Newtonian multipole moments are discussed in \cite{thorne_multipole_1980}. 
   Inserting then the Newtonian multipole moments of a hyperbolic $N$-body scattering event in the infinity past (see \cite[Section 2.2]{kehrberger_case_2024-1}) into \cite[Eq.~(8.13)]{thorne_multipole_1980}  yields that the asymptotics for $h$ in the region $X=[0,1)_{1/r}\times (-3/2,0)_{t/r}\times S^2$ are of the form
    \begin{equation}\label{eq:EVE:thorn_asymp}
        h_{\mathcal{UU}}\in\A{phg}^{\E}(X)\subset \A{phg}^{\mindex1 \cup \overline{(2,1)}}(X)+\Hb^{2+}(X).
    \end{equation}
    
    Now, these expressions are seemingly worse than the desired $rh_{\bar{L}\bar{L}}\in \Hb^{1/2+}(\D^{v_0+1}_{v_0})$.
    Nevertheless, assuming that there exists a solution to \cref{eq:EVE:EVE} satisfying \cref{eq:EVE:thorn_asymp}, we here provide a sketch how to derive \cref{ass:EVE:data}:\footnote{The construction we present here gives harmonic coordinates in the exterior of an incoming lightcone. Alternatively, it is also possible to find such harmonic coordinates in an open neighbourhood of $\scrim$ within $X$, but this requires solving an initial boundary value problem to find $F$ and $\tilde{x}_\mu$ with no incoming radiation through all of $\scrim$.}
    We first apply a coordinate transformation $\bar{v}=v+F$ where $\pu F=h_{LL}$ and $F\in\A{phg}^{(0,1)}(X)$ as in \cref{item:current:metric_coordiante_change}.
    This gives
    \begin{equation}
        h_{\bar{\Lbar}\bar{\Lbar}}\in\Hb^{2-}(X),
        \qquad h_{\mathcal{\bar{U}\bar{T}}}\in\Hb^{1-}(X).
    \end{equation}
    (Here, the vector field $\bar{\Lbar}=\pu|_{\bar{v}}$ is defined via $\partial_{u}$ in the new coordinates.)
    In the new rectangular coordinates $\bar{x}_\mu$ corresponding to $\bar{v}, u, \omega$,\footnote{That is, $\bar{x}_0=\bar{t}=\bar{u}+\bar{v} $ etc.} we have $\Box_g \bar{x}_\mu\in\Hb^{2-}(X)$, where $\Box_g$ is the covariant wave operator corresponding to $g=\eta+h$. The problem with these nicer behaved coordinates, however, is that they are not harmonic.

    To remedy this, let us introduce $\bar{\D}^-$ for the compactification with respect to $\bar{x}_\mu$.
    Using \cref{lemma:EVE:eikonal}, we know that there exists an eikonal function $v+w$ with $w\in\Hbt^{1/2-}(\bar{\D}^{v_0+1}_{v_0-1})$ and corresponding lightcone $\underline{\tilde{\mathcal{C}}}=\{v+w=v_1\}$ for $\eta+h$ in $\{v\in(v_0-1,v_0+1),u\ll-1\}$.
    Next, we consider the solution to the scattering problem with no incoming radiation to 
    \begin{equation}
        \Box_g (\tilde{x}_\mu-\bar{x}_\mu)=-\Box_g\bar{x}_\mu,\quad (\tilde{x}_\mu-\bar{x}_\mu)|_{\underline{\tilde{\mathcal{C}}}}=0.
    \end{equation}
    Using \cref{thm:scat:scat_general}, we conclude that $\tilde{x}_\mu-\bar{x}_\mu\in\rho_-\Hbt^{0-,0-}(\bar{\D}^-)$.
    In particular, we get that $\tilde{v}-\bar{v}\in\rho^-\Hbt^{0-,0-}(\bar\D^-)$.
    Therefore, the $\Ve$ derivatives corresponding to $\bar{x}_\mu$ change by an error of  $\rho_-\Hbt^{0-,0-}(\bar\D^-)\Ve$ when transforming to $\tilde{x}_\mu$ coordinates: $\tilde{v}\partial_{\tilde{v}}-\bar{v}\partial_{\bar{v}}\in\rho_-\Hbt^{0-,0-}(\bar{\D}^-)\bar{\mathcal{V}}_{\mathrm{e}}$ and similarly for other vector fields in $\tilde{\mathcal{V}}_{\mathrm{e}}$.
    In conclusion, we get that \cref{eq:EVE:thorn_asymp} in $\tilde{x}_\mu$ coordinates is given by
    \begin{equation}
        h_{\tilde{\Lbar}\tilde{\Lbar}}\in\Hb^{2-,1-}(\bar\D^-),
        \qquad h_{\mathcal{\tilde{U}\tilde{T}}}\in\Hb^{1-,1-}(\bar\D^-).
    \end{equation}
\end{rem}

Alternatively to using \cref{ass:EVE:data}, one may recover these rates on an initial null cone $\incone$, using the assumptions of \cite{kehrberger_case_2024} and the methods of \cite{luk_local_2012}.
As this recovery is tangential to the main interest in the paper and would already rely on an \emph{assumption} on the initial data on an incoming cone, we do not pursue this direction further.
\subsubsection{Construction of eikonal function}\label{sec:EVE:eikonal}
 The decay rates in \cref{ass:EVE:data} on the metric is sufficient to fix the null geometry close to $\scrim$. To avoid confusion, we stress that the lemma below is far from sharp in terms of regularity and decay:

\begin{lemma}[Construction of eikonal function]\label{lemma:EVE:eikonal}
    Fix a collection of weights $\{a^{\mathcal{UU}}_-\}$ with $\min \{a^{\mathcal{UU}}_-\}>-1/4$ and $a^{\Lbar\Lbar}_->1/2$. Let $k\geq5$ and ${rh_{\mathcal{UU}}|_{\D^{v_0+1}_{v_0}}\in\Hb^{a^{\mathcal{UU}}_-;k}(\D^{v_0+1}_{v_0})}$ and $\norm{rh_{\mathcal{UU}}}_{\Hb^{a^{\mathcal{UU}}_-;5}(\D_{v_0}^{v_0+1})}$ sufficiently small.
    
    a)
    Then there exists $w\in\Hb^{1/4;k-1}(\D^{v_0+1}_{v_0})$ such that $v+w$ is an eikonal function for $\eta+h$.
    Thus, for all $v_1\in(v_0+1/4,v_0+3/4)$ and $u_0\ll-1$ there exist a null hypersurface $\Cbar\subset\{\abs{v-v_1}\lesssim \abs{u}^{-1/4+}\}$.
    
    b)
    Assuming additionally that ${rh_{\mathcal{UU}}|_{\D^{v_0+1}_{v_0}}\in\Hbt^{a^{\mathcal{UU}}_-;k}(\D^{v_0+1}_{v_0})}$ yields $w\in\Hbt^{1/4-;k-1}(\D^{v_0+1}_{v_0})$.
\end{lemma}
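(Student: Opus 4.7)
The plan is to view $g^{\mu\nu}\partial_\mu(v+w)\partial_\nu(v+w)=0$ as a quasilinear transport equation for $w$ in the $\partial_u$ direction. Substituting the decomposition of $g^{-1}$ from \cref{eq:EVE:g_past} and algebraically isolating the $\partial_u w$ contribution, the eikonal equation takes the schematic form
\[
\partial_u w\,\bigl(1+\partial_v w - 2H^{uv}(1+\partial_v w) - H^{uu}\partial_u w\bigr) = H^{vv}(1+\partial_v w)^2 + 2H^{vA}(1+\partial_v w)\sl_A w + H^{AB}\sl_A w\,\sl_B w,
\]
in which $H^{vv}=-\tfrac14 h_{\bar L\bar L}+O(h^2)$ is the only coefficient that is \emph{not} a short-range perturbation of $\eta^{-1}$ near $\scrim$ (cf.~\cref{item:EVE:H_past}); the remaining components $H^{uv}, H^{uu}, H^{vA}, H^{AB}$ are all controlled by $h_{\mathcal{UT}}$ and enjoy the improved decay implied by $a^{\mathcal{UT}}_-\!>\!-1/4$. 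First I would impose the asymptotic boundary condition $\lim_{u\to-\infty}w(u,v,\omega)=0$ (treating $v\in[v_0,v_0+1]$ and $\omega\in S^2$ as parameters), so that the level sets $\underline{\tilde{\mathcal{C}}}_{v_1}=\{v+w=v_1\}$ converge along $\scrim$ to the Minkowskian incoming cones $\{v=v_1\}$.

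The bulk of the work is to solve the equation via the integral fixed point
\[
w(u,v,\omega) = \int_{-\infty}^u \mathcal{F}(w,\partial w)(u',v,\omega)\,\dd u',
\]
where $\mathcal{F}$ is the rational function obtained by isolating $\partial_u w$ in the eikonal equation, satisfying $\mathcal{F}(0,0)=H^{vv}$. I would run the Picard iteration $w^{(0)}=0$, $w^{(n+1)}=\int_{-\infty}^u \mathcal{F}(w^{(n)},\partial w^{(n)})\,\dd u'$. Applying the one-dimensional integration step of \cref{item:ode:1d_boundary_cond_weak_decay} of \cref{prop:ODE:ODE} in the $|u|^{-1}$ direction (using that $r\sim|u|$ throughout the slab), the zeroth-order source $H^{vv}$ integrates to yield $w^{(1)}\in\Hb^{a^{\bar L\bar L}_-;k-1}\subset\Hb^{1/4;k-1}$ since $a^{\bar L\bar L}_->1/2$. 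Higher Sobolev regularity is propagated by commuting with the symmetries $\Vb$ of $\eta$, using the tame product estimate \cref{lemma:scat:sobolev} to control the induced commutator terms and losing one derivative in the process.

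The main obstacle is to close the bootstrap uniformly across all components of the nonlinearity; the choice of threshold $1/4$ (rather than the naive $1/2-$ one might expect from $H^{vv}$ alone) is dictated by the contributions of $H^{uv}\partial_u w$ and $H^{vA}\sl_A w$ to $\mathcal{F}$, whose coefficients only decay like $|u|^{-3/4-}$ under the borderline hypothesis $a^{\mathcal{UT}}_->-1/4$. After bootstrapping $w\in\Hb^{1/4;4}$ (so that $\partial_u w\sim|u|^{-5/4}$ from the transport relation and $\sl w\sim|u|^{-1/4}$), these marginal contributions integrate in $u$ to produce a correction of size exactly $|u|^{-1/4}$, keeping the iteration inside the bootstrap ball under the smallness hypothesis on $\norm{rh_{\mathcal{UU}}}_{\Hb^{a^{\mathcal{UU}}_-;5}(\D^{v_0+1}_{v_0})}$. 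Once $w\in\Hb^{1/4;k-1}$ is established, the pointwise Sobolev embedding \cref{eq:not:Sobolev} yields $|w|\lesssim|u|^{-1/4+}$, from which $\underline{\tilde{\mathcal{C}}}_{v_1}\subset\{|v-v_1|\lesssim|u|^{-1/4+}\}$ is immediate, proving part a).

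For part b), I would rerun the same Picard iteration in the $\Hbt^{\vec a;k}$ scale: under the strengthened hypothesis the coefficients of $\mathcal{F}$ lie in $\Hbt$, and the transport integral $\int_{-\infty}^u$ commutes with $r\partial_v$ up to lower-order terms bounded via the Sobolev embedding \cref{eq:not:inclusionDbold}. This gives $w\in\Hbt^{1/4-;k-1}$, with the arbitrarily small loss $\epsilon$ tracing precisely to this embedding between $\Hbt(\D^-)$ and $\Hb(\Dbold^-)$.
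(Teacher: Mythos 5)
Your overall strategy — integrating the eikonal equation from $\scrim$ along $u$ with boundary condition $w\to 0$ and bootstrapping decay — is the same as the paper's, but the way you set it up misses a structural point and contains a decay-bookkeeping error.

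First, the structural point: your Picard scheme $w^{(n+1)}=\int_{-\infty}^u\mathcal{F}(w^{(n)},\partial w^{(n)})\,\dd u'$ loses one tangential derivative per iteration, since the right-hand side depends on $\partial_v w^{(n)}$ and $\sl w^{(n)}$ and integrating in $u$ does not smooth in those directions; the fixed point therefore does not close in a single $\Hb^{\cdot;k}$ space. The paper sidesteps this by first differentiating the eikonal equation in $\partial_\sigma$ to obtain a genuine quasilinear transport system for the full gradient $W_\sigma:=\partial_\sigma w$, namely $\big(2g^{\nu v}+2g^{\nu\mu}W_\mu\big)\partial_\nu W_\sigma=-\partial_\sigma g^{vv}-2\partial_\sigma g^{v\mu}W_\mu-\partial_\sigma g^{\mu\nu}W_\mu W_\nu$, in which the right-hand side involves $W$ but no derivatives of $W$. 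This is closed under transport along the $W$-dependent vector field, the integral curves of which terminate at $\scrim$, and bootstrapping on $W$ (then integrating once to recover $w$ with the one-derivative loss in the statement) avoids the per-iteration loss entirely.

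Second, your explanation of the threshold $1/4$ is incorrect, and I think it stems from conflating orthonormal-frame and $(u,v,\omega_a)$-coordinate components. In coordinates, the mixed angular--null component of the inverse metric is $g^{va}=r^{-1}H^{\bar L A}_{\mathrm{frame}}\in\Hb^{7/4+;k}$, not $\Hb^{3/4+}$, because the coordinate vectors $\partial_{\omega_a}=rS_a$ carry a factor of $r$. Thus the term you single out, $g^{va}\partial_{\omega_a}w$, decays like $r^{-7/4+}\cdot r^{-1/2+}=r^{-9/4+}$ under the correct bootstrap assumption and is strictly subdominant to the linear source $g^{vv}\in\Hb^{3/2+}$; the same holds for $H^{uv}\partial_uw\sim r^{-3/4+}\cdot r^{-3/2+}=r^{-9/4+}$. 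The bootstrap therefore closes at $W_v,W_\omega\in\Hb^{1/2-\delta}$, $W_u\in\Hb^{3/2-\delta}$, giving $w\in\Hb^{1/2-\delta;k-1}$ — the $1/4$ in the lemma statement is a deliberately weakened consequence of this, not a genuine threshold forced by marginal nonlinear contributions. Even internally, if $H^{vA}\sl_Aw$ truly decayed like $|u|^{-1-}$ as you assert, its $u$-integral would only give $|u|^{0-}$, which lies outside $\Hb^{1/4}$ and would break your bootstrap rather than keep it in the ball, so the claim cannot be salvaged as written.
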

\begin{proof}
	\emph{a) }
    We find $v+w$, a solution to the eikonal equation, $g\big(\dd(v+w),\dd(v+w)\big)=0$, in $\D'=\D^{v_0+7/8}_{v_0+1/8}$.
    We work in coordinates $u,v,\omega_a$.
    We write (partial derivatives of the) eikonal equation as a nonlinear transport equation for the derivatives $W_\mu:=\partial_\mu w$
    \begin{equation}\label{eq:EVE:eikonal}
        \big(2g^{\nu v}+2g^{\nu\mu}W_\mu\big)\partial_\nu W_\sigma=-\partial_\sigma g^{vv}-2\partial_\sigma g^{v\mu}W_\mu-\partial_\sigma g^{\mu\nu}W_\mu W_\nu.
    \end{equation}
    The linear part of the transport term is $\tilde{X}/2=g^{uv}\pu+g^{vv}\pv+g^{av}\partial_{\omega_a}$.
    Using that $g^{vv}\in\Hb^{3/2+;k}(\D'), g^{av}\in\Hb^{7/4+;k}(\D')$ and $g^{uv}+1/2\in\Hb^{3/4+;k}(\D')$, it follows that the integral curves of $\tilde{X}$ terminate at $\scrim$.
    We also note that the inhomogeneity satisfies $u^{\delta^{\sigma}_{u}}\partial_\sigma g^{vv}\in\Hb^{3/2-;k-1}(\D')$, where $u^{\delta^{\sigma}_{u}}$ is 1 if $\sigma\neq u$ and $u$ otherwise.

    Using as bootstrap assumptions $W_v,W_\omega\in \Hb^{1/2-\delta;k-1}(\D'),W_u\in\Hb^{3/2-\delta;k-1}(\D')$ for $0<\delta\ll1$,
    we can  recover
    \begin{nalign}
       Y:=2g^{\mu\nu}W_\mu\partial_\nu=\Hb^{1/2-\delta;k-1}(\D')\pu+\Hb^{3/2-\delta;k-1}(\D')\pv+\Hb^{3/2-\delta;k-1}(\D')\partial_{\omega_a}\\
        u^{\delta^{\sigma}_{u}}\partial_\sigma g^{v\mu}W_\mu \in \Hb^{2-\delta;k-1}(\D'),\quad u^{\delta^{\sigma}_{u}}\partial_\sigma g^{\mu\nu}W_\mu W_\nu \in \Hb^{2-2\delta;k-1}(\D').
    \end{nalign}

    Hence, by a limiting procedure and a bootstrap argument, we can find $u^{\delta^{\sigma}_{u}}W_\sigma\in\Hb^{1/2-\delta;k-1}(\D')$ solving \cref{eq:EVE:eikonal}.
    Integrating from $\scrim$ with $w|_{\scrim}=0$ yields $w\in\Hb^{1/2-\delta;k-1}(\D')$ solving $g\big(\dd(v+w),\dd(v+w)\big)=0$.
    This shows that the level sets of $v+w$ are light cones terminating at $\scrim$.
    
    \emph{b)} Solving the transport equation for $W_\mu$, we immediately get $W_v,W_\omega,uW_u\in\Hbt^{3/2-;k-1}(\D')$.
    Integrating this yields the desired expression for $w$.
\end{proof}

\subsubsection{Proof of scattering in the region \texorpdfstring{$\D^-$}{D-}}\label{sec:EVE:proof}
We are now ready to prove our main scattering result:

\begin{thm}[EVE scattering]\label{thm:EVE:scatteringpast}
    Let $k\geq300$, and $h_{\mathcal{UU}}|_{\D^{v_0+1}_{v_0}}$, $\{a^{\mathcal{UU}}_-\}$ be as in \cref{ass:EVE:data}. Fix $a_-=\min\{a^{\mathcal{UU}}_-\}$ and some $a_0\in(-1,a_-)$.
    Let 
	$\mathfrak{h}_{\mathcal{UU}}\in\Hb^{a_0;k}(\scrim)$ be scattering data on $\scrim$ with \cref{eq:EVE:restriction_on_scattering_data} as in \cref{def:EVE:data}.
    \begin{enumerate}[label={\alph*)}]
        \item
            Fix $c_-=\min(a^{\Lbar\Lbar}_-,2a_-+1,a_-+1,1-)>0$ and $c_0=\min(c_--, a_0)$.\footnote{In particular, $c_0$ is always less than 1! \label{footnote:lindblad}}
        	Then, for $-u_0$ sufficiently large, \cref{eq:EVE:EVE_schematic_past} admits a unique scattering solution in $\D^-$ in the sense of \cref{def:EVE:data}, i.e.~$h$ solves \cref{eq:EVE:EVE_schematic_past}, agrees with $h_{\mathcal{UU}}$ in $\mathcal{D}^{v_0+1}_{v_0}$ and 
            \begin{equation}\label{eq:EVE:solution_past}
    rh_{\Lbar\Lbar}\in\Hbt^{a^{\Lbar\Lbar},a_0;k}(\D^-)+\Hb^{c_-,c_0;k-3}(\D^-),\quad  rh_{\mu\nu}\in\Hbt^{a_-,a_0;k}(\D^-)+\A{phg,b}^{\overline{(0,1)},a_0;k-1}(\D^-)+\Hb^{c_-,c_0;k-3}(\D^-).
            \end{equation}
            Uniqueness is in the class $rh_{\mathcal{UU}}$ satisfying \cref{eq:EVE:solution_past} and \cref{eq:EVE:scattering_def} for $k=10$ and $k=1$ respectively.
        	Moreover, if $h$ is a solution to the Einstein vacuum equations in harmonic gauge in $\D^{v_0+1}_{v_0}$, then $h$ is a solution to the Einstein vacuum equations in $\D^-$ in harmonic gauge.
        \item (No incoming radiation) If $\mathfrak{h}=0$, then $rh_{\mathcal{UU}}\in\Hbt^{a_-,a_--;k-3}(\D^-)$.
        \item (Propagation of polyhomogeneity)
            If, in addition,  $\mathfrak{h}_{\mathcal{UU}}\in\A{phg}^{\E^{\scrim}}(\scrim)$ and $rh_{\mathcal{UU}}\in \A{phg}^{\E^{\mathcal{UU}}}(\D^{v_0+1}_{v_0})$, then the scattering solution satisfies  $rh_{\mathcal{UU}}\in\A{phg}^{\vec{\E}}(\D^-)$ for some $\vec{\E}$.
            
    \end{enumerate}
	
\end{thm}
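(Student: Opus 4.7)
The plan is to combine the general quasilinear scattering result \cref{thm:current:quasilinear_scattering} with the weakly decaying polyhomogeneous scattering theory \cref{thm:scat:weak_polyhom} to handle the weak null structure of the $h_{LL}$ equation. By \cref{thm:EVE:form_of_EVE_past}, the system splits into components $h_{\mathcal{TU}}$ satisfying a good equation \cref{eq:EVE:past_TU} and the component $h_{LL}$ whose equation \cref{eq:EVE:past_LL} contains the extra forcing $F_{LL}[h_{\mathcal{TU}}]$. Given the decay rates from \cref{ass:EVE:data} (in particular $a^{\Lbar\Lbar}_->1/2$), the inverse metric decomposition \cref{eq:EVE:g_past} shows that $g^{-1}-\eta^{-1}$ is an admissible quasilinear perturbation of general type in the sense of \cref{def:current:quasilinear}, and the eikonal construction \cref{lemma:EVE:eikonal} together with the \cref{item:current:metric_coordiante_change} procedure lets us replace $\incone$ by a true characteristic cone of $\eta+h$ so that the hypothesis $H^{vv}|_{\incone}=0$ of \cref{thm:current:quasilinear_scattering} holds.

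First, I would construct an ansatz $h=h_{\mathrm{ans}}+h_\Delta$ that absorbs the weakly decaying polyhomogeneous behaviour at $\scrim$. For the components $h_{\mathcal{TU}}$, take $h_{\mathrm{ans},\mathcal{TU}}$ to be a cutoff times $\mathfrak{h}_{\mathcal{TU}}(v,\omega)$, so that $h_{\Delta,\mathcal{TU}}$ has no incoming radiation. Plugging $h_{\mathrm{ans},\mathcal{TU}}$ into $F_{LL}$ produces, to leading order near $\scrim$, a $(\pv\mathfrak{h}_{\mathcal{TU}})^2 v^{-2}$ contribution; antidifferentiating in $v$ against this yields precisely the logarithm $\log(-u/v)\mathfrak{h}^{(0,1)}_{LL}$ prescribed in \cref{eq:EVE:scattering_def}. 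Thus $h_{\mathrm{ans},LL}$ is built from the explicit polyhomogeneous expansion whose existence and uniqueness is guaranteed by the iterative construction in the proof of \cref{thm:scat:weak_polyhom}, and the gauge condition \cref{eq:EVE:restriction_on_scattering_data} is exactly what makes the leading incoming orders of the constraint equations \cref{eq:eve:constraint_linear} vanish so that this ansatz is consistent with harmonic gauge.

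Second, I would solve for the remainder $h_\Delta$ in the class $rh_\Delta\in\Hbt^{a_-,a_0}(\D^-)$ by a Nash--Moser iteration at the level of the \emph{linearised} rEVE system, mirroring \cref{thm:current:quasilinear_scattering}. At each step, the $h_{\mathcal{TU}}$ equations are of the form \cref{eq:current:quasi_box} plus the long-range potential $A_{\mathcal{TU}}$ treated by \cref{thm:scat:long}/\cref{thm:app3}, and the $h_{LL}$ equation is the same after subtracting the ansatz, with an inhomogeneity that has gained enough decay ($c_-$) to be admissible in the sense of \cref{def:en:short_range}. The decay threshold $c_-=\min(a^{\Lbar\Lbar}_-,\,2a_-+1,\,a_-+1,\,1-)$ enumerates the four obstructions to improvement: initial slab data, quadratic nonlinearities \cref{item:EVE:Q_past}--\cref{item:EVE:G_past}, cubic terms, and the logarithm from $F_{LL}$. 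The tame linear estimate \cref{prop:current:tame_estimate} in the extended setting of systems with long-range potentials closes the scheme.

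Third, once $g=\eta+h$ solves rEVE in $\D^-$, the gauge one-form $\Upsilon$ itself satisfies a covariant wave equation $\Box_g\Upsilon=-\delta_gG_g(\Upsilon\lrcorner\mathrm{X})$; this is a linear wave equation for $\Upsilon$ to which the scattering theory applies. Since $\Upsilon=0$ in the initial slab by assumption and \cref{eq:EVE:restriction_on_scattering_data} is precisely the vanishing condition at $\scrim$ (via the linear parts of the constraints \cref{eq:eve:constraint_linear}), uniqueness in the scattering theory yields $\Upsilon\equiv 0$, so $g$ solves \cref{eq:EVE:EVE}. Uniqueness of $h$ in the stated class is obtained as in \emph{Step 4} of the proof of \cref{thm:current:quasilinear_scattering}: a priori control puts the difference in a short-range linearised equation with trivial data on $\incone$ and vanishing $\Hb^{0+}$-radiation at $\scrim$, and sending a cutoff $\chi(u/u_\infty)$ to infinity closes the argument. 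Part b) follows by directly replacing $\Hb$ by $\Hbt$ everywhere (since with $\mathfrak{h}=0$ the ansatz is trivial and $F_{LL}$ becomes strictly short-range); part c) follows by combining \cref{thm:app:general2}--\cref{thm:app3} applied componentwise, noting that polyhomogeneity of $h_{\mathcal{TU}}$ makes $F_{LL}[h_{\mathcal{TU}}]$ polyhomogeneous as well, after which the index set for $h_{LL}$ is generated by the iteration in \cref{thm:scat:weak_polyhom}. The main obstacle is bookkeeping the interaction between the eikonal change of coordinates, the weak-null ansatz, and the Nash--Moser scheme so that the coefficients in the linearised equations remain admissible perturbations at every stage; this is where the sharp requirement $a^{\Lbar\Lbar}_->1/2$ in \cref{ass:EVE:data} is used.
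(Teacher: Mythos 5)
Your proposal follows essentially the same strategy as the paper's proof: peel off an explicit ansatz absorbing the scattering data and the logarithmic $h_{LL}$ behaviour at $\scrim$, verify that the remainder satisfies an admissible quasilinear system, and invoke \cref{thm:current:quasilinear_scattering} together with the eikonal construction of \cref{lemma:EVE:eikonal}. The gauge-propagation and uniqueness arguments you outline also match. Two points, however, are underdeveloped.

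First, your ansatz only absorbs the behaviour at $\scrim$ (the cutoff$\,\times\mathfrak{h}_{\mathcal{TU}}$ term plus the iteratively built log term), but nothing in the proposal converts the \emph{slab data} from \cref{ass:EVE:data} into a form to which \cref{thm:current:quasilinear_scattering} (which poses data on a null cone $\incone$ with $H^{vv}|_{\incone}=0$) can be applied. The paper does this via an additional layer $rh^{-,1}_{\mathcal{UU}}(u,v,\omega):=rh_{\mathcal{UU}}(u,v_0,\omega)$, which both flattens the slab data out to a constant-$v$ extension and makes $h-h^{-,1}$ vanish at $v=v_0$; without something like this you cannot arrange trivial data on an incoming cone, nor avoid the boundary terms in the weighted energy estimates near $v=v_0$.

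Second, your claim that part b) ``follows by directly replacing $\Hb$ by $\Hbt$ everywhere'' is not accurate. The cutoff $\chi(v)$ (in your ansatz, and in the paper's analogous splitting) does \emph{not} commute with $r\pv$-commutations: $r(\pv\chi)(\,\cdot\,)$ grows by one power of $r$ towards $\scrim$ relative to $\chi\,r\pv(\,\cdot\,)$, and therefore multiplication by $\chi$ destroys $\Hbt$-membership. The paper circumvents this by defining $h^{+,1}:=h-h^{-,1}$ \emph{without} the cutoff and instead placing the cutoff on the energy-multiplier vectorfields, which only produces additional signed bulk terms in the a priori controlled initial slab. If you keep the cutoff on the function itself, your $h_\Delta$ will not land in $\Hbt^{a_-,a_0}(\D^-)$ as you assert, even when $\mathfrak{h}=0$.
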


\begin{proof}
  \textit{a)}  We first show the scattering statement \textit{a)}.
The proof proceeds in three steps: In the first, we incorporate the data in the initial slab and along $\scrim$ into an approximate solution~$h^-$. 
By subtracting $h^-$ and using a cutoff, we then rewrite the equations as equations with trivial data in the second step (where the data have been put into the inhomogeneity).
In the third step, we show that the arising system of equations falls into our usual framework of admissible perturbations.

\emph{Step 1: Peeling off an approximate solution.}
    Let us write $rh^{-,1}_{\mathcal{UU}}(u,v,\omega)=rh_{\mathcal{UU}}(u,v_0,\omega)\in\Hbt^{a^{\mathcal{UU}}_-,a^{\mathcal{UU}}_-;k}(\D^-)$.
    It holds that $\bar{\Box}_0 h^{-,1}_{\mathcal{UU}}\in\Hbt^{a^{\mathcal{UU}}_-+3,a^{\mathcal{UU}}_-+3;k-2}(\D^-)$.
    We can integrate from $\incone$ to obtain that $r(h-h^{-,1})_{\mathcal{UU}}\in \Hbt^{a^{\mathcal{UU}}_-+1;k-1}(\D^{v_0+1}_{v_0})$.
    
    Next, we perform a single iterate to be able to peel off the leading order polyhomogeneous part of $h_{LL}$.
    Recall the definition of $\mathfrak{h}^{(0,1)}_{LL}:=\int\dd v \big(rF_{LL}[r^{-1}\mathfrak{h}_{\mathcal{UT}}]\big)|_{\scrim}$.
    Write $rh^{-,2}_{\mathcal{T}\mathcal{U}}(u,v,\omega)=\mathfrak{h}_{\mathcal{TU}}(v,\omega)\in\A{phg}^{\mindex{0},a_0;k}(\D^-)$
    and
    \begin{equation}
        rh_{LL}^{-,2}(u,v,\omega):=\log(-u/v)\mathfrak{h}_{LL}^{0,1}(v,\omega)+\mathfrak{h}_{LL}(v,\omega)\in\A{phg}^{\overline{(0,1)},a_0;k-1}(\D^-).
    \end{equation}
    We compute that \cref{eq:EVE:scattering_def} holds for $h^{-}=h^{-,1}+h^{-,2}$ and that
    \begin{equation}
        \bar{\Box}_0h^{-,2}_{\mathcal{UT}}\in\Hb^{3-,a_0+3;k-2}(\D^-),\quad \bar{\Box}_0h^{-,2}_{LL}-F_{LL}[h^{-,2}]\in\Hb^{3-,a_0+3;k-3}(\D^-).
    \end{equation}
    Using \ref{item:EVE:H_past} from \cref{thm:EVE:form_of_EVE_past}, we obtain that the inverse metric satisfies $rH[h^{-}]^{LL}\in\Hb^{\min(a^{\Lbar\Lbar}_-,2a_-+1),a_0;k-2}(\D^-)$, $rH[h^{-}]^{\mathcal{UU}}\in\Hb^{a_-,a_0;k-2}(\D^-)$.
     
    \emph{Step 2: Converting into a forced problem.}
    Let $\chi(v)$ be a smooth cutoff function with  $\chi|_{v>v_0+1}=0=(1-\chi)|_{v<v_0+1/2}$.
    Define the error term $h^+$ via $h=h^{-,1}+\chi(h-h^{-,1})+h^{-,2}+h^{+}=h^-+h^+$. 
    In view of our assumption that $\mathfrak{h}$ is supported on $v\geq v_0+1$, we have $h^+=0$ in $\D^{v_0+1/2}_{v_0}$ (this is exactly the reason why we have to include the cutoff---we don't want to pick up any boundary terms in the initial layer when applying our weighted energy estimates to $h^+$.)

    Using \cref{thm:EVE:form_of_EVE_past} and inserting the above bounds on $h-h^{-,1}$ as well as $h^{-,2}$, we may compute the nonlinear terms from \cref{eq:EVE:EVE_schematic_past} to be
    \begin{nalign}\label{eq:EVE:past_proof1}
		Q_{\mathcal{UU}}[h]&=Q_{\mathcal{UU}}[h^+]+V^+_{\mathcal{UU}}[h^+]+\Hb^{\min(4+2a_-,3-),4+2a_0;k-2}(\D^-)\\
		G_{\mathcal{UU}}[h]&=G_{\mathcal{UU}}[h^+]+Q^+_{\mathcal{UU}}[h^+]+V^+_{\mathcal{UU}}[h^+]+\Hb^{3+a_-,5+3a_0;k-2}(\D^-)\\
		F_{LL}[h_{\mathcal{UT}}]&=F_{LL}[h^+_{\mathcal{UT}}]+V^+_{\mathcal{UU}}[h^+_{\mathcal{UT}}]+F_{LL}[h^-_{\mathcal{UT}}]\\
		\bar{\Box}_h h_{\mathcal{UU}}&=\bar{\Box}_{h}h^+_{\mathcal{UU}}+V^+_{\mathcal{UU}}[h^+]+Q^+_{\mathcal{UU}}[h^+]+\Hb^{\min(a^{\Lbar\Lbar}_-,2a_-+1,a_-+1)+2,a_0+3;k-3}(\D^-)
	\end{nalign}
    where $V^{+}_{\mathcal{TU}}$ are potential perturbations with weights $\vec{a}^V:=(\min(3/2+a_-,1+{a^{{\Lbar\Lbar}}_-}),3+a_0)$; where $V^+_{LL}$ is a potential perturbation with weight $(1-,3+a_0)$ depending on $h^+_{\mathcal{UT}}$ and with weight $\vec{a}^V$ depending on $h^+_{\mathcal{UU}}$; and where $Q^+_{\mathcal{UU}}$ is a nonlinear perturbation of order 2 and weight $(2+a_-,4+a_0)$.
    In conclusion, we get that $h^+$ satisfies 	\begin{nalign}\label{eq:EVE:EVE_schematic_past_0}
		\bar{\Box}_{h}h^+_{\mathcal{T}\mathcal{U}}&=A_{\mathcal{T}\mathcal{U}}[h^+]+Q^+_{\mathcal{T}\mathcal{U}}[h^+]+G^+_{\mathcal{T}\mathcal{U}}[h^+]+V^+_{\mathcal{T}\mathcal{U}}[h^+_{\mathcal{U}\mathcal{U}}]+f_{\mathcal{T}\mathcal{U}}\\
		\bar{\Box}_hh^+_{LL}&=A_{LL}[h^+]+F_{LL}[h^+_{\mathcal{T}\mathcal{U}}]+Q^+_{LL}[h^+]+G^+_{LL}[h^+]+V^+_{LL}[h^+_{\mathcal{UU}}]+f_{LL}
	\end{nalign}
	with modified perturbations ($G^+,Q^+$) of the same decay rate as in \cref{eq:EVE:EVE_schematic_past};   linear perturbations $V^+$ as in \cref{eq:EVE:past_proof1}; and inhomogeneities $rf_{\mathcal{UU}}\in\Hb^{c_-+1,a_0+2;k-3}(\D^-)$. 
    Let us note that under the assumption $a_->-1/4, a^{\Lbar\Lbar}>1/2$, there exists $\delta>0$ such that $rf_{\mathcal{UU}}\in\Hb^{3/2+\delta,2+a_0;k-3}(\D^-)$.
    Furthermore, $f_{\mathcal{UU}}$ vanishes for $v<v_0+1/2$.%therefore, by \cref{lemma:EVE:eikonal}, $h^+_{\mathcal{UU}}$ vanishes in the causal past (with respect to the already defined $h$) of the region $\D^{v_0+1/4}_{v_0}$ for $-u_0$ sufficiently large.
    We can thus consider this as a system of equations for $h^+$, with $h^+$ vanishing in $\D^{v_0+1/2}_{v_0}$, so we won't pick up boundary terms in the energy estimates. We will next show that the system of equations is admissible:% he initial data that we pose for $h^{+}$ is that it vanishes in $\D^{v_0+1/4}_{v_0}$, and so we have no boundary terms in the energy estimates.

    \emph{Step 3: Testing admissibility.}
	Finally, we show that \cref{eq:EVE:EVE_schematic_past_0} is an admissible perturbation for $rh^{+}_{\mathcal{UU}}\in\Hb^{\vec{c}^{\mathcal{UU};k-4}}(\D^-)$ for 
	\begin{equation}\label{eq:EVE:past_proof2}
		c^{\mathcal{UU}}_0=\min(a_0,c_--),\quad c^{\mu\nu}_-=\begin{cases}
			c_--\epsilon & \text{if } \mu=\nu=L,\\
			c_- & \text{else},
		\end{cases}
	\end{equation}
	where  $0<\epsilon\ll c_-$. 
    More precisely, we need to confirm that $h$ corresponds to an admissible metric perturbations, that $A$ is an admissible long-range potential, that $Q,G,V$ are admissible short-range perturbations, and that the inhomogeneities $f$ are admissible:
	That the metric \cref{eq:EVE:g_past} is admissible follows from $a^{\Lbar\Lbar}_->0$ and $a_->-1/2$.
    Clearly, $A$ is an admissible long-range potential. 
	The inhomogeneity in \cref{eq:EVE:EVE_schematic_past_0} is also admissible for the weights in \cref{eq:EVE:past_proof2}.
    
	We are left with the proving that the remaining terms are short-range perturbations: Since $c^{\mathcal{UU}}_->1/2$, admissibility is equivalent to the perturbations on the right hand side \cref{eq:EVE:EVE_schematic_past_0} having weights larger than $\vec{c}^{\mathcal{UU}}+(1,2)$ .
	
	We compute that the condition for the nonlinearities forcing  $h^{+}_{\Lbar\Lbar}$ to be perturbative is given by
	\begin{equation}
		\min\{2(c^{LL}_-,c_0)+(3/2,3),3(c^{LL}_-,c_0)+(2,4),c_-^{LL}+(a^{V}_-,3+a_0)\}>(1,2)+(c^{\Lbar\Lbar}_-,c_0,),
	\end{equation}
    which indeed holds. 
	All other components $h_{\mathcal{UT}}$ can be treated in the same way. 
	
	Finally, for $h_{LL}$, the admissibility condition is given by
	\begin{equation}
		\min\{2(c_-,c_0)+(1,3),3(c^{LL}_-,c_0)+(2,4),2(c^{LL}_-,c_0)+(3/2,3),c_-+(1-,3+a_0)\}>(1,2)+(c^{LL}_-,c_0).
	\end{equation}
	Again, this clearly holds.
	
	Since we require no incoming data for $h^{+}$, the result of scattering and uniqueness, together with the estimates, now follows from \cref{thm:current:quasilinear_scattering} together with \cref{lemma:EVE:eikonal}.

    Note that, as explained below \cref{eq:EVE:restriction_on_scattering_data}, the scattering data $\mathfrak{h}$ is chosen precisely so that the wave equation for $\Upsilon[h]$ has no data at $\scrim$, so provided $\Upsilon[h]=0$ in $\D^{v_0+1}_{v_0}$ we obtain that $\Upsilon[h]=0$ in $\D^-$.
    
    \emph{b)} We turn our attention to solutions with no incoming radiation. The previous construction doesn't quite work here because if for $h^{-,2}$, the appearance of the cut-off in the definition of $\chi$ generates artificial incoming radiation (where $\pv\chi\neq0$) and we thus don't have compatibility with no incoming radiation.
    We therefore define instead $h^{+,1}:=h-h^{-,1}$, with $h^{-,1}$ as before. 
   This now has support even in $\D^{v_0+1}_{v_0}$.
   In order to still avoid boundary terms in the energy estimates from \cref{lemma:current:perturbative_h}, we multiply all our multipliers with the same cut-off function $\chi$ as before. 
    This will generate additional bulk terms with a bad sign in the region $\D^{v_0+1}_{v_0}$; but these pose no problem because we already have a priori control on $h$ in this initial slab.
    We can therefore apply the same proof as before, but with nonlinearities and inhomogeneities now compatible with no incoming radiation. The result then follows, again, from \cref{thm:current:quasilinear_scattering}.

    \emph{c)} The propagation of polyhomogeneity follows as in \cref{sec:app}.
    \end{proof}

\subsubsection{An example construction of solutions with fast convergence to Schwarzschild}\label{sec:EVE:fast_decaying_scattering}
In forward problems, it is often assumed that the data at $t=0$ are given by the Schwarzschild initial data plus terms that decay faster, i.e.~plus terms of order $r^{-1-\epsilon}$ for some $\epsilon>0$. In particular, this is assumed in \cite{Lindblad2010,hintz_stability_2020}. However, as pointed out in \cite{kehrberger_case_2024-1}, general scattering solutions cannot be expected to exhibit this fast decay to Schwarzschild towards spacelike infinity.
In particular, note that, even for compactly supported data $\pv\mathfrak{h}$ along $\scrim$ and trivial data in the slab $\D^{v_0+1}_{v_0}$, the solutions generated by \cref{thm:EVE:scatteringpast} only decay like $h\sim r^{-1}$ towards $I^0$ (cf.~\cref{footnote:lindblad}), which one can show to be sharp (and supported on all angular modes) with a bit more work.
This is, in fact, analogous to the observations from \cref{sec:prop} that compactly supported scattering data $\pv\psi^{\scrim}$ for $\Box_\eta\phi=0$ lead to $\phi\sim r^{-1}$ decay towards spacelike infinity, cf.~\cref{rem:poly:improved_decay_at_I0,obsidobsi}. 

The latter observations also state that the $1/r$ coefficient towards $I^0$ vanishes if the $v$-integral over $\pv\psi^{\scrim}$ along~$\scrim$ vanishes. 
The purpose of this subsection is to show that this still holds for the Einstein vacuum equations. More precisely, we will show that for compactly supported data with vanishing integral, the solution converges to Schwarzschild plus $r^{-2}$-terms near spacelike infinity.
For simpler notation, we work in the smooth category ($k=\infty$).

\begin{prop}[Solutions with fast convergence to Schwarzschild]\label{lemma:EVE:converging_to_Sch}
    There exist scattering data $\mathfrak{h}$ (as in \cref{def:EVE:data}) such that the corresponding solution $g=\eta+h$ to $\cref{eq:EVE:rEVE}$ (with trivial data in $\D^{v_0+1}_{v_0}$) up to a diffeomorphism satisfies $\big(\eta+h-g_{\mathrm{Sch}})_{\mathcal{UU}}\in\Hb^{2-,2-}(\D^-)$.
    In particular, the solution is exactly Minkowski before an incoming cone $\incone$ and on a Cauchy hypersurface approaches a Schwarzschild exterior at a rate faster than required in \cite{Lindblad2010} (namely at rate $g-\eta\sim r^{-2+}$).
\end{prop}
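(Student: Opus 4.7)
The plan is to exhibit scattering data $\mathfrak{h}$ that is compactly supported in advanced time along $\scrim$ and carefully tuned so that the corresponding scattering solution $h$ (obtained via \cref{thm:EVE:scatteringpast}) has leading $r^{-1}$ and $r^{-1}\log r$ asymptotics towards $I^0$ that match those of Schwarzschild with some mass $M$ in a suitable harmonic coordinate system. Everything beyond this leading behaviour will automatically live in $\Hb^{2-,2-}(\D^-)$ by the weighted-decay part of \cref{thm:EVE:scatteringpast}. This is the tensorial, nonlinear EVE analogue of the elementary observation (\cref{obsidobsi} and \cref{cor:prop:expansiontowardsI0}) that, for the linear wave equation with compactly supported incoming radiation, the leading-order expansion at $I^0$ is determined explicitly by a finite sequence of moment integrals $\int (v-v_0)^k \pv\psi^{\scrim}\,dv$.

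Concretely, I would proceed as follows. First, take $\mathfrak{h}_{\mathcal{UU}}$ smooth, compactly supported in some $[v_1,v_2] \subset (v_0+1,\infty)$ and satisfying the gauge constraints \cref{eq:EVE:restriction_on_scattering_data}. With trivial data in $\D^{v_0+1}_{v_0}$, \cref{ass:EVE:data} holds vacuously, and \cref{thm:EVE:scatteringpast}(c) produces a polyhomogeneous scattering solution $h \in \A{phg}^{\vec{\mathcal{E}}}(\D^-)$ which, by the compact support of $\mathfrak{h}$ and the domain of dependence, vanishes identically before some incoming null cone. Second, extract the $r^{-1}$-expansion of $h$ at $I^0$ via the iterative algorithm of \cref{sec:app:main}, decomposing $h = h^{(0)} + h^{(1)} + \cdots$ where $h^{(0)}$ solves the linear Minkowskian system for $\bar\Box_0$ with data $\mathfrak{h}$, whose $r^{-1}$-coefficients are explicit moments of $\pv\mathfrak{h}$ by \cref{cor:prop:expansiontowardsI0}, and where successive iterates collect nonlinear contributions from $Q, G, F$ in \cref{eq:EVE:EVE_schematic_past}. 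Third, compare to the expansion of $g_{\mathrm{Sch}} - \eta$ in the Lindblad--Rodnianski harmonic gauge, whose leading contribution towards $I^0$ takes the form $c_{\mu\nu} M/r + O(M^2 r^{-2})$ for explicit universal constants $c_{\mu\nu}$.

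Fourth, solve the resulting finite linear matching system whose unknowns are finitely many moments $M_k^{\mu\nu} := \int (v-v_0)^k \pv\mathfrak{h}_{\mu\nu}\,dv$ together with the mass $M$, subject to the gauge constraints \cref{eq:EVE:restriction_on_scattering_data}. Since both the matching conditions and the gauge constraints are finite-codimension linear conditions on the infinite-dimensional space of admissible compactly supported $\mathfrak{h}$, a solution exists. The diffeomorphism appearing in the statement is used to align the a priori distinct harmonic coordinate systems in which $g$ and $g_{\mathrm{Sch}}$ are naturally expressed.

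The main obstacle I anticipate is the precise bookkeeping of the nonlinear feedback into the $r^{-1}$ coefficient towards $I^0$. Although $Q[h^{(0)}] \sim r^{-4}$ as a source (one power better than what is needed), repeated integration against the Minkowskian fundamental solution and the generic appearance of $\log(r/|u|)$ terms through the index-set operation $\cupdex$ means that the $r^{-1}\log^k r$ part of the full iterate need not be zero even if the corresponding moments of $\mathfrak{h}$ vanish. Hence the matching conditions must be formulated at the level of the full nonlinear iterate rather than its linearization, which amounts to a careful application of \cref{cor:app:sharp_index_sets} to the tensorial system \cref{eq:EVE:EVE_schematic_past}. Showing that the resulting matching system is nondegenerate, so that one can genuinely realise Schwarzschild at $I^0$ with a nonzero $\mathfrak{h}$, will most likely be established by exploiting the explicit form of the Schwarzschild expansion together with an appropriate choice of angular support for $\mathfrak{h}$.
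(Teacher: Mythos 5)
Your high-level shape is right—compactly supported scattering data along $\scrim$, compare against Schwarzschild in a harmonic coordinate system, and invoke a diffeomorphism—but the matching step as you have framed it has a genuine structural gap.

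You treat the matching at $I^0$ as a finite \emph{linear} system in ``finitely many moments $M_k^{\mu\nu}$ together with the mass $M$,'' and then appeal to finite codimension on an infinite-dimensional space of data. But the mass aspect that controls the $r^{-1}$ and $\frac{\log r}{r}$ behaviour of $h_{LL}$ near $I^0$ is \emph{not} a linear functional of the scattering data: by \cref{def:EVE:data} and the explicit form of $F_{LL}$, it is $m(\omega) \propto \int (g_{S^2}^{-1})^{ca}(g_{S^2}^{-1})^{db}\chi_{cd}\chi_{ab}\,\dd v$, a positive-definite quadratic functional of the news $\chi_{ab}=\pv\mathfrak{h}_{ab}$. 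In particular $M$ is not a free parameter you can solve for linearly (you cannot set it to zero without killing the data, and you cannot freely prescribe its angular dependence), and the finite-codimension argument does not apply. The genuine difficulty—which your proposal does not address—is exhibiting compactly supported trace-free $\chi_{ab}$ satisfying the gauge constraints \cref{eq:EVE:restriction_on_scattering_data} for which this quadratic integral is \emph{constant on the sphere}; the paper gets this from an explicit algebraic construction (citing~\cite{kehle_event_2023}), not from an abstract moment-matching argument.

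Your appeal to ``align the a priori distinct harmonic coordinate systems'' by a diffeomorphism is also doing much more work than the phrasing suggests: because of the $\log(-u)$ bending of the light cones induced by the nonzero mass, one must first pass to coordinates $\bar u,\bar v$ that absorb this bending (the paper's explicit change $\bar u = u - 2m\frac{-u\log(-u)}{r}$ followed by a correction to genuine $g$-harmonic coordinates). Without this, $h_{LL}-(g_{\mathrm{Sch}}-\eta)_{LL}$ is only $O(\log r/r)$ near $I^0$, not $O(r^{-2+})$, so the claimed membership in $\Hb^{2-,2-}(\D^-)$ fails. I'd therefore say your proposal identifies the right ingredients (compact support, moments, Schwarzschild-in-harmonic-gauge comparison, a diffeomorphism), but the two crux points—positivity/nonlinearity of the mass aspect and the explicit light-cone-correcting change of coordinates—are missing, and both are essential.
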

\begin{proof}
    We take the solution to be exact Minkowski space before the cone $\incone$, and prescribe scattering data with $\mathfrak{h}=0$ in $\D^{v_0+1}_{v_0}$ and
\begin{equation}
   \pv \mathfrak{h}_{\mu\nu}=\begin{cases}
        \chi_{ab},\qquad \mu,\nu\notin\{L,\Lbar\}\\
        0,\qquad \text{else},
    \end{cases}
\end{equation}
such that $\mathrm{tr}_{g_{S^2}} \chi=0$ and such that the $v$-integrals over the components $\chi_{ab}$, $\int_{v_0}^v \chi_{ab}\dd v' \in C^\infty_c(\scrim\cap\{v\in(v_0+1,v_0+2)\})$ are compactly supported.
Then, observing the (time-reversed version of the) explicit form of $F_{LL}[h_{\mathcal{UT}}]$ from \cite[Eq.~(3.19),(5.5)]{lindblad_global_2005}, we get that $\mathfrak{h}_{LL}^{(0,1)}(v,\omega)=\frac12\int\dd v (g_{S^2}^{-1})^{ca}(g_{S^2}^{-1})^{db}\chi_{cd}\chi_{ab}$.
Write $8m=\mathfrak{h}_{LL}^{(0,1)}(v_0+2,\omega)\in C^\infty(S^2)$ and note that $\mathfrak{h}_{LL}^{(0,1)}(v,\omega)=\mathfrak{h}_{LL}^{(0,1)}(v_0+2,\omega)$ $\forall v>v_0+2$. 
From \cref{thm:EVE:main_scattering}, we get that the corresponding solution $h$ satisfies in $\D^{v_0+3}_{v_0+2}$ 
\begin{equation}\label{eq:EVE:particular_proof}
    rh_{\mu\nu}=\Hb^{1-}(\D^{v_0+3}_{v_0+2})+\begin{cases}
        8m\log(u),&\mu=\nu=L\\
        0,&\text{else}.
    \end{cases}
\end{equation}
In fact, we can upgrade \cref{eq:EVE:particular_proof} to $\Hbt^{1-}(\D^{v_0+4}_{v_0+3})$ using that there is no longer any incoming radiation for $v\geq v_0+3$.

We now split $m=m_0+\slashed{m}$, with $\int_{S^2}\slashed{m}=0$.
Using \cite[Lemma~3.2]{kehle_event_2023} we get that there exists $\chi_{ab}$ such that $\slashed{m}=0$. We will restrict to such $\chi$ from now on. In particular, $m$ is spherically symmetric.\footnote{We give a brief construction here:  consider $\chi^{(1)}=\dd \theta^2-\sin^2(\theta)\dd\varphi^2$ defined in $U_1=\{\abs{\hat{z}}=\abs{\cos\theta}<0.9\}$.
We have $\tr_{g_{S^2}}\chi^{(1)}=0$ and $\abs{\chi^{(1)}}^2_{g_{S^2}}=2$.
Set $\chi^{(2)}$ to be similar with respect to the $y$ axis defined on $U_2=\{\abs{\hat{y}}<0.9\}$.
For $i\in\{1,2\}$, fix $f_i\in C_c^\infty(U_1)$ such that $f_1^2+f_2^2=1$.
Then $\chi=\mathfrak{x}(v-v_0)f_1\chi^{(1)}+\mathfrak{x}(v-v_0-1)f_1\chi^{(2)}$ satisfies the requirement for $\supp\mathfrak{x}\subset[0,1]$.}

At this point, we want to construct coordinates such that the solution looks to leading order like Schwarzschild.
Consider $\Cbar$ a null cone for $g[h]$ in $\D^{v_0+4}_{v_0+3}$, the existence of which follows from \cref{lemma:EVE:eikonal}.
We define $\bar{x}_\mu$ to solve
\begin{equation}
    \bar{\Box}_h (\bar{x}_\mu-x_\mu)=0, \qquad \bar{x}_\mu|_{\Cbar}=x_\mu\big(1+r^{-1}2m\log r\big),\quad {\pv(r(\bar{x}_\mu-x_\mu))|_{\scrim}=0.}
\end{equation}
Using the fast decay of the metric, a basic argument gives that in $\D^{v_0+4}_{v_0+3}$, $\bar{x}_0=x_0-2m\frac{-u}{r}\log(-u)+\Hbt^{1-}(\D^{v_0+4}_{v_0+3})$, $\bar{x}_i=x_i+\frac{x_{i}}{r}2m\frac{-u}{r}\log(-u)+\Hbt^{1-}(\D^{v_0+4}_{v_0+3})$. In particular, to leading order, we have: $\bar{v}=v$, $\bar{u}=u-2m\frac{-u\log(-u)}{r}$.
Setting $\Phi:\bar{x}\mapsto x$, we compute the pull-back:
  \begin{equation}
      \Phi^{*}(\eta+h)=\underbrace{-4\dd\bar{u}\dd\bar{v}+\bar{r}^2 g_{S^2}}_{:=\bar{\eta}}+\frac{8m}{\bar{r}}\dd \bar{u}\dd \bar{v} - 4\bar{r}m\log \bar{r} g_{S^2}\qquad \mod \Hbt^{2-}(\D^{v_0+4}_{v_0+3}),
\end{equation}
where the error term is expressed with respect to the old $\mathcal{UU}$ frame. We will write $\Phi^{\ast}(\eta+h)=\bar{\eta}+\bar{h}$---note that $\bar{\eta}$ is not the pull-back of $\eta$!

All that's left to do now is to notice that the leading order term  above, $r^{-1}8m\dd u\dd v-4r m \log r g_{S^2}$,is the same as the leading order term in the Schwarzschild metric written in harmonic coordinates: 
Recall that we can write $g_{\mathrm{Sch}}=-4(1-2m/r)\dd u\dd v+r^2g_{S^2}$ with respect to coordinates $u,v,\omega$, where $r(r^\star)=r^\star-2m\log r^\star+\Hb^{1-}(\R_r)$ and $r^\star=v-u$, yielding the required leading order form.
Here $\Hb^{1-}(\R_r)$ is understood with respect to the compactification of $\R_r$ given by $1/r$.
The corresponding rectangular coordinates $x_\mu$ are not harmonic with respect to $g_{\mathrm{Sch}}$, but we can construct out of $x$ harmonic coordinates $\tilde{x}$ by solving the wave equations
\begin{equation}
    \Box_{g_{\mathrm{Sch}}}(\tilde{x}_\mu-x_\mu)=-\Box_{g_{\mathrm{Sch}}}x_\mu,\quad (\tilde{x}_\mu-x_\mu)|_{\incone}=0,\, \pv(r(\tilde{x}_\mu-x_\mu))|_{\scri-}=0.
\end{equation}
In $\tilde{x}_\mu$ coordinates, we still have the correct leading order expression, i.e., for $\tilde{\Phi}: \tilde{x}\mapsto x$
\begin{equation}
    \tilde{\Phi}^{\ast}g_{\mathrm{Sch}}=-4(1-2m/\tilde{r})\dd \tilde{u}\dd \tilde{v}+\tilde{r}^2 g_{S^2}\quad \mathrm{mod}\quad  \Hb^{2-,2-}(\D^-),
\end{equation}
where $r$ is now defined via $r(\tilde{v}-\tilde{u})$.
 We now define a Schwarzschild metric $\bar{g}_{\mathrm{Sch}}$ to be the metric above but with $\tilde{x}$ replaced by $\bar{x}$. 

We may then conclude that, in $\D^{v_0+4}_{v_0+3}$, $\bar{\eta}_{\mathcal{UU}}+\bar{h}_{\mathcal{UU}}=\Phi^{\ast}(\eta+h)_{\bar{\mathcal{U}}\bar{\mathcal{U}}}=(\bar{g}_{\mathrm{Sch}})_{\bar{\mathcal{U}}\bar{\mathcal{U}}}+\Hb^{2-}(\D^{v_0+4}_{v_0+3})$.  

Using that $\bar{g}_{\mathrm{Sch}}$ solves the Einstein equations in harmonic coordinates, we finally obtain that $\bar{h}_{\bar{\mathcal{U}}\bar{\mathcal{U}}}=(\bar{g}_{Sch}-\bar{\eta})_{\bar{\mathcal{U}}\bar{\mathcal{U}}}+\Hbt^{2-}(\D^-\setminus \D^{v_0+4}_{v_0})$, working now with coordinates $\bar{x}$.
\end{proof}

\subsection{Construction of solutions in the future region \texorpdfstring{$\D^+$}{D+} }\label{sec:EVE:future}
The solutions constructed in the previous subsection are not compatible with the decay assumed in the standard stability results \cite{Lindblad2010,hintz_stability_2020}.
In this section, we therefore show that the results of \cite{hintz_stability_2020}  (including the propagation of polyhomogeneity) still hold when the data have slower decay. More precisely, we will consider solutions that, near spacelike infinity, decay like $g-\eta\sim r^{-1+a_0}$ with $a_0<0$.\footnote{We re-emphasise the necessity of considering such data in the context of scattering problems, as, in general, one expects that the metric will have $a_0=0$ deviation from the Minkowski metric with perturbation supported on all $\ell$ modes, see \cite[Eq.~(1.5)]{kehrberger_case_2024-1}. We also recall that he existence of solutions for such data is of course known from the works \cite{bieri_extension_2010,shen_stability_2023}, see also \cite{leflochNonlinearStabilitySelfgravitating2024}, where a similar problem is studied in harmonic gauge.} 
We note that the treatment of any $a_0<0$ is already sufficient for the physically important scenario captured in \cref{ass:EVE:data}.

The only novelty compared to \cite{hintz_stability_2020} is that we must correct for the bending of the lightcones imposed by harmonic gauge \textit{without} having access to the explicit Schwarzschild solution.
This is done in \cref{prop:EVE:ansatz} by finding an ansatz to correct for this bending.
As we only do one iteration, we restrict to $a_0>-1/2$, which yields that the second iterate will have decay towards $I^0$ as $h^2_{\mathcal{UU}}\sim r^{-(1+a_1)}$ for $a_1=1+2a_0>0$, thus falling under the same setting as considered in \cite{Lindblad2010,hintz_stability_2020}. With a bit more work, one could generalise this to $a_0>-1$.

In this section, it will be convenient to introduce the analogue of $\Hbt$ near $\scrip$.
Indeed, in this section \emph{exclusively}, we write
\begin{equation}
	\Hbt^{\vec{a};k}(\D^+)=\{f\in\Hb^{\vec{a};k}(\D^+):\{r\pu,\Vb\}^kf\in\Hb^{\vec{a};0}(\D^+)\}.
\end{equation} 
We will use the following characterisation of solutions with weakly decaying data

\begin{lemma}[Weakly decaying data]\label{lemma:EVE:weak_data_at_I0} Fix some $k\in\N$ sufficiently large.

    \textit{a)} Let $a_0\in(-1,0)$.
    Let $\phi$ be the forward solution to $\Box_\eta\phi=f\in\Hb^{a_0+3;k}(\D^+\cap\D^-)$, i.e.~ $\phi|_{\D^-\setminus\D^+}=0$.
    Then we have $r\phi\in\Hbt^{a_0-,a_0-;k-2}(\D^+)+\Hb^{a_0-,0-;k-4}(\D^+)$.
    The same holds for the initial value problem with $\phi|_{t=0}\in\Hb^{a_0+1;k+1}$ and $\partial_t\phi|_{t=0}\in\Hb^{a_0+2;k}$.

    \textit{b)} For $a_0\in(-2,-1)$ the above conclusion holds with extra derivative losses $r\phi\in\Hbt^{a_0-,a_0-;k-4}(\D^+)+\Hb^{a_0-,0-;k-6}(\D^+)$.

    \textit{c)} For $f_H,\vec{a}^H$, $\delta=1-$ and quasilinear perturbations as in \cref{lemma:current:perturbative_h}, the same result holds for $\tilde{\Box}_H\phi=f$ provided that the coefficients are at least $f_H\in\Hbt^{\vec{a}^H;\min(k+2,10)}(\D^+)$ regular.
\end{lemma}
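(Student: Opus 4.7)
My proof plan mirrors the analysis of the past region (Sections 5.2--6) but applied in $\mathcal{D}^+$, where the natural analog of the no--incoming--radiation condition is a no--outgoing--radiation condition at $\mathcal{I}^+$. The key observation, paralleling \cref{rem:en:good_derivatives_noincoming}, is that a generic solution splits into a radiative piece (which has a finite radiation field and thus only $r^{0-}$-boundedness near $\mathcal{I}^+$ for $r\phi$) and a non--radiative piece (which has stronger decay but is only smooth with respect to $r\partial_u$ in addition to $\mathcal{V}_{\mathrm{b}}$). The first step is a baseline energy estimate: since $\phi$ vanishes in $\mathcal{D}^-\setminus \mathcal{D}^+$, its characteristic data on the past boundary of $\mathcal{D}^+$ are trivial, so the future energy estimate of \cref{prop:en:futureestimate} directly gives $r\phi\in \mathrm{H}_{\mathrm{b}}^{a_0-,a_0-;k-2}(\mathcal{D}^+)$ (the loss of two derivatives coming from the inhomogeneity weight), after checking that the hypothesis $f\in\mathrm{H}_{\mathrm{b}}^{a_0+3;k}(\mathcal{D}^+\cap\mathcal{D}^-)$ together with the compact support in $v$ yields $rf$ in the required space $\mathrm{H}_{\mathrm{b}}^{a_0+2,a_0+1;k}(\mathcal{D}^+)$.

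Next I would construct the splitting. The radiation field $\psi^{\mathcal{I}^+}:=\lim_{v\to\infty}\partial_v(r\phi)$ exists in an appropriate weighted Sobolev space along $\mathcal{I}^+$ as a consequence of the baseline estimate and the usual boundary flux bound in \cref{prop:en:futureestimate}. Let $\phi_{\mathrm{rad}}$ be the (unique) solution to $\Box_\eta\phi_{\mathrm{rad}}=0$ with trivial past data and outgoing radiation $\psi^{\mathcal{I}^+}$, constructed by the time--reversed analog of \cref{cor:prop:solution_with_radiation_scri}; this yields $r\phi_{\mathrm{rad}}\in \mathrm{H}_{\mathrm{b}}^{a_0-,0-;k-4}(\mathcal{D}^+)$. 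The residual $\phi_{\mathrm{res}}:=\phi-\phi_{\mathrm{rad}}$ then satisfies $\Box_\eta\phi_{\mathrm{res}}=f$ with trivial past data \emph{and} vanishing outgoing radiation at $\mathcal{I}^+$.

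The core step is to show that $r\phi_{\mathrm{res}}\in \tilde{\mathrm{H}}_{\mathrm{b}}^{a_0-,a_0-;k-2}(\mathcal{D}^+)$, i.e.~that $r\partial_u$--commutations preserve the weights. This is the time--reversed analog of \cref{item:en:pastestimate_Hbt}: the vanishing of the radiation field at $\mathcal{I}^+$ means that, upon integrating the equation $\partial_u\partial_v\psi_{\mathrm{res}}=\Dl\psi_{\mathrm{res}}/r^2+rf$ from $\mathcal{I}^+$ inwards in $v$, one gains a factor $r$ on a $\partial_v$-integration rather than merely $v$. One then inductively commutes with $r\partial_u$ and re--applies the baseline energy estimate to the commuted quantity, absorbing the commutator error terms using the already established decay. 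The derivative count $k-2$ reflects the loss from inverting $\Box_\eta$ twice at each level of commutation. Combining $\phi_{\mathrm{rad}}$ and $\phi_{\mathrm{res}}$ yields the claimed decomposition.

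For part (b), the weakly--decaying regime $a_0\in(-2,-1)$ is reduced to (a) by the time--integration trick of \cref{rem:en:good_derivatives_noincoming}: we apply $T=\partial_t$ once to the equation, which gains one unit of decay towards $I^0$ (since $T\in\rho_0\mathcal{V}_{\mathrm{b}}$) and places $Tf$ into the setting of (a) at the cost of two additional derivatives. Integrating back in $T$ via \cref{corr:ODE:du_dv} recovers $\phi$ and produces the stated losses. Part (c) proceeds identically once one verifies that an admissible quasilinear $H$ with $\vec{a}^H>(1,1)$ and the stated $\tilde{\mathrm{H}}_{\mathrm{b}}$-regularity of $f_H$ satisfies the hypotheses of \cref{lemma:current:perturbative_h}; then the baseline estimate is supplied by \cref{prop:current:tame_estimate} and the $r\partial_u$--commutation closes because $\tilde{\mathrm{H}}_{\mathrm{b}}$--regularity of $f_H$ precisely captures the fact that the perturbation respects the no--outgoing--radiation structure. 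The main technical obstacle, as in the past--region analog, is the $r\partial_u$--commutation near $\mathcal{I}^+$: since $r\partial_u\notin \mathcal{V}_{\mathrm{b}}$ near $\mathcal{I}^+$, the commutator $[r\partial_u,P_\eta]$ generates terms that must be controlled using the vanishing of $\partial_v\psi_{\mathrm{res}}$ at $\mathcal{I}^+$, which is where the splitting into $\phi_{\mathrm{rad}}+\phi_{\mathrm{res}}$ is essential.
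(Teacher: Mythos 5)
Your proposal takes essentially the same route as the paper: baseline energy estimate via \cref{prop:en:futureestimate}, extraction of a radiation field, a split into a radiative piece (handled as a time-reversed version of \cref{cor:prop:solution_with_radiation_scri}, giving the $\Hb^{a_0-,0-}$ part) plus a no-outgoing-radiation residual (giving the $\Hbt^{a_0-,a_0-}$ part via the time-reversed $\Hbt$-estimate), with parts (b) and (c) treated by time integration and by replacing the Minkowski estimates with their metric-perturbed analogues. One small slip to fix: the radiation field at $\scrip$ should be the limit of $\psi$ (equivalently of $\pu\psi$), not $\lim_{v\to\infty}\pv\psi$, which is the tangential derivative along $\scrip$ and is not the correct quantity in the decomposition — the same typo recurs where you cite "vanishing of $\partial_v\psi_{\mathrm{res}}$ at $\scrip$."
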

\begin{proof}
    a)
	Using \cref{prop:en:futureestimate} we already have $r\phi\in\Hb^{a_0,a_0-;k}(\D^+)$.
	Treating $r^{-2}\Dl$ as a perturbation and using \cref{corr:ODE:du_dv}, we have $\pu r\phi\in\Hb^{a_0+1,a_0+1-;k-2}(\D^-)+\A{b,phg}^{a_0+1,\mindex{0};k-2}(\D^-)$.
	In conclusion, we get that $ r\phi$ has a finite radiation field.
	We also have that $r\phi|_{\outcone{}}\in\Hb^{a_0-;k}(\outcone{})$.
    Next, we view the solution as arising from backwards scattering and split it into two parts, one with trivial data along $\outcone{}$ and $\pu  \psi|_{\scrip}$ along $\scrip$, the other with $\psi|_{\outcone{}}$ along $\outcone{}$ and with trivial data along $\scrip$.% to set up a backward scattering with data either on $\scrip$ or $\outcone{}$.

    For the solution with nontrivial data only on $\outcone{}$, the result follows from \cref{thm:scat:scat_general}.

    For the solution arising from nontrivial data on $\scrip$, we use the trick as in \cref{thm:scat:scat_incoming} to write the solution as $\psi|_{\scrip}(v,\omega)+\bar{\psi}(u,v,\omega)$.
    Then $\bar{\psi}$ solves $\Box_\eta\bar{\phi}=-r^{-3}\Dl\psi|_{\scrip}(u,\omega)\in\Hb^{3-,a_0+2-;k-4}(\D^+)$.
    The result for $\bar\psi$ follows from \cref{thm:scat:scat_general}.

    For nontrivial initial data, we may use the energy estimates with boundary terms.

    b) For $T\phi$ part a) applies. Then we recover $\phi$ by integrating the previous two components with data along $\outcone{}$ and on $\scrip$ separately using \cref{corr:ODE:du_dv}.

    c) Note that throughout the proof, we  treated $r^{-2}\Dl\in \rho_+^2\rho_0^2\Diff_{\b}^2$ as a perturbation for constructing the solution.
    Therefore, the proof still holds for $\tilde{\Box}_H\phi=f$ when replacing \cref{prop:en:futureestimate,thm:scat:scat_general} with \cref{thm:current:metric_scattering}.
\end{proof}

To study the evolution of \cref{eq:EVE:EVE_schematic} from $I^0$ towards $\scrip$, we must first create an initial ansatz. 
In fact, the construction of this ansatz is the main novelty of this section compared to previous works.
In hopes of making \cref{prop:EVE:ansatz} easier to follow, we use the following remark to discuss some of the main ideas going into the construction of the ansatz at the example of the Schwarzschild solution. 
\begin{rem}[Constructing an ansatz for Schwarzschild]\label{rem:EVE:schwex}
Suppose we are already given the Schwarzschild solution in some harmonic coordinates $\tilde{x}$, namely
\begin{equation}\label{eq:EVE:sch_in_harmonic_1}
    \tilde{g}=\tilde{g}_{\mathrm{Sch}}=-\dd \tilde{t}^2 \frac{\tilde{r}-m}{\tilde{r}+m}+\dd \tilde{r}^2\frac{\tilde{r}+m}{\tilde{r}-m}+(\tilde{r}+m)^2 \tilde{g}_{S^2}.
\end{equation}
If we use these coordinates to compare against Minkowski ($\tilde{\eta}=-\dd \tilde{t}^2+\dd \tilde{r}^2+\tilde{r}^2g_{S^2}$), the asymptotic geometry is not captured well:\footnote{Of course, one can simply write \cref{eq:EVE:sch_in_harmonic_1} with respect to tortoise coordinates and then find harmonic coordinates the keep the light cones fixed, but the point here is to mimic the proof of \cref{prop:EVE:ansatz} below.}
Indeed, note that to leading order, 
\begin{equation}\label{eq:EVE:ex:gsch}
    \tilde{g}=\tilde{\eta}+\tilde{h}=\tilde{\eta}+\underbrace{\frac{2m}{\tilde{r}}(\dd \tilde{t}^2+\delta^{ij}\dd\tilde{x}_i\dd\tilde{x}_j)}_{:=\tilde{h}_{\mathrm{lin}}}+\underbrace{\text{ higher order terms}}_{:=\tilde{h}^{\mathfrak{Err}}}.
\end{equation}
While $\tilde{h}_{\mathrm{lin}}$ is a solution to the linearised Einstein vacuum equations and constraint equation that removes the leading-order terms near $I^0$ (so of the data), we have $\tilde{h}_{\mathrm{lin},\tilde{L}\tilde{L}}\sim 4m/\tilde{r}$, so $\tilde{h}_{\mathrm{lin}}$ is not a short-range metric perturbation near future null infinity. 
(Note that in the proof below, we will have to work a bit harder to find $\tilde{h}_{\mathrm{lin}}$; there, it corresponds to $\tilde{\chi}h+\tilde{h}^{(1)}$.)

The goal of the construction below is to find a change of coordinates $x\mapsto \tilde{x}$ that removes this problematic term.
Of course, given any other coordinate set $x:\D\to\R^{3+1}$, we can still define $g$ and ${h}_{\mathrm{lin}}$ in analogy to $\tilde{g}$ and $\tilde{h}_{\mathrm{lin}}$ above (simply replacing all $\tilde{x}$'s by $x$'s)---note that these are not pull-backs!\footnote{For instance, given a diffeomorphism $\Phi:x\mapsto \tilde{x}$ between two charts we write $\eta=-\dd t^2+\dd x^2$ and $\tilde{\eta}=-\dd \tilde{t}^2+\dd \tilde{x}^2$, but of course $\Phi^\star \tilde{\eta}\neq\eta$.}

Now, as in \cref{item:current:metric_coordiante_change}, we can find a coordinate change correcting this bad behaviour: Indeed, we find a  map $u\mapsto\tilde{u}=F(u)+u$ with $\pu\pv F=\pu ({h}_{\mathrm{lin},LL})$, $v\mapsto \tilde{v}=v$, which in rectangular coordinates reads
\begin{equation}\label{eq:EVE:ex:prelimcoords}
    \Phi_{\mathrm{prelim}}:(t,x_i)\mapsto (\tilde{t},\tilde{x}_i)=(t+8m\frac{v\log v}{r},x_i).
\end{equation}
Note that the new coordinates $\tilde{x}$ are harmonic with respect to $\eta$.
We compute the pull-back of $\tilde{\eta}$,
\begin{equation}
     \Phi_{\mathrm{prelim}}^{\ast}\tilde{\eta}=\underbrace{-4 \dd u \dd v+r^2 g_{S^2}}_{:=\eta}-\frac{4m}{r}\dd v^2+\text{ higher order terms},
\end{equation}
and note that it precisely accounts for the bad behaviour of ${h}_{\mathrm{lin},LL}$; indeed:
\begin{equation}
 (\Phi_{\mathrm{prelim}}^{\ast}\tilde{\eta})_{LL}-\eta_{LL}=-\frac{4m}{r}+\rho_+\Hbt^{1-,1-}(\D^+).
\end{equation}
We define $h^1_{\mathrm{prelim}}:=h_{\mathrm{lin}}+\Phi_{\mathrm{prelim}}^{\ast}\tilde\eta -\eta$.

Finally, we relabel the thus found coordinates $x_{\mu}$ to $x_{\mathrm{prelim,\mu}}$, because we now want to correct $ \Phi_{\mathrm{prelim}}^{-1}$ to be a harmonic map with respect to the actual $\tilde{g}$ in a neighbourhood of spacelike infinity, i.e.~in an open neighbourhood $\tilde{U}\subset\D^+\cap\D^-$, say $\tilde{U}:=\D^+\cap\D^-\cap\{\tilde{\rho}_+\in(-1/10,1/10)\}$. 
To  this end, we let $\tilde{\chi}$ be a cut-off localising to $\tilde{U}$ and define the metric $\underline{g}=(1-\tilde{\chi})\eta+\tilde{\chi}\tilde{g}_{\mathrm{Sch}}$, which can be evaluated in the $\tilde{x}$ ($x$) chart by a pushforward (pullback) via $\Phi_{\mathrm{prelim}}$,
Define $y_{\mu}$ to solve
\begin{equation}
    \Box_{(1-\tilde{\chi})\eta+\tilde{\chi}\tilde{g}_{\mathrm{Sch}}} y_\mu=0,
\end{equation} 
with data $y_{\mu}=\Phi^{-1}_{\mathrm{prelim}}(\tilde{x})_\mu$ in $\tilde{\chi}=0$.
We now define $\Phi^{-1}(\tilde{x})_\mu=y_\mu$.

Finally, we define
\begin{equation}
    h^1:= h_{\mathrm{lin}}+\Phi^{\ast}\tilde\eta-\eta,\quad \text{and}\quad h^2:=\Phi^{\ast}(\tilde{g})-h^1.
\end{equation}
One then checks (for details see the proof below) that $h^1$ satisfies
    \begin{equation}
        h^1_{\mathcal{UU}}|_{U}=\Big(\Phi^{*}( \tilde{g}_{\mathrm{Sch}})-\eta\Big)_{\mathcal{UU}}+\Hb^{2-}(U),
    \end{equation}
    where we used that $(\Phi^{\ast}\tilde{h}_{\mathrm{lin}})_{\mathcal{UU}}=({h}_{\mathrm{lin}})_{\mathcal{UU}}+\Hb^{2-}(U)$ in $U$.
    That is, after applying a  change of coordinates $\Phi$ that is exactly harmonic \emph{within} the region $U$ (and approximately harmonic away from $U$), the corresponding leading order (in terms of decay) solution to \cref{eq:EVE:rEVE} is $h^1$, a \emph{short range metric perturbation} satisfying $rh^1_{LL}\sim \rho_+$ in $\D^+$.
   
    Finally, one checks that $h^2=\Phi^*(\tilde{g}_{\mathrm{Sch}})-h^1$ satisfies a \cref{eq:EVE:h2} type equation with $a_0=0$, i.e.~an equation that we can solve for towards the future with $h^2\sim r^{-2}$ in $U$ (and we no longer have to worry about the bad $1/r$-behaviour of~${h}^{2}_{LL}$).  $\blacksquare$
\end{rem}

We now want to generalise the ideas above to our setting. By \cref{thm:EVE:scatteringpast}, we may suppose we are already given 
 $r\tilde{h}\in\Hb^{a_0;\infty}(\tilde{\D}^+\cap\tilde{\D}^-)$ solving \cref{eq:EVE:rEVE,eq:EVE:EVE} in harmonic rectangular coordinates $\tilde{x}$.
 We let $\tilde{\chi}$ be cutoff function localising to $\tilde{U}:=\D^+\cap\D^-\cap\{\tilde{\rho}_+\in(-1/10,1/10)\}$, a neighbourhood of the interior of $I^0$.
We write $\tilde{h}^{-}=\tilde{\chi} \tilde{h}$.
As in the example above, we can analogously define $h^-$ in $x$-coordinates.
In the proposition below, we will find a diffeomorphism relating the two coordinate systems ${x}$ and $\tilde{x}$, following more or less the ideas above. 

\begin{prop}\label{prop:EVE:ansatz}
    Let $k\geq30$ and $\tilde{h}, \tilde{U}$ be as above with $\norm{r\tilde{h}_{\mathcal{UU} } }_{\Hb^{a_0;k}(\tilde{U})}\ll 1$. Let $a_0\in(-1/2,0]$, and define $a_1:=1+2a_0>0$.
    There exists
    \begin{itemize}
        \item $\Phi:\D^+\to\R^{3+1}$, a diffeomorphism onto its image, such that $\Phi_\mu(x)-x_\mu\in\Hbt^{a_0-,a_0-;k-13}(\D^+)$ and $\bar{\Box}_h \Phi_\mu(x)=0$ in $\{\chi=1\}$ for $\mu\in\{0,1,2,3\}$;
        \item $h^1_{\mathcal{UU}}$, a metric perturbation ansatz, satisfying
        \begin{subequations}
        \begin{align}
            \begin{aligned}
                rh^1_{\mathcal{UU}}&\in\Hbt^{a_0-,a_0-;k-13}(\D^+)+\Hb^{a_0-,0-;k-13}(\D^+),\\
                rh^1_{LL}&\in\Hb^{a_0-,1-;k-13}(\D^+)+\rho_+\Hbt^{a_0-,a_0-;k-13}(\D^+),\label{eq:EVE:future_h1_size}
            \end{aligned}\\
            -\chi h^2:= \chi h^1_\mathcal{UU}-\chi \big(\Phi^*(\tilde{\eta}+\tilde{h})-\eta\big)_{\mathcal{UU}}\in r^{-1}\Hb^{a_1-;k-13}(U);\label{eq:EVE:h2_size}
        \end{align}
        \end{subequations}
        \item furthermore, we can write 
     $\Phi^{\ast}(\tilde{\eta}+\tilde{h})-\eta:=h^1+h^2$ with 
    \begin{nalign}\label{eq:EVE:h2}
        \bar{\Box}_{h}h^2_{\mathcal{UT}}=\mathfrak{F}^2_{\mathcal{UT}}[h^2]+\Hb^{4+2a_0-,3+3a_0-;k-15}(\D^+),\\
        \bar{\Box}_{h}h^2_{\Lbar\Lbar}=\mathfrak{F}^2_{\Lbar\Lbar}[h^2_{\mathcal{UT}}]+\Hb^{4+2a_0-,2-;k-15}(\D^+),\\
        \pu h^2_{LL}=A^2_{\mathrm{t}}[h^2]+\mathfrak{G}_{\mathrm{t}}[h^2]+\Hb^{3+2a_0-,2+2a_0-;k-15}(\D^+),
    \end{nalign}
    where
    \begin{itemize}
        \item all terms in $\mathfrak{F}^2_{\mathcal{UU}}[h^2]-\mathfrak{F}_{\mathcal{UU}}[h^2]$  are short-range perturbations with coefficients of $k-13$ differentiability (and size arbitrarily small depending on $\norm{r\tilde{h}_{\mathcal{UU}}}_{\Hb^{a_0;k}(\tilde{U})}$ ) for $rh^2_{\mathcal{UT}}\in\Hb^{a_1-\delta,-\delta}(\D^+)$ and $rh^2_{\Lbar\Lbar}\in\Hb^{a_1-\delta,-2\delta}(\D^+)$ for $\delta>0$ sufficiently small; 
        \item $A_\mathrm{t}^2\in\rho_0\rho_+^{1+a_0}\Diff^1_b$ is a linear perturbation with weight $(1,1+a_0)$, and $\mathfrak{G}_\mathrm{t}$ is an order 2 nonlinearity with weight~$(2,1)$.
    \end{itemize}
    \end{itemize}
    Finally, all memberships in the previous inclusions can be made sufficiently small depending only on $\norm{r\tilde{h}_{\mathcal{UU}}}_{\Hb^{a_0;k}(\tilde{U})}$.
\end{prop}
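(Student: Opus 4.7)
I would follow the strategy outlined in \cref{rem:EVE:schwex} but promote every step from the explicit Schwarzschild example to a robust construction using the analytic machinery of the paper. The key idea is that harmonic gauge forces $h_{LL}$ to diverge like $1/r$ in $x$-coordinates (for Schwarzschildean data, like $4m/r$; for our weaker $a_0$-data, like $r^{a_0}$ is merely bounded by \cref{ass:EVE:data}), which ruins short-rangeness of the associated quasilinear perturbation near $\scrip$. The cure is to change coordinates $x\mapsto\tilde{x}$ so that the new rectangular coordinates are eikonal-type adapted to $\tilde{g}$, with the difference $\Phi^{\ast}\tilde{\eta}-\eta$ precisely cancelling the leading-order $LL$ piece of a suitable linear ansatz $h_{\mathrm{lin}}$.

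\emph{Step 1 (construct $h_{\mathrm{lin}}$).} Using $\tilde{h}^{-}=\tilde\chi \tilde h\in\Hb^{a_0;k}(\tilde U)$ as data along $\{\tilde{\chi}=1\}\cap\{t\leq 0\}$, I would evolve $\Box_\eta h_{\mathrm{lin}}=0$ (a system of linear scalar wave equations for the components, plus linear potential corrections accounting for frame projection as in \cref{thm:EVE:form_of_EVE}, treated as long-range potentials) forwards via \cref{thm:scat:long,lemma:EVE:weak_data_at_I0}. This gives $rh_{\mathrm{lin},\mathcal{UU}}\in\Hbt^{a_0-,a_0-;k-4}(\D^+)+\Hb^{a_0-,0-;k-4}(\D^+)$. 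This $h_{\mathrm{lin}}$ is the analogue of $\tilde{h}_{\mathrm{lin}}$ in \cref{eq:EVE:ex:gsch}: it carries all the slowly-decaying information but its $LL$-component fails to be a short-range metric perturbation near $\scrip$.

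\emph{Step 2 (preliminary diffeomorphism).} Mimicking \cref{item:current:metric_coordiante_change}, I would solve the ODE $\pu F=h_{\mathrm{lin},LL}$ (say integrating from $\scrim$, using the no-incoming-radiation condition encoded in $\tilde{\chi}$) to obtain $F\in\Hb^{a_0-1-,a_0-1-}(\D)$. Setting $\Phi_{\mathrm{prelim}}(u,v,\omega)=(u+F,v,\omega)$ in $u,v,\omega$ coordinates (equivalently, a rectangular-coordinate map as in \cref{eq:EVE:ex:prelimcoords}), a direct calculation shows $(\Phi^{\ast}_{\mathrm{prelim}}\tilde{\eta}-\eta)_{LL}=-h_{\mathrm{lin},LL}+\rho_+\Hbt^{a_0-,a_0-}(\D^+)$, while the other components remain of the expected size. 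Defining $h^1_{\mathrm{prelim}}:=h_{\mathrm{lin}}+\Phi^{\ast}_{\mathrm{prelim}}\tilde{\eta}-\eta$, the bad $\rho_+^{a_0}$ piece of $h^1_{\mathrm{prelim},LL}$ is cancelled so that $rh^1_{\mathrm{prelim},LL}$ improves to $\rho_+\Hbt^{a_0-,a_0-}(\D^+)+\Hb^{a_0-,1-}(\D^+)$, matching the claimed \cref{eq:EVE:future_h1_size}.

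\emph{Step 3 (correct to be truly harmonic in $\tilde U$).} The map $\Phi_{\mathrm{prelim}}$ is only harmonic w.r.t.\ $\eta$, not the true metric. Following the last paragraph of \cref{rem:EVE:schwex}, I would introduce an interpolated metric $\underline{g}=(1-\tilde\chi)\eta+\tilde\chi\tilde g$ (pushed-forward/pulled-back via $\Phi_{\mathrm{prelim}}$) and solve $\Box_{\underline g} y_\mu=0$ backwards from $\scrip$ in the appropriate scattering sense, with prescribed value $y_\mu=(\Phi_{\mathrm{prelim}})^{-1}_\mu$ on $\{\tilde\chi=0\}$. Existence and the estimate $y_\mu-(\Phi_{\mathrm{prelim}})^{-1}_\mu\in\rho_+\Hbt^{a_0-,a_0-}(\D^+)$ follow from \cref{thm:current:metric_scattering} applied to the (linear, short-range) quasilinear problem for $y_\mu-(\Phi_{\mathrm{prelim}})^{-1}_\mu$, noting that the error source is supported where $\tilde\chi\in(0,1)$. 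Then $\Phi:=y^{-1}$ is the desired harmonic map; the diffeomorphism property is preserved by taking $\norm{r\tilde{h}_{\mathcal{UU}}}_{\Hb^{a_0;k}}$ small. Setting $h^1:=h_{\mathrm{lin}}+\Phi^{\ast}\tilde\eta-\eta$, the improvement of Step~2 survives because $\Phi-\Phi_{\mathrm{prelim}}$ contributes only to the $\rho_+$-subleading part.

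\emph{Step 4 (the equation for $h^2$ and checking admissibility).} Define $h^2:=\Phi^{\ast}(\tilde\eta+\tilde h)-\eta-h^1$. In $U=\{\chi=1\}$, $\Phi^{\ast}\tilde g$ is the pullback of a solution to \cref{eq:EVE:EVE} under a harmonic map, hence still satisfies the reduced Einstein equations in the $x$-coordinates. Subtracting $h^1$ (which satisfies the linearised reduced equations plus known controllable errors generated by the nonlinear interaction of $h_{\mathrm{lin}}$ with itself and with the coordinate correction) yields \cref{eq:EVE:h2}. The verification that $\mathfrak F^2_{\mathcal{UU}}-\mathfrak F_{\mathcal{UU}}$ is short-range for the claimed decay of $h^2$ follows exactly the bookkeeping in \cref{thm:EVE:form_of_EVE}, using that every extra $h^1$ factor brings at least one power of $\rho_+$ (from the $LL$ improvement) or one power of $\rho_0^{a_0-}$, and that the admissibility inequalities for weights $(a_1-\delta,-\delta)$, $(a_1-\delta,-2\delta)$ hold precisely because $a_1=1+2a_0>0$. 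The transport equation \cref{eq:EVE:eve_Ltransport} for $h_{LL}$ is manipulated in the same way, with $A_{\mathrm t}^2$ collecting the frame-projection commutators (weight $(1,1+a_0)$) and $\mathfrak{G}_{\mathrm t}$ the quadratic remainders.

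\emph{Main obstacle.} The delicate point is Step~3: ensuring that the second, genuinely-harmonic correction preserves the precise cancellation of $h_{\mathrm{lin},LL}$ by $(\Phi_{\mathrm{prelim}}^{\ast}\tilde\eta-\eta)_{LL}$ achieved in Step~2, and simultaneously that the resulting $h^2$ picks up the improved $\rho_+$ weight in its $LL$-component throughout $U$ --- not just to leading order. This requires tracking the pullback of $\tilde{\eta}$ under $\Phi-\Phi_{\mathrm{prelim}}$ carefully, showing the extra contributions all lie in $\rho_+\Hbt^{a_0-,a_0-}(\D^+)$. The derivative loss of $13$ reflects the accumulated losses in solving the ODE in Step~2, the backwards wave equations in Steps~1 and~3, and the product/composition estimates needed for the nonlinear pullbacks.
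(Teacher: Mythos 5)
Your overall architecture (linear ansatz, preliminary light-cone-straightening diffeomorphism, then harmonic correction, then extraction of a short-range equation for $h^2$) matches the paper's, and your Step~4 bookkeeping is in the right spirit. But there is a genuine gap in Step~1 that would make the whole construction fail, and two smaller missteps elsewhere.

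\textbf{The essential gap: you do not enforce the linearised gauge constraint.}
You propose to ``evolve $\Box_\eta h_{\mathrm{lin}}=0$'' from Cauchy data given by $\tilde h^-$. Evolving a tensorial wave equation from arbitrary data gives no control on $D_\eta\Upsilon[h_{\mathrm{lin}}]$. The paper's Step~1 is not merely ``solve the linear wave equation'': it carefully constructs initial data $(\gamma_{\mu\nu},k_{\mu\nu})$ for $h^{(1)}$ such that $D_\eta\Upsilon[h^{(1)}+h^-]$ has vanishing Cauchy data, after which the Bianchi-type identity $\Box_\eta(D_\eta\Upsilon)=0$ forces $D_\eta\Upsilon[h^{(1)}+h^-]\equiv 0$ throughout $\D^+$. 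This matters because the improved estimate on $h^{(1)}_{LL}$ is \emph{not} obtained from the wave equation: the wave equation alone only gives $rh^{(1)}_{LL}\in\Hbt^{a_0-,a_0-}+\Hb^{a_0-,0-}$, which is exactly the decay that is too weak to make $-\tfrac14 h_{LL}\,\pu\otimes\pu$ in \cref{eq:EVE:g_original} a short-range metric perturbation. The improvement comes from the constraint identity \cref{eq:EVE:constraints:puhLL}, which expresses $\pu h^{(1)}_{LL}$ in terms of $\pv$- and angular-derivatives of good components, yielding $\pu h^{(1)}_{LL}\in\Hbt^{2+a_0-,2+a_0-}+\Hb^{2+a_0-,2-}$. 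Only after integrating this transport equation backwards from a fixed outgoing cone does one obtain the decomposition $h^{(1)}_{LL}=h^{(1,\mathrm w)}_{LL}+h^{(1,\mathrm s)}_{LL}$ into a piece that the coordinate change can and must cancel, and a piece with the target $\Hb^{a_0-,1-}$ behaviour. Without $D_\eta\Upsilon[h^{(1)}+h^-]=0$ (or at least a strong quantitative smallness for it), you cannot split $h^{(1)}_{LL}$ in this way, and your Step~2 ansatz has nothing definite to cancel.

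\textbf{Step~2: wrong ODE.}
You solve $\pu F=h_{\mathrm{lin},LL}$ and set $\bar u=u+F$. The cancellation you need is $(\Phi_{\mathrm{prelim}}^{\ast}\tilde\eta)(\pv,\pv)\approx -h_{\mathrm{lin},LL}$, and since $\dd\bar u(\pv)=\pv F$, this requires $\pv F\propto h^{(1,\mathrm w)}_{LL}$, not $\pu F$. (This mirrors \cref{item:current:metric_coordiante_change} with $u\leftrightarrow v$: in $\D^-$ one solves $\pu F=f$ and shifts $v$, in $\D^+$ one solves $\pv F=h^{(1,\mathrm w)}_{LL}$ and shifts $u$.) Also note the paper integrates backwards from $\scrip$ with trivial data there (not ``from $\scrim$ using no-incoming-radiation''), which is what places $F$ in the $\Hbt$-space adapted to $\scrip$, i.e.\ smooth w.r.t.\ $r\pu$.

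\textbf{Step~3: a missing intermediate step.}
$\Phi_{\mathrm{prelim}}$ is not harmonic even for $\eta$; you have merely $\Box_\eta(\bar\Phi_{\mathrm{prelim},\mu})\in\Hbt^{2+a_0-,2+a_0-}$. The paper's Step~3a first corrects to coordinates harmonic with respect to $\eta$ on $\{\chi=0\}$ by solving $\Box_\eta(\tilde x_\mu-\bar x_\mu)=-\Box_\eta\bar x_\mu$ with backward scattering data, and only afterwards (Step~3b) interpolates $(1-\tilde\chi)\eta+\tilde\chi\tilde g$ to extend the coordinates harmonically with respect to the true metric. The $\eta$-harmonicity on $\{\chi=0\}$ is needed to ensure $h^{\mathrm g}_{\mu\nu}:=\partial_{(\mu}\Phi_{\mathrm{prelim},\nu)}$ satisfies $\Box_\eta h^{\mathrm g}=0$, which is used to control $\Box_\eta h^1$ and ultimately drives the estimate \cref{eq:EVE:ansatz_error}; without it the inhomogeneities in \cref{eq:EVE:h2} acquire uncontrolled terms.
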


The proof closely follows \cref{rem:EVE:schwex} above ($a_0$ is given by $0-$ for $g_{\mathrm{Sch}}$), though there are various additional difficulties: For instance, because we are of course not given $g$ globally (as in \cref{eq:EVE:ex:gsch}), we need to work a bit harder to construct the analogue of $h_{\mathrm{lin}}$. 
Furthermore, our preliminary choice of coordinates \cref{eq:EVE:ex:prelimcoords} will in general not be harmonic with respect to $\eta$ so we will have to account for this as well.

\begin{proof}
    \textbf{Step 1: Constructing a linearised solution.}
    
    For the argument below, we recall from the discussion in \cref{sec:EVE:equations} that, by the linearised Bianchi equations ($\delta_\eta\circ G_\eta\circ D_\eta \mathrm{Ric}=0$), any solution of $D_\eta (\mathrm{Ric}-\delta_g^\star\Upsilon)=\Box_\eta h=0$, i.e.~the covariant wave equation on a 2-tensor $h$, also satisfies $\Box_\eta(D_\eta\Upsilon)=0$.
    In words, the linearised gauge conditions, $D_\eta\Upsilon[h]=\delta_\eta G_\eta h=0$, are propagated.
    
    For the first step, let us work in rectangular coordinates, that is, we express tensors in the frame $\dd x_i,\dd t$ as opposed to $\mathcal{U}$.
    Now, for $h^{-}=\chi h$ from above and the form of the nonlinear terms in \cref{thm:EVE:form_of_EVE}, we compute that on $\{\chi=1\}$, we have
    \begin{equation}
        f^\Box_{\mu\nu}:=(\Box_\eta h^{-})_{\mu\nu}\in \Hb^{4+2a_0;k-2}(U),\quad f^\Upsilon_\mu:=(D_\eta\Upsilon[h^{-}])_{\mu}\in\Hb^{3+2a_0;k-1}(U).
    \end{equation}
    Next, we define $h^{(2)}$ in $\D^+$ via the tensorial wave equation
    \begin{equation}\label{eq:EVE:hlinequation}
        \Box_\eta\big(h^{(1)}+h^{-}\big)=0,\qquad h^{(1)}_{\mu\nu}|_{t=0}=\gamma_{\mu\nu},\quad \partial_t h^{(1)}_{\mu\nu}|_{t=0}=k_{\mu\nu},
    \end{equation}
    for $\gamma,k$ to be specified. Write $\underline{\mathcal{T}}=G_\eta \mathcal{T}$ for all two tensors $\mathcal{T}\in\{h^{(1)},h^-,f^{\Box},k,\gamma\}$.
    We choose $\gamma,k$ such that $\delta_\eta G_\eta ( h^{(1)}+h^-)=D_\eta\Upsilon[ h^{(1)}+h^-]$ has zero initial data at $\{t=0\}$, i.e.
    \begin{nalign}
        -\partial_t \underline{h}^{(1)}_{0\mu}|_{t=0}+\partial_{x_i}\underline h^{(1)}_{i\mu}|_{t=0}&=-\underline k_{0\mu}+\partial_{x_i}\underline \gamma_{i\mu}=-f^{\Upsilon}_\mu|_{t=0},\\
        -\partial_t^2 \underline h^{(1)}_{0\mu}|_{t=0}+\partial_{x_i}\partial_t \underline h^{(1)}_{i\mu}|_{t=0}&=-\Delta\underline \gamma_{0\mu}+\partial_{x_i}\underline k_{i\mu}-\underline f^\Box_{0\mu}|_{t=0}=-\partial_t f^{\Upsilon}_j|_{t=0},
    \end{nalign}
    where $\Delta$ is the Euclidean Laplacian and we used the convention, that $\mu=0$ respectively $\mu=i$ denotes the $\dd t$ and $\dd x^i$ components.
    On $\Sigma=U\cap\{t=0\}$, setting $\underline \gamma_{i\mu}=0$, we obtain $\underline k_{0\mu}=f^\Upsilon_\mu|_\Sigma$ and 
    \begin{equation}\label{eq:EVE:data_construction}
        \Delta \underline \gamma_{00}=\partial_{x_i}f^{\Upsilon}_i|_{\Sigma}+\partial_t f^{\Upsilon}_{0}|_{\Sigma}-\underline f^{\Box}_{00}|_{\Sigma}\in\Hb^{4+2a_0;k-3}(\Sigma),\quad \partial_{x_i}\underline{k}_{ij}=\underline f^{\Box}_{0j}|_{\Sigma}-\partial_t f^{\Upsilon}_j|_{\Sigma}\in\Hb^{4+2a_0;k-3}(\Sigma).
    \end{equation}
    By standard elliptic theory, we have $\underline{\gamma}_{00}\in \Hb^{2+2a_0;k-1}(\Sigma)$,  $\underline{k}_{ij}\in\Hb^{3+2a_0;k-2}(\Sigma)$.
    Indeed, this follows from \cite[Corollary 2.6]{hintz_underdetermined_2023} applied to extensions of the the right hand sides of \cref{eq:EVE:data_construction} to $\R^3$, such that these extensions are orthogonal to a finite dimensional subspace.
    In particular, we require the extensions to satisfy $\int_{\R^3}\Delta \underline\gamma_{00}=0$ and $\int_{\R^3}\partial_i\underline{k}_{ij}=0$ for all $j$.
    
    Using \cref{lemma:EVE:weak_data_at_I0} with nontrivial initial data, we already have that $rh^{(1)}_{\mathcal{UU}}\in\Hbt^{a_0-,a_0-;k-8}(\D^+)+\Hb^{a_0-,0-;k-8}(\D^+)$ and also from \cref{prop:en:futureestimate} also $h^{(1)}|_{\chi=1}\in\Hb^{a_1;k-3}(\{\chi=1\})$.

    Next, by our observation from the beginning of the proof, we have that $h^{(1)}+h^{-}$ solves the linearised gauge constraints, i.e.~$D_\eta\Upsilon[h^{(1)}+h^{-}]=0$. 
    Therefore, by \cref{eq:EVE:constraints:puhLL},  we in fact have
    $\pu h^{(1)}_{LL}\in \Hbt^{2+a_0-,2+a_0-;k-9}(\D^+)+\Hb^{2+a_0-,2-;k-9}(\D^+)$.
    Integrating backwards from an outgoing cone $r h^{(1)}_{LL}|_{\outcone{}}\in\Hb^{a_0-}(\outcone{})$ with \cref{corr:ODE:du_dv} yields $rh^{(1)}_{LL}=rh^{(1,\mathrm{w})}_{LL}+rh^{(1,\mathrm{s})}_{LL}\in \Hbt^{a_0-,a_0-;k-9}(\D^+)+\Hb^{a_0-,1-;k-9}(\D^+)$, where the slow decay in the first $\Hbt$-space comes from the slow decay along~$\outcone{}$.
    We will next find a diffeomorphism to improve the first of these spaces to $\rho_+\Hbt^{a_0-,a_0-}(\D^+)$, thereby fixing the null geometry.
    
    \textbf{Step 2: Preliminary change of coordinates.}
    
    As in \cref{item:current:metric_coordiante_change}, we can find a change of coordinates function {$F\in\Hbt^{a_0-,a_0-;k-10}(\D^+)$} solving $\pv F=h^{(1,\mathrm{w})}_{LL}$:
    We consider the equation $\pu\pv F=\pu h^{(1,\mathrm{w})}_{LL} \in\Hbt^{a_0+2-,a_0+2-;k-10}(\D^+)$ and first integrate in $v$ with trivial data at $\scrip$ to write $\pu F(u,v,\omega)=\int_v^{\infty} \pu h^{(1,\mathrm{w})}_{LL}(u,v',\omega)\dd v'\in \Hbt^{a_0+1-,a_0+1-;k-10}(\D^+)$.
   Secondly, we integrate from a cone $\outcone{0}$ with initial data $ F|_{\outcone{0}}=\int_{v_0}^v h_{LL}^{(1,\mathrm{w})}\dd v'|_{\outcone{0}}\in \Hb^{a_0-}(\outcone{0})$. 
   The $F$ thus constructed indeed satisfies $F\in\Hbt^{a_0-,a_0-;k-10}(\D^+)$ and $\pv F=h^{(1,\mathrm{w})}_{LL}$.

    Let us define $\bar{\Phi}_{\mathrm{prelim}}:(u,v,\omega)\mapsto (\bar{u},\bar v,\bar\omega)=(u+F(u,v,\omega),v,\omega)$, which is a diffeomorphism from $\D^+$ to its image for $h$ sufficiently small.
    Let us write $\eta,\,\bar\eta_{\mathrm{prelim}}$ for the Minkowski metric with respect to the coordinates $x,\bar{x}=\bar\Phi_{\mathrm{prelim}}(x)$ respectively, i.e.~$\eta=-\dd t^2+\delta^{ij}\dd x_i\dd x_j$, $\bar{\eta}_{\mathrm{prelim}}=-\dd \bar{t}^2+\delta^{ij}\dd \bar{x}_i\dd \bar{x}_j$.
    By construction, we have $\bar\Phi_{\mathrm{prelim}}^*(\bar{\eta}_{\mathrm{prelim}})=\eta-h^{(1,\mathrm{w})}_{LL}\dd v\dd v+h^{(2)}$, where $rh^{(2)}_{\mathcal{UU}}\in\Hbt^{a_0-,a_0-;k-11}(\D^+)$ and, importantly $h^{(2)}_{LL}\in\Hbt^{2+a_0-,2+a_0-;k-11}(\D^+)$.
    We would like to use these coordinates in $\D^+$, but they are of course not harmonic even for $\eta$, we merely have
    $\Box_\eta (\bar\Phi_{\mathrm{prelim},\mu}(x))\in\Hbt^{2+a_0-,2+a_0-;k-12}(\D^+)$.

    \textbf{Step 3a: Making the coordinates harmonic w.r.t~$\eta$ in $\{\chi=0\}$.}
    
    We first perform a harmonic (with respect to $\eta$) change of coordinates in the region $\{\chi=0\}$. Later, we will correct these coordinates to coordinates that are harmonic with respect to $g$ in the support of $\chi$.
    
   For $\bar{x}_\mu=\bar{\Phi}_{\mathrm{prelim}}(x)_\mu$ as above, we define the scalar functions $\tilde{x}_{\mu}$, for $\mu=0,1,2,3$ to be the backwards scattering solutions to
    \begin{equation}\label{eq:EVE:proof_harmonic}
        \Box_{\eta}(\tilde{x}_\mu-\bar{x}_\mu)=-\Box_\eta\bar{x}_\mu,\qquad (\tilde{x}_\mu-\bar{x}_\mu)|_{\outcone{0}}=0,\quad {\pu(r(\tilde{x}_\mu-\bar{x}_\mu))|_{\scrip}=0.}
    \end{equation}
    From \cref{thm:scat:scat_general}, we have  $\tilde{x}_\mu-\bar{x}_\mu\in\rho_+\Hbt^{a_0-,a_0-;k-12}(\D^+)$, and hence $\tilde{u}-\bar{u}\in\rho_+\Hbt^{a_0-,a_0-;k-12}(\D^+)$.
    Let's define the diffeomorphisms $\tilde{\Phi}_{\mathrm{prelim}}:\bar{x}_\mu\mapsto \tilde{x}_\mu$ and $\Phi_{\mathrm{prelim}}=\tilde{\Phi}_{\mathrm{prelim}}\circ\bar{\Phi}_{\mathrm{prelim}}: x\mapsto\tilde{x}$. Let's also define as before $\tilde{\eta}_{\mathrm{prelim}}=-\dd \tilde{t}^2+\delta^{ij}\dd \tilde{x}_i\dd \tilde{x}_j$.

    Using that $a_0>-1/2$, along with the decay of $\tilde{u}-\bar{u}$, we have that $\Phi_{\mathrm{prelim}}^*(\tilde\eta_{\mathrm{prelim}})=\eta-h_{LL}^{(1,\mathrm{w})}\dd v\dd v+h^{(3)}$, where $rh^{(3)}_{\mathcal{UU}}\in\Hbt^{a_0-,a_0-;k-13}(\D^+)$ and $rh^{(3)}_{LL}\in\rho_+\Hbt^{a_0-,a_0-;k-13}(\D^+)$.
    
    In rectangular coordinates, we can write $\Phi_{\mathrm{prelim}}^*(\tilde\eta_{\mathrm{prelim}})_{\mu\nu}=\eta_{\mu\nu}+h^{\mathrm{g}}_{\mu\nu}+h^{(4)}_{\mu\nu}$  where $2h^{\mathrm{g}}_{\mu\nu}=\partial_\nu \Phi_{\mathrm{prelim},\mu}({x})+\partial_\mu \Phi_{\mathrm{prelim},\nu}({x})$ and $rh^{(4)}_{\mathcal{UU}}\in\Hbt^{3+2a_0-,3+2a_0-;k-13}(\D^+)$.
    Using that $\Box_\eta\Phi_{\mathrm{prelim},\mu}({x})=0$ for all $\mu\in\{0,1,2,3\}$, we have $\Box_\eta h^{\mathrm{g}}_{\mu\nu}=0$ for all $\nu,\mu\in\{0,1,2,3\}$ and $\Box_\eta \Phi^{*}(\tilde\eta)\in\Hbt^{6+2a_0-,6+2a_0-;k-15}(\D^+)$.
    
    Let us define $h^1_{\mathrm{prelim}}=h^{(1)}+(\Phi^*_{\mathrm{prelim}}(\tilde\eta_{\mathrm{prelim}})-\eta)$.
    Then we can summarise the above computations as
    \begin{subequations}\label{eq:EVE:ansatz}
        \begin{align}
            \begin{aligned}
                (rh^{1}_{\mathrm{prelim}})_{\mathcal{UU}}|_{\{\chi=0\}}&\in \Hbt^{a_0-,a_0-;k-15}(\D^+)+\Hb^{a_0-,0-;k-15}(\D^+),\\
                (rh^{1}_{\mathrm{prelim}})_{LL}|_{\{\chi=0\}}&\in \rho_+\Hbt^{a_0-,a_0-;k-15}(\D^+)+\Hb^{a_0-,1-;k-15}(\D^+);\label{eq:EVE:ansatz_decay}
            \end{aligned}\\
            (\Box_\eta h^{1}_{\mathrm{prelim}})_{\mathcal{UU}}|_{\{\chi=0\}}\in\Hbt^{6+2a_0-,6+2a_0-;k-15}(\D^+)\label{eq:EVE:ansatz_error}.
        \end{align}
    \end{subequations}

    \textbf{Step 3b: Making the coordinates harmonic w.r.t.~$\tilde{g}$ in the region $U$.}
    
    Next, we want to extend $\Phi_{\mathrm{prelim}}^{-1}$ to a map $\Phi^{-1}$ that is harmonic with respect to $\tilde{g}$ in the region region $\supp\tilde\chi$.
    We define $y_\mu$ coordinates via
    \begin{equation}
       \Box_{(1-\tilde{\chi})\eta+\tilde{\chi}(\tilde{\eta}+\tilde{h}^-)} y_\mu=0\qquad  (y_\mu -x_{\mu})|_{\{\chi=0\}}=0.
    \end{equation}
    We then finally define the diffeomorphism $\Phi^{-1}:\tilde{x}_\mu\mapsto y_{\mu}$, together with
    \begin{equation}
        h^1:=h^{(1)}+(\Phi^{*}(\tilde\eta)-\eta)+h^-.
    \end{equation}
    Clearly, in $U$, we have $\Phi^{-1}(\tilde{x})_\mu-{x}_\mu\in \Hb^{a_0;k-13}(U)$. Therefore, the metric $h_{\mathrm{lin}}:=h^{(1)}+h^-$ and the corresponding metric $\tilde{h}_{\mathrm{lin}}$ defined with respect to $\tilde{x}$-coordinates are close in the sense that $(\Phi^{\ast}\tilde{h}_{\mathrm{lin}}-h_{\mathrm{lin}})_{\mathcal{UU}}\in \Hb^{2a_0+2;k-13}(U)=r^{-1}\Hb^{a_1;k-13}(U)$.
    We thus deduce that $h^1$ as defined above satisfies
    \begin{equation}
       r h^1_{\mathcal{UU}}|_{\{\chi=1\}}=r(\Phi^{\ast}(\tilde{g})-\eta)_{\mathcal{UU}}+\Hb^{a_1-}(U).
    \end{equation}
    In particular, \cref{eq:EVE:ansatz_decay} still holds for $h_1$, and \cref{eq:EVE:ansatz_error} also still holds in view of \cref{eq:EVE:hlinequation}.
    Finally, since $h_{\mathrm{lin}}$ solves the linearised constraints $D_\eta \Upsilon[h_{\mathrm{lin}}]=0$, we have
    \begin{equation}\label{eq:EVE:linearisedconstraints:h1}
         D_\eta \Upsilon[h^1]=D_\eta \Upsilon[\Phi^{\ast}\tilde{\eta}]=D_\eta \Upsilon[h^{(4)}]\in\Hbt^{5+2a_0-,5+2a_0-;k-14}(\D^+)
    \end{equation}

Let us recap what we have done here: Both $\tilde{x}$ as well as $x$ are harmonic coordinates for $\tilde{g}$ (at least restricted to $\tilde{U}$). However, solving the leading order error term of \cref{eq:EVE:rEVE} with respect to the coordinates $\tilde{x}$ yields a non-short range metric perturbation, whereas, solving in the coordinates $x_\mu$, we get a short-range metric perturbation given by $h^1$, up to a faster-decaying error term.

    \textbf{Step 4: The equations satisfied by the error term $h^2$.}
    
    Next, we compute the equations satisfied by the error term
    \begin{equation} 
    h^2:=\Phi^{\ast}\tilde{g}-\eta-h^1.
    \end{equation}
    For this, we always only need to compute the nonlinear terms in, say, \cref{eq:EVE:eve_L,eq:EVE:eve_Lbar}, as $h^1$ already solves the equations to linear order up to $\Hbt^{(6+2a_0-,6+2a_0)}(\D^+)$ using \cref{eq:EVE:ansatz}.
    In particular, we compute that, away from the support of $h^-$, we have:\footnote{All the inhomogeneities here follow from the weights and order of the operators, except for $G_{\mathcal{UU}}$, where we used the extra property that $G_{\mathcal{UU}}$ has two differentiated terms yielding extra decay for the $\Hbt$ part of $h^1$ improving $\Hb^{5+3a_0-,3+3a_0-}(\D^+)$ to $\Hb^{5+3a_0-,3+a_0-}(\D^+)$}
    \begin{subequations}\label{eq:EVE:proof_future_h2}
        \begin{align}
        Q_{\mathcal{UU}}[h]&=Q_{\mathcal{UU}}[h^2]+V^{(1)}_\mathcal{UU}[h^2]+\Hb^{4+2a_0-,3+2a_0-;k-15}(\D^+)\label{eq:EVE:proof_future_Q}\\
        G_{\mathcal{UU}}[h]&=G_{\mathcal{UU}}[h^2]+Q^{(1)}_{\mathcal{UU}}[h^2]+V^{(2)}[h^2]+\Hb^{5+3a_0-,3+a_0-;k-15}(\D^+)\label{eq:EVE:proof_future_G}\\
        F_{\Lbar\Lbar}[h_{\mathcal{UT}}]&=F_{\Lbar\Lbar}[h^2_{\mathcal{UT}}]+V^{(3)}[h^2_{\mathcal{UT}}]+\Hb^{4+2a_0-,2-;k-15}(\D^+);\label{eq:EVE:proof_future_F}\\
        \bar{\Box}_{h}h&=\bar{\Box}_h h^2+V^{(4)}[h^2]+Q^{(2)}[h^2]+\Hb^{4+2a_0-,3+2a_0-;k-15}(\D^+),\label{eq:EVE:proof_future_box}
        \end{align}
    \end{subequations}
    where we introduced new linear ($V^{(i)}$) and semilinear ($Q^{(i)}$) perturbations with weights and nonlinear orders as below:
    \begin{nalign}
        a^{V^{(1)}}=(3+a_0,2+a_0),\quad a^{V^{(2)}}=(4+2a_0,2+2a_0),&\quad a^{V^{(3)}}=(3+a_0,1-), \quad a^{V^{(4)}}=(3+a_0,3/2+a_0)\\
        a^{Q^{(1)}}=(3+a_0,2+a_0),&\quad a^{Q^{(2)}}=(3+a_0,2),\qquad\qquad n^{Q^{(1)}}=n^{Q^{(2)}}=2.
    \end{nalign}

    Next, we compute that for $rh^2_\mathcal{UU}\in\Hb^{1+2a_0,-\delta}(\D^+)$ and $rh^2_{\Lbar\Lbar}\in\Hb^{1+2a_0,-2\delta}(\D^+)$ all these perturbations are short range for $\delta>0$ sufficiently small.
    For $V^{(1)},V^{(2)},V^{(4)},Q^{(1)},Q_{\mathcal{UU}},G_{\mathcal{UU}}$, this follows directly from the weights. 
   On the other hand, for $V^{(3)},F_{\Lbar\Lbar}$, we exploit that they only depend on $h^2_{\mathcal{UT}}$.
    Finally, we observe that the inhomogeneities (the error terms) in \cref{eq:EVE:proof_future_h2} are also consistent with the weights.

    Finally, we compute the perturbations to \cref{eq:EVE:eve_Ltransport}. In view of \cref{eq:EVE:linearisedconstraints:h1}, we again only need to study the nonlinear terms: We get
    \begin{nalign}
        G_{\mathrm{t}}[h]=G_{\mathrm{t}}[h^2]+V^{(5)}[h^2]+\Hbt^{3+2a_0-,2+2a_0-;k-14}(\D^+),
    \end{nalign}
    where $V^{(5)}$ is a linear perturbation with weight $a^{V^{(5)}}=(2+a_0,1+a_0)$.
\end{proof}

We finish the paper by proving that we can indeed solve for $h^2$ in \cref{eq:EVE:h2} in $\D^+$.
Following 
\cite{Lindblad2010}, this is a standard result.
We prove it here for completeness.
Now, the reason why \cref{eq:EVE:h2}  does not fall under the construction in \cref{thm:scat:scat_general} is that a naive estimate using the wave equation for $h^2_{LL}$ would produce an estimate $rh^2_{LL}\in\Hb^{a_1,0-}(\D^+)$, which in turn yields a non-short range metric perturbation in \cref{eq:EVE:g_original}.
This is remedied by the transport equation in~\cref{eq:EVE:h2}.
The proof therefore consists essentially of a standard energy estimate at top order, together with a linear transport estimate at lower orders.%we need to have a high energy estimate, which is effectively a linear problem sourced by lower order terms:

\begin{lemma}[High energy estimate]\label{lemma:EVE:future_high_energy}
    Let $h^1$ be as in \cref{eq:EVE:future_h1_size}.
    Let $a_1>0$ and $\delta>0$ be sufficiently small.
    Assume that 
    \begin{equation}
        \norm{rh^2_{\mathcal{UU}}}_{\Hb^{a_1-\delta,-2\delta;5}(\D^+)}+\norm{rh^2_{LL}}_{\Hb^{a_1-\delta,a_1-2\delta;5}(\D^+)}\leq \epsilon_1\ll1.
    \end{equation}
    Then for $c_0,c_+\in\R$ and $c_0^\delta=c_0-\delta,c_+^\delta=c_+-2\delta$, we have for any $k\in\mathbb{N}$:
    \begin{nalign}
        \norm{\big(\Vb^k((g[h]-\eta)^{\mu\nu})\nabla_\nu\nabla_\mu\big)h^2_{LL}}_{\Hb^{c_0,c_+;0}(\D^+)}&\lesssim \epsilon_1\norm{h^2_{LL}}_{\Hb^{c_0^\delta-(3+a_1),c_+^\delta-(1+a_1);k}(\D^+)}+\epsilon_1\norm{h^2_{\mathcal{UU}}}_{\Hb^{c_0^\delta-(3+a_1),c_+^\delta-(5/2+2a_0);k}(\D^+)},\\
        \norm{\big(\Vb^k((g[h]-\eta)^{\mu\nu})\nabla_\nu\nabla_\mu\big)h^2_{\mathcal{UU}}}_{\Hb^{c_0,c_+;0}(\D^+)}&\lesssim \epsilon_1\norm{h^2_{LL}}_{\Hb^{c_0^\delta-(3+a_1),c_+^\delta-1;k}(\D^+)}+\epsilon_1\norm{h^2_{\mathcal{UU}}}_{\Hb^{c_0^\delta-(3+a_1),c_+^\delta-(3/2);k}(\D^+)}.\\
    \end{nalign}
\end{lemma}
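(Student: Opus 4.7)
The estimate is a product/Sobolev calculation, so the plan is to decompose the operator $(g[h]-\eta)^{\mu\nu}\nabla_\nu\nabla_\mu=H[h]^{\mu\nu}\nabla_\nu\nabla_\mu$ according to the splitting in \cref{eq:EVE:g_original}, namely
\[
H[h]^{\mu\nu}=-\tfrac14 h_{LL}\,\pu\otimes\pu+\bar H_1[h]^{\mu\nu}+\bar H_2[h]^{\mu\nu},
\]
and insert $h=h^1+h^2$ into each factor. The pieces $\bar H_1[h],\bar H_2[h]$ are, by \cref{thm:EVE:form_of_EVE}\textit{(j)}, short-range metric perturbations with weights $(3,3/2)$ and $(4,2)$ respectively; combined with the smallness of $h^2$ in $\Hb^{5}$ and with \cref{eq:EVE:future_h1_size} for $h^1$, an application of the product estimate \cref{lemma:scat:sobolev} yields
\[
\norm{\Vb^k(\bar H_i[h]^{\mu\nu}\nabla_\mu\nabla_\nu h^2_\bullet)}_{\Hb^{c_0,c_+;0}}\lesssim \epsilon_1\norm{h^2_\bullet}_{\Hb^{c_0^\delta-(3+a_1),c_+^\delta-(3/2);k}},
\]
since two $\Vb$-derivatives on $h^2_\bullet$ cost precisely two powers towards $I^0$ and two towards $\scrip$ (recall $\nabla_\mu\nabla_\nu\in\Diff_{\b}^2(\D^+)$ after extracting the factor $\rho_0^2\rho_\scri$), and the short-range weights of $\bar H_1,\bar H_2$ absorb the rest with at least $\delta$ to spare.

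The core of the lemma is the long-range piece $-\tfrac14 h_{LL}\pu\pu(\cdot)$. Here the crucial observation is that $\pu\pu=(\rho_0\rho_+)^2\Diff^2_{\b}(\D^+)$ near $\scrip$, so acting on $h^2_\bullet\in\Hb^{\vec a;k}$ we get $\pu\pu h^2_\bullet\in\Hb^{\vec a+(2,2);k-2}$; this provides the extra $\rho_+^2$ factor responsible for the weight $c_+^\delta-(3/2)$ on the right-hand side in the $h^2_{\mathcal{UU}}$ estimate. For the $h^2_{LL}$ estimate, one additionally exploits that $h^1_{LL}$ contains a piece in $\rho_+\Hbt^{a_0-,a_0-;k-13}$ (with $r\pu$-smoothness), so distributing $r\pu$ onto $h^1_{LL}$ rather than onto $\pu h^2_{LL}$ gains an extra $\rho_+^{1+a_1}$: schematically,
\[
h^1_{LL}\,\pu\pu h^2_{LL}=\tfrac1r(r\pu h^1_{LL})\,\pu h^2_{LL}+\ldots\in \rho_+^{1+a_1}\cdot(\text{better weights}),
\]
which explains why the $h^2_{LL}$ bound loses only $(1+a_1)$ rather than $3/2$ towards $\scrip$. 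For the $h^2_{LL}\,\pu\pu h^2_{LL}$ contribution one simply uses the hypothesis, together with the fact that $\pu\pu h^2_{LL}$ gains two $\rho_+$'s, and places five derivatives on whichever factor is in $\Hb^{5}$ via \cref{lemma:scat:sobolev}.

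I would implement this in three passes: (i) handle $\bar H_1,\bar H_2$ uniformly for both equations via \cref{lemma:scat:sobolev}; (ii) handle the $-\tfrac14 h^2_{LL}\pu\pu h^2_\bullet$ term using the smallness of $h^2$ in $\Hb^{5}$; (iii) handle $-\tfrac14 h^1_{LL}\pu\pu h^2_\bullet$, splitting $h^1_{LL}$ into its $\Hbt$-regular piece (use $r\pu$ distribution) and its $\Hb^{a_0-,1-}$-piece (direct product estimate), then verifying that the resulting weight $(c_0^\delta-(3+a_1),c_+^\delta-(1+a_1))$ or $(c_0^\delta-(3+a_1),c_+^\delta-3/2)$ is always attained after Leibniz-expanding $\Vb^k$. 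The main obstacle will be the bookkeeping in step (iii): one must carefully distinguish which factor of $r\pu$ is consumed by the $\Hbt$-regularity of $h^1_{LL}$ versus the $\Hb$-regularity of $h^2_{LL}$, and ensure that the five-derivative smallness hypothesis on $h^2$ is sufficient to absorb the nonlinear self-coupling at every multi-index in the Leibniz expansion of $\Vb^k$.
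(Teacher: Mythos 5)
Your proposal matches the paper's approach: the paper's own proof is a single sentence stating that the lemma follows from Sobolev embedding, the decomposition of $g^{-1}$ in \cref{eq:EVE:g_original}, and the a priori weights of $h^1$ and $h^2$; you have simply unpacked that sentence into the explicit three-step bookkeeping (short-range pieces $\bar H_1,\bar H_2$ via \cref{lemma:scat:sobolev}; the long-range $h^2_{LL}\pu\pu$ self-coupling via the $\Hb^{;5}$-smallness hypothesis; and the $h^1_{LL}\pu\pu$ piece exploiting the $\rho_+\Hbt$-membership of $h^1_{LL}$ from \cref{eq:EVE:future_h1_size}). One small slip: the item of \cref{thm:EVE:form_of_EVE} recording the weights of $\bar H_1,\bar H_2$ is \textit{i)}, not \textit{(j)}.
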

\begin{proof}
    This is a direct consequence of Sobolev embedding and the form of $g$ in \cref{eq:EVE:g_original} and the estimates on $h^1$ and~$h^2$.
\end{proof}

\begin{lemma}\label{lemma:EVE:future_nonlinear}
    Fix $k= 35$, $-u_0$ sufficiently large, let $h$ be a sufficiently small solution to \cref{eq:EVE:EVE_schematic} in $\D^+\cap\D^-$ with $rh_{\mathcal{UU}}\in\Hb^{a_0;k}(\D^+\cap\D^-)$, and let $h^1$ be as in \cref{prop:EVE:ansatz}.
    In particular, let us use the coordinates given by $\Phi$ in \cref{prop:EVE:ansatz}.
    There exists $\epsilon$ small enough such that if the error terms in \cref{eq:EVE:h2_size,eq:EVE:h2} have size $\leq\epsilon$ and
    \begin{equation}
        \norm{rh^1_{\mathcal{UU}}}_{\Hb^{a_0-\delta/10,a_0-\delta/10;4}(\D^+)}+\norm{rh^1_{LL}}_{\Hb^{a_0-\delta/10,1+a_0-\delta/10;4}(\D^+)}\leq \epsilon
    \end{equation}
    (both of these smallness assumptions hold if $h$ is sufficiently small in $\D^+\cap \D^-$ by \cref{prop:EVE:ansatz}),
    then the system in \cref{eq:EVE:h2} has a solution in $\D\cap\{u<u_0-10\}$.
    
    Furthermore, higher regularity is propagated: More precisely, the estimates \cref{eq:EVE:proof_bootstrap} hold for any $l$ provided \cref{eq:EVE:h2_size,eq:EVE:h2} hold for $k\geq l+20$.
\end{lemma}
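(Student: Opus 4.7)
The overall strategy is to solve \cref{eq:EVE:h2} via a standard iteration scheme, exploiting the weak null structure (that $\mathfrak{F}^2_{\bar L\bar L}$ depends only on $h^2_{\mathcal{UT}}$) together with the transport equation for $h^2_{LL}$, in order to stay within the class of short-range metric perturbations handled by \cref{thm:current:quasilinear_scattering}.

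\textbf{Step 1 (Choice of region and function spaces).} I would work in $\D^{+,\mathrm{cut}}:=\D^+\cap\{u<u_0-10\}$, so that the future Cauchy data for $h^2$ live on $\{u=u_0-10\}\cap\D^+$ and can be read off from the given solution in $\D^+\cap\D^-$. Guided by \cref{prop:EVE:ansatz}, I would close the bootstrap in the spaces
\begin{equation*}
rh^2_{\mathcal{UT}}\in \Hb^{a_1-\delta,-\delta;k}(\D^{+,\mathrm{cut}}),\qquad rh^2_{\bar L\bar L}\in \Hb^{a_1-\delta,-2\delta;k}(\D^{+,\mathrm{cut}}),\qquad rh^2_{LL}\in \Hb^{a_1-\delta,a_1-2\delta;k}(\D^{+,\mathrm{cut}}),
\end{equation*}
with $k=5$ for the bootstrap and higher $k$ propagated later. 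The last norm is stronger than what one would get from a wave equation alone and is precisely the reason we must not solve $\bar\Box_h h^2_{LL}$ but instead use the transport equation.

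\textbf{Step 2 (Iteration and coupling).} I would construct a sequence $h^2_{(n)}$ by solving a linear problem at each step: given $h^2_{(n)}$, solve
\begin{equation*}
\bar\Box_{h^1+h^2_{(n)}}(h^2_{(n+1)})_{\mathcal{UT}}=\mathfrak{F}^2_{\mathcal{UT}}[h^2_{(n)}]+\mathrm{error},\qquad \bar\Box_{h^1+h^2_{(n)}}(h^2_{(n+1)})_{\bar L\bar L}=\mathfrak{F}^2_{\bar L\bar L}[(h^2_{(n+1)})_{\mathcal{UT}}]+\mathrm{error},
\end{equation*}
\begin{equation*}
\partial_u(h^2_{(n+1)})_{LL}=A^2_{\mathrm t}[h^2_{(n+1)}]+\mathfrak G_{\mathrm t}[h^2_{(n)}]+\mathrm{error}.
\end{equation*}
The wave equations for $(h^2_{(n+1)})_{\mathcal{UT}}$ and $(h^2_{(n+1)})_{\bar L\bar L}$ are, by the hypothesis on $\mathfrak{F}^2$ and the smallness of $h^1$, genuinely short-range quasilinear perturbations of $\Box_\eta$ in the sense of \cref{def:current:quasilinear}: the only potentially dangerous component $-\tfrac14 h_{LL}\partial_u\otimes\partial_u$ of $g^{-1}$ from \cref{eq:EVE:g_original} is a proper admissible perturbation precisely because the transport equation gives us the improved weight $a_1-2\delta$ for $h^2_{LL}$. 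For the wave equations I would apply the characteristic initial value estimate underlying \cref{lemma:en:metric_perturbed_energy_estimate}, equivalently the future piece of \cref{prop:current:metric_perturbed_estimate}, with the spacelike hypersurface $\{u=u_0-10\}$ supplying data; the transport equation is integrated directly along $\partial_u$.

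\textbf{Step 3 (Closing the bootstrap).} Using \cref{lemma:EVE:future_high_energy} to bound the quasilinear contribution of $(g[h]-\eta)^{\mu\nu}\nabla_\mu\nabla_\nu$ acting on $h^2_{(n)}$, together with the short-range bounds on $\mathfrak{F}^2$, all terms produced by the right-hand sides are controlled by $\epsilon_1\cdot\mathrm{(bootstrap\ norm)}$ plus the prescribed inhomogeneities of size $\epsilon$. Taking $\epsilon,\epsilon_1$ and $|u_0|^{-1}$ small enough, the map $h^2_{(n)}\mapsto h^2_{(n+1)}$ is a contraction in the above spaces, and the sequence converges to a solution $h^2$. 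The order in which to close the estimates is dictated by the weak null structure: first $h^2_{\mathcal{UT}}$ (good equation), then $h^2_{\bar L\bar L}$ (bad wave equation driven only by $h^2_{\mathcal{UT}}$), then $h^2_{LL}$ (transport).

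\textbf{Step 4 (Propagation of regularity).} Higher $k$ follows by commuting each of the three equations with the $\Vb$ vector fields, as in the proof of \cref{thm:current:quasilinear_scattering}: commutators with the wave operator produce at worst short-range semilinear and linear perturbations (by \cref{lemma:en:short_range_computations}), and commutators with $\partial_u$ in the transport equation produce lower-order terms estimated via the already established bounds. One then runs the analogue of Steps 1--3 of \cref{thm:current:quasilinear_scattering}, using a tame estimate version of \cref{prop:current:tame_estimate} to inductively raise the regularity to any $l$, provided that the inhomogeneities and the coefficients of $h^1$ are $l+C$-regular for some universal $C$; the hypothesis $k\geq l+20$ leaves ample margin.

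The main obstacle is keeping $h^2_{LL}$ out of the ``long-range'' quasilinear piece of the metric. Any attempt to solve its wave equation directly would only give $r h^2_{LL}\sim r^{a_1-\delta}$ towards $I^0$ with a much worse weight towards $\scrip$, which would make $-\tfrac14 h^2_{LL}\partial_u\otimes\partial_u$ in \cref{eq:EVE:g_original} fail to be an admissible metric perturbation and ruin the energy estimates for the other components. It is the gauge-derived transport equation \cref{eq:EVE:eve_Ltransport}, combined with the fact that its right-hand side depends only on already-controlled quantities with the correct improved weights, that allows us to sidestep this issue and is the reason why the bootstrap closes.
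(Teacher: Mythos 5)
Your overall strategy — exploit the weak null hierarchy (good equation for $h^2_{\mathcal{UT}}$, bad wave equation for $h^2_{\bar L\bar L}$ driven only by $h^2_{\mathcal{UT}}$, transport for $h^2_{LL}$), use \cref{lemma:EVE:future_high_energy} to control the top-order quasilinear terms, and keep $h^2_{LL}$ from becoming a non--short-range metric coefficient — matches the paper. The final paragraph of your proposal correctly identifies the central obstacle. However, there is a genuine gap in the way you set up the bootstrap, and a secondary difference in method.

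The gap: your single-regularity bootstrap at $k=5$ with uniformly good weights cannot close the top-order estimate, and the subsequent ``propagation to higher $k$'' does not repair this because the regularities of $h^2_{LL}$ and $h^2_{\mathcal{UU}}$ are circularly coupled. Concretely, raising regularity for $h^2_{\mathcal{UU}}$ to level $l+5$ via energy estimates requires the metric coefficient $h_{LL}=h^1_{LL}+h^2_{LL}$ at level $l+5$; the transport equation produces $h^2_{LL}$ at level $l+2$ from $h^2$ at level $l$ (one gains via \cref{corr:ODE:du_dv} but the right-hand side $A_{\mathrm{t}}^2[h^2]$ contains first derivatives of $h^2$), so the transport output is always behind the wave output in regularity. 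The paper resolves this by running a three-tier bootstrap, \cref{eq:EVE:proof_bootstrap}: (i) low regularity $l$ with the sharp weights $(a_1-\delta,-\delta)$/$(a_1-\delta,-2\delta)$, (ii) high regularity $l+5$ with degraded weights $(a_1-\delta,-4\delta)$/$(a_1-\delta,-2\delta)$, and (iii) a separate transport bound for $h^2_{LL}$ at the intermediate level $l+2$ with the good weight $(a_1-\delta,a_1-\delta)$ and the strictly weaker constant $\epsilon_1^{4/5}$. This tiering is precisely what \cref{lemma:EVE:future_high_energy} is engineered to trade: in the commuted equation $\bar\Box_h \Vb^{l+5}h^2_{\mathcal{UU}}$, the term $(\Vb^{l+5}(g[h]-\eta)^{\mu\nu})\nabla_\mu\nabla_\nu \Vb^{10}h^2$ places many derivatives on the coefficient and few on $h^2$, so it is controlled by pairing the high-regularity/weak-weight tier for the coefficient against the low-regularity/strong-weight tier for $h^2$, giving a cubic-in-$\epsilon_1$ contribution. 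The exponent $4/5$ in (iii) is not decoration: it is chosen so that the transport step improves $\epsilon_1\mapsto (\epsilon_1/2)^{4/5}$, which is a genuine improvement only because the input bound in the transport estimate is the linear $\epsilon_1$, and is what allows the continuity argument to close. Your scheme does not separate these levels and therefore cannot absorb the top-order commutator.

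Two smaller discrepancies. First, the paper does not iterate; it runs a bootstrap and a continuity argument on $\D^+_T=\D^+\cap\{t<T\}\cap\{u\leq u_0+v^{-\delta}\}$, invoking local existence for quasilinear wave equations, and the hypersurface $\Sigma=\{u=u_0+v^{-\delta}\}$ is chosen specifically to be spacelike for $g[h]$ (not $\eta$), so that no boundary terms arise on the two future-directed boundaries of $\D^+_T$. Your region $\D^+\cap\{u<u_0-10\}$ has a Minkowski-null boundary, and your Picard iteration with frozen coefficients loses derivatives in exactly the way the tame estimate is designed to quantify; without either a Nash--Moser layer (as in \cref{thm:current:quasilinear_scattering}) or the paper's bootstrap, the convergence of the iterates is not justified. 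Second, your weight $a_1-2\delta$ towards $\scrip$ for $h^2_{LL}$ in the bootstrap is stronger than what the paper posits there ($-2\delta$ in the high-regularity tier and $a_1-\delta$ only in the transport tier); claiming it uniformly at all regularities overstates what the transport equation alone can deliver within a single pass.
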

\begin{proof}
    Fix $a_1=1+2a_0>0$, $\delta\in(0,a_1/10)$ and let us consider the hypersurface $\Sigma=\{u=u_0+v^{-\delta}\}$.
    Under the bootstrap assumptions below, \cref{eq:EVE:proof_bootstrap}, $\Sigma$ is a spacelike hypersurface for $g[h]$.
    We will study the solution $h$ in $\D^+_T=\D^+\cap\{t<T\}\cap\{u\leq u_0+v^{-\delta}\}$ for $u_0$ sufficiently small, which in turn has two future directed boundaries on which we can ignore boundary terms coming from energy estimates.

    Fix $k-20 =l\geq15$ and $\delta$ as in \cref{prop:EVE:ansatz}.
    We claim as a bootstrap assumption the following bounds
    \begin{subequations}\label{eq:EVE:proof_bootstrap}
        \begin{align}
        \norm{rh^2_{\mathcal{UT}}}_{\Hb^{a_1-\delta,-\delta;l}(\D^+_T)}+\norm{rh^2_{\Lbar\Lbar}}_{\Hb^{a_1-\delta,-2\delta;l}(\D^+_T)}\label{eq:EVE:proof_bootstrap_low}\qquad\qquad\qquad\qquad\qquad\qquad\\
        +\norm{rh^2_{\mathcal{UU}}}_{\Hb^{a_1-\delta,-4\delta;l+5}(\D^+_T)}+\norm{rh^2_{LL}}_{\Hb^{a_1-\delta,-2\delta;l+5}(\D^+_T)} \leq \epsilon_1 \label{eq:EVE:proof_bootstrap_high},\\
        \norm{rh^2_{LL}}_{\Hb^{a_1-\delta,a_1-\delta;l+2}(\D^+_T)}\leq \epsilon_1^{4/5} ,\label{eq:EVE:proof_bootstrap_transport}
        \end{align}
    \end{subequations}
    where $\epsilon\ll \epsilon_1\ll 1$.
    We will show that, assuming \cref{eq:EVE:proof_bootstrap} holds in $\D^+_T$, we can replace $\epsilon_1\mapsto \epsilon_1/2$.
    Using local existence for quasilinear wave equations in turn implies existence for $h^2$ in $\D^+_{\infty}$ via a continuity argument.

    We begin with the terms in \cref{eq:EVE:proof_bootstrap_low}.
    Using \cref{eq:EVE:proof_bootstrap_transport,eq:EVE:proof_bootstrap_high}, we know that up to regularity $k$, $\bar{\Box}_h$ is a short-range metric perturbation.
    Using the short range nature of the right hand side of \cref{eq:EVE:h2} together with the smallness of the coefficients of the linear terms given by $-u_0\gg 1$ and the energy estimate \cref{prop:current:tame_estimate}, we already have 
    \begin{equation}
        \norm{rh^2_{\mathcal{UT}}}_{\Hb^{a_1-\delta,-\delta;l}(\D^+_T)}+\norm{rh^2_{\Lbar \Lbar}}_{\Hb^{a_1-\delta,-2\delta;l}(\D^+_T)}\leq C(\epsilon_1^{9/5}+\epsilon)\leq \epsilon_1/2
    \end{equation}
    for some fixed constant $C$ depending only on the order of regularity.
    
    For \cref{eq:EVE:proof_bootstrap_transport}, we use the transport equation in \cref{eq:EVE:h2} together with \cref{corr:ODE:du_dv} to obtain
    \begin{equation}
        \norm{rh^2_{LL}}_{\Hb^{a_1-\delta,a_1-\delta;l+2}(\D^+_T)}\leq C (\epsilon_1+\epsilon)\leq (\epsilon_1/2)^{4/5}.
    \end{equation}

    Finally, for \cref{eq:EVE:proof_bootstrap_high}, we can commute $\bar{\Box}_h h^2_{\mathcal{UU}}$ to obtain that $\bar{\Box}_h \Vb^{l+5} h^2_{\mathcal{UU}}$ is given by the schematic form 
    \begin{equation}
        (\Vb^{l-5}(g[h]^{\mu\nu})) \nabla_\mu\nabla_\nu \Vb^{l+4} h^2_{\mathcal{UU}}  +(\Vb^{l+5}(g[h]^{\mu\nu})) \nabla_\mu\nabla_\nu \Vb^{10} h^2_{\mathcal{UU}} +\Vb^{l+5}\mathfrak{F}_{\mathcal{UU}}^2[h^2]+\Hb^{4+2a_0,3+3a_0;k-l-20}(\D^+),
    \end{equation}
    where the first term represents a sum of terms with at most $l-5$ derivatives acting on $g[h]$ and at most $l+4$ derivatives acting on $h^2$, and similarly for the other ones.
    
    Using the a priori control \cref{eq:EVE:proof_bootstrap_low,eq:EVE:proof_bootstrap_transport} we get that the first term is a linear short-range perturbation for $\Vb^{l+4} h^2$.
    Similarly, the $\mathfrak{F}^2$ term is a combination of short range and long range perturbations for $\Vb^{l+4} h^2$, with coefficients depending only on $\Vb^{l-5} h^2$.
    Both of these can be bounded using \cref{prop:current:tame_estimate}.
    For the second term, we use \cref{lemma:EVE:future_high_energy} to bound the corresponding error by $C\epsilon_1^{9/5}$.
    The existence follows.

    Higher order regularity is propagated by repeating the previous estimates without having to close a nonlinear inequality with respect to the size of top order norms.
\end{proof}

\begin{lemma}[Polyhomogeneity]
    Provided that $rh_{\mathcal{UU}}\in\A{phg}^{\E}(U)$, then $\Phi,h^1$ from \cref{prop:EVE:ansatz} are polyhomogeneous on $\D^+$ and so is $h^2$ from \cref{lemma:EVE:future_nonlinear}.
\end{lemma}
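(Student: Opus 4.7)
The strategy is to track polyhomogeneity through each construction step in \cref{prop:EVE:ansatz,lemma:EVE:future_nonlinear}, using the propagation of polyhomogeneity results already established for linear wave equations (\cref{sec:prop}), for short-range perturbations (\cref{thm:app:general,thm:app:general2}), for quasilinear problems (\cref{thm:current:quasilinear_scattering}), and for elliptic problems on weighted spaces (cf.\ \cite{hintz_underdetermined_2023}).

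\textbf{Step 1: Polyhomogeneity of $h^{(1)}$.} The Cauchy data $\gamma,k$ for $h^{(1)}$ are obtained in \cref{eq:EVE:data_construction} by solving elliptic problems on $\Sigma=\{t=0\}\cap U$ whose right-hand sides depend polynomially on the (polyhomogeneous) restrictions of $h^-$, $f^\Box$ and $f^\Upsilon$. Since the Laplacian on $\R^3$, compactified at infinity, preserves polyhomogeneity (the relevant mapping properties on weighted $\b$-Sobolev spaces from \cite{hintz_underdetermined_2023} preserve polyhomogeneous conormality), we obtain that $\gamma,k$ are polyhomogeneous on $\Sigma$. We then apply \cref{thm:app:general2} to the tensorial wave equation \cref{eq:EVE:hlinequation} (viewed componentwise in the parallel-transported frame, producing only long-range potential terms as in \cref{thm:EVE:form_of_EVE}) to conclude that $h^{(1)}$ is polyhomogeneous on $\D^+$.

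\textbf{Step 2: Polyhomogeneity of $\Phi$ and $h^1$.} The function $F$ solves the linear wave equation $\pu\pv F=\pu h^{(1,\mathrm{w})}_{LL}$ with polyhomogeneous data, so \cref{thm:app:general2} (or direct ODE integration via \cref{corr:ODE:du_dv}) gives polyhomogeneity of $F$. The harmonic corrections in Steps 3a and 3b of the proof of \cref{prop:EVE:ansatz} are solutions of scalar wave equations $\Box_\eta(\tilde{x}_\mu-\bar{x}_\mu)=-\Box_\eta\bar{x}_\mu$ and $\Box_{(1-\tilde\chi)\eta+\tilde\chi(\tilde\eta+\tilde h^-)}y_\mu=0$ with polyhomogeneous right-hand sides and polyhomogeneous coefficients, so \cref{thm:app:general2} and \cref{thm:current:quasilinear_scattering} respectively apply. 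Composing polyhomogeneous diffeomorphisms and pulling back the smooth metric $\tilde\eta$ preserves polyhomogeneity (since the change of coordinate satisfies the assumption of \cref{rem:notation:equivalent_polyhoms}), so $\Phi$ and $h^1=h^{(1)}+(\Phi^*\tilde\eta-\eta)+h^-$ are polyhomogeneous on $\D^+$.

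\textbf{Step 3: Polyhomogeneity of $h^2$.} Going back to \cref{eq:EVE:h2}, by Steps 1--2 the coefficients of the short-range and long-range perturbations $V^{(i)}, Q^{(i)}, F, G, \mathfrak{F}^2$ (as worked out in \cref{eq:EVE:proof_future_h2}) and the inhomogeneities appearing on the right-hand side are all polyhomogeneous on $\D^+$, and the metric $\eta+h=\eta+h^1+h^2$ is a polyhomogeneous (quasilinear) metric perturbation once we know $h^2$ is polyhomogeneous. We already proved in \cref{lemma:EVE:future_nonlinear} that a unique conormal solution $h^2$ exists with the stated decay. Applying the polyhomogeneity statement of \cref{thm:current:quasilinear_scattering} to the system \cref{eq:EVE:h2} yields that $h^2$ is polyhomogeneous on $\D^+$ for some index sets. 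The transport equation for $h^2_{LL}$ produces polyhomogeneity by direct integration via \cref{corr:ODE:du_dv}\cref{item:ode:u-prop} once the right-hand side, which only depends on already-established polyhomogeneous quantities, is known to be polyhomogeneous.

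The main obstacle I anticipate is bookkeeping: verifying that the index sets generated at each step are compatible so that the quasilinear iteration producing $h^2$ terminates (i.e.\ that the gap conditions in \cref{def:current:quasilinear} survive the iteration), and that the two harmonic corrections in Step 3a and Step 3b—especially the one using $\Box_{(1-\tilde\chi)\eta+\tilde\chi\tilde g}$, whose coefficients are glued together via a cutoff and are thus only conormal away from $\supp\tilde\chi'$—fall into a regime where the polyhomogeneity statement of \cref{thm:current:quasilinear_scattering} applies. These are essentially the same kind of bookkeeping arguments already carried out in \cref{thm:EVE:scatteringpast} for the past region, and no new analytic input is required.
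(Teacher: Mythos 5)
Your proposal is correct and follows essentially the same route as the paper's (extremely terse, one-sentence) proof, which simply invokes the iteration approach of \cref{thm:app:general} once the requisite conormal estimates $\rho_+^{-a_1}\rho_0^{-a_0}rh_{LL},\,\rho_+^{-a_0}\rho_0^{-a_0}rh_{\mathcal{UU}}\in\Hb^{0,0;\infty}(\D^+)$ (with $a_1>0$, $a_0>-1/2$) are in hand from \cref{lemma:EVE:future_nonlinear}. Your Steps 1--2, which track polyhomogeneity of $h^{(1)}$, $\Phi$, $h^1$ through the elliptic, ODE, and linear/quasilinear wave constructions of \cref{prop:EVE:ansatz}, are the natural expansion of that one-liner, and your Step 3 is exactly the paper's appeal to the iteration of \cref{thm:app:general}/\cref{thm:current:quasilinear_scattering} combined with ODE integration for the transport component $h^2_{LL}$.
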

\begin{proof}
    This follows the same iteration approach as in \cref{thm:app:general}, once a solution is shown to exist and satisfy $\rho_+^{-a_1}\rho_0^{-a_0} rh_{LL},\rho_+^{-a_0}\rho_0^{-a_0} r h_\mathcal{UU}\in\Hb^{0,0;\infty}(\D^+)$, where $a_1>0$ and $a_0>-1/2$.
\end{proof}

\subsection{Scattering solutions in \texorpdfstring{$\D$}{D}}

In this section, we summarise the results of \cref{thm:EVE:scatteringpast,prop:EVE:ansatz,lemma:EVE:future_nonlinear} in all of $\D$ to make more precise the asymptotics towards $\scrip$ in our construction.
We also include an improved conformal regularity statement in the case of compactly supported incoming radiation.

\begin{cor}\label{cor:EVE:final_form}
    Let $k,h_{\mathcal{UU}}|_{\D^{v_0+1}_{v_0}},\{a^{\mathcal{UU}}_-\},a_-,a_0,\mathfrak{h}_{\mathcal{UU}}$ be as in \cref{thm:EVE:scatteringpast}.

    a) Then there exists
    \begin{itemize}
        \item $\eta^-+h^-$, a solution of \cref{eq:EVE:EVE,eq:EVE:rEVE} with $\Box_{\eta^-+h^-}x_\mu=0$, satisfying \cref{eq:EVE:solution_past} in $\D^-$;
        \item $\eta^++h^+$, a solution of \cref{eq:EVE:EVE,eq:EVE:rEVE} with $\Box_{\eta^++h^+}x_\mu=0$, where $h^+$ is a sum of terms satisfying \cref{eq:EVE:future_h1_size,eq:EVE:proof_bootstrap} in $\D^+$;
        \item a diffeomorphism $\Phi_\mu:\D^+\to\R^{3+1}$ such that  $\Phi_\mu(x)-x_\mu\in\Hb^{a_0-;k-13}(\D^+\cap\D^-)$ and $\eta^++h^+=\Phi^* (\eta^-+h^-)$.
    \end{itemize}

    b) Furthermore, for compactly supported incoming radiation $\supp \mathfrak{h}_{\mathcal{UU}}\subset\scrim_{v_0+1,v_0+2}$ (see~\cref{rem:appB:compactsupport}), it holds that $rh_{\mathcal{UU}}\in \Hbt^{0-}(\D^-\setminus\D_{v_0}^{v_0+10})$ and
    \begin{equation}\label{eq:EVE:final_form+}
        rh^+_{\mathcal{UT}}\in \Hb^{0-;k-25-2k'}(\scrip)+\Hb^{0-,1-;k-25-2k'}(\D^+),\quad rh^+_{\Lbar\Lbar}\in \Hb^{0-,0-;k-25-2k'}(\D^+),\quad rh^+_{LL}\in\Hb^{0-,1-;k-20}(\D^+),
    \end{equation}
    for some $k'<\infty$ as in \cref{rem:prop:finite_loss}, provided that $k-2k'\gg 1$.
\end{cor}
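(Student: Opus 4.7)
For part a), this is essentially a bookkeeping synthesis of the three main constructions from the preceding subsections. I would first invoke \cref{thm:EVE:scatteringpast} to obtain the scattering solution $h^-$ in the past region $\D^-$, expressed in harmonic coordinates $x_\mu$ (the fact that $\Box_{\eta^- + h^-} x_\mu = 0$ follows from the vanishing of the harmonic gauge form $\Upsilon$, which is propagated by \cref{eq:EVE:rEVE} given vanishing initial data in the slab $\D^{v_0+1}_{v_0}$ and the compatibility condition \cref{eq:EVE:restriction_on_scattering_data} along $\scrim$). Next, applying \cref{prop:EVE:ansatz} (with $\tilde{x}$ in that proposition identified with the $x_\mu$ just constructed) produces the diffeomorphism $\Phi$, the ansatz $h^1$, and the splitting $\Phi^*(\eta^- + h^-) - \eta = h^1 + h^2$. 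Finally, \cref{lemma:EVE:future_nonlinear} establishes existence of $h^2$ throughout $\D^+ \cap \{u \leq u_0 - 10\}$, so setting $h^+ := h^1 + h^2$ gives a solution in $\D^+$ which is harmonic in the coordinates $\Phi_\mu(x)$ by construction.

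For part b), the improved decay exploits the no-incoming-radiation character of the problem beyond the support of $\mathfrak{h}_{\mathcal{UU}}$. The argument proceeds in three stages.

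First, in $\D^-$, since $\supp \mathfrak{h}_{\mathcal{UU}} \subset \scrim_{v_0+1,v_0+2}$, the scattering data effectively vanishes for $v > v_0 + 2$. Splitting the solution into a part supported near the data and a remainder, the remainder satisfies a no-incoming-radiation problem to which the $\Hbt$-version of \cref{thm:EVE:scatteringpast}b) applies, yielding $rh_{\mathcal{UU}} \in \Hbt^{0-;k-3}(\D^-\setminus \D^{v_0+10}_{v_0})$; the buffer of $10$ accommodates the logarithmic bending of the light cones relative to the flat ones, as in \cref{lemma:EVE:eikonal}.

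Second, for the $\mathcal{UT}$ components in $\D^+$, I would reuse the construction of the linearised ansatz in the proof of \cref{prop:EVE:ansatz}, but with $a_0 = 0-$ initial decay replaced by the $\Hbt$-information from stage one along a suitable outgoing cone $\outcone{u_1}$ lying inside $\D^+ \cap \D^-\setminus \D^{v_0+10}_{v_0}$. The components $h^+_{\mathcal{UT}}$ satisfy the good equation \cref{eq:EVE:eve_L}, whose right-hand side consists only of short-range nonlinearities, so the techniques of \cref{sec:prop:recall} (propagation of conormal regularity to $\scrip$ via $\pu$-integration of the twisted operator $P_\eta$) combined with the commuted estimates of \cref{lemma:EVE:future_high_energy} propagate the $\Hbt$-decay to yield a radiation field in $\Hb^{0-;k-25-2k'}(\scrip)$ and remainder $\Hb^{0-,1-;k-25-2k'}(\D^+)$. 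For $h^+_{LL}$, the transport equation \cref{eq:EVE:eve_Ltransport} avoids using the wave equation altogether: integrating along integral curves of $\pu$ via \cref{corr:ODE:du_dv}\cref{item:ode:u-prop_boundary}, starting from the $\Hbt$-data on $\outcone{u_1}$, gives the claimed $\Hb^{0-,1-;k-20}(\D^+)$ bound with only $20$ derivative losses, since no nonlinear bootstrap is needed.

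Third, for $h^+_{\Lbar\Lbar}$, the bad source $F_{\Lbar\Lbar}[h^+_{\mathcal{UT}}]$ in \cref{eq:EVE:eve_Lbar} involves quadratic expressions in $\pu h^+_{\mathcal{UT}}$, which decays only like $r^{-1}$ towards $\scrip$ even given the improvement of stage two, producing an inhomogeneity of order $r^{-3}$ that, upon double integration via \cref{corr:ODE:du_dv}, contributes a nondecaying (logarithmically blowing-up at worst) term, so only $\Hb^{0-,0-;k-25-2k'}(\D^+)$ control is available. The main obstacle throughout is tracking the derivative losses and weights across the transition $\D^- \to \D^+$ through the corner $I^0$, where the coordinate change $\Phi$ introduces mixing between the scale $\rho_-$ and $\rho_+$; the explicit regularity budget $k - 25 - 2k'$ reflects the combined losses from recovering transversal derivatives on $\outcone{u_1}$ (cf.~\cref{lemma:en:recovering_initial_data}), the propagation of polyhomogeneity near the future corner as quantified in \cref{rem:prop:finite_loss}, and the ansatz construction in \cref{prop:EVE:ansatz}.
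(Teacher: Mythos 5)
Your part a) matches the paper's argument, which is simply to assemble \cref{thm:EVE:scatteringpast}, \cref{prop:EVE:ansatz}, and \cref{lemma:EVE:future_nonlinear}.

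For part b), however, your plan misses the two main ideas of the paper's proof, and the gaps are substantive rather than cosmetic. First, there is a gauge-compatibility problem you do not address. The improved $\Hbt$-decay of $h^-$ that you obtain in stage one is expressed in the $x^-$-gauge associated to $\D^-$, whereas $h^+$ and all of the $\D^+$ machinery live in the $x^+$-gauge obtained from $\Phi$. The transition function $\Phi$ produced by \cref{prop:EVE:ansatz} is defined only on $\D^+\cap\D^-$, so the decay of $h^-$ cannot be fed directly to $\D^+$ as "data on $\outcone{u_1}$". The paper resolves this by \emph{extending} $y_\mu := x^+_\mu - x^-_\mu$ from the overlap to all of $\D^-$ as a solution of the scalar wave equation $\Box_g y_\mu = 0$ with the no-incoming-radiation boundary condition (using \cref{lemma:EVE:weak_data_at_I0}), and only then expressing $h^-$ throughout $\D^-$ in the $x^+$-gauge — this is where the quantitative $\partial_{x^-}(rh^+)\in\Hb^{1-,1-}(\D^-\setminus\D^{v_0+10}_{v_0})$ comes from. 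Your proposal to ``reuse the construction of the linearised ansatz with the $\Hbt$-information from stage one'' gestures at this but does not carry it out; the ansatz in \cref{prop:EVE:ansatz} takes data from the Cauchy surface $\{t=0\}$ inside $U$, not from an outgoing cone, and is not a plug-and-play replacement.

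Second, the precise conclusion \cref{eq:EVE:final_form+} is not reachable by generic conormal propagation through the equations. The split $rh^+_{\mathcal{UT}}\in\Hb^{0-}(\scrip)+\Hb^{0-,1-}(\D^+)$ says there is a $\rho_+$-independent radiation field at leading order and that the remainder gains a full power $1-$ of $\rho_+$, i.e.\ \emph{no} $\log\rho_+$ at order zero. Applying \cref{thm:app:general} to the quasilinear system with a gap $\delta=1-$ only gives the index set $\overline{(0,1)}$ towards $\scrip$, which permits a $\log\rho_+$ at leading order and is therefore strictly weaker than the claim. The paper removes this $\log$ by writing $h^+=h^{+,1}+h^{+,2}$ where $h^{+,1}$ solves the \emph{homogeneous linear} equation $\bar\Box_{h^+}h^{+,1}_{\mu\nu}=0$ with the same data as $h^+$ on $\Cbar$ and no incoming radiation, and then invoking the Minkowskian conformal-regularity improvement \cref{cor:app:minkowski_with_error} (the resummed-$\ell$-mode conservation law, not $\pu$-integration of $P_\eta$) to upgrade $\overline{(0,1)}$ to $\overline{(0,0)}$. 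This even-dimensional cancellation is precisely what a generic ``propagation of conormal regularity'' argument cannot see, and your third stage even acknowledges a surviving logarithmic term — which contradicts the structure of the $\mathcal{UT}$ bound you are trying to prove. Your instincts about using the transport equation for $h^+_{LL}$ (giving the smaller $k-20$ derivative loss) and the bad equation for $h^+_{\Lbar\Lbar}$ are in the right spirit, but both rely on first having the log-free $\mathcal{UT}$ radiation field, so they inherit the gap.
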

\begin{proof}

    \textit{a)} Follows immediately from the previous results.

  \textit{b)} Let us take $a_-=a_0=0$.
    In the case of compactly supported incoming radiation, we note that we have $rh^-|_{\D^{v_0+11}_{v_0+10}}\in \A{phg}^{\overline{(0,1)};k-4}(\D^{v_0+11}_{v_0+10})+\Hbt^{1-;k-4}(\D^{v_0+11}_{v_0+10})$ and $rh^-|_{\D^-\setminus\D_{v_0}^{v_0+10}}\in\tilde{H}^{0-,0-;k-2}(\D^-\setminus\D_{v_0}^{v_0+10})$ from b)-c) of \cref{thm:EVE:main_scattering}.

   Note, that in all the above, we did not work in a single global gauge, instead we i) constructed a solution in $\D^-$, ii) changed coordinates in $\D^+\cap\D^-$, iii) constructed a solution in $\D^+$. In this way, we obtained two different metrics $\eta^{\pm}+h^{\pm}$ in the same coordinate system.
   We can alternatively view the two metrics $\eta^\pm+h^\pm$ as a single metric on a manifold with two coordinate charts.
   Below we extend the gauge from $\D^+$ to the region $\D^-$ to obtain a single metric in a single global harmonic coordinate system.
   
    Consider $\mathfrak{D}$ as a manifold with two coordinate charts 
    $\Phi^\pm:\D^\pm\to\mathfrak{D}$ and a transition function $\Phi^+\circ(\Phi^-)^{-1}=\Phi$.
    Let us write $x^\pm:=(\Phi^\pm)^{-1}$ for coordinate functions on $\mathfrak{D}$ defined on open sets.
    We define the metric $g=\Phi^\pm_*(\eta^\pm+h^\pm)$ on $\mathfrak{D}$ coming from $\eta^\pm+h^\pm$, which is well defined due to $\Phi^*(\eta^-+h^-)=\eta^++h^+$.

    Let us write $y_\mu=x^+_\mu-x^-_\mu\in\Hb^{a_0;k-15}(\Phi^+(\D^+)\cap\Phi^-(\D^-))$ and note that $\Box_{g}y_\mu=0$, since both coordinate systems are harmonic in this region.
    We extend $y_\mu$ to $\D^-$ keeping it harmonic via \cref{lemma:EVE:weak_data_at_I0} to obtain that in the $x^-$ coordinate system $y_\mu\in \Hbt^{0-,0-;k-19}(\D^-)+\Hb^{1-,0-;k-21}(\D^-)$.
    Finally, we extend the coordinate system $x^+=(\tilde{\Phi}^+)^{-1}:\mathfrak{D}\to\R^{3+1}$ to $\D^-$ via $x^+=x^-+y$, and define $h^+:=(\tilde{\Phi}^+)^*\Phi^-_*(\eta^-+h^-)$ in $\D^-$ via the pullback of $h^-$.
    Using that $\frac{\partial x^+_\mu}{\partial x^-_\mu}=\delta^\mu_\nu+\Hb^{1-,1-;k-22}(\D^-)$, it follows that 
    $\partial_{x^-} (rh^+)\in \Hb^{1-,1-;k-23}(\D^-\setminus\D_{v_0}^{v_0+10})$.
    
    We stress that the metric $\eta^+ +h^+$ is no longer asymptotically flat in the sense that $h^+_{\Lbar\Lbar}$ possibly will not decay faster than $1/r$ towards $\scrim_{\mathfrak{D}}=\Phi^-(\scrim)$ and $v^+:=x^+_0-\sqrt{x_i^+x_j^+\delta^{ij}}$ may not be bounded towards $\scrim_{\mathfrak{D}}$.
   
    Let us now write $h^+=h^{+,1}+h^{+,2}$ where the rectangular coordinates of $h^{+,1}$ satisfy scalar wave equations
    \begin{equation}
        \bar{\Box}_{h^+} h^{+,1}_{\mu\nu}=0,\qquad h^{+,1}|_{\Cbar}=h^+|_{\Cbar}, \, \partial_vrh^{+,1}|_{\scrim}=0.
    \end{equation}
    By writing $\bar{\Box}_{h^+}$ with respect to $x^\pm$ in $\Phi^\pm(\D^\pm)$, or with respect to some glued coordinate $x:\mathfrak{D}\to\D$ satisfying  $x=x^+\chi+(1-\chi)x^-$,
    we see that it is a short-range admissible linear perturbation of the scalar wave equation with gap $\delta=1-$.
    We claim that $rh^{+,1}_{\mu\nu}\in\A{b,phg}^{0-,\overline{(0,1)};k-23-k'}(\D^+)+\Hb^{0-,1-;k-23-k'}(\D^+)$ in $\D^-$ for some $k'$ as in \red{\cref{eq:prop:finite_reg}}.
    Indeed, using that the gap is $\delta=1-$, the result follows from \cref{thm:app:general}.
    Combining with \cref{cor:app:minkowski_with_error} it follows that $rh^{+,1}\in\A{b,phg}^{0-,\overline{(0,0)};k-23-k'}(\D^+)+\Hb^{0-,1-;k-23-k'}(\D^+)$.

    Next, we write \cref{eq:EVE:rEVE} for the rectangular components, $h^{+,2}_{\mu\nu}$, in the $x^+$ coordinates 
    \begin{equation}
        \bar{\Box}_{h^+} h^{+,2}_{\mathcal{\mu\nu}}=\bar{\Box}_{h^+} h^+_{\mu\nu}\in \Hb^{4-,4-,2-;k-25}(\D),\qquad h^{+,1}|_{\Cbar^+}=0,
    \end{equation}
    where the improvement of weights towards $\scrim$ follows from the no incoming radiation condition, i.e.~$\partial_{x^-} rh^+\in \Hb^{1-,1-;k-23}(\D^-\setminus \D^{v_0+10}_{v_0})$.
    Using \cref{eq:EVE:eve_L,eq:EVE:eve_Lbar} and \cref{thm:app:general}, we may upgrade this to 
    \begin{equation}
        rh^{+,2}_{\mathcal{UT}}\in \A{b,phg}^{1-,\overline{(0,0)};k-25-k'}(\D^+)+\Hb^{1-,1-;k-25-k'}(\D^+),\quad rh^{+,2}_{LL}\in \A{b,phg}^{1-,\overline{(0,1)};k-25-2k'}(\D^+)+\Hb^{1-,1-;k-25-2k'}(\D^+).
    \end{equation}
    The result follows.
\end{proof}

We may put finally put out metric into a double null gauge in a small neighbourhood of null infinity as follows:
\begin{lemma}\label{lem:EVE:doublenullconversion}
    Let $h$ be a metric perturbation as in \cref{eq:EVE:final_form+} with $k=\infty$.
    Then, we can introduce coordinates $\Phi^{\mathrm{dn}}(u,v,\omega)\mapsto (v^1, u^1, \omega^1_A)$ such that locally around a point $p\in \D^+\cap\scrip$ the metric takes the double null form
    \begin{equation}\label{eq:EVE:finaldoublenullform}
        (\eta+h)^{-1}=-4\Omega^2\partial_{
        {u}^1}\otimes_s\partial_{v^1}+(\Phi^{\mathrm{dn}}_{\ast}(r))^{-1}b^A\partial_{{v}^1}\otimes_s\partial_{\omega_A^1}+(\Phi^{\mathrm{dn}}_{\ast}(r))^{-2}\slashed{g}^{AB}\partial_{\omega_A^1}\otimes_{s}\partial_{\omega_B^1},
    \end{equation}
    where $\Phi_{\ast}^{\mathrm{dn}}(r)$ denotes the push-forward of $r$ along $\Phi^{\mathrm{dn}}$; we will suppress writing this out in the sequel and simply write~$r$. 
  Furthermore, for $X=H^{;\infty}(\scrip_{u_0-1,u_0})/r+\Hb^{2-;\infty}(\D_{u_0}^{u_0-1})$ it holds componentwise that
    \begin{equation}
       -4 \Omega^2=-1+X,\quad b^A\in X,\quad \slashed{g}^{AB}=(\slashed{g}_{S^2}^{-1})^{AB}+X.
    \end{equation}
\end{lemma}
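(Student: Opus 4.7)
The plan is to construct the double null coordinate system $(v^1, u^1, \omega_A^1)$ in a neighbourhood $U$ of $p \in \D^+ \cap \scrip$ by solving two eikonal equations for $g = \eta + h$, adapting the scheme of \cref{lemma:EVE:eikonal} to the future region. First, I would find an outgoing null function $v^1 = v + \tilde{v}$ satisfying $g^{\mu\nu}\partial_\mu v^1 \partial_\nu v^1 = 0$ in $U$, with trivial boundary data on a transverse hypersurface. Differentiating, the gradient $W_\sigma = \partial_\sigma \tilde{v}$ satisfies a quasilinear transport equation whose linear part is advected along $2g^{\mu v}\partial_\mu$ and whose leading inhomogeneity is $-\partial_\sigma g^{vv} = -\tfrac{1}{4}\partial_\sigma h_{LL}$. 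Since $r h_{LL} \in \Hb^{0-,1-;k-20}(\D^+)$---the extra $\rho_+$-decay being inherited from the harmonic transport equation \cref{eq:EVE:eve_Ltransport}---integration from $\scrip$ (in the spirit of Step 1 of \cref{prop:EVE:ansatz}) places $\tilde{v}$ in the space $X$ at the required regularity.

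Second, I would analogously construct an incoming null function $u^1 = u + \tilde{u}$ solving $g^{\mu\nu}\partial_\mu u^1 \partial_\nu u^1 = 0$ with vanishing boundary data on a reference outgoing cone through $p$. The relevant forcing here is $g^{uu} = \tfrac{1}{4}h_{\Lbar\Lbar}$, and $r h_{\Lbar\Lbar}$ enjoys only $\Hb^{0-,0-}$ decay. However, since we work locally around $p$---so that $u$ varies only in a bounded interval on $U$ and $v-v(p)$ is bounded---the weaker decay of $h_{\Lbar\Lbar}$ is compensated by the finite integration range, yielding $\tilde{u} \in X$. I then complete the coordinate system by choosing angular coordinates $\omega_A^1$ that coincide with $\omega_A$ on a reference sphere $\{u^1 = u_0\} \cap \{v^1 = v_*\} \cap U$ and that are Lie-propagated along the vector field $g^{\mu\nu}\partial_\mu u^1 \partial_\nu$, so that $g^{\mu\nu}\partial_\mu u^1 \partial_\nu \omega_A^1 = 0$ throughout $U$.

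With $(u^1, v^1, \omega_A^1)$ in hand, the eikonal relations together with the propagation condition on $\omega_A^1$ directly place $g^{-1}$ in the form \cref{eq:EVE:finaldoublenullform}, with $-4\Omega^2 = g^{\mu\nu}\partial_\mu u^1 \partial_\nu v^1$, $r^{-1}b^A = g^{\mu\nu}\partial_\mu v^1 \partial_\nu \omega_A^1$, and $r^{-2}\slashed{g}^{AB} = g^{\mu\nu}\partial_\mu \omega_A^1 \partial_\nu \omega_B^1$. Each of these components is then the Minkowskian value plus a correction bilinear in $h$ and in the derivatives of $\tilde{u}, \tilde{v}, \omega_A^1 - \omega_A$; the $X$-membership follows by inserting the estimates from \cref{eq:EVE:final_form+} on $h_{\mathcal{UU}}$ together with the bounds on the coordinate corrections, noting that the only component with borderline $\Hb^{1-}$ rather than $\Hb^{2-}$ decay---namely $h_{\Lbar\Lbar}$---contributes only to $g^{u^1 u^1}$, which vanishes identically by the eikonal construction.

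The main technical obstacle will be controlling the $\partial_v$-derivatives of $\tilde{v}$ (and their contribution to $g^{u^1 v^1}$ and $g^{v^1 \omega_A^1}$), since a priori these might inherit a $\rho_+$-rate one worse than the $\partial_u\tilde{v}$ rate coming straight out of the transport equation; to close, one differentiates the eikonal equation in the $v$-direction and integrates the resulting transport equation directly from $\scrip$, using the finite limit $\tilde{v}|_{\scrip}$ (and the smoothness-on-$\scrip$ part of $X$) to land back in $X$. A similar refinement is needed to place the angular discrepancy $\omega_A^1 - \omega_A$ and hence $b^A$ in $X$ rather than merely in $\Hb^{1-}$.
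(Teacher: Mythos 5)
Your overall strategy — solving eikonal equations for $u^1$, $v^1$ and a transport equation for the angular coordinate, then checking that the resulting coordinate corrections produce the desired form — is the same as the paper's (the paper does them in the order $u^1$, then $\omega^1$, then $v^1$, which also works).  However, there are two genuine problems.

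\paragraph{Swapped labels and decay rates.}  Raising indices with $\eta$ via $\eta^{uv}=-\tfrac12$ interchanges $u\leftrightarrow v$: to leading order $g^{vv}\approx -\tfrac14 h_{\Lbar\Lbar}$ and $g^{uu}\approx -\tfrac14 h_{LL}$, exactly as in \cref{eq:EVE:g_original}.  You have it backwards.  This is not cosmetic, because the decay rates in \cref{eq:EVE:final_form+} are drastically different: $rh_{\Lbar\Lbar}\in\Hb^{0-,0-}$ is the \emph{worst}-decaying component, while $rh_{LL}\in\Hb^{0-,1-}$ is the best.  With the labels corrected, your pairing of ``good decay — integrate from $\scrip$'' with the $v^1$-eikonal is wrong: the forcing $g^{vv}\approx -\tfrac14 h_{\Lbar\Lbar}$ has the bad decay, and the transport for $v^1$ runs along $\partial_u$ over a bounded $u$-interval (not from $\scrip$); conversely the $u^1$-transport runs along $\partial_v$ towards $\scrip$ (an unbounded range), so ``compensated by the finite integration range'' is exactly the wrong intuition there — what saves you instead is that the forcing $g^{uu}\approx -\tfrac14 h_{LL}\sim r^{-2}$ is integrable in $v$.

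\paragraph{The eikonal corrections are only in $Y$, and the algebra is missing.}  You claim $\tilde v,\tilde u\in X$.  This is too strong: even with the corrected labels, $g^{vv}\sim r^{-1}$ integrated over a bounded $u$-range gives $\tilde v\sim r^{-1}$, so $\tilde v\in Y=\Hb^{1-;\infty}(\D^{u_0-1}_{u_0})$, \emph{not} $X=H^{;\infty}(\scrip)/r+\Hb^{2-;\infty}$; similarly $\tilde u\in Y$.  (These are exactly the statements ``$f\in Y$'' in the paper's proof.)  Note $Y\not\subset X$.  The lemma nevertheless holds because in the final metric components the $Y$-quantities only appear in products $Y\cdot Y$ or $Y\cdot X$, and the inclusions $X*X\subset X*Y\subset Y*Y\subset X$ convert these to elements of $X$.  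Your proposal neither obtains the correct ($Y$) decay for the coordinate corrections nor invokes this product algebra, so as written the final membership $-4\Omega^2 = -1+X$ etc.\ is not established.
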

\begin{proof}
    We solve 3 nonlinear transport equations to remove the following terms from the inverse metric: $\partial_u^2$, $\partial_v^2$ and $\partial_u\otimes\partial_{\omega_A}$.

    Let us work around a point $p\in\scrip$ and use local coordinates $(u,v,\omega)$.
    We write $X=H^{;\infty}(\scrip_{u_0-1,u_0})/r+\Hb^{2-;\infty}(\D_{u_0}^{u_0-1})$ and $Y=\Hb^{1-;\infty}(\D_{u_0}^{u_0-1})$. In an abuse of notation, we also write $X$ and $Y$ for elements in $X$ and $Y$, respectively,  and observe that $X* X\subset X* Y\subset Y* Y\subset X$.
    Recall the matrix inversion formula for $A,B\in \R^{4\times 4}$ with $A$ invertible:
    \begin{equation}
        (A+\epsilon B)^{-1}=A^{-1}-\epsilon A^{-1}BA^{-1}+\mathcal{O}(\epsilon^2).
    \end{equation}
    Using this, we invert \cref{eq:EVE:final_form+} in the frame $(\dd v,\dd u,r \dd\omega)$ and the corresponding dual frame to obtain
    \begin{equation}
        \eta+h=\begin{pmatrix}
            r^{-1}Y & -2+X & X\\
            -2+X & Y & X\\
            X & X & \slashed{g}_{S^2}+X
        \end{pmatrix}
        \implies (\eta+h)^{-1}=\begin{pmatrix}
            Y & -1/2+X & X\\
            -1/2+X & r^{-1}Y & X\\
            X & X & \slashed{g}^{-1}_{S^2}+X
        \end{pmatrix}.
    \end{equation}
    Rescaling the angular frame by $r$, 
    this implies in the $(\dd v,\dd u,\dd\omega)$ and coresponding dual frames
    \begin{equation}
        \eta+h=\begin{pmatrix}
            r^{-1}Y & -2+X & rX\\
            -2+X & Y & rX\\
            rX & rX & r^2\slashed{g}_{S^2}+r^2X
        \end{pmatrix}
        \implies (\eta+h)^{-1}=\begin{pmatrix}
            Y & -1/2+X & r^{-1}X\\
            -1/2+X & r^{-1}Y & r^{-1}X\\
            r^{-1}X & r^{-1}X & r^{-2}\slashed{g}^{-1}_{S^2}+r^{-2}X
        \end{pmatrix},
    \end{equation}
    We now find  $u^1=u+f$ for a function $f$ such that $u^1$ is null and $f|_{\scrip}=0$. We are thus led to solve:
    \begin{multline}
        0=(\eta+h)^{-1}(\dd u+\dd f,\dd u+\dd f)=\boxed{r^{-1}Y}+r^{-1}Y(\partial_u f)^2\boxed{-\partial_vf}+X\partial_vf+Y(\partial_v f)^2+(-1+X)\partial_vf\partial_uf\\
        +r^{-2}\partial_A f\partial_Bf+r^{-1}X\partial_uf\partial_A f+r^{-1}X\partial_vf\partial_A f.
    \end{multline}
    The highlighted terms are the leading-order parts of the equation.
    Differentiating the equation, we see that derivatives of $f$ are transported along a vector-field that is to leading order $\partial_v$.
    It follows with the same proof as in \cref{lemma:EVE:eikonal} that $f\in Y$.
    Therefore, in the $(v,u^1,\omega)$ coordinate system, we have the following form for the inverse metric
    \begin{equation}
        (\eta+h)^{-1}=_{(v,u_1,\omega)}\begin{pmatrix}
            Y & -1/2+X & r^{-1}X\\
            -1/2+X & 0 & r^{-1}X\\
            r^{-1}X & r^{-1}X & r^{-2}\slashed{g}^{-1}_{S^2}+r^{-2}X
        \end{pmatrix}
    \end{equation}
    Next, we find  $\omega_A^1=\omega_A+f_A$ for some functions $f_A$ such that $f_A|_{\scrip}=0$ and
    \begin{equation}
        0=(\eta^1+h^1)^{-1}(\dd u^1,\dd \omega_A+\dd f_A)=(-1/2+X)\partial_v f_A+r^{-1}X \partial_B f_A +r^{-1}X.
    \end{equation}
    Now $f_A$ solves a linear transport equation, and we get that $f_A\in X$. Thus, in the coordinates $(v,u^1,\omega^1)$, the metric takes the form
    \begin{equation}
        (\eta+h)^{-1}=_{(v,u^1,\omega)}\begin{pmatrix}
            Y & -1/2+X & r^{-1}X\\
            -1/2+X & 0 & 0\\
            r^{-1}X & 0 & r^{-2}\slashed{g}^{-1}_{S^2}+r^{-2}X
        \end{pmatrix}
    \end{equation}
    
    Let us construct another eikonal function $v^1=v+f$ for a new function $f$ that solves the transport equation 
    \begin{equation}
        0=(\eta+h)^{-1}(\dd v+\dd f,\dd v+\dd f)=\boxed{Y}+(\partial_vf)^2+\boxed{(-1+X)\partial_uf}+Xr^{-1}\partial_uf\partial_A f+r^{-2}\partial_Af\partial_Bf
    \end{equation}
    where we highlighted the leading order terms.
    Since $f$ solves a nonlinear transport with leading order term $\partial_u$, we initialise it as $f|_{\{u^1=\tau\}}=0$ for some $\tau$.
   Again, arguing as in \cref{lemma:EVE:eikonal}, it follows that $f\in Y$.
    And so in the coordinate system $(v^1,u^1,\omega)$ the inverse metric takes the form 
    \begin{equation}
        (\eta+h)^{-1}=\begin{pmatrix}
            0 & -1/2+X & r^{-1}X\\
            -1/2+X & 0 & 0\\
            r^{-1}X & 0 & r^{-2}\slashed{g}_{S^2}^{-1}+r^{-2}X
        \end{pmatrix}
    \end{equation}
    This concludes the proof.
    \end{proof}
\begin{rem}[Relation to peeling]\label{rem:EVE:peeling}
Combining part \textit{b)} of \cref{cor:EVE:final_form} with \cref{lem:EVE:doublenullconversion} above, we conclude that smooth, compactly supported data along $\scrim$ (in the sense of \cref{rem:appB:compactsupport}) lead to a solution to the Einstein vacuum equations that, when recast in a double null gauge, takes the form \cref{eq:EVE:finaldoublenullform}. 
The reader familiar with the double null gauge (see e.g.~\cite{dafermos_non-linear_2021}[Section~1.2]) may readily infer from this that the curvature quantity $\alpha$ (whose linearised analogue $\al$ featured in \cref{sec:Sch:ling}) satisfies $\alpha\in\Hb^{4-;\infty}(\D_{u_0}^{u_0-1})$ component-wise in an orthonormal frame.
This is almost sharp in view of the expansion $r^5\al\sim M a(\omega)r$ (cf.~\cref{eq:Schw:thmlingrav:al}) that we prove for the linearised system in this setting.

It is somewhat surprising that we can, with relatively little effort, prove an almost-sharp upper bound for $\alpha$ in this case. Notice that, in contrast, doing actual asymptotic computations for $\alpha$ would be significantly more challenging. Similarly, proving almost-sharp upper bounds for $\alpha$ for the setting of the specific data constructed in \cref{lemma:EVE:converging_to_Sch} (which we expect to be $r^{-5+}$), since we would now need to track all the non-conformally smooth terms of $h$ towards $\scrip$ with much more care.
\end{rem}
\begin{rem}[Further restricting the gauge]
    We note that one can further change the gauge (while keeping a double null form) such that, for instance, the trace of $\slashed{g}-r^2\slashed{g}_{S^2}$ is in $\Hb^{0-;\infty}(\D^{u_0-1}_{u_0})$.
    Similarly, the $1/r$-term in the $\Omega^2$ can also be fixed.
    This is related to Bondi-normalising the gauge at $\scrip$.
\end{rem}

	\printbibliography

@article{Strominger14,
   title={{On BMS invariance of gravitational scattering}},
   volume={2014},
   ISSN={1029-8479},
   url={http://dx.doi.org/10.1007/JHEP07(2014)152},
   DOI={10.1007/jhep07(2014)152},
   number={7},
   journal={Journal of High Energy Physics},
   publisher={Springer Science and Business Media LLC},
   author={Strominger, Andrew},
   year={2014}
}

@article{gasperin_asymptotics_2024-1,
  title = {Asymptotics of Spin-0 Fields and Conserved Charges on {\emph{n}}-Dimensional {{Minkowski}} Spaces},
  author = {Gasper{\'i}n, Edgar and Magdy, Mariem and Mena, Filipe C.},
  year = {2024},
  month = nov,
  journal = {Journal of Geometry and Physics},
  pages = {105389},
  issn = {0393-0440},
  doi = {10.1016/j.geomphys.2024.105389},
  keywords = {Asymptotic charges,Asymptotic expansions,Conformal geometry,Wave equation}
}

@Article{Friedrich2013,
author={Friedrich, Helmut},
title={Conformal Structures of Static Vacuum Data},
journal={Communications in Mathematical Physics},
year={2013},
day={01},
volume={321},
number={2},
pages={419-482},
issn={1432-0916},
doi={10.1007/s00220-013-1694-1},
url={https://doi.org/10.1007/s00220-013-1694-1}
}

@misc{kehle_event_2023,
  title = {Event Horizon Gluing and Black Hole Formation in Vacuum: The Very Slowly Rotating Case},
  shorttitle = {Event Horizon Gluing and Black Hole Formation in Vacuum},
  author = {Kehle, Christoph and Unger, Ryan},
  year = {2023},
  month = apr,
  number = {arXiv:2304.08455},
  eprint = {2304.08455},
  publisher = {arXiv},
  doi = {10.48550/arXiv.2304.08455},
  archiveprefix = {arXiv},
  keywords = {General Relativity and Quantum Cosmology,Mathematics - Analysis of PDEs,Mathematics - Differential Geometry}
}

@Article{ValienteKroon2012,
author={Valiente Kroon, Juan Antonio},
title={Asymptotic Simplicity and Static Data},
journal={Annales Henri Poincar{\'e}},
year={2012},
day={01},
volume={13},
number={2},
pages={363-397},
issn={1424-0661},
doi={10.1007/s00023-011-0122-3},
url={https://doi.org/10.1007/s00023-011-0122-3}
}

@article{PI22,
    author = "Prabhu, Kartik and Shehzad, Ibrahim",
    title = "{Conservation of asymptotic charges from past to future null infinity: Lorentz charges in general relativity}",
    eprint = "2110.04900",
    archivePrefix = "arXiv",
    doi = "10.1007/JHEP08(2022)029",
    journal = "JHEP",
    volume = "08",
    pages = "029",
    year = "2022"
}

@article{CNP22,
    author = "Federico Capone and Kevin Nguyen and Enrico Parisini",
    title = "{Charge and Antipodal Matching across Spatial Infinity}",
   	journal={SciPost Phys.},
	volume={14},
	pages={014},
	year={2023},
	publisher={SciPost},
	doi={10.21468/SciPostPhys.14.2.014},
	url={https://scipost.org/10.21468/SciPostPhys.14.2.014},
}

@misc{leflochNonlinearStabilitySelfgravitating2024,
  title = {Nonlinear Stability of Self-Gravitating Massive Fields},
  author = {LeFloch, Philippe G. and Ma, Yue},
  year = {2024},
  month = mar,
  number = {arXiv:1712.10045},
  eprint = {1712.10045},
  publisher = {arXiv},
  doi = {10.48550/arXiv.1712.10045},
  archiveprefix = {arXiv}
}

@online{kadar_construction_2024,
  title = {Construction of Multi-Soliton Solutions for the Energy Critical Wave Equation in Dimension 3},
  author = {Kadar, Istvan},
  date = {2024-09-08},
  eprint = {2409.05267},
  eprinttype = {arXiv},
  doi = {10.48550/arXiv.2409.05267},
}

@article{anderson_stability_2023,
   title = {Global {{Stability}} for {{Nonlinear Wave Equations Satisfying}} a {{Generalized Null Condition}}},
  author = {Anderson, John and Zbarsky, Samuel},
  year = {2024},
  month = sep,
  journal = {Arch Rational Mech Anal},
  volume = {248},
  number = {5},
  pages = {91},
  issn = {1432-0673},
  doi = {10.1007/s00205-024-02025-4},
  keywords = {Mathematics - Analysis of PDEs}
}

@article{angelopoulos_asymptotics_2018,
  title = {Asymptotics for Scalar Perturbations from a Neighborhood of the Bifurcation Sphere},
  author = {Angelopoulos, Yannis and Aretakis, Stefanos and Gajic, Dejan},
  date = {2018-08-09},
  journaltitle = {Classical and Quantum Gravity},
  shortjournal = {Class. Quantum Grav.},
  volume = {35},
  number = {15},
  issn = {0264-9381, 1361-6382},
  doi = {10.1088/1361-6382/aacc1e},
  url = {http://arxiv.org/abs/1802.05692},
  keywords = {bifurcation sphere,conservation law,General Relativity and Quantum Cosmology,late-time asymptotics,Mathematics - Analysis of PDEs,Newman-Penrose constant,power law tails,Schwarzschild,wave equation}
}

@article{angelopoulos_late-time_2018,
  title = {Late-Time Asymptotics for the Wave Equation on Spherically Symmetric, Stationary Spacetimes},
  author = {Angelopoulos, Yannis and Aretakis, Stefanos and Gajic, Dejan},
  date = {2018},
  journaltitle = {Advances in Mathematics},
  volume = {323},
  pages = {529--621},
  issn = {10902082},
  doi = {10.1016/j.aim.2017.10.027},
  keywords = {Black holes,General Relativity and Quantum Cosmology,Mathematics - Analysis of PDEs,Price's law,Stability,Wave equations}
}

@article{baskin_asymptotics_2015,
  title = {Asymptotics of Radiation Fields in Asymptotically {{Minkowski}} Space},
  author = {Baskin, Dean and Vasy, András and Wunsch, Jared},
  date = {2015},
  journaltitle = {American Journal of Mathematics},
  shortjournal = {American Journal of Mathematics},
  volume = {137},
  number = {5},
  pages = {1293--1364},
  issn = {1080-6377},
  doi = {10.1353/ajm.2015.0033},
  url = {https://muse.jhu.edu/content/crossref/journals/american_journal_of_mathematics/v137/137.5.baskin.html}
}

@book{christodoulou_formation_2009,
  title = {The {{Formation}} of {{Black Holes}} in {{General Relativity}}},
  author = {Christodoulou, Demetrios},
  date = {2009-01-10},
  series = {Monographs in {{Mathematics}}},
  publisher = {European Mathematical Soc}
}

@book{christodoulou_global_1993,
  title = {The {{Global Nonlinear Stability}} of the {{Minkowski Space}} ({{PMS-41}})},
  author = {Christodoulou, Demetrios and Klainerman, Sergiu},
  date = {1993},
  publisher = {Princeton University Press}
}

@incollection{christodoulou_global_2002,
  title = {The Global Initial Value Problem in General Relativity},
  booktitle = {The {{Ninth Marcel Grossmann Meeting}}},
  author = {Christodoulou, Demetrios},
  date = {2002-12},
  pages = {44--54},
  publisher = {World Scientific Publishing Company},
  doi = {10.1142/9789812777386_0004},
  url = {https://www.worldscientific.com/doi/10.1142/9789812777386_0004},
  isbn = {978-981-238-010-4}
}

@article{chrusciel_gravitational_1992,
  title = {Gravitational Waves in General Relativity: {{XIV}}. {{Bondi}} Expansions and the polyhomogeneity of {$\mathcal{I}$} },
  shorttitle = {Gravitational Waves in General Relativity},
  author = {Chrusciel, Piotr T. and MacCallum, Malcolm A. H. and Singleton, David B.},
  date = {1992-02},
  journaltitle = {Proceedings of the Royal Society of London. Series A: Mathematical and Physical Sciences},
  shortjournal = {Proc. R. Soc. Lond. A},
  volume = {436},
  number = {1897},
  eprint = {gr-qc/9305021},
  eprinttype = {arxiv},
  pages = {299--316},
  issn = {0962-8444, 2053-9177},
  doi = {10.1098/rspa.1992.0019},
  keywords = {General Relativity and Quantum Cosmology}
}

@phdthesis{collingbourne_2022, title={{The Gregory–Laflamme Instability and Conservation Laws for Linearised Gravity}}, url={https://www.repository.cam.ac.uk/handle/1810/337689}, doi={10.17863/CAM.85095}, school={Apollo - University of Cambridge Repository}, author={Collingbourne, Sam}, year={2022}, keywords={Mathematical General Relativity, Black Hole Stability} }

@article{dafermos_linear_2019,
  title = {The Linear Stability of the {{Schwarzschild}} Solution to Gravitational Perturbations},
  author = {Dafermos, Mihalis and Holzegel, Gustav and Rodnianski, Igor},
  date = {2019},
  journaltitle = {Acta Mathematica},
  volume = {222},
  number = {1},
  eprint = {1601.06467},
  eprinttype = {arxiv},
  pages = {1--214},
  issn = {00015962, 18712509},
  doi = {10.4310/ACTA.2019.v222.n1.a1},
  url = {http://arxiv.org/abs/1601.06467},
  keywords = {General Relativity and Quantum Cosmology,Mathematical Physics,Mathematics - Analysis of PDEs,Mathematics - Differential Geometry}
}

@online{dafermos_non-linear_2021,
  title = {The Non-Linear Stability of the {{Schwarzschild}} Family of Black Holes},
  author = {Dafermos, Mihalis and Holzegel, Gustav and Rodnianski, Igor and Taylor, Martin},
  date = {2021-04-16},
  eprint = {2104.08222}
}

@unpublished{dafermos_scattering_2013,
  title = {A Scattering Theory Construction of Dynamical Vacuum Black Holes},
  author = {Dafermos, Mihalis and Holzegel, Gustav and Rodnianski, Igor},
  date = {2013},
  eprint = {1306.5364},
  eprinttype = {arxiv},
  url = {http://arxiv.org/abs/1306.5364},
  keywords = {General Relativity and Quantum Cosmology,Mathematical Physics,Mathematics - Analysis of PDEs,Mathematics - Differential Geometry}
}

@article{dafermos_scattering_2018,
  title = {A Scattering Theory for the Wave Equation on {{Kerr}} Black Hole Exteriors},
  author = {Dafermos, Mihalis and Rodnianski, Igor and Shlapentokh-Rothman, Yakov},
  date = {2018},
  journaltitle = {Annales scientifiques de l'École normale supérieure},
  shortjournal = {Ann. Sci. École Norm. Sup.},
  volume = {51},
  number = {2},
  eprint = {1412.8379},
  eprinttype = {arxiv},
  pages = {371--486},
  issn = {0012-9593, 1873-2151},
  doi = {10.24033/asens.2358},
  url = {http://arxiv.org/abs/1412.8379},
  keywords = {General Relativity and Quantum Cosmology,Mathematical Physics,Mathematics - Analysis of PDEs,Mathematics - Differential Geometry}
}

@incollection{Damour86,
  title = {Analytical Calculations of Gravitational Radiation},
  booktitle = {The Fourth Marcel Grossmann Meeting},
  author = {Damour, Thibault},
  date = {1986},
  pages = {365--392},
  publisher = {Elsevier Science Publishers. Amsterdam: The Netherlands},
  keywords = {General Relativity:Gravitational Radiation,Gravitational Radiation:General Relativity}
}

@article{dimock_classical_1987,
  title = {Classical and Quantum Scattering Theory for Linear Scalar Fields on the {{Schwarzschild}} Metric {{I}}},
  author = {Dimock, Jonathan and Kay, Bernard S},
  date = {1987-05-01},
  journaltitle = {Annals of Physics},
  shortjournal = {Annals of Physics},
  volume = {175},
  number = {2},
  pages = {366--426},
  issn = {0003-4916},
  doi = {10.1016/0003-4916(87)90214-4},
  url = {https://www.sciencedirect.com/science/article/pii/0003491687902144}
}

@article{friedlander_radiation_1980,
  title = {Radiation Fields and Hyperbolic Scattering Theory},
  author = {Friedlander, F. Gerard},
  date = {1980-11},
  journaltitle = {Mathematical Proceedings of the Cambridge Philosophical Society},
  volume = {88},
  number = {3},
  pages = {483--515},
  publisher = {Cambridge University Press},
  issn = {1469-8064, 0305-0041},
  doi = {10.1017/S0305004100057819},
  url = {https://www.cambridge.org/core/journals/mathematical-proceedings-of-the-cambridge-philosophical-society/article/abs/radiation-fields-and-hyperbolic-scattering-theory/7B6E97C7182B5C3C03396F615CF7EA2F}
}

@article{friedrich_existence_1986,
  title = {On the Existence of {$n$}-Geodesically Complete or Future Complete Solutions of {{Einstein}}'s Field Equations with Smooth Asymptotic Structure},
  author = {Friedrich, Helmut},
  date = {1986-01},
  journaltitle = {Communications in Mathematical Physics},
  volume = {107},
  number = {4},
  pages = {587--609},
  publisher = {Springer},
  issn = {0010-3616, 1432-0916},
  url = {https://projecteuclid.org/journals/communications-in-mathematical-physics/volume-107/issue-4/On-the-existence-of-n-geodesically-complete-or-future-complete/cmp/1104116232.full},
  keywords = {53C50,58D25,83C05,83C20}
}

@article{gajic_late-time_2022,
  title = {Late-Time Asymptotics for Geometric Wave Equations with Inverse-Square Potentials},
  author = {Gajic, Dejan},
  year = {2023},
  month = oct,
  journal = {Journal of Functional Analysis},
  volume = {285},
  number = {7},
  issn = {0022-1236},
  doi = {10.1016/j.jfa.2023.110058}
}

@article{gajic_late-time_2024,
  title = {Late-Time Tails for Scale-Invariant Wave Equations with a Potential and the near-Horizon Geometry of Null Infinity},
  author = {Gajic, Dejan and Van de Moortel, Maxime},
  date = {2024-01-23},
  eprint = {2401.13047},
  eprinttype = {arxiv}
}

@article{gajic_relation_2022,
  title = {On the Relation between Asymptotic Charges, the Failure of Peeling and Late-Time Tails},
  author = {Gajic, Dejan and Kehrberger, Lionor M. A.},
  date = {2022},
  journaltitle = {Classical and Quantum Gravity},
  shortjournal = {Class. Quantum Grav.},
  volume = {39},
  number = {19},
  issn = {0264-9381},
  doi = {10.1088/1361-6382/ac8863},
  url = {https://dx.doi.org/10.1088/1361-6382/ac8863},
  keywords = {General Relativity and Quantum Cosmology,High Energy Physics - Theory,Mathematical Physics,Mathematics - Analysis of PDEs}
}

@article{godazgar_peeling_2012,
  title = {Peeling of the {{Weyl}} Tensor and Gravitational Radiation in Higher Dimensions},
  author = {Godazgar, Mahdi and Reall, Harvey S.},
  date = {2012-04-19},
  journaltitle = {Physical Review D},
  shortjournal = {Phys. Rev. D},
  volume = {85},
  number = {8},
  eprint = {1201.4373},
  eprinttype = {arxiv},
  pages = {084021},
  issn = {1550-7998, 1550-2368},
  doi = {10.1103/PhysRevD.85.084021},
  url = {http://arxiv.org/abs/1201.4373},
  keywords = {General Relativity and Quantum Cosmology}
}

@incollection{grieser_basics_2001,
  title = {Basics of the B-{{Calculus}}},
  booktitle = {Approaches to {{Singular Analysis}}: {{A Volume}} of {{Advances}} in {{Partial Differential Equations}}},
  author = {Grieser, Daniel},
  editor = {Gil, Juan B. and Grieser, Daniel and Lesch, Matthias},
  date = {2001},
  pages = {30--84},
  publisher = {Birkhäuser},
  location = {Basel},
  doi = {10.1007/978-3-0348-8253-8_2},
  url = {https://doi.org/10.1007/978-3-0348-8253-8_2},
  isbn = {978-3-0348-8253-8},
  keywords = {Asymptotic Type,Index Family,Integral Kernel,Pseudodifferential Operator,Schwartz Kernel}
}

@article{hintz_exterior_2023,
  title = {Exterior {{Stability}} of {{Minkowski Space}} in {{Generalized Harmonic Gauge}}},
  author = {Hintz, Peter},
  year = {2023},
  month = sep,
  journal = {Arch Rational Mech Anal},
  volume = {247},
  number = {5},
  pages = {99},
  issn = {1432-0673},
  doi = {10.1007/s00205-023-01931-3}
}

@online{fuentealba_logarithmic_2024,
  title = {Logarithmic Matching between Past Infinity and Future Infinity: {{The}} Massless Scalar Field},
  shorttitle = {Logarithmic Matching between Past Infinity and Future Infinity},
  author = {Fuentealba, Oscar and Henneaux, Marc},
  year = {2024},
  month = dec,
  eprinttype = {arxiv},
  eprint = {2412.05088},
}

@online{hintz_underdetermined_2023,
  title = {Underdetermined Elliptic {{PDE}} on Asymptotically {{Euclidean}} Manifolds, and Generalizations},
  author = {Hintz, Peter},
  year = {2023},
  month = feb,
  eprinttype = {arxiv},
  eprint = {2302.12904},
}

@online{hintz_lectures_2023,
  title = {Lectures on Geometric Singular Analysis with Applications to Elliptic and Hyperbolic {{PDE}}},
  author = {Hintz, Peter},
  date = {2023},
  url={https://people.math.ethz.ch/~hintzp/notes/unc-pde.pdf}
}

@unpublished{hintz_linear_2023,
  title = {Linear Waves on Asymptotically Flat Spacetimes. {{I}}},
  author = {Hintz, Peter},
  date = {2023-02-27},
  eprint = {2302.14647},
  eprinttype = {arxiv}
}

@online{hintz_microlocal_2023-1,
  title = {Microlocal Analysis near Null Infinity in Asymptotically Flat Spacetimes},
  author = {Hintz, Peter and Vasy, András},
  date = {2023-02-27},
  eprint = {2302.14613},
  eprinttype = {arxiv},
  keywords = {General Relativity and Quantum Cosmology,Mathematics - Analysis of PDEs,Mathematics - Differential Geometry,Primary: 35L05 Secondary: 58J47 35B40}
}

@article{hintz_sharp_2022,
  title = {A Sharp Version of {{Price}}'s Law for Wave Decay on Asymptotically Flat Spacetimes},
  author = {Hintz, Peter},
  date = {2022-01},
  journaltitle = {Communications in Mathematical Physics},
  shortjournal = {Commun. Math. Phys.},
  volume = {389},
  number = {1},
  eprint = {2004.01664},
  eprinttype = {arxiv},
  pages = {491--542},
  issn = {0010-3616, 1432-0916},
  doi = {10.1007/s00220-021-04276-8},
  url = {http://arxiv.org/abs/2004.01664},
  keywords = {General Relativity and Quantum Cosmology,Mathematics - Analysis of PDEs,Primary 58J50 Secondary 83C57 35L05 35C20}
}

@article{hintz_stability_2020,
  title = {Stability of {{Minkowski}} Space and Polyhomogeneity of the Metric},
  author = {Hintz, Peter and Vasy, András},
  date = {2020-06},
  journaltitle = {Annals of PDE},
  shortjournal = {Ann. PDE},
  number = {1},
  issn = {2524-5317, 2199-2576},
  doi = {10.1007/s40818-020-0077-0},
  url = {http://link.springer.com/10.1007/s40818-020-0077-0}
}

@article{hollands_conformal_2004,
  title = {Conformal {{Null Infinity Does Not Exist}} for {{Radiating Solutions}} in {{Odd Spacetime Dimensions}}},
  author = {Hollands, Stefan and Wald, Robert M.},
  date = {2004-11-21},
  journaltitle = {Classical and Quantum Gravity},
  shortjournal = {Class. Quantum Grav.},
  volume = {21},
  number = {22},
  eprint = {gr-qc/0407014},
  eprinttype = {arxiv},
  pages = {5139--5145},
  issn = {0264-9381, 1361-6382},
  doi = {10.1088/0264-9381/21/22/008},
  url = {http://arxiv.org/abs/gr-qc/0407014},
  keywords = {General Relativity and Quantum Cosmology}
}

@article{holzegel_boundedness_2014,
  title = {Boundedness and Growth for the Massive Wave Equation on Asymptotically Anti-de {{Sitter}} Black Holes},
  author = {Holzegel, Gustav H. and Warnick, Claude M.},
  date = {2014-02},
  journaltitle = {Journal of Functional Analysis},
  shortjournal = {Journal of Functional Analysis},
  volume = {266},
  number = {4},
  eprint = {1209.3308},
  eprinttype = {arxiv},
  pages = {2436--2485},
  issn = {00221236},
  doi = {10.1016/j.jfa.2013.10.019},
  url = {http://arxiv.org/abs/1209.3308},
  keywords = {General Relativity and Quantum Cosmology,Mathematics - Analysis of PDEs}
}

@book{ionescu_einstein-klein-gordon_2022,
  title = {The {{Einstein-Klein-Gordon Coupled System}}: {{Global Stability}} of the {{Minkowski Solution}}: ({{AMS-213}})},
  author = {Ionescu, Alexandru D. and Pausader, Benoît},
  date = {2022},
  volume = {406},
  publisher = {Princeton University Press},
  isbn = {978-0-691-23304-8},
  keywords = {General Relativity and Quantum Cosmology,Mathematics - Analysis of PDEs}
}

@article{kehrberger_case_2022,
  title = {The {{Case Against Smooth Null Infinity I}}: {{Heuristics}} and {{Counter-Examples}}},
  shorttitle = {The {{Case Against Smooth Null Infinity I}}},
  author = {Kehrberger, Lionor },
  date = {2022-03},
  journaltitle = {Annales Henri Poincaré},
  shortjournal = {Ann. Henri Poincaré},
  volume = {23},
  number = {3},
  eprint = {2105.08079},
  eprinttype = {arxiv},
  pages = {829--921},
  issn = {1424-0637, 1424-0661},
  doi = {10.1007/s00023-021-01108-2},
}

@article{kehrberger_case_2022-1,
  title = {The {{Case Against Smooth Null Infinity III}}: {{Early-Time Asymptotics}} for {{Higher}} \$\textbackslash ell\$-{{Modes}} of {{Linear Waves}} on a {{Schwarzschild Background}}},
  shorttitle = {The {{Case Against Smooth Null Infinity III}}},
  author = {Kehrberger, Lionor },
  date = {2022-12},
  journaltitle = {Annals of PDE},
  shortjournal = {Ann. PDE},
  volume = {8},
  number = {2},
  eprint = {2106.00035},
  eprinttype = {arxiv},
}

@online{kehrberger_case_2024,
  title = {The {{Case Against Smooth Null Infinity V}}: {{Early-Time Asymptotics}} of {{Linearised Gravity Around Schwarzschild}} for {{Fixed Spherical Harmonic Modes}}},
  shorttitle = {The {{Case Against Smooth Null Infinity V}}},
  author = {Kehrberger, Lionor and Masaood, Hamed},
  date = {2024-01-08},
  eprint = {2401.04179},
  eprinttype = {arxiv},
}

@article{kehrberger_case_2024-1,
  title = {The {{Case Against Smooth Null Infinity IV}}: {{Linearised Gravity Around Schwarzschild}} -- {{An Overview}}},
  shorttitle = {The {{Case Against Smooth Null Infinity IV}}},
  author = {Kehrberger, Lionor},
  date = {2024-03-04},
  journaltitle = {Philosophical Transactions of the Royal Society A: Mathematical, Physical and Engineering Sciences},
  shortjournal = {Phil. Trans. R. Soc. A.},
  volume = {382},
  number = {2267},
  eprint = {2401.04170},
  eprinttype = {arxiv},
  issn = {1364-503X, 1471-2962},
  doi = {10.1098/rsta.2023.0039},
  url = {http://arxiv.org/abs/2401.04170},
  keywords = {General Relativity and Quantum Cosmology,Mathematical Physics,Mathematics - Analysis of PDEs}
}

@phdthesis{kehrberger_mathematical_2023, title={Mathematical Studies on the Asymptotic Behaviour of Gravitational Radiation in General Relativity}, url={https://www.repository.cam.ac.uk/handle/1810/353590}, DOI={10.17863/CAM.99689}, school={Apollo - University of Cambridge Repository}, author={Kehrberger, Lionor}, year={2023}, keywords={Asymptotics, General Relativity, Gravitational Radiation, Isolated Systems, N body problem, Scattering Theory} }

@unpublished{keir_weak_2018,
  title = {The Weak Null Condition and Global Existence Using the P-Weighted Energy Method},
  author = {Keir, Joseph},
  date = {2018-09-30},
  eprint = {1808.09982},
  eprinttype = {arxiv},
  url = {http://arxiv.org/abs/1808.09982},
  keywords = {General Relativity and Quantum Cosmology,Mathematical Physics,Mathematics - Analysis of PDEs,Mathematics - Differential Geometry}
}

@article{lax_scattering_1964,
  title = {Scattering Theory},
  author = {Lax, Peter D. and Phillips, Ralph S.},
  date = {1964-01},
  journaltitle = {Bulletin of the American Mathematical Society},
  volume = {70},
  number = {1},
  pages = {130--142},
  publisher = {American Mathematical Society},
  issn = {0002-9904, 1936-881X},
  url = {https://projecteuclid.org/journals/bulletin-of-the-american-mathematical-society/volume-70/issue-1/Scattering-theory/bams/1183525789.full}
}

@article{lindblad_global_2005,
  title = {Global Existence for the {{Einstein}} Vacuum Equations in Wave Coordinates},
  author = {Lindblad, Hans and Rodnianski, Igor},
  date = {2005-05},
  journaltitle = {Communications in Mathematical Physics},
  shortjournal = {Commun. Math. Phys.},
  volume = {256},
  number = {1},
  pages = {43--110},
  issn = {0010-3616, 1432-0916},
  doi = {10.1007/s00220-004-1281-6},
  url = {http://arxiv.org/abs/math/0312479},
  keywords = {General Relativity and Quantum Cosmology,Mathematical Physics,Mathematics - Analysis of PDEs}
}

@online{lindblad_scattering_2023,
  title = {Scattering for Wave Equations with Sources Close to the Lightcone and Prescribed Radiation Fields},
  author = {Lindblad, Hans and Schlue, Volker},
  date = {2023-04-27},
  eprint = {2303.10569},
  eprinttype = {arxiv},
  doi = {10.48550/arXiv.2303.10569},
  keywords = {Mathematics - Analysis of PDEs}
}

@article{lindblad_weak_2003,
  title = {The Weak Null Condition for {{Einstein}}'s Equations},
  author = {Lindblad, Hans and Rodnianski, Igor},
  date = {2003-06-01},
  journaltitle = {Comptes Rendus Mathematique},
  shortjournal = {Comptes Rendus Mathematique},
  volume = {336},
  number = {11},
  pages = {901--906},
  issn = {1631-073X},
  doi = {10.1016/S1631-073X(03)00231-0},
  url = {https://www.sciencedirect.com/science/article/pii/S1631073X03002310}
}

@article{Lindblad2010,
  title = {The Global Stability of Minkowski Space-Time in Harmonic Gauge},
  author = {Lindblad, Hans and Rodnianski, Igor},
  date = {2010},
  journaltitle = {Annals of Mathematics},
  volume = {171},
  number = {3},
  eprint = {math/0411109},
  eprinttype = {arxiv},
  pages = {1401--1477},
  issn = {0003486X},
  doi = {10.4007/annals.2010.171.1401},
  keywords = {General Relativity and Quantum Cosmology,Mathematical Physics,Mathematics - Analysis of PDEs}
}

@online{luk_late_2024,
  title = {Late Time Tail of Waves on Dynamic Asymptotically Flat Spacetimes of Odd Space Dimensions},
  author = {Luk, Jonathan and Oh, Sung-Jin},
  date = {2024-04-02},
  eprint = {2404.02220},
  eprinttype = {arxiv}
}

@article{bieri_extension_2010,
  title = {An Extension of the Stability Theorem of the {{Minkowski}} Space in General Relativity},
  author = {Bieri, Lydia},
  date = {2010-09},
  journaltitle = {Journal of Differential Geometry},
  volume = {86},
  number = {1},
  pages = {17--70},
  publisher = {Lehigh University},
  issn = {0022-040X},
  doi = {10.4310/jdg/1299766683},
  url = {https://projecteuclid.org/journals/journal-of-differential-geometry/volume-86/issue-1/An-extension-of-the-stability-theorem-of-the-Minkowski-space/10.4310/jdg/1299766683.full}
}

@article{luk_local_2012,
  title = {On the Local Existence for the Characteristic Initial Value Problem in General Relativity},
  author = {Luk, Jonathan},
  date = {2012},
  journaltitle = {International Mathematics Research Notices},
  volume = {2012},
  number = {20},
  eprint = {1107.0898},
  eprinttype = {arxiv},
  pages = {4625--4678},
  issn = {16870247},
  doi = {10.1093/imrn/rnr201},
  keywords = {General Relativity and Quantum Cosmology,Mathematical Physics,Mathematics - Analysis of PDEs}
}

@article{masaood_scattering_2022,
  title = {A {{Scattering Theory}} for {{Linearised Gravity}} on the {{Exterior}} of the {{Schwarzschild Black Hole I}}: {{The Teukolsky Equations}}},
  shorttitle = {A {{Scattering Theory}} for {{Linearised Gravity}} on the {{Exterior}} of the {{Schwarzschild Black Hole I}}},
  author = {Masaood, Hamed},
  date = {2022-07},
  journaltitle = {Communications in Mathematical Physics},
  shortjournal = {Commun. Math. Phys.},
  volume = {393},
  number = {1},
  eprint = {2007.13658},
  eprinttype = {arxiv},
  pages = {477--581},
  issn = {0010-3616, 1432-0916},
  doi = {10.1007/s00220-022-04372-3},
  keywords = {General Relativity and Quantum Cosmology,Mathematical Physics,Mathematics - Analysis of PDEs,Mathematics - Differential Geometry}
}

@online{masaood_scattering_2022-1,
  title = {A {{Scattering Theory}} for {{Linearised Gravity}} on the {{Exterior}} of the {{Schwarzschild Black Hole II}}: {{The Full System}}},
  shorttitle = {A {{Scattering Theory}} for {{Linearised Gravity}} on the {{Exterior}} of the {{Schwarzschild Black Hole II}}},
  author = {Masaood, Hamed},
  date = {2022-11-14},
  eprint = {2211.07462},
  eprinttype = {arxiv},
}

@misc{Mathematica,
  title = {Mathematica, {{Version}} 14.0},
  author = {Inc., Wolfram Research},
  url = {https://www.wolfram.com/mathematica}
}

@book{melrose_atiyah-patodi-singer_1993,
  title = {The {{Atiyah-Patodi-Singer Index Theorem}}},
  author = {Melrose, Richard},
  date = {1993-03-31},
  publisher = {A K Peters/CRC Press},
  location = {New York},
  doi = {10.1201/9781439864609},
  isbn = {978-0-429-06507-1},
  pagetotal = {392}
}

@article{minucci_maxwell-scalar_2022,
  title = {The {{Maxwell-scalar}} Field System near Spatial Infinity},
  author = {Minucci, Marica and Macedo, Rodrigo Panosso and Kroon, Juan Antonio Valiente},
  date = {2022-08-01},
  journaltitle = {Journal of Mathematical Physics},
  volume = {63},
  number = {8},
  eprint = {2206.04366},
  eprinttype = {arxiv},
  pages = {082501},
  issn = {0022-2488, 1089-7658},
  doi = {10.1063/5.0104602},
}

@article{nicolas_conformal_2016,
  title = {Conformal Scattering on the {{Schwarzschild}} Metric},
  author = {Nicolas, Jean-Philippe},
  date = {2016},
  journaltitle = {Annales de l'Institut Fourier},
  volume = {66},
  number = {3},
  pages = {1175--1216},
  issn = {1777-5310},
  doi = {10.5802/aif.3034},
  url = {https://aif.centre-mersenne.org/item/AIF_2016__66_3_1175_0}
}

@article{penrose_asymptotic_1963,
  title = {Asymptotic {{Properties}} of {{Fields}} and {{Space-Times}}},
  author = {Penrose, Roger},
  date = {1963-01-15},
  journaltitle = {Physical Review Letters},
  shortjournal = {Phys. Rev. Lett.},
  volume = {10},
  number = {2},
  pages = {66--68},
  publisher = {American Physical Society},
  doi = {10.1103/PhysRevLett.10.66},
  url = {https://link.aps.org/doi/10.1103/PhysRevLett.10.66}
}

@article{Penrose65,
  title = {Zero Rest-Mass Fields Including Gravitation: Asymptotic Behaviour},
  author = {Penrose, R.},
  date = {1965},
  journaltitle = {Proc. R. Soc. A},
  volume = {284},
  number = {1397},
  pages = {159--203},
  issn = {2053-9169},
  doi = {10.1098/rspa.1965.0058},
  url = {http://doi/10.1098/rspa.1965.0058}
}

@article{raymond_simple_1989,
  title = {{{Simple Nash-Moser implicit function theorem}}},
  author = {Raymond, Xavier Saint},
  date = {1989},
  journal = {Fondation L'Enseignement Mathématique},
  doi = {10.5169/SEALS-57374}
}

@book{reed_iii_1979,
  title = {{{III}}: {{Scattering Theory}}},
  shorttitle = {{{III}}},
  author = {Reed, Michael and Simon, Barry},
  date = {1979-05-29},
  publisher = {Elsevier},
  isbn = {978-0-08-092538-7},
  pagetotal = {480},
  keywords = {Mathematics / General,Science / Physics / General}
}

@article{rendall_reduction_1990,
  title = {Reduction of the {{Characteristic Initial Value Problem}} to the {{Cauchy Problem}} and {{Its Applications}} to the {{Einstein Equations}}},
  author = {Rendall, Alan D.},
  date = {1990},
  journaltitle = {Proceedings of the Royal Society of London. Series A, Mathematical and Physical Sciences},
  volume = {427},
  number = {1872},
  pages = {221--239},
  publisher = {The Royal Society},
  issn = {0080-4630},
}

@article{SeriesVIII,
  title = {Gravitational Waves in General Relativity {{VIII}}. {{Waves}} in Asymptotically Flat Space-Time},
  author = {Sachs, R.},
  date = {1962},
  journaltitle = {Proc. R. Soc. A},
  volume = {270},
  number = {1340},
  pages = {103--126},
  doi = {10.1098/rspa.1962.0206},
  url = {http://doi/10.1098/rspa.1962.0206}
}

@online{shen_stability_2023,
  title = {Stability of {{Minkowski}} Spacetime in Exterior Regions},
  author = {Shen, Dawei},
  date = {2023-08-26},
  eprint = {2211.15230},
  eprinttype = {arxiv}
}

@online{taujanskas_controlled_2023,
  title = {Controlled Regularity at Future Null Infinity from Past Asymptotic Initial Data: Massless Fields},
  shorttitle = {Controlled Regularity at Future Null Infinity from Past Asymptotic Initial Data},
  author = {Taujanskas, Grigalius and Kroon, Juan A. Valiente},
  date = {2023-04-17},
  eprint = {2304.08270},
  eprinttype = {arxiv}
}

@article{thorne_multipole_1980,
  title = {Multipole Expansions of Gravitational Radiation},
  author = {Thorne, Kip S.},
  date = {1980-04-01},
  journaltitle = {Reviews of Modern Physics},
  shortjournal = {Rev. Mod. Phys.},
  volume = {52},
  number = {2},
  pages = {299--339},
  issn = {0034-6861},
  doi = {10.1103/RevModPhys.52.299},
  url = {https://link.aps.org/doi/10.1103/RevModPhys.52.299}
}

@misc{DLMF,
       key = {DLMF, Release 1.0.26 of 2020-03-15},
       title = {NIST Digital Library of Mathematical Functions},
       howpublished = {http://dlmf.nist.gov/, Release 1.0.26 of 2020-03-15},
       year= {2020},
       url = {http://dlmf.nist.gov/},
       author = {F.~W.~J. Olver and A.~B. Olde Daalhuis and D.~W. Lozier and B.~I. Schneider and R.~F. Boisvert and C.~W. Clark and B.~R. Mille and, B.~V. Saunders and H.~S. Cohl and M.~A. McClain, eds.}
}

@article{WalkerWill79,
  title = {Relativistic {{Kepler}} Problem. {{II}}. {{Asymptotic}} Behavior of the Field in the Infinite Past},
  author = {Walker, Martin  and Will, Clifford M.},
  date = {1979},
  journaltitle = {Physical Review D: Particles and Fields},
  shortjournal = {Phys. Rev. D},
  volume = {19},
  number = {12},
  pages = {3495--3508},
  publisher = {American Physical Society},
  issn = {0556-2821},
  doi = {10.1103/PhysRevD.19.3495},
  url = {https://doi/10.1103/PhysRevD.19.3495}
}

@unpublished{yu_nontrivial_2022,
  title = {Nontrivial Global Solutions to Some Quasilinear Wave Equations in Three Space Dimensions},
  author = {Yu, Dongxiao},
  date = {2022-04-27},
  eprint = {2204.12870},
  eprinttype = {arxiv},
  url = {http://arxiv.org/abs/2204.12870},
  keywords = {Mathematics - Analysis of PDEs}
}
 
\end{document}